\newtheorem{theorem}{Theorem}[section]
\newtheorem{lemma}[theorem]{Lemma}
\newtheorem{proposition}[theorem]{Proposition}
\newtheorem{corollary}[theorem]{Corollary}
\theoremstyle{definition}
\newtheorem{definition}[theorem]{Definition}
\newtheorem{example}[theorem]{Example}
\newtheorem*{OQ}{Open Question}
\theoremstyle{remark}
\newtheorem{remark}[theorem]{Remark}
\newtheoremstyle{thm}
  {12pt}
  {12pt}
  {\itshape}
  {\parindent}
  {\scshape}
  {.}
  {5pt}
  {}
\theoremstyle{thm}
\newtheorem*{T6.17}{Theorem \ref{gradientthm1}}
\newtheorem*{T5.5}{Theorem \ref{x-cc}}
\newtheorem*{T5.14}{Theorem \ref{lll}}
\newtheorem*{P3.22}{Proposition \ref{pretzel}}
\newtheorem*{L3.46}{Lemma  \ref{new_lemma_Z}}
\newtheorem*{L3.55}{Lemma  \ref{good_pos}}
\newtheoremstyle{prop}
  {12pt}
  {12pt}
  {\itshape}
  {\parindent}
  {\scshape}
  {.}
  {5pt}
  {}
\theoremstyle{prop}
\newtheoremstyle{lem}
  {12pt}
  {12pt}
  {\itshape}
  {\parindent}
  {\scshape}
  {.}
  {5pt}
  {}
\theoremstyle{lem}
\newtheorem*{L2.6}{Lemma \ref{lem2.4}}
\newtheoremstyle{defn}
  {12pt}
  {12pt}
  {\itshape}
  {\parindent}
  {\scshape}
  {.}
  {5pt}
  {}
\theoremstyle{defn}
\theoremstyle{definition}
\newtheoremstyle{examp}
  {12pt}
  {12pt}
  {}
   {\parindent}
  {\scshape}
  {.}
  {5pt}
  {}
\theoremstyle{examp}
\newtheoremstyle{cor}
  {12pt}
  {12pt}
  {\itshape}
  {\parindent}
  {\scshape}
  {.}
  {5pt}
  {}
\theoremstyle{cor}
\newtheoremstyle{recipe}
  {12pt}
  {12pt}
  {\itshape}
   {\parindent}
  {\scshape}
  {.}
  {5pt}
  {}
\theoremstyle{recipe}
\newtheoremstyle{rem}
  {12pt}
  {12pt}
  {}
   {\parindent}
  {\scshape}
  {.}
  {5pt}
  {}
\theoremstyle{rem}
\newcommand{\bp}{\begin{proof}}
\newcommand{\ep}{\end{proof}}
\providecommand{\norm}[1]{\lVert#1\rVert}
\providecommand{\abs}[1]{\lvert#1\rvert}
\newcommand{\ssc}{\text{sc}}
\renewcommand{\epsilon}{\varepsilon}
\newcommand{\what}{\widehat}
\newcommand{\wh}{\widehat}
\newcommand{\wt}{\widetilde}
\newcommand{\wtilde}{\widetilde}
\newcommand{\gr}{\text{graph}}
\newcommand{\ov}{\overline}
\newcommand{\Fred}{\operatorname{Fred}}
\newcommand{\cl}{\operatorname{cl}}
\newcommand{\codim}{\operatorname{codim}}
\newcommand{\ind}{\operatorname{ind}}
\newcommand{\real}{\operatorname{Re}}
\newcommand{\supp}{\operatorname{supp}}
\newcommand{\R}{{\mathbb R}}
\newcommand{\mo}{\mathcal O}
\providecommand{\ker}[1]{$\text{ker}\ {#1}$}
\providecommand{\im}[1]{$\text{im}\ {#1}$}
\providecommand{\coker}[1]{\text{coker}\ {#1}}
\newcommand{\N}{{\mathbb N}}
\gdef\hex{"}}
\mathchardef\laplace=\hex0001
\mathchardef\nabla=\hex0272
\def\@@dalembert#1#2{\setbox0\hbox{$#1\mathrm I$}

  \vrule height\ht0 depth\z@ width.04\ht0

  \rlap{\vrule height\ht0 depth-.96\ht0 width.8\ht0}

  \vrule height.1\ht0 depth\z@ width.8\ht0

  \vrule height\ht0 depth\z@ width.1\ht0 }
\def\dalembert{\mathbin{\mathpalette\@@dalembert{}}\,}
\begin{document}
\title{POLYFOLD AND FREDHOLM THEORY {I}\\
Basic Theory in M-Polyfolds}
\author{Helmut Hofer, Kris Wysocki and Edi  Zehnder}
\date{\today}
\maketitle

\begin{figure}
\centering
\def\svgwidth{33ex}
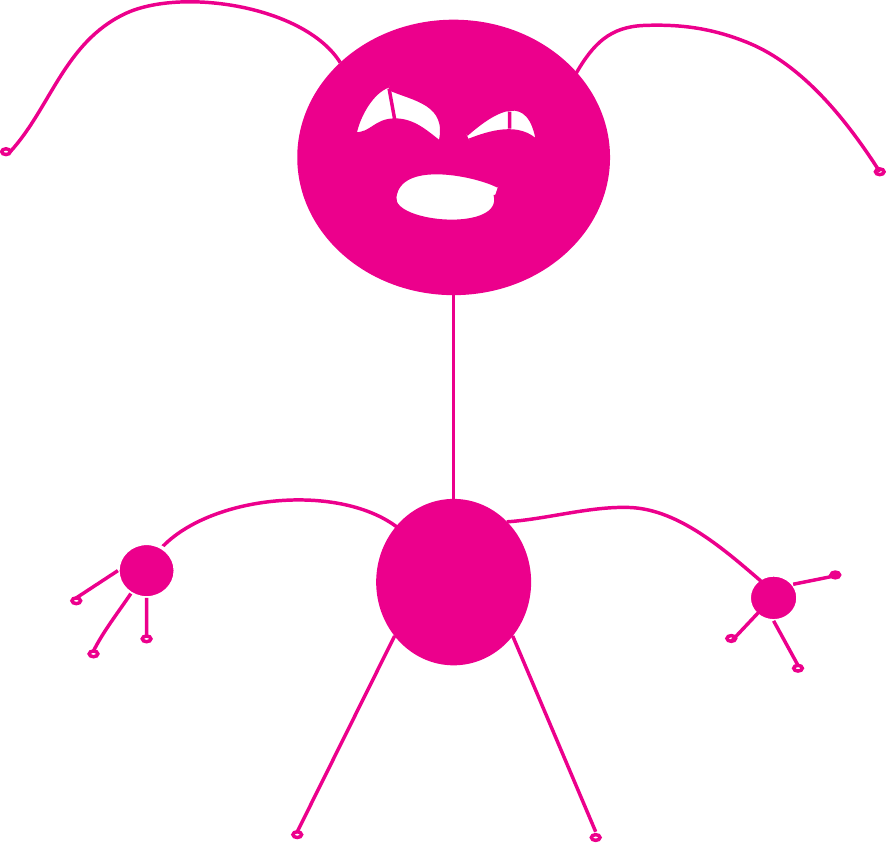
\end{figure}

\pagebreak
\tableofcontents
\pagebreak
\section*{Preface}
The present {\bf Volume I} represents a manual for parts of the polyfold theory, which has been prompted by the analytical problems of the symplectic field theory (SFT), a general theory of symplectic invariants outlined in \cite{EGH}.
The SFT constructs invariants of symplectic cobordisms by analyzing the structure of solutions of elliptic partial differential equations  of Cauchy-Riemann type, from Riemann surfaces into compact symplectic manifolds.  The partial differential equations are defined on varying domains and map into varying targets. The occurring singular limits, like bubbling off phenomena, give rise to serious compactness and transversality problems. Although such nonlinear problems could be approached by ad-hoc methods, the polyfold theory is a systematic and efficient approach providing a language and a large body of results for dealing with nonlinear elliptic equations involving compactness and transversality issues. It requires, however, a generalization of the differential geometry and of the nonlinear Fredholm theory to a class of spaces which are more general than manifolds. These spaces have (locally) varying dimensions and are described locally by retracts on decreasing sequence of Banach spaces, replacing the open sets of Banach spaces in the familiar local description of manifolds.\\

As a guide to the literature these notes provide precise definitions and formulate the important results. They also provide 
many  proofs, but refer otherwise to the papers 
\cite{BEHWZ}, \cite{Hofer}, \cite{HWZ2}-\cite{HWZ5}.
The main topic in the current {\bf Volume I} is the development of all aspects of the Fredholm theory in a class
of spaces called M-polyfolds. These spaces can be viewed as a generalization of the notion of a manifold (finite or infinite dimensional).
In  {\bf Volume {II}} the ideas will be generalized  to the more general class of spaces
called polyfolds, which is needed in the more advanced applications.  {\bf Volume {I} and {II}}
describe a wide array of nonlinear functionalanalytic tools to study perturbation and transversality questions
for a large class of so-called Fredholm sections. In a planned  {\bf Volume {III}}\footnote{Volume {III} will with all likelihood absorb our  unpublished manuscript \cite{HWZ-DM}} we intend to describe a wide array of nonlinear analysis 
tools to construct polyfolds in  applications. In particular the set-up will emphasize recyclability of the analysis
and we shall put forward a novel theory (`Black Boxes') which allows to recycle different pieces of analysis 
and guarantees that these work together, provided they have some easy to check properties.
Other approaches do not address the issue of  recyclability. The latter is a serious issue since, as is well-known,
currently the proofs in symplectic geometry are very long and usually not very transparent, so that mistakes
very often go undetected.

The series of videos \cite{H_Video}, \cite{A_Video},\cite{Wehrheim_video},  illustrate the motivation behind the polyfold theory. We also recommend \cite{FFGW} for the intuitive ideas involved in the polyfold theory.\\

Other approaches to the type of problems considered in  applications were put forward in \cite{FO}, \cite{FOOO},\cite{LiuT},\cite{LT},\cite{McW},  \cite{Yang}, and \cite{Pardon}.\\

\noindent {\bf Acknowledgement:} 
The first author was  partially supported by the NSF grants DMS-0603957 and DMS-1104470, the second author  was partially supported by  the NSF grant DMS-0906280.  The second and the third authors would like to thank the Institute for Advanced Study (IAS) in Princeton for the support and hospitality, the second author  would like to thank the Forschungs Institut f{\"u}r Mathematik (FIM) in Zurich for the support and hospitality.

The authors would like to thank Joel Fish and Katrin Wehrheim for many useful and enlightening discussions.
The first author would like to thank  the participants of the workshop on polyfolds at Pajaro Dunes in August 2012 for their valuable feedback.

\pagebreak
\section{Sc-Calculus}
The basic concept is the  sc-structure on a Banach space.

\subsection{Sc-Structures and Differentiability}
We begin with the  linear sc-theory.
\begin{definition}\label{sc-structure}
A {\bf sc-structure}  \index{D- Sc-Structure} (or scale structure) on a  Banach space $E$ consists of 
a decreasing sequence $(E_m)_{m\geq 0}$ of Banach spaces,   
$$
E=E_0\supset E_1\supset E_2\supset \ldots  ,$$
such that  the following two conditions are satisfied, 
\begin{itemize}
\item[(1)] The inclusion operators $E_{m+1}\rightarrow E_m$ are  compact.
\item[(2)] The intersection $E_\infty:=\bigcap_{i\geq 0} E_i$\index{$E_i,\ E_{\infty}$} is dense in every $E_m$.
\end{itemize}
\end{definition}

Sc-structures, where ``sc'' is short for scale, are known from  linear interpolation theory, see \cite{Tr}. However, our  interpretation is that of a smooth structure.  In the following, a sc-Banach space or a sc-smooth Banach space stands for a Banach space equipped with a sc-structure.  

A finite-dimensional Banach space $E$  has precisely one sc-structure, namely the constant structure $E_m=E$ for all $m\geq0$.  If $E$ is an infinite-dimensional Banach space, the constant structure is not a sc-structure, because it fails property (1).  We shall see that the sc-structure leads to interesting new phenomena in nonlinear analysis.

Points in $E_\infty$ are called {\bf smooth points}, points in $E_m$ are called points of regularity $m$. A subset $A$ of a sc-Banach space $E$ inherits a filtration  $(A_m)_{m\geq 0}$ defined by $A_m=A\cap E_m$. It is, of course, possible that $A_\infty=\emptyset $.   We adopt the convention that 
$A^k$ stands for the set $A_k$ equipped with the induced filtration  
$$(A^k_m)_{m\geq 0}=(A_{k+m})_{m\geq 0}.$$
The direct sum $E\oplus F$ of sc-Banach spaces is a sc-Banach space, whose sc-smooth structure is defined by $(E\oplus F)_m:=E_m\oplus F_m$ for  all $m\geq 0$.

\begin{example}\label{sc-example1}
An example of a sc-Banach space, which is relevant in our applications, is as follows.  We choose a strictly increasing sequence $(\delta_m)_{m\geq 0}$  of real numbers starting with $\delta_0=0$.  
We consider the Banach spaces  $E=L^2(\R\times S^1)$ and  $E_m=H^{m,\delta_m}(\R\times S^1)$, where  the space  $H^{m,\delta_m}(\R\times S^1)$ consists of those  elements in $E$  having
 weak  partial derivatives up to order $m$ which, if  weighted by $e^{\delta_m |s|}$, belong to $E$.  Using Sobolev's compact embedding theorem for bounded domains and the assumption that the sequence $(\delta_m)$ is strictly increasing, one sees that the sequence $(E_m)_{m\geq 0}$ defines a  sc-structure on  $E$. 
\end{example}

\begin{definition}  A linear operator $T\colon 
E\rightarrow F$ between sc-Banach spaces is called {\bf sc-operator},  \index{D- Sc-operator}  if  $T(E_m)\subset F_m$ and the induced operators $T\colon 
E_m\rightarrow F_m$  are continuous for all $m\geq 0$.  A linear {\bf sc-isomorphism}\index{S- Sc-isomorphism}  is a bijective sc-operator whose  inverse is also a sc-operator. 
\end{definition}

A special class of linear sc-operators  are $\ssc^+$-operators,  defined as follows.
\begin{definition}\label{sc_plus_operators}
 A sc-operator $S\colon 
E\to F$ between sc-Banach spaces  is called a {\bf $\ssc^{\pmb{+}}$-operator}\index{D- Sc$^+$-operator},  if 
 $S(E_m)\subset F_{m+1}$  and  $S\colon 
E\rightarrow F^1$ is a sc-operator.
\end{definition}
In view of Definition \ref{sc-structure}, the inclusion operator $F_{m+1}\to F_m$ is compact, implying that a $\ssc^+$-operator $S\colon 
E\to F$ is a sc-compact operator  in the sense that  $S\colon 
E_m\to F_m$ is compact for every $m\geq 0$. Hence, given a sc-operator $T\colon 
E\rightarrow F$ and a sc$^+$-operator $S\colon 
E\rightarrow F$, the 
 operator $T+S$ can be viewed, on every level, as a  perturbation of $T$ by the compact operator $S$.
 
\begin{definition}\label{sc-subspace}
A subspace   $F$ of a sc-Banach space  $E$ is called a {\bf sc-subspace}\index{D- Sc-subspace}, provided $F$ is closed
and the sequence $(F_m)_{m\geq 0}$ given by  $F_m=F\cap E_m$ defines  a sc-structure on $F$.
\end{definition} 

A sc-subspace $F$ of the sc-Banach space $E$ has a sc-complement provided there exists an algebraic complement $G$ of $F$ so that $G_m=E_m\cap G$
defines an sc-structure on $G$, such  that on every level $m$ we have a topological direct sum 
$$E_m=F_m\oplus G_m.$$
Such a splitting $E=F\oplus G$\index{$E\oplus F$} is called a {\bf sc-splitting}\index{Sc-splitting}. 

The following result from \cite{HWZ2}, Proposition 2.7, 
will be used frequently.
\begin{proposition}\label{prop1}\index{P- Finite-dimensional sc-subspaces}
Let $E$ be a sc-Banach space. A finite-dimensional subspace $F$ of $E$  is a sc-subspace  if and only if $F$ belongs to $E_\infty$.
A finite-dimensional sc-subspace always  has a sc-complement.
\end{proposition}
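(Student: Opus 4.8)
The plan is to prove both directions of the equivalence and then the final sentence about complements.

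First, suppose $F\subset E_\infty$ is a finite-dimensional subspace. I would show $F$ is a sc-subspace by checking Definition \ref{sc-subspace}. Closedness of $F$ in $E=E_0$ is automatic since $F$ is finite-dimensional. For the filtration, note that $F\subset E_\infty\subset E_m$ for all $m$, so $F_m=F\cap E_m=F$ for every $m$; thus $(F_m)$ is the constant sequence, which (as noted after Definition \ref{sc-structure}) is the unique sc-structure on the finite-dimensional space $F$. The one point needing a word of justification is that property (1) — compactness of the inclusions $F_{m+1}\to F_m$ — holds trivially for an identity map of a finite-dimensional space, and property (2) holds since $F_\infty=F$ is dense in $F_m=F$. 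Hence $F$ is a sc-subspace.

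Conversely, suppose the finite-dimensional $F$ is a sc-subspace; I must show $F\subset E_\infty$. Pick a basis $e_1,\dots,e_k$ of $F$. Since $(F_m)$ is a sc-structure on the finite-dimensional space $F$, each $F_m$ is a finite-dimensional (hence closed) subspace of $F$, and the sequence $F=F_0\supset F_1\supset\cdots$ must stabilize: for dimensional reasons there is an $N$ with $F_m=F_N$ for all $m\geq N$. But by property (2) of the sc-structure on $F$, the intersection $F_\infty=\bigcap_m F_m=F_N$ is dense in $F_0=F$; since $F_N$ is closed and dense in $F$, we get $F_N=F$, and therefore $F_m=F$ for all $m$. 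In particular $F=F_m=F\cap E_m\subset E_m$ for every $m$, so $F\subset\bigcap_m E_m=E_\infty$, as claimed. (Alternatively, one could argue directly: if some basis vector $e_j$ failed to lie in $E_\infty$, there would be a largest $m$ with $e_j\in E_m$, contradicting that the induced filtration on $F$ is the constant one — but the stabilization argument via density is cleanest.)

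Finally, for the existence of a sc-complement of a finite-dimensional sc-subspace $F$: by the equivalence just proved, $F\subset E_\infty$. Choose a basis $e_1,\dots,e_k$ of $F$ and, using that $E_\infty$ is dense in $E_0^*$-approximations are not needed — instead pick functionals $\lambda_1,\dots,\lambda_k\in E_0^*$ dual to the basis, i.e. $\lambda_i(e_j)=\delta_{ij}$, by Hahn–Banach. Set $G=\bigcap_{i=1}^k\ker\lambda_i$, which is a closed algebraic complement of $F$ in $E_0$, and define $G_m=G\cap E_m$; equivalently $G_m=\bigcap_i\ker(\lambda_i|_{E_m})$. The projection $P\colon E_0\to E_0$, $Px=\sum_i\lambda_i(x)e_i$, has image $F\subset E_\infty$ and each $\lambda_i|_{E_m}$ is continuous, so $P$ restricts to a bounded projection $E_m\to E_m$ with $\ker(P|_{E_m})=G_m$ and $\mathrm{im}(P|_{E_m})=F_m=F$; this gives the topological direct sum $E_m=F_m\oplus G_m$ on every level. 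It remains to check $(G_m)$ is a sc-structure: the inclusion $G_{m+1}\to G_m$ is the restriction of the compact inclusion $E_{m+1}\to E_m$, hence compact, and density of $G_\infty$ in each $G_m$ follows by applying $\Id-P$ to a sequence in $E_\infty$ converging in $E_m$ to a given element of $G_m$ and using that $(\Id-P)E_\infty\subset G_\infty$. Thus $E=F\oplus G$ is a sc-splitting.

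The main obstacle is not any single hard step but making sure the "constant filtration" reasoning in the converse direction is airtight: one must use property (2) of the sc-structure on $F$ (density of $F_\infty$) together with finite-dimensionality, rather than merely asserting that finite-dimensional filtrations are constant. Once that is in place, the complement construction is a routine Hahn–Banach projection argument, with the only care being to verify the compactness and density axioms for $(G_m)$.
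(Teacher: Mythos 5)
Your proof is correct and takes essentially the same route as the paper: constant filtration in the forward direction, density of $F_\infty$ plus finite dimensionality (hence closedness) to force $F_\infty=F$ in the converse, and a Hahn--Banach dual-basis projection $P$ with $G=(\mathrm{id}-P)(E)$ for the complement, verifying compactness and density for $(G_m)$ exactly as the paper does. Your stabilization detour in the converse is a harmless repackaging; the paper simply observes directly that $F_\infty\subset F$ is finite-dimensional, hence closed, so density gives $F_\infty=F$.
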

\begin{proof}
If $F\subset E_\infty$, then $F_m:=F\cap E_m=F$ and $F$ is equipped with the constant sc-structure, so that $F$ is a sc-subspace of $E$. Conversely, 
if $F$ is a sc-subspace, then, by definition,  $F_\infty:=\bigcap_{m\geq 0}F_m=F\cap E_\infty$ is dense in $F$.  Consequently, $F=F_\infty\subset E_\infty$, since $F$ and $F_\infty$ are finite dimensional.  

Next,  let $e_1,\ldots , e_k$ be a basis for a finite-dimensional sc-subspace $F$. In view of the above discussion, $e_i\in E_\infty$. By the  Hahn-Banach theorem, the dual basis  can be extended to linear functionals $\lambda_1,\ldots ,\lambda_k$, which are continuous on $E$ and hence on $E_m$ for every $m$. The map $P\colon 
E\to E$,  defined by $P(x)=\sum_{1\leq i\leq k}\lambda_i(x)e_i$,  has its image in $F\subset E_\infty$. It induces a continuous map from $E_m$ to $E_m$,  and since $P\circ P=P$, it is a sc-projection. Introduce the closed subspace $G:=(\mathbbm{1}-P)(E)$ and let $G_m=G\cap E_m$. Then $E=F\oplus G$ and $G_{m+1}\subset G_m$.  The set $G_\infty:=\bigcap_{m\geq 0}G_m=G\cap E_\infty$ is also dense in $G_m$. Indeed, if $g\in G_m$, we find a sequence $f_n\in E_\infty$, such that $f_n\to g$ in $E_m$. Setting $g_n:=(\mathbbm{1}-P)(f_n)\in G_\infty$ we have $g_n\to (\mathbbm{1}-P)(g)=g$, as claimed.  Consequently, $(G_m)_{m\geq 0}$ defines a sc-structure on $G$ and the proof of the proposition is complete.
\end{proof}

Next we describe  the quotient construction in the sc-framework. 
\begin{proposition}\label{thm_quotient}\index{P- Sc-quotients}
Assume that $E$ is a sc-Banach space and $A\subset E$ a sc-subspace. Then $E/A$ equipped with the 
filtration $E_m/A_m$ is a sc-Banach space. Note that $E_m/A_m= \{ (x+ A)\cap E_m \,  \vert \, x\in E\}$.
\end{proposition}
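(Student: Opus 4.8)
The plan is to check the three defining properties of a sc-structure (Definition \ref{sc-structure}) for $E/A$ with the prescribed levels $(E/A)_m := E_m/A_m$: that each level is a Banach space, that these levels form a decreasing chain with compact inclusion maps, and that the smooth level is dense in every level.

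First I would record the elementary facts underlying the construction. Because $A$ is a sc-subspace, $A_m = A\cap E_m$ is closed in $E_m$ --- it is the preimage of the closed set $A\subset E$ under the bounded inclusion $E_m\hookrightarrow E$ --- so $E_m/A_m$ carries a quotient norm making it a Banach space; write $\pi_m\colon E_m\to E_m/A_m$ for the (open, continuous, surjective) quotient projection. Next I would verify the displayed identity: for $x\in E_m$ we have $x+A_m=(x+A)\cap E_m$, since for such $x$ one has $x+a\in E_m\iff a\in E_m$; and every nonempty set $(x+A)\cap E_m$ with $x\in E$ arises this way (replace $x$ by $x+a_0$ for any $a_0\in A$ with $x+a_0\in E_m$). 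A consequence I would extract immediately is injectivity of the natural maps $E_{m+1}/A_{m+1}\to E_m/A_m$: if $x,x'\in E_{m+1}$ with $x-x'\in A_m$, then $x-x'\in A\cap E_{m+1}=A_{m+1}$. This identifies $E_{m+1}/A_{m+1}$ with a subset of $E_m/A_m$ and exhibits $(E_m/A_m)_{m\ge 0}$ as a decreasing sequence with $E_0/A_0=E/A$.

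Continuity of each inclusion $E_m/A_m\hookrightarrow E_k/A_k$ for $m\ge k$ is then routine: the composite $E_m\hookrightarrow E_k\xrightarrow{\pi_k}E_k/A_k$ is bounded and annihilates $A_m$, hence descends to a bounded map on $E_m/A_m$, which is exactly the inclusion. The step I expect to need the most care --- and the only point where property (1) of the ambient sc-structure is used --- is compactness of $E_{m+1}/A_{m+1}\hookrightarrow E_m/A_m$. I would take a bounded sequence $(\xi_n)$ in $E_{m+1}/A_{m+1}$, use the infimum definition of the quotient norm to lift it to a sequence $(x_n)$ in $E_{m+1}$ with $\|x_n\|_{E_{m+1}}\le\|\xi_n\|+1$, invoke compactness of $E_{m+1}\hookrightarrow E_m$ to pass to a subsequence with $x_{n_j}\to x$ in $E_m$, and then conclude that the images of $\xi_{n_j}$ in $E_m/A_m$, which are $\pi_m(x_{n_j})$, converge to $\pi_m(x)$ by continuity of $\pi_m$.

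Finally, for density of the smooth level I would argue as follows. Since $E_\infty$ is dense in $E_m$ and $\pi_m$ is a continuous surjection, $\pi_m(E_\infty)$ is dense in $E_m/A_m$. For $x\in E_\infty$ and any index $k$, the coset $\pi_m(x)=x+A_m$ lies in the image of $E_k/A_k$ inside $E_m/A_m$: when $k\le m$ this is because $E_m/A_m\subset E_k/A_k$, and when $k\ge m$ one uses the representative $x\in E_\infty\subset E_k$. Hence $\pi_m(E_\infty)\subset\bigcap_{k\ge 0}E_k/A_k=(E/A)_\infty$, so $(E/A)_\infty$ is dense in every level $E_m/A_m$. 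This completes the verification; no single step is hard, but the lifting argument for compactness and the bookkeeping showing that each $E_m/A_m$ is genuinely a subspace of $E/A$ are the points I would write out carefully.
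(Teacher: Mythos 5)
Your proof is correct and follows essentially the same route as the paper's: closedness of $A_m$ in $E_m$, the lift-and-use-compact-embedding argument for compactness of the inclusion maps, and density of $(E/A)_\infty$ via density of $E_\infty$ and continuity of the quotient projection. You supply a few details the paper elides --- the explicit verification of the displayed identity, the injectivity of $E_{m+1}/A_{m+1}\hookrightarrow E_m/A_m$, and the continuity of the inclusion maps --- but the core argument coincides with the one in the text.
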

\begin{proof}
By definition of a sc-subspace,  the filtration on $A$ is given by $A_m=A\cap E_m$ and $A_m\subset E_m$ is a closed subspace.
Hence 
$$
E_m/A_m = E_m/(A\cap E_m) = \{ (x + A)\cap E_m\, \vert \, x\in E_m\}.
$$
We identify an element $x+A_m$ in $E_m/A_m$ with the  element $x+A$ of  $E/A$, so that algebraically we can view
$E_m/A_m\subset E/A$.
The inclusion $E_{m+1}\rightarrow E_m$ is compact,  implying that the quotient map  $E_{m+1}\rightarrow E_m/A_m$ is compact.  We claim that the inclusion map 
$E_{m+1}/A_{m+1}\rightarrow E_m/A_m$ is compact. In order to show this, we take a sequence $(x_k+A_{m+1})\subset E_{m+1}/A_{m+1}$ satisfying 
$$
\norm{x_k+A_{m+1}}_{m+1}:=\inf \{ \abs{x+a|_{m+1}}\, \vert \, a\in A_{m+1}\}\leq 1, 
$$
and choose a sequence $(a_{k})$ in $A_{m+1}$ such that  $\abs{x_k+a_k}_{m+1}\leq 2$. 
We may assume, after taking a subsequence,  that $x_k+a_k\rightarrow x\in E_{m}$. 
The image of the sequence $(x_k+A_{m+1})$
under the inclusion $E_{m+1}/A_{m+1}\rightarrow E_m/A_m$ is the sequence $(x_k+a_k+A_m)$. Then
\begin{equation*}
\begin{split}
&\norm{(x_k+a_k+A_{m})-(x+A_m)}_m\\
&\quad =\norm{(x_k+a_k-x)+A_m}_m\\
&\quad =\inf\{\abs{x_k+a_k-x)+a}_m\, \vert \,  a\in A_m\}\\
&\quad\leq \abs{x_k+a_k-x}_m\to  0,
\end{split}
\end{equation*}
showing  that  the inclusion  $E_{m+1}/A_{m+1}\rightarrow E_m/A_m$ is compact. Finally let us show that $\bigcap_{j\geq 0} (E_j/A_j)$ is dense in every $E_m/A_m$.
Let us first note that $(E/A)_\infty:=\bigcap_{j\geq 0} (E_j/A_j)$ consists of all elements of the form $x+A_\infty$ with $x\in E_\infty$.
Since $E_\infty$ is dense in $E_m$,  the image under the continuous quotient map $E_m\rightarrow E_m/A_m$ is dense. This completes the proof of 
Proposition \ref{thm_quotient} 
\end{proof}

A distinguished class of sc-operators is the class of sc-Fredholm operators.
\begin{definition}
A sc-operator $T\colon 
E\rightarrow F$  between sc-Banach spaces is called  {\bf sc-Fredholm}\index{D- Sc-Fredholm operator},  provided there exist sc-splittings $E=K\oplus X$ and $F=C\oplus Y$, having the following properties.
\begin{itemize}
\item[(1)] $K$ is the kernel of $T$ and is finite-dimensional.
\item[(2)] $C$ is finite-dimensional and  $Y$ is the image of $T$.
\item[(3)] $T\colon 
X\rightarrow Y$ is a sc-isomorphism.
\end{itemize}
\end{definition}
In view of Proposition \ref{prop1}  the definition implies that the kernel $K$ consists of smooth points and $T(X_m)=Y_m$ for all $m\geq 0$.  In particular,  we have the topological direct sums
$$
E_m=K\oplus X_m\quad  \text{and}\quad \ F=C\oplus T(E_m).
$$
The index of a sc-Fredholm operator $T$,  denoted by $\ind(T)$, is  as usual defined  by
$$
\ind (T)=\dim(K)-\dim(C).\index{$\ind (T)$}
$$
Sc-Fredholm operators have the following regularizing property.
\begin{proposition}\label{regular}\index{P- Regularizing property}
A sc-Fredholm operator  $T\colon 
E\rightarrow F$ is regularizing, i.e. if $e\in E$ satisfies $T(e)\in F_m$,  then $e\in E_m$.
\end{proposition}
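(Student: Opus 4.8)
The plan is to reduce the regularity of $e$ to the sc-isomorphism $T\colon X\to Y$ furnished by the definition, exploiting that all the splittings involved are sc-splittings and hence respect the filtrations level by level. Fix sc-splittings $E=K\oplus X$ and $F=C\oplus Y$ as in the definition of a sc-Fredholm operator, where $K=\ker T$ and $Y$ is the image of $T$. Since $K$ and $C$ are finite-dimensional sc-subspaces, Proposition \ref{prop1} gives $K\subset E_\infty$ and $C\subset F_\infty$, so on every level $E_m=K\oplus X_m$ and $F_m=C\oplus Y_m$ as topological direct sums, with the $K$- and $C$-summands unchanged.

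Now suppose $e\in E$ satisfies $T(e)\in F_m$. First I would write $e=k+x$ with $k\in K$ and $x\in X=X_0$; because $T(k)=0$ this gives $T(x)=T(e)\in F_m$. The main step is to promote this to $T(x)\in Y_m$. Using the levelwise splitting $F_m=C\oplus Y_m$, write $T(x)=c+y$ with $c\in C$ and $y\in Y_m$. Since $T(x)\in Y$ and $y\in Y_m\subset Y$, the element $c=T(x)-y$ lies in $Y$; as $F=C\oplus Y$ is a direct sum, $c\in C\cap Y=\{0\}$, hence $T(x)=y\in Y_m$. Finally, the inverse of the sc-isomorphism $T\colon X\to Y$ is a sc-operator, so it maps $Y_m$ into $X_m$; applying it to $T(x)\in Y_m$ yields $x\in X_m$. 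Together with $k\in K\subset E_\infty\subset E_m$ this gives $e=k+x\in E_m$, as claimed.

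The only point that requires care — and which I regard as the crux — is the middle step: a priori $T(x)$ is known only to lie in $F_m$, not in the a posteriori smaller set $Y_m=Y\cap F_m$, and it is precisely the compatibility of the decomposition $F=C\oplus Y$ with the filtration, together with $C\subset F_\infty$, that upgrades ``$T(x)\in F_m$'' to ``$T(x)\in Y_m$''. Everything else is formal bookkeeping with the splittings.
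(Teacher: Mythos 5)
Your proof is correct and rests on the same pivot as the paper's: the levelwise sc-splitting $F_m=C\oplus Y_m$ together with $C\cap Y=\{0\}$ forces the $C$-component to vanish, after which the sc-isomorphism $T\colon X\to Y$ delivers the regularity. The only organizational difference is that you decompose $e=k+x$ in $E$ first and then promote $T(x)$, whereas the paper decomposes $T(e)$ in $F_m$ first and then shows $e-x\in K$; this is a cosmetic reordering, not a different argument.
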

\begin{proof}
By assumption,  $T(e)\in F_m=C\oplus T(X_m)$, so that $T(e)=T(x)+c$ for some $x\in X_m$ and $c\in C$.
Then $T(e-x)=c$ and $e-x\in E_0$. Since $T(E_0)\oplus C=F_0$ it follows that $c=0$ and $e-x\in  K\subset E_\infty$, implying that $x$ and $e$ are on the same level $m$. 
\end{proof}

The following stability result will be crucial later on. The proof, reproduced in Appendix \ref{A1.0},  is taken from \cite{HWZ2},  Proposition 2.1.
\begin{proposition}[Compact Perturbation]\label{prop1.21}\index{P- sc$^+$-perturbations}
Let $E$ and $F$ be sc-Banach spaces. If $T\colon 
E\rightarrow F$ is a
sc-Fredholm operator and $S\colon 
E\rightarrow F$ a sc$^+$-operator, then
$T+S$ is also a  sc-Fredholm operator.
\end{proposition}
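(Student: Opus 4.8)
The plan is to verify the three defining properties of an sc-Fredholm operator for $T+S$ directly, reusing the sc-splittings $E=K\oplus X$ and $F=C\oplus Y$ that come with $T$. Since these are sc-splittings, on every scale $m$ one has topological direct sums $E_m=K\oplus X_m$ and $F_m=C\oplus Y_m$ together with an isomorphism $T\colon X_m\to Y_m$; hence $T\colon E_m\to F_m$ is Fredholm of index $\dim K-\dim C$, the same on every scale. Because $S(E_m)\subset F_{m+1}$ and $F_{m+1}\hookrightarrow F_m$ is compact, $S\colon E_m\to F_m$ is compact, so $T+S\colon E_m\to F_m$ is Fredholm with index $\ind T$ on every scale. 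Everything left to do is scale-theoretic: show that no regularity is lost when passing to the kernel and the image of $T+S$, and assemble the sc-splittings.

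For the kernel, if $(T+S)e=0$ with $e\in E_0$ then $Te=-Se\in F_1$, so $e\in E_1$ by the regularizing property (Proposition \ref{regular}); iterating, $Te=-Se\in F_{m+1}$ forces $e\in E_{m+1}$, hence $e\in E_\infty$. Thus $\ker(T+S|_{E_m})$ is one and the same finite-dimensional subspace $K'\subset E_\infty$ for every $m$, and by Proposition \ref{prop1} it is an sc-subspace admitting an sc-complement $X'$, giving the sc-splitting $E=K'\oplus X'$ with $E_m=K'\oplus X'_m$.

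For the image, put $Z_m:=(T+S)(E_m)=(T+S)(X'_m)$, a closed subspace of $F_m$ of some fixed finite codimension $n$. The crucial point is that $Z_m=Z_0\cap F_m$: if $y\in Z_m$ also lies in $F_{m+1}$, write $y=(T+S)e$ with $e\in X'_m$; then $Te=y-Se\in F_{m+1}$, so $e\in E_{m+1}$ by Proposition \ref{regular}, whence $y\in Z_{m+1}$, the reverse inclusion being trivial. It follows that the filtration $(Z_m)$ on $Z:=Z_0$ is the one induced from $F$, that the inclusions $Z_{m+1}\hookrightarrow Z_m$ are compact, and that $Z_\infty=(T+S)(X'_\infty)$ is dense in each $Z_m$ (since $X'_\infty$ is dense in $X'_m$), so $Z$ is a sc-subspace of $F$. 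Each $T+S\colon X'_m\to Z_m$ is a continuous bijection of Banach spaces, hence an isomorphism, and these inverses are compatible in $m$ by uniqueness, so $T+S\colon X'\to Z$ is a sc-isomorphism. Finally, since $F_\infty$ is dense in $F_0$, its image in the $n$-dimensional space $F_0/Z_0$ is a dense linear subspace, hence all of it; choosing $w_1,\dots,w_n\in F_\infty$ whose classes form a basis of $F_0/Z_0$ and setting $C':=\Span\{w_1,\dots,w_n\}\subset F_\infty$ yields $F_0=C'\oplus Z_0$, and then $F_m=C'\oplus Z_m$ for all $m$ (because $C'\subset F_\infty\subset F_m$ and $Z_m=Z_0\cap F_m$), these sums being topological as $C'$ is finite-dimensional and $Z_m$ closed. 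Thus $F=C'\oplus Z$ is a sc-splitting, and together with $E=K'\oplus X'$ and the sc-isomorphism $T+S\colon X'\to Z$ this exhibits $T+S$ as sc-Fredholm with $\ind(T+S)=\dim K'-n=\ind T$.

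The main obstacle is precisely the two bootstrapping arguments — smoothness of $\ker(T+S)$ and the identity $Z_m=Z_0\cap F_m$ — since these are the steps where the perturbation could in principle destroy the scale structure; both are resolved by feeding the relation $Te=(T+S)e-Se$ into the regularizing property of the unperturbed operator $T$, using that $S$ gains one scale. The remaining ingredients (stability of Fredholmness under compact perturbations on each fixed scale, the open mapping theorem, density of smooth points) are standard. An alternative route builds a scale parametrix $R$ for $T$ out of $(T|_X)^{-1}$ and the finite-rank sc-projections, observes that $R(T+S)-\mathbbm{1}$ and $(T+S)R-\mathbbm{1}$ are sc$^+$-operators, and reduces to the special case $T=\mathbbm{1}$; but it needs the same regularizing input and a little extra bookkeeping to pass from the composites back to $T+S$, so the direct approach above seems cleaner.
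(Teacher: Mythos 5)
Your proof is correct and follows essentially the same strategy as the paper's (Appendix A1.0): use compactness of $S$ levelwise to get Fredholmness on each scale, use the regularizing property of $T$ together with the gain of one level from $S$ to bootstrap the kernel into $E_\infty$ and to show the image filtration agrees with the one induced from $F$, and then pick a smooth finite-dimensional complement of the image using density of $F_\infty$. Your version is somewhat more explicit about the identity $Z_m=Z_0\cap F_m$ and about checking that $T+S\colon X'\to Z$ is an sc-isomorphism, both of which the paper leaves largely implicit, but the underlying argument is the same.
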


The next concept will be crucial later on for the definition of boundaries.

\begin{definition}\label{partial_quadrant}
A {\bf partial quadrant} \index{D- Partial Quadrant}  
in a sc-Banach space $E$  is  a closed convex subset $C$ of $E$, 
such that  there exists a sc-isomorphism $T\colon 
E\rightarrow {\mathbb R}^n\oplus W$ satisfying   $T(C)=[0,\infty)^n\oplus W$.
\end{definition}

We now consider tuples $(U,C,E)$\index{$(U,C,E)$}, in which  $U$ is a relatively open subset of the partial quadrant $C$ in the sc-Banach space $E$.  
\begin{definition}\label{sc_continuous}
If  $(U, C, E)$ and $(U', C', E')$ are two such  tuples, then a  map $f\colon 
U\rightarrow U'$ 
is called {\bf  $\ssc^0$}  (or of {\bf class $\ssc^0$}, or {\bf sc-continuous}),  provided $f(U_m)\subset V_m$ and the induced maps $f\colon 
U_m\rightarrow V_m$ are  continuous for all $m\geq 0$. 
\end{definition}

\begin{definition}\label{sc-tangent}\index{D- Tangent tuple}
The tangent $T(U,C,E)$ of the tuple  $(U,C,E)$ is defined as the tuple 
$$
T(U,C,E)=(TU,TC,TE)\index{$T(U,C,E)$}
 $$where
$$
TU=U^1\oplus E,\quad  TC=C^1\oplus E, \quad  \text{and}\quad  TE=E^1\oplus E.
$$
\end{definition}
Note that $T(U,C,E)$ is a tuple  consisting again  of a relatively open subset $TU$ in the  partial quadrant $TC$ in  the sc-Banach space $TE$.

Sc-differentiability is a  new notion of differentiability in sc-Banach spaces, which is considerably weaker than
the familiar  notion of Fr\'echet differentiability. The new notion of differentiability is the following.

\begin{definition}\label{scx}\index{D- Sc-differentiability}
We consider  two tuples $(U,C,E)$ and $(U',C',E')$  and a map $f\colon U\to U'$. The map $f$ is called $\pmb{\ssc^{1}}$ (or of {\bf class} $\pmb{\ssc^{1}}$)  provided
the following conditions are satisfied.
\begin{itemize}
\item[(1)]  The map $f$ is $\ssc^0$.
\item[(2)] For every $x\in U_1$ there exists a bounded linear operator $Df(x)\colon E_0\to F_0$ such  that
for $h\in E_1$ satisfying  $x+h\in U_1$,
$$
\lim_{\abs{h}_1\rightarrow 0} \frac{ \abs{f(x+h)-f(x)-Df(x)h}_0}{\abs{h}_1}=0.
$$
\item[(3)] The map $Tf\colon TU\to TU'$,  defined by 
$$Tf(x,h)=(f(x),Df(x)h), \quad \text{$x\in U^1$ and $h\in E$}, $$
 is a  $\ssc^0$-map.
The map $Tf\colon TU\to TU'$ is called the {\bf tangent map} of $f$.
\end{itemize}

\end{definition}

In general,  the map $U_1\to   \mathscr{L}(E_0,F_0)$, defined by $x\to  Df(x)$,  will not(!) be continuous, if the space of bounded linear operators is equipped with the operator norm. However,  if we equip it with the compact open topology it will be continuous. The $\ssc^1$-maps between finite dimensional Banach spaces are the familiar $C^1$-maps.

Proceeding inductively,  we define  what it means for  the map $f$  to be  $\ssc^k$ or $\ssc^\infty$. Namely, a $\ssc^0$--map $f$ is said to be a $\ssc^2$--map, if  it is $\ssc^1$ and if  its tangent map $Tf\colon 
TU\to TV$ is  $\ssc^1$. By Definition \ref{scx} and Definition \ref{sc-tangent},  the  tangent map of $Tf$,
$$T^2f\colon 
=T(Tf)\colon 
T^2(U)=T(TU)\to T^2(V)=T(TV),$$
is of class $\ssc^0$.
If the tangent map $T^2f$  is $\ssc^1$, then $f$ is said to be $\ssc^3$,  and so on. The map $f$ is 
{\bf $\ssc^{\pmb{\infty}}$} or {\bf $\ssc$-smooth}, if it is $\ssc^k$ for all $k\geq 0$.
\begin{remark}
The above consideration can be generalized. Instead of taking a partial quadrant $C$ one might take a closed convex partial cone $P$ with nonempty interior. This means $P\subset E$ is a closed subset, so that for a real number $\lambda\geq 0$ it holds $\lambda\cdot P\subset P$. Moreover $P$ is convex and has a nonempty interior. If then $U$ is an open subset of $P$ we can define the notion of being sc$^1$ as in the previous definition. The tangent map $Tf$ is then defined on $TP=P^1\oplus E$. We note that $TP$ is a closed cone with nonempty interior. In this context one should be able to deal with 
sc-Fedholm problems, on M-polyfolds with `polytopal' boundaries and even more general situations.
Many of the results in this book can be carried over to this generality as well, but one should check carefully which arguments carry over.
\end{remark}

\subsection{Properties of Sc-Differentiability}

In this section we  shall discuss the relationship between the classical smoothness in the Fr\'echet sense and the sc-smoothness. The proofs of the following results can be found  in  \cite{HWZ8.7}.

\begin{proposition}[{\bf Proposition 2.1, \cite{HWZ8.7}}] \label{x1}\index{P- Sc-differentiability}
Let $U$ be a relatively open subset of a partial  quadrant in  a sc-Banach space $E$ and let $F$ be another $\ssc$-Banach space. 
Then a  $\ssc^0$-map $f\colon 
U\to F$ is of class $\ssc^1$  if and only if  the following conditions hold true.
\begin{itemize}\label{sc-100}
\item[\em(1)] For every $m\geq 1$,  the induced map
$$
f\colon 
U_m\to  F_{m-1}
$$
is of class $C^1$. In particular,  the derivative  
$$
df\colon 
 U_m\to  \mathscr{L}(E_m,F_{m-1}),\quad x\mapsto df(x)
$$
is a  continuous map.
\item[\em(2)] For every  $m\geq 1$ and every $x\in U_m$,  the bounded  linear operator
$df(x)\colon 
 E_m\to F_{m-1}$ has an extension to a bounded  linear operator $Df(x)\colon 
E_{m-1}\to F_{m-1}$. In addition, the map
\begin{equation*}
U_m\oplus  E_{m-1}\to  F_{m-1}, \quad  (x,h)\mapsto  Df(x)h
\end{equation*}
is continuous.
\end{itemize}
\end{proposition}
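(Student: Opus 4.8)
The statement is an equivalence, so I would prove the two implications separately, and in each direction the work is essentially bookkeeping with the definitions, exploiting the compact-embedding property only lightly. For the direction $\ssc^1\Rightarrow(1),(2)$: suppose $f$ is $\ssc^1$. The hypothesis provides, for each $x\in U_1$, a bounded operator $Df(x)\colon E_0\to F_0$, and the $\ssc^0$-property of the tangent map $Tf(x,h)=(f(x),Df(x)h)$ on $TU=U^1\oplus E$ says precisely that $(x,h)\mapsto Df(x)h$ is continuous as a map $U_{m+1}\oplus E_m\to F_m$ for every $m\geq 0$; reindexing $m\mapsto m-1$ gives the continuity assertion in item (2) and, by restriction, the fact that $df(x):=Df(x)|_{E_m}$ maps into $F_{m-1}$ with a continuous operator-valued dependence on $x\in U_m$ in the strong (pointwise) sense. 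The remaining content of (1) is that the difference quotient converges \emph{in the $F_{m-1}$-norm} for $x,h\in E_m$; here I would run the standard mean-value estimate
\[
f(x+h)-f(x)-df(x)h=\int_0^1\big(Df(x+th)-Df(x)\big)h\,dt,
\]
noting that the integrand is controlled using the joint continuity of $(x,h)\mapsto Df(x)h$ on $U_m\oplus E_{m-1}$ together with the compact inclusion $E_m\hookrightarrow E_{m-1}$, which is what upgrades strong continuity of $x\mapsto Df(x)$ to the uniform control on compact sets needed to make the integral $o(|h|_m)$ in $F_{m-1}$. That the map $f\colon U_m\to F_{m-1}$ is already $\ssc^0$ (indeed continuous $U_m\to F_m\subset F_{m-1}$) is immediate from $f$ being $\ssc^0$.

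For the converse, assume (1) and (2). I first check condition (2) of Definition \ref{scx}: for $x\in U_1$ set $Df(x)\colon E_0\to F_0$ to be the extension provided by hypothesis (2) at level $m=1$; one must verify these extensions are consistent across levels, i.e.\ that the extension $E_{m-1}\to F_{m-1}$ obtained at level $m$ restricts to $df(x)$ on $E_m$ and agrees with the others on overlaps — this follows because all of them agree with the classical Fr\'echet derivative $df(x)$ on the dense subspace $E_\infty$, hence everywhere by continuity. The limit condition
\[
\lim_{|h|_1\to 0}\frac{|f(x+h)-f(x)-Df(x)h|_0}{|h|_1}=0
\]
for $x\in U_1$, $h\in E_1$, $x+h\in U_1$ then comes directly from the $C^1$-property of $f\colon U_1\to F_0$ in hypothesis (1). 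Finally I must show $Tf\colon TU\to TU'$, $(x,h)\mapsto(f(x),Df(x)h)$, is $\ssc^0$: the first component is $\ssc^0$ by assumption, and for the second I need $(x,h)\mapsto Df(x)h$ continuous $U_{m+1}\oplus E_m\to F_m$, which is exactly the continuity statement in hypothesis (2) after the shift $m\mapsto m+1$.

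**Where the difficulty lies.** The routine parts are the compatibility of the various extensions $Df(x)$ across levels (handled by density of $E_\infty$) and the translation between the $(TU,TC,TE)$ language and the level-wise continuity statements (pure reindexing). The one genuinely analytic point is, in the forward direction, deducing the \emph{norm} convergence of the difference quotient on level $m$ into $F_{m-1}$: the $\ssc^1$-hypothesis only gives norm convergence at the bottom level and otherwise only the continuity of $(x,h)\mapsto Df(x)h$, so one has to use the compactness of $E_m\hookrightarrow E_{m-1}$ to pass from pointwise/strong operator continuity to the equicontinuity on precompact sets that makes the mean-value integral small. I expect this compactness-driven step — essentially an Arzel\`a–Ascoli-type argument combined with the integral form of the remainder — to be the heart of the proof; everything else is formal manipulation of the sc-definitions. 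Since the proposition is quoted from \cite{HWZ8.7}, I would present this as a proof sketch and refer there for the full details of that estimate.
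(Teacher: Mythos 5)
The paper does not actually prove Proposition \ref{x1}; it is cited from \cite{HWZ8.7} with the explicit remark that proofs of Propositions 2.1--2.5 can be found there, so there is no in-text proof for me to compare your argument against. Judged on its own merits, your sketch is sound and is essentially the natural argument.

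Two small remarks to tighten it. First, in the forward direction, before you can invoke the mean-value integral
$$f(x+h)-f(x)-df(x)h=\int_0^1\bigl(Df(x+th)-Df(x)\bigr)h\,dt$$
you need to know that the left-hand side can be expressed in integral form at all. That step is justified by the base-level differentiability built into Definition \ref{scx}: for $x\in U_1$ and $h\in E_1$, the function $t\mapsto f(x+th)$ is differentiable with derivative $t\mapsto Df(x+th)h$, and this integrand is continuous into $F_{m-1}$ by the $\ssc^0$-property of $Tf$ (i.e.\ by condition (2), which you established first). The identity then holds in $F_0$ and hence, by uniqueness of the Bochner integral under the continuous inclusion $F_{m-1}\hookrightarrow F_0$, also in $F_{m-1}$. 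You implicitly assume this but it is worth stating. Second, the compactness argument does two jobs at once — it shows the remainder is $o(|h|_m)$ and it shows that $x\mapsto df(x)\in\mathscr{L}(E_m,F_{m-1})$ is continuous in operator norm (otherwise $f\colon U_m\to F_{m-1}$ would merely be differentiable, not $C^1$); both reduce to the same statement: joint continuity of $(x,h)\mapsto Df(x)h$ on $U_m\oplus E_{m-1}$ plus precompactness of the $E_m$-unit ball in $E_{m-1}$ forces norm-continuity of $x\mapsto Df(x)|_{E_m}$. Your converse direction is routine and correct, and your consistency-by-density argument for the extensions $Df(x)$ is exactly right.
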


In particular, if $x\in U_\infty$ is  a smooth point in $U$  and $f\colon 
U\to F$ is a $\ssc^1$-map, then the linearization 
$$Df(x)\colon E\to F$$
is a sc-operator.

A  consequence of Proposition \ref{x1} is the following result about lifting the indices.
\begin{proposition}[{\bf Proposition 2.2, \cite{HWZ8.7}}] \label{sc_up}\index{P- Sc-differentiability under lifts}
Let  $U$  and $V$ be  relatively open subsets of partial quadrants in sc-Banach spaces, and let $f\colon 
U\rightarrow V$ be $\ssc^k$. 
Then $f\colon 
U^1\rightarrow V^1$ is also $\ssc^k$.
\end{proposition}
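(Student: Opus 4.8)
The plan is to prove Proposition \ref{sc_up} by induction on $k$, using Proposition \ref{x1} as the workhorse in the base case and the functoriality of the tangent construction for the inductive step. The statement says: if $f\colon U\to V$ is $\ssc^k$, then the shifted map $f\colon U^1\to V^1$ is again $\ssc^k$. Recall that by convention $U^1$ carries the filtration $(U^1)_m = U_{m+1}$, and correspondingly for the ambient sc-Banach spaces $E^1$, $F^1$.

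First I would do the base case $k=1$. Apply the characterization of Proposition \ref{x1}: since $f\colon U\to V$ is $\ssc^1$, for every $m\geq 1$ the induced map $f\colon U_m\to F_{m-1}$ is of class $C^1$, and for every $m\geq 1$ and $x\in U_m$ the derivative $df(x)\colon E_m\to F_{m-1}$ extends to a bounded operator $Df(x)\colon E_{m-1}\to F_{m-1}$ with $(x,h)\mapsto Df(x)h$ continuous on $U_m\oplus E_{m-1}$. Now I must verify the two conditions of Proposition \ref{x1} for $f\colon U^1\to F^1$, i.e. with every index shifted up by one. Condition (1) for the shifted map at level $m\geq 1$ asks that $f\colon (U^1)_m = U_{m+1}\to (F^1)_{m-1}=F_m$ be $C^1$ — but this is exactly the original condition (1) at level $m+1\geq 2$, hence already known. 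Condition (2) for the shifted map at level $m\geq 1$ and $x\in U_{m+1}$ asks that $df(x)\colon E_{m+1}\to F_m$ extend to $(E^1)_{m-1}=E_m\to F_m$ with $(x,h)\mapsto Df(x)h$ continuous on $U_{m+1}\oplus E_m$ — again exactly the original condition (2) at level $m+1\geq 2$. So the base case is just a reindexing; there is essentially nothing to prove beyond unwinding the definitions, and the $\ssc^0$ hypothesis transfers trivially since $f(U_{m+1})\subset V_{m+1}$ with continuous induced maps.

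For the inductive step, suppose the proposition holds for $k-1$ (for all $U$, $V$, $f$) and let $f\colon U\to V$ be $\ssc^k$. By definition $f$ is $\ssc^1$, and its tangent map $Tf\colon TU\to TV$ is $\ssc^{k-1}$, where $TU = U^1\oplus E$ sits in $TE = E^1\oplus E$ and similarly for $TV$. The key observation is that the tangent bundle of the shifted tuple is naturally a shift of a tangent: one has $T(U^1) = (U^1)^1\oplus E^1 = U^2\oplus E^1$, while $(TU)^1 = (U^1\oplus E)^1 = U^2\oplus E^1$ as well, so $T(U^1) = (TU)^1$ as tuples, and under this identification $T(f\colon U^1\to V^1)$ is precisely $(Tf)\colon (TU)^1\to (TV)^1$. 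Since $Tf$ is $\ssc^{k-1}$, the inductive hypothesis applied to $Tf$ gives that $(Tf)\colon (TU)^1\to (TV)^1$ is $\ssc^{k-1}$, i.e. $T(f\colon U^1\to V^1)$ is $\ssc^{k-1}$. Combined with the base case — which shows $f\colon U^1\to V^1$ is $\ssc^1$ — this is exactly the statement that $f\colon U^1\to V^1$ is $\ssc^k$. For $k=\infty$ one simply notes the $\ssc^k$ conclusion holds for every finite $k$.

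The main obstacle, such as it is, lies entirely in the base case: one must be careful that the extension operator $Df(x)$ provided by Proposition \ref{x1} at the shifted levels is literally the restriction/corestriction of the operators already furnished at the original levels, and that the joint continuity statement on $U_{m+1}\oplus E_m$ is the one already in hand — there is a genuine risk of an off-by-one error in the index bookkeeping (for instance, at the bottom level $m=0$ of the shifted problem there is no condition, consistent with $m=1$ being the bottom nontrivial level of the original). A second minor point to check explicitly is the identification $T(U^1)=(TU)^1$ together with the claim that the two descriptions of the tangent map agree; this is immediate from the formula $Tf(x,h)=(f(x),Df(x)h)$ and the fact that the same operators $Df(x)$ serve both the shifted map and the original one, but it deserves a sentence. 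Neither point involves any estimate; the whole proof is a matter of organizing the definitions correctly, which is why routing through Proposition \ref{x1} rather than the original Definition \ref{scx} is the efficient choice.
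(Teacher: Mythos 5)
Your argument is correct, and since the paper defers the proof to~\cite{HWZ8.7}, I'll assess it on its own. The two ingredients—reindexing the characterization of Proposition~\ref{x1} for the base case, and the identity $T(U^1)=(TU)^1$ together with $T\bigl(f\colon U^1\to V^1\bigr)=(Tf)\colon (TU)^1\to (TV)^1$ for the inductive step—are precisely what one needs, and your bookkeeping checks out: the shifted condition (1) at level $m$ is the original condition (1) at level $m+1$, the shifted condition (2) at level $m$ is the original condition (2) at level $m+1$ (with $Df(x)\colon E_m\to F_m$ taken from the original structure), and both $T(U^1)$ and $(TU)^1$ carry the filtration $U_{m+2}\oplus E_{m+1}$. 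This is the standard and, as far as I can see, the only natural route; it is almost certainly the same argument as in~\cite{HWZ8.7}.
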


\begin{proposition}[{\bf Proposition 2.3, \cite{HWZ8.7}}] \label{lower}\index{P- Sc-smoothness versus $C^k$}
Let $U$ and $V$ be relatively open subsets of partial quadrants in sc-Banach spaces.
If $f\colon 
U\to  V$ is $\ssc^k$, then for every $m\geq 0$,  the map $f\colon 
U_{m+k}\to  V_m$ is of class $C^k$.  Moreover,  $f\colon 
U_{m+l}\to  V_m$ is of class $C^l$ for every $0\leq l\leq k$.
\end{proposition}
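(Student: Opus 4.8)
The plan is to bootstrap from Proposition~\ref{x1}, which already handles the case $k=1$ (giving $f\colon U_{m+1}\to V_m$ of class $C^1$), and to induct on $k$ using the characterization of $\ssc^k$-maps via tangent maps together with Proposition~\ref{sc_up}. First I would recall that $f$ being $\ssc^k$ means, by definition, that $f$ is $\ssc^1$ and $Tf\colon TU\to TV$ is $\ssc^{k-1}$. Since $TU=U^1\oplus E$ and $TV=V^1\oplus E$, and the sc-structure on $TU$ has levels $(TU)_m=U_{m+1}\oplus E_m$, the inductive hypothesis applied to $Tf$ says that $Tf\colon (TU)_{m+k-1}\to (TV)_m$ is of class $C^{k-1}$ for all $m\geq 0$; that is, the map $U_{m+k}\oplus E_{m+k-1}\to V_{m+1}\oplus E_m$, $(x,h)\mapsto (f(x),Df(x)h)$, is $C^{k-1}$.

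Next I would extract from this the two desired conclusions. For the first component, the map $x\mapsto f(x)$ from $U_{m+k}$ into $V_{m+1}$ (hence into $V_m$) is $C^{k-1}$ as a by-product; but I need $C^k$ into $V_m$. Here the point is to combine this with the case $k=1$: Proposition~\ref{x1} applied to $f$ as a $\ssc^1$-map gives that $f\colon U_{m+1}\to V_m$ is $C^1$ with derivative $df(x)$ extending to $Df(x)\colon E_m\to V_m$, and $(x,h)\mapsto Df(x)h$ continuous on $U_{m+1}\oplus E_m$. Now the derivative of $f\colon U_{m+k}\to V_m$ at $x$ is exactly $Df(x)$ restricted to $E_{m+k}$, and $x\mapsto Df(x)$ viewed as a map $U_{m+k}\to \mathscr{L}(E_{m+k},V_m)$ factors through the $C^{k-1}$ map $(x,h)\mapsto Df(x)h$ (i.e.\ the second component of $T^{}f$ read at the appropriate levels). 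This shows $df\colon U_{m+k}\to\mathscr{L}(E_{m+k},V_m)$ is $C^{k-1}$, hence $f\colon U_{m+k}\to V_m$ is $C^k$. This is the crux of the argument and the step I expect to be the main obstacle: one must be careful about \emph{which} Banach-space levels the linear operator $Df(x)$ is being considered between, and that the continuity/differentiability of $x\mapsto Df(x)h$ in the \emph{joint} variables (which is what the tangent-map formalism delivers) really does upgrade to differentiability of $x\mapsto Df(x)$ in operator norm between the correct levels — this is precisely the content of Proposition~\ref{x1}(2) and must be invoked at each inductive stage rather than taken for granted.

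Finally, for the ``moreover'' clause, that $f\colon U_{m+l}\to V_m$ is $C^l$ for every $0\leq l\leq k$: once the main statement is proved for all $k$, this follows by applying the main statement with $k$ replaced by $l$, after noting that a $\ssc^k$-map is in particular $\ssc^l$ for $l\leq k$ (immediate from the inductive definition of $\ssc^k$ via iterated tangent maps), and then observing $U_{m+l}\supset U_{m+k}$ only in the sense of inclusion of the relevant index range — more precisely, apply the just-proved statement to the $\ssc^l$-map $f$ with the level shift $m$, giving $f\colon U_{m+l}\to V_m$ is $C^l$. I would also remark that the base case $k=0$ is trivial ($\ssc^0$ means exactly that each $f\colon U_m\to V_m$ is continuous), so the induction is well-founded, and that the regularity of domains (relatively open in a partial quadrant) plays no role beyond allowing the difference quotients in the definition of $\ssc^1$ to make sense, exactly as in Proposition~\ref{x1}.
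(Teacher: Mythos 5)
The overall strategy — induction on $k$ via the tangent map, combined with Proposition~\ref{x1} for the base case — is sound, and you correctly identify where the difficulty lies. But the way you resolve that difficulty has a genuine gap. You write that the upgrade from joint $C^{k-1}$-regularity of $(x,h)\mapsto Df(x)h$ to $C^{k-1}$-regularity of $x\mapsto Df(x)$ in operator norm ``is precisely the content of Proposition~\ref{x1}(2).'' That is not what Proposition~\ref{x1} says. Part (1) gives $C^1$-regularity of $df\colon U_m\to\mathscr{L}(E_m,F_{m-1})$ — the operator-norm \emph{continuity} of the derivative, nothing more — and part (2) gives only the \emph{joint} continuity of $(x,h)\mapsto Df(x)h$ on $U_m\oplus E_{m-1}$. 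Neither yields $C^{k-1}$-regularity of $x\mapsto Df(x)$ for $k\geq 2$, and the phrase ``factors through the $C^{k-1}$ map $(x,h)\mapsto Df(x)h$'' is not a deduction.

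The implication you want is, in fact, false without a compactness hypothesis. If $E=F=\ell^2$ and $\phi(t)$ is multiplication by the sequence $\bigl(1+\sin(nt)/n\bigr)_n$, one checks that $g(t,h)=\phi(t)h$ is $C^1$ jointly on $\R\times\ell^2$ and linear in $h$, but $t\mapsto\phi(t)$ is not even Fr\'echet-differentiable into $\mathscr{L}(\ell^2)$: the candidate derivative is multiplication by $(\cos(nt))_n$, and the remainder is of order $|s|$ but not $o(|s|)$ in operator norm, because $\sup_n|\cos(n\xi_n)-\cos(nt)|$ does not tend to $0$. So joint $C^r$-regularity of an $h$-linear map gives no operator-norm $C^r$-regularity in general. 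What saves the day in the sc-setting is that you only need $Df(x)$ as an operator from the \emph{smaller} level $E_{m+k}$ into $F_m$, while the joint regularity of $(x,h)\mapsto Df(x)h$ is available with $h$ ranging over the \emph{larger} level $E_{m+k-1}$; the inclusion $E_{m+k}\hookrightarrow E_{m+k-1}$ is compact, and this compactness is exactly what permits the upgrade. Concretely, the missing ingredient is a lemma along these lines: if $E\hookrightarrow E'$ is a compact inclusion, $U\subset E$ open, $L\colon U\times E'\to F$ is $C^r$ and linear in the second variable, then $x\mapsto L(x,\cdot)\restriction_E$ is $C^r$ from $U$ into $\mathscr{L}(E,F)$. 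This is proved by induction on $r$: for $r=0$ one uses precompactness of the closed unit ball of $E$ in $E'$ together with joint continuity; for $r\geq 1$ one applies the $r=0$ case and the inductive hypothesis to $D_1L\colon U\times E'\to\mathscr{L}(E,F)$, which is $C^{r-1}$ and again $h$-linear, then transposes. You need to state and prove this lemma (or cite it precisely), and invoke it with $E=E_{m+k}$, $E'=E_{m+k-1}$, $F=F_m$ in the inductive step; attributing it to Proposition~\ref{x1}(2) leaves the argument incomplete.

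The remaining parts of the proposal — the base case $k=0$, the use of $(TU)_{m+k-1}=U_{m+k}\oplus E_{m+k-1}$, extracting $C^1$-regularity of $f\colon U_{m+k}\to V_m$ by restricting the $k=1$ case, and the reduction of the ``moreover'' clause to $\ssc^k\Rightarrow\ssc^l$ for $l\leq k$ — are all fine.
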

The next result is very useful in proving that a given map between sc-Banach spaces is sc-smooth.
\begin{proposition}[{\bf Proposition 2.4, \cite{HWZ8.7}}] \label{ABC-x}
Let $U$ be a relatively open subset of a partial  quadrant in  a sc-Banach space $E$ and let $F$ be another $\ssc$-Banach space. 
Assume that for every $m\geq 0$ and $0\leq l\leq k$,  the map $f\colon 
U\rightarrow V$  induces  a map
$$
f\colon 
U_{m+l}\rightarrow  F_m,
$$
which is of  class $C^{l+1}$. Then $f$ is $\ssc^{k+1}.$
\end{proposition}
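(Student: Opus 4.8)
The plan is to run an induction on $k$, using Proposition \ref{x1} as the base case to establish $\ssc^1$, and then feeding the hypothesis back into itself to iterate through the tangent maps. Concretely, I would first verify the hypotheses of Proposition \ref{x1} from the assumption with $k+1$ in place of $k$: taking $l=0$ gives that $f\colon U_m\to F_m$ is $C^1$ for every $m\geq 0$, which in particular makes $f\colon U_m\to F_{m-1}$ a $C^1$-map and supplies the continuous derivative $df(x)\colon E_m\to F_{m-1}$; taking $l=1$ gives that $f\colon U_{m}\to F_{m-1}$ is $C^2$, hence $df(x)\colon E_m\to F_{m-1}$ extends to a bounded operator $Df(x)\colon E_{m-1}\to F_{m-1}$ (this extension is exactly the derivative of the level-$(m-1)$ map $f\colon U_{m-1}\to F_{m-1}$ restricted appropriately) with $(x,h)\mapsto Df(x)h$ continuous on $U_m\oplus E_{m-1}$. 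So $f$ is $\ssc^1$, which handles the case $k=0$.

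For the inductive step, suppose the proposition holds for $k$ and assume $f\colon U_{m+l}\to F_m$ is $C^{l+1}$ for all $m\geq 0$, $0\leq l\leq k+1$. I want to show $Tf\colon TU\to TF$ is $\ssc^{k+1}$, where $TU=U^1\oplus E$, $TF=F^1\oplus F$, so by the inductive hypothesis applied to $Tf$ it suffices to check that $Tf\colon (TU)_{m+l}\to (TF)_m$ is of class $C^{l+1}$ for all $m\geq 0$, $0\leq l\leq k+1$. Since $(TU)_{m+l}=U_{m+l+1}\oplus E_{m+l}$ and $(TF)_m=F_{m+1}\oplus F_m$, and $Tf(x,h)=(f(x),Df(x)h)$, this reduces to two statements: that $f\colon U_{m+l+1}\to F_{m+1}$ is $C^{l+1}$ — which is the hypothesis with $m$ replaced by $m+1$ — and that the map $(x,h)\mapsto Df(x)h$ from $U_{m+l+1}\oplus E_{m+l}\to F_m$ is $C^{l+1}$. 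The first component, $f\colon U_{m+l+1}\to F_{m+1}$ being $C^{l+1}$, holds since $(m+1)+l$ with the same $l\le k+1$ is covered by the assumption.

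The crux — and the step I expect to be the main obstacle — is the bilinear-type map $(x,h)\mapsto Df(x)h$ and its regularity. The point is that $Df(x)\colon E_{m+l}\to F_{m+l}$ is the genuine Fréchet derivative of $f\colon U_{m+l}\to F_{m+l}$ (possibly after one more level of smoothing, using $f\colon U_{m+l+1}\to F_{m+l}$ being $C^{l+2}$, so that $df$ is $C^{l+1}$ into $\mathscr L(E_{m+l+1},F_{m+l})$ and then one checks the extension as in Proposition \ref{x1}). Differentiating the evaluation pairing: for the $x$-variable one differentiates $df$ one more time, each derivative costing either a level or nothing depending on whether one uses the Proposition \ref{x1} extension, and one tracks that after $l+1$ derivatives in $(x,h)$ one lands in $F_m$; the $h$-variable is linear so it costs nothing beyond the first derivative. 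The bookkeeping is: $f\colon U_{m+l+1}\to F_m$ being $C^{l+2}$ (available from the hypothesis, since $l+1\leq k+1$ when $l\leq k$, and the boundary case $l=k+1$ needs separate care using $C^{k+2}$ which may or may not be assumed — here one must be slightly careful and may only get the needed regularity for $l\le k$, which is exactly what the inductive hypothesis for $\ssc^{k+1}$ of $Tf$ requires after re-indexing) gives that $(x,h)\mapsto Df(x)h$ into $F_m$ picks up $l+1$ derivatives. I would organize this as a short lemma: if $g\colon U_{n+j}\to G_n$ is $C^{j+1}$ for all $n,0\le j\le r$, then $(x,h)\mapsto Dg(x)h$ defines a map $U_{n+j}\oplus E_{n+j-1}\to G_n$ that is $C^{j}$ for all such $n,j$ — proved directly from the definition of higher Fréchet derivatives and the continuity of the extension in Proposition \ref{x1}(2). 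Granting that lemma, both components of $Tf$ have the required $C^{l+1}$ regularity at level $m+l$ into level $m$, so $Tf$ satisfies the hypothesis of the proposition with $k$, hence is $\ssc^k$, hence $f$ is $\ssc^{k+1}$, completing the induction.
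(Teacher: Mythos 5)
Your proposal has the right high-level shape (induction on $k$, base case via Proposition~\ref{x1}, inductive step passing to $Tf$), but the inductive step contains a genuine gap, and the ``short lemma'' you propose to fill it is not proved and, upon inspection, appears to claim one more order of differentiability than the hypotheses actually deliver.

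Concretely, to apply the inductive hypothesis (the proposition for $k$) to $Tf$, you must verify that $Tf\colon (TU)_{n+j}\to (TF)_n$ is $C^{j+1}$ for all $0\le j\le k$. Already the case $j=0$ is problematic: it asks for $Tf\colon U_{n+1}\oplus E_n\to F_{n+1}\oplus F_n$ to be $C^1$, and its second component requires $(x,h)\mapsto Df(x)h\colon U_{n+1}\oplus E_n\to F_n$ to be $C^1$. Here $Df(x)\colon E_n\to F_n$ is the derivative of $f\colon U_n\to F_n$, which the hypothesis only guarantees to be $C^1$; hence $x\mapsto Df(x)$ is merely a $C^0$ map into $\mathscr{L}(E_n,F_n)$, and restricting $x$ to $U_{n+1}$ does not add a derivative. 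The other available piece of information, $f\colon U_{n+1}\to F_n$ being $C^2$, gives a $C^1$ family $x\mapsto df(x)\in\mathscr{L}(E_{n+1},F_n)$, but this only controls $h\in E_{n+1}$; there is no reason the continuous extension to $h\in E_n$ inherits $C^1$-dependence on $x$. This is precisely the gap between what you assert and what follows. The same issue recurs at every $j$: with $g\colon U_{n+j-1}\to G_n$ of class $C^j$, the evaluation $(x,h)\mapsto Dg(x)h$ on $U_{n+j-1}\oplus E_{n+j-1}$ is $C^{j-1}$ (because $dg\colon U_{n+j-1}\to\mathscr{L}(E_{n+j-1},G_n)$ is $C^{j-1}$), and restricting to $U_{n+j}\oplus E_{n+j-1}$ yields $C^{j-1}$, not the claimed $C^j$. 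So your lemma, as stated, overshoots by one.

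The structural reason the induction does not close as written: the hypothesis of Proposition~\ref{ABC-x}, applied to $Tf$, puts the target at the \emph{same} level, $(TF)_n$, whereas the characterization of $\ssc^1$ in Proposition~\ref{x1} only demands the map into $(TF)_{n-1}$, one level lower. Indeed a direct verification of Proposition~\ref{x1} for $Tf$ does go through: the second component $(x,h)\mapsto Df(x)h\colon U_{m+1}\oplus E_m\to F_{m-1}$ is $C^1$, because $f\colon U_m\to F_{m-1}$ is $C^2$ by hypothesis, so $df\colon U_m\to\mathscr{L}(E_m,F_{m-1})$ is $C^1$ and evaluation is smooth. That dropped target level is the extra derivative you need, and it is what the hypothesis supplies. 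Your inductive step, by re-applying Proposition~\ref{ABC-x} to $Tf$, demands the stronger ``same-level'' condition and loses access to this gain. The correct argument should therefore proceed by directly checking Proposition~\ref{x1}'s two conditions for each iterated tangent map $T^jf$, tracking carefully that the target drops a level each time, rather than by re-feeding $Tf$ into the statement being proved. (A smaller point: in your base case you invoke $l=1$, which is not available when $k=0$; in fact the base case works with $l=0$ alone, since the extension $Df(x)\colon E_{m-1}\to F_{m-1}$ required by Proposition~\ref{x1}(2) is already supplied by the derivative of $f\colon U_{m-1}\to F_{m-1}$, and its joint continuity follows from $df$ being $C^0$ at that level.)
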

In the case that the target  space $F= \R^N$, Proposition \ref{ABC-x} takes the following form.
\begin{corollary}[{\bf Corollary 2.5, \cite{HWZ8.7}}] \label{ABC-y}
Let $U$ be a  relatively open subset of a partial quadrant in a sc-Banach space and  $f\colon 
U\to  \R^N$. If 
for some $k$ and all $0\leq l \leq k$ the map $f\colon 
U_l \to  \R^N$ belongs to  $C^{l +1}$, then $f$ is $\ssc^{k+1}$.
\end{corollary}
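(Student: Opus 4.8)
The plan is to deduce the corollary directly from Proposition~\ref{ABC-x}, applied with target space $F = \R^N$. The first step is to record that $\R^N$, being finite-dimensional, carries only the constant sc-structure, so that $(\R^N)_m = \R^N$ for every $m\geq 0$. Consequently, for $F = \R^N$ the hypothesis demanded by Proposition~\ref{ABC-x} reduces to the following assertion: for every $m\geq 0$ and every $l$ with $0\leq l\leq k$, the induced map $f\colon U_{m+l}\to\R^N$ is of class $C^{l+1}$. Once this is established, Proposition~\ref{ABC-x} gives at once that $f$ is $\ssc^{k+1}$, which is the claim.

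The substance of the argument is therefore the reduction of this requirement to the stated hypothesis, which only concerns the case $m = 0$. I would fix $m\geq 0$ and $l$ with $0\leq l\leq k$ and observe that the inclusion $E_{m+l}\hookrightarrow E_l$ is a bounded linear operator, hence $C^\infty$, and that it maps $U_{m+l} = U\cap E_{m+l}$ into $U_l = U\cap E_l$. Writing $f|_{U_{m+l}}$ as the composite
\[
U_{m+l}\;\hookrightarrow\; U_l \;\xrightarrow{\ f\ }\; \R^N ,
\]
it is the composition of a smooth map with $f\colon U_l\to\R^N$, and the latter is $C^{l+1}$ by hypothesis; since the chain rule remains valid for maps between relatively open subsets of partial quadrants, the composite is $C^{l+1}$. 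This supplies exactly the input needed for Proposition~\ref{ABC-x}.

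I do not expect a genuine obstacle here; the two points that deserve a careful sentence each are: (i) that $U_{m+l}$ is itself a relatively open subset of a partial quadrant $C_{m+l}$ in $E_{m+l}$ --- which follows by restricting to level $m+l$ the sc-isomorphism $T\colon E\to\R^n\oplus W$ witnessing that $C$ is a partial quadrant --- so that the notion ``$C^{l+1}$'' is meaningful there and Proposition~\ref{ABC-x} applies; and (ii) that the composition of a $C^\infty$ map with a $C^{l+1}$ map on such partial-quadrant neighborhoods is again $C^{l+1}$, which is the standard $C^k$-calculus on partial quadrants (via one-sided derivatives or local extension). With these in place, a single invocation of Proposition~\ref{ABC-x} finishes the proof.
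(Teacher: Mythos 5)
Your proof is correct and is essentially the expected deduction: since $\R^N$ carries only the constant sc-structure, the hypothesis of Proposition~\ref{ABC-x} with $F=\R^N$ reduces to requiring $f\colon U_{m+l}\to\R^N$ to be $C^{l+1}$ for all $m\geq0$ and $0\leq l\leq k$, and you correctly observe that this follows from the stated $m=0$ hypothesis because the level inclusion $E_{m+l}\hookrightarrow E_l$ is bounded linear (hence $C^\infty$) and carries $U_{m+l}$ into $U_l$, so that $f|_{U_{m+l}}$ is the composition of a $C^\infty$ map with a $C^{l+1}$ map. The paper defers the proof to \cite{HWZ8.7}, but the reduction you give is the natural one, and your two side remarks --- that $U_{m+l}$ is a relatively open subset of the partial quadrant $C\cap E_{m+l}$ in $E_{m+l}$, and that the chain rule for $C^{k}$-maps on partial quadrants applies --- are exactly the points worth a sentence.
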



\subsection{The Chain Rule and Boundary Recognition}\label{subsection_boundary_recognition}
The cornerstone of the sc-calculus, the chain rule, holds true.
\begin{theorem}[{\bf Chain Rule}]\label{sccomp}\index{T- Chain rule}
Let $U\subset C\subset E$ and $V\subset D\subset F$ and $W\subset Q\subset G$ be relatively open subsets
of partial quadrants in sc-Banach spaces and let $f\colon 
U\rightarrow V$ and $g\colon 
V\rightarrow W$ 
be $\ssc^1$-maps.
Then the composition $g\circ f\colon 
U\rightarrow W$ is also $\ssc^1$ and
$$
T(g\circ f) =(Tg)\circ (Tf).
$$
\end{theorem}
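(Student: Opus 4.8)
The plan is to verify the three defining conditions of $\ssc^1$ for $g\circ f$ in order, and then establish the tangent identity.

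First I would check that $g\circ f$ is $\ssc^0$. This is immediate: by hypothesis $f\colon U_m\to V_m$ and $g\colon V_m\to W_m$ are continuous for every $m\geq 0$, hence their composition $U_m\to W_m$ is continuous for every $m$.

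Next comes the heart of the argument, the pointwise differentiability in condition (2). Fix $x\in U_1$ and set $y=f(x)\in V_1$. The natural candidate for $D(g\circ f)(x)$ is $Dg(y)\circ Df(x)\colon E_0\to G_0$, which is a bounded linear operator as a composition of two bounded linear operators. To prove the difference-quotient limit, take $h\in E_1$ with $x+h\in U_1$ and write, with $k:=f(x+h)-f(x)$,
\begin{equation*}
g(f(x+h))-g(f(x))-Dg(y)Df(x)h
= \bigl(g(y+k)-g(y)-Dg(y)k\bigr) + Dg(y)\bigl(k-Df(x)h\bigr).
\end{equation*}
The second term is controlled directly: $\abs{Dg(y)(k-Df(x)h)}_0\leq \norm{Dg(y)}_{\mathscr L(F_0,G_0)}\cdot\abs{f(x+h)-f(x)-Df(x)h}_0$, and dividing by $\abs{h}_1$ this tends to $0$ because $f$ is $\ssc^1$ at $x$. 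For the first term I would use that $f$ is $\ssc^1$ to control $k$ in the $F_1$-norm: from the differentiability of $f$, $\abs{k}_1\leq \abs{Df(x)h}_1 + o(\abs{h}_1)$, and here is the first subtlety — one must know $Df(x)$ is bounded from $E_1$ to $F_1$, which follows because $Tf$ is $\ssc^0$, so $(x,h)\mapsto Df(x)h$ maps $U_1\oplus E_1\to F_1$ continuously, hence for fixed $x$ the map $h\mapsto Df(x)h$ is bounded on level $1$. Thus $\abs{k}_1 = O(\abs{h}_1)$, and in particular $\abs{k}_1\to 0$ as $\abs{h}_1\to 0$. Then writing $\frac{\abs{g(y+k)-g(y)-Dg(y)k}_0}{\abs{h}_1} = \frac{\abs{g(y+k)-g(y)-Dg(y)k}_0}{\abs{k}_1}\cdot\frac{\abs{k}_1}{\abs{h}_1}$, the first factor tends to $0$ by $\ssc^1$-ness of $g$ at $y$ (taking care of the case $k=0$ separately, where the whole term vanishes), and the second factor stays bounded. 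This gives condition (2), and simultaneously identifies $D(g\circ f)(x)=Dg(f(x))\circ Df(x)$, which is exactly the second-component formula in $T(g\circ f)=Tg\circ Tf$.

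It remains to verify condition (3): that $T(g\circ f)\colon T(U,C,E)\to T(W,Q,G)$, given by $(x,h)\mapsto (g(f(x)),\, Dg(f(x))Df(x)h)$, is $\ssc^0$. But by the formula just established this map is precisely $Tg\circ Tf$, and $Tf\colon TU\to TV$, $Tg\colon TV\to TW$ are $\ssc^0$ because $f$ and $g$ are $\ssc^1$; a composition of $\ssc^0$-maps is $\ssc^0$ by the first step. Hence $T(g\circ f)$ is $\ssc^0$ and equals $Tg\circ Tf$, completing the proof of both assertions.

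I expect the main obstacle to be the estimate on $\abs{k}_1=\abs{f(x+h)-f(x)}_1$ in terms of $\abs{h}_1$: one cannot get this from the level-$0$ difference quotient alone, and must exploit condition (3) in the definition of $\ssc^1$ (the $\ssc^0$-continuity of $Tf$) to see that $h\mapsto Df(x)h$ is bounded $E_1\to F_1$, together with the differentiability estimate for $f$ read on level $1$ — which in turn one gets from Proposition \ref{x1}, namely that $f\colon U_1\to F_0$ being $C^1$ upgrades, via the extension property, so that the increment $f(x+h)-f(x)$ is $O(\abs{h}_1)$ in $F_1$. Once the two-sided control $\abs{k}_1\asymp\abs{h}_1$ is in hand, the rest is bookkeeping with the splitting of the difference into the two displayed terms.
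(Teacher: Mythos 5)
Your decomposition of $g(f(x+h))-g(f(x))-Dg(y)Df(x)h$ into two terms is a natural first move, and the second term $Dg(y)(k-Df(x)h)$ is handled correctly. But the treatment of the first term has a genuine gap, and it occurs exactly where the analytic difficulty of the sc-chain rule lies.

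You factor
$\frac{\abs{g(y+k)-g(y)-Dg(y)k}_0}{\abs{h}_1}
 = \frac{\abs{g(y+k)-g(y)-Dg(y)k}_0}{\abs{k}_1}\cdot\frac{\abs{k}_1}{\abs{h}_1}$
and claim the second factor is bounded, via $\abs{k}_1\leq\abs{Df(x)h}_1+o(\abs{h}_1)$ and boundedness of $Df(x)\colon E_1\to F_1$. Neither ingredient is available for a general $x\in U_1$. The $\ssc^0$-continuity of $Tf$ means $Tf\colon (TU)_m\to (TV)_m$ is continuous, i.e.\ $Tf\colon U_{m+1}\oplus E_m\to V_{m+1}\oplus F_m$; taking $m=1$ gives boundedness of $Df(x)\colon E_1\to F_1$ only when $x\in U_2$, not when $x\in U_1$. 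Independently, the differentiability in Definition \ref{scx}(2) is a level-$0$ statement: $\abs{f(x+h)-f(x)-Df(x)h}_0=o(\abs{h}_1)$. There is no estimate $\abs{f(x+h)-f(x)-Df(x)h}_1=o(\abs{h}_1)$ for $x\in U_1$ — sc-differentiability always loses one level — so the inequality $\abs{k}_1\leq\abs{Df(x)h}_1+o(\abs{h}_1)$ is unjustified even formally. What you actually know is only that $k\to 0$ in $F_1$, by $\ssc^0$-continuity of $f$ on level $1$; that does not bound $\abs{k}_1/\abs{h}_1$, and without that bound the whole factorization collapses.

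This is precisely why the paper's proof does not proceed by your splitting. It writes the difference as a sum of two integrals and, for the problematic piece, exploits the compactness of the inclusion $E_1\hookrightarrow E_0$: the set $\{h/\abs{h}_1 : h\in E_1,\ \abs{h}_1\leq 1\}$ is precompact in $E_0$, hence $\{Df(x)(h/\abs{h}_1)\}$ is precompact in $F_0$, and the $\ssc^0$-continuity of $(v,k)\mapsto Dg(v)k$ on level $0$ together with this precompactness yields uniform convergence of the integrand along any sequence $\abs{h_n}_1\to 0$. The compactness of the scale embeddings is the crucial structural ingredient that makes the chain rule true despite the loss of one level per differentiation; your argument never invokes it, and that is the symptom of the missing step.
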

The result is  proved in \cite{HWZ2} as  Theorem 2.16 for open sets $U$, $V$ and $W$.
We give the adaptation to the somewhat more general setting here. The proof, which is given in Appendix \ref{A1.1},  is very close 
to the one in \cite{HWZ2} and does not require any new ideas.

\begin{remark}
The result is somewhat surprising, since differentiability is only guaranteed
under the loss of one level $U_1\rightarrow F_0$, so that one might expect
 for a composition a loss of two levels. However, one is saved
by the compactness of the embeddings $E_{m+1}\rightarrow E_m$.
\end{remark}

The next basic result concerns  the boundary recognition. Let $C$ be a partial quadrant in the  sc-Banach space  $E$. 
We choose a linear $\ssc$-isomorphism $T\colon 
E\to  \R^k\oplus W$ satisfying $T(C)=[0,\infty)^k\oplus W$. 
If $x\in C$,  then 
$$T(x)=(a_1, \ldots, a_k, w)\in [0,\infty )^k\oplus W,$$
and we define the integer $d_C(x)$ by 
$$
d_C(x)\colon 
=\#\{i\in \{1,\ldots, k\}\vert \, a_i=0\}.$$
\begin{definition}\label{degeneracy_index_1}\index{D- Degeneracy index}
The map $d_C\colon 
C\rightarrow {\mathbb N}_0$ is called the {\bf degeneracy index}.
\end{definition}

Points $x\in C$ satisfying $d_C(x)=0$\index{$d_C$} are interior points of $C$, the  points satisfying  $d_C(x)=1$ are honest boundary points, and the  points with $d_C(x)\geq 2$ are corner points. The size of the index gives the complexity of the corner.

It is not difficult to see that this definition is independent of  the choice of a sc-linear isomorphism $T$.
\begin{lemma}\label{corner_recognition_linear}
The  map  $d_C$  does not depend on the choice of a linear sc-isomorphism $T$.
\end{lemma}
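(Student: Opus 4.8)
The plan is to show that the degeneracy index $d_C(x)$ can be characterized intrinsically, without reference to any particular sc-isomorphism $T$. The key observation is that for $x\in C$, the integer $d_C(x)$ equals the number of independent ``constraints'' that are active at $x$, and this can be phrased in terms of the local structure of $C$ near $x$ as a convex set, together with the sc-topology. Concretely, suppose $T\colon E\to\R^k\oplus W$ and $T'\colon E\to\R^{k'}\oplus W'$ are two sc-isomorphisms with $T(C)=[0,\infty)^k\oplus W$ and $T'(C)=[0,\infty)^{k'}\oplus W'$. Then $S:=T'\circ T^{-1}\colon \R^k\oplus W\to\R^{k'}\oplus W'$ is a linear sc-isomorphism carrying $[0,\infty)^k\oplus W$ onto $[0,\infty)^{k'}\oplus W'$, and it suffices to prove that such an $S$ preserves the degeneracy index computed in the two models; this reduces the lemma to the special case of a linear sc-automorphism of a standard partial quadrant.

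First I would give the intrinsic description. For a point $y=(a_1,\dots,a_k,w)$ in the standard quadrant $[0,\infty)^k\oplus W$, let me look at the set of ``reachable directions'' $R_y:=\{v : y+tv\in [0,\infty)^k\oplus W \text{ for all sufficiently small } t>0\}$, which is the tangent cone of the convex set at $y$; one checks immediately that $R_y=\{(v_1,\dots,v_k,u): v_i\ge 0 \text{ whenever } a_i=0\}$, so the largest linear subspace contained in $R_y$ (its ``lineality space'') has codimension exactly $d_C(x)$ in $\R^k\oplus W$ — well, codimension $d_C(x)$ in the ambient space when we work level by level. Equivalently, $d_C(x)$ is the dimension of the quotient of $\R^k\oplus W$ by the maximal linear subspace $L_y$ on which $C$ is locally a half-space intersection in the trivial way; more cleanly, $d_C(x)=\dim\big((\R^k\oplus W)/L_y\big)$ where $L_y=R_y\cap(-R_y)$. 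Since a linear sc-isomorphism $S$ is in particular a linear homeomorphism on each level, it carries tangent cones to tangent cones and lineality spaces to lineality spaces: $S(R_y)=R_{S(y)}$ and $S(L_y)=L_{S(y)}$, hence $\dim\big((\R^k\oplus W)/L_y\big)=\dim\big((\R^{k'}\oplus W')/L_{S(y)}\big)$, which is the required equality $d_C(x)=d_{C'}(x)$ (under the identification via $T,T'$).

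The step I expect to be the main obstacle is verifying that the relevant quotient dimension is genuinely finite and well-defined in the sc-Banach setting — that is, that $L_y$ really does have finite codimension equal to the combinatorial count. In the standard model this is transparent because $L_y=\{(v_1,\dots,v_k,u): v_i=0 \text{ for } a_i=0\}$ has codimension $d_C(x)$; the content is that this finite-codimensional subspace, being the intersection of finitely many closed sc-subspaces (kernels of the coordinate functionals $v\mapsto v_i$), is itself a closed subspace whose codimension is a topological invariant unchanged by a linear isomorphism of the pair $(E,C)$. One should be slightly careful that ``linear sc-isomorphism'' is used: $S$ need not preserve the splitting into $\R^k$ and $W$, but it does preserve $C$, so it preserves the face structure of $C$, and in particular it maps the open face of $C$ through $x$ (the set of points with the same active-constraint set) to the open face of $C'$ through $S(x)$; the codimension of the affine span of such a face is precisely $d$, and affine-span codimension is manifestly invariant under a linear isomorphism. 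I would therefore organize the proof around faces: show $d_C(x)$ equals the codimension of the smallest face of $C$ containing $x$ in its relative interior, note that any $T$ as in Definition \ref{partial_quadrant} identifies this face with a coordinate face of the standard quadrant, and conclude invariance. The routine convex-geometry verification that active-constraint sets match up across $S$, and that codimension is finite and additive, is what remains but involves no new ideas.
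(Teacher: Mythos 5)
Your proposal is correct, and the core computation agrees with the paper's: you reduce to the transition map $S=T'\circ T^{-1}$, and the step "$S$ carries the lineality space $L_y=R_y\cap(-R_y)$ onto $L_{S(y)}$" is, once unrolled, exactly the paper's argument that $S(E_x)\subset E'_y$ (take $u\in E_x$, observe $x+tu\in C$ for small $\abs{t}$, apply linearity of $S$, conclude $(S(u))_i=0$ for $i\in I'(y)$). What you do differently is to package this under an intrinsic characterization of $d_C(x)$ as the codimension of the lineality space of the tangent cone of $C$ at $x$ (equivalently, the codimension of the affine span of the smallest face through $x$), and to observe that a linear homeomorphism $(\R^k\oplus W, C)\to(\R^{k'}\oplus W',C')$ preserves tangent cones and lineality. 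This buys you two things over the paper's proof: the separate preliminary step "$k=k'$" becomes a special case of the general statement (apply the invariance at a point with all constraints active, e.g.\ $x=0$), and the concern you rightly flag — that $S$ need not respect the splitting $\R^k\oplus W$ — is handled uniformly since tangent cones depend only on the pair $(E,C)$, not on a chosen presentation. The paper's version is a more elementary coordinate-by-coordinate verification; yours is shorter once one accepts the standard convex-geometry facts about tangent cones and lineality spaces, and it makes the diffeomorphism invariance transparent. The only minor flaw is notational: in the last display of your second paragraph you should write $d_{C'}(T'(x))$ (or equivalently make explicit that $y=T(x)$ and $S(y)=T'(x)$) rather than $d_{C'}(x)$, since $x$ lives in $E$, not in $\R^{k'}\oplus W'$.
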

The proof is given in Appendix \ref{A1.2}.

\begin{theorem}\label{ppp22}\index{T- Corner recognition}
Given the tuples $(U, C, E)$ and $(U', C', E')$ and a germ  $f\colon 
(U,x)\rightarrow (U',x')$ of a local sc-diffeomorphism satisfying $x'=f(x)$, then
$$d_C(x)=d_{C'}(f(x)).$$
\end{theorem}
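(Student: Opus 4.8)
The plan is to prove the equality first at smooth points of $U$, where the linearization $Df(y)$ is available and turns out to be a sc-isomorphism, and then to reach an arbitrary (possibly non-smooth) point $x$ by approximating it with smooth points carrying the \emph{same} degeneracy index. As a first step I would reduce to the standard model: choosing sc-isomorphisms $T\colon E\to\R^k\oplus W$ and $T'\colon E'\to\R^{k'}\oplus W'$ with $T(C)=[0,\infty)^k\oplus W$, $T'(C')=[0,\infty)^{k'}\oplus W'$ and replacing $f$ by $T'\circ f\circ T^{-1}$, Lemma~\ref{corner_recognition_linear} allows me to assume $C=[0,\infty)^k\oplus W\subset E=\R^k\oplus W$ and $C'=[0,\infty)^{k'}\oplus W'\subset E'=\R^{k'}\oplus W'$. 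In this model $d_C(a,w)=\#\{i:a_i=0\}$ depends only on the finite-dimensional component $a$, and, since the number of vanishing coordinates of $a$ cannot grow under small perturbations, $d_C$ is upper semicontinuous on $C$ for the topology of $E_0$; the same holds for $d_{C'}$. I also note that any $\ssc^0$-map sends $U_\infty$ into $U'_\infty$, and that for any $x=(a,w)\in U$ there are smooth points $y_n=(a,w_n)\in U_\infty$ (with $w_n\in W_\infty$, $w_n\to w$ in $W_0$) such that $y_n\to x$ in $E_0$ and $d_C(y_n)=d_C(x)$.

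Next I would handle a smooth point $y\in U_\infty$. Then $f(y)\in U'_\infty$, so by the remark following Proposition~\ref{x1} the linearizations $Df(y)\colon E\to E'$ and $Df^{-1}(f(y))\colon E'\to E$ are sc-operators, and the Chain Rule (Theorem~\ref{sccomp}) applied to $f^{-1}\circ f$ and $f\circ f^{-1}$ shows that $\psi:=Df(y)$ is a sc-isomorphism with inverse $Df^{-1}(f(y))$. Writing $y=(a,w)$ and $I=\{i:a_i=0\}$, one checks, using that $U$ is relatively open in the polyhedral quadrant $C$, that the tangent cone $T_yC:=\bigcup_{t>0}t(U-y)$ equals $\{v\in E:v_i\geq0\text{ for }i\in I\}$ and is therefore already closed; likewise for $T_{f(y)}C'$. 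The crux is to show $\psi(T_yC)=T_{f(y)}C'$: for $v\in T_yC\cap E_1$ and small $t>0$ one has $y+tv\in U_1$, hence $f(y+tv)\in C'$, and the $\ssc^1$-estimate of Definition~\ref{scx}(2) gives $t^{-1}\bigl(f(y+tv)-f(y)\bigr)\to\psi(v)$ in $E'_0$; each difference quotient lies in the closed cone $T_{f(y)}C'$, so $\psi(v)$ does too, and density of $T_yC\cap E_1$ in $T_yC$ together with the continuity of $\psi\colon E_0\to E'_0$ yields $\psi(T_yC)\subset T_{f(y)}C'$, the reverse inclusion following by the same argument applied to $f^{-1}$ at $f(y)$. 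Since $\psi$ is a linear homeomorphism carrying $T_yC$ onto $T_{f(y)}C'$, it carries the maximal linear subspace $T_yC\cap(-T_yC)$ onto $T_{f(y)}C'\cap(-T_{f(y)}C')$ and preserves its codimension; this codimension is precisely $d_C(y)$, resp.\ $d_{C'}(f(y))$, so $d_C(y)=d_{C'}(f(y))$.

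Finally, for an arbitrary $x=(a,w)\in U$ I would invoke the smooth points $y_n=(a,w_n)\to x$ from the first step. The smooth case gives $d_{C'}(f(y_n))=d_C(y_n)=d_C(x)$ for all large $n$, while $f(y_n)\to f(x)$ in $E'_0$ because $f$ is $\ssc^0$; the upper semicontinuity of $d_{C'}$ then forces $d_C(x)\leq d_{C'}(f(x))$. Running the same argument for the inverse germ $f^{-1}\colon(U',f(x))\to(U,x)$ gives $d_{C'}(f(x))\leq d_C(x)$, hence equality.

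The main obstacle is exactly that $x$ and $f(x)$ need not be smooth, so $Df(x)$ is not provided by Definition~\ref{scx} and one cannot argue with tangent cones directly at $x$; the way around this is the approximation through smooth points $(a,w_n)$, which succeeds only because $d_C$ is blind to the (possibly non-smooth) $W$-component and is upper semicontinuous. A secondary technical point is that the tangent-cone comparison must survive the weak sc-notion of differentiability; reducing to the polyhedral standard model at the outset makes this clean, since then $\bigcup_{t>0}t(U-y)$ is already closed and the only analytic input is the $\ssc^1$-estimate along the rays $t\mapsto y+tv$ with $v$ on level $1$.
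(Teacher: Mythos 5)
Your proof is correct. The paper itself does not give a proof here (it defers to \cite{HWZ2}, Theorem~1.19), but your argument — compare the partial cones $C_y$ and $C'_{f(y)}$ at smooth points via difference quotients of the $\ssc^1$-germ, read off $d_C(y)$ as the codimension of the maximal linear subspace of the cone, then pass to arbitrary $x$ by choosing smooth approximants $(a,w_n)$ with identical $\R^k$-component and invoking upper semicontinuity of $d_C$ together with $\ssc^0$-continuity of $f$ and $f^{-1}$ — is exactly the scheme the paper later sets up in Lemma~\ref{characterization_reduced_tangent}, Proposition~\ref{reduced_tangent_under_sc_diff}, and Corollary~\ref{equality_of_d}, so you have essentially reconstructed the intended argument.
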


The proof of a more general result is given in \cite{HWZ2}, Theorem 1.19. 
We  note that sc-smooth diffeomorphisms recognize  boundary points and corners, whereas
homeomorphisms recognize only  boundaries, but no corners.
\subsection{Appendix}
\subsubsection{Proof of the sc-Fredholm Stability Result}\label{A1.0}
\begin{proof}[Proof of Proposition \ref{prop1.21}]
Since $S\colon 
E_m\rightarrow F_m$ is compact for every level,  the operator 
$T+S\colon 
E_m\rightarrow F_m$ is Fredholm for every $m$. Denoting  by $K_m$ the
kernel of this operator,  we claim that $K_{m}=K_{m+1}$ for every $m\geq 0$. Clearly, $K_{m+1}\subseteq K_{m}$. If  $x\in K_m$,  then  $Tx=-Sx\in F_{m+1}$ and hence, by Proposition \ref{regular},
$x\in E_{m+1}$. Thus $x\in K_{m+1}$, implying  $K_m\subseteq K_{m+1}$, so that  $K_m=K_{m+1}$. Set $K=K_0$.  By
Proposition \ref{prop1}, $K$ splits the sc-space $E$,  since it is a
finite dimensional subset of $E_{\infty}$. Therefore, we have the
sc-splitting $E=K\oplus X$  for a suitable sc-subspace  $X$ of $E$.

Next define $Y=(T+S)(E)=(T+S)(X)$ and $Y_m\colon 
=Y\cap F_m$ for $m\geq 0$.  Since $T+S\colon 
E\to F$ is Fredholm,  its  image $Y$ is closed.  We claim, that the set  $Y_\infty$, defined by $Y_\infty:=\bigcap_{m\geq 0}Y_m=Y\cap F_\infty$,  is dense in  $Y_m$ for every $m\geq 0$. Indeed, if $y\in Y_m$, then $y=(T+S)(e)$ for some $e\in E$. Since  by Proposition \ref{regular}, $T$ is regularizing, we conclude that $e\in E_m$. Since $E_\infty$ is dense in $E_m$,  there exists a sequence $(e_n)\subset E_\infty$ such that $e_n\to e$ in $E_m$. Then, setting $y_n:=(T+S)(e_n)\in Y_\infty$, we conclude that $y_n\to (T+S)(e)=y$, which shows that $Y_\infty$ is dense in $Y_m$.  Finally, we consider the projection map $p\colon 
F\to  F/Y$. Since $p$  is a continuous surjection onto a finite-dimensional space and $F_\infty$ is dense in $F$, it follows that $\ov{p(F_\infty)}=F/Y$.  Hence, we can choose  a basis in $F/Y$ whose representatives belong to $F_\infty$.  Denoting by  $C$ the span of these representatives, we obtain  $F_m=Y_m\oplus C$.   Consequently, we have a sc-splitting $F=Y\oplus C$ and the proof of Proposition \ref{prop1.21} is complete.
\end{proof}

\subsubsection{Proof of the Chain Rule}\label{A1.1}
\begin{proof}[Proof of Theorem \ref{sccomp}]
The maps 
 $f\colon 
U_1\to F$ and $g\colon 
V_1\to G$  are of class $C^1$ in view of Proposition \ref{lower}. Moreover, $Dg(f(x))\circ Df(x)\in \mathscr{L}(E, G)$, if
$x\in U_1$. Fix $x\in U_1$ and 
 $h\in E_1$ sufficiently small satisfying $x+h\in U_1$ and  $f(x+h)\in V_1$.
Then, using  the postulated
properties of $f$ and $g$,
\begin{equation*}
\begin{split}
&g(f(x+h))-g(f(x))-Dg(f(x))\circ Df(x)h\\
&=\int_{0}^{1} Dg(tf(x+h)+(1-t)f(x))\ [f(x+h)-f(x)-Df(x)h] dt\\
&\phantom{=}+\int_{0}^{1}\bigl( [Dg(tf(x+h)+(1-t)f(x))-Dg(f(x))] \circ Df(x)h \bigr)dt.
\end{split}
\end{equation*}
The first integral after dividing by $\abs{h}_1$ takes the form
\begin{equation}\label{integral1}
\int_{0}^{1} Dg(tf(x+h)+(1-t)f(x)) \frac{f(x+h)-f(x)-Df(x)h}{\abs{h}_1}\  dt.
\end{equation}
For  $h\in E_1$ such that  $x+h\in U_1$ and $\abs{h}_1$ is small, we have that $tf(x+h)+(1-t)f(x)\in V_1$. Moreover, the maps
$
[0,1]\rightarrow F_1$ defined by $t\mapsto tf(x+h)+(1-t)f(x)
$
are continuous and converge in $C^0([0,1],F_1)$
to the constant map
$t\mapsto  f(x)$ as  $\abs{h}_1\rightarrow 0$.
Since $f$ is  of class $\ssc^1$, the quotient
 $$
a(h)\colon 
=\frac{ f(x+h)-f(x)-Df(x)h}{\abs{h}_1}$$
converges  to $0$ in $F_0$  as $\abs{h}_1\to 0$. Therefore, by the continuity assumption (3) in Definition \ref{scx},
 the map $$
(t,h)\mapsto  Dg(tf(x+h)+(1-t)f(x)) [a(h)],
$$
as a map from $[0,1]\times E_1$ into $G_{0}$, converges to $0$ as
$\abs{h}_1\to 0$  uniformly in $t$.
Consequently,   the expression in \eqref{integral1}  converges to $0$ in $G_0$ as $\abs{h}_1\to 0$, if  $x+h\in U_1$.
Next we 
consider the integral
\begin{equation}\label{integral2}
\int_{0}^{1} \bigl[  Dg(tf(x+h)+(1-t)f(x))-Dg(f(x))\bigr] \circ Df(x)
\frac{h}{\abs{h}_{1}}\ dt.
\end{equation}
By Definition \ref{sc-structure},  the inclusion operator $E_1\to E_0$ is compact,  so that 
the set of all $\frac{h}{\abs{h}_{1}}\in E_1$ has  a compact closure in $E_{0}$.  Therefore, since $Df(x)\in \mathscr{L}(E_{0}, F_{0})$ is a continuous map  by Definition \ref{scx},  the closure of
the set of all
$$
Df(x) \frac{h}{ \abs{h}_{1}}
$$
is compact  in $F_{0}$. Consequently, again by  Definition \ref{scx}, every sequence $(h_n)\subset E_1$, satisfying $x+h_n\in U_1$ and $\abs{h_n}_1\to 0$, possesses a subsequence having the property that the integrand of the integral in \eqref{integral2} converges to $0$ in $G_{0}$ uniformly in $t$. Hence, the integral \eqref{integral2} also  converges to $0$ in $G_0$, as $\abs{h}_1\to 0$ and  $x+h\in U_1$.  We
have proved  that
$$
\frac{\abs{g(f(x+h))-g(f(x))-Dg(f(x))\circ Df(x)h}_{0}}{\abs{h}_{1}}\rightarrow 0
$$
as $\abs{h}_1\to 0$  and  $x+h\in U_1$.   Consequently,  condition (2) of Definition \ref{scx} is satisfied for the composition $g\circ f$ with the linear operator
$$D(g\circ f)(x)=Dg(f(x))\circ Df(x)\in  \mathscr{L}(E_0, G_0),
$$
where $x\in U_1$. We conclude that the tangent map
$T(g\circ f)\colon 
TU\to TG$,
$$(x, h)\mapsto (\ g\circ f(x), D(g\circ f)(x)h\ )$$
is $\ssc$-continuous and, moreover,
$T(g\circ f)=Tg\circ Tf$. The proof of Theorem  \ref{sccomp} is complete.
\end{proof}

\subsubsection{Proof Lemma \ref{corner_recognition_linear} }\label{A1.2}
\begin{proof}[Proof of Lemma \ref{corner_recognition_linear}]
We take a second sc-isomorphism  $T'\colon 
E\to \R^{k'}\oplus W'$,   satisfying $T'(C)=[0,\infty)^{k'}\oplus W'$ and first show  that $k=k'$. 
We consider the composition $S=T'\circ T^{-1}\colon 
\R^k\oplus W\to \R^{k'}\oplus W'$, where  $S$ is a sc-isomorphism, mapping 
$[0,\infty )^k\oplus W\to [0,\infty )^{k'}\oplus W'$. We claim that  $S(\{0\}^k\oplus W)=\{0\}^{k'}\oplus W'$.  Indeed,  suppose $S(0, w)=(a, w')$ for some $(a, w')\in [0,\infty)^{k'}\oplus W'$.
Then,  we conclude 
$$S(0, tw)= tS(0, w)=t(a, w')=(ta, tw')\in  [0,\infty)^{k'}\oplus W'$$
for all $t\in \R$.  Consequently, $a=0$. Hence, $S(\{0\}^k\oplus W)\subset \{0\}^{k'}\oplus W'$ and since $S$ is an isomorphim, we obtain equality of these sets.  
The set  $\{0\}^k\oplus W\equiv W$ has codimension $k$ in $\R^k\oplus W$, and since $S$ is an isomorphism  we have 
$$S(\R^k\oplus W)=S(\R^k\oplus \{0\})\oplus S(\{0\}^k\oplus W)=S(\R^k\oplus \{0\})\oplus ( \{0\}^{k'}\oplus W'),$$
so that the codimension of $ \{0\}^{k'}\oplus W'$ in $\R^{k'}\oplus W'$ is equal to $k$. On the other hand, this codimension is equal to $k'$. Hence $k'=k$, as claimed.
To prove our result, it now suffices  to show that if $x=(a, w)\in [0,\infty )^k\oplus W$ and $y=S(x)=(a', w')$, then 
\begin{equation}\label{independence_0}
\# I(x)=\# I' (y)
\end{equation}
where $I(x)$ is the set of indices $1\leq i\leq k$ for which $x_i=0$. The set $I'(y)$ is defined similarly. With $I(x)$ we associate the subspace $E_x$ of $\R^k\oplus W$, 
\begin{align*}
E_x&=\{(b, v)\in R^k\oplus W\vert \,\text{$b_i=0$ for all $i\in I(x)$}\}.
\end{align*}
The set $E'_{y}$ is defined analogously. Observe that $\# I(x)$ is equal to the codimension of $E_x$ in $\R^k\oplus W$.  In order to prove \eqref{independence_0}, it is enough to show that 
$S(E_x)\subset E'_y$. If this is the case, then,  since $S$ is an isomorphism, we obtain $S(E_x)=E'_y$,  which shows that the codimensions of $E_x$ and $E'_y$ in $\R^k\oplus W$ and $\R^{k'}\oplus W'$, respectively, are the same. Now, to see that $S(E_x)\subset E'_y$, we take $u=(b, v)\in E_x$ and note that $x+tu\in [0,\infty)\oplus W$ for $\abs{t}$ small. Then, 
$S(x+tu)=S(x)+tS(u)=y+tS(u).$ If $i\in I'(y)$, then $(S(x+tu))_i=y_i+t(S(u))_i=t(S(u))_i\in [0,\infty)^k\oplus W'$, which is only possible if  $(S(u))_i=0$.  Hence, $S(E_x)\subset E'_y$ and the proof is complete.
\end{proof}

\pagebreak
\section{Differential Geometry Based on Retracts}

The crucial  concept of this book is that of a sc-smooth retraction, which we  are going to introduce next.
\subsection{Retractions and Retracts}\label{section2.1}
In this subsection we introduce several basic  concepts.
\begin{definition} \label{sc-smooth_retraction}\index{D- Sc-smooth retraction}
We consider a tuple $(U, C, E)$ in which $U$ is a relatively open subset of the partial quadrant $C$ in the sc-Banach space $E$. A sc-smooth map $r\colon U\to U$ is called 
a {\bf sc-smooth retraction}  on $U$, if 
$$
r\circ r=r.
$$
\end{definition}

As a side remark, we observe that  if $U$ is an open subset of a Banach space $E$ and $r\colon U\rightarrow U$ is a $C^\infty$-map (in the classical sense) satisfying $r\circ r=r$, then the image of $r$ is a smooth submanifold of the Banach space $E$, as the following propositions shows.

\begin{proposition}\label{smooth_retract}\index{Cartan's theorem}
Let $r\colon 
U\to U$ be a $C^\infty$-retraction defined on an open subset $U$ of a Banach space $E$. Then $O:=r(U)$ is a $C^\infty$-submanifold of $E$. More precisely, for  every point $x\in O$,  there exist an open neighborhood  $V$ of $x$ and  an open neighborhood $W$  of $0$ in $E$, a splitting $E =R \oplus N$, and a smooth diffeomorphism 
$\psi \colon 
 V\to  W$  satisfying  $\psi (0)=x$ and 
$$\psi (O\cap V ) = R \cap W.$$
\end{proposition}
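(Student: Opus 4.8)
The plan is the classical ``image of a smooth idempotent is a submanifold'' argument, adapted to the Banach setting. Fix $x\in O$, so that $r(x)=x$, and set $P:=Dr(x)\in\mathscr{L}(E,E)$. Differentiating the identity $r\circ r=r$ at $x$ by the chain rule, and using $r(x)=x$, gives $P\circ P=P$; hence $P$ is a bounded linear projection and $E=R\oplus N$ with $R:=\mathrm{im}(P)$ and $N:=\ker(P)$ closed subspaces, the sum being a topological direct sum. The subspace $R$ will be the model for the tangent space of $O$ at $x$.

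Next I would build the chart. Define $\phi\colon U\to R\oplus N$ by $\phi(u)=\bigl(P(r(u)-x),\ (\mathrm{id}-P)(u-r(u))\bigr)$. A short computation using $P^2=P$ and $(\mathrm{id}-P)P=0$ shows $D\phi(x)h=(Ph,(\mathrm{id}-P)h)$, i.e. $D\phi(x)$ is the canonical isomorphism $E\xrightarrow{\sim} R\oplus N$. By the inverse function theorem in Banach spaces there is an open neighborhood $V$ of $x$ in $U$ on which $\phi$ restricts to a $C^\infty$-diffeomorphism onto an open neighborhood $\Omega$ of $0$ in $R\oplus N$. If $u\in O\cap V$ then $r(u)=u$, so the $N$-component of $\phi(u)$ vanishes and $\phi(O\cap V)\subseteq (R\oplus\{0\})\cap\Omega$; this is the straightforward inclusion.

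The main obstacle, and the only step needing care, is the reverse inclusion: if $\phi(u)\in R\oplus\{0\}$ then $u\in O$. From $\phi(u)=(\rho,0)$ we obtain $u-r(u)\in R$; write $v:=r(u)\in O$ and $w:=u-r(u)\in R$, and aim to show $w=0$. Applying $r$ to $u=v+w$ and using $r(u)=v=r(v)$ together with the integral form of the mean value theorem gives $0=r(v+w)-r(v)=\int_0^1 Dr(v+tw)\,w\,dt$. Since $w\in R=\mathrm{im}(P)$ we have $Pw=w$, hence $\bigl\|\int_0^1 Dr(v+tw)w\,dt-w\bigr\|\le\bigl(\sup_t\|Dr(v+tw)-P\|\bigr)\,\|w\|$. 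By continuity of $Dr$ at $x$, after shrinking $V$ (noting $v=r(u)$, that $r$ is continuous with $r(x)=x$, and that the segment $[v,v+w]$ stays in a small ball about $x$) this supremum is $<1$, so $\|w\|\le\tfrac12\|w\|$, forcing $w=0$ and $u=v\in O$. Thus, on the shrunken $V$, $\phi(O\cap V)=(R\oplus\{0\})\cap\Omega$.

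Finally, composing $\phi$ with the canonical isomorphism $R\oplus N\cong E$ produces a $C^\infty$-diffeomorphism $\psi$ of $V$ onto an open neighborhood $W$ of $0$ in $E$ with $\psi(x)=0$ (equivalently $\psi^{-1}(0)=x$, the normalization intended in the statement) and $\psi(O\cap V)=R\cap W$, where $R$ is a closed subspace with closed complement $N$. This is precisely a submanifold chart for $O$ at $x$; since $x\in O$ was arbitrary, $O$ is a $C^\infty$-submanifold of $E$.
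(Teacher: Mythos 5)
Your proof is correct and uses the same core ingredients as the paper -- the projection $P=Dr(x)$, the splitting $E=R\oplus N$ into range and kernel, the inverse function theorem, and the fact that $Dr$ stays close to $P$ near $x$ -- but the two crucial steps are organized differently. The paper builds its chart by parametrizing $E$ near $x$ via $(a,b)\mapsto r(a)+(\mathbbm{1}-Dr(a))b$ from $R\oplus N$ into $E$, whereas you go the other way with $\phi(u)=(P(r(u)-x),(\mathrm{id}-P)(u-r(u)))$; both are local diffeomorphisms with the same linearization at $x$, and the choice is cosmetic. The more substantive difference is in the hard inclusion. You decompose $u=v+w$ with $v=r(u)\in O$ and $w=u-r(u)\in R$, observe $r(v+w)=r(v)$, and invoke the integral mean value theorem together with $Pw=w$ to get $\|w\|\le\sup_t\|Dr(v+tw)-P\|\,\|w\|$, which forces $w=0$ once $Dr$ is within $1/2$ of $P$ on the relevant ball. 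The paper instead Taylor-expands $r$ around $r(a)$, derives the relation $Dr(r(a))b=b+o(h)$ for $h=(\mathbbm{1}-Dr(a))b$, and then bounds the two sides separately to reach a contradiction, which requires tracking several auxiliary constants ($1/3$, $1/2$, $3/4$, $\varepsilon$, $\delta$). Your mean-value argument is shorter and avoids the explicit $o(h)$ bookkeeping; the paper's version does essentially the same thing with the error terms written out. (You also correctly flag the typo in the statement: the intended normalization is $\psi(x)=0$, i.e.\ $\psi^{-1}(0)=x$, which is what the paper's own proof produces with $\psi=(f|_V)^{-1}$.)
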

An elegant proof is due to Henri Cartan in \cite{H.Cartan}\footnote{The first author would like to thank E. Ghys for pointing out this reference during his visit at IAS in 2012. It seems that this is H. Cartan's last mathematical paper.}

In sharp contrast to the conclusion of this proposition from classical differential geometry,  there are $\ssc^\infty$-retractions which have,  for example, locally varying finite dimensions, as we shall see later on. 
\begin{definition}\label{sc_smooth_retract}\index{D- Smooth retract}
A tuple $(O,C,E)$ is called a {\bf sc-smooth retract}, if there exists a relatively open subset $U$ of the partial quadrant $C$ and a  sc-smooth retraction $r\colon 
U\to U$ satisfying 
$$r(U)=O.$$
\end{definition}

In case $x\in U^1$ we deduce  from $r\circ r=r$ by the chain rule, 
$ Dr (r(x))\circ Dr(x)=Dr(x)$. Hence if $x\in O^1$, then $r(x)=x$ so that  $Dr (x)\circ Dr(x)=Dr(x)$ and $Dr(x)\colon E\to E$ is a projection.

\begin{proposition}\label{independent_of_retractions}\index{P- Intrinsic geometry of  retracts}
Let  $(O, C, E)$ be  a sc-smooth  retract and assume  that $r\colon 
U\to U$ and $r'\colon 
U'\to U'$ are two sc-smooth retractions defined on relatively open subsets $U$ and $U'$ of $C$  and satisfying $r(U)=r'(U')=O$. Then
$$Tr (TU)=Tr'(TU').$$
\end{proposition}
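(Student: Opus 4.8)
The plan is to show that $Tr(TU)$ and $Tr'(TU')$ are both equal to the same intrinsic object, namely the set of tangent vectors to $O$ that one can form using \emph{any} sc-smooth retraction onto $O$; by symmetry it suffices to prove one inclusion, say $Tr'(TU')\subseteq Tr(TU)$, and then interchange the roles of $r$ and $r'$. First I would unwind the definition of the tangent tuple: a point of $Tr(TU)$ has the form $Tr(x,h)=(r(x),Dr(x)h)$ with $x\in U^1$ and $h\in E$, and since $r(U)=O$ we may restrict attention (after applying $r$ once more and using $Dr(r(x))Dr(x)=Dr(x)$, which holds by the chain rule as noted just before the proposition) to base points $y=r(x)\in O^1$, where $Dr(y)\colon E\to E$ is a projection with image containing the relevant tangent vectors. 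Thus $Tr(TU)=\{(y,k)\mid y\in O^1,\ k\in \operatorname{im}Dr(y)\}$, and likewise for $r'$. So the statement reduces to the purely pointwise claim that for each $y\in O^1$ one has $\operatorname{im}Dr(y)=\operatorname{im}Dr'(y)$ as subspaces of $E$.

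To prove that pointwise equality, fix $y\in O^1$ and let $k\in\operatorname{im}Dr'(y)$, so $k=Dr'(y)v$ for some $v\in E$. I want to exhibit $k$ as $Dr(y)w$ for some $w$. The key device is to compose the two retractions. Consider the sc-smooth map $r\circ r'$, defined on the relatively open set $U''=(r')^{-1}(U\cap U')$ of $C$ — here I should first check $U''$ is relatively open and contains $y$, which follows because $r'$ is sc-continuous (hence continuous on each level) and $r'(y)=y\in U\cap U'$. On $U''$ we have $r\circ r'=r$ pointwise, since $r'$ lands in $O$ and $r$ fixes $O$. Differentiating this identity at $y$ via the chain rule (Theorem~\ref{sccomp}), which is legitimate because $y\in O^1\subset U^1$ and composition of $\ssc^1$ maps is $\ssc^1$, gives $Dr(r'(y))\circ Dr'(y)=Dr(y)$, i.e. $Dr(y)\circ Dr'(y)=Dr(y)$ since $r'(y)=y$. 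Applying both sides to $v$ yields $Dr(y)k=Dr(y)v'$ where... more directly: take $w=Dr'(y)v=k$ itself; then $Dr(y)k=Dr(y)Dr'(y)v=Dr(y)v$. That is not yet $k$. Instead, use the symmetric identity obtained by differentiating $r'\circ r=r'$ at $y$: $Dr'(y)\circ Dr(y)=Dr'(y)$. Now for $k=Dr'(y)v$, set $w:=Dr(y)v$; then $Dr(y)w=Dr(y)Dr(y)v=Dr(y)v=w$, and $Dr'(y)w=Dr'(y)Dr(y)v=Dr'(y)v=k$. So every $k\in\operatorname{im}Dr'(y)$ satisfies $k=Dr'(y)w$ with $w\in\operatorname{im}Dr(y)$; symmetrically every $w\in\operatorname{im}Dr(y)$ lies in $\operatorname{im}Dr'(y)$ by applying $Dr'(y)$ and using $Dr'(y)w=w$. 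Chasing these two inclusions carefully gives $\operatorname{im}Dr(y)=\operatorname{im}Dr'(y)$, completing the pointwise step and hence the proposition.

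The main obstacle I anticipate is the bookkeeping around \emph{levels and domains}: one must make sure the composed retraction $r\circ r'$ is defined on a genuinely sc-open neighborhood of $y$, that $y$ sits at level $1$ (not merely level $0$) so that the chain rule and the linearization $Dr(y)$ are available, and that the tangent-tuple bookkeeping ($TU=U^1\oplus E$, base point in $O^1$, the two-step identity $Dr(r(x))Dr(x)=Dr(x)$) is used consistently so that the reduction to the pointwise statement is airtight. A secondary subtlety is that $O^1$ as a subset of $U^1$ and of $(U')^1$ must be the same set — this is fine since $O$ is defined independently of the retraction and the induced filtration $O_m=O\cap E_m$ does not depend on whether we view $O$ inside $U$ or $U'$ — but it is worth stating explicitly. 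Once these points are pinned down, the algebra with the projections $Dr(y)$, $Dr'(y)$ and the two differentiated identities is short and forces the conclusion.
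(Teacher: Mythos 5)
Your plan matches the paper's: compose the two retractions, observe that the composition identities collapse to $r'\circ r=r$ and $r\circ r'=r'$, differentiate at smooth base points of $O$, and read off that the tangent images coincide. The paper argues directly at the level of tangent maps --- writing a point of $Tr(TU)$ as $Tr(y,k)$, applying $Tr'$, and using $T(r'\circ r)=Tr$ to show it is fixed --- whereas you first reduce to the pointwise statement $Dr(y)E=Dr'(y)E$; that reduction is sound.

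The one thing to fix is that you have the two composition identities swapped, and the slip propagates. Since $r'$ lands in $O$ and $r$ is the identity on $O$, one gets $r\circ r'=r'$ (not $r$); symmetrically $r'\circ r=r$ (not $r'$). Differentiating at $y\in O^1$ therefore yields $Dr(y)\circ Dr'(y)=Dr'(y)$ and $Dr'(y)\circ Dr(y)=Dr(y)$, the reverse of the two identities you wrote. The confusion is visible in your middle passage (``That is not yet $k$\ldots'') and in your last step, where you assert $Dr'(y)w=w$ for $w\in Dr(y)E$ --- a fact that actually requires the \emph{correct} identity $Dr'(y)\circ Dr(y)=Dr(y)$, not the one you stated. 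With the identities righted the argument is short: if $w\in Dr(y)E$ then $Dr(y)w=w$, so $Dr'(y)w=Dr'(y)Dr(y)w=Dr(y)w=w$, giving $w\in Dr'(y)E$; the reverse inclusion follows by interchanging $r$ and $r'$.
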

\begin{proof}
If $y\in U$, then there exists $y'\in U'$, so that $r(y)=r'(y')$.  Consequently, 
$r'\circ r(y)=r'\circ r'(y')=r'(y')=r(y)$, and hence $r'\circ r=r$.  Similarly, one sees that $r\circ r'=r'$.
If  $(x, h)\in Tr (TU)$,  then $(x, h)=Tr(y, k)$ for a pair  $(y, k)\in TU$.
Moreover,  $x\in O_1\subset U'_1$, so that $(x, h)\in TU'$.  From $r'\circ r=r$ it follows,  using the chain rule,  that
$$Tr'(x, h)=Tr' \circ Tr(y,k)=T(r'\circ r)(y, k)=Tr(y, k)=(x, h),$$
implying $Tr (TU)\subset Tr'(TU')$.  Similarly,  one shows that $Tr' (TU')\subset Tr(TU)$, and the proof of the proposition is complete.
\end{proof}

Proposition \ref{independent_of_retractions} allows us  to  define the tangent of a sc-smooth retract  $(O,C,E)$\index{$(O,C,E)$} as follows.
\begin{definition}\index{D- Tangent of a retract}
The {\bf tangent of the sc-smooth retract}  $(O,C,E)$,  denoted by $T(O,C,E)$,  is defined as the triple
$$
T(O,C,E)=(TO,TC,TE),\index{$T(O,C,E)$}
$$
in which $TC=C^1\oplus E$ is the tangent of the partial quadrant $C$, $TE=E^1\oplus E$,  and  $TO:= 
Tr(TU)$, where  $r\colon U\to U$ is any sc-smooth retraction onto $O$.
\end{definition}

We recall that, explicitly, 
$$
TO=Tr(TU)=Tr(U^1\oplus E)=\{(r(x), Dr(x)h)\, \vert \, x\in U^1, h\in E\}.
$$

Starting with a relatively open subset $U$ of a partial quadrant $C$ in $E$,  the tangent $TC$ of $C$  is a partial quadrant in $TE$, and $TU$ is a relatively open subset of $TC$.

In the following we shall quite often just write $O$ instead of $(O,C,E)$
and  $TO$ instead of $(TO,TC,TE)$. However, we would like to point out that the ``reference'' $(C,E)$ is important, since 
it is possible that $O$ is a sc-smooth retract with respect to some nontrivial $C$, but would not be a sc-smooth retract for $C=E$.

\begin{proposition}\label{preparation_retraction}\index{P- Tangent map}
Let $(O,C,E)$ and $(O',C',E')$  be sc-smooth retracts and let $f\colon 
O\rightarrow O'$  be a map. If  $r\colon 
U\to U$ and $s\colon 
V\to V$  are sc-smooth retractions onto $O$ for the triple $(O, C, E)$, then the following holds.
\begin{itemize}
\item[\em(1)]  If $f\circ r\colon 
U\rightarrow E'$ is sc-smooth, then $f\circ s\colon 
V\rightarrow E'$ is sc-smooth, and vice versa.
\item[\em(2)] If  $f\circ r$ is sc-smooth, then  
$T(f\circ r)|TO=T(f\circ s)|TO$.
\item[\em(3)] The  tangent map $T(f\circ r)\vert TO$ maps $TO$ into $TO'$.
\end{itemize}
\end{proposition}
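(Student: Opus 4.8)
The plan is to exploit the identities $r'\circ r=r$ and $r\circ r'=r'$ relating any two retractions onto $O$, exactly as in the proof of Proposition \ref{independent_of_retractions}, and then transport them through the chain rule. The statements (1), (2), (3) are really three faces of the same computation, so I would set them up together. Throughout, write $s\colon V\to V$ and $r\colon U\to U$ for the two given sc-smooth retractions onto $O$, and recall that on points of $O$ (in particular on $O^1$) both $r$ and $s$ restrict to the identity, while $s\circ r=r$ and $r\circ s=s$ as maps (the same elementary argument: for $y\in U$ pick $y'\in V$ with $r(y)=s(y')$, then $s(r(y))=s(s(y'))=s(y')=r(y)$).

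For (1): suppose $f\circ r\colon U\to E'$ is sc-smooth. Since $s\colon V\to V\subset C$ takes values in $V$, and $s(V)=O\subset U$, the composition $f\circ s$ can be rewritten as $f\circ s = f\circ (r\circ s) = (f\circ r)\circ s$ on $V$, using $r\circ s = s$. Here $s\colon V\to U$ is sc-smooth (it is sc-smooth as a map $V\to V$, hence as a map into the larger set $U\subset C$, which is sc-continuous at each level with the same derivative) and $f\circ r\colon U\to E'$ is sc-smooth by hypothesis; so by the Chain Rule (Theorem \ref{sccomp}, and its $\ssc^k$ iterates) the composite $f\circ s\colon V\to E'$ is sc-smooth. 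Swapping the roles of $r$ and $s$ gives the converse. The one point to be careful about is that "sc-smooth map $O\to O'$" is only defined via such a composition with a retraction, so the content of (1) is precisely that this is well-posed; I would phrase it that way.

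For (2) and (3): assume $f\circ r$ is sc-smooth, so $T(f\circ r)\colon TU\to E'^1\oplus E'$ is defined. On $TO=Tr(TU)$, every element has the form $(x,h)=Tr(y,k)$ with $x\in O^1$, and in particular $(x,h)\in Ts(TV)=TO$ as well. From $f\circ s = (f\circ r)\circ s$ and the chain rule, $T(f\circ s) = T(f\circ r)\circ Ts$; evaluating on $TO$ and using $Ts|TO = \operatorname{id}_{TO}$ (because $s|O=\operatorname{id}_O$ gives $Ds(x)$ a projection with $Ds(x)h=h$ for $(x,h)\in TO$, by the discussion following Definition \ref{sc_smooth_retract} applied to the tangent retract) yields $T(f\circ s)|TO = T(f\circ r)|TO$, which is (2). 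For (3), apply the chain rule the other way: on $TU$ we have, using $s\circ r=r$ on $U$, that $f\circ r = f\circ s\circ r$, but more to the point I would show $T(f\circ r)(TO)\subset TO'$ directly. Take $(x,h)\in TO$; then $x=r(x)\in O$, so $f(x)=f(r(x))=(f\circ r)(x)\in O'$, and one must check the derivative component lands in the fiber of $TO'$ over $f(x)$. Pick a retraction $t\colon W\to W$ onto $O'$; since $f\circ r$ has image in $O'$ and $t|O'=\operatorname{id}$, we have $t\circ(f\circ r)=f\circ r$ as maps $U\to E'$, hence by the chain rule $Tt\circ T(f\circ r) = T(f\circ r)$ on $TU$, and therefore $T(f\circ r)(TO)\subset T(f\circ r)(TU)\subset \operatorname{Fix}(Tt) = TO'$.

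The main obstacle is bookkeeping rather than any genuine difficulty: one must be scrupulous that each composition being fed to the Chain Rule is of the right form — a map between relatively open subsets of partial quadrants in sc-Banach spaces — and in particular that passing from "$s\colon V\to V$" to "$s\colon V\to U$" or from "$f\circ r\colon U\to E'$" to "$f\circ r\colon U\to W$" (landing in the domain of $t$, possibly after noting $O'\subset W$ and shrinking) is legitimate, and that $TU\subset TV$-type containments hold on the overlaps where both retractions are defined. The identities $s\circ r=r$, $r\circ s=s$, $t\circ(f\circ r)=f\circ r$ do all the real work; the chain rule then converts each into the corresponding tangent-level identity, and the projection property $Dr(x)h=h$ for $x\in O^1$, $(x,h)\in TO$ supplies the final "identity on $TO$" fact needed for (2).
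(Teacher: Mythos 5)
Your proposal is correct and follows essentially the same route as the paper's proof: you use the identities $r\circ s=s$ and $s\circ r=r$ inherited from Proposition \ref{independent_of_retractions}, the chain rule, the fact that $Ts$ restricts to the identity on $TO$ for (2), and the identity $\rho\circ (f\circ r)=f\circ r$ for an auxiliary retraction $\rho$ onto $O'$ for (3). The bookkeeping caveats you flag (shrinking domains so that $f\circ r$ lands in the domain of $\rho$) are real but glossed over in the paper as well.
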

\begin{proof}
We assume that $f\circ r\colon 
U\to  E'$ is $\ssc^\infty$. Since  $s\colon 
V\to  U\cap V$ is $\ssc^\infty$, the chain rule implies that the composition $f\circ r\circ s\colon 
V\to  F$ is $\ssc^\infty$.
Using  the identity  $f\circ r\circ s=f\circ s$, we conclude that $f\circ s$ is $\ssc^\infty$. Interchanging the role of $r$ and $s$,  the first part of the lemma is proved.
If $(x, h)\in TO$, then $(x, h)=Ts (x, h)$ and using the identity $f\circ r\circ s=f\circ s$ and the chain rule, we conclude
\begin{equation*}
T(f\circ r)(x,h)=T(f\circ r)(Ts)(x,h)=T(f\circ r\circ s)(x,h)=T(f\circ s)(x,h)
\end{equation*}
Now we take any  sc--smooth retraction $\rho\colon 
U'\to  U'$ defined on a  relatively open subset $U$ of the the partial quadrant $C'$ in $E'$  satisfying $\rho(U')=O'$. Then $\rho\circ f= f$ so that $\rho\circ f\circ r=f\circ r$. Application of  the chain rule  yields the identity
$$
T(f\circ r)(x,h)=T(\rho\circ f\circ r)(x,h)=T\rho\circ T(f\circ r)(x,h)
$$
for all $(x, h)\in Tr(TU).$
Consequently, $T(f\circ r)\vert Tr(TU)\colon 
TO\to TO'$ and this map is  independent of the choice of a sc-smooth retraction onto $O$.
\end{proof}

In view of  Proposition  \ref{preparation_retraction},  we can define the sc-smoothness of a map between sc-smooth retracts as follows.
\begin{definition}\label{tangent_retract}\index{D- Tangent map}
Let  $f\colon 
O\rightarrow O'$  be a map between sc-smooth retracts $(O,C,E)$ and $(O',C',E')$, and let  $r\colon 
U\to U$ be a sc-smooth retraction for   $(O, C, E)$. Then {\bf the  map $f$ 
is sc-smooth},  if  the  composition
$$
U\rightarrow E', \quad x\mapsto f\circ r(x)
$$
is sc-smooth.  In this case the {\bf tangent map} $Tf\colon 
TO\rightarrow TO'$ is defined by
$$
Tf\colon 
=T(f\circ r)\vert TO.\index{$Tf$}
$$
\end{definition}
That the image of $Tf$ lies in $TO'$ follows from
$r'\circ f \circ r = f\circ r$. Indeed, differentiating and applying the definition of $Tf$
we deduce that $Tr'\circ Tf =Tf$, which implies that the image of $Tf$ is in $TO'$.

Proposition  \ref{preparation_retraction} shows that   the definition does not depend on the choice of  the sc-smooth retraction $r$, as long as it retracts onto  $O$. With Definition \ref{tangent_retract}, the chain rule for sc-smooth maps between sc-smooth retracts  follows from Theorem \ref{sccomp}.
\begin{theorem}
Let $(O,C,E)$, $(O',C',E')$ and $(O'',C',E'')$ be two sc-smooth retracts and let $f\colon 
O\rightarrow O'$ and $g\colon 
O'\rightarrow O''$ be sc-smooth. Then $g\circ f\colon 
O\rightarrow O''$ is sc-smooth and $T(g\circ f)=(Tg)\circ (Tf)$.
\end{theorem}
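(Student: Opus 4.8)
The plan is to reduce the statement to the chain rule for sc-smooth maps between relatively open subsets of partial quadrants (Theorem \ref{sccomp}), in the same spirit as the proof of Proposition \ref{preparation_retraction}. Fix sc-smooth retractions $r\colon U\to U$ onto $O$ and $s\colon V\to V$ onto $O'$, defined on relatively open subsets $U\subset C$ of $E$ and $V\subset C'$ of $E'$. By hypothesis the maps $f\circ r\colon U\to E'$ and $g\circ s\colon V\to E''$ are sc-smooth. Since $r(U)=O$ and $f(O)\subset O'=s(V)\subset V$, the map $f\circ r$ in fact takes values in $V$, and sc-smoothness of $f\circ r$ into the ambient space is the same as sc-smoothness as a map $U\to V$ (sc-smoothness is a local property and does not see the ambient convex set beyond $V$). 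Thus $f\circ r\colon U\to V$ and $g\circ s\colon V\to E''$ form an admissible pair for Theorem \ref{sccomp}.

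First I would show that $g\circ f$ is sc-smooth, i.e. (Definition \ref{tangent_retract}) that $(g\circ f)\circ r\colon U\to E''$ is sc-smooth. The key algebraic identity is that $s$ restricts to the identity on its image $O'$, so $s\circ f=f$ as maps $O\to O'$, hence $s\circ(f\circ r)=f\circ r$ on $U$ and therefore
\[
(g\circ s)\circ(f\circ r)=g\circ(s\circ f\circ r)=g\circ f\circ r=(g\circ f)\circ r .
\]
By iterated application of the chain rule (Theorem \ref{sccomp}) to the sc-smooth maps $f\circ r\colon U\to V$ and $g\circ s\colon V\to E''$, the composition $(g\circ s)\circ(f\circ r)$ is sc-smooth, hence so is $(g\circ f)\circ r$. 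This proves that $g\circ f$ is sc-smooth, and by Proposition \ref{preparation_retraction}(1) the conclusion does not depend on the choices of $r$ and $s$.

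For the tangent identity, Definition \ref{tangent_retract} gives $T(g\circ f)=T((g\circ f)\circ r)\vert TO$. Using the identity above together with the chain rule formula $T((g\circ s)\circ(f\circ r))=T(g\circ s)\circ T(f\circ r)$, and restricting to $TO$, I would invoke Definition \ref{tangent_retract} once more to get $T(f\circ r)\vert TO=Tf$, with image in $TO'$ by Proposition \ref{preparation_retraction}(3), and $T(g\circ s)\vert TO'=Tg$. Hence for $(x,h)\in TO$,
\[
T(g\circ f)(x,h)=T(g\circ s)\bigl(T(f\circ r)(x,h)\bigr)=T(g\circ s)\bigl(Tf(x,h)\bigr)=Tg\bigl(Tf(x,h)\bigr),
\]
which is exactly $T(g\circ f)=(Tg)\circ(Tf)$.

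I do not expect a genuine obstacle here: everything hinges on (i) the fact that the image of $f\circ r$ lies in $V$ so the composition with $g\circ s$ is defined, (ii) the retraction identity $s\vert_{O'}=\id$ and hence $s\circ f=f$ on $O$, and (iii) correctly matching the various restrictions to the tangent retracts $TO$ and $TO'$ with Definition \ref{tangent_retract} and Proposition \ref{preparation_retraction}. All three points have already been dealt with in the proof of Proposition \ref{preparation_retraction}, so no new idea is required beyond bookkeeping.
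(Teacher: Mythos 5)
Your proposal is correct and takes exactly the route the paper intends: the paper states only that the result "follows from Theorem \ref{sccomp}," and your argument — composing the retraction $r$ on the source, using $s\circ f=f$ on $O'$ to get $(g\circ f)\circ r=(g\circ s)\circ(f\circ r)$, applying the chain rule for maps between relatively open subsets of partial quadrants, and then restricting to $TO$ as in Definition \ref{tangent_retract} and Proposition \ref{preparation_retraction} — is precisely the bookkeeping the authors leave to the reader.
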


If $f\colon O\to O'$ is a sc-smooth map between sc-smooth retracts as in Definition \ref{tangent_retract}, we abbreviate the linearization of $f$ at the point $o\in O^1=r(U^1)$ (on level $1$) by 
$$
Tf(o)=T(f\circ r)(o)\vert T_oO,
$$
so that 
$$Tf(o)\colon T_oO\to T_{f(o)}O'$$
is a continuous linear  map between the tangent spaces 
$T_oO=Dr(o)E$ and $T_{f(o)}O'=Dr'(f(o))E'$.

At this point, we have defined a new {\bf category ${\mathcal R}$}\index{${\mathcal R}$} whose  objects  are  sc-smooth
retracts $(O,C,E)$ and whose morphisms are sc-smooth maps 
$$
f\colon 
(O,C,E)\rightarrow (O',C',E')
$$
 between sc-smooth retracts.
The map $f$ is only defined between $O$ and $O'$, but the other data $(O,C,E)$ are  needed to define the differential geometric properties of $O$.

We also have a  well-defined functor, namely the {\bf tangent functor} \index{Tangent functor}
$$
T\colon 
{\mathcal R}\rightarrow {\mathcal R}.\index{$T\colon 
{\mathcal R}\rightarrow {\mathcal R}$}
$$
It maps  the object $(O,C,E)$ into its tangent $(TO,TC,TE)$ and the morphism
$f\colon 
(O,C,E)\rightarrow (O',C',E')$ into its tangent map $Tf$,
\begin{gather*}
Tf\colon 
(TO,TC,TE)\rightarrow (TO',TC',TE')\\
Tf(x,h)=(f(x),Df(x)h)\quad  \text{for all $(x,h)\in TO$.}
\end{gather*}
 The chain rule guarantees the functorial property 
 $$T(g\circ f)=(Tg)\circ (Tf).$$
 Clearly, this is enough to build a differential geometry  whose  local models
are sc-smooth retracts. The details will be carried out  in the next subsection. Let us note that $T(O,C,E)$ has evidently
more structure than $(O,C,E)$, for example $TO\rightarrow O^1$ seems to have some kind of bundle structure.
We shall discuss this briefly and 
 introduce another category of retracts. 
 
 We consider tuples $(U, C, E)$, where $U$ is a relatively open subset of the partial quadrant $C$ in the sc-Banach space $E$, and let $F$ be another sc-Banach space.
\begin{definition}\index{D- Bundle retraction}
Let $p\colon 
U\oplus F\rightarrow U$ be a trivial sc-bundle  defined by the projection $p\colon 
U\oplus F\rightarrow U$ onto $U$, and let $R\colon 
U\oplus F\rightarrow U\oplus F$ be  a sc-smooth map of the form $R(u,v)=(r(u),\rho(u,v))$, satisfying $R\circ R=R$, and where $\rho(u,v)$ is linear in $v$. We call $R$ a {\bf sc-smooth  bundle retraction}  (covering the sc-smooth retraction $r$)
and $B=R(U\oplus F)$ the associated sc-smooth bundle retract, and denote by $p\colon 
B\rightarrow O$ the induced projection onto $O=r(U)$. 
\end{definition}
Given two sc-smooth bundle retractions $p\colon 
B\rightarrow O$ and $p'\colon 
B'\rightarrow O'$,  a sc-smooth map $\Phi\colon 
B\rightarrow B'$
of the form $\Phi(u,v)=(a(u),\phi(u,v))$, where $\phi(u,v)$ is linear in $v$ and $p'\circ \Phi=p$ is called a {\bf sc-smooth (local) bundle map}.\index{D- Bundle map}

By $\mathcal{BR}$\index{$\mathcal{BR}$} we denote the category whose objects are sc-smooth bundle retracts and whose  morphisms are the sc-smooth bundle maps.
There is  a natural forgetful functor $\mathcal{BR}\rightarrow \mathcal{R}$, which on objects associates with the
bundle $B\xrightarrow{p}O$, the total space  $B$ and which views a bundle map $\Phi$ just as a sc-smooth map.

If $r\colon 
U\rightarrow U$ is a sc-smooth  retraction, then its  tangent map $Tr\colon 
TU\rightarrow TU$ is a sc-smooth bundle retraction. Moreover, the tangent map is a sc-smooth bundle map. Consequently,  the tangent functor can be  viewed as the functor 
 $$
 T\colon 
\mathcal{R }\rightarrow \mathcal{BR}.\index{$ T\colon 
\mathcal{R }\rightarrow \mathcal{BR}$}
 $$
The functor $T$  associates with  the object $(O,C,E)$\index{$(O,C,E)$} the triple $(TO,TC,TE)$\index{$T(O,C,E)$}, in which we  view $TE$  as the bundle $TE\rightarrow E^1$,
$TC$ as the bundle $TC\rightarrow C^1$,  and $TO$ as the bundle $TO\rightarrow O^1$. With a  sc-smooth map $f\colon 
(O,C,E)\rightarrow (O',C',E')$, the functor $T$  associates the 
sc-smooth bundle map $Tf$.

There is another class of retractions, called strong bundle retractions, which will be introduced in the later parts of Section \ref{SEC2.2}.

\subsection{M-Polyfolds and Sub-M-Polyfolds}\label{SEC2.2}
We start with the following observation about sc-retractions. 
\begin{proposition}\label{new_retract_1}\index{P- Restrictions of retracts}
Let $(O, C, E)$ be a sc-smooth retract.
\begin{itemize}
\item[\em(1)] If $O'$ is an open subset of $O$, then $(O',C,E)$ is a sc-smooth retract.
\item[\em(2)] Let  $V$ be  an open subset of $O$ and $s\colon 
V\rightarrow V$  a sc-smooth map, satisfying  $s\circ s=s$.  If $O'=s(V)$, then $(O',C,E)$ is a sc-smooth retract.
\end{itemize}
\end{proposition}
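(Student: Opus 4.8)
The plan is to produce explicit sc-smooth retractions witnessing that $(O',C,E)$ is a sc-smooth retract in each of the two cases, starting from a given sc-smooth retraction $r\colon U\to U$ with $r(U)=O$. For part (1), since $O'$ is open in $O$, there is a relatively open subset $U'$ of $C$ with $O' = O\cap U'$; I would take $U'$ to be any such open set, shrink to $U'\cap U$, and consider the set $W := r^{-1}(O')\cap U'$. I claim $W$ is a relatively open subset of $C$: it is the preimage of the open set $O'$ under the sc-continuous (hence level-wise continuous) map $r$, intersected with an open set. Then $r$ restricts to a map $W\to W$: if $x\in W$ then $r(x)\in O'$, and $r(r(x)) = r(x)\in O'$ with $r(x)\in U'$, so $r(x)\in W$. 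The restriction of an sc-smooth map to an open subset is sc-smooth, and $r\circ r = r$ still holds, so $r|_W$ is a sc-smooth retraction with image $r(W) = O'$. Hence $(O',C,E)$ is a sc-smooth retract.

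For part (2), I would compose the two retractions. Let $r\colon U\to U$ be a sc-smooth retraction onto $O$ and let $V\subset O$ be open with $s\colon V\to V$ sc-smooth and $s\circ s = s$. By part (1), $(V,C,E)$ is a sc-smooth retract, so there is a relatively open subset $\tilde U\subset C$ and a sc-smooth retraction $\tilde r\colon \tilde U\to\tilde U$ with $\tilde r(\tilde U) = V$. Now define $\hat r := \iota\circ s\circ \tilde r\colon \tilde U\to \tilde U$, where I regard $s$ as taking values in $V\subset\tilde U$ (the inclusion $\iota$); more carefully, one must check $s\circ\tilde r$ lands back in $\tilde U$, which holds because its image is $s(V)\subset V = \tilde r(\tilde U)\subset\tilde U$. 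The composition $\hat r$ is sc-smooth by the chain rule (Theorem \ref{sccomp}), since $s$ is sc-smooth on the retract $V$ in the sense of Definition \ref{tangent_retract} — that is, $s\circ\tilde r$ is sc-smooth as a map $\tilde U\to E$ — and $\tilde r$ is sc-smooth. It remains to verify $\hat r\circ\hat r = \hat r$: using $\tilde r|_V = \mathrm{id}_V$ (since $V = \tilde r(\tilde U)$ and $\tilde r\circ\tilde r = \tilde r$ forces $\tilde r$ to fix its image) and $s\circ s = s$, we get $\hat r\circ\hat r = s\circ\tilde r\circ s\circ\tilde r = s\circ s\circ\tilde r = s\circ\tilde r = \hat r$, where the second equality uses that $\tilde r\circ s = s$ on $\tilde U$ because $s$ has image in $V$ and $\tilde r$ fixes $V$. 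Finally $\hat r(\tilde U) = s(\tilde r(\tilde U)) = s(V) = O'$, so $(O',C,E)$ is a sc-smooth retract.

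The main obstacle I expect is bookkeeping about domains and about what ``sc-smooth'' means for the map $s$ defined only on the retract $V$ rather than on an open subset of the partial quadrant. The key point, which must be invoked cleanly, is Definition \ref{tangent_retract}: a map out of a sc-smooth retract is sc-smooth precisely when its composition with a retraction onto that retract is sc-smooth, and Proposition \ref{preparation_retraction} guarantees independence of the chosen retraction. So $s\colon V\to V\subset O$ being ``a sc-smooth map'' in the statement means $s\circ\tilde r\colon\tilde U\to E$ is sc-smooth, which is exactly what feeds into the chain rule above. A secondary point is checking the idempotency identities $\tilde r|_V = \mathrm{id}$ and $\tilde r\circ s = s$, both of which follow formally from $\tilde r\circ\tilde r = \tilde r$ together with $\mathrm{im}(\tilde r) = V \supseteq \mathrm{im}(s)$; once these are in hand the computation $\hat r\circ\hat r = \hat r$ is routine. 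No genuinely new analytic input is needed beyond the chain rule and the definition of sc-smoothness on retracts.
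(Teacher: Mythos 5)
Your proposal is correct and follows essentially the same approach as the paper: in part (1) you restrict $r$ to the preimage of $O'$, and in part (2) you form the composite $s\circ\tilde r$ of the retraction onto $V$ (obtained from part (1)) with $s$, exactly as the paper does, and the idempotency check is the same routine computation. The only cosmetic differences are the harmless extra intersection with $U'$ in part (1) and the invocation of the chain rule in part (2), which is not really needed since the sc-smoothness of $s\circ\tilde r$ is precisely the content of Definition \ref{tangent_retract}.
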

\begin{proof}\mbox{}\\
(1)\, 
By assumption, there exists a sc-smooth retraction $r\colon 
U\to U$ defined on a relatively open subset $U$ of  the partial quadrant $C\subset E$, whose image is $O=r(U)$.  Since the map $r\colon 
U\rightarrow U$ is continuous,  the set $U':=r^{-1}(O')$ is an open subset of $U$ and, therefore, a relatively open subset of $C$.  Clearly,  $O'\subset U'$ 
and the restriction  $r'=r\vert U'$ defines a sc-smooth retraction $r'\colon 
U'\rightarrow U'$ onto  $r'(U')=O'$.
Consequently, the triple  $(O',C,E)$ is a sc-smooth retract, as claimed.

\noindent (2)\, The triple $(V, C, E)$ is, in view of (1),  a sc-smooth retract. Hence, there exists a sc-smooth retraction $r\colon U\to U$ onto $r(U)=V$, where $U\subset C\subset E$ 
We define the map $\rho\colon U\to U$ by 
$$\rho=s\circ r.$$

Then $\rho$ is sc-smooth and $\rho\circ \rho=(s\circ r)\circ (s\circ r).$ If $x\in U$, then $r(x)\in V$, hence $s(r(x))\in V$ and consequently, $r(s(r(x))=s(r(x))$, so that 
$$r\circ \rho=\rho.$$
From $s\circ s=s$, we conclude that  $\rho\circ \rho=\rho$. Hence $\rho$ is a sc-smooth retraction onto the subset 
$$\rho(U)=s\circ r(U)=s(V)=O'.$$
We see that $(O', C, E)$ is a sc-smooth retract, as claimed.
\end{proof}

In the following every sc-smooth map $r\colon 
O\rightarrow O$, defined on a sc-smooth retract  $O$ and satisfying $r\circ r=r$ will also be called a {\bf sc-smooth retraction}.
\begin{definition}\label{sc-charts}\index{D- Sc-charts}
Let $X$ be a topological space and $x\in X$.
{\bf A chart around $x$}  is a tuple $(V,\phi,(O,C,E))$,\index{$(V,\phi,(O,C,E))$} in which  $V\subset X$ is an open neighborhood of  $x$ in $X$, 
and $\phi\colon 
V\rightarrow O$ is a homeomorphism. Moreover, $(O,C,E)$ is a sc-smooth retract.
Two charts $(V,\phi,(O,C,E))$ and $(V',\psi,(O',C',E'))$ are called {\bf sc-smoothly compatible},  if  the transition maps 
$$\psi\circ\phi^{-1}\colon 
\phi(V\cap V')\rightarrow \psi(V\cap V')\quad \text{and}\quad \phi\circ \psi^{-1}\colon 
\psi(V\cap V')\rightarrow \phi(V\cap V')$$
are sc-smooth maps (in the sense of Definition \ref{tangent_retract}).
\end{definition}
Let us observe that $\phi(V\cap V')$ is an open subset of $O$ and so, in view of part (1) of Proposition  \ref{new_retract_1}, the tuple $(\phi(V\cap V'), C, E)$ is a sc-smooth retract.  So,  the above  transition maps are defined on  sc-smooth retracts.
\begin{definition}\label{sc_atlas}\index{D- Sc-smooth atlas}
{\bf A sc-smooth atlas} on the topological space $X$ consists
of a set of charts 
$$
(V,\varphi,(O,C,E)),
$$ 
such  that any two of them are sc-smoothly compatible and the open sets $V$ cover $X$.
Two {\bf sc-smooth atlases} on $X$  are said to be {\bf equivalent}, if their  union is again a  sc-smooth atlas.
\end{definition}

\begin{definition} \index{D- M-polyfold}
{\bf A M-polyfold} $X$  is a Hausdorff paracompact  topological
space equipped with an equivalence class of sc-smooth atlases.
\end{definition}

Analogous to the smoothness of maps between manifolds we shall define sc-smoothness of maps between M-polyfolds.

\begin{definition}
A {\bf map $f\colon 
X\to Y$ between two M-polyfolds is called sc-smooth} \index{D- Sc-smooth map}
if its coordinate representations are sc-smooth. In detail, this requires the following. If $f(x)=y$ and $(V,\varphi, (O, C, E))$ is a chart around $x$ belonging to the atlas of $X$  and 
$(V',\varphi', (O', C', E'))$ is a chart around $y$ belonging to the atlas of $Y$, so that $f(V)\subset V'$, then the map 
$$\varphi'\circ f\circ \varphi^{-1}\colon 
O\to O'$$
is a sc-smooth map between the sc-smooth retracts in the sense of Definition \ref{tangent_retract}.
\end{definition}

The definition does not depend on the choice of sc-smoothly compatible charts.

We recall that a Hausdorff topological space is paracompact, provided  every open cover of $X$ has an open locally finite refinement.  
It is a well-known fact,  that, given an open cover $\mathscr{U}={(U_i)}_{i\in I}$,  one can find a refinement $\mathscr{V}=(V_i)_{i\in I}$ satisfying  $V_i\subset U_i$  for all $i\in I$ (some  of the sets $V_i$ might be  empty).

Given  a  M-polyfold $X$, we say that a point $x\in X$  is on the level $m$, if  there exists a chart $(V, \varphi, (O, C, E))$ around $x$, such that $\varphi(x)\in O_m$. Of course, this definition is independent of  the choice of a chart around $x$. We denote the collection of all points on the level $m$ by $X_m$.   The topology on $X_m$ is defined as follows.  We abbreviate by  $\mathscr{B}$ the collection of  all sets  $\varphi^{-1}(W)$, where $W$ is an open subset of $O_m:=O\cap E_m$ in the chart $(V, \varphi, (O, C, E))$ on $X$.  Then $\mathscr{B}$ is a basis for a topology on $X_m$.   With this topology, the set $V_m:=V\cap X_m$  is an open subset of $X_m$ and  $\varphi\colon 
V_m\to O_m$ is a homeomorphism, so that  the tuple $(V_m, \varphi, (O_m, C_m, E_m))$ is a chart on $X_m$. 
Any two such  charts are sc-smoothly compatible and  the collection of such charts is an atlas on $X_m$.
In this way the M-polyfold $X$ inherits from the charts the  filtration 
$$X=X_0\supset X_1\supset \cdots \supset X_\infty=\bigcap_{i\geq 0}X_i.$$

\begin{lemma}\label{inculsion_continuous}
The inclusion map $i:X_{m+1}\to X_m$ is continuous for all $m\geq 0$.
\end{lemma}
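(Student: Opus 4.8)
The plan is to verify continuity locally using the compatible charts on $X_{m+1}$ and $X_m$ that were just constructed, and to reduce the statement to the corresponding (already known) fact for sc-Banach spaces. Fix a point $x\in X_{m+1}$ and pick a chart $(V,\varphi,(O,C,E))$ around $x$ belonging to the atlas of $X$. By the construction preceding the lemma, $(V_{m+1},\varphi,(O_{m+1},C_{m+1},E_{m+1}))$ is a chart on $X_{m+1}$ and $(V_m,\varphi,(O_m,C_m,E_m))$ is a chart on $X_m$, and in both cases $\varphi$ is a homeomorphism onto the corresponding retract sitting inside the appropriate Banach-space level. Since $V_{m+1}=V\cap X_{m+1}\subset V\cap X_m=V_m$, the inclusion $i\colon X_{m+1}\to X_m$ restricted to $V_{m+1}$ is, in these charts, nothing but the inclusion
$$O_{m+1}=O\cap E_{m+1}\hookrightarrow O\cap E_m=O_m,$$
that is, the restriction of the inclusion operator $E_{m+1}\to E_m$.

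So the key step is: the inclusion $O\cap E_{m+1}\to O\cap E_m$ is continuous for the subspace topologies inherited from $E_{m+1}$ and $E_m$. This is immediate because, by Definition \ref{sc-structure}(1), the inclusion $E_{m+1}\to E_m$ is (compact, hence in particular) continuous, and a continuous map between topological spaces restricts to a continuous map on any subspace, with the target replaced by any subspace containing the image. Concretely, if $W$ is open in $O_m$, then $W=O_m\cap \tilde W$ for some open $\tilde W\subset E_m$; its preimage under the inclusion is $O_{m+1}\cap \tilde W$, and $\tilde W\cap E_{m+1}$ is open in $E_{m+1}$ by continuity of $E_{m+1}\to E_m$, so the preimage is open in $O_{m+1}$. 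Transporting back through the homeomorphisms $\varphi$, we get that $i^{-1}$ of any basic open set of $X_m$ contained in $V_m$ is open in $X_{m+1}$.

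To finish, note that basic open sets of $X_m$ of the form $\varphi^{-1}(W)$ with $W$ open in $O_m$, taken over all charts, form a basis for the topology of $X_m$; hence it suffices to check that $i^{-1}$ of each such set is open in $X_{m+1}$. Given such a set $\varphi^{-1}(W)$ coming from a chart $(V,\varphi,(O,C,E))$, its preimage under $i$ is $\varphi^{-1}(W)\cap X_{m+1}=\varphi^{-1}(W\cap O_{m+1})$, which by the previous paragraph is open in $V_{m+1}$ and therefore open in $X_{m+1}$. Since this holds for every chart and every such $W$, the map $i$ is continuous, and since $m\geq 0$ was arbitrary, the lemma follows. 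The only mild subtlety — and the point one must not skip — is to check that the charts on the two levels are built from the same ambient chart $(V,\varphi,(O,C,E))$ of $X$, so that in coordinates $i$ really is the inclusion of retract levels and not some genuinely new map; this is exactly what the construction of the filtration on $X$ in the paragraph before the lemma provides, so no new ideas are needed.
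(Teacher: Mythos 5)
Your proof is correct and follows essentially the same route as the paper's: work in a single ambient chart $(V,\varphi,(O,C,E))$, observe that in coordinates the inclusion becomes $O_{m+1}\hookrightarrow O_m$, and then unwind the subspace topologies using the continuity of $E_{m+1}\to E_m$ exactly as the paper does with its $W'$ and $W''=W'\cap E_{m+1}$. The added remark about using the \emph{same} ambient chart for both levels is a fair observation but is already implicit in the paper's construction of the filtration.
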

\begin{proof} 
In view of the definition of topologies on $X_{m}$ and $X_{m+1}$, it suffices to show, that if $(V, \varphi, (O, C, E))$ is a chart on $X$ and $W$ is an open subset of $O_m$, then $\varphi^{-1}(W)\cap X_{m+1}$ is open in $X_{m+1}$. If $x\in \varphi^{-1}(W)\cap X_{m+1}$, then $\varphi (x)\in W\cap O_{m+1}\subset W\cap E_{m+1}$. Since $W$ is open in $O_{m}$, there exists an open set  $W'$ in $E_m$, so that $W=W'\cap O$. Hence, $\varphi (x)\in  (W'\cap E_{m+1})\cap O=W''\cap O$, where $W''=W'\cap E_{m+1}$ is an open subset of $E_{m+1}$. Hence $W''\cap O$ is an open subset of $O_{m+1}$ and $ \varphi^{-1}(W)\cap X_{m+1}=\varphi^{-1}(W''\cap O)$,  proving our claim.
\end{proof}

\begin{theorem}\label{X_m_paracompact}\index{T- Paracompactness of $X_m$}
Let $X$ be a M-polyfold. For every $m\geq 0$, the space $X_m$ is metrizable and, in particular, paracompact. In addition, the space $X_\infty$ is metrizable.
\end{theorem}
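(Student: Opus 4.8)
The plan is to reduce the metrizability of $X_m$ to a purely local statement by passing to a locally finite atlas of $X$, and then to invoke a classical metrization criterion (Nagata--Smirnov). The essential difficulty to keep in mind is that $X_m$ carries the topology introduced just before Lemma \ref{inculsion_continuous}, which is \emph{strictly finer} than the subspace topology it would inherit from $X$; in particular one cannot simply restrict a metric from $X$, and neither paracompactness nor regularity of $X_m$ is automatic. (Since $X$ is assumed only paracompact and Hausdorff, the case $m=0$ must be treated on the same footing, the difference being that there paracompactness is already given.)

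First I would fix a good atlas. As $X$ is paracompact and Hausdorff it is normal, so the given atlas $\{(V^\alpha,\varphi^\alpha,(O^\alpha,C^\alpha,E^\alpha))\}$ admits a locally finite open refinement; by part (1) of Proposition \ref{new_retract_1} each member of the refinement again carries a chart (restrict the retraction onto the relevant open subset of the retract), so after relabelling we may assume the atlas $\{(V^i,\varphi^i,(O^i,C^i,E^i))\}_{i\in I}$ is itself locally finite. Using normality once more, shrink it to an open cover $\{W^i\}_{i\in I}$ of $X$ with $\overline{W^i}\subset V^i$ for every $i$.

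Now transfer this data to $X_m$. Put $V^i_m=V^i\cap X_m$ and $W^i_m=W^i\cap X_m$; the chart structure on $X_m$ makes $\varphi^i\colon V^i_m\to O^i_m$ a homeomorphism onto $O^i_m=O^i\cap E^i_m$, a subspace of the Banach space $E^i_m$, hence a metrizable space. Because the inclusion $X_m\hookrightarrow X$ is continuous (Lemma \ref{inculsion_continuous}), the open cover $\{V^i_m\}$ of $X_m$ is locally finite, and closures computed in $X_m$ lie inside closures computed in $X$; hence $\overline{W^i_m}^{\,X_m}\subset\overline{W^i}^{\,X}\cap X_m\subset V^i_m$. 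Two facts about $X_m$ now follow. First, $X_m$ is Hausdorff, since it injects continuously into the Hausdorff space $X$. Second, $X_m$ is regular: given $x\in X_m$ and a closed set $A$ with $x\notin A$, choose $i$ with $x\in W^i_m$, transport the norm metric of $E^i_m$ to $V^i_m$ via $\varphi^i$, and pick a small metric ball $B$ about $x$ with $\overline B\subset W^i_m$ and $B\cap A=\emptyset$; then $B$ is open in $X_m$, while $\overline B^{\,X_m}$, being contained in $\overline{W^i_m}^{\,X_m}\subset V^i_m$, is computed inside the metrizable space $V^i_m$ and is therefore disjoint from $A$, so $x$ has a neighbourhood basis of closed sets. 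Finally, from the locally finite open cover $\{V^i_m\}$ by metrizable subspaces one assembles a $\sigma$-locally finite base of $X_m$ (take the union over $i$ of $\sigma$-locally finite bases of the $V^i_m$, and use local finiteness of $\{V^i_m\}$ to check that the resulting family is $\sigma$-locally finite in $X_m$). Being regular and $T_1$ with a $\sigma$-locally finite base, $X_m$ is metrizable by the Nagata--Smirnov metrization theorem, and in particular paracompact.

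For $X_\infty$ the argument is identical, the only change being the local models: the charts are $(V_\infty,\varphi,(O_\infty,C_\infty,E_\infty))$ with $O^i_\infty\subset E^i_\infty$, and $E^i_\infty=\bigcap_{m\ge 0}E^i_m$, with the topology generated by the norms $\norm{\cdot}_m$, is a Fréchet space and hence still metrizable; local finiteness, Hausdorffness, regularity, the $\sigma$-locally finite base and Nagata--Smirnov then apply verbatim, giving metrizability of $X_\infty$. I expect the main obstacle to be exactly the regularity step together with the reduction preceding it: one must manufacture closed neighbourhoods of points of $X_m$ in the finer topology, and the device that makes this work is the shrinking $\overline{W^i}\subset V^i$, which forces the $X_m$-closure of a small chart ball to stay inside a single chart.
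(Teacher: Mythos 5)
Your approach is correct in outline but takes a genuinely different route from the paper. The paper invokes Smirnov's metrization theorem (Theorem \ref{Smirnov}), which takes paracompactness as an input, and so must prove paracompactness of $X_m$ separately; this is done via Proposition \ref{union_of_paracompact}, by pulling a closed locally finite refinement from $X$ down to $X_m$. You instead invoke the Nagata--Smirnov criterion (regular $T_1$ plus a $\sigma$-locally finite base), which does not presuppose paracompactness and hence avoids Proposition \ref{union_of_paracompact} entirely; paracompactness of $X_m$ then falls out of metrizability rather than having to be established first. You also correctly isolate the real difficulty -- the topology on $X_m$ is strictly finer than the subspace topology, so regularity of $X_m$ is not inherited from $X$ and must be proved -- and the shrinking $\overline{W^i}\subset V^i$ is exactly the right device for doing so. For $X_\infty$, the paper writes down an explicit metric $\sum_m 2^{-m}d_m/(1+d_m)$, whereas you rerun the chart argument with Fr\'echet local models $E_\infty$; both are valid.

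There is one point where your sketch elides a genuine subtlety, and it is the same one you handle carefully in the regularity step. In the final step you claim that the union over $i$ of $\sigma$-locally finite bases of the $V^i_m$ is $\sigma$-locally finite in $X_m$ ``by local finiteness of $\{V^i_m\}$.'' That alone is not enough: local finiteness of $\{V^i_m\}$ cuts the index set down to finitely many $i$ near a given $x$, but for an index $i$ with $x$ in the $X_m$-boundary of $V^i_m$, base elements of $V^i_m$ can still accumulate at $x$. (In $\mathbb{R}$, a base of $(0,\infty)$ may contain all intervals $(\tfrac{1}{n+1},\tfrac{1}{n})$, which cluster at $0\notin(0,\infty)$.) The shrinking fixes this: restrict, for each $i$, to those base elements of $V^i_m$ contained in $W^i_m$; since $\{W^i_m\}$ still covers $X_m$ these remain a base, and now for any $x$ and any offending $i$ with $x\notin V^i_m$, the set $X_m\setminus\overline{W^i_m}^{\,X_m}$ is an $X_m$-open neighborhood of $x$ -- precisely because $\overline{W^i_m}^{\,X_m}\subset V^i_m$ -- that misses every restricted base element coming from index $i$. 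With this correction the assembled family really is $\sigma$-locally finite and Nagata--Smirnov applies.
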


The proof is  postponed to  Appendix \ref{A2.2}.

In order to define the tangent $TX$ of the M-polyfold $X$, we start with its  local description in a chart.
\begin{definition}[{\bf Tangent space $T_xO$}]\index{D- Tangent space}
Let $(O, C, E)$ be a retract and $T(O,C,E)=(TO,TC,TE)$ its tangent,  so that $p\colon 
TO\rightarrow O^1$ is the tangent bundle over $O$.
The {\bf tangent space} $T_xO$ at a point $x\in O^1$ is the pre-image $p^{-1}(x)$, which is a Banach space. Note that only in the case that $x$ is  a smooth point, the tangent space  $T_xO$ has a natural sc-structure.
\end{definition} 
In the case that $x\in O^1$,  the tangent space $T_xO$\index{$T_xO$} is the image of the projection $Dr(x)\colon 
E\rightarrow E$, where $r\colon 
U\rightarrow U$ is any sc-smooth retraction
associated with  $(O,C,E)$ satisfying $r(U)=O$.

Next we consider tuples  $(x, V, \varphi,(O,C,E),h)$, in which $x\in X_1$ is a point in the M-polyfold $X$ on  level $1$ and  $(V,\varphi,(O,C,E))$ is a chart around the point $x$. Moreover,  $h\in T_{\varphi (x)}O$. 
Two such tuples, 
$$
(x, V, \varphi,(O,C,E),h)\quad \text{and} \quad (x',V', \psi, (O',C',E'),h'),
$$
are called equivalent, if  
$$x=x'\quad  \text{and}\quad T(\varphi\circ\psi^{-1})(\psi (x))h'=h.$$ 
\begin{definition}\label{tangent_space}\index{D- Tangent space}
The {\bf tangent space $TX$  of $X$}  as a set, is the collection of all equivalence classes $[(x,V, \varphi,(O,C,E),h)]$.  
\end{definition}

If $x\in X_1$ is fixed, the tangent space $T_xX$ of the M-polyfold $X$ at the point $x \in X$ is the subset of equivalence classes
$$T_xX=\{[(x, \varphi, V, (O, C, E), h)]\ \vert \,  h\in T_{\varphi (x)}O\}.$$
It has the structure of a vector space defined by 
$\lambda\cdot [(x, \varphi, V, (O, C, E), h_1)]+
\mu\cdot [(x, \varphi, V, (O, C, E), h_2)]=
[(x, \varphi, V, (O, C, E), (\lambda h_1+\mu h_2)]$ where $\lambda,\mu\in \R$ and $h_1,h_2\in T_{\varphi(x)}O$.
Clearly,
\begin{equation}\label{equation_tangent_bundle}
TX=\bigcup_{x\in X_1}\{x\}\times T_xX.
\end{equation}

In order to  define a topology on the tangent space $TX$,  we first fix a chart $(V,\varphi,  (O, C, E))$ on $X$ and associate with it a subset $TV$ of $TX$,  defined as   
$$TV:
=\{[(x, \varphi, V, (O, C, E), h)]\ \vert \, x\in V,\, h\in T_{\varphi (x)}O\}.$$

We introduce the {\bf tangent map}  $T\varphi\colon TV\to TO$\index{Tangent map}  by 
$$T\varphi ([(x, \varphi, V, (O, C, E), h)])=(\varphi (x), h),$$
where $h\in T_{\varphi(x)}O$. If  $x\in X_1$ is fixed, then the map
$$T\varphi\colon 
T_xV\to T_{\varphi (x)}O$$
is a linear isomorphism.
Therefore, $T_xX$, the tangent space at $x \in X_1$ inherits the Banach space structure from $T_{\varphi(x)} O \subset E = E_0$. If $x \in X_\infty$, the tangent space $T_xX$ is a sc-Banach space, because the tangent space at the smooth point $\varphi(x) \in O_\infty$ is a sc-Banach space.

If $W$ is an an open subset of $TO$,  we define  the subset $\wt{W}\subset TV$ by $\wt{W}:=(T\varphi )^{-1}(W).$  
We denote by  $\mathscr{B}$ the collection of all such sets  $\wt{W}$ obtained by taking all the charts  $(\varphi, V, (O, C, E))$  of the atlas and all open subsets $W$ of the corresponding tangents $TO$. 

\begin{proposition}\label{op}\mbox{}\index{P- Properties of $TX$}
\begin{itemize}
\item[\em(1)] The collection $\mathscr{B}$ defines a basis for a Hausdorff  topology on $TX$.  
\item[\em(2)] The projection $p\colon 
TX\to X^1$ is a continuous and an open map.
\item[\em(3)] With the topology defined by $\mathscr{B}$,  the tangent space $TX$ of the M-polyfold $X$  is metrizable and hence, in particular,  paracompact.
\end{itemize}
\end{proposition}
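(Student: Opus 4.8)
The plan is to verify the three assertions in order, working throughout with the basis $\mathscr{B}$ and reducing everything to the corresponding facts for the model tangents $TO$, which are themselves retracts (tangents of retracts) and hence inherit good topological behaviour.

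\emph{Part (1).} First I would check that $\mathscr{B}$ is a basis, i.e. that a nonempty intersection $\wt{W}_1\cap\wt{W}_2$ of two basis elements, coming from charts $(V_1,\varphi_1,(O_1,C_1,E_1))$ and $(V_2,\varphi_2,(O_2,C_2,E_2))$, is again a union of basis elements. Given a point $[(x,\dots,h)]$ in the intersection, I would push it into the chart $T\varphi_1$, obtaining $(\varphi_1(x),h)\in W_1$, and use that the transition map $T(\varphi_2\circ\varphi_1^{-1})$ is sc-smooth, hence $\ssc^0$, hence continuous on the relevant level; pulling back $W_2$ through this continuous map and intersecting with $W_1$ produces an open subset of $TO_1$ whose $(T\varphi_1)^{-1}$-preimage is a basis element contained in the intersection and containing the chosen point. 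For the Hausdorff property: two distinct classes $[(x,\dots,h)]\neq[(x',\dots,h')]$ either have $x\neq x'$, in which case I would separate $x,x'$ in $X^1$ (which is Hausdorff, being a M-polyfold on level $1$ — more precisely metrizable by Theorem \ref{X_m_paracompact}) and pull the separating opens back via $p$; or $x=x'$ but $h\neq h'$ as elements of $T_{\varphi(x)}O$, in which case I would work inside a single chart $TV$, where $T\varphi$ is a bijection onto $TO$, and use that $TO$ is a subset of the Banach space $TE$ at level $0$ and is therefore Hausdorff, separating $(\varphi(x),h)$ from $(\varphi(x),h')$ by opens in $TO$ and pulling them back.

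\emph{Part (2).} Continuity of $p\colon TX\to X^1$: the preimage of a basic open set $\varphi^{-1}(W)$ of $X_1$ (with $W$ open in $O_m$, $m\geq 1$) is, within the chart $TV$, exactly $(T\varphi)^{-1}(p_O^{-1}(W))$ where $p_O\colon TO\to O^1$ is the model projection, and $p_O$ is continuous (it is the restriction of the obvious bounded projection $TE=E^1\oplus E\to E^1$), so this preimage is open; since the $TV$ cover $TX$, continuity follows. Openness of $p$: it suffices to show $p$ maps basis elements to open sets, and for a basis element $\wt W=(T\varphi)^{-1}(W)$ with $W$ open in $TO$ one has $p(\wt W)=\varphi^{-1}(p_O(W))$, so I only need that $p_O\colon TO\to O^1$ is an open map. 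This is where I would invoke the bundle-retract structure: $TO=Tr(TU)$ with $Tr(x,h)=(r(x),Dr(x)h)$, so $TO\to O^1$ has the local form $(x,h)\mapsto x$ of a (retracted) trivial bundle, and projections of trivial bundles restricted to retracts of this product form are open; concretely, given $(x_0,h_0)\in W$ open in $TO$ and a point $x$ near $x_0$ in $O^1$, the point $(x,Dr(x)h_0)$ lies in $TO$ and tends to $(x_0,h_0)$, so it lands in $W$ for $x$ close enough, exhibiting a neighbourhood of $x_0$ inside $p_O(W)$.

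\emph{Part (3).} For metrizability of $TX$ I would first establish that $TX$ is Hausdorff (done in (1)) and paracompact, and then produce a metric, or more efficiently apply the same strategy used for Theorem \ref{X_m_paracompact} in Appendix \ref{A2.2}: $TX$ is locally metrizable (each $TV$ is homeomorphic via $T\varphi$ to $TO$, which sits in the metrizable space $TE_0=E_1\oplus E_0$, hence is metrizable), it is Hausdorff, and $X^1$ is paracompact (Theorem \ref{X_m_paracompact}) so one can take a locally finite refinement of a chart cover subordinate to a partition of unity and patch the local metrics, or alternatively note that $TX\to X^1$ is an open continuous surjection with metrizable fibres and metrizable locally trivialized pieces over a metrizable paracompact base. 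I expect the bulk of this to be a routine adaptation of the proof of Theorem \ref{X_m_paracompact} rather than anything new.

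\emph{Main obstacle.} The only genuinely nonformal point is the openness of $p$ in part (2): it is false for projections of arbitrary subsets, and the argument must use the specific form $TO=\{(r(x),Dr(x)h)\}$ — that is, the fact that over each $x\in O^1$ the fibre $T_xO=Dr(x)E$ is nonempty and varies ``continuously downward'' so that points of $O^1$ near $x_0$ can be lifted into any prescribed neighbourhood in $TO$ of a given point over $x_0$. Once that lifting is in hand, everything else is bookkeeping with basis elements and transition maps together with the already-proven metrizability of $X^1$.
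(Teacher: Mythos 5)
Your proposal is correct and follows essentially the same route as the paper's Appendix proof: part (1) via transition maps and a case split on $x\neq x'$ versus $x=x'$, part (2) by reducing to the model projection $\pi\colon TO\to O^1$, and part (3) by the same local-metrizability plus Smirnov/paracompactness argument as in the proof of Theorem \ref{X_m_paracompact}. The one place where you go beyond the paper is the openness of $\pi\colon TO\to O^1$, which the paper asserts without argument; your lifting $x\mapsto(x,Dr(x)h_0)$, whose continuity follows from $Tr$ being $\ssc^0$ and which lands in $TO$ because $Tr(x,h_0)=(r(x),Dr(x)h_0)=(x,Dr(x)h_0)$ for $x\in O^1$, correctly supplies the missing detail.
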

The proof is postponed to  Appendix \ref{A2.11}.
In view of our definition of the topology on $TX$, the map 
$T\varphi\colon 
 TV\to TO$, associated with the chart $(\varphi, V, (O, C, E))$,  is a homeomorphism. Moreover, given two such maps 
$$T\varphi \colon 
TV\to TO\quad \text{and}\quad T\varphi'\colon 
TV'\to TO', $$ the composition $T\varphi' \circ (T\varphi)^{-1}\colon 
 T\varphi (TV\cap TV')\to T\varphi' (TV\cap TV')$ is explicitly of the form 
\begin{equation}\label{transition_1}
T\varphi' \circ (T\varphi)^{-1}(a, h)=(\varphi'\circ \varphi^{-1}(a), 
D(\varphi'\circ \varphi^{-1})(a)\cdot h)=T(\varphi'\circ\varphi^{-1})(a,h).
\end{equation}
Since the transition map  $\varphi' \circ  \varphi^{-1}$ between sc-retracts is sc-smooth, the composition in \eqref{transition_1} is also sc-smooth. 
In addition, $(TO, TC, TE)$ is a sc-smooth retract. Consequently, the tuples $(TV, T\varphi,  (TO, TC, TE))$  define a sc-smooth atlas on $TX$. Since, as we have proved above, $TX$ is paracompact,  the tangent space $TX$ of the M-polyfold $X$ is also  a M-polyfold.  The projection map $p\colon 
TX\to X^1$  is locally built on the bundle retractions $TO\rightarrow O^1$,  and the transition maps of the charts
are sc-smooth bundle maps. Therefore, 
$$p\colon 
TX\rightarrow X^1$$ 
is a sc-smooth M-polyfold bundle.

The  M-polyfold is the notion of a smooth manifold in our extended universe.  If 
$X$ is a  M-polyfold,  which consists entirely of smooth points, then it  has a tangent space at every point. 
There are finite-dimensional examples. For example,  the  chap depicted in Figure \ref{fig:pict1} has a M-polyfold structure,
for which $X=X_\infty$. It illustrates, in particular, that M-polyfolds allow to describe in a smooth way geometric objects having locally varying dimensions.
 For details in the construction of the chap and further illustrations, we refer to \cite{HWZ8.7}, Section 1, in particular, Example 1.22.

\begin{figure}[htb]
\centering
\def\svgwidth{30ex}
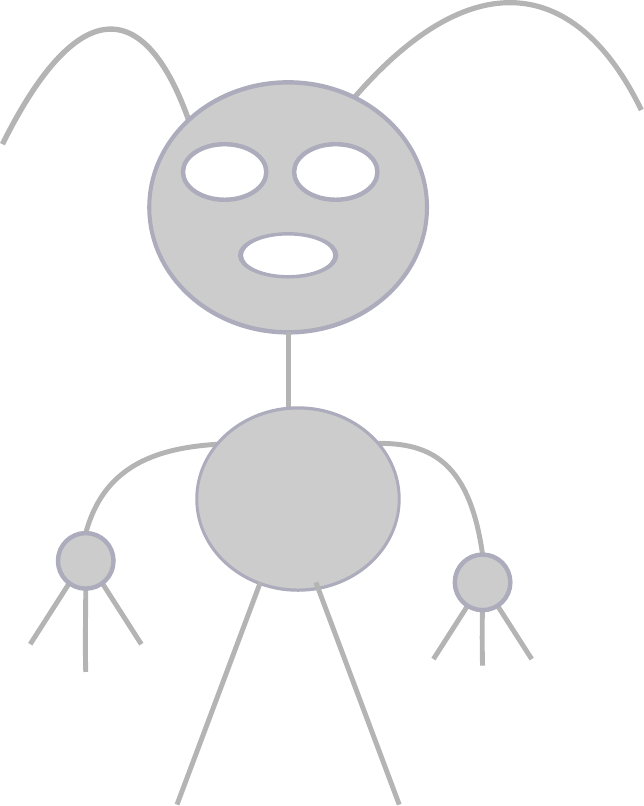
\caption{This chap has a  M-polyfold structure for which every point is smooth. }\label{fig:pict1}
\end{figure}

Next we introduce the notion of a sub-M-polyfold.

\begin{definition}\label{def_sc_smooth_sub_M_polyfold}\index{D- Sub-M-polyfold}
Let $X$ be a M-polyfold and let $A$ be a subset of $X$. The subset  $A$ is called a {\bf sub-M-polyfold} of $X$, 
if every $a\in A$ possesses  an open neighborhood $V$ and a sc-smooth retraction
$r\colon 
V\rightarrow V$, such that 
$$r(V)=A\cap V.$$
\end{definition}

\begin{proposition}\label{sc_structure_sub_M_polyfold}\index{P- Properties of sub-M-polyfolds}
A sub-M-polyfold $A$ of a  M-polyfold $X$ has, in a natural way, the structure of a  M-polyfold,
for which  the following holds. 
\begin{itemize}
\item[{\em (1)}] The inclusion map $i\colon A\rightarrow X$ is sc-smooth and a homeomorphism onto its image.
\item[{\em (2)}] For every $a\in A$ and every sc-smooth  retraction $r\colon V\rightarrow V$ satisfying  $r(V)=A\cap V$ and $a\in V$,
the map $i^{-1}\circ r\colon V\rightarrow A$ is sc-smooth.
\item[{\em (3)}] The tangent space $T_aA$ for a smooth $a\in A$ has a sc-complement in $T_aX$.
\item[{\em (4)}] If $a$ is a smooth point and $s\colon 
W\rightarrow W
$ is a sc-smooth retraction satisfying  $s(W)=W\cap A$ and $a\in W$, then the induced map
$W\rightarrow A$ is sc-smooth and $Ts(a)T_aX=T_aA$.
\end{itemize}
\end{proposition}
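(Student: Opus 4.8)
The plan is to give $A$ the subspace topology together with a sc-smooth atlas obtained by pushing the local retractions $r\colon V\to V$ of Definition \ref{def_sc_smooth_sub_M_polyfold} through the charts of $X$, and then to read (1)--(4) off the coordinate descriptions. To build the charts I would fix $a\in A$, take $V$ and $r\colon V\to V$ with $r(V)=A\cap V$ as in the definition, and an $X$-chart $(W,\phi,(O,C,E))$ around $a$ with $W\subset V$, where $\bar r\colon\bar U\to\bar U$ ($\bar U$ relatively open in $C$) is a sc-smooth retraction onto $O$. Replacing $W$ by $W\cap r^{-1}(W)$, the map $r$ restricts to a sc-smooth retraction $W\to W$ with image $A\cap W$ (it is the identity on $A\cap V$), so $\rho:=\phi\circ r\circ\phi^{-1}$ is a sc-smooth self-map of the relatively open set $\phi(W)\subset O$ with $\rho\circ\rho=\rho$ and image $O_A:=\phi(A\cap W)$. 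By Proposition \ref{new_retract_1}(1), $(\phi(W),C,E)$ is a sc-smooth retract; applying Proposition \ref{new_retract_1}(2) to $\rho$ then shows $(O_A,C,E)$ is a sc-smooth retract, with an explicit retraction $\rho_a:=\rho\circ\bar r$ onto $O_A$ defined on a relatively open subset of $C$. Then $(A\cap W,\phi|_{A\cap W},(O_A,C,E))$ is a chart on $A$ around $a$, and these cover $A$.

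Next I would verify compatibility. For two such charts, built from $X$-charts $\phi,\psi$, the transition map $\tau$ is the restriction of the $X$-transition $\psi\circ\phi^{-1}$ to an open subset of $O_A$. Shrinking domains so that $r$ carries the relevant neighborhood of $a$ into the domain of $\psi$, the composite $\psi\circ r\circ\phi^{-1}$ is exactly a coordinate representation of the sc-smooth M-polyfold map $r$, hence sc-smooth between the corresponding sc-smooth retracts; and a short computation gives $\tau\circ\rho_a=(\psi\circ r\circ\phi^{-1})\circ\bar r$ on a suitable domain, which is sc-smooth by the chain rule (Theorem \ref{sccomp}). By Definition \ref{tangent_retract} (which tests sc-smoothness of $\tau$ by composing with a retraction onto the source retract, namely $\rho_a$) the map $\tau$ is sc-smooth, and symmetrically so is its inverse. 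Thus we obtain a sc-smooth atlas; $A$ with the subspace topology is Hausdorff, and metrizable---hence paracompact---as a subspace of $X=X_0$, which is metrizable by Theorem \ref{X_m_paracompact}. So $A$ is a M-polyfold, and since $O_A\cap E_m=\phi(A\cap W)\cap E_m$ the filtration it inherits is the restriction of that of $X$; in particular a point of $A$ is smooth in $A$ precisely when it is smooth in $X$.

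It remains to read off (1)--(4). In the charts above the coordinate representation of $i$ is the inclusion $O_A\hookrightarrow O$; composing with $\rho_a$ returns $\rho_a$, which is sc-smooth, so $i$ is sc-smooth, and it is a homeomorphism onto its image since the M-polyfold topology on $A$ is the subspace topology. This gives (1). For (2), the coordinate representation of $i^{-1}\circ r$ in suitable charts is $\phi\circ r\circ\psi^{-1}$, a coordinate representation of the sc-smooth map $r$; since its image lies in $O_A\subset O$ and sc-smoothness into a retract depends only on the ambient sc-Banach space (Definition \ref{tangent_retract}), this map is sc-smooth into $O_A$, so $i^{-1}\circ r$ is sc-smooth. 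For (3), let $a\in A$ be smooth; then $\phi(a)$ is a smooth point, so $D\bar r(\phi(a))$ and $D\rho_a(\phi(a))$ are sc-operators on $E$ (remark after Proposition \ref{x1}) and projections (chain rule on $\bar r\circ\bar r=\bar r$ and $\rho_a\circ\rho_a=\rho_a$), with images $T_{\phi(a)}O\cong T_aX$ and $T_{\phi(a)}O_A\cong T_aA$. From $\bar r\circ\rho_a=\rho_a$ we get $T_{\phi(a)}O_A\subset T_{\phi(a)}O$ and that $P:=D\rho_a(\phi(a))|_{T_{\phi(a)}O}$ is a sc-projection of the sc-Banach space $T_{\phi(a)}O$ with image $T_{\phi(a)}O_A$. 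A sc-projection on a sc-Banach space yields a sc-splitting, the complement being $\ker P$: density of $E_\infty\cap\ker P$ in $E_m\cap\ker P$ is obtained, just as in the proof of Proposition \ref{prop1}, by applying $\id-P$ to an approximating sequence. Transporting back, $T_aA$ has a sc-complement in $T_aX$. Finally, for (4) the sc-smoothness of the induced map $W\to A$ is (2) with $s$ in place of $r$, and in the chart $\phi$ the projection $Ts(a)$ corresponds to $D\sigma_a(\phi(a))|_{T_{\phi(a)}O}$, where $\sigma_a$ is the retraction onto $O_A$ built from $s$ exactly as $\rho_a$ was built from $r$; its image is $T_{\phi(a)}O_A$ because $\rho_a$ and $\sigma_a$ retract onto the same retract $O_A$ and so have equal tangents $TO_A$ by Proposition \ref{independent_of_retractions}. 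Hence $Ts(a)T_aX=T_aA$.

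The hard part will be the domain bookkeeping behind the compatibility step: one must choose the shrinkings carefully enough that the transition maps of the $A$-charts become honest restrictions of coordinate representations of the sc-smooth retraction $r$, so that Definition \ref{tangent_retract} can be invoked. The other genuinely non-formal ingredient is the fact used in (3) that a sc-projection at a smooth point produces a sc-splitting, which simply reproduces the density argument in the proof of Proposition \ref{prop1}; everything else is the chain rule together with Proposition \ref{new_retract_1}.
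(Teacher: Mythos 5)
Your proposal is correct and follows essentially the same route as the paper: build charts on $A$ by restricting the $X$-charts to $A$, use Proposition \ref{new_retract_1}(1) and then (2) (with $\rho = \varphi\circ r\circ\varphi^{-1}$) to see the images are retracts, prove compatibility by feeding a retraction onto the source retract into the transition map and invoking the chain rule, and read off (1)--(4) from the resulting coordinate descriptions with the splitting in (3) coming from the sc-projection $D(\rho\circ\bar r)(\varphi(a))$ restricted to $T_{\varphi(a)}O$. The only small addition you make over the paper's write-up is to spell out why the kernel of that sc-projection is actually a sc-subspace (via the density argument borrowed from the proof of Proposition \ref{prop1}), a point the paper leaves implicit; and you articulate (4) via Proposition \ref{independent_of_retractions} rather than just saying it follows "by the same arguments." Both are harmless clarifications, not a change of method.
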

\begin{proof}
We  first define a sc-smooth atlas for $A$. We choose a point $a\in A$ and let $( \varphi, V, (O, C, E))$ be a chart of the M-polyfold $X$ around the point $a$.  By definition of a sub-M-polyfold,  there exists an open neighborhood $U$ of $a$ in $X$ and a sc-smooth retraction $r\colon 
U\to U$ satisfying  $r(U)=U\cap A$.  The set $W\subset X$, defined by  $W:=r^{-1}(U\cap V)\cap (U\cap V)$, is  open in $X$ and satisfies  $r(W)\subset W$ and $r(W)=W\cap A$. Hence, $\varphi (W)$ is an open subset of $O$, so that, in view of of part (1) 
Proposition  \ref{new_retract_1}, the tuple $(\varphi (W), C, E)$ is a sc-retract.  We may therefore  assume without loss of generality that $U=V=W$.  We define the sc-smooth map   $\rho\colon O\to O$ by 
$$\rho=\varphi\circ r\circ \varphi^{-1}.$$
From $r\circ r=r$ we deduce  $\rho\circ \rho=\rho$, so that $\rho$ is a sc-smooth retraction onto $\rho(O)=O'$. By the statement (2) in  Proposition  \ref{new_retract_1},  the triple $(O', C, E)$ is a sc-retract. Therefore there exists a relatively open subset $U'$ of the partial quadrant $C$ in $E$ and a sc-smooth retraction $r'\colon 
U'\to U'$ onto  $r'(U')=O'$.  Restricting the map $\varphi$ to $W\cap A$,  we set $\psi:=\varphi\vert W\cap A$ and compute,
$$\psi (W\cap A)=\varphi (W\cap A)=\varphi \circ r(W)=\varphi\circ r\circ \varphi^{-1}(O)=\rho(O)=O'.$$
Consequently,  $\psi\colon 
 W\cap A\to O'$ is a homeomorphism and the triple  $( \psi, W\cap A, , (O', C, E))$ is a chart on $A$. 

In order to consider the chart transformation we take a second compatible chart $( \varphi', V', (\wh{O}, C', E'))$ of the M-polyfold $X$ around the point $a\in A$ and use it two construct the second chart $( \psi', W'\cap A, , (O'', C', E'))$ of $A$. 
We shall show that the second chart is compatible with the already constructed chart  $( \psi, W\cap A, , (O', C, E))$. 
The domain $\psi ((W\cap A)\cap (W'\cap A))$ of  the transition map 
\begin{equation}\label{transition_map_0}
\psi'\circ \psi^{-1}\colon 
\psi ((W\cap A)\cap (W'\cap A))\to \psi' ((W'\cap A)\cap (W\cap A))
\end{equation}
 is an open subset of $O'$, so that, in view of  (2) of Proposition \eqref{new_retract_1}, there exists a relatively open subset $U''\subset C$ and a sc-smooth retraction $s''\colon 
U''\to U''$ onto  $s''(U'')=\psi ((W\cap A)\cap (W'\cap A))$.  By construction,
 $$(\psi'\circ \psi^{-1})\circ s''=(\phi'\circ \phi^{-1})\circ s''.$$
 The chart transformation $\varphi'\circ \varphi^{-1}\colon 
O\to \wh{O}$ is sc-smooth so that by the chain rule 
the right-hand side is sc-smooth. Therefore, also the left-hand  is a sc-smooth map. So, in view of Definition \ref{tangent_retract} of a sc-smooth map between retracts, the transition map $\psi'\circ \psi^{-1}$ is sc-smooth.
  
We have shown that  the collection of charts $(\psi, W\cap A, (\psi (W\cap A), C, E))$  defines a sc-smooth atlas for $A$.  The sc-smooth structure on $A$  is defined by its equivalence class.

In order to prove the statement (2) in 
Proposition \ref{sc_structure_sub_M_polyfold} we use the above local coordinates, assuming, as above, that $U=V=W$. 
The inclusion map
$i\colon 
A\rightarrow X$ is, in the local coordinates, the inclusion 
$j\colon 
O'=r'(U')\to O$ which  is a  sc-smooth map since $r'\colon 
U'\to U'$ is a  sc-retraction. Conversely, the relations $\varphi\circ r\circ \varphi^{-1}(O)=O'$ and $U=\varphi^{-1}(O)$ show that the map
$i^{-1}\circ r\colon 
U\to A$ is sc-smooth because the retraction $r\colon 
U\to U$ is, by assumption, sc-smooth. This  proves  statement (2) of the proposition. 


In order to prove the statement (3) we work in local coordinates,  and assume that  $X$ is given by the triple $(O,C,E)$ in which $O=r(U)$ and $r\colon 
U\to U$ is a retraction of the relatively open subset $U$ of $C$ in $E$. Then  $A$ is a subset of $O$ having the property that every point $a\in A$ possesses  an open neighborhood $V$ in $O$ and a sc-smooth retraction $s\colon 
V\rightarrow V$ onto $s(V)=A\cap V$. We now assume that $a\in A$ is  a smooth point and introduce  the map $t=s\circ r\colon 
U\to U$. Then 
$t\circ t=t$ and $t$ is a sc-smooth retraction onto the set $V\cap A$. Hence the tangent space $T_aA$ is defined by 
$$T_aA=Dt(a)E=Ds(a)\circ Dr(a)E=Ds(a)T_aO.$$
From $r\circ t=r\circ s\circ r=s\circ r=t$ we conclude 
$Dr(a)\circ Dt(a)E=Dt(a)E$ and hence $Dt(a)E\subset T_aO.$
Therefore, 
\begin{equation*}
\begin{split}
T_aO=Dr(a)E&=Ds(a)\circ Dr (a)+(I-Ds(a))Dr(a)E\\
&=
(T_aA)\oplus (I-Ds(a))(T_aO).
\end{split}
\end{equation*}
This proves the statement (3).  Using the same arguments,  the statement (4) follows and the proof of Proposition \ref{sc_structure_sub_M_polyfold} is complete.

\end{proof}

\subsection{Degeneracy Index and Boundary Geometry}

On the   M-polyfold $X$ we shall introduce the map $d_X\colon 
X\rightarrow {\mathbb N}_0$, called degeneracy index,  as follows.
We first take a smooth chart  $(V,\phi,(O,C,E))$ around the point $x$ 
and define the integer 
$$
d(x, V,\phi,(O,C,E))= d_C(\phi(x)),\index{$d(x, V,\phi,(O,C,E))$}
$$
where $d_C$ is  the index defined in 
Section \ref{subsection_boundary_recognition}.
In  other words, we record how many vanishing coordinates the image point $\phi(x)$ has in the partial quadrant $C$.
\begin{definition}\label{M_polyfold_degeneracy _index}\index{D- Degeneracy index $d_X$}\index{$d_X$}
The {\bf degeneracy} $d_X(x)$ at the point $x\in X$ is the minimum of all numbers 
$d(x, V, \phi,(O,C,E))$, where $(V,\phi,(O,C,E))$ varies over all smooth charts around  the point $x$.
The {\bf degeneracy index}  of the M-polyfold $X$ is the map $d_X\colon 
X\rightarrow {\mathbb N}_0$.
\end{definition}

The next lemma is evident.
\begin{lemma}\index{L- Local property of $d_X$}
Every point $x\in X$ possesses an open neighborhood $U(x)$, such  that $d_X(y)\leq d_X(x)$ for all $y\in U(x)$. 
\end{lemma}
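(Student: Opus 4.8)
The plan is to realize $d_X(x)$ in a single smooth chart and then exploit continuity of the linear coordinate functionals defining the ambient partial quadrant. Since the degeneracy $d_X(x)$ is, by its definition, the minimum of a nonempty set of non-negative integers, it is attained: there is a smooth chart $(V,\phi,(O,C,E))$ around $x$ with $d_X(x)=d(x,V,\phi,(O,C,E))=d_C(\phi(x))$. Fix a linear sc-isomorphism $T\colon E\to\R^k\oplus W$ with $T(C)=[0,\infty)^k\oplus W$, and write $T(\phi(x))=(a_1,\dots,a_k,w)$. By Definition \ref{degeneracy_index_1}, $d_C(\phi(x))=\#I$ where $I:=\{\,i\in\{1,\dots,k\}\mid a_i=0\,\}$; in particular $a_i>0$ for every $i\notin I$.

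Next I would observe that for each $i$ the map $y\mapsto (T(\phi(y)))_i$ is a continuous real-valued function on $V$, being the composition of the homeomorphism $\phi\colon V\to O\subset E=E_0$ with the continuous linear functional $\pi_i\circ T\colon E_0\to\R$. Since $a_i>0$ for every $i\notin I$, there is therefore an open neighborhood $U(x)\subset V$ of $x$ on which $(T(\phi(y)))_i>0$ for all $i\notin I$. For every $y\in U(x)$ this yields
\[
d_C(\phi(y))=\#\{\,i\mid (T(\phi(y)))_i=0\,\}\le\#I=d_C(\phi(x)).
\]

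Finally, since $U(x)\subset V$, the tuple $(V,\phi,(O,C,E))$ is also a smooth chart around each point $y\in U(x)$, and hence by the definition of the degeneracy index,
\[
d_X(y)\le d(y,V,\phi,(O,C,E))=d_C(\phi(y))\le d_C(\phi(x))=d_X(x),
\]
as claimed. There is no genuine obstacle in this argument; the only points deserving a word of care are that the minimum defining $d_X(x)$ is attained, and that $d_C$ may be evaluated with any convenient sc-isomorphism $T$ — the latter being exactly the content of Lemma \ref{corner_recognition_linear}. It is worth noting that only this one-sided inequality holds in general: $d_X$ is merely upper semicontinuous, reflecting that the degeneracy index can drop as one moves off a corner or a boundary stratum.
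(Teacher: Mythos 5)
Your argument is correct and is precisely the natural one the paper has in mind when it calls this lemma ``evident'' and gives no proof: realize $d_X(x)$ in a single chart, use continuity of the coordinate functionals $\pi_i\circ T\circ\phi$ to preserve strict positivity of the non-vanishing coordinates on a neighborhood, and then invoke the minimum in the definition of $d_X$. Nothing to add or repair.
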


From the definitions one deduces immediately the following result.
\begin{proposition}\label{newprop2.24}\index{P- Diffeomorphism invariance of $d_X$}
If $X$ and $Y$ are M-polyfolds and if $ f\colon (U,x)\rightarrow (V,f(x))$ is a germ of sc-diffeomorphisms
around the points $x\in X$ and $f(x)\in Y$,  then $$d_X(x)=d_Y(f(x)).$$
\end{proposition}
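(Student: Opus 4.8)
The plan is to exploit the fact that, by Definition \ref{M_polyfold_degeneracy _index}, the number $d_X(x)$ is the \emph{minimum} of the chart quantities $d_C(\phi(x))$ over all smooth charts $(V,\phi,(O,C,E))$ of $X$ around $x$, together with the observation that pre-composing a chart of $Y$ around $f(x)$ with the sc-diffeomorphism germ $f$ produces a smooth chart of $X$ around $x$ carrying the \emph{same} value of the degeneracy. This gives the inequality $d_X(x)\le d_Y(f(x))$ at once, and running the argument with $f$ replaced by the germ of $f^{-1}$ yields the reverse inequality.

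In detail, I would first fix a smooth chart $(V',\psi,(O',C',E'))$ of $Y$ around $f(x)$. Shrinking the domain of the germ, we may assume that $f$ restricts to a sc-diffeomorphism from an open neighborhood $V_1$ of $x$ in $X$ onto an open neighborhood $\hat V$ of $f(x)$ with $\hat V\subset V'$. Since $f$ and $\psi$ are homeomorphisms, $\psi\circ f\colon V_1\to O'$ is a homeomorphism onto the subset $O'':=\psi(\hat V)$, which is open in $O'$; by part (1) of Proposition \ref{new_retract_1} the triple $(O'',C',E')$ is again a sc-smooth retract, so $(V_1,\psi\circ f,(O'',C',E'))$ is a chart around $x$. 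It is sc-smoothly compatible with the atlas of $X$: for any chart $(\tilde V,\tilde\phi,(\tilde O,\tilde C,\tilde E))$ of $X$, the transition map $\tilde\phi\circ(\psi\circ f)^{-1}=\tilde\phi\circ f^{-1}\circ\psi^{-1}$ is a coordinate representation of the sc-smooth map $f^{-1}$, hence sc-smooth, and symmetrically $\psi\circ f\circ\tilde\phi^{-1}$ is a coordinate representation of the sc-smooth map $f$, hence sc-smooth. Thus $(V_1,\psi\circ f,(O'',C',E'))$ is a smooth chart of $X$ around $x$, and its associated value is $d(x,V_1,\psi\circ f,(O'',C',E'))=d_{C'}((\psi\circ f)(x))=d_{C'}(\psi(f(x)))$.

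By the definition of $d_X$ as a minimum over smooth charts around $x$, it follows that $d_X(x)\le d_{C'}(\psi(f(x)))$; since the chart $(V',\psi,(O',C',E'))$ of $Y$ around $f(x)$ was arbitrary, taking the minimum over all such charts gives $d_X(x)\le d_Y(f(x))$. Applying the same argument to the germ of $f^{-1}$ around $f(x)$, which is again a germ of sc-diffeomorphisms, yields $d_Y(f(x))\le d_X(x)$, and the two inequalities together give $d_X(x)=d_Y(f(x))$. There is no substantial obstacle here; the only points needing care are that an open subset of a sc-smooth retract is again a sc-smooth retract (Proposition \ref{new_retract_1}(1)) and that $f$ and $f^{-1}$ have sc-smooth coordinate representations (the definition of sc-smoothness of maps between M-polyfolds), which together are exactly what makes $\psi\circ f$ a legitimate smooth chart of $X$. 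Alternatively one could work in fixed charts at both ends and invoke the corner recognition Theorem \ref{ppp22} in its form for germs of local sc-diffeomorphisms between retracts, but the argument via the minimum is the more direct way to deduce the statement from the definitions.
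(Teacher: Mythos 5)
Your argument is correct and is exactly the ``immediate from the definitions'' deduction the paper alludes to without writing out: pushing a chart of $Y$ around $f(x)$ back through $f$ yields a chart of $X$ around $x$ (compatible with the atlas by sc-smoothness of $f$ and $f^{-1}$, and with a valid local model by Proposition~\ref{new_retract_1}(1)) carrying the same degeneracy value, so the minimum defining $d_X(x)$ is at most $d_Y(f(x))$, and symmetry gives equality. No gaps.
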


The index $d_X$ quantifies to which extend a point $x$ has to be seen as a boundary point. A more degenerate point has a higher index.

\begin{definition}\label{boundary_M_polyfold}\index{D- Boundary of an M-polyfold}
The subset  $\partial X=\{x\in X\ |\ d_X(x)\geq 1\}$ of $X$  is called the {\bf boundary of $X$}.
A  M-polyfold $X$ for which $d_X\equiv 0$ is called
a  {\bf M-polyfold without boundary}. 
\end{definition}

The relationship between $d_A$ and $d_X$  where  $A$ is a sub-M-polyfold of $X$ is described in the following lemma.
\begin{lemma}\label{k^0}\index{L- $d_A$ versus $d_X$}
If $X$ is  a M-polyfold and $A\subset X$ a sub-M-polyfold of $X$,   then
$$
d_A(a)\leq d_X(a)
$$
for all $a\in A$.
\end{lemma}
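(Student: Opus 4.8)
The plan is to work in local coordinates and compare the retraction defining $A$ with a chart of $X$. Fix $a \in A$ and let $(V,\varphi,(O,C,E))$ be a smooth chart of $X$ around $a$. By the definition of a sub-M-polyfold there is an open neighborhood $U$ of $a$ in $X$ and a sc-smooth retraction $r\colon U\to U$ with $r(U)=U\cap A$; shrinking as in the proof of Proposition \ref{sc_structure_sub_M_polyfold}, I may assume $U=V$. Transporting by $\varphi$, set $\rho=\varphi\circ r\circ\varphi^{-1}\colon O\to O$; this is a sc-smooth retraction with $\rho(O)=O'$, and $(O',C,E)$ is a sc-smooth retract that serves (after restriction to a relatively open $U'\subset C$ carrying a sc-smooth retraction $r'\colon U'\to U'$ onto $O'$) as the local model for the chart $\psi=\varphi|_{V\cap A}$ of $A$ around $a$ used in Proposition \ref{sc_structure_sub_M_polyfold}.

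**Key step.**
With these coordinates, $\psi(a)$ and $\varphi(a)$ are literally the same point of $O'\subset O\subset C$, so the number of vanishing coordinates of the image point in the partial quadrant $C$ is the same whether computed in the chart $(V,\varphi,(O,C,E))$ of $X$ or in the chart $(V\cap A,\psi,(O',C,E))$ of $A$; that is,
$$
d(a, V\cap A, \psi, (O',C,E)) = d_C(\psi(a)) = d_C(\varphi(a)) = d(a, V, \varphi, (O,C,E)).
$$
Here I am using that the charts of $A$ constructed in Proposition \ref{sc_structure_sub_M_polyfold} use the \emph{same} partial quadrant $C$ and the \emph{same} ambient Banach space $E$ as the chart of $X$ they come from, with $O'\subset O$ — this is exactly why no new degeneracy can be created or destroyed. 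Moreover, by Proposition \ref{newprop2.24} (diffeomorphism invariance of the degeneracy index) the quantity $d_C(\varphi(a))$ computed in any particular smooth chart dominates $d_X(a)$, since $d_X(a)$ is the minimum over all smooth charts of $X$.

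**Conclusion and the main obstacle.**
Taking the minimum of $d(a,V\cap A,\psi,(O',C,E))$ over all charts of $A$ of the above type yields $d_A(a)$, while each such value equals $d(a,V,\varphi,(O,C,E)) \geq d_X(a)$; hence $d_A(a)\ge d_X(a)$ — wait, that inequality points the wrong way, so let me be careful: the charts of $A$ arising this way are a \emph{sub}-collection of all charts of $A$ (only those induced from charts of $X$), so the true minimum $d_A(a)$ over all charts of $A$ can only be $\le$ the minimum over this sub-collection, which in turn is $\le d(a,V,\varphi,(O,C,E))$ for the particular chart of $X$; choosing $(V,\varphi,(O,C,E))$ to be a chart realizing the minimum $d_X(a) = d(a,V,\varphi,(O,C,E))$ gives $d_A(a)\le d_X(a)$, as claimed. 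The one point requiring care — and the main obstacle — is making precise that the induced charts on $A$ genuinely have partial quadrant $C$ and ambient space $E$ with $\psi(a)=\varphi(a)$; this is bookkeeping already done inside the proof of Proposition \ref{sc_structure_sub_M_polyfold}, and the only subtlety is ensuring the shrinking of neighborhoods there does not alter the partial quadrant, which it does not since restricting to a relatively open subset of $C$ leaves $C$ (and hence $d_C$) unchanged.
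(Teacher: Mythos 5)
Your corrected argument is exactly the paper's: pick a chart of $X$ that realizes the minimum $d_X(a)$, use the construction from Proposition \ref{sc_structure_sub_M_polyfold} to induce a chart $(V\cap A,\psi,(O',C,E))$ of $A$ with the same partial quadrant $C$ and $\psi(a)=\varphi(a)$, note that $d_C(\psi(a))=d_C(\varphi(a))=d_X(a)$, and finally observe that $d_A(a)$ is a minimum over a larger collection of charts and is therefore $\le$ this value. The only blemish is stylistic: you should excise the aborted intermediate claim ($d_A(a)\ge d_X(a)$ and the statement that the induced charts compute $d_A(a)$ exactly) rather than leaving the false step and its retraction in the writeup.
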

\begin{proof}
We take a point $a\in A$ and choose a chart $(\varphi, V, (O, C, E))$ around the point $a$, belonging to the atlas for $X$ and satisfying 
$$d_X(a)=d(a, \varphi , V, (O, C, E)).$$
The integer on the right-hand side remains unchanged, if we take a smaller domain, still containing $a$,  and replace $O$ by its image. Then, arguing as in 
Proposition \ref{sc_structure_sub_M_polyfold}, we may assume, that the  corresponding chart of the atlas  for $A$ is $(\psi, V', (O', C, E))$, where we have abbreviated $V'=V\cap A$, $\psi=\varphi\vert {V'}$, and $O'=\psi (V')$.  Then, 
$$d(a, \psi, V', (O', C, E))=d(a, \varphi, V, (O, C, E))=d_X(a),$$
and hence, taking the minimum on the left-hand side,
$d_A(a)\leq d_X(a),$ as claimed in the lemma.
\end{proof}

If $E={\mathbb R}^k\oplus W$ is the sc-Banach space and $C=[0,\infty)^k\oplus W$ the partial quadrant in $E$,  we   define the linear subspace $E_i$ of $E$ by
$$
E_i=\{(a_1,\ldots, a_k,w)\in {\mathbb R}^k\oplus W\ \vert \ a_i=0\}.\index{$E_i$}
$$
With a subset $I\subset \{1,\ldots ,k\}$,  we associate the subspace 
$$
E_I=\bigcap_{i\in I} E_i.\index{$E_I$}
$$

In particular, $E_{\emptyset}=E$, $E_{\{i\}}=E_i$, and $E_{\{1,\ldots , k\}}=\{0\}^k\oplus W\equiv W$.  If $x\in C$, we denote by $I(x)$ the  set 
of indices $ i\in \{1,\ldots ,k\}$ for which $x\in E_i.$ We 
abbreviate
$$
E_x:=E_{I(x)}.
$$
Associated with  $E_i$ we have the closed half space $H_i$,  consisting of all elements $(a,w)$ in $ {\mathbb R}^k\oplus W$, satisfying  $a_i\geq 0$, 
$$
H_i=\{(a_1,\ldots, a_k,w)\in E\ |\ a_i\geq 0\}.\index{$H_i$}
$$
If $x\in C$, we define  the partial cone $C_x$ in $E$ as 
$$
C_x=C_{I(x)}:=\bigcap_{i\in I(x)}H_i.\index{$C_x$}
$$

As an illustration we take  the standard quadrant $C\subset {\mathbb R}^2$ consisting of all $(x,y)$ with $x,y\geq 0$.
Then $C_{(0,0)}=C$, $C_{(1,0)}=\{(x,y)\ |\ y\geq 0\}$, $C_{(0,1)}=\{(x,y)\ |\ x\geq 0\}$ and $C_{(1,1)}={\mathbb R}^2$.
If $z=(x,y)$ is at least on level $1$,  one should view  $C_z$ as a partial quadrant in the tangent space $T_zC= E$. 

We shall put some additional structure on a sc-smooth retract $(O,C,E)$, which turns out to be useful.

We call a subset $C$ of a Banach space (or sc-Banach space) 
a {\bf cone} \index{Cone}provided it is closed, convex, and satisfies ${\mathbb R}^+C=C$ and $C\cap (-C)=\{0\}$.
If all properties except the last one hold,  we call $C$  a {\bf  partial cone}.\index{Partial cone}

\begin{definition}\label{reduced_cone_tangent}\index{D- Reduced tangent space}\index{D- Partial cone}
Let $(O,C,E)$ be a sc-smooth retract and let $x\in O_\infty$  be a smooth point in $O$. The {\bf partial cone} $C_xO$  at  $x$ is defined as the following subset of the tangent space at $x$, 
$$
C_xO:=T_xO\cap C_x,\index{$C_xO$}
$$
where $C_x=\bigcap_{i\in I(x)}H_i$.
The {\bf reduced tangent space}  $T^{\textrm{R}}_xO$ is defined as the following subset of the tangent space at $x$, 
$$
T_x^{\textrm{R}} O =T_xO\cap E_x.\index{$T_x^{\textrm{R}}O$}
$$
\end{definition}
\begin{remark}
We have the inclusions
$$
T^R_xO\subset C_xO\subset T_xO, 
$$
and naively one might expect that $C_xO$ is a partial quadrant in $T_xO$. However, this is in general not the case.
\end{remark}
The reduced tangent space and the partial cone are  characterized in the next lemma.
\begin{lemma}\label{characterization_reduced_tangent}\index{L- Reduced tangent}
Let $(O,C,E)$  be  a sc-smooth retract  and let  $x\in O_\infty$ be a smooth point in $O$. Then the following holds.
\begin{itemize}
\item[\em(1)] $T^{\textrm{R}}_xO=\textrm{cl}( \{ \text{$\dot{\alpha}(0)\vert \, \alpha\colon 
(-\varepsilon, \varepsilon)\to O$ is sc-smooth and $ \alpha(0)=x$}\} )$.
\item[\em(2)] $C_xO=\text{cl}( \{\text{$\dot{\alpha}(0)\vert \,  \alpha\colon 
[0, \varepsilon)\rightarrow O$ is sc-smooth and $ \alpha(0)=x$}\})$.
\item[\em(3)] $T_xO =C_xO-C_xO$.
\end{itemize}
Here $\dot{\alpha}(0)=\frac{d}{dt}\alpha (t)_{\vert t=0}$ stands for the derivative of the sc-smooth path $\alpha$ in the parameter $t$ varying in $(-\varepsilon, \varepsilon)$ resp. in $[0, \varepsilon)$.
\end{lemma}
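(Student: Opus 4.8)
The plan is to work in a local model: by Proposition \ref{new_retract_1} and the very definition of a sc-smooth retract we may assume $O=r(U)$ for a sc-smooth retraction $r\colon U\to U$ defined on a relatively open subset $U$ of the partial quadrant $C=[0,\infty)^k\oplus W$ in $E=\R^k\oplus W$, with $x\in O_\infty$. Recall $T_xO=Dr(x)E$, that $Dr(x)$ is a projection, and $E_x=\bigcap_{i\in I(x)}E_i$, $C_x=\bigcap_{i\in I(x)}H_i$. I will prove the three assertions in the order (1), (2), (3), since (3) will follow from (2) together with a linear-algebra argument on the half-spaces $H_i$.

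For (1), the inclusion ``$\supseteq$'' is the essential content. Given a sc-smooth path $\alpha\colon(-\varepsilon,\varepsilon)\to O$ with $\alpha(0)=x$, I first note $\dot\alpha(0)\in T_xO$: since $\alpha=r\circ\alpha$ and $\alpha(0)=x$ is smooth, the chain rule gives $\dot\alpha(0)=Dr(x)\dot\alpha(0)\in Dr(x)E=T_xO$. Next I must show $\dot\alpha(0)\in E_x$, i.e. that the $i$-th coordinate of $\dot\alpha(0)$ vanishes for each $i\in I(x)$; this is exactly the statement that a one-sided-or-two-sided differentiable curve lying in $[0,\infty)$ and equal to $0$ at $t=0$, namely $t\mapsto \pi_i(\alpha(t))$, has vanishing derivative at an interior point of its domain — a point where the curve attains its minimum — so the derivative is $0$. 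Hence the set on the right of (1) is contained in $T_xO\cap E_x=T^R_xO$, and taking closure keeps us inside the closed subspace $T^R_xO$. For ``$\subseteq$'': given $v\in T^R_xO=T_xO\cap E_x$, I want to produce paths whose derivatives approximate $v$. Write $v=Dr(x)v$ (since $v\in T_xO$). For $t$ small consider the candidate path $\beta(t)=r(x+tv)$; it is sc-smooth in $t$ (composition of the sc-smooth $r$ with an affine map, restricted to where $x+tv\in U$), lies in $O$, satisfies $\beta(0)=r(x)=x$, and $\dot\beta(0)=Dr(x)v=v$. The only subtlety is that $x+tv$ must stay in $C=[0,\infty)^k\oplus W$ for $|t|$ small; but since $v\in E_x$, the coordinates $x_i$ with $i\in I(x)$ stay identically $0$ and the coordinates $x_i>0$ with $i\notin I(x)$ remain positive for small $|t|$, so indeed $x+tv\in C$ for both signs of $t$ near $0$. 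Thus every $v\in T^R_xO$ is actually realized (not merely approximated) as $\dot\alpha(0)$, which proves (1) — in fact with the closure being superfluous on the ``$\subseteq$'' side.

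For (2), the argument is parallel but one-sided. ``$\supseteq$'': if $\alpha\colon[0,\varepsilon)\to O$ is sc-smooth with $\alpha(0)=x$, then as before $\dot\alpha(0)\in T_xO$; and for $i\in I(x)$ the function $t\mapsto\pi_i(\alpha(t))$ is $\geq 0$, vanishes at $t=0$, so its one-sided derivative at $0$ is $\geq 0$, giving $\dot\alpha(0)\in H_i$. Hence $\dot\alpha(0)\in T_xO\cap C_x=C_xO$, and we close up inside the closed set $C_xO$. ``$\subseteq$'': given $v\in C_xO=T_xO\cap C_x$, use $\beta(t)=r(x+tv)$ for $t\in[0,\varepsilon)$; now $x+tv\in C$ for $t\geq 0$ small because for $i\in I(x)$ we have $v\in H_i$, i.e. $v_i\geq 0$, so $x_i+tv_i=tv_i\geq 0$, and for $i\notin I(x)$, $x_i>0$ persists. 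Then $\dot\beta(0)=Dr(x)v=v$ as before. So again the realization is exact.

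For (3), ``$C_xO-C_xO\subseteq T_xO$'' is clear since $C_xO\subseteq T_xO$ and $T_xO$ is a subspace. For the reverse inclusion $T_xO\subseteq C_xO-C_xO$, take $v\in T_xO=Dr(x)E$; I want to split $v$ as a difference of two elements of $T_xO\cap C_x$. Here is where the main obstacle lies: one cannot just split coordinatewise because $C_xO$ need not be a partial quadrant inside $T_xO$ (as the remark preceding the lemma warns). Instead I would argue as follows. Since $C=[0,\infty)^k\oplus W$ is a partial quadrant in $E$, and $x$ is a smooth point, the partial cone $C_x=\bigcap_{i\in I(x)}H_i$ satisfies $C_x-C_x=E$ (each $H_i$ is a half-space, so $H_i-H_i=E$; more precisely $C_x$ contains the linear subspace $E_x$ of finite codimension $\#I(x)$, together with enough of a ``spanning'' positive cone, so that $C_x$ generates $E$ as a group — concretely, write any $e\in E$ as $e=e^+-e^-$ where $e^\pm$ have $i$-th coordinate $\max(\pm e_i,0)\geq 0$ for $i\in I(x)$ and arbitrary otherwise, both lying in $C_x$). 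Now apply the projection $P:=Dr(x)\colon E\to T_xO$, which maps $C$ into... no — $P$ need not preserve $C$, so this needs care. Given $v\in T_xO$, write $v=e^+-e^-$ with $e^\pm\in C_x$ as above; then $v=Pv=Pe^+-Pe^-$, but $Pe^\pm$ need not lie in $C_x$. The fix: use parts (1)–(2) instead. By (2), $C_xO=\mathrm{cl}\{\dot\alpha(0)\}$ over sc-smooth $\alpha\colon[0,\varepsilon)\to O$; reversing time, $-C_xO=\mathrm{cl}\{\dot\alpha(0)\}$ over $\alpha\colon(-\varepsilon,0]\to O$. Given $v\in T^R_xO$, part (1) already gives $v\in C_xO\cap(-C_xO)\subseteq C_xO-C_xO$. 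For general $v\in T_xO$, decompose using the finite-dimensional obstruction: $T_xO/T^R_xO$ injects into $E/E_x\cong\R^{\#I(x)}$, so it is finite-dimensional and spanned by (images of) the vectors $\partial_i:=Dr(x)f_i$ where $f_i$ is the $i$-th standard basis vector of $\R^k$, $i\in I(x)$. For each such $i$, the path $t\mapsto r(x+tf_i)$ is defined for $t\geq 0$ (since $x_i=0$, adding $tf_i$ with $t\geq0$ keeps us in $C$) and lies in $O$, so $\partial_i=Dr(x)f_i\in C_xO$ by (2). Writing an arbitrary $v\in T_xO$ as $v=w+\sum_{i\in I(x)}c_i\partial_i$ with $w\in T^R_xO$ and $c_i\in\R$, and using $w\in C_xO-C_xO$ together with $\pm\partial_i=\partial_i-0$ or $0-\partial_i\in C_xO-C_xO$ (note $0\in C_xO$), we get $v\in C_xO-C_xO$. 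This proves (3).

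The main obstacle, as flagged, is (3): the naive coordinatewise splitting fails because $C_xO$ is not a partial quadrant, and the projection $Dr(x)$ does not respect the cone $C$. The resolution is to reduce to the already-proven parametrizations (1) and (2) and exploit that the ``defect'' $T_xO/T^R_xO$ is finite-dimensional, spanned by the directions $Dr(x)f_i$ ($i\in I(x)$), each of which is one-sidedly realizable and hence lies in $C_xO$.
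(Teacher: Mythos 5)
Your proofs of parts (1) and (2) follow the paper's line, but you make a false claim when you assert that ``the realization is exact'' and that the closure is superfluous on the ``$\subseteq$'' side. The path $\beta(t)=r(x+tv)$ is sc-smooth as a map from $(-\varepsilon,\varepsilon)$ (or $[0,\varepsilon)$) into $O$ \emph{only when $v$ is a smooth point}: an sc-smooth map from a finite-dimensional domain with the constant sc-structure necessarily takes values in $O_\infty$ and has its derivative in $E_\infty$, and $t\mapsto x+tv$ is sc-smooth into $E$ only if $v\in E_\infty$. The paper explicitly records this (``$\alpha((-\varepsilon,\varepsilon))$ is contained in $O_\infty$, and $\dot\alpha(0)\in E_\infty$'') and then shows only $(T^R_xO)_\infty\subset\Gamma$, recovering $T^R_xO$ by taking closures. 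So the closure in the statement is genuinely needed; your ``$\subseteq$'' direction is correct once restricted to smooth $v$ followed by a density argument, but as written it overclaims. The same remark applies to your proof of (2).

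For part (3) you abandon exactly the approach the paper uses, and the reason you give for abandoning it is not a real obstacle. You write that after splitting a preimage $e$ of $v$ coordinatewise as $e=e^+-e^-$ with $e^\pm\in C_x$, ``$Pe^\pm$ need not lie in $C_x$'' — but the path $t\mapsto r(x+te^\pm)$, $t\in[0,\varepsilon)$, is an sc-smooth path in $O$ starting at $x$ (for smooth $e^\pm$; note also the paper's $k^-$ is \emph{always} a smooth point since it lives in the finite-dimensional factor $\R^k\times\{0\}$), so by the already-established ``$\supseteq$'' direction of (2) its derivative $Pe^\pm=Dr(x)e^\pm$ lies in $C_xO\subset C_x$. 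This is precisely how you argued $\partial_i\in C_xO$ two sentences later, so the ingredient was already in your hands. Your replacement argument has its own gap: the assertion that $T_xO/T^R_xO$ is spanned by the classes $[\partial_i]$, $i\in I(x)$, is true but not justified by the injection $T_xO/T^R_xO\hookrightarrow E/E_x$, which only gives finite-dimensionality, not a spanning set. The correct justification is that $E=\operatorname{span}\{f_i\colon i\in I(x)\}\oplus E_x$, hence $T_xO=Dr(x)E=\operatorname{span}\{\partial_i\}+Dr(x)E_x$, and $Dr(x)E_x\subset T^R_xO$ — the latter via part (1) applied to the paths $t\mapsto r(x+te)$ for smooth $e\in E_x$ followed by density. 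This makes your route workable, but it is longer and circular-looking (it leans on (1) to set up (3) via the quotient) compared to the paper's direct use of (2) on $h^\pm=Dr(x)k^\pm$.
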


\begin{proof} We may assume that $O$ is a sc-smooth retract in the partial quadrant $C=[0,\infty)^k\oplus W\subset E=\R^k\oplus W$ and that  $O=r(U)$, where $r\colon 
U\to U$ is a sc-smooth retraction on the  relatively open subset $U$ of $C$.  Let us denote for $x\in {\mathbb R}^k\oplus W$
by $x_1\ldots,x_k$ its  coordinates in ${\mathbb R}^k$.\\[0.6ex]  
(1)\,  In order to prove (1), we first  introduce the set  
\begin{equation}\label{set_gamma}
\Gamma= \{ \text{$\dot{\alpha}(0)\vert \, \alpha\colon 
(-\varepsilon, \varepsilon)\to O$ is sc-smooth and $ \alpha(0)=x$}\},
\end{equation}
and observe, that $\alpha ((-\varepsilon, \varepsilon))$ is contained in $ O_\infty$, and   $\dot{\alpha}(0)\in E_\infty$.
Since the closure of $(T^{\textrm{R}}_xO)_\infty$ is  equal to $T^{\textrm{R}}_xO$, it is enough to prove that  $(T^{\textrm{R}}_xO)_\infty\subset \Gamma$ and $\Gamma\subset (T^{\textrm{R}}_xO)_\infty$.  As for the  first inclusion,  we take $v\in (T^{\textrm{R}}_xO)_\infty$. Hence  $x+tv\in U$ for $\abs{t}$ small. This follows from the fact that for $i\in I(x)$,  we have $a_i=v_i=0$ and for $i\not \in I(X)$, we have $x_i>0$. Since $x$ and $v$ are smooth points, $x+tv\in U_\infty$. Then $\alpha (t)=r(x+tv)$ is defined for $\abs{t}$ small, takes values in $O_\infty$,  and $\alpha (0)=x$.  By the chain rule, 
$$\dot{\alpha}(0)=Dr(x)v=v,$$
since $v\in T_xO=\text{image of $Dr(x)$}$. Hence, $v\in \Gamma$ and $(T^{\textrm{R}}_xO)_\infty\subset \Gamma$, as claimed. Conversely,  if $\alpha\colon 
(-\varepsilon, \varepsilon)\to O$ is a  sc-smooth path  satisfying  $\alpha (0)=x$,   then $r(\alpha (t))=\alpha (t)$, so that, applying the chain rule, we find 
$$Dr(x)\dot{\alpha}(0)=\dot{\alpha} (0).$$
This shows that  $\dot{\alpha} (0)$ is a smooth point belonging to $T_xO$. If $i\in I(x)$, then $\alpha_i(0)=x_i=0$ and since 
$\alpha_i(t)\geq 0$ for all $t\in (-\varepsilon, \varepsilon)$, we conclude  that $\dot{\alpha}_i(0)=0$, so that 
$\dot{\alpha}(0)\in E_x$.  
Hence, 
$\dot{\alpha}(0)\in T_xO\cap E_x=T_x^{\textrm{R}}O$  
and since 
$\dot{\alpha}(0)$ 
is a smooth point, 
$\dot{\alpha}(0)\in (T_x^{\textrm{R}}O)_\infty$.  
Therefore,
 $\Gamma \subset (T_x^{\textrm{R}}O)_\infty$,  and the proof of (1) is complete.\\[0.6ex]
(2)\, The proof of (2) is along the same line, except that considering a sc-smooth path $\alpha\colon 
[0,\infty)\to O,$ we conclude that $Dr(x)\dot{\alpha}(0)\in C_x$ and since $Dr(x)\dot{\alpha}(0)=\dot{\alpha}(0)$, we find that $C_xO$ is a subset of the right-hand side of (2). Conversely, we take a smooth point $v\in C_xO$ and consider the path $\alpha (t)=r(x+tv)$ defined for $t\geq 0$ small. Then, $\dot{\alpha}(0)=Dr(x)v$ belongs to the right-hand side of (2) and since $C_xO$ is closed, the result follows.\\[0.8ex]
(3) 
Clearly, $C_xO-C_xO\subset T_xO$. Conversely, let $h\in T_xO$. Then $h=Dr(x)k$, where $k=(a, w)\in E.$  
If $i\in I(x)$, we set $$a_i^\pm=\frac{\abs{a_i}\pm a_i}{2}.$$ 

Then, $a_i^\pm\geq 0$ and $a_i=a^+_i-a^-_i$. Now we define elements $k^\pm\in E$ as follows.  First, $k^+=(b, w),$ where $b_i=a_i$ if $i\not \in I(x)$ and $b_i=a^+_i$ if $i\in I(x)$. The element $k^-$ is defined as $k^-=(c, 0),$ where $c_i=0$ if $i\not \in I(x)$ and $c_i=a_i^-$ if $i\in I(x)$. Then $k=k^+-k^-$ and if $h^\pm =Dr(x)k^\pm$, then, by (2), we have $h^\pm\in  C_xO$ and 
$h=h^+-h^-\in C_xO-C_xO$. The proof of (3) and hence the proof of Lemma \ref{characterization_reduced_tangent}  is complete.

\end{proof}

From the characterization of $T_xO$ and $C_xO$ in 
Lemma \ref{characterization_reduced_tangent} we  deduce immediately the next  proposition.
\begin{proposition}\label{reduced_tangent_under_sc_diff}
Let $(O,C,E)$ and $(O',C',E')$ be sc-smooth retracts,   and let $x\in O_\infty$. If $f\colon 
(O,x)\rightarrow (O',f(x))$
is  a germ of a sc-diffeomorphism mapping $x\in O_\infty$ onto $f(x)=y\in O'_\infty$, then 
$$
Tf(x)T^{\textrm{R}}_xO= T^{\textrm{R}}_{y}O'\quad  \text{and}\quad Tf(x)C_xO= C_yO'.
$$
\end{proposition}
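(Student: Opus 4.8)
The plan is to deduce the statement from the path characterizations of $T^{\textrm{R}}_xO$ and $C_xO$ established in Lemma~\ref{characterization_reduced_tangent}, exploiting that $Tf(x)$ is a topological linear isomorphism between the two tangent spaces which sends the velocity of an sc-smooth curve through $x$ to the velocity of the image curve through $y$.

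First I would check that $Tf(x)\colon T_xO\to T_yO'$ is a homeomorphism. Since $f$ is a germ of a sc-diffeomorphism, there is a germ $g=f^{-1}$ with $g\circ f=\id$ near $x$ and $f\circ g=\id$ near $y$, and both $f$ and $g$ are sc-smooth maps between retracts in the sense of Definition~\ref{tangent_retract}. Applying the chain rule for sc-smooth maps between retracts to these two identities and restricting the resulting equalities of tangent maps to the fibres over $x$ and $y$ yields $Tg(y)\circ Tf(x)=\id_{T_xO}$ and $Tf(x)\circ Tg(y)=\id_{T_yO'}$. Hence $Tf(x)$ is a continuous linear bijection with continuous inverse $Tg(y)$, so it maps the closure of any subset of $T_xO$ onto the closure of the image.

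Next I would relate $Tf(x)$ to path derivatives. Given an sc-smooth path $\alpha\colon(-\varepsilon,\varepsilon)\to O$ with $\alpha(0)=x$, after shrinking $\varepsilon$ so that $\alpha$ stays in the domain of the germ $f$, the composite $\beta:=f\circ\alpha\colon(-\varepsilon,\varepsilon)\to O'$ is sc-smooth with $\beta(0)=y$, and the chain rule $T\beta=Tf\circ T\alpha$ evaluated at $(0,1)\in T(-\varepsilon,\varepsilon)=(-\varepsilon,\varepsilon)\oplus\R$ gives $\dot\beta(0)=Tf(x)\dot\alpha(0)$. Writing $\Gamma$ for the set of velocities $\dot\alpha(0)$ of sc-smooth paths $(-\varepsilon,\varepsilon)\to O$ through $x$ and $\Gamma'$ for the analogous set at $y$ (exactly the sets appearing in the proof of Lemma~\ref{characterization_reduced_tangent}), this shows $Tf(x)\Gamma\subset\Gamma'$; running the same argument with $g$ and using $Tg(y)=(Tf(x))^{-1}$ from the first step upgrades this to $Tf(x)\Gamma=\Gamma'$. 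The identical reasoning with half-open intervals $[0,\varepsilon)$ gives $Tf(x)\Gamma_+=\Gamma'_+$ for the corresponding one-sided velocity sets.

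Finally, taking closures and using that $Tf(x)$ is a homeomorphism, $Tf(x)(\cl\,\Gamma)=\cl(Tf(x)\Gamma)=\cl\,\Gamma'$, which by Lemma~\ref{characterization_reduced_tangent}(1) reads $Tf(x)T^{\textrm{R}}_xO=T^{\textrm{R}}_yO'$; Lemma~\ref{characterization_reduced_tangent}(2) gives $Tf(x)C_xO=C_yO'$ in the same way. I expect the only real care to be needed in the first step — making sure the chain rule genuinely produces the inverse linear map on the fibres, so that $Tf(x)$ is invertible — together with the harmless shrinking of intervals so that composing with the germ is legitimate; everything else is just a transcription of Lemma~\ref{characterization_reduced_tangent}.
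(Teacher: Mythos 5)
Your argument is exactly the one the paper intends: the statement is presented in the paper as an immediate consequence of Lemma~\ref{characterization_reduced_tangent}, and you have simply written out that deduction — chain rule to see $Tf(x)$ is a topological isomorphism intertwining path velocities, then equality of the velocity sets $\Gamma,\Gamma'$ (resp.\ $\Gamma_+,\Gamma'_+$), then closures. The only point worth being explicit about, which you flagged, is that taking closures commutes with $Tf(x)$ because it and its inverse $Tg(y)$ are sc-operators, hence continuous on level~$0$.
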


A sc-smooth retract $O$ associated with a triple 
$(O, C, E)$ is a M-polyfold and, recalling Definition \ref{M_polyfold_degeneracy _index}, its degeneracy index $d_O(x)$ at the point $x$ is the integer 
$$d_O(x)=\min d_{C'}(\varphi (x))$$
where the minimum is taken over all germs of sc-diffeomorphisms $\varphi\colon (O, x)\to (O', \varphi (x))$ into sc-smooth retracts $O'$ associated with $(O', C', E')$. The integer $d_{C'}(\varphi (x))$ is introduced in 
Section \ref{subsection_boundary_recognition}.

\begin{theorem}\label{hofer}\index{T- Basic properties of $d_O$}
Let  $(O,C,E)$ be a smooth retract and let $d_O$ be the degeneracy index of $O$. 
If $x\in O$ is a smooth point, we have the 
inequality
$$
\dim (T_xO/T^R_xO)\leq  d_O(x).
$$
Moreover, if
$\dim(T_xO/T^R_xO)=d_O(x)$, then 
$C_xO$ is a partial quadrant in $T_xO$. 
\end{theorem}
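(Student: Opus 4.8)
The plan is to prove the inequality and the equality statement separately, reducing everything to a single conveniently chosen chart via Proposition \ref{reduced_tangent_under_sc_diff}, and then settling the equality case by linear algebra carried out inside the sc-category. The first observation I would record is that $\dim(T_xO/T^R_xO)$ is invariant under germs of sc-diffeomorphisms between retracts: if $\varphi\colon (O,x)\to (O',y)$ is such a germ, its linearization $T\varphi(x)\colon T_xO\to T_yO'$ is a sc-isomorphism of the tangent spaces (which at the smooth points $x$, $y$ carry natural sc-structures, by the remark following Proposition \ref{x1}), and Proposition \ref{reduced_tangent_under_sc_diff} says it carries $T^R_xO$ onto $T^R_yO'$ and $C_xO$ onto $C_yO'$; hence $\dim(T_xO/T^R_xO)=\dim(T_yO'/T^R_yO')$, and $C_xO$ is a partial quadrant in $T_xO$ if and only if $C_yO'$ is a partial quadrant in $T_yO'$.

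Next, in one chart with $C=[0,\infty)^k\oplus W\subset E=\R^k\oplus W$, I would use $T^R_xO=T_xO\cap E_x$ together with the fact that $E_x=\bigcap_{i\in I(x)}E_i$ has codimension $\#I(x)=d_C(x)$ in $E$: the inclusion $T_xO\hookrightarrow E$ induces an injection $T_xO/(T_xO\cap E_x)\hookrightarrow E/E_x$, so $\dim(T_xO/T^R_xO)\le d_C(x)$. Combining this with the invariance, for every germ of sc-diffeomorphism $\varphi\colon(O,x)\to(O',\varphi(x))$ into a retract with reference $(O',C',E')$ one gets $\dim(T_xO/T^R_xO)=\dim(T_{\varphi(x)}O'/T^R_{\varphi(x)}O')\le d_{C'}(\varphi(x))$, and taking the minimum over all such $\varphi$ yields $\dim(T_xO/T^R_xO)\le d_O(x)$, the first claim.

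For the equality case, assume $\dim(T_xO/T^R_xO)=d_O(x)$ and pick a germ $\varphi\colon(O,x)\to(O',y)$ realizing the minimum, so $\#I(y)=d_{C'}(y)=d_O(x)$. By the invariance, $\dim(T_yO'/T^R_yO')=\#I(y)$, and since $C_xO$ corresponds to $C_yO'$ under the sc-isomorphism $T\varphi(x)$, it suffices to prove $C_yO'$ is a partial quadrant in $T_yO'$; so I may assume at the outset, in the chart at hand, that $\dim(T_xO/T^R_xO)=\ell:=\#I(x)$, and after relabelling coordinates that $I(x)=\{1,\dots,\ell\}$. Let $\pi\colon E=\R^k\oplus W\to\R^\ell$ be the projection onto the first $\ell$ coordinates; it is a sc-operator with $\ker\pi=E_x$. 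Under $T_xO/(T_xO\cap E_x)\cong(T_xO+E_x)/E_x\subset E/E_x\cong\R^\ell$, the hypothesis $\dim(T_xO/T^R_xO)=\ell$ says exactly that $\pi|_{T_xO}\colon T_xO\to\R^\ell$ is surjective with kernel $T^R_xO$. Since $(T_xO)_\infty$ is dense in $T_xO$, $\pi$ maps it onto all of $\R^\ell$, so I can choose a linear right inverse $\sigma\colon\R^\ell\to (T_xO)_\infty$; being valued in smooth points, $\sigma$ is automatically a sc-operator and $R:=\sigma(\R^\ell)$ is a finite-dimensional sc-subspace of $T_xO$ by Proposition \ref{prop1}. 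Then $P:=\mathbbm{1}_{T_xO}-\sigma\circ\pi$ is a sc-projection onto $T^R_xO$, exhibiting $T^R_xO$ as a sc-subspace with sc-complement $R$, so that $\Psi\colon T_xO\to\R^\ell\oplus T^R_xO$, $\Psi(v)=(\pi(v),P(v))$, is a sc-isomorphism with inverse $(a,n)\mapsto\sigma(a)+n$. Because $C_xO=T_xO\cap C_x=(\pi|_{T_xO})^{-1}\big([0,\infty)^\ell\big)$ and $\Psi$ does not alter the $\pi$-component, $\Psi(C_xO)=[0,\infty)^\ell\oplus T^R_xO$; by Definition \ref{partial_quadrant}, $C_xO$ is a partial quadrant in $T_xO$, and transporting back along $T\varphi(x)^{-1}$ finishes the proof.

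I expect the inequality to be routine once one has the codimension bound and the diffeomorphism invariance of Proposition \ref{reduced_tangent_under_sc_diff}. The main obstacle is the equality case: one must first justify the reduction to a chart achieving the minimum $d_O(x)$, then identify equality of dimensions with surjectivity of $\pi|_{T_xO}$, and finally perform the splitting entirely within the sc-category — in particular choosing the right inverse $\sigma$ with values in $(T_xO)_\infty$ so that $T^R_xO$ is a genuine sc-subspace with a sc-complement (via Proposition \ref{prop1}), not merely a closed topological complement — so that $\Psi$ is a bona fide sc-isomorphism realizing $C_xO$ as a partial quadrant.
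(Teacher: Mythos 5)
Your proof is correct and follows essentially the same strategy as the paper's: reduce via Proposition \ref{reduced_tangent_under_sc_diff} to a chart realizing $d_O(x)=d_C(x)$, bound $\dim(T_xO/T^R_xO)$ by the codimension of $E_x$, and in the equality case split $T_xO$ into a $d$-dimensional smooth complement of $T^R_xO$ and transport the quadrant structure. The one place you are more explicit than the paper is in verifying that the splitting and the resulting isomorphism $\Psi$ live in the sc-category — you take the right inverse $\sigma$ with values in $(T_xO)_\infty$ and invoke Proposition \ref{prop1} to get a genuine sc-complement — whereas the paper asserts directly that its map $T$ is a sc-linear isomorphism after choosing the representatives $\wh{v}^j$ (which can indeed be taken smooth by density of $(T_xO)_\infty$); this is a welcome piece of bookkeeping, not a different route.
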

\begin{proof}
Let $x$ be a smooth point of the retract $O$. 
By 
Proposition \ref{reduced_tangent_under_sc_diff} 
the dimension of $T_xO/T^R_xO$ is preserved 
under germs of sc-diffeomorphisms.  Hence, in view of the definition of $d_O(x)$, we may assume, without loss of generality,  that
$$d_O(x)=d_C(x)\equiv d$$ 
Moreover, without loss of generality, we may  assume that $E=\R^k\oplus W$, 
$C=[0,\infty)^k\oplus W$
and $x=(0,\ldots ,0,x_{d+1},\ldots , x_k, w)$, where $x_i>0$ for $d+1\leq i\leq k$ and 
$w\in W$. We recall that if $v=(a, b, w)\in T_x^RO\subset \R^d\oplus R^{k-d}\oplus W$, then $a=0$,  and if $v=(a, b, w)\in C_xO$, then $a_i\geq 0$ for $1\leq i\leq d$.

In order to prove the first statement, we choose smooth vectors $v^1,\ldots v^l$ in $T_xO$ such that $(v^j+T_x^RO)_{1\leq j\leq l}$ are linearly independent in the vector space $T_xO/T_x^RO$. 
Representing $v^j=(a^j,b^j, w^j)\in \R^d\oplus R^{k-d}\oplus W$, we claim that the vectors $(a^j)_{1\leq j\leq l}$ are linearly independent in $\R^d$. 
Indeed, assuming  that  
$\sum_{j=1}^l\lambda_ja^j=0$, we have  
$$\sum_{j=1}^l\lambda_jv^j=\bigl(0, \sum_{j=1}^l\lambda_jb^j, \sum_{j=1}^l\lambda_jw^j\bigr)\in T^R_xO,$$
hence 
$$\sum_{j=1}^l\lambda_j\bigl(v^j+T_x^RO\bigr)=\bigl(\sum_{j=1}^l\lambda_jv^j\bigr)+T_x^RO=T_x^RO.$$
Since $(v^j+T_x^RO)_{1\leq j\leq l}$ 
are linearly independent  in $T_xO/T_x^RO$,  we conclude that $\lambda_1=\ldots =\lambda_l=0$, proving our claim. This implies that the vectors $a^1,\ldots ,a^l$ are linearly independent in $\R^d$. Therefore, $l\leq d$ and hence $\dim(T_xO/T^R_xO)\leq  d=d_O(x)$, proving the first statement of the theorem.

In order to prove the second statement, 
we assume that $\dim(T_xO/T^R_xO)=d_O(x)$. 
If now $(v^j+T^R_xO)_{1\leq j\leq d}$ is a basis of $T_xO/T^R_xO$, then  representing  $v^j=(a^j,b^j, w^j)\in T_xO\subset   \R^d\oplus \R^{k-d}\oplus W$ and arguing as above,   the vectors $a^j$ for $1\leq j\leq d$ form a basis of  $\R^d$. 
Consequently, the map 
$\Phi\colon 
T_xO/T^R_xO\to \R^d$, defined by 
$$\Phi (v+T^R_xO)=\Phi ((a, b, w)+T^R_xO)=a,$$
is a linear  isomorphism. Moreover, if $v=(a, b, w)\in C_xO$  so that $a_j\geq 0$ for $1\leq j\leq d$, then 
$$\Phi (v+T^R_xO)=\Phi ((a, b, w)+T^R_xO)\in [0,\infty)^d.$$

Denoting by $e^j$ for $1\leq j\leq d$ the standard basis of $\R^d$, we introduce $\Phi^{-1}(e^j)=\wh{v}^j+T^R_xO$. By definition, the vectors $v^j$ are of the form 
$\wh{v}^j=(e^j, \wh{b}^j, \wh{w}^j)$ and  are linearly independent in $\R^d\oplus \R^{k-d}\oplus W$.   
If now $v=(a, b, w)\in T_xO$, we have the decomposition 
\begin{equation*}
\begin{split}
v=(a, b, w)&=\sum_{j=1}^da_j\wh{v}^j+\bigl(v-\sum_{j=1}^da_j\wh{v}^j\bigr)
\end{split}
\end{equation*}
where  $a=(a_1,\ldots ,a_d)\in \R^d$. Since the second term  on the right-hand side belongs to $T_x^RO$,  we  have the following decompositions of the tangent space $T_xO$ and of $C_xO$,
\begin{align*}
T_xO=\R \wh{v}^1\oplus \ldots \oplus \R \wh{v}^d\oplus T_x^RO\quad \text{and}\quad 
C_xO=\R^+\wh{v}^1\oplus \ldots \oplus \R^+\wh{v}^d\oplus T_x^RO.
\end{align*}
Therefore, the map $T\colon T_xO\to \R^d\oplus T_xO$, defined  by 
$$T(\lambda_1\wh{v}^1,\ldots ,\lambda_d\wh{v}^d, w)=(\lambda_1, \ldots ,\lambda_d, w)$$
is a sc-linear isomorphism
satisfying $T(C_xO)=[0,\infty )^d\oplus T_x^RO$.
 Hence $C_xO$ is a partial quadrant in $T_xO$ and the second statement of Theorem \ref{hofer} is proved.  
\end{proof}

The  partial quadrant $C$ in $E$ is the  image of the sc-smooth retraction $r=\mathbbm{1}_{C}\colon C\to C$. In particular, $C$ is a M-polyfold which we denote by $X_C$. 
In Section \ref{subsection_boundary_recognition}, we have defined the map $d_C\colon 
C\to \N_0$, which associates with a point $x\in C$ its number of vanishing coordinates. Above, we have defined the degeneracy index $d_{X_C}\colon 
X_C\to \N_0$ of the M-polyfold $X_C$.   By definition, 
$d_{X_C}\leq d_C$ and we shall prove that $d_{X_C}=d_C$.  We may assume  without loss of generality
that $C=[0,\infty)^k\oplus W$ and $E={\mathbb R}^k\oplus W$.  For a smooth point $x\in C$ we have $T_xC=E$ and $T_x^RC=E_x$,  implying 
$\dim(T_xC/T_x^RC)=d_C(x)$. This, of course, also holds for any partial quadrant $C\subset E$. Hence, 
$$
d_C(x) = \dim(T_xX_C/T_x^RX_C)
$$
for a smooth point $x\in C$. From this we deduce  the following corollary of Theorem \ref{hofer}
\begin{corollary}\label{equality_of_d}\index{C- Computation of $d_{X_C}$}
Let $C$ be a partial quadrant in a sc-Banach space. Considering $C$ as a M-polyfold,  denoted by $X_C$,  we have the equality 
$$
d_C=d_{X_C}.
$$
\end{corollary}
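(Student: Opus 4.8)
The inequality $d_{X_C}\le d_C$ holds trivially: the tuple $(C,\mathrm{id},(C,C,E))$ is a chart of $X_C$, so $d_{X_C}(x)\le d(x,C,\mathrm{id},(C,C,E))=d_C(x)$ for every $x\in C$. Hence the whole content of Corollary~\ref{equality_of_d} is the reverse inequality $d_C(x)\le d_{X_C}(x)$. My plan is to prove it first at smooth points, using Theorem~\ref{hofer}, and then propagate it to all of $C$ by a density argument.

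At a smooth point $x\in C$ the inequality is a one-line deduction. Realizing $X_C$ as the retract $(C,C,E)$ with retraction $r=\mathbbm{1}_{C}$, one has $T_xX_C=E$ and $T^R_xX_C=E_x$, whence $\dim(T_xX_C/T^R_xX_C)=d_C(x)$, exactly as recorded just before the statement of Corollary~\ref{equality_of_d}. On the other hand, Theorem~\ref{hofer} gives $\dim(T_xX_C/T^R_xX_C)\le d_{X_C}(x)$. Combining the two with the trivial inequality of the first paragraph yields $d_C(x)=d_{X_C}(x)$ for every smooth point $x\in C$.

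For an arbitrary $x\in C$ I would reduce to the smooth case. Normalize so that $E=\R^k\oplus W$ and $C=[0,\infty)^k\oplus W$, and write $x=(a,w)$ with $a\in[0,\infty)^k$ and $w\in W$. Since $W_\infty$ is dense in $W$, pick $w_n\in W_\infty$ with $w_n\to w$ and set $x_n=(a,w_n)$. Because $\R^k$ carries the constant sc-structure, each $x_n$ is a smooth point of $E$, it lies in $C$, and its set of vanishing coordinates coincides with that of $x$; hence $d_C(x_n)=d_C(x)$, and by the smooth-point case $d_{X_C}(x_n)=d_C(x)$ for all $n$. Now $x_n\to x$ in $X_C$, so by the lemma guaranteeing that every point of an M-polyfold has an open neighborhood on which $d_{X_C}$ does not exceed its value at that point, we get $d_{X_C}(x_n)\le d_{X_C}(x)$ for all large $n$. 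Therefore $d_C(x)=d_{X_C}(x_n)\le d_{X_C}(x)\le d_C(x)$, which closes the argument.

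The only step not already in hand is the passage from smooth to arbitrary points, since Theorem~\ref{hofer} speaks only about smooth points. What makes it routine here is the special geometry of a partial quadrant: a point can be approximated by smooth points having the \emph{same} degeneracy index, because the sign constraints, imposed only on the $\R^k$-coordinates, are unaffected by a perturbation in the scale-dense factor $W$; upper semicontinuity of $d_{X_C}$ then finishes the job.
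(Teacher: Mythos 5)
Your proof is correct and follows essentially the same route as the paper: both establish the equality at smooth points via Theorem~\ref{hofer} and the computation $\dim(T_xX_C/T_x^RX_C)=d_C(x)$, and both pass to arbitrary points by approximating $x=(a,w)$ with smooth points $x_n=(a,w_n)$ of the same degeneracy index and invoking upper semicontinuity. The only (cosmetic) difference is that you cite the stated lemma that $d_{X_C}(y)\le d_{X_C}(x)$ near $x$, while the paper unfolds that semicontinuity explicitly through a chosen sc-diffeomorphism $f\colon U(x)\to O'$ realizing $d_{X_C}(x)=d_{C'}(f(x))$ and the elementary fact that $d_{C'}$ is upper semicontinuous.
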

\begin{proof}
Let us first take  a smooth point $x\in C$. In view of Theorem \ref{hofer},  $\dim(T_x X_C/T_x^R X_C)\leq d_{X_C}(x)$.
Since $d_C(x)=\dim(T_x X_C/T_x^R X_C)$ as we have just seen,  it follows that $d_C(x)\leq d_{X_C}(x)$. By definition of  $d_{X_C}(x)$,  we always have the 
inequality $d_{X_C}(x)\leq d_C(x)$. Consequently,  
$$
d_{X_C}(x)=d_C(x)\quad  \text{if $x\in C_\infty$.}
$$
If $x\in C$ is arbitrary, we take  a sequence of smooth points $x_k\in C$ converging to $x$ and satisfying $d_C(x_k)=d_C(x)$. Hence, 
$$
d_C(x)=d_C(x_k) =d_{X_C}(x_k).
$$
In view of the definition of $d_{X_C}$ we  find a sc-diffeomorphism $f\colon 
U(x)\rightarrow O'$, where $U(x)\subset C$ is relatively open,  and $(O',C',E')$ is a sc-smooth retract, so that
$$
d_{X_C}(x)=d_{C'}(f(x)).
$$
Then $f(x_k)\rightarrow f(x)$ and trivially $d_{C'}(f(x_k))\leq d_{C'}(f(x))$ for large $k$. Hence, for large $k$, 
\begin{equation*}
\begin{split}
d_C(x)&=d_C(x_k)=d_{X_C}(x_k)\\
&=d_{X_{C'}}(f(x_k))\leq  d_{C'}(f(x_k))\leq d_{C'}(f(x)) =d_{X_C}(x).
\end{split}
\end{equation*}
Since $d_{X_C}(x)\leq d_C(x)$,  we  conclude 
$d_{X_C}(x)=d_C(x)$ and the proof of 
Corollary \ref{equality_of_d} is complete.
\end{proof}
From now on we do not have to distinguish between  the index $d_C$  defined for partial quadrants and the degeneracy index $d_{X_C}$, 
where we view $C$ as a  M-polyfold.

\subsection{Tame M-polyfolds}\label {subsec_tame_m_polyfolds}

In order to define spaces whose boundaries have more structure, we introduce the notion of a tame M-polyfold
and of tame retractions and tame retracts. We start with some basic geometry.

Let $C\subset E$ be a partial quadrant in a sc-Banach space $E$. We begin with the particular case
$E={\mathbb R}^k\oplus W$ and $C=[0,\infty)^k\oplus W$.  We recall  the linear sc-subspace $E_i$ of codimension $1$ defined as  
\begin{equation}\label{k^4}
E_i=\{(a_1,\ldots, a_k,w)\in {\mathbb R}^k\oplus W\ \vert \,  a_i=0\}.
\end{equation}
Associated with  $E_i$, there is  the closed half space $H_i$  consisting of all elements $(a,w)$ in $ {\mathbb R}^k\oplus W$ satisfying  $a_i\geq 0$, 
\begin{equation}\label{k^5}
H_i=\{(a_1,\ldots, a_k,w)\in E\ |\ a_i\geq 0\}.
\end{equation}
 In the general case of  a partial quadrant $C$ in $E$,  we can describe the above definitions in a more  intrinsic way as follows.
 
We  consider the set $\{e\in C\,\vert \, d_C(e)=1\}$ of boundary points. This set has exactly $k$ connected components, which we denote by $A_1,\ldots ,A_k$. Each component $A_i$ lies in the smallest subspace $f_i$ of $E$ containing $A_i$. We call $f_i$ an {\bf extended face} and denote by ${\mathcal F}$ the set of all extended faces. The set ${\mathcal F}$ contains exactly $k$ extended faces. 
 Given an extended face $f\in {\mathcal F}$,  we denote  by $H_f$ the closed half subspace of $E$ which contains $C$.  
 
 In the special case $C=[0,\infty)^k\oplus W\subset \R^k\oplus W$, 
 the  extended  faces  $f_i$ are the subspaces $E_i$, and the half spaces $H_{f_i}$ are the half subspaces $H_i$. 
 
 If  $e\in C$,  we introduce the set of all extended faces containing $e$ by 
 $$ {\mathcal  F}(e)=\{f\in {\mathcal F} \vert \, e\in f\}.$$
Clearly, 
 $$d_C(e)=\# {\mathcal F}(e).$$
 \begin{definition}\label{new_def_2.33}
The {\bf partial quadrant $C_e$}  associated  with  $e\in C$ is defined as 
$$
C_e:= \bigcap_{f\in {\mathcal F}(e)} H_f.\index{$C_e$}
$$
The  minimal linear subspace associated with  $e\in C$ is defined by
$$
E_e:=\bigcap_{f\in {\mathcal F}(e)} f.\index{$E_e$}
$$
\end{definition}
Clearly, the following inclusions hold,
$$
E_e\subset C_e\subset E.
$$
For an interior point $x\in C$, i.e. a point satisfying $d_C(x)=0$, we set $E_e=C_e=E$.  The codimension of $E_e$  in $E$ is precisely $d_C(e)$.
The maximal value $d_{C_e}$ attains  is $d_C(e)$.

Next we introduce a special class of sc-smooth retracts. 
\begin{definition}[{\bf Tame sc-retraction}] \label{tame_retarctions}\index{D- Tame sc-smooth retraction}
Let $r\colon U\to U$ be a sc-smooth retraction defined on a relatively open subset $U$ of a partial quadrant $C$ in the sc-Banach space $E$.
The sc-smooth retraction  $r$ is  called a {\bf tame sc-retraction}, if  the following two conditions are satisfied.
\begin{itemize}
\item[(1)] \text{ $d_C(r(x))=d_C(x)$ for all  $x$ in $U$.}
\item[(2)] At every smooth point $x$ in $O:=r(U)$, there exists a sc-subspace $A\subset E$, such that $E=T_xO\oplus A$ and $A\subset E_x$. 
\end{itemize}
A sc-smooth retract $(O,C,E)$ is called a {\bf tame sc-smooth retract},  if $O$ is the image of a sc-smooth  tame
retraction.
\end{definition}

Let $x$ be a smooth point in the tame sc-retract $O$ and let $A\subset E_x$ be a sc-complement of the tangent space $T_xO$ as guaranteed by condition (2) in Definition \ref{tame_retarctions}, so that 
\begin{equation}\label{eq_T_oplus_A}
E=T_xO\oplus A.
\end{equation}
We recall that $T_xO=Dr(x)E$ and hence 
\begin{equation}\label{eq_T_oplus_A_1}
\begin{split}
E&=Dr(x)E+(\mathbbm{1}-Dr(x))E\\
&=T_xO+(\mathbbm{1}-Dr(x))E.
\end{split}
\end{equation}
Applying  the projection $\mathbbm{1}-Dr(x)$ to the equation \eqref{eq_T_oplus_A} and using that $(\mathbbm{1}-Dr(x))T_xO=0$, we obtain 

\begin{equation}\label{eq_T_oplus_A_2}
(\mathbbm{1}-Dr(x))E=(\mathbbm{1}-Dr(x))A.
\end{equation}

We claim that $(\mathbbm{1}-Dr(x))E\subset E_x$. In order to prove this claim we recall that $A\subset E_x$, so that, in view of \eqref{eq_T_oplus_A_2}, 
it is sufficient to prove that $Dr(x)E_x\subset E_x$.  We may assume that $E=\R^k\oplus W$ and $x=(0,\ldots ,0,x_{d+1},\ldots,x_k,w)$ with 
$x_{d+1},\ldots,x_k>0$. We choose a smooth point $y\in E_x$ so that $y=(0,\ldots,0,y_{d+1},\ldots,y_k,v)$. If $\abs{\tau}$ is small, then 
$x_\tau:=x+\tau y$ belongs to  the partial quadrant $C$ and has 
 the first $d$ coordinates vanishing. By condition (1) in Definition \ref{tame_retarctions} of a tame sc-retraction $r$, we have  $d_C(r(x+\tau y))=d_C(x+\tau y).$  Hence 
 the first 
 $d$ coordinates of $r(x+\tau y)$ 
 vanish, and from
 $$
 \dfrac{d}{d\tau}r(x+\tau y)\bigl\lvert_{\tau=0}=Dr(x)y
 $$
 we conclude that the first $d$ coordinates of $Dr(x)y$ 
 vanish.  The same is true if $y$ is on level $0$ in $E_x$. So, $Dr(x)E_x\subset E_x$ and hence, in view of \eqref{eq_T_oplus_A_2}, we have verified that $(\mathbbm{1}-Dr(x))E\subset E_x$, as claimed.

In view of  \eqref{eq_T_oplus_A_1}, we can therefore always assume, without loss of generality,  that in condition (2) of Definition \ref{tame_retarctions} the complement $A$ of $T_xO$ is equal to $A=(\mathbbm{1}-Dr(x))E$. 

Summarizing the discussion we have established the following proposition. 
\begin{proposition}\label{IAS-x}\index{P- Properties of tame retractions}
Let $U\subset C\subset E$ be a relatively open subset of a partial quadrant in a sc-Banach space and let $r\colon 
U\rightarrow U$  be 
a sc-smooth tame retraction. Then,  for every smooth point $x\in O=r(U)$,  the sc-subspace $(\mathbbm{1}-Dr(x))E$ is a subspace of $E_x$, so that in condition (2) of Definition \ref{tame_retarctions} we can take $A=(\mathbbm{1}-Dr(x))E$.
\end{proposition}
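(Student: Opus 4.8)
The plan is to verify directly that the specific complement $A=(\mathbbm{1}-Dr(x))E$ satisfies all three requirements of condition (2) in Definition~\ref{tame_retarctions}. Fix a smooth point $x\in O=r(U)$. Since $r$ is sc-smooth and $x$ is smooth, Proposition~\ref{x1} shows $Dr(x)\colon E\to E$ is a sc-operator, and from $r\circ r=r$ and the chain rule (as noted after Definition~\ref{sc_smooth_retract}) it is a projection with image $T_xO$. Hence on every level $E_m=Dr(x)E_m\oplus(\mathbbm{1}-Dr(x))E_m$ is a topological direct sum, so $E=T_xO\oplus(\mathbbm{1}-Dr(x))E$ is a sc-splitting; moreover, arguing exactly as in the proof of Proposition~\ref{prop1} (approximating elements of $(\mathbbm{1}-Dr(x))E_m$ by $(\mathbbm{1}-Dr(x))f_n$ with $f_n\in E_\infty$), the space $(\mathbbm{1}-Dr(x))E$ is a sc-subspace. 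Thus everything reduces to proving the single inclusion $(\mathbbm{1}-Dr(x))E\subset E_x$.

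To get this, invoke condition (2) of Definition~\ref{tame_retarctions}: there is a sc-subspace $A\subset E_x$ with $E=T_xO\oplus A$. Applying the projection $\mathbbm{1}-Dr(x)$ to this splitting and using $(\mathbbm{1}-Dr(x))T_xO=0$, we obtain $(\mathbbm{1}-Dr(x))E=(\mathbbm{1}-Dr(x))A$. Since $A\subset E_x$, writing $(\mathbbm{1}-Dr(x))A=A-Dr(x)A$ shows that the desired inclusion will follow once we know $Dr(x)E_x\subset E_x$.

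To prove $Dr(x)E_x\subset E_x$, normalize so that $E=\R^k\oplus W$, $C=[0,\infty)^k\oplus W$, and $x=(0,\ldots,0,x_{d+1},\ldots,x_k,w)$ with $x_{d+1},\ldots,x_k>0$, where $d=d_C(x)$; then $E_x=\{v\mid v_1=\cdots=v_d=0\}$. First take a smooth $y\in E_x$. For $\abs{\tau}$ small, $x_\tau:=x+\tau y\in U$ has its first $d$ coordinates equal to $0$ and its coordinates $d+1,\ldots,k$ still positive, so $d_C(x_\tau)=d$. By condition (1) of Definition~\ref{tame_retarctions}, $d_C(r(x_\tau))=d_C(x_\tau)=d$; and since $r$ is continuous, $r(x_\tau)\to r(x)=x$, so for $\abs{\tau}$ small the coordinates $d+1,\ldots,k$ of $r(x_\tau)$ stay positive, which forces the first $d$ coordinates of $r(x_\tau)$ to vanish. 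Differentiating the sc-smooth curve $\tau\mapsto r(x_\tau)$ at $\tau=0$ gives that the first $d$ coordinates of $Dr(x)y$ vanish, i.e. $Dr(x)y\in E_x$. For general $y\in E_x=(E_x)_0$, approximate by smooth $y_n\in(E_x)_\infty$ (possible since $E_x$ is a sc-subspace) and use the continuity of $(z,h)\mapsto Dr(z)h$ from Definition~\ref{scx}(3) together with the closedness of $E_x$ to conclude $Dr(x)y\in E_x$. Hence $Dr(x)E_x\subset E_x$, so $(\mathbbm{1}-Dr(x))E\subset E_x$, and the proposition follows by taking $A=(\mathbbm{1}-Dr(x))E$.

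The step I expect to be the most delicate is the passage from smooth $y$ to $y$ merely on level $0$: the curve argument only produces $Dr(x)y$ for smooth $y$, so one must lean precisely on the level-$0$ continuity of $(z,h)\mapsto Dr(z)h$ provided by Definition~\ref{scx}(3) to close the approximation. The rest — the projection bookkeeping, the normalization of $C$, and the constancy of $d_C$ along the small perturbation $x_\tau$ — is routine.
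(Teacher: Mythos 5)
Your proof is correct and follows essentially the same route as the paper: apply $(\mathbbm{1}-Dr(x))$ to the splitting $E=T_xO\oplus A$ to reduce to $Dr(x)E_x\subset E_x$, then use tameness (condition (1)) along the smooth curve $\tau\mapsto r(x+\tau y)$ and differentiate. You merely fill in details the paper treats tersely — the preliminary check that $(\mathbbm{1}-Dr(x))E$ is a sc-subspace, the observation that coordinates $d+1,\ldots,k$ of $r(x_\tau)$ stay positive, and the density/continuity argument extending the inclusion from smooth $y$ to level-$0$ $y$ (for which, since $x$ is fixed, the boundedness of $Dr(x)\colon E_0\to E_0$ already suffices; you don't actually need the full joint continuity from Definition~\ref{scx}(3)).
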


Let us discuss the tame sc-smooth retracts in more detail. 
\begin{proposition}\label{tame_equality}\index{P- Basic equality for tame retracts}
Let $(O,C,E)$ be a {\bf tame sc-smooth retract},  and let $x\in O_\infty$ be a smooth point of $O$.
Then, $T_x^{\textrm{R}}O$ is a Banach space of codimension $d_O(x)$ in $T_xO$, so that 
\begin{equation}\label{opp}
\dim(T_xO/T^R_xO)=d_O(x). 
\end{equation}
In particular,   $C_xO$ is a partial quadrant in the the tangent space $T_xO$.
In addition,  for every point $x\in O$, we have the equality
\begin{equation}\label{equality_d_O_and_d_C_1}
d_O(x)=d_C(x).
\end{equation}
\end{proposition}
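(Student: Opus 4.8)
The plan is to combine Theorem \ref{hofer}, which already yields the chain $\dim(T_xO/T^R_xO)\le d_O(x)\le d_C(x)$ at every smooth point, with a matching lower bound $\dim(T_xO/T^R_xO)\ge d_C(x)$ extracted directly from tameness. As always one reduces to the model situation in which $E=\R^k\oplus W$, $C=[0,\infty)^k\oplus W$, and $O=r(U)$ for a sc-smooth tame retraction $r\colon U\to U$ on a relatively open subset $U$ of $C$; write $d:=d_C(x)$, so that the minimal linear subspace $E_x$ has codimension $d$ in $E$.

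First I would prove \eqref{opp}. Fix a smooth point $x\in O_\infty$. By Proposition \ref{IAS-x} the sc-subspace $A:=(\mathbbm 1-Dr(x))E$ is contained in $E_x$ and $E=T_xO\oplus A$. Let $q\colon E\to E/E_x$ denote the quotient projection; its range is finite-dimensional of dimension $d$, and $\ker q=E_x$. Since $A\subseteq E_x$ we have $q(E)=q(T_xO)+q(A)=q(T_xO)$, so the restriction $q|_{T_xO}\colon T_xO\to E/E_x$ is surjective, and its kernel is $T_xO\cap E_x=T^R_xO$. Hence $\dim(T_xO/T^R_xO)=\dim(E/E_x)=d=d_C(x)$, and in particular $T^R_xO$ is a closed subspace of finite codimension $d$ in $T_xO$. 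Feeding this into Theorem \ref{hofer} together with the trivial bound $d_O(x)\le d_C(x)$ forces $\dim(T_xO/T^R_xO)=d_O(x)=d_C(x)$ for every smooth $x$, and the second assertion of Theorem \ref{hofer} then shows that $C_xO$ is a partial quadrant in $T_xO$.

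It remains to establish \eqref{equality_d_O_and_d_C_1} at an arbitrary point $x\in O$, which I would do by approximation, following the pattern of the proof of Corollary \ref{equality_of_d}. Set $d:=d_C(x)$ and $I:=I(x)$. Since $E_x=E_I$ is a sc-subspace of $E$, its smooth points are dense in it, so we may choose smooth points $y_k\in E_x$ with $y_k\to x$ in $E$; for $k$ large $y_k$ lies in $U$ and, its coordinates outside $I$ remaining positive, $d_C(y_k)=d$. Put $x_k:=r(y_k)$. Then $x_k\in O_\infty$, $x_k\to r(x)=x$ by sc-continuity of $r$ on level $0$, and condition (1) of Definition \ref{tame_retarctions} gives $d_C(x_k)=d_C(y_k)=d$; by the smooth case already treated, $d_O(x_k)=d_C(x_k)=d$. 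Now pick a germ of sc-diffeomorphism $\varphi\colon(O,x)\to(O',\varphi(x))$ realizing $d_O(x)=d_{C'}(\varphi(x))$. For $k$ large, $\varphi$ is a local sc-diffeomorphism near $x_k$ as well, so Proposition \ref{newprop2.24} together with the definition of $d_{O'}$ gives $d=d_O(x_k)=d_{O'}(\varphi(x_k))\le d_{C'}(\varphi(x_k))$, while $d_{C'}(\varphi(x_k))\le d_{C'}(\varphi(x))$ since a positive coordinate of $\varphi(x)$ stays positive at the nearby point $\varphi(x_k)$. Chaining these with $d_O(x)\le d_C(x)=d$ yields $d\le d_{C'}(\varphi(x_k))\le d_{C'}(\varphi(x))=d_O(x)\le d$, whence $d_O(x)=d_C(x)$.

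The only genuinely new point is the lower bound in the second paragraph: this is precisely where tameness of $r$ enters essentially, through Proposition \ref{IAS-x} (the fact that a complement of $T_xO$ can be taken inside $E_x$). The third paragraph is a routine density argument of the same type as in Corollary \ref{equality_of_d}; the one step deserving a word of care is that the approximating smooth points must be selected within $E_x$, so that their degeneracy index equals $d_C(x)$ exactly rather than merely being $\ge d_C(x)$.
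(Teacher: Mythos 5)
Your proposal is correct and follows essentially the same approach as the paper's proof: you use tameness via Proposition \ref{IAS-x} to produce the complement $A\subset E_x$, deduce $\dim(T_xO/T^R_xO)=d_C(x)$ by a small piece of linear algebra (the paper uses the direct sum $E_x=T^R_xO\oplus A$ where you use the quotient map $E\to E/E_x$, but these are the same computation), combine with Theorem \ref{hofer} and $d_O\le d_C$ to close the loop at smooth points, and handle general $x$ by approximating with smooth points $x_k=r(y_k)$ inside $E_x$ and exploiting upper semicontinuity of the degeneracy index. The only cosmetic difference in the last step is that the paper fixes the $\R^k$-coordinates and approximates only in $W$, and invokes the stated lemma that $d_O$ is locally monotone, whereas you allow approximation anywhere in $E_x$ and unroll the monotonicity into an explicit chain of inequalities through the minimizing chart; both are valid.
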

\begin{proof}
We assume that $O=r(U)$ is the retract in $U\subset C\subset E$ of the tame sc-retraction $r\colon 
U\to U$. Moreover, we may assume that $C=[0,\infty )^k\oplus W\subset E=\R^k\oplus W$. 

Then the  condition (2) of Definition \ref{tame_retarctions}  says that,  for every $x\in O_\infty$ there exists a sc-subspace $A$ of $E$, satisfying  $E=\text{im}\ Dr(x)\oplus A =T_xO\oplus A$ and $A\subset E_x$, where $E_x=E_{I(x)}$.  
Therefore, 
\begin{equation}\label{direct_sum_1}
E_x=(T_xO\oplus A)\cap E_x= (T_xO\cap E_x)\oplus (A\cap E_x)=T_x^{\textrm{R}}O\oplus A, 
\end{equation}
since  $A\subset E_x$. The space $E_x$ has codimension $d_C(x)$ in $E$, so that 
\begin{equation}\label{direct_sum_2}
d_C(x)=\dim\bigl(E/E_x \bigr)= \dim \bigl(T_xO\oplus A /T_x^{\textrm{R}}O\oplus A \bigr)=  \dim \bigl(T_xO /T_x^{\textrm{R}}O \bigr).
\end{equation}
Employing Theorem \ref{hofer},  we conclude from 
\eqref{direct_sum_2} that
$d_C(x)\leq d_O(x) $ for all $x\in O_\infty$.
By definition,  $d_O(x)\leq d_C(x)$ for all $x\in O$,  and hence, 
\begin{equation}\label{eq_d_C=d_O}
d_C(x)=d_O(x)\quad \text{if  $x\in O_\infty$.}
\end{equation}
Consequently, $\dim T_xO/T_x^RO=d_O(x)$. Employing 
Theorem \ref{hofer} once more, $C_xO$ is a partial quadrant in the tangent space $T_xO$.

It remains to prove that $d_O(x)=d_C(x)$ for all (not necessarily smooth) points $x\in O$. If $x=(a, w)\in O$,  we take a sequence $(w_j)\subset W_\infty$  converging to $w$  in $W$.
Then the sequence $x_j\in O$, defined by $x_j=r(a, w_j)$, consists of smooth points and converges to $x$ in $E$.
Since, by assumption, the  retraction $r$ is tame, 
$$d_C(x_j)=d_C(r(a, w_j))=d_C(a, w_j)=d_C(a, w)=d_C(x)$$
and, using  \eqref{eq_d_C=d_O}, we conclude 
that $d_O(x_j)=d_C(x_j)=d_C(x).$
By definition of $d_O$, we find an open neighborhood $V'\subset O$ around $x$, such that 
$d_O(y)\leq d_O(x)$ for all $y\in V'$. Consequently, for large $j$,
$$d_O(x)\geq \lim_{j\to \infty}d_O(x_j)=d_C(x).$$
In view of $d_O(x)\leq d_C(x)$, we conclude 
$d_O(x)=d_C(x)$, and the proof of Proposition \ref{tame_equality} is complete.
\end{proof}

Among all M-polyfolds, there is a distinguished class of M-polyfolds which are modeled on tame retracts.
These turn out to have an interesting and useful boundary geometry.

\begin{definition}\label{def_tame_m-polyfold}\index{D- Tame M-polyfold}
A  {\bf tame M-polyfold} $X$ is a M-polyfold which possesses an equivalent sc-smooth atlas whose charts are all modeled on {\bf tame} sc-smooth retracts.
\end{definition}

By Proposition \ref{reduced_tangent_under_sc_diff}, the following concepts for any M-polyfold  are well-defined and independent of the choice of the charts.


\begin{definition}\label{def_partial_cone_reduced_tangent}\index{D- Reduced tangent space}
Let $X$ be a  M-polyfold. For a smooth point $x\in X$,  the {\bf reduced tangent space $T^R_xX$}\index{$T^R_xX$}  is, by definition, the sc-subspace of the tangent space
$T_xX$   which, by definition,   is the preimage of $T_o^RO$ under any  chart $\psi\colon 
(V,x)\rightarrow (O,o)$, so that 
$$ 
T\psi(x) (T_x^RX) = T_o^RO = T_oO \cap E_o.
$$

For a smooth point $x\in X$,  the {\bf partial cone} $C_xX$\index{Partial cone}\index{$C_xX$} is the closed convex subset of $T_xX$ which under a sc-smooth chart $\psi$ as above is mapped onto $C_oO$, i.e.
$$
T\psi(x)(C_xX) =  T_oO \cap C_o.
$$
\end{definition}

From Proposition \ref{tame_equality} we conclude, if  
the M-polyfold $X$ is tame and if $x\in X$ is a smooth point, that $C_xX$ is a partial quadrant in the tangent space $T_xO$, and  we have the identity
$$
d_X(x)=\dim(T_xX/T_x^RX) = d_{C_xX}(0_x),
$$
where $0_x$ is the zero vector in $T_xX$.  

We are going to show that the boundary of a tame M-polyfold has an additional structure. 
\begin{definition} \index{D- Face of an M-polyfold}\index{D- Face}
Let $X$ be a tame M-polyfold.
A {\bf face} $F$ of $X$  is the closure of a connected component in the subset $\{x\in X\ |\ d(x)=1\}$.
The  M-polyfold $X$ is called {\bf face-structured},  if every point $x\in X$ lies in exactly $d_X(x)$ many faces.
\end{definition}

Before we study faces in more detail we have a look at the local situation.
\begin{lemma}\label{ert}
Let $(O,C,E)$ be a sc-smooth, tame retract and let ${\mathcal F}$ be the collection of extended faces,
associated with the partial quadrant  $C$.  Then the faces $F$  in $O$ are the connected components of the sets $f\cap O$, where 
$f\in {\mathcal F}$ are extended faces. The connected components of $f\cap O$ and $f'\cap O$,
containing a point $x \in O$, are equal if and  only if  $f=f'$.
\end{lemma}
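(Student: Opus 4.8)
The plan is to work entirely in the local model, so I fix a tame sc-smooth retraction $r\colon U\to U$ with $O=r(U)$ on a relatively open subset $U$ of the partial quadrant $C$, and without loss of generality take $C=[0,\infty)^k\oplus W\subset E=\R^k\oplus W$, so that the extended faces are exactly the $E_i$ from \eqref{k^4} and $H_{f_i}=H_i$. The key input is the tameness condition (1): $d_C(r(x))=d_C(x)$ for all $x\in U$. I would first record the elementary observation that for a point $x\in C$ one has $x\in E_i$ (i.e. the $i$-th coordinate of $x$ vanishes) precisely when $i\in I(x)$, and that $d_C(x)=\#I(x)$. So the set $\{x\in O\mid d_O(x)=1\}$ equals, by Proposition~\ref{tame_equality} (which gives $d_O=d_C$ on $O$), the set $\{x\in O\mid \#I(x)=1\}=\bigcup_{i=1}^k\{x\in O\mid I(x)=\{i\}\}$.

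**Identifying the pieces.** The first substantive step is to check that, for each fixed $i$, the set $f_i\cap O=E_i\cap O$ is, near any of its points $x$ with $d_C(x)=1$, an open subset of $\{x\in O\mid d_O(x)=1\}$ together with boundary strata of higher degeneracy; more precisely the relatively open (in $f_i\cap O$) subset $\{x\in E_i\cap O\mid I(x)=\{i\}\}$ is exactly $E_i\cap O\cap\{d_O=1\}$, and the remaining points of $E_i\cap O$ have $d_O\geq 2$. This uses only the coordinate description. Consequently the degree-one locus $\{d_O=1\}$ is the disjoint-on-the-open-part union $\bigsqcup_i\big(E_i\cap O\cap\{d_O=1\}\big)$. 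Now a face $F$ of $O$ is by definition the closure of a connected component $\Sigma$ of $\{d_O=1\}$; since each such $\Sigma$ lies in a unique $E_i$ (the degree-one locus being the disjoint union above), $\Sigma$ is a connected subset of $E_i\cap O$ that is open in $E_i\cap O\cap\{d_O=1\}$. The claim that $F=\overline{\Sigma}$ is precisely a connected component of $f_i\cap O=E_i\cap O$ then needs: (a) $\overline{\Sigma}$ is connected — immediate; (b) $\overline{\Sigma}$ is contained in a single connected component of $E_i\cap O$ — because $\Sigma$ is; and (c) maximality, i.e. the connected component of $E_i\cap O$ containing $x$ is no larger than $\overline{\Sigma}$. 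For (c) I would argue that any point $y$ in that component can be joined to $\Sigma$ inside $E_i\cap O$, and that a path can be pushed (using openness of the locus $\{d_O=1\}$ within $E_i\cap O$, whose complement in $E_i\cap O$ consists of the higher-codimension strata $E_I\cap O$ with $i\in I$, which are nowhere dense in $E_i\cap O$) into the degree-one stratum except at finitely many worse points — hence $y\in\overline{\Sigma}$. This density/nowhere-dense business is where I expect the only real care to be needed.

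**The uniqueness clause.** For the last sentence — that the components of $f\cap O$ and $f'\cap O$ through $x$ coincide iff $f=f'$ — the "if" direction is trivial. For "only if", suppose $f=E_i$, $f'=E_j$ with $i\neq j$ and that the component of $E_i\cap O$ through $x$ equals the component of $E_j\cap O$ through $x$; call this common set $K$. Then $K\subset E_i\cap E_j\cap O=E_{\{i,j\}}\cap O$, so every point of $K$ has $d_O\geq 2$. But $K$ is the closure of a degree-one connected component (a face) by the first part, which forces $K$ to contain points with $d_O=1$ — contradiction. So $i=j$, i.e. $f=f'$.

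**Main obstacle.** The conceptually routine part is the coordinate bookkeeping; the genuine obstacle is step (c) above: showing that a connected component of $E_i\cap O$ is exactly the closure of a single connected component of the degree-one locus inside it. This requires knowing that the "bad" strata $E_I\cap O$ ($\#I\ge 2$, $i\in I$) are nowhere dense in $E_i\cap O$, which I would get from the local structure of a tame retract — Proposition~\ref{tame_equality} gives that at smooth points the reduced tangent space has the expected codimension, and approximating an arbitrary point of $E_i\cap O$ by smooth points of the same degeneracy index lets one find nearby points of strictly smaller $I$, hence $d_O=1$, in any neighborhood. Once that density statement is in hand, the connectedness arguments are formal.
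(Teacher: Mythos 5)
Your overall architecture matches the paper's: work in the local model, use $d_O=d_C$ from Proposition~\ref{tame_equality} to identify $\{d_O=1\}$ as the disjoint (on the open part) union $\bigcup_i E_i\cap O\cap\{d_O=1\}$, show each face lies in one component of some $f_i\cap O$, and for uniqueness argue that a common component would sit in $E_{\{i,j\}}\cap O$ and hence never have degeneracy one. That last paragraph is correct, and is essentially the argument the paper uses later in Corollary~\ref{cor_2.42}.

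The gap is exactly where you flagged it, in step (c), and it is a genuine one rather than just a place for "real care." Your plan is to push a path in $E_i\cap O$ off the higher-degeneracy strata except at finitely many points. That kind of transversality-of-paths argument needs $E_i\cap O$ to carry enough smooth structure to perturb paths generically — but at this stage in the paper it is \emph{not} known that $E_i\cap O$ (or even a face) is a sub-M-polyfold; that is only established later in Proposition~\ref{FACE_X}, which itself depends on Lemma~\ref{ert}. Without that, "nowhere dense" does not buy you the path-pushing, and the density claim itself is only half-argued (approximating by smooth points of the \emph{same} degeneracy index does not produce nearby points with smaller $I$ without a further construction). The paper sidesteps all of this with a direct local device that you never invoke: for $e\in Q\subset f\cap O$, pick $h\in f$ with all non-$f$ coordinates strictly positive, so that $e+th$ lies in $f$ and in no other extended face for $t\in(0,1]$, hence $d_C(e+th)=1$; tameness (condition (1) of Definition~\ref{tame_retarctions}, $d_C(r(x))=d_C(x)$) then gives $d_C(r(e+th))=1$, and the continuous curve $t\mapsto r(e+th)\in O$ runs to $e$ as $t\to0^+$ while staying in a single connected component of $\{d_O=1\}$. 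Replacing your path-pushing step by this explicit $r(e+th)$ argument — applied along a path inside $Q$ and using that the $\{d_O=1\}$-approximations vary continuously — closes the gap and is precisely what the paper does.
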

\begin{proof}
By definition of $O$, the retract $O$ itself  is a tame M-polyfold. We denote by $U\subset  C\subset E$
the  relatively open subset in the  partial quadrant $C$, on which a tame sc-smooth retraction satisfies 
$r(U)=O$. Let $\wh{F}$ be a connected component
of the subset $\{x\in O\, \vert \, d_O(x)=1\}$. By Proposition \ref{tame_equality}, $d_O=d_C$ so that this set is  the same as a connected component
of $\{x\in O\, \vert \, d_C(x)=1\}$. If we look at  the isomorphic case $E={\mathbb R}^k\oplus W$, we see immediately, that there
exists an index  $i$, such  that $\wh{F}$
must lie in the subset of $C$, consisting of points $(a,w)$, for which $a_i=0$. We conclude that there exists
an extended face $f\in {\mathcal F}$, such that $\wh{F}\subset f$.  Therefore, $F$ is contained in a connected subset of $O\cap f$.

Next we consider  a connected component $Q$ of $O\cap f$, where  $f$ is an extended face of $C$.
Let $e\in Q$. By assumption, $e\in f$ and we can take a vector $h\in f$ so small, that $e+th\in U\cap f$ for $t\in (0,1]$,
and such that  for $t\in (0,1]$ the points $e+th$ do not belong to any extended face other than $f$. This can be constructed explicitly, using the model $E={\mathbb R}^k\oplus W$ for $E$. Since the retraction $r$ is tame,  we have $d_{C}(r(e+th))=1$ for $t\in (0,1]$, and 
for $t\in (0,1]$ the points $r(e+th)$ belong
to the same connected component of $\{x\in O\ |\ d_O(x)=1\}$. Using  this argument repeatedly,
 we can connect any two points in $Q$ by a continuous path $\gamma\colon 
[0,1]\rightarrow Q$,
such that  the points $y\in \gamma(0,1)$ satisfy  $d_C(y)=1$. This implies that $Q$ is contained in the closure of the connected component $\wh{F}$ in $\{x\in X\ |\ d_O(x)=1\}$. \end{proof}

\begin{corollary}\label{cor_2.42}\index{C- Characterization of $d_O$ for tame $O$}
If  $(O,C,E)$ is a tame sc-smooth retract, then every point $x\in O$ lies in precisely $d_O(x)$ many faces.
 \end{corollary}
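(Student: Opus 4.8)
The plan is to read the statement off almost directly from Lemma \ref{ert} together with the equality $d_O=d_C$ supplied by Proposition \ref{tame_equality}. Recall from the proof of Lemma \ref{ert} that the retract $O$ is itself a tame M-polyfold, so the notion of a face \emph{in} $O$ is meaningful. Fix a point $x\in O$ (not necessarily smooth) and let ${\mathcal F}(x)=\{f\in{\mathcal F}\mid x\in f\}$ be the set of extended faces of the partial quadrant $C$ through $x$; by the remark preceding Definition \ref{new_def_2.33} we have $\#{\mathcal F}(x)=d_C(x)$. I would define a map from ${\mathcal F}(x)$ to the set of faces of $O$ containing $x$ by sending $f\in{\mathcal F}(x)$ to the connected component of $f\cap O$ that contains $x$. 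This is well defined and single-valued: $x\in O$ and $x\in f$ give $x\in f\cap O\neq\emptyset$, and by Lemma \ref{ert} each connected component of $f\cap O$ is a face of $O$; moreover $x$ lies in exactly one such component.

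For surjectivity I would argue as follows: if $F$ is a face of $O$ with $x\in F$, then by Lemma \ref{ert} $F$ is a connected component of $f\cap O$ for some $f\in{\mathcal F}$; since $F$ is connected, lies in $f\cap O$, and contains $x$, it must be \emph{the} component of $f\cap O$ through $x$, and from $x\in F\subseteq f$ we get $f\in{\mathcal F}(x)$. Injectivity is precisely the second assertion of Lemma \ref{ert}: for $f\neq f'$ in ${\mathcal F}(x)$, the components of $f\cap O$ and of $f'\cap O$ through $x$ are distinct. Hence the assignment is a bijection, so the number of faces of $O$ containing $x$ equals $\#{\mathcal F}(x)=d_C(x)$, and invoking $d_C(x)=d_O(x)$ from Proposition \ref{tame_equality} gives $x$ lying in exactly $d_O(x)$ faces.

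There is not really a substantial obstacle here: the content has already been done in Lemma \ref{ert} and Proposition \ref{tame_equality}. The only points that need a little care are (i) using \emph{both} halves of Lemma \ref{ert} — the identification of faces with components of $f\cap O$, and the ``$f=f'$'' distinctness criterion — to get a genuine bijection rather than just a surjection, and (ii) making the argument for an arbitrary point $x\in O$, where one cannot reason via the reduced tangent space; this is exactly why the general-point equality $d_O=d_C$ in Proposition \ref{tame_equality}, and not merely its smooth-point version, is the right tool to quote. No new estimates or constructions are required.
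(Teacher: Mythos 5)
Your proof is correct and is essentially the reduction the paper intends, but it leans more squarely on Lemma \ref{ert} than the paper's own proof does. You build an explicit bijection $f\mapsto(\text{component of }f\cap O\text{ through }x)$ from ${\mathcal F}(x)$ to the faces of $O$ through $x$, taking surjectivity from the first clause of Lemma \ref{ert} and injectivity from the ``$f=f'$'' clause. The paper, by contrast, does not quote the ``$f=f'$'' clause at all in the proof of the corollary: instead it reproves the needed pairwise distinctness concretely, by exhibiting paths $\gamma_i(t)=r(t,\ldots,t,a_i{=}0,t,\ldots,t,a_{d+1},\ldots,a_k,w)$ and using tameness to show $\gamma_i(t)$ lies in $f_i\cap O$ but in no other $f_j\cap O$. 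In effect, the paper's proof of the corollary is what really establishes the injectivity that the statement of Lemma \ref{ert} promises (its own proof only argues the first clause). So your argument is valid as a formal deduction from the lemma's statement, and it is cleaner; but be aware that the paper's more explicit route is doing some honest work that your citation of Lemma \ref{ert} quietly assumes has already been done.
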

 \begin{proof}
 In view of Proposition \ref{tame_equality}, $d_O(x)=d_C(x)$, so that $x$ belongs to precisely $d:=d_C(x)$ extended faces $f\in {\mathcal F}(x)$, let us say, it belongs to the faces $f_1,\ldots ,f_d$ in the sc-Banach space $E=\R^k\oplus W$, defined as the subspaces $f_i=\{(a_1,\ldots, a_k, w)\vert \, a_i=0\}$, $1\leq i\leq d$. The point $x$ is represented  by $x=(0,\ldots ,0, a_{d+1}, \ldots, a_k, w)$, where 
 $a_j\neq 0$ for $d+1\leq j\leq k$. Let $r\colon 
U\to U$ be the tame  retraction onto $O=r(U)$. Then,  the paths $\gamma_i(t)$ in $O$ for $1\leq i\leq d$, starting at $\gamma_i(0)=x$ are  defined by $$
 \gamma_i(t) = r(t,\ldots ,t, a_i=0, t,\ldots, t, a_{d+1},\ldots ,a_k,w).
 $$
Because the retraction $r$ is tame we have, $d_C(\gamma_i(t))=1$ if $t>0$ and  the points $\gamma_i(t)$ belong to $f_i\cap O$, but not to any other face  $f_j\cap O$ with $j\neq i$. Hence, the connected components containing $x$ of the sets $O\cap f_j$ for $1\leq j\leq d$ are all different.
 This proves the last assertion.
 \end{proof}
Corollary \ref{cor_2.42} implies immediately the following result.
\begin{proposition}\index{P- Number of local faces}
If  $X$ is  a tame M-polyfold, then every point $x\in X$ has an open neighborhood
$V$, so that $y\in V$ lies in precisely $d_X(y)$ many faces of $V$ and  $d_V(y)=d_X(y)$ for all $y\in V$.
In particular, globally, a point $x\in X$ lies in at most $d_X(x)$-many faces.
\end{proposition}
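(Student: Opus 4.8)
The plan is to pass to a tame chart and quote Corollary \ref{cor_2.42}. Fix $x\in X$ and choose a tame chart $(\varphi,V,(O,C,E))$ around $x$, so that $(O,C,E)$ is a tame sc-smooth retract and $\varphi\colon V\to O$ is a sc-diffeomorphism. Since $\varphi$ is, near every $y\in V$, a germ of sc-diffeomorphisms between M-polyfolds, Proposition \ref{newprop2.24} gives $d_V(y)=d_O(\varphi(y))$; the same proposition applied to the inclusion $V\hookrightarrow X$ gives $d_V(y)=d_X(y)$. Hence $d_X(y)=d_O(\varphi(y))=d_V(y)$ for all $y\in V$, which is the second assertion. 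Moreover $\varphi$ is a homeomorphism carrying $\{y\in V\mid d_V(y)=1\}$ onto $\{o\in O\mid d_O(o)=1\}$, hence carrying connected components onto connected components and commuting with closures; it therefore induces a bijection between the faces of the tame M-polyfold $V$ and the faces of the tame retract $O$ under which $y$ lies in a face of $V$ exactly when $\varphi(y)$ lies in the corresponding face of $O$. By Corollary \ref{cor_2.42}, $\varphi(y)$ lies in precisely $d_O(\varphi(y))$ faces of $O$, so $y$ lies in precisely $d_O(\varphi(y))=d_V(y)=d_X(y)$ faces of $V$. This proves the first two assertions.

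For the final ("in particular") claim I would compare the faces of $X$ through $x$ with those of $V$ through $x$. Recall a face of $X$ is the closure of a connected component of $\{z\in X\mid d_X(z)=1\}$. Every face $\widehat F$ of $V$ has the form $\cl_V(K)$ for a connected component $K$ of $\{d_V=1\}=\{d_X=1\}\cap V$; this $K$ lies in a unique connected component $G$ of $\{d_X=1\}$, and setting $\Psi(\widehat F)=\cl(G)$ defines a face of $X$, which contains $x$ whenever $\widehat F$ does, since $x\in\widehat F\subset\cl(K)\subset\cl(G)$. Conversely, let $F=\cl(G)$ be a face of $X$ with $x\in F$. As $V$ is an open neighbourhood of $x$ we get $x\in\cl(G\cap V)$. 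After shrinking $V$ — using the linear model $E=\R^k\oplus W$, $C=[0,\infty)^k\oplus W$ around $\varphi(x)$, in which a sufficiently small ball meets only the extended faces through $\varphi(x)$ — we may assume that $\{d_V=1\}$ has only finitely many connected components and that each of them has $x$ in its closure. Then $x\in\cl(G\cap V)=\bigcup_K\cl(K)$, the union being over the (finitely many) components $K$ of $G\cap V$, forces $x\in\cl(K)$ for one such $K$, and the associated face $\widehat F$ of $V$ then satisfies $x\in\widehat F$ and $\Psi(\widehat F)=F$. Thus $\Psi$ maps the faces of $V$ through $x$ — of which there are exactly $d_X(x)$ by the first part — onto the faces of $X$ through $x$, giving the bound $\leq d_X(x)$.

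The one step that needs care is this shrinking: one must verify that a small enough tame chart around $x$ still ``reaches'' every global face passing through $x$, i.e. that after shrinking every connected component of $\{d_V=1\}$ accumulates at $x$ (equivalently, there are exactly $d_X(x)$ of them and they are precisely the local faces through $x$). This is forced by the explicit description of the extended faces in the model $[0,\infty)^k\oplus W$ together with the defining property $d_C(r(\cdot))=d_C(\cdot)$ of tame retractions — exactly the ingredients already exploited in Lemma \ref{ert} and Corollary \ref{cor_2.42} — once one transports them through $\varphi$; the remainder of the argument is just bookkeeping of connected components.
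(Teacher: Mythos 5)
Your argument for the first two assertions is correct and is exactly what the paper intends: transport everything through a tame chart via Proposition~\ref{newprop2.24}, note $d_V=d_X$ on the open set $V$, and invoke Corollary~\ref{cor_2.42}. Nothing to add there.

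For the ``in particular'' claim, the map $\Psi$ from local faces through $x$ to global faces through $x$ is the right device, and the reduction to surjectivity of $\Psi$ is clean. But the step you describe as ``just bookkeeping of connected components'' is the real content and deserves to be proved, not waved at. What you actually need is that, after shrinking the chart, each of the $d=d_X(x)$ sets $\{y\in O : y_i=0,\ y_j>0\ \forall j\neq i\}$, $1\le i\le d$, is \emph{connected}, so that $\{d_O=1\}$ has exactly $d$ components, all with $x$ in their closure. This does not follow from the condition $d_C(r(\cdot))=d_C(\cdot)$ alone (that only controls the \emph{count} of vanishing coordinates, not \emph{which} ones vanish). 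The missing input is that a tame retraction maps the open $i$-th boundary stratum of $C$ near $x$ into the extended face $f_i$; equivalently, that for $z$ near $x$ with $z_i=0$ and $z_j>0\ (j\neq i)$, the single vanishing coordinate of $r(z)$ is the $i$-th. This is established by Taylor-expanding $r$ at $x$ and using $(\mathbbm{1}-Dr(x))E\subset E_x$ from Proposition~\ref{IAS-x}: the coordinates $r(z)_j$ for $j\neq i,\ j\leq d$ are $\approx z_j>0$ and those for $j>d$ are $\approx a_j>0$, so $d_C$-preservation forces $r(z)_i=0$. (The paper uses precisely this implicitly in the proof of Corollary~\ref{cor_2.42} when asserting $\gamma_i(t)\in f_i$.) With that lemma, each stratum is the continuous image of the convex set $\{z\in U'\cap C : z_i=0,\ z_j>0\ \forall j\neq i\}$ under $r$, hence connected, and the finiteness you need drops out. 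So: right route, one genuine lemma silently skipped.
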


The following technical result turns out to be useful later on.  
 \begin{proposition}[Properties of faces]\label{FACE_X}\index{P- Properties of faces}
Let $(O,C,E)$ be a tame sc-smooth retract associated with the tame sc-smooth retraction $r\colon 
U\to U$ onto $O=r(U)$, and let $F$ be a face of $O$.  Then, there exists an open neighborhood $V^\ast$ of $F$ in $U$ and a sc-smooth retraction $s\colon 
V^\ast\rightarrow V^\ast$ onto  $s(V^\ast)=F$. Moreover, defining $V\subset O$
by $V=O\cap V^\ast$, the restriction $s\colon 
V\rightarrow V$ is a sc-smooth retraction onto  $s(V)=F$, so that $F$ is a sc-smooth  sub-M-polyfold of $O$.  Further, $F$ is tame, i.e. it admits a compatible sc-smooth atlas consisting of  tame local models.
In addition, 
$$
d_F(x)=d_O(x)-1\quad  \text{for all $x\in F$}.
$$
\end{proposition}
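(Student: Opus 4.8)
The plan is to localize the face via Lemma \ref{ert} and then realize the retraction onto $F$ as $r$ composed with a linear coordinate projection. \textbf{Localization.} By Lemma \ref{ert}, after passing to an isomorphic model and permuting coordinates we may assume $E=\R^k\oplus W$, $C=[0,\infty)^k\oplus W$, and that $F$ is a connected component of $O\cap f$ for the single extended face $f=E_1=\{(a_1,\dots,a_k,w)\mid a_1=0\}$. Let $\pi\colon E\to E$, $\pi(a_1,a_2,\dots,a_k,w)=(0,a_2,\dots,a_k,w)$; it is a linear sc-operator with image $E_1$, satisfies $\pi(C)\subseteq C\cap E_1$, and restricts to the identity on $E_1$.

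\textbf{The key lemma (main obstacle).} The crux is to produce an open neighborhood $V^\ast$ of $F$ in $U$ with $\pi(V^\ast)\subseteq U$ and $r(\pi(V^\ast))\subseteq F$. Since $\pi$ displaces a point by a vector whose only nonzero entry is the first coordinate, which is small near $F$, the inclusion $\pi(V^\ast)\subseteq U$ holds after shrinking; the content is that $r$ maps a neighborhood of $F$ in $U\cap E_1$ into $E_1$. Granting this, a point of $U\cap E_1$ near $F$ is sent by $r$ into $O\cap E_1$ and, being close to $F$, into $F$ itself (by Lemma \ref{ert}, $F$ is relatively open in $O\cap E_1$), after a further shrinking of $V^\ast$. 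To see that $r$ preserves $E_1$ near $F$, note that at a smooth point $x\in O\cap E_1$ the operator $Dr(x)$ is a projection (since $x\in O$), and tameness gives, via Proposition \ref{IAS-x}, that $\ker Dr(x)=(\mathbbm{1}-Dr(x))E\subseteq E_x\subseteq E_1$ (here $1\in I(x)$); hence for $v\in E_1$ one has $v-Dr(x)v\in\ker Dr(x)\subseteq E_1$, so $Dr(x)v\in E_1$, i.e. $Dr(x)$ preserves $E_1$. Integrating $Dr$ along straight segments contained in $U\cap E_1$, together with density of smooth points and the connectivity argument already used in the proof of Lemma \ref{ert} (which exhibits $r$ as preserving the $E_1$-slice near $F$), then yields the claim. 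I expect the delicate point — the main obstacle — to be the control of $r$ at the non-smooth points of $U\cap E_1$ and near the corners of $F$.

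\textbf{The retraction $s$ and the inherited M-polyfold structure.} Put $s:=r\circ\pi|_{V^\ast}\colon V^\ast\to V^\ast$. It is sc-smooth by the chain rule, Theorem \ref{sccomp}, since $\pi$ is a linear sc-operator and $r$ is sc-smooth. By the key lemma $s(V^\ast)\subseteq F\subseteq V^\ast$, and for $y\in F$ we have $\pi(y)=y$ (its first coordinate vanishes) and $r(y)=y$ (since $y\in O$), whence $s(y)=y$; thus $s(V^\ast)=F$, and for any $x\in V^\ast$, writing $y=s(x)\in F$, $s(s(x))=s(y)=y=s(x)$. So $s$ is a sc-smooth retraction onto $F$. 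Restricting to $V:=O\cap V^\ast$ yields a sc-smooth retraction $s\colon V\to V$ onto $F$, so $F$ is a sub-M-polyfold of $O$ and, by Proposition \ref{sc_structure_sub_M_polyfold}, carries an M-polyfold structure for which the inclusion into $O$ is sc-smooth.

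\textbf{Tameness of $F$ and the degeneracy formula.} Applying the key lemma once more, $r$ restricts to a retraction $\rho:=r|_N\colon N\to N$ onto $F$, where $N$ is a neighborhood of $F$ that is relatively open in the partial quadrant $C\cap E_1=[0,\infty)^{k-1}\oplus W$ of $E_1$. Then $\rho$ is tame: condition (1) of Definition \ref{tame_retarctions} holds because for $z\in N$ (so $1\in I(z)$, and $1\in I(r(z))$ by the key lemma) one has $d_{C\cap E_1}(\rho(z))=d_C(r(z))-1=d_C(z)-1=d_{C\cap E_1}(z)$; for condition (2), at a smooth $x\in F$ the sc-complement $A$ of $T_xO$ in $E$ furnished by tameness of $r$ satisfies $A\subseteq E_x\subseteq E_1$, so $E_1=(T_xO\cap E_1)\oplus A=T_xF\oplus A$ (using that $Dr(x)$ preserves $E_1$, which also gives $T_xF=Dr(x)(E_1)=T_xO\cap E_1$), and $A\subseteq E_x=(E_1)_x$, the minimal subspace at $x$ inside $E_1$. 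Hence $F$ is a tame sc-smooth retract, and therefore a tame M-polyfold. Finally, Proposition \ref{tame_equality} applied to the tame retract $(F,C\cap E_1,E_1)$ gives $d_F(x)=d_{C\cap E_1}(x)$ for all $x\in F$; since the first coordinate of $x$ vanishes, $d_{C\cap E_1}(x)=d_C(x)-1$, and $d_C(x)=d_O(x)$ by Proposition \ref{tame_equality} for $O$. Therefore $d_F(x)=d_O(x)-1$ for all $x\in F$, as claimed.
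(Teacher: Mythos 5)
Your overall structure matches the paper's: reduce via Lemma~\ref{ert} to $F$ being a component of $O\cap E_1$, set $s=r\circ\pi$ with $\pi$ the coordinate projection onto $E_1$, and verify tameness of $F$ from the fact that the complement $A$ of $T_xO$ lies in $E_x\subseteq E_1$. Your tameness check and your explicit derivation of $d_F(x)=d_O(x)-1$ are both correct and in fact spelled out more carefully than in the paper, which dismisses the latter as obvious.

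The gap is in the argument you offer for the ``key lemma.'' Showing that $Dr(x)$ preserves $E_1$ for \emph{smooth} $x\in O\cap E_1$ is correct, and it is genuinely useful later (it is exactly what gives $T_xF=T_xO\cap E_1$). But it does \emph{not} feed into a proof that $r$ itself maps $U\cap E_1$ near $F$ into $E_1$. The points along a straight segment from $x\in F$ to $y\in U\cap E_1$ lie in $U$, not in $O$, and at $z\in U\setminus O$ the operator $Dr(z)$ is not a projection, Proposition~\ref{IAS-x} gives no constraint on it, and there is no reason for it to preserve $E_1$. So ``integrating $Dr$ along straight segments'' is not a valid step, and ``density of smooth points'' does not rescue it --- even at smooth points off $O$ the needed derivative control is absent. (There is also the secondary issue that integrating the sc-derivative costs a level.) The correct argument is the one you mention only as a supporting ingredient: tameness forces $d_C(r(y))=d_C(y)$, so for $y\in U\cap E_1$ near $F$ with $d_C(y)=1$ the image $r(y)$ lies in a single face, which by the connectivity argument of Lemma~\ref{ert} is $F\subseteq E_1$; a general $y\in U\cap E_1$ near $F$ is a limit of such points, and since $F$ is closed in $O$ and $r(y)\in O$ one gets $r(y)\in F$. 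That argument is self-contained, needs no $Dr$-integration, and is what the paper's ``Consequently, \ldots'' is tacitly invoking. So: keep the $Dr$-computation where it belongs (the tangent space identification), and replace the integration step in your key lemma by the density-and-connectivity argument.
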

\begin{proof}
We may assume that $E={\mathbb R}^k\oplus W$ and $C=[0,\infty)^k\oplus W$. Let $U\subset C$ be a relatively open subset of the partial quadrant $C$ and let
$r\colon 
U\rightarrow U$ be a tame sc-smooth retraction onto $r(U)=O$. In view of Lemma \ref{ert}
we may assume, that our face $F$ is a connected component of $O\cap f_1$, where $f_1$ is the extended face consisting of those
points whose first coordinate vanishes. Consequently,  
 for every  $(a,w)\in F$ we find an open  neighborhood
$U_{(a,w)}\subset U$ in $C$, such  that, if $(b, v)\in U_{(a, w)}$, then $r(0,b_2,\ldots ,b_k,v)$  has the first coordinate vanishing.
Taking the union of these neighborhoods, we find an open neighborhood $U^\ast$ of $F$ which is contained in $U$,
such that for every $(b,v)\in U^\ast$,  the point $(0,b_2,\ldots ,b_k,v)$ belongs to $ U$ and $r(0,b_2,\ldots ,b_k,v)$ has the first coordinate vanishing
and belongs to $F$.
Hence, we can define the sc-smooth map
$$
s\colon 
U^\ast\rightarrow U\quad \text{by}\quad s(b,v):=r(0,b_2,\ldots,b_k,v),
$$
which has its image in $F$, so that,  in view of $F\subset U$,  we may assume that
$$
s\colon 
U^\ast\rightarrow U^\ast.
$$
It follows that the face $F$ is a sc-smooth  sub-M-polyfold of $O$. 

We know that $F$ is a connected component of $f_1\cap O$.  We find an open neighborhood $V$
of $O$ in $f_1\cap C\subset f_1$, so that $r(V)=F$. Clearly,  $s=r\vert V\colon 
V\rightarrow V$ preserves $d_{C\cap f_1}$, since it preserves $d_C$ and $d_C=d_{C\cap f_1}+1$ on $C\cap f_1$.

If $x\in F$ is a smooth point, then $T_xF\subset T_xO$ is of codimension $1$ and hence has a complement
$P$ of dimension $1$, so that 
$$
T_xO=T_xF\oplus P.
$$
We know from Definition \ref{tame_retarctions}  of a tame retract that $T_xO$ has a sc-complement $A\subset E_x$ in $E$. Hence, 
$$
E=T_xO\oplus A = T_xF\oplus P\oplus A.
$$

In view of $P\cap f_1 = \{0\}$ and $T_xF \cap f_1 = T_xF$, we deduce
$$
f_1 = E \cap f_1 = (T_xF\oplus P \oplus A) \cap f_1 = T_xF \oplus (f_1 \cap A).
$$
Noting that $(f_1 \cap A) \subset (f_1 \cap E_x) = f_1$, the sc-smooth retraction $s \colon 
 V \rightarrow V$ onto $s(V) = F$ is tame. Since the last statement of Proposition \ref{FACE_X} is obvious, the  proof  of Proposition \ref{FACE_X} is complete.
\end{proof}
In order to formulate another result along the same lines we first recall  that, given any point $x$ in a tame 
M-polyfold $X$,  we find an open neighborhood $U=U(x)$ such  that $U\cap X$ has precisely $d_X(x)$ many faces containing $x$. Since
we can choose $U$ as small was we wish,  it makes sense to introduce the notion of a {\bf face germ}\index{Face germ}. Two local faces $F$ and $F'$ at $x$ have the same
germ if there exists an open neighborhood $U=U(x)$ such  that $U\cap F=U\cap F'$. We shall write $[F,x]$ for a face germ. 
Sometimes we shall call  them {\bf codimension one face germs}\index{Codimension one face germ}. If we refer to a face germ we always mean the latter.
The collection of all codimension one face
germs at $x$  is denoted by ${\mathcal F}_x$.  For a subset $\sigma\subset {\mathcal F}_x$ we denote by
$F^\sigma_x$ the intersection
$$
F^\sigma_x = \bigcap_{[F,x]\in\sigma} F,\index{$F^\sigma_x$}
$$
and call it {\bf codimension $\#\sigma$ face germ}\index{Codimension $\#\sigma$ face}. 
\begin{proposition}
Given a tame M-polyfold $X$ every point $x$ has $d_X(x)$ many local faces containing $x$. The local faces are tame M-polyfolds
with $d_F(x)=d_X(x)-1$.  The intersection
of local faces $F_\sigma$ associated to a subset $\sigma$ of ${\mathcal F}_x$ is a tame M-polyfold of codimension $\#\sigma$
and $d_{F_\sigma}(x)=d_X(x)-\#\sigma$.
\end{proposition}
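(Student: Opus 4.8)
The plan is to reduce everything to the local model via a tame chart and then to bootstrap from Proposition~\ref{FACE_X} by induction on $\#\sigma$. Fix $x\in X$. Since $X$ is tame it has a chart $(V,\varphi,(O,C,E))$ around $x$ with $(O,C,E)$ a tame sc-smooth retract, say $O=r(U)$ with $r\colon U\to U$ a tame sc-smooth retraction and $o=\varphi(x)$; after a linear sc-isomorphism we may take $E=\R^k\oplus W$, $C=[0,\infty)^k\oplus W$. By Corollary~\ref{cor_2.42}, $o$ lies in exactly $d:=d_O(o)=d_X(x)$ faces of $O$, which by Lemma~\ref{ert} are the connected components through $o$ of the sets $O\cap f_i$ for the extended faces $f_i$ with $o\in f_i$; pulling these germs back by $\varphi^{-1}$ gives $d_X(x)$ local face germs at $x$. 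That this correspondence is intrinsic (independent of the chart) follows from Proposition~\ref{newprop2.24} and Proposition~\ref{reduced_tangent_under_sc_diff}, and Proposition~\ref{FACE_X}, transported through $\varphi$, shows at once that each local face $F$ is a tame M-polyfold with $d_F(y)=d_X(y)-1$ for $y\in F$. This proves the first two sentences.

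For the intersection statement I would argue by induction on $j:=\#\sigma$, the case $j=0$ being trivial and $j=1$ being exactly Proposition~\ref{FACE_X}. Assume the claim for subsets of size $j-1$. Let $\sigma\subset{\mathcal F}_x$ with $\#\sigma=j$, pick $[F,x]\in\sigma$ and set $\sigma'=\sigma\setminus\{[F,x]\}$. By the inductive hypothesis $F^{\sigma'}_x$ is a tame M-polyfold of codimension $j-1$ with $d_{F^{\sigma'}_x}(x)=d_X(x)-(j-1)$, and $x\in F^{\sigma'}_x$. The heart of the step is the claim that $[F\cap F^{\sigma'}_x,x]$ is a codimension-one face germ of the tame M-polyfold $F^{\sigma'}_x$. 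Granting this, Proposition~\ref{FACE_X} applied to $F^{\sigma'}_x$ shows that $F^\sigma_x=F\cap F^{\sigma'}_x$ is a tame sub-M-polyfold of $F^{\sigma'}_x$ of codimension $1$ with $d_{F^\sigma_x}(y)=d_{F^{\sigma'}_x}(y)-1$; in particular $F^\sigma_x$ is a tame M-polyfold of codimension $1+(j-1)=j$ in $X$ and $d_{F^\sigma_x}(x)=d_X(x)-(j-1)-1=d_X(x)-\#\sigma$, completing the induction.

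To prove the sub-claim I would again pass to a tame chart. By (the proof of) Proposition~\ref{FACE_X}, iterated $j-1$ times, the local model of $F^{\sigma'}_x$ is a tame sc-smooth retract sitting in the partial quadrant $C':=C\cap f_{i_1}\cap\dots\cap f_{i_{j-1}}\cong[0,\infty)^{\,k-(j-1)}\oplus W$, obtained by composing $r$ with the coordinate projection that kills the coordinates attached to $f_{i_1},\dots,f_{i_{j-1}}$. The extended faces of $C'$ are precisely the traces $f_m\cap C'$ with $m\notin\{i_1,\dots,i_{j-1}\}$ (this is immediate from the intrinsic description of extended faces preceding Definition~\ref{new_def_2.33} together with Lemma~\ref{corner_recognition_linear}), so by Lemma~\ref{ert} the faces of $F^{\sigma'}_x$ are the connected components of $F^{\sigma'}_x\cap(f_m\cap C')$. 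Since $[F,x]$ is the germ of a component of $O\cap f_m$ for exactly one such $m$, $F\cap F^{\sigma'}_x$ is (the germ of) such a component, hence a face of $F^{\sigma'}_x$; and $d_C=d_{C'}+(j-1)$ on $C'$ together with Proposition~\ref{tame_equality} gives back the degeneracy bookkeeping used above.

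I expect the genuine obstacle to be precisely this sub-claim: one must show that the \emph{intrinsic} codimension-one face germs of $F^{\sigma'}_x$ at $x$ are exactly the germs $[F\cap F^{\sigma'}_x,x]$ as $[F,x]$ ranges over ${\mathcal F}_x\setminus\sigma'$, which amounts to checking that passing to a local-face retract of a tame retract neither creates nor destroys faces and that extended faces of $C$ restrict to extended faces of $C'$ --- a careful, chart-dependent connected-component argument in the model $\R^k\oplus W$, exactly in the spirit of Lemma~\ref{ert} and Proposition~\ref{FACE_X}. Everything else is a mechanical iteration of Proposition~\ref{FACE_X} plus one invocation of Proposition~\ref{tame_equality}. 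As a slightly more hands-on alternative to the induction, one could also define directly on a suitable neighbourhood $U^\ast$ of the component $F^\sigma$ through $o$ the map $s(b,v)=r(0,\dots,0,b_{j+1},\dots,b_k,v)$, verify $s\circ s=s$ and $s(U^\ast)=F^\sigma$ by shrinking $U^\ast$ as in Proposition~\ref{FACE_X}, and check tameness of $s$ by intersecting a tame splitting $E=T_{x'}O\oplus A$ (with $A\subset E_{x'}$) with $E\cap f_1\cap\dots\cap f_j$ as in the proof of Proposition~\ref{FACE_X}; the same subtle connected-component issue reappears there.
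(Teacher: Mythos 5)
The paper leaves this proposition without an explicit proof, presenting it as an immediate consequence of Corollary~\ref{cor_2.42} and Proposition~\ref{FACE_X}; your reconstruction via induction on $\#\sigma$, iterating Proposition~\ref{FACE_X} through the local model $C\cap f_{i_1}\cap\dots\cap f_{i_{j-1}}$ and checking that the extended faces of $C$ restrict to those of the smaller partial quadrant, is exactly the intended argument and is correct.
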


We sum up the discussion about faces in the following theorem.

\begin{theorem}\index{T- Faces as sub-M-polyfolds}
The interior of a face $F$ in a tame M-polyfold $X$ is a sc-smooth sub-M-polyfold of $X$.  If $X$ is face-structured, then every face $F$ is a sub-M-polyfold  and the induced M-polyfold structure is tame.
The inclusion map $i\colon F\rightarrow X$ satisfies $d_X(i(x)) = d_F(x)+1$. 
Every point $x\in X$ has an open neighborhood $U$ such that every point $y\in U$
lies in precisely $d_X(y)$-many faces of $U$. If $X$ is face-structured, then every point $x \in X$ lies in precisely 
$d_X(x)$-many global faces of $X$.
\end{theorem}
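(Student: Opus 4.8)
The strategy is to reduce every assertion to the local statements already established --- chiefly Lemma \ref{ert}, Proposition \ref{FACE_X}, Corollary \ref{cor_2.42}, and the two propositions on local faces proved just above --- and then to glue the resulting local pictures, using that the degeneracy index $d_X$ is a local quantity invariant under germs of sc-diffeomorphisms (Proposition \ref{newprop2.24}). Throughout one works in a tame sc-smooth atlas of $X$: near any point $x$ a chart is modeled on a tame retract $(O,C,E)$, and by Lemma \ref{ert} a face germ of $X$ at $x$ corresponds under such a chart to a connected component of $f\cap O$ for a unique extended face $f$ of $C$.

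First I would treat the interior of $F$. Fix $x\in F$ with $d_X(x)=1$ and a tame chart $(V,\varphi,(O,C,E))$ around $x$, with $V$ so small that $\varphi(F\cap V)$ is a single connected component $Q$ of $f\cap O$ for one extended face $f$. By Proposition \ref{FACE_X} there is a relatively open neighborhood of $Q$ in $O$ together with a sc-smooth retraction onto $Q$, so $F\cap V$ is a sc-smooth sub-M-polyfold of $V$; letting $x$ range over the interior of $F$ (the set where $d_F=0$, equivalently $d_X=1$) shows that this interior is a sub-M-polyfold of $X$. Combining the identity $d_F(x)=d_O(x)-1$ of Proposition \ref{FACE_X} with $d_O=d_C$ (Proposition \ref{tame_equality}) and $d_C(\varphi(x))=d_X(x)$ (Proposition \ref{newprop2.24}) then yields $d_X(i(x))=d_F(x)+1$ at every point where $F$ is locally a sub-M-polyfold.

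Now assume $X$ is face-structured and let $x\in F$ be arbitrary with $d_X(x)=k$. By the proposition on local faces proved above, $x$ has a neighborhood $U$ on which $d_U=d_X$ and in which exactly $k$ local faces pass through $x$ (Corollary \ref{cor_2.42}); since $X$ is face-structured, $x$ lies in exactly $k$ global faces, each of which restricts to a local face germ at $x$, so by counting this restriction map is a bijection. Hence on a small neighborhood the global face $F$ coincides with one of these local faces, which is a tame sub-M-polyfold with $d_F=d_X-1$ by Proposition \ref{FACE_X}. As two such local models describe the same germ of $F$ on any overlap, they assemble into a global tame sub-M-polyfold structure on $F$ satisfying $d_X\circ i=d_F+1$; moreover the local--global correspondence just used also yields the final assertion, that each $x\in X$ lies in exactly $d_X(x)$ global faces, while the fourth assertion is exactly the proposition on local faces proved above.

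I expect the real work to be the gluing and topological bookkeeping in the face-structured case --- verifying that the ``one global face $=$ one local face germ'' matching is consistent as $x$ varies over $F$, that $F$ with the induced structure is Hausdorff and paracompact, and that the local retractions furnished by Proposition \ref{FACE_X} fit together into a genuinely compatible sc-smooth atlas. This reduces to careful tracking of the connected components of $\{x\in X\mid d_X(x)=1\}$ across chart transitions; the substantive geometric content is entirely contained in Proposition \ref{FACE_X}.
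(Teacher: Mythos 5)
Since the paper states this theorem without proof (it is offered as a summary of the preceding discussion on faces), there is no author's argument to compare against directly; the natural benchmark is the proof implied by Lemma~\ref{ert}, Proposition~\ref{FACE_X}, Corollary~\ref{cor_2.42}, and the two propositions on local faces, and your proposal uses exactly those ingredients in the expected way. The strategy --- localize in a tame chart, identify the face germ with a connected component of $f\cap O$, invoke the local retraction of Proposition~\ref{FACE_X}, and use face-structuredness plus the local/global counting bijection to glue --- is sound.

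One step that you state in passing but that genuinely requires an argument is the ability to shrink $V$ so that $\varphi(F\cap V)$ is a \emph{single} connected component of $f\cap O$ (first paragraph) and, correspondingly, that ``on a small neighborhood the global face $F$ coincides with one of these local faces'' (second paragraph). The counting bijection rules out two local face germs \emph{through $x$} lying in the same global face, but it does not a priori prevent $F\cap V$ from also containing components of $\{d_X=1\}\cap V$ that accumulate near $x$ without passing through $x$. To close this, observe that in the model $O\subset[0,\infty)^k\oplus W$ the component of $O\cap f_i$ through $0$ is open in $O\cap f_i$, so the remaining components of each $O\cap f_i$ avoid a neighborhood of $0$; intersecting these $k$ neighborhoods gives a shrink of $V$ whose only local faces are the $d_X(x)$ ones through $x$. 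With that, your injectivity argument pins down $F\cap V$ as a single local face of $V$, and Proposition~\ref{FACE_X} furnishes the retraction and the index identity. The rest of the bookkeeping you flag (Hausdorff, paracompact, compatibility of charts) is routine because $F$ is a closed subspace of a metrizable $X$ and the sub-M-polyfold structure of Proposition~\ref{sc_structure_sub_M_polyfold} is intrinsic once the local retractions exist.
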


Later on we shall frequently use the following proposition about degeneracy index in fibered products.

\begin{proposition}\label{fibered-x}\index{P- Fibered products}
Let $f\colon  X\rightarrow Z$ be a local sc-diffeomorphism  and $g\colon 
Y\rightarrow Z$ a sc-smooth map.
Then the fibered product  $X{_{f}\times_g}Y$,  defined by
$$
X{_{f}\times_g}Y=\{(x,y)\in X\times Y \vert \, f(x)=g(y)\}, 
$$
is a sub M-polyfold of $X\times Y$. If $Y$ is tame, then also $X{_{f}\times_g}Y$ is tame and 
$$
d_{X{_{f}\times_g}Y}(x,y)=d_Y(y).
$$
If both $X$ and $Y$ are tame and both $f$ and $g$ are local sc-diffeomorphism, then 
$$
d_{X{_{f}\times_g}Y}(x,y)=\frac{1}{2} d_{X\times Y}(x,y)=\frac{1}{2}[d_X(x)+d_Y(y)] = d_X(x)=d_Y(y).
$$
\end{proposition}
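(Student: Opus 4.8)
\textbf{Step 1: $X{_{f}\times_g}Y$ is a sub-M-polyfold.} The plan is to treat the three assertions in order, with the first two carrying the analytic content and the last reducing to a symmetry argument together with additivity of the degeneracy index on products. Fix $(x_0,y_0)$ with $f(x_0)=g(y_0)=:z_0$. Since $f$ is a local sc-diffeomorphism, I would choose an open neighborhood $V_X$ of $x_0$ on which $f$ restricts to an sc-diffeomorphism onto an open neighborhood of $z_0$, and then an open neighborhood $V_Y$ of $y_0$ small enough that $g(V_Y)\subset f(V_X)$; set $\gamma:=(f|_{V_X})^{-1}\circ g\colon V_Y\to V_X$, which is sc-smooth by the chain rule. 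On the open set $W:=V_X\times V_Y$ of $X\times Y$ define $r\colon W\to W$ by $r(x,y)=(\gamma(y),y)$. Then $r$ is sc-smooth as a map between M-polyfolds (it is built from $\gamma$ and the coordinate projections), $r\circ r=r$ because the second slot of $r(x,y)$ is $y$, and $r(W)=\{(\gamma(y),y)\mid y\in V_Y\}$. On $W$ the relation $f(x)=g(y)$ is equivalent to $x=\gamma(y)$ by injectivity of $f|_{V_X}$, so $r(W)=(X{_{f}\times_g}Y)\cap W$. Hence Definition \ref{def_sc_smooth_sub_M_polyfold} is verified, and by Proposition \ref{sc_structure_sub_M_polyfold} the fibered product inherits a natural M-polyfold structure.

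\textbf{Step 2: the case $Y$ tame.} The key claim is that the restriction $\pi$ of the projection $X\times Y\to Y$ to $X{_{f}\times_g}Y$ is, locally around every point, an sc-diffeomorphism onto an open subset of $Y$. It is sc-smooth, being $\pi_Y\circ i$ with $i$ the sc-smooth inclusion. Its local inverse is $y\mapsto i^{-1}(\gamma(y),y)=i^{-1}\bigl(r(x_0,y)\bigr)$; this is sc-smooth because $i^{-1}\circ r\colon W\to X{_{f}\times_g}Y$ is sc-smooth by Proposition \ref{sc_structure_sub_M_polyfold}(2) and $y\mapsto(x_0,y)$ is sc-smooth, so the composite is sc-smooth by the chain rule. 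A direct check shows these two maps are mutually inverse homeomorphisms onto their (open) images. Proposition \ref{newprop2.24} then gives $d_{X{_{f}\times_g}Y}(x,y)=d_Y(y)$. Finally, if $Y$ is tame, composing $\pi$ with tame charts of $Y$ produces a compatible atlas of $X{_{f}\times_g}Y$ whose charts are modelled on open subsets of tame retracts; such open subsets are themselves tame retracts, since conditions (1)--(2) of Definition \ref{tame_retarctions} pass to the restriction $r|_{r^{-1}(O')}$ exactly as in Proposition \ref{new_retract_1}(1). Thus $X{_{f}\times_g}Y$ is tame.

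\textbf{Step 3: both $X,Y$ tame and both $f,g$ local sc-diffeomorphisms.} The flip $\tau(x,y)=(y,x)$ is an sc-diffeomorphism $X\times Y\to Y\times X$ carrying $X{_{f}\times_g}Y$ onto $Y{_{g}\times_f}X$ (it conjugates the local retractions of Step 1), so Proposition \ref{newprop2.24} gives $d_{X{_{f}\times_g}Y}(x,y)=d_{Y{_{g}\times_f}X}(y,x)$. Since $g$ is also a local sc-diffeomorphism, Step 2 applies verbatim to $Y{_{g}\times_f}X$ with the tame second factor $X$, giving $d_{Y{_{g}\times_f}X}(y,x)=d_X(x)$. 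Combining with the conclusion of Step 2 for $X{_{f}\times_g}Y$ (valid since $Y$ is tame) yields $d_X(x)=d_{X{_{f}\times_g}Y}(x,y)=d_Y(y)$. It remains to note that $d_{X\times Y}(x,y)=d_X(x)+d_Y(y)$: product charts form an atlas of $X\times Y$, the degeneracy index of a product of partial quadrants $[0,\infty)^n\oplus W$ and $[0,\infty)^{n'}\oplus W'$ is the sum of the two counts of vanishing coordinates, and by Proposition \ref{newprop2.24} the defining minimum may be computed in any atlas, so the two minima separate. Hence $\tfrac12 d_{X\times Y}(x,y)=\tfrac12[d_X(x)+d_Y(y)]=d_X(x)=d_Y(y)$, which is the asserted chain of equalities.

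\textbf{Expected main obstacle.} The one genuinely delicate point is in Step 2: verifying that the graph map $y\mapsto i^{-1}(\gamma(y),y)$ is sc-smooth \emph{as a map into the sub-M-polyfold} rather than merely into $X\times Y$ — this is precisely where Proposition \ref{sc_structure_sub_M_polyfold}(2) is needed — and pinning down that the sub-M-polyfold structure produced from $r$ genuinely makes $\pi$ an sc-diffeomorphism onto an open subset of $Y$. Once this local model is in place, tameness and all the degeneracy identities follow formally from Propositions \ref{newprop2.24}, \ref{tame_equality}, and the additivity of $d$ on products.
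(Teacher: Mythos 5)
Your proposal is correct and takes essentially the same approach as the paper: you build the same local retraction $r(x,y)=\bigl((f|_{V_X})^{-1}\circ g(y),\,y\bigr)$ to exhibit the fibered product as a sub-M-polyfold, identify the projection to $Y$ as a local sc-diffeomorphism to transport both tameness and the degeneracy index, and handle the doubly-tame case by swapping the roles of $X$ and $Y$ (your flip argument merely makes explicit what the paper compresses into ``it follows from the previous case''). One small slip: to justify $d_{X\times Y}(x,y)=d_X(x)+d_Y(y)$ you cite Proposition \ref{newprop2.24}, but the fact actually needed is Proposition \ref{tame_equality}, which guarantees that for a tame retract the degeneracy index $d_O$ coincides with $d_C$; this is what lets the defining minimum be read off a product of tame charts so that it separates as a sum.
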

\begin{proof}
Clearly the product $X\times Y$ is an M-polyfold,  and if both $X$ and $Y$ are tame, then  also $X\times Y$ is tame. 
To see that $X{_{f}\times_g}Y$ is a sub-M-polyfold of $X\times Y$, we take a point $(x,y)\in X{_{f}\times_g}Y$ and  fix  open neighborhoods $U$ of $x$ in $X$ and $W$ of  $f(x)$ in $Z$ so that $f\vert U\colon 
 U\rightarrow W$ is a sc-diffeomorphism. Next we choose an open neighborhood $V$ of $y$ in $Z$ such that $g(V)\subset W$. We define the map 
$R\colon 
U\times V\rightarrow U\times V$ by
$$
R(x',y') = ((f\vert U)^{-1}\circ g (y'),y'), 
$$
It is sc-smooth and satisfies $R\circ R =R$ and $R(U\times V) = (U\times V)\cap (X{_{f}\times_g}Y).$
Consequently, $X{_{f}\times_g}Y$ is a sub M-polyfold of $X\times Y$.   Next we assume, in addition,  that $Y$ is tame.  With the  point $(x,y)\in X{_{f}\times_g}Y$ and open sets $U$ and $V$ as above, we assume that $(V, \varphi', (O', C', E'))$  is a chart around $x$ such that $ (O', C', E')$ is a tame sc-smooth retract.
Then we define a map  $\Phi\colon 
(U\times V)\cap X{_{f}\times_g}Y\to O'$  by setting 
$$\Phi (x', y')=\varphi' (y').$$
The map $\Phi$ is injective since given a pair $(x', y')\in  X{_{f}\times_g}Y$, we have $x'=(f\vert U)^{-1}\circ g (y')$ and hence  $\Phi$ is homeomorphism onto $O'$.
So, the tuple $((U\times V)\cap X{_{f}\times_g}Y, \Phi, (O', C', E'))$ is a chart on $X{_{f}\times_g}Y$ and any two such charts are sc-smooth compatible.  Consequently, 
 $ X{_{f}\times_g}Y$ is tame. 
To prove the formula for the degeneracy index, we 
consider the projection $\pi_2\colon 
 X{_{f}\times_g}Y\to X$, $(x, y)\to y$, onto the second  component.   In view of the above discussion, 
$\pi_2$ is a local sc-diffeomorphism and consequently preserves the degeneracy index.  
Hence  $d_{X{_{f}\times_g}Y}(x,y)=d_Y(y)$. 

If both $f$ and $g$ are sc-diffeomorphisms and $X$ and $Y$ are both tame, then it follows from the previous case that  $d_{X{_{f}\times_g}Y}(x,y)=d_X(x)$, so that 
$$d_{X{_{f}\times_g}Y}(x,y)=d_X(x)=d_Y(y).$$
Finally, since $X\times Y$ is tame, then  $d_{X\times Y}(x,y)=d_X(x)+d_Y(y)$, so that 
$$d_{X\times Y}(x,y)=d_X(x)+d_Y(y)=2d_X(x)=2d_Y(y)=2d_{X{_{f}\times_g}Y}(x,y),$$
for every $(x, y)\in X{_{f}\times_g}Y$.  The proof of Proposition \ref{fibered-x} is complete.
\end{proof} 

\subsection{Strong Bundles}\label{section2.5_sb}
As a preparation for the study of sc-Fredholm sections in the next chapter we shall introduce in this section the notion of a strong bundle over a M-polyfold. 

As usual we shall first describe the new notion in local charts 
of a strong bundle and consider $U\subset C\subset E$, where $U$ is a relatively open subset of the partial quadrant $C$ in the sc-Banach space $E$. If $F$ is another sc-Banach space, we define the non- symmetric product
 $$
U\triangleleft F\index{$U\triangleleft F$}
$$
 as follows. As a set, $U\triangleleft F$ is the product $U\times F$, but it possesses  a double filtration
$(m,k)$ for $0\leq k\leq m+1$, defined by
$$
(U\triangleleft F)_{m,k}:=U_m\oplus F_k.
$$
We view $U\triangleleft F\rightarrow U$ as a trivial bundle. We define for $i=0,1$ 
the sc-manifolds $(U\triangleleft F)[i]$\index{$(U\triangleleft F)[i]$} by their filtrations
$$
((U\triangleleft F)[i])_{m}:= U_{m}\oplus F_{m+i},\quad m\geq 0.
$$
\begin{definition}\index{D-Strong bundle map}
A {\bf strong bundle map}
$$
\Phi\colon 
U\triangleleft F\rightarrow U'\triangleleft F'
$$
is a map which preserves the double filtration and is of the form
$$
\Phi(u,h)=(\varphi(u),\Gamma(u,h)),
$$
where the map $\Gamma$ is linear in $h$. In addition,  for $i=0,1$,  the maps
$$
\Phi\colon 
(U\triangleleft F)[i]\rightarrow (U'\triangleleft F')[i]
$$
are  sc-smooth.
\mbox{}\\

A {\bf strong bundle isomorphism}\index{D- Strong bundle isomorhism} is an  invertible strong bundle map whose inverse is also a strong bundle map.
\mbox{}\\

A {\bf  strong bundle retraction}  \index{strong bundle retraction} is a strong  bundle map
$$
R\colon 
U\triangleleft F\rightarrow U\triangleleft F
$$
satisfying, in addition,  $R\circ R=R$.  The map  $R$ has the form
$$
R(u,h)=(r(u),\Gamma(u,h)),
$$
where $r\colon 
U\rightarrow U$ is a sc-smooth retraction.  
A {\bf tame strong bundle retraction}\index{D- Tame strong bundle retraction}  is one for which the retraction $r$ is tame.  
\end{definition}

The condition $R\circ R=R$ of the retraction $R$ requires that 
$$\bigl( r(r(u)), \Gamma (r(u), \Gamma (u, h))\bigr)=
\bigl( r(u), \Gamma (u, h)\bigr).$$
Hence, if $r(u)=u$, then $\Gamma \bigl(u, \Gamma (u, h)\bigr)=\Gamma (u, h)$, and the bounded linear operator $h\mapsto \Gamma (u, h)\colon  F\to F$ is a projection. If $r(u)=u$ is, in addition, a smooth point, then the projection is a sc-operator.
\mbox{}\\

We continue to denote by $U\subset C\subset E$ a relatively open subset of the partial quadrant $C$ in the sc-Banach space $E$ and let $F$ be another sc-Banach space.
\begin{definition}\label{def_loc_strong_b_retract}\index{D- Strong local bundle}
A {\bf local  strong bundle retract}, denoted by 
$$
(K,C\triangleleft F, E\triangleleft F),\index{$(K,C\triangleleft F, E\triangleleft F)$}
$$
consists of a subset $K\subset C\triangleleft F$, which is the image,  
$$
K=R(U\triangleleft F), 
$$
of a strong bundle retraction
$$
R\colon 
U\triangleleft F\rightarrow U\triangleleft F
$$
of the form $R(u, h)=(r(u),\Gamma (u, h)).$
Here, $r\colon 
U\to U$ is a sc-smooth retraction onto $r(U)=O$.

The local strong bundle retract $(K,C\triangleleft F, E\triangleleft F)$ will sometimes be abbreviated by 
$$
p\colon 
K\to O\index{$p\colon K\to O$}
$$
where $p$ is the map induced by the projection onto the first factor. If the strong bundle retraction $R$  is tame, the  local strong bundle retract is called a {\bf tame local strong bundle retract}. \index{D- Tame local strong bundle retract}
\end{definition}

The retract  $K$ inherits the double filtration $K_{m,k}$\index{$K_{m,k}$} for $m\geq 0$ and $0\leq k\leq m+1$, defined by 
\begin{equation*}
\begin{split}
K_{m,k}&=K\cap (U_m \oplus F_k)\\
&=\{(u, h)\in U_m \oplus F_k\vert \, R(u, h)=(u, h)\}\\
&=\{(u, h)\in O_m \oplus F_k\vert \, \Gamma (u, h)=h\}.
\end{split}
\end{equation*}
The associated spaces $K[i]$, defined by 
$$
K[i]=K_{0,i},\quad i=0,1,\index{$K[i]$}
$$
are equipped with the filtrations
$$
K[i]_m=K_{m,m+i}
\quad \text{for all $m\geq 0$.}$$
The projection maps 
$$p\colon 
K[i]\to O$$
are sc-smooth for $i=0,1$.
They are, in fact, sc-smooth bundle maps as introduced earlier.
\begin{definition}\label{def_section_loc_strong_bundle}\index{D- Section}
A {\bf section} of the local strong bundle retract $p\colon 
K\to O$ is a map $f\colon 
O\to K$ satisfying $p\circ f=\mathbbm{1}_O$.
The section $f$ is called {\bf sc-smooth}\index{Sc-smooth section}, if $f$ is a section of the bundle 
$$p\colon 
K(0)\to O.$$
The section $f$ is called  {\bf $\ssc^{\pmb{+}}$-smooth}, \index{D- Sc$^+$-section} if $f$ is a sc-smooth section of the bundle 
$$p\colon 
K(1)\to O.$$
\end{definition}

A section $f\colon 
O\to K$ is of the form $f(x)=(x, {\bf f}(x))\in O\times F$ and the map ${\bf f}\colon 
O\to F$ is called {\bf principal part of the section}\index{Principal part of a section}. We shall usually denote the principal part with the same letter as the section.\\[0.5ex]

At this point, we can introduce the {\bf category $\mathcal{SBR}$}.\index{$\mathcal{SBR}$} Its  objects are the local,  strong bundle retractions $(K,C\triangleleft F,E\triangleleft F)$. The morphisms of the category   are maps
$\Phi\colon 
K\rightarrow K'$ between local strong bundle retracts, which are linear in the fibers and preserve the double filtrations. Moreover,  the induced maps 
$\Phi[i]\colon 
K[i]\rightarrow K'[i]$ are sc-smooth for $i=0,1$.

We recall that, by definition, {\bf the map $\Phi[i]\colon 
K[i]\rightarrow K'[i]$ between retracts is sc-smooth}, if the composition with the retraction,
$$(\Phi \circ R) \colon 
(U\oplus F)[i]\to (E'\oplus F')[i],$$
is sc-smooth.
There are  two forget functors 
$$
\text{forget}[i]\colon 
\mathcal{SBR}\rightarrow \mathcal{BR},
$$
into the category of  $\mathcal{BR}$ of sc-smooth bundle retractions introduced in Section \ref{section2.1}. They are defined by 
\begin{equation*}
\text{forget}[i](K)=K[i]
\end{equation*}
on the objects $K$ of the category, and 

\begin{equation*}
\text{forget}[i](\Phi)=\Phi[i]
\end{equation*}
on the morphisms between the objects.

We are in a position to introduce the notion of a strong bundle over a M-polyfold $X$. 
We consider a continuous surjective map
$$P\colon 
Y\to X$$
from the paracompact Hausdorff space $Y$ onto the M-polyfold $X$.  We assume, for every $x\in X$,  that  the fiber $P^{-1}(x)=Y_x$
has the structure of a Banachable  space.

\begin{definition}\label{def_strong_bundle_chart}\index{D- Strong bundle chart}
A {\bf strong bundle chart} for the bundle $P\colon 
Y\to X$ is a tuple
$$
(\Phi, P^{-1}(V), K, U\triangleleft F),\index{$(\Phi, P^{-1}(V), K, U\triangleleft F)$}
$$
in which $U\subset C\subset E$ is an open subset of the partial quadrant $C$ in the sc-Banach space $E$, and $F$ is a sc-Banach space. Moreover, 
$$K=R(U\triangleleft F)$$
is the image of a strong bundle retraction $R\colon 
U\triangleleft F\to U\triangleleft F$ of the form $R(u, h)=(r(u), \Gamma (u,h)),$ where $r\colon 
U\to U$ is a sc-smooth retraction onto the retract $O=r(U)$ of 
$(O, C, E)$ and $\Gamma$ is linear in $h$. In addition, $V\subset X$ is an open subset of $X$, homeomorphic to the retract $O$ by a  homeomorphism $\varphi\colon 
V\to O$. In addition, 
$$\Phi\colon 
P^{-1}(V)\to K$$
is a homeomorphism from $P^{-1}(V)\subset Y$ onto the retract $K$, covering the homeomorphism $\varphi\colon 
V\to O$, so that the diagram 
\begin{equation*}
\begin{CD}
P^{-1}(V)@>\Phi>>K\\
@VPVV @VVpV \\
V@>\varphi>>O
\end{CD}
\end{equation*}
commutes. The map $\Phi$ has the property that,  in the fibers over $x\in V$,  the map $\Phi\colon 
P^{-1}(x)\to p^{-1}(\varphi (x))$ is a 
bounded linear operator between Banach spaces.

{\bf Two strong bundle charts} $\Phi\colon 
P^{-1}(V)\to K$ and 
$\Phi'\colon 
P^{-1}(V')\to K'$,  satisfying $V\cap V'\neq \emptyset$ are 
{\bf compatible},\index{Compatibility of strong bundle charts} if the transition maps
$$
\Phi'\circ \Phi^{-1}[i]\colon 
\Phi (P^{-1}(V\cap V'))[i]\to 
\Phi' (P^{-1}(V\cap V'))[i]
$$
are sc-smooth diffeomorphisms for $i=0,1.$
\end{definition}

As usual, one now proceeds to define a strong  bundle atlas, consisting of compatible strong bundle charts covering $Y$ and the equivalence between two such atlases.
\begin{definition}\label{def_strong_bundle} \index{D- Strong bundle}
The continuous surjection 
$$
P\colon 
Y\to X
$$
from the paracompact Hausdorff space $Y$ onto the M-Polyfold $X$, equipped with an equivalence class of strong bundle atlases is called a {\bf strong bundle over the M-polyfold $X$}.
\end{definition}

Induced by the strong bundle charts, the M-polyfold $Y$ is equipped with a natural double filtration into subsets $Y_{m,k}$, $m\geq 0$ and $0\leq k\leq m+1$. Therefore, we can distinguish the underlying M-Polyfolds $Y[i]$ for $i = 0,1$ with the filtrations
$$
Y[i]_m = Y_{m,m+i}  ,  m \geq 0.
$$
The projections
$$
P[i] \colon 
 Y[i] \rightarrow X
$$
are sc-smooth maps between M-Polyfolds.

Correspondingly, we distinguish two types of sections of the strong bundle $P \colon 
 Y \rightarrow X$.

\begin{definition}\label{def_sc_inft_sections}
A section of the strong bundle $P \colon 
 Y \rightarrow X$ is a map $f \colon 
 X \rightarrow Y,$ satisfying $P\circ f = {\mathbbm 1}_X.$

The section $f$ is called a {\bf sc-smooth section}\index{D- Sc-smooth section}, if $f$ is an sc-smooth section of the bundle
$$
P[0] \colon 
 Y[0] \rightarrow X.
$$
The section $f$ is called a ${\bf sc^+}$-{\bf section} of $P \colon 
 Y \rightarrow X$, \index{D- Sc$^+$-section} if $f$ is an sc-smooth section of the bundle
$$ 
P[1] \colon 
 Y[1] \rightarrow X.
$$
\end{definition}
If we say $f$ is an sc-section of $P:Y\rightarrow X$ we mean that it is an sc-smooth section of $Y[0]\rightarrow X$.
An sc$^+$-section or sc$^+$-smooth section of $P:Y\rightarrow X$ is an sc-smooth section of $Y[1]\rightarrow X$.

\begin{definition}[{\bf Pull-back bundle}]
Let $P\colon Y\to X$ be a strong bundle over the M-polyfold $X$ and let  $f\colon Z\to X$  be an sc-smooth map  from the M-polyfold $Z$ into $X$. The pull-back bundle of $f$,
$$P_f\colon f^\ast Y\to Z,$$ 
is defined by the set 
$f^\ast Y=\{(z, y)\in Z\times Y\, \vert \, P(y)=f(z)\}$
and the projection $P_f (z, y)=z$, so that with the projection 
$P'\colon f^\ast Y\to Y$, defined by $P'(z, y)=y$, the diagram 
\begin{equation*}
\begin{CD}
f^\ast Y@>P'>>Y\\
@VP_fVV @VVPV \\
Z@>f>>X
\end{CD}
\end{equation*}
commutes.
\end{definition}

As already shown in \cite{HWZ2}, Proposition 4.11, the strong M-polyfold structure $P$ induces a natural strong $M$- polyfold structure of the pull-back bundle $P_f$.
\begin{proposition}\label{pull_back_strong_bundle}\index{P- Pull-back of strong bundles}
The pull-back bundle $P_f\colon f^\ast Y\to Z$ 
carries a natural structure of a strong M-polyfold bundle over the  M-polyfold $Z$.
\end{proposition}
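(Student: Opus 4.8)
The plan is to transplant a strong bundle atlas of $P\colon Y\to X$ onto $P_f\colon f^\ast Y\to Z$ by precomposing its retractions with the coordinate representations of $f$, in the spirit of the sub-M-polyfold constructions of Proposition \ref{sc_structure_sub_M_polyfold} and Proposition \ref{fibered-x}. I would first dispose of the point-set topology. The set $f^\ast Y$ is closed in $Z\times Y$ because $X$ is Hausdorff and $P$, $f$ are continuous; since $Z$ and $Y$ are metrizable by Theorem \ref{X_m_paracompact} (for $Y$, applied at level $0$ to the underlying M-polyfold $Y[0]$), the product $Z\times Y$ and hence its subspace $f^\ast Y$ are metrizable, in particular paracompact Hausdorff. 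The map $P_f$ is a continuous surjection, and the fiber $P_f^{-1}(z)=\{z\}\times P^{-1}(f(z))$ inherits the Banachable structure of $Y_{f(z)}$.

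The heart of the argument is the construction of the strong bundle charts. Fix $(z_0,y_0)\in f^\ast Y$ and put $x_0=f(z_0)=P(y_0)$. Choose a strong bundle chart $(\Phi,P^{-1}(V),K,U\triangleleft F)$ of $Y$ around $x_0$, with underlying M-polyfold chart $\varphi\colon V\to O=r(U)$ and strong bundle retraction $R(u,h)=(r(u),\Gamma(u,h))$, $K=R(U\triangleleft F)$; and choose a chart $(W,\psi,(O'',C'',E''))$ of $Z$ around $z_0$ small enough that $f(W)\subset V$, with $O''=\rho''(U'')$. Then $g:=\varphi\circ f\circ\psi^{-1}\colon O''\to O$ is a coordinate representation of $f$, hence sc-smooth. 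On $U''\triangleleft F$ define
\[
\wt R\colon U''\triangleleft F\to U''\triangleleft F,\qquad \wt R(u,h)=\bigl(\rho''(u),\,\Gamma\bigl(g(\rho''(u)),h\bigr)\bigr).
\]
Since $g(\rho''(u))$ lies in $O=r(U)$ and is therefore a fixed point of $r$, the operator $h\mapsto\Gamma(g(\rho''(u)),h)$ is a projection, so $\wt R\circ\wt R=\wt R$; the map $\wt R$ preserves the double filtration because $\rho''$ and $g$ preserve levels while $\Gamma$ sends $O_m\oplus F_k$ into $F_k$ for $0\le k\le m+1$; and $\wt R[i]$ is sc-smooth for $i=0,1$ by the chain rule, being the composition of $(u,h)\mapsto(g(\rho''(u)),h)$, which is sc-smooth on the $[i]$-filtration with components $g\circ\rho''$ and $\id_F$, followed by the fiber part of $R[i]$, with $\rho''$ inserted in the base. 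Hence $\wt K:=\wt R(U''\triangleleft F)$, with associated data $(C''\triangleleft F,\,E''\triangleleft F)$, is a local strong bundle retract. One then checks that
\[
\Psi\colon P_f^{-1}(W)\to\wt K,\qquad \Psi(z,y)=\bigl(\psi(z),\,\text{second component of }\Phi(y)\bigr),
\]
is a well-defined homeomorphism covering $\psi$ (the defining constraint $P(y)=f(z)$ translates into $p(\Phi(y))=g(\psi(z))$), is fiberwise a bounded linear isomorphism, and intertwines $P_f$ with $p\colon\wt K\to O''$. Thus $(\Psi,P_f^{-1}(W),\wt K,U''\triangleleft F)$ is a strong bundle chart of $f^\ast Y$ around $(z_0,y_0)$.

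It remains to verify compatibility and conclude. If two such charts come from strong bundle charts $\Phi,\Phi'$ of $Y$ and charts $\psi,\psi'$ of $Z$, then on each filtration $[i]$, $i=0,1$, the transition map is a composition of the sc-smooth maps $\psi'\circ\psi^{-1}$, the base coordinate change $\varphi\circ f\circ\psi^{-1}$, and the transition map $\Phi'\circ\Phi^{-1}[i]$ of the two strong bundle charts of $Y$, which is sc-smooth by Definition \ref{def_strong_bundle_chart}; precomposing with $\wt R$ as required by the notion of sc-smooth map between strong bundle retracts and invoking the chain rule, the transition is sc-smooth for $i=0,1$. Ranging over all strong bundle charts of $Y$ and all charts of $Z$ yields a strong bundle atlas on $P_f\colon f^\ast Y\to Z$; replacing the atlases of $Y$ and $Z$ by equivalent ones gives an equivalent atlas here, so the construction is natural. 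I expect the only genuinely non-routine step to be checking that $\wt R$ is a strong bundle retraction — in particular the sc-smoothness of $\wt R[1]$, where one must keep track of the index shift in the $\triangleleft$-product and use that the base factor $g\circ\rho''$ involves no shift; the topological claim and the chart compatibility are straightforward bookkeeping with the chain rule.
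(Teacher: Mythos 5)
Your argument is correct and spells out exactly the routine construction the authors leave to the reader (and which appears in \cite{HWZ2}, Proposition 4.11): pull back the local strong bundle retraction $R(u,h)=(r(u),\Gamma(u,h))$ via the coordinate representation $g=\varphi\circ f\circ\psi^{-1}$ of $f$, obtaining $\wt R(u,h)=(\rho''(u),\Gamma(g(\rho''(u)),h))$, verify that it is again a strong bundle retraction by using that $\Gamma(o,\cdot)$ is a projection for $o\in O$ and that the $[i]$-filtrations are respected, and then check chart compatibility by the chain rule. The topological preliminaries (closedness in $Z\times Y$, metrizability/paracompactness) and the identification of the induced chart $\Psi$ are handled properly, so the proof is complete.
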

The easy proof is left to the reader.  
\subsection{Appendix}

\subsubsection{Proof of Proposition  \ref{smooth_retract}}\label{A2.1}
\begin{proof}[Proof of Proposition \ref{smooth_retract}]
The $C^\infty$-retraction $r$ satisfies $r\circ r=r$ and hence,  by the chain rule,  
$$Dr(r(x))\circ Dr(x)=Dr (x)\quad \text{for every $x\in U$}.$$
Therefore, if $r(x)=x$, then 
 \begin{equation}\label{c_infty_retract_eq0}
 Dr (x)\circ Dr (x)=Dr(x)
  \end{equation}
and hence the linear operator  $Dr(x)\in \mathscr{L}(E, E)$ is a projection at every point $x\in O$.

Now we take a point $x\in O=r(U)$ and,  for simplicity,  assume that $x=0$.  In view of \eqref{c_infty_retract_eq0}, the Banach space $E$ splits into $E=R\oplus N$, where 
\begin{align*}
R&=\text{range}\ Dr(0)=\text{ker}\ (\mathbbm{1}-Dr (0))\\
N&=\text{ker}\ Dr(0)=\text{range}\ (\mathbbm{1}-Dr (0)).
\end{align*}
According to the splitting $E=R\oplus N$, we use the equivalent norm $\abs{(a, b)}=\max\{\norm{a}, \norm{b}\}$. By $B(\varepsilon)$ we denote an open ball of radius $\varepsilon$ (with respect to the norm $\abs{\cdot }$)centered at the origin.

Now we introduce  the map $f\colon 
 (R\oplus  N)\cap U\to E$, defined   by 
$$f(a, b)=r(a)+(\mathbbm{1}-Dr(a))b.$$
At $(a, b)=(0, 0)$, we have $f(0, 0)=r(0)=0$ and 
$$Df(0, 0)[h, k]=h+k$$
for all $(h, k)\in R\oplus N.$ Consequently, in view of the inverse function theorem, $f$ is a local $C^\infty$-diffeomorphism, and we assume without loss of generality that $f$ is a diffeomorphism on $U$. 

We claim that there exist positive numbers $\delta$,  such that, if 
 $f(a, b)\in O=r(U)$ for $(a, b)\in B(\delta)$, then $b=0$.   The proposition then follows by setting  $W=B(\delta)$, $V=f(W)$,  and defining  the map $\psi\colon 
W\to V$  by $\psi=(f\vert V)^{-1}$.

It remains to  prove the  claim that $b=0$.  Since  $r$ is smooth, we find a constant $\varepsilon>0$, such that  
 $B(2\varepsilon)\subset U$ and 
 \begin{equation}\label{c_infty_retract_eq1}
 \abs{Dr(x)-Dr(0)}<\frac{1}{3}\quad \text{for all $\abs{x}<\varepsilon$}.
 \end{equation}
Moreover,  since $r(0)=0$, there exists a constant $0<\delta <\frac{3}{4}\varepsilon$, such that 
  \begin{equation}\label{c_infty_retract_eq2}
 \abs{r(x)}<\varepsilon \quad \text{for all $\abs{x}<\delta$}.
 \end{equation}
If $x\in B(\delta)$, then for $ \abs{h}<\varepsilon$, 
 \begin{equation}\label{c_infty_retract_eq3}
  r(r(x)+h)=r(r(x))+Dr(r(x))h+o(h)=r(x)+Dr(r(x))h+o(h),
 \end{equation}
where $o(h)$  is an $E$-valued map satisfying $\frac{o(h)}{\abs{h}}\to 0$, as $\abs{h}\to 0$.
Taking a smaller $\varepsilon$, we may assume that 
 \begin{equation}\label{c_infty_retract_eq3a}
 \abs{o(h)}<\frac{\abs{h}}{2}\quad  \text{ for all $\abs{h}<\varepsilon. $}
 \end{equation}
Let  $x=(a, b)\in B(\delta)$  and $f(a, b)\in O$. This means that  
 \begin{equation}\label{c_infty_retract_eq4}
r\bigl(r(a)+(\mathbbm{1}-Dr(a))b\bigr)=r(a)+(\mathbbm{1}-Dr(a))b.
 \end{equation}
By \eqref{c_infty_retract_eq1} and the fact that, $Dr(0)b=0$ for  $b\in N$, the norm of  $(\mathbbm{1}-Dr(a))b$ for $\abs{a}<\delta$ can be estimated as 
\begin{equation}\label{c_infty_retract_eq5}
\abs{b-Dr(a))b}=\abs{b-\bigl(Dr(a))-Dr(0)\bigr)b} \leq \frac{4}{3}\abs{b}.
\end{equation}
Inserting $x=a$ and $h=(\mathbbm{1}-Dr(a))b$ into  \eqref{c_infty_retract_eq3} and using  the identity  
\eqref{c_infty_retract_eq4}, we obtain
\begin{equation}\label{c_infinity_retraction_eq7} 
Dr(r(a))h=h+o(h).
\end{equation}
The left-hand side is equal to 
\begin{equation*}\label{c_infinity_retraction_eq8} 
Dr(r(a))h=Dr(r(a))b-Dr(r(a))Dr(a)b=Dr(r(a))b-Dr(a)b,
\end{equation*}
and the right-hand side is equal to $h+o(h)=b-Dr(a))b+o(h)$,
%
so that 
\eqref{c_infinity_retraction_eq7}  takes the form 
\begin{equation}\label{c_infty_retract_eq10}
Dr(r(a))b=b+o(h).
\end{equation} 
Arguing by contradiction, we assume  that $b\neq 0$. Since   $\abs{r(a)}< \varepsilon$,  the norm of the left-hand side is, in view of  \eqref{c_infty_retract_eq1},  bounded from above by 
\begin{equation}\label{c_infty_retract_eq11}
\abs{Dr(r(a))b}=\abs{Dr(r(a))b-Dr(0)b}<\frac{1}{3}\abs{b}.
\end{equation} 
On the other hand, using \eqref{c_infty_retract_eq3a} and \eqref{c_infty_retract_eq5}, we estimate  the norm of the right-hand side  of \eqref{c_infty_retract_eq10} from below as  
\begin{equation}\label{c_infty_retract_eq12}
\abs{b+o(h)}\geq \abs{b}-\frac{1}{2}\abs{h} \geq   \abs{b}-\frac{2}{3}\abs{b}=\frac{1}{3}\abs{b}. 
\end{equation} 
Consequently, $\frac{1}{3}\abs{b}>\frac{1}{3}\abs{b}$ 
which is absurd. Therefore,  $b=0$ and the proof is complete.
\end{proof}

\subsubsection{Proof of Theorem \ref{X_m_paracompact}}\label{A2.2}
We  make use of the following theorem of metrizability due to Yu. M. Smirnov \cite{Smirnov}.
\begin{theorem}[{\bf {Yu. M. Smirnov}}]\label{Smirnov}\index{T- Smirnov's metrizability theorem}
Let $X$ be a space that is paracompact Hausdorff, and assume that  every point has an open  neighborhood in $X$ that is metrizable. Then $X$ is metrizable.
\end{theorem}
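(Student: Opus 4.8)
The plan is to reduce the statement to a classical metrization criterion, namely the Nagata--Smirnov metrization theorem: a topological space is metrizable if and only if it is Hausdorff, regular, and admits a $\sigma$-locally finite basis, i.e.\ a basis which is a countable union of families of open sets, each locally finite. Since a paracompact Hausdorff space is normal, hence regular, and $X$ is already Hausdorff, it suffices to construct a $\sigma$-locally finite basis for $X$.

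First I would exploit local metrizability together with paracompactness. Cover $X$ by metrizable open sets and, by paracompactness, refine this cover to a locally finite open cover $\{W_\gamma\}_{\gamma\in\Gamma}$; each $W_\gamma$ lies in one of the metrizable members, hence is itself metrizable. Using normality and the shrinking lemma for point-finite open covers, choose an open cover $\{V_\gamma\}_{\gamma\in\Gamma}$ with $\overline{V_\gamma}\subseteq W_\gamma$ for every $\gamma$; it is again locally finite. For each $\gamma$ pick a $\sigma$-locally finite basis $\mathcal{B}_\gamma=\bigcup_{n\geq 1}\mathcal{B}_{\gamma,n}$ of the metrizable space $W_\gamma$ (metrizable spaces have such bases), where each $\mathcal{B}_{\gamma,n}$ is locally finite in $W_\gamma$ and consists of sets open in $W_\gamma$, hence open in $X$. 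Then set $\mathcal{B}'_{\gamma,n}=\{\,B\cap V_\gamma : B\in\mathcal{B}_{\gamma,n}\,\}$ and $\mathcal{C}_n=\bigcup_{\gamma\in\Gamma}\mathcal{B}'_{\gamma,n}$. It is routine that $\bigcup_n\mathcal{C}_n$ is a basis for $X$: given $x\in U$ with $U$ open, take $\gamma$ with $x\in V_\gamma$; then $U\cap V_\gamma$ is an open neighborhood of $x$ in $W_\gamma$, so $x\in B\subseteq U\cap V_\gamma$ for some $B\in\mathcal{B}_\gamma$, and $B=B\cap V_\gamma$ belongs to the appropriate $\mathcal{C}_n$.

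The step I expect to be the main obstacle is showing that each $\mathcal{C}_n$ is locally finite \emph{in all of} $X$, not merely inside the individual $W_\gamma$: a family can be locally finite in the open set $W_\gamma$ yet fail to be so at points of $X\setminus W_\gamma$, which is exactly why the shrinking $\overline{V_\gamma}\subseteq W_\gamma$ was built in. Given $x\in X$, use local finiteness of $\{W_\gamma\}$ to find a neighborhood $N$ of $x$ meeting only $W_{\gamma_1},\dots,W_{\gamma_k}$. For each $i$ with $x\notin\overline{V_{\gamma_i}}$, replace $N$ by $N\cap(X\setminus\overline{V_{\gamma_i}})$; for each $i$ with $x\in\overline{V_{\gamma_i}}\subseteq W_{\gamma_i}$, replace $N$ by its intersection with a $W_{\gamma_i}$-neighborhood of $x$ (open in $X$, since $W_{\gamma_i}$ is) meeting only finitely many members of $\mathcal{B}_{\gamma_i,n}$, which exists by local finiteness of $\mathcal{B}_{\gamma_i,n}$ in $W_{\gamma_i}$. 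The resulting neighborhood $N'$ of $x$ meets only finitely many members of $\mathcal{C}_n$: any such member lies in some $V_\gamma\subseteq W_\gamma$ with $W_\gamma$ meeting $N$, so $\gamma=\gamma_i$ for some $i$; if $x\notin\overline{V_{\gamma_i}}$ then $N'$ is disjoint from $V_{\gamma_i}$, hence from the member, a contradiction, while in the remaining cases only finitely many members of $\mathcal{B}'_{\gamma_i,n}$ can meet $N'$. Hence $\bigcup_n\mathcal{C}_n$ is a $\sigma$-locally finite basis, and the Nagata--Smirnov criterion gives that $X$ is metrizable. The only external facts used are standard: paracompact Hausdorff $\Rightarrow$ normal, the shrinking lemma, the existence of $\sigma$-locally finite bases in metrizable spaces, and the Nagata--Smirnov metrization theorem; in the context of this paper the statement is simply quoted, but the argument above shows it follows from these classical facts, with the shrinking step being the one genuinely delicate point.
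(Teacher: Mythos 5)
The paper does not prove this theorem: it is quoted as a known result with the citation \cite{Smirnov} and used as a black box in the proof of Theorem \ref{X_m_paracompact}, so there is no ``paper's proof'' to compare against. Your argument is the standard textbook derivation of Smirnov's local metrizability theorem from the Nagata--Smirnov metrization criterion, and it is correct. The chain of ingredients you use (paracompact Hausdorff $\Rightarrow$ normal, the shrinking lemma for a locally finite open cover, the existence of $\sigma$-locally finite bases in metrizable spaces, Nagata--Smirnov) is exactly the standard one, and the only genuinely delicate point --- upgrading local finiteness of each $\mathcal{B}_{\gamma,n}$ \emph{inside} $W_\gamma$ to local finiteness of $\mathcal{C}_n$ in all of $X$, by restricting to $V_\gamma$ and using the closed shrinkings $\overline{V_\gamma}\subseteq W_\gamma$ to kill off interference from points outside $W_\gamma$ --- is handled correctly. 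One small remark for economy: since the shrinking $\overline{V_\gamma}\subseteq W_\gamma$ already gives a locally finite cover $\{V_\gamma\}$, you could equivalently argue local finiteness of $\mathcal{C}_n$ by finding a neighborhood of $x$ meeting only finitely many $\overline{V_\gamma}$, then further shrinking only at the finitely many indices with $x\in\overline{V_\gamma}$; this is cosmetically cleaner but the content is the same as what you wrote.
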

The following result is an equivalent definition of paracompactness.
\begin{lemma}[\cite{Michael}, Lemma 1] \label{equivalent_definitions_paracompact}\index{L- Characterization of paracompactness}
A  regular Hausdorff space $X$ is paracompact, if and only if every open cover of $X$ has a locally finite refinement consisting of closed sets.
\end{lemma}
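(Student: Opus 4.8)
The plan is to prove the two implications separately; the forward implication is routine, and essentially all the content lies in the converse.

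For the forward direction I would assume $X$ is paracompact and regular, and let $\mathcal U$ be an open cover. Using regularity, for each $x\in X$ I pick an open set $V_x$ with $x\in V_x\subseteq\overline{V_x}\subseteq U$ for some $U\in\mathcal U$. The sets $V_x$ form an open cover of $X$, so by paracompactness there is a locally finite open refinement $\mathcal W$; then $\{\overline W\ \vert\ W\in\mathcal W\}$ is the refinement we want. It consists of closed sets; each $\overline W$ lies in $\overline{V_x}\subseteq U$ for a suitable $x$ and $U$, so it refines $\mathcal U$; and it is locally finite, because an open set disjoint from $W$ is disjoint from $\overline W$, whence any neighbourhood meeting only finitely many $W\in\mathcal W$ meets only finitely many of the closures.

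For the converse I would assume that every open cover of $X$ admits a locally finite closed refinement, and let $\mathcal U$ be an open cover. First apply this hypothesis to $\mathcal U$, obtaining a locally finite closed refinement $\mathcal F=\{F_s\}_{s\in S}$ together with a choice function $s\mapsto U(s)\in\mathcal U$ with $F_s\subseteq U(s)$. Since $\mathcal F$ is locally finite, every point of $X$ has an open neighbourhood meeting only finitely many members of $\mathcal F$; apply the hypothesis a second time to the open cover of $X$ formed by all such neighbourhoods, obtaining a locally finite closed refinement $\mathcal G=\{G_t\}_{t\in T}$, where each $G_t$, being contained in one of those neighbourhoods, meets only finitely many members of $\mathcal F$. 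Now define, for each $s\in S$,
\[
V_s \;=\; U(s)\setminus\bigcup\{\,G_t\ \vert\ t\in T,\ G_t\cap F_s=\emptyset\,\}.
\]
Since $\mathcal G$ is locally finite the displayed union is closed, so $V_s$ is open; and $F_s\subseteq V_s\subseteq U(s)$, so $\{V_s\}_{s\in S}$ is an open cover of $X$ refining $\mathcal U$.

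The remaining point, which I expect to be the main obstacle, is to verify that $\{V_s\}$ is locally finite; this is the step that forces the two-step construction above together with the subtraction defining $V_s$. The key observation is that $G_t\cap V_s\neq\emptyset$ implies $G_t\cap F_s\neq\emptyset$ (otherwise $G_t$ was among the sets removed in forming $V_s$, making $G_t\cap V_s=\emptyset$), so each $G_t$ meets only finitely many of the $V_s$. Then, given $x\in X$, I choose an open neighbourhood $N$ meeting only finitely many members of $\mathcal G$, say $G_{t_1},\dots,G_{t_n}$. If $N\cap V_s\neq\emptyset$, a point of $N\cap V_s$ lies in some $G_t$, which then meets $N$ and so equals some $G_{t_i}$, whence $G_{t_i}\cap V_s\neq\emptyset$; thus the set of such $s$ is contained in the union of the $n$ finite sets $\{s\ \vert\ G_{t_i}\cap V_s\neq\emptyset\}$ and is finite. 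Hence $\{V_s\}$ is a locally finite open refinement of $\mathcal U$, and $X$ is paracompact. I note that one could alternatively first deduce from the hypothesis that $X$ is normal and then fatten $\mathcal F$ by Urysohn-type shrinkings, but the argument just described avoids normality altogether.
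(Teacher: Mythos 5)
Your proof is correct, and the converse direction reproduces exactly the argument of Michael's Lemma~1, which is what the paper cites without proof: the two-level construction (a closed locally finite refinement $\mathcal F$ of $\mathcal U$, then a second one $\mathcal G$ subordinate to the finite-meeting neighbourhoods) followed by the fattening $V_s=U(s)\setminus\bigcup\{G_t\,\vert\,G_t\cap F_s=\emptyset\}$ is precisely his construction $A^\ast\cap\alpha(A)$. The forward direction via regularity and closures of a locally finite open refinement is also the standard one.
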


We shall use the above lemma in the proof of the following result.

\begin{proposition}\label{union_of_paracompact}\index{P- Paracompactness}
Let $Y$ be  a regular topological space, and let $(Y_i)_{i\in I}$ be a locally  finite family of closed subspaces  of $Y$, so that  $Y=\bigcup_{i\in I}Y_i$. If every subspace $Y_i$ is paracompact, then 
$Y$ is  paracompact.
\end{proposition}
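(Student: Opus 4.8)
The plan is to verify paracompactness of $Y$ via the criterion in Lemma~\ref{equivalent_definitions_paracompact}: since $Y$ is regular, it suffices to show that an arbitrary open cover $\mathscr{U}$ of $Y$ admits a locally finite refinement consisting of sets closed in $Y$. To manufacture such a refinement I would run the same lemma in the other direction on each piece $Y_i$: each $Y_i$ is paracompact by hypothesis and regular as a subspace of the regular space $Y$ (and Hausdorff, if one insists on that hypothesis, since that too passes to subspaces), so each $Y_i$ has the property that every open cover of it admits a locally finite closed refinement.

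First I would fix an open cover $\mathscr{U}$ of $Y$. For each $i\in I$ the family $\{U\cap Y_i\vert \, U\in\mathscr{U}\}$ is an open cover of $Y_i$, so by Lemma~\ref{equivalent_definitions_paracompact} applied to $Y_i$ there is a family $\mathscr{V}_i$ of subsets of $Y_i$, each closed in $Y_i$, which is locally finite in $Y_i$, covers $Y_i$, and refines $\{U\cap Y_i\vert \, U\in\mathscr{U}\}$. Because $Y_i$ is closed in $Y$, every member of $\mathscr{V}_i$ is closed in $Y$; and since every member lies in some $U\cap Y_i\subset U$, the family $\mathscr{V}_i$ refines $\mathscr{U}$. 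Set $\mathscr{V}=\bigcup_{i\in I}\mathscr{V}_i$. Then $\mathscr{V}$ consists of sets closed in $Y$, refines $\mathscr{U}$, and covers $Y$, because $Y=\bigcup_{i\in I}Y_i$ and each $\mathscr{V}_i$ covers $Y_i$.

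The crucial — and the only nontrivial — point is that $\mathscr{V}$ is locally finite in $Y$. Here I would fix a point $y\in Y$. Since $(Y_i)_{i\in I}$ is locally finite, there is an open neighbourhood $W$ of $y$ meeting only finitely many $Y_i$, say $Y_{i_1},\dots,Y_{i_n}$. For each $j\in\{1,\dots,n\}$ I produce an open neighbourhood $W_j$ of $y$ in $Y$ meeting only finitely many members of $\mathscr{V}_{i_j}$: if $y\in Y_{i_j}$, local finiteness of $\mathscr{V}_{i_j}$ in $Y_{i_j}$ yields an open set $W_j\subset Y$ with $y\in W_j$ and $W_j\cap Y_{i_j}$ meeting only finitely many members of $\mathscr{V}_{i_j}$, whence (the members being subsets of $Y_{i_j}$) $W_j$ itself meets only finitely many of them; if $y\notin Y_{i_j}$, take $W_j=Y\setminus Y_{i_j}$, which is open since $Y_{i_j}$ is closed and meets no member of $\mathscr{V}_{i_j}$. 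Then $N:=W\cap W_1\cap\dots\cap W_n$ is an open neighbourhood of $y$, and any member $V\in\mathscr{V}$ meeting $N$ lies in some $\mathscr{V}_i$ with $Y_i\cap W\neq\emptyset$, so $i=i_j$ for some $j$; as $V$ meets $W_j$, it is one of the finitely many members of $\mathscr{V}_{i_j}$ singled out above. Hence $N$ meets only finitely many members of $\mathscr{V}$.

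Thus $\mathscr{V}$ is a locally finite refinement of $\mathscr{U}$ by closed subsets of $Y$, and Lemma~\ref{equivalent_definitions_paracompact} applied to $Y$ gives that $Y$ is paracompact. I expect the main obstacle to be exactly the passage from local finiteness of each $\mathscr{V}_i$ inside the subspace $Y_i$ to local finiteness of the union $\mathscr{V}$ inside $Y$; this is resolved by combining the local finiteness of the covering family $(Y_i)_{i\in I}$ with the closedness of each $Y_i$ in $Y$, as carried out in the previous paragraph.
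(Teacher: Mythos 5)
Your proof is correct. The strategy is the same as the paper's: reduce to Michael's criterion (Lemma~\ref{equivalent_definitions_paracompact}) on $Y$, produce on each paracompact closed piece $Y_i$ a closed locally finite refinement of the trace of the given cover, and then combine these using the local finiteness of $(Y_i)$. The difference is in the combining step. The paper first reindexes each local refinement by the original index set $J$, then forms the single family $(C_j)_{j\in J}$ with $C_j:=\bigcup_{i\in I}C_j^i$; this keeps the refinement indexed by $J$, at the cost of a separate argument (again using local finiteness of $(Y_i)$ and closedness of the $Y_i$) that each $C_j$ is closed in $Y$. You instead keep the pieces separate and take $\mathscr{V}=\bigcup_{i}\mathscr{V}_i$; each member of $\mathscr{V}$ is already closed in $Y$ because it is closed in the closed set $Y_i$, so that extra argument disappears, and the local finiteness argument is if anything a little cleaner. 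Your version is slightly more economical; the paper's buys nothing essential here beyond keeping the index set of the refinement identical to that of $\mathscr{U}$, which Michael's criterion does not require.
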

\begin{proof}
Given an open cover  $\mathscr{U}=(U_j)_{j\in J}$ of $Y$,  it is, in view of 
Lemma \ref{equivalent_definitions_paracompact}, enough to show, that there exists a closed locally finite refinement 
$\mathscr{C}=(C_j)_{j\in J}$ of $\mathscr{U}$.  In order to prove this, we  consider for every $i$  the cover  $\mathscr{Y}_i=(Y_i\cap U_j)_{j\in J}$ of $Y_i$, consisting of open sets in $Y_i$.  Since $Y_i$ is paracompact, Lemma \ref{equivalent_definitions_paracompact} implies that there exists a closed locally finite refinement  $(C_j^i)_{j\in J}$ of  $\mathscr{Y}_i$.  The sets $C_j^i$ are closed in $Y$ and by  definition of refinement satisfy 
\begin{equation}\label{eq1}
C_j^i\subset Y_i\cap U_j\quad \text{and}\quad \bigcup_{j\in J}C^i_j=Y_i.
\end{equation}  
Now, for every $j\in J$,  we define the set
$$C_j:=\bigcup_{i\in I}C_j^i$$
and claim that the family $\mathscr{C}=(C_j)_{j\in J}$ is a closed locally finite refinement of $\mathscr{U}$.  We start with  showing that  $C_j$ is closed. Let 
 $x\in X\setminus C_j=\bigcap_{i\in I}(X\setminus C^i_j)$.   Since $(Y_i)_{i\in I}$ is locally finite,  we find  an open neighborhood  $V(x)$ of $x$ in $Y$, which  intersects $Y_i$ for at most finitely many indices $i$, say for $i$ belonging to the  finite subset $I'\subset I$. 
If $i\not \in I'$, then, by \eqref{eq1},  $V(x)\subset  Y\setminus Y_i\subset Y\setminus C_j^i$,  so that $V(x)\subset \bigcap_{i\not \in  I'}(Y\setminus Y_i).$  
Then the set $U(x)=V(x)\cap  \bigcap_{l\in I'}(Y\setminus C_j^i)$ is an open neighborhood of $x$ in $Y$ contained in $Y\setminus C_j$. This shows that  $C_j$ is closed as claimed.
That  $\mathscr{C}$ is a cover of $X$ and a refinement of $\mathscr{U}$ follows from  \eqref{eq1}, 
\begin{gather*}
\bigcup_{j\in J}C_j=\bigcup_{j\in J}\bigcup_{i\in I}C_j^i=\bigcup_{i\in I}\bigcup_{j\in J}C_j^i=\bigcup_{i\in I}Y_i=X\\
\intertext{and}
V_j=\bigcup_{i\in I}C_j^i\subset \bigcup_{i\in I}Y_i\cap U_j=U_j. 
\end{gather*}
It remains to show that $\mathscr{C}$ is locally finite.  Local finiteness of $(Y_i)_{i\in I}$ implies that a given point $x\in Y$ belongs to finitely many $Y_i$'s, say $x\in Y_i$, if and only if $i$ belongs to the finite subset  $I'\subset I$.  Moreover, there exists an open neighborhood $V(x)$ of $x$ in $Y$ intersecting at most finitely many $Y_i$'s. 
 Since $Y$ is regular, we can replace  $V(x)$  by  a smaller  open set intersecting only those $Y_i$'s whose indices $i$ belong to $ I'$.  Also,  $(C_j^i)_{j\in J}$ is a locally finite family in $Y_i$, so that  there exists an open neighborhood $V_i(x)$ of $x$ in $Y_i$ intersecting $C_j^i$  for, at most, finitely many indices $j$,  which belong to the finite subset $J_i\subset J$. Each $V_i(x)$ is of the form 
$V_i(x)=U_i(x)\cap Y_i$ for some open neighborhood $U_i(x)$ of $x$ in $Y$. Then $U(x)=V(x)\cap \bigcap_{i\in I'}U_i(x)$ is an open neighborhood of $x$ in $Y$  intersecting 
$Y_i$ only for $i\in I'$ and the set $U(x)\cap Y_i$ intersects  $C_j^i$ only for  $j\in J_i$.   This implies that  $U(x)$ can have a nonempty intersection with $C_j$ only for $j\in \bigcup_{i\in I'}J_i$.  Hence,  the family $\mathscr{C}=(C_j)_{j\in J}$ is locally finite,  and the proof is complete.  

\end{proof}
Now we are ready to prove the theorem.
\begin{proof}[Proof of Theorem \ref{X_m_paracompact}]
We start with $m=0$ and take an atlas 
$$
(U^j, \varphi^j, (O^j, C^j, E^j))_{j\in J}.
$$  
Since $\varphi^j\colon 
U^j\to O^j$
 is a homeomorphism and $O^j$ is a metric space, $U^j$ is metrizable. 
Hence, $X$ is locally metrizable. Since by assumption, $X$ is Hausdorff and paracompact,  the Smirnov metrizability theorem implies that $X$ is metrizable. 

In order to prove that $X_m$ is metrizable for $m\geq 1$,  we fix $m\geq 1$  and consider the topological  space $X_m$.  Since by assumption, $X$ is Hausdorff,  given two distinct points $x$ and $x'$ in $X_m$, there exist  two disjoint open neighborhoods $U$ and $U'$ of $x$ and $x'$ in $X$.  Hence, the sets $U_m$ and $U'_m$ are disjoint open neighborhoods of $x$ and $x'$ in $X_m$, so that also $X_m$ is a  Hausdorff space. 
Moreover, the maps $\varphi\colon 
U_m\to O^j_m$ are homeomorphisms and since $O^j_m$ is a metric space, $U^j_m$ are metrizable. So,  $X_m$ is locally metrizable. To prove that $X_m$ is metrizable, it suffices to show, that, in view of the Smirnov metrizability theorem, $X_m$ is paracompact.  Using  the paracompactness  of $X$ and 
Theorem \ref{equivalent_definitions_paracompact}, we find a closed locally finite refinement $\mathscr{C}=(C_j)_{j\in J}$. In particular, $C_j\subset U_j$, so that $C^j_m\subset U^j_m$. Since $U^j_m$ is metrizable, it is also paracompact, so that $C^j_m$ is paracompact as a closed subspace of $U^j_m$.   Hence, $\mathscr{C}_m=(C^j_m)_{j\in J}$ is a locally  finite family of closed subsets of $X_m$ and each $C^j_m$ is paracompact. Thus,  by Proposition \ref{union_of_paracompact}, the space $X_m$ is paracompact, and since it is Hausdorff and locally metrizable, it is metrizable.

Finally, choosing a metric $d_m$ defining the topology on $X_m$ , we set
$$
d(x,y)=\sum_{m=0}^{\infty} \frac{1}{2^m}\cdot  \frac{d_m(x,y)}{1+d_m(x,y)}.
$$
The metric $d$ defines the topology on $X_\infty$.
\end{proof}

\subsubsection{Proof of Proposition \ref{op}}\label{A2.11}
\begin{proof}[Proof of Proposition \ref{op}]
(1)\, In order to prove that $\mathscr{B}$ defines a basis for a topology on $TX$, we take two  sets   $\wt{W}_1$ and $\wt{W}_2$ in  $\mathscr{B}$ and assume that $\alpha=[(x, \varphi, V, (O, C, E), h)]\in \wt{W}_1\cap \wt{W}_2$. We claim that  there exists a set $\wt{W}\in \mathscr{B}$, satisfying  $\alpha\in \wt{W}\subset \wt{W}_1\cap \wt{W}_2.$ By definition,  $\wt{W}_i=(T\varphi_i)^{-1}(W_i)$, where $(\varphi_i, V_i, (O_i, C_i, E_i))$ is a chart and $W_i$ is an open subset of $TO_i$, containing 
$$
\alpha=[(x_i, \varphi_i, V_i, (O_i, C_i, E_i), h_i)]\quad \text{for $i=1,2$}.
$$
This means that 
\begin{equation}\label{basis_top_1}
x_i=x\quad \text{and}\quad h_i=T(\varphi_i \circ \varphi^{-1})(\varphi (x))h\quad \text{for $i=1,2$},
\end{equation}
and, moreover, $(\varphi_i (x_i), h_i)=(\varphi_i (x), h_i)\in W_i$. We define  $W_i'= [ T(\varphi_i \circ \varphi^{-1})(\varphi (x))]^{-1}(W_i)$, and observe that the $W_i$'s  are open subsets of $TO_i$. By \eqref{basis_top_1},  $(\varphi (x), h)\in W_1'\cap W_2'$ and if  $W=W'_1\cap W_2'$ and $\wt{W}=(T\varphi )^{-1}(W)$, then 
$$[(x, \varphi, V, (O, C, E), h)]\in \wt{W}\subset \wt{W}_1\cap \wt{W}_2.$$
Consequently, $\mathscr{B}$ defines a topology on $TX$. To prove that this topology is Hausdorff, we take two distinct elements 
$$\alpha=[(x, \varphi, V, (O, C, E), h)]\quad \text{and}\quad \alpha'=[(x', \varphi', V', (O', C', E'), h')]$$
in the tangent space $TX$.  Since $\alpha\neq \alpha'$, either $x\neq x'$ or if $x=x'$, then $h'\neq T(\varphi'\circ \varphi)^{-1}(\varphi (x))h.$ In the first case $x\neq x'$ we may replace $V$ and $V'$ by smaller open neighborhoods of $x$ and $x'$, so that $V\cap V'=\emptyset$, and then replace $(O, C, E)$, resp. $(O, C', E')$,  by the retracts $(\varphi (V), C, E)$, resp. $(\varphi'(V'), C', E')$. If $W$ (resp. $W'$)  is an open neighborhood of  $(\varphi (x), h)$ (resp. $(\varphi'(x'), h')$)  in $TO$ (resp. $TO'$), then $\wt{W}=(T\varphi)^{-1}(W)$ (resp. $\wt{W}'=(T\varphi')^{-1}(W')$ is an open neighborhood  
of $\alpha$ (resp. $\alpha'$) in $TX$ and  $\wt{W}\cap \wt{W'}=\emptyset$.  In the second case $x=x'$ and $h'\neq T(\varphi'\circ \varphi)^{-1}(\varphi (x))h$. We choose an open neighborhood 
 $W$ of $(\varphi' (x), h')$ in $TO'$ and an open neighborhood $W$ of $(\varphi (x), h)$ in $TO$, so that $W'\cap T(\varphi')^{-1}(W)=\emptyset$. 
  Then $\alpha\in \wt{W}=(T\varphi)^{-1}(W)$ and $\alpha'\in \wt{W'}= (T\varphi')^{-1}(W')$. Moreover, both sets are open  and their intersection is empty. Consequently, the topology defined by $\mathscr{B}$ is Hausdorff.\\[0.7ex]
\noindent (2)\,   We start by proving that  $p\colon 
TX\to X^1$ is an open map. It suffices to show that $p(\wt{W})$ is open in $X^1$ for every element
 $\wt{W}\in\mathscr{B}.$
Let $(\varphi, V, (O, C, E))$ be a chart on $X$ and let $T\varphi\colon 
TV\to TO$ be the associated map defined above and introduce $\wt{W}=(T\varphi )^{-1}(W)$ for the open subset $W$ of $TO$. 
We denote by $\pi\colon 
TO\to O_1$ the projection onto the first factor. This map is continuous and open. Moreover,  
$$p(\wt{W})=\varphi^{-1}\circ \pi\circ (T\varphi)(\wt{W}).$$
Since, by construction, the map $T\varphi\colon 
TV\to TO$ is open and $\varphi\colon 
V\to O$ is a homeomorphism, the composition on the right hand side is an open subset of $X^1$. Hence, $p(\wt{W})$ is open in $X^1$ as claimed.

To show that the projection map  $p\colon TX\to X^1$ is continuous, it suffices to show, that given a chart $(V, \varphi, (O, C, E))$ on $X$ and an open subset $U$ of $X^1$, satisfying $U\subset V_1$, the preimage $p^{-1}(U)$ is open. For such a chart and open set $U$ we have 
$$p^{-1}(U)=(T\varphi )^{-1}\bigl((\varphi (U)\times E)\cap TO\bigr).$$
Since   $(\varphi (U)\times E)\cap TO$ is open in $TO$, the set on the left-hand side belongs to $\mathscr{B}$. Hence, the set $p^{-1}(U)$ is open, and the projection $p$ is continuous as claimed. \\[0.7ex]

\noindent (3)\,  
We start with an atlas $\mathscr{V}=(V^j, \varphi^j, (O^j, C^j, E^j))_{j\in J}$, such  that the family $\mathscr{V}=(V^j)_{j\in J}$ of domains is an open,  locally finite cover of $X$.  The associated maps 
$T\varphi^j\colon 
TV^j\to TO^j$ are homeomorphisms, and since $TO^j$ is metrizable, the same holds for the open sets $TV^j$.  Hence, $TX$ is locally metrizable.  To show that $TX$ is metrizable, it remains  to show that $TX$ is paracompact.  By Theorem \ref{equivalent_definitions_paracompact}, there exists a closed, locally finite refinement $\mathscr{C}=(C^j)_{j\in J}$ of $\mathscr{V}$, so that $\mathscr{C}_1=(C^j_1)_{j\in J}$ is a closed, locally finite refinement of $\mathscr{V}_1=(V^j_1)_{j\in J}$.  The sets $K^j=TO\vert \varphi(C^j_1):=\bigcup_{x\in \varphi (C^j_1)}T_xO$ are closed in $TO$, so that 
the sets $\wt{K^j}:=(T\varphi^j)^{-1}(K^j)$ are closed subsets of $TV$. In particular, each $\wt{K^j}$ is paracompact as a closed subset of metrizable space.  Also, the family $(\wt{K^j})_{j\in J}$ is locally finite. Indeed, let $\alpha=[(x, V, \varphi, (O, C, E), h)]\in TX$. Then there exists an open neighborhood $U(x)$ of $x$ in $X_1$ intersecting at most finitely many $C^j_1$'s, say with indices $j$, belonging to a finite subset $J'\subset J$. Moreover, since $X_1$ is regular, $U(x)$ can be taken so small that also $x\in C^j_1$ for $j\in J'$. Then, setting $W(x):=TO\vert \varphi (U (x)):=\bigcup_{y\in \varphi (U(x))}T_yO$, the set $W(x)$ is an open subset of $TO$ and $\wt{W(x)}:=(T\varphi)^{-1}(W(x))$ intersects only those $\wt{K^j}$ whose indices $j$ belong to $J'$. 
Now applying  Proposition \ref{union_of_paracompact}, we conclude, that the tangent space $TX$ is paracompact and hence metrizable, in view of the Smirnov metrizability  theorem. This finishes the proof of the proposition.
\end{proof}

\pagebreak
\section{Basic Sc-Fredholm Theory}
In this chapter  we start with  the Fredholm theory in the sc-framework. Since sc-maps are more flabby than $C^\infty$-maps, 
we do not have an implicit function theorem for all sc-smooth maps.  However, for a restricted class, which occurs in the applications 
of the theory,  such a theorem is available. We start with a figure  illustrating  a finite-dimensional M-polyfold showing  what to expect from the Fredholm theory. Of course,  our interest is in the infinite-dimensional case.

\begin{figure}[htb]
\centering
\def\svgwidth{50ex}
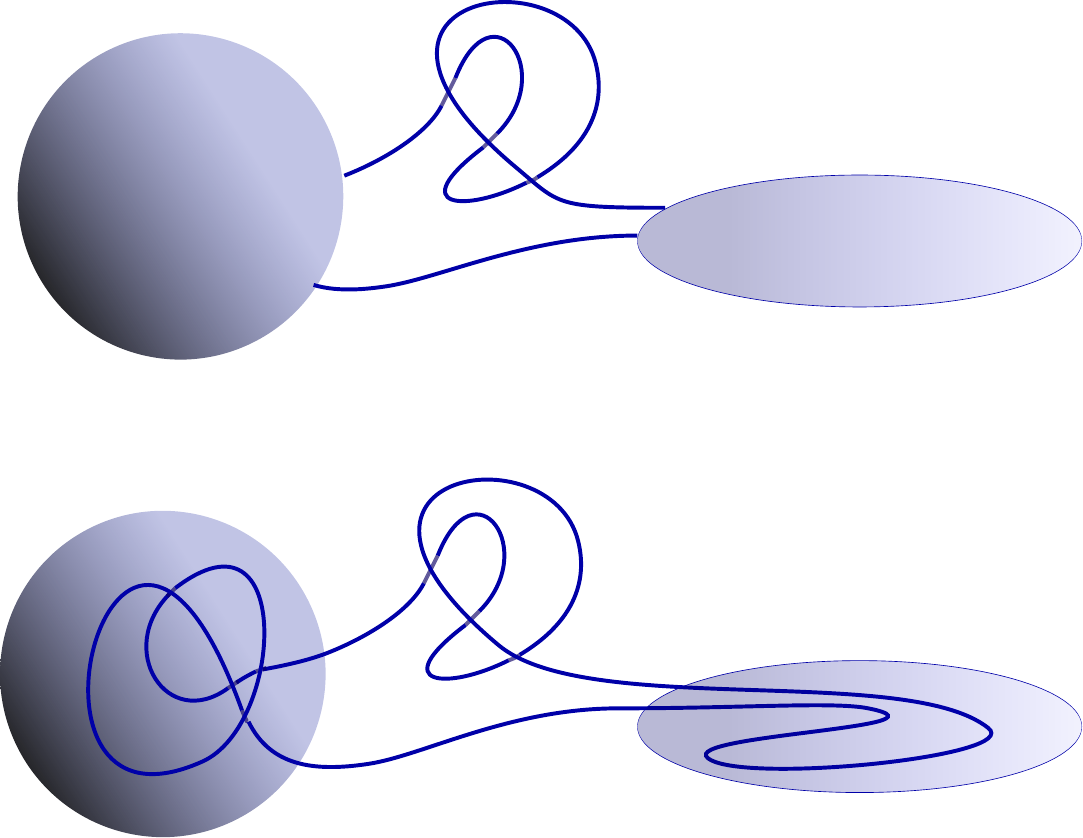
\caption{The top part of the figure shows a finite-dimensional M-polyfold, which is a smooth space.  Assume that we have a bundle in this extended category which 
has  jumps of dimensions coordinated with the base. Then a smooth generic section would produce a solution set which looks like the one depicted in the lower part of the figure.}\label{fig:pict2}
\end{figure}

\subsection{Sc-Fredholm Sections and Some of the Main Results}
The section is devoted to the basic notions and the description of the results leading  to  implicit function theorems. Our overall goal is the notion of a sc-Fredholm section  of a tame strong bundle $P\colon Y\rightarrow X$, as defined in
 Definition \ref{def_strong_bundle}. 
Tameness of the bundle requires that  $X$ is a tame M-polyfold as defined in  
Definition \ref{def_tame_m-polyfold}. The section will end with some useful implicit function theorems.

The more sophisticated perturbation and transversality results are described in a later section.
\mbox{}\\

We start by introducing various types of germs in the sc-context. As usual we denote  by $E$ be a sc-Banach space and by $C\subset E$ a partial  quadrant of $E$.  The sc-Banach space $E$ is equipped with the filtration 
$$E_0=E\supset E_1\supset\cdots \supset E_\infty =\bigcap_{m\geq 0}E_m.$$

\begin{definition}\label{def_germ_nbgh}\index{D- Sc-germ of neighborhoods}
A  {\bf sc-germ of neighborhoods around $0\in C$}, 
denoted by ${\mathcal O}(C,0)$,  is 
a decreasing sequence 
$$
U_0\supset U_1\supset U_2\supset \cdots 
$$
where $U_m$ is a relatively open neighborhoods of $0$ in 
$C\cap E_m$. 

The {\bf tangent of ${\mathcal O}(C,0)$},  denoted by $T{\mathcal O}(C,0)$, is  the decreasing sequence 
$$
U_1\oplus E_0\supset U_2\oplus E_1\supset\cdots \, .
$$
\end{definition}

A special example of a sc-germ of neighborhoods around $0\in C\subset E$ is  a relatively open neighborhood $U$ of $C$ containing $0$ which is equipped with the induced sc-structure defined by the filtration $U_m=U\cap E_m$, $m\geq 0$.  If $E$ is infinite dimensional, the sets $U_m$ is this example 
are not bounded in $E_m$, since the inclusions $E_m\to E_0$ are compact operators. 
Another example is the decreasing sequence $U_m=U
\cap B_{1/m}^{E_m}(0)\cap E_m$, where $B_{1/m}^{E_m}(0)$ is the open ball in $E_m$ centered at $0$ and radius $1/m$, presents a sc-germ of neighborhoods. Here the sets $U_m$ are bounded in $E_m$ for $m>0$. We point out that the size of the sets  $U_m$ in Definition \ref{def_germ_nbgh} does not matter. In the applications the size of $U_m$ quite often decreases rapidly.

\begin{definition}\label{germy}\index{D- Sc-mapping germ}
A  {\bf $\ssc^{\pmb{0}}$-germ} $f\colon {\mathcal O}(C,0)\rightarrow F$\index{$f\colon {\mathcal O}(C,0)\rightarrow F$} into the sc-Banach space $F$, is a continuous map
$f\colon U_0\rightarrow F$ such that $f(U_m)\subset F_m$ and
$f\colon U_m\rightarrow F_m$ is continuous.
A  {\bf $\ssc^{\pmb{1}}$-germ} $f\colon {\mathcal O}(C,0)\rightarrow F$ is a  $\ssc^0$-germ which is of class  $\ssc^1$ in the sense, that there exists 
for every $x\in U_1$  a bounded linear operator $Df(x)\in L(E_0,F_0)$ such that for $h\in U_1$ with $x+h\in U_1$, 
$$
\lim_{\abs{h}_1\rightarrow 0} \frac{\abs{f(x+h)-f(x)-Df(x)h}_0 }{\abs{h}_1}=0.
$$
Moreover,  $Tf\colon U_1\oplus E_0\rightarrow TF$,  defined by $Tf(x,h)=(f(x),Df(x)h)$,  satisfies $Tf(U_{m+1}\oplus E_m)\subset F_{m+1}\oplus F_m$ and
$$
Tf\colon T{\mathcal O}(C,0)\rightarrow TF\index{$Tf\colon T{\mathcal O}(C,0)\rightarrow TF$}
$$
is a $\ssc^0$-germ. We say $f$ is a $\ssc^2$-germ provided $Tf$ is  $\ssc^1$, etc.  If the the germ $f$ is a $\ssc^k$-germ for every $k$ we call it a sc-smooth germ.  
If we write $f\colon {\mathcal O}(O,0)\rightarrow (F,0)$ we indicate
that $f(0)=0$.
\end{definition}
We shall be mostly interested in $\ssc^\infty$-germs $f\colon {\mathcal O}(C,0)\rightarrow F$. \index{Sc$^\infty$-germs}

From Definition \ref{def_strong_bundle_chart} we recall the strong bundle chart $(\Phi, P^{-1}(V), K, U\triangleleft F)$ of a strong bundle $P\colon Y\to X$ over the M-polyfold $X$, illustrated by the commutative diagram
\begin{equation*}
\begin{CD}
P^{-1}(V)@>\Phi>>K\\
@VPVV @VVpV \\
V@>\varphi>>O .
\end{CD}\,
\end{equation*}
In the diagram, $V\subset X$ is an open set and the maps
$\Phi$ and $\varphi$  are homeomorphisms. Moreover, 
$K=R(U\triangleleft F)$ is the image of the strong bundle retraction $R$, and $O=r(U)$ is the image of the sc-smooth retraction $r\colon U\to U$ of the relatively open set $U$ of the the partial quadrant $C$ in the sc-Banach space $E$. 

\begin{definition}\index{D- Sc-section germ}
A {\bf sc-smooth section germ} $(f,x_0)$ of the strong bundle $P\colon Y\to X$ is a continuous section $f\colon V\rightarrow P^{-1}(V)$ on the open neighborhood $V$ of the smooth point $x_0$,  for which the following holds.

There exists a strong bundle chart 
$(\Phi,  P^{-1}(V), K, U\triangleleft F)$  satisfying $\varphi (x_0)=0\in O$ in which the principal part $\wh{\bf f}$ of the local continuous section $\wh{f}=\Phi \circ f\circ \varphi^{-1}\colon O\to K\subset U\triangleleft F$ has the property that the composition 
$$\wh{\bf f}\circ r\colon {\mathcal O}(C, 0)\to F$$
is a sc-smooth germ as defined in 
Definition \ref{germy}.
\end{definition}

To recall, the section $\wh{f}\colon  O\to K\subset U\triangleleft F$ is  of the form $\wh{f}(p)=(p, \wh{\bf f}(p))$ for $p\in O$ and $\wh{\bf f}\colon O\to F$ is called its {\bf principal part}.  By abuse of the notation we shall often use the same letter for the principal part as for the section.

In the next step we introduce the useful  notion of a  filling of a  sc-smooth section germ $(f,0)$  of  a tame strong local bundle $K\rightarrow O$ near the given smooth point  $0$.  The notion of  a filling is a new concept specific to the world of retracts. In all known applications it deals successfully with bubbling-off phenomena and similar singular phenomena.

\begin{definition}[{\bf Filling}]\label{x-filling}\index{D- Filling}
We consider a tame strong local bundle $K\to O$. We recall that $K=R(U\triangleleft F)$  where  $U\subset C\subset E$ is a relatively open neighborhood  of $0$ in the partial quadrant $C$ of the  sc-Banach space $E$ and  $F$ is a sc-Banach space. Moreover,  $R$ is a strong bundle retraction of the form 
$$R(u, h)=(r(u), \rho(u)(h)), $$
covering the tame retraction $r\colon U\to U$ onto $O=r(U),$
and $\rho (u)\colon F\to F$ is a bounded linear operator.  
We  assume that $r(0)=0$. 

A sc-smooth section germ 
$(f, 0)$ of the bundle $K\to O$ possesses a  {\bf filling}\index{D- Filling of a sc-smooth section germ}
if there exists a  sc-smooth section germ $(g, 0)$ of the bundle $U\triangleleft F\to U$ extending $f$ and having  the following properties.
\begin{itemize}
\item[(1)]  $f(x)=g(x)$  for $x\in O$ close to $0$.
\item[(2)] If $g(y)=\rho (r(y))g(y)$ for a point $y\in U$ near $0$, then $y\in O$.
\item[(3)] The linearization of the map
$
y\mapsto  [\mathbbm{1} -\rho(r(y))]\cdot g(y)
$
at the point $0$, restricted to $\ker(Dr(0))$, defines a topological linear  isomorphism
$$
\ker(Dr(0))\rightarrow \ker(\rho (0)).
$$
\end{itemize}

\end{definition}

The crucial property of a filler is the fact that the 
solution sets $\{y\in O\, \vert \, f(y)=0\}$ and $\{y\in U\, \vert \, g(y)=0\}$ coincide near $y=0$. 
Indeed, if $y\in U$ is a solution of the filled section $g$ so that $g(y)=0$, then it follows from (2) that $y\in O$ and from (1) that $f(y)=0$. The section $g$ is, however, much  easier to analyze than  the section $f$,  whose domain of definition has a rather complicated structure. It turns out that in the applications these extensions $g$ are surprisingly easy to  detect. In the Gromov-Witten theory and the SFT they  seem almost canonical.

 The condition (3) plays a role in the comparison of the linearizations $Df(0)$ and $Dg(0)$, assuming that $f(0)=0=g(0)$, as we are going to explain next.
 
 It  follows from the definition of a retract that
$\rho (r(y))\circ \rho (r(y))=\rho (r(y)).$ Hence, since $y=0\in O$ we have $r(0)=0$ and $\rho(0)\circ \rho(0)=\rho (0)$ so that $\rho (0)$ is a linear sc-projection in $F$ and we obtain the sc-splitting
$$
F=\rho (0)F\oplus (\mathbbm{1} -\rho (0))F.
$$
Similarly, it follows from $r(r(y))=r(y)$ for $y\in U$ that $Dr (0)\circ Dr (0)=Dr(0)$ so that $Dr (0)$ is a linear sc-projection in $E$ which gives rise to the sc-splitting
$$
\alpha \oplus \beta \in  E=Dr (0)E\oplus (\mathbbm{1} -Dr (0))E.
$$
We recall that the linearization $Tf(0)$  of the section $f\colon O\to K$ at $y=0=r(0)$ is defined as the restriction of the derivative $D(f\circ r)(0)$ of the map $f\circ r\colon U\to F$ to $T_0O$. From $\rho (r(y))f(r(y))=f(r(y))$ for $y\in U$  close to $0$,  we obtain, using $f(0)=0$,  by linearization 
at $y=0$ the relation $\rho (0)Tf(0)=Tf(0)$. From $g(r(y))=f(r(y))$ for $y\in U$ near $0$ we deduce $Tg(0)=Tf(0)$ on $T_0O$. 
 From the identity
 \begin{gather*}
(\mathbbm{1} -\rho (r(y))g(r(y))=0\quad \text{for $y\in U$ near $0$},
\end{gather*}
we deduce, using $g(0)=0$, the relation
$(\mathbbm{1} -\rho (0))Dg (0)\circ Dr(0)=0$. 
Hence  the matrix representation of $D g (0)\colon E\to F$ with respect to the above splittings of $E$ and $F$ looks as follows,$$
Dg(0)\begin{bmatrix}\alpha \\ \beta
\end{bmatrix}=
\begin{bmatrix}Tf (0)&\rho (0)Dg(0)({\mathbbm 1}-Dr(0))\\0&(\mathbbm{1} -\rho (0)) Dg (0)({\mathbbm 1}-Dr(0))\end{bmatrix}\cdot
\begin{bmatrix}\alpha\\ \beta
\end{bmatrix}.
$$
In view of property (3), the linear map $\beta \mapsto (\mathbbm{1} -\rho (0))\circ Dg (0)({\mathbbm 1}-Dr(0))\beta$ from $(\mathbbm{1} -Dr (0))E$ to $(\mathbbm{1} -D\rho(0))F$ is an isomorphism of Banach spaces. Therefore, 
$$
\text{kernel}\ Dg (0)=(\text{kernel}\ Tf (0))\oplus \{0\}.$$
Moreover the filler has the following additional properties.

\begin{proposition}[{\bf Filler}]\label{filler_new_1}\mbox{}\index{P- Properties of a filler} Assume $f$ has the filling $g$ and $f(0)=0$.
\begin{itemize}
\item[{\em (1)}] The operator $Tf(0)\colon Dr(0)E\to \rho (0)F$ is surjective if and only if the operator $Dg(0)\colon E\to F$ is surjective.
\item[{\em (2)}] $Tf(0)$ is a Fredholm operator (in the classical sense) if and only if $Dg(0)$ is a Fredholm operator and $\ind Tf(0)=\ind Dg(0)$.
\end{itemize}
\end{proposition}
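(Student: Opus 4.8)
The plan is to read off both statements from the block matrix representation of $Dg(0)$ that was just derived in the text, using property (3) of the filling. Recall we have the sc-splittings $E=Dr(0)E\oplus(\mathbbm{1}-Dr(0))E$ and $F=\rho(0)F\oplus(\mathbbm{1}-\rho(0))F$, and with respect to these
$$
Dg(0)\begin{bmatrix}\alpha\\ \beta\end{bmatrix}=\begin{bmatrix}Tf(0)&\rho(0)Dg(0)(\mathbbm{1}-Dr(0))\\0&(\mathbbm{1}-\rho(0))Dg(0)(\mathbbm{1}-Dr(0))\end{bmatrix}\begin{bmatrix}\alpha\\ \beta\end{bmatrix},
$$
where the bottom-right entry, call it $B\colon(\mathbbm{1}-Dr(0))E\to(\mathbbm{1}-\rho(0))F$, is a topological linear isomorphism by property (3). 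So $Dg(0)$ is, up to the bounded (hence harmless) off-diagonal term, block upper triangular with one diagonal block $Tf(0)$ and the other an isomorphism.

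First I would prove (1). If $Tf(0)\colon Dr(0)E\to\rho(0)F$ is surjective, then given any $(c,d)\in\rho(0)F\oplus(\mathbbm{1}-\rho(0))F$, set $\beta=B^{-1}d\in(\mathbbm{1}-Dr(0))E$, then choose $\alpha\in Dr(0)E$ with $Tf(0)\alpha=c-\rho(0)Dg(0)(\mathbbm{1}-Dr(0))\beta$; then $Dg(0)(\alpha,\beta)=(c,d)$, so $Dg(0)$ is surjective. Conversely, if $Dg(0)$ is surjective, then for any $c\in\rho(0)F$ pick $(\alpha,\beta)$ with $Dg(0)(\alpha,\beta)=(c,0)$; the second row forces $B\beta=0$, hence $\beta=0$, and then the first row gives $Tf(0)\alpha=c$, so $Tf(0)$ is surjective. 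This is a direct chase through the two rows and the invertibility of $B$.

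Next I would prove (2). The kernel computation was already carried out in the text: $(\mathbbm{1}-\rho(0))Dg(0)\circ Dr(0)=0$ and the triangular form give $\ker Dg(0)=\ker Tf(0)\oplus\{0\}$, so $\dim\ker Dg(0)=\dim\ker Tf(0)$, with one finite iff the other is. For the cokernel, part (1) shows $Dg(0)$ has closed range of finite codimension iff $Tf(0)$ does, and more precisely the map $\rho(0)F\to F/\operatorname{range}Dg(0)$, $c\mapsto (c,0)+\operatorname{range}Dg(0)$, induces an isomorphism $\rho(0)F/\operatorname{range}Tf(0)\xrightarrow{\ \sim\ }F/\operatorname{range}Dg(0)$: it is surjective because every $(c,d)$ is congruent mod $\operatorname{range}Dg(0)$ to $(c-\rho(0)Dg(0)(\mathbbm{1}-Dr(0))B^{-1}d,\,0)$, and injective because $(c,0)\in\operatorname{range}Dg(0)$ forces, by the argument in (1), $c\in\operatorname{range}Tf(0)$. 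Hence $\operatorname{coker}Dg(0)\cong\operatorname{coker}Tf(0)$, so $Tf(0)$ is Fredholm iff $Dg(0)$ is, and in that case $\ind Dg(0)=\dim\ker Dg(0)-\dim\operatorname{coker}Dg(0)=\dim\ker Tf(0)-\dim\operatorname{coker}Tf(0)=\ind Tf(0)$.

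The only genuine point requiring care — the ``main obstacle,'' though a mild one — is making sure the off-diagonal term $\rho(0)Dg(0)(\mathbbm{1}-Dr(0))$ does not interfere: it is a bounded operator into $\rho(0)F$, and conjugating by the bounded invertible triangular matrix $\begin{bmatrix}\mathbbm{1}&-\rho(0)Dg(0)(\mathbbm{1}-Dr(0))B^{-1}\\0&\mathbbm{1}\end{bmatrix}$ turns $Dg(0)$ into the strictly block-diagonal operator $\operatorname{diag}(Tf(0),B)$, which has the same kernel, cokernel, range-closedness, and index. Since $B$ is an isomorphism, all Fredholm data of $Dg(0)$ coincide with those of $Tf(0)$, and both assertions of Proposition \ref{filler_new_1} follow at once. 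One should just note that all splittings here are sc-splittings at the smooth point $0$, so the block operators are genuine sc-operators and the classical Fredholm notions apply levelwise in the familiar way.
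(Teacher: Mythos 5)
Your proof is correct and follows essentially the same route as the paper: both exploit the block upper-triangular form of $Dg(0)$ with the lower-right entry $B$ (the paper's $C$) an isomorphism coming from property (3), and both finish via the observation that multiplying by the invertible upper-triangular matrix $\begin{bmatrix}\mathbbm{1}&*\\0&\mathbbm{1}\end{bmatrix}$ reduces everything to the block-diagonal $\operatorname{diag}(Tf(0),B)$. The paper is terser — it simply invokes additivity of the Fredholm index under composition with an index-zero isomorphism — while you additionally spell out the kernel and cokernel identifications directly, which makes the argument self-contained but is mathematically the same content.
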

\begin{proof}
(2)\, To simplify the notation we abbreviate the above matrix representing $Dg(0)$ by 
$$
Dg(0)=\begin{bmatrix}A&B\\0&C\end{bmatrix}
$$
and abbreviate the above splittings by $E=E_0\oplus E_1$ and $F=F_0\oplus F_1$.
The operators in the matrix are bounded between corresponding Banach spaces and $C\colon E_1\to F_1$ is an isomorphism of Banach spaces. Therefore, if $B=0$, the operator $A=Df(0)\colon E_0\to F_0$ is Fredholm if and only if the operator 
$$\begin{bmatrix}
A&0\\0&C\end{bmatrix}\colon E\to F
$$
is Fredholm in which case their indices agree. The statement (2) now follows from the composition formula
$$
\begin{bmatrix}
{\mathbbm 1}&BC^{-1}\\
0&{\mathbbm 1}
\end{bmatrix}
\begin{bmatrix}
A&0\\
0&C
\end{bmatrix}=
\begin{bmatrix}
A&B\\
0&C
\end{bmatrix}
$$
since the first factor is an isomorphism from $F$ to $F$, and hence has index equal to $0$,  and the Fredholm indices of a composition are additive. \\[0.5ex]
(1)\, The statement (1) is an immediate consequence of our assumption that $C$ is an isomorphism.
\end{proof}


To sum up the role of a filler, instead of studying the solution set of the section $f\colon O\to K$ we can as well study the solution set of the filled section $g\colon U \to U\triangleleft F$, which is defined on the relatively open set $U$ of the partial quadrant $C$ in the sc-space $E$ and which, therefore,  is easier to analyze.

\begin{definition}[{\bf Filled version}]\label{filled_version_def}\index{D- Filled version}

If $f$ is a sc-smooth section of the tame strong bundle $P\colon Y\to X$ and $x_0\in X$ a smooth point, we 
 say that {\bf section germ $(f, x_0)$ has a filling}, if there exists a strong bundle chart  as defined in 
 Definition \ref{def_strong_bundle_chart}, 
$$
\Phi\colon \Phi^{-1}(V)\to  K\quad  \text{covering $\varphi\colon (V,x_0)\mapsto  (O,0)$,}
$$
 where $K\rightarrow O$ is a tame strong local bundle  containing $0\in O\subset U$,  such that the section germ $\Phi\circ  f\circ \varphi^{-1}\colon O\to K\subset (U\triangleleft F)$ has a filling 
 $g\colon U\to U\triangleleft F$ near $0$. 
 
We shall refer to the section germ $(g, 0)$ as a 
{\bf filled version of $(f, x_0)$}.

\end{definition}

The next concept is that of a   basic germ.
\begin{definition}[{\bf Basic germ}]\label{BG-00x}\index{D- Basic germ}
Let $W$ be a sc-Banach space and $C=[0,\infty )^k\oplus \R^{n-k}\oplus W$ a partial quadrant. Then 
a {\bf basic germ} is a sc-smooth germ 
$$f\colon {\mathcal O}(C, 0)\rightarrow \R^N\oplus W,$$
satisfying $f(0)=0$ and having the following property.
If  $P\colon  \R^N\oplus W\to  W$ denotes the projection, 
the germ $P\circ f\colon {\mathcal O}(C,0)\rightarrow (W,0)$ has the form
$$
P\circ f(a,w)=w-B(a,w) 
$$
for $(a,w)\in ([0,\infty )^k\oplus \R^{n-k})\oplus W$ where $B(0,0)=0$. 
Moreover, for every $\varepsilon>0$ and  every integer $m\geq 0$, the estimate
$$
\abs{B(a,w)-B(a,w')}_m\leq \varepsilon\cdot \abs{w-w'}_m
$$
holds, if $(a, w)$ and $(a, w')$ are close enough to $(0,0)$ on level $m$.
\end{definition}

\begin{remark}
The notion of basic class was introduced in \cite{HWZ3} where, however, we did not require 
$f(0)=0$. Instead we required that $P\circ (f-f(0))$ has  a form as described in Definition \ref{oi}. The later developments convinced us that it is more convenient to require that $f(0)=0$. 
\end{remark}

\begin{lemma}\label{new_Lemma3.9}\index{L- Derivative of a basic germ}
Let $B\colon [0,\infty)^k\oplus \R^{n-k}\oplus W\to W$ be a sc-smooth germ around $0$ satisfying the properties described in Definition \ref{BG-00x}. Then 
for every $\varepsilon>0$ and $m\geq 0$,
\begin{equation}\label{est_second_der_B}
\abs{D_2B(a, w)\zeta}_m\leq \varepsilon\abs{\zeta}_m
\end{equation}
for all $\zeta\in W_m$, if $(a, w)\in E_{m+1}$ is close enough to $(0, 0)$ in $E_m$. In particular, 
$$D_2B(0, 0)=0.$$
\end{lemma}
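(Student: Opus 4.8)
The plan is to obtain \eqref{est_second_der_B} by differentiating the Lipschitz-type estimate built into Definition \ref{BG-00x} in the $W$–directions, exploiting the sc-smoothness of $B$ to control the partial derivative $D_2B$ at the relevant scales, and then to let $\varepsilon\downarrow 0$ to deduce $D_2B(0,0)=0$.

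First I would fix $\varepsilon>0$ and $m\ge 0$ and choose, by Definition \ref{BG-00x}, a relatively open neighbourhood $N$ of $(0,0)$ in $C\cap E_m$ such that $\abs{B(a,w)-B(a,w')}_m\le\varepsilon\,\abs{w-w'}_m$ whenever $(a,w),(a,w')\in N$. Next I would record the differentiability input: by Proposition \ref{x1} (applied with index $m+1$; the $C^1$ part is also Proposition \ref{lower}), since $B$ is sc-smooth, for a point $(a,w)$ in the domain $U_{m+1}$ of $B$ at level $m+1$ the induced map $B\colon U_{m+1}\to W_m$ is of class $C^1$, its classical derivative $dB(a,w)\in\mathscr{L}(E_{m+1},W_m)$ is the restriction of a bounded extension $DB(a,w)\colon E_m\to W_m$, and $D_2B(a,w)$ is the bounded operator $W_m\to W_m$ obtained by restricting $DB(a,w)$ to the $W$–factor.

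Now the core computation. Take $(a,w)$ as in the statement, i.e.\ at level $m+1$ and close enough to $(0,0)$ on level $m$, so that in particular $(a,w)\in U_{m+1}\cap N$, and first let $\zeta\in W_{m+1}$. For $\abs{t}$ small the point $(a,w+t\zeta)$ stays in $U_{m+1}\cap N$, and since $B\colon U_{m+1}\to W_m$ is $C^1$ the curve $t\mapsto B(a,w+t\zeta)$ is differentiable at $t=0$ with
$$
D_2B(a,w)\zeta=\lim_{t\to 0}\tfrac1t\bigl(B(a,w+t\zeta)-B(a,w)\bigr)\qquad\text{in }W_m .
$$
Applying the estimate valid on $N$ to the pair $(a,w+t\zeta),(a,w)$ gives $\bigl|\tfrac1t\bigl(B(a,w+t\zeta)-B(a,w)\bigr)\bigr|_m\le\varepsilon\,\abs{\zeta}_m$ for all small $t$, and passing to the limit $t\to 0$ yields $\abs{D_2B(a,w)\zeta}_m\le\varepsilon\,\abs{\zeta}_m$ for every $\zeta\in W_{m+1}$. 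Since $W_{m+1}$ is dense in $W_m$ (it contains $W_\infty$) and $D_2B(a,w)\colon W_m\to W_m$ is a bounded linear operator, this inequality extends to all $\zeta\in W_m$: choosing $\zeta_j\in W_{m+1}$ with $\zeta_j\to\zeta$ in $W_m$ we have $D_2B(a,w)\zeta_j\to D_2B(a,w)\zeta$ in $W_m$ and $\abs{\zeta_j}_m\to\abs{\zeta}_m$. This is exactly \eqref{est_second_der_B}.

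Finally, taking $(a,w)=(0,0)$ — which lies in $E_{m+1}$ and is trivially close to $(0,0)$ at every level — the inequality just established gives $\abs{D_2B(0,0)\zeta}_m\le\varepsilon\,\abs{\zeta}_m$ for all $\zeta\in W_m$, all $m\ge 0$ and all $\varepsilon>0$; letting $\varepsilon\downarrow 0$ forces $D_2B(0,0)\zeta=0$ for every $\zeta\in W_m$ and every $m$, hence $D_2B(0,0)=0$. The one point that genuinely needs care is the loss of a level inherent in sc-differentiability: the difference-quotient identity for $D_2B$ at a level-$(m+1)$ base point is only directly available for directions $\zeta$ lying at level $m+1$, so the passage to arbitrary $\zeta\in W_m$ really does require both the density of $W_{m+1}$ in $W_m$ and the fact — supplied by Proposition \ref{x1} — that $D_2B(a,w)$ is an honest bounded operator $W_m\to W_m$.
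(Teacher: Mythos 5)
Your proof is correct and follows essentially the same route as the paper: differentiate the contraction estimate along a $W$-direction at a level-$(m+1)$ base point, pass to the limit $t\to 0$ to bound $\abs{D_2B(a,w)\zeta}_m$ for $\zeta\in W_{m+1}$, and then extend to all $\zeta\in W_m$ by density plus the boundedness of $D_2B(a,w)$ on level $m$ supplied by Proposition \ref{x1}. The only cosmetic difference is that the paper phrases the limit step through the $o(t)$ remainder from the definition of $\mathrm{sc}^1$ (invoking Proposition \ref{sc_up}) rather than via the $C^1$-property of $B\colon U_{m+1}\to W_m$, but these are two ways of saying the same thing.
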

\begin{proof}
Since $W_{m+1}\subset W_m$ is dense, it is sufficient to verify the estimate for $\zeta\in W_{m+1}$ satisfying $\abs{\zeta}_{m+1}=1$. For such a $\zeta$ we know  from the definition of the linearization,  recalling Proposition \ref{sc_up},  that $B(a, w)-B(a, w+t\zeta)-D_2B(a, w)(t\zeta)=o(t)$, where $w+t\zeta\in C$,  $\abs{o(t)/t}_m\to 0$ as $t\to 0$. Therefore, 
\begin{equation*}
\begin{split}
\abs{D_2B(a, w)\zeta}_m&=\abs{\dfrac{1}{t}D_2B(a, w)(t\zeta)}_m\\
&\leq \dfrac{1}{\abs{t}}\abs{B(a, w)-B(a, w+t\zeta)}_m+ \dfrac{1}{\abs{t}}\abs{o(t)}_m.
\end{split}
\end{equation*}
The first  term on the right hand side is estimated by $\varepsilon\abs{\zeta}_m$ if $(a, w)$ and $(a, w+t\zeta)$ are sufficiently small in $E_m$. Therefore, the estimate \eqref{est_second_der_B} follows as $t\to 0$.
\end{proof}

We will see that basic germs have special properties as already the following application of Lemma \ref{new_Lemma3.9} demonstrates.

\begin{proposition}\label{Newprop_3.9}\index{P- Fredholm property of basic germs}
Let $f\colon [0,\infty)^k\oplus \R^{n-k}\oplus W=E \to \R^N\oplus W$ be a sc-smooth germ around $f(0)=0$ of the form $f=h+s$ where $h$ is a basic  germ and $s$ a $\ssc^+$-germ. Then 
$$Df(0)\colon \R^n\oplus W\to \R^N\oplus W$$
is a sc-Fredholm operator and its index is equal to 
$$\ind Df(0)=n-N.$$
Moreover, for every $m\geq 0$,
$$Df(a, w)\colon \R^n\oplus W_m\to \R^N\oplus W_m$$ is a Fredholm operator having index $n-N$,  if $(a, w)\in E_{m+1}$ is sufficiently small in $E_m$.
\end{proposition}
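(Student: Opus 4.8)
The plan is to analyze the matrix structure of $Df(a,w)$ with respect to the splitting of the domain $\R^n \oplus W$ and the target $\R^N \oplus W$, and to show that up to a $\ssc^+$-perturbation (which is sc-compact and hence preserves sc-Fredholmness by Proposition \ref{prop1.21}), $Df(a,w)$ is a triangular operator whose diagonal blocks are respectively a finite-rank map $\R^n \to \R^N$ and an invertible operator $\mathbbm{1} - D_2 B(a,w)$ on $W$.

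\bigskip

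First I would write $f = h + s$ with $h$ a basic germ and $s$ a $\ssc^+$-germ, so that $Df(a,w) = Dh(a,w) + Ds(a,w)$. Since $s$ is a $\ssc^+$-germ, $Ds(a,w)\colon \R^n \oplus W_m \to \R^N \oplus W_{m+1} \subset \R^N \oplus W_m$ is sc-compact on every level (the inclusion $W_{m+1}\to W_m$ being compact), and at the smooth point $0$ it is an $\ssc^+$-operator in the sense of Definition \ref{sc_plus_operators}. Hence it suffices to prove the statement for $Dh$ and then invoke Proposition \ref{prop1.21} for the sc-level statement at $0$, and the classical stability of Fredholm operators under compact perturbations for the statement on level $m$. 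Next, write $h = (h_1, P\circ h)$ according to the target splitting $\R^N \oplus W$, where by Definition \ref{BG-00x} we have $P\circ h(a,w) = w - B(a,w)$ with $B$ satisfying the Lipschitz-in-$w$ estimate with arbitrarily small constant near $0$. Differentiating, the $W$-component of $Dh(a,w)$ applied to $(\alpha,\zeta)\in \R^n\oplus W$ is $\zeta - D_1 B(a,w)\alpha - D_2 B(a,w)\zeta$. So with respect to the block decompositions $\R^n\oplus W \to \R^N \oplus W$, the operator $Dh(a,w)$ has the form
$$
Dh(a,w) = \begin{bmatrix} D_1 h_1(a,w) & D_2 h_1(a,w) \\ -D_1 B(a,w) & \mathbbm{1} - D_2 B(a,w) \end{bmatrix}.
$$

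\bigskip

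The key point is that the bottom-right block $\mathbbm{1} - D_2 B(a,w)\colon W_m \to W_m$ is an isomorphism whenever $(a,w)$ is close enough to $(0,0)$ on level $m$: by Lemma \ref{new_Lemma3.9}, $\|D_2 B(a,w)\zeta\|_m \le \varepsilon \|\zeta\|_m$ for $\varepsilon < 1$ near $0$, so $\mathbbm{1} - D_2 B(a,w)$ is invertible by a Neumann series on $W_m$, and moreover its inverse is again a sc-operator at the smooth point $0$ (the estimate holding for all $m$). The off-diagonal block $-D_1 B(a,w)\colon \R^n \to W_m$ has finite-dimensional domain, hence it — together with the bottom-left corner generally — is handled by a standard triangularization. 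Concretely, composing $Dh(a,w)$ on the left with the block-lower-triangular isomorphism $\begin{bmatrix} \mathbbm{1} & 0 \\ -D_1 B(a,w)(\mathbbm{1} - D_2 B(a,w))^{-1}\cdot(\text{suitable entry}) & \mathbbm{1}\end{bmatrix}$ — more precisely, using column operations to clear the $D_2 h_1$ entry and row operations to clear the lower-left entry — reduces $Dh(a,w)$ to block-diagonal form $\mathrm{diag}(T, \mathbbm{1} - D_2 B(a,w))$ where $T\colon \R^n \to \R^N$ is a linear map between finite-dimensional spaces, hence automatically Fredholm of index $n - N$. Since the triangular conjugating factors are isomorphisms (index $0$) and Fredholm index is additive under composition, $Dh(a,w)$ is Fredholm of index $(n-N) + 0 = n - N$. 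Taking $(a,w) = (0,0)$ and checking that all the maps involved are sc-operators (using Proposition \ref{x1} and the fact that $0$ is a smooth point) gives that $Dh(0)$, and hence $Df(0) = Dh(0) + Ds(0)$ with $Ds(0)$ an $\ssc^+$-operator, is sc-Fredholm of index $n - N$ by Proposition \ref{prop1.21}.

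\bigskip

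The main obstacle I expect is bookkeeping the triangularization carefully enough that it produces a genuine sc-operator conjugation at the smooth point $0$ — i.e. verifying that $(\mathbbm{1} - D_2 B(0))^{-1}$ and the finite-rank correction terms are sc-operators on all levels simultaneously, not merely bounded on each fixed level — and, for the level-$m$ statement, making the neighborhood on which the Neumann series converges depend only on smallness in $E_m$ (which is exactly what Lemma \ref{new_Lemma3.9} delivers, so this should go through). A minor subtlety is that $Ds(a,w)$ for $(a,w)$ on level $m+1$ maps into level $m+1$, which on level $m$ is a compact perturbation, so the Fredholm property and index on level $m$ persist; but one should state explicitly that $f$ being a germ means $(a,w)$ is constrained to the partial quadrant $C$, and differentiability there is in the sc-sense, so the derivative $Df(a,w)$ is genuinely defined as in Definition \ref{scx} and Proposition \ref{x1}.
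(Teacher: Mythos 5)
Your proof is correct, and it rests on the same key input --- Lemma \ref{new_Lemma3.9} giving $D_2B(0)=0$ and the smallness of $\|D_2B(a,w)\|_m$ --- but it reaches the conclusion by a more computational route than the paper. Where the paper observes directly that, because $D_2B(0)=0$, the operator $Df(0)$ is a $\ssc^+$-perturbation of the \emph{model} operator $(\delta a,\delta w)\mapsto(0,\delta w)$ (the remaining terms $-D_1B(0)\delta a$, $({\mathbbm 1}-P)Df(0)$, and $Ds(0)$ are all $\ssc^+$ because they have finite-dimensional smooth domain, finite-dimensional smooth range, or come from the $\ssc^+$-section), you instead triangularize the full block matrix $\bigl[\begin{smallmatrix}D_1h_1 & D_2h_1 \\ -D_1B & {\mathbbm 1}-D_2B\end{smallmatrix}\bigr]$ using the invertibility of ${\mathbbm 1}-D_2B$. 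Your approach is more uniform --- it handles $Df(a,w)$ at arbitrary nearby $(a,w)$ in the same framework as $Df(0)$ --- at the cost of having to verify that the triangular conjugating factors are sc-isomorphisms when $(a,w)=0$; the paper avoids this entirely at $0$ by exploiting $D_2B(0)=0$, reserving the Neumann-series/Lemma \ref{new_Lemma3.9} invertibility argument for the level-$m$ statement only. Two minor points: you have the roles of row and column operations reversed (a left multiplication by an upper-triangular block clears the $D_2h_1$ entry, and a right multiplication by a lower-triangular block clears $-D_1B$, not the other way around), and at $(a,w)=0$ you could note that ${\mathbbm 1}-D_2B(0)={\mathbbm 1}$ so the inverse in the conjugating factors is trivially an sc-isomorphism, which removes the only nontrivial verification you flagged as an ``obstacle.''
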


\begin{proof}

With the sc-projection $P\colon \R^N\oplus W\to W$, the linearization of $f$ at the smooth point $0$, 
$Df(0)=P\circ Df(0)+({\mathbbm 1}-P)\circ Df(0)$, is explicitly given by the formula
\begin{equation*}
\begin{split}
Df(0)(\delta a, \delta w)&=\delta w-D_2B(0)\delta w-D_1B(0)\delta a\\
&\phantom{=}+({\mathbbm 1}-P)\circ Df(0)(\delta a, \delta w) +Ds(0)(\delta a, \delta w).
\end{split}
\end{equation*}
By Lemma \ref{new_Lemma3.9}, $D_2B(0)=0$. Therefore, the operator $Df(0)$ is a $\ssc^+$-perturbation of the operator 
\begin{equation}\label{new_equation_sc-pert}
\R^n\oplus W\to \R^N\oplus W, \quad (\delta a, \delta w)
\mapsto (0,\delta w).
\end{equation}
The operator \eqref{new_equation_sc-pert} is a sc-Fredholm operator whose 
kernel is equal to $\R^n$ and whose cokernel is $\R^N$, so that its Fredholm index is equal to $n-N$. 
Since $Df(0)$ is a $\ssc^+$-perturbation of a sc-Fredholm operator, it is also a   sc-Fredholm operator by Proposition \ref{prop1.21}. Because $\ssc^+$-operators are compact, if considered on the same level, the Fredholm index is unchanged and so, 
$\ind Df(0)=n-N$. 

The second statement follows from the fact that the linear operator 
$Df(a, w)\colon \R^n\oplus W_m\to \R^N\oplus W_m$ is a compact perturbation of the operator 
\begin{equation}\label{new_equation_fredholm_pert}
(\delta a, \delta w)
\mapsto (0,({\mathbbm 1}-D_2B(a, w))\delta w).
\end{equation}
Choosing $0<\varepsilon <1$ in Lemma \ref{new_Lemma3.9}, the operator 
${\mathbbm 1}-D_2B(a, w)\colon W_m\to W_m$ is an isomorphism of Banach spaces if $(a, w)\in E_{m+1}$ is sufficiently small in $E_m$. Hence the operator \eqref{new_equation_fredholm_pert} is a Fredholm operator of index $n-N$ and the proposition follows.

\end{proof}

Finally,  we are in a position to introduce the  sc-Fredholm germs.

\begin{definition}[{\bf sc-Fredholm germ}]\label{oi}\index{D- Sc-Fredholm germ}
Let $f$ be a sc-smooth section of the strong bundle $P\colon Y\to X$ over the tame M-polyfold $X$, and let $x_0\in X$ be a smooth point.  Then $(f,x_0)$ is a {\bf sc-Fredholm germ} provided it possesses  a filled version $(g,0)\colon U\to U\triangleleft F$ according to Definition \ref{filled_version_def} and having the following property. 
There exists a local $\ssc^+$-section $s\colon U\to U\triangleleft F$ satisfying $s(0)=g(0)$ such that  the germ 
$(g-s,0)\colon U\to U\triangleleft F$
is conjugated to a basic germ. 

The last condition requires the existence of a strong bundle isomorphism $\Psi\colon U\triangleleft F\to U'\triangleleft F'$ covering the sc-diffeomorphism 
$\psi\colon U\to U'$ such that the push-forward section 
$$\Psi \circ (g-s)\circ \psi^{-1}\colon U'\to U'\triangleleft F'$$
is a basic germ.
\end{definition}

From Proposition \ref{Newprop_3.9} we deduce that the linearization $D(g-s)(0)$ at the point $0$ is a sc-Fredholm operator. Consequently,  $Dg(0)$ is a sc-Fredholm operator by Proposition \ref{prop1.21}, and so, the tangent map 
$Tf(x_0)\colon T_{x_0}X\to Y_{f(x_0)}Y$ is a linear Fredholm operator having the same index as $Dg(0)$, namely $\ind Tf(x_0)=n-N$, in view of the properties of a filler in Proposition \ref{filler_new_1}.

\mbox{}\\

The above definition of a sc-Fredholm germ looks very complicated; one first has to find a filled version, which then, after some correction by a $\ssc^+$-section, is conjugated to a basic germ.
It turns out that the definition of a sc-Fredholm germ is extremely practicable in the applications we have in mind. 
By experience one may say that the fillings, which are usually only needed
near data describing bubbling-off situations seem almost natural, i.e. ``if one sees one example one has seen them all''. 
Examples of fillings  in the  Gromov-Witten, SFT and Floer Theory can be found in  \cite{HWZ5,HWZ5.5}.
The subtraction of a suitable $\ssc^+$-section is in applications essentially the removing of lower order terms of a nonlinear differential operator
and therefore allows tremendous simplifications of the expressions before one tries to conjugate them to a basic germ.
 One also has to keep in mind that the sc-Fredholm theory is designed to cope with spaces whose tangent spaces have locally varying dimensions, on which, on the analytical side, one studies systems of partial differential equations on varying domains into varying codomains.  Later on 
we shall give criteria (which in practice are easy to check)  to verify  that a section is conjugated to a basic germ.

Sc-Fredholm germs possess a useful local compactness property.
\begin{theorem}[{\bf Local Compactness for sc-Fredholm Germs}] \label{compact-x}\index{T- Local compactness}
Let $f$ be a sc-smooth section of the tame strong bundle $P\colon Y\rightarrow X$, and $x_0\in X$  a smooth point. We assume that $(f,x_0)$ is a sc-Fredholm germ satisfying $f(x_0)=0$. Then there exist a nested sequence
of open neighborhood ${\mathcal O}(i)$ of $x_0$ in $X_0$,  for $i\geq 0$, 
$$
{\mathcal O}(0)\supset  {\mathcal O}(1)\supset \cdots  \supset {\mathcal O}(i)\supset   {\mathcal O}(i+1) \supset \cdots  \, ,
$$
such  that for every $i$ the $X_0$-closure 
 $\cl_{X_0}(\{x\in {\mathcal O}(i)\,  \vert \,  f(x)=0\})$ is a compact subset of $ X_i$.
\end{theorem}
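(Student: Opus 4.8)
The plan is to reduce everything to the filled version and then to the basic germ, where the claimed compactness is essentially a consequence of the contraction estimate built into Definition~\ref{BG-00x}. First I would use Definition~\ref{filled_version_def} to pass to a strong bundle chart around $x_0$ in which the section germ $(f,x_0)$ is represented by a filled version $(g,0)\colon U\to U\triangleleft F$. Since the solution sets of $f$ near $x_0$ and of $g$ near $0$ coincide (property (2) of Definition~\ref{x-filling}), it suffices to produce the nested neighborhoods for the zero set of $g$ inside $U\subset C\subset E$. By Definition~\ref{oi} there is an $\ssc^+$-section $s$ with $s(0)=g(0)$ such that $g-s$ is conjugated, via a strong bundle isomorphism covering an sc-diffeomorphism, to a basic germ; sc-diffeomorphisms and strong bundle isomorphisms preserve the filtration and map bounded-closure sets to bounded-closure sets, so I can work directly with a basic germ $h\colon {\mathcal O}(C,0)\to \R^N\oplus W$, where $C=[0,\infty)^k\oplus \R^{n-k}\oplus W$, and with $g=h+s$ where $s$ is $\ssc^+$. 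Note $g(0)=0$ forces $h(0)=-s(0)$, and after absorbing $s(0)$ one may as well assume $h(0)=0=s(0)$.

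Next I would analyze the zero set $\{(a,w) : h(a,w)+s(a,w)=0\}$ using the $W$-component. Applying the projection $P$ onto $W$, the equation reads $w - B(a,w) + P s(a,w) = 0$, i.e. $w = B(a,w) - Ps(a,w)$. The key point is the contraction estimate: by Definition~\ref{BG-00x}, for every $m$ and every $\varepsilon>0$, $\abs{B(a,w)-B(a,w')}_m \le \varepsilon \abs{w-w'}_m$ when $(a,w),(a,w')$ are close to $0$ on level $m$; and $s$ is $\ssc^+$, hence $Ps$ maps level $m$ to level $m+1$ and in particular sends $E_m$-bounded neighborhoods of $0$ into $W_{m+1}$-precompact sets. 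Now I would argue by induction on $i$. Suppose we have an open neighborhood ${\mathcal O}(i)$ of $0$ in $C=C\cap E_0$ whose zero set has $X_0$-closure a compact subset of $E_i$ (for $i=0$ this is a small bounded neighborhood together with the observation that bounded closed sets in $E_0$ need not be compact, so actually the base case must be argued more carefully — see below). Given that on ${\mathcal O}(i)$ solutions live on level $i$, one shows solutions live on level $i+1$: for a solution $(a,w)$ on level $i$ we have $w = B(a,w) - Ps(a,w)$; the term $Ps(a,w) \in W_{i+1}$ since $s$ is $\ssc^+$, and $B(a,w)$ one rewrites as $B(a,w) - B(a,w_0)$ for a suitable reference, using the contraction estimate on level $i+1$ together with regularity of $B$ restricted to the finite-dimensional $a$-variable; after shrinking ${\mathcal O}(i)$ to ${\mathcal O}(i+1)$ one gets a fixed-point/bootstrap argument showing $w\in W_{i+1}$, hence $(a,w)\in E_{i+1}$. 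Since the $a$-coordinate ranges in a precompact subset of the finite-dimensional quadrant $[0,\infty)^k\oplus\R^{n-k}$ and the $w$-coordinate, being a fixed point of a map landing in the precompact set $B(\cdot,\cdot)+W_{i+1}$-image of $Ps$, ranges in a precompact subset of $W_{i+1}$, the closure of the zero set in ${\mathcal O}(i+1)$ is a compact subset of $E_{i+1}$; intersecting with the M-polyfold chart domains and transporting back through $\psi$, $\Psi$, and the filling identification gives ${\mathcal O}(i+1)$.

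The main obstacle — and the step I expect to require the most care — is the \emph{base case} $i=0$, namely establishing a priori that solutions on a sufficiently small $E_0$-neighborhood of $0$ already form a set with precompact $X_0$-closure. Boundedness alone does not give precompactness in $E_0$, so one must use the structure of a basic germ together with the $\ssc^+$-correction to gain a half level immediately: the relation $w = B(a,w) - Ps(a,w)$ with $Ps$ landing in $W_1$ and $B$ contracting on level $0$ shows that the $w$-component of any small solution lies in $W_1$ plus a controlled term, and then the compact embedding $W_1\hookrightarrow W_0$ upgrades the bounded set of solutions to a precompact one in $E_0$; simultaneously the $a$-variable lies in a bounded subset of a finite-dimensional space, which is precompact. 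This is exactly the ``one is saved by the compactness of the embeddings'' phenomenon already remarked after the Chain Rule, and it is what makes the induction start. I would then run the bootstrap of the previous paragraph from this base, shrinking the neighborhood at each stage, and finally note that the nested sequence ${\mathcal O}(0)\supset{\mathcal O}(1)\supset\cdots$ so produced has the required property by construction, completing the proof of Theorem~\ref{compact-x}.
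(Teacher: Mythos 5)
Your high-level strategy matches the paper's: reduce to a filled version, conjugate to a basic germ plus an $\ssc^+$-section, and then gain regularity and compactness from the contraction property of $B$, the level shift of the $\ssc^+$-term, and the compact embeddings $E_{m+1}\hookrightarrow E_m$. (In the paper this is done in the proof of Theorem~\ref{save}, of which Theorem~\ref{compact-x} is a corollary.) However, the step in which you actually gain a level of regularity is circular as written. For a solution $(a,w)$ known only to be on level $i$ you propose to write $B(a,w)=\bigl(B(a,w)-B(a,w_0)\bigr)+B(a,w_0)$ and then apply the contraction estimate on level $i+1$. But the estimate in Definition~\ref{BG-00x}, $\abs{B(a,w)-B(a,w')}_{i+1}\le\varepsilon\abs{w-w'}_{i+1}$, is only available when $(a,w)$ and $(a,w')$ are small \emph{on level $i+1$}. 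Since $w$ is only known to be on level $i$, the left side is not even defined as an $\abs{\cdot}_{i+1}$-quantity and the split cannot be carried out before one already knows $w\in W_{i+1}$.

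The mechanism that actually works — which your phrase ``a fixed-point/bootstrap argument'' gestures at but does not execute — is the parametrized Banach fixed point construction of Theorem~\ref{save}. For each level $m$ one builds a uniform contraction $w\mapsto B(a,w)-z$ on a closed ball $\ov{B}_m(\tau_m')\subset W_m$ for parameters $(a,z)$ in a small set $\Sigma_m$, producing a unique continuous solution $\delta_m(a,z)\in W_m$. One does not bootstrap $w$ itself: rather one notes that a small solution of $f=0$ satisfies $w=\delta_0(a,Ps(a,w))$, that $Ps(a,w)$ already lies at level $i+1$ because $s$ is $\ssc^+$, and that $\delta_0$ and $\delta_{i+1}$ coincide on their common domain by uniqueness of the level-$0$ fixed point; hence $w=\delta_{i+1}(a,Ps(a,w))\in W_{i+1}$ outright, with $\abs{w}_{i+1}\le\tau_{i+1}'$. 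No contraction estimate is ever applied to $w$ at a level where $w$ is not already known to live. A smaller bookkeeping slip: this construction gives \emph{boundedness} of the $w$-component in $W_{i+1}$, not precompactness there; compactness of the $X_0$-closure in $X_i$ then comes from the compact embedding $W_{i+1}\hookrightarrow W_i$, so the neighborhood ${\mathcal O}(i)$ of the statement should be the one whose solutions are bounded on level $i+1$.
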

We emphasize that the ${\mathcal O}(i)$ are open neighborhoods in $X$, i.e. on level $0$.

The result is an immediate consequence of 
Theorem \ref{save}, which will be introduced later,
and has the following corollary.
\begin{corollary}\index{C- Local compactness and regularity}
Let $f$ be a sc-smooth section of the tame strong bundle $P\colon Y\rightarrow X$, and $x_0\in X$  a smooth point. We assume that $(f,x_0)$ is a sc-Fredholm germ satisfying $f(x_0)=0$. 
If $(x_k)\subset X$ is a sequence satisfying   $f(x_k)=0$ and $x_k\rightarrow x$ in $X_0$,  then it follows, for every  given any $m\geq 0$, that 
$x_k\in X_m$ for $k$ large and $x_k\rightarrow x$ in $X_m$.
\end{corollary}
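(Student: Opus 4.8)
The plan is to obtain this as a soft corollary of Theorem~\ref{compact-x}: all the analytic content sits there, and what remains is point-set topology using the metrizability of the levels $X_m$ (Theorem~\ref{X_m_paracompact}) and the continuity of the inclusions $X_{m+1}\to X_m$ (Lemma~\ref{inculsion_continuous}). I would begin by applying Theorem~\ref{compact-x} to the sc-Fredholm germ $(f,x_0)$ (the limit point $x$ of the sequence being the distinguished smooth point $x_0$ at which the germ is posed), obtaining nested neighborhoods $\mathcal{O}(0)\supset\mathcal{O}(1)\supset\cdots$ of $x_0$, \emph{open in $X_0$}, such that for each $i$ the set $S_i:=\cl_{X_0}(\{x\in\mathcal{O}(i)\,\vert\,f(x)=0\})$ is a compact subset of $X_i$.

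Next, I would fix $m\geq 0$. Because $\mathcal{O}(m)$ is an $X_0$-neighborhood of $x_0$ and $x_k\to x_0$ in $X_0$, there is an $N$ with $x_k\in\mathcal{O}(m)$ for all $k\geq N$; combined with $f(x_k)=0$ this gives $x_k\in\{x\in\mathcal{O}(m)\,\vert\,f(x)=0\}\subset S_m\subset X_m$ for $k\geq N$, which is the first assertion. For the convergence $x_k\to x_0$ in $X_m$, I would run a subsequence argument: the tail $(x_k)_{k\geq N}$ lies in the compact metrizable space $S_m$, so any subsequence has a further subsequence converging in $X_m$ to some $y\in S_m$; since $X_m\hookrightarrow X_0$ is continuous (Lemma~\ref{inculsion_continuous}, iterated), this further subsequence also converges to $y$ in $X_0$, while it converges to $x_0$ in $X_0$ as a subsequence of $(x_k)$, and since $X_0$ is Hausdorff we get $y=x_0$. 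Thus every subsequence of $(x_k)_{k\geq N}$ has a further subsequence converging to $x_0$ in $X_m$, which forces $x_k\to x_0$ in $X_m$.

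I do not expect a genuine obstacle here. The one thing to be careful about is the level on which the neighborhoods $\mathcal{O}(i)$ of Theorem~\ref{compact-x} live: they are open in $X_0$ (as stressed right after its statement), and it is exactly this that lets the $X_0$-convergent sequence $(x_k)$ be eventually captured inside $\mathcal{O}(m)$, after which the $X_m$-compactness of $S_m$ finishes the proof. If the $\mathcal{O}(i)$ were only relatively open in $X_i$, the capturing step would break down, so this point should be emphasized rather than skipped; everything else is routine.
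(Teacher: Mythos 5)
Your proof is correct and is the intended route: Theorem~\ref{compact-x} supplies the nested $X_0$-open neighborhoods $\mathcal{O}(i)$ of $x_0$ with $\cl_{X_0}(\{y\in\mathcal{O}(i)\,\vert\,f(y)=0\})$ compact in $X_i$, and the rest is the standard metric-space subsequence argument via metrizability of the $X_m$ (Theorem~\ref{X_m_paracompact}) and continuity of the inclusions $X_{m+1}\to X_m$ (Lemma~\ref{inculsion_continuous}). You are also right to emphasize that the $\mathcal{O}(i)$ live on level $0$ — that is precisely what lets the $X_0$-convergent tail be captured in $\mathcal{O}(m)$ — and that the corollary must be read with the limit being $x_0$ itself, since the neighborhoods from Theorem~\ref{compact-x} are centered there.
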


\begin{definition}[{\bf Sc-Fredholm section}]\index{D- Sc-Fredholm section}
A section  $f$ of the tame strong bundle $P\colon Y\rightarrow X$ over the M-polyfod $X$ is called  {\bf sc-Fredholm section}, if it has 
the following three properties.
\begin{itemize}
\item[(1)] $f$ is sc-smooth.
\item[(2)] $f$ is regularizing, i.e.,  if $x\in X_m$ and  $f(x)\in Y_{m,m+1}$, then $x\in X_{m+1}$.\index{D- Regularizing section}
\item[(3)] The germ $(f,x)$ is a sc-Fredholm germ at  every smooth point $x\in X$.
\end{itemize}
\end{definition}

The  implicit function theorem,  introduced later on,  is applicable to sc-Fredholm sections and will lead to the following local  result near a smooth interior point $x_0\in X$ ( i.e.,  $d_X(x_0)=0$). We assume that  $f(x_0)=0$. Then the linearization $f'(x_0)\colon T_{x_0}X\rightarrow Y_{x_0}$ 
is a sc-Fredholm operator. Moreover, if  $f'(x_0)$ is surjective, then the solution set
$\{x\in X\, \vert \, f(x)=0\}$ near $x_0$ has the structure of a finite dimensional smooth manifold ( in the classical sense) whose dimension agrees with the Fredholm index.  Its smooth structure is in a canonical way  induced 
from the M-polyfold structure of $X$.

In case that $x$ is a boundary point, so that  $d_X(x)\geq 1$, we can only expect the solution set to be reasonable provided 
the kernel of $f'(x)$ lies in good position to the boundary of $X$ and the boundary $\partial X$ is sufficiently well-behaved.
In order that $\partial X$ is regular enough we have required 
that $X$ is a tame M-polyfold so that we can ask  $\ker(f'(x))$ to be in good position to the partial quadrant $C_xX$ in $T_xX$, 
a  notion which we shall introduce later on.

\mbox{}\\
\mbox{}

 If $P\colon Y\to X$ is a tame strong bundle,  we denote by $\Gamma(P)$\index{$\Gamma(P)$} the vector space of sc-smooth sections; by  $\text{Fred}(P)$\index{$\text{Fred}(P)$} we denote the subset of $\Gamma(P)$ consisting of sc-Fredholm sections. Finally, by  
$\Gamma^+(P)$\index{$\Gamma^+(P)$} we denote  
the vector space of $\ssc^+$-sections as introduced in Definition 
\ref{def_sc_inft_sections}.

\mbox{}\\

The following stability property   of a sc-Fredholm section will be crucial for the transversality theory.

\begin{theorem}[{\bf Stability under $\ssc^{\pmb{+}}$-perturbations}]\label{stabxx}  \index{T- Stability of sc-Fredholm sections}
Let $P\colon Y\rightarrow X$ be a  strong bundle over the tame M-polyfold $X$.
If  $f\in \Fred (P)$ and $s\in \Gamma^+(P)$, then $f+s\in \Fred (P)$.
\end{theorem}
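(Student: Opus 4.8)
The plan is to reduce the global statement to the germ-level statement and then to unwind the definition of an sc-Fredholm germ, tracking how the various layers (filling, subtraction of an $\ssc^+$-section, conjugation to a basic germ) interact with the addition of $s$. Concretely, $f+s$ is automatically sc-smooth, since both $f$ and $s$ are sc-smooth sections, and it is regularizing: if $x\in X_m$ and $(f+s)(x)\in Y_{m,m+1}$, then, because $s$ is an $\ssc^+$-section, $s(x)\in Y_{m,m+1}$ as well, hence $f(x)=(f+s)(x)-s(x)\in Y_{m,m+1}$, and the regularizing property of $f$ gives $x\in X_{m+1}$. So properties (1) and (2) of an sc-Fredholm section are immediate, and the whole content is property (3): for every smooth point $x_0\in X$, the germ $(f+s,x_0)$ is an sc-Fredholm germ.

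The main step is therefore the following. Fix a smooth point $x_0$ and choose a strong bundle chart adapted to $(f,x_0)$ as in Definition \ref{filled_version_def}, so that the section germ of $f$ is represented by $\Phi\circ f\circ\varphi^{-1}\colon O\to K$ and admits a filling $g\colon U\to U\triangleleft F$. In the same chart, the section $s$ is represented by a local section of $K\to O$, which I first want to promote to a local $\ssc^+$-section $\wt{s}\colon U\to U\triangleleft F$ of the ambient trivial bundle extending it (using the strong bundle retraction $R$: set $\wt{s}(u) = \rho(r(u))\,\sigma(r(u))$ for the principal part $\sigma$ of the representative of $s$, or more simply observe that $\sigma\circ r$ is already an $\ssc^+$-germ $U\to F$ and compose appropriately). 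The claim is that $g+\wt{s}$ is a filling of the section germ of $f+s$. I would check the three conditions of Definition \ref{x-filling} one at a time: condition (1), that $f+s$ agrees with $g+\wt{s}$ on $O$ near $0$, follows from $f=g$ and $s=\wt s$ on $O$; condition (2), the ``if $(g+\wt s)(y)=\rho(r(y))(g+\wt s)(y)$ then $y\in O$'' implication, follows because $\rho(r(y))\wt{s}(y)=\wt{s}(y)$ already (by construction $\wt s$ has image in $K$), so the condition reduces to the corresponding condition for $g$; condition (3) is a statement about the linearization at $0$ restricted to $\ker Dr(0)$, and since $\wt s$ is an $\ssc^+$-section the map $y\mapsto [\mathbbm 1-\rho(r(y))]\wt s(y)$ vanishes identically, so adding $\wt s$ does not change that linearization, and the isomorphism property persists.

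Now the filled version of $(f+s,x_0)$ is $(g+\wt s,0)$. By hypothesis there is a local $\ssc^+$-section $s_0\colon U\to U\triangleleft F$ with $s_0(0)=g(0)$ such that $(g-s_0,0)$ is conjugated, via a strong bundle isomorphism, to a basic germ. Take as the required $\ssc^+$-section for $f+s$ the sum $s_0+\wt s$; note $\ssc^+$-sections form a vector space (as recorded for $\Gamma^+(P)$), so $s_0+\wt s$ is again an $\ssc^+$-section, and $(s_0+\wt s)(0)=g(0)+\wt s(0)=(g+\wt s)(0)$. Then $(g+\wt s)-(s_0+\wt s)=g-s_0$, which is already conjugated to a basic germ by exactly the strong bundle isomorphism $\Psi$ supplied for $f$. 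Hence $(f+s,x_0)$ satisfies Definition \ref{oi} with the very same conjugating data, and is an sc-Fredholm germ. Since $x_0$ was arbitrary, $f+s\in\Fred(P)$.

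The main obstacle I anticipate is purely bookkeeping rather than conceptual: making precise the passage from the section $s$, which a priori lives only as a section of the strong local bundle $K\to O$, to an ambient $\ssc^+$-section $\wt s\colon U\to U\triangleleft F$ with the right compatibility ($\wt s$ takes values in $K$, equivalently $\rho(r(u))$ fixes its principal part), and verifying that this extension is genuinely $\ssc^+$-smooth, i.e.\ sc-smooth as a section of $(U\triangleleft F)[1]\to U$. This uses that $s$ is an $\ssc^+$-section of $Y[1]\to X$, that the strong bundle chart transition maps preserve the double filtration and are sc-smooth on both $[0]$ and $[1]$ levels, and the chain rule for sc-smooth maps between retracts (Theorem \ref{sccomp}); once that extension is in hand, every remaining verification is a one-line comparison of linearizations or of defining equations, with no new analytic input — in particular no appeal to compactness or to the implicit function theorem is needed, only Proposition \ref{prop1.21} implicitly through the already-established Fredholm properties of basic germs.
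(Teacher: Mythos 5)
Your proof is correct and follows essentially the paper's own route (Lemmas~\ref{rio} and~\ref{filler_extension_s}): extend the local representative of $s$ to an $\ssc^+$-section $\wt s$ on $U$ by composing its principal part with the retraction $r$, show $g+\wt s$ is a filler for the local representative of $f+s$, and subtract $s_0+\wt s$ to land back on $g-s_0$, which is already conjugated to a basic germ by the data supplied for $f$. One small caveat on the justifications: $\wt s$ does \emph{not} take values in $K$ (that would force $r(y)=y$), and the identity $[\mathbbm{1}-\rho(r(y))]\wt{\mathbf s}(y)=0$ for all $y\in U$ is not a consequence of $\wt s$ being $\ssc^+$; it holds because $\wt{\mathbf s}=\mathbf s\circ r$ and $\rho(r(y))\mathbf s(r(y))=\mathbf s(r(y))$ (since $s$ is a section of $K\to O$), which is exactly property~(2) of Lemma~\ref{rio}.
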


In order to prove the theorem we need  two lemmata for local strong bundles. We recall  the local strong bundle retract  $(K, C\triangleleft F, E\triangleleft F)$ from  Definition \ref{def_loc_strong_b_retract}, consisting of the retract $K=R(U\triangleleft F)$, where $R\colon U\triangleleft F\to U\triangleleft F$ is a strong bundle retraction of the form 
$$R(u, h)=(r(u), \rho (u)h), $$
in which $r\colon U\to U$ is a smooth retraction onto $O=r(U)\subset  U$. We shall denote the principal parts  of the sections  of the 
bundles $U\triangleleft F\to U$ and $K\to O$  by bold letters.

\begin{lemma}\label{rio}
A $\ssc^+$-section $s\colon O\to K$ (as defined in Definition \ref{def_section_loc_strong_bundle}) possesses an extension to a $\ssc^+$-section 
$\wt{s}\colon U\to U\triangleleft F$, 
$\wt{s}(u)=(u, \wt{\bf s}(u))$, 
having the following properties.
\begin{itemize}
\item[{\em (1)}]  $\wt{s}(u)=s(u)$ if $u\in O$.
\item[{\em (2)}]  $R(r(u),\wt{\bf s}(u))=s(r(u))$ if $u\in U$.
\end{itemize} 
\end{lemma}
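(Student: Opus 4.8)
The plan is to extend the principal part $\mathbf{s}\colon O\to F$ of the given $\ssc^+$-section to all of $U$ by precomposing with the retraction $r$ and then projecting by $\rho$. Concretely, I would define
$$
\wt{\mathbf s}(u) := \rho(u)\,\mathbf{s}(r(u)),\qquad u\in U,
$$
and set $\wt s(u) = (u,\wt{\mathbf s}(u))$. First I would check that $\wt s$ is a $\ssc^+$-section of $U\triangleleft F\to U$: since $s$ is a $\ssc^+$-section of $K\to O$, its principal part $\mathbf s\circ r\colon U\to F$ is sc-smooth as a section of $K(1)\to O$ precomposed with the sc-smooth retraction $r$, hence $u\mapsto \mathbf s(r(u))$ lands in one level higher and is sc-smooth into $F^1$; and $u\mapsto \rho(u)$ is the fiber part of the strong bundle retraction $R$, which by the definition of a strong bundle retraction is sc-smooth on both $(U\triangleleft F)[0]$ and $(U\triangleleft F)[1]$ and linear in the fiber. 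Composing, $\wt{\mathbf s}=\rho(\cdot)\bigl(\mathbf s(r(\cdot))\bigr)$ is a sc-smooth map $U\to F^1$, i.e. $\wt s$ is a $\ssc^+$-section of the trivial bundle.

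Next I would verify property (1). For $u\in O$ we have $r(u)=u$, so $\wt{\mathbf s}(u)=\rho(u)\mathbf s(u)$. Since $s\colon O\to K$ is a section of $K=R(U\triangleleft F)$, the value $s(u)=(u,\mathbf s(u))$ is fixed by $R$, which forces $\rho(u)\mathbf s(u)=\mathbf s(u)$. Hence $\wt{\mathbf s}(u)=\mathbf s(u)$ and $\wt s(u)=s(u)$ for $u\in O$.

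For property (2) I would compute $R\bigl(r(u),\wt{\mathbf s}(u)\bigr)$ directly. By the form of the retraction, $R(r(u),h)=(r(r(u)),\rho(r(u))h)=(r(u),\rho(r(u))h)$ using $r\circ r=r$. With $h=\wt{\mathbf s}(u)=\rho(u)\mathbf s(r(u))$ this gives fiber component $\rho(r(u))\rho(u)\mathbf s(r(u))$. Now I invoke the identity coming from $R\circ R=R$, namely $\rho(r(u))\circ\rho(u)=\rho(r(u))$ (this is the fiber part of $r(r(u))=r(u)$, $\rho(r(u))\rho(u)=\rho(r(u))$, spelled out in the excerpt just after the definition of strong bundle retraction); one small wrinkle is to double-check the exact composition identity the strong bundle condition yields and apply it in the right order, which is the only place requiring care. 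This reduces the fiber component to $\rho(r(u))\mathbf s(r(u))$. On the other hand $s(r(u))=(r(u),\mathbf s(r(u)))$, and since $r(u)\in O$ we showed $\rho(r(u))\mathbf s(r(u))=\mathbf s(r(u))$; thus $R(r(u),\wt{\mathbf s}(u))=(r(u),\mathbf s(r(u)))=s(r(u))$, proving (2). The main (mild) obstacle is bookkeeping: keeping the double-filtration indices straight so that "one level up in the fiber" is preserved through the composition, and correctly reading off the $R\circ R=R$ fiber identity; the rest is formal.
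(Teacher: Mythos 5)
Your extension is not the one the paper uses, and it does not work. The paper's proof simply sets $\wt{\bf s}(u) := {\bf s}(r(u))$; you instead set $\wt{\bf s}(u) := \rho(u)\,{\bf s}(r(u))$, and the extra factor of $\rho(u)$ is exactly what sinks the argument for property (2).

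The step where you apologetically flag a ``small wrinkle'' is where the error lives. You invoke the identity $\rho(r(u))\circ\rho(u)=\rho(r(u))$ and cite the passage after the definition of a strong bundle retraction; but that passage actually gives the \emph{other} composition: from $R\circ R = R$ one reads off
$$
\bigl(r(r(u)),\ \Gamma(r(u),\Gamma(u,h))\bigr) = \bigl(r(u),\ \Gamma(u,h)\bigr),
$$
i.e. $\rho(r(u))\circ\rho(u) = \rho(u)$, not $\rho(r(u))$. With the correct identity your fiber component becomes $\rho(u)\,{\bf s}(r(u))$, and there is no reason for this to equal ${\bf s}(r(u))$: the element ${\bf s}(r(u))$ lies in the image of the \emph{projection} $\rho(r(u))$ (since $s(r(u))\in K$), but $\rho(u)$ for $u\notin O$ need not be a projection, need not have the same image, and need not fix ${\bf s}(r(u))$. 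A concrete failure: take $r\equiv 0$ on $U=(-1,1)$, $F=\R^2$, $\rho(0)=\mathbbm{1}$, and $\rho(u)=\operatorname{diag}(1+u,1)$ for $u\ne 0$; then $R\circ R=R$ holds, $K=\{0\}\triangleleft F$, ${\bf s}(0)=(a,b)$, and $R(r(u),\wt{\bf s}(u))=(0,((1+u)a,b))\ne(0,(a,b))=s(r(u))$ whenever $ua\ne 0$, so your $\wt s$ violates (2).

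The fix is to delete the $\rho(u)$ and define $\wt{\bf s}(u)={\bf s}(r(u))$. Then (1) is immediate from $r|_O=\mathbbm{1}_O$, and (2) follows in one line without touching the fiber identity at all: since $r(u)\in O$ one has $s(r(u))\in K$, hence $R(s(r(u)))=s(r(u))$, and
$$
R\bigl(r(u),\wt{\bf s}(u)\bigr) = R\bigl(r(u),{\bf s}(r(u))\bigr) = R\bigl(s(r(u))\bigr) = s(r(u)).
$$
Your verification of (1) and your discussion of $\ssc^+$-smoothness are fine (and in fact become slightly simpler once the spurious $\rho(u)$ factor is removed, since you no longer need sc-smoothness of the family $u\mapsto\rho(u)$ on its own), but the misremembered composition identity is a genuine gap, not a bookkeeping nuisance.
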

\begin{proof}
If $s\colon O\to K$  is given by $s(u)=(u, {\bf s}(u))$, $u\in O$, we define the section $\wt{s}\colon U\to U\triangleleft F$ by 
$$\wt{s}(u)=(u, \wt{\bf s}(u))=\bigl(u, {\bf s}(r(u))\bigr),\quad u\in U.$$
Clearly, $\wt{s}$ is a $\ssc^+$-section  of the bundle 
$U\triangleleft F\to U$ and we claim that its restriction to $O$ agrees with the section $s$. Indeed, if $u\in O$, 
then $r(u)=u$, implying $\wt{s}(u)=\wt{s}(r(u))=\bigl( r(u), {\bf s}(r\circ r(u))\bigr)=\bigl( r(u), {\bf s}(r(u))\bigr)=
\bigl(u, {\bf s}(u)\bigr)=s(u)$ as claimed.  Moreover, using that $s(u)=R(s(u))$ if $u\in O$,
$$R\bigl(r(u), \wt{\bf s}(u)\bigr)
=R\bigl(r(u), {\bf s}(r(u))\bigr)=R\bigl({ s}(r(u))\bigr)=s(r(u))$$
for $u\in U$.
\end{proof}
\begin{lemma}\label{filler_extension_s}
Let $f\colon O\to K$ be a sc-smooth section of the (previous) local strong bundle retract, and let $s\colon O\to K$ be a $\ssc^+$-section. 
If $f$ possesses the filler $g\colon U\to U\triangleleft F$, then $f+s$ has the filler $g+\wt{s}\colon U\to U\triangleleft F$, where $\wt{s}$ is the extension of $s$ constructed in the previous lemma.
\end{lemma}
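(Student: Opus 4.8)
The plan is to verify directly that $g+\wt{s}$ satisfies the three conditions of Definition \ref{x-filling} relative to the section $f+s$ of $K\to O$. Write sections of $U\triangleleft F\to U$ through their principal parts, $g(u)=(u,\mathbf g(u))$ and $\wt s(u)=(u,\wt{\mathbf s}(u))$, and recall from Lemma \ref{rio} that $\wt{\mathbf s}(u)=\mathbf s(r(u))$. Since $g$ is sc-smooth and $\wt s$ is a $\ssc^+$-section (hence in particular sc-smooth as a section of $(U\triangleleft F)[0]\to U$), the sum $g+\wt s$ is again a sc-smooth section germ of $U\triangleleft F\to U$; likewise $f+s$ is a sc-smooth section of $K\to O$. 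So all the objects are of the right type and it remains to check the defining relations.

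The one computational observation that drives everything is that $[\mathbbm{1}-\rho(r(y))]\,\wt{\mathbf s}(y)=0$ for $y\in U$ near $0$. Indeed, property (2) of Lemma \ref{rio} states $R(r(y),\wt{\mathbf s}(y))=s(r(y))$, which in principal parts reads $\rho(r(y))\wt{\mathbf s}(y)=\mathbf s(r(y))=\wt{\mathbf s}(y)$; equivalently, $\wt{\mathbf s}(y)$ lies in the range of the projection $\rho(r(y))$. Consequently the map
$$y\longmapsto [\mathbbm{1}-\rho(r(y))]\,(g+\wt s)(y)=[\mathbbm{1}-\rho(r(y))]\,\mathbf g(y)+[\mathbbm{1}-\rho(r(y))]\,\wt{\mathbf s}(y)=[\mathbbm{1}-\rho(r(y))]\,\mathbf g(y)$$
coincides identically, near $0$, with the corresponding map built from $g$ alone.

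From here the three conditions follow at once. For (1): if $x\in O$ is near $0$, then $g(x)=f(x)$ by the filler property of $g$ and $\wt s(x)=s(x)$ by Lemma \ref{rio}(1), so $(g+\wt s)(x)=(f+s)(x)$. For (2): if $(g+\wt s)(y)=\rho(r(y))(g+\wt s)(y)$ for $y\in U$ near $0$, then by the displayed identity $[\mathbbm{1}-\rho(r(y))]\mathbf g(y)=[\mathbbm{1}-\rho(r(y))](g+\wt s)(y)=0$, i.e. $g(y)=\rho(r(y))g(y)$; property (2) of the filler $g$ then gives $y\in O$. For (3): by the displayed identity the map $y\mapsto[\mathbbm{1}-\rho(r(y))](g+\wt s)(y)$ equals $y\mapsto[\mathbbm{1}-\rho(r(y))]\mathbf g(y)$, so its linearization at $0$ restricted to $\ker(Dr(0))$ is exactly the topological isomorphism $\ker(Dr(0))\to\ker(\rho(0))$ furnished by property (3) of $g$. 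Hence $g+\wt s$ is a filler of $f+s$.

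There is essentially no hard step here; the only point requiring care is bookkeeping — keeping the distinction between a section and its principal part straight, and using the \emph{specific} extension $\wt s$ produced in Lemma \ref{rio} rather than an arbitrary $\ssc^+$-extension of $s$, since it is precisely the property $\wt{\mathbf s}(y)=\mathbf s(r(y))$ lying in the range of $\rho(r(y))$ that makes the extra term drop out of the filler's defining relations (2) and (3).
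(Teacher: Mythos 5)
Your proof is correct and follows essentially the same route as the paper's: the single driving observation in both is that $[\mathbbm{1}-\rho(r(u))]\wt{\mathbf s}(u)=0$, which is exactly what the paper extracts from Lemma \ref{rio}(2) to reduce conditions (2) and (3) of the filler for $g+\wt s$ to those already known for $g$. The only difference is cosmetic — you spell out the verification of condition (2) where the paper says it "follows immediately."
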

\begin{proof}
We have to verify that the section $g+\wt{s}$ meets the three conditions in Definition \ref{x-filling}. The properties  (1) and (2) for $g+\wt{s}$ follow immediately from the properties (1) and (2)   for the filler $g$ and the properties (1) and (2) for the section $\wt{s}$ in Lemma \ref{rio}. In order to verify property (3) of a filler we have to linearize the map 
$$u\mapsto [{\mathbbm 1}-\rho (r(u))](g(u)+\wt{s}(u))$$
at the point $u=0$. Since 
$\bigl({\mathbbm 1}-\rho (r(u))\bigr) \wt{s}(u)=
\bigl({\mathbbm 1}-\rho (r(u))\bigr) s(r(u))=0$ by property (2) of Lemma \ref{rio}, the linearization agrees with the linearization of the map $\bigl({\mathbbm 1}-\rho (r(u))\bigr) g(u)$ which satisfies the required property (3), since $g$ is a filler. The proof of Lemma \ref{filler_extension_s} is finished.
\end{proof}

\begin{proof}[{\bf Proof of Theorem \ref{stabxx}}]
Let $f$ be a sc-Fredholm section of the tame strong bundle $P\colon Y\to X$ and let $s\colon X\to Y$ be a $\ssc^+$-section of $P$. Then $f+s$ is a sc-smooth section which is also regularizing. It remains to verify that $(f+s, x)$ is a sc-Fredholm germ for every smooth point $x\in X$. By definition of sc-Fredholm,
$(f, x)$ is a sc-Fredholm germ at the smooth point $x$. Therefore, there exists an open neighborhood $V$ of $x$ and a strong bundle chart $(V, P^{-1}(V), K, U\triangleleft F)$ as defined in Definition
 \ref{def_strong_bundle_chart} and satisfying $\varphi (x)=0\in O$, such that the local representation $\wt{f}=\Phi_\ast (f)=\Phi\circ f\circ \varphi^{-1}\colon O\to K$ of the section $f$ possesses a filled version $g\colon U\to 
U\triangleleft F$ around $0$,  which after subtraction of a suitable $\ssc^+$-section,  is conjugated to a basic germ around $0$. Define $t=\Phi_\ast (s)$. Then $t\colon O\to K$ is a $\ssc^+$-section. By Lemma \ref{rio}  
there is a particular $\ssc^+$-section $\wt{t}\colon U\to U\triangleleft F$ extending $t$. By Lemma \ref{filler_extension_s}, the section $g+\wt{t}\colon U\to U\triangleleft F$ is a filling of $\wt{f}+t$.
In view of the sc-Fredholm germ property, there exists a $\ssc^+$-section $t'$ satisfying $t'(0)=g(0)$ and such that $g-t'$ is conjugated to a basic germ. Now taking the $\ssc^+$-section $\wt{t}+t'\colon U\to U\triangleleft F$, we have $(g+\wt{t})(0)=(\wt{t}+t)(0)$. Moreover, $(g+\wt{t})-(\wt{t}+t')=g-t'$ which, as we already know, is conjugated to a basic germ. To sum up, we have verified that $(f+s, x)$ is a sc-Fredholm germ. This holds true for every smooth point $x\in X$. Consequently, the section $f+s$ is a Fredholm section and the proof of Theorem \ref{stabxx} is complete.
\end{proof}

In order to formulate  a parametrized version of Proposition \ref{stabxx} 
we assume that $P\colon Y\rightarrow X$ is a  strong bundle and $f$ a sc-Fredholm section. The sc-smooth projection 
$$
\pi\colon \R^n\times X\rightarrow X,\quad  (r,x)\mapsto  x,
$$
pulls back the bundle $P$ to the  strong bundle   $\pi^\ast(P)\colon \pi^\ast Y\rightarrow \R^n\times X$.
The section $\wt{f}$ of $\pi^\ast(P)$,  defined by
$$
\wt{f}(r,x)=((r,x),f(x)), 
$$
 is a sc-Fredholm section as is readily verified.  
If $s_1,\ldots ,s_n$ are $\ssc^+$-sections of $P$, then $\wt{s}(r,x)\colon =\bigl( (r,x),\sum_{i=1}^n r_i\cdot s_i(x)\bigr)$ is a $\ssc^+$-section
 of  the pull back bundle $\pi^\ast(P)$ and,  by the stability 
 Theorem \ref{stabxx}, the section 
 $$
 (r,x)\mapsto \wt{f}(r,x)+\wt{s}(r,x)
 $$
 is a sc-Fredholm section of $\pi^\ast(P)$. Hence we have proved the following stability result.
 \begin{theorem}[{\bf Parameterized Perturbations}]\label{corro}\index{T- Parameterized perturbations}
 Let $P\colon Y\rightarrow X$ be a tame strong bundle and $f$ a sc-Fredholm section. If  $s_1,\ldots ,s_n\in\Gamma^+(P)$, 
then the map
 $$
 \R^n\times X\rightarrow Y,\quad (r,x)\mapsto f(x)+\sum_{i=1}^n r_i\cdot s_i(x)
 $$
 defines a sc-Fredholm section of the bundle $\pi^\ast(P)\colon \pi^\ast Y\rightarrow \R^n\times X$.
 \end{theorem}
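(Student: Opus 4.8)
The plan is to exhibit the section in the statement as a $\ssc^+$-perturbation of an obvious sc-Fredholm section on the pulled-back bundle and then to quote the stability Theorem~\ref{stabxx}. First I would check that $\R^n\times X$ is again a tame M-polyfold: starting from a tame atlas of $X$ and multiplying each chart by $\R^n$ (equipped with the trivial sc-structure and the full partial quadrant $\R^n$) yields charts modeled on the retracts $(\R^n\times O,\ \R^n\oplus C,\ \R^n\oplus E)$ with retractions $\id_{\R^n}\times r$. Both conditions of Definition~\ref{tame_retarctions} survive: the degeneracy index and the minimal subspace $E_x$ depend only on the $C$-factor, so $d$ is preserved, and the sc-complement $A\subset E_x$ of $T_xO$ yields the sc-complement $\{0\}\oplus A$ of $\R^n\oplus T_xO$ inside $\R^n\oplus E_x$. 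Hence Proposition~\ref{pull_back_strong_bundle} produces the strong bundle $\pi^\ast(P)\colon \pi^\ast Y\to \R^n\times X$ over the tame M-polyfold $\R^n\times X$, and $\Fred(\pi^\ast(P))$ and $\Gamma^+(\pi^\ast(P))$ make sense.

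Next I would show that $\widetilde f(r,x)=((r,x),f(x))$ is a sc-Fredholm section of $\pi^\ast(P)$. Sc-smoothness and the regularizing property are immediate from the product chart description together with the corresponding properties of $f$ (if $f(x)\in Y_{m,m+1}$ then $x\in X_{m+1}$, and the $\R^n$-coordinate is always smooth). For the sc-Fredholm germ property at a smooth point $(r_0,x_0)$, I would translate $r_0$ to $0$ and take a filled version $(g,0)\colon U\to U\triangleleft F$ of $(f,x_0)$, together with a $\ssc^+$-section $s'$ with $s'(0)=g(0)$ such that $g-s'$ is conjugated to a basic germ; then $\widetilde g(v,u):=((v,u),g(u))$ on $\R^n\times U$ is a filled version of $\widetilde f$. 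The three conditions of Definition~\ref{x-filling} reduce verbatim to those for $g$, because the parameter $v$ enters neither the retraction $\id\times r$ nor the fiberwise projection, and $\ker D(\id\times r)(0)=\{0\}^n\oplus\ker Dr(0)$. Subtracting $\widetilde{s'}(v,u):=((v,u),s'(u))$ (which satisfies $\widetilde{s'}(0)=\widetilde g(0)$) and conjugating by $\id_{\R^n}$ times the strong bundle isomorphism that conjugates $g-s'$ to a basic germ, one is left with the germ $(v,a,w)\mapsto (\text{basic germ})(a,w)$; after reordering the $\R$-factors of the domain quadrant this is again a basic germ in the sense of Definition~\ref{BG-00x}, with the same map $B$ (now independent of $v$) and the same target splitting $\R^N\oplus W$. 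Hence $(\widetilde f,(r_0,x_0))$ is a sc-Fredholm germ, and since this holds at every smooth point of $\R^n\times X$ we get $\widetilde f\in\Fred(\pi^\ast(P))$.

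Then $\widetilde s(r,x):=((r,x),\sum_{i=1}^n r_i\, s_i(x))$ is a $\ssc^+$-section of $\pi^\ast(P)$: each $s_i$ is an sc-smooth section of $Y[1]\to X$, scalar multiplication $\R\times F\to F$ and finite sums are sc-smooth operations, and $(\pi^\ast Y)[1]=\pi^\ast(Y[1])$, so the recipe defines an sc-smooth section of $\pi^\ast(P)[1]$, i.e. an element of $\Gamma^+(\pi^\ast(P))$ by Definition~\ref{def_sc_inft_sections}. Applying Theorem~\ref{stabxx} to $\widetilde f\in\Fred(\pi^\ast(P))$ and $\widetilde s\in\Gamma^+(\pi^\ast(P))$ over the tame M-polyfold $\R^n\times X$ yields $\widetilde f+\widetilde s\in\Fred(\pi^\ast(P))$, and $(\widetilde f+\widetilde s)(r,x)=((r,x),\,f(x)+\sum_i r_i\, s_i(x))$ is exactly the asserted section. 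The only point requiring real care is that the sc-Fredholm germ data — the filling axioms and the basic-germ normal form — survive the adjunction of the trivial parameter directions; since those directions act trivially in every map involved, this is bookkeeping rather than a new idea, and it is the step I would expect to spell out most explicitly.
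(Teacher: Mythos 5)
Your proposal is correct and follows the paper's own route: the paper states that $\widetilde f(r,x)=((r,x),f(x))$ is sc-Fredholm "as is readily verified," notes that $\widetilde s(r,x)=((r,x),\sum_i r_i s_i(x))$ is a $\ssc^+$-section, and invokes Theorem~\ref{stabxx}. You simply fill in the "readily verified" step (tameness of $\R^n\times X$ and the filled/basic-germ data surviving the trivial parameter directions) with the same ideas the paper takes for granted.
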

This theorem and refined versions of the theorem play a role  in the perturbation and transversality theory. As already pointed out,  
the distinguished class of sc-Fredholm sections allows  to apply an implicit function theorem in the usual sense. 
We first formulate the  implicit function theorem at an interior point. Note that a M-polyfold $X$ with an identically vanishing degeneracy map $d_X$ is tame.
\begin{theorem}[{\bf Implicit Function Theorem: Interior Case}]\label{implicit-x}\index{T- Implicit function theorem}
Assume that $P\colon Y\rightarrow X$ is a strong bundle over the  M-polyfold $X$ satisfying $d_X\equiv 0$,  and $f$ a sc-Fredholm section.
Suppose that $x_0\in X$ is a smooth point in $X$,  such  that $f(x_0)=0$. Then the linearization  $f'(x_0)\colon T_{x_0}X\rightarrow Y_{x_0}$  is a sc-Fredholm operator. If $f'(x_0)$  surjective,  
then there exists an open neighborhood $U$ of $x_0\in X$ such  that the solution set $S(f,U)=\{x\in U\, \vert \,  f(x)=0\}$ in $U$ has in a natural way the structure
of a smooth finite dimensional manifold whose dimension agrees with the Fredholm index.  In addition,  $U$ can be chosen  in such a way that the linearization $f'(y)\colon T_yX\rightarrow Y_y$ for $y\in S(f,U)$ is surjective  and  $\ker(f'(y))=T_yS(f,U)$ is the tangent space.
\end{theorem}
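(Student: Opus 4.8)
The plan is to reduce Theorem~\ref{implicit-x} to a finite-dimensional equation handled by the classical implicit function theorem, using the structure of an sc-Fredholm germ. First I would record the Fredholm property of $f'(x_0)$, which is already contained in the discussion following Definition~\ref{oi}: pick a filled version $(g,0)\colon U\to U\triangleleft F$ of $(f,x_0)$ together with an $\ssc^+$-section $s$, $s(0)=g(0)$, such that $g-s$ is conjugate to a basic germ; then $D(g-s)(0)$ is sc-Fredholm by Proposition~\ref{Newprop_3.9}, hence $Dg(0)$ is sc-Fredholm by Proposition~\ref{prop1.21}, and $f'(x_0)=Tf(x_0)$ is Fredholm with $\ind f'(x_0)=\ind Dg(0)=n-N$ by Proposition~\ref{filler_new_1}. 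Since $f(x_0)=0$ and a filling agrees with $f$ near $0$, we have $g(0)=0$ and $s(0)=0$; because $d_X\equiv 0$ the partial quadrant in the local model is all of $E=\R^n\oplus W$, and by the local compactness and regularity statements following Theorem~\ref{compact-x} we may shrink $U$ so that every zero of $f$ near $x_0$ is a smooth point. Pushing the filled version forward by the conjugating strong bundle isomorphism $\Psi$ (covering the sc-diffeomorphism $\psi$), and using that $\Psi$ carries zero sets to zero sets and $\ssc^+$-sections to $\ssc^+$-sections, the problem becomes: describe the zero set near $0$ of a germ $F=h+s\colon \mathcal{O}(\R^n\oplus W,0)\to \R^N\oplus W$ with $h$ a basic germ, $s$ an $\ssc^+$-germ, $F(0)=0$. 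By the filler properties in Proposition~\ref{filler_new_1} this zero set is in sc-diffeomorphic correspondence with $S(f,U)$, and $f'(x_0)$ is surjective exactly when $DF(0)$ is surjective.

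Next I would carry out the finite-dimensional reduction. Write $\mathbf{P}\colon \R^N\oplus W\to W$ and $\mathbf{Q}\colon \R^N\oplus W\to\R^N$ for the two projections. The $W$-component of $F$ has the form $(a,w)\mapsto w-\widetilde{B}(a,w)$, where $\widetilde{B}=B-\mathbf{P}s$ with $B$ as in Definition~\ref{BG-00x}; combining the contraction estimate for $B$ with the $\ssc^+$-character of $s$ one solves $w=\widetilde{B}(a,w)$, near $0$ and on every level, for a unique sc-smooth germ $w=\delta(a)$ with $\delta(0)=0$, where $a$ ranges over a neighbourhood of $0$ in $\R^n$. (This uses the sc-version of the Banach fixed point argument together with the treatment of the $\ssc^+$-term, which is the technical core of the general implicit function theorem proved in the sequel.) Substituting $w=\delta(a)$ leaves the equation $\Psi(a):=\mathbf{Q}F(a,\delta(a))=0$, where $\Psi$ is a $C^\infty$ map from a neighbourhood of $0\in\R^n$ to $\R^N$ with $\Psi(0)=0$, and $a\mapsto(a,\delta(a))$ identifies the zero set of $F$ near $0$ with the graph of $\delta$ over $\Psi^{-1}(0)$.

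The remaining steps are essentially bookkeeping. A block computation, using $D\delta(0)=(\mathbbm{1}-D_2\widetilde{B}(0))^{-1}D_1\widetilde{B}(0)$ and $D_2B(0)=0$ from Lemma~\ref{new_Lemma3.9}, shows that $D\Psi(0)\colon\R^n\to\R^N$ is surjective precisely when $DF(0)$ is, and that $\xi\mapsto(\xi,D\delta(0)\xi)$ maps $\ker D\Psi(0)$ isomorphically onto $\ker DF(0)$. Under the hypothesis that $f'(x_0)$ — hence $DF(0)$ — is surjective, $D\Psi(0)$ is surjective, so the classical implicit function theorem makes $\Psi^{-1}(0)$ near $0$ a $C^\infty$ submanifold of $\R^n$ of dimension $n-N=\ind f'(x_0)$; its image under $a\mapsto(a,\delta(a))$, transported back through the chart and the conjugation, is $S(f,U)$, which thereby acquires the structure of a finite-dimensional smooth manifold, and since the transition maps between two such charts are restrictions of the sc-smooth transition maps of $X$ they are $C^\infty$, so the structure is the one induced by $X$. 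Surjectivity of $f'(\cdot)$ is an open condition along the continuous (on each fixed level) family $y\mapsto f'(y)$, so after a further shrinking of $U$ it holds at every $y\in S(f,U)$; re-running the reduction around such a $y$, or observing that $T_yS(f,U)$ is the graph tangent $\{(\xi,D\delta(a_0)\xi)\colon\xi\in\ker D\Psi(a_0)\}=\ker DF(a_0,\delta(a_0))$ which corresponds under the conjugation and the filler to $\ker f'(y)$, gives $\ker f'(y)=T_yS(f,U)$.

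I expect the genuine difficulty to sit in a single place: producing the sc-smooth implicit solution $\delta$ with the correct regularity on all levels simultaneously when the basic germ is perturbed by the $\ssc^+$-term $s$ — that is, making the ``$W$-equation'' $w=\widetilde{B}(a,w)$ behave like a parametrized contraction uniformly across the scale — and, relatedly, checking that this reduction is compatible with changes of strong bundle charts so that $S(f,U)$ carries a well-defined smooth structure embedded in $X$. These are exactly what the general contraction-germ implicit function theorem of the following section is designed to handle, so in the organisation of the paper Theorem~\ref{implicit-x} is a specialisation of that theorem to interior points.
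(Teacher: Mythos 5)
Your high-level strategy — reduce to a filled, conjugated germ $F=h+s$ on a partial quadrant with trivial corner structure, solve the $W$-equation by a parametrized contraction, and finish by the classical implicit function theorem in the finite-dimensional factor — is the same as the paper's (via Theorems~\ref{arbarello}, \ref{save}, \ref{LGS}, \ref{LGS2}, and \ref{IMPLICIT0}, specialized to $d_X\equiv 0$). You also correctly identify the single hard step. However, the mechanism you propose at that step is not correct as stated, and the failure is not a detail you can defer.

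You claim that after writing $\widetilde{B}=B-\mathbf{P}s$ the equation $w=\widetilde{B}(a,w)$ can be solved by a Banach fixed point argument near $0$ on every level. This is false on level $0$: the $\ssc^+$-character of $s$ means $s\colon E_m\to F_{m+1}$, so $D_2(\mathbf{P}s)(0)\colon W_m\to W_m$ is a compact operator, but compactness does not imply small operator norm, so $\widetilde{B}$ is in general \emph{not} a contraction in $w$ on any level, not even after shrinking the neighborhood. (Shrinking helps with $B$ because $D_2B(0,0)=0$ by Lemma~\ref{new_Lemma3.9}; nothing analogous holds for $\mathbf{P}s$.) What the paper does instead, in Theorem~\ref{arbarello}, is more delicate: setting $A=\mathbf{P}\circ D_2 s(0)$, one observes that $\mathbbm{1}+A$ is an sc-Fredholm operator of index $0$, splits $W=C\oplus X=R\oplus Z$ into its kernel/complement and range/cokernel, enlarges the finite-dimensional parameter block by $C\cong Z$, inverts $\mathbbm{1}+A$ on the infinite-dimensional part, and only then obtains a contraction germ — and even then only after raising the index by one, because the differentiability of $s$ into $F_m$ is available only for $m\geq 1$. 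The resulting germ $(\Psi_\ast(h+s))^1$, not $h+s$ itself, is basic. One then has to bridge back to level $0$, which is what Lemma~\ref{new_lemmat_level_0} (fed by Theorem~\ref{save}) accomplishes. So the step you mark as ``the technical core'' is indeed the core, but ``absorb $s$ into $\widetilde{B}$ and contract'' misses the reorganization of the finite-dimensional part and the index raise that make the contraction exist; without those the finite-dimensional reduction $\Psi(a)=\mathbf{Q}F(a,\delta(a))=0$ never gets off the ground because $\delta$ is not produced. Your discussion of the chart compatibility of the resulting manifold structure (via graphs over $\ker Df(0)$ with $\delta(0)=0$, $D\delta(0)=0$) and of the openness of surjectivity via Corollary~\ref{corex1} matches the paper's route through the tangent-germ property (Theorem~\ref{HKL}); those parts are fine.
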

Theorem \ref{implicit-x} is an immediate consequence of Theorem \ref{IMPLICIT0} in Section \ref{ssec3.4}.
As we shall see, the smooth manifold structure on $S(f,U)$ is induced from the M-polyfold structure of $X$.

Considering a sc-Fredholm section $f$ of the 
strong bundle $Y\rightarrow X$ over the  M-polyfold $X$ having no boundary (i.e., $d_X\equiv 0$),
we assume, in addition,  that the M-polyfold $X$ admits a sc-smooth partition of unity.
Then there exists, for two given smooth point $x\in X$ and $e\in Y_x$, a $\ssc^+$-section $s$ supported near $x$ and satisfying $s(x)=e$.  For the easy proof we refer to \cite{HWZ3}. As we shall see later it suffices
to assume the existence of sc-smooth bump functions instead of sc-smooth partitions of  unity.

If $f(x_0)=0$ and $f'(x_0)$ is not surjective, we find finitely many smooth elements $e_1,\ldots ,e_k\in Y_{x_0}$ satisfying $R(f'(x_0))\oplus \R e_1\oplus \cdots \oplus \R e_k=Y_{x_0}$. Taking the $\ssc^1$-sections $s_i$ satisfying $s_i(x_0)=e_i$, we define the map $\wt{f}\colon \R^k\oplus X\to Y$  by 
$$\wt{f}(r, x)=f(x)+\sum_{i=1}^k r_is_i(x).$$
The linearization of $\wt{f}$ at  the distinguished point $(0,x_0)\in  \R^k\oplus X$ is the continuous linear map 
$$
\wt{f}'(0, x_0)(h,u)= f'(x_0)u +\sum_{i=1}^k h_i e_i,
$$
which is surjective. By Theorem \ref{corro}  and Theorem \ref{IMPLICIT0} we find an open neighborhood $U$ of $(0,x_0)\in \R^k\oplus X$
such  that the solution set of $S(\wt{f}, U)=\{(r,x)\in U\, \vert \,  f(x)+\sum_{i=1}^k r_i\cdot s_i(x)=0\}$ is a smooth finite-dimensional manifold.
The trivial bundle $S(\wt{f},U)\times \R^k\rightarrow S(\wt{f},U)$ has the canonical section $(r,y)\mapsto r$. 
The zero set of this section is precisely the unperturbed solution set of $f(y)=0$ for $y\in U$.
If the solution set of $f(y)=0$, $y\in X$, is compact we can carry out the previous construction globally, which gives rise to  global finite-dimensional reduction. This will be discussed later on.


\begin{remark}\label{remark.3_20}

We should point out that in the original proof of Theorem \ref{IMPLICIT0} in \cite{HWZ3} (Theorem 4.6 and Proposition 4.7) the sc-Fredholm section is defined slightly differently, namely as follows. In \cite{HWZ3}
a sc-smooth section $f$ of the strong bundle $Y\to X$ is called sc-Fredholm, if it possesses around all smooth points of $X$ a filled version 
 $(g,0)$ such that $g-g(0)$ near $0$ is conjugated to a basic germ. It is, in this case, not true that $f+s$ is sc-Fredholm for $\ssc^+$-section $s$. However, by a nontrivial theorem in \cite{HWZ3} (Theorem 3.9), increasing the level, the section 
$(f+s)^1$ of the the strong bundle $Y^1\rightarrow X^1$ is sc-Fredholm. Although not harmful in practice,  this looks unsatisfactory.

This is why we have introduced the new definition (Definition \ref{oi}) of sc-Fredholm, where we require for the filled version $(g, 0)$ that there exists a local $\ssc^+$-section $s\colon U\to U\triangleleft F$ such that $s(0)=g(0)$ and $g-s$ is locally conjugated to a basic germ.
If now $t$ is a $\ssc^+$-section, then the section $f+t$ of the bundle $Y\to X$ is automatically sc-Fredholm in view of Theorem \ref{stabxx}. 

\end{remark}

The difficulty of the nontrivial theorem is now hidden in the proof of the implicit function theorem, which has to incorporate
the arguments of the nontrivial theorem. With the new definition, even if we want to study $f$ only, we have
only normal forms for the perturbed expression, which might be unrelated to our problem.  However, writing $f=(f-s) + s$, we know how $f-s$ looks like, and we know that $s$ is a compact perturbation of $f-s$.  We  combine  these facts to gain sufficient information about $f^1$ to determine, in view the regularizing property of sc-Fredholm sections, the solution set of $f$.

Let us briefly sketch the construction of the manifold structure of the solution set.
Theorem \ref{implicit-x}  is a local result so that we can work without loss of generality with a section $f$ in a strong local bundle $K\rightarrow O$ whose local model is 
$(O,E,E)$ containing  $0\in O$. By the Fredholm requirement we may assume without loss of generality, 
by passing to a suitable filling at $0$,  that we have a section $g$ of $V\triangleleft F\rightarrow V$ which has the following properties.
We have that $f=g$ on $O$ and the germ $(g,0)$ is conjugated to  a basic germ.   Note that for $x\neq 0$ near $0$ the germ $(g,x)$
might not have this property. However if $x\in O_\infty$, then $(f,x)$, after a possible coordinate change, has a filling (which might look quite different than the one we used at $0$),
which is conjugated to a basic germ (after correction by a $\ssc^+$-section).

We point out  that  different  fillings are needed in the proof in order to get the smoothness of $S(f,U)$ at points other than $0$. The  proof of the theorem shows that locally the solution set
of $f=0$ near $0$ can be written as follows. If  $N=\ker(f'(x))=\ker(g'(0))$ and if $Y$ is a sc-complement in $E$ so that $E=N\oplus Y$, then there exists
a sc-smooth map $\delta\colon N\supset {\mathcal U}(0)\rightarrow Y$ satisfying  $\delta(0)=0$ and $D\delta(0)=0$, such  that
$$
\text{graph}(\delta)\colon {\mathcal U}(0)\rightarrow E,\quad  a\rightarrow a+\delta(a)
$$
parametrizes all solutions of $g=0$ and $f=0$ in a suitable neighborhood  of $0$. In particular, by the definition of a filler,  the image of the map $\text{graph}(\delta)$ lies in  $O$.
 The inverse of the map $\text{graph}(\delta)$ is the restriction of the projection $N\oplus Y\to N$  to the solution set,  which is sc-smooth.

In order to formulate the boundary version of the implicit function theorem we start with some preparation.

\begin{definition}[{\bf In good position}] \label{mission1}\index{D- Good position}\index{D- Good complement}
Let $C\subset E$ be a partial quadrant in the sc-Banach space $E$ and $N\subset E$ be a finite-dimensional sc-subspace of $E$.  The subspace $N$ is in  {\bf good position to the partial quadrant $C$}, if the interior of $N\cap C$ in $N$  is non-empty, and if $N$ possesses a  sc-complement $P$, so that $E=N\oplus P$, having the following property. There exists  $\varepsilon>0$, such that 
for pairs $(n, p)\in N\times P$ satisfying $\abs{p}_{0}\leq \varepsilon\abs{n}_{0}$ the statements 
$n\in C$ and $n+p\in C$ are equivalent. We call such a sc-complement $P$ a {\bf good complement}.
\end{definition}

The choice of the right complement $P$ is important. One cannot take a random sc-complement of $N$, in general,  as Lemma 
\ref{cone1} demonstrates.

In view of Proposition \ref{prop1} the finite dimensional sunspace $N$ in Definition \ref{mission1}, possessing the sc-complement $P$, is necessarily a smooth subspace. A finite dimensional subspace $N$ which is not necessarily smooth is called in good position to the partial quadrant $C$ in the sc-Banach space $E$ if there exists a (merely) topological complement $P$ in $E$ satisfying the requirements of Definition \ref{mission1} for some $\varepsilon>0$.

\begin{figure}[htb]
\centering
\def\svgwidth{85ex}
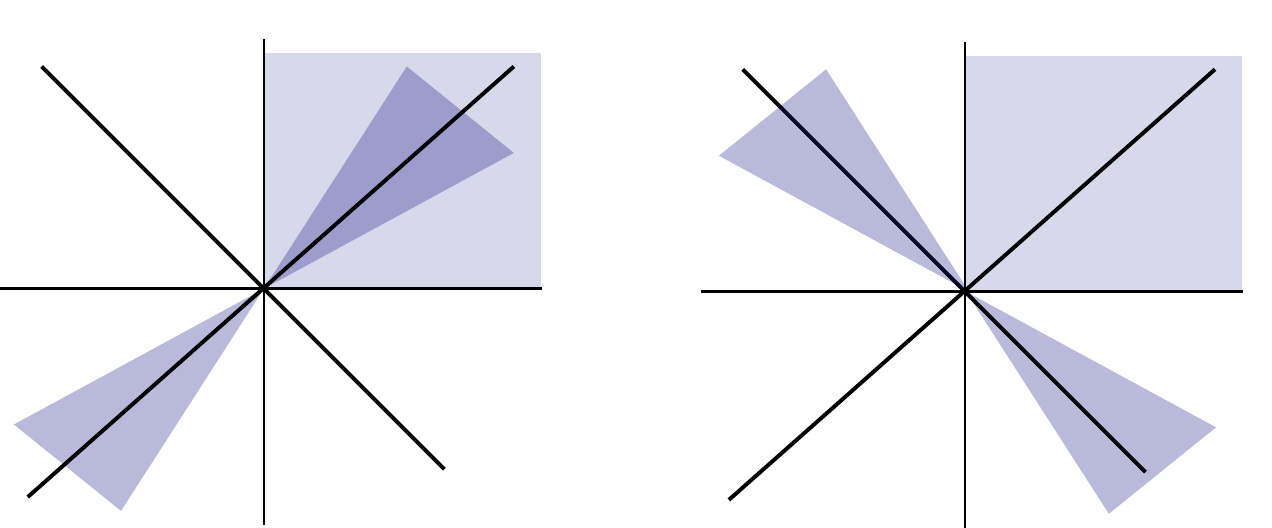
\caption{In figure (a) $N$ is in good position to $C$ while in figure (b) $N$ is not in good position to $C$.}\label{fig:pict3}
\end{figure}

The following result  is taken from \cite{HWZ3}, Proposition 6.1. Its proof is reproduced in Appendix \ref{pretzel-A}
\begin{proposition}\label{pretzel}\index{P- Good position and partial quadrants}
If  $N$ is a finite-dimensional sc-subspace in good position to the partial quadrant  $C$ in $E$, then
$N\cap C$ is a partial quadrant in $N$.
\end{proposition}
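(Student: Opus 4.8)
The goal is to show that if $N$ is a finite-dimensional sc-subspace in good position to the partial quadrant $C\subset E$, then $N\cap C$ is a partial quadrant in $N$, i.e. there is a linear isomorphism of $N$ taking $N\cap C$ to a model quadrant $[0,\infty)^m\oplus W_0$. Since $N$ is finite-dimensional, it suffices to find an internal linear description: $N\cap C$ should be the intersection of $N$ with finitely many closed half-spaces that are ``independent'' (their defining functionals are linearly independent on $N$), because such a set is linearly isomorphic to $[0,\infty)^m$ times a linear subspace.

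\textbf{Step 1: Set up coordinates and the defining functionals of $C$.} Using the definition of a partial quadrant, fix a sc-isomorphism $T\colon E\to\R^k\oplus W$ with $T(C)=[0,\infty)^k\oplus W$; write $\lambda_1,\dots,\lambda_k$ for the associated coordinate functionals, so that $C=\{x\in E\mid \lambda_i(x)\ge 0,\ i=1,\dots,k\}$. Let $P$ be a good sc-complement of $N$, so $E=N\oplus P$ and there is $\varepsilon>0$ such that for $(n,p)\in N\times P$ with $\abs{p}_0\le\varepsilon\abs{n}_0$ one has $n\in C \iff n+p\in C$. Also recall that $N\cap C$ has nonempty interior in $N$.

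\textbf{Step 2: Identify which half-spaces are relevant and show they cut out $N\cap C$.} The set $N\cap C=\{n\in N\mid \lambda_i(n)\ge 0,\ i=1,\dots,k\}$ is a closed convex cone in $N$ with nonempty interior. The plan is to discard the redundant inequalities: let $I\subseteq\{1,\dots,k\}$ be the indices $i$ for which $\lambda_i$ does \emph{not} vanish identically on $N$ and for which the inequality $\lambda_i\ge 0$ is not implied by the others on $N$. Then $N\cap C=\{n\in N\mid \lambda_i(n)\ge 0,\ i\in I\}$, and the restrictions $\lambda_i|_N$, $i\in I$, are the facet-defining functionals of this full-dimensional polyhedral cone. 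The nontrivial content, and the \textbf{main obstacle}, is to show that these restricted functionals $\{\lambda_i|_N\}_{i\in I}$ are \emph{linearly independent} on $N$ (equivalently, the cone $N\cap C$ is \emph{simplicial} on the subspace it spans, so that it is linearly isomorphic to a standard quadrant). This is exactly where the good-position hypothesis, rather than mere finite-dimensionality, must be used: one uses the complement $P$ and the $\varepsilon$-cone condition to produce, for each facet, a vector in $N$ realizing that facet while staying strictly inside all others, and to rule out the ``cusp'' behavior exhibited in Figure \ref{fig:pict3}(b). Concretely, if the $\lambda_i|_N$ were dependent, one could find $n$ in the relative boundary of $N\cap C$ and a direction whose $P$-perturbation of size $\le\varepsilon\abs{n}_0$ leaves $C$ while $n+p$ would be forced to stay in $C$, contradicting the equivalence in the definition of good position; I would make this precise by a convexity/separation argument in the finite-dimensional space $N$ together with the fact that $C$ is defined by the $\lambda_i$ on all of $E$.

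\textbf{Step 3: Conclude.} Once the facet functionals $\mu_j:=\lambda_{i_j}|_N$, $j=1,\dots,m$, $i_j\in I$, are known to be linearly independent on $N$, extend them to a coordinate system $(\mu_1,\dots,\mu_m,\nu_1,\dots,\nu_\ell)$ of the finite-dimensional space $N$; the resulting linear isomorphism $S\colon N\to\R^m\oplus\R^\ell$ satisfies $S(N\cap C)=[0,\infty)^m\oplus\R^\ell$, which is by definition a partial quadrant in $N$. This completes the proof. I expect Steps 1 and 3 to be routine finite-dimensional linear algebra; essentially all the difficulty is concentrated in the linear-independence claim of Step 2, and the proof in Appendix \ref{pretzel-A} (following \cite{HWZ3}, Proposition 6.1) presumably carries out exactly this argument.
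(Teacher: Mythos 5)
Your plan correctly identifies the content of the proposition: $N\cap C$ is a polyhedral cone in $N$ with nonempty interior, and it is a partial quadrant precisely when it is ``simplicial,'' i.e.\ when the facet-defining restrictions $\lambda_i|_N$ are linearly independent. You also correctly concentrate all the weight on that simpliciality claim and correctly attribute it to good position rather than mere finite-dimensionality (Figure~\ref{fig:pict3}(b) is the counterexample). But you acknowledge that Step~2 is not carried out, and the sketch you give does not obviously close the gap: a linear dependence among the $\lambda_i|_N$ is a statement internal to $N$, and it does not by itself produce a point $n$ and a perturbation $p\in P$ with $|p|_0\le\varepsilon|n|_0$ for which exactly one of $n$ and $n+p$ lies in $C$. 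The actual mechanism by which good position enters is the reverse: it \emph{constrains} the good complement. This is Lemma~\ref{cone1}: setting $\Sigma=\bigcup_{0\ne a\in N\cap C}\sigma_a$, for $i\in\Sigma$ one picks $a\in N\cap C$ with $a_i=0$; then for small $p\in P$, both $a+p\in C$ and $a-p\in C$ force $\lambda_i(p)=0$, hence $P\subset\R^{\Sigma^c}\oplus W$. That structural constraint on $P$ is what makes the relevant projections of $N$ onto coordinate subspaces surjective, and this surjectivity is the dimension count that actually drives the result.

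The proof in Appendix~\ref{pretzel-A} is dual to your plan: instead of the H-representation by facet functionals it works with the V-representation by extreme rays. It first splits off the lineality space, writes $N=\wt{N}\oplus(N\cap W)$, and shows $\wt{N}\cap C$ is a pointed cone with nonempty interior with $\wt{N}$ still in good position (Lemma~\ref{ntilde}). Then Krein--Milman for cones (Lemma~\ref{kreinmilman}) and the ray-count characterization of quadrants (Lemma~\ref{nomer}) reduce everything to the key Lemma~\ref{roxy}: every extreme-ray generator $a$ of $\wt{N}\cap C$ satisfies $\#\sigma_a=\dim\wt{N}-1$, which is the ray-side formulation of simpliciality and is proved from the constraint $N^\perp\subset\R^{\sigma_a^c}\oplus W$. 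A degenerate case, where $\wt{N}$ is one-dimensional and spanned by a vector with all $\R^k$-coordinates positive so that $\Sigma=\emptyset$, is treated separately in Lemma~\ref{ll1}; a facet-based argument would need an analogous separate case, since there the unique facet-defining $\lambda_i|_N$ has $i\notin\Sigma$ and the $P$-annihilation observation above says nothing about it. In short: right diagnosis, a viable dual strategy, but the key step is entirely missing from your proposal, and filling it in takes essentially the same amount of work as the extreme-ray route.
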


The boundary version of the implicit function theorem is formulated in the next theorem. The proof is again given later.

\begin{theorem}[{\bf Implicit Function Theorem: Boundary Case}]\label{bound}\index{T- Implicit function theorem {II}}
We assume that $P\colon Y\rightarrow X$ is a strong bundle over the tame M-polyfold $X$, and $f$ is a sc-Fredholm section.
Suppose that $x\in X$ satisfies $f(x)=0$ and the following two properties.
\begin{itemize}
\item[{\em (1)}] The linearisation $f'(x)\colon T_xX\rightarrow Y_x$ is surjective.
\item[{\em (2)}] The kernel $N$ of $f'(x)$ is in good position to the boundary of $X$, i.e. $N$ is in good position to the partial quadrant $C_xX$ in the tangent space $T_xX$.
\end{itemize}
Then there exists an open neighborhood $U$ of $x$ such  that the following holds.
\begin{itemize}
\item[{\em (1)}] The local solution set $S(f,U):=\{y\in U\ |\ f(y)=0\}$, which consists of smooth points, is a tame sub-M-polyfold of $X$. 
\item[{\em (2)}]  The tame sub M-polyfold $S(f,u)$ admits a uniquely determined structure as a smooth manifold with boundary with corners. This  
M-polyfold structure on $S(f,U)$ is the same as the one defined in 
Proposition \ref{sc_structure_sub_M_polyfold}
\end{itemize}
\end{theorem}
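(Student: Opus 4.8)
The plan is to reduce the statement, following the scheme of the interior case (Theorem \ref{implicit-x}), to a finite-dimensional equation on a partial quadrant, and then to read off the manifold-with-corners structure, tracking the boundary by means of the good-position hypothesis.

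\emph{Step 1: Localization and reduction to a basic germ.} Since $(f,x)$ is a sc-Fredholm germ and $X$ is tame, I would choose a strong bundle chart around $x$ sending $x\mapsto 0$, with tame base model $(O,C,E)$, in which $f$ is represented by a section of a tame strong local bundle $K\to O$ possessing a filled version $g\colon U\to U\triangleleft F$ on a relatively open $U\subset C\subset E$. By Definition \ref{oi} there is a local $\ssc^+$-section with $s(0)=g(0)$ such that, after a strong bundle isomorphism covering a sc-diffeomorphism, $g-s$ becomes a basic germ $h\colon {\mathcal O}([0,\infty)^k\oplus\R^{n-k}\oplus W,0)\to\R^N\oplus W$; so the equation to analyze is $h+t=0$ near $0$ with $t$ a $\ssc^+$-section. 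By the filler property the solution sets $\{f=0\}$ near $x$, $\{g=0\}$ near $0$, and $\{h+t=0\}$ near $0$ all coincide; by Proposition \ref{filler_new_1}, Proposition \ref{Newprop_3.9} and Proposition \ref{prop1.21}, the operators $f'(x)$, $Dg(0)$, $D(h+t)(0)$ are simultaneously sc-Fredholm of index $n-N$ with kernels identified; in particular $\ker f'(x)=N$ corresponds under the identifications to a finite-dimensional subspace $N'\subset\R^n\oplus W$ which, by hypothesis (2), is in good position to the partial quadrant $[0,\infty)^k\oplus\R^{n-k}\oplus W$. Finally, since $0\in Y_{m,m+1}$ for all $m$, the regularizing property of $f$ forces every point of $S(f,U)$ to be smooth.

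\emph{Step 2: Finite-dimensional reduction.} Let $Q$ and $\Pi$ denote the projections of $\R^N\oplus W$ onto $\R^N$ and $W$. For the basic germ, $\Pi\circ h(a,w)=w-B(a,w)$ with the contraction estimate of Definition \ref{BG-00x}; adding the $\ssc^+$-section $t$ perturbs this to $w-\widetilde B(a,w)$ where, after passing to the shifted levels on which a $\ssc^+$-operator is small and using Lemma \ref{new_Lemma3.9}, $\widetilde B$ is still a fiberwise contraction on every level. The parametrized Banach fixed point theorem then yields a unique solution $w=w(a)$ of the $W$-component for $a$ near $0$ in $[0,\infty)^k\oplus\R^{n-k}$; an sc-smooth dependence argument (the content of the ``nontrivial theorem'' of \cite{HWZ3}, Theorem~3.9, which here must be incorporated into the proof) together with the regularizing property of $f$ shows that $a\mapsto w(a)$ is $C^\infty$ on the partial quadrant and that every solution of $h+t=0$ near $0$ has the form $(a,w(a))$. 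Substituting, the remaining $\R^N$-component becomes a $C^\infty$ map
$$\phi\colon [0,\infty)^k\oplus\R^{n-k}\supset V_0\longrightarrow\R^N,\qquad \phi(0)=0,$$
whose derivative at $0$ is surjective (the $W$-directions have been eliminated and $D(h+t)(0)$ is onto) and whose kernel at $0$ is $N'\cap(\R^n)$, a finite-dimensional subspace in good position to the quadrant $[0,\infty)^k\oplus\R^{n-k}$ in $\R^n$.

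\emph{Step 3: Manifold-with-corners structure and conclusion.} By Proposition \ref{pretzel}, $N_0:=\ker D\phi(0)$ meets this quadrant in a partial quadrant of $N_0$; choosing a good complement $P\subset\R^n$ and applying the classical implicit function theorem relative to the corner, the zero set of $\phi$ near $0$ is the graph of a $C^\infty$ map $N_0\cap C\supset{\mathcal U}(0)\to P$ vanishing to first order at $0$. Lifting through $a\mapsto(a,w(a))$ and undoing the conjugation and the filling re-embeds this graph into $O\subset X$, exhibiting $S(f,U)$ near $x$ as the image of a sc-smooth retraction of the form $r_S=\text{graph}(\delta)\circ\pi_{N_0}$ on a neighborhood of $0$, where $\delta$ assembles the two finite-dimensional graph maps, $\delta(0)=0$, $D\delta(0)=0$; this proves $S(f,U)$ is a sub-M-polyfold (Definition \ref{def_sc_smooth_sub_M_polyfold}), and since $r_S$ preserves $d_C$ and its local model is the partial quadrant $N_0\cap C$, the retraction is tame, so $S(f,U)$ is tame. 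Because $N_0$ is finite-dimensional, each such chart is a manifold-with-corners chart, transition maps between two of them being finite-dimensional sc-smooth, hence $C^\infty$; this smooth structure with corners is therefore unique, and since $r_S$ is precisely the retraction furnished by Proposition \ref{sc_structure_sub_M_polyfold} for the sub-M-polyfold, it agrees with the M-polyfold structure defined there. The interior case Theorem \ref{implicit-x} is the subcase $k=0$.

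\emph{Main obstacle.} The delicate point is Step 2: establishing sc-smoothness of $a\mapsto w(a)$ and the fact that the solutions of $h+t=0$ are exactly the reduced ones, after the $\ssc^+$-perturbation has forced a level shift (one controls $(f+s)^1$ a priori, cf. Remark \ref{remark.3_20}), and then using the regularizing property of sc-Fredholm sections to descend and recover the solution set of $f$ itself on level $0$. A secondary, purely finite-dimensional but essential subtlety is to verify that the good-position hypothesis is exactly what makes the classical implicit function theorem return a graph over the partial quadrant $N_0\cap C$, so that $S(f,U)$ genuinely acquires a boundary with corners rather than a more singular local model.
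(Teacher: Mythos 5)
Your skeleton matches the paper's route (localize to a filled basic germ, solve the $W$-component by the contraction fixed point argument, reduce to a finite-dimensional equation $\phi(a)=0$ on a partial quadrant in $\R^n$, apply a finite-dimensional implicit function theorem with corners, lift back). There are, however, two genuine gaps.

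First, the identification $\ker D\phi(0) = N'\cap\R^n$ in Step 2 is incorrect, and the good position of this kernel does not follow as directly as you assert. Differentiating the fixed-point identity $\Pi\circ(h+t)(a,w(a))=0$ gives $D w(0)\alpha = D_1 B(0)\alpha$, so $\ker D(h+t)(0) = \{(\alpha,Dw(0)\alpha)\;\vert\;\alpha\in\ker D\phi(0)\}$ --- $\ker D\phi(0)$ is the image of $N'$ under the projection to the $\R^n$-factor, and $N'$ is the graph of $Dw(0)$ over it; whenever $D_1 B(0)\neq 0$ these are different from $N'\cap\R^n$ (which may well be trivial even when $N'$ is large). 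More substantively, knowing that $N'\subset\R^n\oplus W$ is in good position to $[0,\infty)^k\oplus\R^{n-k}\oplus W$ does not immediately give you that the projected kernel $\widetilde K:=\ker D\phi(0)\subset\R^n$ is in good position to $[0,\infty)^k\oplus\R^{n-k}$ in $\R^n$ with a good complement $Z\subset\R^n$ (and that $Z\oplus W$ is a good complement of $N'$). That is the content of the paper's Lemma \ref{new_lemma_Z}, whose proof requires building the complement explicitly; your proposal asserts this without argument.

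Second, your Step 3 constructs a single retraction $r_S$ at the one point $x$ and concludes that $S(f,U)$ is a sub-M-polyfold with the asserted regularity. But the solution germ $w(a)$ provided by the germ implicit function theorem (Theorem \ref{newthm5.4} and Remark \ref{hofer-rem}) has only germ regularity at $0$: $w$ lands in level $m$ with $C^k$ regularity only on a neighborhood of $0$ that shrinks with $m,k$. To get a sc-smooth graph parametrization of $S(f,U)$ on a full neighborhood --- not merely a germ at $x$ --- one must know that the same picture (surjective linearization, kernel in good position) holds at every nearby solution $y$; this requires Corollary \ref{corex1} to propagate surjectivity along solutions and Lemma \ref{good_pos} to propagate good position, after which $S(f,U)$ has the $n$-dimensional tangent germ property (Definition \ref{toast}) at all of its points, and Theorem \ref{HKL} delivers the sub-M-polyfold/manifold-with-corners conclusion. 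Your closing paragraph identifies the sc-smoothness of $a\mapsto w(a)$ as the obstacle, but does not supply the propagation mechanism that actually overcomes it; without it, the constructed $r_S$ is only known to be a sc-smooth retraction as a germ at $x$, which is not what Definition \ref{def_sc_smooth_sub_M_polyfold} requires.
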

Theorem \ref{bound} is a consequence of Theorem \ref{IMPLICIT0} in Section \ref{ssec3.4}.

In the proof of Theorem \ref{bound} we shall describe the manifold structure on the solution space in  detail.
Here we just indicate  how it looks like.  Since $S(f,U)\subset X_\infty$ is a tame M-polyfold, we can take for a point $y\in S(f,U)$
a sc-diffeomorphism $\Psi\colon U(y)\rightarrow O=O(0)$, where $(O,C,E)$ is a tame retract. Then if $t\colon V\rightarrow V$ satisfies
$O=t(V)$ we have the splitting $E=T_0O\oplus (({\mathbbm 1}-Dt(0))E)$, with $Y=({\mathbbm 1}-Dt(0))E$ contained in $T_0^RC$. 
Also the proof will show that $T_0O$ is finite-dimensional. Let $p=Dt(0)$ be the projection onto $T_0O$.
Then it will be shown that near $0$ the projection $P\colon {\mathcal U}'(0)\rightarrow {\mathcal V}(0)$ is a sc-diffeomorphism,
where ${\mathcal U}'$ is an open neighborhood of $0$ in $O$ and ${\mathcal V}$ is an open neighborhood of $0$ in $C_0O$.
Then, for ${\mathcal U}=\Psi^{-1}({\mathcal U}')$ the map
$$
{\mathcal U}\rightarrow {\mathcal V}, \quad  y\mapsto  p\circ \Psi(y)
$$
is a sc-diffeomorphism and its mage lies in an relatively open neighborhood of $0$ in the partial quadrant $C_0O$ in $T_0O$.
The associated transition maps for any two such sc-diffeomorphisms are (trivially) sc-diffeomorphisms between relatively open subsets in partial quadrants 
of finite-dimensional vector spaces. Hence they are classically smooth. This shows that the system of such sc-diffeomorphisms
defines a smooth atlas for the structure of a manifold with boundary with corners and by construction this structure is compatible with the existing 
M-polyfold structure on $S(f,U)$.

As a corollary of Theorem \ref{bound} we shall obtain the following result.
\begin{corollary}\index{C- Global implicit function theorem}
Let $P\colon Y\rightarrow X$ be  a strong bundle over the tame M-polyfold $X$ and $f$ be a sc-Fredholm section.
Suppose that for every $x\in X$ satisfying  $f(x)=0$ the linearisation $f'(x)\colon T_xX\rightarrow Y_x$ is surjective 
and the kernel $\ker(f'(x))$ is in good position to the boundary of $X$ (the latter being an empty condition if $d_X(x)=0$). 
Then the solution set $M:=\{x\in X\ |\ f(x)=0\}\subset X$
is a  sub-M-polyfold of $X$ for which the induced structure is tame, and which, moreover,  admits  a sc-smoothly  equivalent structure as a smooth manifold with boundary with corners.
\end{corollary}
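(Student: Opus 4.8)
The plan is to obtain the global statement from the boundary implicit function theorem (Theorem \ref{bound}) by a local-to-global patching argument, exploiting that ``being a tame sub-M-polyfold'' is a local condition and that Theorem \ref{bound} pins down the manifold-with-corners structure uniquely in terms of the ambient M-polyfold structure, so the local pieces are forced to agree on overlaps.

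First I would fix a point $x\in M$. The solution set of a sc-Fredholm section near a zero consists of smooth points, so $x\in X_\infty$; by hypothesis $f'(x)$ is surjective and $\ker f'(x)$ is in good position to the partial quadrant $C_xX$, so the hypotheses of Theorem \ref{bound} are met. It yields an open neighborhood $U_x$ of $x$ in $X$ such that $S(f,U_x)=M\cap U_x$ is a tame sub-M-polyfold of $X$ whose induced M-polyfold structure (the one from Proposition \ref{sc_structure_sub_M_polyfold}) coincides with a uniquely determined smooth manifold with boundary with corners. By Definition \ref{def_sc_smooth_sub_M_polyfold} applied to this tame sub-M-polyfold, every point of $M\cap U_x$ --- in particular $x$ --- has an open neighborhood $V\subset U_x$ in $X$ and a sc-smooth retraction $r\colon V\to V$ with $r(V)=M\cap V$. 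Since such data exist around every point of $M$, the subset $M$ satisfies Definition \ref{def_sc_smooth_sub_M_polyfold}, i.e. $M$ is a sub-M-polyfold of $X$; by Proposition \ref{sc_structure_sub_M_polyfold} it carries a natural M-polyfold structure for which the inclusion $i\colon M\to X$ is sc-smooth and a homeomorphism onto its image.

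Next I would verify tameness of this natural structure. The charts of $M$ produced in Proposition \ref{sc_structure_sub_M_polyfold} are obtained by pushing the retractions $r$ forward into charts of $X$; near $x$ this is precisely the local model on which Theorem \ref{bound}(2) sets up the smooth manifold-with-corners structure, and that model is a tame retract. Hence $M$ admits, around each of its points, a chart modeled on a tame sc-smooth retract, so $M$ is a tame M-polyfold in the sense of Definition \ref{def_tame_m-polyfold}. (If one prefers an intrinsic argument, tameness of the ambient $X$ together with the diffeomorphism invariance of the degeneracy index, Proposition \ref{newprop2.24}, and Proposition \ref{reduced_tangent_under_sc_diff} show that the good-position hypothesis forces the local models of $M$ to be tame.)

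Finally I would glue the smooth manifold-with-corners structures. On an overlap $U_x\cap U_{x'}$ the two tame sub-M-polyfolds $S(f,U_x)$ and $S(f,U_{x'})$ induce the same M-polyfold $M\cap U_x\cap U_{x'}$; since by Theorem \ref{bound}(2) each carries the \emph{unique} smooth manifold with boundary with corners compatible with that M-polyfold structure, the two manifold-with-corners structures agree on the overlap. Therefore the local manifold-with-corners charts assemble into a single smooth atlas on $M$. Each such chart is a sc-diffeomorphism onto a relatively open subset of a partial quadrant in a finite-dimensional vector space, where sc-smooth maps are exactly the classical $C^\infty$-maps, so all transition maps are classically smooth and the resulting smooth manifold-with-corners structure is sc-smoothly equivalent to the natural M-polyfold structure on $M$. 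The hard part is exactly this compatibility step: two applications of Theorem \ref{bound} at different base points use a priori unrelated fillings, $\ssc^+$-corrections and coordinate changes, so it is only the uniqueness clause of Theorem \ref{bound} --- which characterizes the manifold-with-corners structure intrinsically via $M$ rather than via the auxiliary choices --- that makes the patching legitimate.
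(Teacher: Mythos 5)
Your proof is correct and takes essentially the approach that the paper leaves implicit when it declares the corollary an immediate consequence of Theorem~\ref{bound}: apply the local boundary implicit function theorem at every zero, observe that ``sub-M-polyfold'' and ``tame'' are local properties (Definition~\ref{def_sc_smooth_sub_M_polyfold} and Definition~\ref{def_tame_m-polyfold} are verified chart-by-chart), and then globalize the manifold-with-corners structures. Your handling of the gluing step via the uniqueness clause in Theorem~\ref{bound}(2) is a clean way to avoid re-deriving that the explicit charts $y\mapsto p\circ\Psi(y)$ (described in the paper's preview just before the corollary) produce classically smooth transition maps on overlaps; the two points of view are equivalent, since the uniqueness statement is exactly what guarantees the explicit local charts are mutually compatible regardless of the auxiliary choices of filling and $\ssc^+$-correction. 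You also correctly note the (easily repaired) subtlety that the retraction neighborhood from the sub-M-polyfold definition must be shrunk to lie inside $U_x$ so that $r(V)=S(f,U_x)\cap V$ becomes $M\cap V$.
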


The remaining subsections are devoted to the proof of the above  results. Since the Fredholm theory is one of the main parts 
of the polyfold theory and draws heavily on the possibilities offered  in the sc-smooth theory we shall carry out  the constructions in great details.

\subsection{Subsets with Tangent Structure}
The solution sets of sc-Fredholm sections will come with a certain structure, which in the generic case will induce a natural smooth manifold 
on the solution set. This subsection studies this structure.  Recall the definition of a smooth finite-dimensional subspace $N$ in good position to the partial quadrant $C$ (Definition \ref{mission1}). For such a subspace, $N\cap C$ is a partial quadrant in $N$ (Proposition \ref{pretzel}).

\begin{definition}[{\bf $n$-dimensional tangent germ property}]\label{toast}\index{D- Tangent germ property}
We consider a tame M-polyfold $X$ and a subset $M\subset X$ of  $X$. The subset $M$ has the {\bf $\boldsymbol{n}$-dimensional tangent germ property}
provided the following holds.
\begin{itemize}
\item[(1)]  $M\subset X_\infty$.
\item[(2)]  Every point $x\in M$  lies in an open neighborhood $U\subset X$ of $x$ such that there exists a sc-smooth chart
$\varphi\colon (U,x)\rightarrow (O,0)$ onto a tame  retract  $(O,C,E)$. Moreover, there exists  a $n$-dimensional smooth subspace $N\subset E$ in good position to the partial quadrant $C$, which possesses a good complement $Y$ so that $E=N\oplus Y$.  In addition,  there exists a relatively open neighborhood $V$ of $0$ in the partial quadrant $N\cap C$  and a continuous map
$\delta\colon V\rightarrow Y$ having the following properties.
\begin{itemize}
\item[(i)] $\varphi(M\cap U) =\{v+\delta(v)\, \vert \, v\in V\}\subset N\oplus Y$.
\item[(ii)] $\delta\colon {\mathcal O}(N\cap C,0)\rightarrow (Y,0)$ is a $\ssc^\infty$-germ satisfying  $\delta (0)=0$ and $D\delta(0)=0$.
\end{itemize}
\end{itemize}
 \end{definition}

\begin{figure}[htb]
\begin{centering}
\def\svgwidth{50ex}
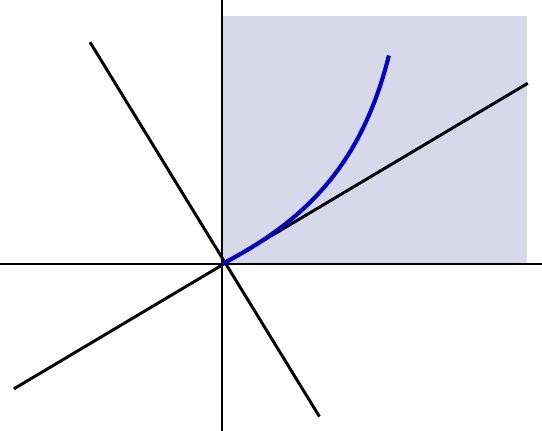
\caption{}\label{fig:pict4}
\end{centering}
\end{figure}

Recalling  the definition of a $\ssc^\infty$-germ (Definition \ref{germy}) we note that here $V$ is a relatively open neighborhood of $0$ in the partial quadrant $N\cap C$ where $N$ is a smooth  finite-dimensional space.
There exists a nested sequence $(V_m)$ of relatively open neighborhoods of $0$ in $N\cap C$, say $V=V_0\supset V_1\supset V_2\supset \ldots$,  such  that
$\delta(V_m)\subset Y_m$ and $\delta\colon V_m\rightarrow Y_m$ is continuous.  Denoting this sequence of neighborhoods by ${\mathcal O}(N\cap C,0)$, its tangent is the nested sequence 
$TV_1\supset TV_2\supset TV_2 \ldots $ denoted by $T{\mathcal O}(N\cap C,0)$.  If $x\in V_1$, then the map $D\delta(x)$ is defined, and since  $\delta$ is $\ssc^\infty$-germ, the tangent map 
 $T\delta\colon T{\mathcal O}(N\cap C,0)\rightarrow TY$  is again of class $\ssc^1$. Iteratively it follows that $T\delta$ is a $\ssc^\infty$-germ.

\begin{proposition}\index{P- Invariance of tangent germ property}
For a pair $(X,M)$ in which  $X$ is a tame M-polyfold and $M$ a subset of $X$,  the property that $M$ has the n-dimensional tangent germ property, 
is a sc-diffeomorphism invariant. More precisely, if $(X',M')$ is a second pair in which  $X'$ is a tame M-polyfold and $M'$ a subset of $X'$ and if $\psi\colon X\rightarrow X'$ is a sc-diffeomorphism satisfying $\psi(M)=M'$,  then $M'$ has the n-dimensional tangent germ property if and only if
$M$  has the n-dimensional tangent germ property.
\end{proposition}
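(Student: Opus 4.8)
The plan is to unravel both definitions and reduce the statement to the fact that the chart data in Definition \ref{toast} transports under a single chart transition map, which is a sc-diffeomorphism between tame retracts and hence preserves everything in sight. It suffices to prove one implication, since $\psi^{-1}\colon X'\to X$ is again a sc-diffeomorphism carrying $M'$ onto $M$; so I assume $M$ has the $n$-dimensional tangent germ property and deduce the same for $M'$. Property (1), namely $M'\subset X'_\infty$, is immediate: $M\subset X_\infty$ and $\psi$, being a sc-diffeomorphism, maps smooth points to smooth points.

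For property (2), fix $x'\in M'$ and set $x=\psi^{-1}(x')\in M$. By hypothesis there is an open neighborhood $U\subset X$ of $x$, a sc-smooth chart $\varphi\colon (U,x)\to (O,0)$ onto a tame retract $(O,C,E)$, a smooth $n$-dimensional subspace $N\subset E$ in good position to $C$ with good complement $Y$ (so $E=N\oplus Y$), a relatively open neighborhood $V$ of $0$ in $N\cap C$, and a $\ssc^\infty$-germ $\delta\colon {\mathcal O}(N\cap C,0)\to (Y,0)$ with $\delta(0)=0$, $D\delta(0)=0$, and $\varphi(M\cap U)=\{v+\delta(v)\mid v\in V\}$. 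On the $X'$ side, choose any chart $\varphi'\colon (U',x')\to (O',C',E')$ onto a tame retract belonging to the atlas of $X'$. Shrinking $U$ and $U'$, I may assume $\psi(U)=U'$ (and correspondingly shrink $O,O'$ and $V$, which is harmless since all properties are local at $0$; here one uses part (1) of Proposition \ref{new_retract_1} so that the shrunk images are again sc-smooth retracts). Consider the transition map
$$
\Theta:=\varphi'\circ\psi\circ\varphi^{-1}\colon (O,0)\to (O',0),
$$
which is a germ of a sc-diffeomorphism between tame retracts, being the composition of the sc-diffeomorphism $\psi$ with the sc-smooth chart maps; its inverse is $\varphi\circ\psi^{-1}\circ(\varphi')^{-1}$, likewise sc-smooth. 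By construction $\varphi'(M'\cap U')=\Theta(\varphi(M\cap U))=\{\Theta(v+\delta(v))\mid v\in V\}$.

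Now I transport the linear data. Let $N':=T\Theta(0)N\subset E'$. By Proposition \ref{reduced_tangent_under_sc_diff}, $T\Theta(0)C_0O=C_0O'$, and since $(O,C,E)$ and $(O',C',E')$ are tame retracts, $C_0O$ is a partial quadrant in $T_0O$ with $d_{C_0O}(0)=d_C(0)$ (Proposition \ref{tame_equality}); because $N$ is in good position to $C$, $N\cap C$ is a partial quadrant in $N$ (Proposition \ref{pretzel}), and one checks $N\subset T_0O$ with $N\cap C=N\cap C_0O$. The linear isomorphism $T\Theta(0)\colon T_0O\to T_0O'$ therefore carries the good-position data of $N$ to good-position data of $N'$ relative to $C'$ (with good complement the image of $Y$ under $T\Theta(0)$, composed with the complement of $T_0O'$ coming from tameness of $(O',C',E')$; this is where Definition \ref{tame_retarctions} condition (2) gives the relevant sc-subspace inside $E'_0$). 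Finally, to exhibit the graph form of $\varphi'(M'\cap U')$: write $E'=N'\oplus Y'$ with $Y'$ the good complement just constructed, let $\pi_{N'}$ and $\pi_{Y'}$ be the projections, and consider the map $V\ni v\mapsto w:=\pi_{N'}(\Theta(v+\delta(v)))$. Its linearization at $0$ is $T\Theta(0)|_N$ (using $D\delta(0)=0$), which is a linear isomorphism $N\to N'$; by the sc-smooth inverse function theorem for germs this map has a $\ssc^\infty$-germ inverse $v=v(w)$ defined on a relatively open neighborhood $V'$ of $0$ in $N'\cap C'$, and then $\delta'(w):=\pi_{Y'}(\Theta(v(w)+\delta(v(w))))$ defines a $\ssc^\infty$-germ $\delta'\colon{\mathcal O}(N'\cap C',0)\to(Y',0)$ with $\delta'(0)=0$, $D\delta'(0)=0$, and $\varphi'(M'\cap U')=\{w+\delta'(w)\mid w\in V'\}$. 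This verifies the $n$-dimensional tangent germ property for $M'$ at $x'$, and since $x'\in M'$ was arbitrary, the proof is complete.

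The main obstacle is the bookkeeping in the last paragraph: one must be careful that the inverse $v\mapsto v(w)$ is genuinely a $\ssc^\infty$-\emph{germ} in the sense of Definition \ref{germy} (not just smooth level by level), and that the neighborhood $V'$ lands in the partial quadrant $N'\cap C'$ — this uses that $\Theta$ maps the partial cone to the partial cone (Proposition \ref{reduced_tangent_under_sc_diff}) together with Proposition \ref{pretzel} identifying $N'\cap C'$ as a partial quadrant, so that the sc-inverse function theorem for maps between relatively open subsets of partial quadrants in the finite-dimensional spaces $N,N'$ applies. Everything else is a routine transport of structure along the sc-diffeomorphism $\Theta$.
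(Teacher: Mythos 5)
Your argument is correct in outline but it is doing far more work than the proposition requires, and in the harder part the details are not fully in place. The key observation you miss is that the tangent germ property (Definition \ref{toast}) only asks for the \emph{existence} of some sc-smooth chart around each point of $M'$ with the required graph representation; it does not ask you to transport the data to an arbitrary pre-given chart on $X'$. The paper exploits this freedom by simply taking $\varphi':=\varphi\circ\psi^{-1}\colon (U',m')\to(O,0)$ as the chart around $m'=\psi(m)$, where $U'=\psi(U)$. This is a sc-smooth chart onto the \emph{same} tame retract $(O,C,E)$, and one has immediately $\varphi'(M'\cap U')=\varphi(M\cap U)=\{v+\delta(v)\mid v\in V\}$. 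The same $N$, $Y$, $V$, $\delta$ verify the definition, and the proof is over — no transition map, no inverse function theorem, no new subspaces.

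Your route — transporting through $\Theta=\varphi'\circ\psi\circ\varphi^{-1}$ for a pre-chosen $\varphi'$ — would prove a stronger statement, but as written it has a gap in the transport of the good-position data. The map $T\Theta(0)$ is only an isomorphism $T_0O\to T_0O'$, not a sc-isomorphism $E\to E'$, whereas good position of $N$ relative to $C$ is a statement in all of $E$ (Definition \ref{mission1}). Setting $N':=T\Theta(0)N$ and producing a good complement $Y'\subset E'$ requires stitching together $T\Theta(0)(Y\cap T_0O)$ with a tame sc-complement of $T_0O'$ inside $E'$ (coming from condition (2) of Definition \ref{tame_retarctions}), and one must then verify the $\varepsilon$-estimate in Definition \ref{mission1} for the combined complement. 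This is precisely the content of Lemma \ref{new_lemma4.20}, which appears later in the text and is itself a nontrivial computation; you gesture at it but do not carry it out. If you want to take the transport route, you should isolate and prove that lemma, or simply observe — as the paper does — that the composed chart makes the transport unnecessary.
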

\begin{proof}
From  $X'=\psi(X)$ we conclude  that $X'$ is tame. We show that if $M\subset X$ has the n-dimensional tangent germ property, then $M'\subset X'$ has this property  too.
Since $M\subset X_\infty$ and $\psi$ is sc-smooth,  we see that $M'=\psi(M)\subset X'_\infty$. Let $m'\in M'$ and  choose a point  $m\in M$ satisfying $\psi(m)=m'$.
By assumption there exists a sc-smooth chart $\varphi\colon (U,m)\rightarrow (O,0)$, where $(O,C,E)$ is a tame retract. By assumption there exists
a smooth n-dimensional linear subspace $N$ in good position to $C$ with sc-complement $Y$ and a continuous map
$\delta\colon V\rightarrow Y$, where $V$ is a relatively open neighborhood of $0$ in $C_N: =C\cap N$, which satisfies
\begin{itemize}
\item[(i)]   $\varphi(M\cap U)=\{v+\delta(v)\ |\ v\in V\}\subset N\oplus Y$.
\item[(ii)] $\delta\colon {\mathcal O}(C_N,0)\rightarrow (Y,0)$ is a $\ssc^\infty$-germ satisfying $\delta (0)=0$ and $D\delta(0)=0$.
\end{itemize}
To deduce the corresponding construction for $(X',M')$ we define the open neighborhood $U'\subset X'$ of $m'=\psi (m)$ by $U'=\psi (U)$ and the sc-smooth chart by $\varphi'=\varphi\circ \psi^{-1}\colon (U', m')\to (O, 0)$. Then  
$$
\varphi'(U'\cap M') = \varphi(U\cap M) =\{v+\delta(v)\, \vert \, v\in V\}
$$
and the lemma follows.
\end{proof}

The important aspect of the $n$-dimensional tangent germ property of a subset is the following result. 

\begin{theorem}\label{HKL}\index{T- Characterization of subsets with tangent germ property}
If  $X$ is a tame M-polyfold and $M\subset X$ a subset possessing the $n$-dimensional tangent germ property, then the following holds.
\begin{itemize}
\item[{\em (1)}] $M$ is a  sub-M-polyfold of $X$ whose  induced M-polyfold structure is tame.  Moreover,  the induced M-polyfold structure on M is sc-smoothly equivalent to a smooth structure  of a manifold with boundary with corners on $M$. 
\item[{\em (2)}] If  $x\in M$ is given, we denote by $U$,  $\varphi$, $N$, $V \subset N$, and $\delta\colon V\rightarrow Y$ the data described in condition (2) of Definition \ref{toast}.
Denoting  by $\pi\colon N\oplus Y\rightarrow N$ the sc-projection, the map $U\cap M\rightarrow V$, given by $y\mapsto  \pi\circ \varphi(y)$,  defines a smooth chart  on $M$ around the point $x$. 
\end{itemize}
\end{theorem}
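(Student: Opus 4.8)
The plan is to translate everything into the local coordinates furnished by Definition \ref{toast} and then verify the two assertions one chart at a time. Fix $x\in M$ and take the data $U$, $\varphi\colon (U,x)\to (O,0)$, the tame retract $(O,C,E)$, the smooth $n$-dimensional subspace $N\subset E$ in good position to $C$ with good complement $Y$, so that $E=N\oplus Y$, the relatively open neighborhood $V$ of $0$ in the partial quadrant $N\cap C$, and the $\ssc^\infty$-germ $\delta\colon {\mathcal O}(N\cap C,0)\to (Y,0)$ with $\delta(0)=0$, $D\delta(0)=0$, satisfying $\varphi(M\cap U)=\{v+\delta(v)\mid v\in V\}$. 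By Proposition \ref{pretzel}, $N\cap C$ is a partial quadrant in $N$, so after shrinking it is of the form $[0,\infty)^l\oplus N'$ in $N$; this is what will eventually carry the manifold-with-corners structure. My first move is to realize $\varphi(M\cap U)$ as the image of an sc-smooth retraction defined near $0$ in $O$, which will simultaneously show $M$ is a sub-M-polyfold and identify its induced structure.

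First I would build the retraction. Since $N$ has a good complement $Y$, write elements of $E$ as $(v,y)\in N\oplus Y$ and let $\pi\colon N\oplus Y\to N$ be the (sc-)projection. On a relatively open neighborhood of $0$ in $O$, consider the map $\sigma(v,y)=(\pi(r(v,y)),\delta(\pi(r(v,y))))$ where $r\colon \widetilde U\to \widetilde U$ is a tame sc-smooth retraction onto $O$; more simply, since $\varphi(M\cap U)$ is already a graph over $V\subset N$, one checks that $\rho\colon z\mapsto \pi(z)+\delta(\pi(z))$ is sc-smooth (composition of the sc-projection, the $\ssc^\infty$-germ $\delta$, and addition, via the chain rule Theorem \ref{sccomp}), satisfies $\rho\circ\rho=\rho$ because $\pi(v+\delta(v))=v$ and $\delta$ evaluated again returns the same value, and has image exactly $\varphi(M\cap U)$. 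One must check $\rho$ maps a relatively open neighborhood of $0$ in $O$ into itself; this follows from the graph description together with the fact that $\varphi(M\cap U)\subset O$. Transporting $\rho$ back by $\varphi$ gives the required sc-smooth retraction onto $M\cap U$, so $M$ is a sub-M-polyfold of $X$ by Definition \ref{def_sc_smooth_sub_M_polyfold}, and by Proposition \ref{sc_structure_sub_M_polyfold} it inherits an M-polyfold structure with $i\colon M\to X$ sc-smooth. Tameness of the induced structure then needs the tame retraction: using that $O$ is tame and $N$ is in good position, one verifies condition (1) of Definition \ref{tame_retarctions} (the degeneracy index $d_C$ is preserved under $\rho$, because on the graph the vanishing coordinates are controlled by the partial quadrant $N\cap C$ and Theorem \ref{hofer}/Proposition \ref{tame_equality}), and condition (2) at smooth points from the splitting $E=N\oplus Y$ with $Y$ a good complement, so $M\cap U$ is a tame retract.

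Next, for assertion (2), I would show the candidate chart $U\cap M\to V$, $y\mapsto \pi\circ\varphi(y)$, is a homeomorphism onto the relatively open subset $V$ of the partial quadrant $N\cap C$ in $N$: it is continuous as a composition, its inverse is $v\mapsto \varphi^{-1}(v+\delta(v))$ which is continuous since $\delta$ is $\ssc^0$, and it is bijective by the graph description. Because the target lives in a finite-dimensional partial quadrant, such charts are charts of a manifold with boundary with corners; and any two of them are compatible because the transition maps are between relatively open subsets of partial quadrants in finite-dimensional spaces and are sc-smooth (being built from $\delta$ and the sc-smooth transition maps of $X$), hence — being sc-smooth between finite-dimensional spaces — classically smooth, using Proposition \ref{lower} and the remark that $\ssc$-maps between finite-dimensional Banach spaces are the usual $C^\infty$-maps. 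This gives a smooth manifold-with-corners atlas on $M$. Finally, sc-smooth equivalence with the induced M-polyfold structure: the chart $y\mapsto \pi\circ\varphi(y)$ and the chart $\varphi|_{M\cap U}$ onto the retract $\varphi(M\cap U)$ differ by $v\mapsto v+\delta(v)$ and its inverse $\pi$, both sc-smooth, so the identity map is sc-smooth in both directions; this is exactly the statement that the two structures coincide, matching the conclusion of Theorem \ref{bound}(2).

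The main obstacle will be the tameness bookkeeping in assertion (1): one has to check carefully that the retraction $\rho$ (equivalently its transport to $X$) is a \emph{tame} sc-retraction, i.e. that $d_C(\rho(z))=d_C(z)$ on a neighborhood and that the tangent complement $Y$ can be chosen inside $E_x$ at each smooth point. This requires combining the good-position hypothesis on $N$ (which controls how $N\cap C$ sits inside $C$, via Proposition \ref{pretzel}) with Proposition \ref{tame_equality} (giving $d_O=d_C$ on the ambient tame retract and $\dim(T_zO/T_z^RO)=d_O(z)$) and Proposition \ref{IAS-x} (allowing the complement to be taken as $(\mathbbm 1-Dr(z))E\subset E_z$), and then checking these survive restriction to the graph. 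Everything else — sc-smoothness of $\rho$, the homeomorphism property of the chart, compatibility of transition maps — is a routine application of the chain rule and Propositions \ref{x1}–\ref{ABC-y}.
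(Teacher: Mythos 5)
Two genuine gaps, both in assertion (1).

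\textbf{The regularity of $\delta$ away from $0$.} You assert that $\rho(z)=\pi(z)+\delta(\pi(z))$ is sc-smooth by the chain rule, treating $\delta$ as if it were sc-smooth on all of $V$. But Definition~\ref{toast} only provides that $\delta\colon {\mathcal O}(N\cap C,0)\to (Y,0)$ is a $\ssc^\infty$-\emph{germ at $0$}: near $0$ the map has arbitrarily high classical differentiability into arbitrarily high levels, but at a point $v_0\in V$ away from $0$ there is a priori only level-$0$ continuity. Without more, $\rho$ is not sc-smooth near $\varphi^{-1}(v_0+\delta(v_0))$, and the sub-M-polyfold property fails to propagate past the single base point $x$. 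The nontrivial part of the paper's proof — which your plan omits entirely — is precisely to show that $\delta$ is in fact a $\ssc^\infty$-germ at \emph{every} $v_0\in V$. The paper does this by invoking the tangent germ property again at the point $x_0=\varphi^{-1}(v_0+\delta(v_0))\in M$, which produces a second chart $\psi$ and a second graph map $\tau$ with $\tau(0)=0$, $D\tau(0)=0$, and then solving the equation $v=\pi\circ\varphi\circ\psi^{-1}(w+\tau(w))$ for $w=w(v)$ by the classical implicit function theorem (the linearization at $w=0$ is invertible because $D\tau(0)=0$). Substituting back and applying $\mathbbm 1-\pi$ then identifies $\delta$ near $v_0$ with a composition of maps of known arbitrarily high regularity. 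This step is the real content of the theorem and cannot be replaced by a chain-rule appeal.

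\textbf{The tameness argument is wrong.} You propose to verify that the retraction $\rho$ (equivalently its transport $s$ to $X$) satisfies conditions (1) and (2) of Definition~\ref{tame_retarctions}, in particular that $d_C(\rho(z))=d_C(z)$. This is false in general, and the paper explicitly points it out with the example $X=[0,\infty)^2$, $M=\{(x,x)\mid x\ge 0\}$: taking $N$ the diagonal, $Y$ the anti-diagonal, and $\delta\equiv 0$, one has $\rho(1,0)=(\tfrac12,\tfrac12)$, so $d_C$ drops from $1$ to $0$. Thus the sc-smooth retraction carving $M\cap U$ out of $X$ is \emph{not} tame in general, and your verification sketch via Theorem~\ref{hofer} and Proposition~\ref{tame_equality} cannot go through. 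Tameness of the \emph{induced} M-polyfold structure on $M$ is established differently: the induced chart $m\mapsto \pi\circ\varphi(m)$ has image the local model $(V, N\cap C, N)$, i.e.\ a relatively open subset of a finite-dimensional partial quadrant with the identity retraction, which is tame for trivial reasons. So the tame atlas on $M$ comes from the target of the charts, not from $\rho$ itself.

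The rest of your plan — the graph description, the chart being a homeomorphism, transition maps being classically smooth because they are sc-smooth between finite-dimensional partial quadrants — matches the paper and is fine once the two points above are repaired.
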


\begin{proof}

We choose a point $x\in M$ and  find an open neighborhood $U\subset X$ of $x$ and a M-polyfold chart
$\varphi\colon (U,x)\rightarrow (O,0)$ onto the retract $(O, C,E)$  so that the set $M\cap U$ is represented as 
$$
\varphi(M\cap U)=\{v+\delta(v)\, \vert \,   v\in V\}\subset N\oplus Y.
$$
The map $\delta\colon V\to Y$ possesses all the properties listed in condition (2) of Definition \ref{toast}.  The map $\varphi\colon M\cap U\to V$  is of the form $\varphi (y)=v(y)+\delta (v(y))$.
With the sc-projection $\pi\colon N\oplus Y\to N$ onto $N$, the map 
\begin{equation}\label{edi}
\pi\circ \varphi\colon M\cap U\to V,\quad \pi\circ \varphi (y)=v(y)
\end{equation}
is continuous  and bijective onto $V$. It is the restriction of the sc-smooth map $\pi\circ \varphi\colon U\to N$, which maps the point $x\in M$ onto $0\in N$. Then the  inverse $\gamma$  of \eqref{edi},
$$
\gamma\colon V\rightarrow M\cap U,\quad  \gamma(v)=\varphi^{-1}(v+\delta(v))
$$
has its image in $X_\infty$ and,  as a map into any $X_m$,  has arbitrarily high regularity if only $v$ is close enough to $0$, depending on $m$. 

Next we shall show that the map $\gamma$ is $\ssc^\infty$ on all of $V$ and not only at the point $0\in V$.

To this aim we choose a $v_0\in V$ and put  $x_0=\varphi^{-1}(v_0+\delta(v_0))$. By construction,  $x_0\in M$,  and by our assumption there is a sc-smooth chart $\psi\colon (U',x_0)\to (O', 0)$ satisfying 
$$\psi(M\cap U')=\{w+\tau (w)\, \vert\, w\in V'\}$$
where $V'$ is a relatively open neighborhood of $0$ in the smooth $n$-dimensional subspace $N'\subset E'$ possessing the tangent germ property. The map $\tau\colon V'\to Y'$ possesses the properties listed in condition (2) of Definition \ref{toast}. In particular, $\tau$ is a $\ssc^\infty$-germ at the point $w=0$.

For $v\in V$ near $v_0$ and $w\in V'$ near $0$ we consider the equation
\begin{equation}\label{edi1}
v=\pi\circ\varphi\circ \psi^{-1}(w+\tau(w)).
\end{equation}

If $v=v_0$, we have the solution $w=0$. Near $0$ the map $\tau$ possesses arbitrary high classical differentiability into any level. Linearizing the right-hand side of the equation at the point $0$, and recalling that $D\tau (0)=0$, we obtain the linear isomorphism 
$$h\mapsto \pi\circ T(\varphi\circ \psi^{-1})(0)h$$
from $N'$ onto $N$.  By the classically implicit function theorem we obtain a germ $v\mapsto w(v)$ for $v$ close to $v_0$ satisfying  $w(v_0)=0$ and solving the equation \eqref{edi1}. The germ has arbitrary high classical differentiability once we are close enough to $v_0$. Now consider the map 
\begin{equation}\label{edi2}
v\mapsto  \varphi\circ \psi^{-1}(w(v)+\tau(w(v))
\end{equation}
for $v$ near $v_0$. Since $v\mapsto w(v)$ has arbitrarily high differentiability at $v_0$ and $\tau$ is $\ssc^\infty$-germ near  $w=0$, we see that the map\eqref{edi2} has, into  any given level,  arbitrarily high differentiability for $v$ near $v_0$.  Consequently, the map is a $\ssc^\infty$-germ near $w=0$. The image of the map  lies in the infinity level. Applying the sc-projection  ${\mathbbm 1}-\pi\colon N\oplus Y\to Y$, we obtain the identity 
$$
\delta(v) =({\mathbbm 1}-\pi)\circ \varphi\circ \psi^{-1}(w(v)+\tau(w(v))
$$
which implies that $\delta$ is a $\ssc^\infty$-germ near $v_0$.
Since $v_0$ is arbitrary in $V$ we see that
$v\mapsto \delta(v)$
is a $\ssc^\infty$-germ around every $v_0\in V$ as we wanted to show. 

Moreover, we conclude that the map
$$
V\rightarrow X,\quad  v\rightarrow \varphi^{-1}(v+\delta(v))
$$
is an injective sc-smooth map whose image is equal to  $M\cap U$.

Next we shall verify that the set $M$ is a sub-M-polyfold of $X$ according to Definition \ref{sc_structure_sub_M_polyfold}.  By construction, we have, so far, at every point $x\in M$ an open neighborhood $U=U(x)\subset X$  and a sc-smooth chart $\varphi\colon U\rightarrow O$,
satisfying  $\varphi(x)=0$, where $(O,C,E)$ is a tame sc-smooth retract. Moreover,  recalling the sc-splitting 
 $
 E=N\oplus Y
 $
 there is  a relatively open neighborhood $V$ of $0$ in the partial quadrant $N\cap C$ of $N$ and a sc-smooth map
 $$
\text{$ \delta \colon V\rightarrow Y$ satisfying $\delta (0)=0$ and
 $D\delta (0)=0$}, 
 $$
 such that  $\varphi(M\cap U) =\{v+\varphi(v)\,  \vert \, v\in V\}$. The map $V\rightarrow U$, 
 $$
 v\mapsto \varphi^{-1}(v+\delta(v)), 
 $$
 is sc-smooth and injective.  The subset $\Sigma\subset C$, defined by 
 $$
 \Sigma=\{v+y\in C\, \vert \, v\in V,\ y\in Y\},
 $$
is relatively open in $C$ and contains $0$. Since $O$ is a tame retract,  there exist a relatively open subset $W$ of $C$ and a tame sc-smooth retraction $r\colon W\rightarrow  W$ onto $O=r(W)$. Consequently, in view of 
 $$
 r\colon r^{-1}(\Sigma\cap O)\rightarrow r^{-1}(\Sigma\cap O),
 $$
the subset $\Sigma\cap O$ is also a sc-smooth retract.

By construction, $v+\delta (v)\in O$ and also $v+\delta (v)\in \Sigma$ and we define the map $t\colon \Sigma\cap O\rightarrow \Sigma\cap O$  by
 $$
 t(v+w)=v+\delta(v).
 $$
The map $t$ is sc-smooth 
 and satisfies $t\circ t=t$, so that  $t$ is a sc-smooth retraction defined on a relatively  open neighborhood of $0$ in $C$ 
 and $t(\Sigma\cap O) = \varphi(M\cap U).$ Therefore, the composition  $s=\varphi^{-1}\circ t\circ\varphi$ defines a sc-smooth retraction
\begin{equation}\label{edi3}
 s\colon U\rightarrow U
 \end{equation}
onto $M\cap U=s(U)$, proving that the subset $M$ is a sc-smooth sub-M-polyfold of $X$.

 The map $u\mapsto C\cap N$, $u\mapsto \pi\circ \varphi (s(u))$ is sc-smooth. Therefore, the map 
 $$M\cap U\to V,\quad m\mapsto \pi\circ \varphi (m)$$
 is a sc-smooth M-polyfold chart on $M$ for the induced M-polyfold structure. The image of the chart is the local model 
 $(V, N\cap C, N)$  so that the transition maps are classically smooth maps and define on $M$  the structure of a manifold with boundary with  corners. The proof of Theorem \ref{HKL} is complete.
 \end{proof}

 As an aside we mention that, in general,  we can not find a local retraction $s$ in \eqref{edi3} which is  tame, as the 
example $X=[0,\infty)^2$ and $M=\{(x,x)\,  \vert , \ x\geq 0\}$ shows.
 
The strength  of the theorem stems  from the fact that in our sc-Fredholm theory the machinery produces subsets $M\subset X$, which have the $n$-dimensional tangent germ property.

\subsection{Contraction  Germs}
The notion of a contraction germ is a slight modification of a basic germ. These germs are convenient for the proof of the implicit function theorem (Theorem \ref{newthm5.4}),  which is the main result of this section. It turns out that the local geometry of sc-Fredholm  germs are intimately related to contraction germs. In the generic case they are used to prove that the zero set of a sc-Fredholm section 
must have the n-dimensional tangent germ property. It follows that  the zero set  is  in a natural  way a smooth manifold with boundary with corners.

\mbox{}\\

In the following we abbreviate by $\wt{C}$ the partial quadrant $\wt{C}=[0,\infty)^{k}\oplus \R^{n-k}$ in $\R^n$ so that 
$C=\wt{C}\oplus W$ is a partial quadrant in the sc-Banach space $E=\R^n\oplus W$.

 
\begin{definition}[$\ssc^0$-{\bf contraction germ} ] \label{BG}\index{D- Contraction germs}
A $\ssc^0$-germ $f\colon {\mathcal O}(C,0)\rightarrow ( W,0)$  is called  a 
$\ssc^0$-{\bf contraction germ} if the following holds.
The germ $ f\colon {\mathcal O}(C,0)\rightarrow (W,0)$
has the form
$$
 f(a,w)=w-B(a,w)
$$
for $(a,w)$ close to $(0,0)\in C$. Moreover, for every $\varepsilon>0$ and $m\geq 0$,  the estimate
$$
\abs{B(a,w)-B(a,w')}_m\leq \varepsilon\cdot \abs{w-w'}_m
$$
holds for all $(a,w),(a,w')$ on level $m$ sufficiently close to $(0, 0)$, depending on $\varepsilon$ and $m$.
\end{definition}


More precisely, the  $\ssc^0$-contraction germ requires for given $\varepsilon>0$,  that we can choose a perhaps smaller germ ${\mathcal O}(C,0)$ of neighborhoods 
$U_0\supset U_1\supset U_2\supset U_3\supset \ldots $ of the point $(0, 0)$ in $[0,\infty)^k\oplus \R^{n-k}\oplus W$ such that 
$
\abs{B(a,u)-B(a,v)}_m\leq \varepsilon \abs{u-v}_m
$
holds if $(a, u), (a, v)\in U_m$.

Starting on level $0$, 
the  parametrized version of Banach  fixed point theorem together with  $B(0,0)=0$, guarantee the existence of relatively open and connected neighborhood $V=V_0$ of $0$ in $[0,\infty)^k\oplus \R^{n-k}$  and  
a uniquely determined continuous map $\delta\colon V\rightarrow W_0$ satisfying  $\delta(0)=0$ and solving the equation
$$
\delta(a)=B(a,\delta(a))\quad \text{for all $a\in V$}.
$$
Going to level $1$ we find, again using the fixed point theorem, an 
 open neighborhood $V_1\subset V_0$ of $0$  and a continuous map $\delta_1\colon V_1\to W_1$ satisfying $\delta (0)=0$ and solving 
 the equation on level $1$. 
From the uniqueness of the solutions of the Banach fixed point problem we conclude that $\delta_1 =\delta\vert V_1$. Continuing  this way, we obtain a decreasing sequence 
of relatively open neighborhoods of $0$ in $[0,\infty)^{k}\times \R^{n-k}$, 
$$
V=V_0\supset V_1\supset V_2\supset \ldots 
$$
such that the continuous solution $\delta\colon V\to W$  satisfies 
$\delta(0)=0$ and $\delta (V_m)\subset W_m$ and $\delta\colon V_m\to W_m$ is continuous. 
In  other words,  we obtain  a $\ssc^0$-solution germ $\delta\colon {\mathcal O}([0,\infty)^k\oplus \R^{n-k},0)\rightarrow (W,0)$.

Summarizing  the discussion we have proved the  following theorem from \cite{HWZ3}, Theorem 2.2.

\begin{theorem}[{\bf Existence}]\label{thm5.2}\index{T- Local solution germ}
A $\ssc^0$-contraction germ $f\colon {\mathcal O}(\wt{C}\oplus W,0)\rightarrow (W,0)$ admits a uniquely determined 
sc$^0$-solution germ 
$$
\delta\colon {\mathcal O}(\wt{C},0)\rightarrow (W,0)
$$
solving 
$$
f\circ \gr (\delta)=0.
$$
Here $\gr (\delta)$ is the associated graph germ $a\mapsto(a,\delta(a))$.
\end{theorem}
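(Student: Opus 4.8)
The plan is to turn the level-by-level discussion preceding the statement into a proof, the engine being the parametrized Banach fixed point theorem applied on each level of $W$, followed by a uniqueness argument to glue the solutions. Fix a contraction constant $\varepsilon\in(0,1)$. Since the size of the neighborhoods in a germ is irrelevant, we may shrink $\mathcal{O}(C,0)=(U_m)_{m\ge 0}$ so that $U_0\supseteq U_1\supseteq\cdots$, each $U_m$ is relatively open in $C\cap E_m=\wt C\oplus W_m$, and the estimate $|B(a,w)-B(a,w')|_m\le\varepsilon|w-w'|_m$ holds for all $(a,w),(a,w')\in U_m$.

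First I would solve on level $m$ for each fixed $m\ge 0$. Because $f$, hence $B$, is $\ssc^0$, the map $a\mapsto B(a,0)$ is continuous into $W_m$ and vanishes at $a=0$; so there are $\rho_m>0$ and a relatively open neighborhood $V_m$ of $0$ in $\wt C$, which I arrange to satisfy $V_m\subseteq V_{m-1}$ and $V_m\times \overline B^{W_m}_{\rho_m}(0)\subseteq U_m$, with $|B(a,0)|_m\le(1-\varepsilon)\rho_m$ for all $a\in V_m$. Then for each $a\in V_m$ the map $w\mapsto B(a,w)$ carries $\overline B^{W_m}_{\rho_m}(0)$ into itself and is an $\varepsilon$-contraction, so the parametrized Banach fixed point theorem yields a continuous $\delta_m\colon V_m\to\overline B^{W_m}_{\rho_m}(0)\subseteq W_m$ with $\delta_m(a)=B(a,\delta_m(a))$; uniqueness of the fixed point together with $B(0,0)=0$ forces $\delta_m(0)=0$.

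Next I would check that the $\delta_m$ are consistent. For $a\in V_m$ the point $(a,\delta_m(a))$ lies in $U_m\subseteq U_{m-1}$ and $\delta_m(a)\in W_m\subseteq W_{m-1}$, so $\delta_m(a)$ is a fixed point of $w\mapsto B(a,w)$ with $(a,\delta_m(a))\in U_{m-1}$; but any two such fixed points $w,w'$ satisfy $|w-w'|_{m-1}=|B(a,w)-B(a,w')|_{m-1}\le\varepsilon|w-w'|_{m-1}$, hence coincide, so $\delta_m=\delta_{m-1}|_{V_m}$. The maps $\delta_m$ therefore glue to a single function which, together with the nested neighborhoods $(V_m)$, is exactly an $\ssc^0$-solution germ $\delta\colon\mathcal{O}(\wt C,0)\to(W,0)$: $\delta(0)=0$, $\delta(V_m)\subseteq W_m$, and $\delta|_{V_m}\colon V_m\to W_m$ is continuous. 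By construction $f\circ\gr(\delta)(a)=\delta(a)-B(a,\delta(a))=0$. For uniqueness, any $\ssc^0$-solution germ $\delta'$ is continuous near $0$ with $\delta'(0)=0$, so on a small enough neighborhood $(a,\delta'(a))\in U_0$, and the level-$0$ contraction argument just used gives $\delta'(a)=\delta(a)$ there, i.e. $\delta'$ and $\delta$ represent the same germ.

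There is no genuine obstacle here; the construction is the standard one. The two places demanding a little care are the verification of the self-map and contraction hypotheses of the fixed point theorem — handled by absorbing $|B(a,0)|_m$ into the slack $(1-\varepsilon)\rho_m$ — and the gluing across levels, where the freedom to shrink the germ so that $U_m\subseteq U_{m-1}$ makes uniqueness of the lower-level fixed point available.
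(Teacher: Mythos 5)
Your proof is correct and takes essentially the same approach as the paper's: a level-by-level application of the parametrized Banach fixed point theorem, with consistency across levels enforced by uniqueness of the fixed point on the lower level. You have simply filled in the details (the self-map estimate via the slack $(1-\varepsilon)\rho_m$, the explicit gluing, and germ uniqueness) that the paper's discussion preceding the theorem leaves to the reader.
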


\mbox{}\\

Our next aim is the regularity of the unique continuous solution germ $\delta$ of the equation $f(v, \delta (v))=0$ guaranteed by Theorem  \ref{thm5.2}, and we are going to prove that the solution germ $\delta$ is of class $\ssc^k$ if the given germ $f$ is of class $\ssc^k$. 
 By a somewhat tricky induction it turns out that we actually  only have to know that if  $f$ is  $\ssc^1$,  then the solution germ $\delta$ is 
 $\ssc^1$ as well. Here we shall make use of the following regularity result from \cite{HWZ3}, theorem 2.3, which is the hard part of the regularity theory.

\begin{theorem}\label{thm5.3}\index{T- Regularity of solution germ}
If the $\ssc^0$-contraction germ $f\colon \mo(\wt{C}\oplus  W,0)\rightarrow (
W,0)$ is of class $\ssc^1$, then the solution germ $\delta\colon \mo(\wt{C},
0)\to (W, 0)$ in Theorem \ref{thm5.2} is also of class  $\ssc^1$. 
\end{theorem}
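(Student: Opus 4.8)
The strategy is the classical "differentiate the fixed-point equation and solve for the derivative by a second fixed-point argument", but executed carefully in the sc-calculus. Suppose $f(a,w)=w-B(a,w)$ is a $\ssc^1$-contraction germ with $\ssc^0$-solution germ $\delta\colon \mo(\wt C,0)\to(W,0)$, so $\delta(a)=B(a,\delta(a))$. First I would exploit Proposition \ref{x1}: being $\ssc^1$, the map $f$ induces a $C^1$-map $f\colon C_m\cap E_m\to W_{m-1}$ for every $m\geq 1$, with a continuous extension $Df(x)\colon E_{m-1}\to W_{m-1}$ of $df(x)$, and $(x,h)\mapsto Df(x)h$ is continuous on $U_m\oplus E_{m-1}$. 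Hence $D_2B(x)\colon W_{m-1}\to W_{m-1}$ is defined and (shrinking the germ) has operator norm $\leq \varepsilon<1$ on each level, by the contraction estimate in Definition \ref{BG}; in particular $\mathbbm 1 - D_2B(a,w)$ is invertible on $W_{m-1}$ with inverse bounded by $(1-\varepsilon)^{-1}$.

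The \textbf{candidate derivative} is then the formal guess: if $\delta$ were differentiable at $a\in V$, differentiating $\delta(a)=B(a,\delta(a))$ gives $D\delta(a) = D_1B(a,\delta(a)) + D_2B(a,\delta(a))D\delta(a)$, i.e.
$$
D\delta(a) = \bigl(\mathbbm 1 - D_2B(a,\delta(a))\bigr)^{-1} D_1B(a,\delta(a)) =: \Lambda(a).
$$
The plan is to define $\Lambda(a)\in \mathscr L(\R^n, W_0)$ by this formula using the level-$0$ data (here one needs $D_2B$ acting on $W_0$, which is available because $Df(x)$ extends from $E_1$ to $E_0$ by Proposition \ref{x1}(2)), show $a\mapsto \Lambda(a)$ is continuous on $V$ into the compact-open topology, and then prove it really is the sc-derivative of $\delta$. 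The $C^1$-ness of $\delta$ as a map $V_m\to W_{m-1}$ (for each $m\geq 1$) will follow from the classical implicit function theorem applied to $f\colon V_m\oplus W_{m-1}\to W_{m-1}$ (whose $w$-linearization $\mathbbm 1 - D_2B$ is invertible on $W_{m-1}$), combined with the uniqueness in Theorem \ref{thm5.2} to identify the classical solution with $\delta$; this gives $d\delta(a) = \Lambda(a)$ on $W_{m-1}$ and the continuity of $d\delta\colon V_m\to \mathscr L(\R^n, W_{m-1})$. Then, to verify conditions (1)--(3) of the $\ssc^1$-definition (Definition \ref{scx}) for the germ $\delta$, I would use the characterization of Proposition \ref{x1}: it suffices to check that for $m\geq 1$ the map $V_m\to W_{m-1}$ is $C^1$ (just done), that $d\delta(a)$ extends to a bounded operator $\R^n\to W_{m-1}$ (it does — $\R^n$ is already on level $m-1$ since it is finite-dimensional, so there is nothing to extend beyond noting $\Lambda(a)$ lands in $W_{m-1}$ when $a\in V_m$), and that $(a,h)\mapsto D\delta(a)h = \Lambda(a)h$ is continuous on $V_m\oplus \R^n\to W_{m-1}$. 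The last point is the crux: continuity of $\Lambda(a) = (\mathbbm 1 - D_2B(a,\delta(a)))^{-1}D_1B(a,\delta(a))$ into $W_{m-1}$ follows from continuity of $\delta\colon V_m\to W_m\hookrightarrow W_{m-1}$ (given), continuity of $(x,h)\mapsto Df(x)h$ on $U_m\oplus E_{m-1}$ into $W_{m-1}$ (Proposition \ref{x1}(2), applied with the arguments $x=(a,\delta(a))\in E_m$ and $h$ ranging over $\R^n$ resp. over $W_{m-1}$), the uniform bound $\|D_2B\|\leq\varepsilon$, and the Neumann-series formula $(\mathbbm 1 - D_2B)^{-1} = \sum_{j\geq 0}(D_2B)^j$ which converges in operator norm uniformly, so that $a\mapsto (\mathbbm 1 - D_2B(a,\delta(a)))^{-1}$ is continuous into $\mathscr L(W_{m-1},W_{m-1})$ with the norm topology.

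\textbf{Where the difficulty lies.} The subtle point — and the reason this is stated as a separate hard theorem — is that the contraction estimate controls $B$ only in the $W$-variable, losing no level, whereas differentiability in the sc-sense loses one level; so one must be sure that the operators $D_1B(a,\delta(a))\colon \R^n\to W_{m-1}$ and $D_2B(a,\delta(a))\colon W_{m-1}\to W_{m-1}$ and their dependence on $a$ are all continuous \emph{at the correct level}, and that the compactness of the inclusions $E_m\hookrightarrow E_{m-1}$ is used exactly where Proposition \ref{x1} needs it. Concretely, I expect the main obstacle to be verifying condition (3) of Definition \ref{scx} for the \emph{tangent germ} $T\delta$ — i.e.\ that $T\delta$ is $\ssc^0$ — because one must show $(a,h)\mapsto \Lambda(a)h$ sends $V_{m+1}\oplus \R^n$ into $W_{m+1}\oplus(\text{nothing, since the fibre is }\R^n)$... more precisely that $T\delta\colon T\mo(\wt C,0)\to TW$ respects the filtration shift, which again reduces via Proposition \ref{x1} to the continuity statements above but now at level $m$ rather than $m-1$; this is where one genuinely needs the full strength of the hypothesis that $f$ is $\ssc^1$ (not merely that each $f\colon C_m\to W_{m-1}$ is $C^1$), namely the uniformity across levels packaged in the $\ssc^0$-ness of $Tf$. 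I would organize the write-up as: (i) set up $\Lambda$; (ii) classical IFT on each level to get $C^1$-regularity $V_m\to W_{m-1}$ and identify $d\delta=\Lambda$; (iii) Neumann series + Proposition \ref{x1}(2) for continuity of $\Lambda$ into the compact-open topology; (iv) invoke Proposition \ref{x1} (the "if" direction) to conclude $\delta$ is $\ssc^1$. Everything in steps (i)--(iii) is routine once the bookkeeping of levels is fixed; step (iv) is a clean citation.
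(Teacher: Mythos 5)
Your overall shape — guess the derivative $\Lambda(a)=(\mathbbm 1-D_2B(a,\delta(a)))^{-1}D_1B(a,\delta(a))$, verify it is the derivative, check the continuity needed for the tangent germ, conclude by the characterization in Proposition~\ref{x1} — is the right one, and you correctly identify that the subtlety lies in the level bookkeeping for $T\delta$. (Note the paper does not reprove this statement here; it cites \cite{HWZ3}, Theorem~2.3.) However, two of your central steps would fail as written.

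\textbf{The classical implicit function theorem does not apply.} You want to invoke the classical IFT for $f\colon V_m\oplus W_{m-1}\to W_{m-1}$. That requires $f$ to be $C^1$ on an \emph{open} subset of $\mathbb R^n\oplus W_{m-1}$ with $\partial_w f$ an automorphism of $W_{m-1}$. But the only $C^1$-information you have, via Proposition~\ref{x1}(1), is that $f\colon V_m\oplus W_m\to W_{m-1}$ is $C^1$ — the domain lives one level higher than the target. The set $V_m\oplus W_m$ is not open in $\mathbb R^n\oplus W_{m-1}$, and $\partial_w f\colon W_m\to W_{m-1}$ is not an isomorphism (the inclusion $W_m\hookrightarrow W_{m-1}$ is not surjective); the invertible operator $\mathbbm 1-D_2B(a,w)$ on $W_{m-1}$ is only the \emph{extension} guaranteed by Proposition~\ref{x1}(2) at points $(a,w)\in E_m$, not the derivative of a $C^1$-map on $W_{m-1}$. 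This level mismatch is exactly what makes the theorem nontrivial, and the classical IFT gives you nothing here. The repair is a direct estimate: first prove $\delta\colon V_{m+1}\to W_m$ is Lipschitz from $|\delta(a+h)-\delta(a)|_m\le(1-\varepsilon)^{-1}|B(a+h,\delta(a))-B(a,\delta(a))|_m$ and the $C^1$-ness of $a\mapsto B(a,\delta(a))$ into $W_m$; then, writing $\rho=\delta(a+h)-\delta(a)-\Lambda(a)h$ and $\sigma=B(a+h,\delta(a+h))-B(a,\delta(a))-DB(a,\delta(a))(h,\delta(a+h)-\delta(a))$, derive $\rho=(\mathbbm 1-D_2B(a,\delta(a)))^{-1}\sigma$ and show $|\sigma|_{m-1}=o(|h|)$ by combining the Lipschitz bound (to control $|\delta(a+h)-\delta(a)|_m$ by $|h|$) with the $\ssc^1$-Taylor estimate for $B$ along the diagonal $(a,\delta(a))$. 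This costs one more level than the IFT would, but since the base is finite-dimensional one is free to re-index the germ neighborhoods.

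\textbf{The Neumann-series argument asserts a false continuity.} You claim $a\mapsto(\mathbbm 1-D_2B(a,\delta(a)))^{-1}$ is continuous into $\mathscr L(W_{m-1},W_{m-1})$ in the \emph{norm} topology. That would force $a\mapsto D_2B(a,\delta(a))$ to be norm-continuous, which is exactly what sc-calculus does \emph{not} give — see the remark right after Definition~\ref{scx}. Uniform norm-convergence of the geometric series $\sum(D_2B)^j$ for each fixed $a$ does not transport to norm-continuity in $a$, because the individual terms are only strongly (not norm-) continuous. What is true, and what you actually need, is joint continuity of $(a,\zeta)\mapsto(\mathbbm 1-D_2B(a,\delta(a)))^{-1}\zeta$. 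This follows from the resolvent estimate: if $x=(\mathbbm 1-D_2B(a,\delta(a)))^{-1}\zeta$ and $x'=(\mathbbm 1-D_2B(a',\delta(a')))^{-1}\zeta'$ then $x-x'=(\mathbbm 1-D_2B(a,\delta(a)))^{-1}\bigl[(\zeta-\zeta')+(D_2B(a,\delta(a))-D_2B(a',\delta(a')))x'\bigr]$, hence $|x-x'|_{m-1}\le(1-\varepsilon)^{-1}\bigl(|\zeta-\zeta'|_{m-1}+|(D_2B(a,\delta(a))-D_2B(a',\delta(a')))x'|_{m-1}\bigr)$, and the last term goes to $0$ by Proposition~\ref{x1}(2) with $x'$ fixed. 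Alternatively, sum the Neumann series \emph{applied to a fixed vector}: each term $(a,h)\mapsto(D_2B(a,\delta(a)))^jD_1B(a,\delta(a))h$ is jointly continuous and the tail is uniformly small, so the sum is jointly continuous — but that is continuity of the application, not of the operator in norm.

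Both gaps are repairable, and steps (i) and (iv) of your plan are sound. But steps (ii) and (iii) as stated are wrong, and they are precisely the hard points the theorem is about.
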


Theorem \ref{thm5.3} shows that a $\ssc^0$-contraction germ $f$
of class $\ssc^1$ has a solution germ $\delta$ satisfying $f(v, \delta (v))=0$ which is 
also of class $\ssc^1$. We shall use this to verify by induction that $\delta$ is of class $\ssc^k$ if $f$ is of class $\ssc^k$.
We start with the following lemma. 

\begin{lemma}\label{newlemma5.4}\index{L- Higher regularity of solution germ}
Let $f\colon {\mathcal O}(\wt{C}\oplus W,0)\rightarrow (W,0)$ be 
a $\ssc^0$-contraction germ of class $\ssc^k$ where  $k\geq 1$.
Moreover, we assume that the solution germ $\delta$ is of class $\ssc^j$.
(By Theorem \ref{thm5.3}, $\delta$ is  at least of class $\ssc^1$.)
We define the germ $f^{(1)}$ by
\begin{equation*}
f^{(1)}\colon {\mathcal O}(T\wt{C}\oplus TW,0)\rightarrow TW,
\end{equation*}
\begin{equation}\label{germequation}
\begin{split}
f^{(1)}(v,b,u,w)&=\left(u-B(v,u),w-DB(v,\delta(v))\left(b,w\right)\right)\\
&=(u,w)-B^{(1)}(v,b,u,w),
\end{split}
\end{equation}
where the last line defines the map $B^{(1)}$. Then $f^{(1)}$ is an
$\ssc^0$-contraction germ and of class $\ssc^{\min\{k-1,j\}}$.
\end{lemma}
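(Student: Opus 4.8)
The plan is to verify the two assertions separately: (i) that $f^{(1)}$ is again a $\ssc^0$-contraction germ in the variables $(u,w)$ relative to the parameter $(v,b)$, and (ii) that $f^{(1)}$ is of class $\ssc^{\min\{k-1,j\}}$. For (i), I would first identify the correct ``parameter/fibre'' splitting: in the notation of Definition \ref{BG}, the partial quadrant for $f^{(1)}$ is $(T\wt{C})\oplus (TW) = (\wt C\oplus\R^n)\oplus(W\oplus W)$ with parameter direction $\wt C\oplus\R^n$ (coordinates $(v,b)$) and fibre direction $W\oplus W$ (coordinates $(u,w)$). From \eqref{germequation} one reads off that the ``$B$-part'' of $f^{(1)}$ is
$$
B^{(1)}(v,b,u,w)=\bigl(B(v,u),\,DB(v,\delta(v))(b,w)\bigr),
$$
and $B^{(1)}$ vanishes at the origin because $B(0,0)=0$ and $DB(0,0)=0$ (the latter by Lemma \ref{new_Lemma3.9}, since $D_2B(0,0)=0$ and the full linearization $DB(0,0)$ has trivial fibre part; more directly, $DB(v,\delta(v))(b,w)$ evaluated at $(v,b,u,w)=0$ is $DB(0,0)(0,0)=0$). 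The contraction estimate must be checked on each level $m$: the first component gives $|B(v,u)-B(v,u')|_m\le\varepsilon|u-u'|_m$ directly from the hypothesis that $f$ is a $\ssc^0$-contraction germ. For the second component I would fix the parameter $(v,b)$ and compute
$$
\bigl|DB(v,\delta(v))(b,w)-DB(v,\delta(v))(b,w')\bigr|_m
=\bigl|DB(v,\delta(v))(0,w-w')\bigr|_m
=\bigl|D_2B(v,\delta(v))(w-w')\bigr|_m,
$$
using linearity of $DB$ in its increment argument; then Lemma \ref{new_Lemma3.9} bounds this by $\varepsilon|w-w'|_m$ provided $(v,\delta(v))$ is close enough to $(0,0)$ on level $m$, which holds for $v$ in a sufficiently small neighbourhood since $\delta$ is a $\ssc^0$-solution germ with $\delta(0)=0$. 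Shrinking the germ of neighborhoods ${\mathcal O}(T\wt C\oplus TW,0)$ accordingly gives the required uniform estimate, so $f^{(1)}$ is a $\ssc^0$-contraction germ.

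For (ii), the regularity count, I would argue as follows. The first component $(v,b,u,w)\mapsto u-B(v,u)$ is just $f$ composed with the projection forgetting $(b,w)$ and relabelling, hence is of class $\ssc^k$ (it does not even use $\delta$). The second component $(v,b,u,w)\mapsto w-DB(v,\delta(v))(b,w)$ is the critical one. Write it as $w-\Lambda(v)(b,w)$ where $\Lambda(v)=DB(v,\delta(v))$. The map $v\mapsto DB(v,\cdot)$, i.e.\ the map $(v,u,\zeta)\mapsto DB(v,u)\zeta$, is the tangent-type map associated to $B$ and is of class $\ssc^{k-1}$ because $B$ (and hence $f$) is $\ssc^k$ — more precisely, $B$ being $\ssc^k$ means $TB$ is $\ssc^{k-1}$, and $(v,u,\zeta)\mapsto DB(v,u)\zeta$ is the fibre part of $TB$. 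Precomposing with $v\mapsto(v,\delta(v))$, which is of class $\ssc^j$ by hypothesis, and with the obvious sc-smooth (in fact linear) maps extracting $(b,w)$, the chain rule (Theorem \ref{sccomp}, applied inductively to get $\ssc^k$ via Proposition \ref{sc_up} and the iterated tangent construction) yields that $(v,b,w)\mapsto DB(v,\delta(v))(b,w)$ is of class $\ssc^{\min\{k-1,j\}}$. Taking the minimum with the $\ssc^k$ regularity of the first component gives that $f^{(1)}$ is $\ssc^{\min\{k-1,j\}}$, as claimed.

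The main obstacle I anticipate is purely bookkeeping rather than conceptual: one must be careful that ``$f$ is $\ssc^k$'' translates correctly into ``$(v,u,\zeta)\mapsto DB(v,u)\zeta$ is $\ssc^{k-1}$'' (one loss of derivative from passing to the linearization, exactly as in the definition of $\ssc^2,\ssc^3,\dots$ via iterated tangent maps), and that the composition with the $\ssc^j$ germ $v\mapsto(v,\delta(v))$ is handled by a version of the chain rule valid for germs — which is the germ-level analogue of Theorem \ref{sccomp} and requires only that the relevant neighborhoods be nested appropriately. A secondary subtlety is that the contraction constant $\varepsilon$ in the estimate for $B^{(1)}$ must be made uniform over the parameter $(v,b)$ on each level $m$; this is where one genuinely uses Lemma \ref{new_Lemma3.9} (giving $|D_2B(v,\delta(v))\zeta|_m\le\varepsilon|\zeta|_m$ for $(v,\delta(v))$ small on level $m$) rather than merely the contraction hypothesis on $B$ itself. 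Once these two points are pinned down, both assertions of the lemma follow by assembling the pieces.
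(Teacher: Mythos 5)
Your proposal follows essentially the same route as the paper's own proof: the contraction estimate is handled component-wise via the contraction hypothesis on $B$ (first component) and the operator-norm bound on $D_2B(v,\delta(v))$ from Lemma \ref{new_Lemma3.9} (second component), and the regularity count is obtained by recognizing $f^{(1)}$ as $Tf$ with the second slot substituted by $\delta(v)$, giving class $\ssc^{\min\{k-1,j\}}$ by the chain rule. The one point where you are a bit loose is the level bookkeeping in the contraction estimate: the target $TW$ carries the filtration $(TW)_m = W_{m+1}\oplus W_m$, so on level $m$ of $TW$ the $u$-component of $B^{(1)}$ must be estimated in $W_{m+1}$ (i.e.\ $|B(v,u)-B(v,u')|_{m+1}\le\varepsilon|u-u'|_{m+1}$), not in $W_m$ as you wrote, while the $w$-component is estimated in $W_m$; only then do the two terms assemble into the norm $|(u',w')-(u,w)|_m$ on $(TW)_m$. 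Since the contraction hypothesis on $B$ holds at every level, this shift costs nothing, but it is exactly the shift that the notion of tangent germ is built to track, and it should be stated explicitly.
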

\begin{proof}
For $v$ small, the map $B^{(1)}$ has the contraction property with
respect to $(u, w)$.
 Indeed,  on the $m$-level of $(TW)_m=W_{m+1}\oplus W_m$, i.e.,
 for $(u, w)\in W_{m+1}\oplus W_m$,  we  estimate for given $\varepsilon>0$ and $v$ sufficiently small,
  \begin{equation*}
\begin{split}
&|B^{(1)}(v,b,u',w')-B^{(1)}(v,b,u,w)|_m\\
&\phantom{===}=|B(v,u')-B(v,u) |_{m+1}\\
&\phantom{=====}+
|DB(v,\delta(v))(b,w')-DB(v,\delta(v))(b,w)| _{m}\\
&\phantom{===}\leq \varepsilon |u'-u|_{m+1} + |D_2B(v,\delta(v))[w'-w] |_m,
\end{split}
\end{equation*}
which, using the estimate $\norm{D_2B(v, \delta (v))}_m\leq \varepsilon$ for the operator norm from Lemma \ref{new_Lemma3.9},  is estimated by
\begin{equation*}
\phantom{===}\leq \varepsilon \cdot \bigl(|
u'-u|_{m+1}+|w'-w|_m\bigr)= \varepsilon \cdot | (u', w')-(u,
w)|_m.
\end{equation*}
Consequently,  the  germ $f^{(1)}$ is an  $\ssc^0$-contraction germ.
If  now $f$ is of class $\ssc^k$ and $\delta$ of class $\ssc^j$,
then the germ $f^{(1)}$ is of class $\ssc^{\min\{k-1,j\}}$, as one
verifies by comparing the tangent map $Tf$ with the map $f^{(1)}$
and using the fact that the solution $\delta$ is of class $\ssc^j$.
By Theorem \ref{thm5.2}, the solution germ $\delta^{(1)}$ of
$f^{(1)}$ is at least of class $\ssc^0$. It solves the equation
\begin{equation}\label{chap5eq9}
f^{(1)}(v,b, \delta^{(1)}(v, b))=0.
\end{equation}
But also the tangent germ $T\delta$,  defined by $T\delta
(v,b)=(\delta (v), D\delta (v)b)$,  is a solution of \eqref{chap5eq9}.
From the uniqueness we conclude  that $\delta^{(1)}=T\delta $.
\end{proof}

To prove higher regularity we will also make use of  the next lemma.
\begin{lemma}\label{newcontrlem}
Assume we are given a $\ssc^0$-contraction germ  $f$ of class
$\text{sc}^k$ and a solution germ $\delta$ of class $\ssc^j$
with $j\leq k$. Then  there exists a $\ssc^0$-contraction germ
$f^{(j)}$ of class $\ssc^{\min\{k-j, 1\}}$ having
$\delta^{(j)}:=T^j\delta$ as the solution  germ.
\end{lemma}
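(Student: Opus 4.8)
The statement is the iterated analogue of Lemma \ref{newlemma5.4}, and I would prove it by induction on $j$, feeding Lemma \ref{newlemma5.4} into itself. The base case $j=1$ is precisely Lemma \ref{newlemma5.4}: starting from the $\ssc^0$-contraction germ $f$ of class $\ssc^k$ and the solution germ $\delta$ of class $\ssc^j$ (with $j\le k$), the germ $f^{(1)}$ produced there is a $\ssc^0$-contraction germ of class $\ssc^{\min\{k-1,j\}}$ whose solution germ is $\delta^{(1)}=T\delta$. When $j=1$ this gives $f^{(1)}$ of class $\ssc^{\min\{k-1,1\}}$, i.e. exactly the claim for $j=1$ (note $\delta^{(1)}=T^1\delta$).

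\textbf{Inductive step.} Assume the lemma holds for $j-1$, where $1\le j\le k$. Then, since $f$ is a $\ssc^0$-contraction germ of class $\ssc^k$ and $\delta$ has regularity at least $\ssc^{j-1}$ (because $j-1\le j$ and $\delta$ is $\ssc^j$), the induction hypothesis produces a $\ssc^0$-contraction germ $f^{(j-1)}$ of class $\ssc^{\min\{k-(j-1),1\}}=\ssc^1$ (using $j\le k$, so $k-(j-1)\ge 1$), whose solution germ is $\delta^{(j-1)}=T^{j-1}\delta$. Now I would apply Lemma \ref{newlemma5.4} once more, but to the contraction germ $g:=f^{(j-1)}$ in place of $f$: here $g$ is of class $\ssc^{k'}$ with $k'=\min\{k-j+1,1\}$, and its solution germ $\delta^{(j-1)}=T^{j-1}\delta$ has regularity $\ssc^{j'}$ where $j'$ is the regularity of $T^{j-1}\delta$, which is $\ge 1$ by Theorem \ref{thm5.3} applied to $g$ (or simply because $\delta$ is $\ssc^j$ so $T^{j-1}\delta$ is $\ssc^{j-(j-1)}=\ssc^1$). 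Lemma \ref{newlemma5.4} then yields $g^{(1)}=:f^{(j)}$, a $\ssc^0$-contraction germ of class $\ssc^{\min\{k'-1,j'\}}$, whose solution germ is $T\delta^{(j-1)}=T^j\delta=\delta^{(j)}$. It remains to check that $\min\{k'-1,j'\}=\min\{k-j,1\}$: when $k>j$ we have $k'=1$, $k'-1=0$, and the minimum is $0=\min\{k-j,1\}$ (so $f^{(j)}$ is merely $\ssc^0$, consistent with the statement), while if $k=j$ the bookkeeping still matches since $\min\{k-j,1\}=0$; and when we only need $\ssc^1$ regularity of $f^{(j)}$ we use $k\ge j+1$, giving $k'=1$ but then we must be more careful — in fact the sharp statement as written only claims class $\ssc^{\min\{k-j,1\}}$, so the arithmetic $\min\{k'-1,j'\}$ must be re-examined: with $k'=\min\{k-j+1,1\}$ one gets $k'-1\le 0$, forcing $f^{(j)}\in\ssc^0$ unless $k-j+1\ge 2$, i.e. $k\ge j+1$, in which case the earlier steps must be organized to preserve more regularity.

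\textbf{Main obstacle.} The delicate point is exactly the regularity bookkeeping in the induction: a naive iteration of Lemma \ref{newlemma5.4} loses one derivative at each stage in \emph{both} slots of the $\min$, which would give $f^{(j)}\in\ssc^{\min\{k-j,\,j'-?\}}$ rather than the claimed $\ssc^{\min\{k-j,1\}}$. To get the stated bound I would instead not iterate Lemma \ref{newlemma5.4} verbatim but re-derive the contraction estimate for $B^{(j)}$ directly: writing out $f^{(j)}$ explicitly as the $j$-fold ``tangent-type'' germ built from the partial derivatives $D^iB(v,\delta(v))$ for $i\le j$, one checks, exactly as in the displayed estimate in the proof of Lemma \ref{newlemma5.4} and using the operator-norm bound $\|D_2B(v,\delta(v))\|_m\le\varepsilon$ from Lemma \ref{new_Lemma3.9}, that $B^{(j)}$ is a contraction in the appropriate fiber variables on $(T^jW)_m$ for $v$ small; this gives that $f^{(j)}$ is a $\ssc^0$-contraction germ regardless of how the derivative count of $f$ is spent. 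The regularity class $\ssc^{\min\{k-j,1\}}$ then follows by comparing $f^{(j)}$ with the $j$-th tangent map $T^jf$ and invoking that $f\in\ssc^k$ together with $\delta\in\ssc^j$, precisely as in the last sentence of the proof of Lemma \ref{newlemma5.4}; uniqueness in Theorem \ref{thm5.2} identifies the solution germ of $f^{(j)}$ with $T^j\delta$, completing the induction.
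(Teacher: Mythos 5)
Your instinct that the lemma is proved by induction on $j$ is correct, and you have also correctly spotted exactly where your version of the induction breaks down: you apply the inductive hypothesis first to produce $f^{(j-1)}$, which only retains class $\ssc^1$ (since the statement being proved bounds regularity by $\min\{\cdot,1\}$), and then a single application of Lemma~\ref{newlemma5.4} to $g=f^{(j-1)}$ gives class $\ssc^{\min\{k'-1,j'\}}$ with $k'=1$, hence class $\ssc^0$ regardless of $k$. That is indeed insufficient when $k>j$. However, the workaround you sketch — abandoning the iteration and re-deriving the contraction estimate for $B^{(j)}$ by a direct multivariable computation — is not what the paper does and would involve a substantially heavier argument that you do not actually carry out.

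The fix is simpler and stays entirely within the inductive framework: apply Lemma~\ref{newlemma5.4} \emph{first}, and then apply the inductive hypothesis to $f^{(1)}$ rather than to $f$. Concretely, in the step from $j$ to $j+1$ (with $f\in\ssc^k$, $k\ge j+1$, and $\delta\in\ssc^{j+1}$): one application of Lemma~\ref{newlemma5.4} yields $f^{(1)}$ of class $\ssc^{\min\{k-1,\,j+1\}}$ with solution germ $T\delta\in\ssc^{j}$; since $\min\{k-1,j+1\}\ge j$, the inductive hypothesis $P(j)$ applies to the pair $(f^{(1)},T\delta)$ and produces $f^{(j+1)}:=(f^{(1)})^{(j)}$ of class $\ssc^{\min\{\min\{k-1,j+1\}-j,\,1\}}$, which a short case check shows equals $\ssc^{\min\{k-(j+1),\,1\}}$, and whose solution germ is $T^{j}(T\delta)=T^{j+1}\delta$. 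The point is that $f^{(1)}$ retains the high regularity $\min\{k-1,j+1\}$, not merely $\ssc^1$, so the inductive hypothesis has enough to work with; peeling off the last derivative with Lemma~\ref{newlemma5.4} at the end, as you tried, throws that regularity away too early.
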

\begin{proof}
We prove the lemma by induction with respect to $j$. If $j=0$ and
$f$ is a $\ssc^0$-contraction germ of class $\ssc^k$, $k\geq 0$,
then we  set $f^{(0)}=f$ and $\delta^{(0)}=\delta$.  Hence the
result holds true if $j=0$. Assuming the result has been proved for
$j$,  we show it is true for $j+1$. Since $j+1\geq 1$ and $k\geq
j+1$,   the map $f^{(1)}$,  defined by \eqref{germequation},  is of
class $\ssc^{\min \{k-1, j+1\}}$ in view of Lemma \ref{newlemma5.4}.
Moreover, the solution germ $\delta^{(1)}=T\delta$  satisfies
$$
f^{(1)}\circ \gr (\delta^{(1)})=0,
$$
and is of class $\ssc^j$.  Since $\min \{k-1, j+1\}\geq j$, by  the
induction hypothesis there exists  a map
${(f^{(1)})}^{(j)}=:f^{(j+1)}$ of regularity class
$\min\{\min\{k-1,j+1\}-j,1\}=\min\{k-(j+1),1\}$ so that
$$
f^{(j+1)}\circ\gr ({(\delta^{(1)})}^{(j)})=0.
$$
Setting  $\delta^{(j+1)}={(\delta^{(1)})}^{(j)}=T^{j}(T\delta
)=T^{j+1}\delta$,   Lemma \ref{newcontrlem} follows.
\end{proof}

The main result of this section is the following germ-implicit functions theorem.

\begin{theorem}[{\bf Germ-Implicit Function Theorem}] \label{newthm5.4}\index{T- Germ implicit function theorem}
If $f\colon \mo(\wt{C}\oplus {W},0)\to  ({ W},0)$ is  an
$\text{sc}^0$-contraction germ which is, in addition, of class
$\ssc^k$, then the solution germ
$$\delta\colon  \mo(\wt{C},0)\rightarrow ({ W},0)$$
satisfying
$$f(v,\delta (v))=0$$
is also of class $\ssc^k$.
\end{theorem}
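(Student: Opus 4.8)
The plan is to prove Theorem \ref{newthm5.4} by induction on $k$, using the machinery built up in Lemmata \ref{newlemma5.4} and \ref{newcontrlem} together with the base case supplied by Theorem \ref{thm5.3}. The essential point is that the solution germ $\delta$ of an $\ssc^0$-contraction germ $f$ is \emph{unique} (Theorem \ref{thm5.2}), and that its iterated tangent germs $T^j\delta$ are themselves solution germs of auxiliary contraction germs $f^{(j)}$; so regularity can be transported level by level in the differentiation order.

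First I would set up the induction. For $k=0$ there is nothing to prove, and for $k=1$ the statement is precisely Theorem \ref{thm5.3}. So assume $k\geq 2$ and suppose the theorem holds for all contraction germs of class $\ssc^{k-1}$. Let $f\colon \mo(\wt{C}\oplus W,0)\to(W,0)$ be an $\ssc^0$-contraction germ of class $\ssc^k$, and let $\delta$ be its unique $\ssc^0$-solution germ. By Theorem \ref{thm5.3}, $\delta$ is at least of class $\ssc^1$, so we may apply Lemma \ref{newlemma5.4} with $j=1$: the germ $f^{(1)}$ defined by \eqref{germequation} is an $\ssc^0$-contraction germ of class $\ssc^{\min\{k-1,1\}}=\ssc^1$ (here we use $k\geq 2$), and its unique solution germ is $\delta^{(1)}=T\delta$. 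But in fact $f^{(1)}$ is of class $\ssc^{\min\{k-1,j\}}$ \emph{as soon as} we know $\delta$ is of class $\ssc^j$; so I would first improve the regularity of $\delta$ step by step. Knowing $\delta\in\ssc^1$, Lemma \ref{newlemma5.4} gives $f^{(1)}\in\ssc^{\min\{k-1,1\}}$ and $\delta^{(1)}=T\delta$ solves it; applying Theorem \ref{thm5.3} to $f^{(1)}$ (which is $\ssc^1$) shows $T\delta=\delta^{(1)}\in\ssc^1$, i.e.\ $\delta\in\ssc^2$. Feeding $\delta\in\ssc^2$ back into Lemma \ref{newlemma5.4} gives $f^{(1)}\in\ssc^{\min\{k-1,2\}}$, so by the induction hypothesis (applied at order $\min\{k-1,2\}\geq 2$ when $k\geq 3$, or directly when $k=2$) its solution germ $T\delta$ is of class $\ssc^{\min\{k-1,2\}}$, i.e.\ $\delta\in\ssc^{\min\{k,3\}}$.

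Rather than chasing this by hand I would package it cleanly via Lemma \ref{newcontrlem}: if $\delta$ is known to be of class $\ssc^j$ with $j\leq k$, then there is an $\ssc^0$-contraction germ $f^{(j)}$ of class $\ssc^{\min\{k-j,1\}}$ whose unique solution germ is $T^j\delta$. Start from $j_0=1$ (valid by Theorem \ref{thm5.3}). Suppose inductively that $\delta\in\ssc^j$ for some $1\leq j< k$. Then $f^{(j)}$ has class $\ssc^{\min\{k-j,1\}}$. If $k-j\geq 1$, then $f^{(j)}$ is an $\ssc^1$-contraction germ, so by Theorem \ref{thm5.3} its solution germ $T^j\delta$ is of class $\ssc^1$; but $T^j\delta\in\ssc^1$ means exactly that $T^{j-1}\delta\in\ssc^2$, and iterating this observation down (or directly: $\delta$ is $\ssc^j$ and its $j$-th tangent germ is $\ssc^1$, hence $\delta\in\ssc^{j+1}$) yields $\delta\in\ssc^{j+1}$. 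Repeating until $j=k$ gives $\delta\in\ssc^k$, which is the claim.

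The main obstacle — and the reason the whole apparatus of Lemmata \ref{newlemma5.4}--\ref{newcontrlem} is needed — is bookkeeping the regularity loss in the right order: naively, differentiating the fixed-point equation $f(v,\delta(v))=0$ loses a derivative on both $f$ and $\delta$, so one cannot directly bootstrap. The trick, already encoded in Lemma \ref{newlemma5.4}, is that the auxiliary germ $f^{(1)}$ involves $DB(v,\delta(v))$ evaluated along the \emph{already-known} solution, so it only costs what $\delta$ actually has, not what $f$ has; this asymmetry is what makes the induction close. One point to check carefully is the implication ``$T^j\delta\in\ssc^1\Rightarrow\delta\in\ssc^{j+1}$,'' which follows from the definition of $\ssc^{j+1}$ as ``$T^j$ of the map is $\ssc^1$'' (Definition \ref{scx} and the discussion following it), together with the fact that the solution germ is a genuine germ of maps on $\mo(\wt{C},0)$, so iterated tangents are computed as in Definition \ref{germy}. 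Once that identification is in place, the combination of Theorem \ref{thm5.3} (base regularity), Lemma \ref{newlemma5.4} (construction of $f^{(1)}$ and identification $\delta^{(1)}=T\delta$), and Lemma \ref{newcontrlem} (its iterate) yields the full statement by a short induction, and the proof of Theorem \ref{newthm5.4} is complete.
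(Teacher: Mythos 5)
Your final, clean version is essentially the paper's proof: both arguments combine Theorem \ref{thm5.3} (base regularity $\delta\in\ssc^1$), Lemma \ref{newcontrlem} (to get $f^{(j)}$ of class $\ssc^{\min\{k-j,1\}}$ with solution germ $T^j\delta$), and the observation that $k-j\geq 1$ makes $f^{(j)}$ at least $\ssc^1$, hence $T^j\delta\in\ssc^1$ and so $\delta\in\ssc^{j+1}$. The only difference is cosmetic: the paper phrases the increment as a proof by contradiction, while you run it as a direct bootstrap from $j=1$ up to $j=k$.
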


From Theorem \ref{newthm5.4}, using  Proposition \ref{sc_up} and \ref{lower}
and Proposition \ref{save}  we deduce the following properties of the section germ $\delta$ under the additional assumptions, that  $f$ is a sc-smooth germ.

\begin{corollary}\label{new_cor_3.33}\index{C- Classical smoothness properties of solution germs}

If the $\ssc^0$-contraction germ $f$ is a sc-smooth germ, there exists for every $m\geq 0$ and $k\geq 0$ a relatively open neighborhoods $V_{m, k}$ of $0$ in $\wt{C}$ such that 
\begin{itemize}
\item[{\em (1)}\ ] $\delta (V_{m, k})\subset W_m$.
\item[{\em (2)}\ ] $\delta\colon  V_{m, k}\to W_m$ is of class $C^k$.
\end{itemize}

In particular, the solution germ $\delta$ is sc-smooth at the smooth point $0$.
\end{corollary}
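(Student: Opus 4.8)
The plan is to obtain conclusions (1) and (2) directly from the germ-implicit function theorem, Theorem \ref{newthm5.4}, combined with the comparison results relating $\ssc$-regularity to classical $C^k$-regularity, namely Propositions \ref{sc_up} and \ref{lower}. First I would record that, by hypothesis, $f\colon \mo(\wt{C}\oplus W,0)\to(W,0)$ is a $\ssc^0$-contraction germ of class $\ssc^k$ for \emph{every} $k\geq 0$. Applying Theorem \ref{newthm5.4} for each such $k$, the solution germ $\delta\colon \mo(\wt{C},0)\to(W,0)$ characterized by $f(v,\delta(v))=0$ is of class $\ssc^k$ for every $k$, hence a sc-smooth germ; this already gives the final assertion of the corollary, that $\delta$ is sc-smooth at the smooth point $0$. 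Being in particular a $\ssc^0$-germ, $\delta$ carries a decreasing sequence $V_0\supset V_1\supset\cdots$ of relatively open neighborhoods of $0$ in $\wt{C}$ with $\delta(V_j)\subset W_j$ and $\delta\colon V_j\to W_j$ continuous.

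Now fix $m\geq 0$ and $k\geq 0$. Since $\delta$ is a $\ssc^{k}$-germ, the germ analogue of Proposition \ref{lower} yields a relatively open neighborhood of $0$ in $\wt{C}_{m+k}$ on which the induced map into $W_m$ is of class $C^k$; equivalently, one may first lift $m$ levels using Proposition \ref{sc_up}, obtaining the $\ssc^k$-germ $\delta\colon\mo(\wt{C},0)^m\to(W^m,0)$, and then apply Proposition \ref{lower} with loss of exactly $k$ levels, which gives the same $C^k$-statement into $(W^m)_0=W_m$ over a neighborhood of $0$ in $(\wt{C}^m)_k=\wt{C}_{m+k}$. Because the finite-dimensional factor $\R^n$ carries the constant sc-structure, $\wt{C}_{m+k}=\wt{C}$, so this neighborhood is simply a relatively open neighborhood $V_{m,k}$ of $0$ in $\wt{C}$; after shrinking it so that $V_{m,k}\subset V_m$ we obtain $\delta(V_{m,k})\subset\delta(V_m)\subset W_m$, which is (1), while Proposition \ref{lower} gives that $\delta\colon V_{m,k}\to W_m$ is of class $C^k$, which is (2).

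The step requiring the most attention — though it is bookkeeping rather than a genuine difficulty, and is the only place I expect to have to be careful — is that Propositions \ref{sc_up} and \ref{lower} are phrased for sc-smooth maps between relatively open subsets of partial quadrants, not for germs. One therefore has to note that their proofs are purely local and pass verbatim to $\ssc^k$-germs defined on a nested sequence of neighborhoods; alternatively, one restricts $\delta$ to a single fixed relatively open neighborhood of $0$ in $\wt{C}$ on which it is an honest $\ssc^k$-map and invokes the propositions there. Once this is granted, everything follows immediately from Theorem \ref{newthm5.4}.
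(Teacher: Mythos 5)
Your proof is correct and follows essentially the same route the paper intends: apply Theorem~\ref{newthm5.4} for each $k$ to conclude $\delta$ is $\ssc^\infty$, then combine Propositions~\ref{sc_up} and~\ref{lower} (observing that the finite-dimensional partial quadrant $\wt C$ has a constant filtration, so $\wt C_{m+k}=\wt C$) to obtain the pointwise $C^k$-regularity into each $W_m$ on a possibly smaller neighborhood. Your explicit acknowledgment that Propositions~\ref{sc_up} and~\ref{lower} are stated for maps on fixed open sets rather than germs, and that passing to the germ setting requires either re-reading the proofs or restricting to a single neighborhood where $\delta$ is an honest $\ssc^k$-map, addresses the only real bookkeeping point. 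The paper's statement also names Theorem~\ref{save}, but that ingredient is only used for the quantitative restatement in Remark~\ref{hofer-rem} (uniform bounds on $|\delta(a)|_i$); it is not needed for Corollary~\ref{new_cor_3.33} as stated, so its omission is not a gap.
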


Theorem \ref{newthm5.4} will be one of the 
building blocks for all future versions of implicit function
theorems,  as well as for the transversality theory.

\begin{proof}[Proof of Theorem \ref{newthm5.4}]
Arguing by contradiction we assume that the solution germ $\delta$ is
of class $\text{sc}^j$ for some $j<k$ but not of class $\text{sc}^{j+1}$. In view of Lemma \ref{newcontrlem}, there exists an
$\text{sc}^0$-contraction germ $f^{(j)}$ of class $\text{sc}^{\min
\{k-j, 1\}}$ such  that  $\delta^{(j)}=T^j\delta$ satisfies
$$
f^{(j)}\circ \gr (\delta^{(j)})=0.
$$
Since also $k-j\geq 1$, it follows that $f^{(j)}$ is at least of
class $\text{sc}^1$. Consequently,  the solution germ $\delta^{(j)}$
is at least of class $\ssc^{1}$. Since
$\delta^{(j)}=T^j\delta$, we conclude that $\delta$ is at least of
class $\text{sc}^{j+1}$ contradicting our assumption. The proof of
Theorem \ref{newthm5.4} is complete.
\end{proof}

The same discussion applies to germs $f$ defined
on $\wt{C}\oplus W$ where $\wt{C}$ is any  finite-dimensional partial quadrant in $\R^n$ leading to the following theorem from \cite{HWZ3}, 
Theorem 2.7. 
\begin{theorem}\label{newthmboundary5.4}\index{T- Germ implicit function theorem {II}}
Let $\wt{C}$ be a finite-dimensional partial quadrant in $\R^n$. If $f\colon \mo(\wt{C}\oplus
{W},0)\to (W,0)$ is a $\ssc^0$-contraction germ which is,
in addition, of class $\ssc^k$, then the solution germ
$$\delta\colon  \mo(\wt{C},0)\rightarrow (W,0)$$
satisfying $f\circ \gr (\delta)=0$
is also of class $\ssc^k$. In particular,  if $f$ is a sc-smooth germ so is its solution germ $\delta$.
\end{theorem}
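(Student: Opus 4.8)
The plan is to deduce Theorem~\ref{newthmboundary5.4} from the already-proved standard case, Theorem~\ref{newthm5.4}, by a linear change of coordinates in the finite-dimensional factor; the whole argument is a reduction, with no substantive new analysis. By Definition~\ref{partial_quadrant} there is a linear isomorphism $T\colon \R^n\to\R^n$ and an integer $k$ with $T(\wt C)=[0,\infty)^k\oplus\R^{n-k}=:\wt C_0$. Set $\wh T:=T\oplus\id_W\colon \R^n\oplus W\to\R^n\oplus W$; since $\wh T$ preserves every level $\R^n\oplus W_m$ and is continuous with continuous inverse there, it is a linear sc-isomorphism, hence a sc-diffeomorphism, and because $\wh T(\wt C\oplus W)=\wt C_0\oplus W$ it restricts to a sc-diffeomorphism of germs of neighborhoods $\wh T\colon \mo(\wt C\oplus W,0)\to\mo(\wt C_0\oplus W,0)$.

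Next I would push $f$ forward and put $g:=f\circ\wh T^{-1}\colon \mo(\wt C_0\oplus W,0)\to(W,0)$, and check that $g$ is again a $\ssc^0$-contraction germ of class $\ssc^k$. Regularity is immediate: $\wh T^{-1}$ is linear, hence sc-smooth, so by the chain rule (Theorem~\ref{sccomp}) $g$ is $\ssc^k$, losing no regularity because the inner map is sc-smooth. For the contraction property, write $f(a,w)=w-B(a,w)$ with $B(0,0)=0$ as in Definition~\ref{BG}; then $g(\bar a,w)=w-B(T^{-1}\bar a,w)=:w-\bar B(\bar a,w)$, with $\bar B(0,0)=0$. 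Since $T^{-1}$ is a fixed continuous linear map, for every $m$ it carries some relatively open neighborhood of $0$ in $\wt C_0$ into a neighborhood of $0$ in $\wt C$ on which $B$ satisfies the contraction estimate; hence, given $\varepsilon>0$ and $m\geq 0$, $\abs{\bar B(\bar a,w)-\bar B(\bar a,w')}_m=\abs{B(T^{-1}\bar a,w)-B(T^{-1}\bar a,w')}_m\leq\varepsilon\abs{w-w'}_m$ for $(\bar a,w),(\bar a,w')$ close enough to $(0,0)$ on level $m$. The one point requiring any care is that the change of coordinates touches only the $\R^n$-variable, so that the normal form $w-\bar B$ and the $w$-Lipschitz estimate defining a contraction germ are preserved; everything else is bookkeeping.

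Now Theorem~\ref{newthm5.4} applies to $g$ (whose domain quadrant $\wt C_0=[0,\infty)^k\oplus\R^{n-k}$ is exactly the one treated there): there is a unique $\ssc^0$-solution germ $\sigma\colon \mo(\wt C_0,0)\to(W,0)$ with $g\circ\gr(\sigma)=0$, and it is of class $\ssc^k$. Define $\delta\colon \mo(\wt C,0)\to(W,0)$ by $\delta:=\sigma\circ T$, restricting the domain germ to $T^{-1}$ of the domain of $\sigma$. For $a\in\wt C$ near $0$, $f(a,\delta(a))=f\bigl(\wh T^{-1}(Ta,\sigma(Ta))\bigr)=g(Ta,\sigma(Ta))=0$, so $f\circ\gr(\delta)=0$; by the uniqueness of the continuous solution germ — which in the parametrized Banach fixed-point argument underlying Theorem~\ref{thm5.2} uses nothing about the shape of the quadrant — $\delta$ is the solution germ of $f$. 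Finally $\delta=\sigma\circ T$ is a composition of the $\ssc^k$ germ $\sigma$ with the linear, sc-smooth map $T$, hence $\delta$ is of class $\ssc^k$ by Theorem~\ref{sccomp}. This proves the first assertion, and the ``in particular'' statement follows immediately: a sc-smooth germ $f$ is $\ssc^k$ for all $k$, whence so is $\delta$, i.e.\ $\delta$ is sc-smooth. As an alternative one could instead re-run the entire discussion of this subsection (Lemmas~\ref{newlemma5.4} and~\ref{newcontrlem} together with Theorem~\ref{newthm5.4}) verbatim with $\wt C_0$ replaced by $\wt C$; the only place the special shape of the quadrant entered was Theorem~\ref{newthm5.4} itself, which is precisely what the reduction above dispenses with.
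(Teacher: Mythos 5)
Your proof is correct, and it takes a genuinely different route from the paper. The paper dispenses with Theorem~\ref{newthmboundary5.4} by simply asserting that ``the same discussion applies'' when $\wt{C}$ is a general finite-dimensional partial quadrant; that is, one is meant to re-read Lemma~\ref{newlemma5.4}, Lemma~\ref{newcontrlem}, and Theorem~\ref{newthm5.4} and observe that none of the arguments use anything beyond the data of a finite-dimensional parameter sitting in a partial quadrant. Your approach instead reduces the general case to the already-proved model case $[0,\infty)^k\oplus\R^{n-k}$ by a linear change of variable $\wh T=T\oplus\id_W$ in the $\R^n$-factor, using Theorem~\ref{newthm5.4} as a black box. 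The crucial points you correctly verify are: the normal form $w-B$ and the contraction estimate in $w$ are preserved under the pushforward because $\wh T$ touches only the finite-dimensional variable; the $\ssc^k$ regularity is preserved because $\wh T$ and $\wh T^{-1}$ are linear sc-isomorphisms (hence sc-smooth and compatible with the chain rule on both sides); and the uniqueness of the solution germ, which holds at the $\ssc^0$ level by the parametrized contraction argument independently of the shape of the quadrant, identifies $\delta=\sigma\circ T$ as the solution germ of $f$. What your reduction buys is modularity — one does not need to audit the internals of the earlier lemmas — at the mild cost of introducing the coordinate change; the paper's version is shorter on the page but leans on the reader to check that nothing in the preceding arguments actually used the standard form of the quadrant.
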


\begin{remark}\label{hofer-rem}

For  later use we reformulate Corollary \ref{new_cor_3.33} in quantitative terms. If $f$ is a $\ssc^0$-contraction germ which, in addition, is a sc-smooth germ satisfying $f(0)=0$. then the solution germ $\delta$ possesses the following properties of existence, uniqueness, and regularity. 

There exist monotone decreasing sequences  $(\varepsilon_i)$ for $i\geq 0$ and $(\tau_i)$ for $i\geq 0$ such that 
\begin{itemize}
\item[(1)] $\delta\colon \{a\in [0,\infty)^k\oplus \R^{n-k}\, \vert \, |a|_0\leq \varepsilon_0\}\rightarrow
\{w\in W\ |\ |w|_0\leq \tau_0\}$
is a continuous solution of $f(a, \delta (a))=0$ satisfying $\delta (0)=0$. 
 \item[(2)] If the solution  $f(a,w)=0$ satisfies  $|a|_0\leq \varepsilon_0$ and $|w|_0\leq \tau_0$, then $w=\delta(a)$.
\item[(3)] If  $|a|_0\leq \varepsilon_i$, then  $\delta(a)\in W_i$ and $|\delta(a)|_i\leq \tau_i$ for every  $i\geq 0$.
\item[(4)] The germ  $\delta\colon \{a\in [0,\infty)^k\oplus \R^{n-k} \, \vert \,  |a|_0\leq \varepsilon_i\}\rightarrow W_i$ is of class $C^i$,  for every $i\geq 0$.
\end{itemize}

\end{remark}

\subsection{Stability  of Basic Germs}
All the maps considered in the section are sc-smooth maps. 
Let us recall (from Definition \ref{BG-00x}) the notion of  a basic germ 
\begin{definition}[{\bf The basic class $\mathfrak{C}_{basic}$}]\index{D- Basic class}\index{$\mathfrak{C}_{basic}$} Let $W$ be a sc-Banach space.  
A {\bf basic germ} $f\colon {\mathcal O}([0,\infty)^k\oplus\R^{n-k}\oplus W,0)\rightarrow (\R^N\oplus W,0)$
is a sc-smooth germ having the property that the germ $P\circ f$ is a $\ssc^0$-contraction  germ, where 
$P\colon \R^N\oplus W\rightarrow W$ is the sc-projection. 
We denote the class of all basic germs by $\mathfrak{C}_{basic}$. 
\end{definition}
In view of Definition \ref{oi}, the basic germs are the local models for the germs of sc-Fredholm sections.


 \begin{theorem}[{\bf Weak Stability of Basic Germs}]\label{arbarello}\index{T- Weak stability of basic germs}
We consider a basic germ 
$$
f\colon {\mathcal O}(([0,\infty)^k\oplus \R^{n-k})\oplus W,0)\rightarrow (\R^N\oplus W,0), 
$$
which we can view as the principal part of a sc-smooth section of the obvious strong bundle. We assume that
$s$ is the principal part of a $\ssc^+$-section of the same bundle satisfying $s(0)=0$.  Then there exists a strong bundle isomorphism
$$
\Phi\colon U\triangleleft (\R^N\oplus W)\rightarrow U'\triangleleft (\R^{N'}\oplus W'),
$$
where $U$ is an open neighborhood of $0$ in $[0,\infty)^k\oplus \R^{n-k}\oplus W$, and $U'$ is an open neighborhood
of $0$ in $[0,\infty)^k\oplus \R^{n'-k}\oplus W'$, 
covering the sc-diffeomorphism $\varphi\colon (U,0)\rightarrow (V,0)$, 
so that ${(\Phi_\ast(f+s))}^1$ is a basic germ. 

Here ${(\Phi_\ast(f+s))}^1$  is the germ $\Phi_\ast(f+s)\colon V^1\to (\R^{N'}\oplus W')^1$, where the levels are raised by $1$.
\end{theorem}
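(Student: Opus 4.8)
The plan is to reduce the problem to the two cases already analyzed in the earlier theory: the filled/conjugation machinery (Definitions \ref{oi}, \ref{BG-00x}) together with the fact that $\ssc^+$-perturbations of basic germs, \emph{after raising the level by one}, become basic germs. The starting observation is that a basic germ $f$ of the form $f = h$ (with $h$ basic) is, in particular, a sc-Fredholm germ in the sense of Definition \ref{oi}, realized on the trivial strong bundle $U\triangleleft(\R^N\oplus W)\to U$ with the identity retraction $r = \mathbbm 1$, and with the trivial filling $g = f$. First I would make the trivial reductions explicit: since the retraction is the identity, every point of $U$ is a smooth point, the projection $\rho(u) = \mathbbm 1_{\R^N\oplus W}$ on the $W$-factor is (up to the obvious block form) the projection $P$, and a section germ on $O = U$ coincides with its filled version. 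So the content of the theorem is a purely local statement about the principal part $f + s$ where $s$ is the principal part of a $\ssc^+$-section with $s(0) = 0$.

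Next I would carry out the main step, which is essentially the ``hard theorem'' of \cite{HWZ3} referenced in Remark \ref{remark.3_20} (Theorem 3.9 there): writing $f + s$ explicitly, one has $P\circ(f+s)(a,w) = w - B(a,w) + P\circ s(a,w)$. Since $s$ is a $\ssc^+$-section, the map $(a,w)\mapsto P\circ s(a,w)$ gains one level, i.e.\ it maps into $W_{m+1}$ and is sc-smooth as a map into $W^1$; consequently, on the level-raised germ $(f+s)^1$ this term behaves like the principal part of an honest sc-smooth contribution that is small in the contraction sense. The key is to show that, \emph{after raising all levels by one}, $P\circ(f+s)^1$ is again a $\ssc^0$-contraction germ, i.e.\ that the new ``$B$'', namely $B^{new}(a,w) = B(a,w) - P\circ s(a,w)$, satisfies the contraction estimate $|B^{new}(a,w) - B^{new}(a,w')|_m \le \varepsilon|w-w'|_m$ near $0$ on every level $m$ of the shifted scale. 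For the $B$-part this is the defining property of a basic germ (Definition \ref{BG-00x}); for the $\ssc^+$-part one invokes Lemma \ref{new_Lemma3.9}-style estimates: a $\ssc^+$-section has $D_2$ of its principal part small in the relevant operator norm after a level shift, because a $\ssc^+$-map $U\to F^1$ has $Df$ compact (hence one can use the strong-bundle estimate and the mean value theorem along the $w$-direction). This is where I expect the real work, and the main obstacle, to lie: one must carefully bookkeep the double filtration and verify that the level-shift $(\cdot)^1$ is exactly what is needed to absorb the loss-of-one-level inherent in the definition of sc-differentiability while keeping the contraction constant arbitrarily small. This is the analytic heart of Theorem 3.9 of \cite{HWZ3}, and I would either reproduce that argument or cite it directly.

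Finally, once $(f+s)^1$ (equivalently $P\circ(f+s)^1$) is known to be a $\ssc^0$-contraction germ, I would conclude by observing that the non-$W$ (i.e.\ $\R^N$) component of $f+s$ plays no role in the contraction structure, so $(f+s)^1$ already \emph{is} a basic germ in the sense of Definition \ref{BG-00x} on the shifted scale, with the roles of $N$, $n$, $k$, $W$ essentially unchanged (one may take $N' = N$, $n' = n$, $W' = W^1$). Thus the required strong bundle isomorphism $\Phi$ can be taken to be (a level-shifted version of) the identity, covering the identity sc-diffeomorphism $\varphi = \mathrm{id}$; in the generality stated with possibly different $N', n', W'$ there is simply more room than needed. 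I would close by noting that the genuine utility of the statement is precisely in the form given in Remark \ref{remark.3_20}: it explains why, with the \emph{old} definition of sc-Fredholm, $(f+s)^1$ rather than $f+s$ is the object that is well-behaved, and it is this deficiency that motivated Definition \ref{oi}. Hence the theorem is best viewed as the technical bridge between the two definitions, and its proof is exactly the contraction-germ stability estimate above plus the cosmetic reductions.
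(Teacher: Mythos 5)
Your reduction (trivial bundle, identity retraction, no filling needed) and the overall intuition (the $\ssc^+$-perturbation should, after a level shift, produce a new contraction germ) are on the right track, but there is a genuine gap in the analytic heart of the argument that makes the rest of the sketch fail. You claim that ``a $\ssc^+$-section has $D_2$ of its principal part small in the relevant operator norm after a level shift, because a $\ssc^+$-map\ldots has $Df$ compact,'' and you conclude that one may absorb $P\circ s$ entirely into a new contraction $B^{\mathrm{new}} = B - P\circ s$ and take $\Phi$ to be essentially the identity, with $N'=N$, $W'=W^1$. This is wrong. The linear operator $A := P\circ D_2 s(0)\colon W\to W$ is a $\ssc^+$-operator, hence level-wise \emph{compact}, but compactness is not smallness: $\|A\|$ can be arbitrarily large, and no raising of the index changes that. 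Consequently $w \mapsto w - B(a,w) + P\circ s(a,w)$ does \emph{not} have the required form $w - B^{\mathrm{new}}(a,w)$ with $B^{\mathrm{new}}$ an arbitrarily small contraction in $w$: the ``linear part'' of the candidate $B^{\mathrm{new}}$ is $D_2B(0,0) - A = -A$, which need not be small. Your sketch of the contraction estimate for $B^{\mathrm{new}}$ therefore breaks down, and with it the claim that the identity suffices as the strong bundle isomorphism.

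The paper's proof handles exactly this obstruction by treating $\mathbbm{1}+A\colon W\to W$ as a $\ssc^+$-perturbation of the identity, hence a sc-Fredholm operator of index $0$ (Proposition \ref{prop1.21}). This produces sc-splittings $W = C\oplus X$ and $W = R\oplus Z$ with $C=\ker(\mathbbm{1}+A)$, $R = \text{range}(\mathbbm{1}+A)$, and $\dim C = \dim Z < \infty$, and $L := (\mathbbm{1}+A)\vert X\colon X\to R$ a sc-isomorphism. Writing $P\circ s(a,w) = Aw + S(a,w)$ with $D_2 S(0,0)=0$, the level raise is used precisely so that $S$ becomes $C^1$ on each level $m\geq 1$ (Proposition \ref{lower}) and hence an arbitrarily small contraction near $0$ — this is the role of $(\cdot)^1$, not shrinking $A$. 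Then one composes with $L^{-1}\circ P_2$ and regroups variables, moving the finite-dimensional piece $C$ of $W$ into the ``finite parameter'' part: the new finite-dimensional factor is enlarged ($N' = N + \dim C$, and $n' = n + \dim C$) and the new infinite-dimensional factor is the proper subspace $W' = X$. This is why the statement genuinely requires a nontrivial strong bundle isomorphism $\Phi$ changing the splitting of the fiber, not merely a level-shifted identity. If you wanted to reproduce the proof rather than cite \cite{HWZ3}, this Fredholm splitting of $\mathbbm{1}+A$ and the explicit construction of the fiber transformation $\Psi$ covering $\psi(a,w) = (a,(\mathbbm{1}-P_1)w, P_1 w)$ are the steps you would need to supply.
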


Recalling  the Fredholm index of a basic germ in Proposition \ref{Newprop_3.9}, we conclude that $n-N=n'-N'$, because the Fredholm index is invariant under strong bundle isomorphisms. The integer $k$ is the  degeneracy index $k=d_C(0)$ of the point $0$ which is, in view of Proposition \ref{newprop2.24} and 
Corollary \ref{equality_of_d}, preserved under the sc-diffeomorphism $\varphi$ satisfying $\varphi (0)=0$.
Although Theorem \ref{arbarello}  was not explicitly formulated  in \cite{HWZ3}, it follows from the proof of Theorem 3.9  in \cite{HWZ3}.

\begin{proof}
Denoting by  $P\colon \R^N\oplus W\rightarrow W$ the sc-projection, the  composition $P\circ f$ is, by definition, of  the form
$$
 P\circ f(a,w)=w-B(a,w), 
$$
and has the property that for every $\varepsilon>0$ the estimate $\abs{B(a,w)-B(a,w')}_m\leq \varepsilon\cdot |w-w'|_m$ holds,   if $(a, w)$ and $w'$ are sufficiently small on level $m$.

Linearizing the  $\ssc^+$-section  $s$ with respect to the variable $w\in W$ at the point $0$, we introduce the sc-operator 
$$A:=P\circ D_2s(0)\colon W\to W.$$
Since $s$ is a
$\ssc^+$-section and $0$ is smooth point, the operator $A\colon W\rightarrow W$ is a $\ssc^+$-operator. Therefore,  the operator ${\mathbbm 1}+A\colon  W\to W$ is a  $\ssc^+$-perturbation of the identity and hence a sc-Fredholm operator by Proposition \ref{prop1.21}.  Because $A$ is level wise compact, 
the index $\ind ({\mathbbm 1}+A)$ is equal to $0$. 
The associated sc-decompositions of the sc-Banach space $W$ are the following,
$$
{\mathbbm 1}+A\colon W=C\oplus X\to W=R\oplus Z,
$$
where $C=\text{ker}({\mathbbm 1}+A)$ and $R=\text{range}\ ({\mathbbm 1}+A)$ and
$\text{dim}\ C=\text{dim}\ Z<\infty$.

Since $s$ is a $\ssc^+$-section, we conclude from Proposition \ref{lower} that the restriction  $s\colon U_m\to \R^N\oplus W_m$ is of class $C^1$, for every 
$m\geq 1$.  
From the identity 
$P\circ s(a, w)=
P\circ D_2 s(0)w+(P\circ s(a, w)-P\circ D_2 s(0)w)$,
one  deduces the following representation for 
$P\circ s$, on every level $m\geq 1$, 
$$
\text{$P\circ s(a,w)=Aw+S(a, w)$\quad  and \quad 
 $D_2S(0,0)=0$}.$$
Therefore,  $S$ is, with respect to the second
variable $w$, a arbitrary small contraction on every level $m\geq 1$, if $a$ and $w$ are sufficiently small  depending 
on the level $m$ and the contraction constant.  
We can make the arguments which follow only on the levels $m\geq 1$. This explains  the reason for the index  raise by $1$ in the theorem.

We can write
\begin{equation*}
\begin{split}
P\circ (f+s)(a, w)&=w-B(a, w)+Aw+S(a, w)\\
&=({\mathbbm 1}+A)w-[ B(a, w)-S(a, w) ]\\
&=({\mathbbm 1}+A)w-\ov{B}(a, w),
\end{split}
\end{equation*}
where we have abbreviated
$$\ov{B}(a, w)=B(a, w)-S(a, w).$$
By assumption,  the map $B$ belongs to the $\ssc^0$-contraction germ and hence the map $\ov{B}$ is a contraction in the second
variable on every level $m\geq 1$ with arbitrary small contraction
constant $\varepsilon>0$ if $a$ and $w$ are sufficiently small
depending on the level $m$ and the contraction constant $\varepsilon$.  Introducing  the canonical projections by
\begin{align*}
P_1&\colon W=C\oplus X\to X\\
P_2&\colon W=R\oplus Z\to R,
\end{align*}
we abbreviate 
\begin{equation*}
\begin{split}
\varphi (a, w)&:=P_2\circ P\circ (f+s)(a,w)\\
&=P_2  [({\mathbbm 1}+A)w-\ov{B}(a, w)]\\
&=P_2[ ({\mathbbm 1}+A)P_1 w- \ov{B}(a, w)].
\end{split}
\end{equation*}
We have used the relation  $({\mathbbm 1}+A)({\mathbbm 1}-P_1)=0$. The operator
$L:=({\mathbbm 1}+A)\vert X\colon X\to R$ is a sc-isomorphism. In view of
$L^{-1}\circ P_2\circ ({\mathbbm 1}+A)P_1w=P_1w$, we obtain the formula
$$
L^{-1}\circ \varphi (a, w)=P_1w-L^{-1}\circ P_2\circ \ov{B}(a, w).$$

\noindent Writing $w=({\mathbbm 1}-P_1)w\oplus P_1w$,  we  shall consider $(a,
({\mathbbm 1}-P_1)w)$ as our  new finite parameter,  and
correspondingly  define the map $\wh{B}$ by
$$
\wh{B}((a, (1-P_1)w),
P_1w)=L^{-1}\circ P_2\circ  \ov{B}(a, ({\mathbbm 1}-P_1)w+P_1w).
$$
Since $\ov{B}(a, w)$ is a contraction in the second variable on every level $m\geq 1$ with arbitrary small contraction constant if $a$ and $w$ are sufficiently small depending on the level $m$ and the contraction constant,  the right hand side of
$$
L^{-1}\circ \varphi (a, ({\mathbbm 1}-P_1)w+P_1w)=P_1w-\wh{B}(a, ({\mathbbm 1}-P_1)w, P_1w)
$$
possesses the required  contraction normal
form with respect to the variable $P_1w$ on all levels $m\geq 1$,
again if $a$ and $w$ are small enough depending on $m$ and the contraction constant.

 It remains
to prove that the above normal form is the result of an admissible
coordinate transformation of the perturbed section $f+s$. Choosing  a
linear isomorphism $\tau \colon Z\to C$, we  define the fiber
transformation $\Psi\colon \R^N\oplus W\to \R^N\oplus X\oplus C$ by
\begin{equation*}
\begin{split}
\Psi (\delta a\oplus \delta w):= \delta a \oplus L^{-1}\circ P_2 \cdot \delta w \oplus  \tau  \circ ({\mathbbm 1}-P_2)\cdot \delta w.
\end{split}
\end{equation*}
We shall view $\Psi$ as a strong bundle map covering the
sc-diffeomorphism $\psi\colon V\oplus W\to V\oplus C\oplus X$ defined by $\psi (a,
w)=(a, (1-P_1)w, P_1w)$ where $V=[0,\infty )^k\oplus \R^{n-k}.$ With
the canonical projection
\begin{gather*}
\ov{P}\colon  (\R^N\oplus C)\oplus X\to X\\
\ov{P}(a\oplus ({\mathbbm 1}-P_1)w\oplus P_1w)=P_1w,
\end{gather*}
and the relation  $\ov{P}\circ \Psi \circ ({\mathbbm 1}-P)=0$,  we obtain the
desired formula
\begin{gather*}
\ov{P}\circ \Psi \circ(f+s)\circ \psi^{-1} (a, ({\mathbbm 1}-P_1)w, P_1w)\\
=P_1w-\wh{B}(a, ({\mathbbm 1}-P_1)w, P_1w).
\end{gather*}
The proof of Theorem \ref{arbarello}  is complete.

\end{proof}

The theorem has the following corollary, where we use the standard notations, denoting, as usual,  by $C$  a partial quadrant in a sc-Banach space $E$.
We also use a second sc-Banach space $F$. 

\begin{corollary}\label{op-perp}\index{C- Weak stability of basic germs}
We assume that the sc-germ  $g\colon {\mathcal O}(C,0)\rightarrow (F,0)$ is equivalent by a strong bundle isomorphism  $\Phi$ to the basic germ $\Phi_\ast g$, and assume that  $s\colon {\mathcal O}(C,0)\rightarrow (F,0)$ is a $\ssc^+$-germ. Then there exists a strong bundle map $\Psi$ such  that 
 $((\Psi\circ \Phi)_\ast(g+s))^1$ is a basic germ.
\end{corollary}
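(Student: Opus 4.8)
The plan is to reduce the statement to Theorem \ref{arbarello} by first transporting everything to the standard local model. Since $g$ is equivalent by the strong bundle isomorphism $\Phi$ to the basic germ $\Phi_\ast g$, and ``basic germ'' is only defined on a standard partial quadrant, the isomorphism $\Phi$ covers a sc-diffeomorphism $\varphi\colon (C,0)\to (C',0)$ with $C'=[0,\infty)^k\oplus\R^{n-k}\oplus W'$ and fibre model $\R^{N'}\oplus W'$ for appropriate data, and $\Phi_\ast g$ is a basic germ there. First I would check that $\Phi_\ast s$ is again a $\ssc^+$-germ. This holds because a strong bundle isomorphism preserves the double filtration and induces sc-smooth maps on both the $[0]$- and $[1]$-levels, hence carries sc-smooth sections of the $[1]$-bundle to sc-smooth sections of the $[1]$-bundle. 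Moreover, since $\Phi$ has the form $\Phi(u,h)=(\varphi(u),\Gamma(u,h))$ with $\Gamma$ linear in $h$ (so $\Gamma(0,0)=0$) and $s(0)=0$, one gets $(\Phi_\ast s)(0)=0$; and the fibrewise linearity of $\Gamma$ gives $\Phi_\ast(g+s)=\Phi_\ast g+\Phi_\ast s$.

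Next I would apply Theorem \ref{arbarello} to the basic germ $\Phi_\ast g$ together with the $\ssc^+$-germ $\Phi_\ast s$, which satisfies $(\Phi_\ast s)(0)=0$. The theorem furnishes a strong bundle isomorphism $\Psi$, covering a sc-diffeomorphism $\psi$, such that $\bigl(\Psi_\ast(\Phi_\ast g+\Phi_\ast s)\bigr)^1$ is a basic germ, where the superscript $1$ records that all levels have been raised by $1$ (this raise is unavoidable, as the relevant contraction estimate for the perturbed expression is only available on levels $m\geq 1$).

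Finally I would observe that $\Psi\circ\Phi$ is again a strong bundle map — the composition of strong bundle maps preserves the double filtration and the fibrewise linearity, and restricts to a sc-smooth map on each $[i]$-level by the chain rule — and that, by functoriality of the push-forward,
\[
\bigl(\Psi_\ast(\Phi_\ast g+\Phi_\ast s)\bigr)^1=\bigl(\Psi_\ast\Phi_\ast(g+s)\bigr)^1=\bigl((\Psi\circ\Phi)_\ast(g+s)\bigr)^1 .
\]
Hence $\bigl((\Psi\circ\Phi)_\ast(g+s)\bigr)^1$ is a basic germ, which is exactly the assertion. There is no genuinely hard step: all the analytic content is packaged into Theorem \ref{arbarello}, and the only point requiring care is the elementary bookkeeping — that push-forward under a strong bundle isomorphism respects the $\ssc^+$-condition, commutes with the sum $g+s$, and fixes the value $0$ at the smooth point, together with the functoriality $(\Psi\circ\Phi)_\ast=\Psi_\ast\circ\Phi_\ast$ for strong bundle maps.
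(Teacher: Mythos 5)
Your proof is correct and follows the same route as the paper: push forward $g$ and $s$ by $\Phi$ into the standard local model, note $\Phi_\ast s$ is a $\ssc^+$-germ vanishing at $0$ and $\Phi_\ast(g+s)=\Phi_\ast g+\Phi_\ast s$, apply Theorem \ref{arbarello} to obtain $\Psi$, and conclude by functoriality of the push-forward. The only difference is that you spell out the bookkeeping (preservation of the $\ssc^+$-condition and the vanishing at $0$ under $\Phi$) which the paper leaves implicit.
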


\begin{proof}
By assumption there exist open neighborhoods $U$ of $0$ in $C$ and $V$ of $0$ in $[0,\infty)^k\oplus \R^{n-k}\oplus W$
and a sc-diffeomorphism $\varphi\colon (U,0)\rightarrow (V,0)$ which is covered by a strong bundle isomorphism
$\Phi\colon U\triangleleft F\rightarrow V\triangleleft(\R^N\oplus W)$ such  that $\Phi_\ast g$ is a basic germ $h$.
Then $t=\Phi_\ast s$ defines a $\ssc^+$-section satisfying  $t(0)=0$. Clearly $\Phi_\ast(g+s)=h+t$,  and applying Theorem \ref{arbarello}, we find a  second strong bundle map $\Psi$ such  that $(\Psi_\ast(h+t))^1$ is a basic germ.  Taking the composition $\Gamma =\Psi\circ \Phi$, we conclude that 
$\Gamma_\ast (g+s)^1$ is a basic germ.  This completes the proof of Corollary \ref{op-perp}.
\end{proof}

In order to illustrate the corollary, we now consider the sc-smooth germ  $h\colon {\mathcal O}(C,0)\rightarrow F$ for which we know that there exists a $\ssc^+$-germ $s$
satisfying  $s(0)=h(0)$,  and assume that the germ $h-s$ around $0$  is equivalent to the  basic germ $g=\Phi_\ast(h-s)$.
We observe that  $h-h(0)=(h-s) +(s-h(0))$, where  $s-h(0)$ is a $\ssc^+$-section. Then $t=\Phi_\ast(s-h(0))$ is a $\ssc^+$-section and $g+t$ is a perturbation by a $\ssc^+$-section of a basic germ. By the previous corollary we find 
a strong bundle coordinate change such  that $(\Psi_\ast(g-s))^1$ is a basic germ, or in  other words, 
$((\Psi\circ\Phi)_\ast( h-h(0)))^1$ is a basic germ.

Note that for the implicit function theorem it does not matter whether  we work with $f$, or $f^1$, ore even $f^{(501)}$.
It  matters that our coordinate change is compatible with the original sc-structure. 

We also  point out  that a strong bundle coordinate change 
for $h^1$ is not the same as  a strong bundle coordinate change for $h$ followed by a subsequent raise of the index.

\subsection{Geometry of Basic Germs}
In this section we shall study in detail sc-smooth germs 
\begin{equation}\label{poi1}
f\colon {\mathcal O}(([0,\infty)^k\oplus \R^{n-k})\oplus W,0)\rightarrow (\R^N\oplus W,0)
\end{equation}
around $0$
of the form
\begin{equation}\label{pi2}
f=h+s
\end{equation}
where $h$ is a basic germ and $s$ is a $\ssc^+$-germ satisfying $s(0)=0$. 
\mbox{}\\

We already know from Corollary \ref{Newprop_3.9} that $Df(0)\colon \R^n\oplus W\to \R^N\oplus W$ is a sc-Fredholm operator of index $\ind Df(0)=n-N$.

\mbox{}\\

In the following we abbreviate $E=\R^n\oplus W$,  $C=([0,\infty)^k\oplus \R^{n-k})\oplus W$, and  $F=\R^N\oplus W$  and by  $P\colon \R^N\oplus W\to W$ the sc-projection.

\begin{theorem}[{\bf Local Regularity and Compactness}]\label{save}\index{T- Local regularity and compactness}
Let $U$ be a relatively open neighborhood of $0$ in $C$.  We  assume that $f\colon U\rightarrow F$ is a sc-smooth map satisfying 
$f(0)=0$ and of the form $f=h+s$ where $h$ is a basic germ and $s$ is a  $\ssc^+$-germ  satisfying $s(0)=0$. We denote by $S=\{(a,w)\in U \, \vert \, f(a,w)=0\}$ the solution set of $f$ in $U$.  Then there exists 
a nested sequence 
$$U\supset {\mathcal O}(0)\supset {\mathcal O}(1)\supset {\mathcal O}(2)\supset \ldots $$
of relatively open neighborhoods of $0$ in $C$ on level $0$ such that  for every $m\geq 0$, the closure of $S\cap {\mathcal O}(m)$ in $C\cap E_0=C_0$ is contained in $C\cap E_m=C_m$, i.e., 
$$
\cl_{C_0}(S\cap {\mathcal O}(m))\subset C_m.
$$
\end{theorem}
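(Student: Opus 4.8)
The plan is to reduce the statement to the $W$-component of the equation and then to bootstrap regularity level by level, using that a basic germ is, by Definition \ref{BG-00x}, a contraction in the fibre variable \emph{on every level} $m\ge 0$, whereas a $\ssc^+$-germ raises levels by one. Writing $P\colon\R^N\oplus W\to W$ for the sc-projection and $P\circ h(a,w)=w-B(a,w)$, every $(a,w)$ in the solution set $S$ satisfies, after applying $P$ to $f(a,w)=0$,
\begin{equation}\label{save_eq_plan}
w=B(a,w)-P\circ s(a,w).
\end{equation}
It therefore suffices to analyse \eqref{save_eq_plan}; the $(\mathbbm 1-P)$-component of $f=0$ only cuts $S$ down further and plays no role.

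The heart of the proof will be the following inductive claim: one can choose decreasing sequences $(\rho_m)_{m\ge0}$ and $(\tau_m)_{m\ge0}$ of positive numbers such that every $(a,w)\in S$ with $|a|\le\rho_0$ and $|w|_0\le\tau_0$ satisfies $w\in W_m$ and $|w|_m\le\tau_m$ for all $m\ge 0$. I would prove this by induction on the level. Suppose $w\in W_j$ with $|w|_j\le\tau_j$. Since $s$ is a $\ssc^+$-germ we have $s(C_j)\subset F_{j+1}$, so the ``frozen'' inhomogeneity $c:=-P\circ s(a,w)$ already lies in $W_{j+1}$; moreover, because $P\circ s\colon C_j\to F_{j+1}$ is continuous with $s(0)=0$, the norm $|c|_{j+1}$ can be made as small as required by shrinking $\rho_0$ and $\tau_j$. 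Now consider the fixed-point problem $w'=c+B(a,w')$. As $B(a,\cdot)$ is $\varepsilon$-Lipschitz in $|\cdot|_0$ with $B(0,0)=0$, for $|a|$ and $|c|_0$ small this map sends a small $|\cdot|_0$-ball into itself and has there a unique fixed point, which by \eqref{save_eq_plan} is precisely $w$. Running the same argument on level $j+1$ --- this is where the contraction property of a basic germ \emph{on level $j+1$} is used --- produces a unique fixed point $w''\in W_{j+1}$ in a small $|\cdot|_{j+1}$-ball with $|w''|_{j+1}\le\tau_{j+1}$; since the inclusion $W_{j+1}\hookrightarrow W_0$ is continuous, that ball sits inside the level-$0$ ball, so $w''$ is also a level-$0$ solution there and hence $w''=w$ by uniqueness. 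Thus $w\in W_{j+1}$ with $|w|_{j+1}\le\tau_{j+1}$, closing the induction. This is exactly the quantitative existence, uniqueness and regularity bookkeeping of Remark \ref{hofer-rem}, applied to the contraction germ $w'\mapsto w'-B(a,w')$ with $(a,c)$ playing the role of the small datum.

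Granting the claim, the neighborhoods $\mathcal O(m)$ are obtained by taking $\mathcal O(m)=\{(a,w)\in U:|a|<\rho_m',\ |w|_0<\tau_m'\}$ with $\rho_m'$ and $\tau_m'$ decreasing and chosen small enough that, for $(a,w)\in S\cap\mathcal O(m)$, the induction above runs all the way to level $m+1$; this yields $U\supset\mathcal O(0)\supset\mathcal O(1)\supset\cdots$ with $S\cap\mathcal O(m)\subset C_{m+1}$ and a uniform bound $|w|_{m+1}\le\tau_{m+1}$ on this set. Finally, fix $m$ and let $(a_k,w_k)\in S\cap\mathcal O(m)$ with $(a_k,w_k)\to(a_\infty,w_\infty)$ in $C_0$. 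The $w_k$ are bounded in $W_{m+1}$, so by the compactness of the inclusion $W_{m+1}\hookrightarrow W_m$ (Definition \ref{sc-structure}) a subsequence converges in $W_m$, and the limit must be $w_\infty$ because $W_m\hookrightarrow W_0$ is continuous; hence $w_\infty\in W_m$, i.e.\ $(a_\infty,w_\infty)\in C_m$, which is precisely the assertion $\cl_{C_0}(S\cap\mathcal O(m))\subset C_m$.

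The step I expect to be the main obstacle is the quantitative bookkeeping in the induction: one must track the moduli of continuity of $P\circ s$ between consecutive levels so that, at each of the \emph{finitely many} steps needed to reach a prescribed level $m$, the frozen term $c=-P\circ s(a,w)$ stays within the range on which the level-$(j{+}1)$ contraction estimate is valid. That only finitely many steps are required for each $m$ is what makes the successive shrinkings of $\rho$ and $\tau$ harmless; the remaining ingredients are nothing but Banach's fixed point theorem applied level by level together with the compactness of the sc-embeddings.
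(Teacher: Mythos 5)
Your proposal is correct and follows essentially the same strategy as the paper: reduce the solution equation to its $W$-component $w = B(a,w) - P\circ s(a,w)$, apply the parametrized Banach fixed point theorem on each level using the contraction property of the basic germ and the level-raising property of the $\ssc^+$-germ $s$, and invoke uniqueness of the level-$0$ fixed point to identify the higher-level fixed point with the original solution. The one place you deviate slightly is the closure step: the paper obtains convergence in $W_m$ directly from the continuity of the parametrized solution map $\delta_m$ coming from Banach's theorem, since the $\ssc^+$-property of $s$ already makes the datum $P\circ s(a_n,w_n)$ converge in $W_m$ whenever $(a_n,w_n)\to(a_\infty,w_\infty)$ on level $0$; you instead establish a uniform bound one level higher and extract a convergent subsequence through the compact embedding $W_{m+1}\hookrightarrow W_m$ --- both are correct, though the paper's version gets convergence of the full sequence and does not require estimates at level $m+1$.
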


Theorem \ref{save} says, in particular, that 
${\mathcal O}(m)\cap S\subset C_m$ for all $m\geq 0$.  Therefore,  the regularity of solutions $(a, w)$ of the equation $f(a, w)=0$ is the higher, the closer to $0$  they are on the level $0$.  Moreover, the solution set on level $m$ sufficiently close to $0$ on level $0$ has a closure on level $0$, which still belongs to level $m$. 
Moreover, the solution set on level $m$, sufficiently close to $0$ on level $0$, has a closure on level $0$, which belongs to level $m$.

\begin{proof}

We construct the sets ${\mathcal O}(m)$ inductively  by showing that there exists a decreasing sequence $(\tau_m)_{m\geq 0}$ of positive numbers such that the sets 
$$
{\mathcal O}(m)=\{(a, w)\in C\, \vert \, \text{$\abs{a}_0<\tau_m$ and $\abs{w}_0<\tau_m$}\}
$$
have the desired properties. We begin with the construction of ${\mathcal O}(0)$. 

By definition of a basic germ,  the composition $P\circ h$ is of the form 
$$P\circ h(a, w)=w-B(a, w),$$ 
where $B(0, 0)=0$ and $B$ is a contraction in $w$ locally near $(0, 0)$. 
Moreover, $s$ is a $\ssc^+$-germ satisfying $s(0)=0$.

We choose $\tau_0'>0$ such that the closed set $\{(a, w)\in C\, \vert \, \abs{a}_0\leq  \tau_0',\abs{w}_0\leq \tau_0'\}$ in  $E_0$ is contained in $U$  and such that, in addition, 
\begin{itemize}
\item[$(0_1)$\, ] $\abs{B(a, w)-B(a, w')}_0\leq \dfrac{1}{4}\abs{w-w'}_0$.
\end{itemize}
for all $a$, $w$, and $w'\in W_0$ satisfying $\abs{a}_0\leq \tau_0'$, 
$\abs{w}_0\leq \tau_0'$, and $\abs{w'}_0\leq \tau_0'$.
Using $B(0)=0$ and $s(0)=0$,  we can choose $0<\tau_0<\tau_0'$ such that 
\begin{itemize}
\item[$(0_2)$\, ] $\abs{B(a, 0)}_0\leq \dfrac{1}{4}\tau_0'$ for all 
$\abs{a}_0\leq \tau_0.$
\item[$(0_3)$\, ] $\abs{P\circ s(a, w)}_0\leq \dfrac{1}{4}\tau_0'$ for all 
$\abs{a}_0\leq \tau_0$ and $\abs{w}_0\leq \tau_0$.
\end{itemize}
For these choices of the constants $\tau_0'$ and $\tau_0$, we introduce the closed set 
$$\Sigma_0=\{(a, z)\in C\, \vert \, \abs{a}_0\leq \tau_0,\, \abs{z}_0\leq \tau_0'/4\}, $$ and denote by $\ov{B}_0(\tau_0')\subset W_0$ the  closed ball in $W_0$ centered  at $0$ and having radius $\tau_0'$.
We define the map 
$F_0\colon \Sigma_0\times \ov{B}_0(\tau_0')\to  W_0$  by 
$$F_0(a, z, w)=B(a, w)-z.$$
If $(a, z)\in \Sigma_0$ and $w, w'\in \ov{B}_0(\tau_0')$, we estimate  using $(0_1)$ and $(0_2)$, 
\begin{equation*}
\begin{split}
\abs{F_0(a, z, w)}_0&=\abs{B(a, w)-z}_0\leq \abs{B(a, w)-B(a, 0)}_0+\abs{B(a, 0)}_0+\abs{z}_0\\
&\leq \dfrac{1}{4}\tau_0'+\dfrac{1}{4}\tau_0'+\dfrac{1}{4}\tau_0'=\dfrac{3}{4}\tau_0'<\tau_0',
\end{split}
\end{equation*}
and
\begin{equation*}
\abs{F_0(a, z, w)-F_0(a, z, w')}_0\leq \dfrac{1}{4}\abs{w-w'}_0.
\end{equation*}
Hence $F_0(a, z,\cdot )\colon \ov{B}_0(\tau_0')\to \ov{B}_0(\tau_0')$ is a contraction,  uniform in  $(a, z)\in \Sigma_0$. 
Therefore, by the parametrized version of Banach's  fixed point theorem there exists a unique continuous function 
$\delta_0\colon \Sigma_0\to \ov{B}_0(\tau_0')$ solving the 
equation 
$$\delta_0(a, z)=B(a, \delta_0 (a, z))-z$$ 
for all  $(a, z)\in \Sigma_0$.
Now we define the open neighborhood ${\mathcal O}(0)$ by 
$${\mathcal O}(0)=\{(a, w)\in C\, \vert \, \text{$\abs{a}_0<\tau_0$ and $\abs{w}_0<\tau_0$}\}.$$
Clearly, the set ${\mathcal O}(0)$ satisfies $\text{cl}_{C_0}(S\cap {\mathcal O}(0))\subset C_0$.

We observe that if $(a, w)\in {\mathcal O}(0)$, then $\abs{P\circ s(a, w)}_0\leq {\tau_0'}/4$ by $(0_3)$ so that $\delta_0(a, P\circ s(a, w))$ is defined. If, in addition, $f(a, w)=0$, then  $P\circ f(a, w)=0$  so that $w=B(a, w)-P\circ s(a, w)$ and we claim that  
\begin{equation}\label{new_number_1}
w=\delta_0(a, P\circ s(a, w))\quad \text{for all $(a, w)\in {\mathcal O}(0)$.}
\end{equation} 
Indeed, since $\delta_0(a, P\circ s(a, w))=B(a, \delta_0(a, P\circ s(a, w)))-P\circ s(a, w)$, we estimate, using $(0_1)$,  
\begin{equation*}
\begin{split}
\abs{w-\delta_0(a, P\circ s(a, w))}_0&=\abs{B(a, w)-B(a, \delta_0(a, P\circ s(a, w)))}_0\\
&\leq 
\dfrac{1}{4}\abs{w-\delta_0(a, P\circ s(a, w))}_0,
\end{split}
\end{equation*}
implying $w=\delta_0(a, P\circ s(a, w))$ as claimed.


We next construct the set ${\mathcal O}(1)\subset {\mathcal O}(0)$.  Since the embedding $W_1\to W_0$ is continuous, there is a constant $c_1>0$ such that 
$\abs{\cdot }_0\leq c_1\abs{\cdot}_1$. With the constant $\tau_0$ defined above, we choose $0<\tau_1'<\min \{\tau_0, \tau_0/c_1\}$ such that the following holds. 
The set $\{(a, w)\in C\, \vert \,\text{$\abs{a}_0\leq \tau_1'$ and $\abs{w}_1\leq \tau_1'$}\}$ is contained in $U$,  and 
\begin{itemize}
\item[$(1_1)$\, ] $\abs{B(a, w)-B(a, w')}_1\leq \dfrac{1}{4}\abs{w-w'}_1$.
\end{itemize}
for all $a$, $w, w'\in W_1$ satisfying $\abs{a}_0\leq \tau_1'$, 
$\abs{w}_1\leq \tau_1'$, and $\abs{w'}_1\leq \tau_1'$.
We choose $0<\tau_1<\tau_1'$ such that 
\begin{itemize}
\item[$(1_2)$\, ] $\abs{B(a, 0)}_1\leq \dfrac{1}{4}\tau_1'$ for all 
$\abs{a}_0\leq \tau_1$
\item[$(1_3)$\, ] $\abs{P\circ s(a, w)}_1\leq \dfrac{1}{4}\tau_1'$ for all 
$\abs{a}_0\leq \tau_1$ and $\abs{w}_0\leq \tau_1$.
\end{itemize}
Proceeding as in the construction of ${\mathcal O}(0)$, we introduce the closed set $\Sigma_1$ in $E_1$ by 
$$\Sigma_1=\{(a, z)\in C_1\,\vert \, \abs{a}_0\leq  \tau_1,\, \abs{z}_1\leq \tau_1'/4\}, $$
and abbreviate by $\ov{B}_1(\tau_1')$ the closed ball in $W_1$ having its center at $0$ and radius $\tau_1'$.  We define the map 
$F_1\colon \Sigma_1\times \ov{B}_1(\tau_1')\to W_1$ by 
$F_1(a, z, w)=B(a, w)-z$. By  $(1_1)$ and $(1_3)$, the map 
$F_1\colon \Sigma_1\times \ov{B}_1(\tau_1')\to \ov{B}_1(\tau_1')$ is a contraction, uniform in $(a, z)\in \Sigma_1$. 
Again using the Banach fixed point theorem, we find a unique continuous map
$\delta_1\colon \Sigma_1\to \ov{B}_1(\tau_1')$  solving the equation $\delta_1(a, z)=B(a, \delta_1(a, z))-z$ for all $(a, z)\in \Sigma_1$.  
Now we define the open neighborhood ${\mathcal O}(1)$ as 
$${\mathcal O}(1)=\{(a, w)\in C_0\, \vert \, \abs{a}_0<\tau_1,\, \abs{w}_0< \tau_1\}.$$
By our definition of $\tau_1$ we have $\tau_1\leq \tau_1'<\tau_0$ so that  
${\mathcal O}(1)\subset {\mathcal O}(0)$.

We next claim that 
\begin{equation}\label{delta_0_equals_delta_1}
\delta_0(a, P\circ s(a, w))=\delta_1(a, P\circ s(a, w))\quad \text{for all $(a, w)\in {\mathcal O}(1)$}.
\end{equation}
To verify the claim,  we note that if $(a, w)\in {\mathcal O}(1)$, then, by $(1_3)$,  $\abs{P\circ s(a, w)}_1\leq \tau'_1/4$. Hence $\delta_1(a,  P\circ s(a, w))$ is defined  and its norm  satisfies $\abs{\delta_1(a, P\circ s(a, w))}_1\leq \tau_1'$ because $\delta_1$ takes its values in the ball $\ov{B}_1(\tau_1')$. 
This implies, recalling that $\abs{\cdot}_0\leq c_1\abs{\cdot}_1$ and 
$\tau_1'\leq \tau_0/c_1$,  the estimate
$$\abs{\delta_1(a, P\circ s(a, w))}_0\leq c_1\abs{\delta_1(a, P\circ s(a, w))}_1\leq c_1\tau_1'\leq \tau_0\leq \tau'_0.$$
Therefore, by construction, the map 
$(a, w)\mapsto \delta_1(a, P\circ s(a, w))$ solves the equation $\delta_1(a, P\circ s(a, w))=B(a, \delta_1(a, P\circ s(a, w))-P\circ s(a, w)$ for all $(a, w)\in {\mathcal O}(1)$.  On the other hand, it follows from ${\mathcal O}(1)\subset {\mathcal O}(0)$ and $(0_3)$ that $\abs{P\circ s (a, w)}_0\leq \tau_0'/4$ and hence the map $(a, w)\mapsto \delta_0(a, P\circ s(a, w))$ solves, by construction, the same equation $\delta_0(a, P\circ s(a, w))=B(a, \delta_0(a, P\circ s(a, w))-P\circ s(a, w)$ for all $(a, w)\in {\mathcal O}(1)$. The claim \eqref{delta_0_equals_delta_1} now follows from the uniqueness of the Banach fixed point theorem on the level $0$.

If $(a, w)\in {\mathcal O}(1)$ satisfies, in addition, $f(a, w)=0$, we deduce from \eqref{new_number_1} and 
\eqref{delta_0_equals_delta_1} that 
\begin{equation}\label{w_equals_delta_1}
w=\delta_1(a, P\circ s(a, w))\in W_1\quad \text{for all $(a, w)\in {\mathcal O}(1)$}.
\end{equation}

In order to verify the desired  property of ${\mathcal O}(1)$ we fix 
$(a, w)\in \text{cl}_0(S\cap {\mathcal O}(1))$. Then there exists a sequence  
$(a_n, w_n)\in S\cap {\mathcal O}(1)$ such that $(a_n, w_n)\to (a, w)$ on level $0$. 
From \eqref{w_equals_delta_1} it follows that  $w_n=\delta_1(a_n, P\circ s(a_n, w_n))\in W_1$  for all $n$.   Since $s$ is $\ssc^+$,  we know that $P\circ s(a_n, w_n)\to P\circ s(a, w)$ on level $1$. From the continuity of $\delta_1$ we conclude the convergence 
$w_n=\delta_1(a_n, P\circ s(a_n, w_n))\to \delta_1(a, P\circ s(a, w))=w$ on level $1$.  Consequently, $(a, w)\in C_1$ as desired.

The induction step is now clear and the further details are left to the reader.

\end{proof}

The previous result has a useful corollary.

\begin{corollary}\label{corex1}\index{C- Stability of surjectivity}
We assume that $U$ is a relatively open neighborhood of $0$ in a partial quadrant $C=[0,\infty)^n\oplus W$ in a sc-Banach space $E=\R^n\oplus W$ and let $F=\R^N\oplus W$.  Let $f\colon U\to F$ be a sc-smooth map satisfying $f(0)=0$ and  admitting the decomposition
$f=h+s$ where $h\in \mathfrak{C}_{basic}$ and $s$  is a $\ssc^+$-map satisfying $s(0)=0$. We assume, in addition, that 
$Df(0,0)$ is surjective.  Then there exists a 
 relatively open neighborhood $U'\subset U$ on  level $0$ such that the following holds.
\begin{itemize} 
\item[{\em (1)}] If $(a,w)\in U'$ satisfies  $f(a,w)=0$, then $(a,w)$ is on level $1$.
\item[{\em (2)}] If $(a,w)\in U'$ and $f(a, w)=0$, then $Df(a,w)\colon E\rightarrow F$ is a surjective Fredholm operator of index $n-N$.
\end{itemize}
\end{corollary}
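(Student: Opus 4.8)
\textbf{Proof plan for Corollary \ref{corex1}.}
The plan is to apply Theorem \ref{save} to produce the nested neighborhoods $\mathcal{O}(m)$ and take $U' = \mathcal{O}(1)$; statement (1) is then immediate, since any solution $(a,w) \in \mathcal{O}(1)$ of $f(a,w)=0$ lies in $\mathrm{cl}_{C_0}(S \cap \mathcal{O}(1)) \subset C_1$, i.e., it is on level $1$. This is the easy half, and it essentially just invokes the theorem already in hand. The work is all in statement (2): upgrading surjectivity of $Df(0,0)$ to surjectivity (and the Fredholm index count) of $Df(a,w)$ at nearby solution points.

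For (2), the main point is that $Df(a,w)$ depends on $(a,w)$ only through the second-variable derivative of the basic part together with the $\ssc^+$-perturbation. First I would invoke the second statement of Proposition \ref{Newprop_3.9}: for every $m \geq 0$, the operator $Df(a,w) \colon \R^n \oplus W_m \to \R^N \oplus W_m$ is Fredholm of index $n-N$ whenever $(a,w) \in E_{m+1}$ is sufficiently small in $E_m$; this uses Lemma \ref{new_Lemma3.9} and the fact that $\ssc^+$-operators are level-wise compact, exactly as in that proof. By (1) the relevant solution points $(a,w)$ lie on level $1$, so applying this with $m=0$ (after shrinking $U'$ once more so that the smallness hypothesis holds) gives that $Df(a,w)$ is Fredholm of index $n-N$. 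It remains to get surjectivity from the Fredholm property plus closeness to $(0,0)$: since $Df(0,0)$ is surjective with finite-dimensional kernel, and since the family $(a,w) \mapsto Df(a,w)$ is continuous as a map into $\mathscr{L}(E_1, F_0)$ (using Proposition \ref{x1}), and since surjective Fredholm operators of a fixed index form an open set in the operator-norm topology, a sufficiently small neighborhood $U'$ can be chosen so that $Df(a,w)$ remains surjective for all $(a,w) \in U'$, in particular at the solution points.

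The one subtlety I expect is bookkeeping about which level the continuity and the openness of surjectivity are taken on: $Df(a,w)$ naturally lives as a map $E_1 \to F_0$ with continuous dependence on $(a,w)$ on level $1$, and surjectivity $E \to F$ for a sc-Fredholm operator is equivalent, by Proposition \ref{regular} (regularizing property) or directly from the structure in Proposition \ref{Newprop_3.9}, to surjectivity on any one level. So I would phrase the openness argument on level $0$: $(a,w) \mapsto Df(a,w) \in \mathscr{L}(\R^n \oplus W_1, \R^N \oplus W_0)$ is continuous, the set of surjective operators is open there, $Df(0,0)$ is in it, hence so is $Df(a,w)$ for $(a,w)$ in a small enough $\mathcal{O}$, and then regularity (level $1$, from (1)) plus sc-Fredholmness promote this to surjectivity $E \to F$ and the index formula $\ind Df(a,w) = n - N$. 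Shrinking $U'$ finitely many times to meet all the smallness requirements of Theorem \ref{save}, Proposition \ref{Newprop_3.9}, and the openness step finishes the proof; I would then remark the details are routine and leave them, or write them out briefly.
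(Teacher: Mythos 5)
Your reduction of statement (1) to Theorem \ref{save} is correct and matches the paper. The argument for (2), however, has a genuine gap at the surjectivity step. You want to conclude surjectivity of $Df(a,w)\colon E_0\to F_0$ by continuity of $(a,w)\mapsto Df(a,w)$ plus openness of the set of surjective operators, but there is no norm topology in which both of these hold simultaneously. In the operator norm of $\mathscr{L}(E_0,F_0)$ the family $x\mapsto Df(x)$ is in general \emph{not} continuous in the sc-framework (this is pointed out explicitly in the paper right after Definition \ref{scx}), so the openness argument has nothing to start from. And in $\mathscr{L}(E_1,F_0)$, where Proposition \ref{x1} does give operator-norm continuity, the operator $Df(0,0)$ is \emph{not} surjective onto $F_0$: it factors through the compact embedding $E_1\hookrightarrow E_0$ (equivalently, its image lies in $F_1\subsetneq F_0$), so in the infinite-dimensional case it is a compact operator, far from being in the open set of surjective Fredholm operators. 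Thus the proposed openness argument is vacuous at the one level where the continuity you cite actually holds.

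The paper's proof proceeds quite differently, and the difference is essential. It fixes the kernel $K=\ker Df(0,0)$, writes $E=K\oplus N$, and shows $Df(a,w)\vert N\colon N\to F$ is Fredholm of index $0$ for $(a,w)\in E_1$ small in $E_0$ by decomposing $Df(a,w)-Df(0,0)$ into a term $D_2B(a,w)$ made uniformly small (in operator norm, level by level) via Lemma \ref{new_Lemma3.9} and a compact remainder. Surjectivity is then reduced to injectivity of $Df(a,w)\vert N$, which is proved by a contradiction argument that crucially uses the extra regularity of solution points coming from Theorem \ref{save} (boundedness on level $2$, hence compact convergence on level $1$), the $\ssc^+$-property of $s$ to gain a level, and a Neumann series for $({\mathbbm 1}-D_2B(a,w))^{-1}$. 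None of this is a generic perturbation-stability fact; it is specific to solutions of $f=0$ and to the structure of basic germs, and it is precisely what replaces the operator-norm-continuity argument that is unavailable here. Your plan needs to be rebuilt around that compactness/contradiction mechanism rather than around openness of surjectivity.
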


\begin{proof}

In view of Theorem \ref{save}  we know that if $f(a,w)=0$ and $(a,w)$ is sufficiently close to $(0,0)$ on level $0$, then $(a, w)\in E_1$. Hence the linearization $Df(a,w)$ is a well-defined as a bounded linear operator from $E_0$ to $F_0$. By Proposition \ref{Newprop_3.9}, the linearization $Df(0,0)\colon E\to F$ is a  Fredholm operator whose index is equal to $\ind Df(0)=n-N$. 

By assumption, the Fredholm operator $Df(0,0)\colon E\to F$ is surjective. Denoting by $K$ its kernel, we have the splitting $E=K\oplus N$ and conclude that the restriction $Df(0)\vert N\colon N\to F$ is an isomorphism of 
Banach spaces and hence a Fredholm operator of index $0$. To see this, we observe that 
$$
Df(a, w)\vert N=Df(0, 0)\vert N+(Df(a, w)\vert N-Df(0, 0)\vert N), 
$$
and the  second term is a admissible perturbation which does not affect the Fredholm character nor the index. Indeed, 
\begin{equation*}
\begin{split}
\bigl( Df(a, w)-Df(0, 0)\bigr)(\alpha, \zeta)=D_2B(a, w)\zeta+C(a, w)(\alpha, \zeta),
\end{split}
\end{equation*}
where $C(a, w)\colon E\to F$ is a compact operator, and 
$\norm{D_2B(a, w)}\leq \varepsilon$ for every $\varepsilon>0$ if $(a, w)$ sufficiently small in $E_0$ depending on $\varepsilon$,  in view of 
Lemma \ref{new_Lemma3.9}.

It follows that $Df(a, w)\vert N\colon N\to F$ is a Fredholm operator of index $0$, if $(a, w)\in E_1$ is sufficiently small in $E_0$.

\mbox{}\\

We finally show that $Df(a, w)\vert N\colon N\to F$ is a surjective operator if $(a, w)\in E_1$ solves, in addition, $f(a, w)=0$.
Since the index is equal to $0$, the kernel of $Df(a, w)\vert N$ has the same dimension as the cokernel of $Df(a, w)\vert N$ in $F$. Hence we have to prove that the kernel of $Df(a, w)\vert N$ is equal to $\{0\}$, if $(a, w)\in E$ is close to $(0, 0)$ in $E_0$ and solves $f(a, w)=0$.

Arguing by contradiction we assume that  there exist a sequence $(a_k, w_k)\in C\cap E_1$ satisfying $f(a_k, w_k)=0$ and $\abs{(a_k, w_k)}_0\to 0$ as $k\to \infty$. Moreover, there exists a sequence 
$(\alpha_k, \zeta_k)\in (\R^n\oplus W)\cap N$  satisfying $\abs{(\alpha_k, \zeta_k)}_0=1$ and $Df(a_k, w_k)(\alpha_k, \zeta_k)=0$.  Consequently, 
\begin{equation}\label{eq_surj_1}
 \zeta_k-D_2B(a_k,w_k) \zeta_k = D_1B(a_k,w_k)\alpha_k - PDs(a_k,w_k)(\alpha_k,  \zeta_k).
\end{equation}

Without loss of generality we  may assume that $\alpha_k\rightarrow \alpha$.  In view of the proof of the previous theorem, for large values of $k$, the sequence $(a_k, w_k)$ is bounded on level $2$. Consequently, since the embedding $W_2\to W_1$ is compact, we may assume that $(a_k, w_k)\to (0, 0)$ on level $1$. Therefore, 
$D_1B(a_k,w_k)\alpha_k \rightarrow D_1B(0,0)\alpha$ in $E_0$. 
In addition, since $s$ is a $\ssc^+$-operator, the map $E_1\oplus E_0\to F_1$,  defined  by $(x, h)\mapsto PDs(x)h$,  is continuous. Hence,  there exists $\rho>0$ such that $\abs{PDs(x)h}_1\leq 1$ if $\abs{x}_1\leq \rho$ and $\abs{h}_0\leq \rho$. This implies that there is a constant $c>0$ such that $\abs{PDs(x)h}_1\leq c$ for all $\abs{x}_1\leq \rho$ and $\abs{h}_0\leq 1$.  From this estimate, we conclude that the sequence 
$PDs(a_k,w_k)(\alpha_k, \zeta_k)$ is bounded in $W_1$. Since the embedding $W_1\to W_0$ is compact, we may assume that the sequence $PDs(a_k,w_k)(\alpha_k, \zeta_k)$ converges to some point $w_0$  in $W_0$. Denoting by $z_k$ the right-hand side of \eqref{eq_surj_1}, we have proved that $z_k\to z_0=D_1B(0,0)\alpha-w_0$ in $W_0$.  Choosing $0<\varepsilon<1$ in Lemma \ref{new_Lemma3.9}, the  operators ${\mathbbm 1}-D_2B(a_k, w_k)$ have a bounded inverse for large $k$ and we obtain from \eqref{eq_surj_1} that 
\begin{equation*}
\zeta_k=\bigl({\mathbbm 1}-D_2B(a_k, w_k)\bigr)^{-1}z_k=\sum_{l\geq 0}\bigl(D_2B(a_k, w_k)\bigr)^{l}z_k
\end{equation*}
for large $k$. We claim that the sequence $\zeta_k$ converges to $z_0$ in $W_0$. Indeed, take  $\rho>0$. From $\norm{D_2B(a_k, w_k)}\leq \varepsilon$ for $k$ large and the fact that the sequence  $(z_k)$ is bounded in $W_0$, it follows that there exists $l_0$ such that 
$$\abs{\sum_{l\geq l_0}\bigl(D_2B(a_k, w_k)\bigr)^{l}z_k}_0\leq \rho/2.$$
Hence 
\begin{equation*}
\begin{split}
\abs{\zeta_k-z_0}_0&\leq \abs{z_k-z_0}_0+\sum_{l=1}^{l_0}\abs{\bigl(D_2B(a_k, w_k)\bigr)^{l}z_k}_0+\abs{\sum_{l>l_0}\bigl(D_2B(a_k, w_k)\bigr)^{l}z_k}_0\\
&\leq \abs{z_k-z_0}_0+\sum_{l=1}^{l_0}\abs{\bigl(D_2B(a_k, w_k)\bigr)^{l}z_k}_0+\rho/2.
\end{split}
\end{equation*}
From  $\abs{z_k-z_0}_0\to 0$ and $D_2B(a_k, w_k)z_k\to D_2B(0, 0)z_0=0$, it follows that $\limsup_{k\to \infty}\abs{\zeta_k-z_0}_0\leq \rho/2$ and,  since $\rho$ was arbitrary,  that $\zeta_k\to z_0$ in $W_0$.
We have proved that $(\alpha_k, \zeta_k)\to (\alpha, z_0)$ in $E_0$.  Hence $(\alpha, z_0)\in N$ and 
$\abs{(\alpha, z_0)}_0=1$. On the other hand,
\begin{equation*}
0=\lim_{k\to \infty}Df(a_k, w_k)(\alpha_k, \zeta_k)=Df(0)(\alpha, z_0).
\end{equation*}
Since $Df(0, 0)\vert N$ is an isomorphism, $(\alpha, z_0)=(0, 0)$, in  contradiction to $\abs{(\alpha, z_0)}_0=1$. The proof of 
Corollary \ref{corex1} is complete.

\end{proof}

In the following theorem we denote, as usual, by $C$ the  partial quadrant $[0,\infty )^k\oplus \R^{n-k}\oplus W$ in the  sc-Banach space $E=\R^n\oplus W$ and by $U$ a relatively open neighborhood of $0$ in $C$. Moreover, $F$ is another sc-Banach space of the form $F=\R^N\oplus W$. 

We consider a sc-smooth germ $f\colon U\to F$ satisfying $f(0)=0$ of the form $f=h+s$ where $h$ is a basic germ and $s$ is a $\ssc^+$-germ satisfying $s(0)=0$.

By Theorem \ref{arbarello} there exists a strong bundle isomorphism 
$$\Phi\colon U\triangleleft (\R^N\oplus W)\to U'\triangleleft (\R^{N'}\oplus W').
$$
where $U'$ is a relatively open neighborhood of $0$ in the partial quadrant $C'=[0,\infty)^k\oplus \R^{n'-k}\oplus W'$, covering the sc-diffeomorphism 
$\varphi\colon (U, 0)\to (U',0)$ such that the section $g=\Phi\circ f\circ \varphi^{-1}$ has the property that $g^1=(\Phi_\ast (h+s))^1\colon (U')^1\to (\R^{N'}\oplus W')^1$ is a basic germ. Clearly, $\varphi \bigl(\{x\in U\, \vert \, f(x)=0\}\bigr) =\{x'\in U'\, \vert \, g(x')=0\}.$

{We abbreviate $F'=\R^{N'}\oplus W'$ and denote by $P'$ the sc-projection $P'\colon \R^{N'}\oplus W'\to W'$.

By definition of a basic germ, the composition $P'\circ g^1$ is a $\ssc^0$-contraction germ which is sc-smooth. Therefore, in view of Remark \ref{hofer-rem} applied to the levels $m\geq 1$,
there are monotone decreasing sequences $(\varepsilon_i')$ for $i\geq 1$ and 
$(\tau_i')$ for $i\geq i$ such that, abbreviating by $\ov{B}_i(\tau_i')$ the closed ball $W'_i$ of center $0$ and radius $\tau_i'$, and by $U'_i$ the neighborhood $U_i'=\{a\in [0,\infty )^k\oplus \R^{n'-k}\, \vert \, \abs{a}_0\leq \tau_i'\}$, the following statements (1)-(4) hold. 
\begin{itemize}
\item[(1)] There exists a unique continuous map $\delta\colon U'_1\rightarrow \ov{B}_1(\tau_1') $  satisfying  $P'\circ g(a,\delta(a))=0$ and $\delta(0)=0$.
\item[(2)] If $(a, w)\in U_1'\oplus \ov{B}_1(\tau_1')$ solves the equation $P'\circ g(a,w)=0$, then $w=\delta(a)$.
\item[(3)] If $a\in U_i'$, then $\delta (a)\in W_{i}'$ and $|\delta(a)|_{i}\leq \tau_i'$ for every 
$i\geq 1$.
\item[(4)] $\delta\colon U_i'\to W_i'$ is of class $C^{i-1}$ for all $i\geq 1$.
\end{itemize}

\begin{lemma}\label{new_lemmat_level_0}
There exists $\varepsilon_0'$ and $\tau_0'$ having the following properties. If $(a', w')$ solves the equation $g(a', w')=0$ and satisfies $\abs{a'}_0\leq \varepsilon'_0$ and $\abs{w'}_0\leq \tau_0'$, then $w'=\delta (a')$.
Moreover, if $\abs{a'}_0\leq \varepsilon'_0$, then $\abs{\delta (a')}_0\leq \tau_0'$.
\end{lemma}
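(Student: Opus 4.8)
The plan is to reduce the statement to the uniqueness property~(2) of the solution germ $\delta$, which holds on level $1$: it suffices to exhibit a level-$0$ neighbourhood of $0$ in which every solution $(a',w')$ of $g=0$ automatically satisfies $a'\in U_1'$ and $w'\in\ov{B}_1(\tau_1')$.

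To produce such a neighbourhood I would go back through the sc-diffeomorphism $\varphi$. Since $\varphi$ carries $\{x\in U\mid f(x)=0\}$ onto $\{x'\in U'\mid g(x')=0\}$, and $f=h+s$ has exactly the form to which Theorem~\ref{save} applies, that theorem (applied to $f$) yields a level-$0$ neighbourhood of $0$ in $C$ in which every solution $(a,w)$ of $f=0$ lies on level $1$ and, by its proof, satisfies the equation $w=B(a,w)-P\circ s(a,w)$ on level $1$, where $P\circ h(a,\cdot)=(\cdot)-B(a,\cdot)$. Because $h$ is a basic germ, $B$ is a level-$1$ contraction in its second argument with arbitrarily small constant, and $B(a,0)\to0$ on level $1$ as $a\to0$ (the variable $a$ lies in the finite-dimensional factor, where all levels agree); because $s$ is a $\ssc^+$-germ with $s(0)=0$, also $P\circ s(a,w)\to0$ on level $1$ as $(a,w)\to0$ on level $0$. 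Writing $w=\bigl(B(a,w)-B(a,0)\bigr)+B(a,0)-P\circ s(a,w)$ and absorbing the first term into the left-hand side, one sees that $|w|_1$ becomes arbitrarily small once $(a,w)$ is close enough to $0$ on level $0$. Transporting this through $\varphi$, which is continuous on level $1$ near $0$ with $\varphi(0)=0$ and hence sends level-$0$-small, level-$1$-small solutions to level-$1$-small ones, and shrinking, I obtain $\varepsilon_0',\tau_0'>0$ such that every solution $(a',w')$ of $g(a',w')=0$ with $|a'|_0\le\varepsilon_0'$ and $|w'|_0\le\tau_0'$ satisfies $a'\in U_1'$ and $|w'|_1\le\tau_1'$.

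Given this, the first assertion follows at once: such a pair also solves $P'\circ g(a',w')=0$ and lies in $U_1'\oplus\ov{B}_1(\tau_1')$, so property~(2) of $\delta$ forces $w'=\delta(a')$. For the ``moreover'' part I would use that $\delta\colon U_1'\to W_1'$ is continuous with $\delta(0)=0$ and that the inclusion $W_1'\hookrightarrow W_0'$ is bounded, so that, after possibly shrinking $\varepsilon_0'$ while keeping $\tau_0'$ fixed, $|\delta(a')|_0\le\tau_0'$ whenever $|a'|_0\le\varepsilon_0'$. There is no conflict between the two shrinkings: $\tau_0'$ only has to be small enough for the first step, while $\varepsilon_0'$ is then chosen small enough for both (and in particular $\le\tau_1'$).

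The main obstacle I anticipate is the quantitative level-$1$ control on solutions: Theorem~\ref{save} as stated only asserts the set inclusion $\cl_{C_0}(S\cap{\mathcal O}(m))\subset C_m$, so to extract the uniform bound $|w'|_1\le\tau_1'$ one must either invoke the explicit Banach fixed-point description from its proof or rerun that fixed-point argument directly on level $1$; one must also keep in mind that $\varphi$ preserves levels only up to constants, which is handled by its continuity at $0$ together with a final shrinking of the neighbourhood.
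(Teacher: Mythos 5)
Your argument follows the same route as the paper's proof: apply (the proof of) Theorem \ref{save} to $f=h+s$ to get quantitative level-$1$ smallness of $f$-solutions that are small on level $0$, transport this through $\varphi$ and $\varphi^{-1}$ using their continuity on levels $1$ and $0$ respectively, and conclude via property~(2) of $\delta$; the ``moreover'' part is handled identically by the boundedness of $W_1'\hookrightarrow W_0'$ and a final shrinking. Your closing caveat about Theorem \ref{save} being stated only as a set inclusion is well observed and is precisely why the paper cites the \emph{proof} of that theorem rather than its statement.
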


\begin{proof}

The solutions of $f(a)=0$ and $g(x')=0$ are related by the sc-diffeomorphism 
$\varphi$ via $x'=\varphi (x)$. Applying the proof of Theorem \ref{save} to the sc-germ $f=h+s$ we find,  for every $\sigma>0$,  a constant $\tau>0$ such that if $x\in C$ is a solution of $f(x)=0$ satisfying $\abs{x}_1<\tau$, then $\abs{x}_1<\sigma$. Using the continuity of $\varphi$ on the level $1$, we choose now  $\sigma>0$ such that if $\abs{x}_1<\sigma$, then $x'=\varphi (x)=(a', w')$ satisfies $\abs{a'}_0\leq \varepsilon_1'$ and $\abs{w'}_1\leq \tau_1'$. Then we conclude from $\abs{x}_0<\tau$ that $\abs{x}_1<\sigma$. Using the continuity of $\varphi^{-1}$ on level $0$ we next choose the desired constants $\varepsilon_0'$ and $\tau_0'$ such that the estimates $\abs{a'}_0\leq \varepsilon_0'$ and $\abs{w'}_0\leq \tau_0'$ imply that $x=\varphi^{-1}(a, w)$ satisfies $\abs{x}_0<\tau$. Assuming now that  $x'=(a', w')$ is a solution of $g(a', w')=0$ satisfying $\abs{a'}_0<\varepsilon_0'$ and $\abs{w'}_0< \tau_0'$, we conclude that $\abs{\varphi^{-1}(x')}_0<\tau$. It follows that 
$\abs{\varphi^{-1}(x')}_1<\sigma$, which implies $\abs{a'}_0<\varepsilon_1'$ and 
$\abs{w'}_1< \tau_1'$. From property (2) we conclude that $w'=\delta (a')$ proving the first statement of the lemma.

Using that the embedding $W_1\to W_0$ is continuous, we find a constant $c>0$ such that 
$\abs{\delta (a')}_0\leq c\abs{\delta (a')}_1$. Taking $\varepsilon_0'$ and $\tau_0'$ smaller we can achieve that 
 $\abs{\delta (a')}_0\leq \tau_0'$ if $\abs{a'}_0\leq \varepsilon_0'$ and the proof of the lemma is complete.
 
 \end{proof}
 
\begin{theorem}[{\bf Local Germ-Solvability I}]\label{LGS}\index{T- Local germ-solvability {I}}

Let  $f\colon U\rightarrow F$ be  a sc-smooth  germ  satisfying $f(0)=0$ and of the form $f=h+s$,  where $h$ is a basic germ 
and $s$ is a $\ssc^+$-section satisfying $s(0)=0$. We assume 
that the linearization $Df(0)\colon E\to F$ is surjective and the  kernel $K=\ker Df(0)$ is in good position to the partial quadrant $C$. 
Let 
$$\Phi\colon U\triangleleft (\R^N\oplus W)\to U'\triangleleft (\R^{N'}\oplus W')$$
be the strong bundle isomorphism covering the sc-diffeomorphism $\varphi\colon (U,0)\rightarrow (U',0)$ guaranteed by Theorem \ref{arbarello}. 
Here $U'$ is a relatively open neighborhood of $0$ 
in the  partial quadrant $C'=[0,\infty )^k\oplus \R^{n'-k}\oplus W'$ sitting in   the sc-Banach space $E'=\R^{n'}\oplus W'$.   

Then, denoting by $g=\Phi_\ast (f)$ the push-forward section, the following holds.
The kernel  $K'=\ker Dg(0)=T\varphi(0)K$  is in good position to the partial quadrant $C'$ and there is a good complement $Y'$  of $K'$ in $E'=K'\oplus Y'$,  and a $C^1$-map $\tau\colon V\rightarrow Y_1'$, defined on the  relatively  open neighborhood $V$ 
of $0$ in $K'\cap C'$  such that 
\begin{itemize}
\item[{\em (1)}] $\tau\colon {\mathcal O}(K'\cap C',0)\rightarrow (Y',0)$ is a sc-smooth germ.
\item[{\em (2)}] $\tau (0)=0$ and $D\tau(0)=0$.
\item[{\em (3)}] After perhaps suitably shrinking $U$ it holds
$$\varphi(\{x\in U\ |\ f(x)=0\}) =
\{x'\in U'\, \vert \, g(x')=0\}=\{y+\tau(y)\,  \vert \,  y\in V\}.
$$
\end{itemize}

\end{theorem}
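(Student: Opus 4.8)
The idea is to combine three tools already in place: Theorem \ref{arbarello} (which produces the strong bundle isomorphism $\Phi$ and the sc-diffeomorphism $\varphi$ so that $g^1 = (\Phi_\ast f)^1$ is a basic germ), Proposition \ref{reduced_tangent_under_sc_diff} / Proposition \ref{pretzel} (which transport the good-position property and the ``$N\cap C$ is a partial quadrant'' property through $\varphi$), and the germ-implicit function theorem Theorem \ref{newthmboundary5.4} together with its quantitative refinement Remark \ref{hofer-rem} (which solves a contraction germ over a finite-dimensional partial quadrant). First I would record that $K' = \ker Dg(0) = T\varphi(0)K$: this is immediate from the chain rule applied to $g = \Phi\circ f\circ\varphi^{-1}$, noting that the fiber part of $\Phi$ at $0$ is a linear isomorphism, so it does not change the kernel. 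Since $T\varphi(0)$ is a linear sc-isomorphism carrying the partial quadrant $C_0C$ onto $C'_0C'$ (or, in the simpler situation where $\varphi$ is just a coordinate change of quadrants, carrying $C$ to $C'$), Proposition \ref{pretzel} applied in $E'$ shows $K'\cap C'$ is a partial quadrant in $K'$ and $K'$ has a good sc-complement $Y'$ with $E' = K'\oplus Y'$, $Y'\subset T^R$-type subspace as in Definition \ref{mission1}.

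\textbf{Reduction to a contraction germ.} The core step is to show that, in the coordinates where $g^1$ is a basic germ, the solution set $\{g=0\}$ is the graph of a map over $V\subset K'\cap C'$. Write, using $E' = \R^{n'}\oplus W'$ and the splitting $E' = K'\oplus Y'$: decompose the equation $g(a',w') = 0$ into its $W'$-component $P'\circ g$ and its $\R^{N'}$-component $(\mathbbm 1 - P')\circ g$. Because $g^1$ is a basic germ, $P'\circ g^1$ has the contraction form $w' \mapsto w' - B(a',w')$, so by Theorem \ref{newthm5.4}/Theorem \ref{newthmboundary5.4} and the quantitative statements (1)--(4) preceding Lemma \ref{new_lemmat_level_0} together with Lemma \ref{new_lemmat_level_0} itself, the $W'$-equation is uniquely solved near $0$ by a sc-smooth germ $\delta\colon {\mathcal O}(\wt C',0)\to (W',0)$ with $\delta(0)=0$, where $\wt C' = [0,\infty)^k\oplus\R^{n'-k}$. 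Substituting $w' = \delta(a')$ into the remaining finite-dimensional equation $(\mathbbm 1-P')\circ g(a',\delta(a')) = 0$ and linearizing at $0$: since $Dg(0)$ is surjective (this follows from Proposition \ref{filler_new_1}(1) together with the surjectivity of $Df(0)$ — though here $g = \Phi_\ast f$ directly, so surjectivity of $Dg(0)$ is equivalent to that of $Df(0)$ because $\Phi$ is a strong bundle isomorphism), the finite-dimensional equation cuts out exactly a graph over $K'\cap\wt C'$-directions. Concretely, split $\wt C'$-coordinates $a' = (y,z)$ according to $K'\cap\R^{n'}$ and its complement; the implicit function theorem in finite dimensions (classical, over a partial quadrant, using that $K'$ is in good position so $K'\cap C'$ has nonempty interior in $K'$) solves for $z$ as a $C^\infty$ function of $y$, vanishing to second order at $0$. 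Composing, one gets $\tau\colon V\to Y'$ with $\tau(0) = 0$, $D\tau(0) = 0$, and the regularity statements (1)--(4) of Corollary \ref{new_cor_3.33} give that $\tau$ is an sc-smooth germ ${\mathcal O}(K'\cap C',0)\to (Y',0)$.

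\textbf{Matching solution sets and shrinking.} Next I would verify (3), that $\varphi(\{f=0\}) = \{g=0\} = \{y + \tau(y) : y\in V\}$ after shrinking $U$. The first equality is automatic since $g = \Phi_\ast f$ and the fiber isomorphism $\Phi$ sends the zero of $f$ to the zero of $g$. For the second, the nontrivial content is the \emph{uniqueness} near $0$ on level $0$: a solution $(a',w')$ with $g(a',w')=0$ and $\abs{(a',w')}_0$ small automatically has $w' = \delta(a')$ — this is exactly what Lemma \ref{new_lemmat_level_0} provides (built on Theorem \ref{save}, the local regularity and compactness statement, which forces small level-$0$ solutions up to level $1$ where the uniqueness of the contraction fixed point applies). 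Then the residual finite-dimensional equation's unique solvability as $z = $ (function of $y$) finishes the identification of the solution set as the claimed graph, and shrinking $U$ (equivalently shrinking $\varepsilon_0', \tau_0'$ and the neighborhood $V$) absorbs all the ``sufficiently small'' clauses.

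\textbf{Expected main obstacle.} The delicate point is the interplay between levels and the partial-quadrant (boundary) structure. Theorem \ref{arbarello} only gives that $g^1$ — not $g$ itself — is a basic germ, so the contraction-germ solvability genuinely lives on levels $m\geq 1$; getting the level-$0$ uniqueness statement needed for (3) requires Theorem \ref{save}/Lemma \ref{new_lemmat_level_0} to push level-$0$ solutions up to level $1$, and one must be careful that the sc-diffeomorphism $\varphi$ and its inverse are used only where their continuity on the relevant level is available. The second subtlety is that all of this must respect the partial quadrant: the finite-dimensional implicit function step must be carried out over $K'\cap C'$, which is a partial quadrant (Proposition \ref{pretzel}) precisely because $K'$ is in good position, and one needs the good complement $Y'$ so that the graph $y\mapsto y+\tau(y)$ lands inside $C'$ — this uses the defining $\varepsilon$-cone estimate in Definition \ref{mission1} together with $D\tau(0)=0$ (so $\abs{\tau(y)}_0 = o(\abs{y}_0)$, hence eventually $\abs{\tau(y)}_0 \leq \varepsilon\abs{y}_0$, making $y\in C' \Leftrightarrow y+\tau(y)\in C'$). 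Handling these level bookkeeping issues and the quadrant containment carefully is where the real work lies; the rest is assembling cited results.
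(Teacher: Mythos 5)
Your overall architecture matches the paper's: solve the $W'$-component by the contraction germ to obtain $\delta$, substitute $w'=\delta(a')$ into the finite-dimensional $\R^{N'}$-equation $H(a'):=(\mathbbm{1}-P')\circ g(a',\delta(a'))=0$, solve that by a partial-quadrant implicit function theorem, and recombine; your use of Lemma \ref{new_lemmat_level_0} and Theorem \ref{save} for the level-$0$ uniqueness in (3) is also correct. But the finite-dimensional step is the heart of the proof, and you have a genuine gap there.

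You propose to ``split $\wt C'$-coordinates $a'=(y,z)$ according to $K'\cap\R^{n'}$ and its complement'' and solve for $z$ in terms of $y$. This is the wrong decomposition. The kernel $K'=\ker Dg(0)$ is a subspace of $E'=\R^{n'}\oplus W'$ that in general has nonzero $W'$-components: by Lemma \ref{new_lemma_relation}, $K'=\{(\alpha,D\delta(0)\alpha)\,\vert\,\alpha\in\ker DH(0)\}$. The right finite-dimensional kernel is therefore $\wt K:=\ker DH(0)\subset\R^{n'}$, which is only \emph{isomorphic} to $K'$ via $\alpha\mapsto(\alpha,D\delta(0)\alpha)$; the intersection $K'\cap\R^{n'}$ is typically strictly smaller than $\wt K$ and may be $\{0\}$. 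Before you can run a partial-quadrant IFT on $H$ you must show that $\wt K$ is in good position to $\wt C'=[0,\infty)^k\oplus\R^{n'-k}$ inside $\R^{n'}$, with a good complement $Z$ chosen so that $Z\oplus W'$ is precisely the good complement $Y'$ of $K'$ in $E'$. This is the content of Lemma \ref{new_lemma_Z}, a nontrivial geometric verification your sketch omits entirely; it is not a formal consequence of Proposition \ref{pretzel} applied to $K'$.

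The same gap propagates to the final construction of $\tau$. The IFT step naturally produces a solution graph $a\mapsto a+\sigma(a)$ over a neighborhood in $\wt K\cap\wt C'$, and combining with $\delta$ gives $\gamma(a)=\sigma(a)+\delta(a+\sigma(a))$, a graph over $\wt K$. To write the solution set as a graph over $K'\cap C'$, as the theorem requires, you must push through the isomorphism $\pi\colon K'\to\wt K$, $(v,D\delta(0)v)\mapsto v$, and correct: the paper sets $\tau(y)=-D\delta(0)\circ\pi(y)+\gamma(\pi(y))$. The correction term $-D\delta(0)\circ\pi(y)$ is exactly what makes $D\tau(0)=0$ hold and what makes $y\mapsto y+\tau(y)$ land on the solution set. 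Your proposal contains no analogue of this term, so the $\tau$ you describe would not satisfy property (2). A smaller point: you claim the finite-dimensional step gives a $C^\infty$ function; in fact $H$ is only of class $C^j$ on the shrinking neighborhoods $U_{j+2}'$, so Theorem \ref{help-you} yields $C^j$ regularity on shrinking domains, which is exactly the sc-smooth germ statement (1) rather than classical smoothness.
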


\begin{proof}

The linearization $Dg(0)\colon \R^{n'}\oplus W'\to \R^{N'}\oplus W'$ is a surjective map because, by assumption, $Df(0)\colon \R^{n}\oplus W\to \R^{N}\oplus W$ is surjective. Moreover, $\ker Dg(0)=T\varphi (0)\ker Df(0)$. In view of Proposition \ref{newprop2.24} and Corollary \ref{equality_of_d} we also have $T\varphi (0)C=C'$.
We also  recall that (above Lemma \ref{new_lemmat_level_0}) we have introduced the sc-projection $P'\colon \R^{N'}\oplus W'\to W'$, the neighborhood $U'_i=\{a\in [0,\infty)^k\oplus \R^{n'-k}\, \vert \, \abs{a}_0\leq \tau'_i\}$ of $0$ in $\R^{n'}$, and the map $\delta\colon U_i'\to W_i'$ satisfying $P'\circ g(a, \delta (a))=0$ and $\delta (a)=0$.

We introduce the map $H\colon U_1'\to \R^{N'}$ by 
$$H(a)=({\mathbbm 1}-P')\circ g(a, \delta (a)).$$
It satisfies $H(0)=0$ since $g(0)=0$.  In the following we need the map $H$ to be of class $C^1$. In view of property (4), the map $H$ restricted to $U_i'$ is of class $C^{i-1}$. Since, by Proposition \ref{lower}, the sc-smooth map $g$ induces a $C^1$-map $g\colon E_{m+1}\to E_m$ for every $m\geq 0$, we shall restrict the domain of $H$ to the set $U'_3$. In order to prove the theorem we shall first relate the solution set $\{H=0\}$ to the solution set $\{g=0\}$, and start with the following lemma.

\begin{lemma}\label{new_lemma_relation}
The linear map $\alpha \mapsto (\alpha , D\delta (0)\alpha)$ from $\R^{n'}$ into $E'$ induces a linear isomorphism 
$$\wt{K}:=\ker DH(0)\to \ker Dg(0)=:K'.$$
\end{lemma}

\begin{proof}[Proof of the lemma]
We first claim that  
$$\ker Dg(0)=\{(\alpha ,D\delta (0)\alpha)\, \vert \, \alpha \in \ker DH(0)\}.$$  Indeed, if $(\alpha, \zeta)\in \ker Dg(0)$, then  $P'\circ Dg(0)(\alpha, \zeta)=0$ and  $({\mathbbm 1}-P')\circ Dg(0)(\alpha, \zeta)=0$. By assumption, $P'\circ g(a, w)=w-B(a, w)$. Hence,   differentiating at the point $0$ in the direction of $(\alpha, \zeta)$ and  recalling from Lemma \ref{new_Lemma3.9} that $D_2B(0)=0$, we obtain 
$$0=P'\circ Dg(0)(\alpha, \zeta)= \zeta-D_1B(0)\alpha-D_2B(0)\zeta= \zeta-D_1B(0)\alpha.$$
 On the other hand, differentiating  the identity $P'\circ g(a, \delta (a))=0$ at $a=0$ and evaluating the derivative at $\alpha$, we find,  in view of previous equation,  
$$0=P'\circ g(0)(\alpha, D\delta (0)\alpha)=D\delta (0)\alpha-D_1B(0)\alpha.$$
Consequently,  $\zeta=D\delta (0)\alpha$ as claimed.

From  
$$DH(0)\alpha =({\mathbbm 1}-P')\circ Dg(0)(\alpha , D\delta (0)\alpha )$$
 it follows, if $\alpha \in \ker DH(0)$, in view of 
 $P'\circ Dg(0)(\alpha ,D\delta (0)\alpha)=0$ for all $\alpha$, that 
 $(\alpha, D\delta (0)\alpha)\in \ker Dg(0)$.
 
Conversely, if $(\alpha, \zeta)\in \ker Dg(0)$, then $0=P'\circ Dg(0)(\alpha, \zeta)=
\zeta-D_1B(0)\alpha$ and $({\mathbbm 1}-P')\circ Dg(0)(\alpha, \zeta)=0$. Since 
$P'\circ Dg(0)(\alpha, D\delta (0)\alpha)=0$, we conclude that $\zeta=D\delta (0)\alpha$ and hence $({\mathbbm 1}-P')\circ Dg(0)(\alpha, D\delta (0)\alpha)=DH(0)\alpha=0$, so that $\alpha \in \ker DH(0)$. The proof of 
Lemma \ref{new_lemma_relation} is complete.

\end{proof}

By assumption, the kernel  $K=\ker Df(0)$ is in good position to the partial quadrant $C$. We recall that this requires  that 
$K\cap C$ has a nonempty interior in $K$ and there exists a complement of $K$, denoted by $K^\perp$, so that $K\oplus K^\perp=E$, having the following property. There exists 
$\varepsilon>0$ such that 
if $k+k^\perp\in K\oplus K^\perp$  satisfies  $\abs{k^\perp}_0\leq \varepsilon\abs{k}_0$, then 
\begin{equation}\label{eq_good_position}
\text{$k+k^\perp\in C$ \quad if and only if \quad $k\in C$}.
\end{equation}

The kernel $K'=\ker Dg(0)=T\varphi (0)K$ has a complement in $E'$,  denoted by $(K')^\perp=T\varphi (0)(K^\perp),$ so that 
$K'\oplus (K')^\perp =E'$.
\begin{lemma} 
The complement $(K')^\perp$ is a good complement of $K'$ in $E'$. 
\end{lemma}
\begin{proof}
Since $T\varphi (0)\colon \R^n\oplus W\to \R^{n'}\oplus W'$ is a topological isomorphism and $T\varphi (0)C=C'$ and $K'=T\varphi (0)K$, it follows that $K'\cap C'$ has a nonempty interior in $K'$. Next we choose 
$\varepsilon'>0$ satisfying $\varepsilon'\norm{T\varphi (0)}\norm{(T\varphi (0))^{-1}}\leq \varepsilon$ and take 
$x'\in K'$ and $y'\in (K')^\perp$ satisfying $\abs{y'}_0\leq \varepsilon'\abs{x'}_0$. 
If  $x=(T\varphi (0))^{-1}x'$ and $y=(T\varphi (0))^{-1}y'$, then
\begin{equation*}
\begin{split}
\abs{y}_0&
=\abs{(T\varphi (0))^{-1}y'}_0\leq \norm{(T\varphi (0))^{-1}}\abs{y'}_0\leq \varepsilon' \norm{(T\varphi (0))^{-1}}\abs{x'}_0\\
&=\varepsilon' \norm{(T\varphi (0))^{-1}}\abs{T\varphi (0)}_0\leq 
\varepsilon' \norm{(T\varphi (0))^{-1}}\norm{T\varphi (0)}\abs{x}_0\leq \varepsilon\abs{x}_0.
\end{split}
\end{equation*}
By \eqref{eq_good_position}, $x+y\in C$ if and only if $x\in C$ and since  $T\varphi (0)C=C'$,  we conclude that $x'+y'\in C'$ if and only if $x'\in C'$.  
\end{proof}
The next lemma is proved in Appendix \ref{pretzel-B}.
\begin{lemma}\label{new_lemma_Z}
The kernel  $\wt{K}:=\ker DH(0)\subset \R^{n'}$ of the linearization $DH(0)$ is in good position to the partial quadrant $\wt{C}=[0,\infty)^{k}\oplus \R^{n'-k}$ in $\R^{n'}$. Moreover, there exists a good complement $Z$ of $\wt{K}$ in $\R^{n'}$, hence 
$\wt{K}\oplus Z =\R^{n'}$, having the property that $Z\oplus W'$ is a good complement of $K'=\ker Dg(0)$ in $E'$, 
$$E'=K'\oplus ( Z\oplus W').$$
\end{lemma}

\mbox{}\\
 In view of  Lemma \ref{new_lemma_Z},  Theorem \ref{help-you} in Appendix \ref{implicit_finite_partial_quadrants} is applicable to the $C^1$-map $H\colon U_2'\to \R^{N'}$,  where 
$U_2'=\{a\in [0,\infty )^k\oplus \R^{n'-k}\, \vert \, \abs{a}_0<\tau_2'\}\subset \R^{n'}$ is introduced before the statement of Theorem \ref{LGS}. According to Theorem 
\ref{help-you}  there exists a relatively open neighborhood $\wt{V}$ of $0$ in $\wt{K}\cap \wt{C}$ and a $C^1$-map 
$$\sigma\colon \wt{V}\to Z$$
satisfying $\sigma(0)=0$ and $D\sigma(0)=0$ and solving the equation  
$$\text{$H(a+\sigma(a))=0$ for all $a\in \wt{V}$.}$$

The situation is illustrated in the following figure.

\begin{figure}[htb]
\begin{centering}
\def\svgwidth{60ex}
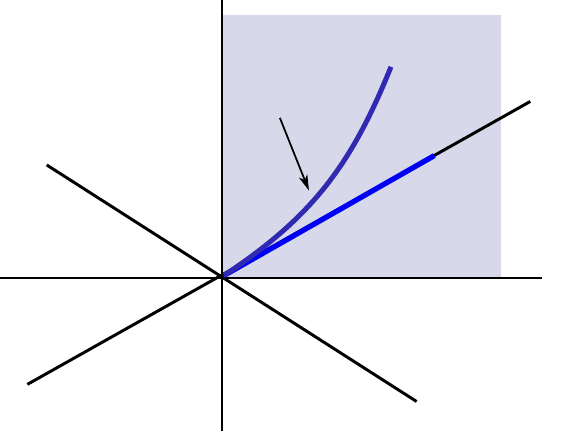
\caption{}\label{fig:pict5}
\end{centering}
\end{figure}

Since $H$ is the more regular the closer we are  to $0$,  the map  $\tau$ has the same property.   Recalling the solution germ 
$\delta\colon U_1'\subset \R^{n'}\to \ov{B}_1' \subset W_1'$ of 
$g(a, \delta (a))=0$, we define the map
$\gamma\colon \wt{V}\rightarrow Z\oplus W'_1$ by 
$$\gamma (a)=\sigma(a)+\delta(a+\sigma(a)),\quad v\in \wt{V}.
$$
By the properties of $\sigma$ and $\delta$ 
there exists a nested sequence $\wt{V}_1\supset \wt{V}_2\supset \ldots $ of relatively open neighborhoods of $0$ in $\wt{K}\cap \wt{C}$ such that the restriction
$$
\gamma \vert \wt{V}_i\colon \wt{V}_i\to Z\oplus W_i'
$$
is of class $C^i$,  for every $i\geq 1$.  Recalling from Lemma 
\ref{new_lemma_relation} that $K'=\ker Dg(0)=\{(v, D\delta (0)v)\, \vert \, v\in \wt{K}\}$, we denote by $\pi\colon K'\to\wt{K}$
the projection
$$\pi\colon (v, D\delta (0)v)\mapsto v.$$
The map $\pi$  is a sc-isomorphism. 

We next introduce the map $\tau$ from a relatively open neighborhood of $0$ in $K'\cap C'$ into $Z\oplus W'$ by 
$$\tau (y)=-D\delta (0)\circ \pi (y)+\gamma (\pi (y)),\quad y\in K'\cap C'.$$
It satisfies $\tau (0)=0$. Differentiating $\tau$  at $y=0$, and using  $D\sigma(0)=0$, we obtain
$$
D\tau(0)\eta =-D\delta(0)\circ \pi ( \eta)+D\delta(0)\circ \pi ( \eta)=0
$$
for all $\eta\in K'$ and hence $D\tau (0)=0$. 
Recalling that $y=(v, D\delta (0)v)$ where $v\in \ker DH(0)$, and $\pi (y)=v$, we  compute 
\begin{equation*}
\begin{split}
y+ \tau (y)&=(v +D\delta(0)v) +(-D\delta(0)v+\gamma (v))\\
&=v+\gamma (v)=(v+\sigma (v))+\delta (v+\sigma (v)),
\end{split}
\end{equation*}
where $v+\sigma (v)\in \R^{n'}$. Hence, by definition of the solution germ $\delta$ we conclude from Lemma \ref{new_lemmat_level_0} that 
$$g(y+\tau (y))=0$$
for all $y\in K'\cap C'$ near $0$ on level $0$. By construction, the map $\tau$ is a sc-smooth germ satisfying $\tau (0)=0$ and $D\tau (0)=0$ and there exists a nested sequence of relatively open subsets $\wt{O}_i=\pi^{-1} (V_i)$ of $0$ in $K'$, where 
$O_i=\{a\in [0,\infty)^k\oplus\R^{n'-k}\ |\ |a|_0<\varepsilon_i'\}$
such that the restrictions satisfy 
$$\tau\vert \wt{O}_i\in C^{i}(\wt{O}_i,Z\oplus W_{i}'),\quad \text{for $ i\geq 1$}.$$
Moreover, if $g(y+w)=0$ for $y\in K'\cap C'$ and $w\in Z\oplus W'$ sufficiently small on level $0$, then $w=\tau (y)$.
To sum up, there exists a relatively open subset $U$ of $0$ in $C$ diffeomorphic to  the relatively open subset $U'=\varphi (U)$ of $0$ in $C'$, and a relatively open subset $V$ of $0$ in $K'\cap C'$ such that  
$$
\varphi \bigl(\{x\in U\  |\ f(x)=0\}\bigr) =\{x'\in U'\, \vert \, g(x')=0\}=\{y+\tau(y)\, \vert \, y\in V\}.
$$
The proof of Theorem \ref{LGS} is finished.

\end{proof}

\begin{corollary}\label{LGS2}\index{C- Germ-solvability {II}}
Let  $f\colon U\rightarrow F$ be  a sc-smooth  germ  satisfying $f(0)=0$ and of the form $f=h+s$,  where $h$ is a basic germ 
and $s$ is a $\ssc^+$-section satisfying $s(0)=0$. We assume 
that the linearization $Df(0)\colon E\to F$ is surjective and the  kernel $K=\ker Df(0)$ is in good position to the partial quadrant 
$C\subset E$. 

Then there exists a good complement $Y$ of $K$ in $E$, so that $K\oplus Y=E$, and  there exists a $C^1$-map 
$\sigma\colon V\rightarrow Y_1$ defined on relatively open neighborhood $V$ of $0$ in $K\cap C$, having the following properties.  
\begin{itemize}
\item[{\em (1)}] $\sigma\colon {\mathcal O}(K\cap C,0)\rightarrow (Y,0)$ is a sc-smooth germ.
\item[{\em (2)}] $\sigma (0)=0$ and $D\sigma(0)=0$.
\item[{\em (3)}] 
$\{x\in U\, \vert \, f(x)=0\} =\{v+\sigma(v)\, \vert \, v\in V\}$ after perhaps replacing $U$ by a smaller open neighborhood.
\end{itemize}

\end{corollary}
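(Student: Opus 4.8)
The plan is to obtain Corollary \ref{LGS2} by transporting the conclusion of Theorem \ref{LGS} back through the coordinate change it provides. Applying Theorem \ref{LGS} to $f=h+s$ produces a strong bundle isomorphism $\Phi$ covering an sc-diffeomorphism $\varphi\colon(U,0)\to(U',0)$, the push-forward $g=\Phi_\ast(f)$ with $K':=\ker Dg(0)=T\varphi(0)K$, a good complement $Y'$ of $K'$ in $E'=K'\oplus Y'$, a relatively open neighborhood $V$ of $0$ in $K'\cap C'$, and a $C^1$-map $\tau\colon V\to Y'_1$ which is an sc-smooth germ $\mathcal O(K'\cap C',0)\to(Y',0)$, satisfies $\tau(0)=0$, $D\tau(0)=0$, and fulfils $\varphi(\{x\in U\mid f(x)=0\})=\{y+\tau(y)\mid y\in V\}$. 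The structural fact I would emphasize is that the $\varphi$ delivered by Theorem \ref{arbarello} (hence the one in Theorem \ref{LGS}) is the restriction to $U$ of a \emph{linear} sc-isomorphism $E\to E'$ — it is the plain rearrangement of coordinates $(a,w)\mapsto(a,(\mathbbm 1-P_1)w,P_1w)$ constructed in the proof of Theorem \ref{arbarello}. Consequently $T\varphi(0)=\varphi$, $\varphi^{-1}$ is a linear sc-isomorphism, $K'=\varphi(K)$, and, since $U$ and $U'$ are relatively open neighborhoods of $0$ in the partial quadrants $C$ and $C'$ (so that $\R^+U=C$ and $\R^+U'=C'$), we also get $\varphi(C)=C'$ and $\varphi^{-1}(C')=C$.

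With this in hand I would set $Y:=\varphi^{-1}(Y')$. Since $\varphi^{-1}$ is a linear sc-isomorphism carrying $K'$ onto $K$, $Y'$ onto $Y$ and $C'$ onto $C$, it is immediate that $Y$ is an sc-subspace with $E=K\oplus Y$, that $K\cap C=\varphi^{-1}(K'\cap C')$ has non-empty interior in $K$, and that $Y$ is a good complement of $K$ with respect to $C$: the good-position estimate for $(K',Y',C')$ transports verbatim once the constant $\varepsilon$ is replaced by $\varepsilon/(\norm{\varphi}\,\norm{\varphi^{-1}})$ — this is exactly the argument for the complement $(K')^\perp$ inside the proof of Theorem \ref{LGS}, used in the opposite direction. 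Next I would define
$$\wt V:=\varphi^{-1}(V)\subseteq K\cap C,\qquad \sigma\colon\wt V\to Y,\qquad \sigma(v):=\varphi^{-1}\bigl(\tau(\varphi(v))\bigr).$$
Here $\wt V$ is a relatively open neighborhood of $0$ in $K\cap C$; $\sigma$ takes values in $Y_1$ because $\tau$ takes values in $Y'_1$ and $\varphi^{-1}$ preserves levels; $\sigma$ is $C^1$ as a composition of $\tau$ with the continuous linear maps $\varphi$ and $\varphi^{-1}$; and $\sigma$ is an sc-smooth germ $\mathcal O(K\cap C,0)\to(Y,0)$ because $\tau$ is one and the nested neighborhoods and the level structure are carried over by the level-preserving linear isomorphisms $\varphi$, $\varphi^{-1}$. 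Moreover $\sigma(0)=\varphi^{-1}(\tau(0))=0$ and $D\sigma(0)=\varphi^{-1}\circ D\tau(0)\circ\varphi|_K=0$, which gives properties (1) and (2).

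For property (3) I would just compute, using that $\varphi$ is a linear bijection taking the zero set of $f$ onto that of $g$:
$$\{x\in U\mid f(x)=0\}=\varphi^{-1}\bigl(\{y+\tau(y)\mid y\in V\}\bigr)=\{\varphi^{-1}(y)+\varphi^{-1}(\tau(y))\mid y\in V\},$$
and then substitute $v=\varphi^{-1}(y)$, so that $y=\varphi(v)$ and $v$ ranges over $\wt V$; this rewrites the set as $\{v+\sigma(v)\mid v\in\wt V\}$. Since $\varphi^{-1}(y)\in K$ and $\varphi^{-1}(\tau(y))\in Y$, the two summands are precisely the components of the point relative to $E=K\oplus Y$, so $\sigma$ is genuinely a graph map over $K\cap C$. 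Renaming $\wt V$ as $V$ and shrinking $U$ so that these equalities hold as germs completes the argument.

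The proof is therefore almost entirely bookkeeping; the point where care is required is the appeal to the linearity of $\varphi$, and this is essential rather than cosmetic. If one transported the graph $\{y+\tau(y)\}$ back through a genuinely nonlinear sc-diffeomorphism, the image of the solution set under the projection $\pi_K\colon E=K\oplus Y\to K$ could fail to be a relatively open neighborhood of $0$ in the partial quadrant $K\cap C$ — the quadratic part of $\varphi^{-1}$ can bend the boundary of the quadrant — and one would be forced to re-run a finite-dimensional implicit function argument rather than transport the graph directly. So the real content of the proof is the observation that Theorem \ref{arbarello} can be read as producing a linear $\varphi$, after which Corollary \ref{LGS2} follows by the substitutions above.
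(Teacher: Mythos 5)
Your proof is correct, and it takes a genuinely cleaner route than the paper's. The difference is exactly the one you highlight: the paper never uses the fact that the coordinate change $\varphi$ provided by Theorem \ref{arbarello} is the restriction of a linear sc-isomorphism $(a,w)\mapsto(a,(\mathbbm 1-P_1)w,P_1w)$. Instead, the paper introduces the auxiliary map $\psi(v)=\pi\circ\varphi^{-1}(v+\tau(v))$, verifies $\psi(0)=0$ and $D\psi(0)=T\varphi(0)^{-1}|_{K'}$, then carries out a substantial argument (case distinction on $v\in K'\cap(\R^{n'-k}\oplus W')$ or not, using Lemma \ref{big-pretzel_1a}) to show that $\psi$ preserves the degeneracy index, in order to invoke the quadrant inverse function theorem, Theorem \ref{QIFT}, to make $\psi$ a local $C^1$-diffeomorphism and then extract $\sigma$ by composing with $\varphi^{-1}(\,\cdot\,+\tau(\cdot))$ and the fiber projection. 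That machinery is exactly what is required to re-straighten the graph when $\varphi$ is allowed to be nonlinear, and is exactly what your observation renders unnecessary: since $\varphi$ is linear, $\psi(v)=\pi(\varphi^{-1}(v)+\varphi^{-1}(\tau(v)))=\varphi^{-1}(v)$ is itself a linear isomorphism $K'\cap C'\to K\cap C$, so the degeneracy-index preservation and the invertibility are immediate and Theorem \ref{QIFT} is not needed. What your approach buys is a significantly shorter proof; what it costs is that you must read the linear $\varphi$ off the construction in the \emph{proof} of Theorem \ref{arbarello} rather than off its \emph{statement}, which only promises some sc-diffeomorphism. If one treats the conjugating diffeomorphism as a black box (which is arguably the stance of the abstract sc-Fredholm germ definition, Definition \ref{oi}, where the conjugation need not be linear), the paper's route via Theorem \ref{QIFT} is the robust one. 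For the specific decomposition $f=h+s$ at hand, your shortcut is sound.

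One bookkeeping point worth stating explicitly if you write this up: the identity $\varphi(C)=C'$ uses that $\varphi$ is defined on all of $C$ (as a global linear map), together with $\R^+U=C$ and $\R^+U'=C'$; you do note this, and it is what lets $\wt V=\varphi^{-1}(V)$ be relatively open in $K\cap C$ rather than merely in some translate of it.
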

\begin{proof}

In view of  Theorem \ref{LGS}, the sc-diffeomorphism $\varphi\colon (U, 0)\to (U', 0)$ between  relatively open subsets $U$ and $U'$ of the partial quadrants $C=[0,\infty)^k\oplus \R^{n-k}\oplus W$ and 
$C'=[0,\infty)^k\oplus \R^{n'-k}\oplus W'$ satisfies 
$T\varphi (0)K=K'=\ker Dg(0)$ and $K'$ is in good position to $C'$.   If $Y'$ is the  good sc-complement in Theorem \ref{LGS} of $K'$ in $E'$, we define $Y=T\varphi(0)^{-1}(Y')$. Then $Y$  is the desired good complemement of $K$ in $E$ with respect to the partial quadrant $C$, so that $E=K\oplus Y$. Let $\pi\colon K\oplus Y\to K$ be the sc-projection. 


Recalling  the map $\tau\colon V\subset K'\cap C'\to Y_1'$ from Theorem \ref{LGS},  we define the map 
$\psi\colon V\subset K'\cap C'\rightarrow K$  by 
$$\psi (v)=\pi\circ\varphi^{-1}(v+\tau(v)).$$
The map $\psi$ satisfies  $\psi (0)=0$ and its derivative $D\psi (0)\colon K'\to K$  is equal to 
$D\psi (0)=T\varphi(0)^{-1}\vert K'$, hence it is an isomorphism. 

We claim that the map $\psi$ preserves the degeneracy index, that is 
\begin{equation}\label{index_preserving}
d_{K'\cap C'}(v)=d_{K\cap C}(\psi (v))
\end{equation}
for $v\in V$ close to $0$.  To see this we first assume that $v\in V\subset K'\cap C'$ belongs to $K'\cap (\R^{n'-k}\oplus W')$ where $\R^{n'-k}\oplus W'$ is identified with $\{0\}^k\oplus \R^{n'-k}\oplus W'.$ Hence $v$ is of the form $v=(0, w)\in \{0\}^k\oplus \R^{n'-k}\oplus W'$. 

If $K'$ is one-dimensional, then, in view of Lemma \ref{ll1},  $v=t(a, w')\in \R^{k}\oplus \R^{n'-k}\oplus W'$ where $a=(a_1,\ldots ,a_k)$ satisfies $a_j>0$. Consequently, $t=0$, so that $v=0$. 
Since $\psi (0)=0$,  we have $d_{C'\cap K'}(0)=d_{C\cap K}(\psi (0))$.

If $\dim K'\geq 2$, then,  denoting by $\wt{K'}$ the algebraic complement of $K\cap (\R^{n'-k}\oplus W')$ in $K'$,  we may assume, after linear change of coordinates, that the following holds. 
\begin{itemize}
\item[(a)] The finite dimensional subspace $\wt{K}'$ is spanned by the vectors $e'_j=(a'_j, b'_j,  w'_j)$ for $1\leq j\leq m=\dim \wt{K}'$ in which $a_j'$ are the vectors of the standard basis of $\R^{l}$, $b_j'=(b_{j, m+1}',\ldots, b_{j,k}')$ with $b_{j,i}'>0$, and $w_j'\in \R^{n'-k}\oplus W'.$
\item[(b)] The good complement $Y'$ of $K'$ in $E'=\R^{n'}\oplus W'$ is contained in $\{0\}^l\oplus \R^{k-l}\oplus \R^{n'-k}\oplus W'$.
\end{itemize}
By adding a basis $e_j'=(0, w_j')\in \{0\}^k\oplus \R^{n'-k}\oplus W'$,  $m+1\leq j\leq \dim K'$,  of $K'\cap (\R^{n'-k}\oplus W')$ we obtain a basis of $K'$.

We have a similar statement for  the subspace $K$ with the algebraic complement $\wt{K}$ of $K\cap (\R^{n-k}\oplus W)$ in $K$, $e'_j$ replaced by $e_j$, and the good complement $Y$ of $K$ contained in $\{0\}^m\oplus \R^{k-m}\oplus \R^{n-k}\oplus W$.

Now if $v=(0, w_1')\in \{0\}^k\oplus \R^{n'-k}\oplus W'$, then, in view of (a), $d_{K'\cap C'}(v)=m=\dim \wt{K}'$.  
Recalling $K'$ is in good position to the partial quadrant $C'$ and $Y'$ is a good complement of $K'$ in $E'$. there exists a constant $\gamma$  such that if $n\in K'$ and $y\in Y'$, then $\abs{y}_0\leq \gamma\abs{n}_0$ implies that $n+y\in C$ if and only if $n\in C$.  It follows from $\tau (0)=0$ and $D\tau (0)=0$ that 
$\abs{\tau (v)}_0\leq \gamma\abs{v}_0$ and,  since $v\in C$,  we conclude that $v\pm \tau (v)\in C$. Since 
for $v$ as above $v_j=0$ for all $1\leq j\leq k$, we find that also $\tau_j (v)=0$ for all $1\leq j\leq k$. Hence 
$v+\tau (v)=(0, w_2')\in \{0\}^k\oplus \R^{n'-k}\oplus W'$.  Because   the map $\varphi$ is a sc-diffeomorphism, $\varphi^{-1}(v+\tau (v))=\varphi^{-1}((0, w_2'))=(0, w)\in  \{0\}^k\oplus \R^{n-k}\oplus W$.

We shall show that $\pi ((0, w))\in K\cap C$ and that 
$d_{K\cap C}(\pi ((0, w))=m$. We decompose $(0, w)$ according to the direct sum  $E=K\oplus Y$ as 
$(0, w)=k+y$ where $k\in K$ and $y\in Y$, and write 
$k=(\alpha_1, \ldots , \alpha_k, w_1)\in \R^k\oplus \R^{n-k}\oplus W$, $y=(y_1,\ldots , y_k, w_2)\in Y$. Then,   using 
$Y\subset \{0\}^m\oplus \R^{k-m}\oplus \R^{n-k}\oplus W$, we find that  $y_1=\ldots y_m=0$. This implies that 
also $\alpha_1=\ldots =\alpha_m=0$. Consequently, $\pi ((0, w))=k=(0, w_1)\in K\cap C$ and 
$d_{K\cap C}(\pi ((0, w))=m$. We have proved the identity 
\eqref{index_preserving} in the case $v\in K'\cap  (\R^{n'-k}\oplus W')$.

In the case $v=(a, w)\in K\cap C\setminus K\cap  (\R^{n-k}\oplus W)$, we use  Lemma \ref{big-pretzel_1a} and  compute  
$$
d_{C\cap K}(\psi(v))=d_C(\varphi^{-1}(v+\tau(v)))=d_{C'}(v+\tau(v))=d_{C'\cap K'}(v).
$$
Hence $\psi\colon V\rightarrow C$ is a $C^1$-map  satisfying 
$\psi (0)=0$, $D\psi (0)(K'\cap C')=K\cap C$, and preserving  the degeneracy index. Consequently, 
applying  the inverse function theorem for partial quadrants in $\R^n$,  Theorem \ref{QIFT},  to the map $D\psi (0)^{-1}\circ \psi$,  we find relatively open neighborhoods $V_0\subset V$ and $V_1\subset C$ of $0$ such that 
$$
\psi \colon V_0\rightarrow V_1
$$
is a $C^1$-diffeomorphism, which has higher and higher differentiability closer and closer to $0$. Considering the map
$$
w\mapsto  \varphi^{-1}(\psi^{-1}(w)+\tau\circ\psi^{-1}(w)), \quad w\in V_1,
$$
we obtain   $\pi\circ  \varphi^{-1}(\psi^{-1}(w)+\tau\circ\psi ^{-1}(w)) =w$,  and define the map $\sigma\colon V_1\to Y$ by 
$$
\sigma(w)=\varphi^{-1}\bigl(\psi^{-1}(w)+\tau\circ\psi^{-1}(w)\bigr)-w =({\mathbbm 1}-\pi)\varphi^{-1}\bigl(\psi^{-1}(w)+\tau\circ\psi^{-1}(w)\bigr).
$$
Finally,   we can now choose  open neighborhoods $V\subset K\cap C$ of $0$ and $U$ of $0$ in $C$ appropriately 
and see that the map $\sigma$ has the desired properties (1)-(3) of Corollary \ref{LGS2}

\end{proof}

\mbox{}
\subsection{Implicit Function Theorems}\label{ssec3.4}

Let us  recall from Definition \ref{x-filling}
 the concept  of a filled version.
\begin{definition}[{\bf Filling}]\index{D-Filling}
We consider a tame strong local bundle $K\to O$, where $K=R(U\triangleleft F)$, and  let  the set $U\subset C\subset E$ be  a relatively open neighborhood  of $0$ in the partial quadrant $C$ of the  sc-Banach space $E$.  Here  $F$ is a sc-Banach space and $R$ is a strong bundle retraction of the form 
$$R(u, h)=(r(u), \rho(u)(h))$$
covering the tame retraction $r\colon U\to U$ onto $O=r(U)$. We  assume that $r(0)=0$.

A sc-smooth section germ 
$(f, 0)$ of the bundle $K\to O$ possesses a  {\bf filling}
if there exists a sc-smooth section germ $(g, 0)$ of the bundle $U\triangleleft F\to U$ satisfying the following properties.
\begin{itemize}
\item[(1)]  $f(x)=g(x)$  for $x\in O$ close to $0$
\item[(2)] If $g(y)=\rho (r(y))g(y)$ for a point $y\in U$ near $0$, then $y\in O$.
\item[(3)] The linearisation of the map
$
y\mapsto  [{\mathbbm 1} -\rho(r(y))]\cdot g(y)
$
at the point $0$, restricted to $\ker(Dr(0))$, defines a topological linear  isomorphism
$$
\ker(Dr(0))\rightarrow \ker(\rho (0)).
$$
\end{itemize}

\end{definition}

\begin{remark}
By replacing $O$ by a smaller set, we may assume in (1) that $f(x)=g(x)$ for all $x\in O$ and in (2) that 
$g(y)=\rho(r(y))g(y)$ for $y\in U$ implies that $y\in O$.
\end{remark}

We also  recall the notions  sc-Fredholm germ and sc-Fredholm section.

\begin{definition}[{\bf sc-Fredholm germ}]\index{D-Sc-Fredholm germ}

Let $f$ be a sc-smooth section of the tame strong bundle 
$P\colon Y\to X$ and let $x\in X$ be a smooth point.

Then $(f, x)$ is a {\bf sc-Fredholm germ}, if there exists a strong bundle chart around $x$ (as defined in 
Definition \ref{def_strong_bundle_chart})
$$\text{$\Phi\colon \Phi^{-1}(V)\to K$ covering $\varphi \colon (V, x)\to (O, 0)$,}$$
where $K\to O$ is a tame strong local bundle containing $0\in O\subset U$, such that the local section germ $\Phi\circ f\circ \varphi^{-1}\colon O\to K$ has a filling $g\colon U\to U\triangleleft F$ near $0$ which possesses the following additional property. There exists a local $\ssc^+$-section $s\colon U\to U\triangleleft F$ satisfying $s(0)=g(0)$ such that $g-s$ is conjugated near $0$ to a basic germ.

\end{definition}
\begin{definition}[{\bf sc-Fredholm section}]\label{def_again_fedholm_sect}
A sc-smooth section $f$ of the tame strong bundle $P\colon Y\to X$ is a {\bf sc-Fredholm section}, if 
\begin{itemize}
\item[(1)] $f$ is regularizing, i.e., if $x\in X_m$ and $f(x)\in Y_{m, m+1}$, then $x\in X_{m+1}$.
\item[(2)] The germ $(f, x)$ is a sc-Fredholm germ at every smooth point $x\in X$.
\end{itemize}
\end{definition}

\mbox{}\\

From Theorem \ref{stabxx} we know that sc-Fredholm sections are stable under  
$\ssc^+$-perturbations; if $f$ is a sc-Fredholm section, then $f+s$ is also a sc-Fredholm section, for every $\ssc^+$-section $s$.

\begin{proposition}\label{prop3_52}
If $f$ is a sc-Fredholm section of the tame strong bundle $P\colon Y\to X$ and $x$ is a solution of $f(x)=0$, then $Tf(x)\colon T_xX\to Y_x$  is a sc-Fredholm operator.
\end{proposition}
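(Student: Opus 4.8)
The statement is essentially an unwinding of the definitions together with the facts already assembled about basic germs, fillings, and $\ssc^+$-perturbations. The plan is to reduce everything to the local model and to Proposition \ref{Newprop_3.9} and Proposition \ref{filler_new_1}. Since $(f,x)$ is a sc-Fredholm germ and $x$ is a smooth point with $f(x)=0$, there is a strong bundle chart $\Phi\colon \Phi^{-1}(V)\to K$ covering $\varphi\colon (V,x)\to (O,0)$ such that the local section germ $\wh f=\Phi\circ f\circ\varphi^{-1}\colon O\to K$ admits a filling $g\colon U\to U\triangleleft F$ near $0$, and moreover there is a local $\ssc^+$-section $s\colon U\to U\triangleleft F$ with $s(0)=g(0)$ such that $g-s$ is conjugated near $0$ to a basic germ. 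Because $f(x)=0$ and the chart maps are linear isomorphisms on fibers, we have $\wh f(0)=0$, hence $g(0)=0$ by property (1) of the filling, and therefore also $s(0)=0$.

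First I would record that $g-s$ being conjugated to a basic germ means there is a strong bundle isomorphism $\Psi$ covering a sc-diffeomorphism $\psi$ with $\Psi_\ast(g-s)\circ\psi^{-1}$ a basic germ $h$; since strong bundle isomorphisms and sc-diffeomorphisms act by linear isomorphisms on the relevant tangent/fiber spaces, the Fredholm property and index of the linearization at $0$ are unchanged under this conjugation. So it suffices to treat $g-s = h\circ(\text{coordinate change})$, where $h$ is a basic germ. By Proposition \ref{Newprop_3.9} applied to $h+0$ (a basic germ is in particular of the form ``basic germ plus $\ssc^+$-germ''), $D(g-s)(0)$ is a sc-Fredholm operator of index $n-N$. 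Since $s$ is a $\ssc^+$-section, $Ds(0)$ is a $\ssc^+$-operator, so $Dg(0) = D(g-s)(0)+Ds(0)$ is a $\ssc^+$-perturbation of a sc-Fredholm operator and hence itself sc-Fredholm by Proposition \ref{prop1.21}, with the same index (as $\ssc^+$-operators are level-wise compact).

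Next I would invoke Proposition \ref{filler_new_1}: since $g$ is a filling of $\wh f$ and $\wh f(0)=g(0)=0$, part (2) of that proposition gives that $T\wh f(0)=Tf(0)$ (read in the chart) is a Fredholm operator in the classical sense if and only if $Dg(0)$ is, with $\ind T\wh f(0)=\ind Dg(0)$. Combining with the previous paragraph, $T\wh f(0)$ is a Fredholm operator of index $n-N$. Finally, translating back through the strong bundle chart $\Phi$ covering $\varphi$, whose tangent maps $T\varphi(x)\colon T_xX\to T_0O$ and $\Phi_x\colon Y_x\to p^{-1}(0)$ are topological linear isomorphisms, we conclude that $Tf(x)\colon T_xX\to Y_x$ is a linear Fredholm operator (with index $n-N$). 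This is exactly what the remark following Definition \ref{oi} already sketched.

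\textbf{Main obstacle.} There is no deep difficulty here; the only thing requiring a little care is the bookkeeping of \emph{where} each linearization lives and the verification that passing from $f$ to $\wh f$ to $g$ to $g-s$ to its conjugated basic germ preserves the Fredholm property and index at each stage. In particular one must use that $Tf(x)$ is \emph{defined} (per Definition \ref{tangent_retract}) as the restriction of $D(\wh f\circ r)(0)$ to $T_0O$, and that the filling's property (3) is precisely what makes the passage between $T\wh f(0)$ and $Dg(0)$ an index-preserving isomorphism on the relevant complement, as spelled out before Proposition \ref{filler_new_1}. Once those identifications are in place, the result follows by chaining Proposition \ref{Newprop_3.9}, Proposition \ref{prop1.21}, and Proposition \ref{filler_new_1}.
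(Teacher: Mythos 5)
Your proposal is correct and follows essentially the same route as the paper's own proof: pass to a strong bundle chart, use the filling $g$ and the $\ssc^+$-section $s$ from the definition of a sc-Fredholm germ, apply Proposition \ref{Newprop_3.9} to $g-s$, Proposition \ref{prop1.21} to recover $Dg(0)$, and Proposition \ref{filler_new_1}~(2) to carry the Fredholm property back to $T\wt f(0)$. The only small addition you make is spelling out explicitly that $f(x)=0$ forces $g(0)=s(0)=0$ and that the chart maps identify $Tf(x)$ with $T\wt f(0)$, which the paper leaves implicit.
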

\begin{proof}
Since $(f, x)$ is a sc-Fredholm germ, there exists a tame strong bundle chart $\Phi\colon \Phi^{-1}(V)\to K$ around $x$ such that the section $\wt{f}=\Phi\circ f\circ \varphi^{-1}\colon O\to K$ has a filling $g\colon U\to U\triangleleft F$ for which there exists a $\ssc^+$-section $s\colon U\to U\triangleleft F$ satisfying $s(0)=g(0)$ such that $g-s$  is conjugated near $0$ to a basic germ. Hence, by Proposition \ref{Newprop_3.9}, the linearization $D(g-s)(0)\colon E\to F$ is a sc-Fredholm operator. Since $Ds(0)$ is a $\ssc^+$-operator,  the linearization $Dg(0)$ is sc-Fredholm operator,  in view of Proposition \ref{prop1.21}. From Proposition \ref{filler_new_1} (2) about fillers it follows that $T\wt{f}(0)\colon T_0O\to K_0$ is a sc-Fredholm operator of index 
$\ind (T\wt{f}(0))=\ind (Dg(0))=\ind (D(g-s)(0))$,  and the proposition follows.
\end{proof}

We now  focus on the solution set $\{f=0\}$ of a sc-Fredholm section near a point $x\in X$ of  $f(x)=0$. In the case of a boundary,  i.e., $d_X(x)\geq 1$,  we require that $\ker f'(x)$ is in good position to the partial cone $C_xX$
in the tangent space $T_xX$, as defined in 
Definition  \ref{reduced_cone_tangent}.

\mbox{}
\begin{definition}[{\bf Good position of a sc-Fredholm  germ}]\label{new_good_position_def}\index{D-Good position}
A sc-Fredholm germ $(f, x)$ of a tame strong bundle
$Y\rightarrow X$  satisfying $f(x)=0$ is in {\bf good position},   if 
\begin{itemize}
\item[(1)] $f'(x)\colon T_xX\rightarrow Y_x$ is surjective.
\item[(2)]  If $d_X(x)\geq 1$,  then  $\ker (f'(x))\subset T_xX$ is in good position to the partial cone  $C_xX$ in the tangent space $T_xX$.
\end{itemize}
\end{definition}

The fundamental implicit function theorem is as follows.

\begin{theorem}\label{IMPLICIT0}\index{T- Implicit function theorem}
Let $f$ be a sc-Fredholm section  of a tame strong bundle
$Y\rightarrow X$ . If $f(x)=0$, and if the sc-Fredholm germ $(f, x)$ is in good position, then there exists an open neighborhood $V$ of $x\in X$ such that  
the solution set $S=\{y\in V\, \vert \, f(y)=0\}$ in $V$ has the following properties.
\begin{itemize}
\item[(1)] At  every point $y\in S$, the sc-Fredholm germ  $(f,y)$ is in good position.
\item[(2)] $S$ is a sub-M-polyfold of $X$ and  the induced  M-polyfold 
structure is equivalent to a smooth manifold structure with boundary with corners. 
\end{itemize}
\end{theorem}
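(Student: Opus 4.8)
The plan is to reduce the global statement to the local Germ-Solvability result, Corollary~\ref{LGS2} (equivalently Theorem~\ref{LGS}), and then show that the solution set there has precisely the $n$-dimensional tangent germ property of Definition~\ref{toast}, so that Theorem~\ref{HKL} applies verbatim. First I would fix a solution $x\in X$ of $f(x)=0$. By Proposition~\ref{prop3_52} the linearization $Tf(x)$ is sc-Fredholm; by hypothesis $(f,x)$ is in good position, so $f'(x)$ is surjective and (if $d_X(x)\geq 1$) $N:=\ker f'(x)$ is in good position to the partial cone $C_xX$. Passing to a tame strong bundle chart around $x$ that realizes the sc-Fredholm germ property, I obtain a filled version $(g,0)$ of the section and, after subtracting a suitable $\ssc^+$-section and a strong bundle conjugation, a representative $f=h+s$ with $h$ a basic germ and $s$ a $\ssc^+$-germ, $s(0)=0$, on a relatively open neighborhood $U$ of $0$ in a partial quadrant $C=[0,\infty)^k\oplus\R^{n-k}\oplus W$. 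One must check that the good position hypotheses survive this reduction: by Proposition~\ref{filler_new_1}, surjectivity of $Tf(x)$ is equivalent to surjectivity of $Dg(0)$, and the kernels correspond; the translation between $C_xX$ and the quadrant $C$ is exactly the content of the tame-retract theory (Theorem~\ref{hofer}, Proposition~\ref{tame_equality}) together with Proposition~\ref{reduced_tangent_under_sc_diff}, which shows good position is a sc-diffeomorphism invariant. The fillers used in Corollary~\ref{LGS2} have solution sets coinciding with those of $f$ near $0$, so the solution set of the original $f$ near $x$ is, in the chart, $\{v+\sigma(v)\mid v\in V\}\subset K\oplus Y$ with $\sigma\colon\mathcal O(K\cap C,0)\to(Y,0)$ a sc-smooth germ, $\sigma(0)=0$, $D\sigma(0)=0$, $Y$ a good complement of $K$, and $K\cap C$ a partial quadrant in $K$ by Proposition~\ref{pretzel}.

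This is precisely condition (2) of Definition~\ref{toast}; condition (1), namely $S\subset X_\infty$, follows from Theorem~\ref{save} (local regularity and compactness), which forces every solution sufficiently close to $0$ to lie on arbitrarily high levels. Hence on a small enough neighborhood $V$ of $x$, the set $S=\{y\in V\mid f(y)=0\}$ has the $n$-dimensional tangent germ property with $n=\ind Tf(x)$. Theorem~\ref{HKL} then gives immediately that $S$ is a sub-M-polyfold of $X$, that the induced M-polyfold structure is tame, and that it is sc-smoothly equivalent to a smooth manifold with boundary with corners, with the charts $y\mapsto\pi\circ\varphi(y)$ described there. This yields conclusion (2).

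For conclusion (1) — that at every point $y\in S$ the germ $(f,y)$ is again in good position — I would argue as follows. Surjectivity of $f'(y)$ for $y$ near $x$ solving $f(y)=0$ is the stability-of-surjectivity statement, Corollary~\ref{corex1}(2), applied in the local model (after, if necessary, passing to a possibly different filling at $y$, as the excerpt notes is needed to see smoothness of $S$ at points other than $0$). The good-position statement for $\ker f'(y)$ relative to $C_yX$ then follows from the fact that $S$ is, near $y$, a smooth submanifold with corners whose tangent space is $\ker f'(y)$ and whose local boundary stratification matches $d_X$ restricted to $S$: concretely, the chart $\pi\circ\varphi$ from Theorem~\ref{HKL} identifies a neighborhood of $y$ in $S$ with a relatively open subset of a partial quadrant in $\ker f'(y)$, and shrinking $V$ we may assume $d_X$ is constant on each stratum of this model; good position is then read off from this product-like normal form exactly as in the proof of Corollary~\ref{LGS2}, where the degeneracy-index–preserving property \eqref{index_preserving} was established. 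Shrinking $V$ once more so that every $y\in S$ admits such a chart and the index is locally constant completes the argument.

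The main obstacle I anticipate is the bookkeeping around conclusion (1): one must ensure that the filling used at $x$ can be replaced, near a nearby solution $y$, by a filling adapted to $y$ (these genuinely differ, since $(g,y)$ need not be conjugated to a basic germ even though $(g,0)$ is), and that all the good-position data transfer compatibly through both fillings and the sc-diffeomorphisms relating them. This is where one needs to invoke the sc-Fredholm germ property at the smooth point $y$ (not merely at $x$) together with the invariance statements (Proposition~\ref{newprop2.24}, Corollary~\ref{equality_of_d}, Proposition~\ref{reduced_tangent_under_sc_diff}) to glue the local pictures; the underlying analytic content — regularity, compactness, and the normal form — is already packaged in Theorems~\ref{save}, \ref{LGS} and Corollary~\ref{LGS2}, so no new estimates are required.
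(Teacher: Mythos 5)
Your proposal follows essentially the same route as the paper: reduce to the filled basic-germ normal form, transfer the good-position and surjectivity hypotheses across the filler and the tame chart (Propositions~\ref{filler_new_1}, \ref{IAS-x}), apply Corollary~\ref{LGS2} to obtain the graph parametrization $\{v+\sigma(v)\}$ of the local solution set, verify the $n$-dimensional tangent germ property of Definition~\ref{toast}, and then invoke Theorem~\ref{HKL}. The one imprecision is in conclusion~(1): for the good position of $\ker f'(y)$ at nearby solutions $y$, the correct reference is Lemma~\ref{good_pos} (which shows that the shifted tangent spaces $N_v=\{n+D\sigma(v)n\}$ admit the \emph{same} good complement $Y$ relative to the local cone $C_{v+\sigma(v)}$), not the degeneracy-index identity~\eqref{index_preserving}; the latter controls the stratum but does not by itself produce a good complement. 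With that reference corrected, the argument matches the paper's.
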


In case $\partial X=\emptyset$,  the requirement that $(f,x)$ is in good position just means that $f'(x)$ is surjective.
In this case the  local solution set $S$ is a smooth manifold and the linearization $f'(y)$ is surjective at every point $y\in S$.

We note that Theorem \ref{implicit-x} and Theorem \ref{bound} are immediate consequences of Theorem  \ref{IMPLICIT0}.

\begin{proof}[{\bf Proof of Theorem \ref{IMPLICIT0}}]
The sc-smooth sc-Fredhom section $f$   of the tame strong bundle $P\colon Y\to X$ over the M-polyfold $X$ is regularizing so that the solutions $y\in X$ of $f(y)=0$ are smooth points. Moreover, at every smooth point $y\in X$, the sc-germ $(f, y)$ is a sc-Fredholm germ.

We now focus  on a neighborhood of the solution $x$ of $f(x)=0$. By assumption, the sc-Fredholm germ $(f, x)$ is in good position according to Definition \ref{new_good_position_def}, so that the kernel $N:=\ker f'(x)$ is in good position to the partial cone $C_xX\subset T_xX$ as defined in Definition \ref{def_partial_cone_reduced_tangent}, and the linearization $f'(x)\colon T_xX\to Y_x$ is surjective. In view of Proposition \ref{prop3_52}, the linear operator $f'(x)$ is a Fredholm operator. Its index is equal to $\ind (f'(x))=\dim N.$

By definition of a sc-Fredholm germ, there exists a strong bundle chart $(V, P^{-1}(V), K, U\triangleleft F)$ of the tame strong bundle $P\colon Y\to X$
\begin{equation*}
\begin{CD}
P^{-1}(V)@>\Phi>>K\\
@VPVV @VVpV \\
V@>\varphi>>O 
\end{CD}\,
\end{equation*}
covering the sc-diffeomorphism $\varphi\colon V\to O$, which is defined on the open neighborhood $V\subset X$ of the given point $x\in X$ and satisfies  $\varphi (x)=0$. The retract $K=R(U\triangleleft F)$ is the image of the strong bundle retraction $R\colon U\triangleleft F\to U\triangleleft F$ of the form 
$R(u, h)=(r(u), \rho (u)h)$  and which covers the tame sc-smooth retraction $r\colon U\to U$ onto $O=r(U)$. As usual, the set $U\subset C$ is a relatively  open subset of the partial quadrant $C=[0,\infty)^k\oplus \R^{n-k}\oplus W$ in the sc-Banach space $E=\R^n\oplus  W$.
Still by definition of a sc-Fredholm germ, the push-forward section
$$\wt{f}=\Phi_\ast (f)\colon O\to K$$
possesses a filled version $g\colon U\to U\triangleleft F$. It has, in particular, the property that 
\begin{equation}\label{g=wtilde_f}
g(u)=\wt{f}(u)\quad \text{if $u\in O$}.
\end{equation}
In addition, there exists a strong bundle isomorphism 
\begin{equation*}
\begin{CD}
U\triangleleft F@>\Psi>>U'\triangleleft F'\\
@VVV @VVV \\
U@>\psi>>U'
\end{CD}\,
\end{equation*}
where $F'=\R^{N}\oplus W$. It covers the sc-diffeomorphism $\psi\colon U\to U'$ satisfying  $\psi (0)=0$. The set $U'$ is a relatively open subset of the partial quadrant $C'=T\psi (0)C$ in the sc-space $E'=\R^N\oplus W$. In addition,  by definition of a sc-Fredholm germ, there exists a $\ssc^+$-section $s\colon U\to U\triangleleft F$ satisfying $s(0)=g(0)$ such that the push-forward section 
$$\Psi_\ast (g-s)=h\colon U'\to F'$$
is a basic germ according to Definition \ref{BG-00x}. Therefore, 
\begin{equation}\label{new_equ_50}
\Psi_\ast (g)=h+t,
\end{equation}
where $t=\Psi_\ast (s)$ is a $\ssc^+$-section $U'\to F'$.

In view of Proposition \ref{Newprop_3.9}, the linearization $D(h+t)(0)\colon E'\to F'$ is a sc-Fredholm operator, so that, by Proposition \ref{prop1.21}, the operator $Dg(0)\colon E\to F$ is also a sc-Fredholm operator.

From the postulated surjectivity of the linearization $f'(x)\colon T_xX\to Y_x$ we deduce that 
$T\wt{f}(0)\colon T_0O\to K_0$ is surjective. Hence, by Proposition \ref{filler_new_1}, the operator $Dg(0)$ is surjective. Since $\ker f'(x)$ is in good position to the partial cone $C_xX\subset T_xX$, and since $T\varphi (x)(C_xX)=C_0O=T_0O\cap C$, the kernel $\ker T\wt{f}(0)$ is in good position to the partial cone $C_0O$. The retract $O=r(U)$ is, by assumption, tame. Hence,  the tangent space $T_0O$ has, in view Proposition \ref{IAS-x}, the sc-complement $\ker Dr(0)$ in $E$ so that 
$$E=T_0O\oplus \ker Dr(0),$$
and $\ker Dr(0)\subset E_x$ at the point $x=0$ ($E_x$ is defined in Definition \ref{new_def_2.33}).
If $Z\subset T_0O$ is a good complement of $\ker T\wt{f}(0)$ in $T_0O=\ker T\wt{f}(0)\oplus Z$, the space $Z\oplus \ker Dr(0)$ is a good complement of 
$\ker T\wt{f}(0)$ in $E$, so that 
$$E=\ker T\wt{f}(0)\oplus (Z\oplus \ker Dr(0)).$$

Since, by the properties of the filler,  
$$\ker T\wt{f}(0)\oplus \{0\}=\ker Dg(0),$$
we conclude that $\ker Dg(0)$ is in good position to $C$ in $E$. Therefore, $N'=\ker D(h+t)(0)$ is in good position to $C'=T\psi (0)C$ in $E'$. Let now $Y'\subset E'$ be the  good complement of $N'$ in $E'=N'\oplus Y'$  from Theorem \ref{LGS2}. 

Then we can apply Theorem \ref{LGS2} 
about the local germ solvability and find an open neighborhood $V'\subset N'\cap C'$ of $0$,  and a map 
$$\sigma\colon V'\to (Y')_1$$ 
possessing the following properties.
\begin{itemize}
\item[(1)] $\sigma\colon V'\to (Y')_1$ is of class $C^1$ and satisfies $\sigma (0)=0$ and $D\sigma (0)=0$.
\item[(2)] $\sigma\colon {\mathcal O}(N'\cap C', 0)\to (Y', 0)$ is a sc-smooth germ.
\item[(3)] 
\begin{align*}
\psi\circ \varphi (&\{y\in V\, \vert \, f(y)=0\}&& \text{}\\
&=\psi (\{u\in O\, \vert \, \wt{f}(u)=0\})&& \text{}\\
&=\psi (\{u\in O\, \vert \, g(u)=0\})&& \text{by \eqref{g=wtilde_f}}\\
&=\{u'\in U'\, \vert \, (h+t)(u')=0\}&& \text{by \eqref{new_equ_50}}\\
&=\{v+\sigma (v)\, \vert \, v\in V'\}&& \text{by Corollary \ref{LGS2}.}
\end{align*}
\end{itemize}
Moreover,
\begin{itemize}
\item[(4)] For every $y\in V$ satisfying $f(y)=0$, so that  $\psi\circ \varphi (y)=v+\sigma (v)$, the kernel $\ker f'(y)$ is in good position to the partial cone $C_yX\subset T_yX$, and $f'(y)\colon T_yX\to Y_y$ is surjective.
\end{itemize}
 Property (4) follows from Corollary \ref{corex1} and from the following lemma, whose proof is postponed to Appendix \ref{geometric_preparation}.

\mbox{}\\
\begin{lemma}\label{good_pos}
Let $C\subset E$ be  a partial quadrant in the  sc-Banach space $E$ and $N\subset E$ a finite-dimensional smooth subspace in good position to $C$ and let $Y$ be a good complement of $N$ in $E$, so that $E=N\oplus Y$.
We assume that $V\subset N\cap C$ is a relatively open neighborhood
of $0$ and $\tau\colon V\rightarrow Y_1$ a map of class $C^1$ satisfying $\tau(0)=0$ and $D\tau(0)=0$.

\mbox{}\\

Then there exists a relatively open neighborhood $V'\subset V$ of $0$ such  that the following holds.
\begin{itemize}
\item[{\em (1)}] $v+\tau(v)\in C_1$ for $v\in V'$.
\item[{\em (2)}] For every $v\in V'$,  the linear subspace $N_v=\{n+D\tau(v)n\, \vert \, n\in N\}$ has the Banach space $Y=Y_0$ as a topological complement.
\item[{\em (3)}] For every $v\in V'$,  there exists a  constant $\gamma_v>0$ such  that  if  $n\in N_v$ and $y\in Y$ satisfy 
$\abs{y}_0\leq \gamma_v\cdot \abs{n}_0$,  the statements $n\in C_z$ and $n+y\in C_z$ are equivalent, where $z=v+\tau (v)$. 
\end{itemize}

\end{lemma}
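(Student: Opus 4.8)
The plan is to prove the three statements by working in the model $E=\R^k\oplus\R^{n-k}\oplus W$ with $C=[0,\infty)^k\oplus\R^{n-k}\oplus W$ and reducing everything to the \emph{linear} good-position hypothesis on $N$ together with the smallness of the first-order data of $\tau$. Recall from Proposition \ref{pretzel} that $N\cap C$ is a partial quadrant in $N$, so it makes sense to talk about $C_z$ for $z$ in a relatively open neighborhood of $0$ in $N\cap C$; and recall that the partial cones $C_z$ for $z=v+\tau(v)$ are taken inside $T_zE=E$, where by Proposition \ref{tame_equality}/Corollary \ref{equality_of_d} the degeneracy $d_C(z)=d_C(v)$ is governed only by which of the first $k$ coordinates of $v$ vanish (since $\tau(v)\in Y_1\subset E_1$ and, as one checks, $\tau(v)$ has vanishing first $k$ coordinates once $v$ does — this is exactly the argument behind \eqref{index_preserving} in Corollary \ref{LGS2}, using $|\tau(v)|_0\le\varepsilon|v|_0$ and good position). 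So $C_z=C_v$ as a subset of $E$, and statement (3) becomes a statement comparing $C_v$-membership of $n$ and $n+y$ where $n\in N_v$, $y\in Y$.

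First I would prove (1). Since $\tau(0)=0$, $D\tau(0)=0$ and $\tau$ is $C^1$ as a map into $Y_1$, for any $\varepsilon>0$ we have $|\tau(v)|_1\le\varepsilon|v|_1$ for $v$ small, hence $|\tau(v)|_0\le c_1\varepsilon|v|_1$. Because $N$ is finite dimensional all its norms are equivalent, so $|v|_1\le c|v|_0$; choosing $\varepsilon$ with $c_1 c\varepsilon$ below the good-position constant of $N$ relative to $C$, we get that $v\in N\cap C$ implies $v+\tau(v)\in C$. Since $\tau(v)\in Y_1\subset E_1$ and $v\in N\subset E_\infty$, in fact $v+\tau(v)\in C\cap E_1=C_1$. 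This is essentially Lemma \ref{good_pos}(1) and is routine.

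Next, (2): $N_v=\{n+D\tau(v)n\mid n\in N\}$ is the graph of the bounded finite-rank operator $D\tau(v):N\to Y_1\subset Y$; as $v\to 0$ we have $D\tau(v)\to D\tau(0)=0$ in the operator norm (on $N$, finite-dimensional), so $N_v$ is a small perturbation of the closed complemented subspace $N$ and hence, by the standard openness of the set of complemented subspaces (the map $\mathrm{id}+D\tau(v)\,\pi_N$ restricted to $N$ has the graph $N_v$ as image, and $N\oplus Y=E$ gives a splitting), $Y=Y_0$ is a topological complement of $N_v$ for $v$ in a small enough neighborhood $V'$. Concretely: the linear map $L_v:E=N\oplus Y\to E$, $L_v(n,y)=(n+D\tau(v)n, y)$, is $\mathrm{id}+(\text{small})$, hence invertible, $L_v(N)=N_v$, $L_v(Y)=Y$, so $E=N_v\oplus Y$.

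The main obstacle, and the heart of the matter, is (3): transferring the good-position inequality from $N$ to the perturbed subspace $N_v$ and from the quadrant $C$ to the partial cone $C_z=C_v$. My plan is: fix $v\in V'$ and let $z=v+\tau(v)$; I want a $\gamma_v>0$ so that for $n\in N_v$, $y\in Y$ with $|y|_0\le\gamma_v|n|_0$, one has $n\in C_v\iff n+y\in C_v$. Write $n=m+D\tau(v)m$ with $m\in N$, so $|n|_0$ and $|m|_0$ are comparable uniformly for $v$ small (since $\|D\tau(v)\|$ is small). The partial cone $C_v=\bigcap_{i\in I(v)}H_i$ is cut out by the coordinate half-space conditions $a_i\ge 0$ for $i\in I(v)\subset\{1,\dots,k\}$; crucially, for $i\in I(v)$ the $i$-th coordinate functional vanishes on $N\cap(\R^{n-k}\oplus W)$ and, after the same linear normalization as in the proof of Corollary \ref{LGS2} (items (a),(b) there, or directly: choose the good complement $Y$ adapted so that for $i\in I(v)$ either the $i$-th coordinate of $m\in N$ vanishes identically or it is one of finitely many ``active'' coordinates controlled below), the condition $n\in C_v$ is equivalent to finitely many inequalities on the ``$N$-part'' $m$. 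The good position of $N$ to $C$ gives: $m\in C\iff m+y'\in C$ whenever $|y'|_0\le\varepsilon|m|_0$, $y'\in Y$; intersecting with the coordinate subspaces indexed by $I(v)$ and using that $Y$ is a good complement yields $m\in C_v\iff m+y'\in C_v$ under the same smallness. Finally absorb $D\tau(v)m$ and the given $y$ into a single perturbation $y'=D\tau(v)m+y\in Y$ (note $D\tau(v)m\in Y$) with $|y'|_0\le(\|D\tau(v)\|+\gamma_v)|m|_0\le\varepsilon|m|_0$ for $\gamma_v$ small enough depending on $v$; this gives $n\in C_v\iff n+y\in C_v$ as required. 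I expect the delicate point to be making the ``adapted good complement'' and the passage $C\to C_v$ precise — i.e. checking that a good complement of $N$ in $E$ restricts to a good complement of $N\cap E_{I(v)}$ in $E_{I(v)}$ for each face index set $I(v)$ — and this is exactly the content that Lemma \ref{new_lemma_Z} and the computations in Corollary \ref{LGS2} were set up to handle, so I would lean on those, deferring the bookkeeping to the appendix (\ref{geometric_preparation}) as the paper indicates.
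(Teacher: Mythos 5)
Your overall plan — put $E=\R^n\oplus W$, $C=[0,\infty)^n\oplus W$, use the algebraic complement $\wt{N}$ of $N\cap W$ in $N$ with the normalized basis $e^j=(a^j,b^j,w^j)$ and the fact that the good complement $Y$ lies in $\{0\}^m\oplus\R^{n-m}\oplus W$ (Lemma \ref{cone1}) — is the same route the paper takes, and parts (1) and (2) are handled essentially as in the paper's appendix proof.

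The gap is in (3), and it is the step you flag yourself as ``delicate.'' You write that good position of $N$ with respect to $C$, together with ``intersecting with the coordinate subspaces indexed by $I(v)$,'' yields $m\in C_v\iff m+y'\in C_v$ under the same smallness. As stated this is not a valid inference: $C_v\supsetneq C$ when $I(v)\subsetneq\{1,\ldots,n\}$, so a good-position constant for the smaller cone $C$ does not transfer by any ``intersection'' argument to the larger one. What actually makes (3) work is a case split that your sketch does not separate out. When $v\in N\cap W$ one has $z=v+\tau(v)\in\{0\}^n\oplus W$ (this needs $v\pm\tau(v)\in C$ from (1), forcing $\tau_i(v)=0$ for $i\le n$), hence $C_z=C$ and the original good-position constant is exactly what is used; one then needs a small quantitative adjustment ($\gamma_v(1+\varepsilon)+\varepsilon<\gamma$) to absorb $D\tau(v)m$ together with $y$. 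When $v\notin N\cap W$ the good-position constant plays no role at all: after normalization $Y\subset\{0\}^m\oplus\R^{n-m}\oplus W$, membership in $C_z$ is determined only by coordinates in $\{1,\ldots,m\}$ on which $Y$ (hence $D\tau(v)m$ and $y$) vanishes identically, and the equivalence holds for \emph{every} $y\in Y$. But this triviality is available only after one proves $I(z)\subset\{1,\ldots,m\}$, i.e. that $v_j+\tau_j(v)>0$ strictly for $m+1\le j\le n$ when $v$ is small and nonzero; that is exactly Lemma \ref{big-pretzel_1a} and is a genuine consequence of $D\tau(0)=0$ (a monotonicity argument along the directions $e^i$), not just of $|\tau(v)|_0\le\varepsilon|v|_0$. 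You allude to this via the index-preservation identity from Corollary \ref{LGS2}, but do not supply it, and without it the claim $C_z=C_v$ and the collapse of (3) to a trivial coordinate statement are not justified. (The $\dim N=1$ case, where $\wt{N}=N$ and $C_z=E$, also deserves a separate line.) So the approach is right but the central step in (3) is left as an appeal to good position where good position is not the operative mechanism, and the two genuinely different cases are collapsed into one.
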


So far we are confronted with the following situation. The open neighborhood $V\subset X$ of the smooth point $x$ is a M-polyfold and we denote the solution set of the sc-Fredholm section $f$ of the tame strong bundle $P^{-1}(V)\to V$ by $S=\{y\in V\, \vert \, f(y)=0\}$. It consists of smooth points. For every $y\in S$, the germ $(f, y)$ is a sc-Fredholm germ and the linearization $f'(y)\colon T_yV\to Y_y$ is a surjective Fredholm operator whose kernel $\ker f'(y)$ is in good position to the partial cone $C_yV\subset T_yV$, so that, proceeding as above we can construct a map $\sigma$ satisfying the above properties (1)-(3). Consequently, abbreviating $d=\dim (\ker f'(x))$, the solution set $S\subset V$ possesses the $d$-dimensional tangent germ property according to Definition \ref{toast}. Therefore we can apply Theorem \ref{HKL} to conclude that the solution set 
$S=\{y\in V\, \vert \, f(y)=0\}$ is a sub-M-polyfold, whose induced M-polyfold structure is equivalent to the structure of a smooth manifold with boundary with corners.

This completes the proof of Theorem \ref{IMPLICIT0}.

\end{proof}
Finally,  we note two immediate consequences of Theorem \ref{IMPLICIT0}.
{\begin{theorem}[{\bf Global implicit function theorem in the case $\partial X=\emptyset$}]\label{io-xx}\index{T- Global implicit function theorem}
If $P\colon Y\rightarrow X$ is a tame strong bundle over an  M-polyfold $X$ satisfying  $\partial X=\emptyset$,  and if $f$ a sc-Fredholm section having the property that at  every point $x$ in the solution set $S=\{y\in X\ |\ f(y)=0\}$,  the linearization $f'(x)\colon T_xX\rightarrow Y_x$ is surjective. Then $S$ is a sub M-polyfold
of $X$ and the induced M-polyfold structure on $S$ is equivalent to the structure of a smooth manifold  without boundary.
\end{theorem}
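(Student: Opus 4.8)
The plan is to reduce the global statement to the local implicit function theorem, Theorem \ref{IMPLICIT0}, applied at each point of the solution set, and then to patch the local conclusions together using the fact that all the notions involved (sub-M-polyfold, induced M-polyfold structure, smooth atlas) are local in nature.

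First I would observe that the hypothesis $\partial X=\emptyset$ means $d_X\equiv 0$, so that for every $x\in X$ the partial cone $C_xX$ coincides with the full tangent space $T_xX$. Consequently, by Definition \ref{new_good_position_def}, the requirement that a sc-Fredholm germ $(f,x)$ be in good position reduces to the single condition that $f'(x)\colon T_xX\to Y_x$ be surjective. By assumption this holds at every $x$ in the solution set $S=\{y\in X\ |\ f(y)=0\}$. Hence at each such $x$ all hypotheses of Theorem \ref{IMPLICIT0} are met, and that theorem supplies an open neighborhood $V_x\subset X$ of $x$ such that $S\cap V_x=\{y\in V_x\ |\ f(y)=0\}$ consists of smooth points, is a sub-M-polyfold of $X$ whose induced M-polyfold structure is equivalent to a smooth manifold structure with boundary with corners, and such that at every $y\in S\cap V_x$ the germ $(f,y)$ is again in good position (i.e.\ $f'(y)$ is again surjective), so the same conclusion is available at $y$.

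Next I would globalize. Being a sub-M-polyfold is, by Definition \ref{def_sc_smooth_sub_M_polyfold}, a purely local property: it suffices that every point of $S$ possess an open neighborhood $V$ in $X$ together with a sc-smooth retraction $r\colon V\to V$ with $r(V)=S\cap V$. The proof of Theorem \ref{HKL}, invoked inside Theorem \ref{IMPLICIT0}, constructs precisely such a retraction on (a shrinking of) each $V_x$; since every point of $S$ lies in some $V_x$, the whole set $S$ is a sub-M-polyfold of $X$. By Proposition \ref{sc_structure_sub_M_polyfold} it then carries a canonical induced M-polyfold structure (in particular it is Hausdorff and paracompact, inherited from $X$), and by Lemma \ref{k^0} we have $d_S(a)\le d_X(a)=0$ for all $a\in S$, so $\partial S=\emptyset$. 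For the smooth structure, the data from Theorem \ref{IMPLICIT0} exhibit $S\cap V_x$ with the $d$-dimensional tangent germ property of Definition \ref{toast}, where $d=\dim\ker f'(x)$ is locally constant; Theorem \ref{HKL}(2) then gives a chart $S\cap V_x\to V\subset N$, $y\mapsto\pi\circ\varphi(y)$, onto a relatively open subset of the finite-dimensional smooth space $N$, and since $d_X\equiv 0$ we have $N\cap C=N$, so the chart targets an open subset of a finite-dimensional vector space. The transition map between any two such charts is the restriction of a sc-smooth map between open subsets of finite-dimensional vector spaces, hence classically $C^\infty$; therefore these charts constitute a smooth atlas on $S$ which, by construction, is sc-smoothly equivalent to the induced M-polyfold structure. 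Thus $S$ is a smooth manifold, and with $\partial S=\emptyset$ it is a manifold without boundary.

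The only point requiring care — not a genuine obstacle, since Theorem \ref{IMPLICIT0} already carries all the analytic content — is the compatibility of the local pieces on overlaps $V_x\cap V_{x'}$: that the local retractions define the same sub-M-polyfold structure there (automatic from the canonical nature of the induced structure in Proposition \ref{sc_structure_sub_M_polyfold}) and that the local finite-dimensional charts glue smoothly (automatic because their transition maps are finite-dimensional sc-diffeomorphisms, hence classically smooth). Everything else is routine bookkeeping.
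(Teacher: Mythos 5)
Your proof is correct and takes exactly the route the paper intends: the paper states this theorem as an ``immediate consequence of Theorem \ref{IMPLICIT0},'' and you have supplied the routine globalization — reducing good position to surjectivity when $d_X\equiv 0$, invoking the local implicit function theorem at each solution, observing that sub-M-polyfold is a local notion, using Lemma \ref{k^0} to get $\partial S=\emptyset$, and noting that the finite-dimensional transition maps are classically smooth. Nothing is missing.
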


In a later section we shall study the question how to perturb a sc-Fredholm section to guarantee the properties required 
in the hypotheses of Theorem \ref{io-xx} and the following  boundary version.
\begin{theorem}[{\bf Global implicit function theorem: boundary case}]\label{io-xxx}\index{T- $\partial$-global implicit function theorem}
Let $P\colon Y\rightarrow X$ be  a tame strong bundle,  and $f$ a sc-Fredholm section having the property that
at  every point $x$ in the solution set $S=\{y\in X\ |\ f(y)=0\}$,  the linearization $f'(x)\colon T_xX\rightarrow Y_x$ is surjective and the kerenel $\ker(f'(x))$ is in good position
to the partial cone $C_xX\subset T_xX$.  Then $S$ is a sub-M-polyfold
of $X$ and the induced M-polyfold structure on $S$ is equivalent to the structure of a smooth manifold with boundary with corners.
\end{theorem}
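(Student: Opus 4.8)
\textbf{Proof plan for Theorem \ref{io-xxx} (Global implicit function theorem: boundary case).}

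The plan is to reduce this global statement to the local Theorem \ref{IMPLICIT0} together with the characterization of subsets with the tangent germ property in Theorem \ref{HKL}. First I would observe that, since $f$ is a sc-Fredholm section, it is regularizing, so every solution $y\in S$ is a smooth point of $X$; hence $S\subset X_\infty$, which is condition (1) of Definition \ref{toast}. Next, fix an arbitrary point $x\in S$. By hypothesis the linearization $f'(x)\colon T_xX\to Y_x$ is surjective and, if $d_X(x)\geq 1$, the kernel $\ker f'(x)$ is in good position to the partial cone $C_xX$; thus the sc-Fredholm germ $(f,x)$ is in good position in the sense of Definition \ref{new_good_position_def}. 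Applying Theorem \ref{IMPLICIT0} at $x$, we obtain an open neighborhood $V=V(x)$ of $x$ in $X$ such that the local solution set $S\cap V=\{y\in V\mid f(y)=0\}$ is a sub-M-polyfold of $X$ whose induced structure is equivalent to that of a smooth manifold with boundary with corners, and moreover, by assertion (1) of Theorem \ref{IMPLICIT0}, at every point $y\in S\cap V$ the germ $(f,y)$ is again in good position.

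The heart of the argument is then to upgrade these local descriptions to the assertion that the \emph{global} set $S$ is a sub-M-polyfold and has the $d$-dimensional tangent germ property near each of its points (with $d=\dim\ker f'(x)$, which may depend on the connected component). For this I would unwind the proof of Theorem \ref{IMPLICIT0}: the local solution set near $x$ was shown to have the $\dim(\ker f'(x))$-dimensional tangent germ property, i.e. in a suitable sc-smooth chart $\varphi\colon (U(x),x)\to(O,0)$ onto a tame retract, with a good splitting $E=N\oplus Y$ and a sc-smooth germ $\delta\colon V\to Y$ satisfying $\delta(0)=0$, $D\delta(0)=0$, one has $\varphi(S\cap U(x))=\{v+\delta(v)\mid v\in V\}$. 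Since this holds at \emph{every} point of $S$ (using that good position propagates, as recorded above), the subset $S\subset X$ satisfies Definition \ref{toast} at each of its points. Hence Theorem \ref{HKL} applies directly and yields that $S$ is a sub-M-polyfold of $X$ whose induced M-polyfold structure is tame and is sc-smoothly equivalent to the structure of a smooth manifold with boundary with corners. This gives the conclusion of Theorem \ref{io-xxx}.

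The main obstacle I anticipate is a bookkeeping one rather than a conceptual one: checking that the tangent germ property established locally in the proof of Theorem \ref{IMPLICIT0} is genuinely stated \emph{pointwise on all of $S$}, so that no separate gluing of the local manifold charts is needed — the gluing is already subsumed in the invariance statement preceding Theorem \ref{HKL} and in Theorem \ref{HKL} itself, which produces a single compatible smooth atlas on $M=S$ from the pointwise data. One should also be careful that the dimension $d$ is locally constant along $S$ (it equals the Fredholm index of $f'$ at that point, which is constant on connected components by continuity of the index), but this causes no problem since Definition \ref{toast} and Theorem \ref{HKL} are applied component by component. With these remarks in place the theorem follows, and Theorem \ref{io-xx} is the special case $\partial X=\emptyset$, where the good-position condition on $\ker f'(x)$ is vacuous.
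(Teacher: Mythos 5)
Your argument is exactly the intended one: the paper labels Theorems \ref{io-xx} and \ref{io-xxx} as immediate consequences of Theorem \ref{IMPLICIT0}, and your proposal just unfolds that statement — the pointwise good-position hypothesis makes Theorem \ref{IMPLICIT0} applicable at every $x\in S$, which gives $S$ the tangent germ property of Definition \ref{toast} (with $n$ locally constant by Proposition \ref{sst}), and Theorem \ref{HKL} then produces the global sub-M-polyfold / manifold-with-boundary-with-corners structure. The point you flag as a potential obstacle is not one: Definition \ref{toast} is already a pointwise condition and Theorem \ref{HKL} performs the gluing, so nothing further is needed.
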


\subsection{Conjugation to a Basic Germ}

A useful criterion to decide  in practice whether  a filled version is conjugated to a basic germ is given in Theorem \ref{basic_germ_criterion} below.
We would like to point out a similar result,  due to K. Wehrheim,  in  \cite{Wehr}.
 The following criterion was 
introduced in \cite{HWZ5} and employed to show that the nonlinear Cauchy-Riemann operator occurring in the Gromov-Witten theory
defines a sc-Fredholm section.
\begin{theorem}[{\bf Basic Germ Criterion}]\label{basic_germ_criterion}\index{T- Basic germ criterion}
Let  $U$ be  a relatively open neighborhood of $0$ in the partial quadrant $C$ of the sc-Banach space $E$, and let 
${\bf f}\colon U\rightarrow F$ be a sc-smooth map into the sc-Banach space $F$, which satisfies the following conditions.
\begin{itemize}
\item[{\em (1)}] At every smooth point $x\in U$ the linerarization $D{\bf f}(x)\colon E\rightarrow F$
is a sc-Fredholm operator and the index does not depend on $x$.
\item[{\em (2)}] There is a sc-splitting $E=B\oplus X$ in which $B$ is a finite-dimensional subspace of $E$ containing the kernel
of $D{\bf f}(0)$, and $X\subset C$, such  that the following holds for $b\in B\cap U$ small enough.  If $(b_j)\subset B\cap U$
is a sequence converging to $b$ and if $(\eta_j)\subset X$ is a sequence which is bounded on level $m$ and satisfying
$$
D{\bf f}(b_j)\eta_j=y_j+z_j,
$$
where $y_j\rightarrow 0$ in $F_m$ and where the sequence $(z_j)$ is bounded in $F_{m+1}$, then the sequence $(\eta_j)$ possesses a convergent
subsequence in $X_m$.
\item[{\em (3)}] For every  $m\geq 0$ and $\varepsilon>0$, the estimate 
$$
\abs{[D_2{\bf f}(b,0)-D_2{\bf f}(b,x)]h}_m\leq \varepsilon\cdot \abs{h}_m
$$
holds for all $h\in E_m$, and for  $b\in B$ sufficiently small,  and for $x\in X_{m+1}$ sufficiently small on level $m$. 
\end{itemize}
Then the section ${\bf f-s}$ is conjugated near $0$ to a basic germ, where ${\bf s}$ is the constant $\ssc^+$-section ${\bf s}(x)={\bf f}(0)$.
\end{theorem}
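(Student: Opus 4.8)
The goal is to exhibit a strong bundle isomorphism $\Phi$ covering a sc-diffeomorphism $\varphi$ with $\varphi(0)=0$ so that $\Phi_\ast(\mathbf f-\mathbf s)$ becomes a basic germ in the sense of Definition \ref{BG-00x}. Without loss of generality we assume $\mathbf f(0)=0$ by replacing $\mathbf f$ by $\mathbf f-\mathbf s$ at the outset; this is harmless because subtracting the constant $\ssc^+$-section $\mathbf s(x)=\mathbf f(0)$ is exactly the correction allowed in the statement, and a $\ssc^+$-perturbation does not destroy any of the three hypotheses (for (1) use Proposition \ref{prop1.21}; (3) is stated directly in terms of $D_2\mathbf f$, which changes by a $\ssc^+$-operator). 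So from now on $\mathbf f(0)=0$ and we must put $\mathbf f$ itself into basic form.

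\textbf{Step 1: split off the finite-dimensional source direction.} Using the sc-splitting $E=B\oplus X$ from hypothesis (2), write points of $U$ as $(b,x)\in B\oplus X$ with $b$ the ``parameter'' and $x$ the ``functional'' variable, and write points of $F$ according to a splitting to be chosen. The natural target splitting is dictated by $D\mathbf f(0)$: since $D\mathbf f(0)$ is sc-Fredholm (hypothesis (1)) with $\ker D\mathbf f(0)\subset B$, restrict attention to $D_2\mathbf f(0)=D\mathbf f(0)|_X\colon X\to F$. I would show $D_2\mathbf f(0)\colon X\to F$ is injective with closed image $R$ of finite codimension, so $F=R\oplus Z$ with $\dim Z<\infty$, and $L:=D_2\mathbf f(0)\colon X\to R$ is a sc-isomorphism. (Injectivity: if $D_2\mathbf f(0)\xi=0$ then $\xi\in\ker D\mathbf f(0)\cap X\subset B\cap X=\{0\}$; finite codimension and closed range follow from $D\mathbf f(0)$ being sc-Fredholm, since the cokernel of $D\mathbf f(0)$ surjects onto the cokernel of $D_2\mathbf f(0)$ via the projection killing $D\mathbf f(0)(B)$, and one then has to identify $\mathrm{coker}$ using the Fredholm index which is constant by (1).) Compose with $L^{-1}$ on the $R$-component: the strong bundle map $\mathrm{id}_{Z}\oplus L^{-1}$ (extended over $U$ as a constant-coefficient isomorphism) turns $\mathbf f$ into a map whose $R$-component has linearization in the $X$-direction equal to the identity at $0$.

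\textbf{Step 2: the contraction normal form.} Write the transformed principal part as $(a,x)\mapsto\bigl(G(a,x),\,x-B(a,x)\bigr)\in Z\oplus R$ where $a=(b,\text{any finite coords of }C)$ so that $C=[0,\infty)^k\oplus\R^{n-k}\oplus X$ in the new coordinates, $B(0,0)=0$, and $D_2B(0,0)=0$ (this last from the construction, since the $R$-linearization in the $X$-direction was normalized to the identity). The core of the proof is the estimate
\[
\abs{B(a,x)-B(a,x')}_m\le\varepsilon\cdot\abs{x-x'}_m
\]
for $(a,x),(a,x')$ small on level $m$, which is precisely the basic-germ condition. I would derive it by integrating the derivative: $B(a,x)-B(a,x')=\int_0^1 D_2B(a,tx+(1-t)x')(x-x')\,dt$, so it suffices to bound $\norm{D_2B(a,x)}_m\le\varepsilon$ for $(a,x)$ small on level $m$. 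Now $D_2B(a,x)$ is, up to the $R$-projection and $L^{-1}$, the difference between the identity and the $X$-direction linearization of the $R$-component of $\mathbf f$ at $(a,x)$; and $\mathrm{id}-(\text{that linearization at }(a,0))\to 0$ follows because $D_2\mathbf f(b,0)$ depends continuously on $b$ with $D_2\mathbf f(0,0)=L$, while the remaining difference $D_2\mathbf f(b,0)-D_2\mathbf f(b,x)$ is $\le\varepsilon\abs{\cdot}_m$ by hypothesis (3). This is where (3) is used essentially.

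\textbf{Step 3: absorb the compact obstruction and the remaining finite directions; check it is a genuine coordinate change.} After Step 2 the $R$-component already has basic form $x-B(a,x)$ with $x$ the full infinite-dimensional variable, so setting $W:=R$ (a sc-Banach space), $\R^{N'}:=Z$, and the partial quadrant $[0,\infty)^k\oplus\R^{n'-k}\oplus W$ with $n'-k$ the dimension of the remaining finite real directions, the map $(a,x)\mapsto(G(a,x),x-B(a,x))$ is exactly a basic germ provided $\mathbf f(0)=0$ (which gives $G(0,0)=0$). It remains to check: (i) all the transformations used — the constant isomorphism $\mathrm{id}_Z\oplus L^{-1}$ of the fiber, and the identity reshuffling of $E=B\oplus X$ into $\R^{n}\oplus X$-form — assemble into a legitimate strong bundle isomorphism in the sense of Definition \ref{def_strong_bundle_chart}, covering $\varphi=\mathrm{id}$ (or a linear sc-diffeomorphism) with $\varphi(0)=0$; this is routine since all maps are sc-smooth linear isomorphisms preserving the double filtration. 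Hypothesis (2)'s sequential compactness condition is used, alongside Proposition \ref{Newprop_3.9}/\ref{prop1.21}, only to guarantee that $D_2\mathbf f(0)$ really is sc-Fredholm-type with the splitting $F=R\oplus Z$ behaving well on every level — i.e. that $L\colon X\to R$ is a sc-isomorphism and not merely a bounded bijection on level $0$; the compactness input upgrades level-$0$ statements to all levels.

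\textbf{Main obstacle.} The delicate point is Step 2: proving the uniform contraction estimate on \emph{every} level simultaneously, with constants depending on the level, rather than just on level $0$. Hypothesis (3) is tailored for exactly this, but one must carefully track that the normalization in Step 1 (dividing by $L$) does not spoil the level-wise smallness — i.e. that $L^{-1}$ is a sc-operator so that $\norm{L^{-1}P_2[\,\cdot\,]}_m$ is controlled by $\norm{\cdot}_m$ — and that the continuity $b\mapsto D_2\mathbf f(b,0)$ together with (3) combine to give, for each $m$ and each $\varepsilon$, a single neighborhood on level $m$ where $\norm{D_2B(a,x)}_m\le\varepsilon$. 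A secondary technical nuisance is verifying that the target splitting $F=R\oplus Z$ can be chosen with $Z\subset F_\infty$ (needed so that $Z=\R^{N'}$ sits inside every level) — this follows from Proposition \ref{prop1} once one knows $Z$ is a finite-dimensional sc-subspace, which in turn is part of the sc-Fredholm structure of $D\mathbf f(0)$.
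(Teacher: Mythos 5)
Your overall skeleton --- split the fiber $F$ as $R \oplus Z$ where $R = D\mathbf{f}(0)(X)$ and $Z$ is finite-dimensional and smooth, then invert on the $R$-component --- matches the paper's strategy. But Step 2 contains a genuine gap at exactly the place you yourself flagged as delicate. You invoke ``$D_2\mathbf f(b,0)$ depends continuously on $b$'' to conclude that $\mathrm{id}-L^{-1}P\,D_2\mathbf{f}(b,0)$ is small for small $b$, where $L=D_2\mathbf{f}(0)$ is the \emph{constant} normalizing isomorphism. In sc-calculus that continuity is not available: $x \mapsto D\mathbf{f}(x)$ is in general \emph{not} continuous in operator norm (see the remark immediately after Definition~\ref{scx}), and none of (1), (2), (3) supplies it --- hypothesis (3) only compares $D_2\mathbf{f}(b,0)$ with $D_2\mathbf{f}(b,x)$ at the \emph{same} $b$, saying nothing about how the linearization varies with $b$. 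Finite-dimensionality of $B$ gives strong (compact-open) continuity of $b\mapsto D_2\mathbf{f}(b,0)$, not norm continuity, so your first bracket $\mathrm{id}-L^{-1}P\,D_2\mathbf{f}(b,0)$ need not shrink, and the constant change of frame $\mathrm{id}_Z\oplus L^{-1}$ does not produce a $B(a,x)$ with the contraction property uniformly in $a$ on any level.

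The paper's fix is to make the bundle isomorphism \emph{base-point dependent}: $\Phi((b,x),(c,y))=\bigl((b,x),(c,L(b)^{-1}y)\bigr)$ with $L(b)=P\circ D\mathbf{f}(b)|_X$. Then the contraction estimate becomes, after a mean-value integral,
\begin{equation*}
|H(b,x)-H(b,x')|_m \le c_m^{-1}\Bigl|\int_0^1 P\bigl[D_2\mathbf{f}(b,0)-D_2\mathbf{f}(b,tx+(1-t)x')\bigr](x-x')\,dt\Bigr|_m,
\end{equation*}
which is controlled entirely by hypothesis (3); the troublesome $b$-dependence cancels because $L(b)^{-1}$ is paired with $L(b)$ at the same $b$. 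The price is that $\Phi$ is no longer obviously a strong bundle map, since $b\mapsto L(b)$ need not be a continuous family of operators. Closing that gap is the job of the paper's Lemma~\ref{family_maps}: if $(b,h)\mapsto L(b)h$ is sc-smooth and one has the uniform level-wise lower bound $|L(b)h|_m\ge c_m|h|_m$, then $(b,y)\mapsto L(b)^{-1}y$ is sc-smooth despite the lack of operator-norm continuity. That uniform bound, in turn, is exactly what the compactness hypothesis (2) buys --- applied for all $b$ in a neighborhood, not just $b=0$ --- via two argument-by-sequences lemmata. So (2) is carrying far more weight than your sketch assigns it: it produces the family of level-wise isomorphisms $L(b)$ that makes the $b$-dependent trivialization legitimate, not merely an upgrade of the $b=0$ statement to higher levels.
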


\begin{proof}
Abbreviating  $Y:=D{\bf f}(0)X\subset F$, the restriction 
$$
D{\bf f}(0)\vert X\colon X\rightarrow Y
$$
is an injective and surjective sc-operator. Since $D{\bf f}(0)\colon E\to F$ is a sc-Fredholm operator, we find a finite-dimensional sc-complement $A$ of $Y$ in $F$, such that 
$$
F=Y\oplus A.
$$
Denoting  by $P\colon Y\oplus A\rightarrow Y$ the sc-projection, we consider the family $b\mapsto L(b)$ of bounded linear operators, defined by  
$$L(b)=P\circ D{\bf f}(b)\vert X\colon X\to Y.$$
It is not assumed  that the operators $L(b)$ depend continuously on $b$. Since $B$ is a finite-dimensional
sc-smooth space,  the map 
$$
(B\cap U)\oplus X\rightarrow Y\colon (b,x)\mapsto  P\circ D{\bf f}(b)x
$$
is sc-smooth. If we raise the index by one, then the map  
$$
(B\cap U)\oplus X^1\rightarrow Y^1\colon (b,x)\mapsto  P\circ D{\bf f}(b)x
$$
is also sc-smooth by Proposition \ref{sc_up}.

\begin{lemma}\label{trivial_kernel}\index{L- Basic germ criterion {I}}
There exists a relatively open neighborhood $O$ of $0$ in $B\cap C$ such  that  the composition  
$$
P\circ D{\bf f}(b)\colon X\rightarrow Y,
$$
has a trivial kernel for every $b\in O$.
\end{lemma}
\begin{proof}
Assuming  that such an open set $O$ does not exist,  we find a sequence $b_j\in B\cap C$ satisfying  $b_j\rightarrow 0$ and a sequence $(h_j)\subset X_0$ satisfying $|h_j|_0=1$ such  that
$P\circ D{\bf f}(b_j)h_j=0$. Then $D{\bf f}(b_j)h_j = z_j$ is a bounded sequence in $A$,  in fact on every level since $A$ is a smooth subspace, and we consider  the level $1$ for the moment.
From property (2) we deduce that $(h_j)$ has a convergent subsequence in $X_0$. So,  without loss of generality,  
we may assume $h_j\rightarrow h$ in $X_0$ and  $|h|_0=1$. Hence
$$
P\circ D{\bf f}(0)h=0, 
$$
in contradiction to the injectivity of the map $P\circ D{\bf f}(0)\vert X$. 
The lemma is proved.
\end{proof}

From the property (1) and the fact that $P$ is a sc-Fredholm operator,  we conclude that $P\circ D{\bf f}(x)$ for smooth $x$ are all sc-Fredholm operators having  the same index. In particular, if $b\in O$, then $P\circ D{\bf f}(b)\colon X\rightarrow Y$ is an  injective sc-Fredholm operators of index $0$ in view of Lemma \ref{trivial_kernel},  and 
hence  a  sc-isomorphisms. Next we sharpen this result.

\begin{lemma}\index{L- Basic germ criterion {II}}
We take a relatively  open neighborhood $\wt{O}$ of $0$ in $C\cap B$ whose compact closure is contained in $O$. Then
for every level $m$ there exists a number $c_m>0$ such  that,  for every $b\in \wt{O}$,  we have the estimate
$$
|P\circ D{\bf f}(b)h|_m\geq c_m\cdot |h|_m\quad  \text{for all $h\in X_m$.}
$$
\end{lemma}

\begin{proof}

Arguing indirectly we find a level $m$ for which there is no such constant $c_m$. Hence there are sequences
$(b_j)\subset \wt{O}$ and $(h_j)\subset X_m$ satisfying $|h_j|_m=1$ and  
$\abs{P\circ D{\bf f}(b_j)h_j}_m\rightarrow 0$.
After perhaps taking a subsequence we may assume that $b_j\rightarrow b$ in $O$. 

From 
$$
D{\bf f}(b_j)h_j =P\circ D{\bf f}(b_j)h_j=y_j\to 0\quad \text{in $F_m$},
$$
we conclude,  in view of the property (2),  for a subsequence,  that $h_j\to h$ in $X_m$. By continuity, $P\circ D{\bf f}(b)h=0$ and $\abs{h}_m=1$, in contradiction to the fact, that $P\circ D{\bf f}(b)\colon X\to Y$ is a 
sc-isomorphism for $b\in O$.

\end{proof}

So far we have verified that the family $b\mapsto L(b)$ meets the assumptions 
of the following lemma,  taken from \cite{HWZ8.7}, Proposition 4.8.

\begin{lemma}\label{family_maps}\index{L- Basic germ criterion {III}}
We assume that $V$ is a relatively open subset in the partial quadrant of a finite-dimensional vector space $G$.
We suppose further that $E$ and $F$ are sc-Banach spaces and consider a family of linear operators $v\rightarrow L(v)$ 
having  the following properties.
\begin{itemize}
\item[{\em (i)}] For every $v\in V$, the linear operator  $L(v)\colon E\rightarrow F$ is a sc-isomorphism.
\item[{\em (ii)}] The map
$$
V\oplus E\rightarrow F,\quad (v,h)\mapsto L(v)h
$$
is sc-smooth.
\item[{\em (iii)}] For every level $m$ there exists a constant $c_m>0$ such that for $v\in V$ and all $h\in E_m$
$$
|L(v)h|_m\geq c_m\cdot |h|_m.
$$
\end{itemize}
Then the well-defined map
$$
V\oplus F\rightarrow E\colon (v,k)\mapsto  L(v)^{-1}(k)
$$
is sc-smooth.
\end{lemma}

Let us emphasize that it is  not assumed  that the operators $v\rightarrow [L(v)\colon E_m\rightarrow F_m]$ depend continuously as operators 
on $v$.


In view of Lemma \ref{family_maps}, the map 
$B\oplus Y\to Y$,
$$(b, y)\mapsto \bigl(P\circ Df(b)\vert X\bigr)^{-1}y,$$
is sc-smooth.

We may assume that the finite-dimensional space  $B$ is equal to $\R^n$ and that $E=\R^n\oplus X$ and $C=[0,\infty)^k\oplus \R^{n-k}\oplus X$ is the partial quadrant in $E$. Hence $B\cap C=[0,\infty)^k\oplus \R^{n-k}$. Moreover, we may identify the finite-dimensional subspace $A$ of $F$ with $\R^N=A$. Replacing, if necessary, the  relatively open neighborhood $U \subset C$ of $0$ by a smaller one
we may assume, in addition,  that $(b,x)\in U$ implies that $b\in \wt{O}$.

 We now define a strong bundle map
$$
\Phi\colon U\triangleleft (\R^N\oplus Y)\rightarrow U\triangleleft (\R^N\oplus Y)
$$
covering the identity $U\to U$ by
$$
\Phi((b,x),(c,y))= ((b,x), (c,[P\circ D{\bf f}(b)|X]^{-1}(y))).
$$
We define  the sc-smooth germ $({\bf h},0)$ by ${\bf h}(b,x)={\bf f}(b,x)-{\bf f}(0,0)$, where  $(b,x)\rightarrow {\bf f}(0,0)$ is a constant $\ssc^+$-section.
We shall show that the  push-forward germ ${\bf k}=\Phi_\ast({\bf h})$ is a basic germ. Using $D{\bf h}=D{\bf f}$,  we compute 
$$
{\bf k}(b,x) = (({\mathbbm 1}-P){\bf h}(b,x), [P\circ D{\bf h}(b)|X]^{-1}P{\bf h}(b,x))\in \R^N\oplus X.
$$
The germ ${\bf k}$ is  a sc-smooth germ 
$$ 
{\mathcal O}([0,\infty)^k\oplus \R^{n-k}\oplus X,0)\rightarrow (\R^N\oplus X,0).
$$
Denoting by $Q\colon \R^N\oplus X\rightarrow X$ the sc-projection, we shall verify that $Q{\bf k}$ is a $\ssc^0$-contraction germ.  We define the sc-smooth  germ $H\colon \R^n\oplus X\to X$ by 
$$
H(b,x) = x- Q{\bf k}(b,x)=x - [P\circ D{\bf h}(b)|X]^{-1}P{\bf h}(b,x).
$$
By construction,  $H(0,0)=0$.  The family $L(b)\colon X\to Y$ of bounded linear operators, defined by 
$$
L(b):=P\circ D{\bf f}(b)\vert X,\quad b\in B,
$$
satisfies the assumptions of Lemma \ref{family_maps}. Recalling now the condition (3) of Theorem \ref{basic_germ_criterion}, we choose $m\geq 0$ and $\varepsilon>0$ and accordingly take $b\in B$ small and $x, x'\in X_{m+1}$ small on level $m$. Then using the estimates in condition (3) and in Lemma  \ref{family_maps}, we estimate, recalling that $P\circ D_2h(b, x)=P\circ D_2f(b, x)$, 

\begin{equation*}
\begin{split}
&|H(b,x)-H(b,x')|_m=\abs{L(b)^{-1} [L(b)(x-x') - P{\bf h}(b,x) +P{\bf h}(b,x')}_m\\
&\quad \leq c_m^{-1}\cdot \abs{P \circ \bigl[D_2{\bf f}(b, 0)(x-x')  - {\bf h}(b,x) +{\bf h}(b,x')] }_m\\
&\quad =c_m^{-1}\cdot \abs{ 
\int_0^1 
P\circ  \bigl[D_2{\bf f}(b, 0)- D_2{\bf f}(b,tx +(1-t)x')\bigr](x-x')dt}_m\\
&\quad \leq  c_m^{-1}\cdot d_m\cdot\varepsilon\cdot |x-x'|_m.
\end{split}
\end{equation*}
The map $(b, x)\mapsto H(b, x)$ is sc-smooth. Therefore, using the density of $X_{m+1}$ in $X_m$, we conclude,  
for every $m\geq 0$ and $\varepsilon>0$,  that the estimate 
$$\abs{H(b, x)-H(b, x')}_m\leq \varepsilon\abs{x-x'}_m$$
holds, if $b\in B$ is small enough and $x, x'\in X_m$ are sufficiently small.  Having verified that the push-forward germ ${\bf k}=\Phi_\ast ({\bf h})$ is a basic germ, the proof of Theorem \ref{basic_germ_criterion} is complete.

\end{proof}
\subsection{Appendix}

\subsubsection{Proof of Proposition \ref{pretzel}}\label{pretzel-A} 
\begin{P3.22}\label{pretzel_0}
If  $N$ is a finite-dimensional sc-subspace in good position to the partial quadrant  $C$ in $E$, then
$N\cap C$ is a partial quadrant in $N$.
\end{P3.22}
As a preparation for the proof we recall some tools and results from the appendix in \cite{HWZ3} and begin with the geometry of closed convex cones and quadrants in finite dimensions.
A  {\bf closed convex cone} $P$ in a finite-dimensional vector space
 $N$ is a closed convex
subset satisfying  $P\cap (-P)=\{0\}$ and $\R^+\cdot P=P$.
 An
{\bf extreme ray} in a closed convex cone $P$ is a subset $R$ of the
form
$$
R=\R^+\cdot x, 
$$
where  $x\in P\setminus \{0\}$, having the property that  if   $y\in P$ and  $x-y\in P$, then $y\in R$. If the cone $P$ has a nonempty interior, then  it generates the vector space  $N$, that is $N=P-P$.

A {\bf quadrant} $C$ in a vector space $N$ of dimension $n$ is a closed convex cone such that there exists a linear isomorphism $T\colon N\rightarrow \R^n$ mapping $C$ onto $[0,\infty)^n$. We observe that a quadrant
in $N$ has precisely $\dim(N)$ many extreme rays.  

The following  version of the Krein-Milman theorem
is well-known, see exercise 30 on page 72 in \cite{Schaefer}.  A proof can be found in the appendix of \cite{HWZ3}, Lemma 6.3.
\begin{lemma}\label{kreinmilman}
A closed convex cone $P$ in a finite-dimensional vector space $N$ is
the  closed convex hull of its extreme rays.
\end{lemma}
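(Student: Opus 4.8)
The plan is to deduce the statement from the classical finite-dimensional Krein--Milman (Minkowski) theorem for \emph{compact} convex sets by slicing the cone $P$ with an affine hyperplane placed transversally to it; the hypothesis $P\cap(-P)=\{0\}$ (pointedness) is exactly what makes such a slice compact. We may and do assume $P\neq\{0\}$, the trivial case being a matter of convention on $\operatorname{conv}(\emptyset)$.

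\textbf{Step 1: a strictly positive functional.} First I would produce a linear functional $\phi\colon N\to\R$ with $\phi(x)>0$ for all $x\in P\setminus\{0\}$. Fixing a norm on $N$ with unit sphere $S$, the set $K:=\operatorname{conv}(P\cap S)$ is compact, and $0\notin K$: a representation $0=\sum_i\lambda_i v_i$ with $v_i\in P\cap S$, $\lambda_i>0$, $\sum_i\lambda_i=1$ would give $\lambda_1 v_1=-\sum_{i\ge 2}\lambda_i v_i\in P\cap(-P)=\{0\}$, contradicting $|v_1|=1$. Separating the compact convex set $K$ from the origin yields $\phi$ with $\phi>0$ on $K$, hence $\phi>0$ on $P\setminus\{0\}$ by homogeneity.

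\textbf{Step 2: a compact slice and the extreme-ray dictionary.} Set $B:=\{x\in P\mid\phi(x)=1\}$, a closed convex set. It is bounded: if $x_k\in B$ with $|x_k|\to\infty$, then after passing to a subsequence $x_k/|x_k|\to v\in P$ with $|v|=1$, yet $\phi(v)=\lim 1/|x_k|=0$, contradicting Step 1. So $B$ is compact convex, and $P=\R^+\!\cdot B$ since every $x\in P\setminus\{0\}$ equals $\phi(x)\cdot\bigl(x/\phi(x)\bigr)$ with $x/\phi(x)\in B$. The key bookkeeping is the correspondence between the extreme points of $B$ and the extreme rays of $P$. If $x\in B$ is an extreme point and $y\in P$ satisfies $x-y\in P$, then $0\le\phi(y)\le\phi(x)=1$; when $0<\phi(y)<1$, writing $x=\phi(y)\cdot\frac{y}{\phi(y)}+(1-\phi(y))\cdot\frac{x-y}{1-\phi(y)}$ as a nontrivial convex combination of points of $B$ forces $y=\phi(y)\,x\in\R^+x$, and the cases $\phi(y)\in\{0,1\}$ are immediate; hence $\R^+x$ is an extreme ray of $P$. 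Conversely, if $\R^+x$ with $x\neq 0$ normalized so that $\phi(x)=1$ is an extreme ray, and $x=\lambda u+(1-\lambda)v$ with $u,v\in B$, $0<\lambda<1$, then $\lambda u\in P$ and $x-\lambda u=(1-\lambda)v\in P$, so $\lambda u\in\R^+x$, and comparing $\phi$-values gives $u=x$, likewise $v=x$; thus $x$ is extreme in $B$.

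\textbf{Step 3: conclusion.} Applying the finite-dimensional Krein--Milman theorem to the compact convex set $B$ gives $B=\operatorname{conv}(\operatorname{ext}(B))$ (this is Minkowski's theorem; if one wants a self-contained argument it follows by induction on $\dim N$ via supporting hyperplanes, and this is the form cited in \cite{Schaefer}). Scaling by $\R^+$ and invoking $P=\R^+\!\cdot B$ together with the dictionary of Step 2, one obtains that $P$ is the convex cone generated by its extreme rays, which equals the convex hull of their union; being equal to $P$, this set is already closed, so $P$ is the closed convex hull of its extreme rays. The only genuine difficulty is the compactness of $B$ (where pointedness is used essentially) and the two directions of the extreme-point/extreme-ray correspondence in Step 2; the rest is the cited Minkowski theorem and routine separation in finite dimensions.
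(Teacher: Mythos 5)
The paper itself gives no proof of this lemma --- it merely defers to exercise 30 on p.\ 72 of \cite{Schaefer} and to Lemma~6.3 in the appendix of \cite{HWZ3} --- so there is no internal argument to compare against. Your proof is correct, self-contained, and is the standard cross-section reduction. The pointedness hypothesis $P\cap(-P)=\{0\}$ is used exactly once and essentially, to separate $0$ from $\operatorname{conv}(P\cap S)$ and thereby produce a linear functional $\phi$ strictly positive on $P\setminus\{0\}$; the slice $B=\{x\in P : \phi(x)=1\}$ is then compact and convex, and you verify carefully in both directions the dictionary between extreme points of $B$ and extreme rays of $P$ (the direction ``extreme point $\Rightarrow$ extreme ray'' being the one that requires the explicit convex decomposition you write down). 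Minkowski's theorem on $B$ then reconstitutes $P$. The one step you compress --- passing from $B=\operatorname{conv}(\operatorname{ext}(B))$ to $P=\operatorname{conv}\bigl(\bigcup\text{extreme rays}\bigr)$ --- is the identity $\operatorname{conv}(\R^+\!\cdot\operatorname{ext}(B))=\R^+\!\cdot\operatorname{conv}(\operatorname{ext}(B))$, which is a routine computation with nonnegative combinations; since the right-hand side is $\R^+\!\cdot B=P$ and $P$ is closed, the closed convex hull coincides with the convex hull and the argument is complete.
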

 A closed convex cone $P$ is called {\bf finitely generated} provided $P$ has
finitely many extreme rays. If this is the case, then  $P$ is the convex hull of
its finitely many extreme rays.

 For example,  if $C$ is a partial
quadrant in the sc-Banach space $E$ and $N\subset E$ is a finite-dimensional subspace of
$E$ such  that $C\cap N$ is a closed convex cone, then $C\cap N$ is
finitely generated.

\begin{lemma}\label{nomer}
Let $N$ be a finite-dimensional vector space and $P\subset N$ a
closed convex cone having a nonempty interior. Then $P$ is a quadrant if
and only if it has $\dim(N)$-many extreme rays.
\end{lemma}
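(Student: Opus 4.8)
The plan is to prove the two implications separately. For the forward implication there is essentially nothing new to do: if $P$ is a quadrant then, as already observed just above, $P$ has precisely $\dim(N)$ many extreme rays. Concretely, one transports the problem by the linear isomorphism $T\colon N\to\R^n$ with $T(P)=[0,\infty)^n$ — extreme rays are carried to extreme rays under any linear bijection, since the defining property of an extreme ray is phrased entirely in terms of the linear and convex structure — and then one checks directly that the extreme rays of $[0,\infty)^n$ are exactly the $n$ coordinate rays $\R^+e_i$: each $\R^+e_i$ is extreme, while any $x\in[0,\infty)^n$ with two nonzero coordinates fails the extremality test by subtracting off one of its coordinate components.

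For the converse, suppose $P$ is a closed convex cone with nonempty interior possessing exactly $n:=\dim(N)$ extreme rays, say $R_i=\R^+v_i$ with $v_i\in P\setminus\{0\}$ for $1\le i\le n$. First I would invoke the fact recalled just above that a closed convex cone with finitely many extreme rays is the convex hull of those rays; hence $P=\{\sum_{i=1}^n\lambda_iv_i\ \vert\ \lambda_i\ge 0\}$, the convex cone generated by $v_1,\dots,v_n$. Consequently $P-P=\Span\{v_1,\dots,v_n\}$. Since $P$ has nonempty interior, $P-P=N$, so $v_1,\dots,v_n$ span $N$; being $n$ in number, they form a basis. Then the linear isomorphism $T\colon N\to\R^n$ determined by $T(v_i)=e_i$ satisfies $T(P)=\{\sum_i\lambda_ie_i\ \vert\ \lambda_i\ge 0\}=[0,\infty)^n$, so $P$ is a quadrant by definition.

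I do not expect a genuine obstacle here: the entire content is Lemma \ref{kreinmilman} together with elementary bookkeeping. The one point to stay attentive to is the exact role of each hypothesis — "nonempty interior" is precisely what promotes "$\{v_i\}$ spans a subspace of $N$" to "$\{v_i\}$ spans all of $N$", and "exactly $n$ extreme rays" (rather than merely finitely many) is what then forces linear independence, hence simpliciality and the conclusion that $P$ is a quadrant.
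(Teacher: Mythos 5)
Your proof is correct, and it is essentially the natural (and in fact standard) argument; since the paper only cites \cite{HWZ3}, Lemma 6.4 for this lemma rather than reproducing the proof, there is nothing concrete to compare line by line, but both directions are handled cleanly. The forward direction transports by the defining linear isomorphism and verifies directly that the standard quadrant has exactly the $n$ coordinate rays as extreme rays, and the converse correctly uses the finitely-generated form of Lemma \ref{kreinmilman} to write $P=\text{cone}(v_1,\dots,v_n)$, then the nonempty-interior hypothesis to get $P-P=N=\Span\{v_i\}$, and the count $n=\dim(N)$ to upgrade spanning to a basis, after which $v_i\mapsto e_i$ exhibits $P$ as a quadrant. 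The only minor point worth spelling out if you wanted to be scrupulous is that the conical hull of finitely many vectors in a finite-dimensional space is automatically closed, so that the set you write down really is all of $P$; but the paper already records this assertion immediately before the lemma, so you are entitled to it.
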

The proof is given in  \cite{HWZ3}, Lemma  6.4. 

We consider the sc-Banach space $E =\R^n\oplus W$  containing the partial quadrant $C =[0,\infty)^n\oplus W$. A point  $a\in C=[0,\infty )^n\oplus W\subset \R^n\oplus W$, has  the representation
$a=(a_1,\ldots ,a_n, a_{\infty})$,  where $(a_1,\ldots ,a_n)\in [0,\infty )^n$ and $a_{\infty}\in W$.
By $\sigma_a$ we shall denote the collection of all  indices $i\in \{1,\ldots ,n\}$ for which
$a_i=0$ and denote by $\sigma_a^c$  the complementary set of indices in $\{1,\ldots ,n\}$. Correspondingly, we introduce the following subspaces in $\R^n$,
\begin{align*}
\R^{\sigma_a}&=\{x\in \R^n\ \vert \ \text{$x_j=0$ for all $j\not \in \sigma_a$}\}\\
\R^{\sigma^c_a}&=\{x\in \R^n\ \vert \ \text{$x_j=0$ for all $j\not \in \sigma^c_a$}\}.
\end{align*}
The next lemma and its proof is taken from the appendix in \cite{HWZ3}. The hypothesis that $C\cap N$ is a closed convex cone is crucial.

\begin{lemma}\label{roxy}
Let $N\subset E_{\infty}$ be  a finite-dimensional smooth subspace of  $E=\R^n\oplus W$ such  that $N\cap C$ is a closed convex cone. If $a\in N\cap C$ is a generator of  an extreme ray $R=\R^+\cdot x$ in $N\cap C$, then
$$
\dim(N)-1\leq \#\sigma_a.
$$
If,  in addition, $N$ is in good position to $C$, then
$$
\dim(N)-1= \#\sigma_a.
$$
\end{lemma}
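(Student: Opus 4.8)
The statement concerns an extreme ray generator $a\in N\cap C$ of the closed convex cone $N\cap C$, and asserts the bound $\dim(N)-1\le\#\sigma_a$, with equality when $N$ is in good position to $C$. The plan is to exploit the fact that $a$, being a generator of an extreme ray, is ``maximally degenerate'' relative to the coordinate hyperplanes $E_i=\{x_i=0\}$, and to extract a linear algebra statement from this. First I would set up coordinates: write $a=(a_1,\dots,a_n,a_\infty)$ with $a_i=0$ for $i\in\sigma_a$ and $a_i>0$ for $i\in\sigma_a^c$. Consider the linear subspace $E_{\sigma_a}=\bigcap_{i\in\sigma_a}E_i$ and its intersection $N_{\sigma_a}:=N\cap E_{\sigma_a}$; clearly $a\in N_{\sigma_a}$.

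The key step is to show that $N_{\sigma_a}$ is one-dimensional, i.e.\ $N_{\sigma_a}=\R a$. Suppose not: then there is $v\in N_{\sigma_a}$ not proportional to $a$. For small $|t|$, the point $a+tv$ has $i$-th coordinate $a_i+tv_i$; for $i\in\sigma_a$ this is $0$ (since $v\in E_{\sigma_a}$), and for $i\in\sigma_a^c$ this is positive because $a_i>0$. Hence $a+tv\in C$ for all sufficiently small $|t|$, and since $a+tv\in N$ we get $a+tv\in N\cap C$ for small $|t|>0$ of either sign. Then writing $a=\tfrac12(a+tv)+\tfrac12(a-tv)$ exhibits $a$ as an interior point of a segment in $N\cap C$, so for the extreme ray condition: from $a-(a-tv)=tv$... more carefully, $a-\tfrac12(a+tv)=\tfrac12(a-tv)\in N\cap C$, so the extreme ray property forces $\tfrac12(a-tv)\in\R^+a$, hence $v\in\R a$, a contradiction. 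Therefore $\dim N_{\sigma_a}=1$. Since $E_{\sigma_a}$ has codimension $\#\sigma_a$ in $E$, the subspace $N_{\sigma_a}=N\cap E_{\sigma_a}$ has codimension at most $\#\sigma_a$ in $N$, so $\dim N-1=\dim N-\dim N_{\sigma_a}\le\#\sigma_a$, which is the first claim.

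For the equality statement, assume $N$ is in good position to $C$ with good complement $P$, so $E=N\oplus P$ and there is $\varepsilon>0$ such that for $(n,p)\in N\times P$ with $|p|_0\le\varepsilon|n|_0$, one has $n\in C\iff n+p\in C$. The goal is the reverse inequality $\#\sigma_a\le\dim N-1$, equivalently $\mathrm{codim}_N(N\cap E_{\sigma_a})\ge\#\sigma_a$, i.e.\ the $\#\sigma_a$ linear functionals $x\mapsto x_i$ ($i\in\sigma_a$) restrict to linearly independent functionals on $N$. Suppose there were a nontrivial relation $\sum_{i\in\sigma_a}\lambda_i x_i=0$ on $N$; I would use the good-position property to produce a point of $N$ violating extremality of $a$, or an interior-point argument: the idea is that good position forces $N\cap C$ to be a genuine quadrant in $N$ (this is exactly Proposition~\ref{pretzel}, whose proof this lemma feeds into, so I must be careful not to argue circularly), and instead argue directly that the coordinate functionals $(x_i)_{i\in\sigma_a}$ on $N$ must be independent because otherwise some $N$-direction tangent to all the hyperplanes $E_i$, $i\in\sigma_a$, but not along $a$, would exist — reducing to the already-established fact that $N\cap E_{\sigma_a}$ is one-dimensional, combined with $\dim(N\cap E_{\sigma_a})\ge \dim N-\#\sigma_a$ needing to be exactly $1$. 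Concretely: we always have $\dim(N\cap E_{\sigma_a})\ge\dim N-\#\sigma_a$; the first part gave $\dim(N\cap E_{\sigma_a})=1$; so $1\ge\dim N-\#\sigma_a$, i.e.\ $\dim N-1\le\#\sigma_a$ — wait, that is the same inequality. So the content of the equality is genuinely that $\dim(N\cap E_{\sigma_a})$ cannot exceed $1$ \emph{and} the codimension count is tight, which requires the good-position hypothesis to prevent $\sigma_a$ from being artificially large.

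\textbf{Main obstacle.} The delicate point, and the one I expect to be hardest, is the equality direction: showing good position forces $\#\sigma_a\le\dim N-1$. The subtlety is that without good position, $\sigma_a$ could in principle be strictly larger than $\dim N - 1$ (the extreme ray lies in a high-codimension coordinate stratum while $N$ is low-dimensional and ``wiggly''). I would handle this by choosing the good complement $P$ adapted to $\sigma_a$ — arranging that $P$ is spanned by suitable coordinate directions $E_i$ so that $N\cap C$ near $a$ is controlled by projecting onto $\R^{\sigma_a}\oplus(\text{coords of }N)$ — and then invoking the $\varepsilon$-estimate of good position to show that the image of $N$ under the coordinate projection $\mathrm{pr}:E\to\R^{\sigma_a}$ must be all of $\R^{\sigma_a}$, giving $\dim N\ge\#\sigma_a$; then extremality of $a$ upgrades this to $\dim N-1\ge\#\sigma_a$ by noting $a$ itself lies in the kernel of $\mathrm{pr}$ restricted to $N$ only if — hmm, in fact $a\in E_{\sigma_a}=\ker\mathrm{pr}$, so $\mathrm{pr}|_N$ has $a$ in its kernel, hence if $\mathrm{pr}(N)=\R^{\sigma_a}$ then $\dim N\ge\#\sigma_a+1$. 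This is the cleanest route, and filling in the selection of the adapted good complement $P$ using the structure of extreme rays of the finitely generated cone $N\cap C$ (Lemma~\ref{kreinmilman} and Lemma~\ref{nomer}) is the technical heart.
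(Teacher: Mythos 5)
Your proof of the first inequality $\dim(N)-1\le\#\sigma_a$ is correct and in the same spirit as the paper's. Both arguments reduce to showing $\dim\bigl(N\cap E_{\sigma_a}\bigr)=1$, where $E_{\sigma_a}=\R^{\sigma_a^c}\oplus W$; you derive a contradiction by perturbing $a$ by a small $\pm tv$ inside the cone and invoking extremality directly, whereas the paper takes an arbitrary $z\in N\cap E_{\sigma_a}$ and translates it by a large multiple $\lambda a$ to land inside $N\cap C\cap E_{\sigma_a}$ before invoking extremality. Both are sound; yours is slightly more local, the paper's slightly more global, and they buy essentially the same thing.

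For the equality direction there is a genuine gap. You correctly identify the goal --- if the coordinate projection $\mathrm{pr}\colon E\to\R^{\sigma_a}$ restricts to a surjection $N\to\R^{\sigma_a}$, then since $a\in\ker(\mathrm{pr}|_N)$ one gets $\dim N\ge\#\sigma_a+1$ --- but you never establish surjectivity. You propose instead to ``choose'' an adapted good complement spanned by coordinate directions and to invoke Lemma~\ref{kreinmilman} and Lemma~\ref{nomer}, calling this ``the technical heart.'' That route is both more complicated than necessary and not carried out; moreover one cannot freely choose the good complement, it is part of the data of good position. The missing observation, which the paper makes, is that \emph{any} given good complement $N^{\perp}$ already satisfies $N^{\perp}\subset E_{\sigma_a}$. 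This follows from a one-line argument: take $m\in N^{\perp}$ scaled so that $|m|_0\le c|a|_0$; good position applied to $a\in C$ gives $a\pm m\in C$, hence $(a\pm m)_i=\pm m_i\ge 0$ for $i\in\sigma_a$ (using $a_i=0$ there), so $m_i=0$ for all $i\in\sigma_a$. Once $N^{\perp}\subset E_{\sigma_a}=\ker\mathrm{pr}$ is in hand, surjectivity of $\mathrm{pr}|_N$ is immediate: for $k\in\R^{\sigma_a}$ write $(k,0)=n+m\in N\oplus N^{\perp}$; then $\mathrm{pr}(n)=\mathrm{pr}(n+m)=k$. Your proposal circles around this fact but neither sees it nor proves it, so the equality $\dim(N)-1=\#\sigma_a$ remains unestablished as written.
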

\begin{proof}
We assume $R=\R^+\cdot a$ is an extreme ray in $C\cap N$ and abbreviate $\sigma=\sigma_a$ and its complememnt in $\{1, \ldots ,n\}$ by  $\sigma^c$.  Then $R\subset N\cap C\cap (\R^{\sigma^c}\oplus W)$. Let $y\in C\cap N\cap (\R^{\sigma^c}\oplus
W)$ be a nonzero element. Since $a_i>0$ for all $i\in\sigma^c$,  there exists $\lambda>0$ so that $\lambda a-y\in N\cap C\cap
(\R^{\sigma^c}\oplus W)\subset N\cap C$. We conclude $y\in R$ because  $R$ is an extreme ray. Given any element $z\in
N\cap(\R^{\sigma^c}\oplus W)$ we find $\lambda>0$ so that
$\lambda a+z\in N\cap C\cap(\R^{\sigma^c}\oplus W) $ and infer,
by the previous argument,  that $\lambda a+z\in R$. This implies that $z\in \R \cdot a$. Hence
\begin{equation}\label{pol}
 \dim(N\cap(\R^{\sigma^c}\oplus W))=1.
\end{equation}
The projection $P\colon \R^n\oplus W=\R^{\sigma}\oplus (\R^{\sigma^c}\oplus W) \to \R^{\sigma}$ induces a linear map
\begin{equation}\label{ohx}
P\colon N\rightarrow \R^{\sigma}
\end{equation}
which by \eqref{pol} has  an one-dimensional kernel. Therefore,
$$
\#\sigma=\dim(\R^{\sigma})\geq \dim R(P)=\dim N-\dim \ker P=\dim(N)-1.
$$
Next assume $N$ is in good position to $C$. Hence there exist a constant $c>0$ and a sc-complement $N^{\perp}$ such that $N\oplus N^{\perp}=\R^n\oplus W$ and if $(n, m)\in N\oplus N^{\perp}$ satisfies $|m|_0\leq c |n|_0$, then $n+m\in C$ if and only if $n\in C$.  We claim that  $N^{\perp}\subset \R^{\sigma^c}\oplus W$. Indeed, let $m$ be any element of $N^{\perp}$.  Multiplying $m$ by a real number we may assume  $|m|_0\leq c |a|_0$ .  Then $a+m\in C$ since $a\in C$. This implies that
$m_i\geq 0$ for  all indices $i\in\sigma_a$. Replacing $m$ by $-m$,  we conclude
$m_i=0$ for all $i\in\sigma_a$.  So $N^{\perp}\subset  \R^{\sigma^c}\oplus W$ as claimed.
Take  $k\in \R^{\sigma_a}$
and write $(k, 0)=n+m\in N\oplus N^{\perp}$.  From  $N^{\perp}\subset {\mathbb
R}^{\sigma^c}\oplus W$, we conclude $P(n)=k$. Hence the map $P$  in \eqref{ohx}  is surjective  and the desired result follows.
\end{proof}


Having studied the geometry of closed convex cones and partial quadrants in finite dimensions we shall next study finite dimensional subspaces $N$ in good position to a partial quadrant $C$ in a sc-Banach space.  In this case,  $N\cap C$ can be a partial quadrant rather than a quadrant, which requires, some additional arguments.

We assume that $N$ is a smooth  finite-dimensional subspace of
$E=\R^n\oplus W$ which is in good position to the partial quadrant $C=[0,\infty)^n\oplus
W$. Thus, by definition, there is  a sc-complement,  denoted by  $N^{\perp}$,  of $N$ in $E$ and a constant $c>0$ such 
that if  $(n,m)\in N\oplus N^{\perp}$ satisfies  $|m|_0\leq c |n|_0$, then  the statements
$n\in C$ and $n+m\in C$ are equivalent.
We introduce the subset $\Sigma$ if $\{1,\ldots,n\}$ by 
$$\Sigma=\bigcup_{a\in C\cap N, a\neq 0}\sigma_{a}\subset \{1,\ldots ,n\}.$$
and denote by $\Sigma^c$ the complement $\{1,\ldots ,n\}\setminus \Sigma$. The associated subspaces of $\R^n$ are defined by
$\R^{\Sigma}=\{x\in \R^n\ \vert \ \text{$ x_j=0$ for $j\not \in \Sigma$}\}$ and
$\R^{{\Sigma}^c}=\{x\in \R^n\ \vert \ \text{$ x_j=0$ for $j\not \in \Sigma^c$}\}$.

\begin{lemma}\label{cone1}
$N^{\perp}\subset {\mathbb
R}^{\Sigma^c}\oplus W$.
\end{lemma}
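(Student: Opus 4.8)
The claim is that the good complement $N^{\perp}$ of $N$ in $E = \R^n \oplus W$ is contained in $\R^{\Sigma^c} \oplus W$, where $\Sigma = \bigcup_{a \in C \cap N, a \neq 0} \sigma_a$. Equivalently, every $m \in N^{\perp}$ has $m_i = 0$ for all $i \in \Sigma$. The plan is to fix an index $i \in \Sigma$, pick a witness $a \in C \cap N$ with $a \neq 0$ and $a_i = 0$ (i.e. $i \in \sigma_a$), and then exploit the defining property of the good complement at the point $a$ to force the $i$-th coordinate of every element of $N^{\perp}$ to vanish. This is essentially the argument already carried out locally inside the proof of Lemma \ref{roxy}, where it was shown that $N^{\perp} \subset \R^{\sigma_a^c} \oplus W$ for a single extreme-ray generator $a$; here we need to run it for every $a \in C \cap N$ and intersect.

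\textbf{Key steps.} First I would fix $m \in N^{\perp}$ and $i \in \Sigma$, and choose $a \in C \cap N$ with $a \neq 0$ and $a_i = 0$. Next, by scaling $m$ by a small positive real $t$ (which keeps $tm \in N^{\perp}$ since $N^{\perp}$ is a linear subspace), I can arrange $|tm|_0 \leq c\,|a|_0$, where $c > 0$ is the good-position constant. Since $a \in C$, the good-position property gives $a + tm \in C$; in particular the $i$-th coordinate satisfies $(a + tm)_i = a_i + t m_i = t m_i \geq 0$, so $m_i \geq 0$. Applying the same argument to $-m \in N^{\perp}$ yields $-m_i \geq 0$, hence $m_i = 0$. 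Since $i \in \Sigma$ was arbitrary, $m$ vanishes on all coordinates indexed by $\Sigma$, i.e. $m \in \R^{\Sigma^c} \oplus W$. As $m \in N^{\perp}$ was arbitrary, $N^{\perp} \subset \R^{\Sigma^c} \oplus W$, which is the assertion.

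\textbf{Main obstacle.} There is no real obstacle here — the argument is short and is a direct transcription of the relevant portion of the proof of Lemma \ref{roxy}. The only point requiring a little care is to make sure the scaling is legitimate: the good-position estimate is stated on level $0$, so one works with the level-$0$ norms $|\cdot|_0$, and one must note that $N^{\perp}$ being a (sc-)subspace is closed under multiplication by arbitrary reals, so shrinking $m$ to $tm$ for small $t > 0$ stays in $N^{\perp}$; if $m = 0$ there is nothing to prove. One should also record explicitly that when $i \in \Sigma$ the chosen witness $a$ has $a_i = 0$ by the very definition of $\Sigma$ as the union of the index sets $\sigma_a$. With these remarks in place the proof is complete.
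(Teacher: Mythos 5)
Your argument is correct and essentially the same as the paper's: you fix $i\in\Sigma$, pick a nonzero witness $a\in C\cap N$ with $a_i=0$, use the good-position estimate to get $a+m\in C$, read off $m_i\geq 0$, and conclude by replacing $m$ with $-m$. The only cosmetic difference is that you rescale $m$ downward whereas the paper rescales $a$ upward to meet the inequality $|m|_0\leq c|a|_0$; both are equally valid.
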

\begin{proof}
Take   $m\in N^{\perp}$. We  have to show that $m_i=0$ for all $i\in\Sigma$.
So fix an index  $i\in\Sigma$ and let $a$ be a nonzero element of  $C\cap N$ such that
$i\in\sigma_a$. Multiplying $a$ by a suitable positive number we may
assume $|m|_0\leq c |a|_0$.  Since $a\in C$,   we infer that $a+m\in C$. This implies
that $a_i+m_i\geq 0$. By definition of $\sigma_a$, we have $a_i=0$  implying $m_i\geq 0$.
Replacing $m$ by $-m$ we find $m_i=0$. Hence $N^{\perp}\subset {\mathbb
R}^{\Sigma^c}\oplus W$ as claimed.
\end{proof}

Identifying $W$ with $\{0\}\oplus W$,  we take   an algebraic complement $\wt{N}$ of $N\cap W$ in $N$ so that
\begin{equation}\label{hoferx}
N=\wt{N}\oplus (N\cap W)\quad \text{and}\quad\quad E= \wt{N}\oplus(N\cap W)\oplus N^{\perp}.
\end{equation}
Let us note that the projection $\pi\colon \R^n\oplus W\rightarrow  \R^n$ restricted to $\tilde{N}$ is injective,  so that
\begin{equation}\label{hoferx0}
\dim(\pi(\tilde{N}))=\dim(\tilde{N}).
\end{equation}
\begin{lemma}\label{ntilde}
If the subspace $N$ of $E$ is in good position to the partial quadrant  $C$, then
$\wt{N}$ is also  in good position to $C$ and the  subspace $\wt{N}^{\perp}:=(N\cap W)\oplus N^{\perp}$ is a good complement of $\wt{N}$ in $E$.
\end{lemma}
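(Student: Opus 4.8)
\textbf{Proof plan for Lemma \ref{ntilde}.}

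The plan is to reduce the statement about $\wt{N}$ to the assumed good position of $N$. First I would fix, once and for all, the good complement $N^{\perp}$ of $N$ and the constant $c>0$ furnished by the hypothesis that $N$ is in good position to $C$, so that for $(n,m)\in N\oplus N^{\perp}$ with $\abs{m}_0\leq c\abs{n}_0$ one has $n\in C \iff n+m\in C$. By construction $E=\wt{N}\oplus(N\cap W)\oplus N^{\perp}$, so $\wt{N}^{\perp}=(N\cap W)\oplus N^{\perp}$ is indeed an algebraic (and, since $N\cap W$ is finite-dimensional and smooth by Proposition \ref{prop1}, also an sc-) complement of $\wt{N}$ in $E$; the content is the metric estimate.

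The key observation is that $C=[0,\infty)^n\oplus W$ is \emph{saturated} in the $W$-direction: whether a point lies in $C$ depends only on its $\R^n$-coordinates, i.e. $x\in C \iff \pi(x)\in [0,\infty)^n$, where $\pi\colon \R^n\oplus W\to \R^n$ is the projection. Hence adding an element of $W$ to a point never changes membership in $C$. Now take $(\tilde n,\tilde m)\in \wt{N}\oplus\wt{N}^{\perp}$ and write $\tilde m = w + m'$ with $w\in N\cap W$ and $m'\in N^{\perp}$. Then $\tilde n + w \in N$ and $m'\in N^{\perp}$, and I would like to choose a new constant $\tilde c>0$ so that $\abs{\tilde m}_0\leq \tilde c\,\abs{\tilde n}_0$ forces $\abs{m'}_0\leq c\,\abs{\tilde n + w}_0$; then
$$
\tilde n + \tilde m = (\tilde n + w) + m' \in C \iff \tilde n + w \in C \iff \tilde n \in C,
$$
the first equivalence by good position of $N$ and the second by saturation of $C$ in $W$ (since $w\in W$). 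So the main step is the bookkeeping inequality: since $\wt{N}\cap W=\{0\}$, the map $\wt{N}\to\R^n$, $\tilde n\mapsto \pi(\tilde n)$, is injective (equation \eqref{hoferx0}), hence there is $a>0$ with $\abs{\pi(\tilde n)}_0\geq a\abs{\tilde n}_0$ on $\wt{N}$; also $\pi(\tilde n + w)=\pi(\tilde n)$, and $\pi$ restricted to $N^{\perp}$ is injective by Lemma \ref{cone1} combined with $N^{\perp}\cap N=\{0\}$ — more simply, on the finite-dimensional... actually $N^{\perp}$ is infinite-dimensional, so instead I would argue directly: $\abs{m'}_0\leq \abs{\tilde m}_0 + \abs{w}_0$ and $\abs{w}_0\leq K\abs{\pi(\tilde n+ w)}_0$? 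That fails since $w$ is unconstrained. The cleaner route is to first project: decompose $\tilde m\in\wt{N}^\perp$ along $(N\cap W)\oplus N^\perp$ using the fixed (continuous) splitting, getting $\abs{w}_0\leq K_1\abs{\tilde m}_0$ and $\abs{m'}_0\leq K_2\abs{\tilde m}_0$ for fixed constants. Then $\abs{\tilde n + w}_0 \geq \abs{\tilde n}_0 - K_1\abs{\tilde m}_0$, so requiring $\abs{\tilde m}_0\leq \tilde c\abs{\tilde n}_0$ with $\tilde c$ small enough (depending on $c,K_1,K_2$) yields $\abs{m'}_0\leq K_2\abs{\tilde m}_0\leq c(1-K_1\tilde c)\abs{\tilde n}_0\leq c\abs{\tilde n+w}_0$, as needed.

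The remaining items are formal: good position of $\wt{N}$ requires that $\wt{N}\cap C$ have nonempty interior in $\wt{N}$ — this I would deduce from the fact that $N\cap C$ has nonempty interior in $N$, writing a relatively interior point of $N\cap C$, perturbing it within $N\cap W$ to land in $\wt N$ (using saturation of $C$ in $W$ once more), and checking the interior is preserved; and $\wt{N}^{\perp}$ being a good complement is exactly the equivalence displayed above. I expect the main obstacle to be precisely the constant-chasing in the last paragraph — making sure the new $\tilde c$ can be chosen uniformly — together with the verification that $\wt{N}\cap C$ has nonempty relative interior, which needs the $W$-saturation of $C$ in an essential way. Everything else is routine linear algebra plus the elementary topology of finite-dimensional convex cones already recalled in Lemmas \ref{kreinmilman}, \ref{nomer}, \ref{roxy}.
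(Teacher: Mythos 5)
Your proof is correct and takes essentially the same route as the paper: decompose $\tilde m$ along $(N\cap W)\oplus N^\perp$, combine the $W$-saturation of $C$ with the good position of $N$ to get the required equivalence, and obtain nonempty relative interior of $\wt N\cap C$ by using saturation to strip the $N\cap W$-component from an interior point of $N\cap C$. The one small variation is that you apply the good-position inequality to the pair $(\tilde n+w,\,m')$ and then remove $w$ by saturation afterward, whereas the paper first removes $w$ by saturation and then applies good position to $(\wt n,\,n_2)$, which avoids the need for the lower bound on $\abs{\tilde n+w}_0$ and makes the constant-chasing a bit shorter (using $\abs{n+m}_0\geq c_1(\abs{n}_0+\abs{m}_0)$ on $N\oplus N^\perp$ to get $\abs{n_2}_0\leq c\abs{\wt n}_0$ directly).
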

\begin{proof}
We define $\abs{x}:=|x|_0$.
Since $N$ is in good position to the quadrant $C$ in $E$, there exist a constant $c>0$ and a sc-complement $N^{\perp}$ of $N$ in $E$ such that if $(n, m)\in N\oplus N^{\perp}$ satisfies $\abs{m}\leq c\abs{n}$, then
 the statements $n\in C$ and $n+m\in C$ are equivalent.  Since $E$ is a Banach space and $N$ is a finite dimensional subspace of $E$, there is a constant $c_1>0$ such that
 $\abs{n+m}\geq c_1[ \abs{n}+\abs{m}]$ for all $(n, m)\in N\oplus N^{\perp}$.  To prove  that $\wt{N}$ is in good position to $C$,  we shall show that $\wt{N}^{\perp}:=(N\cap W)\oplus N^{\perp}$ is a good complement of $\wt{N}$ in $E$. Let $(\wt{n}, \wt{m})\in \wt{N}\oplus \wt{N}^{\perp}=E$ and assume that  $\abs{\wt{m}}\leq c_1c\abs{\wt{n}}$. Write $\wt{m}=n_1+n_2\in (N\cap W)\oplus N^{\perp}$. Since $c_1[\abs{n_1}+\abs{n_2}]\leq \abs{n_1+n_2}=\abs{\wt{m}}\leq c_1c\abs{\wt{n}}$, we get $\abs{n_2}\leq c\abs{\wt{n}}$.  Note that $\wt{n}+\wt{m}=\wt{n}+n_1+n_2\in C$ if and only if $\wt{n}+n_2\in C$ since $n_1\in  \{0\}\oplus W$.  Since  $\abs{n_2}\leq c\abs{\wt{n}}$, this is equivalent to $\wt{n}\in C$. It remains to show that $\wt{N}\cap C$ has a nonempty interior. By assumption $N\cap C$ has nonempty interior. Hence there is a point $n\in N\cap C$ and $r>0$ such that the ball $B^{N}_r(n)$ in $N$ is contained in $N\cap C$. Write $n=\wt{n}+w$ where $\wt{n}\in \wt{N}$ and $w\in N\cap W$. Since $n\in C$ and  $w\in W$, we conclude that $\wt{n}\in C$. Hence $\wt{n}\in \wt{N}\cap C$.
Take $\nu \in B^{\wt{N}}_r(\wt{n})$,  the open ball in $\wt{N}$ centered at
  $\wt{n}$ and of radius $r>0$.  We want to prove that $\nu \in C$. Since $C=[0,\infty )^n\oplus W\subset \R^n\oplus W$, we have to prove for $\nu= (\nu', \nu'')\in \R^n\oplus W$ that $\nu'\in [0,\infty )^n$.  We estimate  $\abs{(\nu +w)-n}=\abs{(\nu +w)-(\wt{n}+w)}=\abs{\nu -\wt{n}}<r$ so that $\nu +w\in B^{N}_r(n)$ and hence  $\nu +w\in N\cap C$ . Having identified $W$ with $\{0\}\oplus W$, we have $w=(0, w'')\in \R^n\oplus W$.  Consequently, $\nu+w=(\nu', \nu''+w'')\in N\cap C$ implies $\nu' \in [0,\infty )^n$.  Since also $\nu \in \wt{N}$, one concludes  that $\nu \in \wt{N}\cap C$ and that $\wt{n}$ belongs to the interior of $\wt{N}\cap C$ in $\wt{N}$.  The proof of Lemma \ref{ntilde} is complete.
 \end{proof}
 Since by  Lemma \ref{cone1},  $N^{\perp}\subset \R^{\Sigma^c}\oplus W$,  the  good complement   $\wt{N}^{\perp}=(N\cap W)\oplus N^{\perp}$ satisfies
$$
\wt{N}^{\perp}\subset \R^{\Sigma^c}\oplus W.
$$

We claim that $C\cap \wt{N}$ is a closed convex cone . 
It suffices to verify that if $a\in C\cap \wt{N}$ and $-a\in C\cap \wt{N}$, then $a=0$. We write 
$a=(a', a_\infty)$ where $a'\in \R^n$ and $a_\infty\in W$. Then $a', -a'\in [0,\infty)^n$ implies that $a'=0$ so that $a=(0, a_\infty)\in \{0\}\oplus W$. Since $a\in \wt{N}$ and $\wt{N}$ is an algebraic complement of $N\cap W$ in $N$, we conclude that $a=0$.  
Moreover,  by  Lemma  \eqref{ntilde},   $\wt{N}$ is in good position to $C$. 
Hence,  recalling  Lemma \ref{roxy}, we have proved  the following result.

\begin{lemma}
The intersection $C\cap\wt{N}$ is a closed convex cone in
$\wt{N}$. If  $a$ is a generator of an extreme ray in $C\cap \wt{N}$, then  
\begin{equation}\label{Nsigma}
\dim \wt{N}-1 =\#{\sigma_a}= d_C(a).
\end{equation}
\end{lemma}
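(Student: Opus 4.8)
The goal is to prove that for $\widetilde{N}$, the algebraic complement of $N \cap W$ in $N$, the intersection $C \cap \widetilde{N}$ is a closed convex cone and that each generator $a$ of an extreme ray satisfies $\dim \widetilde{N} - 1 = \#\sigma_a = d_C(a)$.

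The plan is to assemble the pieces already established in the excerpt. First I would record that $C \cap \widetilde{N}$ is a closed convex cone: closedness and convexity are immediate from $C$ and $\widetilde{N}$ being closed and convex; the pointedness condition $C \cap \widetilde N \cap (-(C\cap\widetilde N)) = \{0\}$ follows from the argument already spelled out just before the statement — if $a = (a', a_\infty) \in C \cap \widetilde N$ and $-a \in C \cap \widetilde N$, then $a', -a' \in [0,\infty)^n$ forces $a' = 0$, hence $a \in \{0\} \oplus W$, and since $a \in \widetilde N$ and $\widetilde N$ meets $N \cap W$ only in $0$, we get $a = 0$. The scaling property $\mathbb{R}^+ \cdot (C \cap \widetilde N) = C \cap \widetilde N$ is trivial.

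Next, by Lemma \ref{ntilde}, $\widetilde N$ is in good position to $C$ (with good complement $\widetilde N^\perp = (N \cap W) \oplus N^\perp$). So $\widetilde N$ is a finite-dimensional smooth subspace of $E = \mathbb{R}^n \oplus W$, its intersection with $C$ is a closed convex cone, and it is in good position to $C$ — exactly the hypotheses of Lemma \ref{roxy} applied with $N$ replaced by $\widetilde N$. Therefore, for any generator $a$ of an extreme ray $R = \mathbb{R}^+ \cdot a$ in $C \cap \widetilde N$, the second (equality) conclusion of Lemma \ref{roxy} gives $\dim \widetilde N - 1 = \#\sigma_a$. Finally, $\#\sigma_a = d_C(a)$ holds by the very definition of $d_C$ on the partial quadrant $C = [0,\infty)^n \oplus W$ (recalling Definition \ref{degeneracy_index_1}: $d_C(x)$ counts the vanishing coordinates of $x$, which is precisely $\#\sigma_x$, and by Lemma \ref{corner_recognition_linear} this is independent of the chosen sc-isomorphism), together with Corollary \ref{equality_of_d} identifying $d_C$ with $d_{X_C}$. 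Chaining these two identities yields $\dim \widetilde N - 1 = \#\sigma_a = d_C(a)$.

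I do not expect a genuine obstacle here: the statement is essentially a corollary packaging of Lemmas \ref{roxy} and \ref{ntilde}, and the only care needed is to verify that $\widetilde N$ literally satisfies the hypotheses of Lemma \ref{roxy} — in particular that $\widetilde N \subset E_\infty$, which is inherited from $N \subset E_\infty$ since $\widetilde N$ is a linear subspace of $N$, and that $C \cap \widetilde N$ is a closed convex \emph{cone} (not merely a partial cone), which is the one point requiring the short pointedness argument above. Once those hypotheses are in place, the conclusion is immediate.
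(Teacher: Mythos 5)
Your proof is correct and follows the paper's own route essentially verbatim: you verify pointedness of $C\cap\wt{N}$ by the coordinate argument, cite Lemma \ref{ntilde} for good position, and then apply Lemma \ref{roxy}; the identification $\#\sigma_a = d_C(a)$ is by definition. The only superfluous step is the appeal to Corollary \ref{equality_of_d}, which is not needed since the statement uses $d_C$ in its partial-quadrant sense, where it counts vanishing coordinates by definition.
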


The position of $\wt{N}$ with
respect to $\R^{\Sigma^c}\oplus W$ is described in the next lemma.
\begin{lemma}\label{ll1}
Either $\wt{N}\cap (\R^{\Sigma^c}\oplus W)=\{0\}$ or
$\wt{N}\subset \R^{\Sigma^c}\oplus W$.  In the second case
$\dim \wt{N}=1$ and $\Sigma=\emptyset$.
\end{lemma}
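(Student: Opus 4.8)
\textbf{Proof plan for Lemma \ref{ll1}.}

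The plan is to argue by a dichotomy on the subspace $\wt{N}\cap (\R^{\Sigma^c}\oplus W)$, using the fact that $\wt{N}$ is an algebraic complement of $N\cap W$ in $N$ together with the structure of $C\cap \wt{N}$ as a closed convex cone established in the previous lemma. First I would observe that $\wt{N}\cap (\R^{\Sigma^c}\oplus W)$ is a linear subspace of $\wt{N}$; if it is $\{0\}$ we are in the first alternative and there is nothing to prove. So assume it is nonzero, and let $v$ be a nonzero element of $\wt{N}\cap (\R^{\Sigma^c}\oplus W)$.

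The key step is to show that such a $v$ forces $\dim\wt{N}=1$. Here I would use Lemma \ref{roxy} (or rather its consequence for $\wt{N}$, equation \eqref{Nsigma}): a generator $a$ of an extreme ray in the closed convex cone $C\cap\wt{N}$ satisfies $\#\sigma_a = \dim\wt{N}-1$. On the other hand, by the very definition of $\Sigma$ as $\bigcup_{a\in C\cap N, a\neq 0}\sigma_a$, for any nonzero $a\in C\cap\wt{N}\subset C\cap N$ we have $\sigma_a\subset\Sigma$, hence $\sigma_a\cap\Sigma^c=\emptyset$. I expect the main obstacle to be extracting a contradiction from the existence of $v$ unless $\wt{N}$ is one-dimensional: the idea is that if $\dim\wt{N}\geq 2$, then $C\cap\wt{N}$ has an extreme ray generated by some $a$ with $\#\sigma_a=\dim\wt{N}-1\geq 1$, i.e. $a$ has at least one vanishing coordinate among the first $n$; but I want to play this against $v$. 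The cleaner route is probably: since $v\in\R^{\Sigma^c}\oplus W$, all coordinates $v_j$ with $j\in\Sigma$ vanish, and since $\wt{N}$ is an algebraic complement of $N\cap W$ in $N$, $v$ has a nonzero component in $\R^n$ (otherwise $v\in N\cap W$, contradicting $\wt{N}\cap(N\cap W)=\{0\}$ and $v\neq 0$); write $v=(v',v_\infty)$ with $v'\in\R^{\Sigma^c}$, $v'\neq 0$. Perturbing a generator $a$ of an extreme ray of $C\cap\wt{N}$ by small multiples $\pm t v$ keeps us inside $C$ near $a$ — because the coordinates being tested for positivity, namely those indexed by $\sigma_a\subset\Sigma$, are untouched by $v$, while the coordinates in $\sigma_a^c$ are strictly positive at $a$. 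Hence $a\pm tv\in C\cap\wt{N}$ for small $t>0$, and since $R=\R^+\cdot a$ is an extreme ray we conclude $v\in\R\cdot a$. As $v$ was an arbitrary nonzero element of $\wt{N}\cap(\R^{\Sigma^c}\oplus W)$, this shows $\dim\bigl(\wt{N}\cap(\R^{\Sigma^c}\oplus W)\bigr)=1$ and moreover $a\in\R^{\Sigma^c}\oplus W$, so $\#\sigma_a=0$, forcing $\dim\wt{N}=1$ by \eqref{Nsigma}. Then $\wt{N}=\R\cdot a\subset\R^{\Sigma^c}\oplus W$, which is exactly the second alternative.

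Finally, in the second case I would check $\Sigma=\emptyset$: since $\wt{N}=\R\cdot a$ with $a\in\R^{\Sigma^c}\oplus W$ and $N = \wt{N}\oplus(N\cap W)$, every element of $C\cap N$ is of the form $\lambda a + w$ with $\lambda\geq 0$ and $w\in N\cap W$, so its vanishing-coordinate set among $\{1,\dots,n\}$ is contained in that of $a$, which is $\sigma_a\subset\Sigma^c$; hence $\Sigma = \bigcup_{b\in C\cap N, b\neq 0}\sigma_b\subset\Sigma^c$, which is only possible if $\Sigma=\emptyset$. This completes the proof. The one point requiring care is the perturbation argument showing $a\pm tv\in C$, which rests on the elementary observation that membership in the quadrant $[0,\infty)^n\oplus W$ is controlled coordinatewise and the perturbation $v$ is supported away from the coordinates where $a$ sits on the boundary.
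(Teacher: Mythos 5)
Your perturbation argument --- replacing an extreme ray generator $a$ by $a\pm tv$ for small $t>0$ and using the extreme-ray property to conclude $v\in\R\cdot a$ --- is correct, and it is essentially the paper's argument (the paper phrases it with $\lambda a+x$ and $\mu a-(\lambda a+x)$ for large $\lambda,\mu$, but the mechanism is the same). You then correctly conclude that $\wt{N}\cap(\R^{\Sigma^c}\oplus W)=\R\cdot a$ is one-dimensional, so $a\in\R^{\Sigma^c}\oplus W$.

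The gap is in the next inference: ``$a\in\R^{\Sigma^c}\oplus W$, so $\#\sigma_a=0$, forcing $\dim\wt{N}=1$ by \eqref{Nsigma}.'' From $a\in\R^{\Sigma^c}\oplus W$ you only get $a_i=0$ for $i\in\Sigma$, i.e.\ $\Sigma\subset\sigma_a$; combined with the automatic $\sigma_a\subset\Sigma$ this yields $\sigma_a=\Sigma$, so $\#\sigma_a=\#\Sigma$, which is exactly the quantity whose vanishing you are trying to prove --- the deduction is circular. The step you are missing, which your own computation almost delivers, is uniqueness of the extreme ray: since $v\in\R\cdot a$ holds for \emph{any} generator $a$ of \emph{any} extreme ray of $\wt{N}\cap C$, all extreme rays coincide with $\R^+\cdot a$. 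As $\wt{N}\cap C$ is the closed convex hull of its extreme rays (Lemma \ref{kreinmilman}) and has nonempty interior in $\wt{N}$ (Lemma \ref{ntilde}), having a single extreme ray forces $\dim\wt{N}=1$. Only now does \eqref{Nsigma} give $\#\sigma_a=0$, and together with $a\in\R^{\Sigma^c}\oplus W$ this gives $\Sigma=\emptyset$. A smaller issue: your closing claim that every nonzero $b\in C\cap N$ has $\sigma_b\subset\sigma_a$ fails for nonzero $b\in N\cap W$, where $\sigma_b=\{1,\ldots,n\}$; this is harmless because the second alternative already forces $N\cap W=\{0\}$ (otherwise $\Sigma=\{1,\ldots,n\}$, so $\R^{\Sigma^c}\oplus W=W$ and $\wt{N}\cap W=\{0\}$ puts you in the first alternative), but the observation should be made explicit.
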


\begin{proof}
Assume that  $\wt{N}\cap (\R^{\Sigma^c}\oplus
W)\neq\{0\}$. Take  a nonzero point $x\in \wt{N}\cap (\R^{\Sigma^c}\oplus W)$. We know that $\wt{N}\cap C$ has a
nonempty interior in $\wt{N}$ and is therefore generated as the convex hull of
its extreme rays by Lemma \ref{kreinmilman}.  Let $a\in C\cap \wt{N}$ be a generator of an
extreme ray $R$. Then $a_i>0$ for all $i\in\Sigma^c$ and  hence $\lambda a +x\in C\cap \wt{N}$ for large  $\lambda >0$. Taking another large  number  $\mu>0$, we get   $\mu a- (\lambda a+x)\in C\cap \wt{N}$. Since $R=\R^+\cdot a$ is an extreme ray, we conclude $\lambda a+x\in \R^+\cdot a$ so that $x\in \R\cdot a$. Consequently,
there is only one extreme ray in $\wt{N}\cap C$, namely
 $R=\R^+\cdot a$ with  $a\in \R^{\Sigma^c}\oplus W$.  Since $\wt{N}\cap C$ has a nonempty interior in $\wt{N}$, we conclude that $\dim \wt{N}=1$  . Hence $\wt{N}=\R\cdot a$ and
 $\wt{N}\subset  \R^{\Sigma^c}\oplus W$. From equation \eqref{Nsigma} we also conclude that $a_i>0$ for all $1\leq i\leq n$.  This in turn implies that $\Sigma=\emptyset$ since  $a\in \R^{\Sigma^c}\oplus W$. The proof of Lemma \ref{ll1} is complete.
\end{proof}

We finally come to the proof of Proposition  \ref{pretzel}.

\begin{proof}[Proof of Proposition \ref{pretzel}] 
We consider, according to Lemma \ref{ll1},  two cases.
Starting with the first case  we assume that $\wt{N}\cap (\R^{\Sigma^c}\oplus
 W)=\{0\}$. The projection $P\colon \wt{N}\oplus \wt{N}^{\perp}=\R^{\Sigma}\oplus (\R^{\Sigma^c}\oplus W)\to \R^{\Sigma}$  induces the linear map
 \begin{equation}\label{opl}
 P\colon \wt{N}\rightarrow \R^{\Sigma}.
 \end{equation}
Take $k\in\R^{\Sigma}$ and write
 $(k,0)=n+m\in\wt{N}\oplus\wt{N}^{\perp}$. Since $\wt{N}^{\perp}\subset
\R^{\Sigma^c}\oplus W$,  we conclude that
 $$
 P(n+m)=P(n)=k
 $$
so that $P$ is surjective. If $n\in \wt{N}$ and $P(n)=0$, then $n\in  \wt{N}\cap ({\mathbb
 R}^{\Sigma^c}\oplus W)=\{0\}$ by assumption. Hence the map in
 \eqref{opl} is a bijection.  By  Lemma \ref{nomer},
$C\cap\wt{N}$ is a quadrant in $\wt{N}$. We shall show that $P$ maps the quadrant $C\cap \wt{N}$ onto the standard quadrant $Q^{\Sigma}=[0,\infty )^{\Sigma}$ in $\R^{\Sigma}$.
Let $a$ be a nonzero element in $C\cap \wt{N}$ generating  an extreme ray $R=\R^+\cdot a$. Then, by Lemma \ref{roxy},
 $$
 \dim \wt{N}-1=\sharp\sigma_a, 
 $$
 and since $\sharp \Sigma=\text{dim}\ \wt{N}$ there is
exactly one index $i\in\Sigma$ for which
$a_i>0$. Further,  $a_i>0$ for all $i\in\Sigma^c$ by definition of $\Sigma$.  This implies that there can be
at most $\dim(\wt{N})$-many extreme rays. Indeed, if $a$ and $a'$
generate extreme rays and
$a_i,a_i'>0$  for some $i\in \Sigma$, then $a_k=a_k'=0$ for all
$k\in\Sigma\setminus\{i\}$.  Hence,  from  $a_j>0$ for all $j\in \Sigma^c$,   we conclude $\lambda
a-a'\in C$ for large $\lambda>0$. Therefore,
$a'\in \R^+a$ implying that $a$ and $a'$ generate the same
extreme ray. As a consequence,  $\wt{N}\cap C$ has  precisely $\dim \wt{N}$-many extreme rays because $\wt{N}\cap C$ has a nonempty interior in view of Lemma \ref{ntilde}.  Hence  the map $P$ in \eqref{opl}  induces an isomorphism
$$
(\wt{N}, \wt{N} \cap C)\rightarrow (\R^\Sigma,Q^\Sigma).
$$
This implies that $(N,C\cap N)$ is isomorphic to
$\bigl( {\R}^{\dim(N)}, [0,\infty)^{\sharp\Sigma}\oplus {\R}^{\dim(N)-\sharp\Sigma}\ \bigr).$

In the second case we assume  that $\wt{N}\subset \R^{\Sigma^c}\oplus W$.  From Lemma  \ref{ll1},  $\Sigma=\emptyset$ and $\wt{N}=\R\cdot a$ for an element $a\in
C\cap\wt{N}$  satisfying  $a_i>0$ for all $1\leq i\leq n$. Hence
$(\wt{N},\wt{N}\cap C)$ is isomorphic to
$( {\R}, {\R}^+)$ and therefore $(N,N\cap C)$ is isomorphic to
$( {\R}, \R^+)$ since in this case $N=\wt{N}$. The proof of Proposition \ref{pretzel} is complete.

\end{proof}

We would like to add two results, which makes use of the previous discussion.

\begin{proposition}\label{big-pretzel}
Let $N$ be a sc-smooth finite-dimensional subspace of $E=\R^n\oplus W$ in good position to the partial quadrant $C=[0,\infty)^n\oplus W$.
If $x\in N\cap C$, then 
\begin{itemize}
\item[{\em (1)}] $d_{N\cap C}(x)= d_C(x)$  if  $x\not \in N\cap W$.
\item[{\em (2)}] $d_{N\cap C}(x)= \dim(N)-\dim(N\cap W)$ if $x\in N\cap W$. 
\end{itemize}
Here we identify $W$ with $\{0\}^n\oplus W$.
\end{proposition}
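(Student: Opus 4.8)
The plan is to reduce to the decomposition $N=\widetilde{N}\oplus(N\cap W)$ introduced in Lemma~\ref{ntilde}, and to exploit the isomorphism $(\widetilde N,\widetilde N\cap C)\cong(\R^{\Sigma},Q^{\Sigma})$ (together with the fact that $\widetilde N^{\perp}\subset\R^{\Sigma^c}\oplus W$) established in the proof of Proposition~\ref{pretzel}. Recall that by Lemma~\ref{ntilde} the subspace $\widetilde N$ is itself in good position to $C$ with good complement $\widetilde N^{\perp}=(N\cap W)\oplus N^{\perp}$, and that $C\cap\widetilde N$ is a genuine quadrant in $\widetilde N$ (as noted in the discussion preceding the proof of Proposition~\ref{pretzel}); moreover, in the first case of Lemma~\ref{ll1} one has $\#\Sigma=\dim\widetilde N=\dim N-\dim(N\cap W)$. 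In the degenerate second case ($\widetilde N\subset\R^{\Sigma^c}\oplus W$, so $\dim\widetilde N=1$ and $\Sigma=\emptyset$) the claim will be checked directly.

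\textbf{Key steps.} First I would record the general formula $d_{N\cap C}(x)=d_C(x)$ for $x\in N$ whenever $N$ is in good position to $C$ \emph{with a good complement contained in $\R^{\Sigma^c}\oplus W$}, and $x\notin N\cap W$. The point is this: writing $x=\widetilde x+w$ with $\widetilde x\in\widetilde N$ and $w\in N\cap W$, the $\R^n$-coordinates of $x$ and of $\widetilde x$ coincide, so $d_C(x)=d_C(\widetilde x)$; and since $x\notin N\cap W$ we have $\widetilde x\neq 0$. Now $d_{N\cap C}(x)$ is, by Lemma~\ref{corner_recognition_linear} applied to the quadrant structure on a neighborhood in $N\cap C$, the number of extreme-ray generators of $C\cap N$ that a face through $x$ lies on; using the product structure $N\cap C$ near $x$ corresponds under the projection $\pi(\widetilde N)\cong\R^{\Sigma}$ to the face structure of $Q^{\Sigma}\oplus\R^{\dim N-\#\Sigma}$, and the complement $N\cap W$ contributes free directions. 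One then matches $d_{N\cap C}(x)$ with the number of vanishing $\Sigma$-coordinates of $\widetilde x$, which is exactly $d_C(\widetilde x)=d_C(x)$. This gives (1).

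For (2): if $x\in N\cap W$, then $\widetilde x=0$, and $x$ lies on \emph{every} extreme ray's supporting hyperplane of the quadrant $C\cap\widetilde N$, so in the product $N\cap C\cong (C\cap\widetilde N)\oplus(N\cap W)$ the point $(0,x)$ has degeneracy index equal to the number of extreme rays of $C\cap\widetilde N$, which by Lemma~\ref{nomer} (since $C\cap\widetilde N$ is a quadrant with nonempty interior in $\widetilde N$) equals $\dim\widetilde N=\dim N-\dim(N\cap W)$. In the second case of Lemma~\ref{ll1}, $\dim\widetilde N=1$ and $N=\widetilde N\oplus(N\cap W)$ with $(N,N\cap C)\cong(\R,\R^+)$ after absorbing $N\cap W$; here $x\in N\cap W$ forces $x$ to be a boundary point of the single half-line factor, giving $d_{N\cap C}(x)=1=\dim\widetilde N$, consistent with the formula, while for $x\notin N\cap W$ we get $d_{N\cap C}(x)=0=d_C(x)$ since $\Sigma=\emptyset$ means all $\R^n$-coordinates of such $x$ are nonzero.

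\textbf{Main obstacle.} The delicate point will be justifying that the degeneracy index $d_{N\cap C}(x)$ computed \emph{intrinsically in} $N\cap C$ (as a partial quadrant in $N$ via Definition~\ref{degeneracy_index_1}) really does coincide with the count of extreme-ray supporting hyperplanes through $x$, and that this count is correctly read off through the projection $\pi\colon\widetilde N\to\R^{\Sigma}$ and the splitting off of $N\cap W$. Concretely one must verify that a linear isomorphism $N\cap C\to[0,\infty)^{\#\Sigma}\oplus\R^{\dim N-\#\Sigma}$ exists carrying $x$ to a point with exactly $d_C(x)$ (resp.\ $\#\Sigma$) vanishing coordinates; once such an isomorphism is in hand, Lemma~\ref{corner_recognition_linear} makes the count independent of choices and the result is immediate. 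Building that isomorphism is essentially a matter of combining the quadrant isomorphism from the proof of Proposition~\ref{pretzel} with an arbitrary linear isomorphism $N\cap W\cong\R^{\dim(N\cap W)}$, so I expect no genuinely new ideas are required beyond careful bookkeeping of the coordinate identifications.
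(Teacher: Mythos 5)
Your plan follows the paper's own proof almost exactly: the paper also splits off $\widetilde N$ from $N\cap W$, invokes the quadrant isomorphism from the proof of Proposition~\ref{pretzel}, and reduces (1)–(2) to reading off vanishing $\Sigma$-coordinates; the ``delicate point'' you flag (that for nonzero $\widetilde x\in C\cap\widetilde N$ the $\Sigma^c$-coordinates are strictly positive, so $d_C(\widetilde x)$ equals the count of vanishing $\Sigma$-coordinates) is exactly the bookkeeping the paper carries out by choosing the basis $e^j=(a^j,b^j,w^j)$ with $b^j_i>0$. So the proposal is correct and takes essentially the same route, differing only in that the paper splits by $\dim N=1$ versus $\dim N>1$ while you split by the two cases of Lemma~\ref{ll1}.
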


\begin{proof}
We make use of the previous notations and distinguish the two case $\dim N=1$ and $\dim N>1$.

If $\dim N=1$, then $N$ is spanned by a 
vector $e=(a_1,\ldots ,a_n, w)\in \R^n\oplus W$ in which $a_j>0$ for all $1\leq j\leq n$. This  implies that $(N, N\cap C)$ is isomorphic to $(\R, [0,\infty))$. 
If $x\in N\cap C$, then $x=te$ for some $t\geq 0$. If, in addition, 
$x\not \in N\cap W$, then $t>0$ and hence $d_{N\cap C}(x)=0=d_C(x)$. If  however, $x\in N\cap W$, then $t=0$, implying that $d_{N \cap C}(x)=1=\dim N.$

Now we assume that $\dim N>1$ and that $\wt{N}$ is the algebraic complement of $N\cap W$ in $N$ so that 
$N=\wt{N}\oplus (N\cap W)$. We may assume, 
after a linear change of coordinates, that $\wt{N}$ is represented as follows.
\begin{itemize}
\item[(1)] The linear subspace $\wt{N}$ is spanned by the vectors $e^j\in \R^n\oplus W$,  for $1\leq j\leq m=\dim \wt{N}$ of the form 
$e^j=(a^j, b^j, w^j)\in \R^m\oplus \R^{n-m}\oplus W$ where $a^j$  is  the standard basis vector in $\R^m$, $b^j=(b^j_1, \ldots ,b^j_{n-m})\in \R^{n-m}$ satisfies  $b^j_i>0$ for all $1\leq i\leq n-m$, and $w^j\in W$. 
\end{itemize}

If $e^j=(0,w^j)\in \{0\}^n\oplus W$ for $m+1\leq j\leq k$,$k=\dim N$,  is a basis of the finite-dimensional subspace $N\cap W$, then 
the vectors $e^1,\ldots ,e^k$ form a basis of $N=\wt{N}\oplus (N\cap W)$.  It follows that 
$(N, N\cap C)$ is isomorphic to $(\R^k, [0,\infty )^m\oplus \R^{k-m})$.

Now we assume that $x\in N\cap C$. 
 If $x\in N\cap C$, then 
\begin{equation}\label{algebraic_eq}
x=\sum_{j=1}^m\lambda_je^j+\sum_{j=m+1}^k\lambda_j e^j\in \wt{N}\oplus(N\cap W)
\end{equation} 
with $\lambda_j\geq 0$ for all $1\leq j\leq m$ and $d_{N\cap C}(x)=\#\{1\leq j\leq m\ \vert \ \lambda_j=0\}$. If, in addition, $x\in N\cap W$, then $\lambda_j=0$ for all $1\leq j\leq m$, and we conclude that 
$d_{N\cap C}(x)=m=\dim N-\dim (N\cap W)$.

If $x\in ( N\cap C)\setminus (N\cap W)$, then there is at least one index $j_0$ in $1\leq j_0\leq m$  for which $\lambda_{j_0}>0$. This implies, in view of (1)  and \eqref{algebraic_eq} that 
$$x=\sum_{j=1}^me^j=(x_1, \ldots, x_n,w)\in \R^n\oplus W$$
satisfies $x_s>0$ for all $m+1\leq s\leq n$ since $b^j_i>0$ for  all $1\leq i\leq n-m$ and $m+1\leq j\leq n$. Using that $a^j$ in (1) are vectors of the standard basis in $\R^m$,  it follows  for $1\leq s\leq m$
that $x_s=0$ if and only if $\lambda_s=0.$ This shows that $d_{N\cap C}(x)=d_C(x)$ if $x\not \in N\cap W$ and completes the proof of Proposition \ref{big-pretzel}. 

\end{proof}

\begin{lemma}\label{big-pretzel_1a}
Let $N$ be a sc-smooth finite-dimensional subspace of $E=\R^n\oplus W$ in good position to the partial quadrant $C=[0,\infty)^n\oplus W$ whose good complement in $E$ is $Y$.  We assume that $\tau\colon N\cap C\to Y_1$ is  a $C^1$-map satisfying $\tau (0)=0$ and $D\tau (0)=0$.  Then 
$$d_{N\cap C}(v)=d_{C}(v+\tau (v))$$
for all $v\in N\cap C\setminus N\cap W$ close to $0$.
\end{lemma}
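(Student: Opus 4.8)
The statement is the ``$d_C$ is preserved along the graph of a small tangent-valued perturbation'' lemma that the proof of Corollary \ref{LGS2} invokes. The strategy is to reduce it, via Proposition \ref{big-pretzel}, to a statement about the finite-dimensional quadrant $N\cap C$ in $N$ together with the degeneracy index $d_C$ restricted to the piece of $C$ where things are nonzero. Throughout, fix the model $E=\R^n\oplus W$, $C=[0,\infty)^n\oplus W$, identify $W$ with $\{0\}^n\oplus W$, and write $\wt N$ for the algebraic complement of $N\cap W$ in $N$ as in the earlier discussion, with a basis $e^1,\dots,e^m$ of $\wt N$ in the normal form (1) from the proof of Proposition \ref{big-pretzel}: $e^j=(a^j,b^j,w^j)\in\R^m\oplus\R^{n-m}\oplus W$ with $a^j$ the $j$-th standard basis vector of $\R^m$ and $b^j_i>0$ for all $i$. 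Then $(N,N\cap C)$ is isomorphic to $(\R^k,[0,\infty)^m\oplus\R^{k-m})$, $k=\dim N$.

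\textbf{Step 1: reduce both sides to concrete coordinate formulas.} By Proposition \ref{big-pretzel}(1), for $v\in N\cap C\setminus N\cap W$ we have $d_{N\cap C}(v)=d_C(v)$. So the claim becomes $d_C(v)=d_C(v+\tau(v))$, i.e. the perturbation $\tau$ does not change which of the first $n$ coordinates vanish. Write $v=(v_1,\dots,v_n,v_\infty)\in\R^n\oplus W$. From the normal form one sees that $v\notin N\cap W$ forces $v_s>0$ for all $m+1\le s\le n$ (since $v$ is a nonnegative combination of the $e^j$ with at least one positive coefficient among the first $m$, and all $b^j_i>0$), and for $1\le s\le m$ the coordinate $v_s$ is exactly the $s$-th coefficient $\lambda_s\ge 0$. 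Thus $I(v):=\{s:v_s=0\}\subset\{1,\dots,m\}$ and $d_C(v)=\#I(v)$.

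\textbf{Step 2: use good position to control the sign pattern of $\tau(v)$.} Since $N$ is in good position to $C$ with good complement $Y$, there is $\gamma>0$ so that for $n_0\in N$, $y\in Y$ with $|y|_0\le\gamma|n_0|_0$ the statements $n_0\in C$ and $n_0+y\in C$ are equivalent. Because $\tau(0)=0$, $D\tau(0)=0$, we get $|\tau(v)|_0\le\gamma|v|_0$ for $v$ small (this is exactly the estimate used in Corollary \ref{LGS2}), so $v\pm\tau(v)\in C$ for all small $v\in N\cap C$. I will extract from this the sign information coordinate by coordinate, exactly as in the proof of Corollary \ref{LGS2}: writing $\tau(v)=(\tau_1(v),\dots,\tau_n(v),\tau_\infty(v))$, for $s\in I(v)$ (so $v_s=0$) the condition $v\pm\tau(v)\in C$ gives $\pm\tau_s(v)\ge 0$, hence $\tau_s(v)=0$. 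Therefore the coordinates of $v+\tau(v)$ indexed by $I(v)$ still vanish, i.e. $I(v)\subset I(v+\tau(v))$, which gives $d_C(v)\le d_C(v+\tau(v))$.

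\textbf{Step 3: the reverse inequality via an open/continuity argument.} For the indices $s\notin I(v)$ I must show $v_s+\tau_s(v)\neq 0$. For $m+1\le s\le n$ this is automatic once $v$ is small: $v_s$ is bounded below by a positive multiple of $\|v\|$ along $N\cap C\setminus N\cap W$ while $|\tau_s(v)|\le|\tau(v)|_0\le\gamma|v|_0$, and shrinking to make $\gamma$ small (the good-position constant can be taken as small as we like) forces $v_s+\tau_s(v)>0$. For $1\le s\le m$ with $v_s>0$, the same comparison applies provided $v_s$ is not too small relative to $|v|_0$; the remaining case, where $v_s$ is positive but tiny compared to $|v|_0$, is the genuinely delicate point, because then the perturbation could in principle push $v_s+\tau_s(v)$ to zero. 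Here I will instead argue by the local lower-semicontinuity of $d_C$ (the Lemma just before Proposition \ref{newprop2.24}: every point has a neighborhood on which $d_C\le d_C$ at the point) applied to the point $0$: near $0$ we have $d_C\le d_C(0)=n$ trivially, which is not enough, so instead apply it at $v_0:=$ a nearby boundary stratum point and combine with Step 2's inclusion $I(v)\subset I(v+\tau(v))$ to pin down equality. More cleanly: the map $v\mapsto v+\tau(v)$ restricted to the relatively open stratum $\{v\in N\cap C: I(v)=\sigma\}$ of $N\cap C$ is continuous and, by Step 2, lands in $\{x\in C: \sigma\subset I(x)\}$; since that stratum of $N\cap C$ is connected and contains in its closure points where we already know (by continuity of coordinates $v_s$, $s\notin\sigma$, bounded below) that $I(x)=\sigma$ exactly, equality $I(v+\tau(v))=\sigma$ propagates. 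I expect \textbf{this last step — ruling out that $\tau$ collapses a small-but-positive coordinate — to be the main obstacle}; the clean way around it is to observe that $\tau$ takes values in $Y_1\subset Y$, and the good-position constant $\gamma$ may be chosen after the fact (one shrinks $V$), so in fact one should first choose a neighborhood on which $|\tau(v)|_0\le\varepsilon|v|_0$ with $\varepsilon$ small enough that $\varepsilon<b^j_i$-type lower bounds dominate, and then the comparison in Step 3 works uniformly for \emph{all} $s\notin I(v)$ simultaneously. Assembling Steps 2 and 3 gives $I(v+\tau(v))=I(v)$, hence $d_C(v+\tau(v))=d_C(v)=d_{N\cap C}(v)$, completing the proof.
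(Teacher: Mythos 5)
Your reduction in Steps 1--2 is sound and follows the paper's strategy: by Proposition \ref{big-pretzel}(1) it suffices to show the zero coordinate set $I(v)$ is preserved, and the good-position estimate $|\tau(v)|_0\le\gamma|v|_0$ combined with $v\pm\tau(v)\in C$ does give $\tau_s(v)=0$ for $s\in I(v)$, hence $I(v)\subset I(v+\tau(v))$. However, you are missing a cleaner ingredient the paper uses: by Lemma \ref{cone1} the good complement $Y$ is contained in $\{0\}^m\oplus\R^{n-m}\oplus W$, so in fact $\tau_j(v)=0$ for \emph{all} $1\le j\le m$ automatically, not just for the vanishing indices. This reduces the whole problem to showing $v_j+\tau_j(v)>0$ for $m+1\le j\le n$.

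The genuine gap is in Step 3. The claim that ``$v_s$ is bounded below by a positive multiple of $\|v\|$ along $N\cap C\setminus N\cap W$'' is false whenever $N\cap W\neq\{0\}$. Take $v^k=\tfrac{1}{k^2}e^1+\tfrac{1}{k}w$ with $0\neq w\in N\cap W$: then $v^k\in N\cap C\setminus N\cap W$, $|v^k|_0\sim 1/k$, but $v^k_s\sim 1/k^2$ for $m+1\le s\le n$, so $v^k_s/|v^k|_0\to 0$. Shrinking the good-position constant $\varepsilon$ therefore cannot dominate the bound $|\tau_s(v)|\le\varepsilon|v|_0$ near the stratum $N\cap W$, and the comparison argument you propose breaks down precisely there. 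The paper avoids this by a different, derivative-based argument: after transporting to the finite-dimensional model $C'=[0,\infty)^m\oplus\R^{k-m}$ via a linear isomorphism $T\colon N\to\R^k$, one sets $g(v')=T^{-1}(v')+\tau(T^{-1}(v'))$ and argues by contradiction. If $g_j(v')=0$ at some nonzero $v'$, then since some $v'_i>0$ ($1\le i\le m$) one differentiates $t\mapsto g_j(v'+t\ov e^i)$ at $t=0$ and finds the derivative equals $b^i_j+D\tau_j(v)e^i$, which is strictly positive because $b^i_j>0$ and $D\tau(0)=0$ makes the second term small; hence $g_j(v'+t\ov e^i)<0$ for small $t<0$, contradicting $g\in C$. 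This monotonicity-at-the-boundary argument is the missing idea. Your proposal also does not treat the $\dim N=1$ case separately, though that case is simpler and follows from a direct ratio estimate since then $N\cap W=\{0\}$.
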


\begin{proof}

Since $N$ is in good position to $C$ and $Y$ is a good complement of $N$ is $E$, there exists a constant 
exists a constant  $\gamma>0$ such that if $n\in N$ and $y\in Y$ satisfy $\abs{y}_0\leq \gamma \abs{n}_0$, then 
$n\in C$ if and only if $n+y\in C$. It follows, in view of $\tau (0)=0$ and $D\tau (0)=0$, that $\abs{\tau (v)}_0\leq \gamma\abs{v}_0$ for $v\in N\cap C$ close to $0$. 

Now proceeding as in the proof of Proposition \ref{big-pretzel}, we 
distinguish the two cases $\dim N=1$ and $\dim N\geq 2$.

In the first case, $N$ is spanned by a vector $e=(a, w)\in \R^n\oplus W$ in which $a=(a_1,\ldots ,a_n)$ satisfies $a_j>0$ for all $1\leq j\leq n$. Take  $v\in N\cap C\setminus N\cap W$, then $v=t(a, w)$ for $t>0$, so that $d_{N\cap C}(v)=0$. 
We  already know that $v_j+\tau_j(v)\geq 0$ for all $1\leq j\leq n$. So, to prove the lemma, it suffices to show that 
$v_j+\tau_j(v)>0$ for $1\leq j\leq n$ and $v\in N\cap C\setminus N\cap W$ close to $0$.  Arguing by contradiction we assume that there exists a sequence $t_k\to 0$ such  that if $v^k=t_ka$, then $v^k_i+\tau_i(v^k)=0$ for some index $1\leq i\leq n$. Then 
$$0\neq \dfrac{a}{\abs{a}_0}=\dfrac{v_i^k}{\abs{v^k}_0}=-\dfrac{\tau_i(v^k)}{\abs{v^k}_0},$$
which leads to   a contradiction since the right-hand side converges to $0$ in view of $\tau (0)=0$ and $D\tau (0)=0$. Consequently, $d_C(v+\tau (v))=0$ for all $v\in  N\cap C\setminus N\cap W$ which are close to $0$.\\[0.5ex]

In the case $\dim N\geq 2$, we use the notation introduced in the proof of Proposition \ref{big-pretzel}. If $v=(v_1,\ldots ,v_k, w)\in 
 N\cap C\setminus N\cap W$, then, in view of (1) in the proof of Proposition \ref{big-pretzel}, $d_{N\cap C}(v)=\#\{1\leq i\leq k\, \vert \, v_i=0\}$ and  there exists at least one index $1\leq i\leq k$ for which  $v_i>0$. This implies,  in particular,  that 
 $v_j>0$ for all $k+1\leq j\leq n$.
 Moreover,  in view of Lemma \ref{cone1}, the good complement $Y$ of $N$ is contained in $\{0\}^m\oplus \R^{n-m}\oplus W$. From this we conclude that $v_j+\tau_j (v)=v_j$ for all $1\leq j\leq m$. Hence in order to finish the proof it suffices to show that $v_j+\tau_j(v)>0$ for all $m+1\leq j\leq n$. In order to verify  this we introduce the sc-linear map $T\colon N\to \R^k$ defined by 
 $T(e^l)=\ov{e}^l$ for $1\leq l\leq k:=\dim N$ where  $\ov{e}^1, \ldots ,\ov{e}^k$ of $\R^k$. Then $T(N\cap C)=C':=[0,\infty )^m\oplus \R^{k-m}$ and  $d_{N\cap C}(v)=d_{C'}(T(v)).$

We consider  the map $g\colon C'\to \R^n\oplus W$,  defined by 
$$g(v')=T^{-1}(v')+\tau (T^{-1}(v'))$$ 
for $v'=(v_1', \ldots ,v'_k)\in C'$ close to $0$. Since $v+\tau (v)\in C$, $g(v')\in C$ for $v'\in C'$ close to $0$. 
We prove our claim by showing that $g_j(v')>0$ for $m+1\leq j\leq n$  and nonzero $v'\in C'$ close to $0$.
Arguing by contradiction we assume that there exists a point $v'\in C'$  different from  $0$ at which $g_j(v')=0$ for some $m+1\leq j\leq n$. 
We have  $v_i'>0$ for some $1\leq i\leq m$ so that  $v'+t\ov{e}^i\in C'$ for $\abs{t}$ small.  Then we compute, 
$$\dfrac{d}{dt}g_j(v'+t\ov{e}^i)\vert_{t=0}=b^i_j+D\tau_j(v)T^{-1}\ov{e}^i=b^i_j+D\tau_j(v)e^i.$$
Since the map $\tau$ is of class $C^1$ and $D\tau (0)=0$ and $b^i_j>0$ we conclude that the derivative is positive, and hence the function $t\mapsto g_j(v'+t\ov{e}^i)$ is strictly increasing for $\abs{t}$ small.  By assumption $g_j(v')=0$, hence  $g_j(v'+t\ov{e}^i)<0$ for $t<0$ small, contradicting  $g_j(v'+t\ov{e}^i)\geq 0$ for $\abs{t}$ small.

Consequently, $v_j+\tau_j(v)>0$ for all $m+1\leq j\leq n$ and all $v\in N\cap C\setminus N\cap W$ sufficiently close to $0$ and since $v_j+\tau_j(v)=v_j$ for $1\leq j\leq m$, we conclude $d_{N\cap C}(v)=d_C(v+\tau (v))$ 
for $v\in N\cap C\setminus N\cap W$ close to $0$. The proof of Lemma \ref{big-pretzel_1a} is complete. 

\end{proof}

\subsubsection{Proof of Lemma \ref{new_lemma_Z}}\label{pretzel-B}

\begin{L3.46}\label{new_lemma_Z_1}
The kernel  $\wt{K}:=\ker DH(0)\subset \R^{n}$ of the linearization $DH(0)$ is in good position to the partial quadrant $\wt{C}=[0,\infty)^{k}\oplus \R^{n'-k}$ in $\R^{n'}$. Moreover, there exists a good complement $Z$ of $\wt{K}$ in $\R^{n'}$, so that 
$\wt{K}\oplus Z =\R^{n'}$, having the property that $Z\oplus W'$ is a good complement of $K'=\ker Dg(0)$ in $E'$, 
$$E'=K'\oplus ( Z\oplus W').$$\\[1ex]
\end{L3.46}
\begin{proof}
The space $E'=\R^{n'}\oplus W'$ is equipped with the norm $\abs{\cdot }_0$. We use the  equivalent norm defined by $\abs{(a, w)}=\max\{ \abs{a}_0,\abs{w}_0\}$ for $(a, w)\in \R^{n'}\oplus W'$. The kernel $K'=\ker Dg(0)\subset \R^{n'}\oplus W'$ is in good position to the partial quadrant $C'=[0,\infty )^k\oplus \R^{n'-k}\oplus W'$ and $Y'$ is a good complement of $K'$ in $E'$, so that $K'\oplus Y'=E'$.

We recall that by the definition of good position, there exists $\varepsilon>0$ such that,  
if  $k\in K'$ and $y\in Y'$ satisfy 
\begin{equation}\label{again_good_1}
\abs{y}\leq \varepsilon \abs{k},
\end{equation}
then 
\begin{equation}\label{again_good_2}
\text{$k+y\in C'$\quad  if and only if\quad  $k\in C'$.}
\end{equation}

 


Let $Y_0$ be an algebraic complement of $Y'\cap W'$ in  $Y'$, where we have  identified  $W'$ with $\{0\}^{n'}\oplus W'$, so that $Y'=Y_0\oplus (Y'\cap W').$

We claim that the projection $P'\colon \R^{n'}\oplus W'\to \R^{n'}$, restricted to $Y_0$,  is injection. Indeed, assume that $x=(a, w)\in Y_0\subset \R^{n'}\oplus W'$ satisfies that  $P'(x)=0$. Then $a=0$ and $x=(0, w)$. Since $x=(0, w)\in Y'\cap W'$ and $Y_0\cap (Y'\cap W')=\{0\}$, we conclude that $x=0$ so  that indeed $P'\vert Y_0$ is an injection.

Introducing the subspace  $Z'=P'(Y_0)$, we claim that $\R^{n'}=\wt{K}+Z'$.  To verify the claim, we take $a\in \R^{n'}$ and let $x=(a, 0)$. Since $\R^{n'}\oplus W'=K'\oplus Y'$, there are unique elements $k\in K'$ and $y\in Y'$ such that $x=k+y$. In view of Lemma 3.44,
the element $k\in K'$ is of the form $k=(\alpha, D\delta (0)\alpha)$ for a unique $\alpha \in \wt{K}$. The element $y\in Y'$ is of the form $y=(b, w)\in \R^{n'}\oplus W'$. Hence 
$x=(a, 0)=k+y=(\alpha , D\delta (0)\alpha)+(b, w)=(\alpha+b, D\delta (0)\alpha+w)$, showing $a=\alpha+b$. On the other hand, since $Y'=Y_0\oplus (Y'\cap W')$,  we have  $(b, w)=(b, w_1)+(0, w_2)$ for $w_1, w_2\in W'$ such that $(b, w_1)\in Y'$ and $(0, w_2)\in Y'\cap W'.$ Hence $b=P'(b, w_1)\in Z'$ and, therefore, $a=\alpha +b\in \wt{K}+Z'$ as claimed.

From this it follows that  $\dim Z'\geq n'-\dim \wt{K}$ and we  choose a subspace $Z$ of $Z'$ of dimension  $\dim Z=n'-\dim \wt{K}$, so that $\R^{n'}=\wt{K}\oplus Z$.  Recalling that  the projection $P'\colon Y_0\to Z'$ is an isomorphism, we have $(P')^{-1}(Z)\subset Y'$.

We shall prove $\wt{K}$ is in good position to the partial quadrant $\wt{C}=[0,\infty )^k\oplus R^{n'-k}$ in $\R^{n'}$ and $Z$ is a good complement of $\wt{K}$, so that 
$\wt{K}\oplus Z=\R^{n'}$.  In view of  the fact that the map $(P')^{-1}\colon Z'\to Y_0$ is an isomorphism and $Z\subset Z'$, there exists a constant $A$ such that 
\begin{equation}\label{again_good_3}
\text{$\abs{(P')^{-1}(z)}\leq A\abs{z}$\quad  for all $z\in Z$.}
\end{equation}
Moreover, by Lemma \ref{new_lemma_relation}, the map $L\colon K'\to \wt{K}$, defined by $L(\alpha , D\delta (0)\alpha)=\alpha$,  is an isomorphism, and hence there exists a constant $B$ such that 
\begin{equation}\label{again_good_4}
\text{$\abs{\alpha}=\abs{L(\alpha ,D\delta  (0)\alpha)}\leq B\abs{(\alpha ,D\delta (0)\alpha)}$ \quad  for all $\alpha\in \wt{K}$.}
\end{equation}

We choose $\varepsilon'>0$ such that $\varepsilon' \cdot A\cdot B<\varepsilon$, where 
$\varepsilon>0$ is the constant from the condition \eqref{again_good_1}. 
We claim that if  $z\in Z$ and $\alpha\in \wt{K}$ satisfy 
\begin{equation}\label{again_good_5}
\abs{z}\leq \varepsilon' \abs{\alpha}, 
\end{equation} 
then 
\begin{equation}\label{again_good_6}
\text{$z+\alpha \in \wt{C}$\quad  if and only if \quad $\alpha \in \wt{C}.$}
\end{equation}
Since $z\in Z\subset Z'$, there exists a unique $w\in W'$ such that $(P')^{-1}(z)=(z, w)\in Y_0$. 
Then we estimate,  using \eqref{again_good_3}, \eqref{again_good_5}, and then \eqref{again_good_4}, 
$$\abs{(z, w)}=\abs{(P')^{-1}(z)}\leq A\abs{z}\leq \varepsilon' A\cdot B\abs{(\alpha ,D\delta  (0)\alpha)}<\varepsilon  \abs{(\alpha ,D\delta (0)\alpha)}.$$
Thus, $(z, w)\in Y_0\subset Y'$ and $(\alpha, D\delta (0)\alpha )\in K'$ satisfy the estimate \eqref{again_good_1} and it follows that 
\begin{equation}\label{again_good_7}
\text{$(z, w)+(\alpha, D\delta (0)\alpha )\in C$ \quad if and only if\quad  $(\alpha, D\delta (0)\alpha)\in C'.$}
\end{equation}
Since $(z, w)+(\alpha, D\delta (0)\alpha)=(z+\alpha ,w+D\delta (0)\alpha)\in \R^{n'}\oplus W'$, 
\eqref{again_good_7} implies that $z+\alpha\in \wt{C}$ if and only if $\alpha \in \wt{C}$, proving our claim \eqref{again_good_6} and we see that $Z$ is a good complement of $\wt{K}$ in $\R^{n'}$.

Next we set 
$$\ov{Y}=Z\oplus W'$$
and claim that $K'\oplus \ov{Y}=\R^{n'}\oplus W'$ and that $\ov{Y}$ is a good complement of $K'$ in $E'=\R^{n'}\oplus W'.$ We first verify that $\R^{n'}\oplus W'=K'+\ov{Y}$. Take $(a, w)\in \R^{n'}\oplus W'$.  Then, since 
$\R^{n'}\oplus W'=K'\oplus Y'$, there are unique elements $(\alpha, D\delta (0)\alpha)\in K'$, where $\alpha \in \wt{K}$,   and $(b, w_1)\in Y'$ such that 
\begin{equation}\label{again_good_8}
(a, w)=(\alpha , D\delta (0)\alpha)+(b, w_1)\in K'\oplus Y'.
\end{equation}
Hence $a=\alpha +b.$ Since $\R^{n'}=\wt{K}\oplus Z$, we may decompose $b$ as $b=\alpha_1+b_1$ with $\alpha_1\in \wt{K}$ and $b_1\in Z$  and then \eqref{again_good_8} can be written as 
\begin{equation*}
\begin{split}
(a, w)&=(\alpha , D\delta (0)\alpha)+(b, w_1)=(\alpha , D\delta (0)\alpha)+(\alpha_1+b_1, w_1)\\
&=(\alpha , D\delta (0)\alpha)+(\alpha_1, D\delta (0)\alpha_1)+(b_1, w_1-D\delta (0)\alpha_1)\\
&=(\alpha+\alpha_1, D\delta (0)(\alpha+\alpha_1))+(b_1, w_1-D\delta (0)\alpha_1)
\end{split}
\end{equation*}
where  $(\alpha+\alpha_1, D\delta (0)(\alpha+\alpha_1))\in K'$ (since $\alpha+\alpha_1\in \wt{K}$) and $(b_1, w-D\delta (0)\alpha_1)\in Z\oplus W'=\ov{Y}$ (since $b_1\in Z$). Hence $\R^{n'}\oplus W'=K'+\ov{Y}$ as claimed.

If $(\alpha ,D\delta (0)\alpha)\in K'\cap \ov{Y}=K'\cap (Z\oplus W')$, then $\alpha \in \wt{K}\cap Z$, so that $\alpha=0$ and $K'\cap \ov{Y}=\{0\}$ and we  have proved that  $Z\oplus \ov{Y}=\R^{n'}\oplus W'$.

Finally, we shall show that $\ov{Y}=Z\oplus W'$ is a good complement of $K'$ in $\R^{n'}\oplus W'$.  Recalling the isomorphism $L\colon K'\to \wt{K}$, $L(\alpha, D\delta (0)\alpha)=\alpha$, there exists a constant $M$ such that $\abs{L^{-1}(\alpha)}\leq M\abs{\alpha}$, i.e.,  
$$\text{$\abs{(\alpha ,D\delta (0)\alpha)}\leq M\abs{\alpha}$\quad  for all $\alpha\in \wt{K}$.}$$ 

We choose $\varepsilon''>0$ such that $\varepsilon'' M<\varepsilon'$ where $\varepsilon'$ is defined in \eqref{again_good_5}. We shall show that if 
$(\alpha, D\delta (0)\alpha)\in K'$ and $(a, w)\in Z$
satisfy the estimate
\begin{equation}\label{again_good_9}
\abs{(a, w)}\leq \varepsilon'' \abs{(\alpha, D\delta (0)\alpha)},
\end{equation}
then 
\begin{equation}\label{again_good_10}
\text{$(a, w)+(\alpha, D\delta (0)\alpha)\in C'$\quad if and only if $(\alpha, D\delta (0)\alpha)\in C'$.}
\end{equation}
Since $\abs{(a, w)}=\max\{\abs{a}_0, \abs{w}_0\}\geq \abs{a}$, we conclude from  \eqref{again_good_10} and 
\eqref{again_good_9} the estimate
$$\abs{a}\leq \abs{(a, w)}\leq \varepsilon'' \abs{(\alpha, D\delta (0)\alpha)}\leq \varepsilon'' M\abs{\alpha}.$$
In view of  \eqref{again_good_5} and 
\eqref{again_good_6}, 
$$\text{$a+\alpha \in \wt{C}$\quad if and only if $\alpha \in \wt{C}$}.$$
This is equivalent to 
$$\text{$(a+\alpha, w+D\delta (0)\alpha)\in C'$ \quad if and only if $(\alpha, D\delta (0)\alpha)\in C'.$}$$
Hence \eqref{again_good_10} holds and the proof that $\ov{Y}=Z\oplus W'$ is a good complement of $K'$ is complete. The proof of Lemma \ref{new_lemma_Z} is finished.

\end{proof}

\subsubsection{Proof of Lemma \ref{good_pos} }\label{geometric_preparation}
\begin{L3.55}\label{good_pos_1}

Let $C\subset E$ be  a partial quadrant in the  sc-Banach space $E$ and $N\subset E$ a finite-dimensional smooth subspace in good position to $C$ and let $Y$ be a good complement of $N$ in $E$, so that $E=N\oplus Y$.
We assume that $V\subset N\cap C$ is a relatively open neighborhood
of $0$ and $\tau\colon V\rightarrow Y_1$ a map of class $C^1$ satisfying $\tau(0)=0$ and $D\tau(0)=0$.

Then there exists a relatively open neighborhood $V'\subset V$ of $0$ such that the following holds.
\begin{itemize}
\item[{\em (1)}] $v+\tau(v)\in C_1$ for $v\in V'$.
\item[{\em (2)}] For every $v\in V'$,  the Banach space $Y=Y_0$ is a topological complement of the linear subspace $N_v=\{n+D\tau(v)n\, \vert \, n\in N\}$.
\item[{\em (3)}] For every $v\in V'$,  there exists a  constant $\gamma_v>0$ such  that  if  $n\in N_v$ and $y\in Y$ satisfy 
$\abs{y}_0\leq \gamma_v\cdot \abs{n}_0$,  then  $n\in C_x$ if and only if  $n+y\in C_x$ are equivalent, where $x=v+\tau (v)$. 
\end{itemize}

\end{L3.55}

\begin{proof}[{\bf Proof of Lemma \ref{good_pos}}]
We shall use the notations of Proposition 
\ref{big-pretzel}
Since  the smooth finite dimensional subspace $N$ of $E$ is in good position to $C$,  and $Y$ is its good complement so that  $E=N\oplus Y$,  there exists a constant $\gamma>0$ such that if $n\in N$ and $y\in Y$ satisfy
\begin{subequations}
\begin{gather}
\abs{y}_0\leq \gamma \abs{n}_0\label{eq_lemma3.55_1}\\
\shortintertext{then}
\text{$y+n\in C$\quad if and only if\quad  $n\in C$.} \label{eq_lemma3.55_2}
\end{gather}
\end{subequations}
\noindent(1)\,  The $C^1$-map $\tau\colon V\to Y_1$ satisfies $\tau (0)=0$ and $D\tau (0)=0$, so that  
$$\lim_{\abs{v}\to 0}\dfrac{\abs{\tau (v)}_0}{\abs{v}_0}=0.$$
Consequently, there exists a relatively open neighborhood $V'\subset V$ of $0$ in $N$ such that $\abs{\tau (v)}_0\leq \gamma \abs{v}_0$ for all $v\in V'$.  Since $V'\subset V\subset C$, we conclude from \eqref{eq_lemma3.55_2} that 
$v+\tau (v)\in C$ for all $v\in V'$.  In addition, since  $V'$ consists of smooth points and $\tau (v)\in Y_1$,  we have $v+\tau (v)\in C_1$ for all $v\in V'$.\\[0.5ex]
\noindent  (2)\,  By assumption,  $E=N\oplus Y$ and $D\tau (v)n\in Y$ for all $n\in N$. If   $x=n+y\in N\oplus Y\in E$, then $y=(n+D\tau (v)n)+(y-D\tau (v)n)$, showing that $E=N_v +Y$.  If $(n+D\tau (v)n)+y=(n'+D\tau (v)n')+y'$ for some $n, n'\in N$ and $y, y'\in Y$, then $n-n'=(y'-y)+D\tau (v)(n'-n)\in N\cap Y$. Hence $n=n'$ and $y=y'$, showing that $N_v\cap Y=\{0\}$ and hence $E=N_v\oplus Y$. That the subspace $Y$ is a topological complement of $N_v$ follows from the fact that $N_v$ is a finite dimensional subspace of $E$. \\[0.5ex]


\noindent (3)\,  
Without loss of generality we may assume $E=\R^n\oplus W$ and $C=[0,\infty)^n\oplus W$. 
We recall that for  $x\in C$, the set $C_x$ is defined by   
$$
C_x=\{(a, w)\in \R^n\oplus W\ \vert \ \text{$a_i\geq 0$ for all $1\leq i\leq n$ for which $x_i=0$}\},$$ 
and if $x_i>0$ for all $\leq i\leq n$, we set $C_x=E$. Clearly, if $x=(0, w)\in \R^n\oplus W$, then $C_x=C$.\\[0.5ex] 

If $v=0$, then $x=v+\tau (v)=0$, so that $C_x=C$ and the statement of the corollary follows from the fact that $N_v=N$ and $N$ is in good position to $C$ having the good complement $Y$.

In order to prove the statement for $v\neq 0$ we distinguish thew  two cases $\dim N=1$ and $\dim N>1$. In 
the first case, the proof of Lemma \ref{big-pretzel_1a} shows that $v_j+\tau_j (v)>0$ for all $1\leq j\leq n$.
This implies that $C_x=E$ and that, in view of (2),  $Y$ is a good complement of $N_v$ with respect to $C_x$.\\[0.5ex]

 In the second case $\dim N>1$, and we denote by $\wt{N}$ the  algebraic complement of $N\cap W$ in $N$,  where $W$ is identified with $W=\{0\}^n\oplus W$,  so that $N=\wt{N}\oplus (N\cap W)$.   We may assume,  after a linear change of coordinates, that the following holds. 
\begin{itemize}
\item[(a)] The linear subspace $\wt{N}$ is spanned by the vectors $e^j$ for $1\leq j\leq m=\dim \wt{N}$ of the form 
$e^j=(a^j, b^j, w^j)$ where $a^j$ are the vectors of the standard basis in $\R^m$, $b^j=(b^j_1, \ldots ,b^j_{n-m})$ satisfy $b^j_i>0$ for all $1\leq i\leq n-m$, and $w^j\in W$. 
\item[(b)] The good complement $Y$ of $N$  in $E$  is contained in $\{0\}^m\oplus \R^{n-m}\oplus W.$
\end{itemize}
Denoting by   $e^j=(0, w^j)\in \{0\}^n\cap W$, $m+1\leq j\leq k=\dim N$, a basis of  $N\cap W$, the vectors $e^1,\ldots ,e^k$  form a basis of $N$.   
We choose $0<\varepsilon<\gamma/2$ where $\gamma$ is the constant from \eqref{eq_lemma3.55_1} and define the open neighborhood $V''$ of $0$ in $N\cap C$ consisting  of points $v\in V'$ satisfying $\abs{D\tau (v)n}_0<\varepsilon\abs{n}_0$ for all $n\in N$.

We consider points  $v\in V'$ belonging to $N\cap W$ where we identify $W$ with $\{0\}^n\oplus W$.  Hence  $v$ is  
of the form $v=(0, w)\in \{0\}^n\oplus W$.  Then $\abs{\tau(v)}_0\leq \gamma \abs{v}_0$ and,  since $v\in C$,  we conclude that $v+\tau (v)$ and $v-\tau (v)$ belong to the partial quadrant $C$. This implies that $\tau_i(v)=0$ for all $1\leq i\leq n$ and  $C_{x}=C$ where $x=v+\tau (v)=(0, w')$. We choose a positive constant $\gamma_v$ such that $\gamma_v(1+\varepsilon)+\varepsilon <\gamma$.  Abbreviating $h=n+D\tau (v)n\in N_v$ for some $n\in N$ and taking $y\in Y$, we assume that 
\begin{equation}\label{eq_lemma3.55_3}
\abs{y}_0\leq \gamma_0\abs{h}_0=\gamma_v \abs{n+D\tau (v)n}_0.
\end{equation}
 
 We claim that $y+h\in C_x$ if and only if $h\in C_x$. Suppose that $y+h=y+n+D\tau (v)n\in C_x=C$. Then we estimate using \eqref{eq_lemma3.55_3}, 
\begin{equation}\label{eeqq1}
\begin{split}
\abs{y+D\tau (v)n}_0&\leq \abs{y}_0+\abs{D\tau (v)n}_0\leq \gamma_v\abs{h+D\tau (v)n}+\abs{D\tau (v)n}_0\\
&\leq (\gamma_v+\varepsilon\gamma_v+\varepsilon)\abs{n}_0\leq \gamma \abs{n}_0.
\end{split}
\end{equation}

Since $y+D\tau (v)n\in Y$ and by assumption $y+h=y+D\tau (v)n+n\in C_x=C$, we deduce from \eqref{eeqq1}  and 
\eqref{eq_lemma3.55_2} that $n\in C$. 
Then the estimate $\abs{D\tau (v)n}_0\leq \varepsilon \abs{n}_0<\gamma\cdot \abs{n}_0$ and again \eqref{eq_lemma3.55_2}, imply that $h=n+D\tau (v)n\in C$, as claimed.\\
Conversely, the  assumption  that $h=n+D\tau (v)n\in C$ implies, in view of  $\abs{D\tau (v)h}_0\leq \abs{h}_0$ and \eqref{eq_lemma3.55_2}, that $n\in C$. Then using  \eqref{eeqq1} we find hat $y+D\tau (v)n +h=y+h\in C$. We have proved that if $h\in N_v$ and $y\in Y$ satisfy $\abs{y}_0\leq \gamma_v\abs{h}_0$, then $y+h\in C_x$ if and only if $h\in C_x$.\\[0.5ex]

Finally we consider points $v=(v_1,\ldots ,v_n, w)\in N\cap C\setminus N\cap W$. Then $v_i>0$ for some $1\leq i\leq m$ and $v_j>0$ for all $m+1\leq j\leq k$. Moreover, in view of the proof of the statement (1), 
$\abs{\tau (u)}_0\leq \gamma\abs{v}_0$ which implies that $v+\tau (v)\in C$ if $v$ is close to $0$.
It follows from the proof of Lemma \ref{big-pretzel_1a} that  $v_j+\tau_j (v)>0$ for all $m+1\leq j\leq n$. Abbreviating  $x=v+\tau (v)$ and denoting by $\Lambda_x$ the set of indices $1\leq j\leq n$ for which $x_j>0$,  we conclude 
$$C_x=\{(a, w)\in \R^n\oplus W\, \vert \, \text{$a_j\geq 0$ for all $1\leq j\leq m$ satisfying $j\not \in \Lambda_x$}\}.$$
Recall that $N_v=\{n+D\tau (v)n\, \vert \, n\in N\}$ and $E=N_v\oplus Y$ in view of the statement (2).  Now the inclusion $Y\subset  \{0\}^m\oplus \R^{n-m}\oplus W$ and the definition of $C_x$ above show that, 
if $h=n+D\tau (v)n\in N_v$ and  $y\in Y$, then $y+h\in C_x$ if and only if $h\in C_x$.  This completes the proof of Lemma \ref{good_pos}.

\end{proof}

\subsubsection{Diffeomorphisms Between Partial 
Quadrants in $\R^n$.}\label{finite_partial_inverse}

In the sections \ref{finite_partial_inverse} and \ref{implicit_finite_partial_quadrants} 
we shall prove an inverse function theorem and an implicit function theorem for smooth maps defined on partial quadrants in $\R^n$. We recall that the closed subset $C\subset \R^n$ is a partial quadrant, if there exists an isomorphism $L$ of $\R^n$ mapping $C$ onto $L(C)=[0,\infty)^k\oplus \R^{n-k}$ for some $k$.

We begin  with a definition.
\begin{definition}[{\bf Class $C^1$}] Let $C$ be a partial quadrant in $\R^n$ and $f\colon U\rightarrow \R^m$ a map defined on a relatively open subset $U\subset C$. The map  $f$ is said to be of {\bf class  $C^1$}  if,  for every $x\in U$,  there exists a bounded linear map $Df(x)\colon \R^n\rightarrow \R^m$
such that
$$
\lim_{x+h\in U,h\to  0} \dfrac{ \abs{f(x+h)-f(x)-Df(x)h}}{\abs{h}}=0,
$$
and,  in addition,  the map 
$$Df\colon U\to {\mathscr L}(\R^n, \R^m), \quad x\mapsto Df(x),$$
is continuous.
\end{definition}
We note that the tangent 
$TU=U\oplus {\mathbb R}^n$ is a relatively open subset of the partial quadrant $C\oplus \R^n$ and the map 
$Tf\colon TU\rightarrow T\R^m$,  defined by 
$$Tf(x,h)=\bigl(f(x), Df(x)h ),$$
is continuous. If the maps $f$ and $Tf$ are of class $C^1$, then  we say that $f$ is of class $C^2$. Inductively, the map $f$ is $C^k$ if the maps $f$ and $Tf$ are of class $C^{k-1}$.  Finally, $f$  is called smooth 
(or $C^\infty$) if  $f$ is of class $C^k$ for all $k\geq 1$.

We consider the following situation.  Assume  $C$ is  a partial quadrant in $\R^n$ and $U\subset C$  a relatively open neighborhood of $0$ in $C$. Let $f\colon U\to C$ be  a smooth map such that $f(0)=0$ and $Df(0)\colon \R^n\to \R^n$ an isomorphism. 
Even if $Df(0)C=C$ one cannot guarantee that the image $f(O)$  of an open neighborhood $O$ of $0$ in $C$  is an open neighborhood of $0$ in $C$ as Figure \ref{fig:pict6}  illustrates.

\begin{figure}[htb]
\begin{centering}
\def\svgwidth{69ex}
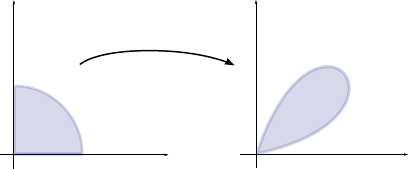
\caption{The boundary tangents at $(0,0)$ are the $x-$ and $y$-axis, but the domain in the right picture is not an open neighborhood of $(0,0)$.}
\label{fig:pict6}
\end{centering}
\end{figure}


Our aim in this section is to derive an inverse and implicit function theorem in this context.

\begin{theorem}[{\bf Quadrant Inverse Function Theorem}]\label{QIFT}
We assume that $C$ is a partial quadrant in ${\mathbb R}^n$ and $U\subset C$ a relatively open neighborhood of $0$, and consider  a smooth map  $f\colon U\rightarrow \R^n$ satisfying $f(U)\subset C$ and having the following properties:
\begin{itemize}
\item[{\em (1)}]  $f(0)=0$ and $Df(0)C=C$.
\item[{\em (2)}]  $d_C(x)=d_C(f(x))$ for every $x\in U$.
\end{itemize}
Then there exist two relatively open neighborhoods $U'$ and $V'$ of $0$ in $C$  such that $U'\subset U$ and the map 
$$f\colon U'\rightarrow V'$$ is a diffeomorphism.
\end{theorem}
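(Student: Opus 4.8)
The statement is an inverse function theorem for smooth maps on partial quadrants, under the constraint that the degeneracy index is preserved. The plan is to reduce to the standard inverse function theorem in $\R^n$ and then to show that the preservation of $d_C$ forces the diffeomorphism to respect the face structure of $C$, so that it restricts to a diffeomorphism between relatively open neighborhoods of $0$ in $C$. First I would normalize coordinates: applying the linear isomorphism $Df(0)^{-1}$ we may assume $Df(0)=\mathrm{id}$ and, conjugating by a further linear isomorphism, that $C=[0,\infty)^k\oplus\R^{n-k}$ is the standard quadrant (this uses hypothesis (1), $Df(0)C=C$, so the linearization is compatible with the quadrant). Next, extend $f$ to a smooth map on a full open neighborhood of $0$ in $\R^n$ — possible since $f$ is $C^1$ on the relatively open subset $U$ of $C$ in the sense defined just above the theorem, so the derivative data extend — and apply the classical inverse function theorem to obtain open sets $\wt U,\wt V\subset\R^n$ and a smooth diffeomorphism $g=f^{-1}\colon\wt V\to\wt U$ with $g(0)=0$, $Dg(0)=\mathrm{id}$.

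The heart of the argument is to show that, after shrinking, $f$ maps $\wt U\cap C$ \emph{onto} $\wt V\cap C$. One inclusion, $f(\wt U\cap C)\subset \wt V\cap C$, is immediate from $f(U)\subset C$. For the reverse, the key point is the following local description of $C$ near $0$: a point $x$ near $0$ lies in $C$ if and only if its first $k$ coordinates are $\geq 0$, and $d_C(x)$ counts how many of them vanish. I would argue coordinate by coordinate. Fix $i\in\{1,\dots,k\}$ and consider the extended face $E_i=\{x_i=0\}$; the connected component of $\{x\in C: d_C(x)=1\}$ lying in $E_i$ is mapped by $f$ into the set $\{d_C=1\}$ by hypothesis (2), and by continuity and the fact that $f$ is a local homeomorphism it must be carried into a single such component, hence into some $E_{j}$. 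Using hypothesis (2) again at the corner $0$ (where $d_C(0)=k$) together with injectivity of $f$ near $0$, the induced map on the $k$ extended faces through $0$ is a bijection; after relabeling we may assume $f(E_i\cap\wt U)\subset E_i$ for each $1\le i\le k$. Then, for a point $y\in\wt V\cap C$, its preimage $x=g(y)$ satisfies: $x_i=0$ exactly when $y_i=0$ (because $f$ maps $E_i$ to $E_i$ and $g$ maps $E_i$ to $E_i$, both injectively near $0$), and for the indices with $y_i>0$ one has $x_i\neq 0$; a sign argument — parametrizing a short segment from a point of the interior face $E_i^c$ into $C$ and using that $f$ preserves $C$ — shows $x_i>0$ rather than $x_i<0$. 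Hence $x\in C$, giving $g(\wt V\cap C)\subset\wt U\cap C$ and therefore $f(\wt U\cap C)=\wt V\cap C$.

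Finally I would set $U'=\wt U\cap C$ and $V'=\wt V\cap C$; these are relatively open neighborhoods of $0$ in $C$, $U'\subset U$ after shrinking $\wt U\subset$ (the open set underlying $U$), and $f\colon U'\to V'$ is a bijection which is smooth (as a restriction of a smooth map) with smooth inverse $g|_{V'}$ in the sense of the class-$C^1$/$C^\infty$ definition for partial quadrants given above. Undoing the two linear normalizations (which are diffeomorphisms of $(\R^n,C)$ preserving $d_C$) yields the theorem in the original coordinates.

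\textbf{Main obstacle.} The routine part is the classical inverse function theorem; the genuinely delicate step is proving $f(U'\cap C)=V'\cap C$, i.e. that the abstract $d_C$-preservation hypothesis (2) upgrades to the concrete statement that $f$ permutes the coordinate hyperplanes cutting out $C$ and preserves the correct sides of them. The subtlety is illustrated by Figure \ref{fig:pict6}: without hypothesis (2) the image of a quadrant neighborhood need not be a quadrant neighborhood, so the sign/face-tracking argument, combining the connectedness of the codimension-one strata $\{d_C=1\}$, injectivity of the local homeomorphism, and $f(U)\subset C$, is exactly what rules out the bad behavior. I expect this face-bijection-plus-sign argument to require the most care, but it should go through cleanly using the structure of extended faces recalled just before Definition \ref{new_def_2.33}.
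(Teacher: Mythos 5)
Your proposal takes a genuinely different route from the paper's, and it has two real gaps: one the paper explicitly warns about, one internal to your face-tracking argument.

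\textbf{The extension step is a gap.} You write that extending $f$ to a smooth map on an open neighborhood of $0$ in $\R^n$ is ``possible since $f$ is $C^1$ \dots so the derivative data extend.'' This is precisely what the paper flags as the nontrivial obstacle; directly after Corollary~\ref{f_homeomrphism} the authors remark that their intrinsic notion of smoothness on partial quadrants does \emph{not} stipulate that $f$ is locally the restriction of a smooth ambient map, and that constructing the extension ``would complicate'' matters. To make your approach work you would have to prove (or precisely cite) a Whitney/Seeley-type $C^\infty$-extension theorem for the quadrant $C=[0,\infty)^k\oplus\R^{n-k}$, and verify that the intrinsic $C^\infty$ hypothesis of the paper (iterated tangent maps of class $C^0$, as in the definition just above the theorem) actually produces the Whitney jet on $\partial C$ with the required remainder estimates. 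That is a genuine piece of work, not a remark. The paper's proof of Proposition~\ref{hongkong} is designed to avoid it entirely: after normalizing so that $Dg(0)=\mathbbm 1$, a Lipschitz estimate $\tfrac12|x-x'|\le|g(x)-g(x')|\le\tfrac32|x-x'|$ makes $g$ injective, a connectedness/continuation argument along segments between \emph{interior} points of $C$ (together with a closure argument to reach boundary points) shows that $g(U)$ is relatively open, and the smoothness of the inverse is then established intrinsically by iterating tangent maps and applying Lemma~\ref{addd}. No extension is ever invoked.

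\textbf{The face-tracking argument also has a gap.} Hypothesis~(2) only controls $f$ on $U\subset C$, so all you can deduce is that $f$ carries the closed face $E_i\cap C\cap U$ into $E_i\cap C$, not that ``$f$ maps $E_i$ to $E_i$.'' The extended $\tilde f$ need not map the full hyperplane $E_i$ to $E_i$, and consequently the claims ``$x_i=0$ exactly when $y_i=0$'' and ``for indices with $y_i>0$ one has $x_i\neq 0$'' do not follow: if $x=g(y)$ lies \emph{outside} $C$, there is no $d_C$-information about $x$ at all, so the sign argument as written does not close. (Also, the bijectivity of your permutation $\sigma$ of faces needs the fact that $D\tilde f(0)=\mathbbm 1$ — injectivity of $f$ alone does not preclude $\sigma(i)=\sigma(i')$.) Once the extension is granted, a cleaner way to finish is to note that $W:=\tilde f(\mathrm{int}\,C\cap \wt U)$ is open in $\mathrm{int}\,C$, nonempty, and closed in $\wt V\cap\mathrm{int}\,C$ (a limit point $y$ has $g(y)\in C$ by closedness and then $d_C(g(y))=d_C(y)=0$ by hypothesis~(2) applied at $g(y)\in U$), so by connectedness $W=\wt V\cap\mathrm{int}\,C$; taking closures gives $f(\wt U\cap C)\supset\wt V\cap C$, bypassing the face combinatorics. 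In short: your overall plan can be made to work, but the extension theorem must actually be supplied, and the face-tracking step needs to be replaced or substantially tightened — whereas the paper's intrinsic argument sidesteps both issues.
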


From $\R^n=C-C$ and $Df(0)C=C$,  it follows that $Df(0)\colon \R^n\to \R^n$ is an isomorphism.  Therefore, it suffices to study the map $g\colon U\to C$, defined by 
$$g(x)=Df(0)^{-1}f(x).$$
The map $g$ has  the same properties (1) and (2), but has the simplifying feature that 
$Dg(0)={\mathbbm1}$. 
Theorem \ref{QIFT} is a consequence of the following proposition.

\begin{proposition}\label{hongkong}
We assume that $C$ is a partial quadrant in ${\mathbb R}^n$ and $U\subset C$ a relatively open convex neighborhood of $0$, and let $f\colon U\rightarrow C$ be a smooth map having the following properties:
\begin{itemize}
\item[{\em (1)}] $f(0)=0$.
\item[{\em (2)}] $\abs{Df(x)-{\mathbbm 1}}\leq 1/2$ for all $x\in U$.
\item[{\em (3)}]  $d_C(x)=d_C(f(x))$ for every $x\in U$.
\end{itemize}
Then $V:=f(U)$ is a relatively open neighborhood of $0$ in $C$,  and the map $f\colon U\rightarrow V$ is a diffeomorphism, that is, $f$ and $f^{-1}$ are smooth
in the sense defined above.
\end{proposition}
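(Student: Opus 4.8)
\textbf{Proof plan for Proposition \ref{hongkong}.}

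The plan is to run the standard proof of the inverse function theorem with the contraction map, but to keep careful track of how the hypothesis (3) on the degeneracy index forces the image to be a full relatively open neighborhood in $C$. First I would normalize: after a linear isomorphism we may assume $C=[0,\infty)^k\oplus\R^{n-k}$. Writing $f(x)=x-\phi(x)$, hypothesis (2) says $\abs{D\phi(x)}\le 1/2$, so $\phi$ is a $\tfrac12$-Lipschitz map on the convex set $U$ with $\phi(0)=0$, hence $\abs{\phi(x)}\le\tfrac12\abs{x}$. In particular $f$ is injective on $U$: if $f(x)=f(y)$ then $x-y=\phi(x)-\phi(y)$ and $\abs{x-y}\le\tfrac12\abs{x-y}$, forcing $x=y$. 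Since $Df(x)={\mathbbm1}-D\phi(x)$ has norm-distance $\le1/2$ from the identity it is invertible, so $f$ is a local $C^\infty$-diffeomorphism near every point of $U$ in the usual (interior-of-$\R^n$) sense; combined with injectivity, $f$ maps $U$ homeomorphically onto $V:=f(U)$ with $f^{-1}$ of class $C^\infty$ as a map between subsets of $\R^n$, once we know $V$ is relatively open in $C$. So the entire content is: $V$ is a relatively open neighborhood of $0$ in $C$.

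To prove $V$ is relatively open in $C$, fix $x_0\in U$ with $y_0=f(x_0)$, and let $d=d_C(x_0)=d_C(y_0)$ by (3). The plan is to show that a small relative ball $B$ of $C$ around $y_0$ is contained in $f(U)$. Write $x_0$ and $y_0$ with exactly $d$ vanishing coordinates among the first $k$; after permuting coordinates both have their vanishing "boundary" coordinates in the index set $I=\{1,\dots,d\}$, and positive $(d+1)$-th through $k$-th coordinates. Given a target $y$ near $y_0$, I want to solve $f(x)=y$, i.e. $x=y+\phi(x)$, by the Banach fixed point theorem applied to $T_y(x)=y+\phi(x)$ on a small closed ball. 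The Lipschitz estimate $\abs{\phi(x)-\phi(x')}\le\tfrac12\abs{x-x'}$ gives contraction, and $\abs{T_y(x)-x_0}\le\abs{y-y_0}+\abs{\phi(x)-\phi(x_0)}\le\abs{y-y_0}+\tfrac12\abs{x-x_0}$ shows $T_y$ maps a ball of radius $2\abs{y-y_0}$ around $x_0$ to itself, so there is a unique fixed point $x(y)$ with $\abs{x(y)-x_0}\le 2\abs{y-y_0}$. The remaining point is that this solution $x(y)$ actually lies in $C$ (so that it is a legitimate point of $U$ and $y\in V$): for $y\in C$ near $y_0$ with $d_C(y)\le d$, I must check the coordinates $x_i(y)$ for $i\in I$ and for $i\in\{d+1,\dots,k\}$ are $\ge0$. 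For $i\in\{d+1,\dots,k\}$ this is automatic by continuity since $(x_0)_i>0$. For $i\in I$ one uses hypothesis (3): I would argue that the map $f$, restricted to the face $\{x\in U\mid x_i=0,\ i\in I\}$, still satisfies the hypotheses (the derivative estimate is inherited, and $d_C$ is still preserved, now as a degeneracy index of the lower-dimensional partial quadrant), so by an induction on $n$ the face maps onto a relative neighborhood of $y_0$ in the corresponding face of $C$; then the degeneracy-preservation (2)' forces $f$ to send the open side $\{x_i>0\}$ locally into $\{y_i>0\}$, and by the open mapping property in $\R^n$ the two halves together cover a full relative neighborhood of $y_0$ in $C$.

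The induction is set up on $n$ (equivalently on $k$): the base case $k=0$ is the classical inverse function theorem on an open subset of $\R^n$, where (3) is vacuous and $f(U)$ is open. For the inductive step, the key structural fact I would isolate is: because $d_C(f(x))=d_C(x)$, the map $f$ carries $\{x\in U\mid d_C(x)\ge j\}$ into $\{y\mid d_C(y)\ge j\}$ and $\{x\mid d_C(x)=0\}$ into $\{y\mid d_C(y)=0\}$; and near $x_0$ the stratum through $x_0$ is exactly the relatively open piece of the face $F=\{x_i=0,\ i\in I\}$, which is a relatively open subset of a partial quadrant in $\R^{n-d}$. Apply the inductive hypothesis to $f|_F\colon F\cap U\to F$ (its derivative is the restriction of $Df$, hence still within $1/2$ of the identity, and $d_{C\cap F}$ is preserved) to get that $f(F\cap U)$ is relatively open in $F$. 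Then, using the fixed-point solution $x(y)$ above together with the stratum-preservation, one shows that for $y$ in a small relative ball of $C$ at $y_0$: if $y$ lies in $F$, then $x(y)\in F\cap U\subset U$; and if $y$ has some $y_i>0$ for an $i\in I$, then by continuity of $x(y)$ and the fact that $x(y)$ cannot jump to a more degenerate stratum (again by (3), since $f(x(y))=y$ has $d_C(y)<d$ while a point of the closed face $F$ would have $d_C\ge d$ unless we already reduced), we get $x_i(y)>0$ for all $i\in I$, so $x(y)\in U$. In all cases $y=f(x(y))\in V$, so $V$ contains a relative neighborhood of $y_0$; as $y_0$ was arbitrary, $V$ is relatively open.

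\textbf{Main obstacle.} The routine contraction-mapping and Lipschitz estimates are easy; the genuine difficulty is the last point — verifying that the fixed-point solution $x(y)$ stays in $C$ for all $y$ in a full relative neighborhood of $y_0$, i.e. that $f$ does not "fold" a piece of the interior of $C$ onto the boundary or vice versa. This is exactly where hypothesis (3) must be used in an essential way, and the clean way to package it is the stratum-by-stratum induction on $k$ sketched above, with the degeneracy-preservation guaranteeing that each open stratum maps into the corresponding open stratum of the target. I expect the bookkeeping of coordinate permutations and the precise radii in the induction to be the only other fiddly part, but conceptually everything reduces to: contraction gives a local solution, and $d_C\circ f = d_C$ keeps that solution on the correct side of every wall of $C$.
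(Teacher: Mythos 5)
Your bi-Lipschitz estimate (from the $\frac12$-Lipschitz bound on $\phi$) and the resulting injectivity are correct, and you have located the crux accurately: proving that the fixed point $x(y)$ lands in $C$. But the resolution you propose is circular. You invoke hypothesis (3) to say $x(y)$ ``cannot jump to a more degenerate stratum''; however, (3) is a hypothesis about points of $U$, so to use $d_C(f(x(y)))=d_C(x(y))$ you must already know $x(y)\in U$ --- which is exactly what you are trying to prove. (Indeed, to even write $f(x(y))=y$ you need $x(y)\in U$, since outside $U$ only some Lipschitz extension of $\phi$, which does not satisfy (3), is available.) The appeal to ``the open mapping property in $\R^n$'' to glue the face information to the interior also fails at a boundary point $x_0$, because $f$ is defined only on $U\subset C$ and the paper's notion of smoothness on partial quadrants does not guarantee a smooth extension to an open neighborhood of $x_0$ in $\R^n$; the classical IFT/open-mapping machinery is therefore available only on $\operatorname{int}(C)$. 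Finally, you take for granted that injectivity plus invertible derivative yields $f^{-1}\in C^\infty$ ``once we know $V$ is open'' --- this too fails at boundary points of $C$ for the same reason, and the paper must prove smoothness of the inverse by a separate argument.

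The paper's proof avoids these traps by arguing in the opposite order to your induction: it handles interior targets first and approaches the boundary by limits, rather than handling the lowest-dimensional face first and filling in the interior. After establishing the bi-Lipschitz estimate, it proves openness of $V$ by a connectedness argument along straight segments inside $\operatorname{int}(C)$: starting from an interior $y_0=f(x_0)$ with $d_C(x_0)=0$ (such a point exists near any $y'\in V$ by continuity and (3)), it shows the set of parameters $\tau$ for which $(1-\tau)y_0+\tau y_1$ is attained is both open (classical IFT applies since the preimage is interior, which (3) enforces) and closed (a Cauchy-sequence argument with the bi-Lipschitz bound, (3) again guaranteeing the limit is interior). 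Boundary targets with $d_C(y)\ge 1$ are then obtained as limits of nearby interior targets, with the bi-Lipschitz bound making the preimages Cauchy. Throughout, (3) is applied only to points already known to lie in $U$, which removes the circularity. The smoothness of $f^{-1}$ at boundary points is then established separately, by showing the iterated tangent maps $T^k f$ are homeomorphisms and running an explicit one-sided difference-quotient argument for the inverse --- none of which is a consequence of the classical IFT.
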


The proof of Proposition \ref{hongkong} follows from three lemmata proved below.

\begin{lemma}\label{new_lemma_3.73}
With  the assumptions  of Proposition \ref{hongkong} , 
the image  $V=f(U)$ is a relatively open subset of  $C$ and $f\colon U\rightarrow V$ is an open map. Moreover, 
\begin{equation}\label{new_equ_56}
\dfrac{1}{2}\abs{x-x'}\leq \abs{f(x)-f(x')}\leq \dfrac{3}{2} \abs{x-x'}, \quad \text{$x, x'\in U$}.
\end{equation}
\end{lemma}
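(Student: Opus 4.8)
The statement to prove is Lemma \ref{new_lemma_3.73}: under the hypotheses of Proposition \ref{hongkong} (so $f\colon U\to C$ is smooth on a relatively open convex neighborhood $U$ of $0$ in a partial quadrant $C\subset\R^n$, with $f(0)=0$, $\abs{Df(x)-\mathbbm 1}\le 1/2$ on $U$, and $d_C(x)=d_C(f(x))$), the image $V=f(U)$ is relatively open in $C$, the map $f\colon U\to V$ is open, and the two-sided Lipschitz estimate \eqref{new_equ_56} holds.

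The plan is to start with the easy Lipschitz bounds, since those are pure calculus and do not use the degeneracy-index hypothesis. For $x,x'\in U$, convexity of $U$ lets me write $f(x)-f(x')=\int_0^1 Df(x'+t(x-x'))(x-x')\,dt$, so $f(x)-f(x')=(x-x')+\int_0^1[Df(x'+t(x-x'))-\mathbbm 1](x-x')\,dt$; taking norms and using $\abs{Df-\mathbbm 1}\le 1/2$ gives both inequalities in \eqref{new_equ_56} at once. In particular $f$ is injective on $U$, so $f\colon U\to V$ is a continuous bijection with a continuous (indeed Lipschitz) inverse onto its image.

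The substantive part is showing $V=f(U)$ is relatively open in $C$ and $f$ is an open map; equivalently, it suffices to show that for every $x_0\in U$ the image $f(W)$ of a small relatively open neighborhood $W\subset U$ of $x_0$ contains a relative neighborhood of $f(x_0)$ in $C$. After translating we may take $x_0=0$, $f(0)=0$, and we may also work in coordinates where $C=[0,\infty)^k\oplus\R^{n-k}$. The classical inverse function theorem applied to a smooth extension of $f$ across the boundary already gives that $f$ maps a full $\R^n$-neighborhood of $0$ diffeomorphically onto a full $\R^n$-neighborhood of $0$; the issue, as Figure \ref{fig:pict6} shows, is that $f(U\cap B)$ need not fill out $C\cap B'$. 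Here is where hypothesis (3) enters decisively: I would argue that the condition $d_C(f(x))=d_C(x)$ forces the diffeomorphism (the $\R^n$-local inverse of $f$, call it $\phi$) to respect the stratification of $C$ by degeneracy index near $0$, i.e. $\phi$ maps each coordinate face $\{x_{i}=0\}$ (for $i$ in the relevant index set at $0$) into the union of such faces, and hence maps the open face-complement $\{x_i>0 \text{ for all } i\}\cap B'$ into $C$. Concretely: given a point $c\in C$ near $0$ with $d_C(c)=j$, I take $x=\phi(c)$, observe $d_C(x)\ge$ something by the face-preservation and $d_C(x)=d_C(f(x))=d_C(c)$; combining these shows $x\in C\cap U$, so $c=f(x)\in V$. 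This yields that $V$ contains $C\cap B'$ for a small ball $B'$, i.e. $V$ is relatively open and $f$ is open.

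The main obstacle will be establishing the face-preservation step rigorously — i.e. that $d_C(f(x))=d_C(x)$ for all $x\in U$ (a \emph{codimension-by-codimension} statement about the image of the $\R^n$-diffeomorphism $\phi$) actually forces $\phi(C\cap B')\subset C$. The cleanest route I foresee is: for each subset $I\subset\{1,\dots,k\}$ consider the relatively open stratum $S_I=\{x\in C: x_i=0 \iff i\in I\}$; the hypothesis says $f(S_I\cap U)\subset\bigcup_{\#J\ge \#I}S_J$, but a continuity/connectedness argument near $0$ (the strata with $\#J>\#I$ are in the boundary of no open piece of $S_I$'s image under a local diffeomorphism unless $f$ is degenerate, which it is not since $Df$ is invertible) upgrades this to $f(S_I\cap U)\subset S_I$ locally, and in particular $f$ sends the top stratum $S_\emptyset\cap U$ into $S_\emptyset$ and each face into itself. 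Then openness of $f$ on each stratum (ordinary inverse function theorem within the affine subspace $\{x_i=0, i\in I\}$) assembles to openness of $f\colon U\to V$ and relative openness of $V$. I expect the bookkeeping of which strata can map where, and the connectedness argument pinning $f(S_I)$ to $S_I$ rather than a higher stratum, to be the only genuinely delicate point; everything else is the standard inverse function theorem plus the integral Lipschitz estimate.
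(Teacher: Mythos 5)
The Lipschitz estimate is handled the same way in the paper (write $f(x)-f(x')=(x-x')-\bigl(g(x)-g(x')\bigr)$ with $g=\mathbbm 1 - f$ and apply the integral mean value inequality using hypothesis (2)), so that part is fine.

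For the openness part, your plan as written has a genuine circularity that you do not flag as such, and I don't think it is merely ``delicate bookkeeping.'' You write: ``given a point $c\in C$ near $0$ with $d_C(c)=j$, I take $x=\phi(c)$, observe $d_C(x)\geq$ something by the face-preservation and $d_C(x)=d_C(f(x))=d_C(c)$; combining these shows $x\in C\cap U$.'' But $d_C(x)$ is only defined for $x\in C$, and the hypothesis $d_C(f(x))=d_C(x)$ is only given for $x\in U\subset C$, so both ingredients already presuppose the conclusion $x\in C$. The ``face-preservation'' of $\phi$ has the same problem: the hypothesis gives you that $f\colon U\to C$ respects the stratification, but $\phi$ is a priori only the inverse of some $\R^n$-extension $\wt f$, and you have no control over $\wt f$ (hence over $\phi$) away from the image $f(U)$. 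Pinning down where $\phi$ sends a face of $C$ requires knowing $\phi$ already lands in $C$, which is the claim. A secondary issue is that the stratum-by-stratum IFT argument gives openness of $f|_{S_I\cap U}$ at points \emph{of} $S_I\cap U$, but not at the corner $0\notin S_I$ (for $I\neq I_0$), so it does not directly produce a full relative neighborhood of $0$ in $C$ from the images of the strata.

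The paper resolves exactly this point by a different connectedness argument that never needs an extension of $f$. Given $y'=f(x')\in V$, one first treats interior target points $y_1$ with $d_C(y_1)=0$: connect $y_1$ to a nearby point $y_0=f(x_0)$ with $d_C(x_0)=0$ by a straight segment $y_\tau$, which stays in the interior by convexity, and let $\tau^*=\sup\{\tau:y_\tau\in V\}$. The Lipschitz bound \eqref{new_equ_56} forces the preimages $x_n=f^{-1}(y_{\tau_n})$ to be Cauchy and to stay in a compact subset of $U$, so $x^*=\lim x_n$ exists, $f(x^*)=y_{\tau^*}$, and now — this is precisely where hypothesis (3) is used, and only here — $d_C(x^*)=d_C(y_{\tau^*})=0$, so $x^*$ is an \emph{interior} point, the classical inverse function theorem applies at $x^*$, and one contradicts $\tau^*<1$. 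Boundary points of $C$ are then obtained as limits of interior points, again using \eqref{new_equ_56} to control the preimages. In short, the paper's use of the degeneracy hypothesis is pointwise (to certify that one specific preimage is interior), whereas your plan uses it globally to steer $\phi$, which is not licensed. Your remaining structure — extension, stratification, $\abs{Df-\mathbbm 1}\le 1/2$ forcing faces to map to themselves — is appealing, but to close the gap you would end up reproducing the segment-tracking argument stratum by stratum, which is strictly more work than the paper's single connectedness step in the top stratum followed by a limiting argument for the boundary.
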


\begin{proof}

Abbreviating $g(x)=x-f(x)$ and $G(x)= {\mathbbm 1}-Df(x)$, and using  the  convexity of the set $U$, we obtain the identity   
$$g(x')=g(x)+\biggl( \int_0^1G\bigl(\tau x'+(1-\tau )x\bigr)\ d\tau\biggr)\cdot (x'-x)$$
for $x, x'\in U$, from which the desired estimate 
\eqref{new_equ_56} follows, in view of  property (2).
We conclude, in particular,  that the map  $f\colon U\to V$ is injective.

In order to show that the image $V:=f(U)$ is a relatively open subset of $C$, we take a point $y'\in V$. Then $f(x')=y'$ for a unique $x'\in U$ and since 
$U$ is a relatively open subset of $C$, there exists $r>0$ such that $B(x', 3r)\cap C\subset U$. Here  we denoted by  $B(x', 3r)$  the open ball in $\R^n$ centered at $x'$ and having radius $3r$. We claim that $B(y', r)\cap C\subset V$.  In order to prove our claim, we first show that $\{y\in B(x', r)\cap C\, \vert \, d_C(y)=0\}\subset V$. To see this, 
we  take  a point $x_0\in B(x', 3r)\cap C$ such that $d_C(x_0)=0$ and let $y_0=f(x_0)$. By property (3), $d_C(y_0)=0$. Next  we take an arbitrary point $y_1\in B(y', r)\cap C$,  also satisfying $d_C(y_1)=0$,  and consider the points $y_\tau=(1-\tau )y_0+\tau y_1$ for $0\leq \tau \leq 1$.  Abbreviating  $\Sigma=\{\tau \in [0,1]\, \vert \, y_\tau\in V\}$ and $\tau^*=\sup \Sigma$, we assume that $\tau^*<1$.   
We note that $\Sigma$ is non-empty since $0\in \Sigma$ and $d_C(y_{\tau^*})=0$. Then we  choose a sequence $(\tau_n)\subset \Sigma$ such that $\tau_n\to \tau'$ and the  corresponding sequence of points $(y_{\tau_n})$  belonging  to $B(y', r)\cap C$ satisfying $y_{\tau_n}\to y_{\tau^*}$. Since $y_{\tau_n}\in V$, we find points $x_n\in U$ such that $f(x_n)=y_{\tau_n}$ and $d_C(x_n)=d_C(y_{\tau_n})=0$.
By \eqref{new_equ_56},  
$$\abs{x_n-x'}\leq 2\abs{f(x_n)-f(x')}=2\abs{y_{\tau_n}-y'}<2r$$
and 
$$\abs{x_n-x_m}\leq 2\abs{y_{\tau_n}-y_{\tau_m}},$$
which show that $(x_n)$ is a Cauchy sequence belonging to $B(x', 2r).$
Hence $(x_n)$ converges to some point $x^*$  belonging to $\ov{B}(x', 2r).$ By continuity of $f$,  $f(x^*)=y_{\tau^*}$, and the point $x^*$ belongs to the interior of $C$ since, by property (3),  $d_C(x^*)=d_C(y_{\tau^*})=0$.  Now  the classical  inverse function theorem implies that there are  open neighborhoods $W$ and $W'$ of $x^*$ and $y_{\tau^*}=f(x^*)$ both contained in the interior of $C$ such that the map $f\colon W\to W'$ has a continuous inverse $f^{-1}\colon W'\to W$. In particular, if  $\tau>0$ is small, then $y_{\tau^*+\tau}\in V$,  contradicting $\tau^*<1$. Summing up, we have proved our claim that 
$$\{y\in B(y', r)\cap C\, \vert \, d_C(y)=0\}\subset V.$$
Next we take any point $y\in B(y', r)\cap C$ satisfying $d_C(y)\geq 1$. We find sequences $(y_n)\subset B(y', r)\cap C$ and 
$(x_n)=(f^{-1}(y_n))$ satisfying  $d_C(y_n)=d_C(x_n)=0$ and $\abs{y_n-y}\to 0$. 
From  \eqref{new_equ_56}, we  have 
$\abs{x_n-x'}\leq 2\abs{y_n-y'}<2r$ and $\abs{x_n-x_m}\leq 2\abs{y_n-y_m}$
from which we deduce the convergence of the sequence $(x_n)$  to some point $x\in B(x', 3 r)\cap C.$  Hence  $f(x)=y$, implying that $y\in V$. This shows that  $ B(y', r)\cap C\subset V$ and that $V$ is a relatively open subset of $C$.

Finally, to see that $f\colon U\to V$  maps relatively open subsets onto relatively open subsets,  we take an open subset $U'$ of $U$. Since $U'$ can be written as a union of relatively open convex subsets of $C$,   employing the previous arguments we conclude that $V'=f(U')$ is an open subset of $V$. The proof of
 Lemma \ref{new_lemma_3.73} is complete.

\end{proof}

From the above lemma we obtain immediately the following corollary.
 \begin{corollary}\label{f_homeomrphism}
With the assumptions  of Proposition \ref{hongkong},  the map $f\colon U\to V$ is a smooth homeomorphism between the relatively open subsets $U$ and $V$ of $C$.
 \end{corollary}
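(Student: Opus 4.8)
The plan is to combine the three lemmata already established: Lemma \ref{new_lemma_3.73} gives that $V = f(U)$ is relatively open in $C$ and that $f \colon U \to V$ is an open map satisfying the two-sided Lipschitz estimate \eqref{new_equ_56}. The latter estimate immediately shows $f$ is injective, hence bijective onto $V$, and the open mapping property shows that $f^{-1} \colon V \to U$ is continuous. Combined with the fact that $f$ is smooth by hypothesis, this already yields that $f \colon U \to V$ is a smooth homeomorphism, which is exactly the assertion of Corollary \ref{f_homeomrphism}.

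Concretely, the proof I would write is short. First I would note that \eqref{new_equ_56} gives, for $x, x' \in U$, the lower bound $\tfrac12 \abs{x - x'} \leq \abs{f(x) - f(x')}$, so that $f(x) = f(x')$ forces $x = x'$; thus $f$ is injective. Since by definition $V = f(U)$, the map $f \colon U \to V$ is a bijection. Its inverse $f^{-1} \colon V \to U$ is continuous: if $U' \subset U$ is relatively open in $C$, then, by the open mapping statement in Lemma \ref{new_lemma_3.73}, $f(U') = (f^{-1})^{-1}(U')$ is relatively open in $V$, so $f^{-1}$ is continuous. Finally $f$ itself is continuous (indeed smooth) by the hypothesis of Proposition \ref{hongkong}. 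Hence $f \colon U \to V$ is a homeomorphism onto the relatively open subset $V$ of $C$, and it is smooth as a map in the sense defined at the beginning of the subsection.

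I do not anticipate any real obstacle here; the statement is a packaging of Lemma \ref{new_lemma_3.73} and its open-map conclusion. The only mild point of care is bookkeeping about what ``smooth'' and ``relatively open'' mean for maps on partial quadrants in $\R^n$, but those notions were fixed in the \textbf{Class $C^1$} definition preceding Proposition \ref{hongkong}, so nothing new is needed. (Smoothness of the inverse $f^{-1}$ is a separate, genuinely harder matter handled in the subsequent lemmata of the section; the present corollary asserts only that $f$ is a \emph{smooth homeomorphism}, i.e. a smooth bijection with continuous inverse, which is all the above argument delivers.)
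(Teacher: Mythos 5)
Your argument is exactly what the paper has in mind: the paper states the corollary follows "immediately" from Lemma \ref{new_lemma_3.73}, and your proof simply spells out the elementary deduction (injectivity from the two-sided estimate \eqref{new_equ_56}, continuity of the inverse from the open-map property, smoothness of $f$ from the hypotheses). You are also right to flag that "smooth homeomorphism" here means smooth bijection with continuous inverse, with smoothness of $f^{-1}$ deferred to the subsequent lemmata.
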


We would like to point out that we cannot employ the usual implicit function theorem (it only applies at interior points). Therefore,  we must provide an argument  for the smoothness of the inverse map  $f^{-1}$. One could try to avoid work,  by first showing that $f$ can be extended to a smooth map defined on an open neighborhood
 of $0$ in ${\mathbb R}^n$. However,  our notion of smoothness does not stipulate that $f$ is near every point the restriction
 of a smooth map defined on an open subset of ${\mathbb R}^n$. This would complicate 
 the construction of a smooth extension.

 \begin{lemma}
With  the assumptions of Proposition \ref{hongkong},  the iterated tangent map $T^kf:T^kU\rightarrow T^kV$ is 
 a smooth homeomorphism.
 \end{lemma}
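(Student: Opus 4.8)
The statement to prove is that the iterated tangent map $T^kf\colon T^kU\to T^kV$ is a smooth homeomorphism, where $f\colon U\to V$ satisfies the hypotheses of Proposition \ref{hongkong}. The plan is to run an induction on $k$, using the case $k=0$ (Corollary \ref{f_homeomrphism}, which already gives that $f\colon U\to V$ is a smooth homeomorphism) as the base and reducing the inductive step to a \emph{single} application of Proposition \ref{hongkong} to the map $Tf$.

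\textbf{The inductive step.} First I would verify that $Tf\colon TU\to TV$ again satisfies the three hypotheses of Proposition \ref{hongkong}, relative to the partial quadrant $TC=C\oplus\R^n$ in $\R^{2n}$. Condition (1) is clear: $Tf(0,0)=(f(0),Df(0)\cdot 0)=(0,0)$. Condition (2) requires $\abs{D(Tf)(x,h)-\mathbbm{1}_{\R^{2n}}}\leq 1/2$; since in block form $D(Tf)(x,h)=\begin{bmatrix}Df(x)&0\\ D^2f(x)[h,\cdot]&Df(x)\end{bmatrix}$, we do \emph{not} immediately get a bound by $1/2$ because of the off-diagonal second-derivative block. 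This is the main obstacle: one must shrink the neighborhood. The right fix is to observe that $TU$ is relatively open in $TC$ and contains a relatively open convex neighborhood of $(0,0)$ of the form $U'\times B_\rho(0)$ with $U'\subset U$ convex; on such a set, using continuity of $D^2f$ and $D^2f(0)$ finite together with $\abs{Df(x)-\mathbbm 1}\le 1/2$, we can pick $U'$ and $\rho$ small enough that $\abs{D(Tf)(x,h)-\mathbbm 1}\le 1/2$ holds on $U'\times B_\rho(0)$. (Here I would note that Proposition \ref{hongkong} is a local statement: it suffices to establish the conclusion on a relatively open neighborhood of $0$, and the germ of $T^kf$ at $0$ determines the germ of $T^{k-1}f$ at $0$ and hence all the claimed structure.) Condition (3), $d_{TC}(x,h)=d_{TC}(Tf(x,h))$, follows from condition (3) for $f$ together with the product structure $d_{TC}(x,h)=d_C(x)+d_{\R^n}(h)=d_C(x)$ (since $\R^n$ is a full space, $d_{\R^n}\equiv 0$), combined with $d_C(f(x))=d_C(x)$; thus $d_{TC}(Tf(x,h))=d_C(f(x))=d_C(x)=d_{TC}(x,h)$.

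\textbf{Conclusion of the induction.} Granting the three hypotheses for $Tf$ on a suitable neighborhood, Proposition \ref{hongkong} (equivalently Corollary \ref{f_homeomrphism} applied to $Tf$) shows that $Tf$ maps a relatively open neighborhood of $(0,0)$ in $TC$ homeomorphically onto a relatively open neighborhood of $(0,0)$ in $TC$, with smooth inverse. Now apply the inductive hypothesis to the map $Tf$ in place of $f$: this yields that $T^{k-1}(Tf)=T^kf$ is a smooth homeomorphism. It remains only to record that the iterated tangent of $Tf$ at level $k-1$ is literally $T^kf$ by definition of the tangent functor, and that the various neighborhoods produced at each stage nest consistently, so the final homeomorphism is onto a relatively open subset of $T^kC$ as required. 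I expect the bookkeeping about which neighborhood one works on — keeping track that all statements are germs at $0$ and that shrinking at one level is harmless — to be the only genuinely fussy point; the analytic content is entirely contained in Proposition \ref{hongkong} and its corollary, which are invoked once per inductive step.
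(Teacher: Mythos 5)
Your inductive step fails at condition (2). You need $\abs{D(Tf)(x,h)-\mathbbm{1}}\leq 1/2$ on all of $TU=U\oplus\R^n$, but the off-diagonal block $D^2f(x)[h,\cdot]$ is linear in $h$, and the fibre variable $h$ ranges over the \emph{unbounded} set $\R^n$; so for any non-affine $f$ this bound is violated as soon as $\abs{h}$ is large. You spotted this and propose to shrink to $U'\times B_\rho(0)$, but this does not prove the stated lemma: the assertion is that $T^kf\colon T^kU\to T^kV$ is a homeomorphism on the \emph{full} iterated tangent, and that global statement is what is needed downstream (the next lemma, Lemma~\ref{addd}, is applied to $T^kf$ on $T^kU$ to show $(T^kf)^{-1}$ is $C^1$, i.e.\ $g=f^{-1}$ is $C^{k+1}$). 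Restricting to a shrinking germ at $0$ cannot deliver this, and because the shrinking must be repeated at every level $k$, even the germ bookkeeping is not the ``only fussy point'' — it is the place where the argument genuinely loses control.

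The paper takes a different and simpler route that avoids re-invoking Proposition~\ref{hongkong} altogether: it writes down an explicit inverse. Since $T^kf$ is a smooth map whose differential has a block lower-triangular structure with $Df(x)$ on the diagonal and $Df(x)$ is invertible for $x\in U$, the inverse of $T^kf$ can be exhibited by hand. For $k=1$ one sets
$$\Phi_1(y,\ell)=\bigl(f^{-1}(y),\ [Df(f^{-1}(y))]^{-1}\ell\bigr),$$
which is continuous on all of $TV$ because $f^{-1}\colon V\to U$ is continuous (Corollary~\ref{f_homeomrphism}) and $Df$ is continuous with continuous inverse on $U$; a one-line check gives $\Phi_1\circ Tf=\mathbbm{1}$ and $Tf\circ\Phi_1=\mathbbm{1}$. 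The induction then sets
$$\Phi_{k+1}(y,\ell)=\bigl((T^kf)^{-1}(y),\ [D(T^kf)((T^kf)^{-1}(y))]^{-1}\ell\bigr)$$
and checks it is the continuous inverse of $T^{k+1}f$, using the inductive hypothesis that $T^kf$ is already a homeomorphism. No contraction hypothesis ever needs to be verified for $Tf$, no shrinking occurs, and the statement stays global. The moral: the only input needed from the earlier lemmas is that $f$ is a homeomorphism with $Df$ pointwise invertible — the stronger quantitative hypotheses of Proposition~\ref{hongkong} need not (and cannot) be propagated to the tangent map.
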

 
 \begin{proof}

The set $T^kC$ is a partial quadrant in $T^k\R^n$ and the sets $T^kU$ and $T^kV$ are relatively open subsets of $T^kC$. 
Then the iterated tangent map $T^kf\colon T^kU\rightarrow T^kV$ is smooth since the map $f\colon U\to V$ is smooth.  Hence we only have to show that $T^kf$ has an inverse 
$(T^kf)^{-1}\colon T^kV\to T^kU$ which is continuous. 
 In order to prove this we proceed by induction starting with $k=1$.  
 We introduce the map $\Phi_1\colon TV\to TU$, defined by 
 $$\Phi_1(y,l)=\bigl( f^{-1}(y),[ Df(f^{-1}(y)) ]^{-1}l).$$
 The map $\Phi_1$ is continuous since,  by Corollary \ref{f_homeomrphism},  the map $f\colon U\to V$ is a homeomorphism.
Moreover, 
 $$
 \Phi_1\circ Tf (x,h) = \bigl(f^{-1}(f(x)),[Df(f^{-1}(f(x))) ]^{-1}\circ Df(x)h\bigr)=(x,h), 
 $$
and similarly $Tf\circ \Phi_1=\mathbbm{1}$.  Hence $\Phi_1$ is an inverse of the tangent map $Tf$.  This  together with the continuity of $Tf$ and $\Phi_1$ show that $Tf$ is a homeomorphism, as claimed.

Now we assume that result holds for $k\geq 1$ and we show that $T^{k+1}f\colon T^{k+1}U\to T^{k+1}V$ is a homeomorphism.  Recalling  that $T^{k+1}U=T^kU\oplus T^k\R^n$, we define the map 
\begin{gather*}
\Phi_{k+1}\colon T^kV\oplus T^k\R^n \to T^kU\oplus T^k\R^n, \\
\Phi_{k+1}(y, l)=\bigl((T^kf)^{-1}(y), [D(T^kf)((T^kf)^{-1}(y))]^{-1}l\bigr).
\end{gather*}
By the inductive assumption, the map $T^kf\colon T^kV\to T^kU$ is a homeomorphism and hence $\Phi_{k+1}$ is well-defined and continuous. 
Moreover, 
\begin{equation*}
\begin{split}
\Phi_{k+1}&\circ T^{k+1}f (x, h)=\Phi_{k+1}\bigl(T^kf(x), (D(T^kf))(x)h\bigr)\\
&=\bigl((T^kf)^{-1}\bigl(T^kf(x)\bigl),  [D(T^kf)((T^kf)^{-1}(T^kf(x)   ))]^{-1}(D(T^kf))(x)h\bigr)\\
&=\bigl(x,  [D(T^kf)(x)]^{-1}(D(T^kf))(x)h\bigr)=\bigl(x, h),
\end{split}
\end{equation*} 
and similarly $ T^{k+1}f\circ \Phi_{k+1}=\mathbbm{1}$. Hence the map $\Phi_{k+1}$ is a continuous inverse of the iterated tangent map $T^{k+1}f$. This completes the inductive step and the proof of the lemma.

\end{proof}

\begin{lemma}\label{addd}
Let  $U$ be  a relatively open subset of a partial quadrant $C\subset {\mathbb R}^n$ and $f\colon U\rightarrow C$ a smooth map satisfying the assumptions of Theorem \ref{hongkong}.  We assume that the image $V=f(U)$ is a relatively open subset of $C$ and $f\colon U\rightarrow V$ a homeomorphism such that $Df(x)$ is invertible at every point $x\in U$. 
Then the inverse map $g=f^{-1}$ is of class $C^1$
 and its derivative at the point $y\in V$ is given by
 $$
 Dg(y) =[Df(g(y))]^{-1}.
 $$
 \end{lemma}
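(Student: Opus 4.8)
The statement is the classical fact that the inverse of a bijective $C^1$ map with everywhere-invertible derivative is again $C^1$, but adapted to the setting of partial quadrants. The plan is to argue exactly as in the standard finite-dimensional proof, taking care that the difference quotients only involve increments $h$ with $y+h\in V$, and using the Lipschitz estimate \eqref{new_equ_56} from Lemma \ref{new_lemma_3.73} to control the increment of $g$ by the increment of its argument. Throughout, $g=f^{-1}\colon V\to U$ is already known to be a homeomorphism by Corollary \ref{f_homeomrphism} (or by hypothesis in the present lemma), and $Df(x)$ is invertible at every $x\in U$.

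First I would fix $y\in V$, write $x=g(y)$, and for $y+h\in V$ set $x+k=g(y+h)$, so that $k=g(y+h)-g(y)$ and $h=f(x+k)-f(x)$. By the lower bound in \eqref{new_equ_56} we have $\abs{k}\le 2\abs{h}$, so $k\to 0$ as $h\to 0$ with $y+h\in V$; in particular the relevant increments $x+k$ stay in $U$ and $\abs{k}$ is comparable to $\abs{h}$. Since $f$ is $C^1$, we can write $h=f(x+k)-f(x)=Df(x)k+R(k)$ with $\abs{R(k)}/\abs{k}\to 0$ as $k\to 0$. Applying $[Df(x)]^{-1}$ and rearranging gives
$$
g(y+h)-g(y)-[Df(g(y))]^{-1}h = k-[Df(x)]^{-1}h = -[Df(x)]^{-1}R(k).
$$
Dividing by $\abs{h}$ and using $\abs{k}\le 2\abs{h}$,
$$
\frac{\abs{g(y+h)-g(y)-[Df(g(y))]^{-1}h}}{\abs{h}}
\le \norm{[Df(x)]^{-1}}\cdot\frac{\abs{R(k)}}{\abs{k}}\cdot\frac{\abs{k}}{\abs{h}}
\le 2\,\norm{[Df(x)]^{-1}}\cdot\frac{\abs{R(k)}}{\abs{k}},
$$
which tends to $0$ as $h\to 0$ with $y+h\in V$. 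Hence $g$ is differentiable at $y$ with $Dg(y)=[Df(g(y))]^{-1}$.

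It remains to check that $Dg\colon V\to{\mathscr L}(\R^n,\R^n)$ is continuous, which is required by the definition of class $C^1$ for maps on partial quadrants. This follows by composing three continuous maps: $g\colon V\to U$ is continuous (homeomorphism), $x\mapsto Df(x)$ is continuous on $U$ because $f$ is $C^1$, and inversion $A\mapsto A^{-1}$ is continuous on the open set of invertible matrices in ${\mathscr L}(\R^n,\R^n)$; since $Df(x)$ is invertible for every $x\in U$, the composition $y\mapsto [Df(g(y))]^{-1}$ is well-defined and continuous. Therefore $g=f^{-1}$ is of class $C^1$ with the asserted derivative formula. I do not anticipate a genuine obstacle here; the only point needing slight attention is that all difference quotients must be taken along admissible increments (those staying in the partial quadrant), and this is handled cleanly by the two-sided Lipschitz estimate \eqref{new_equ_56}, which replaces the role played by an open-set neighbourhood in the classical argument.
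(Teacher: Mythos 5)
Your proof is correct and takes essentially the same route as the paper's: identify $k=g(y+h)-g(y)$, use the two-sided estimate \eqref{new_equ_56} to compare $\abs{k}$ and $\abs{h}$, rewrite the difference quotient in terms of the remainder from the differentiability of $f$ at $x=g(y)$, and obtain continuity of $Dg$ by composing $g$, $Df$, and matrix inversion. The only cosmetic difference is that you explicitly introduce the remainder $R(k)$ and factor it out, whereas the paper leaves $f(x_0+\delta)-f(x_0)-Df(x_0)\delta$ unnamed inside the estimate.
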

 
\begin{proof}

By assumption,  the map $f\colon U\to V$ is a homeomorphism. This implies that $g\colon V\to U$ is also a homeomorphism and the map 
$V\to {\mathscr L}(\R^n)$, defined by 
$$y\mapsto [Df(g(y))]^{-1},$$
is continuous.  Next we take  $y_0\in V$ and $h\in \R^n$ satisfying  $y_0+h\in V$. 
Then there are unique points  $x_0$ and $x_0+\delta\in U$ such that $f(x_0)=y_0$ and $f(x_0+\delta )=y_0+h.$
Recalling the abbreviation $g=f^{-1}$ we note that 
$g(y_0)=x_0$, $g(y_0+h)-g(y_0)=\delta$ and $f(x_0+\delta)-f(x_0)=h$ and compute,
\begin{equation*}
\begin{split}
&\dfrac{1}{\abs{h}}
\abs{g(y_0+h)-g(y_0)- Df(x_0)^{-1}h}\\
&\quad= \dfrac{1}{\abs{h}}\abs{\delta - Df(x_0)^{-1}(f(x_0+\delta )-f(x_0))}\\
&\quad =\dfrac{\abs{\delta}}{\abs{h}}\cdot 
\dfrac{1}{\abs{\delta}}\cdot  \abs{Df(x_0)^{-1}\bigl[ f(x_0+\delta)-f(x_0)-Df(x_0)\delta\bigr] }\\
\end{split}
\end{equation*}

From \eqref{new_equ_56} in Lemma \ref{new_lemma_3.73} we obtain 
$\frac{1}{2}\abs{\delta}\leq \abs{h}\leq \frac{3}{2}\abs{\delta}$ so that $\abs{h}\to 0$ if and only if $\abs{\delta}\to 0$. 
The limits vanish since $f$ is differentiable at $x_0$. The proof of  the lemma is finished.

\end{proof}
 
Now we are in the position to prove Proposition \ref{hongkong}.

\begin{proof}[Proof of Proposition \ref{hongkong}]
Under the hypotheses of the proposition the previous discussion shows for every $k$,  
that $T^kf:T^kU\rightarrow T^kV$ satisfies the hypotheses of Lemma \ref{addd} for a suitable choice 
of data, i.e.  taking $T^kC$ as partial quadrant in $T^k{\mathbb R}^n$. Hence we conclude that $(T^kf)^{-1}$ is $C^1$,
which precisely means that $T^kg$ is $C^1$.  In  other words,  $g$ is of class $C^{1+k}$. Since $k$ is arbitrary we conclude that $g$ is $C^\infty$.
\end{proof}

\subsubsection{An Implicit Function Theorem in Partial Quadrants}\label{implicit_finite_partial_quadrants}

We shall prove a version for the classical implicit function theorem for  maps defined on  partial quadrants. 

\begin{theorem}\label{help-you}

We assume that $U$ is relatively open neighborhood of $0$ in the  partial quadrant $C=[0,\infty)^k\oplus {\mathbb R}^{n-k}$ in $\R^n$, and consider a map 
$f\colon U\rightarrow {\mathbb R}^N$ of class $C^j$, $j\geq 1$,  satisfying $f(0)=0$. Moreover, we  assume that  
$Df(0)\colon \R^n\to \R^N $ is surjective  and the kernel $K:=\ker Df(0)$ is in  good position to $C$, and let  
$Y$ be a good  complement of $K$ in $\R^n$, so that $\R^n=K\oplus Y$. 

Then  there exist an open neighborhood $V$ of $0$ in the partial quadrant $K\cap C$, and a map  
$\tau\colon V\rightarrow Y$ of class $C^j$,  and  positive constants $\varepsilon, \sigma$ having the following properties.
\begin{itemize}
\item[{\em (1)}] $\tau(0)=0$, $D\tau(0)=0$, and $k+\tau(k)\in U$ if $k\in V$.
\item[{\em (2)}] If $k\in V$  satisfies  $\abs{k}\leq \varepsilon$, then $\abs{\tau(k)}\leq \sigma$.
\item[{\em (3)}] If $f(k+y)=0$ for $k\in V$ satisfying 
$\abs{k}\leq \varepsilon$  and $y\in Y$ 
satisfying $\abs{y}\leq\sigma$, then $y=\tau (k)$.
\end{itemize}

\end{theorem}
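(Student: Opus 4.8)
The statement is a finite-dimensional implicit function theorem near a boundary/corner point, and the natural strategy is to reduce it to the \emph{quadrant inverse function theorem} (Theorem \ref{QIFT}) combined with the classical implicit function theorem at interior points. First I would set up coordinates: after applying the linear isomorphism furnished by Definition \ref{mission1}, we may assume $C=[0,\infty)^k\oplus\R^{n-k}$, $\R^n=K\oplus Y$ with $Y$ a good complement, and $K\cap C$ is a partial quadrant in $K$ by Proposition \ref{pretzel}. Since $Df(0)$ is surjective with kernel $K$, its restriction $Df(0)|Y\colon Y\to\R^N$ is a linear isomorphism; composing $f$ with $(Df(0)|Y)^{-1}$ on the target we may assume $N=\dim Y$ and $Df(0)|Y=\mathbbm 1_Y$, while $Df(0)|K=0$.

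The second step is to produce $\tau$. Write points of $U$ as $k+y$ with $k\in K$, $y\in Y$, and consider the $C^j$-map $G(k,y)=f(k+y)$. We have $G(0,0)=0$ and $D_2G(0,0)=Df(0)|Y=\mathbbm 1_Y$ is invertible. Here the subtlety is that $k$ ranges over the partial quadrant $K\cap C$, not an open set, so the classical implicit function theorem does not literally apply with $k$ as parameter; however, it \emph{does} apply with respect to the $y$-variable at any fixed interior value of $k$, and more importantly it applies at $(0,0)$ if we first extend. The cleanest route is: the map $(k,y)\mapsto(k,G(k,y))$ from a neighborhood of $0$ in $(K\cap C)\oplus Y$ into $K\oplus\R^N$ has derivative at $0$ equal to $(h,\ell)\mapsto(h,\ell)$ (using $D_1G(0,0)=Df(0)|K=0$, $D_2G(0,0)=\mathbbm 1$), hence is the identity on the nose at the origin. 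Regarding $K\cap C$ as a partial quadrant $\wt C$ in $\R^{\dim K}$ and $(K\cap C)\oplus Y$ as a partial quadrant in $\R^{\dim K}\oplus\R^N=\R^n$ — note $d_{\wt C\oplus Y}(k,y)=d_{\wt C}(k)$ so the degeneracy index is carried along trivially by this map — we are in a position to invoke Theorem \ref{QIFT}, provided we check hypothesis (2) there, namely that $(k,y)\mapsto(k,G(k,y))$ preserves the degeneracy index. This holds because the first component is the projection onto $k$, so $d(k,G(k,y))\le d_{\wt C}(k)=d(k,y)$, and since $Df(0)$ restricted to the relevant quadrant is the identity the reverse inequality holds near $0$ as well (this is exactly the kind of argument in Lemma \ref{big-pretzel_1a}). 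Applying Theorem \ref{QIFT} we get a local smooth inverse, and reading off its second component at the slice $\{(k,0)\}$ defines $\tau(k)$ of class $C^j$, with $G(k,\tau(k))=0$, i.e. $f(k+\tau(k))=0$; the chain rule at $0$ gives $\tau(0)=0$ and, differentiating $0=D_1G(0,0)+D_2G(0,0)D\tau(0)=D\tau(0)$, also $D\tau(0)=0$. This proves (1).

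The third step is the quantitative uniqueness, items (2) and (3). Item (2) is immediate from continuity of $\tau$ and $\tau(0)=0$: choose $\varepsilon>0$ so small that $|k|\le\varepsilon$, $k\in V$ forces $|\tau(k)|\le\sigma$. For (3), uniqueness follows from the local injectivity provided by the inverse function theorem: if $f(k+y)=0$ with $(k,y)$ small, then $(k,G(k,y))=(k,0)=(k,G(k,\tau(k)))$, and since the map $(k,y)\mapsto(k,G(k,y))$ is a local $C^j$-diffeomorphism onto its image (in particular injective) on a neighborhood of $0$, shrinking $\varepsilon,\sigma$ so that both $(k,y)$ and $(k,\tau(k))$ lie in this neighborhood yields $y=\tau(k)$.

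\textbf{Main obstacle.} The one genuinely nontrivial point is the verification of hypothesis (2) of Theorem \ref{QIFT} — that the auxiliary map preserves $d_C$ near the origin — together with making sure the ``good position'' hypothesis is actually used correctly so that $K\cap C$ is a partial quadrant and the splitting behaves well; everything else is bookkeeping with the classical and quadrant inverse function theorems. In fact, because $K=\ker Df(0)$ and $Y$ is a good complement, the map in question is, to first order, the identity respecting the quadrant structure, so $d_C$-preservation is plausible but requires the same care as in Lemma \ref{big-pretzel_1a}: one uses $\tau(0)=0$, $D\tau(0)=0$, and the good-position constant to control the sign of the first $k$ coordinates of $k+\tau(k)$ versus those of $k$. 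I would isolate that computation as the heart of the argument and treat the rest as routine application of Theorem \ref{QIFT} and the standard implicit function theorem.
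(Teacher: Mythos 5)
Your reduction to Theorem~\ref{QIFT} has a genuine domain gap, and it is more fundamental than the degeneracy-index check you single out as the main obstacle. You propose to apply Theorem~\ref{QIFT} to the auxiliary map $\Phi(k,y)=(k,G(k,y))$, $G(k,y)=f(k+y)$, on a relatively open neighborhood of $0$ in the partial quadrant $(K\cap C)\oplus Y$. But $f$ --- hence $G$ --- is only defined where $k+y\in U\subset C$, and for $k$ on the boundary of $K\cap C$ the set of $y\in Y$ with $k+y\in C$ is a proper closed cone, not a neighborhood of $0$. Concretely, take $C=[0,\infty)\oplus\R$ in $\R^2$, $K=\{(t,t)\,\vert\,t\in\R\}$, $Y=\{(s,0)\,\vert\,s\in\R\}$; one checks $Y$ is a good complement, yet at $k=0$ the only $y\in Y$ with $k+y\in C$ are those with $s\geq 0$. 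So $\Phi$ is not defined on any relatively open subset of $(K\cap C)\oplus Y$ containing $0$, and the hypotheses of Theorem~\ref{QIFT} are simply not met. If instead you pull back $C$ by the addition map and use $A^{-1}(C)=\{(k,y)\,\vert\,k+y\in C\}$ as the quadrant, the domain issue disappears, but you lose the product structure on which your degeneracy-index argument rests; you would then need to prove both that $\Phi$ preserves $A^{-1}(C)$ (not automatic: $G(k,y)=y+O(\abs{(k,y)}^2)$, and the higher-order terms can push $k+G(k,y)$ out of $C$) and that it preserves $d_{A^{-1}(C)}$ --- and at that point you are essentially re-proving the theorem rather than quoting a finished inverse function theorem. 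Note also the paper's own caution, in the remark just before Lemma~\ref{addd}, against trying to fix this by extending $f$ to an open set in $\R^n$: the definition of $C^j$ on a partial quadrant does not presuppose such an extension.

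The paper's actual proof avoids the issue entirely by never invoking an inverse function theorem. After the same normalization ($Df(0)\vert_Y=\mathbbm 1_Y$, so $f(k,y)=y-B(k,y)$ with $B(0,0)=0$, $DB(0,0)=0$), it applies the Banach fixed-point theorem on the family of closed balls $\ov{B}_Y(\varepsilon\abs{k})$ with $k\in B_K(a)\cap C$; good position guarantees these balls translate into $C$, so no openness is needed, and the estimates in Lemmata~\ref{LEMMA1}--\ref{LEMMA2} directly produce $\tau$, the bound $\abs{\tau(k)}\leq\varepsilon\abs{k}/2$, and the quantitative uniqueness of item~(3). Higher regularity is then obtained by re-running the fixed-point argument on the differentiated equation. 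Your computations of $\tau(0)=0$, $D\tau(0)=0$ and your uniqueness argument would be fine once the domain problem were repaired, but repairing it seems to require roughly the same work as the paper's direct contraction argument.
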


The proof will follow from several lemmata, where we shall us the notations 
$$x=(k, y)=k+y\in K\oplus Y$$
for $k\in K$ and $y\in Y$  interchangeably.

The restriction $Df(0)\vert_Y\colon Y\rightarrow {\mathbb R}^N$ is an isomorphism and we abbreviate its inverse by   $L=\bigl[ Df(0)\vert_Y\bigr]^{-1}\colon Y\to \R^N$. Instead of studying the  solutions $f(y)=0$ we can as well study  the solutions of $\wt{f}(x)=0$, where $\wt{f}$ is the composition $\wt{f}=L\circ f$.It satisfies $D\wt{f}(0)k=0$ if $k\in K$ and 
$D\wt{f}(0)y=y$ if $y\in Y$ .  In abuse of notation we shall in the following  denote  the composition  $\wt{f}=L\circ f$ by the old letter $f$ again. 
We  may therefore assume that  
$$
f\colon U\cap (K\oplus Y)\rightarrow Y
$$
has the form 
\begin{equation}\label{new_equ_57}
f(k, y)=y-B(k, y),
\end{equation}
where 
\begin{equation}\label{oipu}
B(0,0)=0\quad  \text{and}\quad DB(0,0)=0.
\end{equation}

By $B_K(a)$ , we denote the open ball in $K$ of radius $a$ centered at $0$. Similarly, $B_Y(b)$ is  the open ball in $Y$ of radius $b$ centered  at $0$.

\begin{lemma}\label{LEMMA1}
There exist constants $a>0$ and $b>0$ such  that 
\begin{itemize}
\item[{\em (1)}] $\bigl( B_K(a)\oplus B_Y(b)\bigr)\cap C\subset U$.
\item[{\em (2)}] $\abs{B(k,y)-B(k,y')}\leq \dfrac{1}{2}\abs{y-y'}$ for 
all $k\in B_K(a)\cap C$ and $y,y'\in B_Y(b)\cap C$. 
\end{itemize}
Moreover, if $f(k,y)=f(k,y')$ for  $(k, y)$ and $(k,y')\in \bigl( B_K(a)\oplus B_Y(b)\bigr)\cap C$, then $y=y'$. 
\end{lemma}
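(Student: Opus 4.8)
The plan is to prove Lemma \ref{LEMMA1} by a direct, quantitative continuity argument based on the normal form \eqref{new_equ_57} and the asymptotic vanishing \eqref{oipu} of $B$ and $DB$ at the origin.

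\textbf{Step 1: Choice of $a$ and $b$.} First I would fix the radii. Since $U$ is a relatively open neighborhood of $0$ in $C$, there is some $\rho>0$ with $B_{\R^n}(\rho)\cap C\subset U$; taking $a,b$ small enough that the box $B_K(a)\oplus B_Y(b)$ is contained in $B_{\R^n}(\rho)$ gives property (1). Next, because $f$ is of class $C^j$ with $j\geq 1$, the partial derivative $D_2B(k,y)\in\mathscr L(Y,Y)$ exists and depends continuously on $(k,y)$, and by \eqref{oipu} we have $D_2B(0,0)=0$ (the $Y$-block of $DB(0,0)$). Hence by continuity I can shrink $a,b$ further so that $\norm{D_2B(k,y)}\leq \tfrac12$ for all $(k,y)\in\bigl(B_K(a)\oplus B_Y(b)\bigr)\cap C$.

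\textbf{Step 2: The Lipschitz estimate (2).} With these choices, fix $k\in B_K(a)\cap C$ and $y,y'\in B_Y(b)\cap C$. The segment $\{ty+(1-t)y'\mid t\in[0,1]\}$ stays inside the convex set $B_Y(b)$, and since $k$ is fixed and $y,y'\in C$ with $C$ a partial quadrant, the segment also stays in $C$ (convexity of $C$), hence in the domain where the bound on $D_2B$ holds. The fundamental theorem of calculus gives
$$
B(k,y)-B(k,y')=\Bigl(\int_0^1 D_2B\bigl(k,\,ty+(1-t)y'\bigr)\,dt\Bigr)(y-y'),
$$
so $\abs{B(k,y)-B(k,y')}\leq \sup_t\norm{D_2B(k,ty+(1-t)y')}\cdot\abs{y-y'}\leq \tfrac12\abs{y-y'}$, which is (2).

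\textbf{Step 3: Injectivity in the $y$-variable.} Finally, suppose $(k,y)$ and $(k,y')$ lie in $\bigl(B_K(a)\oplus B_Y(b)\bigr)\cap C$ and $f(k,y)=f(k,y')$. Using \eqref{new_equ_57}, $y-B(k,y)=y'-B(k,y')$, hence $y-y'=B(k,y)-B(k,y')$, and by (2),
$$
\abs{y-y'}=\abs{B(k,y)-B(k,y')}\leq \tfrac12\abs{y-y'},
$$
which forces $\abs{y-y'}=0$, i.e. $y=y'$. I do not anticipate a serious obstacle here; the only point that requires a little care is Step 2's verification that the integration segment remains in $C$ (so that the derivative bound applies), which uses convexity of the partial quadrant $C$ together with the fact that only the $Y$-coordinate is being varied while $k\in C$ is held fixed. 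This is essentially the same mechanism used later in Lemma \ref{new_lemma_3.73} (property (2) of Proposition \ref{hongkong}), so the argument is a standard contraction-type estimate rather than anything new.
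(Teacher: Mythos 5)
Your proposal is correct and follows essentially the same argument as the paper: choose $a,b$ small enough that the box lies in $U$ and, using $DB(0,0)=0$ and $C^1$-continuity, so that $\norm{D_2B}\leq\tfrac12$ there; then apply the fundamental theorem of calculus along the $Y$-segment to get the Lipschitz bound, and conclude injectivity in $y$ from $\abs{y-y'}\leq\tfrac12\abs{y-y'}$. The only thing you make more explicit than the paper is the remark that the integration segment stays inside the convex set $(B_K(a)\oplus B_Y(b))\cap C$, which is indeed the reason the derivative bound applies pointwise.
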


\begin{proof}

It is clear that there exists constants $a, b>0$ such that 
(1) holds. 
To verify (2) we estimate for 
$(k, y)$ and $(k, y')\in \bigl( B_K(a)\oplus B_Y(b)\bigr)\cap C$, 
\begin{equation}\label{est_contraction_B}
\abs{B(k, y)-B(k, y')}\leq\biggl[ \int_0^1\abs{D_2B(k, sy+(1-s)y')}ds\biggr](y-y').
\end{equation}

In view of $DB(0, 0)=0$ we find smaller $a, b$ such that $ \abs{D_2B(k, y'')}\leq 1/2$ for all $(k, y'')\in \bigl( B_K(a)\oplus B_Y(b)\bigr)\cap C$.  So, 
\begin{equation*}
\abs{B(k, y)-B(k, y')}\leq \dfrac{1}{2}\abs{y-y'}
\end{equation*}
as claimed in (2).

If $f(k,y)=f(k,y')$, for two point $(k,y), (k,y')\in \bigl( B_K(a)\oplus B_Y(b)\bigr)\cap C$, 
then, $y-B(k, y)=y'-B(k, y')$ and using (2),  
$$\abs{y-y'}=\abs{B(k,y)-B(k,y')}\leq \dfrac{1}{2}\abs{y-y'}$$
which implies  $y=y'$. This completes the proof of Lemma \ref{LEMMA1}.

\end{proof}

To continue with the proof of the theorem, we recall that $K$ is in  good position to $C$ and $Y$ is a good complement of $K$ in $\R^n$. Therefore,  there exists $\varepsilon>0$ such that for $k\in K$ and $y\in Y$ satisfying 
$\abs{y}\leq \varepsilon\abs{k}$  the statements
$k+y\in C$ and $k\in C$ are equivalent.

\begin{lemma}\label{LEMMA2}
Replacing $a$ by a smaller number, while keeping $b$,  we have
\begin{equation*}
\abs{B(k,y)}\leq \varepsilon \abs{k}\quad \text{ for all $k\in B_K(a)\cap C $ and $\abs{y}\leq \varepsilon \abs{k}$.}
\end{equation*}
\end{lemma}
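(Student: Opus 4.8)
The statement to be proved is Lemma \ref{LEMMA2}: after shrinking the ball $B_K(a)$, one has $\abs{B(k,y)}\leq \varepsilon\abs{k}$ for all $k\in B_K(a)\cap C$ and all $y$ with $\abs{y}\leq\varepsilon\abs{k}$. The key input is already established: by \eqref{oipu} we have $B(0,0)=0$ and $DB(0,0)=0$, so $B$ is (after composition with $L$, as arranged above) a $C^j$-map vanishing to first order at the origin. The whole point is a quantitative first-order Taylor estimate, carried out on the partial quadrant — so I must not invoke a full-neighborhood Taylor expansion, only the $C^1$-estimate valid on convex relatively open subsets of $C$.

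First I would fix the convex relatively open set $\Omega_{a,b}=\bigl(B_K(a)\oplus B_Y(b)\bigr)\cap C$ from Lemma \ref{LEMMA1}, which is convex since it is the intersection of a convex set with a linear subspace; note $(0,0)\in\Omega_{a,b}$. For $(k,y)\in\Omega_{a,b}$ the segment from $(0,0)$ to $(k,y)$ stays in $\Omega_{a,b}$, so I can write
\[
B(k,y)=B(k,y)-B(0,0)=\Bigl(\int_0^1 DB\bigl(t(k,y)\bigr)\,dt\Bigr)(k,y).
\]
Since $DB$ is continuous and $DB(0,0)=0$, for the given $\varepsilon>0$ there is $\rho>0$ such that $\abs{DB(x)}\leq \tfrac{\varepsilon}{2(1+\varepsilon)}$ (a harmless choice of constant) for all $x\in\Omega_{a,b}$ with $\abs{x}\le\rho$. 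Shrinking $a$ and $b$ so that every point of $\Omega_{a,b}$ has norm at most $\rho$, and then using that on the segment $t(k,y)$ one has $\abs{t(k,y)}\leq\abs{(k,y)}\leq\rho$, I get $\abs{B(k,y)}\leq \tfrac{\varepsilon}{2(1+\varepsilon)}\,\abs{(k,y)}$.

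Now impose the hypothesis $\abs{y}\leq\varepsilon\abs{k}$. With the norm on $K\oplus Y$ chosen (as elsewhere in the paper) so that $\abs{(k,y)}\leq\abs{k}+\abs{y}$, this gives $\abs{(k,y)}\leq (1+\varepsilon)\abs{k}$, hence $\abs{B(k,y)}\leq\tfrac{\varepsilon}{2}\,\abs{k}\leq\varepsilon\abs{k}$, which is the claim. The only subtlety is bookkeeping: I shrink $a$ (keeping $b$, as the statement insists — this is fine because the constraint $\abs{y}\le\varepsilon\abs{k}$ together with small $\abs{k}$ already forces $\abs{y}$ small, so no further restriction on $b$ is needed beyond what Lemma \ref{LEMMA1} provided), and I must make sure the $\rho$-ball condition is achieved purely by decreasing $a$; if the old $b$ is too large for $\abs{(k,y)}\le\rho$ to hold on all of $\Omega_{a,b}$, I instead observe that in the region actually relevant — $\abs{y}\le\varepsilon\abs{k}$ and $\abs{k}<a$ — one automatically has $\abs{(k,y)}\le(1+\varepsilon)a$, so it suffices to choose $a\le\rho/(1+\varepsilon)$, and the estimate on $DB$ is only ever used along segments lying in this sub-region. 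I do not anticipate a real obstacle here; the main care is to keep the argument confined to convex subsets of $C$ (so that the fundamental theorem of calculus along segments is legitimate in the $C^1$-sense defined in the appendix) and to track the constants so that the final bound is $\varepsilon\abs{k}$ rather than merely $O(\abs{k})$.
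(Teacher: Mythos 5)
Your proposal is correct and is essentially the paper's own argument: both integrate $DB$ along the segment from $(0,0)$ to $(k,y)$, use $DB(0,0)=0$ together with continuity of $DB$ to make the operator norm small on a ball of radius $(1+\varepsilon)a$, and then exploit $\abs{(k,y)}\leq(1+\varepsilon)\abs{k}$ to convert the bound into $\varepsilon\abs{k}$ after shrinking $a$. The only point you leave implicit that the paper states explicitly is that $k+y\in C$ (so the segment really lies in the domain); this follows from the good-position hypothesis since $\abs{y}\leq\varepsilon\abs{k}$, and is worth saying in one line.
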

\begin{proof}
First we replace $a$ by a  perhaps smaller number such that $\varepsilon a<b$. If $k\in B_K(a)\cap C $ and $\abs{y}\leq \varepsilon \abs{k}$, then $k+y\in C$  and $\abs{k+y}\leq (1+\varepsilon )\abs{k}<(1+\varepsilon )a$. Introducing 
$$c(a):= (1+\varepsilon)\cdot \max_{x\in B(1+\varepsilon )a)} \abs{DB(x)},$$
 we  observe that $c(a)\to 0$ as $a\to 0$,  in view of $DB(0, 0)=0$. Then we estimate for $k\in B_K(a)\cap C $  and $y\in Y$ satisfying $\abs{y}\leq \varepsilon \abs{k}$, 
 using that $B(0, 0)=0$, 
\begin{equation}\label{new_equ_70}
\begin{split}
\abs{B(k,y)}& \leq \biggl[ \int_0^1 \abs{DB(sk,sy)}\ ds\biggr] (k+y)\\
&\leq \max_{x\in B(1+\varepsilon )a)} \abs{DB(x)}(1+\varepsilon )\abs{k}=c(a)\abs{k}.
\end{split}
\end{equation}
The assertion follows if we choose $a$ so small that 
$c(a)\leq \varepsilon$.
\end{proof}

Now we take $a>0$ so small that $c(a)\leq \varepsilon/2$ and keep the original $b>0$.
For every $k\in B_K(a)\cap C$, we set 
$$X_k:=\ov{B}_Y(\varepsilon \abs{k})\quad\text{and}\quad X=\bigcup_{k\in B_K(a)\cap C}X_k. $$
Clearly, $X\subset U$. By  Lemma \ref{LEMMA2},  
$B(k,\cdot )\colon X_k\to X_k$, and by 
 Lemma \ref{LEMMA1}, the map $B(k, \cdot )$ is a contraction. Therefore, it  has a unique fixed point $\tau (k)\in X_k$ satisfying 
$$\tau (k)=B(k, \tau (k)).$$
We claim that if   
$y\in B_Y(b)\cap C$ and $B(k, y)=y$ for some $k\in B_K(a)\cap C$, then 
$y=\tau (k)$. 

Indeed, since $B(k, \tau (k))=\tau (k)$, it follows that 
$f(k, y)=f(k,\tau (k))$,  and Lemma \ref{LEMMA1} shows that 
$y=\tau (k)$, as claimed.

\begin{lemma}
The map $\tau\colon B_K(a)\cap C \to B_Y(b)$  is continuous
and satisfies $\abs{\tau(k)}\leq \varepsilon  \abs{k}/2$.
\end{lemma}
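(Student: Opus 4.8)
The statement to prove is that the fixed-point map $\tau\colon B_K(a)\cap C\to B_Y(b)$ is continuous and satisfies the quantitative bound $\abs{\tau(k)}\leq \varepsilon\abs{k}/2$. The bound itself is almost immediate from what has been set up: by construction $\tau(k)$ is the unique fixed point of $B(k,\cdot)$ in the ball $X_k=\ov{B}_Y(\varepsilon\abs{k})$, and since we have chosen $a$ so small that $c(a)\leq\varepsilon/2$, the estimate \eqref{new_equ_70} in Lemma \ref{LEMMA2} actually gives $\abs{B(k,y)}\leq c(a)\abs{k}\leq \tfrac{1}{2}\varepsilon\abs{k}$ for all $k\in B_K(a)\cap C$ and $\abs{y}\leq\varepsilon\abs{k}$. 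Applying this to $y=\tau(k)$ and using $\tau(k)=B(k,\tau(k))$ yields $\abs{\tau(k)}=\abs{B(k,\tau(k))}\leq \tfrac{1}{2}\varepsilon\abs{k}$. In particular $\tau(0)=0$.

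For continuity, the plan is the standard parametrized-fixed-point argument. Fix $k_0,k\in B_K(a)\cap C$. Writing $\tau(k)=B(k,\tau(k))$ and $\tau(k_0)=B(k_0,\tau(k_0))$, I would estimate
$$
\abs{\tau(k)-\tau(k_0)}\leq \abs{B(k,\tau(k))-B(k,\tau(k_0))}+\abs{B(k,\tau(k_0))-B(k_0,\tau(k_0))}.
$$
The first term is bounded by $\tfrac{1}{2}\abs{\tau(k)-\tau(k_0)}$ using the contraction estimate (2) of Lemma \ref{LEMMA1} (valid since $\tau(k),\tau(k_0)\in B_Y(b)\cap C$, noting that $X_k\subset \ov{B}_Y(\varepsilon a)$ and $\varepsilon a<b$). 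Absorbing this into the left-hand side gives
$$
\tfrac{1}{2}\abs{\tau(k)-\tau(k_0)}\leq \abs{B(k,\tau(k_0))-B(k_0,\tau(k_0))}.
$$
Since $B$ is continuous (indeed $C^1$) jointly in both variables on the relatively open set $U$, and $(k,\tau(k_0))\to (k_0,\tau(k_0))$ as $k\to k_0$ inside $C$, the right-hand side tends to $0$; hence $\tau$ is continuous at $k_0$. Because $k_0$ was arbitrary, $\tau$ is continuous on $B_K(a)\cap C$.

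\textbf{Main obstacle.} There is no deep obstacle here; the only point requiring a little care is making sure the relevant points stay inside the domains where Lemma \ref{LEMMA1} and Lemma \ref{LEMMA2} apply. Specifically, one must check that $\tau(k_0)\in B_Y(b)\cap C$ and, for $k$ near $k_0$, that the segment arguments appearing implicitly in estimate (2) of Lemma \ref{LEMMA1} lie in $(B_K(a)\oplus B_Y(b))\cap C$; both follow from the choices $c(a)\leq\varepsilon/2$, $\varepsilon a<b$, and convexity of the balls, so shrinking $a$ once more if necessary suffices. After this, the quantitative bound and continuity both fall out of the contraction property exactly as above, and the remaining properties asserted in Theorem \ref{help-you} (the differentiability $\tau\in C^j$ and property (3)) are taken up in the subsequent lemmata, not in this statement.
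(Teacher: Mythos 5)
Your proof is correct and follows essentially the same route as the paper: the bound $\abs{\tau(k)}\leq\varepsilon\abs{k}/2$ falls out of $\tau(k)=B(k,\tau(k))$ together with $\abs{B(k,y)}\leq c(a)\abs{k}$ and $c(a)\leq\varepsilon/2$, and continuity comes from splitting $\abs{\tau(k)-\tau(k_0)}$ via the triangle inequality, absorbing the contraction term $\tfrac12\abs{\tau(k)-\tau(k_0)}$ into the left side, and controlling the remainder $\abs{B(k,\tau(k_0))-B(k_0,\tau(k_0))}$. The one difference is at the very end: you conclude qualitative continuity from continuity of $B$, whereas the paper bounds the remainder by $\bigl[\int_0^1\abs{D_1B(sk+(1-s)k_0,\tau(k_0))}\,ds\bigr]\abs{k-k_0}\leq\tfrac{\varepsilon}{2}\abs{k-k_0}$ (using $c(a)\leq\varepsilon/2$) to obtain the Lipschitz estimate $\abs{\tau(k)-\tau(k_0)}\leq\varepsilon\abs{k-k_0}$, recorded as \eqref{continuity_tau}. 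That stronger quantitative form is exactly what the next lemma invokes when proving that $\tau$ is $C^1$, so although your argument suffices for the statement as written, you will want to keep the Lipschitz version in hand for the sequel.
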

\begin{proof}
In view of \eqref{new_equ_70}, the estimate 
$\abs{\tau (k)}\leq \varepsilon \abs{k}/2$ follows from 
$\abs{\tau (k)}=\abs{B(k, \tau (k)}\leq c(a)\abs{k}$ and from the choice $c(a)\leq \varepsilon/2$.
 In order to prove continuity of $\tau$, we fix a point $k_0\in  B_K(a)\cap C$ and  use  Lemma \ref{LEMMA1}
 to estimate, 
\begin{equation*}
\begin{split}
\abs{\tau (k)-\tau (k_0)}&=\abs{B(k, \tau (k))-B(k_0, \tau (k_0))}\\
&\leq \abs{B(k, \tau (k))-B(k, \tau (k_0))}+\abs{B(k, \tau (k_0))-B(k_0, \tau (k_0))}\\
&\leq \dfrac{1}{2}\abs{\tau (k)-\tau (k_0)} +\abs{B(k, \tau (k_0))-B(k_0, \tau (k_0))}.
\end{split}
\end{equation*}
This implies, 
\begin{equation}\label{continuity_tau}
\begin{split}
\abs{\tau (k)-\tau (k_0)}&\leq 2\abs{B(k, \tau (k_0))-B(k_0, \tau (k_0))}\\
&\leq 2\biggl[ \int_0^1\abs{D_1B(sk+(1-s)k_0, \tau (k_0))}\ ds\biggr] \abs{k-k_0}\\
&\leq \varepsilon \abs{k-k_0},
\end{split}
\end{equation}
where we have used our choice of $a$ that $c(a)= (1+\varepsilon)\cdot \max_{x\in B(1+\varepsilon )a)} \abs{DB(x)}\leq \varepsilon/2$. This finishes the proof of the continuity of the map $\tau$.
\end{proof}


\begin{lemma}
If $f\colon U\to \R^N$ is of class $C^j$, $j\geq 1$, then the map $\tau\colon B_K(a)\cap C \to B_Y(b)$ is also of class $C^j$.
\end{lemma}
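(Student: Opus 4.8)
The plan is to prove this by induction on the regularity class $j$, using the fixed-point equation $\tau(k) = B(k,\tau(k))$ together with the contraction estimate $\lvert D_2 B(k,y)\rvert \le 1/2$ already secured in Lemma \ref{LEMMA1}. The base case $j=1$ is the substantive one: one must show that $\tau$ is of class $C^1$ on the partial quadrant $B_K(a)\cap C$, not merely at interior points. Here I would differentiate the defining relation formally: if $\tau$ were $C^1$, then at a point $k_0$ its derivative $D\tau(k_0)$ would have to satisfy
\begin{equation*}
D\tau(k_0) = D_1 B(k_0,\tau(k_0)) + D_2 B(k_0,\tau(k_0))\circ D\tau(k_0),
\end{equation*}
which, since $\mathbbm{1} - D_2 B(k_0,\tau(k_0))$ is invertible (its inverse being $\sum_{l\ge 0} (D_2 B(k_0,\tau(k_0)))^l$, convergent because $\lvert D_2 B\rvert \le 1/2$), forces the candidate derivative
\begin{equation*}
A(k_0) := \bigl(\mathbbm{1} - D_2 B(k_0,\tau(k_0))\bigr)^{-1}\circ D_1 B(k_0,\tau(k_0)) \colon K \to Y.
\end{equation*}
Since $B$ is $C^1$ and $\tau$ is continuous (proved in the previous lemma), $k\mapsto A(k)$ is continuous. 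So it remains to verify that $A(k_0)$ genuinely is the Fréchet derivative of $\tau$ at $k_0$ — i.e. that the Taylor remainder is $o(\lvert k-k_0\rvert)$ — which is the standard estimate: write
\begin{equation*}
\tau(k) - \tau(k_0) = B(k,\tau(k)) - B(k_0,\tau(k_0)),
\end{equation*}
expand the right side using the $C^1$-ness of $B$, substitute the Lipschitz bound $\lvert \tau(k)-\tau(k_0)\rvert \le \varepsilon\lvert k-k_0\rvert$ from \eqref{continuity_tau}, absorb the $D_2 B\circ D\tau$ term to the left, and divide by $\lvert k-k_0\rvert$. The main obstacle is keeping the bookkeeping clean near boundary points of $C$, where one cannot invoke the classical implicit function theorem and must argue directly from the definition of differentiability on a partial quadrant given at the start of Appendix \ref{finite_partial_inverse}; but since everything is phrased in terms of one-sided increments $k-k_0$ with $k, k_0 \in B_K(a)\cap C$ and the estimates above are insensitive to which chamber $k$ lies in, this goes through.

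For the inductive step I would pass to the tangent map. Assuming the result for $j-1$, consider the $C^{j-1}$ map $Tf\colon TU \to T\mathbb{R}^N$, which satisfies the hypotheses of the theorem relative to the partial quadrant $TC = C\oplus\mathbb{R}^n$ in $T\mathbb{R}^n$: indeed $D(Tf)(0)$ is surjective with kernel $TK$ in good position to $TC$ and good complement $TY$, by the same linear-algebra manipulations used throughout Appendix \ref{pretzel-A} and in Lemma \ref{new_lemma_Z}. Its solution map is precisely $T\tau$, by the uniqueness clause in Lemma \ref{LEMMA1} applied on the tangent level (exactly the mechanism of Lemma \ref{newlemma5.4} and Lemma \ref{newcontrlem} in the sc-setting). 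The inductive hypothesis then gives that $T\tau$ is of class $C^{j-1}$, which by definition means $\tau$ is of class $C^j$. Alternatively — and this may be the cleaner exposition — one observes directly that the formula $D\tau(k) = A(k) = (\mathbbm{1} - D_2 B(k,\tau(k)))^{-1} D_1 B(k,\tau(k))$ expresses $D\tau$ as a composition of $C^{j-1}$ maps in its arguments $k$ and $\tau(k)$, and $\tau$ is at least $C^1$; a bootstrap then raises the regularity of $\tau$ one step at a time up to $C^j$, each step using that $B\in C^j$ and inversion of $\mathbbm{1} - D_2 B$ (a convergent Neumann series, hence as smooth as its input) preserves $C^{j-1}$-dependence. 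I expect the bootstrap argument to be the main thing to write carefully, since one must track that $\tau\in C^l$ implies $A = D\tau \in C^{\min\{j-1,l\}}$ and hence $\tau \in C^{\min\{j,l+1\}}$, terminating at $C^j$; the argument is essentially the one carried out in the proof of Theorem \ref{newthm5.4}, transported from the sc-framework to the finite-dimensional partial-quadrant setting, so no new ideas are needed.

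Finally, assembling these pieces completes the proof of Theorem \ref{help-you}: properties (1)–(3) were already established in Lemmata \ref{LEMMA1}–\ref{LEMMA2} and the continuity lemma (with $V = B_K(a)\cap C$, $\sigma = b$, and $\varepsilon$ the good-position constant shrunk as in Lemma \ref{LEMMA2}), $D\tau(0) = 0$ follows from $DB(0,0)=0$ together with the formula for $D\tau$, and the regularity statement is exactly what the induction/bootstrap above provides.
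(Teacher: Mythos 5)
Your $j=1$ argument is exactly the paper's: differentiate the fixed-point identity $\tau(k)=B(k,\tau(k))$ formally to identify the candidate derivative $D\tau(k)=\bigl({\mathbbm 1}-D_2B(k,\tau(k))\bigr)^{-1}D_1B(k,\tau(k))$, then verify the Fr\'echet remainder using the Lipschitz bound \eqref{continuity_tau} and absorption of the $D_2B$ term to the left; the paper codifies this bookkeeping through the quantities $A(\delta k)$ and $R(\delta k)$, but it is the same computation, and it is valid at boundary points because the increments $\delta k$ are one-sided and nothing in the estimate cares which chamber of $C$ they point into.

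For the inductive step, what the paper does is neither of your two options verbatim. It does not re-invoke Theorem \ref{help-you} on a tangent-level map (your first option) — doing so would require re-establishing surjectivity and good-position hypotheses for $Tf$ on $T\R^n$, which is more work than necessary and is not what Lemma \ref{new_lemma_Z} is for; that lemma lives in a different part of the argument. Nor does the paper run the explicit bootstrap on the formula for $D\tau$ (your second option). Instead it introduces the tangent-level contraction $f^{(1)}(k,h,y,\eta)=\bigl(y-B(k,y),\,\eta-DB(k,y)(h,\eta)\bigr)$, observes that it is again of the form ``identity minus a $\tfrac12$-contraction in the $(y,\eta)$-variable'' with a $C^{j-1}$ principal part, applies the $j=1$ argument to it, and identifies the fixed-point family as $(\tau,D\tau)$; induction then closes. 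This is the same mechanism as Lemma \ref{newlemma5.4} in the sc-world, as you anticipated. Your bootstrap alternative — $\tau\in C^l$ implies, via the formula and the smoothness of matrix inversion in this finite-dimensional setting, $D\tau\in C^{\min\{j-1,l\}}$, hence $\tau\in C^{\min\{j,l+1\}}$, terminating at $C^j$ — is correct, and arguably cleaner to state; it trades the paper's ``one more contraction'' for ``one application of the chain rule.'' Both are fine; I would just drop the first option, since re-checking the full hypotheses of Theorem \ref{help-you} on $Tf$ is overkill when the contraction structure is inherited for free.
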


\begin{proof}

We fix the  point $k\in B_K(a)\cap C$ and assume that  
$k+\delta k\in B_K(a)\cap C.$
Then we abbreviate 
\begin{align*}
A(\delta k)&:=\int_0^1\bigl(D_2B(k+\delta k, s\tau (k+\delta k)+(1-s)\tau (k))-D_2B(k, \tau (k))\bigr)\ ds\\
R(\delta k)&:=B(k+\delta k, \tau (k))-
B(k, \tau (k))-D_1B(k, \tau (k))\delta k.
\end{align*}
Using the fixed point property $\tau (k)=B(k,\tau (k))$ and 
$\tau (k+\delta k)=B(k+\delta k,\tau (k+\delta k))$ we compute,
\begin{equation*}
\begin{split}
&\tau (k+\delta k)-\tau (k)-D_1B(k,\tau (k))\delta k\\
&\quad =B(k+\delta k, \tau (k+\delta k))-B(k, \tau (k))-D_1B(k,\tau (k))\delta k\\
&\quad=\bigl[ B(k+\delta k, \tau (k))-B(k, \tau (k))-D_1B(k_0,\tau (k_0))\bigr] \\
&\quad\phantom{=}+
\bigl[ B(k+\delta k, \tau (k+\delta k)) -B(k+\delta k, \tau (k))\bigr] \\
&\quad=R(\delta k)+A(\delta k)\bigl[ \tau (k+\delta k)-\tau (k)\bigr]+D_2B(k, \tau (k))\bigl[ \tau (k+\delta k)-\tau (k)\bigr].
\end{split}
\end{equation*}
Therefore,
\begin{equation*}
\begin{split}
 \bigl[{\mathbbm 1}-D_2B(k, \tau (k))\bigr]&(\tau (k+\delta k)-\tau (k))-D_1B(k, \tau (k))\delta k\\
&=R(\delta k)+A(\delta k)(\tau (k+\delta k)-\tau (k)).
\end{split}
\end{equation*}
Now, $R(\delta k)/\abs{\delta k}\to 0$ and $A(\delta k)\to 0$ as $\abs{\delta k}\to 0$. Moreover, in view of  
\eqref{continuity_tau}, 
$\abs{\tau(k+\delta k)-\tau (k)}/\abs{\delta k}\leq \varepsilon \abs{\delta k}/\abs{\delta k}\leq \varepsilon$. 
Consequently,
$$
\lim_{\abs{\delta k}\to 0}\dfrac{1}{\abs{\delta k}}\abs{\tau (k+\delta k)-\tau (k)- \bigl[{\mathbbm 1}-D_2B(k, \tau (k))\bigr]^{-1}D_1B(k, \tau (k))\delta k}=0.
$$
This shows that the map $\tau$ is differentiable at the point $k$ and its  derivative $D\tau (k)\in {\mathscr L}(K, Y)$ is given by the familiar formula
$$D\tau (k)\delta k= \bigl[{\mathbbm 1}-D_2B(k, \tau (k_0))\bigr]^{-1} D_1B(k,\tau (k))\delta k.$$
The formula shows that $\tau \mapsto D\tau (k)$ is continuous 
since $\tau $ and $DB$ are continuous maps. Consequently,  the map  $\tau$ is of class $C^1$.

We also note that, differentiating $\tau (k)=B(k, \tau (k))$, we obtain 
\begin{equation*}
\begin{split}
D\tau(k)\delta k& = D_2B(k,\tau(k))D\tau(k)\delta k + D_1B(k,\tau(k))\delta k\\
&=DB(k,\tau(k))(\delta k, D\tau(k)\delta k). 
\end{split}
\end{equation*}
If now $f$ is of class $C^2$, we define the $C^1$-map 
$$
f^{(1)}\colon V\oplus K\oplus Y\oplus Y\to Y\oplus Y
$$
by 
\begin{equation*}
\begin{split}
f^{(1)}(k, h, y,\eta)&=\bigl( y-B(k, y), \eta -DB(k, y)(h, \eta)\bigr)\\
&=(y,\eta)-\bigl( B(k, y), DB(k,y)(h, \eta)\bigr)
\end{split}
\end{equation*}
and find by the same reasoning as before, using that  $D_2B(k, y)$ for small $(k,y)$ is contractive, that the associated family of fixed points
$$(k,h)\mapsto  (\tau(k),D\tau(k)h)$$
is of class $C^1$ near $(0,0)$. In particular, the map 
$\tau$ is of class $C^2$ near $k=0$. Proceeding by induction we conclude that $\tau$ is of class $C^j$ 
on a possibly smaller neighborhood $V$ of $0$ in $K$.

\end{proof}
The proof of Theorem \ref{help-you} is finished.

\pagebreak
\section{Manifolds with Boundary with Corners}
\label{section_tame_manifolds}

The previous chapter 
showed a solution set of a sc-Fredholm section which is a sub-M-polyfold whose induced polyfold structure is equivalent to the structure of a finite dimensional smooth manifold with boundary with corners. In this chapter we shall study these objects in more details. 
The results of Chapter \ref{section_tame_manifolds} will not be used later on.

\subsection{Characterization}\label{subsect_characterization}

A smooth manifold with boundary with corners is a Hausdorff space $M$ which admits an atlas of smooth compatatible quadrant charts $(V, \varphi, (U, C,\R^n))$, where $\varphi\colon V\to U$ is a homeomorphism 
from an open subset $V\subset M$ onto a relatively open set $U$ of the partial quadrant $C$ of $\R^n$.

A M-polyfold $X$ is a Hausdorff space which, in addition, is equipped with a sc-structure. This prompts the following definition.

\begin{definition}
\index{D- Equivalent manifold structure}
A M-polyfold $X$ has a {\bf compatible smooth manifold structure with boundary with corners}, if  
it admits an atlas ${\mathcal A}$ consisting of sc-smoothly  compatible  charts 
$(V,\varphi, (U, C, \R^n))$, where $\varphi\colon V\to U$ is a sc-diffeomorphism from the open set $V\subset X$ onto the relatively open set $U\subset C$ of the partial quadrant 
$C$ in $\R^n$ (or in a finite-dimensional vector space $E$). 
\end{definition}

The transition maps between the  charts are sc-diffeomorphisms between relatively open subsets of partial quadrants in finite-dimensional vector spaces,  
and therefore are classically smooth maps, i.e., of class $C^\infty$. Consequently, the atlas ${\mathcal A}$ defines the structure of a smooth manifold with boundary with corners.

The aim of Section \ref{subsect_characterization}
is the proof of the following characterization.

\begin{theorem}[{\bf Characterization}]
\label{XXX--}\index{T- Characterization of smooth manifolds}
For a tame M-polyfold $X$ the following statements are equivalent.
\begin{itemize}
\item[{\em (i)}]  $X$ 
has a compatible smooth manifold structure with boundary with corners.
\item[{\em (ii)}] $X_0=X_\infty$ and the identity  map ${\mathbbm 1}^1_0\colon X\rightarrow X^1$, $x\mapsto x$ is sc-smooth. Moreover, every point $x\in X$ is contained in a tame sc-smooth polyfold chart $(V, \psi ,(O, C, E))$ satisfying $\psi (x)=0\in O$, and the tangent space $T_0O$ is finite dimensional and in good position to the partial quadrant $C$ in the sc-Banach space $E$.
\end{itemize}
\end{theorem}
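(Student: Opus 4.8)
\textbf{Proof plan for Theorem \ref{XXX--}.}

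The plan is to prove the two implications separately. The implication $(i)\Rightarrow(ii)$ is the easier one. Assume $X$ carries a compatible smooth manifold structure with boundary with corners, given by an atlas $\mathcal A$ of sc-diffeomorphisms $\varphi\colon V\to U\subset C\subset\R^n$. Since each such chart domain $U$ is relatively open in a finite-dimensional partial quadrant, it carries the constant sc-structure, so $U_m=U$ for all $m$; transporting this via the sc-diffeomorphism $\varphi$ shows first that $V_m=V$ for all $m$, hence $X_0=X_\infty$, and second that the inclusion ${\mathbbm 1}_0^1\colon X\to X^1$ is sc-smooth, since in these charts it is literally the identity between sets equipped with the constant sc-structure (this can be checked chart-wise using that the transition maps are classically smooth, invoking Corollary \ref{ABC-y} or Proposition \ref{ABC-x}). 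For the last assertion: given $x$, take a chart $\varphi\colon V\to U\subset C\subset\R^n$ with $\varphi(x)=0$, translating $C$ if necessary so that $0\in C$. This $(V,\varphi,(U,C,\R^n))$ is already a tame sc-smooth retract chart (with $r=\mathbbm 1_U$, which is tame since $d_C\circ\mathbbm 1_U=d_C$), $O=U$ is open in $C$, $T_0O=\R^n$ is finite dimensional, and $\R^n$ is trivially in good position to any partial quadrant in $\R^n$ (take $P=\{0\}$; the interior of $\R^n\cap C=C$ in $\R^n$ is non-empty because $C$ is a partial quadrant). This gives $(ii)$.

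For the implication $(ii)\Rightarrow(i)$, the idea is to manufacture classical quadrant charts out of the given tame sc-charts by composing with a suitable linear projection, in the spirit of the proof of Theorem \ref{IMPLICIT0} and Theorem \ref{HKL}. Fix $x\in X$ and a tame sc-chart $\psi\colon(V,x)\to(O,0)$ with $(O,C,E)$ a tame retract, $O=r(U)$, and $T_0O$ finite dimensional and in good position to $C$. Since $X_0=X_\infty$, every point of $O$ is smooth, so $T_0O\subset E_\infty$ and, being finite dimensional and in good position, $T_0O$ is a smooth sc-subspace by Proposition \ref{prop1} and Definition \ref{mission1}. By Proposition \ref{pretzel}, $N:=T_0O\cap C$ is a partial quadrant in the finite-dimensional space $T_0O$. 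Let $Y$ be the good complement of $T_0O$ in $E$ and let $\pi\colon E=T_0O\oplus Y\to T_0O$ be the (sc-)projection, with $p:=Dr(0)$ the projection onto $T_0O=p(E)$; by Proposition \ref{IAS-x} for a tame retraction we may take $Y=(\mathbbm 1-Dr(0))E=\ker Dr(0)\subset E_x$. The plan is to show that, after shrinking, the composition $\pi|_O\colon O\to N$ is a sc-diffeomorphism onto a relatively open neighborhood of $0$ in $N$. The key computation: the map $G:=\pi\circ r\colon U\to T_0O$ is sc-smooth with $DG(0)=\pi\circ Dr(0)=p$, which restricted to $T_0O$ is the identity; moreover since $X_0=X_\infty$ and ${\mathbbm 1}_0^1$ is sc-smooth, $r$ itself gains regularity trivially, so $G$ induces a classically smooth map on the finite-dimensional model of $O$. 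One then applies the quadrant inverse function theorem, Theorem \ref{QIFT} (or Proposition \ref{hongkong}), to $DG(0)^{-1}\circ(\pi|_O)$ viewed as a map between relatively open sets of partial quadrants in $T_0O\cong\R^n$: its differential at $0$ is the identity, and it preserves the degeneracy index because $d_O=d_C$ on $O$ by Proposition \ref{tame_equality}, $\pi|_O$ is the linear projection compatible with the splitting $E=T_0O\oplus Y$ with $Y$ a good complement (so $d_{N}(\pi(o))=d_O(o)$ near $0$, using Proposition \ref{big-pletzel}-type arguments, specifically Lemma \ref{big-pretzel_1a} with $\tau$ the graph map of $O$ over $N$). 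Hence $\pi|_O\colon O'\to V'\subset N$ is a diffeomorphism for suitable shrunk neighborhoods, and $\Phi:=\pi\circ\psi\colon (V,x)\to (V',0)\subset N\subset T_0O$ is the desired classical quadrant chart.

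It then remains to check compatibility: given two such charts $\Phi_1\colon V_1\to V_1'\subset N_1$ and $\Phi_2\colon V_2\to V_2'\subset N_2$ (with $N_i$ partial quadrants in finite-dimensional vector spaces), the transition map $\Phi_2\circ\Phi_1^{-1}$ is sc-smooth as a composition of sc-smooth maps (each $\Phi_i^{\pm1}$ being sc-smooth by construction and the chain rule, Theorem \ref{sccomp}), and since both source and target sit in relatively open subsets of finite-dimensional partial quadrants with the constant sc-structure, an sc-smooth map between them is classically $C^\infty$ by Corollary \ref{ABC-y} (or Proposition \ref{lower}). Thus the atlas $\{(V,\Phi,(V',N_i,\R^{n_i}))\}$ consists of sc-smoothly compatible charts whose transition maps are classically smooth, giving $X$ a compatible smooth manifold-with-corners structure, i.e. $(i)$.

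The main obstacle I expect is the step showing $\pi|_O$ is a sc-diffeomorphism onto a relatively open subset of $N=T_0O\cap C$: one must verify that the image is genuinely relatively open in the partial quadrant $N$ (not merely in $T_0O$) and that the inverse is sc-smooth, which is exactly the kind of boundary phenomenon for which the usual inverse function theorem fails (cf. Figure \ref{fig:pict6}). This is why Theorem \ref{QIFT} — whose hypotheses require the degeneracy-index-preserving property $d_C(x)=d_C(f(x))$ — must be invoked, and the bulk of the work is checking that hypothesis, which in turn relies on the good-position hypothesis through Lemma \ref{big-pretzel_1a} and Proposition \ref{tame_equality}. A secondary technical point is ensuring that the finite-dimensionality and good position pass correctly between the retract $O$ (via its model on $U\subset C\subset E$) and the linear model on $N$; this is handled by the tameness identities $d_O=d_C$ and the structure of good complements established in Appendix \ref{pretzel-A}.
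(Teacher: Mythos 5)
Your treatment of $(i)\Rightarrow(ii)$ is fine and matches the paper: interpret a classical quadrant chart as a tame sc-smooth chart with the identity as retraction.

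For $(ii)\Rightarrow(i)$ the overall strategy — build the classical chart by composing the sc-chart $\psi$ with the linear projection $p=Dr(0)$ onto $T_0O$ — is the right one. But there is a genuine gap at the central step, namely where you propose to apply the finite-dimensional quadrant inverse function theorem, Theorem~\ref{QIFT}, to ``$DG(0)^{-1}\circ(\pi|_O)$ viewed as a map between relatively open sets of partial quadrants in $T_0O\cong\R^n$.'' At this point of the argument $O$ is not yet known to be (locally) a relatively open subset of a partial quadrant in a finite-dimensional space: it is defined as the image $r(U)$ of a retraction on a relatively open subset $U$ of a partial quadrant $C$ in the possibly infinite-dimensional sc-Banach space $E$. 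Theorem~\ref{QIFT} and Proposition~\ref{hongkong} are statements about smooth maps $f\colon U\to\R^n$ where $U$ is a relatively open subset of a partial quadrant in $\R^n$. Establishing that $O$ has a finite-dimensional model — i.e.\ that there is an open neighborhood of $0$ in $O$ identified sc-smoothly with a relatively open subset of the partial quadrant $T_0O\cap C$ in $T_0O$ — is precisely what the proof must produce, so invoking a theorem whose hypotheses presuppose that identification is circular. Your phrase ``$G$ induces a classically smooth map on the finite-dimensional model of $O$'' is where the gap is hidden.

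The way out is an infinite-dimensional implicit function theorem adapted to retracts, and this is exactly what the paper does: it proves a separate result, Proposition~\ref{FFF}, asserting that $p=Dt(0)$ restricts to a sc-diffeomorphism from a neighborhood of $0$ in $O$ onto a relatively open neighborhood of $0$ in the partial quadrant $T_0O\cap C$. The proof of Proposition~\ref{FFF} realizes $O$ (locally) as the zero set of the sc-Fredholm section $\mathbf f(u)=(\mathbbm 1-p)(u-t(u))$ of the strong bundle $U\triangleleft Y\to U$, where $Y=(\mathbbm 1-Dt(0))E$ is the good complement; then it applies the sc-Fredholm implicit function theorem (Corollary~\ref{LGS2}) to write $O$ near $0$ as a graph $\{a+\delta(a)\}$ over $T_0O\cap C$, with further lemmata (Lemmata~\ref{new_lemma4.12} and \ref{new_lemma_4.13}) guaranteeing uniqueness of the solution and that the graph actually lies in $O$. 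This is the nontrivial content you would need to supply; without it, the step ``$\pi|_O$ is a sc-diffeomorphism onto a relatively open neighborhood of $0$ in $N$'' is unjustified. Your subsequent remarks about degeneracy index preservation and transition map compatibility are correct as far as they go, but they become relevant only after the finite-dimensional chart has been produced.

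One smaller point worth flagging: in (ii) the hypotheses guarantee, via Proposition~\ref{frank}, that $(O,C,E)$ is a $\ssc^+$-retract, which is what makes the section $\mathbf f$ above a $\ssc^+$-perturbation of the identity (hence sc-Fredholm by Proposition~\ref{prop1.21}) and what forces $T_0O$ to be finite-dimensional (Lemma~\ref{new_lemma_4.8}). This use of the hypothesis $\mathbbm 1_0^1$ sc-smooth is essential and should appear explicitly; your sentence ``$r$ itself gains regularity trivially'' gestures at it but does not draw the needed conclusions.
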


\begin{remark}\label{r:r}

If $X_0=X_\infty$, the identity map 
${\mathbbm 1}^1_0\colon X\rightarrow X^1$ has as its inverse the identity map ${\mathbbm 1}^0_1\colon X^1\rightarrow X$. The map ${\mathbbm 1}^0_1$ is 
always sc-smooth. 
Hence, if  the map ${\mathbbm 1}^1_0\colon X\to X^1$ is sc-smooth, it is a sc-diffeomorphism. 
There is a  subtle point in (ii). If $X_0=X_\infty$,  then $X_m=X_0$ as sets for all $m\geq 0$.  However,  it is possible that $X^1\neq X$ as sc-smooth spaces
despite the fact that ${(X^1)}_m = X_{m+1}=X_i$ as sets. In  other words, it  is possible that  as M-polyfolds $X^1\neq X$  
even if the underlying sets are the same.
Indeed, recalling the definition
of sc-differentiability, one realizes that  the sc-smoothness of ${\mathbbm 1}\colon X\rightarrow X$ 
is  completely different from the 
sc-smoothness of ${\mathbbm 1}\colon X\rightarrow X^1$.
This is the reason we use the notation ${\mathbbm 1}_0^1$ 
the identity map $X\to X^1$.

In contrast, if $X$ is  a finite-dimensional vector space equipped with the constant sc-structure, then $X=X^1$ as sc-spaces.
\end{remark}

The proof of Theorem \ref{XXX--} requires some preparations and we start with a definition where we denote as usual with $U\subset C\subset E$ a relatively open set $U$ in the partial quadrant $C$ of the sc-Banach space $E$.

\begin{definition}[{\bf $\ssc^+$-retraction}] 
\label{sc_plus_retraction}\index{D- Sc$^+$-retraction}

 A sc-smooth retraction $r\colon U\to U$  is called a 
 {\bf $\ssc^+$-retraction}, if 
$r(U_m)\subset U_{m+1}$ for all $m\geq 0$ and if  $r\colon U\to U^1$  is sc-smooth.

Similarly, if $V\subset X$ is an open subset of a  M-polyfold $X$, we call the sc-smooth retraction $r\colon V\rightarrow V$ a {\bf $\ssc^+$-retraction},  if $r\colon V\rightarrow V^1$ is sc-smooth.

\end{definition}

\begin{lemma}\label{L-L}\index{L- Sc$^+$-retractions}
Let $(O,C,E)$ be  a sc-smooth retract and suppose that there exists a relatively open subset  $U\subset C$  and a 
$\ssc^+$-retraction
$t\colon U\rightarrow U$ onto  $t(U)=O$. Then every   sc-smooth retraction $s\colon V\rightarrow V$ of a relatively open subset $V\subset C$ satisfying $s(V)=O$
is a $\ssc^+$-retraction. 
\end{lemma}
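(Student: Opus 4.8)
The statement asserts that if $O$ is the image of \emph{some} $\ssc^+$-retraction $t\colon U\to U$, then \emph{every} sc-smooth retraction $s\colon V\to V$ with $s(V)=O$ is automatically a $\ssc^+$-retraction, i.e.\ $s(V_m)\subset V_{m+1}$ for all $m$ and $s\colon V\to V^1$ is sc-smooth. The natural approach is to exploit the intertwining relations between the two retractions, exactly as in the proof of Proposition \ref{independent_of_retractions}: since $t(U)=s(V)=O$, for every $y\in V$ there exists $y'\in U$ with $s(y)=t(y')$, and one deduces $t\circ s=s$ and $s\circ t=t$ (on the overlap of their domains, which we may shrink so that both are defined). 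The key point is the identity
$$
s = t\circ s
$$
on a relatively open subset of $C$. First I would reduce to a common domain: replace $V$ by $V\cap s^{-1}(t^{-1}(U))$, which is still relatively open and still contains $O=s(V)$ as a subset of its image (because $O\subset U$ and $t|_O=\mathrm{id}$), so no generality is lost.

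Granting $s=t\circ s$, the level-shift property is immediate. If $x\in V_m$, then $s(x)\in V_m$ because $s$ is sc-smooth, hence $s(x)\in U_m$; applying $t$ and using that $t$ is a $\ssc^+$-retraction, $t(s(x))\in U_{m+1}$, and since $t(s(x))=s(x)$ we get $s(x)\in V_{m+1}$. Thus $s(V_m)\subset V_{m+1}$ for every $m\geq 0$. For the sc-smoothness of $s\colon V\to V^1$: the map $t\colon U\to U^1$ is sc-smooth by hypothesis, and $s\colon V\to U$ is sc-smooth (it is a sc-smooth retraction with image $O\subset U$), so by the chain rule (Theorem \ref{sccomp}) the composition $t\circ s\colon V\to U^1$ is sc-smooth. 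But $t\circ s=s$, and its image lies in $O\subset U$, hence in $V^1$ after the obvious identification; therefore $s\colon V\to V^1$ is sc-smooth. This shows $s$ is a $\ssc^+$-retraction.

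The one genuinely delicate step is establishing $s=t\circ s$ rigorously, since $s$ and $t$ are defined on different relatively open subsets $V,U$ of $C$ and a priori only agree on $O$. The argument is: for $y\in V$, write $s(y)=t(y')$ for some $y'\in U$ (possible since $t(U)=O=s(V)$); then $t(s(y))=t(t(y'))=t(y')=s(y)$, using $t\circ t=t$. This is valid for every $y$ in the (shrunk) domain of $s$, which is what we need. The symmetric identity $s\circ t=t$ is proved the same way but is not actually required for this lemma. I expect the writeup to be short; the main thing to be careful about is the bookkeeping of domains so that all compositions are defined and the chain rule applies on genuinely relatively open subsets of the partial quadrant, which is exactly the setting in which Theorem \ref{sccomp} was stated.
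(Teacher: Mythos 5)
Your proof is correct and takes essentially the same approach as the paper: establish $s=t\circ s$ (which follows because $s(V)=O\subset U$ and $t$ fixes $O$ pointwise, since $O=t(U)$ and $t\circ t=t$), then invoke the chain rule together with the $\ssc^+$-smoothness of $t$. The domain-shrinking you worry about is in fact unnecessary — $s(V)=O\subset U$ already guarantees that $t\circ s$ is defined on all of $V$ — but this does not affect the correctness of your argument.
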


\begin{proof}
By assumption, the sc-smooth map  $t:U\rightarrow U$  satisfies $t\circ t=t$ and $t(U)=O$. In addition, $t\colon U\to U^1$ is  sc-smooth. 
If now $s\colon V\rightarrow V$ is a sc-smooth retraction onto $s(V)=0$, then $t(s(v))=s(v)$ and hence 
$$
s = t\circ s.
$$
In view of the properties of $t$, the composition 
$t\circ s\colon V\to V^1$ is sc-smooth and hence 
$s\colon V\rightarrow V^1$ is sc-smooth, as claimed. 
\end{proof}

 
\begin{definition}[{\bf $\ssc^+$-retract}] \index{D- Sc$^+$-retract}
The sc-smooth retract $(O,C,E)$ is a 
{\bf $\ssc^+$-retract}, if there exists a relatively open subset 
$U\subset C$ and a $\ssc^+$-retraction $t\colon U\rightarrow U$ onto  $t(U)=O$.
\end{definition}

In view of the Lemma \ref{L-L} the choice of $t$ is irrelevant, being a $\ssc^+$-retract is an intrinsic property of the sc-smooth retract $(O, C, E)$.


\begin{lemma}\label{new_lemma_4.8}\index{L- Image of sc$^+$-retraction}
If $r\colon U\to U$ is a $\ssc^+$-retraction onto the sc-smooth retract $(O, C, E)$, then 
\begin{itemize}
\item[{\em (i)}] $r(U)=O=O_\infty$, i.e., consists of smooth points.
\item[{\em (ii)}] At every point $o\in O$, the tangent space $T_oO=Dr(o)E$ is finite-dimensional.
\end{itemize}
\end{lemma}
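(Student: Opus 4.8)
The plan is to exploit the defining property of an $\ssc^+$-retraction, namely that $r$ lands one level higher, together with the idempotency $r\circ r=r$ and the regularizing behaviour this forces, to conclude first that every point of $O$ is smooth and then that the tangent space at such a point is the image of a compact projection and hence finite-dimensional.

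For (i), I would argue as follows. Let $o\in O=r(U)$, so $o=r(x)$ for some $x\in U$. Since $x\in U_0$ and $r$ is a $\ssc^+$-retraction, $r(x)\in U_1$, hence $o\in U_1$ and $o=r(o)$ (because $r$ is a retraction onto $O$). Now iterate: assuming $o\in U_m$, the $\ssc^+$-property gives $r(U_m)\subset U_{m+1}$, so $o=r(o)\in U_{m+1}$. By induction $o\in U_m$ for all $m$, i.e. $o\in U_\infty$, hence $O=O_\infty$ and every point of $O$ is smooth.

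For (ii), fix $o\in O=O_\infty$. Since $o$ is a smooth point, the linearization $Dr(o)\colon E\to E$ is a bounded linear operator which, by the discussion following Definition \ref{sc_smooth_retract} applied to $o\in O^1$ (using $r\circ r=r$ and the chain rule), is a projection, and $T_oO=Dr(o)E$ is its image. The key extra input is that $r$ is a $\ssc^+$-retraction: $r\colon U\to U^1$ is sc-smooth, so by the remark after Proposition \ref{x1} the linearization of $r\colon U\to E^1$ at the smooth point $o$ is a sc-operator $E\to E^1$; in particular $Dr(o)$, viewed as a map $E_0\to E_1$, is bounded. Composing with the compact inclusion $E_1\hookrightarrow E_0$ (Definition \ref{sc-structure}(1)) shows that $Dr(o)\colon E_0\to E_0$ is a compact operator. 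A compact projection has finite-dimensional range, so $T_oO=Dr(o)E$ is finite-dimensional.

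The step I expect to require the most care is the second half of (ii): verifying cleanly that the $\ssc^+$-property of $r$ upgrades $Dr(o)$ from a mere sc-operator $E\to E$ to an operator that factors through $E_1$, so that compactness of $E_1\hookrightarrow E_0$ can be invoked. This is where one must be precise about which level the linearization is computed on and appeal to Proposition \ref{x1} (or equivalently to the definition of sc-smoothness of $r\colon U\to U^1$) rather than just to $r$ being sc-smooth as a self-map. Everything else is a short induction and the standard fact that a compact idempotent has finite rank.
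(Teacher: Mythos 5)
Your proof is correct and follows essentially the same route as the paper: the induction on levels using $r(U_m)\subset U_{m+1}$ together with $r\circ r=r$ for (i), and the observation that $Dr(o)$ is a $\ssc^+$-operator (hence a compact idempotent on $E_0$) for (ii). The paper states the compactness of $Dr(o)$ more tersely, simply invoking that a $\ssc^+$-operator is levelwise compact, whereas you spell out the factorization through the compact inclusion $E_1\hookrightarrow E_0$; this is the same argument with more detail.
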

\begin{proof}\mbox{}
(i)\, 
 If $x\in U$, then, by definition, $r(x)\in U_1$ and using 
$r\circ r=r$, $r(x)=r(r(x))\in U_2$. Continuing this way we conclude that $r(x)\in \bigcap_{m\geq 0} U_m=U_\infty$. Hence $r(U)=O\cap U_\infty=O_\infty$.

\noindent (ii)\, At the point $o\in O$, the linearization 
$Dr(o)\colon E\to E$ is, in view of (i),  well-defined, and it is a 
$\ssc^+$-operator, since $r$ is a $\ssc^+$-map. Consequently, $Dr(o)\colon E\to E$ is a compact operator between every level. Therefore, the image of the projection $Dr(o)$, namely $Dr(o)E=T_oO$ must be finite-dimensional. This proves the lemma.

\end{proof}

The $\ssc^+$-retracts are characterized by the following proposition.
\begin{proposition}\label{frank}\index{P- Characterization of sc$^+$-retractions}
A sc-smooth retract $(O,C,E)$ is a $\ssc^+$-retract  if and only if  $O_0=O_\infty$ and ${\mathbbm 1}_0^1\colon O\rightarrow O^1$ is sc-smooth.
\end{proposition}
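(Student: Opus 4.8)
The plan is to prove the two directions of the equivalence separately, both being essentially immediate once the groundwork from Lemma \ref{new_lemma_4.8} and Lemma \ref{L-L} is in place. For the forward implication, assume $(O,C,E)$ is a $\ssc^+$-retract, so there is a $\ssc^+$-retraction $t\colon U\to U$ onto $t(U)=O$. By Lemma \ref{new_lemma_4.8}(i), $O=O_\infty$, which (since the filtration on $O$ is $O_m=O\cap E_m$) means $O_m=O_0$ as sets for every $m$. To see that ${\mathbbm 1}_0^1\colon O\to O^1$ is sc-smooth we use the definition of a sc-smooth map between retracts (Definition \ref{tangent_retract}): we must check that the composition ${\mathbbm 1}_0^1\circ t = t\colon U\to U^1$ is sc-smooth, and this is precisely the defining property of a $\ssc^+$-retraction. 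Hence ${\mathbbm 1}_0^1$ is sc-smooth.

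For the reverse implication, assume $O_0=O_\infty$ and that ${\mathbbm 1}_0^1\colon O\to O^1$ is sc-smooth. Pick any sc-smooth retraction $r\colon U\to U$ with $r(U)=O$, which exists because $(O,C,E)$ is a sc-smooth retract. I want to show $r$ is already a $\ssc^+$-retraction. First, since $O=O_\infty$, the map $r\colon U\to U$ actually has image in $O_\infty = O\cap E_m$ for every $m$, so in particular $r(U_m)\subset U_{m+1}$; this takes care of the level-shift condition on the underlying sets. The substantive point is that $r\colon U\to U^1$ is sc-smooth. Here I would factor $r$ as the composition
$$
U \xrightarrow{\ r\ } O \xrightarrow{\ {\mathbbm 1}_0^1\ } O^1 \hookrightarrow U^1,
$$
where the first arrow is sc-smooth as a map into $O$ (viewing $O$ as the retract, this is just $r$ itself), the second is sc-smooth by hypothesis, and the inclusion $O^1\hookrightarrow U^1$ is sc-smooth since $O^1$ carries the induced filtration from $U^1$. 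Applying the chain rule for sc-smooth maps between retracts (the theorem following Definition \ref{tangent_retract}), the composite $r\colon U\to U^1$ is sc-smooth. Therefore $r$ is a $\ssc^+$-retraction, and $(O,C,E)$ is a $\ssc^+$-retract by definition.

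The main thing to be careful about — and the one place where a naive argument could go wrong — is the bookkeeping with the distinction between $U\to U$ and $U\to U^1$ as sc-smooth maps, which is exactly the subtlety flagged in Remark \ref{r:r}: sc-smoothness of a map is genuinely sensitive to which sc-structure sits on the target, so the factorization above must route through the \emph{sc-smooth} identity ${\mathbbm 1}_0^1$ (not the trivial set-theoretic identity) in order to gain the level shift. Once that is acknowledged, both directions reduce to an application of the chain rule together with the characterization of sc-smooth maps between retracts via composition with a retraction. There are no hard estimates here; the content is entirely in correctly invoking Definition \ref{tangent_retract}, Lemma \ref{new_lemma_4.8}, and the chain rule, and in observing (via Lemma \ref{L-L}) that the conclusion is independent of the choice of retraction $r$, so that "$\ssc^+$-retract" is a well-defined intrinsic property — which is what makes the stated characterization meaningful.
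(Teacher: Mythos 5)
Your proof is correct and follows essentially the same route as the paper's: the forward direction reduces to unwinding Definition \ref{tangent_retract} for ${\mathbbm 1}_0^1\circ t=t$, and the reverse direction uses the identical factorization $U\xrightarrow{r}O\xrightarrow{{\mathbbm 1}_0^1}O^1\hookrightarrow U^1$ together with the chain rule. The closing remark invoking Lemma \ref{L-L} for intrinsicness is a welcome clarification, though the paper handles that point outside the proof of the proposition itself.
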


\begin{proof}

If $(O,C,E)$ is a $\ssc^+$-retract, there exists a $\ssc^+$-retraction $r\colon U\to U$ onto  $r(U)=O$. 
The retract has the induced M-polyfold structure  defined  by the scaling $O_m=r(U_m)$ and $O^1$ inherits this structure, so that 
$(O^1)_m=O_{m+1}$, for all $m\geq 0$.
In view of Lemma \ref{new_lemma_4.8}, $O=O_\infty$. Hence the identity map ${\mathbbm 1}_0^1\colon O\to O^1$ is well-defined. According to Definition \ref{tangent_retract}, the map  
${\mathbbm 1}_0^1$ is sc-smooth, provided  the composition 
${\mathbbm 1}_0^1\circ r \colon U\to E^1$ is sc-smooth, which is the case because  $r$ is a $\ssc^+$-map.

In order to prove the opposite direction we assume that $O=O_\infty$ and ${\mathbbm 1}_0^1\colon O\to O^1$ is sc-smooth. Let $r\colon U\to U$ be a sc-smooth retraction onto $O=r(U)$. Then the composition 
$$
U\xrightarrow{r} O\xrightarrow{{\mathbbm 1}_0^1} O^1\xrightarrow{\textrm{inclusion}} U^1
$$
is sc-smooth. The composition agrees with the map $r\colon U\to U^1$ and the lemma is proved.

\end{proof}

The proof of Theorem \ref{XXX--} will make use of the following technical result.

\begin{proposition}
\label{FFF}\index{T- Tangent representation}

Let $(O,C,E)$  be a $\ssc^+$-retract and $t\colon U\to U$ a $\ssc^+$-retraction of the the relatively open subset $U\subset C$ onto $O=t(U)$. We assume that $0\in O$ and the the tangent space $T_0O$ (which  by Lemma \ref{new_lemma_4.8} is finite-dimensional) is in good position to the partial quadrant $C$.   We denote by $Y$ a good complement of $T_0O$, so that $E=T_0O\oplus Y,$  and abbreviate by 
$$p=Dt(0)\colon E=T_0O\oplus Y\rightarrow T_0O$$ 
the sc-projection. 

Then there exist  open neighborhoods ${\mathcal U}$ of $0$ in  $O$ and ${\mathcal V}$ of $0$ in $T_0O\cap C$ such  that
$$
p\colon {\mathcal U}\rightarrow {\mathcal V}
$$
is a sc-diffeomorphism.

\end{proposition}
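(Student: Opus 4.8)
The plan is to produce the sc-diffeomorphism $p\colon{\mathcal U}\to{\mathcal V}$ by inverting the sc-smooth map $p$ restricted to $O$ near $0$, using the structure of the $\ssc^+$-retraction $t$ together with the good position hypothesis. First I would set up the local picture: since $t$ is a $\ssc^+$-retraction onto $O$, Lemma \ref{new_lemma_4.8} gives $O=O_\infty$ and $T_0O=Dt(0)E$ finite-dimensional, and Proposition \ref{frank} shows that $O^1=O$ as M-polyfolds via the sc-diffeomorphism ${\mathbbm 1}_0^1$. By the good position hypothesis we have $E=T_0O\oplus Y$ with $Y$ a good complement, and $p=Dt(0)$ is the sc-projection onto $T_0O$; by Proposition \ref{pretzel}, $T_0O\cap C$ is a partial quadrant in the finite-dimensional space $T_0O$. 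Since $t$ is $\ssc^+$ and smoothing, the composition $p\circ t\colon U\to T_0O$ lands in a finite-dimensional target, so by Proposition \ref{lower} (or directly Corollary \ref{ABC-y}) it is classically smooth as a map into $T_0O$ once we pick coordinates; thus the restriction $p|O\colon O\to T_0O$ is a classically smooth map between a subset of $C$ and the finite-dimensional partial quadrant $T_0O\cap C$.

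The key step is to verify that $p|O$ is, near $0$, a $C^\infty$-diffeomorphism onto a relatively open neighborhood of $0$ in $T_0O\cap C$, and for that I would invoke the Quadrant Inverse Function Theorem, Theorem \ref{QIFT}. Concretely, I would first reduce to the case where $O$ is itself (the image of) a retract sitting in a finite-dimensional partial quadrant: the map $u\mapsto p\circ t(u)$ on $U$ is a $\ssc^+$-retraction-type construction, but more efficiently, since $T_0O\subset E$ is finite-dimensional and in good position, one applies Theorem \ref{HKL}-style reasoning or, more directly, works with the sc-smooth chart $t|_{{\mathcal U}'}$ of $O$ and the map $p$ between the finite-dimensional quadrants. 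The hypotheses of Theorem \ref{QIFT} that need checking are: (1) $p(0)=0$ (clear) and $D(p|O)(0)(T_0O\cap C)=T_0O\cap C$ — here $D(p|O)(0)=p|_{T_0O}={\mathbbm 1}_{T_0O}$ since $Dt(0)$ is a projection onto $T_0O$, so the differential is the identity and maps the quadrant onto itself; and (2) the degeneracy-index-preserving condition $d_{T_0O\cap C}(x)=d_{T_0O\cap C}(p(x))$ for $x$ near $0$ in $O$. The identity-differential observation is what makes this tractable: near $0$, $p|O$ is $C^1$-close to the identity on $T_0O$, so condition (2) in the form of Proposition \ref{hongkong} applies after composing with $D(p|O)(0)^{-1}={\mathbbm 1}$.

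The main obstacle I expect is verifying the degeneracy-index preservation, i.e. that $p$ restricted to $O$ does not change the number of vanishing boundary coordinates. This is exactly where the good position of $T_0O$ to $C$ and the tameness of $t$ enter. Since $t$ is a tame-type $\ssc^+$-retraction one has $d_C(t(u))=d_C(u)$, and since $T_0O$ is in good position with good complement $Y$, writing a point $o\in O$ near $0$ as $o = p(o) + (o - p(o))$ with $o-p(o)\in Y$ small relative to $p(o)$, the defining property of a good complement (Definition \ref{mission1}) gives that $o\in C$ iff $p(o)\in C$, and more refined versions (Lemma \ref{big-pretzel_1a}, Lemma \ref{good_pos}) upgrade this to equality of degeneracy indices $d_C(o)=d_{T_0O\cap C}(p(o))$; combined with $d_O(o)=d_C(o)$ (Proposition \ref{tame_equality}) and the fact that $o$ and $p(o)$ lie on the graph of a $C^1$-map with vanishing differential at $0$, this yields the index-preservation hypothesis of Theorem \ref{QIFT}. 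Once Theorem \ref{QIFT} produces relatively open neighborhoods ${\mathcal V}\subset T_0O\cap C$ and a smooth inverse, I would set ${\mathcal U}=(p|O)^{-1}({\mathcal V})$, observe it is open in $O$, and note that $p$ and its inverse, being $C^\infty$ between relatively open subsets of partial quadrants in finite-dimensional vector spaces, are in particular sc-diffeomorphisms for the induced (constant) sc-structures, which by $O=O_\infty$ and Proposition \ref{frank} agree with the inherited structure. This completes the argument.
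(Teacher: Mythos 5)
Your plan takes a genuinely different route from the paper, but it has a gap that I don't see how to close without essentially re-discovering the paper's argument. The paper does not try to invert $p|_O$ directly via the Quadrant Inverse Function Theorem. Instead it works in the \emph{ambient} relatively open set $U\subset C\subset E$: it introduces the trivial strong bundle $U\triangleleft Y\to U$, defines the sc-smooth section with principal part ${\bf f}(u)=({\mathbbm 1}-p)(u-t(u))$, shows that ${\bf f}$ is regularizing and, after subtracting a $\ssc^+$-section, conjugated to a basic germ (so $f$ is a sc-Fredholm section), observes $D{\bf f}(0)=({\mathbbm 1}-p)$ is surjective onto $Y$ with kernel $T_0O$ in good position to $C$, and then applies Corollary~\ref{LGS2} (the germ implicit function theorem) to parametrize the zero set as a graph $a\mapsto a+\delta(a)$ over a relatively open set in $T_0O\cap C$, with $\delta(0)=0$, $D\delta(0)=0$. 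Two further elementary lemmata verify that all nearby solutions of ${\bf f}=0$ are of this form and actually lie in $O$, after which $p$ restricted to the graph is manifestly the sc-diffeomorphism onto its image.

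The gap in your proposal is a circularity in the reduction step. Theorem~\ref{QIFT} (and Proposition~\ref{hongkong}) are stated for smooth maps between relatively open subsets of a partial quadrant in $\R^n$. You want to apply it to $p|_O\colon O\to T_0O\cap C$, but $O$ is a priori only a $\ssc^+$-retract sitting inside the infinite-dimensional $E$; it is not yet known to be a finite-dimensional manifold-with-corners, let alone a relatively open subset of a partial quadrant. You acknowledge the issue (``I would first reduce to the case where $O$ is itself the image of a retract sitting in a finite-dimensional partial quadrant'') and gesture at ``Theorem~\ref{HKL}-style reasoning,'' but Theorem~\ref{HKL} requires the $n$-dimensional tangent germ property, i.e.\ a local graph description of $O$ over $T_0O\cap C$ with $\delta(0)=0$, $D\delta(0)=0$ --- which is precisely the conclusion of Proposition~\ref{FFF}. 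You cannot invoke it to prove itself. The other suggested shortcut, ``works with the sc-smooth chart $t|_{\mathcal{U}'}$ of $O$,'' does not resolve this either: $t$ is a retraction onto $O$, not a chart of $O$ into a finite-dimensional local model.

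A secondary difficulty: the index-preservation hypothesis of Theorem~\ref{QIFT} that you propose to check via tameness of $t$ (``$d_C(t(u))=d_C(u)$'') is not available, since $t$ is only assumed to be a $\ssc^+$-retraction, not a tame one; Proposition~\ref{FFF} makes no tameness hypothesis. The arguments you want to draw from Proposition~\ref{tame_equality} and Lemma~\ref{big-pretzel_1a} presuppose the graph description of $O$ (or tameness of the retraction) and so again run into the same circularity. The paper's route sidesteps all of this precisely because the implicit function theorem machinery is set up on the ambient $U$, which is trivially tame, and produces the finite-dimensional graph parametrization as output rather than requiring it as input.
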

 
We 
note that $({\mathcal V},T_0O\cap C,T_0O)$ is a local M-polyfold model, because $T_0O\cap C$ is a  partial quadrant in $T_0O$, in view of Proposition \ref{pretzel}. 
The proof of Proposition \ref{FFF} is based on the implicit function theorem for sc-Fredholm sections proved in the previous chapter.



\begin{proof}[Proof of Proposition \ref{FFF}]

We recall that $U$ is a  relatively open neighborhood of $o$ in  the  partial quadrant  $C$ of the sc-Banach space $E$.
Moreover,  $t\colon U\rightarrow U$ is  a $\ssc^+$-retraction onto $O=t(U)$ which contains $0$. In view of 
Lemma \ref{new_lemma_4.8}, the tangent space $T_0O=Dt(0)E$ is a smooth subspace of finite dimensions. By assumption, $T_0O$ lies in good position to $C$. Accordingly, there exists a good sc-complement $Y$ of $T_0O$ in $E$ so that 
$$E=T_0O\oplus Y.$$
 It has the property that there 
exists  $\varepsilon_0>0$  such that for $a+y\in T_0O\oplus Y$ satisfying $\abs{y}_0\leq \varepsilon \abs{a}_0$ the  statements $a\in C$    
and $a+y\in C$ are equivalent.

We shall use in the following the notation $u=a+y\in T_0O\oplus Y$ and denote by $p$ the  sc-smooth projection 
$$p\colon E=T_0O\cap Y\rightarrow T_0O, $$
defined by  $p=Dt(0)$.

Now we consider the local strong bundle 
$$
\pi\colon U\triangleleft Y\rightarrow U,
$$
and  the sc-smooth section $f\colon U\to U\triangleleft Y$ , $f(u)  = (u,{\bf f}(u))$, whose principal part ${\bf f}\colon U\to Y$
is defined  by
$$
 {\bf f}(u) = ({\mathbbm 1}-p)\bigl( u-t(u)\bigr).
$$
We observe that if $u\in O$, then $u=t(u)$ and hence ${\bf f}(u)=0.$ We shall show later on that there are no other solutions $u\in U$ of ${\bf f}(u)=0$ locally near $u=0$.

\end{proof}

\begin{lemma}\label{new_lemma4.11}
The section $f$ is sc-Fredholm.
\end{lemma}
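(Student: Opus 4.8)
The goal is to verify that the section $f\colon U\to U\triangleleft Y$ with principal part ${\bf f}(u)=({\mathbbm 1}-p)(u-t(u))$ is sc-Fredholm, i.e. that it is sc-smooth, regularizing, and that its germ at every smooth point of $U$ is a sc-Fredholm germ. Sc-smoothness of $f$ is immediate: $t\colon U\to U$ is a $\ssc^+$-retraction, hence in particular sc-smooth, $p=Dt(0)$ is a (fixed) bounded linear sc-operator, so ${\bf f}$ is a composition and linear combination of sc-smooth maps, and the bundle $U\triangleleft Y\to U$ is the trivial strong bundle, so there is nothing to fill and the local model is already of the required form. The plan is to exhibit ${\bf f}$ itself, after subtracting a suitable $\ssc^+$-section, as a basic germ at each smooth point, which simultaneously yields the Fredholm-germ property and (via the regularizing property of sc-Fredholm operators together with Theorem~\ref{save}) the regularizing property of the section.

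First I would work near $u=0$ and use the splitting $E=T_0O\oplus Y$. Write $u=a+y$ with $a\in T_0O$, $y\in Y$. Since $t$ is a $\ssc^+$-retraction, Lemma~\ref{new_lemma_4.8} gives $t(U)=O=O_\infty$ and $Dt(u)$ is a $\ssc^+$-operator at each $u\in O$; moreover $p=Dt(0)$ is a projection onto the finite-dimensional space $T_0O$ with kernel $Y$. I would then compute the principal part in these coordinates:
$$
{\bf f}(a+y)=({\mathbbm 1}-p)\bigl((a+y)-t(a+y)\bigr)=y-\bigl[({\mathbbm 1}-p)t(a+y)\bigr],
$$
using $({\mathbbm 1}-p)(a+y)=y$ because $a\in T_0O=\operatorname{im}p$ and $y\in Y=\ker p$. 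Set $B(a,y):=({\mathbbm 1}-p)\,t(a+y)\in Y$. Because $t$ is a $\ssc^+$-map, $B$ takes values on one level higher and, crucially, $D_2B(a,y)=({\mathbbm 1}-p)D_2t(a+y)$ is a $\ssc^+$-operator, hence compact on each level with arbitrarily small operator norm once we are close to $0$ on the appropriate level; from this the contraction estimate $\abs{B(a,y)-B(a,y')}_m\le\varepsilon\abs{y-y'}_m$ for $(a,y),(a,y')$ small on level $m$ follows by the mean value inequality exactly as in Lemma~\ref{new_Lemma3.9}. Thus $P\circ f(a,y)=y-B(a,y)$ is of the form required in Definition~\ref{BG-00x} (here the target is $\R^N\oplus W$ with $N=0$, $W=Y$, and the finite parameter is $a$ ranging in a partial quadrant of $T_0O$, which by Proposition~\ref{pretzel} is legitimate since $T_0O$ is in good position to $C$). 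Taking $s$ to be the constant $\ssc^+$-section $s(u)={\bf f}(0)=0$, the germ $(f-s,0)=(f,0)$ is already a basic germ, so $(f,0)$ is a sc-Fredholm germ.

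Next I would handle an arbitrary smooth point $u_0\in U$ rather than just $0$. Since $X_0=X_\infty$ type behaviour holds for $U$ only inside $O$, I would distinguish: if $u_0\in O$, then $t(u_0)=u_0$, $Dt(u_0)$ is a $\ssc^+$-projection onto the finite-dimensional $T_{u_0}O$, and the very same computation — replacing $p$ by $Dt(u_0)$ and the complement $Y$ by $\ker Dt(u_0)$, noting $O$ is tame so by Proposition~\ref{IAS-x} this complement sits inside $E_{u_0}$, and invoking tameness to get $T_{u_0}O$ in good position — shows $(f,u_0)$ is conjugated to a basic germ after subtracting the constant $\ssc^+$-section $s(u)={\bf f}(u_0)$. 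If $u_0\in U\setminus O$ then ${\bf f}(u_0)\ne 0$ in general, but a solution germ near such a point is empty on a neighborhood once we are close enough to the solution set (by Theorem~\ref{save} the zero set of $f$ near $0$ lies in $O$, and more precisely the fillings/contraction structure localizes near solutions), so there is no constraint there; in any case the \emph{definition} of sc-Fredholm germ only needs to be checked at points which are potentially in the zero set, and the argument above at points of $O$ suffices after shrinking. Finally, regularizing: if $u\in U_m$ and ${\bf f}(u)=y-B(u)\in Y_{m+1}$, then $y=B(u)+(\text{level }m+1\text{ term})$; since $B$ is $\ssc^+$ it raises level, so $y\in Y_{m+1}$ whenever the level-$m$ components agree, and feeding this back through $u=a+y$ with $a\in T_0O\subset E_\infty$ gives $u\in E_{m+1}$ — equivalently one invokes Proposition~\ref{regular} for the sc-Fredholm operator $D{\bf f}$ whose Fredholm property we just established. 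The main obstacle I anticipate is the bookkeeping for points $u_0$ outside $O$ and making the ``no nearby solutions'' reduction precise; the basic-germ verification at points of $O$ is routine once the $\ssc^+$ character of $B$ and its $D_2$-derivative is in hand.
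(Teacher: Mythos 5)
The sketch at $u=0$ is on the right track, but there are three genuine gaps that the paper's proof is specifically designed to avoid.

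First, the claim ``$N=0$, $W=Y$'' in your identification of ${\bf f}$ with a basic germ presumes that the splitting $E=T_0O\oplus Y$ is compatible with the partial quadrant structure of $C$. It is not, in general: $Y$ is a \emph{good} complement of $T_0O$, and the paper explicitly notes that such a $Y$ need not be contained in $C$. Definition~\ref{BG-00x} requires the quadrant constraints to live entirely in the finite-dimensional factor, so if $Y$ carries some of the coordinate constraints of $C$, your normal form is not a basic germ. The paper repairs this by writing $Y=B\oplus W$ with $B$ finite-dimensional and $W\subset C$, so that $C=(C\cap(T_0O\oplus B))\oplus W$, and then taking $T_0O\oplus B$ as the finite parameter space and $W$ as the contraction variable. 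This step is not cosmetic; without it you cannot invoke Definition~\ref{BG-00x}.

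Second, your treatment of a general smooth point $u_0$ does not go through. You suggest ``replacing $p$ by $Dt(u_0)$,'' but $p=Dt(0)$ is hard-wired into the definition of ${\bf f}$; you cannot change it. Near $u_0$ you have ${\bf f}(u_0+v)=({\mathbbm 1}-p)\bigl((u_0+v)-t(u_0+v)\bigr)$, and the projection $({\mathbbm 1}-p)$ is not aligned with the splitting $T_{u_0}O\oplus\ker Dt(u_0)$, so the computation that worked at $0$ does not reproduce itself. You also cannot restrict attention to points where $f$ vanishes: Definition~\ref{def_again_fedholm_sect} requires the sc-Fredholm germ property at \emph{every} smooth point, not just at potential zeros, so ``no nearby solutions'' is not an escape. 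The paper's device is to change coordinates $v\mapsto u+v$ and subtract the tailored $\ssc^+$-section ${\bf s}(v)={\bf f}(u)+({\mathbbm 1}-p)\bigl[t(u)-t(u+v)\bigr]$; a short algebraic cancellation then gives $({\bf g}-{\bf s})(v)=({\mathbbm 1}-p)v$, which is a basic germ whose contraction part is identically zero. This works at every smooth $u\in U$ uniformly, with the partial quadrant $C_u$ replacing $C$. Your regularizing argument and the Fredholm property of $D{\bf f}(0)={\mathbbm 1}-p$ are fine, but the germ verification needs the $Y=B\oplus W$ refinement and the subtraction of the paper's specific $\ssc^+$-section rather than a constant one.
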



\begin{proof}

In order to verify that the sc-smooth section ${\bf f}$ is regularizing, we take $u=a+y\in U_m$ and assume that 
${\bf f}(u)=y-({\mathbbm 1}-p)\circ t (a+y)\in Y_{m+1}.$
Since $t$ is $\ssc^+$-smooth, we conclude  that $y\in Y_{m+1}$ and since $a$ is smooth, that  $a+y\in U_{m+1}$, showing that ${\bf f}$ is indeed regularizing.

The derivative of ${\bf f}$ at the smooth point $u\in U$ has the form
\begin{equation*}
\begin{split}
D{\bf f}(u)h &=({\mathbbm 1}-p)(h - Dt(u)h)\\
& = h -(p(h)-({\mathbbm 1}-p)Dt(u)h)
\end{split}
\end{equation*}
for $h\in E$. Since  $D{\bf f}(u)$ is a  $\ssc^+$-operator and $p$ is sc-smooth, the operator $D{\bf f}(0)$ is a perturbation of the identity operator by a $\ssc^+$-operator, and therefore a sc-Fredholm operator, in view of Proposition 
\ref{prop1.21}.

Next we have to verify that at a given smooth point $u\in U$, a filled version of ${\bf f}$, after modification by a $\ssc^+$-section ${\bf s}$, is conjugated to a basic germ. However, in the case at hand, we already 
work on relatively open subsets of a partial quadrant, so that  a filling is not needed. We merely have to find a suitable $\ssc^+$-section to obtain a section  which is conjugated to a basic germ.

In general, the good complement $Y$ in the decomposition 
$E=T_0O\oplus Y$ is not a subset of $C$. 
However, we claim that there exists  a finite-dimensional sc-Banach space $B$ and a sc-Banach space $W$ contained in $C$,  such  that $Y=B\oplus W$,  which leads to the sc-decomposition
$$
E= (T_0O\oplus B)\oplus W.
$$

Indeed, if $E=\R^n\oplus F$ and the partial quadrant $C\subset E$ is of  the form $C =[0, \infty)^n\oplus F$ where $F$ is a sc-Banach space, then, identifying the sc-Banach space $F$with $\{0\}^n\oplus F$, we let $B$ to be an  algebraic complement of $Y\cap F$ in $Y$ and $W=Y\cap F$, so that $Y=B\oplus  W$. Clearly, $W\subset C$ and in order to verify that $B$ is finite-dimensional, we take linearly independent vectors $(a^1, w^1), \ldots , (a^l, w^l)$ in $B$ and claim that the  vectors $a^1, \ldots ,  a^l$ are linearly independent in $\R^{n}$. If 
$\sum_{i=1}^l\lambda_ia^l=0$, then $\sum_{i=1}^l\lambda_i(a^i, w^i)=(0, w)\in B\cap W$. Since 
$B\cap W=\{0\}$, we conclude $\lambda_i=0$ and $l\leq n$. Hence $\dim B\leq n$. In the general case, there exists a sc-isomorphism $L\colon E\to \R^n\oplus F$ mapping $C$ onto 
the partial quadrant $C'=[0,\infty)^n\oplus F$.  Then the subspace $L(T_0O)$ is in good position to the partial quadrant $C'$ and $L(Y)$ is a good complement of $L(T_0O)$ in $\R^n\oplus F$. 
By the above argument, we find a finite-dimensional subspace $B'$ and a sc-Banach space $W'$ contained in $C'$, so that $L(T_0O)=B'\oplus W'$.  With $B=L^{-1}B'$ and $W=L^{-1}W'$,  we conclude that $B$ is finite-dimensional, $W$ is contained in $C$ and $Y=B\oplus W$, as claimed.

Since $W\subset C$,
$$C=\bigl(C\cap (T_0O\oplus B)\bigr)\oplus W.$$
The finite-dimensional space $T_0O\oplus B$ is in good position to $C$ because it has a sc-complement contained in $C$. Therefore, the set $C\cap (T_0O\oplus B)$ is a partial quadrant in $T_0O\oplus B$, in view of Proposition \ref{pretzel}. We shall represent an element $u\in U$ as 
$$u=a+b+w\in T_0O\oplus B\oplus W,$$
where $b+w=y\in Y=B\oplus W.$
Furthermore, we denote by  $P$ the projection 
$$P\colon Y=B\oplus W\to W.$$
In accordance with the notation in the definition of a basic germ, we view $Y$ as $\R^N\oplus W$, where 
$N=\dim(B)$,  and $P$  as the sc-projection $P\colon \R^N\oplus W\to W$. 

If $u\in C$ is a smooth point near $0$ we denote by $C_u$ the associated partial quadrant introduced in Definition \ref{new_def_2.33}. The partial quadrant $C_u$ contains $C$, so that 
$$C_u=(C_u\cap (T_0O\oplus B))\oplus W.$$
Fixing the smooth point $u\in C$, the map $v\mapsto u+v$ for $v$ near $0$ in $C_u$, maps an open neighborhood of $0$ in $C_u$ to an open neighborhood of $u$ in $C$. We now study the principal part ${\bf f}\colon U\rightarrow Y$ near the fixed smooth point $u$, make  the change of coordinate $v\mapsto  u+v$, and define the section 
${\bf g}(v)$ by 
$${\bf g}(v):={\bf f}(u+v).$$
In addition, we define a $\ssc^+$-section ${\bf s}$ near 
near $0$ in $C_u$,  by 
$${\bf s}(v) ={\bf f}(u) +( {\mathbbm 1}-p)[  t(u)-t(u+v)]. $$
It has the property that  at $v=0$, 
$$
{\bf g}(0)-{\bf s}(0)=0.
$$
In the decomposition $v=(a+b)+w$ we interpret $a+b\in C_u\cap (T_0O\oplus B)$ as a finite-dimensional parameter and consider 
the germ 
$$(a+b,w)\mapsto  ({\bf g}-{\bf s})(a+b+w)\in Y.$$
Then 
\begin{equation*}
\begin{split}
 P({\bf g}-{\bf s})(a+b+w)=P({\mathbbm 1}-p)(a+b+w) =P(b+w)=w.
\end{split}
\end{equation*}
Identifying an open neighborhood of $0$ in $C_u\cap (T_0O\oplus B)$ with an open neighborhood of $0$ in some partial quadrant $[0,\infty)^k\oplus {\mathbb R}^{n-k}$, we conclude that ${\bf g}-{\bf s}$ is a basic germ, whose contraction part happens to be  identically zero. We have used the local  strong bundle  isomorphism $(u+v,b+w)\mapsto (v,b+w)$.

Consequently,  $(f,u)$ is a sc-Fredholm germ and the proof of lemma \ref{new_lemma4.11}
is  complete.

\end{proof}

The sc-Fredholm section ${\bf f}$ vanishes at the point $u=0$, so that ${\bf f}(0)=0$. Its linearization is  is given by 
$$
D{\bf f}(0)(h) = ({\mathbbm 1}-p)h,\quad h\in E.
$$
Therefore, $Df(0)$ has the kernel $\ker(D{\bf f}(0))=T_0O=Dt(0)E$,  which, by assumption,  is in good position to the partial quadrant $C$. Moreover, its image is $ ({\mathbbm 1}-p)E=Y$, so that $Df(0)$ is surjective.

We are in position to apply the implicit function theorem, Corollary  \ref{LGS2},  and conclude that the local solution set of ${\bf f}$ near $u=0$ is represented by 
$$\{u\in U_0\, \vert \, {\bf f}(u)=0\}=\{u=a+\delta (a)\in T_0O\oplus Y\, \vert \, a\in V\}\subset U.$$
Here  $U_0\subset U$ is an open neighborhood of $0$ in $U$ and $V$ is an open neighborhood of $0$ in the partial quadrant $T_0O\cap C$ of $T_0O$. The map $\delta\colon V\to Y$ is a sc-smooth map satisfying $\delta (0)=0$ and $D\delta (0)=0$.

That all sufficiently small solutions of ${\bf f}(u)=0$ are of the form $a+\delta (a)$, is, in our special case, easily verified directly.

\begin{lemma}\label{new_lemma4.12}
Let $f(a+y)=0$ and $\abs{a}_0<\varepsilon$, $\abs{y}_0<\varepsilon$. If $\varepsilon$ is sufficiently small, then 
$y=\delta (a)$.
\end{lemma}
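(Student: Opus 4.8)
The statement asserts the uniqueness clause that was alluded to in the discussion following Lemma~\ref{new_lemma4.11}: sufficiently small zeros of the principal part $\mathbf f$ are automatically of the graph form $a+\delta(a)$. First I would recall the precise setup. We have the $\ssc^+$-retraction $t\colon U\to U$ with $t(U)=O$, the splitting $E = T_0O\oplus Y$ with $Y$ a good complement, the sc-projection $p = Dt(0)$, and the principal part $\mathbf f(u) = (\mathbbm 1 - p)(u - t(u))$. The implicit function theorem (Corollary~\ref{LGS2}) applied to the sc-Fredholm section $f$ produced a relatively open neighborhood $U_0\subset U$ of $0$, a relatively open neighborhood $V\subset T_0O\cap C$ of $0$, and a sc-smooth germ $\delta\colon V\to Y$ with $\delta(0)=0$, $D\delta(0)=0$, such that the local solution set of $\mathbf f=0$ in $U_0$ equals $\{a+\delta(a)\mid a\in V\}$. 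So the entire content of the lemma is to verify that the explicit ball $\{a+y\mid |a|_0<\varepsilon,\ |y|_0<\varepsilon\}$ is contained in $U_0$ for $\varepsilon$ small, together with the matching between the $a$-part and the parameter in $V$.

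The cleanest route, which avoids re-quoting the structure of the proof of Corollary~\ref{LGS2}, is to argue directly by a contraction/fixed-point estimate, exactly as one does for a classical implicit function theorem, using that $\mathbf f$ has the ``contraction normal form'' in the $Y$-direction that was exhibited in the proof of Lemma~\ref{new_lemma4.11} (its contraction part is in fact identically zero). Concretely, write $u = a+y\in T_0O\oplus Y$. Applying $(\mathbbm 1 - p)$ and using $p = Dt(0)$, the equation $\mathbf f(a+y)=0$ reads
\begin{equation}
y = (\mathbbm 1 - p)\,t(a+y).
\end{equation}
Set $B(a,y) := (\mathbbm 1 - p)\,t(a+y)$. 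Since $t$ is $\ssc^+$, hence sc-smooth, and $B(0,0) = (\mathbbm 1 - p)\,t(0) = 0$, and since $D_2 B(0,0) = (\mathbbm 1 - p)\,Dt(0)\big|_Y = (\mathbbm 1 - p)p\big|_Y = 0$, there is, by continuity of $Dt$ near $0$ on level $0$, a constant $\varepsilon>0$ such that $\|D_2 B(a,y)\|_{0}\le \tfrac12$ whenever $|a|_0\le\varepsilon$ and $|y|_0\le\varepsilon$; this is the analogue of the estimate used in Lemma~\ref{LEMMA1}. Consequently, for each fixed $a$ with $|a|_0<\varepsilon$, the map $y\mapsto B(a,y)$ is a contraction on the relevant ball, so there is at most one $y$ with $|y|_0<\varepsilon$ solving $y = B(a,y)$. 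Since $a+\delta(a)$ is such a solution for $a\in V$ (and $|\delta(a)|_0<\varepsilon$ after shrinking $V$, using $\delta(0)=0$ and continuity), uniqueness on level $0$ forces $y = \delta(a)$ for any level-$0$ solution $a+y$ with $|a|_0,|y|_0$ small. One has to check also that $a$ itself lies in $V$: this follows because $a = p\,u$ and $u$ is a solution, so $a$ is in the image of the solution set under $p$, which by the germ-solvability statement is exactly $V$ near $0$; alternatively one shrinks $\varepsilon$ so that $\{|a|_0<\varepsilon\}\cap (T_0O\cap C)\subset V$.

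The only genuinely delicate point — and the main obstacle — is bookkeeping between the level-$0$ statement being asked for and the fact that $\delta$ was obtained as a \emph{germ}, i.e.\ the neighborhood $V$ and the radius within which $\delta$ takes small values depend on the level one wishes to control. Here, however, the lemma only asks for a level-$0$ conclusion ($y=\delta(a)$ as elements, with the hypothesis and conclusion both on level $0$), so a single contraction estimate on level $0$ suffices and no inductive descent through levels is needed; the regularizing property of $f$ (already verified in Lemma~\ref{new_lemma4.11}) guarantees that any such solution is in fact smooth, but this is not even required for the statement. I would therefore organize the write-up as: (i) reduce $\mathbf f(a+y)=0$ to the fixed-point equation $y=B(a,y)$ with $B(0,0)=0$ and $D_2B(0,0)=0$; (ii) choose $\varepsilon$ small so that $B(a,\cdot)$ is a $\tfrac12$-contraction on $\{|y|_0\le\varepsilon\}$ uniformly in $|a|_0\le\varepsilon$, shrinking further so that $\{|a|_0<\varepsilon\}\cap(T_0O\cap C)\subset V$ and $|\delta(a)|_0<\varepsilon$ there; (iii) invoke uniqueness of the contraction fixed point and that $a+\delta(a)$ is one such fixed point to conclude $y=\delta(a)$. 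This completes the proof of Lemma~\ref{new_lemma4.12}.
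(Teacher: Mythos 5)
Your overall strategy --- reformulate $\mathbf{f}(a+y)=0$ as the fixed-point equation $y = ({\mathbbm 1}-p)\,t(a+y)$, then derive uniqueness from a contraction estimate on the right-hand side --- is exactly the paper's. Step (i) is correct, as is the observation that $D_2B(0,0)=({\mathbbm 1}-p)\,p|_Y=0$. What fails is step (ii): you cannot obtain the contraction estimate on level $0$.

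The assertion ``by continuity of $Dt$ near $0$ on level $0$ there is $\varepsilon>0$ with $\|D_2B(a,y)\|_0\le \tfrac12$'' would require $u \mapsto Dt(u)$ to be continuous as a map $U_1 \to \mathscr{L}(E_0, E_0)$ (and even to be defined for $u$ on level $0$). This is precisely what sc-differentiability does \emph{not} provide: as noted right after Definition~\ref{scx}, $x \mapsto Df(x)$ is in general not continuous in the operator norm on the $0$-level. Proposition~\ref{x1} gives continuity of the evaluation map $(u,h)\mapsto Dt(u)h$ and continuity of $u \mapsto Dt(u)$ into $\mathscr{L}(E_m, F_{m-1})$ for $m\ge 1$ --- hence, since $t$ is $\ssc^+$ so that $t\colon U\to U^1$ is sc-smooth, into $\mathscr{L}(E_1,E_1)$ --- but never into $\mathscr{L}(E_0,E_0)$. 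Your stated reason for dismissing the subtlety (that $\delta$ is only a germ) is beside the point, and the conclusion that ``a single contraction estimate on level $0$ suffices and no inductive descent through levels is needed'' is a genuine misconception about the sc-calculus. In the same vein, the regularizing property of $f$ is in fact needed, contrary to your aside: without it, the solutions $a+y$ would only be on level $0$ and $Dt(a+y)$ would not even be defined.

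The paper performs the contraction on \emph{level $1$}. Subtracting the two fixed-point identities $y = ({\mathbbm 1}-p)\,t(a+y)$ and $\delta(a) = ({\mathbbm 1}-p)\,t(a+\delta(a))$ and integrating the derivative along the segment $u(\tau) = a + \tau y + (1-\tau)\delta(a)$ gives
$y - \delta(a) = \left(\int_0^1 ({\mathbbm 1}-p)Dt(u(\tau))\,d\tau\right)(y-\delta(a))$.
Because $t$ is $\ssc^+$, $u \mapsto ({\mathbbm 1}-p)t(u)$ is $C^1$ as a map $U_1 \to E_1$ with derivative depending continuously on $u$ in $\mathscr{L}(E_1,E_1)$, and $({\mathbbm 1}-p)Dt(0)=0$; hence $\|({\mathbbm 1}-p)Dt(u)\|_{\mathscr{L}(E_1,E_1)} \le \tfrac12$ once $|u|_1 < \varepsilon_1$. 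To pass from the level-$0$ smallness hypothesis to a level-$1$ one, one bootstraps: from $y = ({\mathbbm 1}-p)\,t(a+y)$ and the $\ssc^+$-continuity of $t$ one gets $|y|_1$ small when $|a|_0,|y|_0$ are small, and $|\delta(a)|_1$ is small because $\delta$ is a sc-smooth germ vanishing at $0$ on the finite-dimensional (hence all-smooth) domain $V\subset T_0O$. The level-$1$ estimate then gives $|y-\delta(a)|_1 \le \tfrac12|y-\delta(a)|_1$, so $y = \delta(a)$. This bootstrapping through level $1$, which is exactly where the $\ssc^+$-property of the retraction $t$ is spent, is the ingredient your argument is missing.
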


\begin{proof}
From $f(a+y)=0$ and $f(a+\delta (a))=0$, we obtain 
$y=({\mathbbm 1}-p)t(a+y)$ and 
$
\delta (a)=
({\mathbbm 1}-p)t(a+\delta (a))
$. 
Consequently,  
$$
y-\delta (a)=({\mathbbm 1}-p)[t(a+y)-t(a+\delta (a))].
$$
As $f$ is regularizing, the solutions are smooth points. Since $t$ is a $\ssc^+$-map, the map 
$u\mapsto ({\mathbbm 1}-p)t(u)$ is of class $C^1$ between 
the $1$-levels, and the derivative vanishes at $u=0$, 
$({\mathbbm 1}-p)Dt(0)=0$. Therefore, there exists 
$\varepsilon_1>0$ such that 
\begin{equation}\label{new_eq1_em4.12}
\norm{({\mathbbm 1}-p)Dt(u)}_{L(E_1,E_1)}\leq 1/2
\end{equation}
if $\abs{u}_1<\varepsilon_1$. 
Consequently, on level $1$,
\begin{equation}\label{new_eq2_em4.12}
y-\delta (a)=\biggl(\int_0^1({\mathbbm 1}-p)Dt(u(\tau))\ d\tau \biggr)\cdot (y-\delta (a)),
\end{equation}
where $u(\tau )=a+\tau y+(1-\tau )\delta (a)$.
Since $t$ is a $\ssc^+$-map satisfying $t(0)=0$, we conclude from 
$y=({\mathbbm 1}-p)t(a+y)$ that $\abs{y}_1$ is small if $a$ and $y$ are small on level $0$. Moreover, the map $\delta$ is sc-smooth and satisfies $\delta (0)=0$ and hence 
$\abs{\delta (a)}_1$ is small, if $a$ is small on level $0$. Summing up, 
$\abs{u(\tau)}_1<\varepsilon_1$ if $\varepsilon$ is sufficiently small,  and we conclude from \eqref{new_eq1_em4.12} and 
\eqref{new_eq2_em4.12} the estimate 
$$
\abs{y-\delta (a)}_1\leq \dfrac{1}{2}\abs{y-\delta (a)}_1,
$$
so that indeed $y=\delta (a)$ is $\varepsilon$ is sufficiently small, as claimed in Lemma \ref{new_lemma4.12}.
\end{proof}


We finally verify that the solutions $f(a+\delta (a))=0$ belong to $O$ if $a$ is small on level $0$.

\begin{lemma}\label{new_lemma_4.13}
Let $f(a+\delta (a))=0$ and $\abs{a}_0<\varepsilon.$
If $\varepsilon>0$ is sufficiently small, then 
$a+\delta (a)\in O$.
\end{lemma}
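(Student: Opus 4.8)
The plan is to show that the solution $u := a + \delta(a)$ is a fixed point of the retraction, i.e.\ $t(u) = u$, which is exactly the assertion $u \in O$. First I would set $q := t(u)$, so that $q \in O = t(U)$ and hence $t(q) = q$ by idempotency of $t$, and put $w := u - t(u) = u - q$, so that $u = q + w$. Since $p = Dt(0)$ is the sc-projection of $E$ onto $T_0O$ along $Y$, the equation ${\bf f}(u) = ({\mathbbm 1} - p)(u - t(u)) = 0$ says exactly that $w \in \ker({\mathbbm 1} - p) = \operatorname{im}(p) = T_0O$. So it suffices to prove that the vector $w \in T_0O$ vanishes.

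The whole argument is then carried out on level $1$. Since $f$ is regularizing (Lemma \ref{new_lemma4.11}), the solution $u$ is a smooth point, so $u$, $q = t(u)$ and $a = p(u)$ all lie on level $1$; moreover, because $T_0O$ is finite-dimensional and, by construction, $\delta$ maps a sufficiently small relatively open neighbourhood of $0$ in $T_0O \cap C$ continuously into $Y_1$ with $\delta(0) = 0$, one has $|u|_1 \to 0$ as $|a|_0 \to 0$, and likewise $|q|_1 = |t(u)|_1 \to 0$ since $t$ is an $\ssc^+$-map. As $t$ is $\ssc^+$, Proposition \ref{lower} shows that $t$ induces a $C^1$-map $U_1 \to U_1$. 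Shrinking $\varepsilon$, I may assume that $q$ and $u = q + w$ lie in a relatively open ball about $0$ contained in $U_1$; the segment $\{q + \tau w : \tau \in [0,1]\}$ then also lies in $U_1$, since it lies in the convex set $C_1$ (both endpoints are in $C_1$) and in that ball. The fundamental theorem of calculus on level $1$, together with $t(q + w) = t(u) = q = t(q)$, yields
$$0 = t(q + w) - t(q) = \left( \int_0^1 Dt(q + \tau w)\, d\tau \right) w .$$

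It remains to observe that $w \in T_0O$ is essentially unmoved by this averaged derivative. The map $q' \mapsto Dt(q')$ is continuous from $U_1$ into $\mathscr{L}(E_1, E_1)$ and equals $p$ at $q' = 0$, and $p$ restricts to the identity on the finite-dimensional subspace $T_0O$; hence there is a neighbourhood of $0$ on which $|Dt(q') z - z|_1 \le \tfrac12 |z|_1$ for all $z \in T_0O$. For $\varepsilon$ small enough all the points $q + \tau w$ lie in this neighbourhood, so from the displayed identity one gets $|w|_1 = |w - \left( \int_0^1 Dt(q + \tau w)\, d\tau \right) w|_1 \le \int_0^1 |w - Dt(q + \tau w) w|_1\, d\tau \le \tfrac12 |w|_1$, which forces $w = 0$. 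Therefore $u = t(u) \in O$, proving the lemma. The main obstacle is the routine but careful bookkeeping needed to run the estimates genuinely on level $1$ — controlling $|u|_1$ and $|t(u)|_1$ in terms of smallness on level $0$ via the regularity of the solution germ $\delta$ (Corollary \ref{new_cor_3.33}) and the $\ssc^+$-property of $t$, and keeping the relevant segments inside the domain; once that is arranged, the geometric core, namely that a solution differs from its retract only in a direction tangent to $O$ along which $Dt$ is within $\tfrac12$ of the identity, is immediate.
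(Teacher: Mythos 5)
Your proof is correct and is essentially the argument in the paper, only with slightly different bookkeeping: you set $w = u - t(u)$ and observe directly from ${\bf f}(u)=0$ that $w\in T_0O$, then run the fundamental-theorem-of-calculus contraction estimate on level $1$ along the segment from $t(u)$ to $u$. The paper instead introduces $a_1 := p\circ t(u)$ and estimates the difference $a_1-a$ along the segment $a(\tau)=\tau a_1+(1-\tau)a+\delta(a)$; since $a_1+\delta(a)=t(u)$ and $a_1-a=-w\in T_0O$, this is exactly the same segment and the same quantity up to sign, and the same use of $Dt(0)=p$ acting as the identity on $T_0O$ together with continuity of $Dt$ on level $1$.
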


\begin{proof}

We have to confirm that $a+\delta (a)=t(a+\delta (a))$. From 
$f(a+\delta (a))=0$ we conclude that $\delta (a)=
t(a+\delta (a))-p\circ t(a+\delta (a))$, so that our aim is to prove that 
$$a=p\circ t(a+\delta (a)).$$
Applying the retraction $t$ to the identity
\begin{equation}\label{new_eq3_lem4.13}
p\circ t(a+\delta (a))+\delta (a)=t(a+\delta (a)),
\end{equation}
and using $t\circ t=t$,  we obtain
$$t\bigl(p\circ t(a+\delta (a))+\delta (a)\bigr)=t(a+\delta (a)).$$

Hence, abbreviating 
$$a_1:=p\circ t(a+\delta (a)),$$
we arrive at the identity
$$t(a_1+\delta (a))-t(a+\delta (a))=0.$$
Going to level $1$, abbreviating $a(\tau)=\tau a_1+(1-\tau)a+\delta (a)$ and observing that 
$Dt(0)(a_1-a)=a_1-a$,  we estimate
\begin{equation*}
\begin{split}
0&=\abs{t(a_1+\delta (a))-t(a+\delta (a))}_1\\
&=\left|\int_0^1Dt(a(\tau ))\ d\tau (a_1-a)\right|_1\\
& \geq \abs{a_1-a}-\biggl(\int_0^1\norm{Dt(a(\tau ))-Dt(0)}_{L(E_1, E_1)}\ dt \biggr)\cdot \abs{a_1-a}_1
\end{split}
\end{equation*}
Since $\norm{Dt(u)}_{L(E_1, E_1)}$ is continuous in $u$ on level $1$, there exists $\varepsilon_1>0$ such that 
$$\norm{Dt(v))-Dt(0)}_{L(E_1, E_1)}\leq 1/2$$
if $\abs{v}_1\leq \varepsilon_1$. Arguing as in the previous lemma, $\abs{a(\tau)}_1\leq \varepsilon_1$ if $\abs{a}_0<\varepsilon$ and $\varepsilon$ is sufficiently small. Consequently,
$$
0\geq \abs{a_1-a}_1-\dfrac{1}{2}\abs{a_1-a}_1=\dfrac{1}{2}\abs{a_1-a}_1.
$$
Therefore, $a_1=a$ and hence $p\circ t(a+\delta (a))=a$ if 
$\varepsilon>0$ is sufficiently small, and Lemma \ref{new_lemma_4.13} is proved.

\end{proof}
In order to complete the proof of Proposition \ref{FFF}, we set 
${\mathcal V}=\{a\in T_oO\cap C\, \vert \, \abs{a}_0<\varepsilon\}.$ 
The map $p$, satisfying 
$$p (a+\delta (a))=a,$$
is a sc-diffeomorphism from the open set 
${\mathcal U}=\{ a+\delta (a)\, \vert \, a\in {\mathcal V} \}$ in $O$, to the open set ${\mathcal V}$ having the sc-smooth map $a\mapsto a+\delta (a)$ as its inverse. This completes the proof of Proposition \ref{FFF}.
\hfill $\blacksquare$}
\begin{proof}[{\bf Proof of Theorem \ref{XXX--}}]

We assume that (i) holds true: the M-polyfold $X$  has a compatible smooth manifold structure with boundary with corners. 
Correspondingly there is an atlas of sc-smoothly compatible partial quadrants charts $(V, \varphi, (U, C, \R^n))$ where $\varphi\colon V\to U$ is a sc-diffeomorphism from the open subset $V\subset X$ onto the relatively open subset $U\subset C$ of the partial quadrant $C$ in $\R^n$. We take in $\R^n$ the unique constant sc-structure. 

Then the identity map ${\mathbbm 1}\colon U\to U$ is a tame sc-smooth retraction of $(U, C, \R^n)$.

 Clearly, $U=U_\infty$ and ${\mathbbm 1}_0^1\colon U\to U^1$ is sc-smooth. Consequently, these charts define a tame M-polyfold structure on $X$ for which  $X_0=X_\infty$ and
 ${\mathbbm 1}_0^1\colon X\to X^1$ is sc-smooth and the statement (ii) follows.
 
Conversely, if (ii) holds for the tame M-polyfold $X$, then $X_0=X_\infty$ and ${\mathbbm 1}_0^1\colon X\to X^1$ is sc-smooth. If $x\in X$, then we find a tame sc-smooth polyfold chart $(V,\psi, (O, C, E))$ such that the sc-diffeomorphism $\psi\colon V\to O$ satisfies $\psi (x)=0$. 
It follows that $O_0=O_\infty$ and 
${\mathbbm 1}_0^1\colon O\to O^1$ is sc-smooth. By Proposition \ref{frank} the sc-retract $(O, C, E)$ is a $\ssc^+$-retract and hence there is a $\ssc^+$-retraction $t\colon U\to U$ of a relatively open sbset $U$ in the partial quadrant $C$ satisfying $O=t(U)$. The tangent space $T_0O$ is a finite-dimensional smooth subspace of the sc-Banach space $E$ and, by assumption,  in good position to $C$.

Using Proposition \ref{FFF} we find an open neighborhood ${\mathcal O}'$ of $0$ in $O$ and an open neighborhood ${\mathcal V}$ of $0$ in $T_0O\cap C$ and a sc-diffeomorphsim $p\colon {\mathcal O}'\to {\mathcal V}$ onto the finite-dimensional polyfold model $({\mathcal V}, T_0O\cap C, T_0O)$. Taking the open set $V_0=\psi^{-1}({\mathcal O}')\subset V$, the composition 
$$
V_0\xrightarrow{\psi}{\mathcal O'}\xrightarrow{p}{\mathcal V}
$$
is a sc-diffeomorphism onto the finite-dimensional model $({\mathcal V}, T_0O\cap C, T_0O)$. Carrying out this construction around every point $x\in X$, we obtain a smoothly compatible atlas of tame charts and the statement (i) follows. This completes  the proof of Theorem \ref{XXX--}.

 \end{proof}

\subsection{Smooth Finite Dimensional  Submanifolds}

We start with the definition.

\begin{definition}\label{Def2.40}\index{D- Smooth submanifold}
A {\bf smooth finite-dimensional submanifold of the M-polyfold $X$}  is a subset $A\subset X$ having the property that every point $a\in A$ possesses 
 an open  neighborhood $V\subset X$ and a  $\ssc^+$-retraction $s\colon V\to V$ onto 
 $$s(V)=A\cap V.$$
\end{definition}

\mbox{}\\

In contrast to the sc-smooth sub-M-polyfold of an M-polyfold in Definition 
\ref{def_sc_smooth_sub_M_polyfold}, the retracts in the above definition are  $\ssc^+$-retracts.
We also point out, that a smooth finite dimensional submanifold is not defined as a sub-M-polyfold whose induced structure admits an equivalent structure of a smooth manifold with boundary with corners.

\mbox{}\\
\begin{remark} 
The subset $A\subset X$ in 
Definition \ref{Def2.40}  is called a smooth finite-dimensional manifold for the following reasons. The retraction $s\colon V\to V$, satisfying $s\circ s=s$, is a $\ssc^+$-map. Therefore, the image $s(V)\subset X_\infty$ consists of smooth points, so that every point $a\in s(V)$ possesses a tangent space $T_aA$ defined by $T_aA=Ts(a)(T_aX)$. From the $\ssc^+$-smoothness of the map $s$, it follows that the projection $Ts (a)\colon T_aX\to T_aX$ is a $\ssc^+$-operator  and 
therefore level-wise compact. 
Hence its image, $T_aA$ is finite-dimensional. Moreover, as we shall show below, the subsets $A\subset X$ are, near points $a\in A$ satisfying $d_X(a)=0$, 
necessarily smooth finite-dimensional manifolds in the classical sense. The same holds true in the case $d_X(a)>0$ under the additional assumption that the tangent space $T_aA$ is in good position to the partial quadrant $C_aX$ in $T_aX$.

\end{remark}

Next we are going to prove that a smooth finite-dimensional submanifold $A$ inherits 
from $X$ a natural M-polyfold structure. As 
such the degeneracy index $d_A(a)$ is well-defined,  and the boundary $\partial A$ is defined as the subset $\{a\in A\, \vert \, d_A(a)\geq 1\}$.

Let us first recall that a subset $A$ of  a topological space $X$ is called {\bf locally closed}\index{Locally closed} if every point $a\in A$ possesses an open neighborhood $V(a)\subset X$ having the property that a point $b\in V(a)$ belongs to $A$, if $U\cap A\neq \emptyset$ for all open neighborhoods $U$ of $b$.

\begin{proposition}\label{new_prop4.15}\index{P- Basic properties of submanifolds}
A smooth finite-dimensional submanifold $A\subset X$  of the M-polyfold $X$ has the
following properties.
\begin{itemize}
\item[{\em (1)}] $A\subset X_\infty$,  and  $A$  inherits the M-polyfold structure induced from $X$.  In particular,  $A$  possesses a tangent space at every point in $A$, and 
the degeneracy index $d_A$ is defined on $A$.
\item[{\em (2)}] $A$ is locally closed in $X$.
\item[{\em (3)}] $\partial A=\{a\in A\, \vert \, d_A(x)\geq 1\}\subset \partial X$. 
\item[{\em (4)}] ${\mathbbm 1}_0^1\colon A\to A^1$ is sc-smooth.
\end{itemize}
\end{proposition}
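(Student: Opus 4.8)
The plan is to derive all four items from results already available for sub-M-polyfolds (Proposition \ref{sc_structure_sub_M_polyfold}, Lemma \ref{k^0}) and for $\ssc^+$-retracts (Lemma \ref{new_lemma_4.8}, Proposition \ref{frank}), together with one elementary topological observation. For (1), fix $a\in A$ with the open set $V\subset X$ and the $\ssc^+$-retraction $s\colon V\to V$ satisfying $s(V)=A\cap V$. Since $s\colon V\to V^1$ is sc-smooth it sends each level $V_m$ into $V_{m+1}$, and then $s\circ s=s$ forces, exactly as in the proof of Lemma \ref{new_lemma_4.8}, that $s(x)\in\bigcap_{m\ge 0}V_m=V_\infty$ for every $x\in V$. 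Hence $A\cap V=s(V)\subset V_\infty\subset X_\infty$, and as $a$ was arbitrary, $A\subset X_\infty$. Because an $\ssc^+$-retraction is in particular a sc-smooth retraction, $A$ is a sub-M-polyfold of $X$ in the sense of Definition \ref{def_sc_smooth_sub_M_polyfold}, so Proposition \ref{sc_structure_sub_M_polyfold} equips $A$ with its induced M-polyfold structure; in particular $A$ carries a tangent space at each of its points and a degeneracy index $d_A$.

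For (2), with the same data note that $s(V)=\{x\in V\mid s(x)=x\}$: the inclusion $\supset$ is clear, and $x=s(y)$ gives $s(x)=s(s(y))=s(y)=x$. Since $X$ is Hausdorff and $s$ is continuous, this fixed-point set is closed in $V$ (it is the preimage of the diagonal of $X\times X$ under $x\mapsto(x,s(x))$). Thus $A\cap V$ is closed in the open neighborhood $V$ of $a$, which is precisely the statement that $A$ is locally closed in $X$. Item (3) is then immediate: by (1), $A$ is a sub-M-polyfold of $X$, so Lemma \ref{k^0} gives $d_A(a)\le d_X(a)$ for all $a\in A$; hence $d_A(a)\ge 1$ implies $d_X(a)\ge 1$, that is, $\partial A\subset\partial X$.

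For (4), in the charts supplied by the proof of Proposition \ref{sc_structure_sub_M_polyfold} the local models of $A$ are the retracts $O'=\varphi(A\cap V)$, and the step is to check that each $(O',C,E)$ is an $\ssc^+$-retract. Concretely, in an ambient chart $\varphi\colon V\to O=r'(U')$ of $X$ (with $r'\colon U'\to U'$ a sc-smooth retraction onto the model $O$ of $V$), the composite $\rho:=(\varphi\circ s\circ\varphi^{-1})\circ r'\colon U'\to U'$ is a sc-smooth retraction with image $O'$, and it is an $\ssc^+$-retraction since $s$ is one (so that $\varphi\circ s\circ\varphi^{-1}\colon O\to O^1$ is sc-smooth, using Proposition \ref{sc_up} for $\varphi$ and the chain rule). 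Proposition \ref{frank} then yields $O'_0=O'_\infty$ and that ${\mathbbm 1}_0^1\colon O'\to (O')^1$ is sc-smooth. Since, using $A\subset X_\infty$, the map ${\mathbbm 1}_0^1\colon A\to A^1$ is the identity on underlying sets and its coordinate representation in these charts of $A$ (on the source) and of $A^1$ (on the target) is exactly ${\mathbbm 1}_0^1\colon O'\to (O')^1$, it follows that ${\mathbbm 1}_0^1\colon A\to A^1$ is sc-smooth.

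The main obstacle is not depth but bookkeeping, and it lies in (4): one must verify carefully that the induced local models of $A$ genuinely are $\ssc^+$-retracts so that Proposition \ref{frank} applies, and keep the filtrations of $A$ and $A^1$ straight when identifying the coordinate representation of ${\mathbbm 1}_0^1$. The only other delicate point, already settled in (1), is that $\ssc^+$-smoothness of $s$ forces $s(V)\subset X_\infty$, which is what makes the statement ${\mathbbm 1}_0^1\colon A\to A^1$ meaningful in the first place.
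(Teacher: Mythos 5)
Your proof is correct and follows essentially the same route as the paper. Two small differences are worth noting: in item~(3) you invoke Lemma~\ref{k^0} directly to get $d_A\le d_X$, whereas the paper re-derives the needed special case by exhibiting a boundary-free chart for $A$ around a point with $d_X(a)=0$ --- yours is the cleaner path since the lemma is already available; in item~(4) you supply a considerably more detailed verification (checking that the induced local model $O'=\varphi(A\cap V)$ is itself an $\ssc^+$-retract via $\rho=(\varphi\circ s\circ\varphi^{-1})\circ r'$ and then applying Proposition~\ref{frank}) than the paper's very terse statement ``the postulated $\ssc^+$-retraction is the identity on the retract $A$, and $A=A_\infty$, so that ${\mathbbm 1}_0^1$ is sc-smooth,'' which tacitly relies on exactly the bookkeeping you spelled out. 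Your formulation of~(2) as closedness of the fixed-point set via the diagonal is equivalent to the paper's sequence argument. No gaps.
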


\begin{proof}\mbox{}

(1)\, A $\ssc^+$-retraction is, in particular, a sc-retraction and hence $A$ inherits the M-polyfold structure from $X$, in  view of Proposition \ref{sc_structure_sub_M_polyfold}. Since a $\ssc^+$-retraction $s$ has its image in $X_\infty$,  we conclude that $A\subset X_\infty$,  implying that every point in $a\in A$ has 
the  tangent space $T_aA=Ts(s)(T_aX)$. Since $A$ is a M-polyfold,  the degeneracy index $d_A$ is defined on $A$.\\[0.5ex]
(2)\, In order to prove (2) we choose a point   $a\in A$ and an open neighborhood $V\subset X$ of $a$ such that a  suitable 
$\ssc^+$-retraction $s\colon V\to V$  retracts onto $s(V)=A\cap V.$
If  $b\in V$ lies in the closure of $A$, then there exists a sequence $(a_k)\subset A$
satisfying $a_k\rightarrow b$. Since $V$ is open $b\in V$, we conclude that  
$a_k\in A\cap V$ for large $k$.  Hence $a_k=s(a_k)$ and,  using that $b\in V$,  we conclude that $s(b)=\lim_k s(a_k)=b,$ implying that $b\in A\cap V$.\\[0.5ex]
(3)\,  By definition,  $\partial A=\{a\in A\,  \vert \, d_A(x)\geq 1\}$.
We assume that $a\in A$ satisfies $d_A(a)\geq 1$ and show that $d_X(a)\geq 1$. If $d_X(a)=0$,  we find an open neighborhood
$V\subset X$  of $a$ which is sc-diffeomorphic to a retract $(O,E,E)$. This implies that there exists an open neighborhood 
$U\subset A$ of $a$ which is sc-diffeomorphic to a retract $(O',E,E)$, so that $d_A(a)=0$.  \\[0.5ex]
(4)\, The postulated $\ssc^+$-retraction is the identity on the retract $A$, and $A=A_\infty$, so that ${\mathbbm 1}_0^1\colon A\to A^1$ is sc-smooth.

The proof of 
Proposition \ref {new_prop4.15}
is complete.
\end{proof}

\begin{OQ} It is an open question,  whether  there 
is a difference between a smooth finite-dimensional submanifold $A$
and a sub-M-polyfold  satisfying $A=A_\infty$ and $\dim T_aA$ being finite and locally constant.  In fact, the question is if under the latter conditions ${\mathbbm 1}_0^1\colon A\rightarrow A^1$ is sc-smooth.  We conjecture that this is not always the case. Rather we would expect,
for example,  if the dimension of the tangent space is equal to $1$, the set $A$ might be something like a branched one-dimensional manifold
as defined in \cite{Mc}. It would be interesting to see if such type of examples  can be constructed. For example,  
consider the subset $T:=\{(x,0)\in {\mathbb R}^2\, \vert\, -1<x<1\}\cup\{(0,y)\in {\mathbb R}^2\, \vert \, y\in [0,1)\}$ with the induced topology.
Is it possible to find a sc-smooth retract $(O,E,E)$ where $E$ is a sc-Banach space so that $T$ is homeomorphic to $O$
and $O=O_\infty$?  From our previous discussion this is impossible if we require $O$ to be a $\ssc^+$-retract. 
\end{OQ}

So far not much can be said about the structure of the smooth finite-dimensional submanifold $A$ at the boundary without additional assumptions. In order to formulate such an assumption, we consider a M-polyfold $X$ which is required to be tame. Thus, at every smooth point $a\in X$, the cone $C_aX$ is a partial quadrant in the tangent space $T_aX$, in view of Proposition \ref{tame_equality}, and we can introduce the following definition.





\begin{definition}[{\bf good position at  $a\in A$}] \index{D- Good position at $a\in A$}
A smooth finite-dimensional submanifold $A$ of the tame M-polyfold $X$ is {\bf in good position at the point $a\in A$}, if the finite-dimensional linear subspace $T_aA\subset T_aX$ is in good position to the partial quadrant $C_aX$ in $T_aX$.
\end{definition}

We also need the next definition.

\begin{definition}[{\bf Tame submanifold}] \index{D- Tame submanifold}
A smooth finite-dimensional submanifold $A\subset X$ of the M-polyfold $X$ is called 
{\bf tame}, if, equipped with its induced M-polyfold structure, the M-polyfold $A$ is tame.
\end{definition}

The main result of the section is as follows.

\begin{theorem}\label{thm-basic}\index{T- Characterization of tame submanifolds}
Let $X$ be a  tame M-polyfold and  $A\subset X$ a smooth finite-dimensional submanifold of $X$. If $A$ is at every point $a\in A$ in good position, then the induced M-polyfold structure  on $A$ is equivalent to the structure of a smooth manifold with boundary with corners. In particular, the M-polyfold $A$ is tame.  
\end{theorem}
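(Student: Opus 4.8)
The plan is to verify the hypotheses of Theorem \ref{XXX--} for the M-polyfold $A$ equipped with its induced structure. The pieces are almost all in place from the preceding results, so the proof is essentially an assembly argument. First I would recall from Proposition \ref{new_prop4.15} that $A\subset X_\infty$, that $A$ inherits an M-polyfold structure from $X$ with $A=A_\infty$ (so $A_0=A_\infty$), and that the identity map ${\mathbbm 1}_0^1\colon A\to A^1$ is sc-smooth. This already gives the first half of condition (ii) in Theorem \ref{XXX--}, once we know $A$ is tame. For tameness and the local chart statement, I would fix $a\in A$, choose an open neighborhood $V\subset X$ and a $\ssc^+$-retraction $s\colon V\to V$ with $s(V)=A\cap V$, and then compose with a tame sc-smooth chart of $X$ around $a$ (using that $X$ is a tame M-polyfold). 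Pulling $s$ over by this chart yields, as in the proof of Proposition \ref{sc_structure_sub_M_polyfold}, a $\ssc^+$-retraction $t\colon U\to U$ of a relatively open subset $U$ of a partial quadrant $C\subset E$ with $t(U)=O'$, where $(O',C,E)$ is the local model of $A$ around $a$, and $0\in O'$ corresponds to $a$.

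Next I would invoke Lemma \ref{new_lemma_4.8}: since $t$ is a $\ssc^+$-retraction, $O'=O'_\infty$ and the tangent space $T_0O'=Dt(0)E$ is finite-dimensional. The good-position hypothesis on $A$ at $a$, together with the diffeomorphism invariance of the partial cone (Proposition \ref{reduced_tangent_under_sc_diff}) and the identification $T\psi(a)(C_aX)=C_0O\cap$ (appropriate cone in the ambient chart), translates into: $T_0O'$ is in good position to the partial quadrant $C$ in $E$. Here I should be a little careful about which partial quadrant the good-position statement refers to — the hypothesis is good position to $C_aX\subset T_aX$, but in the ambient tame chart $T_aX$ is modeled by a tame retract and $C_aX$ corresponds to $C_0O\cap T_0O$; since the ambient retract is tame, $C_0O$ is a partial quadrant in $T_0O$, and $T_0O'$ sits in good position to it. Then I would apply Proposition \ref{FFF} (which needs exactly a $\ssc^+$-retract whose tangent at $0$ is finite-dimensional and in good position to the ambient partial quadrant) to obtain open neighborhoods ${\mathcal U}\subset O'$ of $0$ and ${\mathcal V}\subset T_0O'\cap C$ of $0$ with $p=Dt(0)\colon{\mathcal U}\to{\mathcal V}$ a sc-diffeomorphism onto the finite-dimensional local model $({\mathcal V},T_0O'\cap C,T_0O')$. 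Composing the original chart of $A$ with this $p$ gives, around each point of $A$, a tame sc-smooth chart modeled on a partial quadrant in a finite-dimensional space, with $T_0O'$ finite-dimensional and in good position to the partial quadrant, which is precisely condition (ii) of Theorem \ref{XXX--}.

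Finally I would conclude: by Theorem \ref{XXX--}, $A$ has a compatible smooth manifold structure with boundary with corners, and in particular the transition maps between the finite-dimensional quadrant charts constructed above are classically smooth diffeomorphisms between relatively open subsets of partial quadrants in $\R^n$, so $A$ is a tame M-polyfold and its induced structure is equivalent to that of a smooth manifold with boundary with corners.

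The main obstacle I anticipate is bookkeeping rather than anything deep: one must check carefully that the good-position assumption, which is phrased intrinsically in terms of $T_aA$, $T_aX$ and the partial cone $C_aX$, is faithfully transported through the ambient tame chart into the hypothesis ``$T_0O'$ in good position to $C$ in $E$'' required by Proposition \ref{FFF}. This uses that the ambient chart is a tame retract, so that $C_aX$ really is a partial quadrant and the ambient complement structure behaves well; a small amount of care is needed because a good complement of $T_aA$ in $T_aX$ must be promoted to a good complement of $T_0O'$ in all of $E$ — but this is handled by the tame-retract splitting $E=T_0O\oplus\ker Dr(0)$ with $\ker Dr(0)\subset E_x$, exactly as in the proof of Theorem \ref{IMPLICIT0}. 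Beyond that, everything is a direct citation of Proposition \ref{new_prop4.15}, Lemma \ref{new_lemma_4.8}, Proposition \ref{FFF}, and Theorem \ref{XXX--}.
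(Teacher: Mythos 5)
Your proposal is correct and takes essentially the same route as the paper: conjugating the $\ssc^+$-retraction for $A$ with a tame chart of $X$, invoking Lemma \ref{new_lemma_4.8} for finite-dimensionality of $T_0O'$, promoting good position from $(T_0O, C_0O)$ to $(E,C)$ via the tame splitting $E=T_0O\oplus \ker Dr(0)$ (which is precisely the content of Lemma \ref{new_lemma4.20} that the paper proves at this point), and then applying Proposition \ref{FFF} to produce finite-dimensional partial-quadrant charts whose transition maps are classically smooth. The one cosmetic remark is that once these charts are in hand you do not actually need to cite Theorem \ref{XXX--} — the atlas of finite-dimensional quadrant charts already exhibits $A$ as tame and as a smooth manifold with boundary with corners, and indeed invoking \ref{XXX--} would be slightly circular since its hypothesis already assumes tameness; the paper accordingly concludes directly from the constructed charts.
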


\begin{proof}

The result will be based on Proposition \ref{XXX--}. We focus on a point $a\in A$. Then $a$ is a smooth point, and we find a sc-smooth tame chart $\varphi\colon (V, a)\mapsto (O, 0)$, where $\varphi$ is a sc-diffeomorphism from the open neighborhood 
$V\subset X$ of $a$ onto the retract $O$ satisfying $\varphi (a)=0$. The retract $(O, C, E)$ is  a tame local model. By assumption, $T_aA$ is in good position to the partial quadrant $C_aX$ in $T_aX$ and there is a good complement $Y'\subset T_aX$, so that $T_aX=T_aA\oplus Y'$. Therefore, the tangent space $T_0O=T\varphi (a)(T_aX)$ has the sc-splitting 
$$T_0O=N\oplus Y,$$
in which $N=T\varphi(a)(T_aA)$ and $Y=T\varphi (a)Y'$. Since $(O, C, E)$ is tame, the tangent space $T_0O$ has a sc-complement $Z$ contained in $C$, in view of Proposition \ref{IAS-x}. Hence 
$$
E=T_0O\oplus Z= N\oplus Y\oplus Z.
$$

\begin{lemma}\label{new_lemma4.20}
The finite-dimensional subspace $N=T\varphi(a)(T_aA)$ is in good position to $C$ and $Y\oplus Z$ is a good complement in $E$.
\end{lemma}

\begin{proof}[{\bf Proof of Lemma \ref{new_lemma4.20}}] 
By assumption, $N$ is in good position to $C_0=T_0O\cap C=T\varphi (a)(C_aX)\subset T_0O$, with the good complement $Y$. Hence there exists $\gamma>0$ such that  for $(n, y)\in N\oplus Y$
satisfying $\abs{y}_0\leq \gamma \abs{n}_0$ we have $n\in C_0$ if and only if $n+y\in C_0$. Take the norm $\abs{(y, z)}_0=\abs{y}_0+\abs{z}_0$ on $Y\oplus Z$ and consider $(n, y, z)$ satisfying $\abs{(y, z)}_0\leq \gamma \abs{n}_0$, and note that $n\in C$ if and only if $n\in C_0$. If $n\in C_0$, we conclude from 
$\abs{(y, z)}_0\leq \gamma \abs{n}_0$  that $n+y\in C_0$ which implies $n+y\in C$. Since $z\in C$, we conclude that $n+y+z\in C$. Conversely, we assume that $n+y+z\in C$ satisfies $\abs{(y, z)}_0\leq \gamma \abs{n}_0$. Then it follows from $z\in Z\subset C$ that $n+y\in C$ and hence $n+y\in C_0=T_0O\cap C$. From $\abs{y}_0\leq \gamma \abs{n}_0$ we deduce that 
$n\in C_0$ and hence $n\in C$. Having verified that $N$ is in good position to $C$ and $Y\oplus Z$ is a good complement, the proof of lemma is complete.
\end{proof}

Continuing with the proof of Theorem \ref{thm-basic}, we recall that $(O, C, E)$ is  a tame retract. Hence there exists a relatively open  subset $U\subset C$ and a  sc-smooth tame retraction $r\colon U\to U$ onto $O=r(U)$. Since $A\subset X$ is a finite-dimensional smooth submanifold of $X$, we find an open neighborhood $V\subset X$ of $a$ and a $\ssc^+$-retraction $s\colon V\to V$ onto $s(V)=A\cap V$. Taking $V$ and $U$ small, we may assume that $V=\varphi^{-1}(O)$. 
Then, defining  $t\colon U\rightarrow U$ by
$$
t(u)=\varphi\circ s\circ \varphi^{-1}\circ r (u),
$$
we compute, using $r\circ \varphi =\varphi$ and $s\circ s=s$, 
\begin{equation*}
\begin{split}
t\circ t& = \varphi\circ s\circ\varphi^{-1}\circ r\circ  \varphi\circ s\circ \varphi^{-1}\circ r\\
&=\varphi\circ s\circ\varphi^{-1}\circ  \varphi\circ s\circ \varphi^{-1}\circ r\\
&= \varphi\circ s\circ s \circ \varphi^{-1}\circ r\\
&= \varphi\circ s\circ \varphi^{-1}\circ r\\
&=t.
\end{split}
\end{equation*}
We see that $t$ is 
a retraction. Moreover, it is a $\ssc^+$-retraction since $s$ is a 
$\ssc^+$-retraction. It retracts onto $t(U)=\varphi (A\cap V)$ and 
$Dt(0)E=T\varphi (a)Ts(a)(T_aX)=T\varphi (a)T_aA=N$. 
Therefore,  $Q:=t(U)\subset O$ is a $\ssc^+$-retract and $T_0Q=N$ 
is in good position to $C$, by Lemma \ref{new_lemma4.20}.

Now we can apply Proposition  \ref{FFF}  and conclude that 
that there are open neighborhoods $V_1$ of $0$ in $Q$ and $V_0$ of $0$ in $C\cap N$ such that $p=Dt(0)$ is a sc-diffeomorphism 
$$p\colon V_1\to V_0.$$
In view of Proposition \ref{pretzel},  $C\cap N$ is a partial quadrant in $N$, so that $(V_0, C\cap N, N)$ is a local model which is tame since $V_0\subset C\cap N$ is open.

Defining the open neighborhood $V(a)=\varphi^{-1}(V_0)\cap A$ of $a$ in $A$,  the map 
$$p\circ \varphi \colon V(a)\to V_0$$
is a sc-diffeomorphism defining a sc-smooth tame chart on the M-polyfold $A$. The collection of all these charts defines a sc-smoothly  compatible structure of a smooth manifold with boundary with corners. Moreover, the M-polyfold $A$ is tame. 
The proof of Theorem \ref{thm-basic} is complete.
%

\end{proof}

\begin{remark}
If $A$ is in good position at $a$, then there exists and open neighborhood $V(a)\subset X$, so that $A$ is in good position for all $b\in V(a)\cap A$. 
In order to see this we observe that Theorem \ref{FFF} defines a chart from the sole knowledge 
that we are in good position at $0$. Since the tangent spaces  of $A$  move only slowly, 
Lemma \ref{good_pos} implies that we are in good position at the nearby points as well.
\end{remark}

\begin{definition}[{\bf General position}] \index{D- Submanifold in general position}
Let $A\subset X$ be a smooth finite-dimensional submanifold of the 
M-polyfold $X$. Then $A$ is in {\bf general position} at the point $a\in A$, if 
 $T_aA$ has in $T_aX$ a sc-complement 
 contained in the reduced tangent space $T^R_aX$ as defined in Definition
 \ref{def_partial_cone_reduced_tangent}.
\end{definition}

We end this subsection with a local result.

\begin{theorem}[{\bf Local structure of smooth submanifolds}]\index{T- Local structure of submanifolds}\label{local-str}
Let $A$ be a smooth submanifold of the tame M-polyfold $X$. We assume that $A$ is in general position at the point $x\in A$, assuming that $T_xA$ has in $T_xX$ a sc-complement contained in $T_x^RX$. Then there exists an open neighborhood $U\subset X$  on which the following holds.
\begin{itemize}
\item[{\em (1)}] There are precisely $d=d_X(x)$  many local faces ${\mathcal F}_1,\ldots,{\mathcal F}_d$ contained in $U$. 
\item[{\em (2)}] $d_X(a)=d_A(a)$ for $a\in A\cap U$.
\item[{\em (3)}] If $a\in A\cap U$, then the tangent space $T_aA$ has in $T_aX$ a sc-complement contained in $T^R_aX$.
\item[{\em (4)}] If $\sigma\subset \{1, \ldots ,d\}$, then the intersection ${\mathcal F}_\sigma:=\bigcap_{i\in \sigma}{\mathcal F}_i$ of local faces is a tame M-polyfold and $A\cap {\mathcal F}_\sigma$ is a tame smooth submanifold of ${\mathcal F}_\sigma$. At every point $a\in (A\cap U)\cap {\mathcal F}_\sigma$, the tangent space $T_a(A\cap {\mathcal F}_\sigma)$ has in $T_a{\mathcal F}_\sigma$ a sc-complement contained in  $T_a^R{\mathcal F}_\sigma$.
 \end{itemize}
\end{theorem}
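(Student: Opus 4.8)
The plan is to reduce everything to the local model near $x$ and then run the machinery of Section~\ref{subsection_boundary_recognition}, Section~\ref{subsec_tame_m_polyfolds}, and Theorem~\ref{thm-basic} face by face. First I would choose a tame sc-smooth chart $\varphi\colon(V,x)\to(O,0)$ with $(O,C,E)$ a tame retract, $E=\R^k\oplus W$, $C=[0,\infty)^k\oplus W$, arranged so that $d_X(x)=d_C(0)=d$, i.e.\ the point $0$ lies on the first $d$ coordinate hyperplanes. By Corollary~\ref{cor_2.42} the local faces of $O$ through $0$ are exactly the connected components through $0$ of $O\cap f_i$, where $f_i=\{a_i=0\}$ for $1\le i\le d$; pulling back through $\varphi$ gives the faces ${\mathcal F}_1,\dots,{\mathcal F}_d$ in an open neighborhood $U\subset X$ of $x$, which proves (1). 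Shrinking $U$ we may also assume (using the Lemma before Proposition~\ref{newprop2.24} together with Proposition~\ref{tame_equality}) that $d_U(y)=d_X(y)$ for all $y\in U$.

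For (2)--(3), the hypothesis is that $T_xA$ has a sc-complement in $T_xX$ contained in $T^R_xX$. Transporting through $T\varphi(x)$ and using Proposition~\ref{IAS-x} (so that $\ker Dr(0)\subset E_x$ is a sc-complement of $T_0O$ in $E$), one gets that $N:=T\varphi(x)(T_xA)$ has a sc-complement inside $E_x=E_{\{1,\dots,d\}}=W$ (identifying as usual). In particular $N$ is in good position to $C$ with a good complement contained in $E_x\subset C$; this is the situation of Lemma~\ref{new_lemma4.20} and hence Theorem~\ref{thm-basic} applies at $x$, giving $A$ the tame smooth-manifold structure locally. The point of having the complement inside $E_x$ rather than merely inside $C$ is that this is an \emph{open} condition in the sense of the Remark after Theorem~\ref{thm-basic}: the tangent spaces of $A$ vary continuously (indeed $Ts$ is $\ssc^+$, hence continuous into $\mathscr L$ on each level), so after shrinking $U$ the complement stays inside $E_y=T^R_yX$-image for all $y\in A\cap U$. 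This yields (3), and (2) follows from Proposition~\ref{big-pretzel} applied to $N$ in good position to $C$ (since the complement lies in $E_x=W$, we are in case (1) of that proposition away from $N\cap W$, and a direct check on $N\cap W$ gives equality), combined with the chart identification $d_A=d_{N\cap C}$ from Theorem~\ref{thm-basic}.

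For (4), fix $\sigma\subset\{1,\dots,d\}$. By the Proposition preceding the face theorem in Section~\ref{subsec_tame_m_polyfolds}, ${\mathcal F}_\sigma=\bigcap_{i\in\sigma}{\mathcal F}_i$ is a tame M-polyfold of codimension $\#\sigma$ with $d_{{\mathcal F}_\sigma}(a)=d_X(a)-\#\sigma$. In the local model ${\mathcal F}_\sigma$ corresponds to a connected component of $O\cap f_\sigma$ with $f_\sigma=\bigcap_{i\in\sigma}f_i$, and Proposition~\ref{FACE_X} (iterated, or its analogue for higher-codimension face germs) provides a tame sc-smooth retraction $s_\sigma$ onto it, realizing it as a tame sub-M-polyfold. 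Then $A\cap{\mathcal F}_\sigma$ is cut out in ${\mathcal F}_\sigma$ by the restriction of the $\ssc^+$-retraction $s$ defining $A$ (composed with $s_\sigma$, exactly as in the proof of Proposition~\ref{new_retract_1}(2) and Proposition~\ref{FACE_X}), so it is a smooth finite-dimensional submanifold of ${\mathcal F}_\sigma$. For the general-position statement in ${\mathcal F}_\sigma$: in the model, $T_a{\mathcal F}_\sigma$ is $T_aO$ intersected with $f_\sigma$, and $T_a^R{\mathcal F}_\sigma=T_a{\mathcal F}_\sigma\cap (E_a\cap f_\sigma)$; the sc-complement of $T_aA$ inside $E_a$ from part (3), intersected with $f_\sigma$, is a sc-complement of $T_a(A\cap{\mathcal F}_\sigma)$ inside $E_a\cap f_\sigma$, which is contained in $T_a^R{\mathcal F}_\sigma$. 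Applying Theorem~\ref{thm-basic} inside ${\mathcal F}_\sigma$ then shows $A\cap{\mathcal F}_\sigma$ is tame.

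\textbf{Main obstacle.} The delicate point is the ``openness'' propagation in (3): verifying that the sc-complement of $T_yA$ can be chosen inside $E_y$ (not just inside $C$, not just a good complement) for all $y$ in a neighborhood, uniformly enough that part (4)'s intersection argument with $f_\sigma$ still produces a genuine sc-subspace complement on every level. This requires combining the continuity of $y\mapsto Ts(y)$ on each level with Lemma~\ref{good_pos} (good position persists under small perturbation of the subspace and the base point) and with the fact, from Proposition~\ref{IAS-x} applied in the tame chart, that $\ker Dr(y)\subset E_y$; keeping track of which linear-algebra identities survive intersection with the fixed subspace $f_\sigma$ on all scales is where the real work lies. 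Everything else is bookkeeping with results already established.
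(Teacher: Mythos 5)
Your reduction to the local model, the identification of the $d$ local faces via Corollary~\ref{cor_2.42}, and the transport of the general-position hypothesis through $T\varphi(x)$ and Proposition~\ref{IAS-x} to the decomposition $E=N\oplus Y$ with $Y\subset W$ all match the paper's proof exactly, as does your treatment of part (4) (applying the earlier arguments face-by-face). The divergence is in how you handle (2)--(3), and there your proposal has a real gap — one you yourself flag as the ``main obstacle.''

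For (3), you want to argue that the general-position condition is ``open,'' propagating the complement from $T_xA$ to $T_yA$ for nearby $y$ via continuity of $y\mapsto Ts(y)$ and the observation $E_y\supset E_x$. This is not how the paper proceeds, and it is not enough as stated. The issue is not merely that $E_y$ varies with $y$ (it does, but favourably), but that the ambient tangent space $T_yO$ itself changes — possibly even in dimension — so a fixed candidate complement of $T_xA$ need not lie in $T_yO$, let alone complement $T_yA$ there. Continuity of $Dt$ on levels $m\ge 1$ does not give you a clean mechanism for rebuilding a sc-complement \emph{inside} $T_y^RO$ simultaneously on all levels including level~$0$. The paper sidesteps all of this by using the explicit graph parametrisation produced by Proposition~\ref{FFF}: $A=\{v+\delta(v)\,|\,v\in V\}$ with $\delta\colon V\to Y\subset W$, $D\delta(0)=0$. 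Then $T_aA$ is the image of $h\mapsto h+D\delta(v)h$ with $D\delta(v)h\in Y\subset W$, so $T_aA+W\supset T_0A+W=E$ immediately, hence $T_aO=T_aA+(W\cap T_aO)=T_aA+T_a^RO$, from which the existence of a sc-complement inside $T_a^RO$ follows at once for every $a\in A\cap U$. No continuity of linearisations, no perturbation of complements. If you want your openness intuition to become a proof, you essentially have to reprove this graph-parametrisation computation anyway — so the detour buys nothing.

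For (2), your route via Proposition~\ref{big-pretzel} can be made to work but is longer and incomplete as written: Proposition~\ref{big-pretzel} relates $d_{N\cap C}(v)$ to $d_C(v)$, whereas you need $d_C(a)=d_C(v+\delta(v))$, so you also need Lemma~\ref{big-pretzel_1a} (which you do not cite) to pass from the base point $v$ to the graph point $v+\delta(v)$; and your ``direct check on $N\cap W$'' requires the dimension count $\dim T_0A-\dim(T_0A\cap W)=d$, which you do not supply. The paper avoids this entirely by observing that the sc-diffeomorphism $p\colon A\to T_0A\cap C$, $p(v+\delta(v))=v$, carries $A\cap{\mathcal F}_\sigma$ onto $(T_0A)_\sigma\cap C$, so $d_A(a)=d_{T_0A\cap C}(p(a))$ equals the number of faces of $O$ through $a$, which by tameness (Proposition~\ref{tame_equality}) equals $d_O(a)$. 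Both arguments ultimately say the same thing, but the paper's is self-contained within the graph picture and does not need the auxiliary degeneracy-index lemmata.
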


 
\begin{proof}

The result is local and,  going into a tame M-polyfold chart of $X$ around the point $x$,  we may assume that $X=O$ and $x=0\in O$, where $O$ belongs to the tame retract 
$(O, C, E)$ in which $C=[0,\infty)^n\oplus W$, $n=d_O(0)$, is a partial  quadrant in the sc-Banach space $E={\mathbb R}^n\oplus W$.
Moreover, $A\subset O$ is a smooth submanifold of $O$. We recall that the faces of $C$ are the subsets of $E$, defined by 
$$
F_i=\{(a_1,\ldots ,a_n,w)\in E\, \vert \,\text{$a_i=0$ and $a_j\geq 0$ for $j\neq  i$ and $w\in W$}\}.$$
For a subset $\sigma\subset \{1,\ldots ,n\}$ we introduce $F_\sigma=\bigcap_{i\in \sigma}F_i$. Then the M-polyfold $O$ has the faces ${\mathcal F}_i=O\cap F_i$ and we abbreviate the intersection by ${\mathcal F}_\sigma:=O\cap F_\sigma$.

By Definition \ref{reduced_cone_tangent}, $T_0^RO=T_0O\cap (\{0\}\oplus W)$, in our local coordinates.
By assumption of the theorem, the tangent space $T_0A\subset T_0O$ has in $T_0O$ a sc-complement $\wt{A}\subset T_0^RO$, so that $T_0O=T_0A\oplus \wt{A}$. In view of the definition of tame, the tangent space $T_0O$ has in $E$ a sc-complement $Z\subset \{0\}\oplus W$. Hence $E=T_0O\oplus Z=T_oA\oplus \wt{A}\oplus Z$ and it follows that $T_0A$ has in $E$ a sc-complement $Y$ contained in $W=\{0\}\oplus W$,
$$E=T_0A\oplus Y,\quad Y\subset W.$$

We denote by $N\subset T_aA$ a sc-complement of $T_0A\cap W$ in $T_0A$,
$$T_0A=N\oplus (T_0A\cap W).$$

From $E\cap W=W=(T_0A\cap W)\oplus (Y\cap W)=(T_0A\cap W)\oplus Y$, we obtain 
$$E=T_0A\oplus Y=N\oplus (T_0A\cap W)\oplus Y=N\oplus W.$$
Using $Y\subset W\subset C$, we deduce 
$$
C=(C\cap N)\oplus W$$
and
$$
C=(T_0A\cap C)\oplus Y.
$$
Therefore, the projection $\R^n\oplus W\to \R^n$ induces an isomorphism from $N$ to $\R^n$ and from $C\cap N$ onto $[0,\infty )^n$. Moreover, the tangent space $T_0A$ is in good position to $C$, and $Y$ is a good complement.
Hence $T_0A\cap C$ is a partial quadrant of $T_0A$ by Proposition \ref{pretzel}.

As proved in Proposition \ref{FFF} the smooth submanifold $A\subset O$ is represented local as the graph 
$$A=\{v+\delta (v)\, \vert \, v\in V\}$$
of a sc-smooth map $\delta\colon V\to Y$ defined on an open neighborhood $V$ of $0$ in the partial quadrant $T_0A\cap C$ of $T_0A$ satisfying $\delta (0)=0$ and $D\delta (0)=0$. 
The projection 
$$p\colon A\to (T_0A\cap C)\cap V,$$
defined by $p(v+\delta (v))=v$, is a sc-diffeomorphism from the M-polyfold $A$ and 
we conclude, by Proposition \ref{newprop2.24}, that 
$$d_A(a)=d_{T_0A\cap C}(p(a))$$
for all $a\in A\cap U$.

The points in $T_0A\cap C$  are represented by $(n, m)\in (N\cap C)\oplus (T_0A\cap W)$ so that the points $a\in A$ are represented by 
$$a=((n, m), \delta (n, m)).$$

Introducing 
$$
(T_0A)_\sigma:=T_0A\cap T_0{\mathcal F}_\sigma=\{(n, m)\in T_0A\, \vert \, 
\text{$n_i=0$ for $i \in \sigma$}\},
$$
the diffeomorphism $p\colon A\to  V\subset T_0A\cap C$, maps the intersection of faces $A\cap {\mathcal F}_\sigma$ to $(T_0A)_\sigma \cap C$. In particular, if $p(a)\in (T_0A)_\sigma\cap C$, then $a\in A\cap {\mathcal F}_\sigma$. This implies that 
$$d_{T_0A\cap C}(p(a))=d_C(a).$$
In view of proposition \ref{tame_equality}, $d_C(a)=d_O(a)$, and we have verified for $a\in A\cap U$ that $d_A(a)=d_O(a)$ as claimed in the statement (2) of the theorem.

In order to prove the statement (3) we choose $a\in A\cap U$. Then $a=v+\delta (v)$ with $v\in T_0A\cap C$. The tangent space $T_aA$ is the image of the linear map $\alpha\colon h\mapsto h+D\delta (v)h$, $h\in T_0A$. If $a=0$, then $T_0A+W=E$ and we conclude, by means of the particular form of the map $\alpha$, that also $T_aA+W=E$ for all $a\in A\cap U$. Intersecting  with $T_aO$ and using $T_aA\subset T_aO$ leads to 
\begin{equation*}
\begin{split}
T_aO&=T_aA+(W\cap T_aO)\\
&=T_aA+T^R_aO,
\end{split}
\end{equation*}
implying that $T_aA$  has in $T_aO$ a sc-complement which is contained in $T_a^RO$. This proves the statement (3) of the theorem.

As for the last statement we observe that, in view of the arguments in the proof of Proposition 2.43, the sets ${\mathcal F}_\sigma$ are tame M-polyfolds. Moreover, $A\cap {\mathcal F}_\sigma$ are smooth submanifolds of ${\mathcal F}_\sigma$. The previous arguments, but now applied to $A\cap {\mathcal F}_\sigma$ conclude the proof of Theorem \ref{local-str}

\end{proof}



\subsection{Families and an Application of Sard's Theorem}
If $X$ is a tame M-polyfold we introduce the family
$$Z:=\R^m\oplus X=\{(\lambda , x)\, \vert \, \lambda \in \R^m, x\in X\}.$$
The family $\R^m\oplus X$ has a natural tame M-polyfold structure defined by the product of the corresponding charts and we denote by 
$$P\colon \R^m\oplus X\to \R^m,\quad P(\lambda ,x)=\lambda$$
the sc-smooth projection.

Using Theorem \ref{local-str}
and Sard's theorem we are going to establish the following result.

\begin{theorem}\label{SARD}\index{T- Fibered families and Sard}
Let $A\subset Z$ be a smooth submanifold of the tame M-polyfold $Z$. We assume that the induced 
M-polyfold structure is tame and the closure of $A$ is compact. Denoting by $p:=P\vert A\colon A\to \R^m$ the restriction of $P$ to $A$ we assume, in addition, that there exists $\varepsilon>0$ such that the following holds.
\begin{itemize}
\item[(1)] For every $\lambda\in \R^m$ satisfying $\abs{\lambda}\leq \varepsilon$, the set $p^{-1}(\lambda )\subset A$ is compact and non-empty.
\item[(2)] For $z\in A$ there exists a sc-complement $T_zA$ in $T_zZ$ which is contained in $T_z^RZ$.
\end{itemize}
Then there exists a set of full measure $\Sigma\subset \{\lambda\in {\mathbb R}^m\, \vert \, \abs{\lambda}<\varepsilon\}$ of regular values of $p$, having full measure in $B^m(\varepsilon)$ such that for $\lambda\in\Sigma$,  the set 
$$A_\lambda:=\{x\in X \, \vert \,  (\lambda,x)\in A\}$$ is a smooth compact  manifold with boundary with corners, having the additional property that the tangent space $T_xA_{\lambda}$ at $x\in A_\lambda$ has in $T_xX$ a sc-complement contained in $T^R_xX$. 
\end{theorem}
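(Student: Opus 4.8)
\textbf{Proof proposal for Theorem \ref{SARD}.}

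The plan is to reduce the statement to an application of Sard's theorem combined with the local structure result of Theorem \ref{local-str}. The first step is to show that $p = P|A \colon A \to \R^m$ is a genuinely smooth map between finite-dimensional manifolds with boundary with corners to which Sard's theorem applies. Since $A$ is a smooth submanifold of $Z$ whose induced M-polyfold structure is tame, and since by hypothesis (2) the tangent space $T_zA$ has a sc-complement in $T_zZ$ contained in $T_z^R Z$, the submanifold $A$ is in general position at every point of $A$ in the sense of the definition of general position. Hence Theorem \ref{local-str} applies at every $z \in A$: each point has a neighborhood on which $A\cap\mathcal F_\sigma$ is a tame submanifold of the corresponding intersection of faces, and the local models are classically smooth manifolds with boundary with corners. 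Therefore $A$ carries the structure of a (finite-dimensional) smooth manifold with boundary with corners, and the projection $P$ restricts to a $C^\infty$-map $p\colon A \to \R^m$ (here one uses that $P$ is sc-smooth and, on a finite-dimensional tame chart, sc-smooth maps are classically smooth).

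Next I would handle the corner strata. Write $A = A^{(0)} \sqcup A^{(1)} \sqcup \cdots$ where $A^{(k)} = \{z \in A \mid d_A(z) = k\}$; by Theorem \ref{local-str}(4), each $A^{(k)}$ is locally the interior of an intersection of faces of $A$, hence a smooth manifold without boundary, and $p$ restricted to each stratum $A^{(k)}$ is a smooth map of (ordinary) manifolds. Since $A$ has compact closure and, locally, only finitely many faces, there are only finitely many strata up to the relevant dimension, and each stratum is $\sigma$-compact. Applying Sard's theorem to $p|_{A^{(k)}}$ for each $k$ gives a set $\Sigma_k \subset B^m(\varepsilon)$ of full measure consisting of regular values of $p|_{A^{(k)}}$; set $\Sigma = \bigcap_k \Sigma_k$, which still has full measure in $B^m(\varepsilon)$. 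For $\lambda \in \Sigma$, the fibre $p^{-1}(\lambda)$ is transverse to every stratum $A^{(k)}$, which is precisely the condition guaranteeing that $A_\lambda = p^{-1}(\lambda)$ is a smooth submanifold with boundary with corners of $A$, with $\partial^{(j)} A_\lambda = A_\lambda \cap A^{(j)}$. By hypothesis (1), $p^{-1}(\lambda)$ is compact and non-empty for $|\lambda|\le\varepsilon$, so $A_\lambda$ is a nonempty compact smooth manifold with boundary with corners.

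It remains to verify the tangent-space condition: for $x \in A_\lambda$, the tangent space $T_x A_\lambda$ has a sc-complement in $T_x X$ contained in $T_x^R X$. Here I would argue as follows. Identify $x$ with $(\lambda, x) \in A \subset Z = \R^m \oplus X$, so $T_{(\lambda,x)}Z = \R^m \oplus T_x X$ and $T_x^R Z = \R^m \oplus T_x^R X$ (the reduced tangent in a product splits, since the $\R^m$ factor is boundary-free). By hypothesis (2), $T_{(\lambda,x)}A$ has a sc-complement $Q$ in $T_{(\lambda,x)}Z$ with $Q \subset T_{(\lambda,x)}^R Z$. Regularity of $\lambda$ means $Dp(x)\colon T_{(\lambda,x)}A \to \R^m$ is surjective at every corner stratum through $x$; and $T_x A_\lambda = \ker(Dp(x)|_{T_{(\lambda,x)}A})$ sits inside $\{0\}\oplus T_x X$. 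The complement of $T_x A_\lambda$ inside $T_x X$ can then be taken to be the image under the projection $T_{(\lambda,x)}Z \to T_x X$ of a complement of $T_x A_\lambda$ inside $T_{(\lambda,x)}A$ together with (a slice of) $Q$; since $Q \subset \R^m \oplus T_x^R X$, its projection to $T_x X$ lands in $T_x^R X$, and the within-$A$ complement of $T_x A_\lambda$ maps to a finite-dimensional space transverse to $A_\lambda$ which, by regularity, can be chosen inside $T_x^R X$ as well. Assembling these gives the desired sc-complement contained in $T_x^R X$; one then invokes Theorem \ref{thm-basic} (or directly Theorem \ref{local-str}) once more to conclude $A_\lambda$ is tame with the classical smooth structure.

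I expect the main obstacle to be the last paragraph: carefully tracking how the reduced tangent spaces behave under the product decomposition $Z = \R^m \oplus X$ and under passage to the fibre $A_\lambda$, and checking that the transversality furnished by Sard genuinely delivers a complement \emph{inside} $T_x^R X$ rather than merely inside $T_x X$. The stratified Sard argument itself is routine given Theorem \ref{local-str}, but one must be scrupulous that "regular value for each stratum" is the correct and sufficient transversality hypothesis for a manifold-with-corners preimage theorem, and that the finitely-many-faces/compact-closure hypotheses make the countable intersection $\Sigma = \bigcap_k \Sigma_k$ harmless.
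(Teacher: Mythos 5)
Your proposal follows essentially the same route as the paper: apply Theorem \ref{local-str} to exhibit $A$ as a smooth finite-dimensional manifold with boundary with corners, stratify, apply Sard stratum-by-stratum, intersect the resulting full-measure sets, and then check the tangent-space condition. The only organizational difference is that you stratify $A$ globally by the degeneracy index $d_A$, whereas the paper covers the relevant compact set by finitely many charts $U(z_i)$ and applies Sard to $p_\sigma\colon A\cap {\mathcal F}_\sigma^\circ\to\R^m$ for each face-subset $\sigma$ in each chart, taking a finite intersection; the two procedures are equivalent because locally $A^{(k)}$ is the disjoint union of the $A\cap{\mathcal F}_\sigma^\circ$ over $\#\sigma=k$, and compact closure plus metrizability of $X$ gives the $\sigma$-compactness you need.

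Where your write-up is genuinely vague is the last paragraph, which you flag yourself. The assertion that the within-$A$ complement of $T_xA_\lambda$ ``by regularity, can be chosen inside $T_x^RX$'' is the crux and needs a real argument; as stated it simply restates the goal. Here is the precise point, which is also what the paper's $\xi_{z'}$ is doing. Write $z'=(\lambda,x)$. Since $\lambda$ is a regular value for $p$ restricted to the stratum through $z'$, the restriction of $\pi_1\colon\R^m\oplus T_xX\to\R^m$ to the reduced tangent space $T_{z'}^RA$ (the tangent of the stratum) is surjective. Hence given $a\in T_{z'}A$ one can subtract an element $r\in T_{z'}^RA$ with $\pi_1(r)=\pi_1(a)$, so $a-r\in T_{z'}A\cap(\{0\}\oplus T_xX)=\{0\}\oplus T_xA_\lambda$; this shows $T_{z'}A=(\{0\}\oplus T_xA_\lambda)+T_{z'}^RA$, and one may therefore \emph{choose} the slice $\xi$ complementing $\{0\}\oplus T_xA_\lambda$ in $T_{z'}A$ to lie inside $T_{z'}^RA$. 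Because $A$ is in general position at $z'$ one has $T_{z'}^RA=T_{z'}A\cap T_{z'}^RZ\subset T_{z'}^RZ=\R^m\oplus T_x^RX$, and the sc-complement $Q$ of $T_{z'}A$ given by hypothesis (2) also lies in $\R^m\oplus T_x^RX$. Then $V:=(\{0\}\oplus T_xX)\cap(\xi\oplus Q)$ is a complement of $\{0\}\oplus T_xA_\lambda$ in $\{0\}\oplus T_xX$ that is contained in $\{0\}\oplus T_x^RX$, which is exactly the required sc-complement of $T_xA_\lambda$ in $T_xX$ inside $T_x^RX$. Without the observation that $\xi$ can be taken inside $T_{z'}^RA\subset T_{z'}^RZ$, the naive projection of ``complement-in-$A$ plus $Q$'' to $T_xX$ need not land in $T_x^RX$, which is the gap in your final paragraph.
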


\begin{remark}
By Proposition 2.36, 
the codimension of $T^R_xX$ is equal to $d_X(x)$. Therefore, if $\lambda\in \Sigma$, then the point $x\in A_\lambda$ can only belong to a corner if $d_X(x)\leq \dim (A_\lambda)$. For example, zero-dimensional manifolds $A_\lambda$ have to lie in $X\setminus \partial X$, $one$-dimensional $A_\lambda$ can only hit the $d_X=1$ part of the boundary,  etc.
\end{remark}

\begin{proof}

Fixing a point $z\in A$ there exists, in view of Theorem \ref{local-str} and open neighborhood $U(z)\subset Z$ of $z$ in $Z$ such that there are precisely $d=d_X(z)$ local faces ${\mathcal F}_1,\ldots ,{\mathcal F}_d$ in $Z\cap U$. Moreover, for all subsets $\sigma\subset \{1, 2,\ldots ,d\}$, the intersection of $A$ with ${\mathcal F}_\sigma=\bigcap_{i\in \sigma}{\mathcal F}_i$ is a tame smooth submanifold in ${\mathcal F}_\sigma$ of dimension $\dim A-\#\sigma$. Moreover, the tangent space $T_z(A\cap {\mathcal F}_\sigma)$ has a sc-complement in $T_z{\mathcal F}_\sigma$ which is contained in $T_z^R{\mathcal F}_\sigma$.

We cover the set $\{z\in A\, \vert \, \abs{p(z)}\leq \varepsilon\}$ with the finitely many such neighborhoods $U(z_1),\ldots , U(z_k)$ and first
 study the  geometry of the problem in one of these neighborhoods $U(z_i)$ which, for simplicity of notation, we denote by $U(z)$.

If $\sigma\subset \{1,\ldots ,d\}$
and ${\mathcal F}_\delta$ the  associated 
intersection of local faces, we denote by 
${\mathcal F}^{\circ}_\sigma$ the interior of ${\mathcal F}_\sigma$, i.e., the set ${\mathcal F}_\sigma$ with its boundary removed. The union of all 
${\mathcal F}^{\circ}_\sigma$ is equal to $U(z)$. Here we use the convention that for the empty subset of 
$\{1,\ldots ,d\}$, the empty intersection is equal to $U(z)$ from which the boundary is removed. We note that ${\mathcal F}_{\{1,\ldots ,d\}}^\circ$ is a M-polyfold without boundary. By Theorem 4.23, 
the intersection $A\cap {\mathcal F}^\circ_{\sigma}$ is a smooth manifold without boundary having dimension $\dim (A)-\#\sigma$.

With a subset $\sigma$ we associate the smooth projection 
$$p_\sigma\colon A\cap {\mathcal F}_\sigma^\circ\to \R^m,\quad p_\sigma (\lambda , x)=\lambda.$$
Using Sard's theorem we find a subset $\Sigma_\sigma\subset B^m(\varepsilon)$ of regular values of $p_{\sigma}$ having full measure. The intersection 
$$\Sigma=\bigcap_{\sigma \subset \{1,\ldots ,d\}}\Sigma_\sigma$$
has full measure in $B^m(\varepsilon)$ and consists of regular values for all the maps in (1).

Now we fix $\lambda \in \Sigma$. Then the preimage $p^{-1}(\lambda )\subset A$ has the form $\{\lambda \}\times A_\lambda$ and, by construction,
$$p_\sigma^{-1} (\lambda)=(\{\lambda\}\times A_\lambda)\cap {\mathcal F}_\sigma^\circ ,$$ 
which is a smooth submanifold of $A\cap {\mathcal F}_\sigma^\circ$. Moreover, the tangent space $T_{z'}(A\cap {\mathcal F}_\sigma^\circ )$ at the point $z'=(\lambda, x)\in A\cap {\mathcal F}_\sigma^\circ$ is equal to 
$$T_{z'}(A\cap {\mathcal F}_\sigma^\circ )=T_{z'}\bigl((\{\lambda \}\times A_\lambda)\cap {\mathcal F}_\sigma^\circ\bigr)\oplus \xi_{z'}$$
where the linearized projection 
$$Tp_\sigma (z')\colon T_{z'}(A\cap {\mathcal F}_\sigma^\circ )\to T_{p(z')}\R^m$$
maps $\xi_{z'}$  isomorphically onto $\R^m$.

From our discussion we conclude, in view of the assumption (2) of the theorem,  for $z'=(\lambda, x)\in U(z)$ that the tangent space $T_{z'}(\{\lambda \}\times A_\lambda)$ has a sc-complement in $T^R_{z'}X$. Consequently, $A_\lambda\cap U(z)$ is a smooth submanifold of $X$ in general position.

The argument above applies  to every $z_1,\ldots ,z_k$. Hence for every $z_i$ there exists a subset $\Sigma_{z_i}\subset B^m (\varepsilon)$ of full measure consisting of regular values. The subset $\wh{\Sigma}$, 
$$\wh{\Sigma}=\bigcap_{i=1}^k\Sigma_{z_i}\subset B^m(\varepsilon ),$$
has full measure and it follows, for every $\lambda \in \wh{\Sigma}$, that $A_\lambda\subset X$ is a smooth compact submanifold of the M-polyfold $X$ which, moreover, is in general position.

\end{proof}

\subsection{Sc-Differential Forms}\label{subs_sc_differential}
Following  \cite{HWZ7} we start with the definition.
\begin{definition}
Let $X$ be a M-polyfold and $TX\rightarrow X$ its tangent bundle. A sc-differential $k$-form  $\omega$ is a sc-smooth map
$$\omega:\bigoplus_k TX\rightarrow \R$$
 which is linear in each argument and skew-symmetric. 
The vector space of sc-differential k-forms on $X$ is denoted by $\Omega^k(X)$. \index{$\Omega^k(X)$}
\end{definition}

By means of the inclusion maps $X^i\rightarrow X$ the 
sc-differential $k$-form $\omega$ on $X$ pulls back to a sc-differential $k$-form on $X^i$, defining this way the directed system
$$
\Omega^k(X)\to \ldots \to  \Omega^k(X^i)\rightarrow\Omega^{k}(X^{i+1}\to \ldots 
$$
We denote the direct limit of the system by $\Omega_\infty^\ast (X)$\index{$\Omega_\infty^\ast (X)$}
and introduce the set $\Omega_\infty^\ast (X)$ of sc-differential form on $X_\infty$ by 
$$
\Omega_\infty^\ast (X)=\bigoplus_k \Omega_\infty^k (X).
$$

Next we define the exterior differential. For this we use the Lie bracket of vector fields which has to be generalized to our context.
As shown in \cite{HWZ7} Proposition 4.4,  the following holds.
\begin{proposition}\index{P- Lie bracket}
Let $X$ be a M-polyfold and given two sc-smooth vector fields $A$ and $B$ on $X$ one can define the Lie-bracket by the usual formula which defines
a sc-smooth vector field $[A,B]$ on $X^1$, that is,  
$[A, B]$ is a section of the tangent bundle $T(X^1)\to X^2.$\index{$[A,B]$}
\end{proposition}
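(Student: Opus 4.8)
The final statement to prove is the proposition asserting that for two sc-smooth vector fields $A$ and $B$ on a M-polyfold $X$, the usual formula for the Lie bracket yields a sc-smooth vector field $[A,B]$ on $X^1$, i.e.\ a section of $T(X^1)\to X^2$.

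\textbf{Overall approach.} The statement is local, so I would work in a single sc-smooth chart $(V,\varphi,(O,C,E))$ with $O=r(U)$ for a sc-smooth retraction $r\colon U\to U$, $U$ relatively open in a partial quadrant $C\subset E$. A sc-smooth vector field on $O$ is, by Definition \ref{tangent_retract} and the description of $TO$, a sc-smooth map $o\mapsto (o,\mathbf{A}(o))$ with $\mathbf{A}(o)\in T_oO=Dr(o)E$; equivalently $\mathbf{A}=Dr\cdot \widetilde{\mathbf A}$ for a sc-smooth map $\widetilde{\mathbf A}\colon U\to E$, after composing with $r$ as in Definition \ref{tangent_retract}. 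The plan is: (i) write down the bracket in the retract coordinates using the ambient linear structure of $E$; (ii) check via the chain rule (Theorem \ref{sccomp}) and Proposition \ref{x1} that the naive ambient formula $\mathbf A\cdot D\mathbf B-\mathbf B\cdot D\mathbf A$ makes sense and is sc-smooth, but only after a loss of one level, which is why the resulting object lives over $X^1$ and is a section of $T(X^1)\to X^2$; (iii) verify that the ambient expression, once projected appropriately by $Dr$, actually lands in $TO$ and is independent of the chosen retraction, using Proposition \ref{independent_of_retractions} and Proposition \ref{preparation_retraction}; (iv) check chart-independence so the local definitions glue.

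\textbf{Key steps in order.} First I would recall that $D\mathbf B(o)\in\mathscr L(E_0,E_0)$ exists only for $o\in U_1$ and that, by Proposition \ref{x1}, the map $U_{m}\times E_{m-1}\to E_{m-1}$, $(o,h)\mapsto D\mathbf B(o)h$ is continuous; this is the source of the level shift. Then I would define, in the chart, the principal part of the candidate bracket at $o\in O_1$ by the usual formula
$$
[\mathbf A,\mathbf B](o)=D\mathbf B(o)\mathbf A(o)-D\mathbf A(o)\mathbf B(o),
$$
interpreted with the ambient derivatives of the sc-smooth extensions $\mathbf A\circ r,\mathbf B\circ r\colon U\to E$, and I would check that this is independent of the choice of extension and of $r$ by differentiating the identities $r\circ r=r$, $\mathbf A\circ r=\mathbf A$ on $O$ and applying the chain rule, exactly in the style of the proof of Proposition \ref{independent_of_retractions}. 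Next I would show $[\mathbf A,\mathbf B](o)\in T_oO$: differentiating $Dr(o)\mathbf A(o)=\mathbf A(o)$ and the analogous identity for $\mathbf B$ gives relations that, combined, force $Dr(o)[\mathbf A,\mathbf B](o)=[\mathbf A,\mathbf B](o)$ on $O_2$ (two differentiations cost two levels, which matches the claim that $[A,B]$ is a section of $T(X^1)\to X^2$). Then sc-smoothness of $o\mapsto(o,[\mathbf A,\mathbf B](o))$ as a map into $T(X^1)$ follows from Proposition \ref{x1} and the chain rule: each term is a composition of sc-smooth maps with the "derivative evaluation" map, which is sc-smooth from $U^1\oplus E$ after the one-level shift, precisely as in the proof of the chain rule in Appendix \ref{A1.1}. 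Finally I would note that under an sc-smooth transition map $\psi\circ\varphi^{-1}$ the local brackets transform by the usual pushforward formula — this is a formal consequence of the chain rule applied twice — so the locally defined vector fields agree on overlaps and define a global sc-smooth vector field $[A,B]$ on $X^1$, i.e.\ a section of $T(X^1)\to X^2$.

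\textbf{Main obstacle.} The routine Euclidean computation is harmless; the real work is bookkeeping the level losses so that everything is consistent. The delicate point is step (iii)–(iv): one must be careful that the derivative $D\mathbf B(o)$ used inside the bracket is the \emph{ambient} operator on $E_0$ (guaranteed by Proposition \ref{x1}(2)), not merely the intrinsic derivative $d\mathbf B(o)$ on $E_m$, since otherwise the chain-rule manipulations proving tangency and well-definedness do not close up. A secondary subtlety is confirming that the output is genuinely a $\ssc^0$-section of $T(X^1)\to X^2$ and not just of some in-between bundle — this forces one to track that $[\mathbf A,\mathbf B]$, as a map, is sc-smooth precisely on $O^1$ with values in $TO$, and one level further, i.e.\ on $O^2$, to get the tangency identity $Dr\cdot[\mathbf A,\mathbf B]=[\mathbf A,\mathbf B]$; this is exactly the content of the statement and is where the proof must be most careful. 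Since the detailed argument is essentially that of \cite{HWZ7}, Proposition 4.4, I would present the key identities and the level accounting and refer to that source for the remaining routine verifications.
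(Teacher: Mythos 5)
The paper does not actually prove this proposition: it is stated with a pointer to \cite{HWZ7}, Proposition~4.4, and no proof appears in the present text. So there is no in-paper argument to match against; I can only evaluate your sketch on its own merits, and it is indeed the natural route and, as far as I can tell, the one taken in the cited source.

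Two points are worth sharpening. First, the step you summarize as ``differentiating $Dr(o)\mathbf A(o)=\mathbf A(o)$ gives relations that, combined, force $Dr(o)[\mathbf A,\mathbf B](o)=[\mathbf A,\mathbf B](o)$'' hides the key ingredient: differentiating the invariance identity yields
$$
Dr(o)\,D\mathbf A(o)h \;=\; D\mathbf A(o)h \;-\; D^2r(o)\bigl[h,\mathbf A(o)\bigr],
$$
together with the analogous identity for $\mathbf B$, and substituting these into $Dr(o)[\mathbf A,\mathbf B](o)$ leaves the error term $D^2r(o)[\mathbf B(o),\mathbf A(o)]-D^2r(o)[\mathbf A(o),\mathbf B(o)]$. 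This cancels exactly because $D^2r(o)$ is symmetric in its two arguments, a fact which in the sc-setting follows from Proposition~\ref{lower} (classical $C^2$-smoothness after dropping two levels) plus density of $E_{m+2}$ in $E_{m+1}$. You should state this symmetry explicitly: without it the tangency argument does not close. Second, a small imprecision in the level bookkeeping: since $\mathbf A$ and $\mathbf B$ are sc-smooth maps out of $O^1$ (being principal parts of sections of $TO\to O^1$), the ambient derivatives $D\mathbf A(o),D\mathbf B(o)$ are defined for $o\in O_2$, not $o\in O_1$ as you write when defining the bracket. This is precisely what makes $[A,B]$ a section of $T(X^1)\to X^2$ rather than of $TX\to X^1$, and since the level shift is the entire content of the proposition, the statement ``define at $o\in O_1$'' should be corrected. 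Your remaining steps — retraction-independence in the spirit of Proposition~\ref{independent_of_retractions}, chart invariance via two applications of the chain rule — are the right ones.
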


In order to define the exterior derivative 
$$d:\Omega^k(X^{i+1})\to \Omega^{k+1}(X^{i}),$$
we take s sc-differential $k$-form and $(k+1)$ sc-smooth vector fields $A_0,A_1,\ldots, A_k$ on $X$ and define $(k+1)$-form $d\omega$ on $X$ by the following familiar formula
\begin{equation*}
\begin{split}
d\omega(A_0,A_1,\ldots, A_k)&=\sum_{i=0}^k(-1)^iD(\omega(A_0,\ldots,\what{A}_i,\ldots, A_k)\cdot A_i \\
&\phantom{=}+ \sum_{i<j} (-1)^{i+j}\omega([A_i,A_j],A_0,\ldots,\what{A}_i,\ldots,\what{A}_j,\ldots,A_k).
\end{split}
\end{equation*}

The exterior derivative $d$ commutes with the inclusion map $X^{i+1}\to X^i$  occurring in the directed system, and consequently induces a map
$$d:\Omega_\infty^\ast (X)\rightarrow\Omega_\infty^\ast (X)$$ 
having the property  $d^2=0$. The pair $(\Omega_\infty^\ast (X), d)$\index{$(\Omega_\infty^\ast (X), d)$} is a graded differential algebra which we call the de Rham complex of the M-polyfold $X$.

\begin{definition}\label{def_de_Rham_cohomology}
The sc-de Rham cohomology of the M-polyfold $X$ is defined as $H^\ast_{sc}(X):=\text{ker}(d)/\text{im}(d)$.\index{$H^\ast_{sc}(X)$}
\end{definition}

There is also a relative version.
If $X$ is a tame M-polyfold the inclusion map $\partial X\rightarrow X$ restricted to local faces is sc-smooth. Local faces are naturally tame M-polyfolds and the same is true for 
the intersection of local faces. Therefore it makes sense to talk about differential forms on $\partial X$ and $\partial X^i$. We define  the differential algebra $\Omega^\ast_\infty(X,\partial X)$ by
$$
\Omega_\infty^\ast(X,\partial X):=\Omega_\infty^\ast(X)\oplus \Omega_\infty^{\ast-1}(\partial X)\index{$\Omega_\infty^\ast(X,\partial X)$}
$$
with differential
$$
d(\omega,\tau)=(d\omega,j^\ast\omega-d\tau)
$$
where $j:\partial X\rightarrow X$ is the inclusion. One easily verifies that $d\circ d=0$ and we denote the associated cohomology by $H^{\ast}_{dR}(X,\partial X)$.

Clearly, a sc-differential $k$-form $\omega\in \Omega^\ast_{\infty}(X)$ induces a classical smooth  differential form 
on a smooth finite-dimensional submanifold $N$ of the M-polyfold $X$.  The following version of Stokes'  theorem holds true.

\begin{theorem}\index{T- Stokes}
Let $X$ be a M-polyfold and let $N$ be an oriented smooth n-dimensional compact tame submanifold of $X$ whose boundary 
$\partial N$, a  union of smooth faces $F$, is equipped with the induced orientation. If $\omega$ is  a sc-differential $(n-1)$-form  on $X$, then 
$$
\int_M d\omega  = \sum_{F} \int_F \omega.
$$
\end{theorem}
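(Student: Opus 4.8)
The plan is to reduce the M-polyfold Stokes theorem to the classical Stokes theorem for smooth manifolds with boundary with corners. The key observation is that by Theorem \ref{thm-basic} (and the fact that $N$ is a tame smooth submanifold of $X$), the induced M-polyfold structure on $N$ is equivalent to the structure of a genuine smooth $n$-dimensional compact manifold with boundary with corners; moreover a sc-differential $(n-1)$-form $\omega\in\Omega^\ast_\infty(X)$ restricts, via the sc-smooth inclusion $i\colon N\to X$ and the chain-rule for sc-smooth maps, to an ordinary smooth $(n-1)$-form $i^\ast\omega$ on $N$. One must also check that the exterior derivative is compatible with restriction, i.e. $i^\ast(d\omega)=d(i^\ast\omega)$; this follows from the coordinate formula for $d$ given in Section \ref{subs_sc_differential} together with the fact that $Ti$ maps sc-smooth vector fields on $N$ (and their Lie brackets) to the $i$-related objects on $X$, so the pullback of the intrinsic $d$ agrees with the classical $d$ on the finite-dimensional manifold $N$.

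First I would set up the local picture. Fix a point $x\in N$. By Theorem \ref{thm-basic} there is an open neighborhood $V(x)$ in $N$ and a sc-diffeomorphism $\psi\colon V(x)\to V_0$ onto a relatively open subset $V_0$ of a partial quadrant $C\cap K$ in a finite-dimensional space $K\cong\R^n$, and these charts form a smooth (classically $C^\infty$) atlas for $N$ since transition maps between relatively open subsets of partial quadrants in finite-dimensional spaces are classically smooth (this is exactly the content around Theorem \ref{QIFT} and the discussion of compatible manifold structures in Section \ref{subsect_characterization}). Under such a chart, $i^\ast\omega$ becomes a classical smooth $(n-1)$-form on $V_0$, and $i^\ast d\omega = d(i^\ast\omega)$ reduces to the classical identity on $V_0$. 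The local faces of $C\cap K$ are the coordinate hyperplane pieces, and under the sc-diffeomorphisms these glue to give exactly the faces $F$ of $\partial N$ (this uses Lemma \ref{ert}, Corollary \ref{cor_2.42}, and the face discussion of Section \ref{subsec_tame_m_polyfolds}).

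Next I would globalize by a partition of unity. Since $N$ is compact, choose a finite atlas $\{(\psi_\alpha, V_\alpha, V_0^\alpha)\}$ of such tame charts and a smooth partition of unity $\{\rho_\alpha\}$ subordinate to $\{V_\alpha\}$ on the finite-dimensional manifold $N$ (this exists by the classical theory once we know $N$ carries a smooth manifold-with-corners structure). Write $i^\ast\omega=\sum_\alpha \rho_\alpha\, i^\ast\omega$, apply the classical Stokes theorem to each $\rho_\alpha\, i^\ast\omega$ in the chart $V_0^\alpha\subset C\cap K$ (classical Stokes holds on relatively open subsets of finite-dimensional partial quadrants, with the boundary integral taken over the union of the coordinate face pieces, using the induced orientation), and sum. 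Compatibility of orientations: the given orientation on $N$ induces one on each face $F$ of $\partial N$, and the local chart contributions along a shared face cancel except on the actual boundary, where they assemble into $\sum_F\int_F\omega$. Thus $\int_N d\omega=\sum_F\int_F\omega$; note the statement writes $\int_M d\omega$ for what is $\int_N d\omega$, a harmless typo.

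The main obstacle I expect is the bookkeeping that the intrinsic M-polyfold notions — the tangent bundle $TN$ inherited from $TX$ (Proposition \ref{sc_structure_sub_M_polyfold} and Theorem \ref{HKL}), the intrinsic exterior derivative defined via the generalized Lie bracket, and the intrinsic notion of face of $N$ — all coincide, under the sc-diffeomorphisms of Theorem \ref{thm-basic}, with the classical objects on the smooth manifold-with-corners $N$. Concretely, one must verify that $i^\ast$ intertwines the intrinsic $d$ on $X$ with the intrinsic $d$ on $N$ (using naturality of the coordinate formula for $d\omega$ under the sc-smooth map $i$ and the fact, from the Lie-bracket proposition cited in Section \ref{subs_sc_differential}, that $Ti$ sends $[A,B]$ to the $i$-related bracket), and that the faces of $N$ as an abstract tame M-polyfold are exactly the smooth faces $F$ appearing in the statement. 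Once these identifications are in place, the argument is a routine reduction to the classical Stokes theorem via a partition of unity, and no further analysis on the infinite-dimensional side is needed.
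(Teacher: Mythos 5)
The paper never writes out a proof of this Stokes theorem; immediately after the statement it says the general theory of sc-differential forms "can be worked out as for the classical smooth manifolds" and leaves the details to the reader. Your proposal is the natural way to fill in those details, and the overall strategy — identify $N$ with a genuine compact smooth manifold with boundary and corners, observe that $i^\ast\omega$ and $i^\ast d\omega$ become classical forms, then invoke classical Stokes on quadrant charts via a partition of unity — is correct.

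One point to clean up: Theorem \ref{thm-basic} is not quite the right citation. That theorem requires $X$ to be \emph{tame} and $N$ to be \emph{in good position at every point}, neither of which is in the hypotheses of the Stokes statement. The correct route is to apply Theorem \ref{XXX--} (the characterization of compatible smooth manifold structures) to $N$ itself, regarded as an abstract tame M-polyfold. The conditions (ii) of that theorem are satisfied for a compact tame smooth finite-dimensional submanifold: $N_0=N_\infty$ and ${\mathbbm 1}_0^1$ sc-smooth by Proposition \ref{new_prop4.15}, finite-dimensional tangent spaces because the local models are $\ssc^+$-retracts (Proposition \ref{frank}, Lemma \ref{new_lemma_4.8}), and the tangent space $T_0O$ in good position to $C$ because tameness plus Proposition \ref{IAS-x} produces a sc-complement of $T_0O$ inside $E_0\subset C$, which is automatically a good complement (no $\varepsilon$ condition is needed) while $C_0O$ has nonempty interior by Proposition \ref{tame_equality}. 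Once this is in place, the rest of your argument — pullback to classical smooth forms, naturality of $d$ as implicitly asserted in Section \ref{subs_sc_differential}, matching intrinsic faces to coordinate faces via Lemma \ref{ert} and Corollary \ref{cor_2.42}, partition of unity, classical quadrant Stokes — goes through as you describe.
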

Here, the submanifold $N$ is not assumed to  be face structured.

The sc-smooth map $f:X\to Y$ between two M-polyfolds induces 
the map  $f^\ast:\Omega^\ast_\infty(Y)\rightarrow\Omega^\ast_\infty(X)$ in the usual way. There is also a version of the Poincar\'e Lemma, formulated and proved in \cite{HWZ7}.
The general the theory of sc-differential forms on M-polyfolds can be worked out as for the classical smooth manifolds. We leave it to the reader to carry  out the details.

\pagebreak
\section{The Fredholm Package for M-Polyfolds}\label{sec_package}
Chapter \ref{sec_package} 
is devoted to compactness properties 
of sc-Fredholm sections, to their perturbation theory, and to the transversality theory.

\subsection{Auxiliary Norms}

Recalling Section \ref{section2.5_sb}, we consider the strong bundle 
$$P\colon Y\rightarrow X$$
over the tame M-polyfold $X$.  The subset $Y_{0,1}$\index{$Y_{0,1}$ } of biregularity $(0, 1)$\index{Biregularity $(0, 1)$} is a topological space and
the map  $P\colon Y_{0,1}\rightarrow X$ is continuous. 
The fibers $Y_x:=P^{-1}(x)$ have the structure of Banach spaces.

We  first introduce the notion
of an auxiliary norm. This concept allows us to quantify the size of admissible perturbations in the transversality and perturbation theory.
We point out  that our definition is more general than the earlier one  given in \cite{HWZ3}, but it works just as well.

\begin{definition}
An {\bf auxiliary norm} \index{D- Auxiliary norm} is a continuous map $N\colon Y_{0,1}\rightarrow{\mathbb R}$, which has the following properties.
\begin{itemize}
\item[(1)] The  restriction of $N$ to a fiber is a complete norm.  
\item[(2)] If $(w_k)$ is a sequence in $Y_{0,1}$ such that $P(w_k)\rightarrow x$ in $X$
and $N(w_k)\rightarrow 0$, then $w_k\rightarrow 0_x$ in $Y_{0,1}$.
\end{itemize}
\end{definition}

Any two auxiliary norms are locally compatible according to the later Proposition \ref{peter},  which is an immediate corollary of  the following local comparison result.
\begin{lemma}\label{erde}\index{L- Comparison of  auxiliary norms}
Let $P\colon Y\rightarrow X$ be a strong bundle over the M-polyfold $X$,  and  let   $N\colon Y_{0,1}\rightarrow{\mathbb R}$ be  a continuous map which fiber-wise is a complete norm.
Then the following statements are equivalent.
\begin{itemize}
\item[{\em (i)}]  $N$ is an auxiliary norm.
\item[{\em (ii)}] For every $x\in X$  there exists,  for  a suitable open neighborhood $V$ of $x$,  a strong bundle isomorphism 
$\Phi\colon Y\vert V\rightarrow K$ whose underlying sc-diffeomorphism $\varphi\colon V\rightarrow O$ maps $x$ to a point $o\in O$,  and constants $0<c <C$ such that for all $w\in P^{-1}(V)$, 
$$
c\cdot N(w)\leq \abs{h}_1\leq C\cdot N(w),
$$
where $\Phi(w)=(p,h)$.
\end{itemize}
\end{lemma}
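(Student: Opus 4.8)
The plan is to prove the two implications separately, with the local trivialization via a strong bundle chart being the connecting device in both directions. First I would fix notation: around any point of $X$ we have, by Definition \ref{def_strong_bundle_chart}, a strong bundle chart $\Phi\colon Y\vert V\to K$ covering a sc-diffeomorphism $\varphi\colon V\to O$, with $K=R(U\triangleleft F)$ the image of a strong bundle retraction $R(u,h)=(r(u),\Gamma(u,h))$. For $w\in P^{-1}(V)$ write $\Phi(w)=(p,h)\in K\subset O\times F$; the map $w\mapsto h$ on the fiber $P^{-1}(x)$ is a Banach space isomorphism onto $\Gamma$-invariant vectors, so $h\mapsto |h|_1$ restricted to that fiber is a complete norm, comparable to $N$ on each individual fiber. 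The content of the lemma is that this comparison can be made \emph{locally uniform} precisely when $N$ is an auxiliary norm.

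For the implication (ii)$\Rightarrow$(i): given the uniform estimate $c\cdot N(w)\le |h|_1\le C\cdot N(w)$ near $x$, suppose $P(w_k)\to x$ and $N(w_k)\to 0$. For $k$ large, $w_k\in P^{-1}(V)$, so $\Phi(w_k)=(p_k,h_k)$ with $|h_k|_1\le C\cdot N(w_k)\to 0$ and $p_k=\varphi(P(w_k))\to \varphi(x)=o$. Hence $(p_k,h_k)\to(o,0)$ in $K\subset U\triangleleft F$ at biregularity $(0,1)$, i.e. $\Phi(w_k)\to \Phi(0_x)$, and applying the homeomorphism $\Phi^{-1}$ on $Y_{0,1}\vert V$ gives $w_k\to 0_x$. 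Thus property (2) of an auxiliary norm holds, and property (1) is assumed; so $N$ is an auxiliary norm.

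For the harder implication (i)$\Rightarrow$(ii): fix $x$, choose a strong bundle chart $\Phi\colon Y\vert V\to K$ covering $\varphi\colon V\to O$ with $\varphi(x)=o$, and let $M(w):=|h|_1$ where $\Phi(w)=(p,h)$; this is a continuous map $Y_{0,1}\vert V\to\mathbb R$ which is fiberwise a complete norm. I would argue by contradiction that, after shrinking $V$ to a neighborhood with compact closure in the chart domain (possible since $X_1$, hence $X$, is locally metrizable by Theorem \ref{X_m_paracompact}, and $Y_{0,1}$ is locally modeled on $K[1]$), there are constants $0<c<C$ with $c\,N\le M\le C\,N$ on $P^{-1}(V)$. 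Suppose no such $C$ exists: then there is a sequence $w_k\in P^{-1}(V)$ with $M(w_k)>k\cdot N(w_k)$; normalize so that $M(w_k)=1$, hence $N(w_k)<1/k\to 0$. Passing to a subsequence, $P(w_k)\to x'\in \overline V$ (using compactness of the closure of the chosen neighborhood in $X$), so by the auxiliary-norm property (2) applied at $x'$, $w_k\to 0_{x'}$ in $Y_{0,1}$; but then $M(w_k)\to M(0_{x'})=0$, contradicting $M(w_k)=1$. The reverse bound $M\ge c\,N$ is obtained symmetrically, swapping the roles of $M$ and $N$ and using that $M$ is itself an auxiliary norm on $Y\vert V$ (which follows from the already-proved direction (ii)$\Rightarrow$(i) applied to the trivial comparison $M=|h|_1$ in the chart).

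The main obstacle I expect is the compactness/convergence bookkeeping in the contradiction argument: one must ensure the limit point $x'$ of $P(w_k)$ still lies in the chart domain $V$ where $\Phi$ and hence $M$ are defined, which is why $V$ must be chosen with closure inside a larger chart domain, and one must be careful that convergence $w_k\to 0_{x'}$ takes place in the biregularity-$(0,1)$ topology on $Y_{0,1}$ (not merely in $Y_0$), so that continuity of $M\colon Y_{0,1}\vert V\to\mathbb R$ can be invoked. All the underlying facts — that strong bundle charts exist, that $\Phi$ is a homeomorphism on $Y_{0,1}$, that the fiberwise norm $|h|_1$ is complete, and that the spaces involved are metrizable hence sequentially well-behaved — are available from Section \ref{section2.5_sb} and Theorem \ref{X_m_paracompact}, so no new ideas beyond the standard local-uniformity-by-contradiction scheme are needed.
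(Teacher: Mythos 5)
The implication (ii)$\Rightarrow$(i) is fine and is the same as the paper's argument. The implication (i)$\Rightarrow$(ii), however, has a genuine gap at the point where you extract a convergent subsequence of the base points.

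You propose to shrink $V$ to a neighborhood ``with compact closure in the chart domain,'' citing local metrizability via Theorem \ref{X_m_paracompact}, and then, after normalizing $M(w_k)=1$, to pass to a subsequence so that $P(w_k)\to x'\in\overline V$. But metrizability does not give local compactness: the M-polyfold $X$ is modeled on retracts of (generally infinite-dimensional) sc-Banach spaces, where no open set has compact closure. There is thus no reason a subsequence of $P(w_k)$ converges, and your contradiction never materializes. The same defect is present in the symmetric argument you outline for the bound $M\ge cN$, since that again needs a limit point $x'$ for the base sequence.

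The paper avoids this entirely by controlling the base points by construction rather than by compactness: assuming no constant $c$ exists with $c\cdot\lvert h\rvert_1\le N(w)$ near $x$, one negates the estimate over a shrinking family of neighborhoods of $\varphi(x)$ and normalizes $\lvert h_k\rvert_1=1$, so that $y_k=P(w_k)\to x$ holds automatically; then property (2) of an auxiliary norm at the known point $x$ forces $\Phi(w_k)\to(\varphi(x),0)$ in the $(0,1)$-biregularity topology, contradicting $\lvert h_k\rvert_1=1$. You should adopt this ``shrink the base neighborhood along the sequence'' device in place of the compactness step. The paper also treats the other bound $N\le C\cdot\lvert h\rvert_1$ more cheaply, without any contradiction: continuity of $N$ at $0_x$ gives $N<1$ on a small $(0,1)$-neighborhood of $0_x$ in the chart, and homogeneity of the fiberwise norms (both $N$ and $\lvert\cdot\rvert_1$ scale linearly) then yields $N(w)\le\frac{2}{\varepsilon}\lvert h\rvert_1$ for $P(w)$ near $x$. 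Your plan to get this bound by a second symmetric contradiction argument would again run into the compactness problem, whereas the direct continuity-and-scaling argument sidesteps it; replacing that part of your proof as well is advisable.
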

\begin{proof}

Assume that (i) holds.   We fix  $x\in X$ and choose for a suitable open neighborhood $V=V(x)\subset X$ a  strong bundle isomorphism $\Phi\colon Y\vert V\rightarrow K$, 
where $K\rightarrow O$ is a local strong bundle and  $K\subset V\triangleleft F\subset C\triangleleft F\subset E\triangleleft F$ is the retract  $K=R(V\triangleleft F)$.  Define $q\in O$ by  $(q,0)=\Phi(0_x)$.
An open neighborhood of $0_x\in  Y_{0,1}$ consists of all $w\in Y_{0,1}$ for which the set  of all  $(p,h)=\Phi(w)$ belongs to some open neighborhood of $(q,0)$ in $K_{0,1}$.
In view of the continuity of the function $N\colon Y_{0,1}\to \R$ there exists $\varepsilon>0$ such that $N(y)<1$ for all $y\in \Phi^{-1}(p, h)$ satisfying $\abs{p-q}_0+\abs{h}_1<\varepsilon.$
In particular,  $\abs{p-q}_0 <\varepsilon/2$ and $\abs{h}_1=\varepsilon/2$ imply  $N(w)\leq 1$.
Using that $N(\lambda w) =\abs{\lambda} N(w)$ and similarly for the norm $\abs{\cdot }_1$,  we infer  for $y$ close enough to $x$,  
$P(w)= y$, and $\Phi (w)=(p, h)$, that 
$$
N(w)\leq \frac{2}{\varepsilon}\cdot \abs{h}_1.
$$

On the other hand,  assume there is no constant $c>0$ such that 
$$
c\cdot \abs{h}_1\leq N(w)
$$
for $(p,h)=\Phi(w)$ and $p$ close to $q$.  Then we find  sequences $y_k\rightarrow x$ and $(w_k)$ satisfying  $P(w_k)=y_k$ and $\abs{h_k}_1=1$
such that $N(w_k)\rightarrow 0$. Since $N$ is an auxiliary norm,  we conclude that $w_k\rightarrow 0_x$ in $Y_{0,1}$ 
which implies the convergence $\Phi(w_k)=(q_k,h_k)\rightarrow (q,0)$, contradicting  $\abs{h_k}_1=1$.  At this point we have proved that,  given an auxiliary norm $N$,  there exist for every $x\in X$ constants $0<c<C<\infty$ depending on $x$
such  that 
$$
c\cdot N(w)\leq \abs{h}_1\leq C\cdot N(w)
$$
for all $w\in Y_{0,1}$  for which  $P(w)$ is close to $x$ and $(p,h)=\Phi(w)$,  for a suitable strong bundle isomorphism $\Phi$ to a local strong bundle model. Hence (i) implies (ii).

The other direction of the proof is obvious: since $N$ is continuous and fiber-wise a complete norm
we see that  the property (1) in  the definition of an auxiliary norm holds.  The estimate  in the statement of the lemma  implies that also property (2) holds.

\end{proof}

As a corollary of the lemma we immediately obtain the following proposition.

\begin{proposition}\index{P- Local equivalence of auxiliary norms}\label{peter}
Let $P\colon Y\rightarrow X$ be a strong bundle. Then there exists an auxiliary norm $N$.
Given two auxiliary norms $N_0$ and $N_1$, then there exists a continuous function $f\colon X\rightarrow (0,\infty)$ such that for all $w\in P^{-1}(x)\subset Y_{0,1}$,  
$$
f(x)\cdot N_0(h)\leq N_1(h)\leq \frac{1}{f(x)}\cdot N_0(h).
$$
\end{proposition}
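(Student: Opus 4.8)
The plan is to split the proposition into its two assertions: existence of at least one auxiliary norm, and the local comparability of any two auxiliary norms via a continuous positive function. Both follow from Lemma~\ref{erde} together with a partition-of-unity argument on $X$, so the heart of the work is bookkeeping with locally finite covers rather than new analysis.

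For existence, I would first invoke Lemma~\ref{erde}(ii) read in reverse: around every $x\in X$ choose a strong bundle chart $\Phi\colon Y\vert V\to K$ with underlying sc-diffeomorphism $\varphi\colon V\to O\subset C\triangleleft F$, and define a local candidate $N_V\colon P^{-1}(V)_{0,1}\to\R$ by $N_V(w)=\abs{h}_1$ where $\Phi(w)=(p,h)$. This $N_V$ is continuous and fiber-wise a complete norm (the fiber over $q\in O$ is the Banach space $p^{-1}(q)\subset\{q\}\oplus F_1$), so by the trivial direction ``(ii) $\Rightarrow$ (i)'' of Lemma~\ref{erde} it is even an auxiliary norm over $V$. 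Since $X$ is a paracompact Hausdorff space (by definition of an M-polyfold, and $X=X_0$ is metrizable by Theorem~\ref{X_m_paracompact}), it admits a sc-smooth (or merely continuous, which suffices here) partition of unity $(\beta_i)_{i\in I}$ subordinate to a locally finite refinement $(V_i)$ of the chart domains. Set
$$
N(w):=\sum_{i\in I}\beta_i(P(w))\cdot N_{V_i}(w),
$$
where the $i$-th summand is interpreted as $0$ when $P(w)\notin\supp\beta_i$. Local finiteness makes this a finite sum near each point, hence continuous; on a fiber $P^{-1}(x)$ it is a convex combination of the complete norms $N_{V_i}\vert_{P^{-1}(x)}$ with at least one strictly positive coefficient, hence again a complete norm; and property (2) of an auxiliary norm is inherited because near any $x$, $N$ is bounded below by $\beta_{i_0}(P(w))\cdot N_{V_{i_0}}(w)$ for an index $i_0$ with $\beta_{i_0}(x)>0$, and $N_{V_{i_0}}$ already has property (2). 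Thus $N$ is an auxiliary norm on all of $X$.

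For the comparison statement, let $N_0,N_1$ be two auxiliary norms. Fix $x\in X$. Applying Lemma~\ref{erde}, the direction ``(i) $\Rightarrow$ (ii)'' to each of $N_0$ and $N_1$ with a common strong bundle chart $\Phi$ around $x$ (one may shrink the two neighborhoods produced by the lemma to their intersection and use the same $\Phi$), we obtain an open neighborhood $V_x$ of $x$ and constants $0<c_j\le C_j$, $j=0,1$, with
$$
c_j\,N_j(w)\le\abs{h}_1\le C_j\,N_j(w)\qquad\text{for all }w\in P^{-1}(V_x),\ \Phi(w)=(p,h).
$$
Combining the two chains eliminates $\abs{h}_1$ and gives $\tfrac{c_1}{C_0}\,N_0(w)\le N_1(w)\le\tfrac{C_1}{c_0}\,N_0(w)$ on $P^{-1}(V_x)$; writing $g_x=\min\{c_1/C_0,\;c_0/C_1\}\in(0,1]$ we get $g_x\,N_0(w)\le N_1(w)\le g_x^{-1}\,N_0(w)$ there. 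To globalize the constant into a continuous function, choose a continuous partition of unity $(\gamma_j)$ subordinate to a locally finite refinement of $\{V_x\}$ with associated constants $g_j>0$, and set $f(x)=\sum_j\gamma_j(x)\,g_j$. Then $f\colon X\to(0,\infty)$ is continuous and positive, and for $w\in P^{-1}(x)$ one has $f(x)\le g_j$ whenever $\gamma_j(x)>0$ is only an upper bound; instead, to get both inequalities cleanly, take $f(x)=\min_{j:\,\gamma_j(x)>0} g_j$, which is continuous (locally the minimum of finitely many positive constants over the relevant indices, and each index set is locally constant away from boundaries of supports; more simply, use $f(x)=\big(\sum_j\gamma_j(x)/g_j\big)^{-1}\cdot(\text{something})$—but the honest route is the locally finite minimum). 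With this $f$, for $w\in P^{-1}(x)$ we have $f(x)\,N_0(w)\le N_1(w)\le f(x)^{-1}\,N_0(w)$, as required.

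The main obstacle is the last globalization step: one must produce a genuinely continuous $f\colon X\to(0,\infty)$ out of the pointwise-defined local constants $g_x$, and the naive ``take the minimum'' is only upper semicontinuous in general. The clean fix, which I would carry out in detail, is to fix a locally finite cover $(W_i)$ by such neighborhoods with constants $g_i$, take a subordinate continuous partition of unity $(\gamma_i)$, and define $f(x)=\Big(\sum_i \gamma_i(x)\, g_i^{-1}\Big)^{-1}$. Then $f$ is continuous and positive by local finiteness, and since $g_i^{-1}\ge 1$ one checks $f(x)\le g_i$ for every $i$ with $\gamma_i(x)>0$ (as $f(x)^{-1}\ge\gamma_i(x) g_i^{-1}+\sum_{k\ne i}\gamma_k(x)g_k^{-1}\ge \gamma_i(x)g_i^{-1}$ is not quite it—rather one uses $f(x)^{-1}\ge g_i^{-1}$ because $\sum\gamma_k=1$ and each $g_k^{-1}\ge g_i^{-1}$ fails in general)—so in fact one should instead run the partition-of-unity argument directly on the \emph{inequalities}: from $g_i N_0\le N_1\le g_i^{-1}N_0$ on $P^{-1}(W_i)$, multiply by $\gamma_i(P(w))$ and sum to obtain $\big(\sum_i\gamma_i(P(w))g_i\big)N_0(w)\le N_1(w)\le\big(\sum_i\gamma_i(P(w))g_i^{-1}\big)N_0(w)$, and then it suffices to replace the left coefficient by the smaller continuous function $f(x):=\min\{\sum_i\gamma_i(x)g_i,\ (\sum_i\gamma_i(x)g_i^{-1})^{-1}\}$, which is continuous, positive, and satisfies $f(x)\le\sum_i\gamma_i(x)g_i$ and $f(x)^{-1}\ge\sum_i\gamma_i(x)g_i^{-1}$ simultaneously. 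This is the one place where care is genuinely needed; everything else is a routine assembly of Lemma~\ref{erde} and paracompactness of $X$.
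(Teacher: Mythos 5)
Your proof is correct and follows the same strategy the paper sketches: build a local auxiliary norm in each strong bundle chart by pulling back $\abs{\cdot}_1$, glue with a continuous partition of unity on the paracompact base, and derive the global comparison function $f$ from the local two-sided estimates of Lemma~\ref{erde} via another partition-of-unity averaging. You do spell out (and, after a couple of false starts, correctly resolve) a continuity subtlety for $f$ that the paper dismisses with ``the local compatibility implies the existence of $f$''; one could simplify your final formula by noting that Jensen's inequality for $t\mapsto t^{-1}$ already gives $\bigl(\sum_i\gamma_i(x)g_i^{-1}\bigr)^{-1}\le\sum_i\gamma_i(x)g_i$, so $f(x)=\bigl(\sum_i\gamma_i(x)g_i^{-1}\bigr)^{-1}$ alone suffices without the outer minimum.
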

\begin{proof}
The existence follows from a (continuous) partition of unity argument using the paracompactness of $X$, pulling back by strong bundle maps the standard norm $\abs{\cdot }_1$
to the fibers of the strong bundles. The local compatibility implies the existence of $f$.
\end{proof}

\subsection{Compactness Results}

There are several different kinds of compactness requirements on a sc-smooth section which are useful in practice. It will turn out that they are all equivalent for sc-Fredholm sections. We note that compactness is a notion on the $0$-level of $X$.

\begin{definition}
Let $f$ be a sc-smooth section of a strong bundle $P\colon Y\rightarrow X$. 
\begin{itemize}
\item[(1)] We say that 
$f$ has a {\bf compact solution set} \index{D- Compact solution set} if  $f^{-1}(0)=\{x\in X\ \vert \, f(x)=0\}$ is compact in $X$ (on level $0$).
\item[(2)] The section  $f$ is called {\bf weakly proper} \index{D- Weakly proper}
if  it has a compact solution set and if for every auxiliary norm $N$ there exists an open neighborhood
$U$ of $f^{-1}(0)$ such  that for every $\ssc^+$-section $s$ having support in $U$  and satisfying $N(s(y))\leq 1$ for all $y$,  the solution set 
$$
\{x\in X\,  \vert \, f(x)=s(x)\}
$$
is compact in $X$. 
\item[(3)] The section  $f$ is called {\bf  proper} \index{D- Proper} if $f$ has a compact solution set and if for  every auxiliary norm $N$ there exists an open neighborhood $U$ of
$f^{-1}(0)$ such that the closure in $X_0$ of the set $\{x\in U\,  \vert \,  N(f(x))\leq 1\}$ is compact.
\end{itemize}
\end{definition}

In point (3) we adopt the convention  that if $f(x)$ does not have bi-regularity $(0,1)$, then $N(f(x))=\infty$.
Obviously  proper implies weakly proper, which in turn implies compactness.  
$$
\text{\bf proper}\, \Longrightarrow\, \text{\bf weakly proper}\ \  \Longrightarrow\, \text{\bf compact solution set}.
$$

In general, for a sc-smooth section $f$,  these notions are not equivalent.  
The basic result that all previous compactness notions coincide for a sc-Fredholm section is given by the following theorem.

\begin{theorem}\label{x-cc}\index{T- Equivalence of compactness notions}
Assume that $P\colon Y\rightarrow X$ is a strong M-polyfold bundle over the tame $X$  and $f$ a sc-Fredholm section.
If $f$ has a compact solution set, then $f$ is  proper. In particular,  for a sc-Fredholm section the properties of being  proper,  or being weakly proper, or having a compact solution set are equivalent.
\end{theorem}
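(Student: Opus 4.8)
\textbf{Proof plan for Theorem \ref{x-cc}.}

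The plan is to show that a sc-Fredholm section $f$ with compact solution set is proper; weak properness is then a formal consequence, and we already have the implications proper $\Rightarrow$ weakly proper $\Rightarrow$ compact solution set. First I would fix an auxiliary norm $N$ (these exist by Proposition \ref{peter}) and recall that properness requires producing an open neighborhood $U$ of $S := f^{-1}(0)$ such that $\mathrm{cl}_{X_0}(\{x \in U \mid N(f(x)) \leq 1\})$ is compact. The key is that $S$ is compact, so it suffices to work locally near each point $x_0 \in S$ and then patch using a finite subcover together with the local equivalence of auxiliary norms from Proposition \ref{peter}. Because $f$ is regularizing and sc-Fredholm, every $x_0 \in S$ is a smooth point and $(f, x_0)$ is a sc-Fredholm germ, so in a suitable strong bundle chart $(\Phi, P^{-1}(V), K, U \triangleleft F)$ with $\varphi(x_0) = 0$ the section $f$ has a filled version $g : U \to U \triangleleft F$, and after subtracting a $\ssc^+$-section $s$ with $s(0) = g(0)$, the germ $g - s$ is conjugated to a basic germ; write $g = h + (\text{\ssc^+-term})$ where, after the coordinate change of Theorem \ref{arbarello}, the relevant section on raised levels is $h + s'$ with $h$ a basic germ and $s'$ a $\ssc^+$-germ satisfying $s'(0) = 0$ (absorbing $g(0) = f(x_0) = 0$).

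The heart of the argument is Theorem \ref{save} (Local Regularity and Compactness): applied to the germ $f = h + s'$ of the form basic plus $\ssc^+$, it produces a nested sequence $\mathcal{O}(0) \supset \mathcal{O}(1) \supset \cdots$ of relatively open neighborhoods of $0$ on level $0$ such that $\mathrm{cl}_{C_0}(S_{\text{loc}} \cap \mathcal{O}(m)) \subset C_m$, where $S_{\text{loc}}$ is the local solution set. What I actually need, however, is compactness of the closure not of the zero set but of the sublevel set $\{x \mid N(f(x)) \leq 1\}$. The right statement is the stronger one hidden in the proof of Theorem \ref{save}: the construction there produces continuous solution germs $\delta_m$ of the fixed-point problem $\delta_m(a, z) = B(a, \delta_m(a, z)) - z$ on each level, and by feeding in $z = P \circ s'(a, w)$ together with a small contribution from the fiber value $f(a, w)$ itself (rather than only $z = P \circ s'$ as in the proof of $f(x) = 0$), one gets that any $(a, w)$ near $0$ with $N(f(a, w)) \leq 1$ — hence $f(a, w)$ bounded in the $(0,1)$-sense, which by the chart and Lemma \ref{erde} means $|f(a,w)|_1$ bounded — lies in $E_1$ with controlled $E_1$-norm. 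Concretely: near $0$, $|\mathbf{f}(a,w)|_1 \le \text{const}$ forces, via $P \circ f(a,w) = w - B(a,w) + (\ssc^+\text{-term})$ and the contraction estimate on $B$, that $w$ lies in a bounded subset of $W_1$; then the compactness of the embedding $W_1 \hookrightarrow W_0$ shows the sublevel set has compact closure on level $0$ in that chart. This is essentially Corollary \ref{corex1}(1) sharpened to sublevel sets, and I expect the main obstacle to be writing this sharpening cleanly — one must redo the fixed-point argument of Theorem \ref{save} keeping track of an inhomogeneity coming from the bounded (but nonzero) fiber value, rather than citing Theorem \ref{save} as a black box. An alternative, possibly cleaner route is: reduce the sublevel set near $x_0$ to the sublevel set of the filled finite-dimensional-plus-contraction model, observe that in the model $N(\mathbf{f}) \le 1$ plus closeness to $0$ forces boundedness on level $1$ by the explicit formula for a basic germ, and invoke the compact embedding.

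Finally I would patch: cover the compact set $S$ by finitely many such chart neighborhoods $V_1, \dots, V_k$ with the associated smaller neighborhoods $V_i'$ on which the above local compactness holds for the \emph{chart-transported} standard auxiliary norm $|\cdot|_1$; set $U = \bigcup_i V_i'$ (shrunk so that $\bar U$ is covered by the $V_i$). By Proposition \ref{peter}, on each $V_i$ the given auxiliary norm $N$ is equivalent to the transported norm by a continuous positive factor, which is bounded above and below on the relatively compact piece, so $\{x \in V_i' \mid N(f(x)) \le 1\}$ is contained in a set of the form $\{x \in V_i' \mid |f(x)|_1 \le C_i\}$, which by rescaling (using $N(\lambda w) = |\lambda| N(w)$ and the same for the transported norm, as in the proof of Lemma \ref{erde}) still has compact closure on level $0$. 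Then
$$
\mathrm{cl}_{X_0}\Bigl(\{x \in U \mid N(f(x)) \le 1\}\Bigr) \subset \bigcup_{i=1}^k \mathrm{cl}_{X_0}\Bigl(\{x \in V_i' \mid N(f(x)) \le 1\}\Bigr),
$$
a finite union of compact sets, hence compact. This establishes that $f$ is proper. Since proper $\Rightarrow$ weakly proper $\Rightarrow$ compact solution set always holds, all three notions coincide for sc-Fredholm sections, completing the proof. $\blacksquare$
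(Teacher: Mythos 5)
Your proposal is correct and follows essentially the same route as the paper's proof in Appendix \ref{a-x-cc}: reduce locally to the form $f = h + t$ with $h$ a basic germ and $t$ a $\ssc^+$-germ, then redo the parametrized Banach fixed-point argument from Theorem \ref{save} with the inhomogeneity $z = f(a,w) - (({\mathbbm 1}-P)\circ h + t)(a,w)$ bounded on level $1$, and use the compact embedding $W_1 \hookrightarrow W_0$ together with continuity of the solution germ $\delta$ to extract a convergent subsequence. You correctly identified that Theorem \ref{save} cannot be cited as a black box and must be reworked for sublevel sets; you also spell out the finite-subcover patching and the reduction via Lemma \ref{erde}/Proposition \ref{peter}, which the paper leaves implicit.
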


The proof is postponed to  Appendix \ref{a-x-cc}.

There are several other useful considerations. The first is the following  consequence of  the local Theorem \ref{save}.

\begin{theorem}[{\bf Local Compactness}]\label{save-1}\index{T- Local compactness}
Assume that $P\colon Y\rightarrow X$ is strong bundle over the M-polyfold $X$ and $f$ a sc-Fredholm section.
Then for a given solution  $x\in X$ of  $f(x)=0$ there exists a nested sequence of open neighborhoods $U(i)$ of $x$ on level zero,
say
$$
x\in U(i+1)\subset U(i)\subset X_0,\quad i\geq 0,
$$
such  that, for all $i\geq 0$,  $\cl_{X_0}(\{y\in U(i)\ |\ f(y)=0\})$ is a compact subset of $X_i$. 
\end{theorem}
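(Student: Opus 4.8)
\textbf{Proof plan for Theorem \ref{save-1} (Local Compactness).}

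The plan is to reduce the statement to the local model and then quote Theorem \ref{save} directly. Since the assertion is local near the given solution $x\in X$ with $f(x)=0$, and since $f$ is a sc-Fredholm section, the section germ $(f,x)$ is a sc-Fredholm germ. By definition this means there is a strong bundle chart $\Phi\colon \Phi^{-1}(V)\to K$ around $x$, covering a sc-diffeomorphism $\varphi\colon (V,x)\to (O,0)$, such that the local section $\wt f=\Phi\circ f\circ\varphi^{-1}\colon O\to K$ admits a filled version $g\colon U\to U\triangleleft F$ near $0$, with a local $\ssc^+$-section $t\colon U\to U\triangleleft F$ satisfying $t(0)=g(0)$ and such that $g-t$ is, after a strong bundle isomorphism, a basic germ. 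Thus, after applying that isomorphism, the principal part of $g$ takes the form $h+s$ where $h$ is a basic germ and $s$ is a $\ssc^+$-germ, both vanishing at $0$ once we subtract the constant $\ssc^+$-section; here I would use that $g(0)=\wt f(0)=0$ because $x$ is a solution and the chart maps $x$ to $0$.

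First I would recall the key fact from the filling construction: by the ``crucial property of a filler'' (the remark following Definition \ref{x-filling}), the solution sets $\{y\in O\,\vert\,\wt f(y)=0\}$ and $\{y\in U\,\vert\,g(y)=0\}$ coincide near $0$. Pulling back through $\varphi$, which is a homeomorphism on every level (hence restricts to a homeomorphism $V_m\to O_m$ for each $m$), the solution set of $f$ near $x$ corresponds level-by-level to the solution set $S=\{y\in U'\,\vert\,(h+s)(y)=0\}$ of the conjugated basic-plus-$\ssc^+$ germ. Now I would apply Theorem \ref{save} to $f'=h+s$: it produces a nested sequence of relatively open neighborhoods $U'\supset {\mathcal O}(0)\supset{\mathcal O}(1)\supset\cdots$ of $0$ on level $0$ such that $\cl_{C_0}(S\cap{\mathcal O}(m))\subset C_m$ for every $m$. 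In fact Theorem \ref{save} gives more: the ${\mathcal O}(m)$ are balls of radius $\tau_m$ on level $0$, so $S\cap{\mathcal O}(m)$ is bounded on level $m$ (indeed on level $m+1$, as the proof shows), hence its closure in $C_0$, which lies in $C_m$, is a closed and bounded-on-a-higher-level subset, thus compact in $C_m$ by the compact embedding $E_{m+1}\hookrightarrow E_m$.

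Then I would transport these neighborhoods back: set $U(m)$ to be a small open neighborhood of $x$ in $X_0$ whose image under $\psi\circ\varphi$ (the composite sc-diffeomorphism to the basic-germ model) is contained in ${\mathcal O}(m)$, arranging $U(m+1)\subset U(m)$. Since $\psi\circ\varphi$ and its inverse are continuous on every level, the $X_0$-closure of $\{y\in U(m)\,\vert\,f(y)=0\}$ is carried homeomorphically, via the level-$m$ restriction of the chart maps, onto a subset of $\cl_{C_0}(S\cap{\mathcal O}(m))\subset C_m$, which is compact; hence $\cl_{X_0}(\{y\in U(m)\,\vert\,f(y)=0\})$ is a compact subset of $X_m$, as required. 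The one point demanding care is the bookkeeping between ``closure on level $0$'' and ``compactness on level $m$'': I must check that the chart homeomorphisms intertwine $X_0$-closure with $C_0$-closure and that the compactness conclusion of Theorem \ref{save} (closure in $C_0$ lands in $C_m$, together with boundedness on level $m$) genuinely yields compactness in the $X_m$-topology. This is the main (though essentially routine) obstacle; everything else is a direct citation of Theorem \ref{save} and of the filler property. Finally I note, as the theorem already observes, that regularity of $f$ forces all solutions to be smooth points, which makes the chart around $x$ available in the first place.
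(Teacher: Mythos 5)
Your overall strategy is exactly the one the paper intends: the paper gives no proof of Theorem \ref{save-1} beyond the remark that it is ``the following consequence of the local Theorem \ref{save},'' so your reduction to the local model, the use of the filler property to pass between $\wt f$ and $g$, the conjugation to $h+s$ with $h$ basic and $s$ a $\ssc^+$-germ vanishing at $0$, and the transport of the neighborhoods ${\mathcal O}(m)$ back through the chart are all the right moves, and you are right that a sc-Fredholm germ with $f(x)=0$ gives $g(0)=0$ so the hypotheses of Theorem \ref{save} are met.

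The one genuine gap is in the compactness step, precisely the point you flag as ``the main obstacle.'' Your parenthetical claim that $S\cap {\mathcal O}(m)$ is bounded on level $m+1$ is not what the proof of Theorem \ref{save} establishes. Inspecting that proof, the condition on $P\circ s$ at the $m$-th stage (the analogue of $(1_3)$) only bounds $\abs{P\circ s(a,w)}_m$ on the level-zero ball of radius $\tau_m$, and the fixed-point map $\delta_m$ takes values in $\ov B_m(\tau_m')\subset W_m$; so solutions in $S\cap {\mathcal O}(m)$ are uniformly bounded on level $m$, not level $m+1$. Since $W_m$ is infinite-dimensional, boundedness on level $m$ alone does not make the closure compact in $C_m$, so the argument as stated does not close. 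Two equivalent repairs: (i) reindex, setting $\wt{\mathcal O}(m):={\mathcal O}(m+1)$, so that $S\cap\wt{\mathcal O}(m)$ is bounded on level $m+1$; then a bounded subset of $E_{m+1}$ has equal closures in $E_0$ and $E_m$, both compact in $E_m$, by the compact embedding $E_{m+1}\hookrightarrow E_m$; or (ii) keep ${\mathcal O}(m)$, use boundedness on level $m$ and the compact embedding $E_m\hookrightarrow E_{m-1}$ to extract a level-$(m-1)$-convergent subsequence, and then invoke the bootstrap mechanism already present in the proof of Theorem \ref{save} --- the continuity of $\delta_m\colon\Sigma_m\to W_m$ together with the $\ssc^+$-property of $s$ giving $P\circ s\colon E_{m-1}\to W_m$ continuous --- to promote that subsequence to level-$m$ convergence. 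Either repair also resolves the second subtlety you would otherwise hit: the identification of the $C_0$-closure with the $C_m$-closure is not automatic from a containment statement, and requires exactly this extraction/bootstrap argument. The remaining chart bookkeeping (shrinking so that closures stay inside the chart domain, levelwise homeomorphism of the chart maps) is, as you say, routine.
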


The next result shows that compactness, a notion on the $0$-level, also  implies compactness on higher levels.
\begin{theorem}\index{T- Compactness properties}
Assume that $P\colon Y\rightarrow X$ is a strong bundle over the tame M-polyfold $X$ and $f$ is a sc-Fredholm section with compact solution set
$S=\{x\in X\, \vert \, f(x)=0\}$ in $X_0$.
Then $S$  is a compact subset of $X_\infty$.
\end{theorem}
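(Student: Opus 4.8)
The statement asserts that if $f$ is a sc-Fredholm section of a strong bundle $P\colon Y\to X$ over a tame M-polyfold $X$ and the solution set $S=\{x\in X\mid f(x)=0\}$ is compact in $X_0$, then $S$ is in fact a compact subset of $X_\infty$. Since $X_\infty=\bigcap_{m\geq 0}X_m$ carries the topology induced by the metric of Theorem \ref{X_m_paracompact}, and since the inclusions $X_{m+1}\to X_m$ are continuous (Lemma \ref{inculsion_continuous}), it suffices to show: (a) $S\subset X_m$ for every $m\geq 0$, so that $S\subset X_\infty$ as a set; and (b) $S$ is compact in the topology of each $X_m$. Indeed, if $S$ is compact in every $X_m$, then it is compact in $X_\infty$ because a subset of $X_\infty$ is compact iff it is compact in the topology of every $X_m$ (the $X_\infty$-topology being the coarsest making all $X_\infty\to X_m$ continuous, and $S$ being compact and the $X_m$-topologies Hausdorff, the identity maps $S_{X_m}\to S_{X_{m-1}}$ are homeomorphisms onto their image, forcing the $X_\infty$-topology on $S$ to agree with each).

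\textbf{Step 1: regularity of solutions.} First I would invoke the fact that a sc-Fredholm section is regularizing (property (2) in Definition \ref{def_again_fedholm_sect}): if $x\in X_m$ and $f(x)\in Y_{m,m+1}$, then $x\in X_{m+1}$. Since $f(x)=0\in Y_{m,m+1}$ trivially whenever $x\in X_m$, an immediate induction starting from $x\in X_0$ gives $x\in X_m$ for all $m$, hence $x\in X_\infty$. Thus $S\subset X_\infty$ as a set. This takes care of (a).

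\textbf{Step 2: local compactness on each level.} The key tool is Theorem \ref{save-1} (Local Compactness): for each solution $x\in S$ there is a nested sequence of open neighborhoods $U(i)$ of $x$ in $X_0$ such that, for every $i\geq 0$, the $X_0$-closure $\cl_{X_0}(\{y\in U(i)\mid f(y)=0\})$ is a compact subset of $X_i$. Now fix $m\geq 0$. For each $x\in S$, pick the neighborhood $U^x:=U(m)$ from Theorem \ref{save-1} applied at $x$, so that $\cl_{X_0}(\{y\in U^x\mid f(y)=0\})$ is compact in $X_m$. The family $\{U^x\}_{x\in S}$ is an open cover of $S$ in $X_0$; since $S$ is compact in $X_0$, finitely many $U^{x_1},\dots,U^{x_k}$ cover $S$. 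Then
$$
S=\bigcup_{j=1}^k\bigl(S\cap U^{x_j}\bigr)\subset\bigcup_{j=1}^k\cl_{X_0}\bigl(\{y\in U^{x_j}\mid f(y)=0\}\bigr)=:K_m,
$$
and $K_m$, being a finite union of subsets each compact in $X_m$, is compact in $X_m$. Moreover $S$ is a closed subset of $K_m$ in the $X_m$-topology: $S$ is closed in $X_0$ (it is compact, hence closed, in the Hausdorff space $X_0$), and by continuity of the inclusion $X_m\to X_0$ the preimage of $S$ under that inclusion, which is $S$ itself, is closed in $X_m$. A closed subset of a compact set is compact, so $S$ is compact in $X_m$. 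Since $m$ was arbitrary, (b) follows.

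\textbf{Step 3: conclusion.} Combining Steps 1 and 2, $S\subset X_\infty$ and $S$ is compact in the topology of $X_m$ for every $m$. By the remark at the start of the plan (compactness of $S$ in every $X_m$ together with the metrizability of $X_\infty$ from Theorem \ref{X_m_paracompact}), $S$ is a compact subset of $X_\infty$, which is the claim. The only genuinely nontrivial input is Theorem \ref{save-1}, which is itself a consequence of the local regularity-and-compactness Theorem \ref{save}; the rest is a routine patching argument. I expect the main (minor) obstacle to be bookkeeping the topologies carefully — in particular, verifying that compactness in each $X_m$ upgrades to compactness in $X_\infty$ — but this is handled cleanly by the metric on $X_\infty$ and the continuity of the level inclusions.
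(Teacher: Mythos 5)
Your argument is correct and uses exactly the same two ingredients as the paper --- the regularizing property of sc-Fredholm sections (to get $S\subset X_\infty$) and the local compactness Theorem \ref{save-1} --- but it packages them differently. The paper proceeds by sequential compactness in $X_\infty$: using that $X_\infty$ is metrizable (Theorem \ref{X_m_paracompact}), it takes an arbitrary sequence in $S$, extracts an $X_0$-convergent subsequence by the assumed compactness, and then uses the nested neighborhoods from Theorem \ref{save-1} around the limit point to upgrade the $X_0$-convergence to convergence on every level and hence in $X_\infty$. You instead run a covering argument for each fixed level $m$ separately: extract a finite subcover of $S$ by the $U(m)$-neighborhoods of Theorem \ref{save-1}, deduce that $S$ sits inside a finite union of $X_m$-compact sets, and then close it up. Finally you pass from compactness of $S$ in every $X_m$ to compactness in $X_\infty$ via the observation that the $X_\infty$-topology is the initial topology for the inclusions $X_\infty\to X_m$ and that the identity maps $S_{X_{m+1}}\to S_{X_m}$ are homeomorphisms (continuous bijection from compact to Hausdorff). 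The paper's route is shorter because it avoids the last step entirely; your route is perhaps more transparent about why the level-wise compactness is uniform over $S$ (via the finite subcover rather than a single base point), and it also yields the slightly stronger intermediate statement that $S$ is compact in each $X_m$. Both are valid.

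One small remark on your Step 3: the justification is correct but somewhat indirect. A more economical way to close it, staying within the sequential language the paper's metrizability result invites, is a diagonal extraction: given $(x_k)\subset S$, extract a subsequence converging in $X_0$, then a further subsequence converging in $X_1$ (possible since $S$ is $X_1$-compact), and so on; the diagonal subsequence converges on every level, hence in $X_\infty$, to a point of $S$ since $S$ is $X_0$-closed. This avoids having to invoke the characterization of the $X_\infty$-topology as an initial topology, which the paper does not explicitly establish (it only writes down a concrete metric).
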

\begin{proof}
By assumption $S$ is compact in $X_0$. Since $f$ is regularizing,  $S\subset X_\infty$.
As was previously shown $X_\infty$ is a metric space. Hence we can argue with sequences.
Take a sequence $(x_k)\subset S$. We have to show that it has a convergent subsequence in $X_\infty$.
After perhaps taking a subsequence we may assume that $x_k\rightarrow x\in S$ in $X_0$. 
From  Theorem \ref{save-1} we conclude that $x_k\rightarrow x$ on every level $i$. 
This implies the convergence  $x_k\rightarrow x$ in $X_\infty$.
\end{proof}

We recall that a sc-smooth section $f$ of the strong bundle $Y\rightarrow X$ defines,  by raising the index,  a sc-smooth section
$f^i$ of $Y^i\rightarrow X^i$. In view of the  previous theorem we conclude that a sc-Fredholm section with compact solution set
produces a sc-Fredholm section $f^i$ with compact solution set. Note that it is a priori clear that $f^i$ is sc-Fredholm. Hence we obtain the following corollary.
\begin{corollary}
Let $f$ be a sc-Fredholm section of the strong bundle $P\colon Y\rightarrow X$ over the tame M-polyfold $X$ and suppose that the solution set $f^{-1}(0)$ is compact.
Then $f^i$ is a sc-Fredholm section of $P^i\colon Y^i\to X^i$ with compact solution set.
\end{corollary}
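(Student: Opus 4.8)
The plan is to combine two facts that are already in place: index-shifting preserves the sc-Fredholm property, and compactness of the solution set of a sc-Fredholm section propagates from level $0$ all the way up to $X_\infty$.

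First I would record that $f^i$ is indeed a sc-Fredholm section of $P^i\colon Y^i\to X^i$. This is the ``a priori'' observation alluded to before the statement: sc-smoothness of a section is preserved under raising the index by Proposition \ref{sc_up}; the regularizing property for $f^i$ follows from that of $f$ together with the identifications $(X^i)_m = X_{i+m}$ and $(Y^i)_{m,m+1} = Y_{i+m,\,i+m+1}$; and the sc-Fredholm germ property at any smooth point $x$ --- the existence of a filled version which, after subtraction of a suitable $\ssc^+$-section, is conjugated to a basic germ --- transports verbatim under the index shift, since all the ingredients (fillings, $\ssc^+$-sections, basic germs, strong bundle isomorphisms) are themselves stable under index shift. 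Hence $f^i$ meets all three requirements of a sc-Fredholm section.

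The key input is then the immediately preceding theorem, applied to $f$ itself: since $f$ is a sc-Fredholm section with solution set $S=f^{-1}(0)$ compact in $X_0$, the set $S$ is in fact a compact subset of $X_\infty$. It remains to transfer this to $X^i$. As a subset of $X$, the solution set of $f^i$ is exactly $S$, because $f^i(x)=0$ if and only if $f(x)=0$; and the base of $Y^i\to X^i$ has $0$-level $(X^i)_0 = X_i$. The iterated inclusion $X_\infty\hookrightarrow X_i$ is continuous: by Lemma \ref{inculsion_continuous} each inclusion $X_{m+1}\to X_m$ is continuous, and on $X_\infty$ the metric constructed in Theorem \ref{X_m_paracompact} dominates each $d_m$, so convergence in $X_\infty$ forces convergence in $X_i$. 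Therefore the image of the compact set $S\subset X_\infty$ under this inclusion is a compact subset of $X_i=(X^i)_0$, i.e.\ $f^i$ has compact solution set, which completes the argument.

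The only point that requires a moment's care is the claim that the sc-Fredholm germ property genuinely survives the index shift; but since a filled version and a conjugacy to a basic germ are formulated entirely in terms of data that are themselves index-shift-stable, this verification is routine and carries no real obstacle. Everything else reduces to citing the two results above.
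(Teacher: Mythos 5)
Your proposal is correct and follows the paper's intended argument exactly: the paper remarks that it is ``a priori clear'' that $f^i$ is sc-Fredholm and then invokes the preceding theorem (compactness of $S$ in $X_\infty$) to conclude compactness of the solution set on level $0$ of $X^i$. You have simply spelled out the details the paper leaves implicit, including the continuity of the inclusion $X_\infty\hookrightarrow X_i$.
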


\subsection{Perturbation Theory and Transversality}

Let us start with the case that $P\colon Y\rightarrow X$ is a tame strong M-polyfold bundle.
 We shall study  for a given sc-Fredholm section 
 $f$ the perturbed section $f+s$, where $s\in\Gamma^+(P)$.

We need a supply of $\ssc^+$-sections which we can only guarantee if we have enough sc-smooth bump functions
(this is a weaker requirement than having sc-smooth partitions of unity. See Appendix \ref{POU} for a more detailed discussion.)
This is,  for example,  the case if $X$ is built on sc-Hilbert spaces.

\begin{definition}
The  M-polyfold $X$ {\bf admits sc-smooth bump functions} if for every $x\in X$ and every  open neighborhood $U(x)$
there exists a sc-smooth map $f\colon X\rightarrow {\mathbb R}$ satisfying $f\neq 0$ and $\supp(f)\subset U(x)$.
\end{definition}

In Section  \ref{POU} (Proposition \ref{prop-x5.36}) the following useful statement is proved.

\begin{proposition}
If the M-polyfold $X$ admits sc-smooth bump functions then for every $x\in X$ and every open neighborhood $U(x)$ there exists
a sc-smooth function $f\colon X\rightarrow [0,1]$ satisfying  $f(x)=1$ and $\supp(f)\subset U(x)$. One can even achieve that $f(y)=1$ for all $y$ close to $x$.
\end{proposition}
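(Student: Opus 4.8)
The statement to prove is:

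\begin{quote}
If the M-polyfold $X$ admits sc-smooth bump functions then for every $x\in X$ and every open neighborhood $U(x)$ there exists a sc-smooth function $f\colon X\rightarrow [0,1]$ satisfying $f(x)=1$ and $\supp(f)\subset U(x)$. One can even achieve that $f(y)=1$ for all $y$ close to $x$.
\end{quote}

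\textbf{Approach.} The plan is to start from a single sc-smooth bump function guaranteed by the definition and massage it by elementary operations -- squaring, composing with a fixed scalar cutoff function on $\mathbb{R}$, and renormalizing -- all of which preserve sc-smoothness by the chain rule (Theorem \ref{sccomp}) and the fact that sc-smoothness of real-valued maps can be checked levelwise (Corollary \ref{ABC-y}). Since all the manipulations happen in the target $\mathbb{R}$, and a smooth map $\mathbb{R}\to\mathbb{R}$ of the relevant kind induces an sc-smooth map upon post-composition, none of this is delicate; the only genuine content is arranging the support condition and the value $1$ near $x$ simultaneously.

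\textbf{Key steps.} First I would invoke the hypothesis to obtain an sc-smooth $g\colon X\to\mathbb{R}$ with $g\not\equiv 0$ and $\supp(g)\subset U(x)$. Replacing $g$ by $g^2$ (sc-smooth by Theorem \ref{sccomp} applied to $y\mapsto y^2$) we may assume $g\ge 0$ and $g\not\equiv 0$. The issue now is that $g$ need not be positive at $x$ itself. To fix this, pick a point $x_0$ with $g(x_0)>0$; continuity of $g\colon X_0\to\mathbb{R}$ gives an open set $W$ containing $x_0$ on which $g>\tfrac12 g(x_0)=:c>0$, with $W\subset U(x)$. Separately, apply the hypothesis again to produce an sc-smooth $h\ge 0$ with $h\not\equiv 0$ and $\supp(h)\subset W$; after scaling $h$ by a positive constant we may assume $\sup h = 1$, and in particular $h$ takes a value close to $1$ somewhere in $W$. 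Actually the cleanest route is: since $X$ admits sc-smooth bump functions, for \emph{the given point} $x$ and neighborhood $U(x)$ we directly get sc-smooth $g\ge 0$ (after squaring) with $g\not\equiv 0$, $\supp(g)\subset U(x)$; but to guarantee $g(x)>0$ I would instead apply the bump-function hypothesis at $x$ with $U(x)$ -- the definition only says $g\not\equiv 0$, not $g(x)\neq 0$, so the honest fix is to combine a bump around $x$ supported in $U(x)$: here one must note that openness of $U(x)$ and $x\in U(x)$ let us shrink, and that \emph{some} translate/choice forces positivity at $x$. The standard argument: let $g$ be sc-smooth, $\supp(g)\subset U(x)$, $g(x_0)\ne 0$ for some $x_0$; this does not obviously help at $x$. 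So I would argue directly that the definition, applied with the neighborhood $U(x)$ of $x$, still only yields non-vanishing somewhere -- hence one genuinely needs the following observation, which I would state as a preliminary reduction: \emph{it suffices to produce, for each $x$ and each $U(x)$, an sc-smooth $g\ge 0$ with $g(x)>0$ and $\supp(g)\subset U(x)$}; and this stronger-looking statement follows from the bump function hypothesis because one may cover and translate -- concretely, choose a chart $(V,\varphi,(O,C,E))$ around $x$ with $V\subset U(x)$, transport the problem into the retract, and there the existence of a bump nonvanishing at the given point is classical for the relevant function spaces. I would therefore phrase the real content as: pass to a chart $V\subset U(x)$ around $x$, where the problem reduces to the model space, and in the model the hypothesis (sc-smooth bumps exist) plus translation invariance of the local model gives a bump positive at $\varphi(x)$ supported in a small ball inside $\varphi(V)$; pull it back and extend by zero to get sc-smooth $g\ge 0$ on $X$ with $g(x)>0$, $\supp(g)\subset U(x)$.

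\textbf{Finishing.} Given such $g$ with $g\ge 0$, $g(x)=:a>0$, $\supp(g)\subset U(x)$, fix a smooth function $\chi\colon\mathbb{R}\to[0,1]$ with $\chi(t)=0$ for $t\le a/3$ and $\chi(t)=1$ for $t\ge 2a/3$, $\chi$ monotone. Then $f:=\chi\circ g$ is sc-smooth by Theorem \ref{sccomp}, takes values in $[0,1]$, equals $1$ at $x$ (since $g(x)=a\ge 2a/3$) and, by continuity of $g$ on $X_0$, equals $1$ on the open set $\{g>2a/3\}$, an open neighborhood of $x$; finally $\{f\ne 0\}\subset\{g>a/3\}\subset\{g\ne 0\}$, so $\supp(f)\subset\supp(g)\subset U(x)$. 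This proves both assertions.

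\textbf{Main obstacle.} The one subtle point -- and the only step I would treat carefully rather than routinely -- is upgrading ``$g\not\equiv 0$ with $\supp(g)\subset U(x)$'' (the literal content of the definition) to ``$g>0$ at the prescribed point $x$''. The honest way to do this is to localize in a chart around $x$ contained in $U(x)$ and exploit that the local model is an open set in a partial quadrant in a Banach space closed under translations, so a bump function can be centered at the image of $x$; everything after that is bookkeeping with the chain rule and continuity on level $0$.
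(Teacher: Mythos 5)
Your finishing step -- squaring $g$ to make it nonnegative, then post-composing with a monotone cutoff $\chi\colon\R\to[0,1]$ with $\chi\equiv 0$ near $0$ and $\chi\equiv 1$ above some threshold below $g(x)$ -- is exactly the paper's argument (modulo an apparent typo in the paper's definition of $\sigma$), and the chain-rule justifications via Theorem~\ref{sccomp} and Corollary~\ref{ABC-y} are correct.

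The concern you flag about upgrading ``$g\not\equiv 0$ with $\supp(g)\subset U(x)$'' to ``$g(x)\ne 0$'' is legitimate: the stated definition only asserts nonvanishing \emph{somewhere}, and the paper's proof simply writes ``$g(0)=1$'' without comment. However, your proposed fix does not actually close this gap, for two reasons. First, the local model in a chart is a retract $O=r(W)$ inside a relatively open subset $W$ of a partial quadrant $C\subset E$; neither $O$ nor $C$ is translation-invariant, so ``translate the bump to be centered at $\varphi(x)$'' has no meaning there, and you cannot speak of ``a small ball inside $\varphi(V)$'' as though $\varphi(V)$ were an open set in $E$. Second, and more fundamentally, the hypothesis that $X$ admits sc-smooth bump functions gives you sc-smooth functions on $X$ (equivalently, on the retract $O$), not smooth bump functions on the ambient Banach space $E$; asserting that ``the existence of a bump nonvanishing at the given point is classical for the relevant function spaces'' presupposes a classical bump function on $E$, which is a separate and strictly stronger hypothesis than the one you are given. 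So the chart reduction is circular.

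The practical resolution is that the definition is evidently intended to provide, for each $x$ and $U(x)$, a bump function with $g(x)\ne 0$: all of the paper's examples verify the hypothesis by exhibiting exactly such a function (translate a bump on the Hilbert/Banach model to be centered at the image of $x$, then compose with the retraction), and the paper's proof takes $g(x)\ne 0$ for granted. If you adopt that reading, your ``Finishing'' paragraph is a complete and correct proof, essentially identical to the paper's. If you insist on the literal reading, you have located a real gap in the definition, but your chart argument does not repair it -- and as far as I can tell, the literal version does not obviously imply the conclusion without further input.
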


We start with an existence result.

\begin{lemma}[{\bf Existence of $\ssc^+$-sections}]
\index{L- Sc$^+$-sections I}
We assume that $P\colon Y\rightarrow X$ is a tame strong bundle over the tame M-polyfold $X$ which admits sc-smooth bump functions.
Let $N$ be an auxiliary norm for $P$.
Then for every smooth point $x\in X$,  every smooth point $e$ in the fiber $Y_x=P^{-1}(x)$,  and every $\varepsilon>0$ there exists,  for  a given open neighborhood
$U$ of $x$,  a $\ssc^+$-section $s\in \Gamma^+(P)$ satisfying  $s(x)=e$, $\supp (s)\subset U$,  and $N(s(x))<N(e)+\varepsilon$.
\end{lemma}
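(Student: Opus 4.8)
The strategy is to work in a single strong bundle chart around $x$, extend the value $e$ to a local $\ssc^+$-section with essentially no increase in the auxiliary norm, and then multiply by a sc-smooth bump function supported in $U$. First I would choose, using Lemma \ref{erde}, a strong bundle isomorphism $\Phi\colon Y|V\to K$ over a sc-diffeomorphism $\varphi\colon V\to O$ with $\varphi(x)=o$, valid on an open neighborhood $V\subset U$ of $x$, together with constants $0<c<C$ so that $c\cdot N(w)\leq |h|_1\leq C\cdot N(w)$ whenever $\Phi(w)=(p,h)$ and $P(w)$ is near $x$. Since $x$ is a smooth point, $o\in O_\infty$, and since $e\in Y_x$ is smooth, the image $\Phi(e)=(o,e_0)$ has $e_0\in F_\infty$ (bi-regularity is preserved by strong bundle maps). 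Here $K=R(U'\triangleleft F)$ with $R(u,h)=(r(u),\rho(u)h)$ and $\rho(o)e_0=e_0$.

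Next I would build a local $\ssc^+$-section of $K\to O$ taking the value $(o,e_0)$ at $o$. The natural candidate is the principal part $\mathbf{t}(p):=\rho(p)e_0$ for $p\in O$; this is $\ssc^+$-smooth because $e_0$ is a smooth point of $F$ and $\rho$ is part of a sc-smooth bundle retraction, so $p\mapsto\rho(p)e_0$ lands in $F_{m+1}$ when $p\in O_m$ and is sc-smooth as a map into $F^1$ (compare the construction in Lemma \ref{rio}). By construction $\mathbf{t}(o)=\rho(o)e_0=e_0$, so the corresponding section $t_{\mathrm{loc}}$ of $Y|V$ defined by $t_{\mathrm{loc}}=\Phi^{-1}\circ(\mathrm{id},\mathbf{t})\circ\varphi$ is a $\ssc^+$-section with $t_{\mathrm{loc}}(x)=e$. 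Moreover, by continuity of $p\mapsto\mathbf{t}(p)$ in the $|\cdot|_1$-norm and the norm comparison from Lemma \ref{erde}, after shrinking $V$ we may assume $N(t_{\mathrm{loc}}(y))<N(e)+\varepsilon$ for all $y$ in a smaller open neighborhood $V'\subset V$ of $x$; indeed $N(t_{\mathrm{loc}}(x))=N(e)$ and $N(t_{\mathrm{loc}}(\cdot))$ is continuous near $x$.

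Finally I would globalize with a bump function. Since $X$ admits sc-smooth bump functions, by the quoted Proposition there is a sc-smooth $\beta\colon X\to[0,1]$ with $\beta(x)=1$, $\beta\equiv 1$ near $x$, and $\supp(\beta)\subset V'$. Define $s:=\beta\cdot t_{\mathrm{loc}}$, extended by $0_y$ outside $\supp(\beta)$; this is a well-defined global $\ssc^+$-section of $P$ (multiplication of a $\ssc^+$-section by a sc-smooth scalar function preserves the $\ssc^+$ property, and the extension by zero is sc-smooth because $\supp(\beta)$ is contained in the chart domain). Then $\supp(s)\subset V'\subset U$, and $s(x)=\beta(x)\,t_{\mathrm{loc}}(x)=e$. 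For the norm estimate, on the fiber over any $y$ we have $s(y)=\beta(y)t_{\mathrm{loc}}(y)$, so by fiberwise homogeneity of $N$ and $0\leq\beta\leq 1$, $N(s(x))=N(e)<N(e)+\varepsilon$; more to the point $N(s(y))=\beta(y)N(t_{\mathrm{loc}}(y))\leq N(t_{\mathrm{loc}}(y))<N(e)+\varepsilon$ for all $y$ (and $N(s(y))=0$ off $\supp(\beta)$). This proves the lemma.

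\textbf{Main obstacle.} The only delicate point is verifying that the local model $\mathbf{t}(p)=\rho(p)e_0$ really is a $\ssc^+$-section, i.e.\ that $\rho(\cdot)e_0$ gains a derivative — this uses crucially that $e_0$ is a \emph{smooth} point of $F$ together with the definition of a strong bundle retraction (the map $R$ is sc-smooth as a map $(U\triangleleft F)[0]\to(U\triangleleft F)[0]$ \emph{and} $(U\triangleleft F)[1]\to(U\triangleleft F)[1]$, and plugging a smooth fiber vector exploits the shifted filtration); the argument is essentially the one already carried out in Lemma \ref{rio}, so I would model this step on that proof.
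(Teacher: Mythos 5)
Your proposal is correct and follows essentially the same route as the paper: pass to a local strong bundle chart, define the local $\ssc^+$-section via the bundle retraction as $t(y)=R(y,\wt e)$ (your $\rho(p)e_0$ is exactly the principal part of this), use continuity of $N$ to shrink the neighborhood so the norm estimate holds, and cut off with a sc-smooth bump function from Proposition \ref{prop-x5.36} before extending by zero. The only cosmetic differences are that you invoke Lemma \ref{erde} explicitly for the norm comparison (the paper uses continuity of $N$ directly) and that you point to Lemma \ref{rio} for the $\ssc^+$-smoothness of $\rho(\cdot)e_0$, both of which are consistent with the paper's argument.
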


The proof is an adaption of the proof in \cite{HWZ3}.
\begin{proof}
Since $X$ is metrizable,  it is a normal space. We choose  an open neighborhood $Q$ of $x$ so that $W\vert Q$ is strong bundle isomorphic
to the tame local bundle $p\colon K\rightarrow O$ mapping $x$ into $0\in O$. We find an open neighborhood $Q'$  of $x$  whose closure in $X$
is contained in $Q$.  If we construct $s\in\Gamma^+(W\vert Q)$ with support in $Q'$ we can extend it by $0$ to $X$. 
It suffices to construct a section $s$ suitably in local coordinates.
Hence we work in $K\rightarrow O$ 
and choose a smooth point $e\in p^{-1}(0)$. The open set $Q'$ corresponds to an open neighborhood $O'$ of $0$ contained in $O$.
We write $e=(0,\wt{e})$.
Let $N\colon K\rightarrow {\mathbb R}$
be the auxiliary norm. Using the local strong bundle retraction $R$, we define the $\ssc^+$-section $t\colon O\to K$ by $t(y)=R(y,\wt{e})$, so that $t(0)=(0, \wt{e})=e$.  
If $\varepsilon>0$ there exists $\delta>0$ such that $N(t(y))<N(t(0))+\varepsilon=N(e)+\varepsilon$ for $y\in O$ and $\abs{y}_0<\delta$. Using Proposition \ref{prop-x5.36}, we find a sc-smooth function $\beta\colon O\to [0,1]$ satisfying $\beta (0)=1$ and having support in $O'\cap \{y\in O\, \vert \, \abs{y}_0<\varepsilon\}$. The $\ssc^+$-sections $s(y)=\beta (y)t(y)$ of $K\to O$ has the required properties.
\end{proof}

Next we discuss a  perturbation and transversality result in the case that the M-polyfold does not have a boundary.
Our usual notation will be $P\colon Y\rightarrow X$ for the strong bundle. In case we have an auxiliary norm $N$
and an open subset $U$ of $X$ we denote by $\Gamma^{+,1}_U(P)$\index{Allowable sc$^+$-sections, $\Gamma^{+,1}_U(P)$} the space of all $s\in\Gamma^+(P)$ satisfying  $\supp(s)\subset U$ and $N(s(x))\leq 1$ for all $x\in X$. In our applications  $U$ is the open neighborhood
of the compact solution set of a sc-Fredholm section $f$.  We shall refer to $\Gamma^{+,1}_U(P)$ as the space of {\bf allowable sc$^+$-sections}.
The space  $\Gamma^{+,1}_U(P)$ becomes a metric space with respect to the uniform distance
defined by 
$$
\rho(s,s')=\sup_{x\in X}\{N(s(x)-s'(x))\,  \vert \, x\in X\}.
$$
The metric space  $(\Gamma^{+,1}_U(P),\rho)$ is not complete.

\begin{remark}
We note however that if a section  $s$ belongs to the completion of ${\Gamma}^{+,1}_U(P)$,  then the solution set of $f(x)+s(x)=0$ is still compact provided $f$ has a compact solution set and $U$ is an open neighborhood adapted
to the auxiliary norm $N$,  in the sense that $N(f(x))\leq 1$ for $x\in U$,  has the properties stipulated in the  properness result.
\end{remark}

\begin{theorem}[{\bf Perturbation and Transversality: interior case}]\label{p:=}\index{T- Perturbation and transversality: interior case}
Let $P\colon Y\rightarrow X$ be  a strong bundle over the   M-polyfold $X$ satisfying $\partial X=\emptyset$, and assume that $X$ admits sc-smooth bump functions.
Let $f$ be a sc-Fredholm section with a compact solution set and $N$ an auxiliary norm. Then there exists an open neighborhood
$U$ of the solution set $S=\{x\in X\, \vert \, f(x)=0\}$ such  that for sections $s\in\Gamma^+(P)$ having the property that  
$\supp(s)\subset U$ and  $N(f(x))\leq 1$ for all $x\in X$,  the solution set
$S_{f+s}=\{x\in X\,  \vert \, f(x)+s(x)=0\}$ is compact. Moreover,   there exists a dense subset ${\mathcal O}
$ of the metric space $(\Gamma^{+,1}_U(P),\rho)$ such  that for every $s\in {\mathcal O}$ the solution set $S_{f+s}$ has the property that
$(f+s)'(x)\colon T_xX\rightarrow Y_x$ is surjective for all $x\in S_{f+s}$, i.e. for all $x\in S_{f+s}$ the germ $(f,x)$ is in good position.
In particular,  $S_{f+s}$ is a M-subpolyfold whose induced structure is equivalent to the structure of a compact smooth manifold
without  boundary.
\end{theorem}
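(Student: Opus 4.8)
The plan is to assemble the statement from four ingredients already available: the stability of sc-Fredholm sections under $\ssc^+$-perturbations (Theorem \ref{stabxx}), the equivalence of the compactness notions for sc-Fredholm sections (Theorem \ref{x-cc}), the parametrized perturbation theorem (Theorem \ref{corro}), and the global implicit function theorem in the boundaryless case (Theorem \ref{io-xx}).

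\emph{Compactness of the perturbed solution set.} Since $f$ has the compact solution set $S=f^{-1}(0)$, Theorem \ref{x-cc} shows that $f$ is proper. Fixing the auxiliary norm $N$, I would choose, as in the definition of properness, an open neighborhood $U$ of $S$ such that $\cl_{X_0}(\{x\in U\,\vert\, N(f(x))\le 1\})$ is compact. For $s\in\Gamma^+(P)$ with $\supp(s)\subset U$ and $N(s(x))\le 1$ for all $x$, Theorem \ref{stabxx} gives $f+s\in\Fred(P)$; since $f+s$ agrees with $f$ off $U$ and $S\subset U$, every zero of $f+s$ lies in $U$, and at such a zero $x$ one has $N(f(x))=N(-s(x))\le 1$, so $S_{f+s}$ is contained in the set above with compact $X_0$-closure. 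As the zero set of an $\ssc^0$-section it is closed in $X_0$, hence compact.

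\emph{Genericity of surjectivity.} To show $\mathcal{O}:=\{s\in\Gamma^{+,1}_U(P)\,\vert\, (f+s)'(x)\text{ surjective for every }x\in S_{f+s}\}$ is $\rho$-dense, fix $s_0\in\Gamma^{+,1}_U(P)$ and $\varepsilon>0$; after replacing $s_0$ by $(1-\eta)s_0$ I may assume $N(s_0(x))\le 1-\eta$ everywhere, and $f+s_0\in\Fred(P)$ has compact solution set $S_0$ by the previous step. At each point of $S_0$ the linearization $(f+s_0)'(x)$ is a sc-Fredholm operator, hence has image of finite codimension; using the existence lemma for $\ssc^+$-sections, the available sc-smooth bump functions, and the compactness of $S_0$, I would produce finitely many $\ssc^+$-sections $t_1,\dots,t_m$, supported in $U$ and scaled small enough that $s_0+\sum_i r_it_i\in\Gamma^{+,1}_U(P)$ whenever $\vert r\vert\le\rho_0$, and such that $\imaginary((f+s_0)'(x))+\Span\{t_i(x)\}=Y_x$ on a neighborhood $W$ of $S_0$. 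Shrinking $\rho_0$, the section $\tilde f(r,x)=(f+s_0)(x)+\sum_i r_it_i(x)$ — a sc-Fredholm section of $\pi^\ast P$ over the boundaryless M-polyfold $\R^m\times X$ by Theorem \ref{corro} — has its solution set over $\{\vert r\vert\le\rho_0\}$ compact and contained in $\{\vert r\vert\le\rho_0\}\times W$, and at each of its zeros $(r,x)$ the linearization $\tilde f'(r,x)$ is surjective, because its image contains $\imaginary((f+s_0+\sum_ir_it_i)'(x))$ together with all $t_i(x)$, the operator $(f+s_0+\sum_ir_it_i)'(x)$ being a norm-small perturbation of $(f+s_0)'(x)$.

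\emph{Sard and conclusion.} Applying Theorem \ref{io-xx} to $\tilde f$ over $\{\vert r\vert<\rho_0\}\times X$ exhibits the solution set in this region as a sub-M-polyfold equivalent to a smooth manifold; since $f$ is sc-Fredholm this manifold is finite-dimensional, and by the properness just used its closure is compact, hence it is second countable and the classical Sard theorem applies to the smooth projection $(r,x)\mapsto r$ on it. A regular value $r$ with $\vert r\vert<\varepsilon$ produces $s:=s_0+\sum_i r_it_i\in\Gamma^{+,1}_U(P)$ with $\rho(s,s_0)<\varepsilon$ and $(f+s)'(x)$ surjective for all $x\in S_{f+s}$, so $\mathcal{O}$ is dense. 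Finally, for $s\in\mathcal{O}$ the section $f+s$ is a sc-Fredholm section of the boundaryless M-polyfold $X$ with surjective linearization along $S_{f+s}$, so Theorem \ref{io-xx} identifies $S_{f+s}$ with a smooth manifold without boundary, which is compact by the first step. The main obstacle is the genericity step: choosing finitely many allowable $\ssc^+$-sections that simultaneously span the $x$-dependent cokernels over the compact solution set, verifying that the parametrized section stays sc-Fredholm with surjective linearization along its zero set near the zero-parameter slice, and organizing the finite-dimensional Sard argument while keeping all perturbations inside $\Gamma^{+,1}_U(P)$; the compactness statement and the passage to a manifold structure are then immediate from Theorems \ref{x-cc}, \ref{stabxx}, and \ref{io-xx}.
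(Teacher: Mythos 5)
Your overall route — compactness from Theorem \ref{x-cc} and Theorem \ref{stabxx}, then spanning the cokernel with finitely many allowable $\ssc^+$-sections, passing to the parametrized sc-Fredholm section via Theorem \ref{corro}, and concluding by Sard plus the implicit function theorem — is exactly the strategy of the paper's proof. However, there is a genuine gap in the genericity step.

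You claim to produce $\ssc^+$-sections $t_1,\dots,t_m$ such that $\imaginary((f+s_0)'(x))+\Span\{t_i(x)\}=Y_x$ \emph{for all $x$ in a neighborhood $W$ of $S_0$}, and then use this together with a norm-small perturbation argument in the $r$-variable to conclude that $\tilde f'(r,x)$ is surjective at every zero $(r,x)$ of $\tilde f$. The spanning-on-a-neighborhood claim is the problem: in the sc-Fredholm setting the linearization does \emph{not} depend continuously on the base point $x$ in operator norm (this is stated explicitly after Definition \ref{scx} and is the raison d'\^etre of the whole sc-theory), so the image $\imaginary((f+s_0)'(x))$ can jump as $x$ moves off $S_0$ and the spanning established at $S_0$ need not persist on any neighborhood. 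Moreover for $x\not\in S_0$ the linearization $(f+s_0)'(x)$ is only defined up to a $\ssc^+$-operator, so the statement is not even well-posed without specifying a choice. Your norm-small comparison handles only the $r$-direction and does not repair this. Also note that Theorem \ref{io-xx}, which you invoke, takes surjectivity along the entire solution set as a \emph{hypothesis}; it cannot be used to establish it.

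What the paper does instead is to apply the local implicit function theorem, Theorem \ref{implicit-x}, to the parametrized section at each point $(0,x)$ with $x\in S_{f+s_0}$. The IFT's conclusion includes the statement that the neighborhood $U$ may be chosen so that the linearization is surjective \emph{at every solution in $U$}; this internal stability, which rests on Corollary \ref{corex1} (stability of surjectivity at solutions) rather than on any continuity of the linearization, is precisely the ingredient that replaces your spanning-on-a-neighborhood claim. Combining these local conclusions over a finite cover of the compact set $S_{f+s_0}$ then yields surjectivity of $\tilde f'$ at all zeros in a uniform tube $\{\vert\lambda\vert<\delta_0\}\times X$, after which your Sard argument proceeds as written. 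If you replace the neighborhood-spanning claim by this appeal to the IFT's surjectivity-at-solutions conclusion and a finite cover, the proof closes up.
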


One can follow the proof of Theorem 5.21 in \cite{HWZ3}. In \cite{HWZ3}  we still worked with splicing cores.
For the convenience of the reader we therefore sketch the proof, the details can be filled in using the arguments  from  \cite{HWZ3}.
\begin{remark}
In \cite{HWZ3} we assume that the fibers of the strong bundle are separable sc-Hilbert spaces. This is in fact not needed
due to an improved  treatment of the compactness in the present text. Also,  originally an auxiliary norm
had to satisfy more properties involving weak convergence, which,  again due to the improved compactness results,  is not needed.
The strategies of the proofs in this more general context are the same.
\end{remark}

\begin{definition}\index{D- Transversal to the zero-section}
The  sc-Fredholm section $g$ of the strong M-polyfold bundle $P\colon Y\to X$ is called {\bf transversal to the zero-section} if,  at every point $x$ satisfying  $g(x)=0$, 
the linearization $g'(x)\colon T_xX\rightarrow Y_x$ is surjective.
\end{definition}

\begin{proof}[Proof of Theorem \ref{p:=}]
We  choose  $s_0'$ in $\Gamma^{+,1}_{U}(P)$ and for given $\varepsilon>0$ we find $\delta<1$ such  that
$$
\abs{s_0'(x)-\delta s_0'(x)}_1\leq \varepsilon/2 \quad  \text{for all $x\in X$}.
$$
Define $s_0=\delta s_0'$. Consider the solution set $S_{f+s_0}$ which we know is compact. 
For every $x\in S_{f+s_0}\subset X_\infty$ we find finitely  many allowable $\ssc^+$-sections $s_1^x,\ldots, s^{k_x}_x$ $\Gamma^{+,1}_U(P)$ such  that the range of 
$(f+s_0)'(x)$ together with the sections $s_j^x$ span $Y_x$.
Then, abbreviating $\lambda=(\lambda_1,\ldots ,\lambda_{k_x})$,  the map
$$
{\mathbb R}^{k_x}\oplus X\rightarrow W, \quad  (\lambda,y)\rightarrow f(y)+s_0(y)+\sum_{j=1}^{k_x} \lambda_j \cdot s^j_x(y)
$$
is a sc-Fredholm section of the obvious pull-back bundle,  in view of the theorem about parameterized perturbations,
Theorem \ref{corro}. We also note that the linearization at the point $(0,x)$ is surjective. In view of the the interior case 
of the implicit function theorem, Theorem \ref{implicit-x}, there exists an open neighborhood $U(x)\subset X$ of $x$
such  that for every $(0,y)$ with $y\in U(x)\cap S_{f+s_0}$ the linearization of the above section is surjective.
We can carry out the above construction for every $x\in S_{f+s_0}$, and  obtain an open covering $(U(x))_{x\in S_{f+s_0}}$
of $S_{f+s_0}$. Hence we find a finite open cover $(U(x_i))_{i=1,\ldots ,p}$.  For every $i$ we have sections 
$s^{j}_{x_i}$, $1\leq j\leq k_{x_i}$.  For simplicity of notation, we list all of  them as $t_1,\ldots,t_m$. Then,  by construction, the section
$$
{\mathbb R}^m\oplus X\rightarrow W, \quad (\lambda,y)\mapsto f(y)+s_0(y)+\sum_{j=1}^m \lambda_j\cdot t_j(y)
$$
is sc-Fredholm,  and its  linearization at every point $(0,x)$ with $x\in S_{f+s_0}$  is surjective. There is a number $\delta_0>0$ such  that
$\abs{\lambda}<\delta_0$ implies $N(\sum_{j=1}^m \lambda_j\cdot t_j(y))<\varepsilon/2$.  By the implicit function theorem
we find an open neighborhood ${\mathcal U}\subset \{\lambda \in \R^m\, \vert \, \abs{\lambda}<\delta_0 \}\oplus X$ of $\{0\}\times S_{f+s_0}$ such that the section 
$$
F\colon {\mathcal U}\rightarrow Y,\quad  F(\lambda,y)=f(y)+s(\lambda,y):=f(y)+s_0(y)+\sum_{j=1}^m \lambda_j\cdot t_j(y)
$$
has a surjective linearization at every solution 
$(\lambda, y)\in {\mathcal U}$ of the equation $F(\lambda,y)=0$. Moreover, $\wt{S}:=F^{-1}(0)$ is a smooth manifold containing $\{0\}\oplus S_{f+s_0}$. 
Taking a regular value $\lambda_0$ for the  projection
$$
\wt{S}\rightarrow {\mathbb R}^m, \quad (\lambda,y)\mapsto  \lambda,
$$ 
it is easily verified  that
$f+s_0+s(\lambda_0, \cdot )$ is transversal to the zero section, see Theorem 5.21 in \cite{HWZ3}. By construction,  $N(s(\lambda_0,y))\leq \varepsilon/2$
and $N(s_0(y)+s(\lambda_0,y))\leq 1-\varepsilon/2$ for all $y$. 
This completes the proof of Theorem \ref{p:=}. 
\end{proof}

\begin{remark}\label{rem_homotopy}
In practice we need to homotope from one sc-Fredholm operator
to the other. For example assume that $f_0$ and $f_1$ are sc-Fredholm sections for $P\colon Y\rightarrow X$, 
both transversal to the zero section and having compact solution sets.
 Suppose further that $f_t$, $t\in [0,1]$,  is an interpolating arc satisfying the following.
 First of all, the section 
 $$
 [0,1]\times X\rightarrow Y,\quad (t,x)\rightarrow f_t(x)
 $$
 is sc-Fredholm and has a compact solution set. Now we can use the above construction for a given auxiliary norm
 to find an open neighborhood $U$  of the solution set and a small perturbation $s$ supported in $U$ satisfying 
 $s(t,\cdot )=0$  for $t$ close to $t=0,1$ (we already have transversality at the boundaries) and such  that $(t,x)\mapsto  f_t(x)+s(t,x)$ is transversal to the 
 zero-section. Then the solution set is a compact smooth cobordism between the originally given solution sets  $S_{f_i}$ for $i=0,1$.
Here  we have to deal with the boundary situation which, in this 
special case,  is trivial. The reader will be able to carry out this construction in more detail
 once we have finished  our general discussion of the boundary case.
 \end{remark}
 
 The next result shows that under a generic perturbation we are able to bring the solution set into a general position to the boundary
 and achieve the transversality to the zero-section. The solution space is then a smooth manifold with boundary with corners.
 
 \begin{definition}[{\bf General Position}]\index{D- General position of $(f,x)$}
Let $P\colon  Y\rightarrow X$ is a strong bundle over the  M-polyfold $X$ and let $f$ be a  sc-Fredholm section. We say that $(f,x)$, where $f(x)=0$, is in {\bf general position}
if  $f'(x)\colon T_xX\rightarrow Y_x$ is surjective and the kernel $\ker(f'(x))$ has a sc-complement contained in the reduced tangent space $T^R_xX$.
\end{definition}

The associated result is the following.
\begin{theorem} [{\bf Perturbation and Transversality: general position}]\label{thm_pert_and_trans}\index{T- Perturbation and transversality: general position}
We assume that $P\colon  Y\rightarrow X$ is a strong bundle over the  M-polyfold $X$ which admits sc-smooth bump functions.
Let $f$ be a sc-Fredholm section with compact solution set and $N$ an auxiliary norm. Then there exists an open neighborhood $U$ of the solution set 
$S=\{x\in X\,  \vert \, f(x)=0\}$ such  that for a section $s\in\Gamma^+(P)$ satisfying  $\supp (s)\subset U$ and $N(s(x))\leq 1$ for all $x\in X$,  the solution set
$S_{f+s}=\{x\in X\,  \vert \,  f(x)+s(x)=0\}$ is compact. Moreover,   there exists a dense subset ${\mathcal O}
$ of the metric space $(\Gamma^{+,1}_U(P),\rho)$ such  that,  for every $s\in {\mathcal O}$,  the solution set $S_{f+s}$ has the property 
that for every $x\in S_{f+s}$,  the pair $(f+s,x)$ is in general position.
In particular,  $S_{f+s}$ is a sub-M-polyfold whose induced structure is  equivalent to the structure of a compact smooth manifold with boundary with corners and  $d_{S_{f+s}}(x)=d_X(x)$ for all $x\in S_{f+s}$. 
\end{theorem}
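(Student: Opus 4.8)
The plan is to mimic the proof of Theorem \ref{p:=} (the interior case), now incorporating the boundary geometry through Theorem \ref{thm-basic} and Theorem \ref{io-xxx}. First I would fix the auxiliary norm $N$ and invoke Theorem \ref{x-cc}: since $f$ is a sc-Fredholm section with compact solution set, it is proper, so there exists an open neighborhood $U$ of $S=\{x\in X\,\vert\,f(x)=0\}$ such that for every $s\in\Gamma^+(P)$ with $\supp(s)\subset U$ and $N(s(x))\leq 1$ for all $x$, the closure in $X_0$ of $\{x\in U\,\vert\,N(f(x)-s(x))\leq 1\}$ is compact; in particular $S_{f+s}$ is compact. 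By Theorem \ref{stabxx}, $f+s$ is again a sc-Fredholm section. This disposes of the compactness and stability assertions and fixes the neighborhood $U$ once and for all.

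Next comes the genericity statement. Starting from an arbitrary $s_0'\in\Gamma^{+,1}_U(P)$, shrink it to $s_0=\delta s_0'$ so that $f+s_0$ is $\rho$-close to $f$ and still has a compact solution set $S_{f+s_0}$. For each $x\in S_{f+s_0}\subset X_\infty$, using that $X$ admits sc-smooth bump functions together with the existence lemma for $\ssc^+$-sections, choose finitely many allowable $\ssc^+$-sections whose values at $x$, together with the image of $(f+s_0)'(x)$, span $Y_x$; moreover one may arrange these sections to have values in $T_x^R X$-compatible directions, but more simply one just takes a sufficient supply so that surjectivity holds. A compactness argument over $S_{f+s_0}$ produces a finite list $t_1,\dots,t_m$ and, via Theorem \ref{corro}, the parametrized section $F(\lambda,y)=f(y)+s_0(y)+\sum_{j=1}^m\lambda_j t_j(y)$ on $\R^m\oplus X$ is sc-Fredholm with surjective linearization along $\{0\}\times S_{f+s_0}$. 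By the boundary implicit function theorem, Theorem \ref{io-xxx} (or Theorem \ref{bound}), for $\abs{\lambda}$ small the zero set $\wt S=F^{-1}(0)$ near $\{0\}\times S_{f+s_0}$ is a sub-M-polyfold equivalent to a smooth manifold with boundary with corners, on which the linearization of $F$ is everywhere surjective with kernel in good position to $C_{(\lambda,y)}(\R^m\oplus X)$.

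Now I would apply Sard's theorem in the form of Theorem \ref{SARD} (the fibered-family version) to the projection $\wt S\to\R^m$: its hypotheses are met because $\wt S$ has compact closure with tame induced structure, each fiber $p^{-1}(\lambda)$ is compact, and the good-position property at solutions of $F=0$ gives precisely the required sc-complement of the tangent space inside the reduced tangent space of the graph direction. Hence for a full-measure set of $\lambda_0$ the fiber $S_{f+s_0+s(\lambda_0,\cdot)}$ — where $s(\lambda_0,y)=s_0(y)+\sum_j(\lambda_0)_j t_j(y)$ — is a compact smooth manifold with boundary with corners such that at every $x$ in it the tangent space has a sc-complement in $T_x^R X$ and $(f+s,x)$ is in general position; the identity $d_{S_{f+s}}(x)=d_X(x)$ then follows from the last clause of Theorem \ref{local-str} (equivalently Proposition \ref{fibered-x}). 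Choosing $\lambda_0$ small enough keeps $s_0+s(\lambda_0,\cdot)$ within $\Gamma^{+,1}_U(P)$ and $\rho$-close to the original $s_0'$, which establishes density of the good set ${\mathcal O}$.

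The main obstacle is the Sard step: one must check carefully that the general-position condition propagates from the single point $x$ to a whole neighborhood in the perturbed solution set (so that the stratification of $\wt S$ by local faces is genuinely a stratification by smooth manifolds, as needed to apply Sard face-by-face), and that the full-measure sets obtained on the finitely many coordinate patches $U(x_i)$ intersect in a full-measure set. This is exactly where Theorem \ref{local-str} and the remark following Theorem \ref{thm-basic} (slow variation of tangent spaces, together with Lemma \ref{good_pos}) do the work, but assembling them into a clean global statement is the delicate part; everything else is a routine transcription of the interior-case argument with ``smooth manifold'' replaced by ``smooth manifold with boundary with corners'' and ``Theorem \ref{implicit-x}'' replaced by ``Theorem \ref{io-xxx}''.
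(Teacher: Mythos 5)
Your outline follows the paper's proof quite closely (shrink $s_0'$ to $s_0$, build a parametrized Fredholm section $F(\lambda,y)=f(y)+s_0(y)+\sum_j\lambda_j t_j(y)$, apply the boundary implicit function theorem to its zero set $\wt S$, then Theorem~\ref{SARD}). But there is a genuine gap in the one step where the boundary case really differs from the interior case: the construction of the auxiliary sections $t_1,\dots,t_m$.

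To invoke Theorem~\ref{SARD} you need its hypothesis~(2): at every $z\in\wt S$ the tangent space $T_z\wt S$ must have a sc-complement \emph{inside the reduced tangent space} $T_z^R(\R^m\oplus X)$. That is the definition of \emph{general position}, not of good position. Your draft conflates the two: you conclude from Theorem~\ref{io-xxx} that ``the linearization of $F$ is everywhere surjective with kernel in good position to $C_{(\lambda,y)}(\R^m\oplus X)$'', and then immediately assert that this ``good-position property \dots gives precisely the required sc-complement \dots inside the reduced tangent space''. It does not; good position only yields a good complement relative to the partial cone, which in general is strictly weaker than a complement contained in $T^R_z$. Theorem~\ref{SARD} will not apply under good position alone.

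To actually get general position for $\wt S$ you must build into the choice of $t_1,\dots,t_m$ a second transversality condition on top of surjectivity: the paper's property (ii), namely that $\ker F'(0,x)$ is transversal to $\R^m\oplus T_x^R X$. Your remark ``one may arrange these sections to have values in $T_x^RX$-compatible directions, but more simply one just takes a sufficient supply so that surjectivity holds'' explicitly waves this away, and surjectivity alone does not guarantee (ii). The paper achieves (ii) concretely by fixing a linear subspace $L\subset T_xX$ complementing $T_x^RX$ and adding $\ssc^+$-sections whose values at $x$ span $f'(x)L$; this forces the kernel of $F'(0,x)$ to project onto $T_xX/T_x^RX$, which is exactly the transversality needed. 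Without this step you cannot verify the hypothesis of Theorem~\ref{SARD}, and the Sard argument and the identity $d_{S_{f+s}}(x)=d_X(x)$ (which, as you note, comes from general position via Theorem~\ref{local-str}, not good position) both hang in the air. The rest of the outline — properness, use of Theorem~\ref{corro}, the compactness/covering argument, and the full-measure intersection step — is handled correctly and matches the paper.
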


We follow the ideas of the proof of Theorem 5.22 in \cite{HWZ3}.
\begin{proof}

By assumption,  $f$ has a compact solution set $S_f$. Given an auxiliary norm  we can find an open neighborhood $U$ of $S_f$
such  that the solution set $S_{f+s}$ is compact for every section $s\in \Gamma^{+,1}_U(P)$. 

In order to prove the result we will choose  $s_0\in \Gamma^{+,1}_U(P)$ and perturb nearby by introducing
suitable $\ssc^+$-sections. 
We note that  a good approximation of $s_0\in\Gamma^{+,1}_U(P)$ is $\delta s_0$ where  $\delta<1$ is close to $1$ so that we have to find a small perturbation of the latter.
If we take $s_0$ satisfying  $N(s_0(y))\leq 1-\varepsilon$ for all $y\in X$, 
there is no loss of generality assuming that $s_0=0$ by replacing $f+s_0$ by $f$ and $N$ by $cN$ for some large $c$. 
 The general strategy already appears in the proof of Theorem \ref{p:=}.
Here the only complication is that we would like to achieve additional  properties of the perturbed problem. This requires a more sophisticated set-up. 

In the next step we choose enough sections
in $\Gamma^{+,1}_U(P)$, say $s_1,\ldots, s_m$ such  that,  near every $(0,x)$ with $x\in S_f$,  the section 
$$
F(\lambda,y)=f(y)+\sum_{i=1}^m\lambda_i\cdot s_i(y)
$$
has suitable properties. Namely,  we require the following properties:
\begin{itemize}
\item[(i)] $F'(0,x)\colon  {\mathbb R}^m\oplus T_xX\rightarrow Y_x$ is surjective.
\item[(ii)] $\ker(F'(0,x))$ is transversal to ${\mathbb R}^m\oplus  T^R_xX\subset {\mathbb R}^m\oplus T_xX$.
\end{itemize}

The strategy of the proof is the same as the strategy in  the proof of Theorem \ref{p:=}.  We fix a point $(0,x)$ with $x\in S_f$ and observe that if we have $\ssc^+$-sections
so that the properties (i)-(ii) hold at this specific $(0,x)$, then adding more sections,  the properties (i)-(ii) will still hold. 
Furthermore,   if for a section the properties (i)-(ii) hold at the specific $(0,x)$ , then they  will also hold 
at $(0,y)$ for  $y\in S_f$ close to $x$,  say for $y\in U(x)\cap S_f$. As a consequence we only have to find
the desired sections for a specific $x$ and then,  noting that the collection of neighborhoods $(U(x))$ is an open cover of $S_f$, 
we can choose  finitely many  points $x_1,\ldots ,x_p$ such  that the neighborhoods $U(x_1),\ldots , U(x_p)$ cover $S_f$. The collection of sections associated to these finitely many points 
then possesses  the  desired properties. Therefore,  it is enough to give the argument at a general point $(0,x)$ for  $x\in S_f$.
The way to  achieve  property (i) at $(0,x)$ is  as in the proof of Theorem \ref{p:=}. We take enough $\ssc^+$-sections to obtain the surjectivity.
We take a linear subspace $L$ complementing $T_x^RX$ in $T_xX$ and add sections,  which at $x$ span the image $f'(x)L$.  At this point the combined system of sections already satisfies (i) and (ii) and,  
taking the finite union of all these sections,  the desired properties at $(0,x)$ hold. 
By the previous discussion this completes the construction.

Since $S_f$ is compact  and since,  by construction, the section 
$(F,(x,0))$ is in general position at every  point $(0,x)$, we can apply the implicit function theorem to the section
$$
F(\lambda,y)=f(y)+\sum_{i=1}^m \lambda_i\cdot s_i(y).
$$
We deduce  the existence of $\varepsilon>0$ such  that the set $\wt{S}=\{(\lambda,y)\in {\mathbb R}^m\oplus X\,  \vert \, \text{$\abs{\lambda}<2 \varepsilon$ and $y\in X$}\}$
is a smooth manifold with boundary with corners containing $\{0\}\times S_f$. In addition, the  properties (i)-(ii) hold
for all $(\lambda,y)\in \wt{S}$ and not only for the points $(0,x)\in \{0\}\times S_f$.

To be precise,  $\wt{S}\subset {\mathbb R}^m\oplus X$ is a smooth submanifold with boundary with corners 
so that for every $z=(\lambda,x)\in \wt{S}$ the tangent space $T_z\wt{S}$ has a sc-complement in $ T^R_z({\mathbb R}^m\oplus X)$.  If  $p\colon  \wt{S}\rightarrow {\mathbb R}^m$
is  the projection,  the set $p^{-1}(\{\lambda\, \vert \ \abs{\lambda}\leq \varepsilon\})$ is compact. Hence we can apply 
Theorem \ref{SARD} and find,  for a subset $\Sigma$ of $B^m_\varepsilon$ of full measure,   that
the subset $S_\lambda=\{x\in X\,  \vert \,  (\lambda,x)\in \wt{S}\}$ of $X$ is a smooth submanifold with boundary with corners 
having the property  that every point is in general position, so that   for every  point $x\in S_\lambda$ and every parameter $\lambda\in\Sigma$, the tangent space $T_zS_\lambda$ has a sc-complement contained in $T^R_xX$.

\end{proof}

The third result is concerned with a relative perturbation, which vanishes at the boundary in case we already know that at the boundary we are in a good position. If we have a sc-Fredholm germ $(f,x)$ then a {\bf good position}\index{Good position of $(f,x)$} requires $f'(x)$ to be surjective and the
kernel to be in good position to the cone $C_xX$.

Such a germ would be in {\bf general  position}\index{General position of $(f,x)$} if we require in addition to the  surjectivity of $f'(x)$, that the kernel has a sc-complement in $T^R_xX$.  Clearly general position implies good position.
In SFT,  or more generally,  in a Fredholm theory with operations we have a lot of algebraic structure combining a possibly 
infinite family of Fredholm problems. In this case perturbations should respect the algebraic structure and genericity in these cases
might mean genericity within the algebraic contraints. In some of these cases general position is not achievable, but one can  still achieve a good
position. The perturbations occurring in the context of a Fredholm theory with operations are very often constructed inductively,
so that at each step the problem is already in good position at the boundary, but has to be extended to a generic problem.
The following theorem is a sample result along these lines.

\begin{theorem}[{\bf Perturbation and Transversality: good position}]\index{T- Perturbation and transversality: good position}
Assume that $P\colon  Y\rightarrow X$ is a strong bundle 
over the the tame  M-polyfold $X$ which admits sc-smooth bump functions.
Let $f$ be a sc-Fredholm section with compact solution set and $N$ an auxiliary norm. Further,  assume 
that,  for every $x\in \partial X$ solving  $f(x)=0$, the pair $(f,x)$ is in good position. Then there exists an open neighborhood
$U$ of the solution set $S=\{x\in X\, \vert \,  f(x)=0\}$ so that,  for every  section $s\in\Gamma^{+,1}_U(P)$,  the solution set
$S_{f+s}=\{x\in X\ |\ f(x)+s(x)=0\}$ is compact. Moreover,  there exists an arbitrarily small section $s\in\Gamma^{+,1}_U(P)$ satisfying $s(x)=0$ near $\partial X$
such  that $f+s$ is transversal to the zero-section and for every $x\in S_{f+s}$ the pair $(f,x)$ is in good position.
In particular,  $S_{f+s}$ is a M-subpolyfold whose  induced structure is equivalent to the structure of a compact smooth manifold with  boundary with corners.
\end{theorem}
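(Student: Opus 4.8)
The plan is to adapt the proof of Theorem \ref{thm_pert_and_trans} (perturbation and transversality in general position), modifying it so that the perturbations we construct vanish near $\partial X$, using the hypothesis that $(f,x)$ is already in good position at every boundary solution. Since the germ $(f,x)$ is in good position for $x \in \partial X$ with $f(x)=0$, by the boundary version of the implicit function theorem (Theorem \ref{IMPLICIT0}, via Theorem \ref{bound}) there is already an open neighborhood $U_\partial$ of $\{x \in S \cap \partial X\}$ in which the solution set is a tame sub-M-polyfold with boundary-with-corners structure, and at every solution point in $U_\partial$ the pair $(f,x)$ remains in good position (this is part (4) of the construction inside the proof of Theorem \ref{IMPLICIT0}). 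So no perturbation is needed there; the work is entirely in the interior part of the solution set.

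First I would fix an auxiliary norm $N$ and, using Theorem \ref{x-cc} together with the properness of $f$, choose an open neighborhood $U$ of the compact solution set $S$ adapted to $N$ so that $S_{f+s}$ is compact for every $s \in \Gamma^{+,1}_U(P)$. Next I would choose a sc-smooth bump function (using the hypothesis that $X$ admits sc-smooth bump functions, and Proposition \ref{prop-x5.36}) which equals $1$ on a neighborhood of $S \cap \partial X$ and has support in $U_\partial$; call the complementary cutoff $\chi$, so $\chi \equiv 0$ near $\partial X \cap S$ and $\chi \equiv 1$ on the part of $S$ bounded away from $\partial X$. Every $\ssc^+$-section I build below will be multiplied by $\chi$, guaranteeing it vanishes near the boundary. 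Then, exactly as in the proof of Theorem \ref{p:=} and Theorem \ref{thm_pert_and_trans}, for each interior solution point $x \in S$ (i.e. $x$ not in the closure of the boundary neighborhood) I would pick finitely many allowable $\ssc^+$-sections $s_1^x, \ldots, s_{k_x}^x$, each supported in $U \cap \operatorname{supp}(\chi)$, such that the image of $f'(x)$ together with their values spans $Y_x$, and additionally (following the general-position argument in Theorem \ref{thm_pert_and_trans}) enough sections so that $\ker F'(0,x)$ is transversal to $\mathbb{R}^m \oplus T^R_x X$; by compactness of $S$ a finite subfamily $t_1, \ldots, t_m$ suffices. The parametrized section $F(\lambda, y) = f(y) + \sum_i \lambda_i \chi(y) t_i(y)$ is sc-Fredholm by Theorem \ref{corro}, and by the implicit function theorem its zero set $\wt{S}$ is, for $|\lambda|$ small, a smooth manifold with boundary with corners containing $\{0\} \times S$, with $(F, (\lambda, y))$ in good position at every point.

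Then I would apply Theorem \ref{SARD} to the projection $p \colon \wt{S} \to \mathbb{R}^m$, whose fibers over small $\lambda$ are compact and nonempty, to extract a full-measure set $\Sigma$ of regular values; for $\lambda_0 \in \Sigma$ arbitrarily small, $S_{f+s}$ with $s = \sum_i (\lambda_0)_i \chi t_i$ is transversal to the zero section away from $\partial X$, while near $\partial X$ the perturbation vanishes and $(f, x)$ was already in good position by hypothesis, so $f+s$ is transversal to the zero section and $(f+s,x)$ is in good position at every solution point. The conclusion that $S_{f+s}$ is a sub-M-polyfold with boundary-with-corners structure then follows from Theorem \ref{IMPLICIT0} (equivalently Theorem \ref{bound}) applied at each solution point. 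The density statement follows by running this argument starting from an arbitrary $s_0 \in \Gamma^{+,1}_U(P)$ (absorbing $s_0$ into $f$ as in the proof of Theorem \ref{thm_pert_and_trans}) and noting $N(s(\lambda_0, \cdot))$ can be made as small as desired.

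The main obstacle I anticipate is verifying that the cutoff $\chi$ can be chosen so that multiplying by it does not destroy the good-position property anywhere: one must check that on the transition region where $0 < \chi < 1$, the solution set $S$ is still covered by the boundary implicit-function-theorem charts (so that good position holds there regardless of the perturbation), which requires choosing $U_\partial$ and $\chi$ compatibly before selecting the interior perturbation data. This is a matter of ordering the choices correctly — first the boundary neighborhood from Theorem \ref{IMPLICIT0}, then $\chi$, then the finite collection of interior points $x_i$ lying outside $\operatorname{supp}(1-\chi)$ — and then the rest is the routine Sard-plus-implicit-function-theorem machinery already assembled in the preceding theorems.
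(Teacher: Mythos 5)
Your overall strategy is the same as the paper's: choose perturbation sections that vanish near $\partial X$ (which is permissible because the good-position hypothesis already gives surjectivity of $f'(x)$ at boundary solutions, hence in an open neighborhood of them), and then apply a Sard-type argument. Making the cutoff $\chi$ explicit is a reasonable organizational device, even though the paper's one-line phrasing (``finitely many sections $\ldots$ vanishing near $\partial X$'') accomplishes the same thing more tersely.

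There is, however, a genuine gap in the step where you apply Theorem~\ref{SARD} to $\wt{S}$. That theorem requires its hypothesis~(2): for every $z\in \wt{S}$, the tangent space $T_z\wt{S}$ must have a sc-complement contained in $T_z^R(\R^m\oplus X)$ — the \emph{general position} condition. Near $\{0\}\times (S_f\cap\partial X)$, where $\chi\equiv 0$, the section $F$ reduces to $f$, so $T_{(0,x)}\wt{S}=\R^m\oplus T_xS_f=\R^m\oplus\ker f'(x)$; for the hypothesis of Theorem~\ref{SARD} to hold there one would need $\ker f'(x)$ to have a sc-complement inside $T_x^RX$, which is \emph{general} position, not the \emph{good} position you are assuming. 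Since good position does not imply general position (cf.\ Lemma~\ref{general_to_good}, which gives only the opposite implication), you cannot invoke Theorem~\ref{SARD} on $\wt{S}$ as a whole; you yourself only claim good position for $(F,(\lambda,y))$ in the preceding sentence, which is not enough. Moreover, the general-position stabilization argument you import from the proof of Theorem~\ref{thm_pert_and_trans} cannot be carried out at boundary solutions precisely because you have forced the stabilizing sections to vanish there.

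The fix is to aim lower, as the paper implicitly does: the theorem asks only for \emph{good} position of the perturbed germ, and for interior points $x$ (with $d_X(x)=0$) good position reduces to mere surjectivity of $(f+s)'(x)$. So it suffices to pick sections $t_1,\ldots,t_m$ vanishing near $\partial X$ such that $R(f'(x))$ together with the $t_i(x)$ spans $Y_x$ for every $x\in S_f$ (near $\partial X$ this is already achieved by $R(f'(x))$ alone), and then apply the plain Sard's theorem to the submersion $\wt{S}\to\R^m$ to get surjectivity at a generic small $\lambda$. Good position at a solution $x\in S_{f+s_\lambda}$ then holds for free: if $d_X(x)=0$ it is just the surjectivity you obtained; if $d_X(x)\geq 1$ then, for $\lambda$ sufficiently small, $x$ lies in the region where $s_\lambda\equiv 0$ (this requires the compactness argument showing new boundary solutions cannot escape the cutoff region as $\lambda\to 0$, which you should include), so $(f+s_\lambda,x)=(f,x)$ and the good-position hypothesis applies directly via the openness statement in Theorem~\ref{IMPLICIT0}.
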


\begin{proof}

By our previous compactness considerations there exists,  for a given auxiliary norm $N$,  an open neighborhood $U$ of $S_f$
so that for $s\in \Gamma^{+,1}_U(P)$ the solution set $S_{f+s}$ is compact.
By the usual recipe already used in the previous proofs we can find finitely many sections $s_1,\ldots ,s_m$ in $\Gamma^{+,1}_U(P)$ 
which are vanishing near $\partial X$ so that for every $x\in S_f$ the image $R(f'(x))$ and the $s_i(x)$ span $Y_x$.
Of course, in the present construction we are allowed to have sections which vanish near $\partial X$, since by assumption
for $x\in S_f\cap \partial X$ we are already in good position (which in fact implies that $S_f$ is already a manifold with boundary with corners
near $\partial X$). Then we consider as before the section 
$$
(\lambda,x)\mapsto   f(x)+\sum_{i=1}^m \lambda_i\cdot s_i(x),
$$
and,  for a generic value of $\lambda$, which we can take as small as we wish,  we conclude that the associated section $s_\lambda=
\sum_{i=1}^m \lambda_i\cdot s_i$ has the desired properties.

\end{proof}

The next result deals with a homotopy $t\mapsto f_t$ of sc-Fredholm sections during which also the bundle  changes. 
\begin{definition}[{\bf Generalized compact homotopy}]\label{gen_comp_hom}
Consider two sc-Fredholm sections $f_i$ of tame strong bundles $P_i\colon  Y_i \rightarrow X_i$ having  compact solution sets.
We shall refer to $(f_i,P_i)$ as two {\bf compact sc-Fredholm problem}\index{D- Compact sc-Fredholm problem}. 
Then a {\bf generalized compact homotopy between the two compact  sc-Fredholm problems}\index{D- Compact homotopy}  consists of a tame strong bundle $P\colon  Y\rightarrow X$
and a sc-Fredholm section $f$, where $X$ comes with a sc-smooth surjective map $t\colon  X\rightarrow [0,1]$, so that the preimages
$X_t$ are tame M-polyfolds and $f_t=f|X_t$ is a sc-Fredholm section of the bundle $Y\vert X_t$. Moreover,  $f$ has a compact solution set
and $(f\vert X_i,P\vert Y_i)=(f_i,P_i)$ for $i=0,1$.
\end{definition}

\begin{remark}
Instead of requiring $(f\vert X_i,P\vert Y_i)=(f_i,P_i)$ for $i=0,1$ one should better require that the problems 
are isomorphic and make this part of the data. But in applications the isomorphisms are mostly clear, so that we allow ourselves
to be somewhat sloppy.
\end{remark}

\begin{theorem}[{\bf Morse-type structure}]\label{MORSE-type}\index{T- Fredholm homotopy, Morse-type}
We assume that all occurring M-polyfolds admit sc-smooth bump functions. 
Let $f$ be a sc-Fredholm section of the tame strong bundle $P\colon Y\to X$ which is a generalized compact homotopy between the compact Fredholm  problems $(f_i, P_i)$ for $i=0, 1$, as in Definition \ref{gen_comp_hom}.
We assume that $P_i\colon  Y_i\rightarrow X_i$ are  strong bundles over M-polyfolds $X_i$ having no boundaries,  and that  the Fredholm sections $f_i$
are already generic in the sense that for all $x\in S_{f_i}$ the germ $(f_i,x)$ is in general position.
We assume further that $\partial X=X_0\coprod X_1$.  Let $N$ be an auxiliary norm on $P$. Then there exists an open neighborhood $U$ 
of the solution set $S_f$ in $X$ such  that,  for all sections $s\in \Gamma^{+,1}_U(P)$,  the solution set $S_{f+s}$ is compact. Moreover,  there exists an arbitrarily small section 
$s_0\in\Gamma^{+,1}_U(P)$ which vanishes near $\partial X$,  possessing the following properties.
\begin{itemize}
\item[{\em (1)}] For every $x\in S_{f+s_0}$,  the germ $(f+s_0,x)$ is in general position.
\item[{\em (2)}] The smooth function $t\colon  S_{f+s_0}\rightarrow [0,1]$ has only Morse-type critical points.
\end{itemize}
\end{theorem}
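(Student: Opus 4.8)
The plan is to combine the transversality machinery developed for the general-position case with a standard Sard-type argument applied to the function $t$ restricted to the (already generic) solution set. First I would invoke Theorem \ref{thm_pert_and_trans} (perturbation and transversality: general position) in the interior of $X$, but carried out \emph{relative to the boundary}: since $\partial X = X_0 \amalg X_1$ and, by hypothesis, $f_0$ and $f_1$ are already generic (every germ $(f_i,x)$ is in general position at solutions), the section $f$ is already in general position at every $x \in S_f \cap \partial X$. Thus I would apply the relative version of the perturbation theorem (the good-position/relative statement proved just above, whose proof allows $\ssc^+$-sections vanishing near $\partial X$) to produce, for a suitable open neighborhood $U$ of $S_f$ and a given auxiliary norm $N$, an arbitrarily small $s_0 \in \Gamma^{+,1}_U(P)$ with $s_0 \equiv 0$ near $\partial X$ such that $f + s_0$ is transversal to the zero section and $(f+s_0, x)$ is in general position for every $x \in S_{f+s_0}$. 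Compactness of $S_{f+s_0}$ follows from Theorem \ref{x-cc} together with the choice of $U$ adapted to $N$. This gives conclusion (1), and by Theorem \ref{thm_pert_and_trans} (or Theorem \ref{IMPLICIT0}) the set $S_{f+s_0}$ carries the structure of a compact smooth manifold with boundary with corners, with $\partial S_{f+s_0}$ lying over $\partial X = X_0 \amalg X_1$ because $s_0$ vanishes there, so $S_{f+s_0}$ is a cobordism between $S_{f_0}$ and $S_{f_1}$.

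For conclusion (2) I would argue as follows. Write $M = S_{f+s_0}$, a compact smooth finite-dimensional manifold with boundary with corners, and let $\hat t = t|_M \colon M \to [0,1]$, a smooth function. Since $s_0$ vanishes near $\partial X$ and $f_0, f_1$ are transversal over the boundaryless $X_0, X_1$, the function $\hat t$ has no critical points on $\partial M$ (it equals the constant $0$ resp. $1$ there in a collar), so all critical points of $\hat t$ lie in the interior $M \setminus \partial M$, which is a genuine smooth manifold without boundary. It therefore suffices to perturb $\hat t$ slightly, through functions that agree with $\hat t$ near $\partial M$, to a Morse function; this is the classical fact that Morse functions are generic, applied on the compact manifold-with-corners $M$. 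However, the subtlety is that we are not allowed to perturb $\hat t$ by hand — it is prescribed as the restriction of the ambient map $t$. So instead I would feed this genericity requirement back into the perturbation of $f$: enlarge the finite family $s_1,\dots,s_m$ used in the proof of the general-position theorem so that the parametrized section $F(\lambda,x) = f(x) + \sum \lambda_i s_i(x)$ not only stays Fredholm and, for generic $\lambda$, has a solution set $S_\lambda$ in general position, but also so that, again for generic $\lambda$, the restriction $t|_{S_\lambda}$ is Morse. Concretely, apply Theorem \ref{SARD} (fibered families and Sard) not only to the projection $p \colon \widetilde S \to \mathbb R^m$ but simultaneously to the map $\widetilde S \to \mathbb R^m \times \mathbb R$, $(\lambda,x) \mapsto (\lambda, t(x))$ and to its first and second jets along the fibers, so that for $\lambda$ outside a measure-zero set the function $t|_{S_\lambda}$ has only nondegenerate critical points; taking $\lambda$ also outside the measure-zero set that would destroy the general-position property, and small enough that $s_\lambda$ vanishes near $\partial X$, one obtains $s_0 := s_\lambda$ with the required properties (1) and (2).

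The main obstacle I expect is a careful, honest implementation of the jet-transversality argument inside the sc-framework: one must check that, on the already-generic smooth submanifold $\widetilde S \subset \mathbb R^m \times X$, the map sending $(\lambda, x)$ to the $1$- and $2$-jets of $t$ along the $x$-fibers is itself smooth with enough $\lambda$-directions available to make the Hessian condition transversal — that is, that perturbing $\lambda$ genuinely moves the critical set and its Hessian of $t|_{S_\lambda}$ in general position. This is where the hypothesis that $X$ admits sc-smooth bump functions and that one has a sufficiently large supply of $\ssc^+$-sections $s_i$ is used: one needs enough sections to simultaneously guarantee surjectivity of $F'$, the general-position splitting of $\ker F'$, and the extra jet-transversality for $t$. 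Once this is arranged, the remaining pieces — compactness of $S_{f+s_0}$ from Theorem \ref{x-cc}, the smooth manifold-with-corners structure from Theorem \ref{IMPLICIT0}, and the absence of interior critical points of $t$ on $\partial M$ from the vanishing of $s_0$ near $\partial X$ — are routine and follow exactly as in the proofs of Theorems \ref{p:=} and \ref{thm_pert_and_trans}.
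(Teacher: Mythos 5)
Your first paragraph (achieving conclusion~(1) by a relative perturbation that vanishes near $\partial X$, using the fact that $f_0$ and $f_1$ are already generic over the boundaryless $X_0$ and $X_1$) matches the paper's opening step and is fine.

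The argument for conclusion~(2), however, has a genuine gap, and you correctly flag where it sits but do not close it. Reducing the Morse-genericity of $t|_{S_{f+s_0}}$ to transversality of $(\lambda,z)\mapsto d(t|_{S_\lambda})(z)$ to the zero section of $T^\ast S_\lambda$ and then applying Sard is the right shape of argument; but the entire content of the theorem is the verification that one can \emph{choose} $\ssc^+$-sections $s_1,\dots,s_m$ (vanishing near $\partial X$) so that the linearization of this map at each critical point $(0,x)$ of $t|_S$ is surjective onto $T_x^\ast S$. You defer this to ``a careful, honest implementation of the jet-transversality argument'' and observe it must be done — that sentence is a placeholder for the proof, not a proof. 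In the paper this is done via Lemma~\ref{lemma5.30}: for each $\tau\in T_x^\ast S$ one constructs, using the graph representation of $S$ near $x$ and the splitting $T_xS=K\oplus L$ where $K=\ker(f'(x)\circ b_\tau)$, a $\ssc^+$-section $s$ supported near $x$ with $s(x)=0$ and $f'(x)\circ b_\tau + s'(x)|T_xS = 0$. Here $b_\tau\colon T_xS\to Z$ is the operator of Lemma~\ref{lemma5.29} with $dt(x)\circ b_\tau=\tau$. The point is that this particular construction yields the identity $\bigl(\frac{\partial\Phi}{\partial\lambda_i}(0,\cdot)\bigr)'(x)=b_{\tau_i}+a_i$ with $a_i$ valued in $T_xS$, from which the linearization at $(0,x)$ becomes $(\delta\lambda,\delta z)\mapsto (dt|S)'(x)\delta z+\sum_i\delta\lambda_i\tau_i$, which is surjective once $\tau_1,\dots,\tau_m$ span $T_x^\ast S$. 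Without a substitute for this construction, the appeal to Sard is unsupported: Theorem~\ref{SARD} as stated only gives general position of the fibers $A_\lambda$, not Morse genericity of an external function on them.

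One further small point: you speak of applying Sard ``to the first and second jets along the fibers.'' For Morse genericity you only need transversality of the 1-jet $d(t|_{S_\lambda})$ to the zero section of $T^\ast S_\lambda$; nondegeneracy of the Hessian at critical points is automatic from that transversality. Invoking the 2-jet is not needed and muddies the intended argument.
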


\begin{proof}

Applying the  previous discussions we can achieve property  (1) for a suitable section $s_0$. The idea then is to perturb $f+s_0$ further
to achieve also property (2). Note that (1) is still true after a small perturbation.  Hence it suffices to assume that $f$ already has the property that
$(f,x)$ is in good position for all $x$ solving  $f(x)=0$. The solution set $S=\{x\in X\, \vert \, f(x)=0\}$ is a compact manifold
with smooth boundary components. Moreover,  $d(t\vert S)$ has no critical points near $\partial S$ in view of  the assumption
that $(f_i,X_i)$ are  already in general position. If we take a finite number of $\ssc^+$-sections $s_1, \ldots, s_m$ of $P\colon Y\to X$, which vanish near $\partial X$ and are supported near $S$ (depending on the auxiliary  norm $N$),
then the solution set
$\wt{S}=\{(\lambda,x)\,\vert \,  f(x)+\sum_{i=1}^m\lambda_i\cdot s_i(x)=0,\ \abs{\lambda}<\varepsilon\}$ is a smooth manifold for $\varepsilon$ is small enough. Using the classical standard implicit function theorem, we find a family of smooth embeddings $\Phi_\lambda\colon  S\rightarrow \wt{S}$ having the property that $\Phi_0(x)=(0,x)$ and $\Phi_\lambda(S)=\{\lambda\}\times S_\lambda$, where
$S_\lambda=\{y\in X\, \vert \,  f(x)+\sum_{i=1}^m\lambda_i\cdot s_i(y)=0\}$. 

Our aim is to construct the above sections 
$s_i$ in such a way
that, in addition, the map
$$
(\lambda,y)\mapsto d(t\circ\Phi_\lambda(y))\in T_y^\ast S
$$
is transversal at $\{0\}\times S$ to the zero section in cotangent bundle $T^\ast S$.
Then, after having achieved this, the 
parameterized version of Sard's theorem 
will guarantee values of the parameter 
$\lambda$ arbitrarily close to $0$, for which the smooth section
$$
S\ni y\mapsto  d(t\circ\Phi_\lambda(y))\in T^\ast S
$$
is transversal to the zero-section and hence the function $y\mapsto  t\circ\Phi_\lambda(y)$ will be  a Morse function on $S$.
Since $\Phi_\lambda\colon  S\rightarrow S_\lambda$ is a diffeomorphism,  we conclude  that $t\vert S_\lambda$ is a Morse-function.
Having constructed this way the desired section $s_0$, the proof of the theorem will then be complete.

It  remains to construct the desired family of $\ssc^+$-sections $s_1, \ldots ,s_m$.

We fix a critical point $x\in S$ of the function $t\vert S\to [0,1]$, hence $d(t\vert S)(x)=0$. Then $T_xS\subset \ker (dt(x))$. Since $x$ is a smooth point,  
we find a one-dimensional smooth linear sc-subspace $Z\subset T_xX$ such that
$$
T_xX = Z\oplus \ker(dt(x)).
$$
The proof of the following trivial observation is left to the reader.
\begin{lemma}\label{lemma5.29}
For every element $\tau\in T_x^\ast S$ there exists a  uniquely determined sc-operator 
$$
b_\tau \colon  T_xS\to  Z
$$
satisfying 
$$
dt(x)\circ b_\tau = \tau.
$$
If $a\colon  T_xS\rightarrow \ker(dt(x))$ is a sc-operator, then
$$
dt(x)\circ (b_\tau + a) =\tau.
$$
\end{lemma}

Slightly more difficult is the next  lemma.
\begin{lemma}\label{lemma5.30}
Assume that $\tau \in T^\ast_xS$ is given.  Then there exists a $\ssc^+$-section $s$ with sufficiently small support around $x$ having the following properties.
\begin{itemize}
\item[{\em (1)}] $s(x)=0$.
\item[{\em (2)}] $f'(x)\circ b_\tau +  (s'(x)\vert T_xS)=0.$
\end{itemize} 
We note that if $a\colon  T_xS\rightarrow T_xS$,  then also property (2) holds with $b_\tau $ replaced by $b_\tau+a$,  in view of $T_xS = \ker(f'(x))$.
\end{lemma}

\begin{proof}
Since $f$ is a sc-Fredholm section and 
$f(x)=0$, 
the linearization $f'(x)\colon  T_xX\rightarrow Y_x$ is surjective and the kernel is equal to  $T_xS$.
Then $f'(x)\circ b_\tau \colon   T_xS\rightarrow Y_x$ is a sc-operator, i.e.,  the image of any vector in $T_xS$ belongs to $Y_\infty$. 
If this operator is the zero operator we can take $s=0$. Otherwise the operator has a one-dimensional image spanned
by some  smooth point $e\in Y_x$. We are done if we can construct a $\ssc^+$-section $s$ satisfying  $s(x)=0$,  and having support close to $x$, and  
 $s'(x)\vert T_xS\colon  T_xS\rightarrow Y_x$  has a one-dimensional image spanned by $e$,  and
$\ker(s'(0)\vert T_xS)= \ker(f'(x)\circ b_\tau)$. Then a suitable multiple of $s$ does the job. 

Denote by $K\subset T_xS$ the kernel of $f'(x)\circ b_\tau$  and by $L$ a complement of $K$ in $T_xS$. Then $L$ is one-dimensional.
We work now in local coordinates in order to construct $s$. We may assume that $x=0$ and represent $S$ near $0$ as a graph over the tangent space $T_xS$, say $q\mapsto  q+\delta(q)$ with $\delta(0)=0$ and $D\delta(0)=0$. Here $\delta\colon  {\mathcal O}(T_xS,0)\rightarrow V$, where $V$ is a sc-complement of $T_xS$ in the sc-Banach space $E$. The points in $E$ in a neighborhood of $0$ are of  the form
$$
q+\delta(q) + v,
$$
where  $v\in V$. We note that $q+\delta(q)\in O$, where $O$ is the local model for $X$ near $0$.
We split $T_xS=K\oplus L$ and correspondingly write $q=k+l$. Then we can represent  the points in a neighborhood of $0\in E$ in the form
$$
k+l +\delta(k+l) +v.
$$
Choosing  a linear isomorphism $j\colon  L\rightarrow {\mathbb R}$, we define the section $\wt{s}$ for $(k,l,v)$ small by
$$
k+l+\delta(k+l)+v\mapsto  R(k+l +\delta(k+l) +v,\beta(k+l+\delta(k+l)+v)j(l)e),
$$
where $\beta$ has support around $0$ (small) and $\beta$  takes the value $1$ near $0$. The section $s$ is then the restriction of $\wt{s}$ to
$O$. If we restrict $s$ near $0$ to $S$ we obtain 
$$
s(k+l+\delta(k+l)) = R(k+l+\delta(k+l),j(l) e).
$$
Hence  $s(0)=0$,  and the linearization of $s$ at $0$ restricted to $T_xS$ is given by
$$
s'(0)(\delta k+\delta l) = j(\delta l) e.
$$
This implies that $s'(0)\vert T_xS$ and $f'(0)\circ b_\tau$ have the same kernel and their image is spanned by $e$. Therefore,  $s$,  multiplied
by a suitable scalar,  has the desired properties and the proof of Lemma \ref{lemma5.30} is complete.

\end{proof}

Continuing with the proof of Theorem \ref{MORSE-type}  we focus as before on the critical point $x\in S$ of $t\vert S$, which satisfies   $d(t\vert S)(x)=0$. Associated with  a basis  $\tau_1,\ldots  ,\tau_m$ of $T_x^\ast S$ the previous lemma produces the sections $s_1, \ldots ,s_m$.
Consider the solution set $\wt{S}$ of solutions  $(\lambda,y)$ of $f(y)+\sum_{i=1}^m\lambda_i\cdot s_i(y)=0$.   Since $f'(y)$ is onto for all $y\in S$,  the solution set 
$S_\lambda=\{y\, \vert \,  f(y) +\sum_{i=1}^m \lambda_i\cdot s_i(y)=0\}$ is a compact manifold (with boundary) diffeomorphic to 
$S$ if $\lambda$ small. Moreover,  $\wt{S}$ fibers over a neighborhood of zero via the map $(\lambda,y)\mapsto  \lambda$. 

The smooth map
\begin{equation}\label{eqp}
(\lambda,y)\mapsto  d(t\vert S_\lambda)(y)\in T_yS_\lambda.
\end{equation}
is a smooth section of the bundle over $\wt{S}$ whose  fiber at $(\lambda,y)$ is equal to $T_y^\ast M_\lambda$.

We  show that the linearization of \eqref{eqp} at $(0,x),$ which is a map
$$
{\mathbb R}^d\oplus T_xS\rightarrow T_x^\ast S, 
$$
is surjective. Near $(0,x)\in \wt{S}$ we can parameterize $\wt{S}$,  using the implicit function theorem,  in the form
$$
(\lambda,y)\mapsto  (\lambda,\Phi_\lambda(y))
$$
where  $\Phi_0(y)=y$, and $\frac{\partial\Phi}{\partial\lambda_i}(0,x)=0$.
Using \eqref{eqp}  and the map $\Phi$ we obtain,  after a coordinate change on the base
for  $\lambda$ small and $z\in S$ near $x$, the map 
$$
(\lambda,z)\mapsto  d(t\circ \Phi_\lambda)(z) = dt(\Phi_\lambda(z))T\Phi_\lambda(z),
$$
where $d$ acts only on the $S$-part. By construction,  the section vanishes at $(0,x)$.
Recall that,  by construction,  $\Phi_\lambda(x)=x$. Hence for fixed $\delta \lambda$ the map 
$$
z\mapsto  \sum_{i=1}^m \delta\lambda_i \cdot \frac{\partial\Phi}{\partial\lambda_i}(0,z)
$$
is a vector field defined near  $x\in S$ which vanishes at $x$. Therefore, it has a well-defined linearization at $x$.
The linearization ${\mathbb R}^d\oplus T_xS\rightarrow T_x^\ast S$ at $(0,x)$ is  computed to be the mapping  
\begin{equation}\label{new_eq5_68}
\begin{split}
(\delta \lambda,\delta z)\mapsto &(dt\vert S)'(x)\delta z + dt(x)\biggl(\sum_{i=1}^m \delta\lambda_i\biggl( \frac{\partial\Phi}{\partial\lambda_i}(0,\cdot )\biggr)'(x)\biggr)\\
&=(dt\vert S)'(x)\delta z + \sum_{i=1}^m \delta\lambda_i \cdot dt(x)\circ \biggl(\frac{\partial\Phi}{\partial\lambda_i}(0,\cdot )\biggr)'(x).
\end{split}
\end{equation}
The derivative  $(dt\vert S)'(x)$ determines  the Hessian of the map $t\vert S$ at the point $x\in S$. The argument is complete if we can show that
\begin{equation}\label{question1}
\biggl(\frac{\partial\Phi}{\partial\lambda_i}(0,\cdot )\biggr)'(x) =b_{\tau_i} +a_i,
\end{equation}
where the image of $a_i$ belongs to  $T_xS$.
By the  previous discussion,   $dt(x)\circ b_{\tau_i}=\tau_i$ so that the map \eqref{new_eq5_68} can be rewritten, using $dt(x)\circ a_i=0$, as
$$
(\delta \lambda,\delta z)\mapsto   (dt\vert S)'(x)\delta z + \sum_{i=1}^m \delta\lambda_i \cdot \tau_i,
$$
which then proves our assertion. So, let us show that the identity \eqref{question1} holds. We first linearize the equation
$$
f(\Phi_\lambda(z))+\sum_{i=1}^d \lambda_i s_i(\Phi_\lambda(z))=0
$$
with respect to $\lambda$ at $\lambda=0$, which  gives 
$$
Tf(z)\biggl(\sum_{i=1}^m \delta\lambda_i\cdot \frac{\partial\Phi}{\partial\lambda_i}(0,z)\biggr) +\sum_{i=1}^d \delta\lambda_i\cdot s_i(z)=0.
$$
Next we linearize with respect to $z$ at $z=x$, leading  to 
$$
\sum_{i=1}^m \delta\lambda_i\cdot \biggl(f'(x)\circ \biggl(\frac{\partial\Phi}{\partial\lambda_i}\biggr)'(0,x) + s'_i(x)\vert T_xS\biggr)=0
$$
for all $i=1,\ldots  ,m$. Hence $f'(x)\circ \bigl(\frac{\partial\Phi}{\partial\lambda_i}\bigr)'(0,x) + s'_i(x)\vert T_xS=0$. This implies that 
$\bigl(\frac{\partial\Phi}{\partial\lambda_i}\bigr)'(0,x) =b_{\tau_i} +a_i$, where the image of $a_i$ lies  in the kernel of $f'(x)$, i.e., in  $T_xS$. 
At this point we have proved that the linearization of \eqref{eqp} at $(0,x)$ is surjective. Since the section is smooth,  there exists an open neighborhood $U(x)$ of $x$  in $S$ so that,  if at $(0,y)$ we have $d(t\vert S)(y)=0$, then the linearization of
$(\lambda,z)\rightarrow d(t\vert S_\lambda)(z)$ at $(0,y)$ is surjective.

We can now apply the previous discussion to all points $x$ solving $d(t\vert S)(x)=0$ and,  using the compactness,  we find finitely many such points $x_1,\ldots  ,x_k$
so that the union of all the $U(x_i)$ covers the critical points of $t\vert S$. For every $i$ we have $\ssc^+$-sections $s_1^i,\ldots  ,s_{m_i}^i$ possessing  the desired properties.  In order to simplify the notation we denote the union of these sections by $s_1,\ldots  ,s_d$. Then we consider the solutions of 
$$
f(y)+\sum_{i=1}^d \lambda_i\cdot s_i(y)=0.
$$
Again we denote the solution set by $\wt{S}$. It fibers over an open neighborhood of $0$ in ${\mathbb R}^d$. By construction, 
the smooth map
$$
(\lambda,z)\mapsto  d(t\vert S_\lambda)(z)\in T_zS_\lambda
$$
has,  at every point $(0,y)$ satisfying $d(t\vert S)(y)=0$,  a linearization
$$
{\mathbb R}^d\oplus T_yS\rightarrow T_y^\ast S
$$
which is surjective. Now we take a regular value $\lambda$ (small) for the projection $\wt{M}\rightarrow {\mathbb R}^d$
and find,   by the parameterized version of Sard's theorem, that
$$
d(t\vert S_\lambda)
$$
is indeed a Morse-function. The proof of  Theorem \ref{MORSE-type} is complete.

\end{proof}

We conclude this subsection by adding  two useful results. The first result shows that we can always
bring a proper Fredholm section into a good position  by a small perturbation.

\begin{theorem}[{\bf Perturbation into a good position}]
We assume that $P\colon  Y\rightarrow X$ is a strong bundle over the tame M-polyfold $X$ and $f$ a proper sc-Fredholm section. We assume that $X$ admits sc-smooth bump functions.
Fix an auxiliary norm $N$ and choose  an associated open neighborhood $U$ of $S=f^{-1}(0)$ such that the solution set $S_{f+s}=\{x\in X\, \vert \, f(x)+s(x)=0\}$ is compact for all $\ssc^+$-sections $s$ in $\Gamma^{+,1}_U(P)$. 
Then there exists a  $\ssc^+$-section $s\in \Gamma^{+,1}_U(P)$ such that in addition
the compact solution set $S_{f+s}=\{x\in X\, \vert \, f(x)+s(x)=0\}$ has the property that for every $x\in S_{f+s}$ the pair $(f+s,x)$ is in good position to the boundary $\partial X$.
In particular,  $S_{f+s}$ has in a natural way the structure of a manifold with boundary with corners.
\end{theorem}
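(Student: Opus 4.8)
The plan is to deduce the statement as an essentially immediate consequence of Theorem~\ref{thm_pert_and_trans} (Perturbation and Transversality: general position), using only the observation — already recorded in the discussion preceding that theorem — that a sc-Fredholm germ which is in general position is automatically in good position. So the substantive analytic input (the parametrized perturbation theorem, the implicit function theorem in good position, and the application of Sard's theorem to the fibered family) is borrowed wholesale from Theorem~\ref{thm_pert_and_trans}, and what is left is a short verification plus some bookkeeping.

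First I would check that the hypotheses of Theorem~\ref{thm_pert_and_trans} are met: $X$ is a tame M-polyfold admitting sc-smooth bump functions, $P\colon Y\to X$ is a strong bundle, and $f$ is a proper sc-Fredholm section, hence by Theorem~\ref{x-cc} a sc-Fredholm section with compact solution set $S=f^{-1}(0)$, and the auxiliary norm $N$ admits an adapted open neighbourhood of $S$. Since shrinking the neighbourhood $U$ of the statement is harmless — a $\ssc^+$-section supported in a smaller set is still supported in $U$, and $\Gamma^{+,1}_{U'}(P)\subseteq\Gamma^{+,1}_U(P)$ for $U'\subseteq U$ — I may assume $U$ is itself adapted to $N$, so that Theorem~\ref{thm_pert_and_trans} furnishes a dense subset $\mathcal O$ of the metric space $(\Gamma^{+,1}_U(P),\rho)$ with the property that, for each $s\in\mathcal O$, the solution set $S_{f+s}$ is compact, is a sub-M-polyfold of $X$ whose induced structure is equivalent to that of a compact smooth manifold with boundary with corners, and the germ $(f+s,x)$ is in general position at every $x\in S_{f+s}$. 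I then fix any such $s\in\mathcal O$ (the set is nonempty, being dense in a nonempty metric space).

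It remains to see that, for each $x\in S_{f+s}$, general position of $(f+s,x)$ implies good position in the sense of Definition~\ref{new_good_position_def}. Write $g=f+s$ and $N':=\ker g'(x)$, a finite-dimensional smooth subspace of $T_xX$ (smooth points, since $g'(x)$ is sc-Fredholm). Surjectivity of $g'(x)$ is shared by both notions, so if $d_X(x)=0$ nothing more is needed. If $d_X(x)\geq 1$, general position gives a sc-complement $P'\subset T^R_xX$ with $T_xX=N'\oplus P'$; since $X$ is tame, by Proposition~\ref{tame_equality} the partial cone $C_xX$ is a partial quadrant in $T_xX$, and working in a tame chart I may take coordinates $T_xX=\R^d\oplus W'$ with $d=d_X(x)$, $C_xX=[0,\infty)^d\oplus W'$, $T^R_xX=\{0\}^d\oplus W'$, and projection $\pi\colon T_xX\to\R^d$ along $T^R_xX$, so that $v\in C_xX$ iff $\pi(v)\in[0,\infty)^d$. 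Because $P'\subset\ker\pi$, the restriction $\pi|_{N'}\colon N'\to\R^d$ is surjective and hence open, so $N'\cap C_xX=(\pi|_{N'})^{-1}([0,\infty)^d)$ has nonempty interior in $N'$; and for $n\in N'$, $p\in P'$ one has $\pi(n+p)=\pi(n)$, so $n+p\in C_xX$ iff $n\in C_xX$ with no size restriction on $p$. Thus $P'$ is a good complement of $N'$ relative to $C_xX$ (with arbitrary $\varepsilon>0$), i.e. $N'$ is in good position to $C_xX$, so $(g,x)$ is in good position. Combined with the compactness and manifold-with-corners structure of $S_{f+s}$ already supplied by Theorem~\ref{thm_pert_and_trans}, this proves the theorem. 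I do not expect a real obstacle here; the only point requiring care is the compatibility of the auxiliary data $(N,U)$ fixed in the statement with those internally chosen in Theorem~\ref{thm_pert_and_trans}, which is handled, as above, by passing to a smaller adapted neighbourhood.
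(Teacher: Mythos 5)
Your proof is correct and takes essentially the same approach as the paper, which likewise reduces the statement to the observation that general position implies good position (the paper's Lemma~\ref{general_to_good}) combined with the generic-perturbation result of Theorem~\ref{thm_pert_and_trans}. The only differences are cosmetic: you cite Theorem~\ref{thm_pert_and_trans} directly where the paper re-runs the Sard/parametrized-perturbation construction, and your projection-along-$T^R_xX$ argument also explicitly verifies the nonempty-interior clause of Definition~\ref{mission1}, a point the paper's proof of Lemma~\ref{general_to_good} passes over.
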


\begin{proof}
The theorem is a consequence of Theorem \ref{thm_pert_and_trans} 
 and the following lemma showing that  general position implies good position.  
 \begin{lemma}\label{general_to_good}
Assume that $f\colon X\to Y$ is  a sc-Fredholm section of the strong M-polyfold bundle $P\colon Y\to X$ over the tame M-polyfold $X$. If $f$ is in general position, then $f$ is in good position.
\end{lemma}
\begin{proof}
We take a point $x\in X$ solving $f(x)=0$. Then the linearization $f'(x)\colon T_xX\to Y_x$ is surjective. This implies that $(f, x)$ is in good position if $d(x)=0$. Hence we assume that $d:=d(x)\geq 1$. Since $(f, x)$ is in general position, the kernel $\text{ker}f'(x)$ is transversal to the reduced tangent space $T^{R}_xX$ in $T_xX$. In particular, $\dim \text{ker}f'(x)\geq \codim T^{R}_xX$ which implies that $\dim \text{ker}f'(x)\geq d$.
Working in local coordinates, we may assume that $x=0$  belongs to the partial quadrant $C=[0,\infty)^d\oplus \R^{n-d}\oplus W$ and that 
$f\colon \R^n\oplus W\to \R^N\oplus W$. The linearization $f'(0)\colon \R^n\oplus W\to \R^N\oplus W$ is surjective and $\ker f'(0)$ is transversal to $\{0\}^d\oplus R^{n-d}\oplus W$. This implies that $K:=\ker f'(0)$ has a sc-complement $K^\perp$ contained in $C$. 
To verify that $K^\perp$ is a good complement of $\ker f'(0)$ we take $a\in K$ and $b\in K^\perp$. If $a\in C$, then, since $K^\perp\subset C$, we conclude that also $a+b\in C$. Conversely, assuming $a+b\in C$, it follows from $-b\in C$ and  $a=(a+b)+(-b)$ that $a\in C$.
We have proved that $K:=\ker f'(0)$ is in good position to $C$.
\end{proof}

 {
Now  fixing  an auxiliary norm $N$ of the strong M-polyfold bundle $P\colon Y\to X$, we find an open neighborhood $U$ of the compact solution set $S_f=f^{-1}(0)$ and having the property that for every $s\in \Gamma^{+,1}_U(P)$ the solution set $S_{f+s}$ is compact.  Taking $\varepsilon>0$, we choose $0<\delta <1$ and  a $\ssc^+$-section $s_0'$ having its support contained in $U$ and satisfying $(1-\delta )N(s_0'(x))<\varepsilon/2$ for all $x\in X$. Then we set $s_0=\delta s_0'$.  The solutions set $S_{f+s_0}=(f+s_0)^{-1}(0)$ is compact, contained in $U$, and consists of smooth points.  Arguing as in the proof of Theorem \ref{thm_pert_and_trans}, 
we choose finitely many $\ssc^+$-sections, $s_1$, \ldots ,$s_m$,  belonging to $\Gamma_U^{+,1}(P)$  such that the sections 
$F\colon \R^m\oplus X\to Y$, defined by 
$$F(\lambda, y)=(f+s_0)(y)+\sum_{j=1}^m\lambda_j s_j (y),$$
is in general position every point $(0, x)\in \R^m\oplus X$ with $x\in S_f$. There exists $\delta_0>0$ such that $\abs{\lambda}<\delta_0$ implies that $\sum_{j=1}^mN(s_j(y))<\varepsilon/2$. In view of Theorem \ref{IMPLICIT0} 
for every $(0, x)$ where $x\in S_f$ there exists an open neighborhood $U(x)$ such that at every point $(\lambda, y)\in U(x)$ solving $F(\lambda, y)=0$ the sc-Fredholm germ $(F, (\lambda ,y))$ is in general position to the boundary $\partial (\R^m\oplus X)=\R^m\oplus \partial X$. Using the compactness of the solution set $S_{f+s_0}$, there are finitely many sets $U(x_i)$, $1\leq i\leq k$, and possibly smaller $\delta_0$ such that 
$\wt{S}=\{(\lambda, x)\, \vert \, \text{$F(\lambda, x)=0$  and $\abs{\lambda}<\delta_0$}\}\subset \bigcup_{i=1}^kU(x_i)$. In particular, if $(\lambda , y)\in \wt{S}$, then $(F, (\lambda, y))$ is in general position, and hence, in view of the above lemma, in good position to $\partial (B_{\delta_0}\oplus X)$. Then Theorem 3.57 
implies that $\wt{S}$ is a sub M-polyfold
of $X$ and the induced M-polyfold structure on $S$ is equivalent to the structure of a smooth manifold with boundary with corners. Applying Sard's theorem as in the proof of Theorem 5.22 \cite{HWZ2}, we find $\lambda^*$ satisfying $\abs{\lambda^*}<\delta_0$ such that,  setting $s:=s_0+\sum_{j=1}^m\lambda_j^*s_j$, the sc-Fredholm section $f+s$ has the property that $(f+s, x)$ is in general position for every $x$ solving the equation $(f+s)=F(\lambda, x)=0$. In view of the above lemma, for any such $x$, the linearization $(f+s)'(x)$ is surjective and the $\ker(f+s)'(x)$ is in good position to $\partial X$. The proof of the theorem is complete.
}
\end{proof}

The next result deals with  the question how  different perturbations, which  bring  a sc-Fredholm section  into good position,  are related.
The proof is left to the reader.

\begin{theorem}[{\bf Cobordism between good position perturbations}]\index{T- Cobordism}
Let $P\colon  Y\rightarrow X$ be a strong bundle over the tame M-polyfold $X$, which is assumed to admit sc-smooth bump functions.
Assume that $f_t$, $t\in [0,1]$, is a proper homotopy
of sc-Fredholm sections. Assume $N$ is an auxiliary norm for $Y\rightarrow X\times [0,1]$ and $U$ an associated open neighborhood
of the compact solution set $S=\{(x,t)\, \vert\ f_t(x)=0\}$. Let $U_i=U\cap (\{i\}\times X)$ and $N_i=N(\cdot ,i)$ for $i=0,1$. Suppose further
that $s_i$ are sc$^+$-section of $Y\rightarrow X$ supported in $U_i$ and satisfying $N_i(s_i(x))<1$ for all $x\in X$, $i=0,1$, so that
$(f+s_i,x)$ is in good position to $\partial  X$ for all smooth $x$, with associated solution sets $S_0$ and $S_1$, which are compact manifolds with boundary with corners.
Then there exists a $\ssc^+$-section $s(x,t)$ with $N(s(x,t))<1$ for all $(x,t)$, supported in $U$,  and satisfying $s_i=s(x,i)$, so that
$F$ defined by $F(x,t)=f(x)+s(x,t)$ is at all smooth points $(x,t)$ in good position to the boundary. In particular,  the associated solution set 
$S$ is a compact manifold with boundary with corners intersecting $\{i\}\times X$ in the manifolds with boundary with corners $S_0$ and $S_1$.
\end{theorem}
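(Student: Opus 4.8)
The plan is to reduce the homotopy statement to the parameterized perturbation machinery already developed, exactly as in the proof of the preceding theorems (the perturbation-and-transversality results and Theorem \ref{MORSE-type}). First I would fix the auxiliary norm $N$ on the strong bundle $Y\to X\times[0,1]$ and choose, using the compactness of the solution set $S=\{(x,t)\mid f_t(x)=0\}$ and Theorem \ref{x-cc}, an open neighborhood $U$ of $S$ such that for every $\ssc^+$-section $s$ of $Y\to X\times[0,1]$ with $\supp(s)\subset U$ and $N(s)<1$ the perturbed solution set $S_{F}=\{(x,t)\mid f(x)+s(x,t)=0\}$ is compact. Then I would set $U_i=U\cap(\{i\}\times X)$, $N_i=N(\cdot,i)$, and observe that the given sections $s_0,s_1$ are allowable relative to $U_i,N_i$ and put $(f+s_i,x)$ in good position at all smooth solutions, so $S_i$ are already compact manifolds with boundary with corners by the boundary implicit function theorem, Theorem \ref{bound} (equivalently Theorem \ref{IMPLICIT0}).

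The key step is to produce an interpolating $\ssc^+$-section $s(x,t)$ with $s(x,i)=s_i(x)$, supported in $U$, with $N(s)<1$, and such that $F(x,t)=f(x)+s(x,t)$ is in good position at every smooth zero. I would start from a naive interpolation: pick a sc-smooth cutoff $\chi\colon[0,1]\to[0,1]$ with $\chi\equiv0$ near $0$, $\chi\equiv1$ near $1$, and form $s^{0}(x,t)=(1-\chi(t))s_0(x)+\chi(t)s_1(x)$, which already agrees with $s_i$ near $t=i$ and satisfies $N(s^0)<1$ by convexity of the fiber norm. The section $f+s^{0}$ is a sc-Fredholm section of $Y\to X\times[0,1]$ by the stability result Theorem \ref{stabxx}, with compact solution set, and it is already in good position to $\partial(X\times[0,1])$ at all smooth solutions lying over $t\in\{0,1\}$ (there the boundary component is a copy of $\partial X$ and good position is inherited from $s_0,s_1$). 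Now I would invoke the good-position perturbation machinery: following the proof of Theorem \ref{thm_pert_and_trans} and the last theorem of the perturbation subsection, I would add finitely many $\ssc^+$-sections $s_1,\dots,s_m$ supported in $U$ and \emph{vanishing near $\{0,1\}\times X$}, so that the parameterized section $(\lambda,x,t)\mapsto f(x)+s^{0}(x,t)+\sum\lambda_j s_j(x,t)$ is sc-Fredholm (Theorem \ref{corro}), has compact solution set, and is in general position at every point $(0,x,t)$ with $(x,t)\in S_{f+s^0}$; then Lemma \ref{general_to_good} upgrades general position to good position. By the implicit function theorem Theorem \ref{IMPLICIT0} applied to the parameterized family and Sard's theorem (exactly as in the proof of Theorem 5.22 in \cite{HWZ3}), there is an arbitrarily small regular value $\lambda^{*}$ so that $s=s^{0}+\sum\lambda^{*}_j s_j$ is allowable, equals $s_i$ near $t=i$, and $F(x,t)=f(x)+s(x,t)$ is in good position to $\partial(X\times[0,1])$ at every smooth zero.

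Finally I would read off the conclusion. Since $F$ is a sc-Fredholm section in good position at all smooth solutions, Theorem \ref{IMPLICIT0} gives that $S=F^{-1}(0)$ is a tame sub-M-polyfold of $X\times[0,1]$ whose induced structure is equivalent to that of a smooth manifold with boundary with corners, and compactness comes from the choice of $U$ and $N$. Because $s(x,i)=s_i(x)$, the intersection $S\cap(\{i\}\times X)$ is precisely the zero set of $f+s_i$, i.e.\ $S_i$; one then checks, using the product structure of the faces of $X\times[0,1]$ at $t\in\{0,1\}$ and Proposition \ref{FACE_X}, that $S\cap(\{i\}\times X)$ is an embedded face of $S$ with the induced smooth structure on $S_i$. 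I expect the main obstacle to be the bookkeeping in the boundary geometry: one must ensure that the added sections $s_j$ genuinely vanish near $\{0,1\}\times X$ so that good position there is not disturbed, and that the face $\{i\}\times X$ of the product interacts correctly with the corner strata of $X$ when invoking the partial-cone good-position condition of Definition \ref{new_good_position_def}; this is where one has to be careful that $C_{(x,i)}(X\times[0,1])=C_xX\times[0,\infty)$ and that $\ker F'(x,i)$ meets the $[0,\infty)$-factor transversally, which is automatic once the one-dimensional homotopy direction is generic, precisely as in Theorem \ref{MORSE-type}.
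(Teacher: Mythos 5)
Your overall strategy (naive interpolation plus a parameterized perturbation supported away from the boundary faces $\{0,1\}\times X$, followed by Sard's theorem) is the right shape of argument, but there is a genuine gap at the corners $\partial X\times\{0,1\}$ that you acknowledge in your last paragraph and then incorrectly dismiss. With the cutoff $\chi$ you have chosen, $s^0(x,t)=s_i(x)$ for all $t$ near $i$, so near $t=i$ the section $F$ is literally the cylinder over $f+s_i$. Consequently at any smooth solution $(x,i)$ one has
\[
\ker F'(x,i)\;=\;\ker\bigl((f+s_i)'(x)\bigr)\oplus\R\;=\;N\oplus\R ,
\qquad
C_{(x,i)}(X\times[0,1])=C_xX\oplus[0,\infty).
\]
The claim that $N\oplus\R$ is then in good position to $C_xX\oplus[0,\infty)$ (inherited from $N$ being in good position to $C_xX$) is \emph{false} in general. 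The definition of good position (Definition \ref{mission1}) requires a complement $\wt{P}$ and an $\varepsilon>0$ with the equivalence $(\xi\in C)\Leftrightarrow(\xi+p\in C)$ for all $(\xi,p)$ with $|p|_0\le\varepsilon|\xi|_0$. For $\xi=(0,t')$, $t'>0$, one has $|\xi|_0\approx|t'|$, and then $|p|_0\le\varepsilon|t'|$ puts no constraint forcing $p$ to lie in $C_xX$. A concrete counterexample: $T_xX=\R$, $C_xX=[0,\infty)$, $N=\{0\}$. Here $N$ is in good position to $C_xX$ (vacuously), but $N\oplus\R=\{0\}\oplus\R$ is \emph{not} in good position to $[0,\infty)^2\subset\R^2$, for any choice of one-dimensional complement: for $\xi=(0,t')$, $t'>0$, and $p$ a small vector in the complement with negative first coordinate, $\xi\in[0,\infty)^2$ but $\xi+p\notin[0,\infty)^2$. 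A direct computation (writing $\wt{P}\subset T_xX\oplus\R$ as a graph $\{(p',\lambda(p')):p'\in P'\}$ over a complement $P'$ of $N$ in $T_xX$) shows that $N\oplus\R$ is in good position to $C_xX\oplus[0,\infty)$ if and only if one can take $P'\subset T^R_xX$, i.e.\ if and only if $(f+s_i,x)$ is in \emph{general} position, not merely good position. Since the hypothesis of the theorem is only good position and the $s_i$ are fixed, your cylinder is not automatically admissible at the corners, and the added perturbations $s_j$, being required to vanish near $\{0,1\}\times X$, cannot repair the kernel of $F'(x,i)$ there.

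The missing idea is that the interpolating section must be chosen to have a nonvanishing $t$-derivative at $t=0,1$. Writing $L=(f+s_i)'(x)$ and $\sigma(x)=\partial_t s(x,i)$, one finds $\ker F'(x,i)=N\oplus\R\cdot(-p_0,1)$ where $p_0\in T_xX$ is determined by $Lp_0=\sigma(x)$ modulo $N$. Choosing the $\ssc^+$-section $\sigma$ so that (roughly) $-p_0$ points into the interior of the cone $C_xX$ tilts the kernel away from the face $\{t=0\}$ and restores good position: in the one-dimensional model above, with $p_0<0$, the tilted line $\R\cdot(-p_0,1)$ has good complement $\R\oplus\{0\}$ with $\varepsilon<|p_0|/(1+|p_0|)$. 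Carrying this out globally along $S_0$ and $S_1$ requires covering the compact boundary solution sets, making local choices of such $\sigma$ (one must be careful that $\sigma$ is a genuine $\ssc^+$-section and that the condition persists in a neighborhood, which is where the good-complement estimates of Lemma \ref{good_pos} come in), and verifying that the resulting $F$ is in good position at $\partial X\times\{0,1\}$ before running your parameterized perturbation over $[0,1]\times X$ away from the endpoints. Your appeal to Theorem \ref{MORSE-type} does not help: there $\partial X_i=\emptyset$, so the corner stratum $\partial X\times\{0,1\}$ is absent and the cylinder is automatically in good position; that theorem does not address the situation you are using it for.
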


\subsection{Appendix}
\subsubsection{Proof of Theorem \ref{x-cc}}\label{a-x-cc}

\begin{T5.5}
Let  $P\colon Y\rightarrow X$ be  a strong M-polyfold bundle over the M-polyfold $X$ and $f$ a sc-Fredholm section of $P$.
If $f$ has a compact solution set $S$, then $f$ is  proper. In particular,  for a sc-Fredholm section the properties of being  proper,  or being weakly proper, or having a compact solution set are equivalent.
\end{T5.5}

\begin{proof} 

We denote by  $N$ the auxiliary norm on a strong M-polyfold bundle  $P$.  Fixing a solution $x\in X$ of $f(x)=0$, we have  to find an open neighborhood $U\subset X$ of $x$, such  that the closure 
$\cl_{X}(\{y\in U\, \vert \,  N(f(y))\leq 1\})$ is compact. Since $f$ is regularizing,  the point $x$ is smooth. 
There is no loss of generality in assuming  that we work
with a filled version $g$ of $f$ for which we have a $\ssc^+$-section $s$ such  that $g-s$ is conjugated to a basic germ.
 Hence, without loss of generality,  we may  assume that  
 we work in local coordinates and $f=h+t$  where $h\colon {\mathcal O}(C)\to \R^N\oplus W$ is a basic germ and $t\colon {\mathcal O}(C)\to \R^N\oplus W$ a $\ssc^+$-germ satisfying $t(0)=0$. Here $C$ is the partial quadrant $C=[0,\infty)^k\oplus {\mathbb R}^{n-k}\oplus W$ in the sc-Banach space $E=\R^n\oplus W$. Then it suffices to find in local coordinates an open neighborhood $U$ of $0$ in the partial quadrant $C$ such that 
the closure (on level $0$) of the set 
\begin{equation}\label{eq_est_0_5}
\{(a, w)\in U\; \vert \; \abs{(h+t)(a, w)}_1\leq c\}
\end{equation}
is compact. 

The section  $t$ is a $\ssc^+$-section satisfying  $t(0)=0$. Therefore, we find  a constant $\tau'>0$ such that 

\begin{equation}\label{eq_est_1_5}
\text{$\abs{t(a, w)}_1\leq c$\quad for $(a, w)\in C$ satisfying   $\abs{a}_0<\tau'$, $\abs{w}_0<\tau'.$ }
\end{equation}

We denote by  $P\colon \R^N\oplus W\to W$ the sc-projection.   By assumption, $h$ is a basic germ and hence $P\circ h$ is of the form 
$$P\circ h(a, w)=w-B(a, w)\quad \text{for $(a, w)\in C$ near $0$.}$$
Here  $B$ is a sc-contraction germ. Moreover, $({\mathbbm 1}-P)h$ takes values in $\R^N$,  so that its range consists of smooth point. 
In view of the sc-contraction property of $B$ and since any two norms on $\R^N$ are equivalent, replacing $\tau'>0$ by a smaller number,  we may assume that the estimates 

\begin{equation}\label{eq_est_2_5}
\abs{B(a, w)-B(a, w')}_0\leq \dfrac{1}{4}\abs{w-w'}_0
\end{equation}
and
\begin{equation}\label{eq_est_2_5b}
\abs{({\mathbbm 1}-P)h(a, w)}_1\leq c
\end{equation}
are satisfied 
for all $(a, w), (a, w')\in C$ such that $\abs{a}_0\leq \tau'$, $\abs{w}_0\leq \tau'$,  and $\abs{w'}_0\leq \tau'$.

Since $B(0)=0$, we find $0<\tau\leq \tau'$ such that 
\begin{equation}\label{eq_est_3_5}
\abs{B(a, 0)}_0\leq \tau'/8\quad \text{for all $a\in \R^n$ such that  $\abs{a}_0\leq \tau$.}
\end{equation}
for all $\abs{a}_0\leq \tau$ and $\abs{w}_0\leq \tau'/4.$
We introduce the closed set 
$$\Sigma=\{(a, z)\in \R^n\oplus W\, \vert \, \abs{a}_0\leq \tau,\ \abs{z}_0\leq \tau'/2\}$$
and denote by $\ov{B}(\tau')$ the closed ball in $Y_0$ centered at $0$ and having radius $\tau'$.
Then we  define the map $F\colon \Sigma \times \ov{B}(\tau')\to Y_0$ by 
$$F(a, z, w)=B(a, w)+z.$$
If $(a, z, w)\in \Sigma \times \ov{B}(\tau')$, then,  in view of (2) and (3), 
\begin{equation*}
\begin{split}
\abs{F(a, z, w)}_0&\leq \abs{B(a, w)-B(a, 0)}_0+\abs{B(a, 0)}_0+\abs{z}_0\\
&\leq \dfrac{1}{4}\abs{w}_0+\abs{B(a, 0)}_0+\abs{z}_0\leq \tau'/4+\tau'/8+\tau'/2=3\tau'/4<\tau',
\end{split}
\end{equation*}
and, if  $(a, z)\in \Sigma$ and $w, w'\in \ov{B}(\tau')$, then 
$$\abs{F(a, z, w)-F(a, z, w')}_0=\abs{B(a, w)-B(a, w')}_0\leq \dfrac{1}{4}\abs{w-w'}_0.$$

We see that $F$ is a parametrized contraction of $\ov{B}(\tau')$, uniform in $(a, z)\in \Sigma$.
 In view of the parametrized Banach fixed point theorem, there exists a unique continuous map $\delta \colon \Sigma \to \ov{B}(\tau')$ satisfying 
 $F(a, z, \delta (a, z))=\delta (a, z)$ for every $(a, z)\in \Sigma$. 
Thus 
 $$\delta (a, z)=B(a, \delta (a, z))+z,\quad \text{for all $(a, z)\in \Sigma$}.$$
In particular, if $(a,z, w)\in \Sigma\times \ov{B}(\tau')$ and $z=w-B(a, w)$, then $w=\delta (a, z).$

Now we define the open neighborhood $U$ of $0$ in $C$ by 
 \begin{equation}\label{eq_est_4_5}
 U=\{(a, w)\, \vert \, \abs{a}_0<\tau,\, \abs{w}_0<\tau'/4\}.
 \end{equation}

Assume that $(a, w)\in U$ and  that $z'=h(a, w)$ belongs to $\R^n\oplus Y_1$ and satisfies $\abs{z'}_1\leq c$.  Then  
\begin{equation*}
\begin{split}
z'=h(a, w)+t(a, w)&=P\circ h(a, w)+\bigl(({\mathbbm 1}-P)\circ h+t\bigr) (a, w)\\
&=w-B(a, w)+\bigl(({\mathbbm 1}-P)\circ h+t\bigr)(a, w)
\end{split}
\end{equation*}
and 
\begin{equation}\label{eq_est_5}
w-B(a, w)=z\quad \text{where $z=z'-\bigl(({\mathbbm 1}-P)\circ h+t\bigr) (a, w)$}.
\end{equation}
In view of the estimates \eqref{eq_est_1_5} and \eqref{eq_est_2_5b}, 
\begin{equation}\label{eq_est_5_5}
\abs{z}_1\leq 3c. 
\end{equation}
The  norm of $z$ on level $0$,  can be estimated as 
\begin{equation}\label{eq_est_6_5}
\begin{split}
\abs{z}_0&=\abs{w-B(a, w)}_0\leq \abs{w}_0+\abs{B(a, w)}_0\\
&\leq \abs{w}_0+\abs{B(a, w)-B(a, 0)}_0+\abs{B(a, 0)}_0\\
&\leq \abs{w}_0+\dfrac{1}{4}\abs{w}_0+\abs{B(a, 0)}_0\\
&\leq \tau'/4+\tau'/16+\tau'/8=7\tau'/16<\tau'/2. 
\end{split}
\end{equation}
We conclude that  $(a, z,w )\in \Sigma\times \ov{B}(\tau')$ and $w=\delta (a, z).$ 

At this point we can verify the claim that the closure on level $0$ of the set defined in 
\eqref{eq_est_0_5} is compact. With a sequence $(a_n, w_n)\in \{(a, w)\in U\, \vert \, \abs{(h+t)(a, w)}_1<c\}$, we  consider the   corresponding sequence $z_n=w_n-B(a_n, w_n)$ defined by \eqref {eq_est_5}. Then the estimates  \eqref{eq_est_5_5} and \eqref{eq_est_6_5} give 
$$ \abs{z_n}_1\leq 3c \quad  \text{and}\quad  \abs{z_n}_0\leq \tau'/2, $$
which implies that $w_n=\delta (a_n, z_n)$. Since the embedding $E_1\to E_0$ is compact and $a_n\in \R^n$, we conclude, after taking a subsequence,  that $(a_n, z_n)\to (a, z)$ in $E_0=\R^n\oplus W_0$. From the continuity of the map  $\delta$, we deduce the convergence  
$w_n=\delta (a_n, z_n)\to w:=\delta (a, z)$ in $W_0$. Therefore, 
the sequence $(a_n, w_n)$ converges to $(a, w)$ in $E_0$. Since, by assumption, the solution set of $f$ is compact, the proof of the properness of the Fredholm section is complete.

\end{proof}

 \subsubsection{Notes on Partitions of Unity and Bump Functions}\label{POU}

An efficient tool for globalizing local construction in M-polyfolds are sc-smooth partitions of unity.
\begin{definition}
A M-polyfold $X$ {\bf admits sc-smooth partitions of unity} if  for every open covering of $X$ there exists a subordinate sc-smooth partition of unity.
\end{definition}

So far we did not  need sc-smooth partitions of unity for our constructions on M-polyfolds. We would need them, for example, for the construction of sc-connections  M-polyfolds. We point out that a (classically) smooth partition of unity on a Banach space, which is  equipped with a sc-structure,  induces a sc-smooth partition of unity, in view of  Corollary \ref{ABC-y}. However, many Banach spaces do not admit smooth partition of unity subordinate to a given open cover.
Our discussion in this section is based on the survey article \cite{Fry} by Fry  and McManusi on smooth bump functions on Banach spaces. The article contains many interesting open questions.

For many constructions one does not need sc-smooth partitions of unity, but only sc-smooth bump functions.
\begin{definition}
A M-polyfold $X$ admits {\bf admits sc-smooth bump functions} if  for every point $x\in X$
and every open neighborhood $U(x)$ there exists a sc-smooth function $f\colon  X\rightarrow {\mathbb R}$ which is not identically zero and has support in $U(x)$.
\end{definition}

For example, if $E$ is a Hilbert space with a scalar product $\langle \cdot, \cdot \rangle$ and associated norm $\norm{\cdot }$, the map $x\mapsto \norm{x}^2$, $x\in E$, is smooth. Choosing a smooth function $\beta\colon \R\to \R$ of  compact support and satisfying $\beta (0)=1$, the function $f(x)=\beta (\norm{x}^2)$, $x\in E$, is a smooth, non-vanishing function of bounded support. Therefore, if $E$ is equipped with a sc-structure, the map is a sc-smooth function on the Hilbert space in view of Corollary \ref{ABC-y}. Using the Hilbert structure to rescale and translate, we see that the Hilbert space $E$ admits sc-smooth bump functions.

Currently it is an open problem whether the existence of sc-smooth partitions of unity on M-polyfolds $X$ is equivalent to the existence of sc-smooth bump functions. The problem is related to an unsolved classical question in 
Banach spaces: is, in every Banach space, the existence of a single smooth bump function (we can use the Banach space structure to rescale and translate) equivalent 
to the existence of smooth partitions of unity subordinate to given  open  covers? 
For the discussion on this problem we refer to \cite{Fry}.

Sc-smooth bump functions can be used for the construction of functions having special properties, as the following example shows.

\begin{proposition}\label{prop-x5.36}
We assume that the M-polyfold $X$ admits sc-smooth bump functions. Then for every point $x\in X$ and every open neighborhood
$U(x)$ there exists a sc-smooth function $f\colon  X\rightarrow [0,1]$ with $f(x)=1$ and support in $U(x)$. In addition we can choose  $f$ in such a way 
that $f(y)=1$ for all $y$ near $x$.
\end{proposition}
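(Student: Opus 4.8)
The plan is to reduce everything to a single composition with a classical smooth cut-off function, after first manufacturing a \emph{nonnegative} sc-smooth bump that is strictly positive at $x$. Starting from the hypothesis (applied at $x$, with $U(x)$ replaced by a smaller sc-smooth chart domain $V\subset U(x)$ which I take to be a small neighborhood of $x$), pick a sc-smooth $\psi\colon X\to\R$ that is not identically zero with $\supp(\psi)\subset V$, and replace it by $\psi^2$: this is the composition of $\psi$ with the classically smooth map $t\mapsto t^2$, hence sc-smooth by the chain rule (Theorem~\ref{sccomp}, or Corollary~\ref{ABC-y} read in charts), and it is nonnegative, not identically zero, and still supported in $V$. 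The one genuinely delicate point is to arrange in addition that the bump is nonzero at $x$ itself; granting this for the moment, call the resulting bump $g$, so that $g\ge 0$, $g(x)>0$ and $\supp(g)\subset U(x)$.

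Given such a $g$, put $c:=g(x)>0$ and choose a classically smooth $\beta\colon\R\to[0,1]$ with $\beta\equiv 0$ on $(-\infty,c/3]$ and $\beta\equiv 1$ on $[2c/3,\infty)$. Then $f:=\beta\circ g$ is the desired function. Indeed, $f$ is sc-smooth as a composition of the sc-smooth map $g\colon X\to\R$ with the classically smooth $\beta$ (again Theorem~\ref{sccomp}, or Corollary~\ref{ABC-y} in charts); $f(X)\subset[0,1]$ since $\beta(\R)\subset[0,1]$; $f(x)=\beta(c)=1$ because $c\ge 2c/3$; and, $g$ being continuous with $g(x)=c>2c/3$, we get $f=\beta\circ g\equiv 1$ on the open neighborhood $\{y\in X\ :\ g(y)>2c/3\}$ of $x$. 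Finally $\{f\ne 0\}\subset\{g>c/3\}\subset\{g\ne 0\}$, so $\supp(f)=\overline{\{f\ne 0\}}\subset\overline{\{g\ne 0\}}=\supp(g)\subset U(x)$, where the last inclusion uses that $\supp(g)$ is closed and contained in $U(x)$.

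It remains to explain the centering step, which is the heart of the matter and the main obstacle. Here I would work in a sc-smooth chart $(V,\varphi,(O,C,E))$ with $\varphi(x)=0\in O=r(U')$, transport $\psi^2$ to a sc-smooth function $\tilde\psi$ on $O$, and use that, by Definition~\ref{tangent_retract}, $\tilde\psi$ is realized by the sc-smooth function $H:=\tilde\psi\circ r$ on the relatively open set $U'\subset C\subset E$. Choose a smooth point $z$ in the open nonempty set $\{\psi^2\ne 0\}\subset V$ and set $w_0:=\varphi(z)\in O_\infty$; since $z\in V$ and $V$ is small, $w_0$ is close to $0$. The translation $u\mapsto u+w_0$ of $E$ is sc-smooth, so near $0$ the function $w\mapsto H(w+w_0)$ is sc-smooth and takes the value $H(w_0)=\tilde\psi(w_0)=\psi^2(z)\ne 0$ at $0$; restricting this germ to $O$, transporting it back by $\varphi^{-1}$, and extending by zero yields $g$. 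The care needed here is, first, to check that translation in the ambient sc-Banach space and the ensuing restriction to the retract preserve sc-smoothness (which follows from the chain rule together with the fact that sc-smoothness on $O$ is tested through the retraction $r$), and second, to verify that the support of the transported, extended function stays inside $U(x)$ — for which it is used that $w_0$ was taken close to $0$, i.e.\ that the auxiliary chart domain $V$ was shrunk around $x$ at the outset. If one instead reads the hypothesis in its (in practice equally available) stronger form, in which the bump is required to be nonzero at the prescribed point, this third paragraph becomes unnecessary and the proof is just the first two.
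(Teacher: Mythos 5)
Your first two paragraphs are exactly the paper's own argument in slightly different clothing: post-compose a bump $g$ with $g(x)\ne 0$ and $\supp(g)\subset U(x)$ by a classically smooth cut-off $\beta\colon\R\to[0,1]$ that is $\equiv 1$ near $g(x)$ and $\equiv 0$ outside a compact set not containing $0$; the composition is sc-smooth by the chain rule, has value $1$ on the open set $\{g>2c/3\}\ni x$, and has support inside $\supp(g)$. (Incidentally, the squaring $\psi\mapsto\psi^2$ buys you nothing here: your containment $\{f\ne 0\}\subset\{g>c/3\}\subset\{g\ne 0\}$ already works for a signed $g$, since $\beta\equiv 0$ on $(-\infty,c/3]$.) This part is correct and matches the paper.

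The third paragraph is where your route and the paper's diverge, and where the gap is. You are reading the hypothesis literally — ``$f\ne 0$'' as ``$f$ is not identically zero'' — and then trying to manufacture a bump that is nonzero \emph{at} $x$; the paper's proof simply starts from ``a bump $g$ with $g(0)=1$,'' i.e.\ it tacitly assumes the stronger reading in which the bump is nonzero at the prescribed point. Your centering construction does not close the gap. After pushing $\psi^2$ into the chart and forming $H=\tilde\psi\circ r$ on $U'\subset C\subset E$, you translate to $o\mapsto H(o+w_0)$ and restrict to $O$. The value at $0$ is fine, but the place where this function is nonzero is
\[
\{\,o\in O\ :\ r(o+w_0)\in\{\tilde\psi\ne 0\}\,\},
\]
i.e.\ a preimage under $o\mapsto r(o+w_0)$ of a small set in $O$. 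Nothing controls this preimage: $r$ can collapse large sets, so this set need not be small, need not be relatively compact in the chart domain, and need not stay inside $\varphi(U(x))$. Taking $V$ small at the outset controls $|w_0|_0$, but it does not control how $r$ spreads preimages; the familiar finite-dimensional intuition ``translate a compactly supported bump a little and the support moves a little'' fails once a nonlinear retraction is interposed. And the obvious repair — multiply the translated function by a cut-off to force small support — would require exactly the kind of bump at $x$ you are trying to build. So the ``extend by zero'' step in your third paragraph is not justified.

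The honest resolution is the one you mention in your last sentence: take the bump-function hypothesis in the form in which the bump is nonzero at the specified point (which is how the paper itself uses it, and how the notion is in practice produced, e.g.\ on sc-Hilbert local models). With that reading your proof is complete and is the paper's proof; with the literal reading, the centering step needs a genuinely different idea than translation in the ambient Banach space.
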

\begin{proof}
By  assumption,  there exists  a sc-smooth bump function $g$ having support in $U(x)$   and satisfying $g(0)=1$. In order to achieve that the image is contained in $[0,1]$ we choose a smooth map $\sigma\colon  {\mathbb R}\rightarrow [0,1]$
satisfying $\sigma(s)=0$ for $s\leq 1$ and $\sigma(s)=1$ for $s\geq 1$ and define  the sc-smooth function $f$ by $f=\sigma\circ g$. If, in addition, we wish $f$ to be constant near $x$,  we take $f(y)=\sigma(\delta\cdot g(y))$ for  $\delta>1$. 
\end{proof}

The survey paper \cite{Fry} discusses, in particular, bump functions on Banach spaces, which are classically differentiable.

\begin{definition} [{\bf $C^k$-bump function}]  A Banach space $E$ 
{\bf admits a $C^k$- bump function}\index{D- $C^k$- bump function},  if there exists a $C^k$-function 
$f\colon  E\rightarrow {\mathbb R}$,  not identically zero and having bounded support.
Here $k\in \{0, 1, 2, \ldots\}\cup \{\infty\}$.
\end{definition}

The existence of $C^k$-bump functions on $L_p$ spaces is a consequence of the following result due to Bonic and Frampton, see Theorem 1 in \cite{Fry}.
\begin{theorem}[Bonic and Frampton]\label{Bonic_Frampton}
For the $L_p$-spaces the following holds for the usual $L^p$-norm $\norm{\cdot }$.
Let $p\geq 1$ and let $\norm{\cdot}$ be the usual norm on $L^p$.  

\begin{itemize}
\item[{\em (1)}] If is an even integer,  then $\norm{\cdot }^p$ is of class $C^\infty$.
\item[{\em (2)}] If is an odd integer, then $\norm{\cdot }^p$ is of class $C^{p-1}$.
\item[{\em (3)}] If $p\geq 1$ is not an integer, then $\norm{\cdot }^p$ is of class $C^{[p]}$, where $[p]$ is the integer part of $p$.
\end{itemize}
\end{theorem}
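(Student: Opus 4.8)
The statement to prove is Theorem \ref{Bonic_Frampton}, which asserts concrete $C^k$-differentiability of the function $x \mapsto \|x\|^p$ on $L^p$-spaces, with the differentiability order depending on the parity and integrality of $p$. Since the excerpt cites this as a known result due to Bonic and Frampton and refers to the survey \cite{Fry} (Theorem 1 there), the honest plan is to \emph{reduce} the computation to the one-dimensional situation and then quote the classical analysis. Note first that for $p$ an even integer the claim is essentially immediate: $\|x\|^p = \left(\int |x|^p\right)$ with $p$ even means the integrand is a polynomial in $x$ (more precisely $|x(t)|^p = x(t)^p$), and one checks that the map $x \mapsto \int x^p$ is a bounded polynomial map of degree $p$, hence $C^\infty$; the $k$-th derivative is the obvious symmetric $k$-linear form $(h_1,\dots,h_k) \mapsto p(p-1)\cdots(p-k+1)\int x^{p-k} h_1\cdots h_k$, and one verifies its boundedness by H\"older's inequality. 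So the substance of the theorem is cases (2) and (3), where $t \mapsto |t|^p$ is only finitely differentiable on $\mathbb{R}$.

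The plan for cases (2) and (3) is as follows. First I would record the elementary fact that the scalar function $\phi_p(t) = |t|^p$ on $\mathbb{R}$ is of class $C^{m}$ where $m = p-1$ if $p$ is an odd integer and $m = \lfloor p \rfloor$ if $p$ is non-integral, and that the $m$-th derivative $\phi_p^{(m)}$ is H\"older continuous with exponent $p - m \in (0,1]$ and satisfies a growth bound $|\phi_p^{(j)}(t)| \le C_j |t|^{p-j}$ for $0 \le j \le m$. Next, write $\|x\|^p = \int_\Omega |x(t)|^p \, d\mu(t) = I(x)$ where $I = N_p \circ \phi_p$ with $N_p$ the integration functional; the strategy is to show by induction on $j \le m$ that $I$ is $j$-times differentiable with
$$
D^j I(x)(h_1,\dots,h_j) = \int_\Omega \phi_p^{(j)}(x(t)) \, h_1(t)\cdots h_j(t) \, d\mu(t),
$$
and that $D^j I \colon L^p \to \mathscr{L}^j_{\mathrm{sym}}(L^p;\mathbb{R})$ is continuous. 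The key estimate at each step is H\"older's inequality: $\phi_p^{(j)}(x) \in L^{p/(p-j)}$ with norm controlled by $\|x\|^{p-j}$, and then $\big|\int \phi_p^{(j)}(x) h_1 \cdots h_j\big| \le \|\phi_p^{(j)}(x)\|_{p/(p-j)} \prod_i \|h_i\|_p$ since $\tfrac{p-j}{p} + j\cdot \tfrac{1}{p} = 1$. For the inductive step from $j-1$ to $j$ one must estimate the difference quotient
$$
\frac{1}{\|h\|}\Big\|\phi_p^{(j-1)}(x+h) - \phi_p^{(j-1)}(x) - \phi_p^{(j)}(x)\,h\Big\|_{p/(p-j+1)} \to 0
$$
as $\|h\|_p \to 0$, which follows from the mean value theorem applied pointwise together with the uniform continuity properties of $\phi_p^{(j-1)}$ and a dominated convergence argument, using that $\int_0^1 |\phi_p^{(j)}(x + sh) - \phi_p^{(j)}(x)|^{p/(p-j)} ds \to 0$ in measure with an integrable majorant coming from the growth bound. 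The stopping point $j = m$ is exactly where $\phi_p$ stops being differentiable, so the induction cannot be pushed further; that $I$ fails to be $C^{m+1}$ is the standard sharpness statement, which I would either cite or sketch via the pointwise failure of $\phi_p^{(m)}$ to be differentiable at $0$ combined with a judicious choice of perturbations concentrated where $x$ vanishes.

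Finally, to obtain $C^k$-bump functions — the corollary-style consequence that motivates stating the theorem here — one composes: having $\|\cdot\|^p \in C^m(L^p)$, pick a $C^\infty$ function $\beta \colon \mathbb{R} \to \mathbb{R}$ with $\beta(s) = 1$ for $s \le 1$, $\beta(s) = 0$ for $s \ge 2$, and set $f(x) = \beta(\|x\|^p)$; then $f \in C^m$, $f$ is not identically zero, and $\mathrm{supp}(f) \subset \{\|x\|^p \le 2\}$ is bounded, which is the statement of Definition on $C^k$-bump functions. The main obstacle in the whole argument is the inductive differentiability step with its difference-quotient estimate in the $L^{p/(p-j+1)}$-norm: getting the right H\"older conjugacy bookkeeping and supplying a clean dominated-convergence majorant for the remainder term is the only place where real work is needed, everything else being either elementary (the even-integer polynomial case) or a citation to the classical literature \cite{Fry}.
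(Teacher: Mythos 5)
The paper does not prove Theorem~\ref{Bonic_Frampton}: it states the result and defers to \cite{Fry} (and ultimately to Bonic--Frampton), so there is no internal proof to compare against. Your reconstruction is the standard argument, and it is essentially correct in outline: reduce $\|x\|^p=\int\phi_p(x)\,d\mu$ to the scalar function $\phi_p(t)=|t|^p$, exploit the polynomial identity $|t|^p=t^p$ in the even case, and in the remaining cases induct on the order of differentiation with the formula $D^jI(x)(h_1,\dots,h_j)=\int\phi_p^{(j)}(x)\,h_1\cdots h_j$, controlled by the generalized H\"older inequality with exponents $p/(p-j),p,\dots,p$. Your bookkeeping of exponents, the growth bound $|\phi_p^{(j)}(t)|\lesssim|t|^{p-j}$, and the choice of the $L^{p/(p-j+1)}$-norm for the remainder in the inductive step are all correct.

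One place where your sketch could be made tighter: rather than appealing to "mean value theorem plus dominated convergence'' for the inductive remainder estimate, it is cleaner (and is the device used in the classical proofs) to isolate the needed lemma as Krasnoselskii's continuity theorem for superposition (Nemytskii) operators: if $\psi$ is continuous with $|\psi(t)|\le C(1+|t|^{p/q})$, then $x\mapsto\psi(x)$ is continuous from $L^p$ to $L^q$. Applied with $\psi=\phi_p^{(j)}$ and $q=p/(p-j)$, this gives at once that
\[
\Bigl\|\int_0^1\bigl[\phi_p^{(j)}(x+sh)-\phi_p^{(j)}(x)\bigr]\,ds\Bigr\|_{L^{p/(p-j)}}\longrightarrow 0
\quad\text{as }\|h\|_p\to0,
\]
which is exactly the remainder estimate after one H\"older, and simultaneously establishes continuity of $x\mapsto D^jI(x)$, so you need not treat these two points separately. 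The important feature, which you correctly exploit, is that the Nemytskii operator goes between \emph{different} exponents $p\to p/(p-j)$; for $p\to p$ the operator would be forced to be affine, and the inductive scheme would collapse. Also worth saying explicitly: the induction must stop at $j=m$ because $\phi_p^{(m)}$ is merely H\"older of exponent $p-m\in(0,1]$, not $C^1$; for the theorem as stated you only need to carry the induction to $j=m$, so the sharpness discussion you append is optional. With the Krasnoselskii lemma in hand, your argument is complete.
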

We deduce immediately for the Sobolev spaces $W^{k,p}(\Omega)$ that  the usual norms 
$$
\norm{u}_{W^{k,p}}^p=\sum_{|\alpha|\leq k} \norm{D^\alpha u}^p_{L_p}.
$$
have  the same differentiability as $\norm{\cdot}_{L_p}$.

Taking a non-vanishing smooth function $\beta\colon \R\to \R$ of compact support, 
the map $f(x)=\beta (\norm{x}^p)$, is a bump function of $L_p$, whose smoothness depends on $p$ as indicated in Theorem \ref{Bonic_Frampton}.

If 
$\Omega$ is a bounded domain in ${\mathbb R}^n$ and $E=W^{1,4}(\Omega)$ is equipped with the sc-structure $E_m=(W^{1+m,4}(\Omega)$, $m\geq 0$, we deduce from (1) in Theorem \ref{Bonic_Frampton} that $E$ admits sc-smooth bump functions.
In contrast, if  $E=W^{1,3/2}(\Omega)$ is equipped with the  sc-structure  $E_m=W^{1+m,3/2}(\Omega)$, then the straightforward bump function constructed by using (3) of Theorem \ref{Bonic_Frampton} would only be of class $\ssc^1$. Does there exist a sc-smooth bump function on $E$, i.e., sc-smooth and of bounded support in $E$?

The existence of sc-smooth bump functions is a local property.
\begin{definition}
A local M-polyfold model $(O,C,E)$ has the {\bf sc-smooth bump function property} if,   for every $x\in O$ and every open neighborhood
$U(x)\subset O$ satisfying  $\cl_E(U(x))\subset O$,  there exists a  sc-smooth function $f\colon  O\rightarrow {\mathbb R}$ satisfying  $f\neq 0$ and $\supp(f)\subset U(x)$.
\end{definition}

Clearly,  the following holds.
\begin{theorem}
A M-polyfold $X$ admits sc-smooth bump functions if and only if it admits a sc-smooth atlas whose  the local models have the sc-smooth bump function property.
\end{theorem}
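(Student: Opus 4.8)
The plan is to prove the two implications separately; each is a routine globalization resp.\ localization argument once one knows how to transport an sc-smooth function between a chart domain and $X$. I will use repeatedly that a function $g\colon X\to\mathbb R$ is sc-smooth precisely when each coordinate representation $g\circ\varphi^{-1}\colon O\to\mathbb R$ is sc-smooth, that sc-smoothness of a real-valued map is a local property, and that the restriction of an sc-smooth map to an open subretract is again sc-smooth.

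\emph{Direction $\Rightarrow$.} Assuming $X$ admits sc-smooth bump functions, I will show that in fact \emph{every} sc-smooth atlas of $X$ has local models with the sc-smooth bump function property (here the condition $\cl_E U(x)\subset O$ is not needed; only openness of $U(x)$ in $O$ is used). Fix a chart $(V,\varphi,(O,C,E))$, a point $x\in O$, and an open neighbourhood $U(x)\subset O$ of $x$ with $\cl_E U(x)\subset O$. Then $\varphi^{-1}(U(x))$ is an open neighbourhood in $X$ of $p:=\varphi^{-1}(x)$, so there is an sc-smooth $g\colon X\to\mathbb R$ with $g\neq 0$ and $\supp_X(g)\subset\varphi^{-1}(U(x))$. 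Its coordinate representation $f:=g\circ\varphi^{-1}\colon O\to\mathbb R$ is sc-smooth by definition, is not identically zero (since $g$ vanishes off $V$ but is not identically zero), and since $\{f\neq 0\}=\varphi(\{g\neq 0\})$ and $\varphi\colon V\to O$ is a homeomorphism one gets $\supp_O(f)=\varphi\bigl(\cl_V\{g\neq 0\}\bigr)\subset\varphi\bigl(\supp_X(g)\bigr)\subset U(x)$. Hence $(O,C,E)$ has the sc-smooth bump function property.

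\emph{Direction $\Leftarrow$.} Assume $X$ has an atlas whose local models have the sc-smooth bump function property, and fix $p\in X$ and an open neighbourhood $W$ of $p$. Choose a chart $(V,\varphi,(O,C,E))$ around $p$ and, by Proposition~\ref{new_retract_1}(1), shrink it so that $V\subset W$; the sc-smooth bump function property passes to open subretracts, so the shrunken model still enjoys it. Writing $O=r(U_0)$ with $U_0\subset C$ open, $O$ is closed in $U_0$, hence locally closed in $E$, so (e.g.\ by intersecting $O$ with a small $E$-ball around $x:=\varphi(p)$) one can choose an open neighbourhood $U(x)\subset O$ of $x$ with $\cl_E U(x)\subset O$. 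The bump function property gives an sc-smooth $f\colon O\to\mathbb R$ with $f\neq 0$ and $T:=\supp_O(f)\subset U(x)$; the closure condition forces $T$ to be \emph{closed in $E$}, since $\cl_E(T)\subset\cl_E U(x)\subset O\subset U_0$ while $T$, being closed in $O$ and $O$ being closed in $U_0$, is closed in $U_0$. Define $g:=f\circ\varphi$ on $V$ and $g:=0$ on $X\setminus V$. Granting that $g$ is sc-smooth, we then have $g\neq 0$ and $\supp_X(g)\subset V\subset W$, which finishes the proof.

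\emph{The main obstacle} is exactly to verify that this extension-by-zero $g$ is continuous (whence, by checking coordinate representations, sc-smooth), and the crux is the inclusion $\supp_X(g)\subset V$, i.e.\ that $\varphi^{-1}(T)$ is closed in $X$; this is precisely what the hypothesis $\cl_E U(x)\subset O$ is designed to deliver. I would argue by contradiction: if $q_n\in\varphi^{-1}(T)$ converges in $X$ to some $q\notin V$, pick a chart $(V',\psi,(O',C',E'))$ around $q$, and apply the transition map $\psi\circ\varphi^{-1}$ on $\varphi(V\cap V')$ to the sequence $\varphi(q_n)\in T\subset U(x)$, using that $T$ is $E$-closed inside $O$ and that $\cl_E U(x)\subset O$ to run $q_n$ back into $V$ and contradict $q\notin V$ by Hausdorffness of $X$. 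Once $\supp_X(g)\subset V$ is known, $g$ is continuous (being continuous on the open set $V$ and identically $0$ on the open set $X\setminus\supp_X(g)$, which cover $X$), and for any chart $(V',\psi,(O',C',E'))$ the coordinate representation $g\circ\psi^{-1}$ agrees on $\psi(V\cap V')$ with the sc-smooth pullback $f\circ(\varphi\circ\psi^{-1})$ and vanishes on the open complement in $O'$ of its support (which lies inside $\psi(V\cap V')$ because $\supp_X(g)\subset V$), so $g\circ\psi^{-1}$ is sc-smooth by locality of sc-smoothness and the chain rule; hence $g$ is sc-smooth and the proof is complete.
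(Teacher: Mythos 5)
Your $\Rightarrow$ direction is correct, and the observation that the closure hypothesis $\cl_E U(x)\subset O$ is not used there is fine.

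The $\Leftarrow$ direction has a genuine gap at exactly the step you flag as ``the crux.'' The hypothesis $\cl_E(U(x))\subset O$ does \emph{not} by itself make $\varphi^{-1}(T)$ closed in $X$: the $E$-topology on $O$ has no a priori relation to the $X$-topology outside $V$, and in the infinite-dimensional setting your set $T=\supp_O(f)$, although $E$-closed and (with your small-ball choice of $U(x)$) bounded in $E_0$, is not compact. So in your contradiction argument a sequence $q_n\in\varphi^{-1}(T)$ converging in $X$ to some $q\notin V$ gives you points $\varphi(q_n)\in T$ that need not have any $E$-convergent subsequence, and the transition-map step does not ``run $q_n$ back into $V$'': one does get $\psi(q_n)\to\psi(q)\in \cl_{O'}\psi(V\cap V')\setminus\psi(V\cap V')$, but nothing forces $\varphi(q_n)$ to converge in $E$, so the $E$-closedness of $T$ never gets to act. (In finite dimensions your small ball makes $T$ compact, which rescues the argument, but that is not available here.) That the closure hypothesis alone is insufficient is already visible in a finite-dimensional model if one drops the small-ball restriction: in a stereographic chart of $S^1$ with $(O,C,E)=(\R,\R,\R)$ and $U(x)=\R$, one can take $\supp(f)=\R=T$, and then $\varphi^{-1}(T)=V$ is not closed in $S^1$.

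The correct repair is to bring in the normality of $X$, which is available because $X$ is metrizable by Theorem~\ref{X_m_paracompact}. Having shrunk the chart so that $V\subset W$, choose an open $V'$ with $p\in V'\subset\cl_X(V')\subset V$, and then choose the open set $U(x)$ with $\cl_E(U(x))\subset O$ so that, in addition, $U(x)\subset\varphi(V')$ (intersect your small-ball choice with the open set $\varphi(V')$; the $E$-closure condition survives). The resulting bump $f$ satisfies $\supp_O(f)\subset U(x)\subset\varphi(V')$, hence $\{g\neq 0\}=\varphi^{-1}(\{f\neq 0\})\subset V'$ and therefore $\supp_X(g)\subset\cl_X(V')\subset V\subset W$ \emph{without} needing $\varphi^{-1}(T)$ to be closed. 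Sc-smoothness of $g$ then follows from the open cover $X=V\cup\bigl(X\setminus\cl_X(V')\bigr)$: on $V$ one has $g=f\circ\varphi$, and on $X\setminus\cl_X(V')$ one has $g\equiv 0$. This is the same shrinking device the paper uses in the proof of Theorem~\ref{partition_approximation}, and is presumably the ``clear'' step the authors left unstated.
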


The following class of spaces have the sc-bump function property.
\begin{proposition}
Assume that $(O,C,E)$ is a local M-polyfold model in which the $0$-level $E_0$ of the sc-Bananch space $E$ is a Hilbert space. 
Then $(O,C,E)$ has the sc-bump function property.
\end{proposition}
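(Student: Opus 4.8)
The plan is to produce the bump function by hand: pull back, through the defining retraction, the standard Hilbert-norm bump function on the level-$0$ space $E_0$. Write $O=r(U)$, where $r\colon U\to U$ is a sc-smooth retraction on a relatively open subset $U$ of the partial quadrant $C$ in $E$, and recall that the topology on $O$ is the one induced from $E_0$. Fix $x\in O$ and an open neighborhood $U(x)$ of $x$ in $O$ with $\cl_E(U(x))\subset O$. Since $U(x)$ is open in $O$, I would choose $\rho>0$ so small that $\{y\in O\ \vert\ \norm{y-x}_0<\rho\}\subset U(x)$, where $\norm{\cdot}_0$ is the Hilbert norm of $E_0$, and then fix a smooth function $\beta\colon\R\to\R$ with $0\le\beta\le 1$, $\beta(t)=1$ for $t\le 1/4$, and $\beta(t)=0$ for $t\ge 1$. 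Define $f\colon O\to\R$ by $f(y)=\beta\bigl(4\norm{y-x}_0^2/\rho^2\bigr)$. Then $f(x)=1$, so $f\not\equiv 0$; and $\{y\in O\ \vert\ f(y)\neq 0\}\subset\{y\in O\ \vert\ \norm{y-x}_0<\rho/2\}$, whose closure in $O$ is contained in $\{y\in O\ \vert\ \norm{y-x}_0\le\rho/2\}\subset U(x)$, so $\supp(f)\subset U(x)$.

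The substantive point is sc-smoothness of $f$, which by Definition \ref{tangent_retract} means sc-smoothness of $f\circ r\colon U\to\R$. The key observation is that the map $q_x\colon E\to\R$, $q_x(w)=\norm{w-x}_0^2$, is sc-smooth. Indeed, $w\mapsto\norm{w}_0^2$ is a $C^\infty$ (in fact real-analytic) function on the Hilbert space $E_0$, and for every level $l$ the inclusion $E_l\hookrightarrow E_0$ is bounded and linear, so $q_x$ restricted to $E_l$ is of class $C^\infty$; by Corollary \ref{ABC-y} it follows that $q_x$ is $\ssc^{k+1}$ for every $k$, hence sc-smooth, and so is its restriction to the relatively open subset $U$ of $C$. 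Since $r\colon U\to U$ is sc-smooth and $t\mapsto\beta(4t/\rho^2)$ is a smooth, hence sc-smooth, map $\R\to\R$, the chain rule (Theorem \ref{sccomp}) shows that $f\circ r=\beta\bigl(4\,(q_x\circ r)/\rho^2\bigr)\colon U\to\R$ is sc-smooth. Therefore $f$ is a sc-smooth function on $O$ with the required non-vanishing and support properties, so $(O,C,E)$ has the sc-smooth bump function property.

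There is no serious obstacle; the only thing one must be careful about is not to insist that the center $x$ be a smooth point. This is harmless because the sc-smoothness of $q_x$ is obtained levelwise through Corollary \ref{ABC-y}, rather than by trying to regard $w\mapsto w-x$ as a sc-smooth self-map of the scale $E$, which it is not when $x\notin E_\infty$. The whole argument thus reduces to the fact---already isolated in the paper's discussion preceding the statement---that the square of a Hilbert-space norm is $C^\infty$ and hence, via Corollary \ref{ABC-y}, sc-smooth.
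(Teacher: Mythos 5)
Your proof is correct and follows essentially the same route as the paper's: take the square of the $E_0$-Hilbert-norm, note it is $C^\infty$ on every level via the bounded inclusion $E_l\hookrightarrow E_0$, invoke Corollary~\ref{ABC-y} to get sc-smoothness, postcompose with a smooth cutoff $\beta$, and precompose with the retraction $r$. Your closing remark about the center $x$ not needing to be a smooth point is a worthwhile clarification that the paper's one-line phrase ``scaling, translating, and composition with the retraction'' leaves implicit.
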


\begin{proof}
Let $\langle \cdot ,\cdot \rangle$ be the inner product and $\norm{\cdot}$ the associated norm of $E_0$. We choose a smooth function $\beta\colon \R\to \R$ of compact support and satisfying $\beta (0)=1$. Then the function $f(x)=\beta (\norm{x}^2)$, $x\in E$, defines, in view of Corollary \ref{ABC-y}, a sc-smooth function on $E$. Using scaling, translating, and composition with the sc-smooth retraction onto $O$ the proposition follows.
\end{proof}


Next we study the question of the existence of sc-smooth partitions of unity.
By definition, a M-polyfold $X$ is paracompact and, therefore, there exist  continuous partitions of unity. Hence  it is not surprising that the existence of sc-smooth partitions of unity is connected to local properties of $X$,  namely to   the local  approximability 
of continuous functions  by sc-smooth functions.

\begin{definition}\label{approximation_property}
A local M-polyfold model $(O,C,E)$ has the {\bf sc-smooth approximation property} provided the following holds.
Given $(f,V,\varepsilon)$,  where $V$ is an open subset of $O$ such that $\cl_C(V)\subset O$,  $f\colon  O\rightarrow [0,1]$ is a continuous function with support contained  $V$,  and $\varepsilon>0$,
there exists a sc-smooth map $g\colon  O\rightarrow [0,1]$ supported  in  $V$  and satisfying $\abs{f(x)-g(x)}\leq \varepsilon$ for all $x\in O$.
\end{definition}

\begin{theorem}\label{partition_approximation}
The following two statements are equivalent.
\begin{itemize}
\item[{\em (i)}] A M-polyfold $X$ admits sc-smooth partitions of unity subordinate to any given open cover.
\item[{\em (ii)}] The M-polyfold $X$ admits an atlas consisting of local models having the sc-smooth approximation property.
\end{itemize}
\end{theorem}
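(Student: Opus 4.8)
<br>

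The plan is to prove the equivalence of (i) and (ii) in Theorem~\ref{partition_approximation} by reducing everything to the standard manipulations that produce partitions of unity, keeping careful track of the sc-smooth structure throughout. The key observation is that the existence of sc-smooth partitions of unity is a purely local-to-global matter because an M-polyfold is, by definition, Hausdorff and paracompact, so continuous partitions of unity subordinate to any open cover always exist; the issue is only whether the bump-like building blocks can be chosen sc-smooth, and that is exactly what the sc-smooth approximation property provides in a chart.

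First I would prove (ii)$\Rightarrow$(i). Let $\mathscr{U}=(U_i)_{i\in I}$ be an open cover of $X$. Using paracompactness, pass to a locally finite refinement $(V_j)_{j\in J}$, each $V_j$ contained in some $U_{i(j)}$ and each $V_j$ contained in the domain of an atlas chart $\varphi_j\colon W_j\to O_j$ with $(O_j,C_j,E_j)$ a local model having the sc-smooth approximation property; shrinking again (using the standard shrinking lemma quoted after Definition~\ref{def_strong_bundle}) we may assume there is a further locally finite cover $(V'_j)$ with $\mathrm{cl}_X(V'_j)\subset V_j$. Choose a continuous partition of unity $(\beta_j)$ subordinate to $(V'_j)$. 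Working in the chart $\varphi_j$, the function $\beta_j\circ\varphi_j^{-1}$ is a continuous $[0,1]$-valued function on $O_j$ with support whose $C_j$-closure lies inside the relatively compact (in $C_j$) image of $V'_j$, hence inside $\varphi_j(V_j)\Subset O_j$; by the sc-smooth approximation property I approximate it within $\varepsilon_j:=2^{-j}/(3\cdot\#\{k: V_k\cap V_j\ne\emptyset\})$ by a sc-smooth $g_j\colon O_j\to[0,1]$ supported in $\varphi_j(V_j)$. Pulling back and extending by zero gives a sc-smooth $\gamma_j\colon X\to[0,1]$ supported in $V_j$ with $|\gamma_j-\beta_j|<\varepsilon_j$ pointwise. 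Because the family $(V_j)$ is locally finite, the sum $\gamma:=\sum_j\gamma_j$ is locally a finite sum, hence sc-smooth, and $\gamma(x)\ge \sum_j\beta_j(x)-\sum_j\varepsilon_j \ge 1-\frac13>0$ for every $x$. Then $\tau_j:=\gamma_j/\gamma$ is sc-smooth (quotient of sc-smooth functions with non-vanishing denominator, using the chain rule Theorem~\ref{sccomp} together with smoothness of $t\mapsto 1/t$ on $(0,\infty)$ and Corollary~\ref{ABC-y} applied level-wise), the sum $\sum_j\tau_j\equiv 1$, $\mathrm{supp}(\tau_j)\subset V_j\subset U_{i(j)}$, so $(\tau_j)$ is the required sc-smooth partition of unity subordinate to $\mathscr{U}$.

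Next, (i)$\Rightarrow$(ii). Suppose $X$ admits sc-smooth partitions of unity subordinate to every open cover. I must show every atlas chart model has the sc-smooth approximation property; in fact it suffices to show that the given atlas already has this property, since that is all the statement requires. Fix a chart $\varphi\colon W\to O$ with model $(O,C,E)$ and data $(f,V,\varepsilon)$ as in Definition~\ref{approximation_property}: $V\subset O$ open with $\mathrm{cl}_C(V)\subset O$, $f\colon O\to[0,1]$ continuous with support in $V$, and $\varepsilon>0$. Transport $f$ to the continuous function $\widetilde f:=f\circ\varphi$ on $W$, extended by zero to $X$; its support lies in the open set $\varphi^{-1}(V)$ whose closure is compact-in-$C$-image, so it is contained in $W$. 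For each $x\in X$ choose an open neighbourhood $U_x$ on which the oscillation of $\widetilde f$ is less than $\varepsilon$ and with $U_x\subset\varphi^{-1}(V)$ if $x\in\mathrm{supp}(\widetilde f)$, and $U_x\subset X\setminus\mathrm{supp}(\widetilde f)$ otherwise. Take a sc-smooth partition of unity $(\tau_\alpha)$ subordinate to $(U_x)_x$; for each $\alpha$ pick a point $p_\alpha$ in the support of $\tau_\alpha$ and set $c_\alpha:=\widetilde f(p_\alpha)$. The function $\widetilde g:=\sum_\alpha c_\alpha\tau_\alpha$ is locally a finite sum of sc-smooth functions, hence sc-smooth, is supported in $\varphi^{-1}(V)$ (every $\tau_\alpha$ with $c_\alpha\ne 0$ is supported in some $U_x$ meeting $\mathrm{supp}(\widetilde f)$, hence in $\varphi^{-1}(V)$), takes values in $[0,1]$, and satisfies $|\widetilde f(x)-\widetilde g(x)|\le\sum_\alpha|\widetilde f(x)-c_\alpha|\tau_\alpha(x)\le\varepsilon$ since $\tau_\alpha(x)\ne 0$ forces $x,p_\alpha$ to lie in a common $U_y$ of $\widetilde f$-oscillation $<\varepsilon$. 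Then $g:=\widetilde g\circ\varphi^{-1}\colon O\to[0,1]$ is sc-smooth, supported in $V$, and $\varepsilon$-close to $f$, which is the sc-smooth approximation property.

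I expect the main obstacle to be the bookkeeping needed to make the local finiteness arguments rigorous while staying inside the M-polyfold category — in particular verifying that a locally finite sum of sc-smooth functions is again sc-smooth (which requires noting that on a neighbourhood of any point only finitely many summands are nonzero, so the sum is genuinely a finite sum there, and invoking that the notion of sc-smoothness in Definition~\ref{tangent_retract} is local), and checking that quotienting by a nowhere-vanishing sc-smooth function preserves sc-smoothness (handled by composing with $t\mapsto 1/t$ and applying the chain rule and the $\R^N$-target criterion Corollary~\ref{ABC-y} on each level). A secondary subtlety is the matching of ``support compactly contained in $O$'' between the chart and $X$: one must use that $\mathrm{cl}_C(V)\subset O$ to guarantee that the zero-extension of $\widetilde f$ to $X$ is continuous with support inside $W$, so that the partition of unity on $X$ can be cut down appropriately. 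None of these steps needs new ideas beyond the sc-calculus already developed, but they require care to state precisely.
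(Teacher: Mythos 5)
Your overall strategy matches the paper's: both directions hinge on trading between continuous Urysohn/partition-of-unity data and their sc-smooth approximants in charts, and your (i)~$\Rightarrow$~(ii) argument (weighting a subordinate sc-smooth partition of unity by sampled values of $\widetilde f$) is essentially the paper's construction $\wh g(y)=\sum_x\beta_x(y)g(x)$, differing only in the bookkeeping of the cover and sample points. The direction (ii)~$\Rightarrow$~(i) is where your argument has a genuine technical gap: the tolerance $\varepsilon_j:=2^{-j}/(3\cdot\#\{k:V_k\cap V_j\neq\emptyset\})$ presupposes both that the index set $J$ is countable (so that $2^{-j}$ and $\sum_j\varepsilon_j\le 1/3$ make sense) and that each $V_j$ meets only finitely many $V_k$ (so the denominator is finite). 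Neither is automatic for an M-polyfold: metrizability and paracompactness do not give second countability, and a locally finite cover may have uncountably many members, with each member meeting infinitely many others if it is not relatively compact. The paper sidesteps this entirely by not trying to approximate the partition-of-unity functions themselves; it instead builds ``plateau'' functions $f_\lambda$ with $f_\lambda\equiv 1$ on a further shrinking $Q_\lambda\subset W_\lambda$, approximates each within the fixed tolerance $1/2$, and observes that the sc-smooth approximants $\wh g_\lambda$ are then automatically positive on $Q_\lambda$; since the $Q_\lambda$ cover $X$, the locally finite sum $\sum_\lambda\wh g_\lambda$ is everywhere positive without any summability argument, after which normalization proceeds exactly as in your proof. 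Replacing your summable-tolerance estimate by this pointwise-positivity observation would repair the argument; the rest of what you write (locality of sc-smoothness, local finiteness of the sum, the quotient by a nowhere-vanishing sc-smooth function via the chain rule and Corollary~\ref{ABC-y}) is correct. A cosmetic slip: you assert that the chart images of the shrunken sets are relatively compact in $C_j$, which fails in infinite dimensions; fortunately Definition~\ref{approximation_property} only needs $\cl_{C}(V)\subset O$, not compactness, so this remark should simply be dropped.
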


\begin{proof} 
Let us first show that (i) implies (ii). For the M-polyfold $X$,  we take an atlas of M-polyfold charts $\phi\colon U\to O$. We shall show that the local models $(O, C, E)$ posses  the sc-smooth approximation property. Let $(f,V,\varepsilon)$ be as in Definition \ref{approximation_property}. 
The sc-smooth function $f\circ \phi$ is defined on $\phi^{-1}(O)\subset X$ and we extend it by $0$ to all of $X$ and obtain a continuous function 
$g\colon X\to [0,1]$ whose  support is contained in the open set $W=\phi^{-1}(V)$, satisfying $\cl_X(W)\subset U$. 
Define the open subset $\wt{W}$ of $X$ by
$$
\wt{W}=\{x\in W\, \vert \, g(x)>\varepsilon/4\}.
$$
Then $\cl_X(\wt{W})\subset W$.
For every $x\in \cl_X(\wt{W})$,  there exists open neighborhood $U_x$ of $x$ such that
$\cl_X(U_x)\subset W$ and $\abs{g(x)-g(y)} <\varepsilon/2$ for all $y\in U_x$. Take the  open cover of $X$ consisting of $U_0=\{x\in X\, \vert \, g(x)<\varepsilon/2\}$ and $(U_x)$, $x\in \cl_X(\wt{W})$. By assumption, there exists a subordinate sc-smooth partition of unity consisting of $\beta_0$ with the support in $U_0$ and $(\beta_x)$ supported in $(U_x)$,  $x\in \cl_X(\wt{W})$.  Then we  define the function $\wh{g}\colon  X\rightarrow [0,1]$ by 
$$
\wh{g}(y)=\sum_{x\in \cl_X(\wt{W})} \beta_x(y)g(x).
$$
Since the collection of supports of $\beta_0$ and $(\beta_x)$ is locally finite, the sum is locally finite and, therefore, $\wh{g}$ is sc-smooth.

We claim that $\abs{g(x)-\wh{g}(x)}<\varepsilon$ for all $x\in X$. In order to show this we first show that $\supp (\wh{g})\subset W$. We assume that $y\in \supp (\wh{g})$ and let 
$(y_k)$ be a sequence  satisfying $\wh{g}(y_k)>0$ and $y_k\rightarrow y$. Since $\wh{g}(y_k)>0$, every open neighborhood of $y$ intersects $\supp (\beta_x)$ for some $x\in \cl_X(W)$. Since  the collection of supports $(\supp (\beta_x))$ is locally finite, there exists an open neighborhood $Q=Q(y)$ of $y$ in $X$ and finitely many points $x_1,\ldots ,x_m$ such that only the supports of the functions $\beta_{x_i}$ intersect  $Q$.
Hence
$$
0<\wh {g}(y_k) =\sum_{i=1}^m \beta_{x_i}(y_k)g(x_i), 
$$
showing  that  $y_k\subset \bigcup_{i=1}^m \cl_X(U_{x_i})\subset W$ for large $k\geq 1$.  Consequently,  $y\in W$ and hence $\supp (\wh{g})\subset W$.

Next, given $z\in U$,  there exists an  open neighborhood $Q=Q(z)$ of $z$ in $X$  such that 
$Q$ intersects only finitely many supports of functions belonging to the partition of unity.
If $Q$ intersects the support of $\beta_0$ and none of the supports of the functions $\beta_x$, then $Q\subset U_0$ and $\wh{g}(y)=0$ for all $y\in Q$. Consequently, 
$$\abs{g(y)-\wh{g}(y)}=\abs{g(y)}<\varepsilon/2<\varepsilon$$
for all $y\in Q$. If $Q$ intersects supports of the functions $\beta_x$, there are finitely many points $x_1,\ldots, x_l$ such that only the supports of the functions $\beta_{x_i}$ intersect  $Q$.
Hence 
$$
\wh{g}(y)=\sum_{i=1}^l \beta_{x_i}(y)g(x_i)\quad  \text{for all $y\in Q$}.
$$
Since $\abs{g(x)-g(y)}<\varepsilon/2$ if $y\in U_x$ and $g(y)<\varepsilon/2$ if $y\in U_0$,  we conclude that 
\begin{equation*}
\begin{split}
|g(y)-\wh{g}(y)|
&=\abs{\beta_0(x)g(y)+\sum_{i=1}^l\beta_{x_i}(y)g(y)-\sum_{i=1}^l \beta_{x_i}(y)g(x_i)}\\
&< \varepsilon/2 +\sum_{i=1}^l \beta_{x_i}(z)\abs{g(y)-g(x_i)}\\
&\leq \varepsilon/2 +(\varepsilon/2)\sum_{i=1}^l \beta_{x_i}(y)< \varepsilon.
\end{split}
\end{equation*}
for all $y\in Q$. 
Consequently, $\abs{g(y)-\wh{g}(y)}< \varepsilon$ for all $y\in U$ and 
$\abs{f(x)-\wh{g}\circ\phi^{-1}(x)}< \varepsilon$ for $x\in O$. This completes the proof that (i) implies (ii).

Next we show that (ii) implies (i).  For the M-polyfold $X$ we take an atlas of M-polyfold charts $\phi_\tau\colon U_\tau \to O_\tau$, $\tau \in T$, where the local models $(O_\tau, C_\tau, E_\tau)_{\tau \in T}$ posses  the sc-smooth approximation property. We assume that ${(V_\lambda)}_{\lambda\in\Lambda}$ is an open cover of $X$.
Then there exists a refinement $(W_\lambda)_{\lambda \in \Lambda}$ (some sets may be empty) with the same index set $\Lambda$, which is locally finite so that $W_\lambda\subset V_\lambda$ and  for every $\lambda\in \Lambda$ there exists an index $\tau(\lambda)\in T$ such that $\cl_X(W_\lambda)\subset  U_{\tau(\lambda)}$. 
Since $X$ is metrizable, we find open sets 
$Q_\lambda$ such  that
$$
 Q_\lambda\subset \cl_X(Q_\lambda)\subset W_\lambda
$$
and $(Q_\lambda)_{\lambda \in \Lambda}$ is a locally finite open cover of $X$. Using again  the metrizability  of $X$, we find continuous functions  $f_\lambda\colon  X\rightarrow [0,1]$ satisfying
$$
f\vert Q_\lambda\equiv 1\quad \text{and} \quad  \supp(f_\lambda)\subset W_\lambda.
 $$
 Let $\varepsilon=1/2$, $V_\lambda'=\phi_{\tau(\lambda)}(W_\lambda)$,   and $f_\lambda'=f_\lambda\circ\phi^{-1}_{\tau(\lambda)}$.   In view of the hypothesis (ii), for the triple $(f_\lambda', V_\lambda', 1/2)$ there exists  a sc-smooth function $g_\lambda\colon  O_{\tau(\lambda)}\rightarrow [0,1]$ having support in $V_\lambda'$ and satisfying 
 $$
\abs{ f_\lambda'(x)-g_\lambda(x)}< 1/2\quad  \text{for all $x\in O_{\tau(\lambda)}$}.
$$
Going back to $X$ and extending $g_\lambda\circ \phi_{\tau (\lambda)}^{-1}$ onto $X$ by $0$ outside of $W_\lambda$,  we obtain the sc-smooth functions $\wh{g}_\lambda\colon  X\rightarrow [0,1]$ satisfying
$\wh{g}_{\lambda}(x)>0$ for $x\in Q_\lambda$. Then we define 
$\gamma_\lambda\colon  X\rightarrow [0,1]$ by
$$
\gamma_\lambda(x)=\frac{\wh{g}_\lambda(x)}{\sum_{\lambda\in\Lambda} \wh{g}_{\lambda}(x)}.
$$
The family of functions  $(\gamma_\lambda)_{\lambda \in \Lambda}$ is the desired sc-smooth partition of unity subordinate to the given open cover $(V_\lambda)$ of $X$. 

\end{proof}

An immediate consequence of Theorem \ref{partition_approximation} is the following result.

\begin{proposition}
Assume that the sc-Banach space admits smooth partitions of unity. Then a local model $(O, C, E)$ has the sc-smooth approximation property.
\end{proposition}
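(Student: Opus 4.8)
The plan is to reduce the statement, by means of the retraction defining $O$ and the characterization of sc-smoothness for real-valued maps (Corollary \ref{ABC-y}), to a purely classical approximation problem on the level-$0$ space, which is then settled using the assumed smooth partitions of unity on $E_0$. Write $O=r(U)$, where $r\colon U\to U$ is a sc-smooth retraction of the relatively open subset $U$ of the partial quadrant $C\subset E$; one may, if convenient, first normalise via the sc-isomorphism of Definition \ref{partial_quadrant} so that $E=\R^n\oplus W$ and $C=[0,\infty)^n\oplus W$, but this is not needed. Given $(f,V,\varepsilon)$, set $h:=f\circ r\colon U\to[0,1]$. Since $f$ is continuous and $r$ is $\ssc^0$, $h$ is continuous on the level-$0$ space $U=U_0$, a relatively open subset of the closed convex set $C_0$; since $r\vert O=\mathrm{id}$ we have $h\vert O=f$, and $\mathrm{supp}_{U_0}(h)\subseteq r^{-1}(\mathrm{supp}_O f)\subseteq r^{-1}(V)$, the last set being open in $U_0$.

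The first substantive step is a classical approximation. Because $E_0$ admits smooth partitions of unity, so does every relatively open subset of $C_0$, with the notion of $C^\infty$ inherited from the partial quadrant: given an open cover, lift it to a cover of an open subset of $E_0$, take a smooth partition of unity there, and restrict. By the standard averaging construction $\widetilde\phi=\sum_x\beta_x\,h(x)$ (over a partition of unity $(\beta_x)$ subordinate to a cover by sets on which $h$ oscillates by less than $\varepsilon$) one obtains a classically $C^\infty$ function $\widetilde\phi\colon U_0\to[0,1]$ --- the values lie in $[0,1]$ because $\widetilde\phi$ is a pointwise convex combination of values of $h$ --- with $\lvert\widetilde\phi-h\rvert\le\varepsilon$ on $U_0$. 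In the same way, using that $U_0$ is metrizable (hence normal) to insert one extra layer of nested open sets, one produces a $C^\infty$ cutoff $\chi\colon U_0\to[0,1]$ that is $\equiv1$ on a neighbourhood of $\mathrm{supp}_{U_0}(h)$ and satisfies $\mathrm{supp}_{U_0}(\chi)\subseteq r^{-1}(V)$. Then $\phi:=\chi\widetilde\phi\colon U_0\to[0,1]$ is $C^\infty$, is supported inside $r^{-1}(V)$, and still obeys $\lvert\phi-h\rvert\le\varepsilon$ on $U_0$ (off $\{\chi=1\}$ one has $h=0$ and $0\le\phi\le\widetilde\phi\le\varepsilon$).

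The second substantive step upgrades $\phi$ from classically smooth to sc-smooth. For every $l\ge0$ the inclusion $E_l\hookrightarrow E_0$ is bounded linear, so the restriction $\phi\colon U_l\to\R$ is the composition of the $C^\infty$ map $\phi$ with a continuous linear map and hence is $C^\infty$, in particular of class $C^{l+1}$, on the partial quadrant $C_l$. By Corollary \ref{ABC-y} this forces $\phi\colon U\to\R$ to be $\ssc^{k+1}$ for every $k$, i.e. sc-smooth. Consequently $\phi\circ r\colon U\to\R$ is sc-smooth by the chain rule (Theorem \ref{sccomp}), and by Definition \ref{tangent_retract} this is exactly the assertion that $g:=\phi\vert O\colon O\to[0,1]$ is a sc-smooth map between retracts. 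Finally, for $x\in O$ one has $\lvert g(x)-f(x)\rvert=\lvert\phi(x)-h(x)\rvert\le\varepsilon$; and if $x\in O$ lies outside $r^{-1}(V)$ then $\phi$ vanishes on a neighbourhood of $x$, so $\{g\neq0\}\subseteq r^{-1}(V)\cap O=V$ and $\mathrm{supp}_O(g)\subseteq\mathrm{supp}_{U_0}(\phi)\cap O\subseteq V$. Thus $g$ has all the required properties and $(O,C,E)$ has the sc-smooth approximation property.

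I expect the main obstacle to be the careful interface between the three regularity notions in play --- continuity on $U_0$, classical $C^\infty$-smoothness on the partial quadrant $C_0\subset E_0$, and sc-smoothness on $(O,C,E)$ --- together with the attendant point-set bookkeeping: supports, the relative (not absolute) topology of the partial quadrant, and the role of the retraction $r$. The single genuinely imported ingredient is the transfer of smooth partitions of unity from $E_0$ to relatively open subsets of $C_0$ and the resulting classical approximation lemma; once that and Corollary \ref{ABC-y} are in hand, the remaining work is routine.
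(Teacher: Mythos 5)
Your proof is correct, and it takes a more self-contained route than the one the paper intends. The paper derives the proposition as a one-liner from Theorem~\ref{partition_approximation}, direction (i)~$\Rightarrow$~(ii): one first promotes the classical smooth partitions of unity on $E_0$ to sc-smooth partitions of unity on the M-polyfold $O$ (by extending an open cover of $O$ to an open subset of $E_0$, applying the smooth partition there, restricting, and invoking Corollary~\ref{ABC-y} plus the chain rule with the retraction $r$), and then cites the abstract equivalence theorem applied to the trivial atlas $\{(O,\mathrm{id},(O,C,E))\}$. You instead \emph{inline} the essential construction and change the ambient space: rather than working with sc-smooth partitions on $O$, you pull $f$ back along $r$ to the honest relatively open set $U_0\subset C_0\subset E_0$, do the classical oscillation/averaging argument there with a \emph{classical} $C^\infty$ partition of unity, multiply by a classical cutoff to control the support, and only at the very end upgrade to sc-smoothness via Corollary~\ref{ABC-y} before restricting to $O$. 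This buys you two things: you never have to formulate or prove that $O$ admits sc-smooth partitions of unity, and the entire analytic content is the completely standard partition-of-unity approximation on a subset of $E_0$; the sc-theory enters only through the single, cheap application of Corollary~\ref{ABC-y} and the chain rule for $\phi\circ r$. The paper's route is shorter on the page but front-loads the work into the proof of Theorem~\ref{partition_approximation}. Your point-set bookkeeping (that $\supp_O f$ is closed in $C$ because $\cl_C(V)\subset O$, that $r^{-1}(V)\cap O=V$, that the cutoff trick forces $|\phi-h|\le\varepsilon$ everywhere) is all in order, so the argument stands.
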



The result below, due to Tor\'unczyk (see \cite{Fry}, Theorem 30),  gives a complete characterization of Banach spaces admitting $C^k$-partitions of unity.
This criterion reduces the question to a problem  in the geometry of Banach spaces. Though this criterion is not easy to apply it serves as one of the main tools in the investigation of the question, see \cite{Fry}. In order to formulate the theorem we need a definition.

\begin{definition}
If  $\Gamma$ is a set, we denote by $c_0(\Gamma)$ the Banach space of functions $f\colon  \Gamma\rightarrow {\mathbb R}$ having the property that for every $\varepsilon>0$
the number of $\gamma\in\Gamma$ with $\abs{f(\gamma)}>\varepsilon$ is finite.  The vector space operations are obvious and the norm is defined by
$$
\abs{f}_{c_0}=\text{max}_{\gamma\in\Gamma} \abs{f(\gamma)}.
$$
A  homeomorphic embedding $h\colon  E\rightarrow c_0(\Gamma)$ is {\bf coordinate-wise $C^k$}, \index{Coordinate-wise $C^k$-embedding}
if for every $\gamma\in\Gamma$ the map $E\rightarrow {\mathbb R}$,  $e\rightarrow (h(e))({\gamma})$ is $C^k$.
\end{definition}

If $\Gamma=\N$, then 
$c_0({\mathbb N})$ is the usual space $c_0$ of sequences converging to $0$.
\begin{theorem}[Tor\'unczyk's Theorem]
A Banach space $E$ admits a $C^k$-partition of unity if and only if there exists a set $\Gamma$ and a coordinate-wise $C^k$ homeomorphic embedding
of  $E$ into $c_0(\Gamma)$. 
\end{theorem}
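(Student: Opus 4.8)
The plan is to prove both implications essentially by hand, since the statement is a classical fact about the geometry of Banach spaces and is independent of the sc-machinery; in one direction one builds the coordinate functions of the embedding out of partitions of unity, and in the other one pulls back partitions of unity from $c_0(\Gamma)$. Throughout one uses only that a Banach space is metrizable, that bounded linear functionals are $C^\infty$, and that a finite product of $C^k$ functions is $C^k$.

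For the ``only if'' direction, assume $E$ admits a $C^k$-partition of unity subordinate to every open cover. Using metrizability, fix for each $n\ge 1$ a locally finite open cover $(U^n_i)_{i\in I_n}$ of $E$ by sets of diameter $<1/n$ together with a subordinate $C^k$-partition of unity $(\varphi^n_i)_{i\in I_n}$, and set $\Gamma=\coprod_{n\ge 1}I_n$, $h(x)_{(n,i)}=\tfrac1n\varphi^n_i(x)$. First I would check $h(x)\in c_0(\Gamma)$: for fixed $n$, since the $\varphi^n_i(x)$ are nonnegative and sum to $1$, at most $\lceil 1/\varepsilon\rceil$ of them exceed $\varepsilon$, and for $n>1/\varepsilon$ every coordinate $\tfrac1n\varphi^n_i(x)$ is $<\varepsilon$; hence $\{(n,i):h(x)_{(n,i)}>\varepsilon\}$ is finite. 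Injectivity: if $x\neq y$, choose $n>1/\|x-y\|$; some $\varphi^n_j$ has $\varphi^n_j(x)>0$, so $x\in\operatorname{supp}\varphi^n_j\subset U^n_j$, whose diameter is $<1/n$, whence $y\notin U^n_j$ and $\varphi^n_j(y)=0$. Continuity of $h$ into $c_0(\Gamma)$ follows by splitting the supremum over $\Gamma$ into the finitely many $n\le 2/\varepsilon$ (where local finiteness and continuity of the $\varphi^n_i$ apply) and the tail $n>2/\varepsilon$ (where the factor $\tfrac1n$ is uniformly $<\varepsilon/2$). Continuity of $h^{-1}$ (openness onto the image) uses the same diameter estimate: given $x$ and $r>0$, pick $n>1/r$ and $j$ with $\varphi^n_j(x)=c>0$; then $U^n_j\subset B(x,r)$, so the open slab $\{\xi:|\xi_{(n,j)}-h(x)_{(n,j)}|<c/n\}$ pulls back into $B(x,r)$. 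Finally each coordinate $x\mapsto\tfrac1n\varphi^n_i(x)$ is $C^k$, so $h$ is coordinate-wise $C^k$.

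For the ``if'' direction, let $h\colon E\to c_0(\Gamma)$ be a coordinate-wise $C^k$ homeomorphic embedding. The key auxiliary step is: every open set $W\subset c_0(\Gamma)$ admits a $C^\infty$-partition of unity whose members are \emph{locally coordinate-finite}, i.e.\ near each point each member equals a finite product of $C^\infty$ functions of the individual coordinate functionals $e_\gamma^\ast$. This is built from ``box'' bumps: for $z\in c_0(\Gamma)$ and $\varepsilon>0$ the set $F=\{\gamma:|z(\gamma)|\ge\varepsilon/4\}$ is finite, and with a standard $C^\infty$ bump $\beta\colon\mathbb R\to[0,1]$ equal to $1$ near $0$ and to $0$ off $[-1,1]$, the function $b_z(x)=\prod_{\gamma\in F}\beta(\tfrac4\varepsilon(x(\gamma)-z(\gamma)))\cdot\prod_{\gamma\notin F}\beta(\tfrac4\varepsilon x(\gamma))$ is positive at $z$, is supported in $B(z,\varepsilon)$, and on a sufficiently small $c_0$-ball around any point is a finite product of coordinate factors (all but finitely many coordinates of a fixed point are already small, so the corresponding $\beta$-factors are locally constant equal to $1$), hence $C^\infty$. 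A locally finite refinement of a given cover of $W$ together with the usual ``sum of such bumps, then normalise'' argument yields the partition of unity. Given this, one concludes by pullback: an open cover $(U_\lambda)$ of $E$ corresponds, via the homeomorphism $h$, to an open cover $(W_\lambda)$ of an open neighbourhood $W\supset h(E)$ in $c_0(\Gamma)$ with $W_\lambda\cap h(E)=h(U_\lambda)$; choose a locally coordinate-finite $C^\infty$-partition of unity $(\psi_\lambda)$ on $W$ subordinate to $(W_\lambda)$ and put $\phi_\lambda=\psi_\lambda\circ h$. Near any point of $E$ each $\phi_\lambda$ is a finite product of $C^\infty$ functions of the maps $x\mapsto(h(x))(\gamma)$, which are $C^k$ by hypothesis, so $\phi_\lambda$ is $C^k$; the family $(\phi_\lambda)$ is locally finite, $\sum_\lambda\phi_\lambda\equiv 1$ on $E$ since $h(E)\subset W$, and $\operatorname{supp}\phi_\lambda\subset h^{-1}(W_\lambda)=U_\lambda$ because $h$ is a homeomorphism onto its image.

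The main obstacle is exactly the auxiliary step in the ``if'' direction: the embedding $h$ is only assumed coordinate-wise $C^k$, with no control over $h$ as a map into $c_0(\Gamma)$ beyond continuity, so $\psi_\lambda\circ h$ can be shown to be $C^k$ only when $\psi_\lambda$ is locally an honest finite product of functions of individual coordinates. The delicate point in making the box bumps $C^\infty$ despite the $c_0$-norm depending on all coordinates is the observation that, in a small enough ball around a fixed point, all but finitely many coordinates are so small that their $\beta$-factors are locally constant equal to $1$, so the infinite product degenerates locally to a finite product. Everything else — the verifications in the ``only if'' direction that $h$ lands in $c_0(\Gamma)$, is injective, and is a homeomorphism onto its image — is routine once the diameter-$1/n$ covers with subordinate $C^k$-partitions of unity are fixed.
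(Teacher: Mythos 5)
The paper does not actually prove this theorem; it is stated with a pointer to \cite{Fry}, so there is no paper proof to compare against and I can only judge the proposal on its own merits. Your ``only if'' direction is correct and is the standard construction: one builds coordinates from $C^k$-partitions of unity subordinate to covers by sets of diameter $<1/n$, with the weight $1/n$ forcing the image into $c_0(\Gamma)$ and the diameter control giving injectivity and openness onto the image. The box-bump construction is also sound, including the essential observation that on a sufficiently small $c_0$-ball around any fixed point all but finitely many of the factors are locally the constant $1$, so the infinite product degenerates locally to a finite product of $C^\infty$ functions of individual coordinates.

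The gap is the auxiliary step in the ``if'' direction: that every open $W\subset c_0(\Gamma)$ admits, subordinate to any given open cover, a $C^\infty$ partition of unity whose members are locally coordinate-finite. You dispatch this with ``a locally finite refinement \dots\ together with the usual `sum of such bumps, then normalise' argument,'' but that argument does not close. After taking a locally finite open refinement $(V_i)$ and a shrinking $(V_i')$, one still has to exhibit, for each $i$, a locally coordinate-finite $C^\infty$ function supported in $V_i$ and strictly positive on $V_i'$, and the family of box bumps one would sum to obtain such a function must itself have locally finite supports inside $V_i$ --- which is precisely the problem one started with, one level down. In a separable space one would cover $\overline{V_i'}$ by countably many balls and weight the bumps; for uncountable $\Gamma$ the space $c_0(\Gamma)$ is non-separable, $\overline{V_i'}$ need not be Lindel\"of, and a family of box bumps positive on all of $V_i'$ need not have locally finite supports. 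Moreover the auxiliary step is exactly the ``if'' direction of the statement applied to $E=c_0(\Gamma)$ with $h=\operatorname{id}$, so it cannot be invoked without circularity; it carries essentially all the content of the theorem. Making it rigorous is the heart of Tor\'unczyk's argument and requires an actual new idea --- for instance, that the open balls of radius $r$ with $\varepsilon/2<r<\varepsilon$ centred at the finitely supported points of $c_0(\Gamma)$ with entries in $\varepsilon\mathbb{Z}$ form a locally finite cover, together with a careful scheme for combining such covers across different mesh sizes while keeping the resulting sums $C^\infty$. Your sketch needs to supply a construction of this kind.
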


An important class of Banach spaces are those which are are weakly compactly generated. They have good smoothness properties and  will provide
us with examples of sc-Banach spaces admitting sc-smooth partitions of unity.

\begin{definition}
A Banach space $E$ is called  {\bf weakly compactly generated} (WCG) if there exists a weakly compact set $K$ in $E$ such  that the closure of the span of $K$ is the whole space, 
$$
E=\cl_E(\text{span}(K)).
$$
\end{definition}

There are  two  useful examples of WCG Banach spaces.
\begin{proposition}
Let $E$ be a Banach space.
\begin{itemize}
\item[{\em (i)}] If $E$ is reflexive, then $E$ is WCG.
\item[{\em (ii)}] If $E$ is separable, then $E$ is WCG.
\end{itemize}
\end{proposition}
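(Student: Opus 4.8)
$\textbf{Proof plan.}$

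The statement to prove is the proposition that (i) every reflexive Banach space is weakly compactly generated, and (ii) every separable Banach space is weakly compactly generated. These are two independent and essentially classical facts, so the plan is to handle them separately, each by exhibiting an explicit weakly compact set whose linear span is dense.

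For (i), the plan is to take $K$ to be the closed unit ball $B_E$ of $E$ itself. By the Banach--Alaoglu theorem, the unit ball of the bidual $B_{E^{**}}$ is weak${}^*$-compact; since $E$ is reflexive, the canonical embedding $E \to E^{**}$ is an isometric isomorphism carrying the weak topology of $E$ onto the weak${}^*$-topology of $E^{**}$, so $B_E$ is weakly compact. Its linear span is all of $E$, hence certainly dense, so $E = \mathrm{cl}_E(\mathrm{span}(B_E))$ and $E$ is WCG with $K = B_E$.

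For (ii), let $(x_n)_{n\geq 1}$ be a countable dense subset of $E$; without loss of generality every $x_n \neq 0$. Set $K = \{0\} \cup \{ x_n / (2^n \|x_n\|) : n \geq 1\}$. The plan is to check that $K$ is weakly compact: it is norm-compact, because $x_n/(2^n\|x_n\|) \to 0$ in norm and adjoining the limit point $0$ makes the set a convergent sequence together with its limit, which is compact; and norm-compact subsets of a Banach space are weakly compact, since the norm topology is finer than the weak topology and both are Hausdorff. Finally, $\mathrm{span}(K)$ contains every $x_n$ (each is a nonzero scalar multiple of an element of $K$), hence contains the dense set $(x_n)$, so $\mathrm{cl}_E(\mathrm{span}(K)) = E$ and $E$ is WCG.

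There is no real obstacle here; both parts are short. The only point requiring a modicum of care is the verification in (ii) that the rescaled sequence $x_n/(2^n\|x_n\|)$ together with $0$ forms a weakly compact — equivalently, in this case norm-compact — set, and the observation that a norm-compact set is automatically weakly compact. I would state these explicitly but not belabor them.
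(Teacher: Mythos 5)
Your proof is correct and takes essentially the same approach as the paper: for (i) use the weak compactness of the closed unit ball in a reflexive space, and for (ii) rescale a dense sequence so that it converges to $0$ in norm and adjoin the limit to get a norm-compact (hence weakly compact) generating set. The only cosmetic difference is your choice of normalization $x_n/(2^n\|x_n\|)$ versus the paper's $x_n/n$ with $x_n$ taken from the unit ball.
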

\begin{proof}
In case that $E$ is reflexive it is known that the closed unit ball ${B}$ is  compact. in the weak topology. Clearly,  $E=\text{span}(B)$.
If $E$ is separable, we  take a dense sequence ${(x_n)}_{n\geq  }$ in the unit ball and 
define $K=\{0\}\cup\{\frac{1}{n}x_n\, \vert \,  n\geq1\}$. Then $K$ is compact and,  in particular,  weakly compact. 
\end{proof}
The usefulness of WCG spaces lies in the following result from \cite{GTWZ}, see also \cite{Fry},  Theorem 31. 
\begin{theorem}[\cite{GTWZ}]\label{wcg_partition}If the  WCG-space $E$ admits a $C^k$-bump function, then it also admits  $C^k$-partitions of unity.
\end{theorem}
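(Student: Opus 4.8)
The final statement to prove is Theorem \ref{wcg_partition}: if a WCG Banach space $E$ admits a $C^k$-bump function, then it admits $C^k$-partitions of unity. This is cited as a known theorem from \cite{GTWZ}, so the plan is to explain the standard route through the structure theory of WCG spaces combined with Tor\'unczyk's characterization, rather than to reprove everything from scratch.

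First I would recall the projectional resolution of the identity (PRI) for WCG spaces: every WCG Banach space $E$ admits a long sequence of norm-one projections $(P_\alpha)$ indexed by ordinals up to $\operatorname{dens}(E)$, with $P_\alpha P_\beta = P_{\min(\alpha,\beta)}$, with $(P_{\alpha+1}-P_\alpha)E$ separable, and with $\bigcup_\alpha P_\alpha E$ dense; this is the Amir--Lindenstrauss theorem. Iterating, one obtains that $E$ has a \emph{Markushevich basis} and, more to the point, that $E$ embeds into some $c_0(\Gamma)$ by a bounded linear injection (again Amir--Lindenstrauss). The key refinement needed is that, using the $C^k$-bump function together with the PRI, one can upgrade this linear injection into $c_0(\Gamma)$ to a \emph{coordinate-wise $C^k$} homeomorphic embedding $h\colon E\to c_0(\Gamma)$ in the sense of Tor\'unczyk. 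Once that is in place, Tor\'unczyk's Theorem (quoted just above in the excerpt) immediately yields that $E$ admits $C^k$-partitions of unity subordinate to every open cover.

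The main step, and the hard part, is the construction of the coordinate-wise $C^k$ embedding into $c_0(\Gamma)$ from the data of a single $C^k$-bump function. The idea is as follows. From the bump function on $E$ one produces, for each separable ``slice'' $(P_{\alpha+1}-P_\alpha)E$ of the PRI, a family of $C^k$ real-valued functions that separate points and whose supports form a point-finite (in fact $\sigma$-point-finite) family after suitable rescaling; here one uses that a $C^k$-bump function on $E$ restricts to a $C^k$-bump on each subspace, and that on a separable Banach space a $C^k$-bump function already gives a coordinate-wise $C^k$ embedding into $c_0$ (the separable case of Tor\'unczyk). Assembling these countable families over all the ordinals $\alpha < \operatorname{dens}(E)$ via the PRI projections $P_\alpha$ (composing each slice-function with $P_{\alpha+1}$, which is linear hence $C^k$) and indexing the total collection by a set $\Gamma$, one obtains a map $h\colon E\to \mathbb{R}^\Gamma$ that is coordinate-wise $C^k$; the PRI structure and careful normalization of the functions guarantee that $h$ actually takes values in $c_0(\Gamma)$ and is a homeomorphic embedding onto its image. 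This transfinite induction on $\operatorname{dens}(E)$, keeping track of continuity of $h$ at each limit stage, is where all the real work lies.

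Finally I would note that everything above is a theorem of Godefroy, Troyanski, Whitfield and Zizler \cite{GTWZ}, and since the excerpt explicitly allows citing \cite{GTWZ} and Tor\'unczyk's theorem, the proof in the text can legitimately be given as: combine the Amir--Lindenstrauss PRI machinery with the $C^k$-bump to build the coordinate-wise $C^k$ embedding into $c_0(\Gamma)$, then invoke Tor\'unczyk's Theorem. The one genuine subtlety to flag is that the normalization of the slice-functions must be done so that their contributions decay (the $c_0$ condition) uniformly on bounded sets — this is exactly the place where a careless argument would fail, and it is the reason the result is nontrivial despite the short statement.
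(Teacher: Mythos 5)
The paper does not give a proof of this theorem; it is stated with a bare citation to \cite{GTWZ} and no argument is supplied. Your sketch is therefore not competing with any in-text proof, and as a description of the standard route it is essentially correct: one uses the Amir--Lindenstrauss projectional resolution of the identity for WCG spaces to reduce to separable slices, on each slice uses the $C^k$-bump to manufacture a countable, suitably normalized family of $C^k$ functions, assembles these over the long sequence of projections into a coordinate-wise $C^k$ homeomorphic embedding into $c_0(\Gamma)$, and then invokes Tor\'unczyk's characterization (Theorem 5.47 of the text). You also correctly flag the genuine delicacy, namely arranging the uniform decay needed for the image to land in $c_0(\Gamma)$ and for the family of supports to be appropriately point-finite. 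One small caution: your parenthetical that the separable case "already gives" the coordinate-wise $C^k$ embedding understates that subcase; the separable statement (a $C^k$-bump on a separable Banach space implies $C^k$-partitions of unity) is itself a nontrivial theorem, not an immediate consequence, so in a full write-up you would either cite it separately or carry out that induction base explicitly. Given that the theorem is explicitly attributed to \cite{GTWZ} in the text, leaving the details to that reference, as you do, is entirely appropriate.
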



\begin{corollary}
Let $(O,C,E)$ be a local model, where $E_0$ is a Hilbert space. Then $(O,C,E)$ has the sc-smooth approximation property.
\end{corollary}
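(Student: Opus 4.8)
The statement to be proved is: if $(O,C,E)$ is a local M-polyfold model whose zeroth level $E_0$ is a Hilbert space, then $(O,C,E)$ has the sc-smooth approximation property in the sense of Definition \ref{approximation_property}. The plan is to deduce this by combining the two preceding results: the Proposition stating that such an $(O,C,E)$ has the sc-bump function property (via the smooth function $x\mapsto\beta(\|x\|^2)$ built from the Hilbert norm and Corollary \ref{ABC-y}), the Proposition stating that if the ambient sc-Banach space $E$ admits smooth partitions of unity then a local model $(O,C,E)$ has the sc-smooth approximation property, and Torunczyk's-theorem-style input, namely Theorem \ref{wcg_partition} of \cite{GTWZ}, which says that a WCG Banach space admitting a $C^k$-bump function admits $C^k$-partitions of unity.

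\textbf{Key steps.} First I would observe that $E_0$, being a Hilbert space, is reflexive, hence weakly compactly generated. Second, $E_0$ admits a $C^\infty$-bump function: the map $x\mapsto\beta(\langle x,x\rangle)$, where $\beta\colon\mathbb{R}\to[0,1]$ is a smooth function of compact support with $\beta(0)=1$, is smooth and of bounded support (this is exactly the computation already recorded just before the Proposition on the sc-bump function property). Third, by Theorem \ref{wcg_partition}, $E_0$ therefore admits smooth ($C^\infty$) partitions of unity subordinate to any open cover. Fourth, a smooth partition of unity on $E_0$, interpreted via the sc-structure and Corollary \ref{ABC-y}, yields a sc-smooth partition of unity on $E$; in other words $E$ (as a sc-Banach space) admits sc-smooth partitions of unity. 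This is precisely the hypothesis of the Proposition immediately preceding this corollary (``Assume that the sc-Banach space admits smooth partitions of unity. Then a local model $(O,C,E)$ has the sc-smooth approximation property''), whose conclusion is exactly what we want. Applying it finishes the proof.

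\textbf{Main obstacle.} There is no genuine obstacle here: the corollary is a short assembly of machinery that has already been set up, and each implication (Hilbert $\Rightarrow$ reflexive $\Rightarrow$ WCG; Hilbert norm squared is a $C^\infty$-bump; WCG $+$ bump $\Rightarrow$ partitions of unity; smooth partitions of unity on $E_0$ give sc-smooth partitions of unity on $E$; sc-smooth partitions of unity on $E$ give the approximation property of the local model) is already available in the excerpt. The only point requiring a small amount of care is the passage from a $C^\infty$-partition of unity on $E_0$ to a sc-smooth one on $E$: one must check that each bump function $\phi\colon E_0\to[0,1]$ of the classical partition, being $C^\infty$ on $E_0$ and hence, by Corollary \ref{ABC-y} applied level by level (the target is $\mathbb{R}$), sc-smooth on $E$, and that local finiteness of the cover is inherited on every level since the topology on $E_m$ is finer than that on $E_0$. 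Once this is noted, the corollary follows immediately.
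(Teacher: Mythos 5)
Your proposal is correct and follows essentially the same chain of implications as the paper's own proof (Hilbert $\Rightarrow$ reflexive $\Rightarrow$ WCG; Hilbert norm gives a $C^\infty$-bump; WCG plus bump $\Rightarrow$ smooth partitions of unity via Theorem \ref{wcg_partition}; then invoke Theorem \ref{partition_approximation} / the preceding proposition to conclude). If anything, you are a touch more careful than the paper's one-line proof, in that you make explicit the step where the classically $C^\infty$ partition on $E_0$ is upgraded to a sc-smooth one on the scale $E$ via Corollary \ref{ABC-y} and the inheritance of local finiteness on each level $E_m$; the paper elides this by speaking directly of ``sc-smooth bump functions'' and ``sc-smooth partitions of unity.''
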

\begin{proof}
A Hilbert space is reflexive and hence a WCG-space. 
We have already seen that a Hilbert space equipped with a sc-structure admits sc-smooth bump functions and conclude from 
Theorem \ref{wcg_partition} that it admits sc-smooth partition of unity, and consequently has the smooth approximation property, in view of Theorem 
\ref{partition_approximation}.
\end{proof}

\pagebreak
\section{Linearizations, Orientations, and Invariants}

In this chapter we introduce the notion of a linearization of a sc-Fredholm section and  discuss orientations and invariants associated to proper sc-Fredholm sections. We refer the reader to \cite{DK} for some of the basic ideas around determinants of linear Fredholm operators,
and to \cite{FH} for applications of the more classical ideas to problems arising in symplectic geometry, i.e. linear Cauchy-Riemann type operators. In polyfold theory, the central issue is that the occuring linear Fredholm operators, which are linearizations of nonlinear sections,
 do in general not depend as operators continuously on the points where the linearization was taken. On the other hand there is some weak continuity
property which allows to introduce determinant bundles. However, it is necessary to develop some new ideas.

\subsection{Linearizations of Sc-Fredholm Sections}

Let $P\colon Y\rightarrow X$ be a strong bundle over the  M-polyfold $X$ and $f$ a sc-smooth section of $P$.

If $x$ is a smooth point in $X$ and $f(x)=0$, there exists a well-defined {\bf linearization}\index{Linearization} 
$$f'(x)\colon T_xX\rightarrow Y_x$$
which is a sc-operator.  In order to recall the definition, we identify, generalizing 
a classical fact of vector  bundles, the tangent space $T_{0_x}Y$ at the element $0_x$ with the sc-Banach space $T_xX\oplus Y_x$ where $Y_x=P^{-1}(x)$ is the fiber over $x$. Denoting by $P_x\colon T_xX\oplus Y_x\to Y_x$ the sc-projection, the linearization of $f$ at the point $x$ is the following operator,
$$f'(x):=P_x\circ Tf(x)\colon T_xX\to Y_x.$$

As in the case of vector bundles there is, in general, no intrinsic notion of a  linearization of the section $f$ at the smooth point $x$ if $f(x)\neq 0$. However, dealing with a strong bundle we can profit from the additional structure. We simply take a local $\ssc^+$-section $s$ defined near $x$ and satisfying $s(x)=f(x)$, so that the linearization 
$$(f-s)'(x)\colon T_xX\to Y_x$$
is well-defined. To find such a $\ssc^+$-section we take a strong bundle chart around $x$. Denoting the sections in the local charts by the same letters, we let $R$ be the local strong bundle retraction associated with the local strong bundle. It satisfies $R (x, f(x))=f(x)$ and we define the desired section $s$ by $s(y)=R(y, f(x))$.  Since $f(x)$ is a smooth point, $s$ is a $\ssc^+$-section satisfying $s(x)=f(x)$ at the distinguished point $x$, as desired.

If $t$ is another $\ssc^+$-section satisfying $t(x)=f(x)=s(x)$, then 
$$(f-s)'(x)=(f-t)'(x)+(t-s)'(x)$$
and the linearization $(t-s)'(x)$ is a $\ssc^+$-operator. It follows from Proposition \ref{prop1.21}, that $(f-s)'(x)$ is a sc-Fredholm operator if and only if $(f-s)'(x)$ is a sc-Fredholm operator, in which case their Fredholm indices agree because a $\ssc^+$-operator is level  wise a compact operator.

Let now $f$ be a sc-Fredholm section of the bundle $P$ and $x$ a smooth point in $X$. Then there exists, by definition, a $\ssc^+$-section $s$ satisfying $s(x)=f(x)$ and, moreover, $(f-s)'(x)$ is a sc-Fredholm operator.

\begin{definition}\index{D- Space of linearizations}\index{$\text{Lin}(f,x)$}
If $f$ is a sc-Fredholm section $f$ of the strong bundle $P\colon Y\rightarrow X$ and $x$ a smooth point in $X$, then the {\bf space of linearizations} of $f$ at $x$ is the set of sc-operators from $T_xX$ to $Y_x$ defined as 
$$
\text{Lin}(f,x)=\{(f-s)'(x)+a\, \vert \,  \text{$a\colon T_xX\to Y_x$ is a $\ssc^+$-operator}\}.
$$
\end{definition}

The operators in $\text{Lin}(f,x)$
all differ by  linear $\ssc^+$-operators and are all sc-Fredholm operators having the same Fredholm index. 
This allows us to define
the {\bf  index of the sc-Fredholm germ $(f,x)$} \index{Index of sc-Fredholm germ}
by
$$
\text{ind}(f,x):=\dim \ker \bigl((f-s)'(x)\bigr)-\dim\bigl(Y_x/(\text{Im}(f-s)'(x))\bigr).\index{$\text{ind}(f,x)$}
$$
We shall show that this index is locally constant. The proof has to cope with the difficulty caused by the fact that, in general, the linearizations do not depend continuously as operators on the smooth point $x$.

Recall that a M-polyfold
is locally path connected, and that,  moreover,  any two smooth points in the same path component can be connected by a $\ssc^+$-smooth path $\phi\colon [0,1]\rightarrow X$.
\begin{proposition}[Stability of $\text{ind}(f,x)$]\index{P- Stability of $\text{ind}(f,x)$}\label{sst}
Let $P\colon Y\rightarrow X$ be a strong bundle over the tame M-polyfold $X$ and $f$  a sc-Fredholm section. If 
$x_0$ and $x_1$ are smooth points in $X$ connected by a sc-smooth path $\phi\colon [0,1]\rightarrow X$, then
$$
\text{ind}(f,x_0)=\text{ind}(f,x_1).
$$
\end{proposition}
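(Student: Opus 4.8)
The statement is local in nature along the path, so the plan is to reduce to a stability result for linearizations of sc-Fredholm germs and then use a connectedness argument on $[0,1]$. First I would observe that, by definition of a sc-Fredholm section, at every smooth point $x$ the germ $(f,x)$ possesses a filled version $(g,0)\colon U\to U\triangleleft F$ which, after subtraction of a suitable $\ssc^+$-section, is conjugated to a basic germ; hence by Proposition \ref{Newprop_3.9} and Proposition \ref{prop1.21} the linearization $Dg(0)$ is a sc-Fredholm operator of index $n-N$, and by Proposition \ref{filler_new_1} the operator $Tf(x)$ (and every element of $\text{Lin}(f,x)$) is a Fredholm operator with index $\text{ind}(f,x) = \ind Dg(0) = n-N$. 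So $\text{ind}(f,x)$ is the Fredholm index of the basic germ model at $x$.

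The key step is to show that $\text{ind}(f,x)$ is a \emph{locally constant} function of the smooth point $x$. I would fix a smooth point $x_0$, pass to a filled version $g$ of $(f,x_0)$, and apply Corollary \ref{corex1}: after perhaps a correction by a $\ssc^+$-section and a conjugation to a basic germ $h+s$ as in Theorem \ref{save}, for all solutions $(a,w)$ near $(0,0)$ on level $0$ one has that $(a,w)$ is on level $1$ and $Df(a,w)$ is a Fredholm operator of the same index $n-N$; more precisely, Proposition \ref{Newprop_3.9} already gives that $Df(a,w)\colon \R^n\oplus W_m\to \R^N\oplus W_m$ is Fredholm of index $n-N$ for all sufficiently small $(a,w)\in E_{m+1}$, not only at solutions of $f=0$. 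Translating this back through the filling (using part (2) of Proposition \ref{filler_new_1}, which survives $\ssc^+$-perturbations of $g$, and the fact that any two elements of $\text{Lin}(f,y)$ differ by a $\ssc^+$-operator hence have the same index) shows: there is an open neighborhood $V$ of $x_0$ in $X$ such that every smooth point $y\in V$ satisfies $\text{ind}(f,y) = \text{ind}(f,x_0)$. The point is that the basic-germ normal form for $(f,x_0)$ controls the linearizations not only at $x_0$ but at all nearby smooth points, even though these linearizations do not depend continuously as operators on $y$.

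Given local constancy, the connectedness argument is routine. Let $\phi\colon [0,1]\to X$ be the sc-smooth path from $x_0$ to $x_1$; since $\phi$ is sc-smooth its image lies in $X_\infty$, so every point $\phi(t)$ is a smooth point of $X$. Define $\Sigma = \{t\in[0,1]\mid \text{ind}(f,\phi(t)) = \text{ind}(f,x_0)\}$. Then $0\in\Sigma$, and by the local constancy just established together with the continuity of $\phi\colon [0,1]\to X_0$ (the inclusion $X_m\to X_0$ is continuous, so $\phi$ is continuous into $X_0$ as well), for each $t$ there is an open interval around $t$ on which $\text{ind}(f,\phi(\cdot))$ is constant; hence $\Sigma$ is both open and closed in $[0,1]$. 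By connectedness of $[0,1]$, $\Sigma = [0,1]$, so $\text{ind}(f,x_1) = \text{ind}(f,x_0)$.

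\textbf{Main obstacle.} The one step needing genuine care is the deduction of local constancy: I must argue that the basic-germ normal form available at $x_0$ yields information about $\text{ind}(f,y)$ for \emph{all} nearby smooth $y$, not just about the solution set $f=0$. This is exactly where one cannot use continuity of $y\mapsto Tf(y)$ in the operator norm. The resolution is to use that the filled version $g$ of $(f,x_0)$, after the $\ssc^+$-correction and conjugation, equals $h+s$ with $h$ a basic germ and $s$ a $\ssc^+$-germ, and then invoke the second statement of Proposition \ref{Newprop_3.9} (Fredholmness and index of $D(h+s)(a,w)$ at \emph{all} small $(a,w)$, with the index depending only on $n-N$), transported back through the strong bundle chart, the conjugation, and the filling; the index invariance under these operations is guaranteed by Proposition \ref{prop1.21} (compact/$\ssc^+$-perturbations) and Proposition \ref{filler_new_1} (comparison of $Tf(y)$ with $Dg(y)$). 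Once this is set up carefully the rest is bookkeeping.
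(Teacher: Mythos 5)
The high-level structure of your proof (local constancy plus a connectedness argument on $[0,1]$) is correct, but the local constancy step has a genuine gap exactly at the point you flagged as the main obstacle, and the resolution you propose does not close it. The issue is that Proposition \ref{filler_new_1} compares $Tf(0)$ with $Dg(0)$ only at the \emph{distinguished point} $0$ where the filling was constructed. For $y\in O$ close to $0$, the matrix representation of $D{\bf g}(y)$ with respect to $E = T_yO\oplus\ker Dr(y)$ and $F = \rho(y)F\oplus\ker\rho(y)$ has a lower-left block $(\mathbbm{1}-\rho(y))D{\bf g}(y)|_{T_yO}$ and a lower-right block $(\mathbbm{1}-\rho(y))D{\bf g}(y)|_{\ker Dr(y)}$. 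The filler conditions ensure that at $y=0$ the first of these vanishes and the second is an isomorphism; they say nothing at nearby $y$. Since $y\mapsto Dr(y)$, $y\mapsto \rho(y)$, and $y\mapsto D{\bf g}(y)$ are not continuous in the operator norm, and since the ranks of $Dr(y)$ and $\rho(y)$ (and hence $\dim T_yO$ and the fiber dimensions) may jump, one cannot argue these conditions persist by continuity. So Proposition \ref{Newprop_3.9} does give you $\ind D{\bf g}(y) = n-N$ for all nearby $y$, but you have no bridge from that to $\ind Tf(y)$.

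The paper's proof takes a different route that sidesteps this exactly: it moves to $[0,1]\times X$, introduces a $\ssc^+$-section $s(t,x)$ with $s(t,\phi(t)) = f(\phi(t))$, then stabilizes by finitely many $\ssc^+$-sections $s_1,\ldots,s_{m+p}$ chosen so that the section $F(\lambda,t,x) = f(x)-s(t,x)+\sum\lambda_i s_i(t,x)$ has a surjective linearization at $(0,t_0,\phi(t_0))$ with kernel in general position to the boundary. The implicit function theorem then produces a smooth solution manifold $S$ containing the graph $(0,t,\phi(t))$ near $t_0$, along which $F'$ remains surjective and $\ker F' = TS$. The index of the stabilized problem is read off as $\dim T_{(0,t,\phi(t))}S$, which is manifestly locally constant because $S$ is a smooth manifold, and subtracting the $m+p$ added dimensions recovers $\text{ind}(f,\phi(t))$. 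Stabilizing to transversality is precisely what eliminates the need to control the filler at varying base points. If you want to complete your argument, you would need either to establish that the filler conditions persist on an open neighborhood (which the text gives no tools for), or adopt the stabilization trick.
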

\begin{proof}

{
We shall show that the map $t\mapsto  \text{ind}(f,\phi(t))$ is locally constant. The difficulty is that the linearizations, even if picked sc-smoothly will,  in general,  not depend as operators continuously on $t$. On top of it we have possibly varying  dimensions of the spaces so that we need to change the filled version at every point. However, one can prove the result with a trick, which will also be used 
in dealing with orientation questions later on. We consider the tame M-polyfold $[0,1]\times X$ and consider the graph of the path $\phi$. We fix $t_0\in [0,1]$ and choose  a locally defined $\ssc^+$-section $s(t, x)$ satisfying $s(t,\phi(t))=f(\phi(t))$ for  $(t,x)\in [0,1]\times X$ near  $(t_0,\phi(t_0))$. (We do not need a sc-smooth partition of unity. If we had one  available then we could define such a section $s$ which satisfies  $s(t,\phi(t))=f(\phi(t))$ for all $t\in [0,1]$.) We choose  finitely many smooth points $e_1,\ldots ,e_m$
such  that the image of $(f-s(t_0,\cdot ))'(\phi(t_0))$ together with the $e_i$ span $Y_{\phi(t_0)}$.  Next we take a smooth finite-dimensional
linear subspace $L$ of $T_{\phi(t_0)}X$ which has a sc-complement in $T^R_{\phi(t_0)}X$. Then the image
of $L$ under $(f-s(t_0,\cdot ))'(\phi(t_0))$ is a smooth finite-dimensional subspace of $Y_{\phi(t_0)}$ of dimension $r$, say.
We choose  smooth vectors $p_1,\ldots ,p_r$ spanning this space. Next we take $m+r$ many $sc^+$-sections depending on $(t,x)$ (locally defined)
so that at $(t_0,\phi(t_0))$ they take the different values $e_1,\ldots ,e_m$ and $p_1,\ldots ,p_r$. Now the section
$$
F(\lambda,t,x)= f(x)-s(t,x)+\sum_{i=1}^{m+p} \lambda_i\cdot s_i(t,x)
$$
is defined near $(0,t_0,\phi(t_0))$ and takes values in $Y$. The linearization at $(0,t_0,\phi(t_0))$ with respect to the first and second variable
is surjective  and the kernel of the linearization has a complement contained in 
$$
T_{(0,t_0,\phi(t_0))}^R({\mathbb R}^{m+p}\oplus [0,1]\oplus X)={\mathbb R}^{m+p}\oplus T^R_{t_0}[0,1]\oplus T^R_{\phi(t_0)}X.
$$
Hence $\ker(F'(0,t_0,\phi(t_0)))$ is in good position to the boundary. Employing  the implicit function theorem
for the boundary case, we  obtain a solution manifold $S$ of $F=0$ containing $(0,t,\phi(t))$ for $t\in [0,1]$ close to $t_0$.
Moreover,  if  $(\lambda,t,x(t))\in S$, then  $\ker(F'(\lambda,t,\phi(t)))=T_{(\lambda,t,\phi(t))}S$ and
$F'(\lambda,t,\phi(t))$ is surjective. Hence 
$$
t\mapsto  \dim(T_{(0,t,\phi(t))}S)
$$
is locally constant for $t\in [0,1]$ near $t_0$. By construction, 
\begin{equation*}
\begin{split}
\text{ind}(f,\phi(t_0))+m+p&=\text{ind}(F,(0,t_0,\phi(t_0)))\\
&=\text{ind}(F,(0,t,\phi(t)))=\text{ind}(f,\phi(t))+m+p.
\end{split}
\end{equation*}
Therefore, $\ind (f,\phi (t_0))=\ind (f,\phi (t))$ for all $t$ near $t_0$.
}
\end{proof}

\subsection{Linear Algebra and Conventions}\label{sect_conventions}

In he following  we  are concerned with the orientation  which is  crucial in our applications.
We follow  to a large extent the  appendix in \cite{HWZ5}. The ideas of the previous proof are also useful
in dealing with orientation questions. There we did not use sc-smooth partitions of unity.  {\bf In the following, however,}
{\bf we shall assume the existence of sc-smooth partitions of unity} to simplify the presentation, but the proofs
could be modified arguing as in the index stability theorem.

We begin with standard facts about determinants and wedge products.
Basically all the constructions are natural, but  usually depend on conventions, which have  to be stated apriori.
Since different authors  use different conventions,  their natural isomorphisms can be different.  To avoid these difficulties we state our conventions carefully. 
We also would like to point out that A. Zinger has written a paper dealing with these type of issues, \cite{Zinger}.
He also describes  some of the mistakes occurring in the literature as well as deviating conventions by different authors.
Since the algebraic treatment of SFT (one of the important applications of the current theory)
relies on the orientations
of the moduli spaces and the underlying conventions we give a comprehensive treatment of orientation questions.

Using  the notation introduced by Zinger in  \cite{Zinger}, we define 
$$
\lambda(E):=\Lambda^{max} E\quad   \text{and}\quad   \lambda^\ast(E):=(\lambda(E))^\ast,\index{$\lambda(E)$}\index{$\lambda^\ast(E)$}
$$
where $(\lambda (E))^\ast$ is the dual of the vector space $\lambda(E)$.

A linear map $\Phi\colon E \rightarrow F$ between finite-dimensional vector spaces of the same dimension  induces the linear map
$$
\lambda(\Phi)\colon \lambda(E)\rightarrow\lambda (F), \index{$\lambda(\Phi)$}
$$
defined by $\lambda(\Phi)(a_1\wedge\ldots \wedge a_n):= \Phi(a_1)\wedge\ldots \wedge \Phi(a_n).
$
The  map $\lambda(\Phi)$ is nontrivial if and only if $\Phi$ is an isomorphism. The dual map $\Phi^\ast\colon F^\ast\to E^\ast$ of $\Phi\colon E \rightarrow F$ induces the map 
$$\lambda (\Phi^\ast)\colon \lambda (F^\ast )\to \lambda (E^\ast).$$
Moreover, we denote by 
$$\lambda^\ast  (\Phi)\colon \lambda^\ast  (F )\to \lambda^\ast  (E)$$
the dual of the map $\lambda(\Phi)\colon \lambda(E)\rightarrow\lambda(F)$.

The composition of  the two maps 
$$
E\xrightarrow{\Phi}F\xrightarrow{\Psi} G
$$
between vector spaces of the  same dimension satisfies 
$$
\lambda(\Psi\circ\Phi) =\lambda(\Psi)\circ \lambda(\Phi).
$$



{
There are different canonical isomorphisms
$$
\lambda ( E^\ast) \rightarrow \lambda^\ast( E)
$$
depending on different conventions. Our convention is the following.
}

{
\begin{definition}
If $E$ is a finite-dimensional real vector space and $E^\ast$ its dual,  the {\bf natural isomorphism}
$$
\iota\colon \lambda(E^\ast)\rightarrow\lambda^\ast(E)\index{$\iota\colon \lambda(E^\ast)\rightarrow\lambda^\ast(E)$}
$$
is defined by 
$$\iota( e_1^\ast\wedge\ldots \wedge e_n^\ast)(a_1\wedge \ldots \wedge a_n)=\det (e_i^\ast(a_j)),
$$
where $n=\dim(E)$. If $n=0$, we set 
$\lambda( E^\ast) =\lambda(\{0\}^\ast)=\R$.  
\end{definition}
}

{
From this definition we deduce  for a basis $e_1,\ldots ,e_n$ of $E$ and its dual basis $e_1^\ast,\ldots ,e_n^\ast$  the formula
$$
\iota(e^\ast_1\wedge\ldots \wedge e^\ast_n)=(e_1\wedge\ldots \wedge e_n)^\ast,
$$
where the dual vector $v^\ast$ of  a vector $v\neq 0$ in a one-dimensional vector space is determined by $v^\ast(v)=1$. 
}

{
Indeed, 
$$
(e_1\wedge\ldots \wedge e_n)^\ast (e_1\wedge\ldots \wedge e_n)=1= \det((e_i^\ast(e_j))=\iota(e_1^\ast\wedge\ldots \wedge e_n^\ast)(e_1\wedge\ldots \wedge e_n).
$$
}

{
The definition of $\iota$ is compatible with the previous definition of induced maps.
}

\begin{proposition}\index{P- Naturality of $\iota$}
If  $\Phi\colon E\rightarrow F$ is an  isomorphism between two finite-dimensional vector spaces and $\Phi^\ast\colon F^\ast\rightarrow E^\ast$ is its dual, then the following diagram is commutative, 
$$\begin{CD}
\lambda(F^\ast)@>\lambda(\Phi^\ast)>> \lambda(E^\ast)\\
@VV\iota V   @VV\iota V\\
\lambda^\ast(F)@>\lambda^\ast(\Phi)>> \lambda^\ast(E)
\end{CD}$$
\end{proposition}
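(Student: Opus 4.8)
The statement asserts that the natural isomorphism $\iota$ intertwines the pullback maps on top exterior powers of duals with the duals of the pushforward maps on top exterior powers. The plan is to verify the commutativity of the diagram by direct computation on decomposable elements, using a fixed basis and its dual, which is legitimate since $\lambda(F^\ast)$ is one-dimensional and any nonzero element is decomposable.

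First I would choose a basis $f_1,\ldots,f_n$ of $F$ and let $f_1^\ast,\ldots,f_n^\ast$ be the dual basis of $F^\ast$; since $\Phi$ is an isomorphism, $e_i:=\Phi^{-1}(f_i)$ is a basis of $E$ with dual basis $e_i^\ast$. The key identity to record is the relation between $\Phi^\ast$ and these dual bases: for each $i$ we have $\Phi^\ast(f_i^\ast)=\sum_j (f_i^\ast(\Phi e_j))\, e_j^\ast = e_i^\ast$, because $\Phi e_j=f_j$ and $f_i^\ast(f_j)=\delta_{ij}$. Hence $\lambda(\Phi^\ast)(f_1^\ast\wedge\cdots\wedge f_n^\ast)=e_1^\ast\wedge\cdots\wedge e_n^\ast$.

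Then I would chase the element $\omega:=f_1^\ast\wedge\cdots\wedge f_n^\ast \in \lambda(F^\ast)$ around both sides of the diagram. Going down then right: $\iota(\omega)=(f_1\wedge\cdots\wedge f_n)^\ast$ by the formula recorded after the definition of $\iota$, and then $\lambda^\ast(\Phi)\big((f_1\wedge\cdots\wedge f_n)^\ast\big)$ is by definition the functional $\xi\mapsto (f_1\wedge\cdots\wedge f_n)^\ast(\lambda(\Phi)\xi)$ on $\lambda(E)$; evaluating on $e_1\wedge\cdots\wedge e_n$ gives $(f_1\wedge\cdots\wedge f_n)^\ast(\Phi e_1\wedge\cdots\wedge \Phi e_n)=(f_1\wedge\cdots\wedge f_n)^\ast(f_1\wedge\cdots\wedge f_n)=1$, so $\lambda^\ast(\Phi)(\iota(\omega))=(e_1\wedge\cdots\wedge e_n)^\ast$. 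Going right then down: $\lambda(\Phi^\ast)(\omega)=e_1^\ast\wedge\cdots\wedge e_n^\ast$ by the previous paragraph, and $\iota(e_1^\ast\wedge\cdots\wedge e_n^\ast)=(e_1\wedge\cdots\wedge e_n)^\ast$ again by the displayed formula. The two results agree, and since $\omega$ spans the one-dimensional space $\lambda(F^\ast)$, linearity gives commutativity on all of $\lambda(F^\ast)$. The degenerate case $n=0$ is handled by the convention $\lambda(E^\ast)=\R$, where all four maps are the identity on $\R$.

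The argument is entirely routine bookkeeping; the only mild subtlety — and the point most prone to sign or convention errors — is the precise relation $\Phi^\ast(f_i^\ast)=e_i^\ast$ when $e_i=\Phi^{-1}f_i$, together with keeping straight that $\lambda^\ast(\Phi)$ denotes the dual of $\lambda(\Phi)$ (a pullback on functionals) rather than $\lambda(\Phi^{-1})$ or $\lambda(\Phi^\ast)$. Once that bookkeeping is pinned down by working with a single compatible pair of bases, no further obstacle arises.
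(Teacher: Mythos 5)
Your proof is correct and takes essentially the same route as the paper: fix a basis $f_1,\ldots,f_n$ of $F$, set $e_i=\Phi^{-1}(f_i)$, and chase the spanning element $f_1^\ast\wedge\cdots\wedge f_n^\ast$ around both sides of the diagram, using $\Phi^\ast(f_i^\ast)=e_i^\ast$ and the identity $\iota(e_1^\ast\wedge\cdots\wedge e_n^\ast)=(e_1\wedge\cdots\wedge e_n)^\ast$. The only small difference is that you name the common result $(e_1\wedge\cdots\wedge e_n)^\ast$ explicitly (and note the $n=0$ case), whereas the paper verifies equality by evaluating both compositions on $e_1\wedge\cdots\wedge e_n$ and getting $1$; these are the same computation.
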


\begin{proof}

{
Let $f_1,\ldots ,f_n$ be a basis of $F$ and $f_1^\ast,\ldots ,f^\ast_n$ its dual basis of $F^\ast$. Then we define the basis $e_1,\ldots ,e_n$  of $E$ by  $\Phi(e_i)=f_i$.
Its dual basis in $E^\ast$ is given by 
$e_1^\ast=f_1^\ast \circ \Phi,\ldots , 
e_n^\ast=f_n^\ast \circ \Phi$ and we compute,
\begin{equation*}
\begin{split}
&(\lambda^\ast  (\Phi)\circ\iota(f_1^\ast\wedge\ldots \wedge f^\ast_n))(e_1\wedge\ldots \wedge e_n)\\
&\quad= (\lambda (\Phi^\ast)((f_1\wedge\ldots \wedge f_n)^\ast))(e_1\wedge\ldots \wedge e_n)\\
&\quad = (f_1\wedge\ldots \wedge f_n)^\ast\circ \lambda(\Phi)(e_1\wedge\ldots \wedge e_n)\\
&\quad=(f_1\wedge\ldots \wedge f_n)^\ast(f_1\wedge\ldots \wedge f_n)\\
&\quad= 1.
\end{split}
\end{equation*}
Similarly,
\begin{equation*}
\begin{split}
&( \iota\circ\lambda(\Phi^\ast)(f_1^\ast\wedge\ldots \wedge f_n^\ast))(e_1\wedge\ldots \wedge e_n)\\
&\quad = (\iota( f_1^\ast\circ\Phi\wedge\ldots \wedge f_n^\ast\circ\Phi))(e_1\wedge\ldots \wedge e_n)\\
&\quad =(\iota(e_1^\ast\wedge\ldots \wedge e_n^\ast))(e_1\wedge\ldots \wedge e_n)\\
&\quad = {(e_1\wedge\ldots \wedge e_n)}^\ast(e_1\wedge\ldots \wedge e_n)\\
&\quad =1.
\end{split}
\end{equation*}
Hence
$
\lambda^\ast(\Phi)\circ \iota=\iota\circ \lambda(\Phi^\ast)
$
and the commutativity of the diagram is proved. 
}
\end{proof}

Next we consider the  exact sequence ${\bf E}$ of finite-dimensional linear vector spaces, \index{${\bf E}$, exact sequence}
$$
{\bf E}:\quad  0\rightarrow A\xrightarrow{\alpha} B\xrightarrow{\beta} C\xrightarrow{\gamma} D\rightarrow 0.
$$
We recall that, by definition, the sequence is exact at $B$, for example, if  $\im (\alpha)=\ker (\beta)$. We deal with the exact sequence as follows.

We take  a complement $Z\subset B$ of $\alpha (A)$ so that $B=\alpha (A)\oplus Z$, and 
a complement $V\subset C$ of $\beta (B)$ so that  $C=\beta (B)\oplus V$. Then the exact sequence ${\bf E}$ becomes 
$$
{\bf E}:\quad  0\rightarrow A\xrightarrow{\alpha} \alpha (A)\oplus Z \xrightarrow{\beta} \beta (B)\oplus V\xrightarrow{\gamma} D\rightarrow 0.$$
Here the first  nontrivial map is $a\mapsto (\alpha (a), 0)$, the second is $(b, z)\mapsto 
(\beta (z), 0)$, and the third is $(v, c)\mapsto \gamma (v)$. The maps $\alpha\colon A\to \alpha (A)$, $\beta \colon Z\to \beta (Z)$, and $\gamma \colon V\to D$ are isomorphisms. 

From the exact sequence ${\bf E}$ we are going  to construct several natural isomorphisms,  fixing again some  conventions.
The first natural isomorphism is the isomorphism
$$
\Phi_{\bf E}\colon \lambda(A)\otimes\lambda^\ast(D)\rightarrow \lambda(B)\otimes\lambda^\ast(C)\index{$\Phi_{\bf E}$}
$$
constructed as follows.

{
We abbreviate $n=\dim(A)$, $m=\dim(B)$, $k=\dim(C)$, and $l=\dim(D)$.  
}


{
$\Phi_{\bf E}$ maps $0$ to $0$. 
Next we take nonzero vector
$$
h:=(a_1\wedge\ldots \wedge a_n)\otimes (d_1\wedge\ldots \wedge d_l)^\ast\in \lambda (A)\otimes \lambda^\ast (D),$$
where  $a_1,\ldots ,a_n$ is a basis of  $A$ and $d_1,\ldots ,d_l$ is a basis for $D$.  Then we define the basis $b_1, \ldots ,b_n$ of $B$ by $b_i=\alpha (a_i)$ and the basis $c_1,\ldots ,c_l$ of $V$ by $\gamma (c_i)=d_i$, $i=1,\ldots ,l$.  Now we  choose a basis $b_1',\ldots ,b_{m-n}'$ of $Z$ and define the basis $c_1',\ldots ,c'_{m-n}$ of $\beta (B)\subset C$ by $c_i'=\beta (b_i')$.
}

Finally, we define $\Phi_{\bf E}(h)\in\lambda(B)\otimes\lambda^\ast(C)$ by 
\begin{equation}\label{eq_Phi_E}
\begin{split}
&\Phi_{\bf E}((a_1\wedge \ldots \wedge a_n)\otimes (d_1\wedge \ldots \wedge d_l)^\ast)\\
& = (\alpha(a_1)\wedge\ldots \wedge\alpha(a_n)\wedge b_1'\wedge\ldots \wedge b_{m-n}')\otimes (c_1\wedge\ldots \wedge c_l\wedge c_1'\wedge\ldots \wedge c_{m-n}')^\ast.
\end{split}
\end{equation}

The {\bf two conventions}  here are  that 
$b_1',\ldots ,b_{m-n}'$ are listed after the
$\alpha (a_1),\ldots ,\alpha (a_n)$ and
$c_1',\ldots ,c_{m-n}'$ are listed after $c_1,\ldots ,c_l$.
Apart from these two conventions how to list the vectors, the resulting definition does not depend on the choices involved.
\begin{lemma}\label{o6.6}\index{L- Well-definedness of $\Phi_{\bf E}$}
With the above two conventions, the definition of $\Phi_{\bf E}$ does not depend on the choices.
\end{lemma}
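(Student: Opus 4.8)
The statement asserts that the linear map $\Phi_{\bf E}$ defined by \eqref{eq_Phi_E} is independent of all the choices made in its construction, subject only to the two listed ordering conventions. The choices are: (a) the basis $a_1,\dots,a_n$ of $A$; (b) the basis $d_1,\dots,d_l$ of $D$; (c) the complement $Z$ of $\alpha(A)$ in $B$ and its basis $b_1',\dots,b_{m-n}'$; (d) the complement $V$ of $\beta(B)$ in $C$. My plan is to handle these one family at a time, reducing each to an elementary determinant computation in $\lambda$ and $\lambda^\ast$, and exploiting multilinearity so that it suffices to check the effect of a change-of-basis matrix on both sides of \eqref{eq_Phi_E}.

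First I would treat the basis changes (a) and (b), which are the easiest. Suppose we replace $(a_i)$ by $(\tilde a_i)$ with $\tilde a_i=\sum_j g_{ij}a_j$ for an invertible matrix $g=(g_{ij})$. Then on the left side $a_1\wedge\cdots\wedge a_n$ scales by $\det g$, and on the right side both $\alpha(\tilde a_1)\wedge\cdots\wedge\alpha(\tilde a_n)$ scales by $\det g$ (since $\alpha$ is linear and the $b_i'$ are unchanged) and the $\lambda^\ast(C)$ factor is unchanged; similarly the $c_i$ defined via $\gamma(c_i)=d_i$ are unchanged if the $d_i$ are. So both sides scale by the same factor $\det g$, and since $\Phi_{\bf E}$ is defined to be linear, it is well-defined under (a). The argument for (b) is symmetric: replacing $(d_i)$ by $\tilde d_i=\sum_j h_{ij}d_j$ rescales $(d_1\wedge\cdots\wedge d_l)^\ast$ by $(\det h)^{-1}$, and forces $\tilde c_i=\sum_j h_{ij}c_j$ (to keep $\gamma(\tilde c_i)=\tilde d_i$), which rescales the dual $(c_1\wedge\cdots\wedge c_l\wedge c_1'\wedge\cdots\wedge c_{m-n}')^\ast$ — in the first $l$ slots — by $(\det h)^{-1}$, matching. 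Here the ordering convention (that the $c_i$ come before the $c_i'$) is what guarantees the rescaling is by the clean factor $(\det h)^{-1}$ and not some mixed expression.

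The more delicate steps are (c) and (d), the independence of the complements $Z$ and $V$. For (c), suppose $Z$ and $\tilde Z$ are two complements of $\alpha(A)$ in $B$; then there is a linear map $\sigma\colon Z\to\alpha(A)$ with $\tilde Z=\{z+\sigma(z)\mid z\in Z\}$, and a corresponding new basis $\tilde b_i'=b_i'+\sigma(b_i')$. In $\lambda(B)$ we have $\alpha(a_1)\wedge\cdots\wedge\alpha(a_n)\wedge\tilde b_1'\wedge\cdots\wedge\tilde b_{m-n}' = \alpha(a_1)\wedge\cdots\wedge\alpha(a_n)\wedge b_1'\wedge\cdots\wedge b_{m-n}'$, because each $\sigma(b_i')\in\alpha(A)=\mathrm{span}(\alpha(a_1),\dots,\alpha(a_n))$ and wedging a vector from the span of earlier factors gives zero. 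On the $C$-side, $c_i'=\beta(b_i')$ but also $\beta(\tilde b_i')=\beta(b_i')+\beta(\sigma(b_i'))=\beta(b_i')=c_i'$ since $\sigma(b_i')\in\alpha(A)=\ker\beta$; so the $c_i'$ do not change at all. Hence the right side of \eqref{eq_Phi_E} is literally unchanged. One also has to allow the auxiliary basis of $Z$ itself to change; but that is a basis change internal to $Z$, which rescales $b_1'\wedge\cdots\wedge b_{m-n}'$ by some $\det$ and rescales $c_1'\wedge\cdots\wedge c_{m-n}'$ by the \emph{same} $\det$ (since $\beta\colon Z\to\beta(B)$ is an isomorphism carrying $b_i'$ to $c_i'$), and these two rescalings cancel between $\lambda(B)$ and $\lambda^\ast(C)$. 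For (d), replacing $V$ by $\tilde V=\{v+\rho(v)\mid v\in V\}$ with $\rho\colon V\to\beta(B)$: the new $\tilde c_i$ satisfy $\gamma(\tilde c_i)=d_i$ with $\tilde c_i = c_i+\rho(c_i)$, and in $\lambda^\ast(C)$ we compute $(\tilde c_1\wedge\cdots\wedge\tilde c_l\wedge c_1'\wedge\cdots\wedge c_{m-n}')^\ast$; expanding, each $\rho(c_i)$ lies in $\beta(B)=\mathrm{span}(c_1',\dots,c_{m-n}')$, which appear later in the wedge, so again $\tilde c_1\wedge\cdots\wedge\tilde c_l\wedge c_1'\wedge\cdots\wedge c_{m-n}' = c_1\wedge\cdots\wedge c_l\wedge c_1'\wedge\cdots\wedge c_{m-n}'$ in $\lambda(C)$, hence the same dual. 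So the right side is unchanged. I expect the main obstacle to be nothing deep but rather the bookkeeping: one must be scrupulous that in each of (c) and (d) the "correction term" lands in the span of factors that occur \emph{later} in the ordered wedge product (so it is killed), which is exactly where the two ordering conventions are used, and one must separately dispatch the internal-basis rescaling in $Z$ and verify the cancellation between the $\lambda$ and $\lambda^\ast$ factors. Assembling these four verifications — and noting they are mutually independent so can be composed — gives the claim; I would present it as four short lemmatic paragraphs followed by a one-line synthesis.
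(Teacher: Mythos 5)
Your proof is correct and follows essentially the same route as the paper's: handle the basis changes in $A$, $D$, and within the complement by showing the two sides of \eqref{eq_Phi_E} rescale compatibly, and handle the change of complements by observing that the correction terms lie in $\alpha(A)=\ker\beta$ (resp.\ in $\beta(B)$), hence are killed by the wedge product. One small misattribution: the vanishing of those correction terms is automatic for any wedge containing a linearly dependent family and does not depend on the listing order; the ordering conventions matter rather for fixing the signs and ensuring the rescaling factors in your steps (a), (b), and the internal-basis change of $Z$ match cleanly.
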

The  proof is carried out  in Appendix \ref{oo6.6}.


{
Associated with the exact sequence ${\bf E}$ there exists also a second natural isomorphism\index{$\Psi_{\bf E}$}
\begin{equation}\label{secon_iso}
\Psi_{\bf E}\colon  \lambda(C)\otimes\lambda(A)\otimes\lambda^\ast(D)\rightarrow \lambda(B)
\end{equation}
constructed as follows.
}

{
We first map  
$(c_1\wedge\ldots \wedge c_k)\otimes (a_1\wedge\ldots \wedge a_n)\otimes(d_1\wedge\ldots \wedge d_l)^\ast$ into
$(c_1\wedge\ldots \wedge c_k)\otimes\Phi_{\bf E}((a_1\wedge\ldots \wedge a_n)\otimes(d_1\wedge\ldots \wedge d_l)^\ast)$ which belongs to  $\lambda(C)\otimes\lambda(B)\otimes\lambda^\ast(C)$
and then we compose this map  with the  isomorphism
$$
\bar{\iota}\colon \lambda(C)\otimes\lambda(B)\otimes\lambda^\ast(C)\to \lambda (B), $$
defined by $v\otimes b\otimes v^\ast\rightarrow b.$ Since $\Phi_{\bf E}$ is well-defined, so is $\Psi_{\bf E}$. For convenience we present a more explicit formula for  $ \Psi_{\bf E}$, using the notations of $\Phi_{\bf E}$.
\begin{proposition}\label{oger}\index{P- Well-definedness of $\Psi_{\bf E}$}
We fix the basis  $a_1,\ldots ,a_n$ of $A$ and 
the basis $d_1,\ldots ,d_l$ of  $D$ and abbreviate their wedge products by $a$ and by $d$. In the complement of $\beta(B)\subset C$ we have the basis  $c_1,\ldots ,c_l$ defined by $\gamma(c_i)=d_i$.
The vectors  $b_1,\ldots ,b_n\in B$ defined by
$b_i=\alpha(a_i)$ are a basis of  of $\alpha (A)\subset B$, and in the complement of $\alpha(A)$ in $B$ we choose the basis  a $b_1',\ldots ,b_{m-n}'$ and define the basis $c_1',\ldots ,c_{m-n}'$ of $\beta (B)\subset C$ by $c_i'=\beta (b_i')$. Abbreviating 
$c=c_1'\wedge\ldots \wedge c_l'\wedge c_1\wedge\ldots \wedge c_{m-n}$, we obtain the formula
$$
\Psi_{\bf E}(c\otimes a\otimes d^\ast)= b_1\wedge\ldots \wedge b_n\wedge b_1'\wedge\ldots \wedge b_{m-n}'\in \lambda (B).
$$
\end{proposition}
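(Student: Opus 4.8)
The plan is to unwind the definition of $\Psi_{\bf E}$ given just above the statement and combine it with the explicit formula \eqref{eq_Phi_E} for $\Phi_{\bf E}$ established in Lemma \ref{o6.6}. First I would recall the factorization $\Psi_{\bf E} = \bar\iota \circ (\mathrm{id}_{\lambda(C)} \otimes \Phi_{\bf E})$, where $\bar\iota\colon \lambda(C)\otimes\lambda(B)\otimes\lambda^\ast(C)\to\lambda(B)$ is the contraction $v\otimes b\otimes v^\ast\mapsto b$. Since both $\bar\iota$ and $\Phi_{\bf E}$ are already known to be well-defined (the latter by Lemma \ref{o6.6}, which is proved in Appendix \ref{oo6.6}), the composite $\Psi_{\bf E}$ is well-defined, so the only real content of the Proposition is to verify the displayed formula for a particular choice of bases.

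The key computation proceeds as follows. With the bases fixed as in the statement — $a = a_1\wedge\cdots\wedge a_n$ for $A$, $d = d_1\wedge\cdots\wedge d_l$ for $D$, the $c_i\in V$ with $\gamma(c_i)=d_i$, the $b_i=\alpha(a_i)$, a chosen basis $b_1',\ldots,b_{m-n}'$ of $Z$, and $c_i'=\beta(b_i')$ — I would apply the definition of $\Phi_{\bf E}$ from \eqref{eq_Phi_E} to get
$$
\Phi_{\bf E}(a\otimes d^\ast) = (b_1\wedge\cdots\wedge b_n\wedge b_1'\wedge\cdots\wedge b_{m-n}')\otimes(c_1\wedge\cdots\wedge c_l\wedge c_1'\wedge\cdots\wedge c_{m-n}')^\ast \in \lambda(B)\otimes\lambda^\ast(C).
$$
Now, writing $c := c_1'\wedge\cdots\wedge c_l'\wedge c_1\wedge\cdots\wedge c_{m-n}$ as in the statement (note the ordering matches that of the one appearing in $\Phi_{\bf E}$ up to reindexing of the chosen bases of $V$ and $\beta(B)$, which is harmless since $\Phi_{\bf E}$ is independent of these choices), one tensors on the left with $c\in\lambda(C)$ and applies $\bar\iota$. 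Since $c$ and the covector $(c_1\wedge\cdots\wedge c_l\wedge c_1'\wedge\cdots\wedge c_{m-n}')^\ast$ pair to a nonzero scalar $\mu$ (both are top-degree elements of $\lambda(C)$, resp. $\lambda^\ast(C)$), and $\bar\iota$ is precisely the map that contracts the $\lambda(C)$-factor against the $\lambda^\ast(C)$-factor, the contraction yields $\mu\cdot(b_1\wedge\cdots\wedge b_n\wedge b_1'\wedge\cdots\wedge b_{m-n}')$; the normalization built into $\bar\iota$ (contraction of a vector against its own dual covector gives $1$) forces $\mu=1$ once one uses the same basis of $C$ in both slots, giving the claimed formula. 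The only bookkeeping point to state carefully is that $c$ as defined in the Proposition is exactly the $\lambda(C)$-element dual to the covector produced by $\Phi_{\bf E}$, after matching the arbitrary basis choices; this is where one invokes Lemma \ref{o6.6} to say the answer does not depend on those choices.

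I do not expect a serious obstacle here: the statement is essentially a repackaging of the already-proved Lemma \ref{o6.6} together with the definition of the contraction $\bar\iota$, and the "proof" amounts to chasing the two conventions (the placements of $b_1',\ldots,b_{m-n}'$ after $\alpha(a_1),\ldots,\alpha(a_n)$ and of $c_1',\ldots,c_{m-n}'$ relative to $c_1,\ldots,c_l$) through the composition. The mildly delicate step is simply being consistent about the ordering of wedge factors in $c$ versus in the $\lambda^\ast(C)$-output of $\Phi_{\bf E}$, so that the pairing in $\bar\iota$ evaluates to $+1$ rather than to a sign; I would handle this by choosing, for the purposes of the verification, the \emph{same} ordered basis $c_1',\ldots,c_{m-n}',c_1,\ldots,c_l$ of $C$ in both the $\lambda(C)$ slot and the $\lambda^\ast(C)$ slot, which is legitimate precisely because $\Phi_{\bf E}$ is basis-independent. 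With that choice the pairing is manifestly $1$ and the formula drops out.
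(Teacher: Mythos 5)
Your proposal is correct and follows the same route as the paper: write $\Psi_{\bf E}=\bar\iota\circ(\mathrm{id}_{\lambda(C)}\otimes\Phi_{\bf E})$, apply the explicit formula \eqref{eq_Phi_E} for $\Phi_{\bf E}(a\otimes d^\ast)$, and then observe that $\bar\iota$ contracts the $\lambda(C)$-factor $c$ against the $\lambda^\ast(C)$-factor produced by $\Phi_{\bf E}$ to give the pure $\lambda(B)$ element $b_1\wedge\cdots\wedge b_n\wedge b_1'\wedge\cdots\wedge b_{m-n}'$.

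One small correction to your bookkeeping remark. You attribute the harmlessness of the apparent ordering mismatch between the Proposition's $c$ and the $\lambda^\ast(C)$ output of $\Phi_{\bf E}$ to the basis-independence of $\Phi_{\bf E}$ (Lemma \ref{o6.6}); that lemma is about independence of which vectors you pick for the complements, not about the fixed ordering convention in \eqref{eq_Phi_E}, and so it cannot absorb a potential sign $(-1)^{l(m-n)}$ coming from reordering wedge factors. The real reason there is no sign is that the Proposition's expression $c=c_1'\wedge\cdots\wedge c_l'\wedge c_1\wedge\cdots\wedge c_{m-n}$ implicitly uses the opposite primed/unprimed naming from the sentence preceding it (note the index ranges $c_l'$ and $c_{m-n}$ are only consistent with $c_i'$ satisfying $\gamma(c_i')=d_i$, $i=1,\ldots,l$, and $c_i\in\beta(B)$, $i=1,\ldots,m-n$); the paper's own proof spells this out by declaring $c_i\in\beta(B)$ and $\gamma(c_i')=d_i$. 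With that reading, $c$ is exactly dual to the $\lambda^\ast(C)$-factor of $\Phi_{\bf E}(a\otimes d^\ast)$ and the contraction evaluates to $1$ with no sign to track, which is what your calculation is really using.
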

}

{
\begin{proof}
We already know from Lemma \ref{o6.6} that $\Psi_{\bf E}$ is well-defined since $\Phi_{\bf E}$ is well-defined. By construction, 
$$
\Phi_{\bf E}(a\otimes d^\ast) = (b_1\wedge\ldots \wedge b_n\wedge b_1'\wedge\ldots \wedge b_{m-n}')\otimes (c_1'\wedge\ldots \wedge c'_l\wedge c_1\wedge\ldots \wedge c_{m-n})^\ast.
$$
Here $c_i=\beta(b_i)$ and $\gamma(c_i') =d_i$. Then,  abbreviating the wedge product of the vectors $b_1,\ldots b_n$, 
$b_1', \ldots, b_{n-m}'$ by $b$, we obtain 
\begin{equation*}
\Psi_{\bf E}(c\otimes a\otimes d^\ast)
=\bar{\iota}(c\otimes b\otimes c^\ast)
=b.
\end{equation*}
\end{proof}
}

\subsection{The Determinant of a Fredholm Operator}

For the determinants of sc-Fredholm operator later on we need some of the classical theory of determinants which can be found, for example,  in \cite{Zinger}.
\begin{definition}\label{def_determinant_1}\index{D- Determinant}\index{$\text{det}(T)$}
The {\bf determinant} $\text{det}(T)$ of a bounded linear Fredholm operator $T\colon E\to F$ between real Banach spaces is the $1$-dimensional real vector space defined by
$$
\text{det}(T)= \lambda( \ker(T))\otimes \lambda^\ast( \text{coker(T)}).
$$
\end{definition}
An alternative definition used by some authors is  $\text{det} (T)=\lambda(\ker(T))\otimes\lambda(\text{coker}(T)^\ast)$.
The two definitions are naturally isomorphic given a convention how to identify $\lambda^\ast(A)$ and $\lambda(A^\ast)$. 
\begin{definition}
An {\bf orientation}\index{D- Orientation of $T$} of an Fredholm operator $T\colon E\rightarrow F$ is an orientation of the line $\det(T)$.
\end{definition}

We begin this subsection by deriving exact sequences associated to Fredholm operators.  
\begin{definition}
Let $T\colon E\rightarrow F$ be a Fredholm operator between two Banach spaces. A {\bf good left-projection}\index{D- Good left-projection} for $T$ is a bounded projection $P\colon F\rightarrow F$ having the following two properties.
\begin{itemize}
\item[(1)] $\dim(F/R(P))<\infty$.
\item[(2)] $R(P\circ T)=R(P)$.
\end{itemize}
By $\Pi_T$\index{$\Pi_T$} we denote the collection of all good left-projections for $T$.
\end{definition}

In view of (1) the projection $P$ satisfies $\dim \text{coker} (P)=\dim \ker (P)<\infty$. Hence $P$ is a Fredholm operator of index $0$. Since $T$ is Fredholm, the composition $P\circ T$ is Fredholm and $\text{ind}(P\circ T)=\text{ind} (P)+\text{ind} (T)=\text{ind} (T).$

The set $\Pi_T$ of projections  possesses a partial ordering $\leq $ defined by
$$\text{$P\leq Q$ \quad if and only if \quad $P=PQ=QP$}.$$

For  $P$ and $P'$ belonging to $\Pi_T$,   the intersection $R(P)\cap R(P')$  has a finite codimension in $F$. 
\begin{lemma}\label{nd1}
Given $P,P'\in\Pi_T$,  there exists a projection  $P''\in\Pi_T$ satisfying $P''\leq P$ and $P''\leq P'$.
\end{lemma}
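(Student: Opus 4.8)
\textbf{Proof plan for Lemma \ref{nd1}.}

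The plan is to build the common lower bound $P''$ by an explicit construction on the cokernel side, i.e. by producing a subspace $R(P'')$ of $F$ that sits inside $R(P)\cap R(P')$, has finite codimension, and satisfies the crucial condition $R(P''\circ T)=R(P'')$. First I would record the basic finiteness: since $P,P'\in\Pi_T$ we have $\dim(F/R(P))<\infty$ and $\dim(F/R(P'))<\infty$, hence $R_0:=R(P)\cap R(P')$ has finite codimension in $F$. The naive guess ``take $P''$ to be a projection onto $R_0$'' may fail condition (2), because $R(P''\circ T)$ need not equal $R_0$; it is only guaranteed to be a subspace of $R_0$ of possibly strictly smaller, but still finite, codimension. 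So the actual construction has to correct for this.

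The key step is to let $Z:=R((P''_0)\circ T)$ where $P''_0$ is any bounded projection of $F$ onto $R_0$; this $Z$ is a closed subspace of $R_0\subset R(P)\cap R(P')$ of finite codimension in $F$ (finite codimension in $R_0$ because $P''_0\circ T$ is Fredholm of index $\operatorname{ind}(T)+$ a finite number, so its range is closed of finite codimension). Then I would take $P''$ to be a bounded projection of $F$ onto $Z$, chosen so that $P''\le P$ and $P''\le P'$ in the partial order; concretely, since $Z\subset R(P)$ and $R(P)$ is complemented, one can pick a complement of $Z$ inside $F$ that contains the kernels of both $P$ and $P'$ (using $\ker P,\ker P'$ are finite-dimensional and $Z\cap(\ker P+\ker P')=\{0\}$ after possibly shrinking $Z$ by a further finite-codimensional step), which forces $P''=P''P=PP''$ and likewise for $P'$. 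Finally I must check $P''\in\Pi_T$: condition (1) is clear since $Z$ has finite codimension; for condition (2), $R(P''\circ T)\subseteq Z=R(P'')$ is automatic, and for the reverse inclusion one uses that $Z$ was defined precisely as the range of a projection composed with $T$, together with the compatibility $P''\le P''_0$ forcing $P''\circ T$ and $P''_0\circ T$ to have the same range after composing with $P''$. I would verify $R(P'')=R(P''\circ P''_0\circ T)=P''(Z)=Z$.

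The main obstacle I anticipate is the simultaneous compatibility: arranging that the single projection $P''$ lies \emph{below both} $P$ and $P'$ in the ordering $P''=P''P=PP''$ and $P''=P''P'=P'P''$, while at the same time having $R(P'')$ be a range of the form $R(P''\circ T)$. These two demands pull in different directions — the ordering constrains $\ker P''$ (it must contain $\ker P$ and $\ker P'$), whereas condition (2) constrains $R(P'')$ — so the construction must interleave a finite-codimensional shrinking of the range with an enlargement of the kernel, checking at each stage that the intersection of the shrunk range with the enlarged kernel stays trivial. This is a finite-dimensional linear-algebra bookkeeping argument, but it needs to be done carefully; once the subspace and its complement are fixed, the projection $P''$ and the verification of (1), (2), and $P''\le P$, $P''\le P'$ are routine.
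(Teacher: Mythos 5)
Your plan is not wrong in spirit, but it is over-engineered, and the over-engineering is what creates the ``interleaving'' problem you worry about at the end and never resolve. The observation you are missing is that condition (2) is \emph{automatic} for any $P''$ with $P''\leq P$ and $P\in\Pi_T$: from $P''=P''P$ one gets $P''T=P''PT$; since $R(PT)=R(P)$, $R(P'')\subseteq R(P)$, and $P''$ restricted to $R(P'')$ is the identity, one computes $R(P''T)=P''(R(PT))=P''(R(P))=R(P'')$. So condition (2) is not in tension with the order relations --- it is a \emph{consequence} of them. This eliminates the need for the intermediate $Z=R(P''_0\circ T)$, which in any case is problematic as an auxiliary object: $Z$ is tied to the particular $P''_0$, and once you change $P''_0$ to the final $P''$, $R(P''T)$ moves with it and the role of $Z$ evaporates. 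Carrying $Z$ through the argument is exactly what makes the requirements appear to pull in different directions.

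Once (2) is known to be free, the construction collapses to a single step. Set $V:=\ker P+\ker P'$ (finite-dimensional) and $R_0:=R(P)\cap R(P')$. Take $R(P'')$ to be a closed complement of the finite-dimensional space $R_0\cap V$ inside $R_0$; then $R(P'')\cap V=\{0\}$ and $R(P'')$ is still of finite codimension in $F$. Choose $\ker P''$ to be a closed complement of $R(P'')$ in $F$ that contains $V$ (possible since $R(P'')\cap V=\{0\}$ and $V+R(P'')$ has finite codimension, hence is complemented). Then $R(P'')\subseteq R(P)\cap R(P')$ and $\ker P,\ker P'\subseteq V\subseteq\ker P''$, so $P''\leq P$, $P''\leq P'$, and (2) follows as above. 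No iteration. I would also note, for comparison, that the paper's own argument takes $R(P'')=R_0$ without any shrink and deduces $P''\leq P$, $P''\leq P'$ from the decomposition $F=Z\oplus X\oplus X'\oplus R_0$; that deduction implicitly requires $\ker P$ and $\ker P'$ to lie in $Z\oplus X\oplus X'=\ker P''$, which is not always achievable once $R_0\cap V\neq\{0\}$ (for instance when $R(P)=R(P')$ but $\ker P\neq\ker P'$). Your sense that the range may need to be cut down is therefore sound and more careful than the paper's treatment; the gap in your plan is the misattribution of the difficulty to condition (2) rather than to the kernel inclusion, and the resulting unfinished iterative scheme.
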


{
\begin{proof}
Let $X$ be a topological complement of 
$R(P)\cap R(P')$ in $R(P)$ and let $X'$ be a topological complement of $R(P)\cap R(P')$ in $R(P')$
so 
that 
\begin{equation}\label{eq_intersection}
\text{$X\oplus \bigl(R(P)\cap R(P')\bigr)= R(P)$\quad and\quad $X'\oplus \bigl(R(P)\cap R(P')\bigr)=R(P')$.}
\end{equation}
Then  $X\cap X'=\{0\}$. Indeed,  if $z\in X\cap X'$, then $z\in R(P)\cap R(P')$  and it follows from \eqref{eq_intersection} that $z=0$.  For the subspace 
$X\oplus X'\oplus \bigl(R(P)\cap R(P')\bigr)$ we choose  a topological complement $Z$ in $F$
so that 
$$
F= Z\oplus X\oplus X'\oplus \bigl(R(P)\cap R(P')\bigr).
$$
Let $P''\colon F\to F$ be the projection onto $R(P)\cap R(P')$ along $Z\oplus X\oplus X'$.
Then 
$$
w=P''(z+x+x'+w)=P''(x+w)=P''P(z+x+x'+w)=PP''(z+x+x'+w).
$$
Consequently, $P''=PP''=P''P$. 
Similarly,  $P''=P'P''=P''P$. Hence,  $P''\leq P$ and $P''\leq P'$ and, in view of $R(PT)=R(P)$, we also have  $R(P''T)=R(P'')$.
\end{proof}
}

{
Associated with  the projection $P\in\Pi_T$ there is the exact sequence\index{${\bf E}_{(T,P)}$, exact sequence}
$$
{\bf E}_{(T,P)}:\quad 0\rightarrow\ker(T)\xrightarrow{j_T^P} \ker(PT) \xrightarrow{\Phi^P_T} F/R(P)\xrightarrow{\pi_T^P}\text{coker}(T)\rightarrow 0,
$$
where $j_T^P$\index{${\bf E}_{(T,P)}$} is the inclusion map, $\pi_T^P$ is  defined by $\pi_T^P(f+R(P))= (I-P)f+R(T)$ for $f\in F$,  and 
$$
\Phi^P_T(x)=T(x)+R(P)
$$
for $x\in \ker (PT)$.
\begin{lemma}
The sequence ${\bf E}_{(T,P)}$ is exact.
\end{lemma}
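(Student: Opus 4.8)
The statement to verify is the exactness of the four-term sequence
$$
{\bf E}_{(T,P)}:\quad 0\rightarrow\ker(T)\xrightarrow{j_T^P} \ker(PT) \xrightarrow{\Phi^P_T} F/R(P)\xrightarrow{\pi_T^P}\text{coker}(T)\rightarrow 0.
$$
This is a routine but slightly bookkeeping-heavy diagram chase: we must check that each of the four maps is well-defined, then verify exactness at each of the four spots (injectivity of $j_T^P$, exactness at $\ker(PT)$, exactness at $F/R(P)$, surjectivity of $\pi_T^P$). The only structural input we need is that $P\in\Pi_T$, i.e.\ $R(PT)=R(P)$ and $\dim(F/R(P))<\infty$, together with the fact that $T$ is Fredholm.

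First I would record the well-definedness checks. The inclusion $j_T^P$ makes sense because $Tx=0$ implies $PTx=0$, so $\ker(T)\subset\ker(PT)$. For $\Phi^P_T$, the formula $\Phi^P_T(x)=Tx+R(P)$ is clearly linear in $x$, so there is nothing to check beyond linearity (its image does land in $F/R(P)$ by definition). For $\pi_T^P$, I would check that $f+R(P)\mapsto (I-P)f+R(T)$ is independent of the representative: if $f-f'\in R(P)$, say $f-f'=Pg$, then $(I-P)(f-f')=(I-P)Pg=0$, so the value is unchanged; linearity is immediate.

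Next, exactness at the four spots. Injectivity of $j_T^P$ is trivial. For exactness at $\ker(PT)$: if $x\in\ker(T)$ then $\Phi^P_T(x)=Tx+R(P)=R(P)=0$, so $\operatorname{im}(j_T^P)\subset\ker(\Phi^P_T)$; conversely if $x\in\ker(PT)$ and $\Phi^P_T(x)=0$, then $Tx\in R(P)$, but also $PTx=0$, and since $P$ restricted to $R(P)$ is the identity we get $Tx=P(Tx)=0$, hence $x\in\ker(T)$. For exactness at $F/R(P)$: given $x\in\ker(PT)$, we have $PTx=0$, so $(I-P)(Tx)=Tx$, and thus $\pi_T^P(\Phi^P_T(x))=\pi_T^P(Tx+R(P))=(I-P)(Tx)+R(T)=Tx+R(T)=0$; hence $\operatorname{im}(\Phi^P_T)\subset\ker(\pi_T^P)$. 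Conversely, suppose $f+R(P)\in\ker(\pi_T^P)$, i.e.\ $(I-P)f\in R(T)$, say $(I-P)f=Tx$ for some $x\in E$. Then $P(Tx)=P(I-P)f=0$, so $x\in\ker(PT)$, and $\Phi^P_T(x)=Tx+R(P)=(I-P)f+R(P)=f+R(P)$, since $Pf\in R(P)$. Finally, surjectivity of $\pi_T^P$: given $g+R(T)\in\operatorname{coker}(T)$, the element $g+R(P)\in F/R(P)$ maps to $(I-P)g+R(T)=g-Pg+R(T)$, and I would need $Pg\in R(T)$; this follows because $R(P)=R(PT)\subset R(T)$, so $Pg\in R(P)\subset R(T)$ and therefore $(I-P)g+R(T)=g+R(T)$. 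This establishes exactness throughout, and $\Phi^P_T$ automatically lands in the finite-dimensional space $F/R(P)$, consistent with all four spaces being finite-dimensional.

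\textbf{Main obstacle.} There is no genuine obstacle here — the proof is a diagram chase — but the one point requiring a moment of care is the repeated use of the identity $R(P)=R(PT)$ (the defining property (2) of a good left-projection), which is exactly what makes both the exactness at $F/R(P)$ and the surjectivity of $\pi_T^P$ work; without it the sequence would fail to be exact at the cokernel end. I would make sure to invoke it explicitly at those two steps. The finite-dimensionality of $F/R(P)$ from property (1) is only needed to know the sequence is a sequence of finite-dimensional spaces (so that the later determinant constructions via $\Phi_{\bf E}$ apply), not for exactness itself.
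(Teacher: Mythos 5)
Your proof is correct up through exactness at $F/R(P)$, and you follow essentially the same route as the paper — but there is a genuine gap in your argument for surjectivity of $\pi_T^P$, which rests on the false claim that $R(P)=R(PT)\subset R(T)$.

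That inclusion does not follow from $P\in\Pi_T$. The set $R(PT)=P(R(T))$ is the image of $R(T)$ under $P$, which lands in $R(T)$ only if $P$ happens to preserve $R(T)$ — and that is not part of the hypotheses. A concrete counterexample: in $E=F=\mathbb{R}^2$, let $T$ be the projection onto $\mathrm{span}(e_1)$ and let $P$ be the projection onto $\mathrm{span}(e_1+e_2)$ along $\mathrm{span}(e_1-e_2)$. Then $R(T)=\mathrm{span}(e_1)$, $R(P)=\mathrm{span}(e_1+e_2)$, and one checks $R(PT)=\mathrm{span}(e_1+e_2)=R(P)$, so $P\in\Pi_T$; yet $R(P)\not\subset R(T)$. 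Consequently the element you propose, $g+R(P)$, need not map to $g+R(T)$, since $Pg$ need not lie in $R(T)$.

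The fix is the paper's: you cannot expect $g+R(P)$ itself to be a preimage; you must first correct $g$ by an element of $R(T)$. Given $g+R(T)$, use $R(PT)=R(P)$ to choose $x\in E$ with $PT(x)=Pg$, and take $f=g-T(x)$. Then
$$
(I-P)f=(I-P)(g-T(x))=g-Pg-T(x)+PT(x)=g-T(x),
$$
so $\pi_T^P(f+R(P))=(g-T(x))+R(T)=g+R(T)$, since $T(x)\in R(T)$. Everything else in your proof — the well-definedness checks (which the paper leaves implicit), injectivity of $j_T^P$, exactness at $\ker(PT)$, and exactness at $F/R(P)$ including the observation that the constructed $x$ lies in $\ker(PT)$ — is correct and matches the paper's reasoning.
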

}
\begin{proof}

{
We follow the proof in  \cite{HWZ5}.
The inclusion $j_T^P$ is injective and $\Phi^P_T\circ j_T^P=0$. If $\Phi^P_T(x)=0$,  then $T(x)\in R(P)$ implying $(I-P)T(x)=0$.
Since $x\in \ker(PT)$ we conclude $T(x)=0$. This proves exactness at $\ker(PT)$. Assuming  $x\in \ker(PT)$,  hence 
$PT(x)=0$, we compute
\begin{equation*}
\begin{split}
\pi_T^P\circ \Phi^P_T(x)&=\pi_T^P(T(x)+R(P))\\
&=\pi_T^P(T(x)+R(PT))\\
&=T(x)+R(T)\\
&=R(T),
\end{split}
\end{equation*}
i.e. the composition $\pi_T^P\circ \Phi^P_T$ vanishes. If $\pi_T^P(y+R(P))=0$, hence $(I-P)y\in R(T)$, there exists 
$x\in E$ solving  $T(x)=(I-P)y$. Consequently, 
$$
\Phi_T^P(x) = T(x) + R(P) = (I-P)y+R(P)= y+R(P),
$$
proving the  exactness at $F/R(P)$. Finally we show the surjectivity of the map $\pi_T^P$.
Given $f+R(T)\in \text{coker}(T)$, we  choose  $x\in E$ satisfying  $PT(x)=Pf$ and  compute,
$$
\pi_T^P(f-T(x)+R(P))= (I-P)(f-T(x)) +R(T)= f +R(T),
$$
which finishes the proof of the exactness.
}
\end{proof}

{
Starting with the  Fredholm operator $T\colon E\rightarrow F$ and the projection $P\in\Pi_T$ we obtain from  the exact sequence
$$
{\bf E}_{(T,P)}:\quad 0\rightarrow\ker(T)\xrightarrow{j_T^P} \ker(PT) \xrightarrow{\Phi^P_T} F/R(P)\xrightarrow{\pi_T^P}\text{coker}(T)\rightarrow 0,
$$
recalling $R(P)=R(PT)$, \index{${\bf E}_{(T,P)}$, exact sequence}
the isomorphism
$$
\Phi_{{\bf E}_{(T,P)}}\colon\lambda(\ker(T))\otimes\lambda^\ast(\text{coker}(T))\rightarrow \lambda(\ker(PT))\otimes \lambda^\ast(\text{coker}(PT)).
$$
By  definition of the determinant,  this means that
$$
\Phi_{{\bf E}_{(T,P)}}\colon  \det(T)\rightarrow \det(PT)
$$
is an isomorphism.
We rename this isomorphism for the further discussion and,  setting $\Phi_{{\bf E}_{(T,P)}}=\gamma_T^P$, we have the isomorphism
$$
\gamma_T^P\colon \det(T)\rightarrow\det(PT).\index{$\gamma^P_T$}
$$
}

{
We consider the Fredholm operator $T\colon E\rightarrow F$ and the projections $P,Q\in \Pi_T$ satisfying $P\leq Q$, so that  $P=PQ=QP$. Then, as we have already seen, the composition  $QT$ is Fredholm. Moreover, 
$P\in \Pi_{QT}$. Therefore, the associated exact sequences ${\bf E}_{(T,P)}$, ${\bf E}_{(T,Q)}$, and
${\bf E}_{(QT,P)}$ produce the following isomorphisms,
\begin{align*}
\gamma^P_T\colon&\det(T)\rightarrow\det(PT),\\ \gamma^Q_T\colon&\det(T)\rightarrow\det(QT),\\ \gamma^P_{QT}\colon&\det(QT)\rightarrow\det(PT).
\end{align*}
\index{$\gamma^P_T:\det(T)\rightarrow\det(PT)$}\index{$\gamma^Q_T:\det(T)\rightarrow\det(QT)$}
\index{$\gamma^P_{QT}:\det(QT)\rightarrow\det(PT)$}
The crucial observation is the following result, whose proof is postponed to Appendix \ref{oooo6.6}.
\begin{proposition}\label{ooo6.6}
If  $T\colon E\rightarrow F$ is a Fredholm operator and $Q,P\in\Pi_T$ satisfy $P\leq Q$, 
then $QT\colon E\rightarrow F$ is a Fredholm operator and $P\in\Pi_{QT}$. Moreover, 
$$
\gamma^P_{QT}\circ \gamma^Q_T=\gamma^P_T.
$$
\end{proposition}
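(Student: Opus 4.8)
\textbf{Proof plan for Proposition \ref{ooo6.6}.}

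The plan is to reduce the identity $\gamma^P_{QT}\circ\gamma^Q_T=\gamma^P_T$ to a purely linear-algebraic statement about the three exact sequences ${\bf E}_{(T,Q)}$, ${\bf E}_{(QT,P)}$, and ${\bf E}_{(T,P)}$ and the way the isomorphisms $\Phi_{\bf E}$ behave under ``splicing'' of exact sequences. First I would record the elementary facts: since $P\le Q$ we have $P=PQ=QP$, so $R(P)\subset R(Q)$, $\ker Q\subset\ker P$, and $R(PT)=R(PQT)=R(P(QT))$; hence $P\in\Pi_{QT}$ and $QT$ is Fredholm of the same index as $T$, which makes all three sequences well-defined. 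The key structural observation is that the four-term sequence ${\bf E}_{(T,P)}$ factors through the intermediate space $\ker(QT)$ and $F/R(Q)$: concretely, the inclusion $\ker T\hookrightarrow\ker PT$ factors as $\ker T\hookrightarrow\ker QT\hookrightarrow\ker PT$, and the quotient map $F/R(P)\twoheadrightarrow\mathrm{coker}\,T$ factors as $F/R(P)\twoheadrightarrow F/R(Q)\twoheadrightarrow\mathrm{coker}\,T$, while the middle map $\Phi^P_T\colon\ker PT\to F/R(P)$ is compatible with $\Phi^Q_T$ and $\Phi^P_{QT}$ in the obvious way (all are induced by $x\mapsto Tx$).

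With this factorization in hand, the next step is to choose adapted bases and simply compute both sides using the explicit formula \eqref{eq_Phi_E} for $\Phi_{\bf E}$, being careful about the two ordering conventions fixed before Lemma \ref{o6.6}. I would fix a basis $a_1,\dots,a_n$ of $\ker T$; extend it inside $\ker QT$ by vectors $a'_1,\dots,a'_p$ so that $a_1,\dots,a_n,a'_1,\dots,a'_p$ is a basis of $\ker QT$; then extend further inside $\ker PT$ by $a''_1,\dots,a''_q$ to a basis of $\ker PT$. Dually, I would fix a basis $d_1,\dots,d_l$ of $\mathrm{coker}\,T$, lift it to a complement of $R(Q)$ in $F$ by $c_1,\dots,c_l$, choose a complement basis $c'_1,\dots,c'_{p'}$ of $R(Q)$ modulo $R(P)$ (i.e.\ representing $R(Q)/R(P)$), and a basis $c''_1,\dots,c''_{q'}$ of $R(P)$ in the relevant directions needed by $\Phi^P_{QT}$ and $\Phi^P_T$. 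Then $\gamma^Q_T$ sends $(a_1\wedge\cdots\wedge a_n)\otimes d^\ast$ to $(Ta'_1\wedge\cdots)\otimes(\cdots)^\ast$ following \eqref{eq_Phi_E}, applying $\gamma^P_{QT}$ inserts the $R(P)$-directions, and one checks this equals the single application of $\gamma^P_T$ because: (i) the wedge of $\ker$-vectors produced is $a_1\wedge\cdots\wedge a_n\wedge(\text{new }\ker QT\text{ directions})\wedge(\text{new }\ker PT\text{ directions})$, which by the ``list-after'' convention is exactly the one $\gamma^P_T$ produces in one step; (ii) the dual cofactor is the wedge of $c$'s and $c'$'s and $c''$'s in the order $\gamma^P_T$ prescribes, because $R(Q)\supset R(P)$ nesting makes the two-step ordering agree with the one-step ordering. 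The maps $\Phi^P_T(x)=Tx+R(P)$, $\Phi^Q_T(x)=Tx+R(Q)$, $\Phi^P_{QT}(x)=QTx+R(P)$ are all represented by the same matrix of $T$ in these adapted bases, so the sign/scalar bookkeeping matches.

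The main obstacle is precisely this sign bookkeeping: keeping the two ordering conventions consistent across the two-step composition versus the one-step map, and verifying that no transposition sign is introduced when one passes a block of $\ker QT$-vectors past a block of $\ker PT$-vectors, or a block of $R(Q)$-cofactors past a block of $R(P)$-cofactors. A clean way to organize this, which I would pursue rather than brute-forcing the wedge manipulations, is to prove a general ``composition lemma'' for the isomorphisms $\Phi_{\bf E}$ attached to a commuting triangle of short-exact-type data and deduce Proposition \ref{ooo6.6} as a special case; this is the standard device (cf.\ the treatment in \cite{Zinger}) and isolates the one genuinely delicate sign computation into a single auxiliary statement, whose proof can then be relegated to the appendix alongside Lemma \ref{o6.6} and Proposition \ref{ooo6.6}. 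Once the composition lemma is available, the proof of the proposition is a routine application: identify the three exact sequences as the edges of the relevant commuting diagram built from the inclusions $\ker T\subset\ker QT\subset\ker PT$ and the surjections $F/R(P)\to F/R(Q)\to\mathrm{coker}\,T$, and read off $\gamma^P_{QT}\circ\gamma^Q_T=\gamma^P_T$.
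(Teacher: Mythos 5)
Your direct-computation plan is essentially the same as the paper's actual proof: fix a basis of $\ker T$, extend it to a basis of $\ker QT$ and then to a basis of $\ker PT$, do the nested choices on the cokernel side, and compute both sides with the explicit formula, exploiting the ``list-after'' convention. However, there is a genuine imprecision at exactly the point you flag as the main obstacle. You assert that $\Phi^P_T(x)=Tx+R(P)$, $\Phi^Q_T(x)=Tx+R(Q)$, and $\Phi^P_{QT}(x)=QTx+R(P)$ ``are all represented by the same matrix of $T$'' in the adapted bases. This is not true as stated: $\Phi^P_{QT}$ sends $x\mapsto QTx+R(P)$, and for $x\in\ker(PT)\setminus\ker(QT)$ the vectors $QTx+R(P)$ and $Tx+R(P)$ genuinely differ, by $(I-Q)Tx+R(P)$, which need not vanish. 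The reason the resulting elements of the determinant line nevertheless agree is that this difference lies in the span of the other wedge factors and hence drops out of the top exterior power. The paper makes this precise by introducing the induced projection $\widetilde Q$ on $F/R(P)$, noting $F/R(P)=\ker\widetilde Q\oplus\ker(I-\widetilde Q)$, checking that the cofactor blocks $\{[d_i],\,[T(b_j)]\}$ and $\{[QT(\widetilde b_k)]\}$ lie in the two summands respectively, and then using this splitting to replace $[QT(\widetilde b_k)]$ by $[T(\widetilde b_k)]$ inside the wedge. Your proposal would need to supply this argument. Relatedly, you should normalize the representatives $d_1,\ldots,d_l$ to lie in $R(I-Q)$, as the paper does; without that normalization the compatibility of $\pi^P_T$ and $\pi^Q_T$ on these vectors is not automatic.

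Your alternative suggestion, to first prove a general composition lemma for $\Phi_{\bf E}$ under splicing of four-term exact sequences and then read the proposition off as a special case, is a valid and more conceptual route (this is close to what Zinger does in \cite{Zinger}). But notice that the same subtlety resurfaces there: the middle maps $\Phi^P_{QT}$ and $\Phi^P_T$ are not honestly nested (one uses $QT$, the other $T$), so the relevant diagram does not commute on the nose, and verifying the hypotheses of your composition lemma forces you to confront precisely the $\ker\widetilde Q\oplus\ker(I-\widetilde Q)$ decomposition. That approach isolates, but does not eliminate, the key step.
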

}

{
From Proposition \ref{ooo6.6} we conclude for the three 
projections 
$P, R,S\in \Pi_T$ satisfying $P\leq R$ and $P\leq S$, the relation 
$\gamma^P_{TR}\circ \gamma_T^R=\gamma_T^P=\gamma_{ST}^P\circ \gamma_T^S$, so that the 
diagram
\begin{equation*}
\begin{CD}
\det(T)@> \gamma_T^R >> \det(RT)\\
@VV\gamma_T^S V   @VV\gamma_{RT}^P V\\
\det(ST) @>\gamma_{ST}^P>>  \det(PT)
\end{CD}
\end{equation*}
is commutative, implying the following corollary.
\begin{corollary}\label{KPD}
The map 
$$
\det(ST)\rightarrow \det(RT), \quad  h\mapsto {(\gamma_{RT}^P)}^{-1}\circ\gamma_{ST}^P(h)
$$
is independent of the choice of $P$ as long as $P\leq S$ and $P\leq R$. Moreover,  this map is equal to the isomorphism 
$$
\det(ST)\rightarrow \det(RT),\quad h\mapsto \gamma^R_T\circ {(\gamma_T^S)}^{-1}(h).
$$
\end{corollary}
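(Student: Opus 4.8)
The plan is to obtain Corollary \ref{KPD} as a short diagram chase resting entirely on Proposition \ref{ooo6.6} (equivalently, on the commutative square displayed just before the corollary), so that no new computation with exact sequences is needed. First I would record two preliminary facts. Given $S,R\in\Pi_T$, Lemma \ref{nd1} produces a projection $P\in\Pi_T$ with $P\le S$ and $P\le R$, so the class of admissible $P$ occurring in the statement is non-empty and the assertion is not vacuous. Next, for any such $P$, Proposition \ref{ooo6.6} applied with $Q=S$ shows that $ST$ is Fredholm, that $P\in\Pi_{ST}$, and that $\gamma^P_{ST}\circ\gamma^S_T=\gamma^P_T$; the same proposition applied with $Q=R$ shows $P\in\Pi_{RT}$ and $\gamma^P_{RT}\circ\gamma^R_T=\gamma^P_T$. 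In particular all four maps $\gamma^S_T,\gamma^R_T,\gamma^P_{ST},\gamma^P_{RT}$ are isomorphisms of one-dimensional real vector spaces, hence invertible, and the sources and targets match up: $\gamma^S_T\colon\det(T)\to\det(ST)$, $\gamma^P_{ST}\colon\det(ST)\to\det(PT)$, and symmetrically for $R$.

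Then I would simply combine the two identities. Since both $\gamma^P_{ST}\circ\gamma^S_T$ and $\gamma^P_{RT}\circ\gamma^R_T$ equal $\gamma^P_T$ as maps $\det(T)\to\det(PT)$, we get
$$\gamma^P_{ST}\circ\gamma^S_T=\gamma^P_{RT}\circ\gamma^R_T,$$
which is exactly the commutativity of the square preceding the corollary. Composing this equality on the left with $(\gamma^P_{RT})^{-1}$ and on the right with $(\gamma^S_T)^{-1}$ yields
$$(\gamma^P_{RT})^{-1}\circ\gamma^P_{ST}=\gamma^R_T\circ(\gamma^S_T)^{-1}$$
as isomorphisms $\det(ST)\to\det(RT)$. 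The right-hand side involves neither $P$ nor any choice attached to it; hence both claims follow at once: the map $h\mapsto(\gamma^P_{RT})^{-1}\circ\gamma^P_{ST}(h)$ is independent of the choice of $P$ subject to $P\le S$ and $P\le R$, and it coincides with the isomorphism $h\mapsto\gamma^R_T\circ(\gamma^S_T)^{-1}(h)$.

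Because the argument is a two-line manipulation of established isomorphisms, I do not expect a genuine obstacle here; the only points meriting a word of care are the verification that the statement is meaningful (non-emptiness of the set of admissible $P$, supplied by Lemma \ref{nd1}) and the bookkeeping that each $\gamma$ has the domain and codomain claimed, so that the compositions and inverses above are legitimate — both of which are immediate from the definitions of $\Pi_T$, of $\det(\cdot)$, and of the exact sequences ${\bf E}_{(T,P)}$.
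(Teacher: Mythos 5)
Your proof is correct and follows exactly the paper's route: the paper derives the commutative square from two applications of Proposition \ref{ooo6.6} (with $Q=S$ and $Q=R$) and then states the corollary as an immediate consequence, which is precisely the diagram chase you spell out. The only additions you make---citing Lemma \ref{nd1} for non-vacuity and checking that the various $\gamma$'s have matching domains and codomains---are bookkeeping the paper leaves implicit, so there is no substantive difference.
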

}

{
The latter may be viewed as some kind of transition map. This will become clearer in the next subsection.
The first map in the corollary behaves smoothly with respect to certain bundle constructions.
}

{
It will turn out that the good left projection structures are  very useful in  the definition of  determinant bundles for classical Fredholm operators.
}

{
Postponing  the construction of determinant bundles there is an important stabilization construction,  which is the linearized version
of a construction occurring in perturbation theory,  and which has to be understood from the point of view of orientations of determinants. 
}

{
We start with a Fredholm operator $T\colon E\rightarrow F$ and assume that $\phi\colon {\mathbb R}^n\rightarrow F$ is a linear map
such  that the map $T_\phi\colon  E\oplus {\mathbb R}^n\rightarrow F$, defined by 
$$
T_\phi(e,r)=T(e)+\phi(r),\index{$T_{\phi}$}
$$ 
is surjective. Writing ${\mathbb R}^n$ after 
$E$ is convenient and goes hand in hand with the conventions in  the definition of the isomorphism $\Phi_{\bf E}$ associated with the  exact sequence ${\bf E}$.  We would like to introduce 
a convention relating the orientations of  $\det(T)$ and $\det(T_\phi)$,  knowing that  ${\mathbb R}^n$ possesses  the   preferred orientation as 
direct sum of $n$-many copies of ${\mathbb R}$,  each of which is  oriented by $[1]$.
We introduce  the exact sequence
$$
{\bf E}:\quad  0\rightarrow \ker(T)\xrightarrow{j} \ker(T_\phi)\xrightarrow{\pi} {\mathbb R}^n\xrightarrow{[\phi]} F/R(T)\rightarrow 0,
$$
in which  $j(e)=(e,0)$,  $\pi(e,r)= r$, and  $ [\phi](r)=\phi(r)+R(T)$. 
\begin{lemma}
The sequence ${\bf E}$ is exact.
\end{lemma}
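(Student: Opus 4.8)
The plan is a direct verification that the four maps of ${\bf E}$ fit together into an exact sequence, checking exactness at each of the nonzero terms from left to right. First I would recall the definitions: $j(e)=(e,0)$, $\pi(e,r)=r$, $[\phi](r)=\phi(r)+R(T)$, and that by hypothesis $T_\phi\colon E\oplus{\mathbb R}^n\to F$, $T_\phi(e,r)=T(e)+\phi(r)$, is surjective. Exactness at $\ker(T)$ is just injectivity of $j$, which is immediate since $j(e)=(e,0)=0$ forces $e=0$.

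Next I would treat the two middle terms by an elementary diagram chase. At $\ker(T_\phi)$: the composite $\pi\circ j$ sends $e$ to $\pi(e,0)=0$, so $\operatorname{im}(j)\subseteq\ker(\pi)$; conversely, if $(e,r)\in\ker(T_\phi)$ with $\pi(e,r)=r=0$, then $(e,r)=(e,0)$ and $0=T_\phi(e,0)=T(e)$, so $e\in\ker(T)$ and $(e,0)=j(e)$, giving $\ker(\pi)\subseteq\operatorname{im}(j)$. At ${\mathbb R}^n$: for $(e,r)\in\ker(T_\phi)$ one has $\phi(r)=-T(e)\in R(T)$, hence $[\phi]\bigl(\pi(e,r)\bigr)=[\phi](r)=\phi(r)+R(T)=R(T)$, so $\operatorname{im}(\pi)\subseteq\ker([\phi])$; conversely, if $[\phi](r)=0$, i.e.\ $\phi(r)\in R(T)$, choose $e\in E$ with $T(e)=-\phi(r)$, so that $(e,r)\in\ker(T_\phi)$ and $\pi(e,r)=r$, giving the reverse inclusion. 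Finally, exactness at $F/R(T)$ is surjectivity of $[\phi]$: given $f+R(T)$, surjectivity of $T_\phi$ yields $(e,r)$ with $T(e)+\phi(r)=f$, whence $[\phi](r)=\phi(r)+R(T)=f-T(e)+R(T)=f+R(T)$.

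There is no real obstacle here; the argument is routine. The only points needing a little care are bookkeeping — separating the trivial inclusion "the composite of consecutive maps vanishes" from the substantive inclusion "the kernel is no bigger than the image" at each spot — and noting that the surjectivity hypothesis on $T_\phi$ is used exactly once, at the rightmost term. With ${\bf E}$ established as exact one can then feed it into the isomorphism $\Phi_{\bf E}$ of Lemma~\ref{o6.6}, which is precisely what produces the canonical comparison between $\det(T)$ and $\det(T_\phi)$ under the stated conventions (with ${\mathbb R}^n$ listed after $E$).
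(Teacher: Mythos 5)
Your proof is correct and takes essentially the same route as the paper's: a straightforward diagram chase checking exactness at each of the four nonzero terms, with the surjectivity hypothesis on $T_\phi$ used exactly once, at $F/R(T)$. Your write-up is just more explicit about separating the trivial inclusions from their converses; the content is identical.
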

\begin{proof}
Clearly,  $\pi\circ j=0$ and $j$ is injective. If $\pi(e,r)=0$, then $T(e)=0$ and $j(e)=(0,e)=(e,r)$. If $(e,r)\in\ker(T_\phi)$, then $T(e)+\phi(r)=0$
implying that $\phi(r)\in R(T)$. Hence $[\phi(r)]=R(T)$. If $[\phi](r)=0$,  we have $\phi(r)=T(-e)$ for some $e\in E$ and hence $(e,r)\in\ker(T_\phi)$.
Moreover,  $\pi(e,r)=r$. The last map is surjective since,   by  assumption,  $T_\phi$ is surjective. 
\end{proof}
}

From  the  exact sequence ${\bf E}$ we deduce the previously constructed natural isomorphism
$$
\Phi_{\bf E}\colon \det(T)\rightarrow \lambda(\ker(T_\phi))\otimes \lambda^\ast({\mathbb R}^n).
$$
In the applications the operator $T$ is oriented and the auxiliary constructions to obtain transversality  yield,  on the linearized level,  operators of the type $T_\phi$.

{
The above isomorphism can be used to relate the orientations of  $\det(T)$ and $\det(T_\phi)$. However, this requires  an additional convention. For this it suffices 
to fix an isomorphism $\lambda^\ast({\mathbb R}^n)\rightarrow {\mathbb R}^\ast =\lambda^\ast(\{0\})$. This gives rise to an isomorphism
$$
\lambda(\ker(T_\phi))\otimes \lambda^\ast({\mathbb R}^n)\rightarrow \lambda(\ker(T_\phi))\otimes {\mathbb R}^\ast=\det(T_\phi),
$$
which then yields an isomorphism $\det(T)\rightarrow \det(T_\phi)$. The isomorphism we choose is defined by 
$$
\psi\colon \lambda^\ast({\mathbb R}^n)\rightarrow \lambda^\ast(\{0\})={\mathbb R}^\ast,\quad  (e_1\wedge\ldots \wedge e_n)^\ast\mapsto 1^\ast,
$$
where $e_1,\ldots ,e_n$ is the standard basis of ${\mathbb R}^n$. It then follows for every basis $c_1,\ldots ,c_n$ of  ${\mathbb R}^n$ that 
\begin{equation}\label{iso--}
\psi \bigl((c_1\wedge\ldots \wedge c_n)^\ast\bigr)=\frac{1}{\det([c_1,\ldots ,c_n])}1^\ast,
\end{equation}
where $\det$ denotes the determinant of a $n\times n$ matrix of the column vectors.
}

{
Summing up the previous discussion and using the definition of the isomorphism $\Phi_{\bf E}$ associated to the exact sequence ${\bf E}$ we can summarize the findings 
in  the following proposition.
\begin{proposition}
With the chosen isomorphism $\phi\colon \lambda^\ast({\mathbb R}^n)\rightarrow\lambda^\ast(\{0\})={\mathbb R}^\ast$, $(e_1\wedge\ldots \wedge e_n)^\ast\rightarrow 1^\ast$, we define the  isomorphism \index{$\iota_\phi\colon \det(T)\rightarrow \det(T_\phi)$}
$$
\iota_\phi\colon \det(T)\rightarrow \det(T_\phi)
$$
by the following formula. We choose  a basis $a_1,\ldots ,a_k$ of $\ker(T)$ and a basis $\bar{d}_1=\phi(\bar{c}_1)+R(T),\ldots ,\bar{d}_m=\phi(\bar{c}_m)+R(T)$ of  $F/R(T)$.
Let $h=(a_1\wedge\ldots \wedge a_k)\otimes (\bar{d}_1\wedge\ldots \wedge \bar{d}_m)^\ast$.  
Extend $(a_1,0),\ldots ,(a_k,0)$ to a basis of  $\ker(T_\phi)$ by adding
$\bar{b}_1=(b_1,r_1),\ldots ,\bar{b}_l=(b_l,r_l)$,  and notice that $r_1=\pi(b_1,r_1),\ldots ,r_l=\pi(b_l,r_l)$ together with $\bar{c}_1,\ldots ,\bar{c}_m$ form a basis of ${\mathbb R}^n$.
Then the isomorphism $\iota_\phi$ is defined by the formula
$$
\iota_\phi(h) = \frac{1}{ \det(\bar{c}_1,\ldots , \bar{c}_m, r_1\,\ldots ,r_l)}((a_1,0)\wedge\ldots \wedge (a_k,0)\wedge \bar{b}_1\wedge\ldots \wedge \bar{b}_l)\otimes1^\ast.
$$
\end{proposition}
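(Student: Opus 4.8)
The statement being asserted is an explicit formula for the isomorphism $\iota_\phi\colon\det(T)\to\det(T_\phi)$ that was already defined abstractly via the exact sequence ${\bf E}$ and the chosen auxiliary isomorphism $\psi\colon\lambda^\ast(\R^n)\to\lambda^\ast(\{0\})=\R^\ast$. So the ``proposition'' is really an unwinding computation: I need to trace a generating element $h=(a_1\wedge\cdots\wedge a_k)\otimes(\bar d_1\wedge\cdots\wedge\bar d_m)^\ast$ through the composition
$$
\det(T)\xrightarrow{\Phi_{\bf E}}\lambda(\ker(T_\phi))\otimes\lambda^\ast(\R^n)\xrightarrow{\ \mathrm{id}\otimes\psi\ }\lambda(\ker(T_\phi))\otimes\R^\ast=\det(T_\phi),
$$
and check it agrees with the closed formula. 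The plan is: first fix notation and recall from Section~\ref{sect_conventions} the precise recipe for $\Phi_{\bf E}$ associated to a four-term exact sequence $0\to A\xrightarrow{\alpha}B\xrightarrow{\beta}C\xrightarrow{\gamma}D\to0$; here $A=\ker(T)$, $B=\ker(T_\phi)$, $C=\R^n$, $D=F/R(T)$, with $\alpha=j$, $\beta=\pi$, $\gamma=[\phi]$. Then substitute the concrete bases given in the statement and compute.

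\textbf{Key steps.} First I would verify the bases are as claimed: $a_1,\dots,a_k$ a basis of $\ker(T)$, the vectors $\bar b_i=(b_i,r_i)$ completing $(a_1,0),\dots,(a_k,0)$ to a basis of $\ker(T_\phi)$, and — this is the point where exactness is used — that $r_1,\dots,r_l$ together with $\bar c_1,\dots,\bar c_m$ form a basis of $\R^n$, which follows because $\pi$ maps the complement of $j(\ker T)$ in $\ker(T_\phi)$ isomorphically onto $\ker([\phi])=\mathrm{im}(\pi)$ and $[\phi]$ maps the complement of $\pi(\ker T_\phi)$ isomorphically onto $D$ via $\bar c_s\mapsto\bar d_s$. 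Second, apply the defining formula \eqref{eq_Phi_E} for $\Phi_{\bf E}$: with the conventions ``new $B$-vectors listed after the $\alpha(a_i)$'' and ``new $C$-vectors listed after the $\gamma^{-1}$-images of the $d_j$'', one gets
$$
\Phi_{\bf E}(h)=\bigl((a_1,0)\wedge\cdots\wedge(a_k,0)\wedge\bar b_1\wedge\cdots\wedge\bar b_l\bigr)\otimes\bigl(\bar c_1\wedge\cdots\wedge\bar c_m\wedge r_1\wedge\cdots\wedge r_l\bigr)^\ast,
$$
since $\gamma(\bar c_s)=[\phi](\bar c_s)=\bar d_s$ and $\beta(\bar b_i)=\pi(\bar b_i)=r_i$. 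Third, apply $\mathrm{id}\otimes\psi$ using the consequence \eqref{iso--} of the definition of $\psi$: $\psi\bigl((\bar c_1\wedge\cdots\wedge\bar c_m\wedge r_1\wedge\cdots\wedge r_l)^\ast\bigr)=\dfrac{1}{\det(\bar c_1,\dots,\bar c_m,r_1,\dots,r_l)}\,1^\ast$. Multiplying the two factors gives exactly the asserted formula for $\iota_\phi(h)$. Finally I would note that well-definedness (independence of the chosen bases) is automatic because $\iota_\phi$ was defined abstractly, but one can also see it directly from the transformation behaviour of the determinant factor cancelling that of the wedge products; I would include a one-line remark to this effect rather than a full recomputation.

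\textbf{Main obstacle.} There is no real analytic difficulty — everything is finite-dimensional linear algebra — but the genuine hazard is \emph{sign/convention bookkeeping}: the formula \eqref{eq_Phi_E} for $\Phi_{\bf E}$ builds in two ordering conventions (where the complementary basis vectors get inserted), and the identification $\psi$ builds in a third; getting the order of the wedge factors and the placement of the $r_i$ versus $\bar c_s$ exactly matched to \eqref{eq_Phi_E} and \eqref{iso--} is where an error would creep in. The careful step is therefore the substitution in the second bullet above: one must list $\gamma^{-1}(\bar d_s)=\bar c_s$ \emph{before} the $\beta(\bar b_i)=r_i$ in the dual factor (matching ``$c_1',\dots,c_{m-n}'$ listed after $c_1,\dots,c_l$'' in the definition, with the roles of the index sets as dictated by which map is $\beta$ and which is $\gamma$), so that the determinant appearing is precisely $\det(\bar c_1,\dots,\bar c_m,r_1,\dots,r_l)$ and not its reciprocal-with-a-sign or a permuted version. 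Once that matching is pinned down, the proof is a two-line concatenation of \eqref{eq_Phi_E} and \eqref{iso--}.
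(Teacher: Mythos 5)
Your proposal is correct and is essentially the paper's own argument made explicit: the paper disposes of the proposition with the single sentence ``This follows immediately from the definition of the isomorphism $\Phi_{\bf E}$ associated with the exact sequence ${\bf E}$,'' and your step-by-step unpacking — identifying $\alpha=j$, $\beta=\pi$, $\gamma=[\phi]$, substituting the given bases into the formula \eqref{eq_Phi_E} (with $\beta(\bar b_i)=r_i$ and $\gamma(\bar c_s)=\bar d_s$ placed in the order dictated by the conventions), then applying \eqref{iso--} — is exactly the computation the paper is gesturing at. Your flagging of the ordering conventions as the only real hazard is accurate; your placement of $\bar c_1,\ldots,\bar c_m$ before $r_1,\ldots,r_l$ in the dual wedge matches the convention ``$c_1',\ldots,c_{m-n}'$ listed after $c_1,\ldots,c_l$'' and yields the correct determinant.
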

This follows immediately from the definition of the isomorphism $\Phi_{\bf E}$ associated with  the exact sequence ${\bf E}$. Our choice of isomorphism $\lambda^\ast({\mathbb R}^n)\rightarrow {\mathbb R}^\ast$ is, of course, dual
to a uniquely determined choice of isomorphism ${\mathbb R}\rightarrow\lambda({\mathbb R}^n)$,  defined by the mapping,  which maps $1$ to the wedge of the standard basis.
Its  inverse is the map 
\begin{equation}\label{iso--1}
c_1\wedge\ldots \wedge c_n \rightarrow \det([c_1,\ldots ,c_n]).
\end{equation}
Here again $\det$ is the determinant of a $n\times n$ matrix.  Having chosen  the isomorphisms (\ref{iso--}) and (\ref{iso--1}) we deduce the following natural identifications
for every finite dimensional vector space $V$,
$$
V\otimes \lambda({\mathbb R}^n)\rightarrow V\quad  v\otimes (e_1\wedge\ldots \wedge e_n)\mapsto  v\otimes 1\rightarrow v, 
$$
and
$$
V\otimes\lambda^\ast({\mathbb R}^n)\rightarrow V:v\otimes (e_1\wedge\ldots \wedge e_n)^\ast\rightarrow v\otimes 1^\ast\rightarrow v.
$$
}

{
We need another convention.
\begin{definition}\index{D- Natural isomorphism for $\det(T\oplus S)$}\index{$\det(T)\otimes \det(S)\rightarrow \det(T\oplus S)$, natural isomorphism}
 Assume that $T\colon E\rightarrow F$ and $S\colon E'\rightarrow F'$ are Fredholm operators. Then the direct sum $T\oplus S\colon E\oplus E'\rightarrow F\oplus F'$ is a Fredholm operator, and we define the {\bf natural isomorphism} 
$$
\det(T)\otimes \det(S)\rightarrow \det(T\oplus S)
$$
as follows. We choose  $h=(a_1\wedge\ldots \wedge a_n)\otimes (d_1\wedge\ldots \wedge d_l)^\ast\in \det (T)$ and  $h'=(a_1'\wedge\ldots \wedge a_{n'}')\otimes (d_1'\wedge\ldots \wedge d_{l'}')^\ast\in \det (S)$.
Then we map $h\otimes h'$ to the vector $g\in \det (T\oplus S)$,  defined by
$$
g=(a_1\wedge\ldots \wedge a_n\wedge a_1'\wedge\ldots \wedge a_{n'}')\otimes(d_1\wedge\ldots \wedge d_l\wedge d_1'\wedge\ldots \wedge d_{l'}')^\ast.
$$
\end{definition}
}

\subsection{Classical Local Determinant Bundles}

{
We continue  with some local constructions in the neighborhood of  a Fredholm operator $T\colon E\rightarrow F$ between two Banach spaces.
\begin{lemma}
Given a Fredholm operator $T\colon E\rightarrow F$ and a projection  $P\in \Pi_T$, we take a topological complement $Y$ of $\ker(PT)$ in $E$.
Then there exists $\varepsilon>0$ such  that for every $S\in {\mathcal L}(E,F)$ satisfying $\norm{S-T}<\varepsilon$,  the following holds.
\begin{itemize}
\item[{\em (1)}] $P\in\Pi_S$.
\item[{\em (2)}] $PS\vert Y\colon Y\rightarrow R(P)$ is a topological isomorphism.
\end{itemize}
\end{lemma}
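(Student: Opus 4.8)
The plan is to observe that the condition $\dim(F/R(P))<\infty$ in the definition of a good left-projection depends only on $P$, not on the operator it is paired with; hence to show $P\in\Pi_S$ the only point to check is $R(PS)=R(P)$, and this, together with statement (2), will be obtained from a Neumann-series perturbation argument once the analogous facts for $T$ itself are recorded. So first I would unpack the unperturbed situation. Since $P\in\Pi_T$ we have $R(PT)=R(P)$, and $\ker(PT)$ is closed in $E$, being the kernel of the bounded operator $PT$; therefore its topological complement $Y$ is also closed, hence a Banach space, and the splitting $E=\ker(PT)\oplus Y$ gives $PT(E)=PT(Y)$, so $R(PT|Y)=R(PT)=R(P)$, while $\ker(PT|Y)=Y\cap\ker(PT)=\{0\}$. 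The subspace $R(P)$ is closed, being the range of a bounded projection, hence a Banach space, so by the open mapping theorem the continuous linear bijection $A:=PT|Y\colon Y\to R(P)$ is a topological isomorphism.

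Next comes the perturbation step. For $S\in{\mathcal L}(E,F)$ write
$PS|Y = A + P(S-T)|Y = A\bigl(\mathbbm{1} + A^{-1}P(S-T)|Y\bigr)$.
We may assume $P\neq 0$, since $P=0$ forces $R(P)=\{0\}$ and then $\dim(F/R(P))<\infty$ means $F$ is finite-dimensional, a trivial case. Set $\varepsilon:=\bigl(\norm{A^{-1}}\cdot\norm{P}\bigr)^{-1}>0$. If $\norm{S-T}<\varepsilon$, then $\norm{A^{-1}P(S-T)|Y}<1$, so $\mathbbm{1} + A^{-1}P(S-T)|Y$ is invertible via the Neumann series, and consequently $PS|Y\colon Y\to R(P)$ is again a topological isomorphism. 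This is precisely statement (2).

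Finally, statement (1) follows from (2): since $PS$ maps $E$ into $R(P)$ we have $R(PS)\subseteq R(P)$, while $R(PS)\supseteq PS(Y)=R(P)$ by (2), so $R(PS)=R(P)$; together with the unchanged finiteness $\dim(F/R(P))<\infty$ this gives $P\in\Pi_S$. There is nothing subtle here beyond the invocation of the open mapping theorem to upgrade $A$ from a bijection to a topological isomorphism, after which the argument is just the standard fact that the set of isomorphisms between Banach spaces is open; the only bookkeeping point worth stressing is that (2) must be proved before (1), as (1) is deduced from it.
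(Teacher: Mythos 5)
Your proof is correct and follows essentially the same route as the paper's: establish that $PT\vert Y\colon Y\to R(P)$ is a topological isomorphism, invoke openness of invertible operators (which you make explicit via the Neumann series) to get (2), and then read off (1) from $R(PS)\supseteq PS(Y)=R(P)$. The paper states the estimate $\norm{PT\vert Y - PS\vert Y}\leq\norm{P}\cdot\norm{S-T}$ and cites openness of isomorphisms directly, while you unpack the preliminaries (closedness of $\ker(PT)$ and $R(P)$, open mapping theorem, explicit $\varepsilon$), but there is no substantive difference in the argument.
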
 
}
\begin{proof}

{
We estimate $\norm{PT\vert Y-PS\vert Y}\leq \norm{P}\cdot\norm{S-T}$. Since $PT\vert Y\colon Y\rightarrow R(P)$ is a topological linear isomorphism, 
it follows from the openness of invertible linear operators $Y\rightarrow R(P)$ that (2) holds for a suitable $\varepsilon$.
Since $PS\colon Y\rightarrow R(P)$ is a topological linear isomorphism,  we conclude that $R(PS)=R(P)$,  which implies
$P\in\Pi_S$.
}
\end{proof}

{
We assume the hypotheses of  the previous lemma, split $E=\ker(PT)\oplus Y$,  and  correspondingly write $e=k+y$.
For $S$ satisfying  $\norm{S-T}<\varepsilon$ we consider the equation
$PS(k+y)=0$ which can be rewritten as
$$
PS(y)=-PS(k).
$$
Hence $y=y(k,S))=-(PS\vert Y)^{-1}(PS(k))$. The map $(k,S)\mapsto  y(k,S)$ is continuous. 
We equipe  the topological space
$$
\bigcup_{\norm{S-T}<\varepsilon} \{S\}\times \ker(PS)\subset {\mathcal L}(E,F)\oplus E
$$
with the induced topology. 
The projection
$$
\pi\colon  \bigcup_{\norm{S-T}<\varepsilon} \{S\}\times \ker(PS)\rightarrow  \bigcup_{\norm{S-T}<\varepsilon} \{S\}
 $$
 is the restriction of the continuous projection ${\mathcal L}(E,F)\oplus E\rightarrow {\mathcal L}(E,F)$ and therefore continuous. The fibers of $\pi$ are finite-dimensional vector spaces of the same dimension.  
\begin{lemma}\label{reddat}
The topological  space $\bigcup_{\norm{S-T}<\varepsilon} \{S\}\times \ker(PS)$ together with the  continuous projection $\pi$ has the structure of a trivial  vector bundle.
\end{lemma}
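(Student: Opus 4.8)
The statement asserts that the space $\mathcal{E}:=\bigcup_{\norm{S-T}<\varepsilon}\{S\}\times\ker(PS)$, with the projection $\pi$ onto the open ball $B_\varepsilon(T)\subset\mathscr{L}(E,F)$, is a trivial vector bundle. Since $\mathcal{E}$ carries a fiberwise linear structure and $\pi$ is continuous with finite-dimensional fibers of constant dimension (as established just before the statement), the only thing to produce is a continuous, fiberwise-linear trivialization $\mathcal{E}\to B_\varepsilon(T)\times\ker(PT)$. The natural candidate is the map induced by projecting along $Y$: send $(S,k+y)\in\{S\}\times\ker(PS)$, where $k\in\ker(PT)$ and $y\in Y$, to $(S,k)\in B_\varepsilon(T)\times\ker(PT)$. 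I would first verify this is well-defined and fiberwise linear — it is simply the restriction to each fiber $\ker(PS)$ of the (fixed) continuous linear projection $\rho\colon E=\ker(PT)\oplus Y\to\ker(PT)$, which is manifestly linear in $k+y$.

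\textbf{Key steps.} First I would show $\rho|_{\ker(PS)}\colon\ker(PS)\to\ker(PT)$ is a linear isomorphism for each $S$ with $\norm{S-T}<\varepsilon$. Injectivity: if $k+y\in\ker(PS)$ and $\rho(k+y)=k=0$, then $y\in\ker(PS)\cap Y$; but $PS|_Y\colon Y\to R(P)$ is a topological isomorphism by part (2) of the previous lemma, so $PS(y)=0$ forces $y=0$. Surjectivity then follows from equality of dimensions ($\dim\ker(PS)=\dim\ker(PT)$, already noted). Explicitly, the inverse sends $k\in\ker(PT)$ to $k+y(k,S)$ where $y(k,S)=-(PS|_Y)^{-1}(PS(k))$, which is exactly the formula derived just above the statement. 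Second, I would check that the global map
$$\Theta\colon\mathcal{E}\to B_\varepsilon(T)\times\ker(PT),\qquad \Theta(S,e)=(S,\rho(e)),$$
is a homeomorphism onto its image $B_\varepsilon(T)\times\ker(PT)$. Continuity of $\Theta$ is clear since $\rho$ is continuous and the inclusion $\mathcal{E}\hookrightarrow\mathscr{L}(E,F)\oplus E$ is continuous. Continuity of the inverse $\Theta^{-1}(S,k)=(S,k+y(k,S))$ follows from the continuity of $(k,S)\mapsto y(k,S)$, which in turn uses continuity of $S\mapsto(PS|_Y)^{-1}$ (openness and continuity of operator inversion) composed with $S\mapsto PS(k)$. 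Finally, $\Theta$ commutes with the projections onto $B_\varepsilon(T)$ by construction and is linear on each fiber, so it is a vector bundle isomorphism; since the target is a product bundle, $\mathcal{E}$ is trivial.

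\textbf{Main obstacle.} There is no serious obstacle here — the argument is a routine application of the implicit-function-type formula $y=y(k,S)$ together with the openness of the set of invertible operators. The one point requiring mild care is confirming that the $\varepsilon$ supplied by the previous lemma simultaneously guarantees (a) $P\in\Pi_S$, (b) $PS|_Y$ is invertible, and hence (c) the fiber dimension is constant equal to $\dim\ker(PT)$; all three are exactly the content of that lemma, so one just invokes it. I would close by remarking that this local triviality is what is needed to glue the determinant lines $\det(PS)=\lambda(\ker(PS))\otimes\lambda^\ast(\mathrm{coker}(PS))=\lambda(\ker(PS))\otimes\lambda^\ast(F/R(P))$ into a local determinant line bundle over $B_\varepsilon(T)$, since $F/R(P)$ is a fixed finite-dimensional space independent of $S$.
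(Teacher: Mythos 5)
Your proposal is correct and takes essentially the same approach as the paper: the paper defines the trivialization as the map $(S,k)\mapsto(S,k+y(k,S))$ from the product to the total space and observes that its inverse is the restriction of the continuous projection $(S,k+y)\mapsto(S,k)$, which is exactly your $\Theta$ read in the opposite direction. Your added verification that $\rho|_{\ker(PS)}$ is a fiberwise isomorphism is implicitly contained in the paper's earlier discussion (bijectivity of the map $(S,k)\mapsto(S,k+y(k,S))$ onto $\ker(PS)$), so the two arguments coincide.
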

\begin{proof}
The continuous and bijective map
$$
\{S\, \vert \,  \norm{S-T}<\varepsilon\}\times\ker(PT)\rightarrow \bigcup_{\norm{S-T}<\varepsilon} \{S\}\times \ker(PS) 
$$
is defined by
$$
(S,k)\mapsto (S,k+y(k,S)).
$$
Its inverse is the restriction of the continuous map
$$
\{S\,  \vert \, \norm{S-T}<\varepsilon\}\times E\rightarrow\{S\,  \vert \,  \norm{S-T}<\varepsilon\}\times \ker(PT),\quad (S,k+y)\mapsto (S,k), 
$$
so that our map is indeed a topological bundle trivialization. 
\end{proof}
}

{
Now we have the trivial bundle 
$$
\pi\colon \bigcup_{ \norm{S-T}<\varepsilon} \{S\}\times\ker(PS)\rightarrow \{S\,  \vert \, \norm{S-T}<\varepsilon\},
$$
and the  product bundle 
$$
\pi_0\colon \{S\,  \vert \, \norm{S-T}<\varepsilon\}\times(F/R(P))\rightarrow \{S\,  \vert \,  \norm{S-T}<\varepsilon\}.
$$
The obvious viewpoint about these bundles is the following. Given the family of Fredholm operators $S\mapsto  PS$ defined for $S$ satisfying  $\norm{S-T}<\varepsilon$, 
the kernel dimension and cokernel dimension is constant. Therefore,  we obtain the topological kernel bundle as well as the cokernel bundle.
We shall need the following standard lemma which can be derived from results in f.e. in \cite{LA}.
}

{
\begin{lemma}\label{lemma6.21}
Given finite-dimensional topological vector bundles $E$ and $F$ over the topological space $X$,  the bundles $\lambda(E)\rightarrow X$, $\lambda^\ast(F)\rightarrow X$
as well as $E\oplus F\rightarrow X$, and $E\otimes F\rightarrow X$ have the structure of topological vector bundles in a natural way.
\end{lemma}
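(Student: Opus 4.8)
\textbf{Proof plan for Lemma \ref{lemma6.21}.}

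The statement is a routine bundle-theoretic assertion: from finite-dimensional topological vector bundles $E\to X$ and $F\to X$ one wants to construct the bundles $\lambda(E)$, $\lambda^\ast(F)$, $E\oplus F$ and $E\otimes F$. The key point is that each of these operations is a \emph{functor} on the category of finite-dimensional vector spaces and isomorphisms which is continuous in the appropriate sense, so it carries local trivializations to local trivializations and transition maps to transition maps. The plan is to make this observation precise and then apply it four times.

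First I would recall the standard reconstruction principle for topological vector bundles: a finite-dimensional topological vector bundle over $X$ of rank $r$ is equivalent to the data of an open cover $(U_i)$ of $X$ together with continuous transition maps $g_{ij}\colon U_i\cap U_j\to \GL(r,\R)$ satisfying the cocycle condition $g_{ij}g_{jk}=g_{ik}$ on triple overlaps (and $g_{ii}=\Id$); two such cocycles describe isomorphic bundles iff they are related by continuous maps $h_i\colon U_i\to\GL(r,\R)$ via $g_{ij}'=h_i g_{ij} h_j^{-1}$. So the strategy is: choose a common refinement of trivializing covers for $E$ and $F$, producing transition cocycles $(g_{ij})$ for $E$ with fiber $\R^n$ and $(g_{ij}')$ for $F$ with fiber $\R^m$, and then exhibit the new cocycles.

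Next, for each of the four operations I would write down the induced cocycle and check the cocycle condition. For $E\oplus F$ take $g_{ij}\oplus g_{ij}'\colon U_i\cap U_j\to\GL(n+m,\R)$, the block-diagonal matrix; continuity and the cocycle identity are immediate from those of $g_{ij}$ and $g_{ij}'$. For $E\otimes F$ take the Kronecker product $g_{ij}\otimes g_{ij}'\colon U_i\cap U_j\to \GL(nm,\R)$; the map $\GL(n,\R)\times\GL(m,\R)\to\GL(nm,\R)$, $(A,B)\mapsto A\otimes B$ is continuous (polynomial in the entries) and multiplicative in each factor, hence the cocycle condition passes through. For $\lambda(E)=\Lambda^{\max}E$ take $\det\circ\, g_{ij}\colon U_i\cap U_j\to \GL(1,\R)=\R^\times$, using that $\det$ is continuous and multiplicative and that $\lambda(A)$ acts on $\Lambda^n\R^n\cong\R$ by multiplication by $\det A$ (this is exactly the normalization fixed in Section \ref{sect_conventions}). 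For $\lambda^\ast(F)$ take $(\det\, g_{ij}')^{-1}$, i.e. the dual of $\det\, g_{ij}'$; here one uses that for an isomorphism $A$ the map $\lambda^\ast(A)$ acts on the one-dimensional space by $\det(A)^{-1}$, and $A\mapsto \det(A)^{-1}$ is continuous and multiplicative on $\GL$. In each case naturality — meaning independence of the auxiliary choices up to canonical isomorphism — follows because if $(g_{ij})$ and $(g_{ij}')$ are changed by coboundaries $h_i$, then the induced cocycles change by the induced coboundaries ($h_i\oplus k_i$, $h_i\otimes k_i$, $\det h_i$, $(\det k_i)^{-1}$ respectively), so the resulting bundle is canonically determined.

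I do not expect a genuine obstacle here; the only thing requiring a word of care is the phrase ``in a natural way'', which I would interpret and discharge as above: the constructions are functorial in $E$ and $F$ and the transition-cocycle description makes the functoriality manifest. If one prefers a coordinate-free formulation, one can instead topologize the total spaces $\bigsqcup_{x}\lambda(E_x)$ etc.\ by transporting the topology along a local trivialization and observing that the transition maps are homeomorphisms — this is literally the content of the cocycle argument. Either way the verification is mechanical, so in the paper it is reasonable to cite a reference such as \cite{LA} for the underlying fact and record only the explicit transition functions; that is what the statement already anticipates.
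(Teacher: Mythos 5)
Your proof is correct. The paper itself gives no proof of this lemma and simply cites \cite{LA} as a reference for ``standard'' facts; the transition-cocycle argument you lay out is precisely the standard argument one would extract from such a reference, and every step checks out: the four induced cocycles (block-diagonal sum, Kronecker product, $\det\circ g_{ij}$, and $(\det g_{ij}')^{-1}$) are continuous and satisfy the cocycle identity because the underlying operations $\oplus$, $\otimes$, $\lambda(\cdot)$, $\lambda^\ast(\cdot)$ are continuous functors on finite-dimensional vector spaces and isomorphisms, and the coboundary computation at the end is exactly what makes ``natural'' precise. One very minor point of care you might add: the identification $\lambda^\ast(A)=\det(A)^{-1}$ on a line uses the paper's convention $\iota\colon\lambda(E^\ast)\to\lambda^\ast(E)$ from Section 6.2 (so that the dual transition of a scalar $c$ is $c^{-1}$), which you implicitly invoke when you pass to inverse-transpose in rank one — but this is exactly the convention fixed earlier in the paper, so there is no gap.
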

}

{
From Lemma \ref{lemma6.21} we deduce  immediately the following result.
\begin{proposition}\label{erty-o}
The topological space 
$$
\bigcup_{\norm{S-T}<\varepsilon} \{S\}\times \det(PS), 
$$
together with the projection onto the set $\{S\, \vert \, \norm{S-T}<\varepsilon\}$ 
has in a natural way the structure of topological line bundle.
\end{proposition}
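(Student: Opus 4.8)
The plan is to deduce Proposition \ref{erty-o} directly from the bundle structures already assembled in Lemma \ref{reddat} and Lemma \ref{lemma6.21}, together with the good-left-projection exact sequence machinery from the previous subsection. So the argument is essentially a packaging exercise: all the analytic content has already been isolated, and what remains is to check that the relevant topological vector bundles exist and that forming $\lambda$, $\lambda^\ast$, and $\otimes$ keeps us in the category of topological line bundles.

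First I would fix the Fredholm operator $T\colon E\to F$, choose a good left-projection $P\in\Pi_T$, and invoke the lemma preceding Lemma \ref{reddat}: there is $\varepsilon>0$ so that $P\in\Pi_S$ for all $S$ with $\norm{S-T}<\varepsilon$, and $PS\vert Y\colon Y\to R(P)$ is a topological isomorphism, where $Y$ is a fixed topological complement of $\ker(PT)$ in $E$. This produces, by Lemma \ref{reddat}, the trivial kernel bundle
$$
\pi\colon \bigcup_{\norm{S-T}<\varepsilon}\{S\}\times\ker(PS)\rightarrow\{S\,\vert\,\norm{S-T}<\varepsilon\}
$$
with explicit trivialization $(S,k)\mapsto(S,k+y(k,S))$. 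On the cokernel side the family $S\mapsto F/R(PS)=F/R(P)$ is literally constant, so $\pi_0\colon\{S\,\vert\,\norm{S-T}<\varepsilon\}\times(F/R(P))\to\{S\,\vert\,\norm{S-T}<\varepsilon\}$ is a (trivial) topological vector bundle. This is the step where I expect the only genuine subtlety to lie: one must be a little careful that $R(PS)=R(P)$ holds identically on the chosen neighborhood — which is exactly assertion (1) of the lemma above Lemma \ref{reddat} — so that the cokernel really is a fixed vector space rather than merely a bundle of constant rank; without that one would have to work harder to trivialize the cokernel bundle.

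Next I would apply Lemma \ref{lemma6.21} to these two topological vector bundles over the base $X=\{S\,\vert\,\norm{S-T}<\varepsilon\}$: it gives that $\lambda\bigl(\bigcup\{S\}\times\ker(PS)\bigr)\to X$ and $\lambda^\ast\bigl(X\times(F/R(P))\bigr)\to X$ are topological line bundles, and that their tensor product
$$
\lambda\Bigl(\bigcup_{\norm{S-T}<\varepsilon}\{S\}\times\ker(PS)\Bigr)\otimes\lambda^\ast\bigl(X\times(F/R(P))\bigr)\longrightarrow X
$$
is again a topological line bundle. Now recall from Definition \ref{def_determinant_1} that $\det(PS)=\lambda(\ker(PS))\otimes\lambda^\ast(\operatorname{coker}(PS))$ and that $\operatorname{coker}(PS)=F/R(PS)=F/R(P)$ fiberwise; hence the total space of the line bundle just constructed is canonically identified, fiber by fiber, with $\bigcup_{\norm{S-T}<\varepsilon}\{S\}\times\det(PS)$, and the projection onto $X$ corresponds to the projection stated in the proposition. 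This identification is through the natural isomorphisms of Lemma \ref{lemma6.21}, so it is a homeomorphism on total spaces and linear on fibers, and therefore the claimed topological line bundle structure is transported to $\bigcup_{\norm{S-T}<\varepsilon}\{S\}\times\det(PS)$.

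Finally I would remark on naturality: the construction a priori depends on the auxiliary data $P$ and $Y$, but Corollary \ref{KPD} and Proposition \ref{ooo6.6} show that for two projections $P,R\in\Pi_T$ the fiberwise isomorphisms $\gamma^R_T\circ(\gamma^S_T)^{-1}$ (equivalently $(\gamma^P_{RT})^{-1}\circ\gamma^P_{ST}$) intertwine the two line-bundle structures on overlaps, and these transition maps are continuous in $S$ because $S\mapsto PS$ varies continuously and the exact-sequence isomorphisms $\Phi_{\mathbf E}$ depend continuously on the data. Hence the line bundle structure is, up to canonical isomorphism, independent of the choices, which is what "in a natural way" records. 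The main obstacle, as noted, is nothing deep — it is just the bookkeeping needed to see that $\operatorname{coker}(PS)$ is genuinely constant on the neighborhood so that Lemma \ref{lemma6.21} applies on the nose; everything else is a formal consequence of results already in hand.
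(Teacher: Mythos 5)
Your proposal is correct and follows essentially the same route the paper takes (the paper's proof of this proposition is effectively the two displayed bundles preceding it plus an appeal to Lemma \ref{lemma6.21}, which you reconstruct explicitly). You also correctly pinpoint the one place the ``constant rank'' heuristic has to be sharpened: it is assertion (1) of the lemma before Lemma \ref{reddat} that guarantees $R(PS)=R(P)$ identically on the $\varepsilon$-ball, making $\operatorname{coker}(PS)=F/R(P)$ a genuinely fixed space and the cokernel bundle an honest product, so that Lemma \ref{lemma6.21} applies without any further work. One small caveat: your closing paragraph about independence of $P$ via Corollary \ref{KPD} and Proposition \ref{ooo6.6} is not needed for this proposition, which is stated for a fixed choice of $P\in\Pi_T$; the phrase ``in a natural way'' here refers to the functorial naturality of $\lambda$, $\lambda^\ast$, $\otimes$ recorded in Lemma \ref{lemma6.21}, not to choice-independence. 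The choice-independence you describe is exactly what the paper establishes afterwards (Lemma \ref{kimportant} and the subsequent gluing) when assembling the global bundle $\text{DET}_{(E,F)}$, so it is a correct observation but belongs to that later step rather than to the proof of Proposition \ref{erty-o} itself.
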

}

{We introduce,  for the Fredholm operator $T$ and the projection $P\in\Pi_T$,  the abbreviation
$$
\text{DET}(T,P,\varepsilon)=\bigcup_{\norm{S-T}<\varepsilon} \{S\}\times \det(PS),
$$
where $\varepsilon>0$ is guaranteed by Lemma \ref{reddat}. We shall call $\text{DET}(T,P,\varepsilon)$ the {\bf local
determinant bundle} \index{Local determinant bundle} associated with  the Fredholm operator  $T$, the projection $P\in\Pi_T$,  and $\varepsilon$.
}

{
For the projection  $Q\in\Pi_T$ satisfying  $Q\leq P$ we abbreviate
$$
\text{DET}(T,Q,\varepsilon)=\bigcup_{\norm{S-T}<\varepsilon} \{S\}\times \det(QS)
$$
which,  again in a natural way,  is a topological line bundle. 
\begin{lemma}\label{kimportant}
The algebraic isomorphism
$$
\what{\gamma}_{T,P,Q,\varepsilon}:\text{DET}(T,P,\varepsilon)\rightarrow \text{DET}(T,Q,\varepsilon):(S,h)\rightarrow (S,\gamma^Q_{PS}(h))
$$ 
is a topological line bundle isomorphism.
\end{lemma}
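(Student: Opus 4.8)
The plan is to verify that the algebraically defined map $\what{\gamma}_{T,P,Q,\varepsilon}$, which on each fiber over $S$ is the isomorphism $\gamma^Q_{PS}\colon\det(PS)\to\det(QS)$ from the exact sequence ${\bf E}_{(PS,Q)}$, is continuous as a map between the two topological line bundles (and similarly for its inverse, which has the same shape with the roles of $P$ and $Q$ interchanged on the image side — more precisely, the inverse is again fiberwise an isomorphism $\det(QS)\to\det(PS)$ obtained from the relevant exact sequence, so it suffices to check continuity in one direction once the structure is symmetric, or simply to check that $\what{\gamma}$ is a continuous fiberwise linear isomorphism, hence a bundle isomorphism). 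Since both $\text{DET}(T,P,\varepsilon)$ and $\text{DET}(T,Q,\varepsilon)$ are topological line bundles by Proposition \ref{erty-o}, and a fiberwise linear bijection between line bundles covering the identity is a bundle isomorphism as soon as it is continuous, the whole content is the continuity of $\what{\gamma}_{T,P,Q,\varepsilon}$.

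First I would recall the explicit trivializations furnished by Lemma \ref{reddat}: fixing topological complements $Y_P$ of $\ker(PT)$ and $Y_Q$ of $\ker(QT)$ in $E$, the maps $(S,k)\mapsto(S,k+y_P(k,S))$ and $(S,k)\mapsto(S,k+y_Q(k,S))$ trivialize the kernel bundles $\bigcup\{S\}\times\ker(PS)$ and $\bigcup\{S\}\times\ker(QS)$, where $y_P(k,S)=-(PS|Y_P)^{-1}(PS(k))$ depends continuously on $(k,S)$, and likewise for $y_Q$. Dually, using that $R(PS)=R(P)$ and $R(QS)=R(Q)$ are constant subspaces for $\|S-T\|<\varepsilon$, the cokernel bundles $\bigcup\{S\}\times (F/R(P))$ and $\bigcup\{S\}\times(F/R(Q))$ are already product bundles. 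By Lemma \ref{lemma6.21} these combine into explicit local trivializations of $\text{DET}(T,P,\varepsilon)=\bigcup\{S\}\times\det(PS)$ and of $\text{DET}(T,Q,\varepsilon)$ over a neighborhood of $T$. In these coordinates $\what{\gamma}_{T,P,Q,\varepsilon}$ becomes a map $\{S:\|S-T\|<\varepsilon\}\times\det(PT)\to\{S:\|S-T\|<\varepsilon\}\times\det(QT)$ of the form $(S,h)\mapsto(S,\Theta(S)h)$ for a family of linear isomorphisms $\Theta(S)\colon\det(PT)\to\det(QT)$, and continuity of $\what{\gamma}$ is exactly continuity of $S\mapsto\Theta(S)$, i.e. of a single scalar function since these are one-dimensional spaces.

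The key step is then to unwind the definition of $\gamma^Q_{PS}=\Phi_{{\bf E}_{(PS,Q)}}$ through the formula for the natural isomorphism $\Phi_{\bf E}$ associated to an exact sequence (equation \eqref{eq_Phi_E}) and read off that the resulting scalar depends continuously on $S$. Concretely: choose a basis $a_1,\dots,a_k$ of $\ker(PT)$; transporting by the trivialization gives the basis $a_i+y_P(a_i,S)$ of $\ker(PS)$, which varies continuously in $S$. Applying $PS$ to the extra basis vectors of $\ker(QS)$ beyond $\ker(PS)$ and tracking the determinant of the change-of-basis matrix that appears in the definition of $\Phi_{\bf E}$ — all of whose entries are continuous functions of $S$, since they are built from $PS$, $QS$, the fixed projections, and the continuous vectors $y_P(\cdot,S)$, $y_Q(\cdot,S)$ — shows that $\Theta(S)$, expressed as a nonzero real number in the fixed frames, is a continuous (indeed, it will be seen to be a ratio of polynomials in matrix entries with nonvanishing denominator) function of $S$. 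Hence $\what{\gamma}_{T,P,Q,\varepsilon}$ is continuous; being fiberwise the linear isomorphism $\gamma^Q_{PS}$, it is a topological line bundle isomorphism, and its inverse is continuous by the same argument applied to $(\gamma^Q_{PS})^{-1}$.

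The main obstacle I anticipate is purely bookkeeping: one must be careful that the exact sequence ${\bf E}_{(PS,Q)}$ genuinely depends on $S$ only through continuously-varying data (the space $\ker(PS)$ moves, but $\ker(QS)$, $R(Q)$, and the cokernel $F/R(Q)$ are handled by the constant-subspace observation and Lemma \ref{reddat}), and that the conventions built into $\Phi_{\bf E}$ (the ordering of the appended basis vectors, fixed in Lemma \ref{o6.6}) are applied uniformly in $S$. Once the trivializations of Lemma \ref{reddat} and Lemma \ref{lemma6.21} are in place, there is no analytic difficulty — only the verification that determinants of continuously varying matrices are continuous and nonvanishing, which is immediate. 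So the proof is short: trivialize, express $\gamma^Q_{PS}$ in frames via \eqref{eq_Phi_E}, observe continuity of the resulting scalar, and invoke that a continuous fiberwise-linear isomorphism of line bundles is a bundle isomorphism.
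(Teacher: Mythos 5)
Your proposal is correct and follows essentially the same route as the paper: both trivialize the kernel and cokernel bundles via Lemma \ref{reddat} and the constancy of $R(P)$, $R(Q)$, extend continuous frames for $\ker(PS)$ to continuous frames for $\ker(QS)$ (and similarly for the cokernels), and then read off continuity of $\gamma^Q_{PS}$ in those frames from the explicit formula \eqref{eq_Phi_E}. Your formulation in terms of a scalar transition function $\Theta(S)$ is a slightly more explicit rendering of the paper's ``continuous sections map to continuous sections'' argument, but it is not a different method.
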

\begin{proof}
This is trivial and follows from an inspection of the maps $\gamma_{PS}^Q$.  A sketch of the proof goes as follows.
We start with the exact sequence
$$
0\rightarrow \ker(PS)\rightarrow \ker(QS)\rightarrow F/R(Q)\rightarrow F/R(P)\rightarrow 0.
$$
Here $\norm{S-T}<\varepsilon$. Since $Q,P\in\Pi_T$ and $Q\leq P$,  the kernels of $PS$ and $QS$ define topological bundles if 
we vary the Fredholm operators $S$. This is  trivially true for the cokernel bundles associated to $PS$ and $QS$,  which are honest product bundles,  in view of
$R(PS)=R(P)$ and $R(QS)=R(Q)$. We can take linearly independent continuous sections which span  the kernels of the $PS$. Similarly,  for the cokernels we can take constant sections. Now going through the construction of the maps $\gamma_{PS}^Q$,  we see that we can extend the kernel sections for the $PS$ to
a family of continuous sections which are  point-wise linearly independent and  span the kernels of the $QS$. Proceeding  the same way  with the cokernels, we obtain at the end a continuous family of point-wise linearly independent sections spanning $\ker(QS)$ and $F/R(Q)$. Taking the appropriate wedges
and tensor products we see  that a continuous section of the first line bundle is mapped to a continuous section of the second.
The argument can  be reversed to verify  the continuity of the inverse map.
\end{proof}
}

{
Now we introduce  the bundle
$$
\text{DET}_{(E,F)}=\bigcup_{S\in{\mathcal F}(E,F)} \{S\}\times \det (S)\index{$\text{DET}_{(E,F)}$}
$$
over the space of all Fredholm operators.
At first sight this set seems not to have a lot of structure globally (though near certain $S$ it has some due to the previous discussion). The problem
is that  in the definition of $\det(S)$ the `ingredients' 
$\ker(S)$ and $F/R(S)$ associated to  $S$ might have varying local dimensions. However, in view of the discussion in the previous subsection, we shall see, that we can equip $\text{DET}_{(E,F)}$  with the structure
of a topological line bundle over the space of Fredholm operators ${\mathcal F}(E,F)$. This structure will turn out to be natural, i.e.,  the structure  does not depend on the choices  involved in  its construction. Here, it is important to follow the conventions already introduced.
}

{
We have the projection map
$$
\text{DET}_{(E,F)}\rightarrow {\mathcal F}(E,F),\quad (S,h)\mapsto  S.
$$
The base ${\mathcal F}(E,F)$ has a topology, but the total space $\text{DET}_{(E,F)}$ at this point has not.
Given $T\in{\mathcal F}$ we can invoke Proposition \ref{erty-o} and find $\varepsilon>0$ and $P\in\Pi_T$
such  that 
$$
\gamma_{(T,P,\varepsilon)}\colon \text{DET}_{(E,F)}\vert \{S\, \vert \, \norm{S-T}<\varepsilon\}\rightarrow \text{DET}(T,P,\varepsilon),\quad (S,h)\mapsto  (S,\gamma^P_S(h))
$$
is a bijection which is fibers-wise linear and covers the identity on the base.
}

{
\begin{definition}
Denote by ${\mathcal B}$ the collection of all subsets $B$ of $\text{DET}_{(E,F)}$ having the property  that there exists $\gamma_{(T,P,\varepsilon)}$ for which 
$\gamma_{(T,P,\varepsilon)}(B)$ is open in $\text{DET}(T,P,\varepsilon)$. 
\end{definition} 
}

{
The fundamental topological observation is  the following proposition.
\begin{proposition}
The collection of sets ${\mathcal B}$ is a basis for a topology on $\text{DET}_{(E,F)}$.
\end{proposition}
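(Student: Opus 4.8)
The statement asserts that the collection $\mathcal{B}$ of subsets of $\mathrm{DET}_{(E,F)}$ of the form $\gamma_{(T,P,\varepsilon)}^{-1}(\mathcal{O})$, with $\mathcal{O}$ open in a local determinant bundle $\mathrm{DET}(T,P,\varepsilon)$, is a basis for a topology. The plan is to verify the two standard axioms for a basis: first, that every point of $\mathrm{DET}_{(E,F)}$ lies in some member of $\mathcal{B}$; and second, that if $h \in B_1 \cap B_2$ with $B_1, B_2 \in \mathcal{B}$, then there is $B_3 \in \mathcal{B}$ with $h \in B_3 \subset B_1 \cap B_2$. The first axiom is immediate: given $(S_0, h_0) \in \mathrm{DET}_{(E,F)}$, by Proposition \ref{erty-o} applied to $T = S_0$ there is $\varepsilon > 0$ and $P \in \Pi_{S_0}$ so that $\gamma_{(S_0, P, \varepsilon)}$ is a fiberwise-linear bijection onto $\mathrm{DET}(S_0, P, \varepsilon)$ covering the identity on the base; then $\gamma_{(S_0,P,\varepsilon)}^{-1}(\mathrm{DET}(S_0,P,\varepsilon)) = \mathrm{DET}_{(E,F)}|\{S : \|S-S_0\| < \varepsilon\}$ is a member of $\mathcal{B}$ containing $(S_0, h_0)$.

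The crux is the second axiom, where one must compare two local trivializations $\gamma_{(T_1, P_1, \varepsilon_1)}$ and $\gamma_{(T_2, P_2, \varepsilon_2)}$ on the overlap of their domains. First I would pass to a smaller neighborhood: if $(S_0, h_0)$ lies in both domains, then $\|S_0 - T_i\| < \varepsilon_i$ for $i = 1, 2$, so there is $\delta > 0$ with $\{S : \|S - S_0\| < \delta\}$ contained in both domains. I would then invoke Lemma \ref{nd1} to find a common refinement $P_0 \in \Pi_{T_1} \cap \Pi_{T_2}$ with $P_0 \leq P_1$ and $P_0 \leq P_2$ — one must check $P_0 \in \Pi_{T_i}$ makes sense and also, after possibly shrinking $\delta$ further, that $P_0 \in \Pi_S$ for all $S$ with $\|S - S_0\| < \delta$, using the lemma preceding the statement of Lemma \ref{reddat} (the openness of the conditions $P \in \Pi_S$ and $PS|Y \colon Y \to R(P)$ being an isomorphism). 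With such a common $P_0$, the key tool is Lemma \ref{kimportant}: the maps $\widehat{\gamma}_{S_0, P_i, P_0, \delta} \colon \mathrm{DET}(T_i, P_i, \delta) \to \mathrm{DET}(T_i, P_0, \delta)$ (really one should take these over the common base $\{S : \|S - S_0\| < \delta\}$, which requires recentering the local determinant bundles at $S_0$) are topological line bundle isomorphisms. Composing, the transition map $\gamma_{(T_2, P_2, \varepsilon_2)} \circ \gamma_{(T_1, P_1, \varepsilon_1)}^{-1}$ restricted to the $\delta$-ball is realized, via the two isomorphisms to the common model $\mathrm{DET}(S_0, P_0, \delta)$, as a topological line bundle automorphism; hence it carries open sets to open sets. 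Therefore $\gamma_{(T_2, P_2, \varepsilon_2)}(B_1 \cap B_2')$, where $B_2'$ is the restriction of $B_2$ to the $\delta$-ball, is open in $\mathrm{DET}(T_2, P_2, \varepsilon_2)$, so $B_3 := B_1 \cap B_2'$ (or more precisely a basic set inside it containing $h_0$) lies in $\mathcal{B}$ and is contained in $B_1 \cap B_2$.

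The main obstacle I anticipate is bookkeeping around the \emph{base}: the local determinant bundles $\mathrm{DET}(T_i, P_i, \varepsilon_i)$ live over different balls $\{S : \|S - T_i\| < \varepsilon_i\}$, and to apply Lemma \ref{kimportant} one needs the two bundles and the common-refinement bundle $\mathrm{DET}(S_0, P_0, \delta)$ all over the \emph{same} base $\{S : \|S - S_0\| < \delta\}$; this is just a matter of restricting all the bundles and trivializations to the common ball, but it needs to be said carefully, and one must re-check that $P_0$ is a legitimate good left-projection for \emph{every} $S$ in that ball (not merely for $T_1$, $T_2$, $S_0$), which is where the openness lemma before Lemma \ref{reddat} and a further shrinking of $\delta$ come in. A secondary point to be careful about is that $\widehat{\gamma}$ in Lemma \ref{kimportant} is stated for a fixed $T$ with $Q \leq P$ both in $\Pi_T$; applying it with $T = S_0$ and then noting (via Corollary \ref{KPD}, i.e. the independence of the comparison map from the choice of common lower bound) that the resulting transition map over the $\delta$-ball is well-defined and continuous regardless of which common $P_0$ was chosen. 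Once these base-and-projection subtleties are handled, the continuity of the transition map — hence the basis property — follows formally from Lemmas \ref{nd1}, \ref{reddat}, \ref{kimportant}, and Proposition \ref{erty-o}.
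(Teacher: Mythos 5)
Your proposal is correct and follows essentially the same route as the paper: find a common lower bound $Q\le P_1$, $Q\le P_2$ via Lemma \ref{nd1}, use Corollary \ref{KPD} to realize the transition map as $(S,h)\mapsto(S,(\gamma^Q_{P_2 S})^{-1}\circ\gamma^Q_{P_1 S}(h))$, and conclude continuity from Lemma \ref{kimportant}. Your extra care about restricting all bundles to a common base ball and rechecking that $Q\in\Pi_S$ there is a reasonable fleshing-out of details the paper leaves implicit, but it is not a different argument.
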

}

{
Postponing the proof for the moment, we denote the topology associated with ${\mathcal B}$ by ${\mathcal T}$,  and  obtain the following result.
\begin{proposition}\index{P- Line bundle structure of $\text{DET}_{(E,F)}$}
The set $\text{DET}_{(E,F)}$ with its linear structure on the fibers, equipped with the topology ${\mathcal T}$, 
is a topological line bundle. The maps
$$
\gamma_{(T,P,\varepsilon)}\colon \text{DET}_{(E,F)}|\{S\ |\ \norm{S-T}<\varepsilon\}\rightarrow \text{DET}(T,P,\varepsilon),\quad (S,h)\rightarrow (S,\gamma^P_S(h))
$$
are topological bundle isomorphisms. 
\end{proposition}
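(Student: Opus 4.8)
The statement has two parts: first, that $\mathrm{DET}_{(E,F)}$ equipped with the topology $\mathcal{T}$ associated to the basis $\mathcal{B}$ is a topological line bundle over $\mathcal{F}(E,F)$; second, that each map $\gamma_{(T,P,\varepsilon)}$ is a topological bundle isomorphism. The overall strategy is to reduce both assertions to facts already established: the local triviality statement of Proposition~\ref{erty-o} (that each $\mathrm{DET}(T,P,\varepsilon)$ is a topological line bundle), the compatibility statement of Lemma~\ref{kimportant} (that the transition maps $\what{\gamma}_{T,P,Q,\varepsilon}$ between the local models are topological line bundle isomorphisms), and the cocycle identity $\gamma^P_{QS}\circ\gamma^Q_S=\gamma^P_S$ of Proposition~\ref{ooo6.6} applied fiberwise. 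So the first thing I would do is record that, because $\mathcal{B}$ is already shown to be a basis for a topology $\mathcal{T}$ (the preceding proposition, whose proof is deferred), each $\gamma_{(T,P,\varepsilon)}$ is by construction a homeomorphism from $\mathrm{DET}_{(E,F)}\vert\{S:\norm{S-T}<\varepsilon\}$ with the subspace topology onto $\mathrm{DET}(T,P,\varepsilon)$: indeed a set $B$ in the domain is open iff $\gamma_{(T,P,\varepsilon)}(B)$ is open, which is precisely the defining property of $\mathcal{B}$ once one checks that the sets $\gamma_{(T,P,\varepsilon)}(B)$ form a neighborhood basis there.

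\textbf{Key steps.} Step 1: verify that the projection $p\colon\mathrm{DET}_{(E,F)}\to\mathcal{F}(E,F)$, $(S,h)\mapsto S$, is continuous and open for $\mathcal{T}$. Continuity is immediate because in each local chart $\gamma_{(T,P,\varepsilon)}$ the projection becomes the continuous projection of the local line bundle $\mathrm{DET}(T,P,\varepsilon)$ composed with $\gamma_{(T,P,\varepsilon)}$, which is a homeomorphism; openness likewise transfers from the local models. Step 2: show the fibers $p^{-1}(S)=\{S\}\times\det(S)$ carry their given vector space structure and that addition and scalar multiplication are continuous in $\mathcal{T}$; this again reduces to the local models, where $\gamma_{(T,P,\varepsilon)}$ restricts on each fiber to the linear isomorphism $\gamma^P_S\colon\det(S)\to\det(PS)$, so the fiberwise-linear structure pulled back from $\mathrm{DET}(T,P,\varepsilon)$ coincides with the intrinsic one. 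Step 3: local triviality. Given $T\in\mathcal{F}(E,F)$, choose $P\in\Pi_T$ and $\varepsilon>0$ as in Proposition~\ref{erty-o}; then $\gamma_{(T,P,\varepsilon)}$ is a fiberwise-linear homeomorphism onto $\mathrm{DET}(T,P,\varepsilon)$, which by Proposition~\ref{erty-o} is a trivial topological line bundle over the ball $\{S:\norm{S-T}<\varepsilon\}$; composing gives the required local trivialization of $\mathrm{DET}_{(E,F)}$. This simultaneously proves the second assertion, namely that each $\gamma_{(T,P,\varepsilon)}$ is a topological bundle isomorphism. Step 4: check that the structure is well-defined, i.e. independent of the auxiliary choices $(T,P,\varepsilon)$ — this is exactly where Lemma~\ref{kimportant} enters, since on an overlap the two local trivializations differ by the transition map, and one must see this transition map is $\what{\gamma}_{T,P,Q,\varepsilon'}$ (on a common sub-ball, after passing via a common refinement $Q\le P$ and $Q\le P'$ obtained from Lemma~\ref{nd1}), which Lemma~\ref{kimportant} asserts is a topological line bundle isomorphism; the consistency of the chain of such maps on triple overlaps follows from the cocycle identity in Corollary~\ref{KPD} and Proposition~\ref{ooo6.6}.

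\textbf{Main obstacle.} The genuinely delicate point — and the one whose proof is postponed in the excerpt — is that $\mathcal{B}$ actually is a basis for a topology, equivalently that for any two charts $\gamma_{(T_1,P_1,\varepsilon_1)}$ and $\gamma_{(T_2,P_2,\varepsilon_2)}$ and any point $(S_0,h_0)$ in the overlap of their domains, there is a basis element containing $(S_0,h_0)$ inside the overlap. This is where one must combine Lemma~\ref{nd1} (existence of a common refinement $Q\in\Pi_{T_1}\cap\Pi_{T_2}$ with $Q\le P_1$, $Q\le P_2$, valid near $S_0$), the observation that $Q\in\Pi_S$ for $S$ near $S_0$, and Lemma~\ref{kimportant} giving that $\what{\gamma}_{T_i,P_i,Q,\delta}$ are homeomorphisms, so that the two chart images of a small open set agree up to a homeomorphism and hence one can intersect them. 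I expect the bookkeeping of shrinking $\varepsilon$'s and verifying the refinement $Q$ lies in $\Pi_S$ uniformly on a neighborhood to be the only real work; once that compatibility is in hand, Steps~1--4 are formal transfers through the homeomorphisms $\gamma_{(T,P,\varepsilon)}$, using Proposition~\ref{erty-o}, Lemma~\ref{kimportant}, and Proposition~\ref{ooo6.6}.
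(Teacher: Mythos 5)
Your proposal is correct and follows essentially the same route as the paper: the paper's entire proof of this proposition (and of the companion basis-for-a-topology proposition) is the single lemma that the transition maps $\gamma_{(T',P',\varepsilon')}\circ\gamma_{(T,P,\varepsilon)}^{-1}$ are topological isomorphisms, established by choosing a common refinement $Q\leq P$, $Q\leq P'$ and invoking Corollary~\ref{KPD} and Lemma~\ref{kimportant} exactly as you do in your ``main obstacle'' paragraph. Your Steps 1--3 make explicit the formal transfers that the paper leaves implicit, but the substance — common refinement via Lemma~\ref{nd1}, factoring the transition map through $(\gamma^Q_{P'S})^{-1}\circ\gamma^Q_{PS}$, and continuity via Lemma~\ref{kimportant} — matches the paper.
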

}

{
In order to prove the previous two propositions we recall  the maps $\gamma_{(T,P,\varepsilon)}$ and $\gamma_{(T',P',\varepsilon')}$. We are interested in the transition map
$\gamma_{(T',P',\varepsilon')}\circ \gamma_{(T,P,\varepsilon)}^{-1}$. The domain of this map  consists of all pairs $(S,h)$ satisfying  $\norm{T-S}<\varepsilon$ and $\norm{S-T'}<\varepsilon'$ and $h\in \det(PS)$. The transition map preserves the base point and maps $h$ to an element in $\det(P'S)$. This transition map is obviously an algebraic isomorphism between
two topological line bundles. 
}

{
\begin{lemma}
The transition map $\gamma_{(T',P',\varepsilon')}\circ \gamma_{(T,P,\varepsilon)}^{-1}$ is a topological isomorphism between topological line bundles.
\end{lemma}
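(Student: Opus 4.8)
The plan is to reduce the claim to the local triviality statement established in Lemma \ref{kimportant} together with Corollary \ref{KPD}. Fix two charts $\gamma_{(T,P,\varepsilon)}$ and $\gamma_{(T',P',\varepsilon')}$, and let $W$ denote the intersection of their bases in $\mathcal{F}(E,F)$, i.e. the set of $S$ with $\norm{S-T}<\varepsilon$ and $\norm{S-T'}<\varepsilon'$. The transition map sends $(S,h)$ with $h\in\det(PS)$ to $(S,(\gamma^{P'}_S)\circ(\gamma^P_S)^{-1}(h))\in\det(P'S)$, so it is fiberwise linear and covers the identity on $W$; the only thing to check is that it and its inverse are continuous with respect to the topologies of $\text{DET}(T,P,\varepsilon)$ and $\text{DET}(T',P',\varepsilon')$, which are genuine topological line bundles by Proposition \ref{erty-o}.

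First I would reduce to the situation of a common refining projection. Given $S\in W$, both $P$ and $P'$ lie in $\Pi_T\cap\Pi_{T'}\subset\Pi_S$ (shrinking $\varepsilon,\varepsilon'$ if necessary, using the lemma preceding Lemma \ref{reddat}). By Lemma \ref{nd1} there is $Q\in\Pi_S$ with $Q\leq P$ and $Q\leq P'$; by continuity of the construction one can choose a single $Q\in\Pi_T\cap\Pi_{T'}$ with $Q\leq P$ and $Q\leq P'$ that works on a possibly smaller common neighborhood $W'$ of the given base point. Then Lemma \ref{kimportant} gives topological bundle isomorphisms $\what{\gamma}_{T,P,Q,\varepsilon}\colon\text{DET}(T,P,\varepsilon)|W'\to\text{DET}(T,Q,\varepsilon)|W'$ and $\what{\gamma}_{T',P',Q,\varepsilon'}\colon\text{DET}(T',P',\varepsilon')|W'\to\text{DET}(T',Q,\varepsilon')|W'$. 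Moreover $\text{DET}(T,Q,\varepsilon)|W'$ and $\text{DET}(T',Q,\varepsilon')|W'$ are literally the same set $\bigcup_{S\in W'}\{S\}\times\det(QS)$ with the same topology, since the topology built in Proposition \ref{erty-o} depends only on the family $S\mapsto QS$ over $W'$, not on which operator $T$ or $T'$ was used to produce $Q$.

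Next, by Proposition \ref{ooo6.6} we have $\gamma^Q_{PS}\circ\gamma^P_S=\gamma^Q_S$ and $\gamma^Q_{P'S}\circ\gamma^{P'}_S=\gamma^Q_S$ for every $S\in W'$, i.e. the diagram of $\gamma$'s commutes. Hence on $W'$ the transition map $\gamma_{(T',P',\varepsilon')}\circ\gamma_{(T,P,\varepsilon)}^{-1}$ equals the composite
$$
\what{\gamma}_{T',P',Q,\varepsilon'}^{-1}\circ\what{\gamma}_{T,P,Q,\varepsilon},
$$
which is a composition of two topological line bundle isomorphisms and therefore a topological isomorphism. Since the base point of $W$ was arbitrary, the transition map is a topological isomorphism on all of $W$; in particular it and its inverse carry $\mathcal{B}$-open sets to $\mathcal{B}$-open sets, which is exactly the compatibility statement needed. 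I expect the main obstacle to be the bookkeeping in the reduction to a common $Q$: one must verify that a single good left-projection $Q$ can be chosen to lie in $\Pi_T\cap\Pi_{T'}$, to satisfy $Q\leq P$ and $Q\leq P'$ simultaneously, and to remain valid (via the openness lemma before Lemma \ref{reddat}) on a full neighborhood of the chosen base point rather than just at that point. Everything else is a formal consequence of Proposition \ref{ooo6.6}, Lemma \ref{kimportant}, and the observation that $\text{DET}(\,\cdot\,,Q,\,\cdot\,)$ depends only on the family $S\mapsto QS$.
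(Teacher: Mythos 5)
Your proof takes essentially the same approach as the paper: both choose a common refinement $Q\leq P$, $Q\leq P'$ (via Lemma \ref{nd1}), use Proposition \ref{ooo6.6}/Corollary \ref{KPD} to rewrite the transition map as $(S,h)\mapsto(S,(\gamma^Q_{P'S})^{-1}\circ\gamma^Q_{PS}(h))$, and then invoke Lemma \ref{kimportant} to conclude continuity. Your version is slightly more careful about checking that $Q$ remains a good left-projection on a full neighborhood of the base point (which the paper leaves implicit, relying on the openness lemma preceding Lemma \ref{reddat}), but this is additional bookkeeping on the same argument, not a different route.
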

\begin{proof}
It suffices to prove the continuity of the transition map near a base point $S_0$, since the same argument also applies to the inverse map. We choose a projection $Q$ satisfying $Q\leq P$ and $Q\leq P'$. In view of Corollary \ref{KPD},  the transition map 
$\gamma_{(T',P',\varepsilon')}\circ \gamma_{(T,P,\varepsilon)}^{-1}$
is equal to 
$$
(S,h)\mapsto (S,(\gamma_{P'S}^Q)^{-1}\circ\gamma_{PS}^Q(h)).
$$
This map is continuous,  in view of Lemma \ref{kimportant}.
\end{proof}
}

{
Let us finally remark that it is known that the line bundle $\text{DET}_{(E,F)}$ is non-orientable. However,  over certain subsets it is orientable and that will be used in the discussion of sc-Fredholm sections.
}

\subsection{Local Orientation Propagation and Orientability}

{
Viewed locally, a classical Fredholm section $f$ is a smooth map $f\colon U\to F$ from an open subset $U$ of some Banach space $E$
into  a Banach space $F$. The derivatives $Df(x)$,  $x\in U$,  form a  family of Fredholm operators depending continuously as operators on $x\in U$.
The determinant line bundle of this family of Fredholm operators is a smooth line bundle over $U$. If $U$ is contractible, this line bundle possesses precisely two possible orientations because a line has precisely two possible orientations.  Indeed, in this case, the orientation of one single line in the bundle determines 
a natural orientation of all the other lines. We might view 
this procedure as a local continuous propagation of an orientation.
}

{
This method, however, is not applicable in the sc-Fredholm setting because the linearizations do not, in general,  depend continuously on the base point. But there are additional difficulties. For example, we are confronted with nontrivial bundles where the dimensions 
can locally jump. Nevertheless there is enough structure 
which allows to define a propagation of the orientation 
and this is the core of the orientation theory in the following section.
}

{
We consider  the sc-Fredholm section $f$ of the strong M-polyfold bundle $P\colon W\rightarrow X$ over the tame $M$-polyfold $X$, choose  for a smooth point $x\in X$, 
a locally defined $\ssc^+$-section $s$ satisfying  $s(x)=f(x)$,  and take the linearization $(f-s)'(x)\colon T_xX\rightarrow Y_x$.  The linearization $(f-s)'(x)$ belongs to
the space of linearizations 
$\text{Lin}(f,x)$ which is a subset of linear Fredholm 
 operators $T_xX\to Y_x$ possessing the induced metric defined by the norm of the space ${\mathscr L}(T_xX, Y_x)$ of bounded operators.
Moreover, $\text{Lin}(f,x)$ is a convex subset and therefore contractible. Introducing the line bundle
$$
\text{DET}(f,x):=\bigcup_{L\in \text{Lin}(f,x)}\{L\}\times \text{det}(L)\index{$\text{DET}(f,x)$}
$$
over the convex set $\text{Lin}(f,x)$,  the previous 
discussion shows that $\text{DET}(f,x)$ is a topological line bundle.
\begin{proposition}\index{P- Line bundle structure for $\text{DET}(f,x)$}
$\text{DET}(f,x)$ has in a natural way the structure of a topological line bundle over $\text{Lin}(f,x)$ and consequently has two possible orientations since the base space
is contractible.
\end{proposition}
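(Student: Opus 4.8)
The plan is to realize $\text{DET}(f,x)$ as the restriction of the topological line bundle $\text{DET}_{(E,F)}$ constructed in the previous subsection to a convex (hence contractible) subset of the space of classical Fredholm operators between the level-$0$ Banach spaces, and then invoke the standard fact that a line bundle over a contractible paracompact base is trivial, hence orientable with exactly two orientations.

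First I would fix the level-$0$ Banach spaces $E := (T_xX)_0$ and $F := (Y_x)_0$. Every $L \in \text{Lin}(f,x)$ is a sc-Fredholm operator $T_xX \to Y_x$; by the very definition of a sc-Fredholm operator it splits sc-compatibly as $T_xX = \ker L \oplus X$ and $Y_x = C \oplus \operatorname{im} L$, with $\ker L$ and $C$ finite-dimensional and $L\vert X\colon X \to \operatorname{im} L$ a sc-isomorphism. Hence on level $0$ the induced map $L\colon E \to F$ is a classical Fredholm operator whose kernel and cokernel are the very same finite-dimensional spaces $\ker L$ and $C$ that occur on all levels; in particular $\det(L)$ in the sense of Definition \ref{def_determinant_1} coincides canonically with the level-$0$ Fredholm determinant. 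Since $E_\infty$ is dense in $E$, the assignment $L \mapsto L\vert E$ is injective, and (because $\text{Lin}(f,x)$ carries the metric inherited from $\mathscr{L}(T_xX,Y_x)$) it is an isometric, hence topological, embedding of $\text{Lin}(f,x)$ into the space $\mathcal{F}(E,F)$ of Fredholm operators $E \to F$. Moreover its image is convex: by definition $\text{Lin}(f,x) = (f-s)'(x) + \{a \mid a\colon T_xX \to Y_x \text{ a } \ssc^+\text{-operator}\}$, and the $\ssc^+$-operators form a linear subspace of $\mathscr{L}(E,F)$, so $\text{Lin}(f,x)$ is an affine translate of a linear subspace.

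With these identifications in place I would define the topological structure on $\text{DET}(f,x)$ to be the one obtained by restriction, i.e. $\text{DET}(f,x) = \text{DET}_{(E,F)}\vert\,\text{Lin}(f,x)$; the restriction of a topological line bundle to a subspace of its base is again a topological line bundle. Naturality — independence of the sc-splittings, good left-projections $P \in \Pi_L$, and radii $\varepsilon$ used in the local charts — is inherited from the corresponding naturality of the structure on $\text{DET}_{(E,F)}$, which was established by means of Corollary \ref{KPD} and Lemma \ref{kimportant}. (Alternatively one can repeat the local construction directly: around each $L_0 \in \text{Lin}(f,x)$ pick $\varepsilon>0$ and $P \in \Pi_{L_0}$, form $\text{DET}(L_0,P,\varepsilon)$ over $\{L \in \text{Lin}(f,x) \mid \norm{L-L_0}<\varepsilon\}$ as in Proposition \ref{erty-o}, and verify continuity of the transition maps verbatim as in Lemma \ref{kimportant}; this yields the same structure.) Finally, $\text{Lin}(f,x)$ is convex, hence contractible, and being a metric space it is paracompact, so $\text{DET}(f,x)$ is a trivial line bundle; a trivial line bundle over a path-connected base admits exactly two orientations, namely the two fiberwise orientations of $\text{Lin}(f,x)\times\R$, which is the claim. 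The only point that requires genuine (if modest) care — the main obstacle — is the verification that the sc-Fredholm determinant $\det(L)$ agrees, naturally in $L$, with the level-$0$ Fredholm determinant, so that the restriction identification above is legitimate; this reduces to the observation, immediate from the definition of a sc-Fredholm operator, that $\ker L$ and $\operatorname{coker} L$ are level-independent.
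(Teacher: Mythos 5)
Your proof is correct and takes essentially the same approach as the paper. The paper simply asserts, immediately before stating the proposition, that "the previous discussion shows that $\text{DET}(f,x)$ is a topological line bundle"—that is, it implicitly restricts the line bundle $\text{DET}_{(E,F)}$ over $\mathcal{F}(E,F)$ to the convex subset $\text{Lin}(f,x)$; your write-up makes explicit the points the paper leaves implicit, namely that the $\ssc$-determinant agrees with the level-$0$ Fredholm determinant (because the kernel of an $\ssc$-Fredholm operator is level-independent and the cokernel is represented by a fixed finite-dimensional smooth complement $C$), that $\text{Lin}(f,x)$ sits in $\mathcal{F}((T_xX)_0,(Y_x)_0)$ as a convex subset via the level-$0$ restriction, and that a line bundle over a contractible paracompact base is trivial.
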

}

{
\begin{definition}[{\bf Orientation}]\index{D- Orientations of $\text{DET}(f,x)$}
Let $f$ be a sc-Fredholm section of the strong M-polyfold bundle $P\colon Y\to X$ over the tame M-polyfold $X$ and $x\in X$ a smooth point. Then an {\bf orientation for the pair $(f,x)$},  denoted by 
$\mathfrak{o}_{(f,x)}$,  is a choice
of one of the two possible orientations of  
$\text{DET}(f,x)$.
\end{definition}
}

{
Let us denote by ${\mathcal O}_f$ the orientation space associated to $f$,  which is the collection of all pairs $(x,\mathfrak{o})$, in which $x\in X_\infty$ and $\mathfrak{o}$ is an orientation of $\text{DET}(f,x)$.
We consider a category whose  objects are the sets $Or_x^f:=\{\mathfrak{o}_x,-\mathfrak{o}_x\}$ of the two possible orientations of $\text{DET}(f,x)$. The reader should not be confused by our notation.
There is no distinguished class $\mathfrak{o}_x$. We only know there are two possible orientations, namely $\mathfrak{o}_x$ and $-\mathfrak{o}_x$.
For any two smooth points $x$ and $y$ 
in the same path component of $X$ the associated morphism set consists of two isomorphisms. The first one maps $\pm\mathfrak{o}_x\rightarrow \pm\mathfrak{o}_y$
and the second one $\pm\mathfrak{o}_x\rightarrow \mp\mathfrak{o}_y$.
}


{
\begin{proposition}\label{umbra}\index{P- Basic construction for orientation {I}}
If $f$ is a  sc-Fredholm section of the strong M-polyfold bundle $P\colon Y\rightarrow X$,  then every smooth point $x$ possesses  an open neighborhood $U=U(x)$ having the following properties.
\begin{itemize}
\item[{\em (1)}] $U$ is sc-smoothly contractible.
\item[{\em (2)}] The solution set 
$\{x\in \cl_X (U)\, \vert \, f(x)=0\}$ is compact.
\item[{\em (3)}] Given a sc-smooth path $\phi\colon [0,1]\rightarrow U$,  there exists a $\ssc^+$-section $s\colon U\times [0,1]\rightarrow Y$
having  the property that $s(\phi(t),t)=f(\phi(t))$. 
\item[{\em (4)}] If  $t_0\in [0,1]$, $y_0\in U_\infty$,  and $e_0$ is  a smooth point in $Y_{y_0}$, then  there exists a $\ssc^+$-section $q\colon U\times [0,1]\rightarrow Y$ satisfying $q(y_0,t_0)=e_0$.
\end{itemize}
\end{proposition}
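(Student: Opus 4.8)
\textbf{Proof strategy for Proposition \ref{umbra}.}

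The plan is to produce the neighborhood $U$ by intersecting finitely many neighborhoods, each obtained from a separate property, and then to verify that all four conditions survive the intersection. First I would fix the smooth point $x\in X_\infty$ and pass to a strong bundle chart around $x$, so that we work in a local strong bundle retract $K=R(U'\triangleleft F)\to O$ with $r(U')=O$, $\varphi(x)=0\in O$, and with the local section germ $\wh{f}=\Phi\circ f\circ\varphi^{-1}$. In these coordinates all the assertions are statements about a sc-smooth section of a local strong bundle, and I intend to produce a basic neighborhood of $0$ in $O$ of the form $O\cap W$ for a suitable relatively open convex (hence sc-smoothly contractible) neighborhood $W$ of $0$ in $C$; pulling back by $\varphi$ gives the neighborhood $U$ in $X$. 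Contractibility of such a chart neighborhood comes from choosing $W$ convex and using the sc-smooth retraction $r$ together with the linear contraction $(a,t)\mapsto ta$ of $W$; this gives (1), and it is preserved under further shrinking as long as we keep $W$ convex.

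Next I would handle (2). Since $f$ is a sc-Fredholm section and $x$ is a solution point only when $f(x)=0$ — if $f(x)\ne 0$, by continuity we simply shrink $U$ so that $f$ has no zeros on $\cl_X(U)$ and (2) is trivial. If $f(x)=0$, then Theorem \ref{save-1} (Local Compactness) furnishes a nested sequence of neighborhoods $U(i)$ with $\cl_{X_0}(\{y\in U(i)\mid f(y)=0\})$ compact in $X_i$; taking $U$ to be such a $U(i)$, and shrinking further to an open set whose closure is contained in it, gives (2). Both (1) and (2) are monotone under shrinking, so intersecting the two choices of $U$ is harmless.

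For (3) and (4) I would invoke the standing hypothesis of this section that all occurring M-polyfolds admit sc-smooth partitions of unity. For (4): $y_0$ is a smooth point, $e_0\in Y_{y_0}$ is smooth, so working in a strong bundle chart around $y_0$ and using the local strong bundle retraction $R(u,h)=(r(u),\rho(u)h)$ one sets $q_0(u)=R(u,\wh{e}_0)$ where $\wh e_0$ is the principal part of $e_0$; this is a local $\ssc^+$-section with $q_0(y_0)=e_0$ since $R(y_0,\wh e_0)=e_0$. Multiplying by a sc-smooth bump function equal to $1$ near $y_0$ and supported in $U\times[0,1]$ (here we use Proposition \ref{prop-x5.36}, valid because $X$, and hence $U\times[0,1]$, admits sc-smooth bump functions), and extending by $0$, yields the desired $q$ on $U\times[0,1]$; constancy of the construction in the $[0,1]$-variable is immediate. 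For (3): given a sc-smooth path $\phi\colon[0,1]\to U$, its graph $\Gamma=\{(\phi(t),t)\}$ is a sc-smooth compact arc in $U\times[0,1]$; around each $(\phi(t),t)$ one constructs, exactly as in (4) but now with $e_0=f(\phi(t))$ (a smooth point, since $f$ is regularizing and $\phi(t)\in X_\infty$), a local $\ssc^+$-section agreeing with $f$ along the graph at that parameter, namely $s_{\text{loc}}(u)=R(u,\wh{\bf f}(\phi(t)))$; one then patches these local sections along $\Gamma$ using a sc-smooth partition of unity subordinate to a locally finite cover of $U\times[0,1]$ that is adapted to $\Gamma$, obtaining a global $\ssc^+$-section $s$ on $U\times[0,1]$ with $s(\phi(t),t)=f(\phi(t))$. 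Shrinking $U$ for (3) or (4) does not affect (1) or (2), so the final $U$ is the intersection of all the neighborhoods produced, and it satisfies (1)–(4).

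\textbf{Main obstacle.} The genuinely delicate point is (3): one must patch the locally-defined $\ssc^+$-sections — each of which agrees with $f$ only at one point of the path — into a single $\ssc^+$-section agreeing with $f$ \emph{all along} $\Gamma$. The patching must be done by a sc-smooth partition of unity (so that the $\ssc^+$-property and sc-smoothness of the sum are retained, using that a convex combination of $\ssc^+$-sections is again $\ssc^+$ and that sc-smoothness of a locally finite sum follows from local finiteness), and one must arrange the cover so that the value condition $s(\phi(t),t)=f(\phi(t))$ holds identically and not merely at isolated parameters; this is where availability of sc-smooth partitions of unity (rather than only bump functions) is essential, and it is the reason the proof is organized around that hypothesis. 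Everything else is routine, being either preservation of openness/contractibility under shrinking or a direct application of Theorem \ref{save-1}.
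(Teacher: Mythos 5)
You have the right architecture for (1) and (2) --- pass to a local strong bundle chart, take a convex neighborhood to get sc-smooth contractibility, invoke local compactness (Theorem \ref{save-1}), shrink; and your observation that (1), (2) are stable under shrinking is correct. The construction for (4) also works, although it is heavier than necessary: once you are inside a single chart, the formula $q(y,t)=R(y,{\bf e}_0)$ is already a globally defined $\ssc^+$-section on $U\times[0,1]$ (with $R$ the strong bundle retraction), so no bump-function cutoff is needed.

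The real issue is (3), and you yourself flag it as the ``genuinely delicate point'' without filling the gap. As written, for each parameter $t$ you construct a local section $s_{\mathrm{loc}}(u)=R(u,\wh{\bf f}(\phi(t)))$ in which $t$ is \emph{frozen} and only $u$ is the argument; such a section agrees with $f$ along the graph of $\phi$ \emph{only at the single parameter $t$}. If you then patch finitely many of these with a sc-smooth partition of unity $\{\beta_\alpha\}$, the value at an interior graph point $(\phi(t),t)$ is a genuine convex combination of the various $s_\alpha(\phi(t),t)$, which are in general all different from $f(\phi(t))$; the partition-of-unity argument only preserves a pointwise identity if \emph{every} local section already satisfies it on the full overlap of its support with the graph. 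Your proposal never explains how to force that, and the obvious way to force it --- letting the second argument vary, i.e.\ using $s_\alpha(u,t)=R(u,{\bf f}(\phi(t)))$ --- makes the patching and the whole partition-of-unity machinery unnecessary. That is exactly what the paper does: in the local chart, define the \emph{single} global section $s(y,t)=R(y,{\bf f}(\phi(t)))$ on $U\times[0,1]$, which is $\ssc^+$ because $f$ is regularizing (so ${\bf f}(\phi(t))$ is smooth for each $t$), and which satisfies $s(\phi(t),t)=R(\phi(t),{\bf f}(\phi(t)))=R(f(\phi(t)))=f(\phi(t))$ since $R$ is a retraction onto $K$. In particular, Proposition \ref{umbra} does not require sc-smooth partitions of unity at all.
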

}

\begin{proof}

{
Taking local strong bundle coordinates coordinates we may assume that the bundle is $P\colon K\rightarrow O$ and $x=0$.  Here $K=R(V\triangleleft F)$ is the strong bundle retract covering the sc-retraction $r$ satisfying  $r(V)=O$.
As usual,  $V\subset C$ is an open neighborhood of $0$ in the partial quadrant $C$ of the sc-Banach space $E$. 
}

{
There is no loss of generality
assuming that $V$ is convex. Note that in this case $O$ is sc-smoothly contractible by the contraction  
$$
\what{r}\colon [0,1]\times O\rightarrow O,\quad \what{r}(t,y)=r(ty).
$$
Indeed, for $y\in O$ we have that $y\in V$,  and since $0\in V$, we have  for $t\in [0,1]$ that $ty\in V$, so that $r(ty)$ is defined. 
If $t=1$,  then $r(1y)=r(y)=y$ and if  $t=0$, then $r(0y)=r(0)=0$. 
The  sc-Fredholm section $f$ possesses the local compactness property. We can therefore choose  a convex neighborhood $V'$ of $0\in C$
such  that $U=r(V')$ is sc-smoothly contractible and so small that $\cl_C(U)\subset O$ and the restriction $f\vert \cl_C(U)$ has a compact solution set.
}

{
At this point we have constructed an open neighborhood $U$ of $x$ which has the first two properties.
Shrinking $U$ further, so that it is still the retract of a convex open subset of $V$,  will keep these properties.  We  have to show that an additional shrinking will guarantee (3) and (4).
}

{
The section $f$ of $K\rightarrow O$ has the form $f(y)=(y,{\bf f}(y))$ where $R(f(y))=f(y)$.
The map  ${\bf f}$ is the  principle part of the section $f$ and is sc-smooth as a map from 
$O$ to $F$.
If $\phi\colon [0,1]\rightarrow U$ is a sc-smooth path, $t\in [0,1]$ and $y\in U$, then 
$(y,{\bf f}(\phi(t)))\in O\triangleleft F\subset V\triangleleft F$  and we define the map
$s\colon U\times [0,1]\rightarrow K$ by 
$$
s(y,t)=R(y,{\bf f}(\phi(t))).
$$
}

{The map  $s$ is a $\ssc^+$-section of the pull-back of the bundle $K\rightarrow O$ by the map $U\times [0,1]\rightarrow O$, $(y, t)\mapsto y$.
By construction,  $s(\phi(t),t)=R(\phi(t),{\bf f}(\phi(t)))=R(f(\phi(t)))=f(\phi(t))$.  This proves (3).
}

{
The statement (4) was already proved in some variation  in the sections about Fredholm theory.  Formulated in the local
coordinates, the statement (4) assumes that  the  smooth point $e_0=(y_0,{\bf e_0})\in K$ and 
$t_0\in 0,1]$ are given.
The required  section $q\colon O\times [0,1]\to K$ can be then defined as the section 
$$
q(y,t)=R(y,{\bf e_0}).
$$
It satisfies  $q(y_0,t_0)=R(y_0,{\bf e_0})=(y_0,{\bf e_0})=e_0$, as desired.
}
\end{proof}

{
Now we are in the position to define a propagation mechanism. Since it will involve several choices we have to make sure that the end result is independent of the choices involved. 
The set up is as follows. We have a strong bundle $P\colon Y\rightarrow X$ over the M-polyfold $X$ and an sc-Fredholm section $f$ of $P$. Around a smooth point $x\in X$ we choose  an open neighborhood $U=U(x)$ so that the statements (1)-(4) of Proposition \ref{umbra} hold.
}

{
There is no loss of generality assuming that $X=U$. If 
 $\phi\colon [0,1]\rightarrow X$ is a sc-smooth path, we employ Proposition \ref{umbra}
and choose  a $\ssc^+$-section 
$s:X\times [0,1]\rightarrow W$ satisfying  $s(\phi(t),t)=f(\phi(t))$. 
}

{
Adding  finitely many $\ssc^+$-sections $s_1,\ldots ,s_k$ defined on $X\times [0,1]$, we obtain the 
sc-Fredholm section
$$
F\colon  [0,1]\times X\times {\mathbb R}^k\rightarrow W,\quad F(t,y,\lambda)=f(y)-s(y,t)+\sum_{i=1}^k\lambda_i\cdot s_i(y,t)
$$
having the property that  for $\lambda=0$ the points $(t,\phi(t),0)$ are solutions of  $F(t,\phi(t),0)=0$. For fixed $t\in [0,1]$ we introduce the  sc-Fredholm section
$$
F_t\colon X\times {\mathbb R}^k\rightarrow Y,\quad (y,\lambda)\rightarrow F(t,y,\lambda).
$$
The discussion, so far,  is true for all choices $s_1,\ldots ,s_k$. Using the results from the transversality theory we can choose  
$s_1,\ldots ,s_k$ such  that, in addition, $F_t$ has a good boundary behavior.
}

{
\begin{lemma}  \label{lem_property_star}
There exist finitely many $\ssc^+$-sections $s_1,\ldots ,s_k$ of the bundle  $Y\rightarrow [0,1]\times X$
such  that the sc-smooth Fredholm section $F$ of the bundle $Y\rightarrow [0,1]\times {\mathbb R}^k\times X$,  defined by
$$
F(t,y,\lambda) = f(y)-s(y,t)+\sum_{i=1}^k \lambda_i\cdot s_i(y,t),
$$
has the following property (P).
\begin{itemize}
\item[{\em (P)}] For every fixed $t\in [0,1]$,  the section $F_{t}$ is at the point $(\phi(t),0)$  in general position to the boundary of $X\times {\mathbb R}^k$.
\end{itemize}
\end{lemma}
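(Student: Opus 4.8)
The statement is a pointwise transversality normal form: at each individual parameter value $t$, we want $F_t$ to be in general position at the single point $(\phi(t),0)$. The key simplification — which distinguishes this from the global perturbation theorems like Theorem \ref{thm_pert_and_trans} — is that this is a requirement at only \emph{one} solution point per slice, and that point is known explicitly. So the plan is to run the standard finite-dimensional reduction/transversality argument, but tracking carefully that the finitely many $\ssc^+$-sections $s_1,\dots,s_k$ can be chosen once and for all, uniformly in $t\in[0,1]$, to make property (P) hold simultaneously at every slice.

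First I would fix a specific $t_0\in[0,1]$ and analyze what "general position at $(\phi(t_0),0)$" requires for $F_{t_0}$. By Proposition \ref{prop3_52}, the linearization $F_{t_0}'(\phi(t_0),0)\colon T_{\phi(t_0)}X\oplus\R^k\to Y_{\phi(t_0)}$ is a sc-Fredholm operator, where I use that $s(\phi(t_0),t_0)=f(\phi(t_0))$ so that this is genuinely a linearization at a zero of $F_{t_0}$. General position (Definition of "general position of $(f,x)$" in Section 5.3) requires two things: that this linearization be surjective, and that its kernel have a sc-complement contained in the reduced tangent space $T^R_{(\phi(t_0),0)}(X\times\R^k) = T^R_{\phi(t_0)}X\oplus\R^k$. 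For surjectivity, following the recipe already used repeatedly (e.g. in the proofs of Theorem \ref{p:=} and Theorem \ref{thm_pert_and_trans}): at $(\phi(t_0),0)$ pick finitely many smooth vectors $e_1,\dots,e_m\in Y_{\phi(t_0)}$ spanning a complement of the image of $(f-s(\cdot,t_0))'(\phi(t_0))$, and use Proposition \ref{umbra}(4) to realize each as the value $s_j(\phi(t_0),t_0)=e_j$ of a $\ssc^+$-section on $X\times[0,1]$. For the reduced-tangent condition, pick a finite-dimensional smooth subspace $L\subset T_{\phi(t_0)}X$ whose image under $(f-s(\cdot,t_0))'(\phi(t_0))$ is finite dimensional and add further $\ssc^+$-sections realizing a spanning set of that image at $(\phi(t_0),t_0)$ — exactly as in the proof of Theorem \ref{thm_pert_and_trans}. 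With these sections in place, conditions (i)–(ii) there translate into: $F_{t_0}'(\phi(t_0),0)$ is onto and its kernel is transversal to $T^R_{\phi(t_0)}X\oplus\R^k$ inside $T_{\phi(t_0)}X\oplus\R^k$, which is precisely general position of $(F_{t_0},(\phi(t_0),0))$.

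Next, the uniformity in $t$: since the sections $s_j$ are defined on the whole of $X\times[0,1]$ and $\phi$ is sc-smooth, adding more sections only helps, and a property of the form "the image of a linearization together with the $s_j(\phi(t),t)$ span $Y_{\phi(t)}$, and the kernel is transversal to the reduced tangent space" is an open condition in $t$ at each slice — compare the openness remarks in the proof of Theorem \ref{thm_pert_and_trans}, where one observes that if (i)–(ii) hold at a specific $(0,x)$ they persist at nearby solution points. Here the relevant "nearby" direction is the parameter $t$: if property (P) holds at $t_0$ for some finite collection of sections, it holds for all $t$ in an open neighborhood $J(t_0)\subset[0,1]$ of $t_0$. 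Now cover the compact interval $[0,1]$ by finitely many such $J(t_1),\dots,J(t_p)$ and take $s_1,\dots,s_k$ to be the union of all the sections associated to $t_1,\dots,t_p$; enlarging each local collection to the full union preserves the respective general-position properties because, as noted, adding $\ssc^+$-sections does not destroy surjectivity or the transversality of the kernel. The resulting $F$ then satisfies (P) for every $t\in[0,1]$.

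The main obstacle I anticipate is bookkeeping rather than a genuine difficulty: one must make sure the "general position" bound uses the \emph{reduced} tangent space of $X\times\R^m$, which is $T^R X\oplus\R^m$ (the $\R^m$-factor is automatically in the reduced tangent direction since it carries the constant sc-structure and contributes no boundary), so that the surjectivity-plus-complement-in-reduced-tangent conditions really are what the finite collection of perturbations buys us. A secondary point to handle with care is that the chosen $\ssc^+$-sections must genuinely live over $X\times[0,1]$ and not just over individual slices — this is guaranteed by Proposition \ref{umbra}(4), which produces $\ssc^+$-sections $q\colon U\times[0,1]\to Y$ with prescribed value at a given $(y_0,t_0)$ — and that we may assume without loss of generality (as in Proposition \ref{umbra}) that $X=U$ is sc-smoothly contractible with compact solution set, so the compactness/finiteness arguments go through. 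Once these are in place the proof is a routine assembly of the transversality machinery already developed in Section 5.3.
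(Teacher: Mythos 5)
The paper itself does not present a proof of this lemma; it only offers the remark that ``using the results from the transversality theory we can choose $s_1,\ldots,s_k$\ldots'', so you are filling a genuine gap. Your overall strategy --- achieve property (P) at a fixed $t_0$ by the same recipe as in Theorem~\ref{thm_pert_and_trans} (first $\ssc^+$-sections realizing a complement of the image of $(f-s(\cdot,t_0))'(\phi(t_0))$, then further sections realizing a spanning set of $(f-s(\cdot,t_0))'(\phi(t_0))L$ for a finite-dimensional $L$ complementing $T^R_{\phi(t_0)}X$), then exploit openness in $t$ and compactness of $[0,1]$, while noting that adding sections never destroys the property --- is exactly the intended route and is consistent with the machinery developed in Section~5.3. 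You are also right that the reduced tangent space of $X\times\R^k$ is $T^R_xX\oplus\R^k$, which is what makes the kernel condition for the stabilized problem achievable.

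The one step that deserves more care is your openness claim. You write that the condition ``the image of the linearization together with the $s_j(\phi(t),t)$ span $Y_{\phi(t)}$, and the kernel is transversal to the reduced tangent space'' is an open condition in $t$, citing the openness remark in the proof of Theorem~\ref{thm_pert_and_trans}. But that remark concerns general position of a single parametrized sc-Fredholm section at nearby solution \emph{points}; it does not directly address the fact that here the slice operators $F_t'(\phi(t),0)$ need not depend continuously on $t$ as operators (this lack of continuity of linearizations is a core feature of sc-calculus that the paper repeatedly emphasizes). To close the gap, one should route the openness through the parametrized section $F$ over $[0,1]\times\R^k\times X$: first observe that (P) at $t_0$ implies $F$ is in general position at $\gamma(t_0)=(t_0,\phi(t_0),0)$, using that $\dot\gamma(t_0)=(1,\dot\phi(t_0),0)\in\ker F'(\gamma(t_0))$, so that $\partial_t F(\gamma(t_0))$ already lies in the image of $F_{t_0}'(\phi(t_0),0)$ and the ``$\R\oplus$'' direction in the tangent space is accounted for. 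Then the paper's openness (via Theorem~\ref{IMPLICIT0} and Corollary~\ref{corex1}) gives general position of $F$ at $\gamma(t)$ for $t$ near $t_0$. Finally, to recover the slice condition (P) at such $t$, you need that the extra vector $(\dot\phi(t),0)$ arising when projecting $\ker F'(\gamma(t))$ to $T_{\phi(t)}X\oplus\R^k$ already lies in $T^R_{\phi(t)}X\oplus\R^k$; for interior $t\in(0,1)$ this follows from Lemma~\ref{characterization_reduced_tangent}(1), and at $t\in\{0,1\}$ you have (P) by construction anyway. With this bridge spelled out, your covering argument goes through.
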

Property (P) automatically implies that $F$ is in general position to the boundary of $[0,1]\times X\times {\mathbb R}^k$
at all points $(t,\phi(t),0)$ for $t\in [0,1]$. As a consequence of the implicit function theorem for the  boundary case we obtain the following result.
\begin{lemma}
The solution set $S=\{ (t, y, \lambda)\, \vert \,  F(t,y,\lambda)=0\}$ for $(t,y,0)$ near $(t,\phi(t),0)$ is a smooth manifold with boundary with corners.
Hence $\{(t,\phi(t),0)\, \vert \,  t\in [0,1]\}\subset S$. Moreover,  it follows from the property (P) that the projection 
$$
\pi\colon S\rightarrow [0,1], \quad (t,y,\lambda)\mapsto  t
$$
is a submersion. The set  $S_t$ defined by  $\{t\}\times S_t:=\pi^{-1}(t)$  is a manifold with boundary with corners contained
in $ X\times {\mathbb R}^k$.
\end{lemma}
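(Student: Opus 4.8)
The plan is to deduce all four assertions from the boundary implicit function theorem, Theorem~\ref{IMPLICIT0}, applied first to the sc-Fredholm section $F$ over the tame M-polyfold $[0,1]\times X\times\mathbb{R}^k$ (tame because $X$ is), and then to each sc-Fredholm section $F_t$ over $X\times\mathbb{R}^k$; property (P) supplies the hypotheses, and the propagation of good and general position from Theorem~\ref{local-str} handles the behavior at nearby solutions. To begin, $F$ is a sc-Fredholm section by Theorem~\ref{corro} (equivalently by the stability Theorem~\ref{stabxx}), and along the arc one has $F(t,\phi(t),0)=f(\phi(t))-s(\phi(t),t)=0$ by the defining property $s(\phi(t),t)=f(\phi(t))$ of the chosen $\ssc^+$-section $s$; this gives $\{(t,\phi(t),0)\mid t\in[0,1]\}\subset S$. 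By property (P) and the remark after Lemma~\ref{lem_property_star}, the germ $(F,(t,\phi(t),0))$ is in general position to the boundary, hence in good position by Lemma~\ref{general_to_good}, for every $t\in[0,1]$. Theorem~\ref{IMPLICIT0} then yields, for each $t$, an open neighborhood $V_t$ of $(t,\phi(t),0)$ in which the solution set of $F$ is a sub-M-polyfold carrying a smooth manifold-with-boundary-with-corners structure equivalent to its induced M-polyfold structure, and in which $(F,z)$ is in good position at every solution $z$. Covering the compact arc by finitely many $V_t$ and noting that the smooth structures agree on overlaps (each being the one induced from the ambient M-polyfold structure, cf. Theorem~\ref{IMPLICIT0}(2)), the solution set $S$ of $F$ in a neighborhood of the arc is a smooth manifold with boundary with corners.

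Next I would treat the submersion statement. Since $F$ is a sc-Fredholm section it is regularizing, so $S$ consists of smooth points and, by the first step, is a finite-dimensional smooth manifold with corners; hence $z\mapsto T_zS$, and therefore $z\mapsto d\pi_z$, vary continuously, and $T_zS=\ker F'(z)$ because $F'(z)$ is surjective (good position). The tangent vector of the arc at $z=(t,\phi(t),0)$ is $(1,\phi'(t),0)\in T_zS$ and maps to $1$ under $d\pi_z$, so $d\pi_z\neq0$ along the arc; by continuity $d\pi_z\neq0$ on a neighborhood of the arc in $S$, and after shrinking $S$ to that neighborhood $\pi\colon S\to[0,1]$ is a submersion. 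Property (P) is what makes this a \emph{neat} submersion in the manifold-with-corners sense: since (P) keeps $(F_t,\cdot)$ in general position to the boundary of $X\times\mathbb{R}^k$ at all solutions near $(\phi(t),0)$ — via Theorem~\ref{local-str}(2)--(3) — the corner strata of $S$ near the arc are precisely those cut out by the faces of $X\times\mathbb{R}^k$ (together with the ambient faces $\{0\}\times X\times\mathbb{R}^k$ and $\{1\}\times X\times\mathbb{R}^k$, which map to $0$ and $1$), and the same tangent-vector argument shows $\pi$ restricted to each such stratum is again a submersion onto $[0,1]$.

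For the last assertion, observe that by construction $\pi^{-1}(t)=\{t\}\times\{(y,\lambda)\text{ near }(\phi(t),0):F_t(y,\lambda)=0\}$, so $S_t$ is exactly the local solution set of the sc-Fredholm section $F_t$ of $Y\to X\times\mathbb{R}^k$ at $(\phi(t),0)$. Property (P) says $(F_t,(\phi(t),0))$ is in general position, hence in good position by Lemma~\ref{general_to_good}, so Theorem~\ref{IMPLICIT0} applied to $F_t$ shows $S_t$ is a sub-M-polyfold of $X\times\mathbb{R}^k$ whose induced structure is equivalent to that of a smooth manifold with boundary with corners; equivalently, $S_t=\pi^{-1}(t)$ with $\pi$ a neat submersion, by the manifold-with-corners regular-value theorem.

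The main obstacle is the boundary bookkeeping concealed in the words ``submersion'' and ``manifold with boundary with corners'': one must verify that $\pi$ and its restrictions to the corner strata of $S$ are simultaneously submersions onto $[0,1]$ and that these strata fit together so that each fiber $S_t$ inherits a genuine manifold-with-corners structure. This is the only place where property (P) enters essentially (beyond the trivial containment of the arc in $S$): through Theorem~\ref{local-str}(3) it guarantees that $F_t$, not merely $F$, remains in general position to $\partial(X\times\mathbb{R}^k)$ along the zero set, which is precisely what keeps the fibers $S_t$ transverse to the ambient corner structure. Everything else is a routine application of the implicit function theorem combined with the compactness of $[0,1]$.
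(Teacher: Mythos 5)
Your proposal is correct and follows the (unwritten) approach the paper intends — apply the boundary implicit function theorem, Theorem~\ref{IMPLICIT0}, to $F$ along the arc using property~(P) via Lemma~\ref{general_to_good}, cover the compact arc, and apply the same theorem to each $F_t$ for the fiber statement. Your submersion argument (tangent vector $(1,\phi'(t),0)\in T_zS$ plus openness of the surjectivity condition) is a slight variant of the more direct consequence of~(P) — namely that surjectivity of $F_t'(\phi(t),0)$ alone already forces $d\pi_z$ to be onto, since one can solve $F_t'(\phi(t),0)(\delta y,\delta\lambda)=-\partial_tF(z)$ — but both land in the same place; the corner bookkeeping you flag as the delicate point is the genuine content, and your appeal to Theorem~\ref{local-str} together with the direct application of Theorem~\ref{IMPLICIT0} to $F_t$ covers it adequately.
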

We do not claim that $S_t$ is compact, but the manifold $S_t$  lies in such a way in $X\times {\mathbb R}^k$ that its intersection with $(\partial X)\times {\mathbb R}^k$
carves out a boundary with corners on the manifold $S_t$.
}

{
The tangent space $T_{(\phi (t), 0)}S_t$ at the point $(\phi (t), 0)\in S_t$ agrees with the kernel $\ker (F_t'(\phi (t), 0))$ and, abbreviating  
$$
L_t:=T_{(\phi(t),0)}S_t=\ker (F_t'(\phi (t), 0))\quad \text{for $t\in [0,1]$},$$
we introduce the bundle  $L$ of tangent spaces along the path $\phi (t)\in S$, for $t\in [0,1]$, by 
$$
L=\bigcup_{t\in [0,1]} \{t\}\times L_t.
$$
The bundle $L$ is a smooth vector bundle over $[0,1]$. By $\lambda (L)$ we denote the line bundle associated with $L$,
$$\lambda (L)=\bigcup_{t\in [0,1]} \{t\}\times \lambda (L_t).$$
}

{
An orientation of the lines $\lambda (L_t)$ determines by continuation an orientation of all the other lines. In particular, an orientation of $\lambda (L_0)$ at $t=0$ determines an orientation of $\lambda (L_1)$ at $t=1$, and we shall relate these orientations to the orientations of
$\text{DET}(f,\phi(0))$ and $\text{DET}(f,\phi(1))$.
To this aim we introduce, for every fixed $t\in [0,1]$, the exact sequence ${\bf E}_t$ defined by 
\begin{equation}\label{eexact}
\begin{gathered}
{\bf E}_t:\quad 0\rightarrow \ker( (f-s(\cdot,t ))'(\phi(t)) )\xrightarrow{j} \ker ({F}_t'(\phi(t),0))\xrightarrow{p}\\
\xrightarrow{p} {\mathbb R}^k\xrightarrow{c} Y_{\phi(t)}/R((f-s(\cdot,t ))'(\phi(t)))\rightarrow 0
\end{gathered}
\end{equation}
in which  $j$ is the inclusion map, $p$ the projection onto the ${\mathbb R}^k$-factor, and the  map $c$ is defined
by 
$$
c(\lambda) =\left(\sum_{i=1}^k \lambda_is_i(t,\phi(t))\right)+ R((f-s(\cdot,t ))'(\phi(t))).
$$
}
\begin{lemma}\label{lem_exact_seq_2}
The sequence \eqref{eexact} is exact.
\end{lemma}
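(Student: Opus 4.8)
The claim is that the sequence \eqref{eexact} is exact, and the strategy is completely parallel to the proof of exactness of the sequence ${\bf E}$ associated with a stabilized Fredholm operator $T_\phi$ that was carried out earlier in Section \ref{sect_conventions} (the sequence $0\to\ker(T)\to\ker(T_\phi)\to\R^n\to F/R(T)\to 0$). So the first thing I would do is identify $T=(f-s(\cdot,t))'(\phi(t))$ and note that, by construction of $F_t$ via the parametrized perturbation recipe together with Theorem \ref{corro}, the linearization $F_t'(\phi(t),0)\colon T_{\phi(t)}X\oplus\R^k\to Y_{\phi(t)}$ is exactly the stabilized operator $T_\phi$ with $\phi$ the linear map $\R^k\to Y_{\phi(t)}$, $\lambda\mapsto\sum_i\lambda_i s_i(t,\phi(t))$. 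Hence, once this identification is in place, the exactness follows verbatim from the earlier lemma. I would, however, spell out the four verifications because the maps $j$, $p$, $c$ have their concrete meaning here.

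\textbf{The four steps.} First, exactness at $\ker((f-s(\cdot,t))'(\phi(t)))$: the inclusion $j$ is manifestly injective. Second, exactness at $\ker(F_t'(\phi(t),0))$: given $(e,0)$ with $e\in\ker((f-s)'(\phi(t)))$ we have $p(e,0)=0$, and conversely if $(e,\lambda)\in\ker F_t'$ with $p(e,\lambda)=\lambda=0$, then $(f-s)'(\phi(t))e=0$, so $(e,0)=j(e)$; this uses $F_t'(\phi(t),0)(e,\lambda)=(f-s)'(\phi(t))e+\sum_i\lambda_i s_i(t,\phi(t))$. Third, exactness at $\R^k$: for $(e,\lambda)\in\ker F_t'$ we get $\sum_i\lambda_i s_i(t,\phi(t))=-(f-s)'(\phi(t))e\in R((f-s)'(\phi(t)))$, so $c(p(e,\lambda))=c(\lambda)=0$; conversely if $c(\lambda)=0$ then $\sum_i\lambda_i s_i(t,\phi(t))=(f-s)'(\phi(t))(-e)$ for some $e$, whence $(e,\lambda)\in\ker F_t'$ and $p(e,\lambda)=\lambda$. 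Fourth, surjectivity of $c$: since $F_t$ is, by the property (P) of Lemma \ref{lem_property_star}, in general position — hence in particular $F_t'(\phi(t),0)$ is surjective — every class in $Y_{\phi(t)}/R((f-s)'(\phi(t)))$ is hit by some $\sum_i\lambda_i s_i(t,\phi(t))$, so $c$ is onto.

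\textbf{Main obstacle.} The only non-formal input is the surjectivity at the last slot, i.e. that the $\ssc^+$-sections $s_1,\dots,s_k$ were chosen so that their values $s_i(t,\phi(t))$ span a complement of $R((f-s(\cdot,t))'(\phi(t)))$ in $Y_{\phi(t)}$ for the given $t$. This is precisely guaranteed by property (P) of Lemma \ref{lem_property_star}, which in turn comes from the transversality/perturbation machinery (the existence-of-$\ssc^+$-sections lemma together with Theorem \ref{corro} and the finite-cover argument already used in the proof of Proposition \ref{sst}). So in the write-up I would simply invoke Lemma \ref{lem_property_star} at that point. Everything else is diagram-chasing identical to the stabilization lemma in Section \ref{sect_conventions}, and I would phrase the proof as "the sequence is the sequence ${\bf E}$ attached to the operator $(f-s(\cdot,t))'(\phi(t))$ stabilized by the map $\lambda\mapsto\sum_i\lambda_i s_i(t,\phi(t))$, which is surjective by property (P); exactness therefore follows from the earlier lemma," followed by the four short verifications above for the reader's convenience.
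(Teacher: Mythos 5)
Your proof is correct and takes essentially the same route as the paper: both run the four-step diagram chase directly and invoke property (P) of Lemma \ref{lem_property_star} to get surjectivity of $c$. Your additional observation that the sequence is literally the stabilization sequence ${\bf E}$ for $T=(f-s(\cdot,t))'(\phi(t))$ and $\phi\colon\lambda\mapsto\sum_i\lambda_i s_i(t,\phi(t))$, so that exactness also follows from the earlier lemma in Section \ref{sect_conventions}, is a nice packaging but not a different argument.
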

\begin{proof}

{
The inclusion map $j$ is injective and $p\circ j=0$. From  $p(h,\lambda)=0$ it follows that 
$\lambda=0$
so that $h\in \ker((f-s(\cdot,t ))'(\phi(t)))$. If $(h,\lambda)\in \ker(F_t'(\phi(t),0))$,  then $\sum \lambda_is_i(\phi(t),t)$
belongs to the image of $(f-s(\cdot,t ))'(\phi(t))$ which implies $c\circ p=0$. It is also immediate that an
element $\lambda\in {\mathbb R}^k$ satisfying  $c(\lambda)=0$ implies that $\sum \lambda_is_i(\phi(t),t)$ belongs to the image
of $(f-s(\cdot,t ))'(\phi(t))$. This allows us to construct an element $(h,\lambda)\in \ker(F_t'(\phi(t),0))$ satisfying  $p(h,\lambda,)=\lambda$. 
Finally, it follows from the property $(P)$ in Lemma \ref{lem_property_star}  that the map $c$ is  surjective. 
The proof of Lemma 
\ref{lem_exact_seq_2} is complete.
}
\end{proof}

{
From the exact sequence ${\bf E}_t$ we deduce the natural isomorphism $\Phi_{{\bf E}_t}$ introduced in Section \ref{sect_conventions}, 
\begin{equation*}
\begin{split}
\Phi_{{\bf E}_t}&\colon \lambda \bigl(\ker (f-s(\cdot , t))'(\phi (t))\bigr)\otimes \lambda^\ast (\coker  (f-s(\cdot , t))'(\phi (t))\bigr)\\
&\to \lambda \bigl(\ker F_t'(\phi (t), 0)\otimes \lambda^\ast(\R^k),
\end{split}
\end{equation*}
and obtain, in view of the the definition of the determinant, the isomorphism
$$\Phi_{{\bf E}_t}\colon \det ((f-s(\cdot ,t))'(\varphi (t)))\to \lambda (L_t)\otimes \lambda^\ast(\R^k).$$
}

{
Now we assume that we have chosen the orientation 
$\mathfrak{o}_0$ of $\text{DET}(f,\phi(0))$  at $t=0$. It induces an orientation of $\det ((f-s(\cdot ,0))'(\phi (0))).$
The isomorphism $\Phi_{{\bf E}_0}$ determines an orientation of $\lambda (L_0)\otimes \lambda^\ast (\R^k)$. By continuation we extend this orientation to an orientation of  $\lambda (L_1)\otimes \lambda^\ast (\R^k).$ Then the isomorphism $(\Phi_{{\bf E}_1})^{-1}$ at $t=1$ gives us an orientation of $\det ((f-s(\cdot ,1))'(\varphi (1)))$ and consequently an orientation $\mathfrak{o}_1$ of 
$\text{DET}(f,\phi(1))$  at $t=1$, denoted by 
$$\mathfrak{o}_1=\phi_\ast \mathfrak{o}_0,$$
and called the {\bf push forward orientation}.
}

{
A priori our procedure might depend on the choices of the sections $s(y, t)$ and $s_1\ldots ,s_k$ and we shall prove next that it actually does not.}

{
\begin{proposition}\label{local-xyz}\index{P- Basic construction for orientation {II}}
We assume that $f$ is the sc-Fredholm section of the strong M-polyfold bundle $P\colon Y\rightarrow X$ and $U=U(x)$ an open neighborhood  around a smooth point $x$, for which the conclusions of Proposition \ref{umbra} hold. Let $\phi\colon [0,1]\rightarrow U$ be a  sc-smooth path. Then the construction of the map
$\mathfrak{o}\rightarrow\phi_\ast\mathfrak{o}$,
associating with  an orientation of $\text{DET}(f,\phi(0))$ an orientation $\phi_\ast\mathfrak{o}$  of $ \text{DET}(f,\phi(1)) $ does not depend on the choices involved as long as the hypotheses of Lemma \ref{lem_property_star} hold. 
\end{proposition}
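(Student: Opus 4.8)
The plan is to show that the push-forward map $\mathfrak{o}\mapsto\phi_\ast\mathfrak{o}$ is unchanged if we pass from one admissible choice $(s,s_1,\ldots,s_k)$ to another admissible choice $(s',s'_1,\ldots,s'_l)$, where ``admissible'' means the conclusions of Proposition \ref{umbra} hold and property (P) of Lemma \ref{lem_property_star} holds along $\phi$. The key point is that the construction of $\phi_\ast\mathfrak{o}$ proceeds by a continuation argument along the interval $[0,1]$ in the line bundle $\lambda(L)\otimes\lambda^\ast(\mathbb{R}^{\bullet})$, so that \emph{if two constructions agree at a single value of $t$ and depend on a parameter continuously, they agree for all $t$}. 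Concretely, I would first reduce to the local situation $X=U$ and introduce a two-parameter family interpolating between the two choices, then use a connectedness argument in the parameter to transport the coincidence at $t=0$ to $t=1$.

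First I would reduce to comparing two standard cases and then take their ``common refinement''. Given the two admissible choices $(s,s_i)_{i=1}^k$ and $(s',s'_j)_{j=1}^l$, observe that the choice with all $k+l$ sections $s_1,\ldots,s_k,s'_1,\ldots,s'_l$ added to $f-s$ (respectively to $f-s'$) is again admissible: adding more $\ssc^+$-sections preserves surjectivity of $F_t'(\phi(t),0)$ and keeps the germ in general position to the boundary, exactly as in the proofs of Theorem \ref{p:=} and Theorem \ref{thm_pert_and_trans}. Moreover, by the construction of the isomorphism $\iota_\phi$ (the stabilization isomorphism $\det(T)\to\det(T_\phi)$ of Section \ref{sect_conventions}) together with the convention fixing the isomorphism $\lambda^\ast(\mathbb{R}^n)\to\mathbb{R}^\ast$, enlarging the set of added sections changes the bundle $L$ by a direct sum with a trivial bundle over $[0,1]$ carrying the standard orientation, and the resulting push-forward map is \emph{literally the same}. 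Hence it suffices to treat the case where the two choices use the same finite collection of sections $s_1,\ldots,s_k$ but possibly different correcting sections $s$ and $s'$; and even there, after enlarging, one may assume the two data agree off a small set.

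Next I would run the homotopy argument in the interpolation parameter. For $\sigma\in[0,1]$ set $s^\sigma=(1-\sigma)s+\sigma s'$ and $s_i^\sigma=(1-\sigma)s_i+\sigma s'_i$; these are $\ssc^+$-sections, and $s^\sigma(\phi(t),t)=f(\phi(t))$ holds for all $\sigma$ because this identity is affine in the data. Form the two-parameter sc-Fredholm section $F(\sigma,t,y,\lambda)=f(y)-s^\sigma(y,t)+\sum_i\lambda_i s_i^\sigma(y,t)$. The difficulty — and I expect this to be the main obstacle — is that property (P) of Lemma \ref{lem_property_star} need \emph{not} hold for every intermediate $\sigma$, so the solution manifold $S^\sigma_t$ and hence the line bundle $\lambda(L^\sigma)$ may degenerate. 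The fix is the standard transversality/Sard argument of Theorem \ref{thm_pert_and_trans} applied to the enlarged parameter space: one adds finitely many \emph{further} $\ssc^+$-sections $q_1,\ldots,q_m$ to obtain $\widehat F(\sigma,t,y,\mu,\lambda)$ which is in general position to the boundary of $[0,1]_\sigma\times[0,1]_t\times X\times\mathbb{R}^m\times\mathbb{R}^k$ at all points of the (compact, by Proposition \ref{umbra}(2)) set $\{(\sigma,t,\phi(t),0,0)\}$, invoke Theorem \ref{IMPLICIT0} to get a smooth solution manifold $\widehat S$ with boundary with corners containing $[0,1]_\sigma\times[0,1]_t\times\{(\phi(t),0,0)\}$, and check that projection $\widehat S\to[0,1]_\sigma\times[0,1]_t$ is a submersion near this set. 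The associated bundle of tangent spaces $\widehat L\to[0,1]_\sigma\times[0,1]_t$ is then a genuine smooth vector bundle over a contractible base, and $\lambda(\widehat L)$ carries a unique continuation of any orientation.

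Finally I would assemble the conclusion. Fixing the orientation $\mathfrak{o}_0$ of $\text{DET}(f,\phi(0))$, the exact sequence ${\bf E}_t$ of \eqref{eexact} (and its $\widehat{}$-version) together with the natural isomorphism $\Phi_{{\bf E}_t}$ of Section \ref{sect_conventions} transports $\mathfrak{o}_0$ to an orientation of $\lambda(\widehat L)_{(\sigma,0)}$ for every $\sigma$, independently of $\sigma$ by continuation over the contractible base; continuing in $t$ to $t=1$ and applying $(\Phi_{{\bf E}_1})^{-1}$ produces an orientation of $\text{DET}(f,\phi(1))$ which, again by continuation over the $\sigma$-interval, does not depend on $\sigma$. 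Specializing to $\sigma=0$ and $\sigma=1$ shows that the two original admissible choices yield the same push-forward orientation. Because the stabilization conventions were used throughout, the extra sections $q_j$ (and the earlier enlargement) do not affect the answer, so $\mathfrak{o}\mapsto\phi_\ast\mathfrak{o}$ is well-defined. This completes the proof of Proposition \ref{local-xyz}.
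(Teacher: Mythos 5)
Your overall strategy is the same as the paper's: split the independence claim into two steps --- first that adding more stabilizing $\ssc^+$-sections does not change the propagation (the paper's Lemma \ref{opas}), then a convex homotopy of the correcting section $s$ with the stabilizing family held fixed (the paper's Lemma \ref{opas1}) --- and then assemble via a two-parameter family. Your treatment of the second step (interpolate $s^\sigma$, restore property (P) across the $(\sigma,t)$-square by adding further sections via a transversality argument, then continue orientations over the contractible base) is in line with the paper.

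There is, however, a gap in your first step. You claim that passing from $(s,s_1,\ldots,s_k)$ to $(s,s_1,\ldots,s_k,s'_1,\ldots,s'_l)$ ``changes the bundle $L$ by a direct sum with a trivial bundle over $[0,1]$ carrying the standard orientation, and the resulting push-forward map is \emph{literally} the same.'' This is not correct as stated. Writing $\ov{F}(t,y,\lambda,\mu) = f(y)-s(y,t)+\sum_i\lambda_i s_i(y,t)+\sum_j\mu_j s'_j(y,t)$, the fibre $\ov{L}_t = \ker\ov{F}_t'(\phi(t),0,0)$ is generally \emph{not} $L_t\oplus\R^l$: a kernel element $(h,\delta\lambda,\delta\mu)$ with $\delta\mu\neq 0$ need not project to an element of $L_t=\ker F_t'(\phi(t),0)$, since its presence is exactly to compensate the non-trivial contribution of $\sum_j\delta\mu_j\,s'_j(\phi(t),t)$. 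What one does get is a short exact sequence $0\to L_t\to\ov{L}_t\xrightarrow{p}\R^l\to 0$ with $p(h,\delta\lambda,\delta\mu)=\delta\mu$, giving a \emph{continuous} isomorphism $\lambda(\ov{L}_t)\cong\lambda(L_t)\otimes\lambda(\R^l)$, but one then has to check that this isomorphism intertwines the exact-sequence isomorphisms $\Phi_{{\bf E}_t}$ and $\Phi_{\ov{{\bf E}}_t}$ for the sequences \eqref{eexact}; that is a nontrivial compatibility of the natural isomorphisms $\Phi_{\bf E}$ which your sketch does not supply. The paper avoids this diagram chase by first introducing the extra sections with coefficients $\tau\mu_j$ depending on an auxiliary parameter $\tau$: at $\tau=0$ the new sections are genuine dummies, the bundle is the literal direct sum $L\oplus([0,1]\times\R^l)$ and the propagation is manifestly unchanged (this is the only place your ``literally the same'' observation is valid), and then continuity of the associated bundles $\wt L^\tau$ and of the endpoint linearizations in $\tau$ transports the coincidence to $\tau=1$, i.e.\ to the actual $\ov{F}$. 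To complete your argument you should either carry out the exact-sequence compatibility check, or, more in line with the rest of your proposal, run the same $\tau$-homotopy from the dummy to the real coefficients.
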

}

{The proof of the proposition follows from two lemmata.
\begin{lemma}\label{opas}
Under the assumption of the proposition we assume that 
the sc-Fredholm section 
$$
F(t,y,\lambda)=f(y)-s(y,t)+\sum_{i=1}^k\lambda_i\cdot s_i(y,t),
$$ 
satisfies the property  (P) from Lemma \ref{lem_property_star}.
We view $F$ as section of the bundle $Y\rightarrow [0,1]\times X\times {\mathbb R}^k$. Adding more $\ssc^+$-sections we 
also introduce the second sc-Fredholm section 
$$
\ov{F}(t,y,\lambda,\mu)=F(t,y,\lambda)+\sum_{j=1}^l\mu_j\cdot \bar{s}_j(y,t),
$$
viewed as a section of the bundle 
$Y\rightarrow [0,1]\times X\times {\mathbb R}^{k+l}$. 
Then both sections define the same propagation of the orientation along the sc-smooth path $\phi$.
\end{lemma}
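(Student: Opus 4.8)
The plan is to compare the two propagation mechanisms fiberwise over the path $\phi$ and to exploit that all the linear data involved vary continuously in the parameter $t$. Fix $t\in[0,1]$ and abbreviate $T_t=(f-s(\cdot,t))'(\phi(t))$, which is a common element of $\mathrm{Lin}(f,\phi(t))$ for both sections, since $F$ and $\ov{F}$ have $f-s(\cdot,t)$ as their shared unstabilized part; write $L_t=\ker F_t'(\phi(t),0)$ and $\ov{L}_t=\ker\ov{F}_t'(\phi(t),0,0)$. Property (P) for $F$ makes $F_t'(\phi(t),0)\colon T_{\phi(t)}X\oplus\R^k\to Y_{\phi(t)}$ surjective; this forces $\ov{F}_t'(\phi(t),0,0)$ to be surjective as well, and one checks directly that
$$
0\to L_t\xrightarrow{\ \iota\ }\ov{L}_t\xrightarrow{\ \pi_\mu\ }\R^l\to 0
$$
is exact, where $\iota(\delta y,\delta\lambda)=(\delta y,\delta\lambda,0)$ and $\pi_\mu(\delta y,\delta\lambda,\delta\mu)=\delta\mu$ (surjectivity of $\pi_\mu$ uses $\sum_j\mu_j\bar{s}_j(\phi(t),t)\in Y_{\phi(t)}=R(F_t'(\phi(t),0))$). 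Combining the resulting isomorphism $\lambda(\ov{L}_t)\cong\lambda(L_t)\otimes\lambda(\R^l)$ with the splitting $\lambda^\ast(\R^{k+l})\cong\lambda^\ast(\R^k)\otimes\lambda^\ast(\R^l)$ (in this order, matching the convention that in $\ov{F}$ the $\lambda$-variables precede the $\mu$-variables) and the canonical pairing $\lambda(\R^l)\otimes\lambda^\ast(\R^l)\cong\R$ given by the standard orientation of $\R^l$, one obtains a distinguished isomorphism
$$
\Lambda_t\colon\ \lambda(L_t)\otimes\lambda^\ast(\R^k)\ \longrightarrow\ \lambda(\ov{L}_t)\otimes\lambda^\ast(\R^{k+l}).
$$
One should also record at this point that $\ov{F}$ itself satisfies property (P): surjectivity is automatic as above, and if $Z$ is a sc-complement of $L_t$ inside $T^R_{\phi(t)}X\oplus\R^k$, then $Z\oplus\{0\}^l$ is a sc-complement of $\ov{L}_t$ inside $T^R_{\phi(t)}X\oplus\R^{k+l}$, so the $\ov{F}$-propagation along $\phi$ is indeed defined; moreover the exact sequence $\ov{\mathbf E}_t$ attached to $\ov{F}$ (the analog of \eqref{eexact}) is exact by the same argument as in Lemma \ref{lem_exact_seq_2}.

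Next I would verify the key algebraic identity
$$
\Lambda_t\circ\Phi_{\mathbf E_t}=\Phi_{\ov{\mathbf E}_t}\colon\ \det(T_t)\ \longrightarrow\ \lambda(\ov{L}_t)\otimes\lambda^\ast(\R^{k+l}),
$$
where $\mathbf E_t$ and $\ov{\mathbf E}_t$ are the exact sequences \eqref{eexact} attached to $F$ and $\ov{F}$. This is a finite-dimensional linear-algebra computation carried out with adapted bases: pick a basis $a_\bullet$ of $\ker T_t$, lift it to a basis of $L_t$ by adding vectors $(b'_\bullet,r'_\bullet)$, then lift further to a basis of $\ov{L}_t$ by adding vectors $\tilde e_j$ whose $\R^l$-component is the $j$-th standard basis vector, and on the cokernel side take a basis of $V\subset\R^k$ (resp. its image in $\R^{k+l}$) mapping to a basis of $Y_{\phi(t)}/R(T_t)$. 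Inserting these into the explicit formula \eqref{eq_Phi_E} for $\Phi_{\mathbf E}$ (whose independence of choices is Lemma \ref{o6.6}) and tracking the standard-orientation identifications that go into $\Lambda_t$, the two sides are seen to agree; the ordering conventions in \eqref{eq_Phi_E} and in the very definition of $\ov{F}$ are precisely what make the signs cancel.

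Then I would observe that the family $t\mapsto\Lambda_t$ is continuous. The bundles $L=\bigcup_t\{t\}\times L_t$ and $\ov{L}=\bigcup_t\{t\}\times\ov{L}_t$ are continuous (indeed smooth) vector bundles over $[0,1]$, as already used in the construction of the propagation; the maps $\iota$ and $\pi_\mu$ in the short exact sequence above are a constant inclusion and the restriction of a constant linear projection, so the sequence varies continuously with $t$, and $\lambda(\cdot)$, $\otimes$, $\lambda^\ast(\cdot)$ together with passage to the isomorphism determined by a continuous short exact sequence all preserve continuity by Lemma \ref{lemma6.21}. Hence $\Lambda$ is a topological line bundle isomorphism $\lambda(L)\otimes\lambda^\ast(\R^k)\to\lambda(\ov{L})\otimes\lambda^\ast(\R^{k+l})$ over $[0,1]$, and in particular it carries continuous orientations of the source to continuous orientations of the target.

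Finally the conclusion follows formally. An orientation $\mathfrak o$ of $\mathrm{DET}(f,\phi(0))$ determines one and the same orientation of the line $\det(T_0)$ for both procedures; applying $\Phi_{\mathbf E_0}$ and $\Phi_{\ov{\mathbf E}_0}$ gives orientations of $\lambda(L_0)\otimes\lambda^\ast(\R^k)$ and $\lambda(\ov{L}_0)\otimes\lambda^\ast(\R^{k+l})$ which correspond under $\Lambda_0$, by the algebraic identity. Since a continuous orientation of a line bundle over $[0,1]$ is determined by its value at $0$ and $\Lambda$ is a continuous bundle isomorphism, the continuations of these two orientations to $t=1$ still correspond under $\Lambda_1$; applying $\Phi_{\mathbf E_1}^{-1}$, $\Phi_{\ov{\mathbf E}_1}^{-1}$ and using the algebraic identity at $t=1$, the two resulting orientations of $\det(T_1)$, hence of $\mathrm{DET}(f,\phi(1))$, coincide — that is, $\phi_\ast\mathfrak o$ computed from $F$ equals $\phi_\ast\mathfrak o$ computed from $\ov{F}$. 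The main obstacle is the sign bookkeeping in the identity $\Lambda_t\circ\Phi_{\mathbf E_t}=\Phi_{\ov{\mathbf E}_t}$: one must account for every sign coming from the ordering conventions in \eqref{eq_Phi_E}, from $\lambda^\ast(\R^{k+l})\cong\lambda^\ast(\R^k)\otimes\lambda^\ast(\R^l)$, and from the standard orientation of $\R^l$, and check that they all cancel. Everything else — the exactness statements, the continuity in $t$, and the formal propagation argument — is routine once these conventions are fixed.
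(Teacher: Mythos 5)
Your argument is correct, and it takes a genuinely different route from the paper's. The paper introduces an auxiliary homotopy parameter $\tau\in[0,1]$ and a section $\wt{F}$ on $[0,1]\times[0,1]\times X\times\R^{k+l}$ that interpolates between $F$ trivially stabilized by $\R^l$ (at $\tau=0$) and $\ov{F}$ (at $\tau=1$); constancy of the propagation in $\tau$ is a continuity argument, and the $\tau=0$ endpoint is disposed of by the observation (the paper's ``simple computation'') that the bundle $L\oplus([0,1]\times\R^l)$ defines the same propagation as $L$. You instead compare $F$ and $\ov{F}$ directly and fiberwise over $t$: the short exact sequence $0\to L_t\to\ov{L}_t\to\R^l\to 0$, the identification $\lambda^\ast(\R^{k+l})\cong\lambda^\ast(\R^k)\otimes\lambda^\ast(\R^l)$, and the standard orientation of $\R^l$ give a line isomorphism $\Lambda_t$; the compatibility $\Lambda_t\circ\Phi_{{\bf E}_t}=\Phi_{\ov{{\bf E}}_t}$ is the paper's ``simple computation'' carried out for general $\bar{s}_j$ rather than only for $\bar{s}_j\equiv 0$; and continuity of $\Lambda$ in $t$ transports the compatibility from $t=0$ to $t=1$. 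Your route dispenses with the second parameter, the auxiliary solution manifold $\wt{S}$ over $[0,1]\times[0,1]$, and the second continuity argument, at the cost of proving the algebraic identity once in the general case rather than only in the degenerate one --- the same style of base computation either way, and both proofs leave it implicit. Where you do need care is exactly where you flag it: the ordering convention in \eqref{eq_Phi_E} (added kernel vectors listed last, lifted cokernel representatives listed first), combined with your choice of listing $\lambda$ before $\mu$, places the new $\R^l$-data as the outermost wedge factors on both the kernel and cokernel sides of $\Phi_{\ov{{\bf E}}_t}$, after which the $\lambda(\R^l)\otimes\lambda^\ast(\R^l)$ contraction is sign-free and lands on $\Phi_{{\bf E}_t}(a\otimes d^\ast)$ on the nose.
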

}

\begin{proof}

{
The key is to view $F$ as the section
$$
\wh{F}(\tau, t,y,\lambda, \mu)= f(y)-s(y,t)+\sum_{i=1}^k\lambda_i\cdot s_i(y,t)
$$
of the strong M-polyfold bundle $Y\rightarrow [0,1]\times [0,1]\times X\times {\mathbb R}^{k+l}$.
There is a sc-Fredholm section 
$$\wt{F}\colon
[0,1]\times[0,1]\times X\times{\mathbb R}^{k+l}\rightarrow Y, 
$$
defined 
by
$$
\wt{F}(\tau,t,y,\lambda,\mu)=\wh{F}(\tau, t,y,\lambda,\mu)+\sum_{j=1}^l \mu_j\cdot \bar{s}_j(y,t).
$$
The induced map $X\times{\mathbb R}^{k+l}\rightarrow Y$ for fixed $(\tau,t)\in [0,1]\times [0,1]$ satisfies the property (P).
We obtain a solution manifold $\tilde{S}$ with boundary with corners  defined near the points $(\tau,t,\phi(t),0)$,which fibers over $[0,1]\times [0,1]$.
Let $\tilde{\pi}\colon \tilde{S}\rightarrow [0,1]\times [0,1]$ and consider its tangent map.
Then take the vector bundle $\tilde{L}\rightarrow [0,1]\times [0,1]$ whose fiber over $(\tau,t)$ is the preimage of the zero section of $T([0,1]\times [0,1])$ under $T\tilde{\pi}$.
For $\tau=0$ we obtain the bundle $\tilde{L}^0\rightarrow \{0\}\times [0,1]=[0,1]$. This is of the form
$$
L\oplus ([0,1]\times {\mathbb R}^l)\rightarrow [0,1],
$$
where $L$ is the bundle associated to $F$ which we used originally to define the propagation along $\phi$. 
Applying the construction now using $\tilde{L}^0$ one verifies by a simple computation that it defines the same 
propagation.  
}

{
Hence it remains to verify that every bundle $\wt{L}^\tau$, $\tau\in [0,1]$,  defines  the same propagation. 
First of all we note that as $\tau$ varies,  the bundle varies continuously. Moreover, we relate the end points 
to $\det((f-s(\cdot,0))'(\phi(0)))$ and $\det((f-s(\cdot,1))'(\phi(1)))$. The data in the occurring exact sequence
relating  the latter with  the corresponding orientations of $\tilde{L}^\tau$,  vary continuously.  This implies that the propagation definition 
does not depend on $\tau$.
}
\end{proof}

{
Next we show that the choice of the $\ssc^+$-section $s$ satisfying  $s(\phi(t),t)=f(\phi(t))$ does not affect the definition of the propagation.
}

{
\begin{lemma}\label{opas1}
We assume that the $\ssc^+$-sections $s^i$ for $i=0,1$ satisfy 
$$
s^i(\phi(t),t)=f(\phi(t))
$$
 and consider the homotopy $s^\tau=\tau \cdot s^1+(1-\tau)\cdot s^0$.
We choose  additional sections $s^\tau_1,\ldots ,s^\tau_k$  so that the property (P) holds together with $s^\tau$  for every $\tau$.
Then the  propagation of the orientation along the sc-smooth path $\phi$ using each of the collections $s^\tau,s_1^\tau,\ldots ,s_k^\tau$ is the same.
 \end{lemma}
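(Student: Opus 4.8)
<br>

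\textbf{Plan of proof.} The statement asserts that the orientation propagation along a fixed sc-smooth path $\phi$ does not depend on the choice of the $\ssc^+$-section $s$ used to localize $f$ along $\phi$. The natural approach, consistent with the pattern of Lemma \ref{opas} and the proof of Proposition \ref{sst} (the index-stability theorem), is to lift everything one dimension higher, turning the homotopy parameter $\tau$ into an extra variable. So first I would introduce the strong bundle $Y\rightarrow [0,1]\times[0,1]\times X\times \R^k$ and the sc-Fredholm section
$$
\wh F(\tau,t,y,\lambda)=f(y)-s^\tau(y,t)+\sum_{i=1}^k\lambda_i\cdot s_i^\tau(y,t),
$$
where $s^\tau=\tau\cdot s^1+(1-\tau)\cdot s^0$ and the $s_i^\tau$ are chosen so that property (P) of Lemma \ref{lem_property_star} holds for every fixed $(\tau,t)$. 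Since $s^i(\phi(t),t)=f(\phi(t))$ for $i=0,1$, convexity gives $s^\tau(\phi(t),t)=f(\phi(t))$ for every $\tau$, so $(\tau,t,\phi(t),0)$ is always a solution of $\wh F=0$. By the boundary implicit function theorem (Theorem \ref{IMPLICIT0}) applied near these solution points, one obtains a solution manifold $\wh S$ with boundary with corners fibering over $[0,1]\times[0,1]$ via $\wh\pi$, and the vector bundle $\wh L\rightarrow[0,1]\times[0,1]$ whose fiber over $(\tau,t)$ is the preimage of the zero section of $T([0,1]\times[0,1])$ under $T\wh\pi$; this is exactly $\bigcup_{(\tau,t)}\{(\tau,t)\}\times\ker(\wh F_{\tau,t}'(\phi(t),0))$.

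\textbf{Key steps.} Second, for each fixed $\tau$ the restriction $\wh L^\tau\rightarrow\{\tau\}\times[0,1]$ is precisely the bundle $L$ (with its ancillary $\R^k$-data) used in the definition of the propagation from $s^\tau,s_1^\tau,\dots,s_k^\tau$. I would recall that an orientation $\mathfrak o_0$ of $\mathrm{DET}(f,\phi(0))$ determines, via the isomorphism $\Phi_{{\bf E}_0}$ of the exact sequence \eqref{eexact} at $\tau$, $t=0$, an orientation of $\lambda(L_0)\otimes\lambda^\ast(\R^k)$; one propagates this over $t\in[0,1]$ and transports back through $(\Phi_{{\bf E}_1})^{-1}$ at $t=1$ to an orientation $\mathfrak o_1^\tau$ of $\mathrm{DET}(f,\phi(1))$. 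Third, I would argue that as $\tau$ varies in $[0,1]$ the data entering this construction — the bundle $\wh L^\tau$, the subbundle $\ker((f-s^\tau(\cdot,t))'(\phi(t)))$, the cokernel bundle, and the maps $j$, $p$, $c$ in \eqref{eexact} — all vary continuously in $\tau$ (this is where the continuity of the strong-bundle data and of the $\ssc^+$-sections enters, together with the continuity over the base in the transversality/implicit-function construction). Consequently the two-valued function $\tau\mapsto\mathfrak o_1^\tau\in\{\mathfrak o_1,-\mathfrak o_1\}$ is continuous, hence locally constant, hence constant; so $\mathfrak o_1^0=\mathfrak o_1^1$. Since by Lemma \ref{opas} the propagation for a fixed $s^\tau$ is independent of the auxiliary sections $s_1^\tau,\dots,s_k^\tau$, this finishes the proof. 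I would then note that combining Lemma \ref{opas1} with Lemma \ref{opas} yields Proposition \ref{local-xyz}.

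\textbf{Main obstacle.} The genuinely delicate point is the continuity-in-$\tau$ claim in step three. The linearizations $(f-s^\tau(\cdot,t))'(\phi(t))$ do \emph{not} in general depend continuously, \emph{as operators}, on $(\tau,t)$ — this is the recurring subtlety of the sc-theory emphasized throughout the chapter — so one cannot simply invoke continuity of a family of Fredholm operators. The correct resolution, as in the proofs of Proposition \ref{sst} and Lemma \ref{opas}, is to work entirely with the \emph{solution manifold} $\wh S$, which genuinely is a smooth manifold with boundary with corners and whose tangent bundle $\wh L$ is a bona fide smooth vector bundle over $[0,1]\times[0,1]$; the orientation transport is then a statement about ordinary continuous line bundles, where local constancy of a two-valued continuous function is automatic. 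One must be careful that the exact sequences \eqref{eexact} fit into a continuous family over $[0,1]\times[0,1]$ — i.e., that the maps $c_\tau(\lambda)=\sum\lambda_i s_i^\tau(\phi(t),t)+R((f-s^\tau(\cdot,t))'(\phi(t)))$ assemble continuously — which again follows by expressing cokernels via a fixed good left-projection as in Section on the determinant line bundle, rather than intrinsically. Apart from this, the remaining steps are routine bookkeeping with the natural isomorphisms $\Phi_{\bf E}$ fixed in Section \ref{sect_conventions}.
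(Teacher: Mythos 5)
Your argument follows essentially the same route as the paper: lift the homotopy parameter $\tau$ into the base, form the vector bundle $\wt{L}\rightarrow[0,1]\times[0,1]$ over the solution manifold, and conclude that the induced propagation along $\phi$ is locally constant in $\tau$. The one place you deviate in mechanism is your ``main obstacle'' paragraph, where you raise joint continuity of the linearizations $(f-s^\tau(\cdot,t))'(\phi(t))$ in $(\tau,t)$ and propose to resolve it via good left-projections. The paper needs considerably less: only that at the two fixed endpoints $t\in\{0,1\}$ the family $\tau\mapsto(f-s^\tau(\cdot,i))'(\phi(i))$ varies continuously \emph{as operators}. This is automatic here precisely because the homotopy is affine, $s^\tau=\tau s^1+(1-\tau)s^0$, so that $(f-s^\tau(\cdot,i))'(\phi(i))$ traces out an affine (hence continuous) path inside the convex space $\text{Lin}(f,\phi(i))$, over which $\text{DET}(f,\phi(i))$ is already known to be a topological line bundle. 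That observation anchors the $\tau$-family of orientation transports at both ends; no projections are needed, and the convexity of the homotopy $s^\tau$ is exactly the hypothesis that makes the endpoint anchoring work. Your good-left-projection alternative is plausible but heavier, and you could tighten the exposition by making explicit that continuity over the interior of the square comes for free from the smoothness of the solution manifold, while the only genuine continuity needed at the level of operators is at $t=0,1$, where it is given by convexity.
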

 }
 
 {
 \begin{proof}
 The proof is similar as the lemma above. We obtain a vector bundle $\wtilde{L}\rightarrow [0,1]\times [0,1]$
 with parameters $(\tau,t)$ in the base. Then we can define for fixed $\tau$ a vector bundle over $t\in[0,1]$, say $\wt{L}^\tau$.
 Clearly $\wt{L}^\tau$ varies continuously in $\tau$.  At  $t=0,1$ the linearizations
 $(f-s^\tau(\cdot,0))'(\phi(0))$ and $(f-s^\tau(\cdot,1))'(\phi(1))$ vary continuously as operators in $\text{DET}(f,\phi(0))$ and $\text{DET}(f,\phi(1))$, respectively.
 This implies that the bundle $\tilde{L}^\tau$ induces,  independently of $\tau\in [0,1]$,  the same propagation of the orientation  along the path $\phi$.
\end{proof}
}

{
Now we are in the position to finish the proof of Proposition \ref{local-xyz}.
We start with two  collections $s^i,s_1^i,\ldots ,s_{k^i}^i$ to define the propagation and add for $i=0$ and $i=1$
additional $\ssc^+$-sections such that the following holds.
\begin{itemize}
\item[(1)] For both situations $i=0$ and $i=1$ we have the same number of sections.
\item[(2)] For  a convex homotopy parametrized by $\tau\in [0,1]$, the property (P) holds for each fixed $\tau$.
\end{itemize}
By Lemma \ref{opas} adding section does not change the propagation. We can apply this for $i=0,1$.
Then we can use Lemma \ref{opas1} to show that these two propagations (after adding sections) are the same.
}

{
Next assume we have the same hypotheses and $U=U(x)$ has the initially stated properties.
If the two sc-smooth paths $\phi^0,\phi^1\colon [0,1]\rightarrow U$ have the same starting and end points,  we homotope sc-smoothly from one to the other with end points fixed,  using that $U$ is sc-smoothly contractible. We 
denote this sc-smooth homotopy by
$$
\Phi\colon [0,1]\times [0,1]\rightarrow U
$$
where $\phi^i=\Phi(i,0)$ for $i=0,1$. Then we  construct the $\ssc^+$-section $s$ satisfying $s(\Phi(\tau,t),\tau,t)=f(\Phi(\tau,t))$ and define the sc-Fredholm section  
$$
(\tau,t,y,\lambda)\rightarrow f(y)-s(y,\tau,t)+\sum_{i=1}^k\lambda_i\cdot s_i(y,\tau,t)
$$
possesing the obvious properties. 
For every $\tau$ we obtain a vector bundle $\wt{L}^\tau\rightarrow [0,1]$. Since all the data change continuously in these
bundles and the operators for $t=0,1$ in $\text{DET}(\ast)$  change continuously in $\tau$,  we see that every $\wt{L}^\tau$ defines the same 
propagation of the orientation along paths $\phi^\tau=\Phi(\tau,\cdot )$. Hence we have proved the following statement.
}

{
\begin{theorem}\label{thm_6.35}\index{T- Propagation of orientation}
We assume that $P\colon Y\rightarrow X$ is a strong bundle over the tame M-polyfold and  $x$ a smooth point in $X$.
Then there exists an open sc-smoothly contractible neighborhood $U=U(x)$ such  that for every sc-smooth path $\phi\colon [0,1]\rightarrow U$
there exists a well-defined propagation 
$$
\mathfrak{o}\rightarrow \phi_\ast\mathfrak{o}
$$
of an orientation $\mathfrak{o}$ of $\text{DET}(f,\phi(0))$. Moreover,  if $\phi^0$ and $\phi^1\colon [0,1]\rightarrow U$ are two sc-smooth paths from the  same starting points 
to the same end points, then the  propagation along $\phi^0$ and $\phi^1$ is the same, i.e., 
$$
\phi^0_\ast\mathfrak{o}=\phi^1_\ast\mathfrak{o}.
$$
\end{theorem}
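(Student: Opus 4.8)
The plan is to prove Theorem \ref{thm_6.35} in two stages: first establish the existence of a well-defined propagation along a single sc-smooth path (this is essentially a repackaging of Proposition \ref{local-xyz} together with the preliminary constructions), and then prove path-independence by a homotopy argument. First I would invoke Proposition \ref{umbra} to produce the open neighborhood $U = U(x)$ satisfying properties (1)--(4): $U$ is sc-smoothly contractible, $\cl_X(U)$ has compact solution set, every sc-smooth path in $U$ admits a $\ssc^+$-section $s$ with $s(\phi(t),t) = f(\phi(t))$, and prescribed fiber values can be attained by $\ssc^+$-sections. Given a sc-smooth path $\phi\colon [0,1]\to U$, I would then recall the construction preceding Lemma \ref{lem_property_star}: choose $s$ with $s(\phi(t),t) = f(\phi(t))$, add finitely many $\ssc^+$-sections $s_1,\ldots,s_k$ so that the sc-Fredholm section $F(t,y,\lambda) = f(y) - s(y,t) + \sum \lambda_i s_i(y,t)$ has property (P) (this is where the transversality machinery, Theorem \ref{thm_pert_and_trans}, enters). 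The implicit function theorem for the boundary case (Theorem \ref{IMPLICIT0}) then yields a solution manifold $S$ near $\{(t,\phi(t),0)\}$ fibered over $[0,1]$, with tangent bundle $L = \bigcup_t \{t\}\times L_t$, $L_t = \ker F_t'(\phi(t),0)$. The exact sequence ${\bf E}_t$ of Lemma \ref{lem_exact_seq_2}, via the natural isomorphism $\Phi_{{\bf E}_t}\colon \det((f - s(\cdot,t))'(\phi(t))) \to \lambda(L_t)\otimes\lambda^\ast(\R^k)$, lets one transport an orientation $\mathfrak{o}$ of $\text{DET}(f,\phi(0))$ to an orientation $\phi_\ast\mathfrak{o}$ of $\text{DET}(f,\phi(1))$ by continuation of the orientation of the line bundle $\lambda(L)\otimes\lambda^\ast(\R^k)$ over $[0,1]$. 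Proposition \ref{local-xyz} guarantees this is independent of all the choices ($s$ and the $s_i$), so $\phi_\ast\mathfrak{o}$ is well-defined.

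For the path-independence statement, I would argue as follows. Given two sc-smooth paths $\phi^0,\phi^1\colon [0,1]\to U$ with $\phi^0(0) = \phi^1(0)$ and $\phi^0(1) = \phi^1(1)$, use that $U$ is sc-smoothly contractible to produce a sc-smooth homotopy $\Phi\colon [0,1]\times[0,1]\to U$ with $\Phi(\tau,\cdot) = \phi^\tau$ interpolating between $\phi^0$ and $\phi^1$, with endpoints fixed: $\Phi(\tau,0) = \phi^0(0)$ and $\Phi(\tau,1) = \phi^0(1)$ for all $\tau$. (Concretely, one can combine the contracting homotopy $\hat r$ from the proof of Proposition \ref{umbra} with the two given paths to build $\Phi$; the existence of such a fixed-endpoint homotopy in a sc-smoothly contractible set is straightforward.) Then choose a $\ssc^+$-section $s(y,\tau,t)$ with $s(\Phi(\tau,t),\tau,t) = f(\Phi(\tau,t))$ — this is possible by property (3) of Proposition \ref{umbra} applied to the two-parameter family — and finitely many $\ssc^+$-sections $s_i(y,\tau,t)$ so that for each fixed $\tau$ the induced section satisfies property (P). One obtains, for each $\tau$, a vector bundle $\wt L^\tau \to [0,1]$, and as $\tau$ varies all the data — the bundle $\wt L^\tau$, the linearizations $(f - s(\cdot,\tau,0))'(\phi^\tau(0))$ and $(f - s(\cdot,\tau,1))'(\phi^\tau(1))$ as points of $\text{DET}(f,\phi^0(0))$ and $\text{DET}(f,\phi^0(1))$ respectively, and the isomorphisms $\Phi_{{\bf E}_t}$ — depend continuously on $\tau$. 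Since orientations are discrete (two choices, locally constant under continuous variation), the propagation $\mathfrak{o}\mapsto\phi^\tau_\ast\mathfrak{o}$ is independent of $\tau$; in particular $\phi^0_\ast\mathfrak{o} = \phi^1_\ast\mathfrak{o}$.

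The technical heart is the continuity claim: that the line bundle $\wt L^\tau\otimes\lambda^\ast(\R^k)$ over $[0,1]$, together with the identifications at its two ends with the determinant lines of the linearizations of $f$, varies continuously in $\tau$. This requires checking that the solution manifold from the boundary implicit function theorem can be taken to depend continuously (indeed smoothly, where it matters) on the extra parameter $\tau$, which follows by applying Theorem \ref{IMPLICIT0} once to the section on $[0,1]\times[0,1]\times X\times\R^k$ (with $\tau$ and $t$ both treated as base parameters) rather than parameter-by-parameter — exactly the trick already used in the proofs of Lemmata \ref{opas} and \ref{opas1}. The main obstacle I anticipate is purely bookkeeping: verifying that property (P) can be arranged uniformly along the two-parameter family $\Phi$ (one needs the transversality perturbation to be applicable simultaneously for all $(\tau,t)$ near the compact set $\Phi([0,1]^2)$), and that the continuity of the Fredholm operators at the endpoints $t = 0,1$ holds as $\tau$ varies — but both are handled by the same compactness-plus-genericity arguments already developed in Section \ref{sec_package} and in Lemmata \ref{opas}, \ref{opas1}. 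No genuinely new idea beyond those lemmata is needed; the theorem is their common generalization with $U$ sc-smoothly contractible supplying the homotopy between any two paths.
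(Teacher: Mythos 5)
Your proposal is correct and follows essentially the same route as the paper: invoke Proposition \ref{umbra} for the neighborhood, Proposition \ref{local-xyz} for well-definedness along a single path, and then homotope $\phi^0$ to $\phi^1$ with endpoints fixed (using sc-smooth contractibility of $U$), build a two-parameter family of $\ssc^+$-sections with property (P), and conclude by continuity of the bundles $\wt L^\tau$ and of the endpoint operators in $\text{DET}(\ast)$. The only difference is presentational: you spell out more explicitly the continuity claim and the bookkeeping around arranging property (P) uniformly over the compact image of the homotopy, which the paper treats as "obvious properties"; your identification of where the genuine technical content sits (and that it is handled by the same two-parameter trick as in Lemmata \ref{opas} and \ref{opas1}) is accurate.
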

}

{
Having this theorem there are precisely two possible ways to orient the family $\text{DET}(f,y)$, $y\in U_\infty$,  so that 
these orientations are related by propagation along paths. Namely,  we fix $x$ and take for $y\in U$ a sc-smooth path
$\phi\colon [0,1]\rightarrow U$, starting at $x$ and ending at $y$.  Fixing  an orientation $\mathfrak{o}_x$ of $\text{DET}(f,x)$, 
we define $\mathfrak{o}_y=\phi_\ast\mathfrak{o}_x$. The definition is independent of the choice of the path $\phi$.
At this point we have a map which associates to a smooth point $y\in U$, i.e. $y\in U_\infty$,  an orientation $\mathfrak{o}_y$ of $\text{DET}(f,y)$. It follows from Theorem \ref{thm_6.35} that if  $\psi$ is a sc-smooth path connecting $y_1$ with $y_2$, then $\psi_\ast\mathfrak{o}_{y_1}=\mathfrak{o}_{y_2}$.
}

{
\begin{definition}
Given a strong bundle $P\colon Y\rightarrow X$ over the tame M-polyfold $X$ and a sc-Fredholm section $f$, we call an open contractible
neighborhood $U(x)$ of a smooth point $x$ on which the local propagation construction can be carried out an {\bf orientable neighborhood}\index{D- Orientable neighborhood} of $(f,x)$. Any two of the possible orientations of $\text{DET}(f,y)$ for $y\in U_\infty$, which have the propagation property,  is called  a  {\bf continuous orientation}\index{D- Continuous orientations}.
\end{definition}
An orientable neighborhood has precisely two continuous orientations. 
}

{
Now we globalize the local propagation of orientation constructions to a more global procedure.
}

{
Let $X$ be an M-polyfold and $\phi\colon [0,1]\rightarrow X$ a sc-smooth map. We denote by $[\phi]$ its sc-smooth homotopy class with end points fixed. Associated with  $[\phi]$ we have the source $s([\phi])$ which is the starting point $\phi(0)$ and the target $t([\phi])$ which is the end point $\phi(1)$.
Given two sc-smooth homotopy classes  $[\phi]$ and $[\psi]$ satisfying $t([\phi])=s([\psi])$,  we  define the composition $[\psi]\ast[\phi]$ as the class of $\gamma$ defined as follows.
Take a smooth map $\beta\colon [0,1]\rightarrow [0,2]$ satisfying  that $\beta([0,1/2])=[0,1]$, $\beta(0)=0$ and $\beta(s)=1$ for $s$ near $1/2$. Moreover, 
$\beta([1/2,1])=[1,2]$ and $\beta(1)=2$. Then we define the sc-smooth path $\gamma$ by $\gamma(t)=\phi(\beta(s))$ for $s\in [0,1/2]$ and $\gamma(t)=\psi(\beta(s))$ for $s\in [1/2,1]$.
This way we obtain a category ${\mathcal P}_X$ whose objects are the smooth points in $X$ and whose morphisms $x\rightarrow y$ are the homotopy  classes $[\phi]$ with source $x$ and target $y$.
}

{
The main result of this section is  described  by the following theorem which is a consequence of the local constructions.
}

\begin{theorem}\index{T- Main orientation result}

{
Let $P\colon Y\rightarrow X$ be a strong bundle over the tame M-polyfold $X$ and $f$ a sc-Fredholm section.
Then there exists a uniquely determined functor 
$$
{\mathcal P}_X\xrightarrow{\Gamma_f}{\mathcal O}_f,\index{${\mathcal P}_X\xrightarrow{\Gamma_f}{\mathcal O}_f$}
$$
which associates
with  the smooth point $x$ the set $Or_x^f$,  and with the sc-smooth homotopy class  $[\phi]\colon x\rightarrow y$ a morphism $\Gamma_f([\phi])\colon Or_x^f\rightarrow Or_y^f$,  which has the following property.\\[1ex]
$({\bf \ast})$\quad 
Given any smooth point $x$ in $X$  and  a sc-smoothly contractible open neighborhood $U=U(x)$ having the property that $f$ on $U$ has the local propagation property, then 
the maps $\phi_\ast\colon Or_{\phi(0)}^f\rightarrow Or_{\phi(1)}^f$ and $\Gamma^f([\phi])\colon Or_{\phi(0)}^f\rightarrow Or_{\phi(1)}^f$ coincide for a sc-smooth path $\phi\colon [0,1]\rightarrow U$.
}
\end{theorem}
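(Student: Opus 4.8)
The strategy is to build the functor $\Gamma_f$ directly out of the local propagation construction developed above, and then to verify that it is well-defined on homotopy classes, compatible with composition, and uniquely determined by property $(\ast)$. The essential input is Theorem \ref{thm_6.35}: around every smooth point $x$ there is a sc-smoothly contractible orientable neighborhood $U(x)$ on which the propagation map $\mathfrak{o}\mapsto \phi_\ast\mathfrak{o}$ is defined for every sc-smooth path in $U(x)$, is independent of the auxiliary $\ssc^+$-data, and depends only on the endpoints of $\phi$ (so in particular it is a homotopy invariant \emph{within} $U(x)$). On objects we simply set $\Gamma_f(x) = Or^f_x = \{\mathfrak{o}_x, -\mathfrak{o}_x\}$, the two-element set of orientations of $\text{DET}(f,x)$; there is nothing to check here.

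On morphisms I would proceed as follows. Given a sc-smooth path $\phi\colon [0,1]\to X$ representing a morphism $x\to y$, cover the compact image $\phi([0,1])$ by finitely many orientable neighborhoods $U_1,\dots,U_m$ (each of the form $U(z_i)$ from Theorem \ref{thm_6.35}), choose a Lebesgue-number partition $0=t_0<t_1<\dots<t_m=1$ so that $\phi([t_{i-1},t_i])\subset U_i$, and define $\Gamma_f([\phi])$ as the composition $(\phi|_{[t_{m-1},t_m]})_\ast\circ\cdots\circ(\phi|_{[t_0,t_1]})_\ast$ of the local propagation maps, each reparametrized to $[0,1]$. Two things must be checked for this to be a good definition: (i) independence of the chosen cover and partition, and (ii) invariance under homotopies of $\phi$ with fixed endpoints. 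For (i), a standard refinement argument reduces to comparing two partitions one of which refines the other; on a single subinterval lying in an overlap $U_i\cap U_j$, both local constructions agree because, by Proposition \ref{local-xyz} and Theorem \ref{thm_6.35}, inside the smaller contractible piece $U_i\cap U_j$ (or a contractible neighborhood of the subpath contained in it) the propagation is canonical and independent of auxiliary data. For (ii), given a sc-smooth homotopy $\Phi\colon [0,1]\times[0,1]\to X$ with $\Phi(\cdot,0)$ and $\Phi(\cdot,1)$ the two paths, one again covers the compact image $\Phi([0,1]^2)$ by finitely many orientable neighborhoods and uses a grid refinement: each small square of the grid maps into one $U_i$, and on each $U_i$ the ``same starting and end points $\Rightarrow$ same propagation'' clause of Theorem \ref{thm_6.35} lets one slide the broken path across the grid one square at a time without changing the induced map on orientations. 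Functoriality $\Gamma_f([\psi]\ast[\phi])=\Gamma_f([\psi])\circ\Gamma_f([\phi])$ is then immediate: concatenating the chosen partitions of $\phi$ and $\psi$ produces an admissible partition for the composite path $\gamma$, and the local propagation maps compose by construction (the reparametrization $\beta$ used in defining $[\psi]\ast[\phi]$ does not affect the result since propagation depends only on endpoints). Uniqueness is forced by property $(\ast)$: any functor satisfying $(\ast)$ must agree with the local propagation on small contractible neighborhoods, and since every morphism $[\phi]$ is a composite of morphisms each represented by a subpath lying in such a neighborhood, the functor is determined on all of $\mathcal{P}_X$.

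The main obstacle I expect is item (ii), the homotopy invariance, done \emph{globally} rather than within a single orientable neighborhood. Theorem \ref{thm_6.35} only gives homotopy invariance locally, so the global statement requires the grid/refinement bookkeeping to stitch local invariances together; the delicate point is to ensure that at each elementary move across a grid square the relevant subpaths and their perturbed models change continuously, so that the exact sequences \eqref{eexact} and the isomorphisms $\Phi_{{\bf E}_t}$ vary continuously and hence the induced orientation identification is locally constant in the grid-move parameter. This is exactly the mechanism already used inside the proof of Proposition \ref{local-xyz} (via the bundles $\widetilde{L}^\tau$ over $[0,1]\times[0,1]$), so the work is to organize a finite iteration of that argument. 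A minor additional point to address is that the neighborhoods $U(x)$ of Theorem \ref{thm_6.35} are open on level $0$ and the smooth points $\phi([0,1])\subset X_\infty$ lie in them, so the covering and Lebesgue-number arguments are carried out on level $0$ while the orientations live over $X_\infty$; this causes no trouble but should be stated. Everything else is routine categorical bookkeeping once the local theory of the previous subsection is in hand.
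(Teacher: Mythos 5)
Your proposal follows exactly the paper's argument: cover the compact path image by finitely many orientable neighborhoods from Theorem \ref{thm_6.35}, compose the local propagation maps along a Lebesgue-number subdivision, verify homotopy invariance by a grid-refinement covering argument, and observe that property $(\ast)$ forces uniqueness since any morphism factors through subpaths lying in orientable neighborhoods. You spell out the refinement and grid-square bookkeeping that the paper compresses into ``by a covering argument one verifies,'' but the decomposition and the key inputs (Proposition \ref{local-xyz}, Theorem \ref{thm_6.35}) are the same.
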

\begin{proof}

{
We consider the  sc-smooth path $\phi:[0,1]\rightarrow X$, fix $t_0\in [0,1]$, and choose an orientable neighborhood $U(\phi(t_0))$.
Then we find an open interval $I(t_0)$ so that an orientation
$\mathfrak{o}_t$ for $(f,\phi(t))$, where $t\in I(t_0)\cap [0,1]$, determines an orientation for all $s\in I(t_0)\cap [0,1]$.  
Now using the compactness of $[0,1]$ and a finite covering map, we can transport a given orientation $\mathfrak{o}_0$ of $\text{DET}(f,\phi(0))$ to an orientation 
$\mathfrak{o}_1$ of $\text{DET}(f,\phi(1))$. The map $\pm\mathfrak{o}_0\rightarrow \pm\mathfrak{o}_1$ is denoted by $\phi_\ast$. By a covering argument one verifies that the map $\phi_\ast$ only depends on the homotopy class $[\phi]$ for fixed end-points. It is obvious that $\Gamma_f$ is unique.
}
\end{proof}
 
 {
Finally we can give two equivalent definitions of orientability of a sc-Fredholm section.
}

{
\begin{definition}\index{D- Orientable sc-Fredholm section {I}}
A sc-Fredholm section  $f$  of the strong bundle $P\colon Y\rightarrow X$ over the tame M-polyfold $X$ is called  {\bf orientable}  provided for every pair of smooth points $x$ and $y$
in the same path component,  the morphism $\Gamma_f([\phi])$ does not depend on the choice of the homotopy class $[\phi]\colon x\rightarrow y$. 
\end{definition}
}
An equivalent version is the following.

{
\begin{definition} \index{D- Orientable sc-Fredholm section {II}}
Let $f$ be a sc-Fredholm section of the strong bundle $P\colon Y\rightarrow X$ over the tame M-polyfold $X$. Then $f$ is  called {\bf orientable}  if there exists 
a map which associates with a point  $y\in U_\infty$ an orientation $\mathfrak{o}_y$ of $\text{DET}(f,y)$ having the following property. For every smooth point $x$ there exists
an orientable neighborhood $U(x)$ for which  $U_\infty(x)\ni y\mapsto \mathfrak{o}_y$ is one of the two continuous orientations.
\end{definition}
}

{
Finally we  define an orientation of the sc-Fredholm section as follows.
}

{
\begin{definition}[{\bf Orientation of sc-Fredholm sections}]\index{D- Orientation of an sc-Fredholm section}
Let $P\colon Y\rightarrow X$ be a strong bundle over the tame M-polyfold $X$ and assume that $f$ is a  sc-Fredholm section.
An orientation $\mathfrak{o}^f$ for $f$ is a map which associates with every  smooth  point $y\in X$ an orientation $\mathfrak{o}_y^f$ of $\text{DET}(f,y)$
such  that,  for every smooth point $x$ and orientable neighborhood $U=U(x)$,  the restriction $\mathfrak{o}^f\vert U_\infty$ is one of the two possible continuous orientations.
\end{definition}
}

{
We close this subsection with a general result.
}

{
\begin{theorem}\index{T- Orientation of solution set}
We assume that $P\colon Y\rightarrow X$ is a strong bundle over the tame M-polyfold $X$. Let $f$ be a proper sc-Fredholm section
such  that for every $x\in X$ solving $f(x)=0$ the pair $(f,x)$ is in 
general position to the boundary $\partial X$.
Suppose $\mathfrak{o}^f$ is an orientation for $f$. Then the solution set $S=f^{-1}(0)$ is a smooth compact manifold with boundary with corners
possessing  a natural orientation.
\end{theorem}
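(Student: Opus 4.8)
The plan is to combine the boundary implicit function theorem with the orientation machinery developed in this section. First I would record that $S=f^{-1}(0)$ is a compact subset of $X_\infty$: properness of $f$ gives a compact solution set, and the regularizing property of a sc-Fredholm section (part (1) of Definition \ref{def_again_fedholm_sect}) forces $S\subset X_\infty$. By Lemma \ref{general_to_good}, general position to the boundary implies good position, so at every $x\in S$ the germ $(f,x)$ is in good position in the sense of Definition \ref{new_good_position_def}. Hence Theorem \ref{IMPLICIT0} (equivalently the global version Theorem \ref{io-xxx}) applies and shows that $S$ is a sub-M-polyfold of $X$ whose induced M-polyfold structure is sc-smoothly equivalent to that of a smooth compact manifold with boundary with corners. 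This disposes of everything except the orientation.

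For the orientation I would proceed pointwise. Fix $x\in S$. Since $(f,x)$ is in good position, the linearization $f'(x)\colon T_xX\to Y_x$ is surjective, so $\operatorname{coker}(f'(x))=\{0\}$ and, by Definition \ref{def_determinant_1}, $\det(f'(x))=\lambda(\ker f'(x))\otimes\lambda^\ast(\{0\})=\lambda(\ker f'(x))$ canonically. Moreover $\ker f'(x)=T_xS$ is the tangent space of the solution manifold at $x$ (this is part of the conclusion of Theorem \ref{IMPLICIT0}), and its dimension equals $\operatorname{ind}(f,x)$, which is locally constant on $X$ by Proposition \ref{sst}, hence constant on each component of $S$. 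Since $f'(x)\in\operatorname{Lin}(f,x)$ and $\operatorname{Lin}(f,x)$ is a contractible base for the line bundle $\text{DET}(f,x)$, the given orientation $\mathfrak{o}^f_x$ of $\text{DET}(f,x)$ restricts to an orientation $\mathfrak{o}^S_x$ of the fiber $\det(f'(x))=\lambda(T_xS)$, that is, to an orientation of $T_xS$. This assigns an orientation to every tangent space of $S$.

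The remaining, and main, step is to check that $x\mapsto\mathfrak{o}^S_x$ is continuous with respect to the manifold topology of $S$, so that it is an orientation of $S$ in the usual sense (for a manifold with corners this is equivalent to orienting the interior, the orientation extending to the boundary faces by continuity of tangent spaces). The hard part is bridging the abstract path-propagation definition of a continuous orientation of $f$ with honest continuity in $S$, since the linearizations $f'(y)$ do not in general vary continuously as operators. I would argue locally: around $x_0\in S$ choose an orientable neighborhood $U=U(x_0)$ as in Theorem \ref{thm_6.35}, on which $\mathfrak{o}^f$ restricts to one of the two continuous orientations. Any two smooth points of $S\cap U$ are joined by a sc-smooth path $\phi\colon[0,1]\to S\cap U$, and since $f(\phi(t))=0$ for all $t$, the $\text{sc}^+$-section $s$ used in the propagation construction satisfies $s(\phi(t),t)=f(\phi(t))=0$, so $(f-s(\cdot,t))'(\phi(t))\in\operatorname{Lin}(f,\phi(t))$ and $F(t,\phi(t),0)=0$. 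The propagation along $\phi$ is then governed by the exact sequences ${\bf E}_t$ and the isomorphisms $\Phi_{{\bf E}_t}\colon\det((f-s(\cdot,t))'(\phi(t)))\to\lambda(L_t)\otimes\lambda^\ast(\R^k)$, where $L_t=\ker F_t'(\phi(t),0)$ contains $T_{\phi(t)}S=\ker f'(\phi(t))$ as a complemented subbundle depending continuously on $t$; here I would invoke the explicit local graph description $\{v+\tau(v)\mid v\in V\}$ of the solution set from Corollary \ref{LGS2} (via Theorem \ref{LGS}), which exhibits $TS$ as a genuinely continuous (indeed smooth) subbundle of the local model. Tracking orientations through these data shows that the continuous orientation $\mathfrak{o}^f$ induces, via the identification of $\text{DET}(f,y)$ restricted to $\det f'(y)$ with $\lambda(T_yS)$, precisely the continuous orientation of $TS|_{S\cap U}$. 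Hence $x\mapsto\mathfrak{o}^S_x$ is locally continuous, therefore globally continuous, and $S$ is a smooth compact oriented manifold with boundary with corners, which completes the proof.
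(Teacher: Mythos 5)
Your proof follows the same three-step outline as the paper's own (terse) proof: invoke transversality and Lemma \ref{general_to_good} to get the manifold-with-corners structure, note that surjectivity of $f'(x)$ identifies $\det(f'(x))$ with $\lambda(T_xS)$ so that $\mathfrak{o}^f_x$ orients $T_xS$, and then use the local continuation property of $\mathfrak{o}^f$ together with the propagation machinery to show continuity. The details you supply — in particular that the $\ssc^+$-section from Proposition \ref{umbra} vanishes identically when $f(\phi(t))=0$ so that $L_t$ genuinely contains $T_{\phi(t)}S$, and that the local graph description exhibits $TS$ as a smooth subbundle — fill in precisely the step the paper dismisses with ``one verifies.''
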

}
\begin{proof}

{
We already know from the transversality discussion that $S$ is a compact manifold with boundary with corners.
If  $x\in S$, then $\ker(f'(x))=T_xS$ and because  $f'(x)$ is surjective,  every tangent space $T_xS$ is oriented by $\mathfrak{o}_x^f$.
Since $\mathfrak{o}^f$ has the local continuation property,  one verifies that the differential geometric local prolongation of the orientation on $S$ is
the same as the Fredholm one.
}
\end{proof}

{
If $f$ is not  generic to start with,  we have to take a small perturbation $s$ supported near the zero set of $f$, so that  the solution set $S_{f+s} $ has a natural orientation coming from $\mathfrak{o}^f$ since $\det((f+s)'(x))$ has an orientation coming from $\text{DET}(f,x)$ for $x\in S_{f+s}$.
}

{
\begin{remark} 
If $(f,\mathfrak{o})$ is an oriented Fredholm section of the strong bundle $P\colon Y\rightarrow X$ and $s$ is a $\ssc^+$-section of $Y\rightarrow X\times [0,1]$, then the sc-Fredholm section 
$\wh{f}$,  defined on  $X\times[0,1]$ by $\wh{f}(x,t)=f(x)+s(x,t)\in Y$, 
has a natural orientation associated with  
$\mathfrak{o}$ and the standard orientation of $[0,1]$. 
We choose  a local $\ssc^+$-section $t$ near the smooth point $x$  satisfying  $t(x)=f(x)$ and consider the orientation 
$\mathfrak{o}_x$ of  $\det((f-t)'(x))$, where $L=(f-t)'(x)\colon T_xX\rightarrow Y_x$. Taking the vector 
$h=(a_1\wedge\ldots \wedge a_n)\otimes {(b_1\wedge\ldots \wedge b_l)}^\ast$ determining the orientation of $L$, we orient 
$$
\wt{L}\colon T_{(x,t)}(X\times [0,1])\rightarrow Y_x,\quad (a,b)\rightarrow La
$$
by the vector 
$\wt{h}=(a_1\wedge\ldots \wedge a_n\wedge e)\otimes (b_1\wedge\ldots \wedge b_l)^\ast$,
where the vector $e$ defines the standard orientation of $[0,1].$
The linear map $\wt{L}$ belongs to the linearization space of $\wh{f}$ at $(x,t)$, so that the latter obtains an orientation.
\end{remark}
}

\subsection{Invariants Associated to Proper Sc-Fredholm Sections}

{
The next result generalizes a classical fact of smooth compact manifolds to polyfolds. For the convenience of the reader we first recall the relevant previous constructions.
}

{We consider the oriented and proper sc-Fredholm section $(f, \mathfrak{o})$ of the strong bundle $P\colon Y\to X$ over the tame M-polyfold $X$ admitting sc-smooth bump functions. The solution set $S=\{x\in X\, \vert \, f(x)=0\}$ is compact. If $N$ is an auxiliary norm on $P$ we know from the perturbation and transversality result, Theorem \ref{thm_pert_and_trans}, that there exists an open neighborhood $U\subset X$ of $S$ such that for every $\ssc^+$-section $s$ of $P$ which is supported in $U$ and satisfies $N(s(x))\leq 1$ for all $x\in X$, the solution sets $S_{f+s}=\{x\in X\, \vert \, f(x)+s(x)=0\}$ is compact. Moreover, there exist distinguished such 
$\ssc^+$-sections having the additional property that,  for all $x\in S_{f+s}$, the  pair $(f+s, x)$ is in general position to the boundary $\partial X$. Its solution set is a compact manifold with boundary with corners and, as we have seen above, possesses a natural orientation induced from the orientation $\mathfrak{o}$ of the sc-Fredholm section $f$.
}

{
In the following theorem we shall call these distinguished $\ssc^+$-sections $s$ {\bf admissible for the pair $(N, U)$.}
}

{
In Section \ref{subs_sc_differential} we have introduced the differential algebra 
$\Omega^\ast_\infty(X,\partial X)=\Omega_\infty^\ast(X)\oplus \Omega_\infty^{\ast-1}(\partial X).$ Its differential $d(w, \tau)=(d\omega, j^\ast w-d\tau)$, in which $j\colon \partial X\to X$ is the inclusion map, satisfies $d\circ d=0$ and we denote the associated cohomology by 
$H^\ast_{\textrm{dR}}(X, \partial X)$. 
}

{
\begin{theorem}\index{T- Invariants for oriented proper sc-Fredholm sections}
Let $P\colon Y\rightarrow X$ be a strong bundle over the tame M-polyfold $X$. We assume that $X$ admits sc-smooth bump functions.
Then there exists a well-defined map which associates with  a proper and oriented sc-Fredholm section $(f, \mathfrak{o})$ of the bundle  $P$ a linear map 
$$
\Psi_{(f,\mathfrak{o})}\colon H^\ast_{\textrm{dR}}(X,\partial X)\rightarrow {\mathbb R}
$$
having the following properties.
\begin{itemize}
\item[{\em (1)}] If $N$ is an auxiliary norm on $P$ and $s$ a $\ssc^+$-section of $P$ which is admissible for the pair $(N, U)$, then the solution set  $S_{f+s}=\{x\in X\, \vert \, f(x)+s(x)=0\}$ is an oriented and compact manifold with boundary with corners, and 
$$
\Psi_{(f,\mathfrak{o})}([\omega,\tau]) = \int_{S_{f+s}} \omega -\int_{\partial S_{f+s}}\tau,
$$ 
holds for every cohomology class $[\omega,\tau]$ in $H^\ast_{dR}(X,\partial X)$. The integrals on the right-hand side are defined to be zero, if the dimensions of the forms and manifolds do not agree.  
\item[{\em (2)}] For  a proper and oriented homotopy between two oriented sc-Fredholm sections $(f_0,\mathfrak{o}_0)$ and $(f_1,\mathfrak{o}_1)$ of the bundle $P$  we have the identity
$$
\Psi_{(f_0,\mathfrak{o}_0)}=\Psi_{(f_1,\mathfrak{o}_1)}.
$$
\end{itemize}
\end{theorem}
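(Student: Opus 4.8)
The plan is to construct the map $\Psi_{(f,\mathfrak{o})}$ via the recipe forced by property (1): pick an auxiliary norm $N$, an adapted open neighborhood $U$ of $S=f^{-1}(0)$ as in Theorem \ref{thm_pert_and_trans}, and an admissible $\ssc^+$-section $s$ so that $f+s$ is in general position to $\partial X$ along its zero set. Then $S_{f+s}$ is a compact oriented smooth manifold with boundary with corners (using the implicit function theorem \ref{IMPLICIT0}, the compactness of the perturbed solution set, and the induced orientation from $\mathfrak{o}$ discussed in the orientation section). For a relative cohomology class $[\omega,\tau]\in H^\ast_{\textrm{dR}}(X,\partial X)$ one sets
$$
\Psi_{(f,\mathfrak{o})}([\omega,\tau]) := \int_{S_{f+s}}\omega - \int_{\partial S_{f+s}}\tau,
$$
where $\omega,\tau$ are restricted (pulled back) to $S_{f+s}$ and $\partial S_{f+s}$ via the sc-smooth inclusions, and the integral is declared zero in the dimension-mismatched case. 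The first task is to check this is well-defined on the level of differential forms: if $(\omega,\tau)=d(\alpha,\beta)=(d\alpha,j^\ast\alpha-d\beta)$ is exact, then $\int_{S_{f+s}}d\alpha - \int_{\partial S_{f+s}}(j^\ast\alpha - d\beta)$; by Stokes' theorem on $S_{f+s}$ (the version stated in Subsection \ref{subs_sc_differential}, applied to the oriented compact submanifold $S_{f+s}$ with its faces), $\int_{S_{f+s}}d\alpha=\int_{\partial S_{f+s}}\alpha$, and $\int_{\partial S_{f+s}}d\beta=0$ again by Stokes' theorem on the closed manifold $\partial S_{f+s}$ (or because $\partial(\partial S_{f+s})$ is handled by the face structure). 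Hence the value on an exact class is $0$, so $\Psi_{(f,\mathfrak{o})}$ descends to $H^\ast_{\textrm{dR}}(X,\partial X)$.

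The second and main task is independence of the choices $(N,U,s)$. Here the strategy is a cobordism argument. Given two admissible choices $s_0$ (for $(N_0,U_0)$) and $s_1$ (for $(N_1,U_1)$), one uses the homotopy version of the perturbation and transversality theory — concretely the parametrized stability Theorem \ref{corro} together with the boundary implicit function theorem — to produce a $\ssc^+$-section $s(x,t)$ of $P$ pulled back over $X\times[0,1]$, interpolating between $s_0$ and $s_1$, supported in a neighborhood adapted to an auxiliary norm on the pulled-back bundle, and generic enough that $\wh{f}(x,t)=f(x)+s(x,t)$ is in general position to $\partial(X\times[0,1])$ along its zero set. (This is exactly the content sketched in Remark \ref{rem_homotopy} and in the ``Cobordism between good position perturbations'' theorem; one needs general position rather than just good position to apply Stokes cleanly, and one should first replace $s_0,s_1$ by generic ones if necessary, or invoke that general position is achievable by Theorem \ref{thm_pert_and_trans}.) Its solution set $W:=\wh{f}^{-1}(0)\subset X\times[0,1]$ is then a compact oriented manifold with boundary with corners whose boundary decomposes as $S_{f+s_1}\sqcup(-S_{f+s_0})\sqcup(\text{faces lying over }\partial X)$. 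Applying Stokes' theorem to $W$ with the pulled-back closed form $\omega$ (and using $d\omega=0$, $j^\ast\omega=d\tau$ on the $\partial X$-faces) yields
$$
0 = \int_W d\omega = \int_{S_{f+s_1}}\omega - \int_{S_{f+s_0}}\omega - \int_{\partial W \cap (\partial X\times[0,1])}\omega,
$$
and the last term is rewritten via $\omega|_{\partial X}=d\tau$ and a second application of Stokes on the $(\partial X\times[0,1])$-part of $W$ to match the boundary contributions $\int_{\partial S_{f+s_i}}\tau$. This gives the equality of the two candidate values and hence well-definedness. Property (2) — invariance under a proper oriented homotopy $(f_t,\mathfrak{o}_t)$ — is proved by the same cobordism mechanism applied to the homotopy itself: the solution set of the homotopy, after a generic $\ssc^+$-perturbation supported near it, is an oriented compact cobordism between $S_{f_0+s_0}$ and $S_{f_1+s_1}$ respecting orientations, and Stokes again forces $\Psi_{(f_0,\mathfrak{o}_0)}=\Psi_{(f_1,\mathfrak{o}_1)}$.

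The step I expect to be the genuine obstacle is the careful bookkeeping of boundary faces and orientations in the cobordism argument: one must ensure that $W$ (the perturbed zero set over $X\times[0,1]$, or over $X\times[0,1]$ for a homotopy) really is tame, face-structured enough for Stokes, that its boundary faces are precisely $S_{f+s_1}$, $S_{f+s_0}$, and the ``vertical'' faces lying over $\partial X$, and that the induced orientations (from $\mathfrak{o}$ via the linearization determinant lines, as in the orientation subsections, together with the standard orientation of $[0,1]$) match with the expected signs — this is where the orientation conventions fixed in Section \ref{sect_conventions} and the push-forward orientation $\phi_\ast\mathfrak{o}$ enter and must be reconciled with the differential-geometric boundary orientation. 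The analytic inputs (compactness of perturbed solution sets, general position, the implicit function theorem with corners) are all available from earlier in the paper; the work is in assembling them and tracking signs, together with the mild point that one should shrink neighborhoods and, if needed, first make $f$ itself generic, so that all the Stokes applications take place on honest compact oriented manifolds with boundary with corners.
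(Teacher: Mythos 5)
Your proposal follows essentially the same strategy as the paper's proof: define $\Psi_{(f,\mathfrak{o})}$ by the formula in property (1) for one choice of admissible $\ssc^+$-perturbation, and then show independence of that choice (and property (2)) by a cobordism argument over $X\times[0,1]$ in which $f$ is homotoped through admissible perturbations, the perturbed solution set is a compact oriented manifold with boundary with corners, and Stokes' theorem together with $d\omega=0$ and $j^\ast\omega=d\tau$ forces the two candidate values to agree. You additionally spell out the verification that exact relative forms integrate to zero, which the paper leaves implicit; this is a welcome supplement rather than a divergence in method.
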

}
\begin{proof}

{
(1)\,  The properness of the sc-Fredholm section $f$ implies the compactness of its solution set $S_f=\{x\in X\,\vert \, f(x)=0\}$. If $N$  is an auxiliary norm $N$, then there exist, by Theorem 
\ref{thm_pert_and_trans}, an open neighborhood $U\subset X$ of $S_f$ and a $\ssc^+$-section of $s_0$ of $P$ which is admissible for the pair $(N, U)$ such that the solution set $S_0=S_{f+s_0}=\{x\in X\, \vert \, f(x)+s_0(x)=0\}$ is, in addition, an oriented compact manifold with boundary with corners of dimension $n=\dim S_{f+s_0}=\ind (f'(x))$ for $x\in S_{f+s_0}$. 
The orientation $(f, \mathfrak{o})$ induces an orientation on $S_{f+s}$ and its local faces.  We define the map $\Psi_{(f,\mathfrak{o})}$ by 
\begin{equation}\label{int_eq_1}
\Psi_{(f,\mathfrak{o})}([\omega,\tau]):=\int_{S_{f+s_0}}\omega -\int_{\partial S_{f+s_0}}\tau
\end{equation}
for a cohomology class $[\omega, \tau]$ in $H^\ast_{\textrm{dR}}(X,\partial X)$. The integrals on the right-hand side are defined to be zero 
if the dimensions of the forms and manifolds do not agree. 
}

{
In order to verify that the 
definition \eqref{int_eq_1} does not depend on the choice of admissible section $s_0$,  we take a second $\ssc^+$-section $s_1$ admissible for the pair $(N, U)$ so that the solution set $S_1=S_{f+s_1}=\{x\in X\, \vert \, f(x)+s_1(x)=0\}$ is a compact oriented manifold with boundary with corners. As explained in Remark \ref{rem_homotopy}, 
there exists a proper $\ssc^+$- homotopy $s_t$ connecting  $s_0$ with  $s_1$
such the sc-Fredholm section  $F$, defined on $X\times [0,1]$ by $F(x,t)=f(x)+s_t(x)$, is in 
general position to the boundary of $ X\times [0,1]$ for every  $(x,t)$ in the solution set   $S_F=\{(x, t)\in X\times [0,1]\, \vert \, f(x)+s_t(x)=0\}$, which is an 
oriented compact manifold with boundary with corners, whose orientation is induced from the orientation  $\mathfrak{o}$ of $f$ and the standard orientation of $[0,1]$.  We recall that  $[\omega,\tau]$ satisfies $d\omega=0$ and $j^\ast\omega=d\tau$ on $\partial X$ where $j\colon \partial X\to X$ is the inclusion map. Extending $\omega$ to the whole space $X\times [0,1]$, we introduce the form $\ov{\omega}$ on $X\times [0,1]$ by 
$$\ov{\omega}=p_1^\ast\omega\wedge p_2^\ast dt,$$
where $p_1\colon X\times [0,1]\to X$ and $p_2\colon X\times [0,1]\to [0,1]$ are the sc-projection maps. It follows that $d\ov{\omega}=0$ and $\ov{\omega}\vert \partial X\times [0,1]=d\ov{\tau}$, where $d\ov{\tau}=\ov{p}_1^\ast (d\tau)\wedge \ov{p}_2^\ast dt.$ The maps $\ov{p}_i$ for $i=1, 2$ are the restrictions of $p_i$ to $\partial X\times [0,1]$. 
By Stokes theorem,
$$\int_{S_F}d\ov{\omega}=\int_{\partial S_F}\ov{\omega}=0.$$
We decompose the boundary $\partial S_F$ of the manifold $S_F$ into 
$$\partial S_F=S_0\cup S_1\cup \wt{S},$$
where $S_0\subset \partial X\times \{0\}$, $S_1\subset\partial X\times \{1\}$, and $\wt{S}\subset \partial X\times (0, 1)$, as illustrated in Figure \ref{pict7}.

\begin{figure}[htb]
\begin{centering}
\def\svgwidth{70ex}
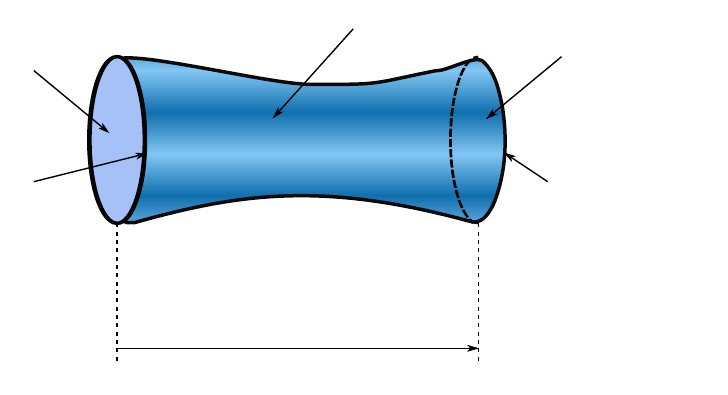
\caption{}\label{pict7}
\end{centering}
\end{figure}
}

{
Hence,
$$
0=\int_{\partial S_F}\ov{\omega}=\int_{\partial S_F}d\ov{\tau},
$$
and we compute the two integrals, taking the orientations into account,
$$0=\int_{\partial S_F}\ov{\omega}=\int_{S_0}\omega-\int_{S_1}\omega-\int_{\wt{S}}\ov{\omega}.$$
Integration of $d\ov{\tau}$ over $\partial S_F$ using Stokes theorem gives 
\begin{equation*}
\begin{split}
0&=\int_{\partial S_F}d\ov{\tau}=\int_{S_0}d\tau-
\int_{S_1}d\tau+\int_{\wt{S}}\ov{\omega}\\
&=\int_{\partial S_0}\tau-\int_{\partial S_1}\tau
+\int_{\wt{S}}\ov{\omega}.
\end{split}
\end{equation*}
Comparing the integrals, we 
obtain 
$$\int_{S_0}\omega-\int_{\partial S_0}\tau=\int_{S_1}\omega-\int_{\partial S_1}\tau,$$
which shows that the definition \eqref{int_eq_1} does indeed not depend on the choice of the distinguished section $s_0$.
}

{
The formula in (2) is verified by the same homotopy argument as in (1).
}
\end{proof}

{In the special situation $\partial X=\emptyset$ of no boundary, the map $\Psi_{(f,\mathfrak{o})}\colon H^n_{\textrm{dR}}(X)\rightarrow {\mathbb R}$ is defined by 
$$
\Psi_{(f,\mathfrak{o})}([\omega])=\int_{S_{f+s}}\omega
$$ 
for $[\omega]\in H^n_{\textrm{dR}}(X)$,  if $n$ is equal to the index of the sc-Fredholm section $f$, and zero otherwise.
}
\subsection{Appendix}
\subsubsection{Proof of Lemma \ref{o6.6}}\label{oo6.6}
\begin{proof}[Proof of Lemma \ref{o6.6}]

{Recalling the exact sequence 
$$
{\bf E}:\quad  0\rightarrow A\xrightarrow{\alpha} B\xrightarrow{\beta} C\xrightarrow{\gamma} D\rightarrow 0,
$$
we choose linear subspaces $V\subset B$ and $W\subset C$ such  that
$$
B=\alpha(A)\oplus V\quad   \text{and}\quad    C=\beta(B) \oplus W
$$
and denote this choice of subspaces by $(V, W)$. The maps 
$$\beta_V\colon V\to \beta (B)\quad \text{and}\quad \gamma_W\colon W\to D$$
are isomorphisms and  $\beta(B)=\beta(V)$ by exactness. Fixing the basis 
$a_1,\ldots ,a_n$ of $A$ and  $d_1,\ldots ,d_l$ of $D$, we choose any basis $b_1,\ldots ,b_{m-n}$ of $V\subset B$ and define the basis 
$c_1,\ldots ,c_{m-n}$ of $\beta (B)\subset C$ by  $c_i=\beta_V(b_i)$, and  define the basis  $c_1',\ldots ,c_l'$ of $W$ by $\gamma_W(c_i')=d_i$.
}

{In order to show that  the vector
$$
(\alpha(a_1)\wedge\ldots \wedge\alpha(a_n)\wedge b_1\wedge\ldots \wedge b_{m-n})\otimes ( c_1'\wedge\ldots \wedge c_l'\wedge c_1\wedge\ldots \wedge c_{m-n})^\ast
$$
does not depend on the choice of the basis $b_1,\ldots ,b_{m-n}$ of $V$,  we choose a second basis
 $\ov{b}_1,\ldots ,\ov{b}_{m-n}$ of $V$, so that there is a linear isomorphism
$\sigma\colon V\rightarrow V$ mapping one basis into the other by $\ov{b}=\sigma (b)$.
Hence 
$$\ov{b}_1\wedge \ldots \wedge \ov{b}_{m-n}=\det (\sigma )\cdot b_1\wedge \ldots \wedge b_{m-n},$$
where $\det(\sigma)$ is the usual determinant of the  linear map $\sigma$. 
The associated basis $\ov{c}_1,\ldots ,\ov{c}_{m-n}$ of $\beta (B)$  is then defined by 
$\ov{c}_i =\beta_V(\ov{b}_i)$. The two basis 
$c_i$ and $\ov{c}_i$ of $\beta (B)$ are therefore related by the isomorphism
$$
 \beta_V\circ \sigma\circ \beta_V^{-1}\colon \beta(B)\rightarrow\beta(B)
 $$
and consequently,
$$
\ov{c}_1\wedge\ldots \wedge \ov{c}_{m-n} =\det( \beta_V\circ \sigma\circ \beta_V^{-1})c_1\wedge\ldots \wedge c_{m-n}=\det(\sigma)c_1\wedge\ldots \wedge c_{m-n}.
$$
From this we obtain
\begin{equation*}
\begin{split}
&(\alpha(a_1)\wedge\ldots \wedge\alpha(a_n)\wedge\bar{b}_1\wedge\ldots \wedge \ov{b}_{m-n})\otimes ( c_1'\wedge\ldots \wedge c_l'\wedge \ov{c}_1\wedge\ldots \wedge \ov{c}_{m-n})^\ast\\
&\quad \quad =\det (\sigma) (\alpha(a_1)\wedge\ldots \wedge\alpha(a_n)\wedge b_1\wedge\ldots \wedge b_{m-n})\\
&\phantom{\quad \quad =}\otimes \dfrac{1}{\det (\sigma)}( c_1'\wedge\ldots \wedge c_l'\wedge c_1\wedge\ldots \wedge c_{m-n})^\ast,
\end{split}
\end{equation*}
so that the basis change has indeed no influence. 
}

{
Next we replace the basis  $a_1,\ldots ,a_n$ of $A$  by the basis $\ov{a}_1,\ldots ,\ov{a}_n$ by  means of the linear isomorphism  $\sigma\colon A\rightarrow A$ and replace  the basis $d_1,\ldots ,d_l$ of $D$ 
by the basis $\bar{d}_1,\ldots ,\bar{d}_l$  by means of the isomorphism $\varepsilon\colon D\rightarrow D$. Then 
$$(\ov{a}_1\wedge \ldots \wedge \ov{a}_n)\wedge (\ov{d}_1\wedge \ldots \wedge \ov{d}_l)^\ast=
\dfrac{\det (\tau)}{\det (\varepsilon)}\cdot 
(a_1\wedge \ldots \wedge a_n)\wedge (d_1\wedge \ldots \wedge d_l)^\ast,$$
and the vector 
$(a_1\wedge \ldots \wedge a_n)\wedge (d_1\wedge \ldots \wedge d_l)^\ast$ is independent of the choices of the basis if and only if $\det (\tau)=\det (\varepsilon)$. So far, the definition of $\Phi_{\bf E}$ could  only depend on the choice 
 of the complements $(V,W)$.
}

{
So, we  assume that $(V,W)$ and $(V',W')$ are two choices. We assume that $a_1,\ldots ,a_n$ and $d_1,\ldots ,d_l$ are fixed bases for $A$ and $D$, respectively.
Now fixing  any basis $b_1,\ldots ,b_{m-n}$ of  $V$, we find vectors $q_1,\ldots ,q_{m-n}$ of  $\alpha (A)$ such  that $\ov{b}_1,\ldots ,\ov{b}_{m-n}$ with $\ov{b}_i=b_i+q_i$ is a basis of  $V'$. We define $c_i\in W$ by  $\gamma_W(c_i)=d_i$. Then we choose  $p_i\in\beta(C)$ so that $\ov{c}_i=c_i +p_i\in W'$
and $\gamma_{W'}(\ov{c}_i)=d_i$. Now we use the fact that for  $(V,W)$ and $(V',W')$ other choices do not matter.
Using that $q_j$ is a linear combination of 
$\alpha (a_1),\ldots ,\alpha (a_n)$ and $p_j$ a linear combination of $c_1',\ldots ,c_l'$, it follows that 
\begin{equation*}
\begin{split}
&(\alpha(a_1)\wedge\ldots \wedge\alpha(a_n)\wedge b_1\wedge\ldots \wedge b_{m-n})\otimes ( c_1'\wedge\ldots \wedge c_l'\wedge c_1\wedge\ldots \wedge c_{m-n})^\ast\\
&\quad =(\alpha(a_1)\wedge\ldots \wedge\alpha(a_n)\wedge\ov{b}_1\wedge\ldots \wedge \ov{b}_{m-n})\otimes ( c_1'\wedge\ldots \wedge c_l'\wedge \ov{c}_1\wedge\ldots \wedge \ov{c}_{m-n})^\ast,
\end{split}
\end{equation*}
which completes the proof that $\Phi_{\bf E}$ is well-defined.
}
\end{proof}

\subsubsection{Proof of Proposition \ref{ooo6.6}}\label{oooo6.6}
\begin{proof}[Proof of Proposition \ref{ooo6.6}]

{
Let $P, Q\in\Pi_T$, $P\leq Q$,  and recall the exact sequence \begin{equation*}
{\bf E}_{(T,Q)}\colon\quad 0\rightarrow\ker(T)\xrightarrow{j^Q_T}\ker(QT)\xrightarrow{\Phi^Q_T} F/R(Q)\xrightarrow{\pi^Q_T}  \text{coker}(T)\rightarrow 0.
\end{equation*}
By definition of the determinant,  $h\in \det(T)$ is of  the form
$$
h=(a_1\wedge,\ldots ,\wedge a_n)\otimes ((d_1+R(T))\wedge\ldots \wedge (d_l+R(T)))^\ast,
$$
where $a_1,\ldots ,a_n$ is a basis of  $\ker(T)$ and $d_1+R(T),\ldots , d_l+R(T)$ is a basis of $F/R(T)$ satisfying $d_i\in R(I-Q)$.
The latter condition can be achieved since, by definition of $\Pi_T$,  $R(QT)=R(Q)$.
We choose  the  vectors $b_1,\ldots ,b_m$ in $\ker(QT)$
so that $a_1,\ldots ,a_n,b_1,\ldots ,b_m$ form a basis of  $\ker(QT)$ and take the linearly independent vectors  $T(b_1)+R(Q),\ldots ,T(b_m)+R(Q)$ in $F/R(Q)$.
By definition of the isomorphism $\gamma^Q_T\colon \det T\to \det (QT)$, 
\begin{equation*}
\begin{split}
&\gamma^Q_T(h)\\
&\quad =(a_1\wedge\ldots \wedge a_n\wedge b_1\wedge\ldots \wedge b_m)\otimes\\
&\quad ((d_1+R(Q))\wedge\ldots \wedge (d_l+R(Q))\wedge(T(b_1)+R(Q))\wedge\ldots \wedge (T(b_m)+R(Q)))^\ast .
\end{split}
\end{equation*}
We note that 
\begin{equation} 
T(a_1)=\ldots =T(a_n)=0\ \text{and}\ \ QT(b_1)=\ldots =QT(b_m)=0.
\end{equation}
Next we consider the exact sequence 
\begin{equation*}
0\rightarrow\ker(QT)\xrightarrow{j_{QT}^P}\ker(PT)\xrightarrow{\Phi^P_{QT}} F/R(P)\xrightarrow{\pi_{QT}^P}\text{coker}(QT)\rightarrow 0.
\end{equation*}
In order to  compute $\gamma^P_{QT}(\gamma^Q_T(h))$ we take as basis of  $\ker(QT)$ the vectors
$a_1,\ldots ,a_n,b_1,\ldots ,b_m$,  and as basis of $F/R(Q)$ the vectors $d_1+R(Q),\ldots ,d_l +R(Q),T(b_1)+R(Q),\ldots ,T(b_m)+R(Q)$.
Since  $\ker(QT)\subset\ker(PT)$,  we choose the vectors  $\wt{b}_1,\ldots ,\wt{b}_k$  in $\ker (PT)$ so that
$a_1,\ldots ,a_n,b_1,\ldots ,b_m,\wt{b}_1,\ldots ,\wt{b}_k$ form  a basis of  $\ker(PT)$.  We define 
$$
\alpha^P=a_1\wedge\ldots \wedge a_n\wedge b_1\wedge \ldots \wedge b_m\wedge \wt{b}_1\wedge\ldots \wedge \wt{b}_k,
$$
and note  that $\pi^P_{QT}(d_i+R(P))=(I-P)d_i +R(Q)=d_i+R(Q)$.  The vectors 
\begin{gather*}
d_1+R(P),\ldots ,d_l+R(P),\quad T(b_1)+R(P),\ldots ,T(b_m)+R(P),\\
QT(\wt{b}_1)+R(P),\ldots ,QT(\wt{b}_k)+R(P)
\end{gather*}
are a basis 
of $F/R(P)$ and we abbreviate  their  wedge product by $\beta^P$, so that 
$$
\gamma^P_{QT}(\gamma^Q_T(h)) = \alpha^P\otimes {(\beta^P)}^\ast.
$$
Recall that $d_i=(I-Q)d_i$ and $(I-Q)T(b_i)=T(b_i)$ for $i=1,\ldots ,m$. Abbreviating 
$[d]=d+R(P)$, the projection  $Q\colon F\rightarrow F$ induces the  projection
$\wt{Q}\colon F/R(P)\rightarrow F/R(P)$,  defined by $[d]\mapsto [Qd]$. Then $(I-\wt{Q})[d]=[(I-Q)d]$,  and therefore   
$$
F/R(P)= \ker(\wt{Q})\oplus \ker(I-\wt{Q}).
$$
The vectors 
 \begin{gather*}
 [d_1],\ldots ,[d_l],  [T(b_1)],\ldots ,[T(b_m)]\in \ker(\wt{Q}),
[QT(\wt{b}_1)],\ldots ,[QT(\wt{b}_k)]\in\ker(I-\wt{Q})
\end{gather*}
form  a basis for $F/R(P)$. By the  standard properties of the wedge product, 
\begin{equation*}
\begin{split}
\beta^P&=[d_1]\wedge\ldots \wedge[d_l]\wedge [T(b_1)]\wedge\ldots \wedge[T(b_m)]\wedge[QT(\wt{b}_1)]\wedge\ldots \wedge [QT(\wt{b}_k)]\\
&=[d_1]\wedge\ldots \wedge[d_l]\wedge [T(b_1)]\wedge\ldots \wedge[T(b_m)]\wedge[T(\wt{b}_1)]\wedge\ldots \wedge [T(\wt{b}_k)].
\end{split}
\end{equation*}
Hence we arrive for the composition 
$\gamma^P_{QT}\circ \gamma^Q_T$ at the formula
$$
\gamma^P_{QT}(\gamma^Q_T(h)) = \alpha^P\otimes ([d_1]\wedge\ldots \wedge[d_l]\wedge [T(b_1)]\wedge\ldots \wedge[T(b_m)]\wedge[T(\wt{b}_1)]\wedge\ldots \wedge [T(\wt{b}_k)])^\ast.
$$
Next we compute $\gamma_T^P(h)$. In order to do so we start with the basis $a_1,\ldots ,a_n$ of  $\ker(T)$
and extend it to a basis of  $\ker(PT)$  by choosing   
 $b_1,\ldots ,b_m,\wt{b}_1,\ldots ,\wt{b}_k$. The wedge of all these vectors  is $\alpha^q$.
For $F/R(T)$ we have the basis 
$d_1+R(T),\ldots ,d_l +R(T)$.   Recall that $(I-Q)d_i=d_i$ so that $d_i\in \ker(\wt{Q})$. Further, 
\begin{equation*}
\begin{split}
\pi^P_T(d_i +R(P))&=(I-P) d_i+R(T)\\
&=(I-P)(I-Q)d_i+R(T)\\
&=(I-Q)d_i+R(T)\\
&=d_i+R(T).
\end{split}
\end{equation*}
Then we take  the basis of $F/R(P)$ formed  (with the previous convention) by the vectors
$$
[d_1],\ldots ,[d_l], [T(b_1)],\ldots ,[T(b_m)],[T(\wt{b}_1)],\ldots ,[T(\wt{b}_k)],
$$
and note that their  wedge product is equal to $\beta^P$. By definition,
$$
\gamma^P_T(h)=\alpha^P\otimes {(\beta^P)}^\ast,
$$
which agrees with  
$\gamma^P_{QT}\circ \gamma^Q_T(h)$.  The proof  of Proposition \ref{ooo6.6} complete.
}
\end{proof}

\bibliographystyle{amsalpha}

\begin{thebibliography}{}
\bibitem{A_Video} Peter Albers,  	A Simple Example of an M-Polyfold Relevant to Morse Theory, IAS-Video.
\bibitem{BEHWZ} F.~Bourgeois, Y.~Eliashberg, H.~Hofer,
K.~Wysocki and E.~Zehnder,  Compactness Results in Symplectic Field
Theory, {\em Geometry and Topology}, Vol. 7, 2003, pp.799-888.
\bibitem{H.Cartan} H. ~Cartan, Sur les
r\'etractions d'une vari\'et\'e, C. R. Acad.Sc. Paris, t. 303, Serie I, no 14, 1986, p. 715.
\bibitem{DK} S. Donaldson\ and\ P. Kronheimer, The geometry of
four-manifolds, Oxford Mathematical Monographs. Oxford Science
Publications. The Clarendon Press, Oxford University Press, New
York, 1990.
\bibitem{EGH} Y. Eliashberg, A. Givental\ and\ H. Hofer, Introduction to Symplectic Field Theory,
 Geom. Funct. Anal. {\bf 2000}, Special Volume, Part II, 560--673.
\bibitem{El} H. Eliasson, Geometry of manifolds of maps, J. Differential Geometry {\bf 1}(1967), 169--194.
\bibitem{FFGW} O. Fabert, J. Fish, R. Golovko, ad K. Wehrheim, Polyfolds: A First and Second Look, arxiv:1210.6670.
\bibitem{FH} A. Floer\ and\ H. Hofer, Coherent orientations for periodic orbit problems in symplectic geometry, Math. Z. {\bf 212} (1993), no.~1, 13--38.
\bibitem{Fry} R. Fry and S. McManus, Smooth Bump Functions and the Geometry of Banach Spaces, Epostiones Mathematicae 20 (2002), 143-183.
\bibitem{FO} K. Fukaya\ and\ K. Ono,
    Arnold conjecture and Gromov-Witten invariants.
    Topology,Vol. 38 No 5, 1999.pp. 933-1048.
\bibitem{FOOO} K. Fukaya, Y.-G. Oh, H. Ohta and K. Ono,
Lagrangian intersection Floer theory-anomaly and obstruction {I} and {II}, AMS/IP Studies in Advanced Mathematics, vol 46,
Amer. Math. Soc./International Press, 2009.
\bibitem{GTWZ} C. Godefry, S. Troyanski, J.H.M. Whitfield, and V. Zizler, Smoothness in weakly compactly generated Banach spaces, J. Func. Anal., 52 (1983), 344-352.
\bibitem{G} M.~Gromov, Pseudoholomorphic Curves in
Symplectic Geometry, {\it Inv. Math.} Vol. 82 (1985), 307-347.
\bibitem{Hirsch} M. Hirsch, Differential Topology, Graduate Texts in Mathematics, Vol 33, Springer.
\bibitem{H_Video} H. Hofer, Mini-Course on Polyfolds, IAS-Video.
\bibitem{Hofer} H. Hofer, A General Fredholm Theory and
Applications, Current Developments in Mathematics, edited by D.
Jerison, B. Mazur, T. Mrowka, W. Schmid, R. Stanley, and S. T. Yau,
International Press, 2006.
\bibitem{HWZ-Convex} H. Hofer, K. Wysocki\ and\ E. Zehnder,
The dynamics on three-dimensional strictly convex energy surfaces. Ann. of Math. (2) 148 (1998), no. 1, 197-289.
\bibitem{HWZ-tight} H. Hofer, K. Wysocki\ and\ E. Zehnder, Finite energy foliations of tight three-spheres and Hamiltonian dynamics. Ann. of Math. (2) 157 (2003), no. 1, 125-255. 
\bibitem{HWZ-DM} H. Hofer, K. Wysocki\ and\ E. Zehnder,
Deligne-Mumford-Type spaces with a View Towards Symplectic Field
Theory, lecture note in preparation.
\bibitem{HWZ2} H. Hofer, K. Wysocki\ and\ E. Zehnder,
A General Fredholm Theory {I}: A Splicing-Based Differential
Geometry, JEMS, Volume 9, Issue 4 (2007),  841--876.
\bibitem{HWZ3} H. Hofer, K. Wysocki\ and\ E. Zehnder,
A General Fredholm Theory {II}: Implicit Function Theorems.
Geometric and Functional Analysis, Volume 19, ( 2009), 206-293.
\bibitem{HWZ3.5} H. Hofer, K. Wysocki\ and\ E. Zehnder,
A General Fredholm Theory {III}: Fredholm Functors and Polyfolds.
Geometry \texttt{\&} Topology 13, (2009), 2279-2387.
\bibitem{HWZ7} H. Hofer, K. Wysocki\ and\ E.
Zehnder, Integration Theory on the Zero Set of Polyfold Fredholm Sections.
 Math. Ann.  346  (2010),  no. 1, 139--198.
\bibitem{HWZ7.1} H.~Hofer, K.~Wysocki and E.~Zehnder,
Finite Energy Cylinders with Small Area,
{\it Ergodic Theory Dynam. Systems} {\bf 22} (2002), No. 5, 1451-1486.
\bibitem{HWZ8.7} H. Hofer, K. Wysocki\ and\ E. Zehnder, Sc-Smoothness, Retractions and New Models for Smooth Spaces,
 Dynamical Systems,  Volume 28, Number 2, October 2010 pp. 665-788.
 \bibitem{HWZ6} H. Hofer, K. Wysocki\ and\ E. Zehnder,
A General Fredholm Theory {IV}: Operations , paper in preparation.
\bibitem{HWZ5} H. Hofer, K. Wysocki\ and\ E. Zehnder,
Applications of Polyfold Theory {I}: The Polyfolds of Gromov-Witten Theory, preprint.
\bibitem{HWZ5.5} H. Hofer, K. Wysocki\ and\ E. Zehnder,
Applications of Polyfold Theory {II}: The Polyfolds of SFT, in preparation.
\bibitem{LA} S. Lang, {\it Introduction to differentiable manifolds}, Second edition, Springer, New York, 2002.
\bibitem{LiuT} G. Liu and G. Tian, Floer homology and Arnold conjecture. J. Differential Geom. 49 (1998), no. 1, 1-74. 
\bibitem{LT} J. Li, Jun and G. Tian, Virtual moduli cycles and {G}romov-{W}itten invariants of general symplectic manifolds, Topics in symplectic $4$-manifolds (Irvine, CA, 1996), First Int. Press Lect. Ser., I, Int. Press, Cambridge, (1998), 47--83.
\bibitem{Mc} D. McDuff, Groupoids, branched manifolds and multisections. J. Symplectic Geom. 4 (2006), no. 3, 259Ð315. 
\bibitem{McW} D. McDuff and K. Wehrheim, Smooth Kuranishi Atlases with Trivial Isotropy, arxiv 1208.1340.
\bibitem{Michael} E. Michael, A note on paracompact spaces, Proc. Am. Math Soc., Vol. 4, No. 5 (Oct., 1953), pp. 831-838
\bibitem{Pardon} J. Pardon, An algebraic approach to virtual fundamental cycles on moduli spaces of J-holomorphic curves, (Preprint arXiv:1309.2370).
 \bibitem{R}W. Rudin, {\it Functional analysis}, Second edition, McGraw-Hill, New York, 1991.
 \bibitem{SaUh:81} J. Sacks and K. Uhlenbeck,
The existence of minimal immersions of 2-spheres.
Ann. of Math. (2) 113 (1981), no. 1, 1-24
\bibitem{Schaefer} H. Schaefer,  Topological vector spaces.
Graduate Texts in Mathematics, Vol. 3. Springer-Verlag, New
York-Berlin,  1971. xi+294 pp.
\bibitem{Smirnov} Yu. M. Smirnov, A necessary and sufficient condition for metrziability of a topological space, Dklady Akad. Nauk. SSSR (N.S.), 77 (1951), 197-200
\bibitem{SCHW} J. T. Schwartz,
{\it Nonlinear functional analysis}, Gordon and Breach, New York,
1969.
\bibitem{Tr} H. Triebel, {\it Interpolation theory, function spaces, differential operators}, North-Holland, Amsterdam, 1978.
\bibitem{Wehr} K. Wehrheim, Fredholm notions in scale calculus and Hamiltonian Floer theory, arxiv:1209.4040.
\bibitem{Wehrheim_video} K. Wehrheim, Introduction to polyfolds, IAS-Video.
\bibitem{Yang} D. Yang, A Complete Theory of Kuranishi Structures and the Polyfold-Kuranishi Correspondence, NYU Ph.D.-thesis 2013.
\bibitem{Zinger} A. Zinger, The Determinant Line Bundle for Fredholm Operators:
Construction, Properties, and Classification, arxiv:1304.6368v1.
\end{thebibliography}

\printindex
\end{document}